\title{\textbf{A Berkovich Approach to Perfectoid Spaces}}
\author{Attilio Castano}
\date{}
\theoremstyle{definition}
\newtheorem{thm}{Theorem}[section]
\newtheorem{lemma}[thm]{Lemma}
\newtheorem{prop}[thm]{Proposition}
\newtheorem{corollary}[thm]{Corollary}
\newtheorem{defn}[thm]{Definition}
\newtheorem{example}[thm]{Example}
\newtheorem{rem}[thm]{Remark}
\newtheorem{notation}[thm]{Notation}
\newtheorem{const}[thm]{Construction}
\newtheorem{warning}[thm]{Warning}
\newtheorem{convention}[thm]{Convention}
\newtheorem{thmx}{Theorem}
\newenvironment{cd}[0]{\begin{equation*}\begin{tikzcd}}{\end{tikzcd}\end{equation*}}
\newcommand{\Spec}{\operatorname{Spec}}
\newcommand{\Spc}{\operatorname{Spc}}
\newcommand{\Cone}{\operatorname{Cone}}
\newcommand{\perf}{\operatorname{perf}}
\newcommand{\perfd}{\operatorname{perfd}}
\newcommand{\Spf}{\operatorname{Spf}}
\newcommand{\colim}{\operatorname{colim}}
\newcommand{\arc}{\operatorname{arc}}
\newcommand{\cotimes}{\widehat{\otimes}}
\newcommand{\rat}{\operatorname{rat}}
\newcommand{\op}{\operatorname{op}}
\newcommand{\im}{\operatorname{Im}}
\newcommand{\CAlg}{\operatorname{CAlg}}
\newcommand{\Kos}{\operatorname{Kos}}
\newcommand{\Perfd}{\operatorname{Perfd}}
\newcommand{\Perf}{\operatorname{Perf}}
\newcommand{\Shv}{\operatorname{Shv}}
\newcommand{\PreShv}{\operatorname{PreShv}}
\newcommand{\Set}{\operatorname{Set}}
\newcommand{\Hom}{\operatorname{Hom}}
\newcommand{\Maps}{\operatorname{Maps}}
\newcommand{\Comp}{\operatorname{Comp}}
\newcommand{\Mod}{\operatorname{Mod}}
\newcommand{\Cond}{\operatorname{Cond}}
\newcommand{\Ban}{\operatorname{Ban}}
\newcommand{\uBan}{\operatorname{uBan}}
\newcommand{\pt}{\operatorname{pt}}
\newcommand{\coeq}{\operatorname{coeq}}
\newcommand{\eq}{\operatorname{eq}}
\newcommand{\Aff}{\operatorname{Aff}}
\newcommand{\eff}{\operatorname{eff}}
\newcommand{\Top}{\operatorname{Top}}
\newcommand{\Frac}{\operatorname{Frac}}
\newcommand{\PerfdSpc}{\operatorname{Perfd-Spc}}
\newcommand{\AnSpc}{\operatorname{Analytic-Spc}}
\newcommand{\sep}{\operatorname{sep}}
\newcommand{\Int}{\operatorname{Int}}
\newcommand{\tic}{\operatorname{tic}}
\newcommand{\picomp}{{}_{\varpi}^{\wedge}}
\newcommand{\Id}{\operatorname{Id}}
\newcommand{\Fil}{\operatorname{Fil}}
\newcommand{\tf}{\operatorname{tf}}
\newcommand{\contr}{\operatorname{contr}}
\newcommand{\pic}{\operatorname{pic}}
\newcommand{\ic}{\operatorname{ic}}
\newcommand{\init}{\operatorname{init}}
\newcommand{\Sch}{\operatorname{Sch}}
\newcommand{\qcqs}{\operatorname{qcqs}}
\newcommand{\Tot}{\operatorname{Tot}}
\newcommand{\cosk}{\operatorname{cosk}}
\newcommand{\Val}{\operatorname{Val}}
\newcommand{\ProFin}{\operatorname{ProFin}}
\newcommand{\Sub}{\operatorname{Sub}}
\newcommand{\Zar}{\operatorname{Zar}}
\newcommand{\cg}{\operatorname{cg}}
\newcommand{\cgwh}{\operatorname{cgwh}}
\newcommand{\Open}{\operatorname{Open}}
\newcommand{\Yo}{\operatorname{\text{よ}}}
\newcommand{\blang}{\Big \langle}
\newcommand{\brang}{\Big \rangle}
\newcommand{\cC}{\mathcal{C}}
\newcommand{\cB}{\mathcal{B}}
\newcommand{\cD}{\mathcal{D}}
\newcommand{\cI}{\mathcal{I}}
\newcommand{\cO}{\mathcal{O}}
\newcommand{\cH}{\mathcal{H}}
\newcommand{\cU}{\mathcal{U}}
\newcommand{\cT}{\mathcal{T}}
\newcommand{\cX}{\mathcal{X}}
\newcommand{\cW}{\mathcal{W}}
\newcommand{\frakp}{\mathfrak{p}}
\newcommand{\frakm}{\mathfrak{m}}
\newcommand{\CC}{\mathbf{C}}
\newcommand{\A}{\mathbf{A}}
\newcommand{\FF}{\mathbf{F}}
\newcommand{\ZZ}{\mathbf{Z}}
\newcommand{\RR}{\mathbf{R}}
\newcommand{\cM}{\mathcal{M}}
\newcommand{\uHom}{\underline{\Hom}}
\DeclareSymbolFontAlphabet{\mathbb}{AMSb} 
\DeclareSymbolFontAlphabet{\mathbbl}{bbold} 
\newcommand{\Prism}{\mathbbl{\Delta}}
\begin{document}

\maketitle
\hbadness=99999
\tolerance=9999

\chapter*{Abstract}
\addcontentsline{toc}{chapter}{Abstract}
Since their inception perfectoid spaces have catalyzed a revolution in $p$-adic geometry. We redevelop the foundations of perfectoid spaces from the point of view of Berkovich Spaces, where the underlying topological space of an affinoid perfectoid space is a compact Hausdorff space -- closely resembling the situation in complex geometry. The key technical ingredient in our construction is arc$_\varpi$-descent for perfectoid Banach algebras. Along the way, we establish various foundational results for arc$_\varpi$-sheaves, notably a form of the Gerritzen-Grauert theorem.

\setcounter{tocdepth}{2}

\tableofcontents

\newpage

\chapter{Introduction}

\section{Affinoid Perfectoid Spaces}

The work of Ostrowski on the classification of non-trivial valuations on $\mathbf{Q}$ -- up to equivalence there is one valuation for each prime number $p$, the $p$-adic valuation, and the archimedean valuation given by the usual absolute value -- illustrate an old analogy in mathematics that the real numbers $\mathbf{R}$ (or the complex numbers $\mathbf{C}$) should be thought as the prime at infinity, given that the completion of $\mathbf{Q}$ with respect to the archimedean valuation is the real numbers $\mathbf{R}$. Thus, it is natural to ask whether there is a theory of $p$-adic geometry, parallel to the rich theory of complex geometry. This vision was first realized by Tate in the early 1960's, and then systematically developed by a number of other mathematicians, with the resulting $p$-adic geometric objects being dubbed rigid analytic spaces by Tate -- with some variants due mainly to Raynaud, Berkovich and Huber.

During the last decade there has been a revolution in $p$-adic geometry catalyzed by the introduction of Perfectoid Spaces by Scholze in his thesis, with one of its first achievements being an extension of the main theorems of $p$-adic Hodge theory to (proper) rigid-analytic varieties, where perfectoid techniques allowed for proofs that closely resemble those of the main theorems of complex Hodge theory. For his work on perfectoid spaces Scholze used Huber's foundational work on rigid-analytic geometry to build upon, it had the advantage that work in more general situations -- needed in the context of perfectoid spaces -- than Tate's or Berkovich's work, but at the price of replacing the more traditional category of non-archimedean Banach algebras with the more subtle category of Huber pairs.

Before diving into the specifics of our work, let us introduce some preliminary definitions. For the rest of this document fix a prime number $p$. A \emph{perfectoid field} $K$ is a non-archimedean field for which exists a $\varpi \in K$ which satisfies $1 > |\varpi^p| \ge |p|$, and such that the Frobenius morphism $K_{\le 1}/\varpi^p \rightarrow K_{\le 1}/\varpi^p$ is surjective -- in what follows $K$ will always denote a perfectoid field. More generally, a \emph{perfectoid Banach $K$-algebra} is a non-archimedean uniform Banach $K$-algebra $R$ such that the Frobenius morphism $\varphi: R_{\le 1}/\varpi^p \rightarrow R_{\le 1}/\varpi^p$ is surjective; we will denote the full-subcategory of $\Ban_K$\footnote{All Banach $K$-algebras considered in this text will be non-archimedean, and so we will often drop the ``non-archimedean'' adjective and just call them Banach $K$-algebras. This is reflected in our choice of notation where we choose to denote the category of non-archimedean Banach $K$-algebras by $\Ban_K$.} spanned by the perfectoid Banach $K$-algebras by $\Perfd_K^{\Ban}$\footnote{There is a subtle distinction between our definition of perfectoid Banach $K$-algebras and Scholze's original definition \cite[Definition 5.1.]{scholze2012perfectoid}; we insist that the norm on $R$ to be power-multiplicative, while Scholze only requires it to be power-multiplicative up to equivalence. This has some minor practical consequences for us, as we often work with the non-full subcategory $\Ban_K^{\contr} \subset \Ban_K$ of non-archimedean Banach $K$-algebras and contractive morphisms, where different norms give rise to non-isomorphic objects.}. We will often call an object of $\Perfd_{K}^{\Ban, \op}$ an \emph{affinoid perfectoid space}, and denote the affinoid perfectoid space corresponding to the perfectoid Banach $K$-algebra $A$ by $\cM(A)$; and for an non-archimedean Banach $K$-algebra $A$ we will denote the corresponding object of $\Ban_K^{\op}$ by $\cM(A)$.

Motivated by the work of Tate on rigid analytic geometry and the geometric intuition from complex geometry Berkovich introduced what is now called the \emph{Berkovich spectrum} of a Banach $K$-algebra in \cite{berkovich_spectral}. One critical feature of Berkovich's construction is that -- unlike Tate's or Huber's approach to non-archimedean geometry --  the spectrum of a Banach $K$-algebra is a compact Hausdorff space which allows for direct application of the geometric intuition that is so valuable over the complex numbers. We refer the reader to Definition \ref{defn_berko_spectrum} for a formal definition, but it suffices to say that Berkovich's construction produces a functor
\begin{align*}
	|-|: \Ban_K^{\op} \longrightarrow \Comp && \cM(A) \mapsto |\cM(A)|
\end{align*}
which we will often call the \emph{Berkovich functor}, and where $\Comp$ is the category of compact Hausdorff spaces.

We use recent advances of the perfectoid theory -- notably the development of the $\arc_\varpi$-topology -- to rebuild the theory of perfectoid spaces from the ground up, using Berkovich's work as our foundations. The $\arc_\varpi$-topology was introduced by Bhatt and Mathew in \cite{arc_topology}, it is a Grothendieck topology in the category of qcqs $K_{\le 1}$-schemes where covers are tested by valuation rings of rank $1$ where $\varpi \not= 0$. Following Raynaud's approach to non-archimedean geometry, where Banach $K$-algebra $R$ are studied via their formal model $R_{\le 1}$\footnote{The notation $R_{\le 1}$ means the objects of $R$ which have norm $\le 1$, and we regard $R_{\le 1}$ as as a $\varpi$-complete $K_{\le 1}$-algebra. Throughout much of this work we restrict our attention to the subcategory $\Ban_K^{\contr} \subset \Ban_K$ of Banach $K$-algebras with contractive morphisms, making the construction $(-)_{\le 1}$ functorial.}, we translate the purely algebraic definition of the $\arc_\varpi$-topology of Bhatt and Mathew to a definition that can be entirely formulated in terms of the Berkovich spectrum.

\begin{defn}[$\arc_\varpi$-covers] Let $\{A \rightarrow B_i\}_{i \in I}$ be a finite collection of contractive morphisms of non-archimedean Banach $K$-algebras. We say that $\{\cM(B_i) \rightarrow \cM(A)\}_{i \in I}$ is an \emph{$\arc_\varpi$-cover} if the induced map of compact Hausdorff spaces $\sqcup_{i \in I} |\cM(B_i)| \rightarrow |\cM(A)|$ is surjective -- this determines a (finitary) Grothendieck topology on the category $\Ban_K^{\contr, \op}$.
\end{defn}
 
An important feature of the $\arc_\varpi$-topology is that it reflects the topology of $|\cM(A)|$ much more than than the topology generated by affinoid domains, used by Tate and Berkovich. To illustrate this point recall the following fact from point-set topology: if $\{X_i \rightarrow Y\}_{i \in I}$ is a finite collection of maps of compact Hausdorff spaces such that $\sqcup X_i \rightarrow Y$ is surjective, then the following canonical map is an isomorphism
\begin{align*}
	\coeq \Big( (\sqcup_{i \in I} X_i) \times_Y (\sqcup_{i \in I} X_i) \rightrightarrows (\sqcup_{i \in I} X_i) \Big) \overset{\simeq}{\longrightarrow} Y
\end{align*}
We call the Grothendieck topology on $\Comp$ generated by finite collections of maps $\{X_i \rightarrow Y\}_{i \in I}$ which induce a surjective map $\sqcup X_i \rightarrow Y$ the \emph{effective topology} (often written in symbols as $\eff$) on $\Comp$, and the above statement can be interpreted as saying that compact Hausdorff spaces are sheaves with respect to the $\eff$-topology. While its not true that all Banach $K$-algebras are sheaves with respect to the $\arc_\varpi$-topology, it becomes true if we restrict ourselves to the full subcategory of perfectoid Banach $K$-algebras. In fact, in many ways the results of this work can be summarized as a way of justifying the following sentence
\begin{center}
	\emph{``Via the Berkovich spectrum, affinoid perfectoid spaces behave as if they were compact Hausdorff spaces''}
\end{center}
Or in other words, we show to what extent an affinoid perfectoid space is determined by its underlying compact Hausdorff space. Let us illustrate this point with some of the results from this work.

\begin{thmx}[Tate acyclicity - Corollary \ref{arc_pi_descent_for_bananach_perfectoids}]\label{intro_tate_acyclicity_perfd} Let $\cM(R)$ be a perfectoid affinoid space, and $\{ \cM(S_i) \rightarrow \cM(R) \}_{i \in I}$ a finite collection of morphisms which determine an $\arc_\varpi$-cover. Then, the canonical map
\begin{equation*}
	\coeq \Big ( \sqcup_{i,j \in I} \cM(S_i) \times_{\cM(R)} \cM(S_j) \rightrightarrows \sqcup_{i \in I} \cM(S_i)    \Big ) \overset{\simeq}{\longrightarrow} \cM(R)
\end{equation*}
is an isomorphism\footnote{Furthermore, the Berkovich functor $|-|: \Perfd_K^{\Ban, \op} \rightarrow \Comp$ will preserve the coequalizer (cf. Propositions \ref{general_coeq_comp} and \ref{general_fiber_prod_berko_sp}).}, where the coequalizer is computed in $\Ban_K^{\contr, \op}$. This is in fact a small consequence of a much more fundamental result, which says that the following sequence
\begin{equation*}
	0 \rightarrow R \rightarrow \prod_{i \in I} S_i \rightarrow \prod_{i,j \in I} S_i \cotimes_R S_j \rightarrow \cdots 
\end{equation*}
is exact and admissible\footnote{In fact, the above result admits an analog after applying the functor $(-)_{\le 1}$, which says that the following sequence
\begin{align*}
	0 \rightarrow R_{\le 1} \rightarrow \prod_{i \in I} S_{i, \le 1} \rightarrow \prod_{i,j \in I} S_{i, \le 1} \cotimes_{R_{\le 1}}^a S_{j, \le 1} \rightarrow \cdots 
\end{align*}
is almost exact, in the almost category of $K_{\le 1}$-modules.} \footnote{In Theorem \ref{struct_presheaf_perfectoid} we show that every affinoid perfectoid space $\cM(A)$ admits a structure presheaf with the expected properties, and the above discussion shows that it is in fact a structure sheaf. See also Theorem \ref{stalks_perfectoid} for discussion on the basic properties of stalks on affinoid perfectoid spaces.}.
\end{thmx}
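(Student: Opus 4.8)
The plan is to deduce the statement from the $\arc_\varpi$-descent theorem for perfectoid algebras -- the paper's key technical input -- by descending to formal models, in the spirit of Raynaud. It is worth first isolating what must actually be proved. The coequalizer assertion is formal once we have the exact admissible sequence: $\Ban_K^{\contr}$ is additive with kernels, so a coequalizer in $\Ban_K^{\contr,\op}$ is an equalizer in $\Ban_K^{\contr}$, and the coproducts $\sqcup_i \cM(S_i)$, $\sqcup_{i,j}\cM(S_i)\times_{\cM(R)}\cM(S_j)$ are the products $\prod_i S_i$, $\prod_{i,j} S_i\cotimes_R S_j$. Thus the displayed coequalizer is the equalizer of $\prod_i S_i \rightrightarrows \prod_{i,j} S_i\cotimes_R S_j$, which by exactness at the first spot is the kernel of the first differential, i.e.\ the image of $R$; and admissibility makes $R \to \prod_i S_i$ strict, so that image is $R$ with its given norm. (That $|-|$ preserves the coequalizer is the separate input of Propositions \ref{general_coeq_comp} and \ref{general_fiber_prod_berko_sp}.) One uses here, and throughout, that completed tensor products of perfectoid Banach $K$-algebras are again perfectoid, so that every term of the \v{C}ech complex lies in $\Perfd_K^{\Ban}$; this is part of the structure-presheaf package of Theorem \ref{struct_presheaf_perfectoid}.

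The remaining content is the exactness and admissibility of the augmented \v{C}ech complex $0 \to R \to \prod_i S_i \to \prod_{i,j} S_i\cotimes_R S_j \to \cdots$, which I would establish integrally. Since $R$ is uniform, $R_{\le 1}$ is a $\varpi$-torsionfree $\varpi$-complete $K_{\le 1}$-algebra with $R = R_{\le 1}[\tfrac1\varpi]$, and for perfectoid $R, S_i, S_j$ the unit ball $(S_i\cotimes_R S_j)_{\le 1}$ agrees, up to a canonical almost isomorphism, with the derived $\varpi$-completed tensor product $S_{i,\le 1}\cotimes^a_{R_{\le 1}} S_{j,\le 1}$ (the higher $\Tor$'s are almost zero, perfectoid rings behaving like flat algebras over one another in the almost category). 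It therefore suffices to show that $0 \to R_{\le 1} \to \prod_i S_{i,\le 1} \to \prod_{i,j} S_{i,\le 1}\cotimes^a_{R_{\le 1}} S_{j,\le 1} \to \cdots$ is almost exact in the almost category of derived $\varpi$-complete $K_{\le 1}$-modules: inverting $\varpi$ recovers exactness of the Banach complex, while the boundedness built into the almost-integral statement, together with derived $\varpi$-completeness, forces the differentials to be strict and hence yields admissibility -- this is the standard mechanism by which formal-model descent produces Tate acyclicity.

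To produce that integral almost-exact sequence I would match our $\arc_\varpi$-cover condition with the one of Bhatt--Mathew \cite{arc_topology} on $\Spf R_{\le 1}$. A point of $|\cM(R)|$ is a bounded multiplicative seminorm on $R$, equivalently -- after normalization -- a map $R_{\le 1}\to V$ to a rank-$\le1$ valuation ring with $\varpi\ne0$ in $V$; surjectivity of $\sqcup_i|\cM(S_i)|\to|\cM(R)|$ then says exactly that every such $\Spec V\to\Spec R_{\le1}$ lifts, possibly after a rank-$\le1$ valuation extension $V\hookrightarrow W$, through some $\Spec S_{i,\le1}$ -- i.e.\ that $\{\Spf S_{i,\le1}\to\Spf R_{\le1}\}$ is an $\arc_\varpi$-cover in the sense of \cite{arc_topology}, the passage to $W$ being harmless because such valuation rings are $\arc_\varpi$-local. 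Feeding this cover into the $\arc_\varpi$-descent theorem -- in its integral incarnation, that perfectoid rings form a sheaf (indeed a hypersheaf) for the $\arc_\varpi$-topology, after derived $\varpi$-completion and in the almost category -- produces precisely the almost exactness demanded in the previous paragraph, the higher terms being recognized as $S_{i,\le1}\cotimes^a_{R_{\le1}}S_{j,\le1}$ through the dictionary above.

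The hard part will be the interface between the Banach and integral pictures: controlling $S_i\cotimes_R S_j$ in terms of the derived $\varpi$-completed tensor product of unit balls -- keeping the almost-error bounded, knowing the result stays perfectoid, and knowing it computes the sections of the structure presheaf over the fiber product -- and, dually, matching Berkovich-spectral surjectivity with the valuative $\arc_\varpi$-cover condition, where one must verify that the rank-$\le1$ valuation extension does not disturb descent. Once those dictionaries are in place, the theorem is a formal assembly of the Raynaud reduction with the $\arc_\varpi$-descent theorem for perfectoid algebras.
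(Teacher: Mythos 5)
Your proposal follows essentially the same route as the paper: translate the statement via the dictionary $(-)_{\le 1}\colon \Perfd_K^{\Ban}\simeq\Perfd_{K_{\le 1}}^{\Prism a}$ of Proposition \ref{equiv_perfd_ban_tic}, identify Berkovich-spectral surjectivity with the integral $\arc_\varpi$-cover condition (this is Proposition \ref{arc_pi_cover_for_banach}, proved by lifting valuations through completed residue fields), and plug in the integral almost $\arc_\varpi$-descent theorem \ref{arc_pi_descent_for_perfectoids}. Two places where your narrative diverges from the paper's.

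Your rationale for why the $\varpi$-completed tensor of unit balls computes $(S_i\cotimes_R S_j)_{\le 1}$ almost-isomorphically --- that higher Tor's are almost zero because perfectoids are ``almost flat'' over one another --- is not the paper's reason and is not a general fact. What the paper actually proves (Propositions \ref{tensor_integral_perfectoid} and \ref{tensor_ban_perfectoid}) is that the \emph{derived} $\varpi$-completed tensor product of integral perfectoids is concentrated in degree zero, by tilting to characteristic $p$ where derived tensor products of perfect rings are literally discrete (\cite[Lemma 3.16]{projectivity_affine_grass}), then untilting via the $\A_{\inf}$-functor and derived Nakayama. No almost-flatness is invoked or needed.

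For admissibility you invoke the Raynaud mechanism: integral boundedness forces strict differentials. That can be made to work, but the paper's Corollary \ref{arc_pi_descent_for_bananach_perfectoids} simply observes that admissibility is automatic from exactness by the open mapping theorem --- each kernel in the exact sequence is a closed, hence Banach, submodule, and a continuous surjection onto it is open. This is the cleaner route and avoids the norm bookkeeping your sketch leaves implicit.
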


This result has an important antecedent in \cite[Proposition 8.8]{diamonds_scholze}, where an analogous result is proved by replacing the Berkovich spectrum, by the adic space spectrum of Huber; since the Berkovich spectrum lies inside the adic spectrum our results can be viewed as a slight strengthening of Scholze's $v$-descent for perfectoid affinoid algebras -- from $v$-descent to $\arc_\varpi$-descent. Let us also contrast our result with the original form of Tate acyclicity where $R$ and $S_i$ are assumed to be non-archimedean Banach $K$-algebras which are topologically of finite type, and where the morphisms $\cM(S_i) \rightarrow \cM(R)$ have the form of an affinoid domain.

\begin{example} Let $\cM(R)$ be a perfectoid affinoid space, and consider the Gelfand transform $\prod_{x \in \cM(R)} \cH(x)$ of $R$ -- the most relevant property of the Gelfand transform for this example is that the underlying topological space of $\cM(\prod_{x \in \cM(R)} \cH(x))$ can be canonically identified with the Stone-Cech compactification of $\cM(R)$ considered as a discrete set. Then, the following sequence is exact and admissible
\begin{equation*}
	0 \rightarrow R \rightarrow \prod_{x \in \cM(R)} \cH(x) \rightarrow \Big (\prod_{x \in \cM(R)} \cH(x) \Big ) \cotimes_R  \Big ( \prod_{x \in \cM(R)} \cH(x) \Big )  \rightarrow \cdots 
\end{equation*}
In particular, this implies that the canonically induced map is an isomorphism\footnote{This result admits the following topological analog: if $X$ is a compact Hausdorff space, and $\beta(X^{\delta})$ is the Stone-Cech compactification of $X$ as a discrete set, then we have the following isomorphism $\coeq( \beta(X^{\delta}) \times_X \beta(X^{\delta}) \rightrightarrows \beta(X^{\delta})) \simeq X$.}
\begin{align*}
	\coeq \Big ( \cM(\prod_{x \in \cM(R)} \cH(x) ) \times_{\cM(R)} \cM(\prod_{x \in \cM(R)} \cH(x))  \rightrightarrows \cM(\prod_{x \in \cM(R)} \cH(x) )   \Big) \overset{\simeq}{\longrightarrow} \cM(R).
\end{align*}
\end{example}

At the crux of the definition of rigid analytic spaces is the notion of affinoid domains, which play a role analogous to the one that open subsets play in topology. If $A \rightarrow B$ is a bounded morphism of non-archimedean Banach $K$-algebras of topologically finite type, then we say that $\cM(B) \rightarrow \cM(A)$ is an affinoid domain if the map $\cM(B) \rightarrow \cM(A)$ is universal in the sense that for any other map $\cM(C) \rightarrow \cM(A)$ such that $\im(\cM(C) \rightarrow \cM(A))$ is contained in $\im(\cM(B) \rightarrow \cM(A))$, there exists a unique morphism $\cM(C) \rightarrow \cM(B)$ making the following diagram commute
\begin{cd}
	& \cM(C) \ar[d] \ar[ld, dashed] \\
	\cM(B) \ar[r] & \cM(A)
\end{cd}
The Gerritzen-Grauert theorem \cite[7.3.5]{BGR} is one of the most important foundational results in rigid analytic geometry, in particular it implies that  affinoid domains are finite unions of rational domains, allowing the proof of Tate's acyclicity for finite covers by affinoid domains by bootstrapping the same result from rational domains. In contrast, the analogous result in the category of compact Hausdorff spaces admits a much cleaner answer, if $X \rightarrow Y$ is an injective morphism of compact Hausdorff spaces then $X$ is endowed with the subspace topology of $Y$ and so any map $Z \rightarrow Y$ whose image is contained in $X$ admits a unique factorization as $Z \rightarrow X \rightarrow Y$. In the category of affinoid perfectoid spaces we have the following result, which parallels \cite[Proposition 5.3]{diamonds_scholze}.

\begin{thmx}[Affinoid Domains - Theorem \ref{perfd_spc_mono}]\label{intro_mono_perfd} Let $\cM(S) \rightarrow \cM(R)$ be a morphism of affinoid perfectoid spaces, then the following are equivalent
\begin{enumerate}[(1)]
	\item The morphism $\cM(S) \rightarrow \cM(R)$ is a monomorphism in the category of affinoid perfectoid spaces.
	\item The induced map of underlying sets $\cM(S) \rightarrow \cM(R)$ is injective, and for each $x \in \cM(R)$ with inverse image $y \in \cM(S)$ the induced map of completed residue fields $\cH(x) \rightarrow \cH(y)$ is an isomorphism.
	\item The map $\cM(S) \rightarrow \cM(R)$ is an affinoid domain: for any morphism $\cM(T) \rightarrow \cM(R)$ such that $\im(\cM(T) \rightarrow \cM(R)) \subset \im(\cM(S) \rightarrow \cM(R))$, there exists a unique morphism $\cM(T) \rightarrow \cM(S)$ of affinoid perfectoid spaces, making the following diagram commute
	\begin{cd}
		& \cM(T) \ar[d] \ar[ld, dashed] \\
		\cM(S) \ar[r] & \cM(R)
	\end{cd}
\end{enumerate}
\end{thmx}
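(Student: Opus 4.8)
The plan is to deduce all three equivalences from the following lemma, which is where the main input --- Tate acyclicity (Corollary \ref{arc_pi_descent_for_bananach_perfectoids}) --- enters: \emph{a monomorphism $f\colon \cM(B)\to\cM(A)$ of affinoid perfectoid spaces which is surjective on Berkovich spectra is an isomorphism}. To see this, surjectivity makes $\{\cM(B)\to\cM(A)\}$ an $\arc_\varpi$-cover, so by Corollary \ref{arc_pi_descent_for_bananach_perfectoids} the sequence $0\to A\to B\xrightarrow{\,d^0-d^1\,} B\cotimes_A B$ is exact and admissible, where $d^0,d^1\colon B\to B\cotimes_A B$ are the two structural maps. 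On the other hand, $f$ is a monomorphism exactly when its diagonal $\cM(B)\to\cM(B)\times_{\cM(A)}\cM(B)=\cM(B\cotimes_A B)$ is an isomorphism, equivalently when the multiplication $m\colon B\cotimes_A B\to B$ is an isomorphism; since $m\circ d^0=m\circ d^1=\operatorname{id}_B$, this forces $d^0=d^1$, so the exact sequence identifies $A$ with $\ker(d^0-d^1)=B$ through the structural map $A\to B$, and $f$ is an isomorphism. Note also that $(3)\Rightarrow(1)$ is immediate: two morphisms $\cM(T)\rightrightarrows\cM(S)$ with a common composite to $\cM(R)$ both factor that composite through $\cM(S)$, hence trivially through $\im(\cM(S)\to\cM(R))$, so by uniqueness in $(3)$ they agree.

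\textbf{$(1)\Rightarrow(2)$.} For injectivity on points, suppose $y_1,y_2\in|\cM(S)|$ have the same image $x\in|\cM(R)|$, and set $W:=\cH(y_1)\cotimes_{\cH(x)}\cH(y_2)$, a nonzero perfectoid Banach $\cH(x)$-algebra (it contains the nonzero algebraic tensor product), so $|\cM(W)|\neq\emptyset$. The two morphisms $\cM(W)\to\cM(\cH(y_i))\to\cM(S)$ have the same composite $\cM(W)\to\cM(\cH(x))\to\cM(R)$, hence agree by $(1)$; evaluating on any point of $\cM(W)$ gives $y_1=y_2$. For the residue fields, fix $x$ in the image with preimage $y$. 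Monomorphisms are stable under base change, so $\cM(S\cotimes_R\cH(x))\to\cM(\cH(x))$ is a monomorphism; its source is nonempty because the point $y$ over $x$ induces a nonzero map $S\cotimes_R\cH(x)\to\cH(y)$, and its target is a single point, so the lemma yields $S\cotimes_R\cH(x)\xrightarrow{\sim}\cH(x)$. Unwinding this isomorphism, the map $S\to S\cotimes_R\cH(x)=\cH(x)$ is precisely the point $y$, so it realizes $\cH(y)$ as a closed subfield of $\cH(x)$; combined with the embedding $\cH(x)\hookrightarrow\cH(y)$ induced by $y\mapsto x$, whose round-trip to $\cH(x)$ is the identity, this gives $\cH(x)\xrightarrow{\sim}\cH(y)$.

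\textbf{$(2)\Rightarrow(1)$, and then $(3)$.} If $\phi_1,\phi_2\colon\cM(T)\rightrightarrows\cM(S)$ have a common composite to $\cM(R)$, then for each $t\in|\cM(T)|$ the injectivity in $(2)$ forces $\phi_1(t)=\phi_2(t)=:s$, and the isomorphism $\cH(r)\xrightarrow{\sim}\cH(s)$ ($r$ the image of $t$) forces the two maps $\cH(s)\to\cH(t)$ to coincide; hence the two ring maps $S\to T$ dual to $\phi_1,\phi_2$ agree after composing with every $T\to\cH(t)$, and as $T$ is uniform the Gelfand transform $T\hookrightarrow\prod_t\cH(t)$ is injective, so $\phi_1=\phi_2$. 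This proves $(1)$ and also gives the uniqueness in $(3)$. For existence in $(3)$, given $\cM(T)\to\cM(R)$ with $\im(\cM(T)\to\cM(R))\subseteq\im(\cM(S)\to\cM(R))$, let $q\colon\cM(S\cotimes_R T)\to\cM(T)$ be the second projection of $\cM(S)\times_{\cM(R)}\cM(T)$; as a base change of the monomorphism $\cM(S)\to\cM(R)$ it is a monomorphism, and it is surjective on Berkovich spectra because the fiber over $t$ is $\cM(S\cotimes_R\cH(t))$ with $S\cotimes_R\cH(t)\neq 0$ (the hypothesis gives a point $s'$ of $\cM(S)$ over the image $r$ of $t$, hence a nonzero map $S\cotimes_R\cH(t)\to\cH(s')\cotimes_{\cH(r)}\cH(t)$). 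By the lemma $q$ is an isomorphism, and $\cM(T)\xrightarrow{q^{-1}}\cM(S\cotimes_R T)\xrightarrow{\operatorname{pr}_1}\cM(S)$ is the required lift over $\cM(R)$.

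\textbf{Main obstacle.} The crux is the lemma: its proof is the only place the deep input, $\arc_\varpi$-descent for perfectoid Banach algebras, is used, and everything else reduces to the categorical dictionary ``monomorphism $\leftrightarrow$ isomorphic diagonal $\leftrightarrow$ isomorphic multiplication map'' together with the observation that a uniform Banach algebra embeds into the product of its completed residue fields. The step requiring the most care is $(1)\Rightarrow(2)$, where one must verify that the completed tensor products in play are nonzero --- so that the relevant fibers are nonempty and the lemma applies --- and that the maps of complete valued fields produced at the end are honest isomorphisms rather than merely dense inclusions.
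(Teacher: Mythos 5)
Your proof is correct, but it takes a genuinely different route from the paper's. The paper deduces Theorem \ref{perfd_spc_mono} from the topos-level monomorphism criterion (Proposition \ref{mono_arc_topos}), and even its ``direct'' affinoid proof runs through Proposition \ref{mono_berko_sp_injective}, Proposition \ref{mono_iso_on_residue_fields} (a general Banach-algebra statement about residue fields, proved by a faithful-flatness/codiagonal argument with no descent input), and Proposition \ref{iso_arc_topos} ($\arc_\varpi$-equivalences between qcqs objects are isomorphisms). You instead isolate one lemma --- a monomorphism of affinoid perfectoid spaces that is surjective on Berkovich spectra is an isomorphism --- proved directly from the exact admissible complex of Corollary \ref{arc_pi_descent_for_bananach_perfectoids} via the codiagonal trick ($m$ an isomorphism forces $d^0=d^1$, whence $A=\ker(d^0-d^1)=B$), and you apply it twice: to the fiber $\cM(S\cotimes_R\cH(x))\to\cM(\cH(x))$ for the residue-field half of $(1)\Rightarrow(2)$, and to the projection $\cM(S\cotimes_R T)\to\cM(T)$ for existence in $(3)$, which is exactly where the paper instead invokes ``$\arc_\varpi$-equivalence implies isomorphism''. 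What your route buys is a self-contained argument living entirely in $\Perfd_K^{\Ban,\op}$, with the deep input (Tate acyclicity) entering once and transparently; what the paper's route buys is that its argument generalizes verbatim to arbitrary qcqs $\arc_\varpi$-sheaves (Theorem \ref{intro_mono_arc_topos}), and that Proposition \ref{mono_iso_on_residue_fields} handles residue fields for non-perfectoid Banach algebras as well. Two steps you assert without justification are available in the paper and should be cited: nonvanishing of the completed tensor products of residue fields rests on Gruson's theorem (cf.\ the proofs of Proposition \ref{mono_berko_sp_injective} and Corollary \ref{non_zero_comp_tensor_ban}), and the round-trip identity $\cH(x)\to\cH(y)\to\cH(x)=\operatorname{id}$ needs the observation that both maps are determined, via the universal property of Proposition \ref{univ_completed_residue} and continuity, by their restriction to the dense subfield generated by the image of $R$.
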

The above result really is a special feature of affinoid perfectoid spaces, and does not hold for more general non-archimedean Banach $K$-algebras. Indeed, we can consider the canonical surjection $K\langle T \rangle \rightarrow K \langle T \rangle/ (T^2)$ and then the induced map $\cM(K \langle T \rangle/ (T^2)) \rightarrow \cM(K\langle T \rangle)$ is a monomorphism in $\Ban_K^{\op}$; however, it is not an affinoid domain as the canonical inclusion $\cM(K) \rightarrow \cM(K\langle T \rangle)$ at the origin will not factor through $\cM(K \langle T \rangle/ (T^2)) \rightarrow \cM(K\langle T \rangle)$. The proof of Theorem \ref{intro_mono_perfd} relies on Theorem \ref{intro_iso_perfd}, which was inspired from the fact that a map $X \rightarrow Y$ of compact Hausdorff spaces is an homeomorphism if and only if the underlying map of sets is a bijection. This also explains the failure of Theorem \ref{intro_mono_perfd} for general Banach $K$-algebras, as maps from non-archimedean fields cannot distinguish between a Banach $K$-algebra $A$ and its uniformization $A^u$.

\begin{thmx}[Isomorphisms - Theorem \ref{perfd_spc_maps}]\label{intro_iso_perfd} Let $\cM(S) \rightarrow \cM(R)$ be a morphism of affinoid perfectoid spaces, then the following are equivalent
\begin{enumerate}[(1)]
	\item The map $\cM(S) \rightarrow \cM(R)$ is an isomorphism of affinoid perfectoid spaces.
	\item The induced map of underlying sets $\cM(S) \rightarrow \cM(R)$ is bijective, and for each $x \in \cM(R)$ with inverse image $y \in \cM(S)$ the induced map of completed residue fields $\cH(x) \rightarrow \cH(y)$ is an isomorphism.
	\item For all perfectoid non-archimedean field $L$, the induced map $\Maps(\cM(L), \cM(S)) \rightarrow \Maps(\cM(L), \cM(R))$ is bijective.
\end{enumerate}
\end{thmx}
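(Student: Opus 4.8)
The plan is to establish $(1)\Rightarrow(2)$, the equivalence $(2)\Leftrightarrow(3)$, and the substantive implication $(2)\Rightarrow(1)$. Here $(1)\Rightarrow(2)$ is immediate from functoriality of the Berkovich spectrum $|-|$ and of the completed residue fields $\cH(x)$; $(2)\Leftrightarrow(3)$ is a point-set manipulation once one unwinds what an $L$-point is; and $(2)\Rightarrow(1)$ is the heart of the matter, which I would deduce from the $\arc_\varpi$-descent of Theorem \ref{intro_tate_acyclicity_perfd}.

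For $(2)\Leftrightarrow(3)$ I would first record the standard identification, for a perfectoid Banach $K$-algebra $A$ and a perfectoid non-archimedean field $L$,
\[ \Maps(\cM(L),\cM(A)) \;\cong\; \coprod_{x\in|\cM(A)|}\{\,\text{isometric }K\text{-embeddings }\cH(x)\hookrightarrow L\,\}, \]
obtained by factoring a bounded $K$-homomorphism $A\to L$ through the completed residue field of the point it defines. Writing $f\colon|\cM(S)|\to|\cM(R)|$ for the underlying map and $f_x^\sharp\colon\cH(f(x))\hookrightarrow\cH(x)$ for the induced map on residue fields, the map in $(3)$ becomes $(x,j)\mapsto(f(x),\,j\circ f_x^\sharp)$. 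Granting $(2)$, $f$ is a bijection and each $f_x^\sharp$ is an isomorphism, so this map is a bijection for every $L$; this gives $(2)\Rightarrow(3)$. For $(3)\Rightarrow(2)$ I would take $L=\cH(y)$ for $y\in|\cM(R)|$ and chase the element $(y,\mathrm{id})$ through surjectivity of $(3)$: this produces a point $x$ over $y$ together with a retraction of $f_x^\sharp$, and feeding this datum back into injectivity of $(3)$ with $L=\cH(x)$ forces $f_x^\sharp$ to be an isomorphism; hence $f$ is surjective and at least one point over each $y$ has isomorphic residue field. Injectivity of $f$ then follows by choosing, for two points $x_1,x_2$ over a common $y$, a perfectoid field $L$ carrying compatible isometric $\cH(y)$-embeddings of $\cH(x_1)$ and $\cH(x_2)$ (for instance the completed residue field of a point of $\cH(x_1)\cotimes_{\cH(y)}\cH(x_2)$, or a large enough algebraically closed perfectoid extension) and invoking injectivity of $(3)$; combined with the previous step this yields $(2)$.

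For $(2)\Rightarrow(1)$, note that since $f$ is surjective the singleton family $\{\cM(S)\to\cM(R)\}$ is an $\arc_\varpi$-cover, so Theorem \ref{intro_tate_acyclicity_perfd} (applied with $I$ a point) identifies $\cM(R)$ with the coequalizer of the kernel pair $\cM(S\cotimes_R S)=\cM(S)\times_{\cM(R)}\cM(S)\rightrightarrows\cM(S)$ of $q\colon\cM(S)\to\cM(R)$; that is, $q$ is an effective epimorphism. It therefore suffices to show $q$ is a monomorphism, i.e. that the two projections $p_1,p_2\colon\cM(S\cotimes_R S)\rightrightarrows\cM(S)$ coincide, i.e. that the two structural maps $S\rightrightarrows S\cotimes_R S$ are equal. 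Since $S\cotimes_R S$ is again perfectoid, hence uniform, the Gelfand transform $S\cotimes_R S\hookrightarrow\prod_{z}\cH(z)$ is injective, so it is enough to check that the two composites $S\rightrightarrows S\cotimes_R S\to\cH(z)$ agree for each $z\in|\cM(S\cotimes_R S)|$. Setting $x_i=p_i(z)\in|\cM(S)|$ and letting $y\in|\cM(R)|$ be the common image, both composites $R\to S\rightrightarrows S\cotimes_R S$ equal the structure map, so $f(x_1)=y=f(x_2)$, and injectivity of $f$ from $(2)$ forces $x_1=x_2=:x$. Both composites then factor as $S\to\cH(x)\xrightarrow{\iota_i}\cH(z)$ through the canonical map $S\to\cH(x)$, and each $\iota_i$ satisfies $\iota_i\circ f_x^\sharp=\beta$, where $\beta\colon\cH(y)\to\cH(z)$ is the residue map of $R\to\cH(z)$; since $f_x^\sharp$ is an isomorphism by $(2)$, we get $\iota_1=\iota_2$. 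Hence $p_1=p_2$, so $q$ is a monomorphism and, being an effective epimorphism, an isomorphism.

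The main obstacle is precisely $(2)\Rightarrow(1)$: the passage from ``bijective on points with isomorphic residue fields'' to a genuine isomorphism of Banach algebras cannot be argued naively, and must route through $\arc_\varpi$-descent — both to know that $q$ is an effective epimorphism (so that being a monomorphism suffices) and, implicitly, to license the reduction of the monomorphism check to the completed residue fields, which in turn relies on the uniformity of perfectoid algebras (injectivity of the Gelfand transform) together with the stability of perfectoid Banach $K$-algebras under completed tensor products and under passage to completed residue fields. A secondary, more bookkeeping-style difficulty lies in $(3)\Rightarrow(2)$, where one must manufacture a common perfectoid field receiving two prescribed residue fields compatibly over a base and keep careful track of which field homomorphisms are isometric.
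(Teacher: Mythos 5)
Your argument is correct, but it takes a genuinely different route from the paper. The paper deduces this statement (via Theorem \ref{perfd_spc_maps}) from the general topos-theoretic Proposition \ref{iso_arc_topos}: a morphism of qcqs $\arc_\varpi$-sheaves is an isomorphism iff it is an $\arc_\varpi$-equivalence iff it is bijective on underlying points with completed residue fields being $\arc_\varpi$-equivalences; one then identifies the abstract residue fields in the topos with the Banach-theoretic $\cH(x)$ (Proposition \ref{res_fields_arc_topos}, Theorem \ref{stalks_perfectoid}) and uses full faithfulness of $\Yo_{\arc_\varpi}$ on $\Perfd_K^{\Ban,\op}$ -- itself a consequence of the same $\arc_\varpi$-descent -- to transport the isomorphism of sheaves back to an isomorphism of perfectoid Banach algebras. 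You instead stay inside $\Ban_K^{\contr,\op}$ and apply Theorem \ref{intro_tate_acyclicity_perfd} directly to the one-element cover $\cM(S)\to\cM(R)$, obtaining $R=\eq(S\rightrightarrows S\cotimes_R S)$, and then check by hand that the two coprojections coincide, using injectivity of the Gelfand transform on the uniform (indeed perfectoid) algebra $S\cotimes_R S$ together with the uniqueness part of the universal property of completed residue fields (Proposition \ref{univ_completed_residue}, which is also what justifies your identity $\iota_i\circ f_x^\sharp=\beta$); likewise your $(2)\Leftrightarrow(3)$ bookkeeping via $\Maps(\cM(L),\cM(A))\cong\coprod_x\{\cH(x)\hookrightarrow L\}$ replaces the paper's residue-field formalism in the topos, and your use of a residue field of $\cH(x_1)\cotimes_{\cH(y)}\cH(x_2)$ matches the nonvanishing and perfectoidness inputs the paper supplies elsewhere. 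Both proofs rest on exactly the same descent theorem; yours is more elementary and self-contained for the affinoid case, making transparent that all one needs is the Amitsur equalizer for a single surjection plus uniformity, perfectoidness of $S\cotimes_R S$, and perfectoidness of residue fields, while the paper's detour buys the statement for arbitrary qcqs $\arc_\varpi$-sheaves (Proposition \ref{iso_arc_topos}), which it reuses for the classification of monomorphisms and the Gerritzen--Grauert analogue.
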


\begin{example} The following are all monomorphisms in the category of affinoid perfectoid spaces.
\begin{enumerate}[(1)]
	\item (Residue Fields) For every point $x \in \cM(R)$ of an affinoid perfectoid space, the canonical map $\cM(\cH(x)) \rightarrow \cM(R)$ is a monomorphism of perfectoid affinoid spaces\footnote{For general Banach $K$-algebras it is not generally true that the map $\cM(\cH(x)) \rightarrow \cM(R)$ is a monomorphism in $\Ban_K^{\contr, \op}$, but its close to being so (cf. Proposition \ref{properties_stalk}). Furthermore, in general $\cM(\cH(x)) \hookrightarrow \cM(R)$ fails to be an affinoid domain, for example due to the presence of nilpotents.}.
	\item (Rational Domains) Let $X = \cM(R)$ and $V = \{x \in X \text{ such that } |f_i(x)| \le |g(x)|\}$ for a collection of elements $\{g, f_1, \dots, f_n\} \subset R$ which generate the unit ideal. There exists a unique perfectoid algebra $\cO_X(V)$ together with a monomorphism $\cM(\cO_X(V)) \rightarrow X$, such that at the level of underlying sets its image is given exactly by $V \subset X$\footnote{In Proposition \ref{rat_domains_are_mono} we show that rational domains are monomorphisms for general Banach $K$-algebras and in Proposition \ref{rational_dom_univ_prop} that they satisfy the universal property of affinoid domains with respect to uniform Banach $K$-algebras.}.
	\item (Zariski Closed Subsets) Let $R$ is an perfectoid Banach $K$-algebra and $I$ and ideal of $R$. Even though the Banach $K$-algebra $R/I$ is not perfectoid, we know from \cite[Remark 7.5]{prisms} that there exists a initial perfectoid Banach $K$-algebra $S$ receiving a map from $R/I$, such that the induced map $R \rightarrow S$ is surjective. Thus, we learn that the map $\cM(S) \rightarrow \cM(R)$ is a monomorphism of affinoid perfectoid spaces, and at the level of underlying sets has the same image as $\cM(R/I) \rightarrow \cM(R)$\footnote{If $R$ is a Banach $K$-algebra and $I \subset R$ a closed ideal, the induced map $\cM(R/I) \rightarrow \cM(R)$ is a monomorphism in $\Ban_K^{\contr, \op}$, but fails to be an affinoid domain, for example due to the presence of nilpotents.}.
\end{enumerate}
\end{example}

Let us conclude this section with a word on what is involved in the proofs of this statements. The proof of the form of Tate's acyclicity for affinoid perfectoid spaces stated above, ultimately relies on \cite[Proposition 8.10]{prisms} which show that integral perfectoid algebras satisfy $\arc$-descent. However, this result is not directly applicable to our situation as integral perfectoid algebras are never perfectoid Banach $K$-algebras, thus there is some translation needed in order to use loc. cit. to prove statements about perfectoid Banach $K$-algebras. In order to achieve this goal, in Theorem \ref{intro_dictionary} we establish an equivalence of categories $(-)_{\le 1}: \Ban_K^{\contr} \simeq \CAlg_{K_{\le 1}}^{\wedge a \tf}: (-)[\frac{1}{\varpi}]$, which provides a way to translate statements form integral perfectoid algebras to statements about perfectoid Banach algebras\footnote{While the need to restrict to $\varpi$-complete $\varpi$-torsion free $K_{\le 1}$-algebras is clear, the need for almost mathematics may not be so transparent. The following heuristic attempts at explaining the need for almost mathematics: if $R$ is a Banach $K$-algebra it is clear that if $x \in R$ satisfies $|x| \le 1 + \varepsilon$ for all $\varepsilon > 0$ then $|x| \le 1$; on the other hand if $R \subset R[\frac{1}{\varpi}]$ is a $\varpi$-torsion free $K_{\le 1}$-algebra and $x \in R[\frac{1}{\varpi}]$ it is not generally true that if $\varpi^{1/p^n} x \in R$ for all $n \in \ZZ_{\ge 0}$ that $x \in R$ -- almost mathematics fixes this issue.}. We regard the following dictionary as a categorical version of \cite[2.3.1]{andrelemme}.

\begin{thmx}[Dictionary - Proposition \ref{equiv_almost_tf_banach}]\label{intro_dictionary} Let $K$ be a perfectoid non-archimedean field and $\varpi \in K$ a topological nilpotent unit admitting a compatible system of $p$-power roots. We denote by $\Ban_K^{\contr}$ the category of non-archimedean Banach $K$-algebras with contractive morphisms, and $\CAlg_{K_{\le 1}}^{\wedge a \tf}$ the category of $\varpi$-complete $\varpi$-torsion-free almost $K_{\le 1}$-algebras, where almost mathematics is perform with respect to the ideal $(\varpi^{1/p^\infty}) \subset K_{\le 1}$. Then, there is an equivalence of categories
\begin{align*}
	(-)_{\le 1}: \Ban_K^{\contr} \simeq \CAlg_{K_{\le 1}}^{\wedge a \tf}: (-)[\frac{1}{\varpi}].
\end{align*}
Furthermore, this equivalence induces equivalences between the following categories:
\begin{enumerate}[(1)]
	\item The category $\uBan_K \subset \Ban_K^{\contr}$ of uniform Banach $K$-algebras, and the category $\CAlg_{K_{\le 1}}^{\wedge \tic} \subset \CAlg_{K_{\le 1}}^{\wedge a \tf}$ of $\varpi$-complete $\varpi$-torsion free algebras $A$ which are also totally integrally closed with respect to $A \subset A[\frac{1}{\varpi}]$ (cf. Proposition \ref{uBan_tic_dict}).
	\item The category $\Perfd_K^{\Ban} \subset \Ban_K^{\contr}$ of perfectoid Banach $K$-algebras, and the category $\Perfd_{K_{\le 1}}^{\Prism a} \subset \CAlg_{K_{\le 1}}^{\wedge a \tf}$ of almost integral perfectoid $K_{\le 1}$-algebras (cf. Propositions \ref{equiv_perfd_tic_almost} and \ref{equiv_perfd_ban_tic}).
	\item The category of non-archimedean fields (resp. perfectoid non-archimedean fields) over $K$, and the category of $\varpi$-complete rank one (resp. integral perfectoid) valuation rings $V$ with faithfully flat structure map $K_{\le 1} \rightarrow V$ (cf. Proposition \ref{na_field_rk_one_dict}).
	\item Let $R \rightarrow S$ be a contractive morphism of Banach $K$-algebras. Then, $\cM(S) \rightarrow \cM(R)$ is surjective if and only if the map $R_{\le 1} \rightarrow S_{\le 1}$ is an $\arc_\varpi$-cover in the sense of \cite[Definition 6.14]{arc_topology} (cf. Proposition \ref{arc_pi_cover_for_banach}).
\end{enumerate}
\end{thmx}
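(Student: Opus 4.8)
The plan is to write down the two functors, verify they are well defined, and then show that the unit and counit of the resulting adjunction are isomorphisms; the four refinements follow by matching up full subcategories on the two sides. For $R\in\Ban_K^{\contr}$ the unit ball $R_{\le 1}$ is $\varpi$-torsion free (as $\varpi$ is a unit of $K$) and $\varpi$-adically complete -- using that $\varpi$ is a topologically nilpotent unit and the norm is a $K$-module norm, the $\varpi$-adic topology on $R_{\le 1}$ is the subspace topology from the Banach algebra $R$, in which $R_{\le 1}$ is closed -- so after almostification we obtain a functor $(-)_{\le 1}^a\colon\Ban_K^{\contr}\to\CAlg_{K_{\le 1}}^{\wedge a\tf}$, functorial on $\Ban_K^{\contr}$ since contractive maps preserve unit balls. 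Conversely, for $A$ in the target I equip $A_*[\frac1\varpi]$ with the unique $K$-module norm whose unit ball is $A_*$; this is a submultiplicative, complete norm since $A_*$ is a $\varpi$-torsion free, $\varpi$-adically complete, $\frakm$-closed $K_{\le 1}$-algebra. Since $(A_*[\frac1\varpi])_{\le 1}=A_*$ by construction, $(-)_{\le 1}^a\circ(-)[\frac1\varpi]$ sends $A$ to $(A_*)^a$, and the canonical map $(A_*)^a\to A$ is an isomorphism of almost algebras; this is the counit.

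The unit is the conceptual heart. It is $R\to\bigl((R_{\le 1})^a\bigr)_*[\frac1\varpi]$, obtained by inverting $\varpi$ in the canonical map $R_{\le 1}\to\bigl((R_{\le 1})^a\bigr)_*=\uHom_{K_{\le 1}}(\frakm,R_{\le 1})$, and I claim the latter is already an isomorphism. Injectivity is $\varpi$-torsion freeness of $R_{\le 1}$; for surjectivity, a homomorphism $\varphi\colon\frakm\to R_{\le 1}$ is multiplication by $m:=\varpi^{-1/p^n}\varphi(\varpi^{1/p^n})\in R[\frac1\varpi]$ (independent of $n$), which lies in $R_{\le 1}$ because $|m|\le|\varpi|^{-1/p^n}$ for all $n$ and $|\varpi|^{-1/p^n}\to 1$ -- here we use crucially that $\varpi$ is topologically nilpotent and admits compatible $p$-power roots. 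Hence $\bigl((R_{\le 1})^a\bigr)_*=R_{\le 1}$, so the unit is a ring isomorphism, and it is isometric: it preserves unit balls, and $K$ being perfectoid, $|K^\times|$ is dense in $\RR_{>0}$, so a $K$-module norm on a Banach $K$-algebra is determined by its unit ball. This gives the equivalence $(-)_{\le 1}\colon\Ban_K^{\contr}\simeq\CAlg_{K_{\le 1}}^{\wedge a\tf}\colon(-)[\frac1\varpi]$, a categorical version of \cite[2.3.1]{andrelemme}. The almost structure is exactly what makes essential surjectivity correct, since for a general $\varpi$-complete $\varpi$-torsion free $B$ the map $B\to\uHom_{K_{\le 1}}(\frakm,B)$ need not be an isomorphism although its target is always the unit ball of a Banach $K$-algebra.

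It remains to identify subcategories under this equivalence. For (1): $R$ is uniform if and only if $R_{\le 1}$ is the ring of power-bounded elements of $R_{\le 1}[\frac1\varpi]$, equivalently $R_{\le 1}$ is totally integrally closed in $R_{\le 1}[\frac1\varpi]$, so $\uBan_K$ matches $\CAlg_{K_{\le 1}}^{\wedge\tic}$. For (3): $(-)_{\le 1}$ carries a non-archimedean field over $K$ to a $\varpi$-complete rank one valuation ring faithfully flat over $K_{\le 1}$ and conversely (the value group and completeness translate directly, faithful flatness because $0\neq\varpi$ is a non-unit), and combining with (2) the perfectoid ones correspond. For (4): a point of $|\cM(R)|$ is an equivalence class of maps from $R$ to non-archimedean fields, so $\cM(S)\to\cM(R)$ is surjective if and only if every map from $R$ to a non-archimedean field admits, after enlarging the field, a compatible map from $S$; translating by (3) and using the reduction of the $\arc_\varpi$-condition to $\varpi$-adically complete rank one valuation rings, this is precisely the statement that $R_{\le 1}\to S_{\le 1}$ is an $\arc_\varpi$-cover in the sense of \cite[Definition 6.14]{arc_topology}.

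The main obstacle is refinement (2). One must show that the Frobenius-surjectivity condition defining $\Perfd_K^{\Ban}$ inside $\Ban_K^{\contr}$ corresponds under $(-)_{\le 1}$ to the condition defining $\Perfd_{K_{\le 1}}^{\Prism a}$ inside $\CAlg_{K_{\le 1}}^{\wedge a\tf}$. One direction is essentially unwinding definitions, using that $R_{\le 1}/\varpi^p$ is (almost) computed from $R_{\le 1}$; the other requires genuine perfectoid theory -- Scholze's tilting correspondence \cite{scholze2012perfectoid} and the structure theory of (almost) integral perfectoid rings from \cite{prisms} -- to produce the required $p$-power roots modulo $\varpi^p$ and to control the relevant almost-purity phenomena. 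This, rather than the soft categorical manipulations of the first two steps, is where the real content lies.
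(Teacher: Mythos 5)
Your construction of the two functors, the key computation that the unit ball of a Banach $K$-algebra already equals its module of almost elements (injectivity from $\varpi$-torsion freeness, surjectivity because $|m|\le|\varpi|^{-1/p^n}\to 1$), and the use of density of $|K^\times|$ to upgrade the unit to an isometric isomorphism are exactly the paper's proof of Proposition \ref{equiv_almost_tf_banach} and the constructions around it; your treatments of (1) and (3) likewise match Propositions \ref{uBan_tic_dict} and \ref{na_field_rk_one_dict}. Up to that point the proposal is correct and takes the same route.

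The genuine gap is item (2), which you defer to ``Scholze's tilting correspondence and almost-purity phenomena'' without an argument; that is both incomplete and a misdiagnosis of what is needed. The actual mechanism is more elementary and specific. For the direction Banach $\Rightarrow$ integral: uniformity makes $R_{\le 1}$ totally integrally closed, hence $p$-integrally closed, and Lemma \ref{p_int_closed_frob} then shows the $p$-power map $R_{\le 1}/\varpi\to R_{\le 1}/\varpi^p$ is \emph{injective}; together with the assumed Frobenius surjectivity this verifies the criterion of \cite[Lemma 3.10]{BMS1}, so $R_{\le 1}$ is integral perfectoid -- even this ``definition-unwinding'' direction needs the $p$-integral-closedness input, which your sketch omits. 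For the converse, bijectivity of $A/\varpi\to A/\varpi^p$ for integral perfectoid $A$ (again \cite{BMS1}) gives Frobenius surjectivity mod $\varpi^p$ on $A[\frac{1}{\varpi}]$ (Proposition \ref{equiv_perfd_ban_tic}). Finally, matching totally integrally closed integral perfectoids with \emph{almost} integral perfectoids (Proposition \ref{equiv_perfd_tic_almost}) rests on the stability of the integral perfectoid property under passing to the $\varpi$-torsion-free quotient and under total integral closure (Propositions \ref{pi_torsionfree_perfectoid} and \ref{tic_preserves_perfectoid}); no almost purity is used anywhere, and tilting only enters inside the proofs of some of those stability statements.

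A smaller gap sits in (4): your reduction of surjectivity of $|\cM(S)|\to|\cM(R)|$ to the field-lifting property is only obvious in one direction. Given $y\in|\cM(S)|$ over $x\in|\cM(R)|$ and a map $R\to L$ realizing $x$, you must amalgamate $L$ and $\cH(y)$ over $\cH(x)$, i.e.\ prove $\cH(y)\cotimes_{\cH(x)}L\neq 0$; the paper does this via flatness of $\varpi$-torsion-free $K_{\le 1}$-modules (Corollary \ref{non_zero_comp_tensor_ban}), and this step should be stated rather than absorbed into ``a point is an equivalence class of maps to fields''.
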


In order to effectively work with the category $\CAlg_{K_{\le 1}}^{\wedge a \tf}$ in Section \ref{sect_int_closure} we show that the canonical inclusion $\CAlg_{K_{\le 1}}^{\wedge a \tf} \subset \CAlg_{K_{\le 1}}$ admits a left adjoint
\begin{align*}
	(-)^{\wedge a \tf}: \CAlg_{K_{\le 1}} \rightarrow \CAlg_{K_{\le 1}}^{\wedge a \tf}
\end{align*}
which can be described as: first passing to the $\varpi$-torsion free quotient, followed by $\varpi$-completing and finally passing to the almost $\varpi$-category\footnote{When restricted to the category of integral perfectoid algebras $\Perfd_{K_{\le 1}}^{\Prism} \subset \CAlg_{K_{\le 1}}$ the functor $(-)^{\wedge a \tf}$ identifies with the much simpler functor $(-)^a$.}. As a result, we learn that for a pair of contractive morphisms $A \leftarrow C \rightarrow B$ of Banach $K$-algebras, the completed tensor product admits the following identity $(A \cotimes_C B)_{\le 1} \simeq (A_{\le 1} \otimes_{C_{\le 1}} B_{\le 1})^{\wedge a \tf}$.

On the other hand, the proof of Theorem \ref{intro_mono_perfd} rely on some basic topos theory performed in the category of $\arc_\varpi$-sheaves which we will discuss in the next section.

\section{The Berkovich Functor}

In developing a global theory of perfectoid spaces we follow Grothendieck's functor of points approach to algebraic geometry and regard the functor the geometric object represents as fundamental. This is not to say that the traditional geometric ingredients are lost, like the Zariski spectrum of a ring, but rather they take an auxiliary role as they can be extracted from the functor the geometric object represents. This approach was also taken up to some extent by Tate in the original definitions of rigid analytic spaces. Before diving into the more sophisticated approach to analytic geometry taken in this paper, let us do a whirlwind tour of the functor of points approach to the definition of the category of schemes, we refer the reader to \cite[Chapter VI]{eisenbud_harris_geometry} for more on this perspective. As we see it, there are two basic ingredients needed to get the theory off the ground, first it is the Zariski spectrum of a ring, which provides us with an underlying topological space associated to our geometric object in a functorial way
\begin{align*}
	|\Spec(-)|: \CAlg^{\op} \rightarrow \Top && \Spec(R) \mapsto |\Spec(R)|
\end{align*}
The second main ingredient is the ability to work locally in the Zariski spectrum $\Spec(R)$ to answer questions about $R$ itself; for example if we have a finite collection of morphisms $\{\Spec(R[\frac{1}{f_i}]) \rightarrow \Spec(R)\}_{i \in I}$ -- which are called Zariski open sets and form a basis for the topology of $\Spec(R)$ -- which induce a surjective map at the level of underlying topological spaces $\sqcup_{i \in I} |\Spec(R[\frac{1}{f_i}])| \rightarrow |\Spec(R)|$, then the following sequence is exact\footnote{Analogous to Theorem \ref{intro_tate_acyclicity_perfd}.}
\begin{align*}
	0 \rightarrow R \rightarrow \prod_{i \in I} R[\frac{1}{f_i}] \rightarrow \prod_{i,j \in I} R[\frac{1}{f_i}] \otimes_R R[\frac{1}{f_j}] \rightarrow \cdots.
\end{align*}
Informally, this is saying that one can recover $R$ from $\{R[\frac{1}{f_i}] \}_{i \in I}$ together with some gluing instructions along Zariski open subsets; furthermore, this is compatible with the Zariski spectrum in the sense that the topological space $|\Spec(R)|$ can be obtained by gluing $\{|\Spec(R[\frac{1}{f_i}]) |\}_{i \in I}$ along the intersections
\begin{align*}
	\Big\{ \Big|\Spec \Big(R \Big[\frac{1}{f_i}\Big] \otimes_R R \Big[\frac{1}{f_j}  \Big] \Big) \Big| = \Big|\Spec \Big(R \Big[\frac{1}{f_i} \Big] \Big)\Big| \cap \Big|\Spec \Big(R \Big[\frac{1}{f_j} \Big] \Big)\Big|  \Big\}_{i,j \in I}.
\end{align*}
Intuitively, the category of schemes is then the category of geometric objects which are obtained by gluing a collection $\{\Spec(S_j)\}$ along Zariski open subsets. In order to make this definition precise, the language of sheaves on a site (and thus the language of topoi) provide a powerful framework to perform local-to-global constructions; indeed, the category of schemes can be realized as the full subcategory of the category of Set-valued Zariski sheaves $\Shv_{\Zar}(\CAlg^{\op})$ spanned by objects $X$ satisfying the following conditions: there is a collection of open subfunctors $\{\Spec(S_j) \hookrightarrow X\}_{j \in I}$ such that the induced map $\sqcup_{j \in J} \Spec(S_j) \rightarrow X$ is a surjective map of Zariski sheaves.

Our approach to defining a global theory of perfectoid spaces is formally quiet similar to the discussion in the previous paragraph, we will first define a category of sheaves and then we will isolate the category of perfectoid spaces as a full-subcategory satisfying certain properties. The analog of the category $\Shv_{\Zar}(\CAlg^{\op})$ will be the $\arc_\varpi$-topos $\cX_{\arc_\varpi}$ which is defined as
\begin{align*}
	\cX_{\arc_\varpi} := \Shv_{\arc_\varpi}(\Ban_K^{\contr, \op})
\end{align*}
the category of $\Set$-valued $\arc_\varpi$-sheaves on $\Ban_K^{\contr,\op}$. However, instead of associating a ``underlying topological space'' to each $\arc_\varpi$-sheaf we associate a condensed set. The category of condensed sets, denoted by $\Cond$, was first introduced by Clausen and Scholze \cite{condensedlectures} and its defined as
\begin{align*}
	\Cond := \Shv_{\eff}(\Comp)
\end{align*}
the category of $\Set$-valued sheaves on $\Comp$ with respect to the effective topology\footnote{A word of warning is in order: the categories $\Ban_K^{\contr}$ and $\Comp$ are large categories, and so considering functors defined on them presents set-theoretic difficulties. In order to avoid this problems we implicitly impose a cardinal bound $< \kappa$ by some uncountable strong limit cardinal to the categories $\Ban_K^{\contr}$ and $\Comp$. Thus what we call a condensed set is called a $\kappa$-small condensed set in \cite{condensedlectures}. Our constructions do not depend on the choice of cardinal bound, thus we will not mention it throughout most of this work.} \footnote{Clausen and Scholze also define the category of $\kappa$-small condensed sets as $\Shv_{\eff}(\ProFin)$, where $\ProFin \subset \Comp$ is the category of profinite sets which forms a basis for the effective topology on $\Comp$, thus giving rise to an equivalent category.}. One reason we prefer to work with the category of condensed sets as opposed to the category of topological spaces, is that it mirrors the construction of $\cX_{\arc_\varpi}$ better, for instance, the category of condensed sets is a topos while the category of topological spaces is not. Furthermore, we can informally think of the $\arc_\varpi$-topology on $\Ban_K^{\contr}$ as the ``inverse image'' of the effective topology under the Berkovich functor $|-|: \Ban_K^{\contr} \rightarrow \Comp$. The following result shows that we can extend the Berkovich functor to all $\arc_\varpi$-sheaves.

\begin{thmx}[Berkovich Functor - Construction \ref{berko_funct}]\label{intro_const_berko_funct} There exists a unique colimit preserving functor, which we call the \emph{Berkovich functor},
\begin{align*}
	|-|: \cX_{\arc_\varpi} \longrightarrow \Cond
\end{align*}
making the following diagram commute
\begin{cd}
	\Ban_K^{\contr, \op} \ar[r, "\vert - \vert"] \ar[d, "\Yo_{\arc_\varpi}", swap] & \Comp \ar[d, "\Yo_{\eff}"] \\
	\cX_{\arc_\varpi} \ar[r, "\vert - \vert"] & \Cond
\end{cd}
Where $\Yo_{\tau}$ is the sheafified Yoneda functor with respect to the topology $\tau$. Recall that since compact Hausdorff spaces are sheaves with respect to the effective topology the Yoneda functor $\Yo_{\eff}$ is fully faithful, and by Theorem \ref{intro_tate_acyclicity_perfd} we learn that the restriction of $\Yo_{\arc_\varpi}$ to $\Perfd_K^{\Ban,\op}$ is fully faithful, but not in general\footnote{For example, the uniformization $\cM(A^u) \rightarrow \cM(A)$ becomes an isomorphism after applying $\Yo_{\arc_\varpi}$.} \footnote{Even though technically $\Yo_{\arc_\varpi}$ is defined on $\Ban_K^{\contr, \op}$ and not on $\Ban_K^{\op}$, since $\Yo_{\arc_\varpi}$ identifies $\cM(A)$ with its uniformization $\cM(A^u)$ and morphisms between uniform Banach $K$-algebras are contractive, by precomposing $\Yo_{\arc_\varpi}$ with the uniformization functor $\Ban_K^{\op} \overset{(-)^u}{\longrightarrow} \uBan_K \overset{\Yo_{\arc_\varpi}}{\longrightarrow} \cX_{\arc_\varpi}$ we obtain a natural extension of $\Yo_{\arc_\varpi}$ to $\Ban_K^{\op}$.}.
\end{thmx}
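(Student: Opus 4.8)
\emph{Proof sketch.}

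The plan is to produce $|-|$ from the universal property of the $\arc_\varpi$-topos as a cocomplete category. Recall the following standard description of a sheaf topos attached to a pretopology: for a small site $(\cC,\tau)$ with $\tau$ generated by a pretopology, and for any cocomplete category $\cD$, restriction along the sheafified Yoneda embedding $\Yo_\tau\colon\cC\to\Shv_\tau(\cC)$ identifies the colimit-preserving functors $\Shv_\tau(\cC)\to\cD$ with the functors $\Phi\colon\cC\to\cD$ such that, for every covering family $\{c_i\to c\}_{i\in I}$ of the pretopology, the canonical map $\colim_{(d\to c)\in R}\Phi(d)\to\Phi(c)$ is an isomorphism, where $R\hookrightarrow\Yo(c)$ is the sieve generated by the family; a colimit-preserving functor is recovered as the left Kan extension of the corresponding $\Phi$ along $\Yo_\tau$, and its uniqueness is part of this equivalence. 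Since $\Cond=\Shv_{\eff}(\Comp)$ is a Grothendieck topos it is cocomplete, so the theorem is equivalent to the single assertion that
\begin{equation*}
	\Phi:=\Yo_{\eff}\circ(|-|)\colon\Ban_K^{\contr,\op}\longrightarrow\Cond
\end{equation*}
satisfies the above cocontinuity condition with respect to the $\arc_\varpi$-pretopology: the colimit-preserving extension it produces is then the desired Berkovich functor $|-|\colon\cX_{\arc_\varpi}\to\Cond$, and the commuting square is precisely the identification $|-|\circ\Yo_{\arc_\varpi}\cong\Phi$.

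So fix an $\arc_\varpi$-covering family $\{\cM(B_i)\to\cM(A)\}_{i\in I}$, i.e. a finite family with $\sqcup_i|\cM(B_i)|\to|\cM(A)|$ surjective, and let $R$ be the sieve it generates. The category $\Ban_K^{\contr,\op}$ has finite coproducts, namely $\coprod_i\cM(B_i)=\cM(\prod_i B_i)$, and pullbacks, namely $\cM(B)\times_{\cM(A)}\cM(C)=\cM(B\cotimes_A C)$; hence $R$ is the image of $\coprod_i\Yo\cM(B_i)\to\Yo\cM(A)$ in the presheaf topos, and the colimit of $\Phi$ over $R$ is computed as the coequalizer of the kernel pair, so the condition to be checked is that the canonical map
\begin{equation*}
	\coeq\Big(\coprod_{i,j}\Phi\cM(B_i\cotimes_A B_j)\rightrightarrows\coprod_i\Phi\cM(B_i)\Big)\longrightarrow\Phi\cM(A)
\end{equation*}
is an isomorphism in $\Cond$.

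To see this, compare both sides with the Čech coequalizer of $\coprod_i\Yo_{\eff}|\cM(B_i)|\to\Yo_{\eff}|\cM(A)|$. First, $\{|\cM(B_i)|\to|\cM(A)|\}_i$ is a jointly surjective family of compact Hausdorff spaces, hence an $\eff$-cover, so $\Yo_{\eff}$ turns it into a jointly effective-epimorphic family in $\Cond$ (a general property of the sheafified Yoneda embedding into a sheaf topos); thus $\coprod_i\Yo_{\eff}|\cM(B_i)|\to\Yo_{\eff}|\cM(A)|$ is an effective epimorphism and $\Yo_{\eff}|\cM(A)|$ is the coequalizer of its kernel pair. Since coproducts in the topos $\Cond$ are universal and $\Yo_{\eff}$ is left exact (it is a left-exact localization applied after the ordinary Yoneda embedding, which preserves limits), that kernel pair is $\coprod_{i,j}\Yo_{\eff}\big(|\cM(B_i)|\times_{|\cM(A)|}|\cM(B_j)|\big)\rightrightarrows\coprod_i\Yo_{\eff}|\cM(B_i)|$. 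Second, for each $i,j$ the canonical map $\cM(B_i\cotimes_A B_j)\to\cM(B_i)\times_{\cM(A)}\cM(B_j)$ induces a \emph{surjection} of compact Hausdorff spaces $|\cM(B_i\cotimes_A B_j)|\twoheadrightarrow|\cM(B_i)|\times_{|\cM(A)|}|\cM(B_j)|$ (Proposition \ref{general_fiber_prod_berko_sp}), which after $\Yo_{\eff}$ becomes an effective epimorphism, and these maps intertwine the two projections down to $\coprod_i\Yo_{\eff}|\cM(B_i)|$. Because a coequalizer is unchanged under precomposing the pair of maps with an epimorphism, the displayed coequalizer agrees with the coequalizer of the kernel pair above, namely $\Yo_{\eff}|\cM(A)|=\Phi\cM(A)$. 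This verifies the cocontinuity of $\Phi$, and hence the theorem.

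The technical heart of the argument is the compatibility of the Berkovich spectrum with fibre products, and the key point is that we need only — and, for general Banach $K$-algebras, only have — the \emph{weak} form, that $\cM(B\cotimes_A C)\to\cM(B)\times_{\cM(A)}\cM(C)$ is surjective; the stronger statement that this is a homeomorphism onto the fibre product fails in general (for instance for self-products of perfectoid fields over $K$), which is exactly why the proof is organized around surjectivity together with the elementary observation that coequalizers see only the image of the defining pair. Everything else is either the formal universal property of sheaf topoi or the fact, recalled in the introduction, that $\Comp$ consists of $\eff$-sheaves; in particular the argument never requires $\Yo_{\eff}$ or the Berkovich functor on $\Ban_K^{\contr}$ to preserve finite colimits, only that coproducts in a topos are universal and that $\Yo_{\eff}$ is left exact.
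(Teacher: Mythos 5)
Your proof is correct and follows essentially the same route as the paper's Construction \ref{berko_funct}: verify the hypothesis of the universal property of $\Shv_{\arc_\varpi}(\Ban_K^{\contr,\op})$ (Proposition \ref{prop_sheaves}(4)) for $\Yo_{\eff}\circ|-|$, using the surjectivity of $|\cM(B\cotimes_A C)|\to|\cM(B)|\times_{|\cM(A)|}|\cM(C)|$ (Proposition \ref{general_fiber_prod_berko_sp}) together with the observation that coequalizers are unchanged under precomposing the pair with an epimorphism. The only cosmetic difference is that you run the Čech comparison directly in $\Cond$, whereas the paper first establishes the coequalizer in $\Comp$ (Proposition \ref{gluing_berko_sp}) and then transports it along $\Yo_{\eff}$ (Lemma \ref{yo_eff_preserves_general_coeq}).
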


In what follows we will often not make a distinction between a compact Hausdorff space $X$ and it associated condensed set $\Yo_{\eff}(X)$, and we will just denote both by $X$. Moreover, for a condensed set $X: \Comp^{\op} \rightarrow \Set$ we will often consider the set $X(*)$, the value of $X$ at $* \in \Comp$, and refer to it as the \emph{underlying set} of $X$. On the other hand, if $\cM(A)$ is an affinoid perfectoid space we will not make a distinction between $\cM(A)$ and its associated $\arc_\varpi$-sheaf $\Yo_{\arc_\varpi}(\cM(A))$ and we will just denote both by $\cM(A)$. However, if $A$ is a general Banach $K$-algebra, since $\Yo_{\arc_\varpi}$ is not fully faithful we will write $\cM(A)_{\arc_{\varpi}}$ for $\Yo_{\arc_\varpi}(\cM(A))$.

\begin{example}[Perfectoid Torus]\label{intro_perfd_torus} Let $\CC_p$ be an algebraically closed perfectoid field obtained as the completion of an algebraic closure of $\mathbf{Q}_p$ and set $X = \cM(\CC_p \langle T^{\pm 1} \rangle)_{\arc_\varpi}$. Consider the family of maps
\begin{align*}
	\cdots \rightarrow X_n \rightarrow X_{n-1} \rightarrow \cdots \rightarrow X && X_{n} = \cM(\CC_p \langle T^{\pm 1/p^n} \rangle)_{\arc_\varpi}
\end{align*}
where the transition maps $X_n \rightarrow X_{n-1}$ are induced by the canonical inclusions $\CC_p \langle T^{\pm 1/p^{n-1}} \rangle \hookrightarrow \CC_p \langle T^{\pm 1/p^{n}} \rangle$, which are finite etale maps and in particular $\arc_\varpi$-covers. Then, the following identity
\begin{align*}
	\CC_p \langle T^{\pm 1/p^n} \rangle \cotimes_{\CC_p \langle T^{\pm 1} \rangle} \CC_p \langle T^{\pm 1/p^n} \rangle \overset{\simeq}{\longrightarrow} \prod_{g \in \mu_{p^n}(\CC_p)} \CC_p \langle T^{\pm 1/p^n} \rangle && x \otimes y \mapsto (x \epsilon_n(y))_{\epsilon_n \in \mu_{p^n}(\CC_p)}
\end{align*}
where an element $\epsilon_n \in \mu_{p^n}(\CC_p)$ acts on $\CC_p \langle T^{\pm 1/p^n} \rangle$ via the map $T^{1/p^n} \mapsto \epsilon_n T^{1/p^n}$, allows us to rewrite the isomorphism $\coeq \Big( X_n \times_X X_n \rightrightarrows X_n \Big) \overset{\simeq}{\longrightarrow} X$ as
\begin{align*}
	\coeq \Big( \sqcup_{\epsilon_n \in \mu_{p^n}(\CC_p)} X_n \rightrightarrows X_n \Big) \overset{\simeq}{\longrightarrow} X && \text{equivalently } X_n/\mu_{p^n}(\CC_p) \simeq X
\end{align*}
Hence, we learn that the map $X_n \rightarrow X$ presents $X_n$ as a $\mu_{p^n}(\CC_p)$-torsor, and since the Berkovich functor $\cX_{\arc_\varpi} \rightarrow \Cond$ preserves all colimits we get an induced isomorphism of compact Hausdorff spaces $|X_n|/\mu_{p^n}(\CC_p) \simeq |X|$. Next, define the object $X_{\infty} = \lim_{n} X_n$ where the limit is computed in $\cX_{\arc_\varpi}$, we see that $X_{\infty}$ is represented by $\cM(\CC_p \langle T^{1/p^\infty} \rangle)$ showing that it is an affinoid perfectoid space. By definition, the map $\CC_p \langle T^{\pm 1} \rangle \hookrightarrow \CC_p \langle T^{\pm 1/p^\infty} \rangle$ is a pro-finite etale map, and since the the Berkovich functor $|-|: \Ban_K^{\contr, \op} \rightarrow \Comp$ preserves all limits, the compact Hausdorff space $|X_{\infty}|$ identifies with $\lim_n |X_n|$, showing that $X_\infty \rightarrow X$ is an $\arc_\varpi$-cover -- Tate acyclicity for perfectoids (Theorem \ref{intro_tate_acyclicity_perfd}) suggests that from the point of view of the $\arc_\varpi$-topology we may think of $X_{\infty}$ as a universal cover of $X$. By the above discussion, we learn that $X_{\infty} \rightarrow X$ presents $X_\infty$ as a $\mu_{p^\infty}(\CC_p) = \lim_n \mu_{p^n} (\CC_p)$ torsor, where an element $(\epsilon_n) \in \lim_n \mu_{p^n}(\CC_p)$ acts on $X_\infty$ via the map $T^{1/p^n} \mapsto \epsilon_n T^{1/p^n}$ and where we regard the group $\mu_{p^\infty}(\CC_p)$ as a profinite group. Thus, we get the identity $X_\infty/\mu_{p^\infty}(\CC_p) \simeq X$, and since the Berkovich functor preserves all colimits we get an isomorphism $|X_{\infty}|/\mu_{p^\infty}(\CC_p) \simeq |X|$ of compact Hausdorff spaces.

Let us now explain a purely topological analog of the above example. Let $S^1$ be the unit circle, which we regard as an analog of $\cM(\CC_p \langle T^{\pm 1} \rangle)_{\arc_\varpi}$ and the map $p^n$-fold covering map $S^1 \rightarrow S^1$ as an analog of the morphism $X_n \rightarrow X$. The $p^n$-fold covering map $S^1 \rightarrow S^1$ admits an action of $\mu_{p^n}(\CC_p)$ by deck-transformations and induces an isomorphism $S^1/\mu_{p^n}(\CC_p) \simeq S^1$. Taking the limit $S_{\infty}^1 = \lim_n S^1$, where the transition maps are the $p$-fold covering space map $S^1 \rightarrow S^1$, we obtain a solenoid-like compact Hausdorff space $S^1_{\infty}$ -- analogous to the affinoid perfectoid space $\cM(\CC_p \langle T^{\pm 1/p^\infty} \rangle)$. By construction, the map $S^1_{\infty} \rightarrow S^1$ presents $S^1_{\infty}$ as a $\mu_{p^\infty}(\CC_p)$-torsor, and induces the identity $S^1_{\infty}/\mu_{p^\infty}(\CC_p) \simeq S^1$. The construction of $S^1_{\infty} \rightarrow S^1$ is analogous to the universal covering space map $\mathbf{R} \rightarrow S^1$, with the important distinction that $S^1_{\infty}$ is a compact Hausdorff space.
\end{example}

\begin{example}[Covers by Perfectoids]\label{intro_cover_perfd} Every Banach $K$-algebra $A$ admits a non-canonical $\arc_\varpi$-cover by a perfectoid Banach $K$-algebra, which implies that $\Perfd_K^{\Ban, \op} \subset \Ban_K^{\contr, \op}$ form a basis for the $\arc_\varpi$-topology. The Gelfand transform of $A$ -- defined as $A \rightarrow \prod_{x \in \cM(A)} \cH(x)$ -- is an $\arc_\varpi$-cover, and let $\overline{\cH(x)}^{\wedge}$ be an algebraic closure of $\cH(x)$ followed by the completion of the non-archimedean field $\overline{\cH(x)}$. The resulting object $\prod_{x \in \cM(A)} \overline{\cH(x)}^{\wedge}$ is a perfectoid Banach $K$-algebra, and the induced contractive morphism
\begin{align*}
	A \rightarrow \prod_{x \in \cM(A)} \overline{\cH(x)}^{\wedge}
\end{align*}
is an $\arc_\varpi$-cover of $A$ by the perfectoid Banach $K$-algebra $\prod_{x \in \cM(A)} \overline{\cH(x)}^{\wedge}$. For completeness sake let us mention again that the underlying topological space of $\cM(\prod_{x \in \cM(A)} \overline{\cH(x)}^{\wedge})$ can be canonically identified with $\beta(\cM(A)^{\delta})$ -- the Stone-Cech compactification of $\cM(A)$ regarded as a discrete set.
\end{example}

Following Grothendieck we regard a topos as a natural place to do geometry, and our work aims to show that we can do non-archimedean geometry over a perfectoid field $K$ in the topos $\cX_{\arc_\varpi}$ -- this is parallels Scholze's approach to non-archimedean geometry via $v$-sheaves \cite{diamonds_scholze}. In fact, in order to prove the classification of affinoid domains for affinoid perfectoid spaces (Theorem \ref{intro_mono_perfd}) we are forced to consider the totality of the category of $\arc_\varpi$-sheaves. Along the way we prove a much more general version of the aforementioned theorem -- we will often need to impose mild finiteness hypothesis to the $\arc_\varpi$-sheaves we consider, like being quasicompact or quasiseparated, we follow \cite[Expose VI]{SGA4} for the relevant background. In fact, our original slogan now admits the following more general version
\begin{center}
	\emph{``Via the Berkovich functor, quasicompact quasiseparated $\arc_\varpi$-sheaves behave as if they were compact Hausdorff spaces''}\footnote{In Proposition \ref{berko_funct_stability} we show that the Berkovich functor preserves quasicompact and quasiseparated objects, and in Proposition \ref{epi_condensed_sets} we showed that subcategory of quasicompact quasiseparated objects of $\Cond$ is canonically equivalent to the category of compact Hausdorff spaces.}.
\end{center}

To get the theory off the ground, let us provide some examples of $\qcqs$ $\arc_\varpi$-sheaves and mention some of the basic information of $X \in \cX_{\arc_\varpi}$ we can read off from the set $|X|(*)$.

\begin{example}[Proposition \ref{basic_arc_pi_properties}] For every object $\cM(A) \in \Ban_K^{\contr, \op}$, its image under the sheafified Yoneda functor $\cM(A)_{\arc_\varpi}$ is a quasicompact quasiseparated $\arc_\varpi$-sheaf.
\end{example}

\begin{thmx}[Section \ref{section_arc_pi_topos}]\label{intro_generalities_arc_topos} The $\arc_\varpi$-topos $\cX_{\arc_\varpi}$ has the following basic properties
\begin{enumerate}[(1)]
	\item(Points) Let $X \in \cX_{\arc_\varpi}$, then for each $x \in |X|(*)$ there exists a perfectoid non-archimedean field $L$ and a morphism $\cM(L) \rightarrow X$ such that under the Berkovich functor it gets mapped to $x: * \rightarrow |X|$. Furthermore, if we have a pair of morphisms $Y \rightarrow X \leftarrow Z$ such that $|Y|(*) \times_{|X|(*)} |Z|(*) \not= \emptyset$ then $Y \times_X Z \not= \emptyset$.
	\item(Epimorphisms) Let $X \rightarrow Y$ be a morphism in $\cX_{\arc_\varpi}$, and assume that $Y$ is qcqs and $X$ is quasicompact, then $X \rightarrow Y$ is an epimorphism if and only if $|X|(*) \rightarrow |Y|(*)$ is a surjective map of sets.
	\item(Residue Fields) Let $Y \in \cX_{\arc_\varpi}$ be a quasiseparated object, then for each $x \in |Y|(*)$ there exists a unique qcqs object $Y_x$ together with a monomorphism $Y_x \hookrightarrow Y$ such that it gets mapped to $x: * \rightarrow |Y|$ under the Berkovich functor. We call the resulting map $Y_x \hookrightarrow Y$ the \emph{completed residue field} of $Y$ at $x \in |Y|(*)$.
\end{enumerate}
\end{thmx}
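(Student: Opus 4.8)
The whole theorem follows once we know that the assignment $X \mapsto |X|(*)$ is the inverse image functor of a point of the topos $\cX_{\arc_\varpi}$; everything else is bookkeeping with quasicompactness together with the effective-epimorphism criterion for perfectoid covers supplied by Tate acyclicity. So the first step is to build this point. Let $u \colon \Ban_K^{\contr,\op} \to \Set$ send $\cM(A)$ to the underlying set of its Berkovich spectrum. This $u$ is \emph{continuous} for the $\arc_\varpi$-topology essentially by definition: a family is an $\arc_\varpi$-cover exactly when $u$ carries it to a jointly surjective family. It is also \emph{flat}: the category of elements of $u$, whose objects are pairs $(\cM(A), a)$ with $a \in |\cM(A)|$ and whose morphisms $(\cM(A),a) \to (\cM(B),b)$ are contractive $K$-algebra maps $B \to A$ pulling $a$ back to $b$, has a terminal object, namely $\cM(K)$ with the unique point of $|\cM(K)|$ -- each Banach $K$-algebra carries a unique contractive structure morphism from $K$, and every bounded multiplicative seminorm restricts to the norm on $K$. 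Hence $u$ underlies the inverse image functor $\omega \colon \cX_{\arc_\varpi} \to \Set$ of a point of $\cX_{\arc_\varpi}$, with $\omega(X) = \colim_{(\cM(A)\to X)}|\cM(A)|$; thus $\omega$ is left exact, preserves all colimits, and preserves and reflects the initial object (if $X \neq \emptyset$ some $\cM(A)\to X$ has $A\neq 0$, so $|\cM(A)|\neq\emptyset$). Finally $\omega$ is identified with $X \mapsto |X|(*)$: since the one-point space is a projective object of $\Comp$, the global sections functor $\Gamma = |{-}|(*) \colon \Cond \to \Set$ is left exact and colimit-preserving, so $\Gamma \circ |{-}|$ is colimit-preserving and agrees with $u$ on representables, hence equals $\omega$; here left-exactness of $|{-}| \colon \cX_{\arc_\varpi}\to\Cond$ uses Propositions~\ref{general_coeq_comp} and~\ref{general_fiber_prod_berko_sp}.

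For Part~(1), the second assertion is immediate from left-exactness: $|Y\times_X Z|(*) = |Y|(*)\times_{|X|(*)}|Z|(*)$ and $\omega$ reflects the empty object. For the first, given $x \in |X|(*) = \colim_{(\cM(A)\to X)}|\cM(A)|$, pick a morphism $\cM(A)\to X$ and a point $a\in|\cM(A)|$ whose image under $|\cM(A)| \to |X|(*)$ is $x$. By Example~\ref{intro_cover_perfd} the Gelfand-type map $A \to \prod_{y\in\cM(A)}\overline{\cH(y)}^{\wedge} =: A^{+}$ is an $\arc_\varpi$-cover by a perfectoid Banach $K$-algebra, so $\cM(A^{+})\to\cM(A)$ is an effective epimorphism and, $\omega$ being exact, $|\cM(A^{+})| \to |\cM(A)|$ is surjective; lift $a$ to $a^{+}\in|\cM(A^{+})|$. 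Then $L := \cH(a^{+})$ is a perfectoid non-archimedean field, and the composite $\cM(L)\to\cM(A^{+})\to\cM(A)\to X$, which realizes $a^{+}$ hence $a$ hence $x$ on Berkovich spectra, is the morphism sought.

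For Part~(2), if $f\colon X\to Y$ is an epimorphism then $\omega(f) = |f|(*)$ is surjective since $\omega$ is exact. Conversely, assume $|X|(*)\to|Y|(*)$ is surjective, with $X$ quasicompact and $Y$ quasicompact quasiseparated. Choose an effective epimorphism $p\colon\cM(A)\to Y$ with $A$ perfectoid (Example~\ref{intro_cover_perfd}); it suffices to show the base change $f'\colon X':=X\times_Y\cM(A)\to\cM(A)$ is an epimorphism, since then $p\circ f'$ is an epimorphism factoring through $f$. Here $X'$ is quasicompact (it is a pullback of $\cM(A)$ along $X\to Y$, and $\cM(A)\to Y$ is a quasicompact morphism because $Y$ is quasiseparated), and $|X'|(*) = |X|(*)\times_{|Y|(*)}|\cM(A)|$ surjects onto $|\cM(A)|$. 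Pick an effective epimorphism $q\colon\cM(B)\to X'$ with $B$ perfectoid (Example~\ref{intro_cover_perfd}); then $|\cM(B)| \to |X'|(*) \to |\cM(A)|$ is surjective, and this composite is precisely the map of Berkovich spectra induced by the underlying contractive morphism $A\to B$. By Theorem~\ref{intro_tate_acyclicity_perfd}, a morphism of affinoid perfectoid spaces that is surjective on Berkovich spectra (equivalently, an $\arc_\varpi$-cover, cf.\ Theorem~\ref{intro_dictionary}(4)) is an effective epimorphism in $\cX_{\arc_\varpi}$; hence $\cM(B)\to\cM(A)$ is an epimorphism factoring through $f'$, so $f'$ is an epimorphism. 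The same argument yields the auxiliary fact used below: a monomorphism $W\hookrightarrow V$ of quasicompact quasiseparated objects inducing a bijection on underlying sets is an isomorphism, being simultaneously a monomorphism and (now) an epimorphism.

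For Part~(3), given $Y$ quasiseparated and $x\in|Y|(*)$, take $\cM(L)\to Y$ as in Part~(1) and let $Y_x\hookrightarrow Y$ be its image. Then $Y_x$ is quasiseparated (a subobject of $Y$) and quasicompact (a quotient of $\cM(L)$), and applying the exact functor $|{-}|$ to $\cM(L)\twoheadrightarrow Y_x\hookrightarrow Y$ together with $|\cM(L)| = *$ shows $|Y_x| = *$ and that $Y_x\hookrightarrow Y$ realizes $x$. If $Y_x'$ is another such object, then $Y_x\times_Y Y_x'$ is quasicompact quasiseparated with $|Y_x\times_Y Y_x'|(*) = *$, and each projection to $Y_x$ and to $Y_x'$ is a monomorphism inducing a bijection on underlying sets, hence an isomorphism by the auxiliary fact; so $Y_x\simeq Y_x'$ over $Y$, giving uniqueness. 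The step I expect to cause the most trouble is the first paragraph: establishing that $|X|(*)$ genuinely computes the stalk $\colim_{(\cM(A)\to X)}|\cM(A)|$ of a point of $\cX_{\arc_\varpi}$ rather than merely receiving a comparison map from it. This hinges on the exactness of $\Gamma\colon\Cond\to\Set$ and the left-exactness of the Berkovich functor on affinoid perfectoid spaces; granting these, Parts~(1)--(3) reduce to the effective-epimorphism criterion for perfectoid covers (Theorem~\ref{intro_tate_acyclicity_perfd}) and routine quasicompactness arguments.
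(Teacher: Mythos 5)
Your framing, that $X\mapsto|X|(*)$ is the inverse-image functor of a point of $\cX_{\arc_\varpi}$, is the part that fails, and the failure is not cosmetic: the functor is \emph{not} left exact, so it is not the stalk functor of a point. Concretely, Proposition~\ref{general_fiber_prod_berko_sp} asserts only that the canonical map $|\cM(B\cotimes_A C)|\to|\cM(B)|\times_{|\cM(A)|}|\cM(C)|$ is \emph{surjective}, and Proposition~\ref{berko_funct_stability}(2) likewise records only that $|F_1\times_{F_2}F_3|\to|F_1|\times_{|F_2|}|F_3|$ is an \emph{epimorphism} (it upgrades to an isomorphism only under the monomorphism hypothesis in part~(4)). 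The map is genuinely not injective in general: for instance, take $f,g\colon K\langle T\rangle\rightrightarrows K\langle T_1,T_2\rangle$ sending $T\mapsto T_1$ and $T\mapsto T_2$; the Gauss point of the bidisc satisfies $|f(T)-g(T)|=|T_1-T_2|=1\neq 0$, so it does not lie in the fiber product over the two copies of the disc cut out by the equalizer, showing the Berkovich functor does not preserve fiber products. Since $\Ban_K^{\contr,\op}$ has finite limits (pushouts in $\Ban_K^{\contr}$ are completed tensor products), flatness of $u$ would be \emph{equivalent} to $u$ preserving finite limits, and that fails. Your argument that $\int u$ is cofiltered because it has a terminal object is also not valid as a matter of category theory: a terminal object gives nonemptiness but not the existence of common predecessors nor the coequalization condition. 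The same example above exhibits a parallel pair in $\int u$ that cannot be equalized from the left.

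Because of this, several of your assertions are literally false as stated, most visibly ``$|Y\times_X Z|(*) = |Y|(*)\times_{|X|(*)}|Z|(*)$'' and ``$|X'|(*)=|X|(*)\times_{|Y|(*)}|\cM(A)|$''; in each case only a surjection holds. The good news is that everywhere you actually use left-exactness, you only need the surjection: for the nonemptiness claim in Part~(1) a surjection onto a nonempty set already forces the source to be nonempty, and in Part~(2) the surjectivity is what carries the lift. So the downstream arguments for Parts~(1)--(3) can be repaired by replacing ``$\omega$ is exact'' with the two weaker facts the paper actually proves in Proposition~\ref{berko_funct_stability}: the Berkovich functor is cocontinuous (so preserves effective epimorphisms, hence induces surjections on $(*)$ since $*$ is projective in $\Comp$), and it sends fiber products to surjections. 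This is precisely what the paper does in Propositions~\ref{pts_arc_pi_shv}, \ref{epi_arc_topos} and \ref{res_fields_arc_topos}: no point of the topos is invoked, only the stability properties of $|{-}|$, the effective-epimorphism criterion for $\arc_\varpi$-covers of affinoid perfectoids (Theorem~\ref{intro_tate_acyclicity_perfd} via Proposition~\ref{arc_pi_cover_for_banach}), and routine quasicompactness. Your choice of $\cM(A^+)\to\cM(A)$ in Part~(1) and the two-step base change in Part~(2) mirror the paper's constructions; once the exactness claim is excised, the remaining argument is essentially the paper's.
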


\begin{thmx}[Isomorphisms\footnote{Compare with \cite[Lemma 12.5]{diamonds_scholze}.} - Proposition \ref{iso_arc_topos}]\label{intro_iso_arc_topos} Let $X \rightarrow Y$ be a morphism of qcqs objects of $\cX_{\arc_\varpi}$. Then, the following are equivalent
\begin{enumerate}[(1)]
	\item The morphism $X \rightarrow Y$ is an isomorphism.
	\item The morphism $X \rightarrow Y$ is an \emph{$\arc_\varpi$-equivalence}: there exists a cofinal collection of perfectoid non-archimedean fields such that the induced map $X(L) \rightarrow Y(L)$ is a bijection for every object in this cofinal system.
	\item The induced map $|X|(*) \rightarrow |Y|(*)$ is a bijection, and for each $x \in |X|(*) \simeq |Y|(*) \ni y$ the induced map of completed residue fields $X_x \rightarrow Y_y$ is an $\arc_\varpi$-equivalence.
\end{enumerate}
\end{thmx}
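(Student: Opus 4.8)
The plan is to prove the cycle of implications $(1) \Rightarrow (3) \Rightarrow (2) \Rightarrow (1)$, with the last implication being the substantive one. The implication $(1) \Rightarrow (3)$ is essentially formal: an isomorphism induces a bijection on $|X|(*)$ by functoriality of the Berkovich functor, and it carries the completed residue field $X_x \hookrightarrow X$ (which is unique with its defining property by Theorem \ref{intro_generalities_arc_topos}(3)) isomorphically onto $Y_y \hookrightarrow Y$, so in particular $X_x \to Y_y$ is an isomorphism and a fortiori an $\arc_\varpi$-equivalence. For $(3) \Rightarrow (2)$ I would argue that any perfectoid non-archimedean field map $\cM(L) \to X$ factors (by Theorem \ref{intro_generalities_arc_topos}(1) and uniqueness of residue fields) through a unique $X_x \hookrightarrow X$, and likewise for $Y$; since $|X|(*) \to |Y|(*)$ is a bijection and each $X_x \to Y_y$ is an $\arc_\varpi$-equivalence, the map $X(L) \to Y(L)$ is a bijection for all $L$ in the intersection of the relevant cofinal systems, which is again cofinal. (One must check that the cofinal system can be chosen uniformly over the finitely many — by quasicompactness — residue fields that arise; since a finite intersection of cofinal systems of perfectoid fields is cofinal, this is fine.)

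The heart of the matter is $(2) \Rightarrow (1)$. Here I would first use Example \ref{intro_cover_perfd} to choose an $\arc_\varpi$-cover $\cM(A) \twoheadrightarrow Y$ by an affinoid perfectoid space, and pull back to get $X \times_Y \cM(A) \to \cM(A)$; by descent (Theorem \ref{intro_tate_acyclicity_perfd} gives that affinoid perfectoids generate the topos and that $\Yo_{\arc_\varpi}$ is fully faithful on them) it suffices to show the pulled-back map is an isomorphism, so we may assume $Y = \cM(A)$ is a representable affinoid perfectoid. Then $X$ is a qcqs $\arc_\varpi$-sheaf over $\cM(A)$; covering $X$ in turn by an affinoid perfectoid $\cM(B)$ and using that $\cM(B) \to X \to \cM(A)$ together with the $\arc_\varpi$-equivalence hypothesis forces the two projections $\cM(B) \times_X \cM(B) \rightrightarrows \cM(B)$ to agree on $L$-points for a cofinal family of perfectoid fields $L$, I would invoke Theorem \ref{intro_iso_perfd} (the affinoid-level statement, condition (3)) to conclude $\cM(B) \times_X \cM(B) \simeq \cM(B)$, i.e. $\cM(B) \to X$ is a monomorphism, hence an isomorphism. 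One then concludes that $X$ is representable by an affinoid perfectoid and that $X \to Y = \cM(A)$ is an $\arc_\varpi$-equivalence of affinoid perfectoids, so Theorem \ref{intro_iso_perfd} applies directly to finish.

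The main obstacle I anticipate is the bookkeeping in the reduction to the representable case: one must verify that the property of being an $\arc_\varpi$-equivalence is stable under the base changes performed (pulling back along $\cM(A) \to Y$ and along $\cM(B) \to X$), and that the various cofinal systems of perfectoid fields can be intersected to a single cofinal system compatible with all the covers in play — this uses quasicompactness crucially to keep everything finite. A secondary subtlety is checking that $X$, once known to be covered by a representable affinoid perfectoid via a monomorphism, is itself representable; this should follow from the fact that a monomorphism of $\arc_\varpi$-sheaves which is also an $\arc_\varpi$-cover is an isomorphism, together with full faithfulness of $\Yo_{\arc_\varpi}$ on $\Perfd_K^{\Ban,\op}$. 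I expect the rest to be a formal descent argument.
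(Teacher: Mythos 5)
Your reduction of $(2)\Rightarrow(1)$ to the representable case breaks at the key step. After base-changing to $Y=\cM(A)$ and choosing an epimorphism $\cM(B)\twoheadrightarrow X$, you claim the hypothesis forces the two projections $\cM(B)\times_X\cM(B)\rightrightarrows\cM(B)$ to agree on $L$-points, hence that $\cM(B)\to X$ is a monomorphism and $X$ is representable. This is false: an $L$-point of $\cM(B)\times_X\cM(B)$ is a pair of $L$-points of $\cM(B)$ with the same image in $X(L)$, and the bijectivity of $X(L)\to\cM(A)(L)$ only says this is the same as having equal image in $\cM(A)(L)$ — it does not force the two points of $\cM(B)(L)$ to coincide. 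Already in the trivial case $X=Y=\cM(A)$ with $\cM(B)\to\cM(A)$ any non-injective $\arc_\varpi$-cover (e.g.\ a two-piece rational cover), the projections differ on $L$-points and $\cM(B)\times_X\cM(B)\not\simeq\cM(B)$. Indeed a general qcqs $\arc_\varpi$-sheaf satisfying (2) has no reason to be affinoid perfectoid, so any argument that first proves representability of $X$ cannot work. The correct (and much softer) route, which is the one the paper takes in Proposition \ref{iso_arc_topos}, stays inside the topos: (2) gives surjectivity of $X(L)\to Y(L)$ on a cofinal family, hence the $\arc_\varpi$-lifting property, so $X\to Y$ is an epimorphism by the epimorphism criterion (Proposition \ref{epi_arc_topos}); then, since $X\times_Y X$ is again qcqs and injectivity of $X(L)\to Y(L)$ makes the diagonal $X\to X\times_Y X$ surjective on $L$-points, the same criterion shows the diagonal (always a monomorphism) is an epimorphism, hence an isomorphism, so $X\to Y$ is a monomorphism; mono together with epi in a topos gives (1). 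No choice of affinoid cover of $X$, and no appeal to Theorem \ref{intro_iso_perfd}, is needed.

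There is also a gap in your $(3)\Rightarrow(2)$: the parenthetical ``finitely many — by quasicompactness — residue fields'' is wrong, since quasicompactness of $X$ in $\cX_{\arc_\varpi}$ does not bound $|X|(*)$, which is typically infinite (already for $X=\cM(A)$), and an intersection of infinitely many cofinal systems of perfectoid fields need not be cofinal. The fix is to not juggle cofinal systems at all: first establish $(2)\Rightarrow(1)$ and apply it to the (qcqs) residue fields, so that each $\arc_\varpi$-equivalence $X_x\to Y_y$ is actually an isomorphism and induces a bijection on $\cM(L)$-points for \emph{every} perfectoid field $L$; then every $\cM(L)\to Y$ factors uniquely through some $Y_y$ by Proposition \ref{res_fields_arc_topos} and lifts uniquely through $X_x$ to $X$. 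This forces the order of implications to be $(2)\Rightarrow(1)$ before $(3)\Rightarrow(2)$, rather than your proposed cycle.
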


The previous result allows us to understand to what extent the sheafified Yoneda functor $\Yo_{\arc_\varpi}: \Ban_K^{\contr, \op} \rightarrow \cX_{\arc_\varpi}$ is not fully faithful. We say that a morphism $f: \cM(A) \rightarrow \cM(B)$ in $\Ban_K^{\contr, \op}$ if part of the collection $\cW$ if the map $|f|(*): |\cM(A)|(*) \rightarrow |\cM(B)|(*)$ is bijective and each $x \in |\cM(A)|(*) \simeq |\cM(B)|(*) \ni y$ the induced map $\cM(\cH(x)) \rightarrow \cM(\cH(y))$ of completed residue fields is an $\arc_\varpi$-equivalence. Then, the functor $\Yo_{\arc_\varpi}: \Ban_K^{\contr, \op} \rightarrow \cX_{\arc_\varpi}$ factors as
\begin{align*}
	\Yo_{\arc_\varpi}: \Ban_K^{\contr, \op} \rightarrow \Ban_K^{\contr, \op}[\cW^{-1}] \hookrightarrow \cX_{\arc_\varpi}
\end{align*}
where the first functor is a localization of $\Ban_K^{\contr, \op}$ with respect to $\cW$, and the following functor is fully faithful.

The following result is a version of the Gerritzen-Grauert theorem for rigid analytic geometry, and extends Theorem \ref{intro_mono_perfd} to more general $\arc_\varpi$-sheaves.

\begin{thmx}[Monomorphisms\footnote{Compare with \cite[Proposition 12.15]{diamonds_scholze}.} - Proposition \ref{mono_arc_topos}]\label{intro_mono_arc_topos} Let $X \rightarrow Y$ be a morphism in $\cX_{\arc_\varpi}$, and assume that $X$ is qcqs and $Y$ is quasiseparated. Then, the following are equivalent
\begin{enumerate}[(1)]
	\item The morphism $X \rightarrow Y$ is a monomorphism.
	\item The induced map $|X|(*) \rightarrow |Y|(*)$ is an injective map of sets, and for each $x \in |X|(*)$ with image $y \in |Y|(*)$ the induced map of completed residue fields $X_x \rightarrow Y_y$ is an $\arc_\varpi$-equivalence.
	\item The morphism $X \rightarrow Y$ is an \emph{analytic domain}: for any object $Z \in \cX_{\arc_\varpi}$ and any morphism $Z \rightarrow Y$ satisfying $\im(|Z|(*) \rightarrow |Y|(*)) \subset \im(|X|(*) \rightarrow |Y|(*))$, there exists a unique morphism $Z \rightarrow X$ making the following diagram commute
	\begin{cd}
		& Z \ar[ld, dashed] \ar[d] \\
		X \ar[r] & Y
	\end{cd}
\end{enumerate}
\end{thmx}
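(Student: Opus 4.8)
The plan is to prove the cycle of implications $(3)\Rightarrow(1)\Rightarrow(2)\Rightarrow(3)$. The first implication is formal: if $g_1,g_2\colon Z\rightrightarrows X$ have equal composite to $Y$, then the image hypothesis in $(3)$ is automatic and both $g_i$ are lifts of that composite, hence agree by the uniqueness clause, so $X\to Y$ is a monomorphism. Throughout I would use one elementary observation: any morphism $g\colon W\to Y$ with $W$ qcqs and $|W|$ a single point lying over $z\in|Y|(*)$ factors uniquely through $Y_z\hookrightarrow Y$, because the Berkovich functor preserves epimorphisms, so the epi--mono factorization of $g$ exhibits a qcqs subobject of $Y$ with underlying space a point over $z$, which must be $Y_z$ by the uniqueness in Theorem \ref{intro_generalities_arc_topos}(3); this applies when $W=\cM(L)$ for a non-archimedean field $L$ and when $W$ is itself a residue field.

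For $(1)\Rightarrow(2)$, write $f\colon X\to Y$. To see $|X|(*)\to|Y|(*)$ is injective I would take $x_1,x_2$ over a common $y$, lift them to morphisms $\cM(L_i)\to X$ via Theorem \ref{intro_generalities_arc_topos}(1), and note that since $f$ is a monomorphism $\cM(L_1)\times_Y\cM(L_2)=\cM(L_1)\times_X\cM(L_2)$, which is nonempty by the nonemptiness criterion of Theorem \ref{intro_generalities_arc_topos}(1); a point of it yields morphisms $\cM(L_3)\to\cM(L_i)$ whose composites to $X$ coincide by the definition of the fiber product over $X$, so $x_1=x_2$. For the residue fields I would apply the observation above to $X_x\hookrightarrow X\xrightarrow{f}Y$: its image is $Y_y$, so the induced map $X_x\to Y_y$ is an isomorphism, a fortiori an $\arc_\varpi$-equivalence.

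The substance is $(2)\Rightarrow(3)$, which I would carry out in two steps. In Step one I would show $f$ is a monomorphism: since $X$ and $X\times_Y X$ are qcqs ($X$ qcqs, $Y$ quasiseparated), Theorem \ref{intro_iso_arc_topos} reduces this to the diagonal $X\to X\times_Y X$ being an $\arc_\varpi$-equivalence, and as $X\mapsto X(L)$ preserves limits, to injectivity of $X(L)\to Y(L)$ for cofinally many perfectoid fields $L$; given $g_1,g_2\colon\cM(L)\rightrightarrows X$ with equal composite to $Y$, the injectivity in $(2)$ forces them to hit one point $x$, so both factor through $X_x\hookrightarrow X$ by the observation, the composite $X_x\hookrightarrow X\to Y$ factors through $Y_y\hookrightarrow Y$ via the canonical map $X_x\to Y_y$, which by $(2)$ is an $\arc_\varpi$-equivalence of qcqs objects hence an isomorphism (Theorem \ref{intro_iso_arc_topos}), and since $Y_y\hookrightarrow Y$ is a monomorphism, comparing factorizations gives $g_1=g_2$. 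In Step two I would establish the lifting property: given $Z\to Y$ with $\im(|Z|(*)\to|Y|(*))\subset\im(|X|(*)\to|Y|(*))$, the lifts $Z\to X$ over $Y$ are the sections of $X\times_Y Z\to Z$, and since isomorphisms are local on the target and $Z$ receives an epimorphism from a coproduct of representables, it suffices to show $X\times_Y\cM(A)\to\cM(A)$ is an isomorphism for each $\cM(A)$ in such a family (its composite to $Y$ still obeys the image condition, factoring through $|Z|(*)$). That map is a monomorphism of qcqs objects (using Step one), so by Theorem \ref{intro_generalities_arc_topos}(2) it is an isomorphism once it is surjective on $|\cdot|(*)$; and for $a\in|\cM(A)|(*)$, lifting to $\cM(L_a)\to\cM(A)$ by Theorem \ref{intro_generalities_arc_topos}(1) and applying the nonemptiness criterion shows $X\times_Y\cM(L_a)\ne\emptyset$, which therefore has a point mapping to a point of $|X\times_Y\cM(A)|(*)$ over $a$. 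Uniqueness of the lift follows from Step one.

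I expect Step one of $(2)\Rightarrow(3)$ --- recovering the monomorphism property from the set-theoretic and residue-field data --- to be the main obstacle, since it requires combining the isomorphism criterion of Theorem \ref{intro_iso_arc_topos} with the residue-field theory of Theorem \ref{intro_generalities_arc_topos}(3) in a way that dodges the apparent circularity: one wants to factor morphisms through residue fields, yet this factorization resembles a special case of the statement being proved and must instead be extracted from the construction of the $Y_z$. The remainder is systematic, the recurring technical points being the stability of quasicompactness and quasiseparatedness under the fiber products involved, that the Berkovich functor preserves epimorphisms, that a nonempty $\arc_\varpi$-sheaf has a point, and the standard facts that in a topos a monomorphism is an isomorphism iff it is an epimorphism and that isomorphisms can be checked locally on the target.
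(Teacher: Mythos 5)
Your proposal is correct, and it runs on the same engine as the paper's proof of Proposition \ref{mono_arc_topos}: the uniqueness of completed residue fields, the isomorphism criterion (Proposition \ref{iso_arc_topos}), the epimorphism criterion (Proposition \ref{epi_arc_topos}), and the points/nonemptiness statements — but it organizes the crucial step differently. For extracting the monomorphism from $(2)$, the paper factors $X \to Y$ through its image $Z=\im(X\to Y)$ and shows $X\to Z$ is an isomorphism using the ``bijective on $|\cdot|(*)$ plus residue-field equivalences'' form of Proposition \ref{iso_arc_topos}; you instead show the diagonal $X\to X\times_Y X$ is an $\arc_\varpi$-equivalence by proving injectivity of $X(L)\to Y(L)$ for perfectoid fields $L$, separating two $L$-points via the isomorphism $X_x\simeq Y_y$. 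Both hinge on the same residue-field input; your route avoids the image factorization, at the cost of your preliminary ``observation'', which is precisely Proposition \ref{res_fields_arc_topos}(3) of the paper (the universal property of $Y_z$ for point-like maps), and your derivation of it from uniqueness via the epi--mono factorization is essentially the paper's own proof of that statement — so your worry about circularity is resolved exactly as you hoped. Two smaller variations: in $(1)\Rightarrow(2)$ you get injectivity of $|X|(*)\to|Y|(*)$ from the points/nonemptiness criterion rather than from the fact that the Berkovich functor preserves monomorphisms (Proposition \ref{berko_funct_stability}), and in the lifting step you reduce to representables by descent of isomorphisms along an epimorphism from a coproduct, where the paper instead writes $Z$ as a colimit of representables and glues the unique lifts; in the representable case your pointwise use of the nonemptiness criterion and the paper's use of the surjection $|X\times_Y Z|(*)\to (|X|\times_{|Y|}|Z|)(*)$ are interchangeable. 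The background facts you defer to (epi plus mono equals iso in a topos, isomorphisms descend along epimorphisms, a nonempty $\arc_\varpi$-sheaf has a point, preservation of epimorphisms by the colimit-preserving Berkovich functor, and the routine qcqs bookkeeping) are all available, so there is no gap.
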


\begin{example} Let $A$ be a Banach $K$-algebra, and $X = \cM(A)_{\arc_\varpi}$ its associated $\arc_\varpi$-sheaf. Then, the following are examples of analytic domains
\begin{enumerate}[(1)]
	\item (Residue Fields) For each $x \in |X|(*)$ there exists a non-archimedean field $\cH(x)$ together with a monomorphism $\cM(\cH(x))_{\arc_\varpi} \rightarrow X$.
	\item (Rational Domains) For any subset $|\cM(A)| \supset V := \{ x \in \cM(A) \text{ such that } |f_i(x)| \le |g(x)| \}$, where $\{g, f_1, \dots, f_n\} \subset A$ generate the unit ideal, there exists a Banach $K$-algebra $B$ and a monomorphism of $\arc_\varpi$-sheaves $\cM(B)_{\arc_\varpi} \rightarrow \cM(A)_{\arc_\varpi}$ with image $V \subset |\cM(A)| = |\cM(A)_{\arc_\varpi}|$.
	\item (Zariski Closed Subsets) For any closed ideal $I \in A$, the induced map $\cM(A/I)_{\arc_\varpi} \rightarrow \cM(A)_{\arc_\varpi}$ is a monomorphism of $\arc_\varpi$-sheaves.
\end{enumerate}
\end{example}

\section{Analytic Geometry}

In the first section we introduced and studied the category of affinoid perfectoid spaces, in this section we will introduce and study global analogs called perfectoid spaces, and we will do so by isolating them as a full-subcategory of $\cX_{\arc_\varpi}$\footnote{Parallel to what happens in algebraic geometry, where we can isolate the category of schemes as a full-subcategory of $\Shv_{\Zar}(\CAlg^{\op})$.}. Moreover, we wield the full power of the $\arc_\varpi$-topos $\cX_{\arc_\varpi}$ to develop a theory of non-archimedean geometry that works without any finiteness hypothesis; to the best of our knowledge this is the first instance in the literature where Berkovich geometry has been developed beyond the topologically of finite type case. One of the major obstructions in doing so is that there is no structure sheaf on Banach $K$-algebras in this generality\footnote{See \cite[4.1]{buzzard2014stably} for a counterexample.} -- we remedy the this by replacing the structure presheaf by its perfectoidization.

Let us introduce the basic objects of study. We say that an $\arc_\varpi$-sheaf $X$ is a \emph{perfectoid space} if there exists a collection of monomorphisms $\{\cM(A_i) \hookrightarrow X\}_{i \in I}$ from affinoid perfectoid spaces\footnote{By Theorem \ref{intro_tate_acyclicity_perfd} we know that $\Perfd_K^{\Ban, \op}$ embeds fully faithfully in $\cX_{\arc_\varpi}$ via the Yoneda functor $\Yo_{\arc_\varpi}$. Hence, from this point onwards we regard the category of affinoid perfectoid spaces as a full subcategory of $\cX_{\arc_\varpi}$.}, such that the induced map $\sqcup_{i \in I} \cM(A_i) \twoheadrightarrow X$ is an epimorphism of $\arc_\varpi$-sheaves (cf. Definition \ref{defn_perfectoid_space}). More generally, we say that an $\arc_\varpi$-sheaf $X$ is a \emph{$\arc_\varpi$-analytic space} if there exists a collection of monomorphisms $\{\cM(A_i)_{\arc_\varpi} \hookrightarrow X\}_{i \in I}$, where each $A_i$ is a Banach $K$-algebra, such that the induced map $\sqcup_{i \in I} \cM(A_i)_{\arc_\varpi} \twoheadrightarrow X$ is an epimorphism of $\arc_\varpi$-sheaves (cf. Definition \ref{defn_analytic_space}). In particular, if $X$ is of the form $\cM(A)_{\arc_\varpi}$ for some Banach $K$-algebra $A$ then we say that $X$ is an \emph{affinoid $\arc_\varpi$-analytic space}. Furthermore, we say that a perfectoid space (resp $\arc_\varpi$-analytic space) $X$ is quasicompact (resp. quasiseparated) if it is so as an object of $\cX_{\arc_\varpi}$.

Since the category of perfectoid spaces is a full subcategory of the category of $\arc_\varpi$-analytic spaces we will formulate our results using the language of $\arc_\varpi$-analytic spaces.

\begin{defn}[Maximal Atlas]\label{into_defn_max_atlas} Let $X$ be an $\arc_\varpi$-analytic space (resp. condensed set). Define $\Sub(X)_{\qcqs}$ as the category whose objects are monomorphisms $Y \hookrightarrow X$ from a qcqs $\arc_\varpi$-sheaves (resp. compact Hausdorff spaces) and morphisms are maps $Y_1 \rightarrow Y_2$ making the following diagram commute
\begin{cd}
	Y_1 \ar[rd, hook] \ar[rr] && Y_2 \ar[dl, hook] \\
	& X
\end{cd}
In particular we see that any map $Y_1 \rightarrow Y_2$ in $\Sub(X)_{\qcqs}$ must be a monomorphism. We call $\Sub(X)_{\qcqs}$ the category of \emph{qcqs subobjects} of $X$. Furthermore, if $X$ is an $\arc_\varpi$-analytic space we can also consider the full-subcategory $\Sub(X)_{\Aff} \subset \Sub(X)_{\qcqs}$ spanned by affinoid $\arc_\varpi$-analytic spaces $Y = \cM(A)_{\arc_\varpi}$, we regard $\Sub(X)_{\Aff}$ as an analog of Berkovich's \emph{maximal affinoid atlas} (cf. \cite[Proposition 1.2.15]{berkovichetale}).
\end{defn}

\begin{prop}[Proposition \ref{subobjects_arc_topos}] Let $X$ be a quasiseparated $\arc_\varpi$-analytic space, then the Berkovich functor $|-|: \cX_{\arc_\varpi} \rightarrow \Cond$ induces an equivalence of categories
\begin{align*}
	\Sub(X)_{\qcqs} \overset{\simeq}{\longrightarrow} \Sub(|X|)_{\qcqs} && (Y \hookrightarrow X) \mapsto (|Y| \hookrightarrow |X|)
\end{align*}
In particular for any pair of morphisms $Y_1 \hookrightarrow X \hookleftarrow Y_2$, there is a canonical isomorphism $|Y_1 \times_X Y_2| \simeq |Y_1| \times_{|X|} |Y_2| = |Y_1| \cap |Y_2|$.
\end{prop}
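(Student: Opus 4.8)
The plan is to treat $|-|$ as a map of posets: every morphism in $\Sub(X)_{\qcqs}$ and in $\Sub(|X|)_{\qcqs}$ is a monomorphism, so what must be shown is (i) well-definedness, (ii) that $Y_1\hookrightarrow X$ factors through $Y_2\hookrightarrow X$ exactly when $|Y_1|\hookrightarrow|X|$ factors through $|Y_2|\hookrightarrow|X|$, and (iii) essential surjectivity. For (i): if $Y\hookrightarrow X$ is a monomorphism with $Y$ qcqs, then $|Y|$ is qcqs in $\Cond$ by Proposition \ref{berko_funct_stability}, hence compact Hausdorff by Proposition \ref{epi_condensed_sets}; and applying $|-|$ to the diagonal isomorphism $Y\simeq Y\times_X Y$, together with compatibility of the Berkovich functor with this fibre product of qcqs objects over the quasiseparated $X$ (Proposition \ref{general_fiber_prod_berko_sp}), shows $|Y|\hookrightarrow|X|$ is again a monomorphism. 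Here I also record that, inside a fixed compact Hausdorff space, a qcqs subobject is the same thing as a closed subspace, since a monomorphism of condensed sets out of a compact Hausdorff space is a homeomorphism onto a closed subset.

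For (ii) the forward direction is functoriality; for the converse, evaluate the factorisation $|Y_1|\hookrightarrow|Y_2|\hookrightarrow|X|$ at $\ast\in\Comp$ to get $\im(|Y_1|(\ast)\to|X|(\ast))\subseteq\im(|Y_2|(\ast)\to|X|(\ast))$. Since $Y_2$ is qcqs and $X$ is quasiseparated, Theorem \ref{intro_mono_arc_topos} identifies $Y_2\hookrightarrow X$ with an \emph{analytic domain}, and its universal property applied to $Y_1\to X$ — which satisfies exactly the required containment on $\ast$-points — yields the factorisation $Y_1\to Y_2$ over $X$. In particular, a qcqs subobject of $X$ is determined, up to unique isomorphism over $X$, by its image in $|X|$; this uniqueness is used repeatedly below.

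For (iii), fix an affinoid atlas $\{\cM(A_i)_{\arc_\varpi}\hookrightarrow X\}_{i\in I}$; applying the colimit-preserving Berkovich functor presents $|X|$ as covered by the compact Hausdorff spaces $|\cM(A_i)|$. Given a qcqs subobject $W\hookrightarrow|X|$, pull this cover back along $W$ and use quasicompactness of $W$ to find a finite $I_0\subseteq I$ with $\sqcup_{i\in I_0}W_i\twoheadrightarrow W$, where $W_i:=W\times_{|X|}|\cM(A_i)|$ is a qcqs subobject of $|\cM(A_i)|$, i.e.\ a closed subspace. Granting the affinoid case below, lift each $W_i$ to a qcqs $Y_i\hookrightarrow\cM(A_i)_{\arc_\varpi}\hookrightarrow X$ and set $Y:=\im(\sqcup_{i\in I_0}Y_i\to X)$; this is a finite union of qcqs subobjects of the quasiseparated $X$, hence qcqs, and since $|-|$ preserves images and $|Y|\hookrightarrow|X|$ is a monomorphism by (i), we get $|Y|=\bigcup_{i\in I_0}W_i=W$.

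It remains to prove the affinoid case: for a Banach $K$-algebra $A$ and a closed $W\subseteq|\cM(A)|$, produce a qcqs $Y\hookrightarrow\cM(A)_{\arc_\varpi}$ with $|Y|=W$. First reduce to $A$ perfectoid by choosing a perfectoid cover $\cM(R)\twoheadrightarrow\cM(A)_{\arc_\varpi}$ (Example \ref{intro_cover_perfd}): the preimage $\widetilde W\subseteq|\cM(R)|$ of $W$ is closed and saturated for the equivalence relation $|\cM(R)|\times_{|\cM(A)|}|\cM(R)|$, so once $\widetilde W$ is lifted to a qcqs $\cM(S)\hookrightarrow\cM(R)$ its two pullbacks to $\cM(R)\times_{\cM(A)_{\arc_\varpi}}\cM(R)$ have the same image, hence agree by the uniqueness in (ii), and $\cM(S)$ descends to the desired $Y$. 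For $R$ perfectoid and $W\subseteq|\cM(R)|$ closed, use that rational domains are closed and form a fundamental system of neighbourhoods of the compact Hausdorff space $|\cM(R)|$, so that $W$ is the cofiltered intersection $\bigcap_\alpha V_\alpha$ of the finite unions of rational domains containing it; each $V_\alpha$ is represented by a perfectoid Banach $K$-algebra $R_\alpha$ (via Theorem \ref{intro_tate_acyclicity_perfd} and the structure sheaf of Theorem \ref{struct_presheaf_perfectoid}), the completed filtered colimit $S:=(\colim_\alpha R_\alpha)^{\wedge}$ is again perfectoid, the map $\cM(S)\hookrightarrow\cM(R)$ is a monomorphism by Theorem \ref{intro_mono_perfd} (injective on $\ast$-points, and passing to a rational domain does not change completed residue fields at its points), and since $|-|\colon\Ban_K^{\contr,\op}\to\Comp$ sends this completed colimit to the limit $\varprojlim_\alpha|V_\alpha|=\bigcap_\alpha V_\alpha=W$, we conclude $|\cM(S)|=W$. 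The main obstacle is precisely this affinoid–perfectoid step: the point-set input that an arbitrary closed subset of the Berkovich spectrum is a cofiltered intersection of finite unions of rational domains, and the verification that the resulting filtered colimit of perfectoid Banach algebras stays perfectoid and is carried by the Berkovich functor to $W$; the remaining bookkeeping — compatibility of $|-|$ with fibre products, images, and cofiltered limits of qcqs objects, and descent of qcqs subobjects along epimorphisms — is routine given Section \ref{section_arc_pi_topos}.
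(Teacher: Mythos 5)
The reductions you make at the start are fine (well-definedness, full faithfulness via the analytic-domain characterization of Proposition \ref{mono_arc_topos}, reduction of essential surjectivity to the affinoid case via an atlas and then to the perfectoid case by descent of subobjects along a perfectoid cover), but the final, crucial step has a genuine gap. You assert that every finite union $V_\alpha$ of rational domains in an affinoid perfectoid space $\cM(R)$ ``is represented by a perfectoid Banach $K$-algebra $R_\alpha$'', citing Tate acyclicity and the structure presheaf. Neither result gives this: the structure presheaf of Theorem \ref{struct_presheaf_perfectoid} is only defined on the poset $|X|_{\rat}$ of rational domains, and Tate acyclicity only identifies $R$ with the equalizer for covers of the \emph{whole} space; it says nothing about a proper subset $V_\alpha = V_{\alpha,1} \cup \dots \cup V_{\alpha,m}$ being the image of a monomorphism from an affinoid perfectoid space. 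Whether a finite union of rational domains in an affinoid perfectoid is again affinoid perfectoid is a nontrivial claim that is nowhere established in the paper, and your argument collapses without it: if you replace each $V_\alpha$ by the non-representable qcqs subobject $\im(\sqcup_k \cM(\cO(V_{\alpha,k})) \to \cM(R))$, then $W$ becomes a cofiltered limit of qcqs subobjects, and you would need the Berkovich functor to commute with that cofiltered limit; but $|-|$ on $\cX_{\arc_\varpi}$ is only constructed as a colimit-preserving extension (Construction \ref{berko_funct}), and Proposition \ref{cofiltered_lim_berko_sp} only covers cofiltered limits of \emph{representables} in $\Ban_K^{\contr,\op}$, not of general qcqs $\arc_\varpi$-sheaves. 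So the approximation-by-rational-domains strategy does not close.

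The paper avoids this entirely with a much shorter construction that you may want to compare with: after reducing to $F$ qcqs (via the filtered presentation of a quasiseparated object by qcqs subobjects, Proposition \ref{saturate_qs}), given a qcqs subobject $\tilde Y \hookrightarrow |F|$ one picks, for every point $\tilde x_j \in \tilde Y(*)$, a perfectoid field $L_{x_j}$ with a map $\cM(L_{x_j}) \to F$ hitting it (Proposition \ref{pts_arc_pi_shv}), forms the single perfectoid algebra $\prod_{j} L_{x_j}$ (a product of perfectoid fields, perfectoid by Proposition \ref{product_perfectoid}), and takes $Y := \im\big(\cM(\prod_j L_{x_j}) \to F\big)$. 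Since $|\cM(\prod_j L_{x_j})| \simeq \beta(J)$ (Lemma \ref{stone_cech_berko_sp}), the image of its points in $|F|(*)$ is the closure of $\tilde Y(*)$, which equals $\tilde Y(*)$ by compactness; uniqueness and the equivalence then follow from Propositions \ref{mono_arc_topos} and \ref{mono_condensed_sets}, exactly as in your step (ii). No affinoid approximation of closed subsets, no limits, and no representability of unions of rational domains are needed.
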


The following result greatly extends Theorem \ref{intro_tate_acyclicity_perfd} beyond the perfectoid case; however, it comes at the price that we can no longer work with a sheaf of modules, but rather have to work with sheaves of complexes and thus we have to work in the almost derived $\infty$-category of $K_{\le 1}$, where almost mathematics is performed with respect to the ideal $(\varpi^{1/p^\infty}) \subset K_{\le 1}$. In order to formulate this result, we need the notion of perfectoidization of a $\varpi$-complete $K_{\le 1}$-algebra introduced in \cite[Section 8]{prisms}, we refer the reader to loc. cit. for the definitions.

\begin{thmx}[Structure Sheaf - Proposition \ref{perfectoidization_banach}]\label{intro_struct_sheaf_perfd} Let $X = \cM(A)_{\arc_\varpi}$ be an affinoid $\arc_\varpi$-analytic space. Then, the functor\footnote{By Proposition \ref{perfectoidization_banach} we learn that the construction $(\cM(B)_{\arc_\varpi} \hookrightarrow \cM(A)_{\arc_\varpi}) \mapsto (B_{\le 1, \perfd})^a$ only depends on $\cM(B)_{\arc_\varpi}$ and not on the choice of representative $B$.}
\begin{align*}
	\cO_{X, \perfd}(-)_{\le 1}^a: \Sub(X)_{\Aff}^{\op} \longrightarrow \cD(K_{\le 1})^{\wedge a}_{\varpi}  && (\cM(B)_{\arc_\varpi} \hookrightarrow \cM(A)_{\arc_\varpi}) \mapsto (B_{\le 1, \perfd})^a
\end{align*}
is an $\arc_\varpi$-sheaf of complexes in the almost derived $\infty$-category of $K_{\le 1}$, where almost mathematics is performed with respect to the ideal $(\varpi^{1/p^\infty}) \subset K_{\le 1}$.
\end{thmx}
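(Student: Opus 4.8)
The plan is to verify the sheaf condition for the presheaf $\cO_{X,\perfd}(-)_{\le 1}^a$ directly against $\arc_\varpi$-covers inside $\Sub(X)_{\Aff}$, reducing everything to the integral perfectoidization statement of \cite[Section 8]{prisms}. First I would unwind what an $\arc_\varpi$-cover in $\Sub(X)_{\Aff}$ means: given $\cM(B)_{\arc_\varpi} \hookrightarrow X$ and a finite family of monomorphisms $\{\cM(C_i)_{\arc_\varpi} \hookrightarrow \cM(B)_{\arc_\varpi}\}$ whose union is all of $|\cM(B)|$, the well-definedness footnote together with Theorem \ref{intro_dictionary} lets me replace each $C_{i,\le 1}$ by its image in $\CAlg_{K_{\le 1}}^{\wedge a\tf}$, and by Theorem \ref{intro_dictionary}(4) the map $B_{\le 1}\to \prod_i C_{i,\le 1}$ is an $\arc_\varpi$-cover of $\varpi$-complete $K_{\le 1}$-algebras in the sense of \cite[Definition 6.14]{arc_topology}. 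Then I would invoke \cite[Proposition 8.10]{prisms}: integral perfectoid algebras (almost) satisfy $\arc$-descent, so $(B_{\le 1,\perfd})^a$ is the limit of the \v{C}ech complex of $(C_{i,\le 1,\perfd})^a$ in $\cD(K_{\le 1})^{\wedge a}_{\varpi}$. The remaining point is that the \v{C}ech terms compute correctly, i.e.\ that $(C_{i,\le 1}\otimes_{B_{\le 1}} C_{j,\le 1})_{\perfd}^a$ agrees with $(\cO_X(\cM(C_i)_{\arc_\varpi}\times_X \cM(C_j)_{\arc_\varpi}))_{\le 1,\perfd}^a$; this follows by combining the identity $(A\cotimes_C B)_{\le 1}\simeq (A_{\le 1}\otimes_{C_{\le 1}} B_{\le 1})^{\wedge a\tf}$ from Theorem \ref{intro_dictionary} with the fact that perfectoidization is insensitive to the $(-)^{\wedge a\tf}$-completion (it commutes with $\varpi$-completion and kills $\varpi$-torsion), so that $(-)_{\perfd}$ applied to either side yields the same object.

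The argument then splits into two checks, matching the two clauses in the definition of an $\arc_\varpi$-sheaf on the site $\Sub(X)_{\Aff}$ with its induced topology. For the separatedness/gluing part over a single cover, I would write the \v{C}ech complex
\begin{equation*}
(B_{\le 1,\perfd})^a \longrightarrow \prod_i (C_{i,\le 1,\perfd})^a \longrightarrow \prod_{i,j}(C_{i,\le 1}\otimes_{B_{\le 1}} C_{j,\le 1})^a_{\perfd} \longrightarrow \cdots
\end{equation*}
and show it is a limit diagram in $\cD(K_{\le 1})^{\wedge a}_{\varpi}$; by the previous paragraph this is exactly the content of \cite[Proposition 8.10]{prisms} applied to the $\arc_\varpi$-cover $B_{\le 1}\to\prod_i C_{i,\le 1}$. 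For the functoriality/restriction-map coherence part, I would note that $\Sub(X)_{\Aff}$ is a poset (every morphism is a monomorphism, by Definition \ref{into_defn_max_atlas}) whose coverings are generated by the finite surjective families just described, and on such a site a presheaf with values in a presentable $\infty$-category is a sheaf precisely when the \v{C}ech condition holds for each generating cover — a standard reduction I would cite rather than reprove.

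The main obstacle, I expect, is precisely the identification of the fiber-product terms: a priori the fiber product $\cM(C_i)_{\arc_\varpi}\times_X \cM(C_j)_{\arc_\varpi}$ is computed in the topos $\cX_{\arc_\varpi}$, and one must know that it is again (representable by) an affinoid $\arc_\varpi$-analytic space with underlying algebra the completed tensor product $C_i\cotimes_B C_j$, so that it actually lies in $\Sub(X)_{\Aff}$ and the presheaf can be evaluated on it. This requires that $\Sub(X)_{\Aff}$ be closed under the relevant fiber products, which in turn rests on Proposition \ref{subobjects_arc_topos} (the equivalence $\Sub(X)_{\qcqs}\simeq\Sub(|X|)_{\qcqs}$ and the compatibility of the Berkovich functor with fiber products of subobjects) together with the representability of intersections of affinoid domains — this is the one place where the geometry of $\cX_{\arc_\varpi}$, rather than pure commutative algebra, is genuinely used. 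Once this closure is in hand, the rest is bookkeeping: the descent statement is simply the Banach-algebra avatar of \cite[Proposition 8.10]{prisms}, transported across the dictionary of Theorem \ref{intro_dictionary}, with almost mathematics absorbing the discrepancy between $(-)_{\le 1}$-integral models and their torsion-free $\varpi$-completions.
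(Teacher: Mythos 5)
Your overall strategy matches the paper's: transport the problem to $\varpi$-complete $K_{\le 1}$-algebras via the dictionary, invoke Bhatt--Scholze's arc-descent for perfectoidization, handle the \v{C}ech terms via the identity $(A \cotimes_C B)_{\le 1} \simeq (A_{\le 1} \otimes_{C_{\le 1}} B_{\le 1})^{\wedge a \tf}$, and use the closure of $\Sub(X)_{\Aff}$ under fiber products (which for separated affinoids is Proposition~\ref{an_spc_separated} together with Example~\ref{separated_banach}). So the skeleton is right.

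The one genuine gap is the passage from $\varpi$-complete $\arc$-descent to $\arc_\varpi$-descent. What \cite[Proposition 8.10]{prisms} (and the paper's Proposition~\ref{perfectoidization_pi_complete_arc_descent}, which reproves it) gives you is that $(-)_{\perfd}$ satisfies descent along $\varpi$-\emph{complete} $\arc$-covers, i.e.\ covers tested also by rank-$\le 1$ valuation rings in which $\varpi = 0$. The $\arc_\varpi$-topology is strictly finer -- every $\varpi$-complete $\arc$-cover is an $\arc_\varpi$-cover, but a surjection $|\cM(S)| \twoheadrightarrow |\cM(R)|$ need not see the characteristic-$\varpi$ locus at all -- so being an $\arc_\varpi$-sheaf is a strictly stronger statement, and you cannot simply cite BS 8.10 and conclude the \v{C}ech complex over an $\arc_\varpi$-cover totalizes. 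The paper's bridge (Propositions~\ref{arc_pi_descent_for_perfectoids} and~\ref{perfectoidization_arc_pi_descent}) is: for an $\arc_\varpi$-cover $R \to S$, the augmented map $R \to S \times (R/\varpi)_{\perf}$ \emph{is} a $\varpi$-complete $\arc$-cover; apply $\varpi$-complete $\arc$-descent to that; and then observe that $((R/\varpi)_{\perf})^a \simeq 0$, so after passing to the almost category the extra factor evaporates and the \v{C}ech complex of $S$ alone totalizes. That vanishing is the real, indispensable role of $(-)^a$ in the statement. Your closing sentence attributes the role of almost mathematics to ``absorbing the discrepancy between $(-)_{\le 1}$-integral models and their torsion-free $\varpi$-completions'' -- a secondary issue (handled by the adjunction characterization $S_{\perfd}^a = \lim_{S \to P} P$ over $P \in \Perfd_{K_{\le 1}}^{\Prism a}$, which automatically sees through $(-)^{\wedge a \tf}$) -- and so misses where the almost formalism is actually load-bearing.

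A smaller imprecision: saying perfectoidization ``kills $\varpi$-torsion'' is the wrong mechanism for the insensitivity to $(-)^{\wedge a \tf}$. The correct justification is the adjunction: $(-)^{\wedge a \tf}$ is left adjoint to the inclusion $\CAlg_{K_{\le 1}}^{\wedge a \tf} \hookrightarrow \CAlg_{K_{\le 1}}$, the target of the limit defining $(-)^a_{\perfd}$ (cf.\ Proposition~\ref{perfectoidization_arc_pi_descent}(1)) lives in $\Perfd_{K_{\le 1}}^{\Prism a} \subset \CAlg_{K_{\le 1}}^{\wedge a \tf}$, and hence $\Maps(S, P) = \Maps(S^{\wedge a \tf}, P)$ for all such $P$, which forces $S^a_{\perfd} \simeq (S^{\wedge a \tf})^a_{\perfd}$.
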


\begin{example}[Proposition \ref{properties_perfd}] The following examples compute $\cO_{X, \perfd}(-)_{\le 1}^a$ in a couple of special cases
\begin{enumerate}[(1)]
	\item If $A$ is a Banach $K$-algebra of characteristic $p$, then $(A_{\le 1, \perfd})^a = (A_{\le 1, \perf})^a$, where the latter is just the $\varpi$-completed colimit perfection $A_{\le 1, \perf} = \colim_{x \mapsto x^p} A_{\le 1}$.
	\item If $A$ is a perfectoid Banach $K$-algebra, we may restrict to the full-subcategory $\Sub(X)_{\Perfd} \subset \Sub(X)_{\Aff}$ spanned by perfectoid Banach $K$-algebras, then the structure sheaf $\cO_{X, \perfd}(-)_{\le 1}^a$ admits a much simpler description as
	\begin{align*}
	\cO_{X}(-)_{\le 1}^a: \Sub(X)_{\Perfd}^{\op} \longrightarrow \cD(K_{\le 1})^{\wedge a}_{\varpi}  && (\cM(B) \hookrightarrow \cM(A)) \mapsto (B_{\le 1})^a
	\end{align*}
	\item Let $X = \cM(\CC_p \langle T^{\pm 1} \rangle)_{\arc_\varpi}$, we follow the discussion on Example \ref{intro_perfd_torus} to compute $\cO_{X, \perfd} (X)_{\le 1}^a$. Since $X_{\infty} \rightarrow X$ is a $\mu_{p^{\infty}}(\CC_p)$-torsor we can use the presentation
	\begin{align*}
		\CC_p \langle T^{\pm 1/p^\infty} \rangle = \widehat{\bigoplus}_{i \in \ZZ[\frac{1}{p}]} \CC_p \cdot T^i
	\end{align*}
	and $\arc_\varpi$-descent for $(-)_{\le 1, \perfd}^a$ to identify $(\CC_p\langle T^{\pm 1} \rangle)_{\le 1, \perfd}^a$ as the complex in the category $\cD(K_{\le 1})^{\wedge a}_{\varpi}$
	\begin{align*}
		(\CC_p\langle T^{\pm 1} \rangle)_{\le 1, \perfd}^a \simeq  
		\widehat{\bigoplus}_{i \in \ZZ[\frac{1}{p}]} \Big ( \CC_{p, \le 1} \cdot T^i \overset{T^i \mapsto (1 - \epsilon^i)T^i}{\xrightarrow{\hspace{2cm}}}  \CC_{p, \le 1} \cdot T^i \Big)
	\end{align*}
	where $\epsilon = (\epsilon_n) \in \lim_n \mu_{p^n}(\CC_p) = \mu_{p^\infty}(\CC_p)$ is a generator. In particular, we see that when $i \in \ZZ$ the map $T^{i} \mapsto (1 - \epsilon^i)T^i$ is the zero map, while when $i \not\in \ZZ$ it is an isomorphism. Thus, we obtain the identity
	\begin{align*}
		(\CC_p \langle T^{\pm 1} \rangle)_{\le 1, \perfd}^a \simeq \widehat{\bigoplus}_{i \in \ZZ} \Big ( \CC_{p, \le 1} \cdot T^i \overset{0}{\longrightarrow}  \CC_{p, \le 1} \cdot T^i \Big)
	\end{align*}
	providing an example where $(\CC_p\langle T^{\pm 1} \rangle)_{\le 1, \perfd}^a$ is not concentrated in degree zero\footnote{Compare with \cite[Section 4]{scholze_padic_hodge}.}.
\end{enumerate}
\end{example}

In order to make closer contact with Berkovich's theory of $K$-analytic spaces, we would like to show that $\arc_\varpi$-analytic spaces admit a theory of strong morphisms (cf. \cite[Definition 1.2.7]{berkovichetale}) -- informally, a theory of strong morphisms says that given an a morphism $X \rightarrow Y$ of $K$-analytic spaces and an affinoid cover $\{\cM(A_i) \hookrightarrow Y\}$, there exists an affinoid cover of $\{\cM(B_j) \hookrightarrow X\}$ refining the cover on $Y$. We show that separated $\arc_\varpi$-analytic spaces admit a theory of strong morphisms.

\begin{defn}[Definition \ref{defn_separated}] Let $Y \rightarrow X$ be a morphism of $\arc_\varpi$-analytic spaces. Then,
\begin{enumerate}[(1)]
	\item We say that $Y \rightarrow X$ is \emph{affine} if for every morphisms $\cM(A)_{\arc_\varpi} \rightarrow X$, where $A$ is a Banach $K$-algebra, the fiber product $Y \times_X \cM(A)_{\arc_\varpi}$ is represented by some Banach $K$-algebra $B$, in other words we have an identification $\cM(B)_{\arc_\varpi} = Y \times_X \cM(A)_{\arc_\varpi}$.
	\item We say that $Y \rightarrow X$ is a \emph{closed immersion} if it is affine, and for every morphisms $\cM(P) \rightarrow X$, where $P$ is a perfectoid Banach $K$-algebra, the induced morphism $Y \times_X \cM(P) \rightarrow \cM(P)$ is represented by a surjective map $P \twoheadrightarrow R$ of perfectoid Banach $K$-algebras. In other words, there exists a perfectoid Banach $K$-algebra $R$ and an isomorphism $\cM(R) = Y \times_X \cM(P)$ such that the induced map $P \rightarrow R$ is surjective.
	\item We say that $Y \rightarrow X$ is \emph{separated} if the diagonal map $\Delta: Y \rightarrow Y \times_X Y$ is a closed immersion. In particular, we say that $X$ is separated if the map $X \rightarrow \cM(K)$ is separated.
\end{enumerate}
\end{defn}

\begin{example} The following are some examples of the classes of morphisms of $\arc_\varpi$-analytic spaces we just introduced
\begin{enumerate}[(1)]
	\item A morphism $\cM(B)_{\arc_\varpi} \rightarrow \cM(A)_{\arc_\varpi}$ of affinoid $\arc_\varpi$-analytic spaces is affine (cf. Example \ref{affine_morphism_banach}).
	\item If $A \rightarrow B$ is a contractive surjective morphism of Banach $K$-algebras, the induced map $\cM(B)_{\arc_\varpi} \rightarrow \cM(A)_{\arc_\varpi}$ is a closed immersion\footnote{This result relies critically on the fact that Zariski closed sets are strongly Zariski closed \cite[Theorem 7.4]{prisms}.} (cf. Example \ref{closed_immersions_banach}). 
	\item All affinoid $\arc_\varpi$-analytic spaces $\cM(A)_{\arc_\varpi}$ are separated (cf. Example \ref{separated_banach}).
\end{enumerate}
\end{example}

\begin{rem}[Atlases and strong morphisms - Remark \ref{an_spc_atlas_strong_morphism}] Let us use the technology developed so far to make contact with Berkovich's theory of $K$-analytic spaces as developed in \cite[Section 1]{berkovichetale}. Let $X$ be a separated $\arc_\varpi$-analytic space, then the category $\Sub(X)_{\Aff}$ is closed under fiber products and satisfies most of the conditions for a net of compact hausdorff spaces on a topological space in the sense of Berkovich\footnote{With the exception that for each $x \in |X|(*)$ there need not exists a finite collection of objects $\{Y_i \hookrightarrow X\}$ in $\Sub(X)_{\Aff}$ whose union contains a neighborhood of $x$. Later we will introduce the notion of a ``locally compact'' $\arc_\varpi$-analytic space which is meant to fill in this gap.}. Furthermore, one can show that the natural functor $\Sub(X)_{\Aff} \rightarrow \cX_{\arc_\varpi}$ defined by $(Y \hookrightarrow X) \mapsto Y$ satisfies $\colim_{\Sub(X)_{\Aff}} Y = X$.

Finally, let us argue that any morphism $f: X \rightarrow Y$ of separated $\arc_\varpi$-analytic spaces comes from a ``strong morphism'' in the sense of Berkovich \cite[Definition 1.2.7]{berkovichetale}. Indeed, for each monomorphism $\cM(A)_{\arc_\varpi} \hookrightarrow X$ there exists a finite collection of monomorphisms $\{\cM(A_i)_{\arc_\varpi} \hookrightarrow \cM(A)_{\arc_\varpi}\}$ inducing an $\arc_\varpi$-cover, such that for each $\cM(A_i)_{\arc_\varpi}$ there exists a monomorphism $\cM(B_i) \hookrightarrow Y$ such that $|f|(*) \Big( |\cM(A_i)|(*) \Big) \subset |\cM(B_i)|(*)$. In particular, there exists an essentially unique morphism $\cM(A_i)_{\arc_\varpi} \rightarrow \cM(B_i)_{\arc_\varpi}$ making the following diagram commute
\begin{cd}
	\cM(A_i)_{\arc_\varpi} \ar[r] \ar[d, hook] & \cM(B_i)_{\arc_\varpi} \ar[d, hook] \\
	X \ar[r] & Y
\end{cd}
\end{rem}

Before diving deeper into the theory, let us show a recipe for producing separated $\arc_\varpi$-analytic spaces as the ``$\varpi$-complete generic fiber'' of quasicompact separated schemes over $K_{\le 1}$.

\begin{thmx}[The Generic Fiber Functor - Construction \ref{const_generic_fiber_funct}] There exists a functor
\begin{align*}
	(-)_{\eta, \arc_\varpi}: \Sch_{K_{\le 1}, \qcqs} \rightarrow \cX_{\arc_\varpi} && X \mapsto X_{\eta, \arc_\varpi}
\end{align*}
which we call the \emph{generic fiber functor} satisfying the following properties:
\begin{enumerate}[(1)]
	\item If $X$ is qcqs $K_{\le 1}$-scheme, then $X_{\eta, \arc_\varpi}$ is a qcqs $\arc_\varpi$-analytic space.
	\item If $Y \rightarrow X$ is a closed immersion of qcqs $K_{\le 1}$-schemes, then $Y_{\eta, \arc_\varpi} \rightarrow X_{\eta, \arc_\varpi}$ is a closed immersion of $\arc_\varpi$-sheaves.
	\item If $X$ is a quasicompact separated $K_{\le 1}$-scheme, then $X_{\eta, \arc_\varpi}$ is a quasicompact separated $\arc_\varpi$-analytic space.
\end{enumerate} 
\end{thmx}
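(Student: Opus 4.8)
\emph{Plan.} The strategy is to build the functor first on the site of affine $K_{\le 1}$-schemes, check it is left exact and sends Zariski covers to $\arc_\varpi$-covers, and then extend it to all qcqs schemes via the comparison between flat cover-preserving functors into a topos and geometric morphisms; the three stated properties then follow formally from left exactness and cocontinuity, together with a little commutative algebra about the reflector $(-)^{\wedge a \tf}$ of Section~\ref{sect_int_closure}. For a $K_{\le 1}$-algebra $R_0$ put $B_{R_0} := (R_0^{\wedge a \tf})[\tfrac{1}{\varpi}]$; by Theorem~\ref{intro_dictionary} this is a non-archimedean Banach $K$-algebra, and since $(-)^{\wedge a \tf}$ is functorial the rule $R_0 \mapsto \cM(B_{R_0})_{\arc_\varpi}$ defines a functor $F\colon \Aff_{K_{\le 1}} \to \cX_{\arc_\varpi}$, sending $\Spec K_{\le 1}$ to the terminal object $\cM(K)$.

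\emph{Left exactness and covers.} To see $F$ is left exact it suffices to treat pullbacks: because $\cM(-)_{\arc_\varpi}$ preserves finite limits (Yoneda followed by sheafification), because $(C \cotimes_A D)_{\le 1}\simeq (C_{\le 1}\otimes_{A_{\le 1}}D_{\le 1})^{\wedge a \tf}$ (Section~\ref{sect_int_closure}), and because the reflective localization $(-)^{\wedge a \tf}$ satisfies $(R_0\otimes_{T_0}S_0)^{\wedge a \tf}\simeq(R_0^{\wedge a \tf}\otimes_{T_0^{\wedge a \tf}}S_0^{\wedge a \tf})^{\wedge a \tf}$, one gets a natural identification $F(\Spec R_0\times_{\Spec T_0}\Spec S_0)\simeq F(\Spec R_0)\times_{F(\Spec T_0)}F(\Spec S_0)$. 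Next, if $\{\Spec(R_0[\tfrac{1}{f_i}])\to\Spec R_0\}$ is a basic Zariski cover, so $(f_1,\dots,f_n)=R_0$, then any map $R_0^{\wedge a \tf}\to V$ to a $\varpi$-complete rank-one valuation ring with $\varpi\neq 0$ factors through $R_0[\tfrac{1}{f_i}]^{\wedge a \tf}$ for some $i$, since $V$ is local and a relation $1=\sum a_if_i$ forces some $f_i$ to be a unit in $V$; by parts (3) and (4) of Theorem~\ref{intro_dictionary} this says precisely that $\{\cM(B_{R_0[1/f_i]})_{\arc_\varpi}\to\cM(B_{R_0})_{\arc_\varpi}\}$ is an $\arc_\varpi$-cover. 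Combining this with left exactness (which identifies the \v{C}ech nerve of a Zariski cover with the \v{C}ech nerve of its image), $F$ is a flat cover-preserving functor, so by the standard comparison \cite[Expose IV]{SGA4} it extends uniquely to a left exact, cocontinuous functor $F_!\colon\Shv_{\Zar}(\Aff_{K_{\le 1}})\to\cX_{\arc_\varpi}$; restricting along the fully faithful embedding $\Sch_{K_{\le 1},\qcqs}\hookrightarrow\Shv_{\Zar}(\Aff_{K_{\le 1}})$ produces the functor $(-)_{\eta,\arc_\varpi}$.

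\emph{The three properties.} For (1): a finite affine open cover $\{U_i\hookrightarrow X\}$ is sent by the left exact $F_!$ to monomorphisms $U_{i,\eta,\arc_\varpi}\hookrightarrow X_{\eta,\arc_\varpi}$ from affinoid $\arc_\varpi$-analytic spaces and by cocontinuity to an epimorphism $\sqcup_i U_{i,\eta,\arc_\varpi}\twoheadrightarrow X_{\eta,\arc_\varpi}$, so $X_{\eta,\arc_\varpi}$ is an $\arc_\varpi$-analytic space; it is quasicompact because the cover is finite and quasiseparated because $U_{i,\eta,\arc_\varpi}\times_{X_{\eta,\arc_\varpi}}U_{j,\eta,\arc_\varpi}\simeq(U_i\cap U_j)_{\eta,\arc_\varpi}$ is again a finite union of affinoids. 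For (2): closed immersions of $\arc_\varpi$-sheaves (Definition~\ref{defn_separated}) are local on the target, so one reduces to $Y=\Spec(R_0/I)\hookrightarrow\Spec R_0=X$; since $(-)^{\wedge a \tf}$ carries surjections to surjections (the $\varpi$-torsion-free quotient, the $\varpi$-adic completion and the almostification being each right exact on surjections, the middle one by a Mittag--Leffler argument), $B_{R_0}\twoheadrightarrow B_{R_0/I}$ is a contractive surjection, whence $Y_{\eta,\arc_\varpi}\to X_{\eta,\arc_\varpi}$ is a closed immersion by Example~\ref{closed_immersions_banach}, which itself rests on strong Zariski closedness. For (3): apply $F_!$ to the scheme-theoretic diagonal $X\to X\times_{\Spec K_{\le 1}}X$, which is a closed immersion; left exactness identifies its image with the diagonal $X_{\eta,\arc_\varpi}\to X_{\eta,\arc_\varpi}\times_{\cM(K)}X_{\eta,\arc_\varpi}$, so (2) makes $X_{\eta,\arc_\varpi}$ separated while (1) makes it quasicompact.

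\emph{Main obstacle.} The only real work lies at the affine level: establishing the two compatibilities of $(-)^{\wedge a \tf}$ used above — with finite tensor products, which is what makes $F$ left exact and hence sends \v{C}ech nerves of Zariski covers to \v{C}ech nerves, and with surjections — and the valuation-theoretic statement that basic Zariski covers become $\arc_\varpi$-covers. Once these are in hand, the passage to arbitrary qcqs schemes and the verification of the three properties are formal topos-theoretic bookkeeping.
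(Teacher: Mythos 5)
Your proof is correct in substance, but it takes a genuinely different route to the construction than the paper does. In the paper the $\arc_\varpi$-topos is \emph{defined} as $\Shv_{\arc_\varpi}(\Sch_{K_{\le 1},\qcqs})$ (Construction \ref{const_arc_pi_topos}), so the generic fiber functor is literally the sheafified Yoneda functor $\Yo_{\arc_\varpi}$ on the scheme site; the only content of Construction \ref{const_generic_fiber_funct} is the explicit identification $\Spec(A)_{\eta,\arc_\varpi}\simeq \cM(A^{\wedge a \tf}[\tfrac{1}{\varpi}])_{\arc_\varpi}$ (using that $\Yo_{\arc_\varpi}$ inverts $\arc_\varpi$-equivalences) and the fact that Zariski gluing data are preserved because faithfully flat maps are $\arc_\varpi$-covers (Lemma \ref{faithfully_flat_implies_arc_cover}). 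You instead build the functor only on affines via the dictionary, $R_0\mapsto \cM((R_0^{\wedge a\tf})[\tfrac{1}{\varpi}])_{\arc_\varpi}$, verify flatness and that basic Zariski covers go to $\arc_\varpi$-covers (your valuation-ring argument is exactly the content of Theorem \ref{intro_dictionary}(3),(4) combined with locality of $V$), and then extend by the SGA4 comparison and restrict to qcqs schemes. Both produce the same functor, since both are determined by their common restriction to affines together with preservation of Zariski gluing. What the paper's setup buys is that property (1) is immediate: $X_{\eta,\arc_\varpi}=\Yo_{\arc_\varpi}(X)$ is qcqs because the site is finitary (Proposition \ref{finitary_implies_coh}); in your framework you must re-derive quasicompactness and quasiseparatedness from the finite affine cover, which you do correctly — your quasiseparatedness claim is exactly the criterion of Proposition \ref{prop_qcqs_obj}(3) applied to the epimorphism $\sqcup_i U_{i,\eta,\arc_\varpi}\twoheadrightarrow X_{\eta,\arc_\varpi}$, and it would be worth citing it. What your route buys is that left exactness and cover-preservation are isolated as the only affine-level inputs, which makes the formal bookkeeping in (1)--(3) transparent; your proofs of (2) and (3) then coincide with the paper's (surjectivity passes to $\varpi$-completions and survives inverting $\varpi$; the diagonal of a separated scheme is a closed immersion and finite limits are preserved). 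Two minor cautions: the almost-elements step $H^0 j_*$ is a right adjoint and does \emph{not} preserve surjectivity on the nose, but this is harmless here because $(M_*)[\tfrac{1}{\varpi}]=M[\tfrac{1}{\varpi}]$ for $\varpi$-torsion-free $M$, which is why the paper only asserts surjectivity of $A^{\wedge}[\tfrac{1}{\varpi}]\to B^{\wedge}[\tfrac{1}{\varpi}]$; and your reduction of (2) to the affine case via ``closed immersions are local on the target'' is asserted rather than proved against Definition \ref{defn_separated} (which tests against arbitrary maps from affinoids in the topos, not only those factoring through a chart) — but the paper's own proof of Proposition \ref{properties_gen_fib_funct}(2) makes the same implicit reduction, resting ultimately on Example \ref{closed_immersions_banach} and strong Zariski closedness, so this is a shared terseness rather than a defect specific to your argument.
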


\begin{example} The affine line $\mathbf{A}^1_{K_{\le 1}}$ is a quasicompact separated scheme over $K_{\le 1}$, and its image under the generic fiber functor is given by the unit disk $\mathbf{D}^1_{K, \arc_\varpi} = \cM(K\langle T \rangle)_{\arc_\varpi}$ which is a quasicompact separated $\arc_\varpi$-analytic space. Similarly, the projective line $\mathbf{P}^1_{K_{\le 1}}$ is a quasicompact separated $K_{\le 1}$ scheme, and its image under the generic fiber functor gives rise to the quasicompact separated $\arc_\varpi$-analytic space $\mathbf{P}^1_{K, \arc_\varpi}$ defined as the colimit of the following diagram
\begin{cd}
	& \cM(K \langle T^{\pm 1} \rangle)_{\arc_\varpi} \ar[ld, "p_1", swap] \ar[rd, "p_2"] \\
	\cM(K \langle T_1 \rangle)_{\arc_\varpi} && \cM(K \langle T_2 \rangle)_{\arc_\varpi}
\end{cd}
where $p_1(T) = T_1$ and $p_2(T) = T_2^{-1}$. There are also perfectoid analogs of the previous examples, for instance the affine scheme $\mathbf{A}_{K_{\le 1}}^{1, \infty} = \Spec(K_{\le 1}[T^{1/p^\infty}])$ is a quasicompact separated scheme over $K_{\le 1}$, and its image under the generic fiber functor is the perfectoid unit disk $\mathbf{D}_{K}^{1, \infty} = \cM(K \langle T^{1/p^\infty} \rangle)$ which is a quasicompact separated perfectoid space. Similarly we can define the quasicompact separated scheme $\mathbf{P}_{K_{\le 1}}^{1, \infty}$ as the scheme obtained by gluing two copies of $\mathbf{A}^{1, \infty}_{K_{\le 1}}$ along $\Spec(K_{\le 1}[T^{\pm 1/p^\infty}])$, analogously to how $\mathbf{P}^{1}_{K_{\le 1}}$ is defined. We denote the image of $\mathbf{P}^{1, \infty}_{K_{\le 1}}$ under the generic fiber functor by $\mathbf{P}^{1, \infty}_{K}$, which is a quasicompact separated perfectoid space, which can be defined as the colimit of the diagram
\begin{cd}
	& \cM(K \langle T^{\pm 1/p^\infty} \rangle) \ar[ld, "p_1", swap] \ar[rd, "p_2"] \\
	\cM(K \langle T_1^{1/p\infty} \rangle) && \cM(K \langle T_2^{1/p^\infty} \rangle)
\end{cd}
where $p_1(T^{1/p^n}) = T_1^{1/p^n}$ and $p_2(T^{1/p^n}) = T_2^{-1/p^n}$.
\end{example}

Its rather surprising that we have been able to developed much of this theory without relying on a notion of ``open subsets'', we conclude this introduction by explaining that $\arc_\varpi$-analytic spaces come equipped with a good theory of open subsets. Our theory of $\arc_\varpi$-analytic spaces (and more generally $\arc_\varpi$-sheaves) differ the most from Huber's theory of adic spaces (or Scholze's theory of $v$-sheaves) when we study their open subsets. To illustrate the differences recall that in Scholze's theory of perfectoid spaces for an affinoid perfectoid space $X$ and a rational domain $V$ the canonical map $V \hookrightarrow X$ is an open immersion, while for us the inclusion of a rational domain into an affinoid perfectoid space $V \hookrightarrow X$ is not an open immersion generally\footnote{We expect this distinction to lead to small differences in the etale topology of $\arc_\varpi$-analytic spaces, when compared to Scholze's theory of $v$-sheaves, but we do not pursue this line of work.}.

\begin{defn}[Open Immersions - Definition \ref{defn_open_immersion_arc}] A monomorphism $U \hookrightarrow X$ of $\arc_\varpi$-sheaves is an \emph{open immersion} if the induced map $|U| \hookrightarrow |X|$ is an open immersion of condensed sets (cf. Definition \ref{defn_open_immersion_cond}).
\end{defn}

\begin{thmx}[Propositions \ref{open_subobject_arc} and \ref{open_of_spc_is_spc}] The collection of open immersions have the following properties
\begin{enumerate}[(1)]
	\item If $X$ is an $\arc_\varpi$-analytic space (resp. a perfectoid space), and $U \hookrightarrow X$ is an open immersion of $\arc_\varpi$-sheaves then $U$ is an $\arc_\varpi$-analytic space (resp. a perfectoid space).
	\item If $X$ is a quasiseparated $\arc_\varpi$-analytic space and $U \hookrightarrow X$ an open immersion, then it is an \emph{analytic domain}: for any morphism $Z \rightarrow X$ such that $\im(|Z|(*) \rightarrow |X|(*)) \subset |U|(*)$ there exists a unique morphism $Z \rightarrow U$ making the following diagram commute
	\begin{cd}
		& Z \ar[ld, dashed] \ar[d] \\
		U \ar[r, hook] & X
	\end{cd}
	\item For a quasiseparated $\arc_\varpi$-analytic space (resp. condensed set) $X$ denote by $\Open(X)$ the category of open immersions from $\arc_\varpi$-analytic spaces (resp. condensed sets) into $X$. Then, the Berkovich functor induces an equivalence of categories
	\begin{align*}
		\Open(X) \overset{\simeq}{\longrightarrow} \Open(|X|)
	\end{align*}
\end{enumerate}
\end{thmx}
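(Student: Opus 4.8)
The plan is to establish the three assertions in the order (1), (2), (3), with the whole argument resting on an affinoid-local form of (1). Throughout I use that open immersions of $\arc_\varpi$-sheaves are stable under base change, that monomorphisms compose, that colimits in $\cX_{\arc_\varpi}$ are universal, and that the Berkovich functor preserves colimits; the nontrivial inputs are the iso-detection criterion (Theorem \ref{intro_iso_arc_topos}), the equivalence $\Sub(Y)_{\qcqs}\simeq\Sub(|Y|)_{\qcqs}$ for quasiseparated $Y$ (Proposition \ref{subobjects_arc_topos}), the generalities of the $\arc_\varpi$-topos (Theorem \ref{intro_generalities_arc_topos}), and the observation that finitely many rational domains of $\cM(C)_{\arc_\varpi}$ whose Berkovich spectra cover $|\cM(C)|$ constitute an $\arc_\varpi$-cover.

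\emph{Affinoid case of (1).} Let $W\hookrightarrow\cM(A)_{\arc_\varpi}$ be an open immersion, so $|W|$ is an open subset of the compact Hausdorff space $|\cM(A)|$. The first point is that, with no finiteness hypothesis on $A$, rational domains form a basis of the Berkovich topology: the topology of $|\cM(A)|$ is the coarsest making $x\mapsto|f(x)|$ continuous for all $f\in A$, and since $\varpi$ has $p$-power roots the value group $|K^{\times}|$ is dense in $\RR_{>0}$, so every point of $|W|$ lies in a rational domain of the form $\{x:|f_i(x)|\le|\lambda_i|,\ |\mu_j(x)|\le|g_j(x)|\}$ contained in $|W|$ (with $\lambda_i,\mu_j\in K^{\times}$, so that all the unit-ideal conditions are automatic). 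Let $\{V_{\alpha}\}$ be the family of all such rational domains; by the examples following Theorem \ref{intro_mono_arc_topos} each $V_{\alpha}=\cM(B_{\alpha})_{\arc_\varpi}$ carries a monomorphism $\cM(B_{\alpha})_{\arc_\varpi}\hookrightarrow\cM(A)_{\arc_\varpi}$ with image $|V_{\alpha}|\subseteq|W|$. Each such monomorphism factors through $W$: writing $W$ as the filtered union of its quasicompact open subobjects and applying Proposition \ref{subobjects_arc_topos}, the open subobject $\cM(B_{\alpha})_{\arc_\varpi}\times_{\cM(A)_{\arc_\varpi}}W\hookrightarrow\cM(B_{\alpha})_{\arc_\varpi}$ has Berkovich spectrum $|V_{\alpha}|\cap|W|=|V_{\alpha}|$, and since an increasing family of opens covering the compact space $|V_{\alpha}|$ stabilizes, this monomorphism is already an isomorphism on $|-|$ at a finite stage, hence an isomorphism by Proposition \ref{subobjects_arc_topos}. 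Finally $\sqcup_{\alpha}\cM(B_{\alpha})_{\arc_\varpi}\to W$ is an epimorphism of $\arc_\varpi$-sheaves: for any $\cM(C)_{\arc_\varpi}\to W$ the compact $|\cM(C)|$ is covered by the open preimages of the $|V_{\alpha}|$, hence by finitely many of them, and the corresponding pullbacks are rational domains of $\cM(C)$ forming an $\arc_\varpi$-cover through which $\cM(C)_{\arc_\varpi}\to W$ factors locally. Thus $W$ is an $\arc_\varpi$-analytic space; when $A$ is perfectoid the $B_{\alpha}=\cO_{\cM(A)}(V_{\alpha})$ are perfectoid, so $W$ is a perfectoid space.

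\emph{General case of (1), and assertion (2).} For an $\arc_\varpi$-analytic space $X$ with atlas $\{\cM(A_i)_{\arc_\varpi}\hookrightarrow X\}$ and $U\hookrightarrow X$ open, base change gives open immersions $U\times_X\cM(A_i)_{\arc_\varpi}\hookrightarrow\cM(A_i)_{\arc_\varpi}$, each $\arc_\varpi$-analytic by the affinoid case with some affinoid atlas $\{\cM(B_{ij})_{\arc_\varpi}\hookrightarrow U\times_X\cM(A_i)_{\arc_\varpi}\}$; since $\cM(A_i)_{\arc_\varpi}\hookrightarrow X$ is a monomorphism so is $U\times_X\cM(A_i)_{\arc_\varpi}\hookrightarrow U$, and composing yields monomorphisms $\cM(B_{ij})_{\arc_\varpi}\hookrightarrow U$; universality of colimits makes $\sqcup_{i,j}\cM(B_{ij})_{\arc_\varpi}\to U$ an epimorphism, so $U$ is an $\arc_\varpi$-analytic space (perfectoid if $X$ is). For (2), uniqueness of a lift $Z\to U$ is immediate from $U\hookrightarrow X$ being a monomorphism, and by the sheaf property together with uniqueness we may assume $Z=\cM(C)_{\arc_\varpi}$; the hypothesis $\im(|Z|(*)\to|X|(*))\subseteq|U|(*)$ combined with Theorem \ref{intro_generalities_arc_topos}(1) and Proposition \ref{subobjects_arc_topos} shows that the open subobject $Z\times_X U\hookrightarrow Z$ has full Berkovich spectrum, and then, writing $U$ as the filtered union of its quasicompact open subobjects $U_{\gamma}$ so that the qcqs subobjects $Z\times_X U_{\gamma}\hookrightarrow Z$ have Berkovich spectra increasing to the compact $|\cM(C)|$, it is an isomorphism by Proposition \ref{subobjects_arc_topos}; hence $Z\to X$ factors through $U$.

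\emph{Assertion (3), and the main difficulty.} By (1) the assignment $(U\hookrightarrow X)\mapsto(|U|\hookrightarrow|X|)$ lands in $\Open(|X|)$; it is fully faithful as a functor of posets because every quasiseparated open subobject is the filtered union of its qcqs open subobjects and Proposition \ref{subobjects_arc_topos} handles the qcqs ones, so the partial order is reflected. For essential surjectivity, given an open immersion $V\hookrightarrow|X|$ of condensed sets, pick an affinoid atlas $\{\cM(A_i)_{\arc_\varpi}\hookrightarrow X\}$, cover each open set $V\cap|\cM(A_i)|$ of the compact $|\cM(A_i)|$ by rational domains $V_{i\alpha}$ as in the affinoid case, and let $U\hookrightarrow X$ be the image of $\sqcup_{i,\alpha}\cM(B_{i\alpha})_{\arc_\varpi}\to X$; since $|-|$ preserves colimits, $|U|=\bigcup_{i,\alpha}|V_{i\alpha}|=V$, so $U\hookrightarrow X$ is a monomorphism with $|U|=V\hookrightarrow|X|$ open, i.e.\ an open immersion; the $(\text{resp. condensed set})$ case is the identity functor. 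I expect the main obstacle to be the affinoid case of (1): on one hand one must verify that rational domains form a basis of $|\cM(A)|$ with no topological-finiteness hypothesis, where the density of $|K^{\times}|$ coming from the $p$-power roots of $\varpi$ is exactly what rescues the classical argument; on the other hand one must control the interaction of the Berkovich functor with the fiber products $\cM(B_{\alpha})_{\arc_\varpi}\times_{\cM(A)_{\arc_\varpi}}W$ and $\cM(C)_{\arc_\varpi}\times_X U$, which is why the proof never commutes $|-|$ past a general pullback but reduces to qcqs subobjects, where Proposition \ref{subobjects_arc_topos} applies, and then invokes the compactness principle that an increasing open cover of a compact space stabilizes.
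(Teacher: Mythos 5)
Your overall architecture matches the paper's: rational domains as neighborhood bases (rescued by the density of $|K^{\times}|$), compactness to pass to finite subfamilies, Proposition \ref{subobjects_arc_topos} to move between qcqs subobjects of $X$ and compact subsets of $|X|$, colimit-preservation of the Berkovich functor, and base change along an affinoid atlas for the general case of (1); this is exactly how Propositions \ref{open_subobject_arc} and \ref{open_of_spc_is_spc} proceed. However, there is a genuine gap at the two places where you write $W$ (resp.\ $U$) as ``the filtered union of its quasicompact open subobjects'' and conclude by stabilization over a compact spectrum. Quasicompact open subobjects essentially do not exist: a qcqs open subobject of $\cM(A)_{\arc_\varpi}$ would have as Berkovich image a compact open subset of $|\cM(A)|$, and, for instance, the open unit disc inside $|\cM(K\langle T\rangle)|$ contains no nonempty such subset. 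What is true (and what Proposition \ref{open_subobject_arc}(1) provides) is that an open subobject is the filtered colimit of \emph{all} qcqs subobjects $Y\hookrightarrow X$ with $|Y|(*)\subset|U|(*)$; but these have compact, not open, spectra, and an increasing filtered family of compact subsets covering a compact set need not stabilize. So the stabilization step fails in both of its uses: (i) the factorization of each $\cM(B_\alpha)_{\arc_\varpi}$ through $W$ in the affinoid case of (1), and (ii) the claim in (2) that $Z\times_X U\hookrightarrow Z$ is an isomorphism --- the real obstruction there being that $Z\times_X U$ is not known to be quasicompact, so Propositions \ref{iso_arc_topos} and \ref{epi_arc_topos} do not apply to it directly.

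The paper circumvents this differently. For (2) it never tests the abstract pullback: it sets $V:=\im(Z\to X)$, which is qcqs, notes $|V|(*)\subset|U|(*)$, and uses the colimit description of $U$ from Proposition \ref{open_subobject_arc}(1), so that $V$ maps to $U$ tautologically as a term of the defining colimit. For the affinoid case of (1) it does not factor the rational domains through $U$ at all: it forms $V=\im(\sqcup_i V_i\to X)$ and proves $|V|=|U|$ as condensed subsets via the lifting criterion of Remark \ref{subspace_top_condensed} and Proposition \ref{mono_condensed_sets}, where compactness is applied to a cover by \emph{interiors} of rational domains (your $V_\alpha$ are closed, so ``open preimages of the $|V_\alpha|$'' needs this neighborhood-basis formulation), producing a finite subfamily whose image the compact test space lifts into; it then concludes $V=U$ from the already-established equivalence $\Open(X)\simeq\Open(|X|)$ of Proposition \ref{open_subobject_arc}(3). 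Your essential-surjectivity step in (3) has the same soft spot ($|U|=\bigcup|V_{i\alpha}|$ is only checked on underlying points; identifying the condensed structure requires the same finite-subfamily lifting argument), and the same repair applies. In short, the route is the paper's, but the missing ingredient is to replace ``stabilization over quasicompact opens'' by the colimit-over-qcqs-subobjects description together with the condensed-level lifting via finite subfamilies.
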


\begin{example}[Zariski Opens] Let $X = \cM(A)_{\arc_\varpi}$ be an affinoid $\arc_\varpi$-analytic space, $f \in A$ an object of $A$, and $|X|_{f \not= 0} \subset |X|$ the open subset of $|X|$ where the function $f$ does not vanish. Then, there exists an essentially unique $\arc_\varpi$-analytic space $X_{f \not= 0}$ together with a monomorphism $X_{f \not= 0} \hookrightarrow X$ such that under the Berkovich functor it induces the open subset $|X|_{f \not= 0} \subset |X|$.
\end{example}

With this definitions at hand we are able to isolate a full subcategory of quasiseparated $\arc_\varpi$-analytic spaces which are ``locally compact'' (cf. Definition \ref{defn_locally_compact_arc}); this is analogous to Berkovich's definition of ``good'' $K$-analytic space. Instead of giving more definitions, let us just state some consequences.

\begin{thmx}[Proposition \ref{properties_loc_compact_arc}] Let $X$ be a quasiseparated locally compact $\arc_\varpi$-analytic space, and $\{U_i \hookrightarrow X\}_{i \in I}$ be a (possibly infinite) collection of open immersions, then the induced map
\begin{align*}
	\sqcup_{i \in I} U_i \longrightarrow X
\end{align*}
is an epimorphism of $\arc_\varpi$-sheaves. In particular, this implies that the following map is an isomorphism
\begin{align*}
	\coeq \Big( \sqcup_{i,j \in I} U_i \times_X U_j \rightrightarrows \sqcup_{i \in I} U_i   \Big) \overset{\simeq}{\longrightarrow} X
\end{align*}
\end{thmx}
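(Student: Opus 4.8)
The plan is to test the asserted epimorphism after base change along an affinoid atlas of $X$, where the underlying Berkovich space is a compact Hausdorff space and the epimorphism criterion of Theorem \ref{intro_generalities_arc_topos}(2) applies; the whole weight of the \emph{locally compact} hypothesis is then concentrated in a preliminary step that replaces each $U_i$ --- which is an $\arc_\varpi$-analytic space but need not be quasicompact --- by a family of qcqs subobjects small enough to sit inside some $U_i$. Throughout, the $U_i$ are taken to form an open cover of $X$, i.e. $\bigcup_{i\in I}|U_i|=|X|$; as the $|U_i|$ are open condensed subspaces of $|X|$ this can be tested on underlying sets.

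\emph{Step 1 (shrinking to qcqs neighbourhoods).} I would first record, from Definition \ref{defn_locally_compact_arc}, that in a locally compact $\arc_\varpi$-analytic space every point admits a fundamental system of qcqs neighbourhoods inside $X$ (if the definition is stated in a weaker "one neighbourhood per point" form, this refinement is a point-set shrinking argument which I would carry out first). Granting this, for each $x\in|X|(*)$ choose $i(x)\in I$ with $x\in|U_{i(x)}|(*)$, and choose a qcqs subobject $N_x\hookrightarrow X$ with $x$ in the interior of $|N_x|\subseteq|X|$ and $|N_x|\subseteq|U_{i(x)}|$. Since $U_{i(x)}\hookrightarrow X$ is an open immersion into a quasiseparated $\arc_\varpi$-analytic space it is an analytic domain (Propositions \ref{open_subobject_arc} and \ref{open_of_spc_is_spc}); because $\operatorname{im}(|N_x|(*)\to|X|(*))\subseteq|U_{i(x)}|(*)$, the map $N_x\hookrightarrow X$ factors uniquely, necessarily through a monomorphism, as $N_x\hookrightarrow U_{i(x)}\hookrightarrow X$.

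\emph{Step 2 (joint epimorphicity) and conclusion.} By construction the interiors of the $|N_x|$ cover $|X|$. To see that $\sqcup_{x\in|X|(*)}N_x\to X$ is an epimorphism of $\arc_\varpi$-sheaves, fix an atlas $\{\cM(A_j)_{\arc_\varpi}\hookrightarrow X\}_j$ of monomorphisms from affinoid $\arc_\varpi$-analytic spaces (Definition \ref{defn_analytic_space}), which is jointly epimorphic. Since in a topos epimorphisms are stable under coproducts and composition, it suffices that each base change $\sqcup_x\bigl(N_x\times_X\cM(A_j)_{\arc_\varpi}\bigr)\to\cM(A_j)_{\arc_\varpi}$ be an epimorphism. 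Here $\cM(A_j)_{\arc_\varpi}$ is qcqs with $|\cM(A_j)|$ compact Hausdorff; quasiseparatedness of $X$ makes each $N_x\times_X\cM(A_j)_{\arc_\varpi}$ qcqs, and by Proposition \ref{subobjects_arc_topos} its underlying space is $|N_x|\cap|\cM(A_j)|$, so the open sets $\operatorname{int}(|N_x|)\cap|\cM(A_j)|$ cover the compact space $|\cM(A_j)|$. Taking a finite subcover indexed by $x_1,\dots,x_m$, the quasicompact object $\sqcup_{k=1}^m\bigl(N_{x_k}\times_X\cM(A_j)_{\arc_\varpi}\bigr)$ maps to the qcqs object $\cM(A_j)_{\arc_\varpi}$ by a map surjective on $|-|(*)$, hence an epimorphism by Theorem \ref{intro_generalities_arc_topos}(2); a fortiori so is the map from the full coproduct over $x$. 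Since each $N_x\to X$ factors through $U_{i(x)}$, the epimorphism $\sqcup_xN_x\to X$ then factors as $\sqcup_xN_x\to\sqcup_xU_{i(x)}\to\sqcup_{i\in I}U_i\to X$ (the middle arrow being reindexing along $x\mapsto i(x)$), so $\sqcup_{i\in I}U_i\to X$ is an epimorphism. Finally, every epimorphism in a topos is effective and coproducts in a topos are universal, so $X$ is the coequalizer of $(\sqcup_iU_i)\times_X(\sqcup_iU_i)=\sqcup_{i,j\in I}U_i\times_XU_j\rightrightarrows\sqcup_{i\in I}U_i$, which is the claimed isomorphism.

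\emph{Main obstacle.} The only genuinely nontrivial point is Step 1: one must shrink a compact neighbourhood of $x$ into the prescribed open $U_{i(x)}$, which needs that a locally compact $\arc_\varpi$-analytic space has \emph{fundamental systems} of qcqs neighbourhoods rather than merely one such neighbourhood per point --- and this is precisely why one cannot first reduce to the case of an affinoid $X$, since a general affinoid perfectoid space carries too few affinoid (or qcqs) subobjects to run the argument. All remaining steps are topos-theoretic bookkeeping together with the compactness of Berkovich spectra.
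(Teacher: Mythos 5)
Your proof is correct and follows essentially the same route as the paper: test the epimorphism against affinoid (qcqs) objects mapping into $X$, use local compactness to produce qcqs subobjects lying inside the prescribed opens with interiors covering, extract a finite subfamily by compactness of the Berkovich spectrum, and apply the criterion that a map from a quasicompact to a qcqs $\arc_\varpi$-sheaf which is surjective on $|-|(*)$ is an epimorphism, concluding the coequalizer statement by effectivity of epimorphisms in the topos. The shrinking you isolate as the main obstacle in Step 1 is exactly what the paper supplies via Propositions \ref{properties_loc_compact_cond}(1) and \ref{properties_loc_compact_arc}(1)--(2) (a Urysohn argument inside a compact Hausdorff neighbourhood, transported to $\cX_{\arc_\varpi}$ through Proposition \ref{subobjects_arc_topos}), so no new argument is needed there.
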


\begin{example} Let $\{f_1, \dots, f_n\} \subset A$ be a collection of objects of the Banach $K$-algebra $A$ generating the unit ideal. Set $X = \cM(A)_{\arc_\varpi}$ and $X_{f_i \not= 0} \hookrightarrow X$ the open immersion corresponding to the open set $|X|_{f_i \not= 0} \subset |X|$. Then, the following map is an isomorphism
\begin{align*}
	\coeq \Big( \sqcup_{i,j \in I} X_{f_i \not= 0} \times_X X_{f_j \not= 0} \rightrightarrows \sqcup_{i \in I} X_{f_i \not= 0}   \Big) \overset{\simeq}{\longrightarrow} X
\end{align*}
Furthermore, we have the identification $X_{f_i \not= 0} \times_X X_{f_j \not= 0} = X_{f_i f_j \not= 0}$. Showing that affinoid $\arc_\varpi$-analytic spaces can be glued along ``Zariski open subsets'' just like schemes.
\end{example}

\section{Organization}

In Chapter \ref{chapt_comm_alg} we lay the foundations for the work ahead of us. The main goal is to establish the dictionary (Theorem \ref{intro_dictionary}), which allows us to translate questions about non-archimedean functional analysis on the category $\Ban_K^{\contr}$ to more algebraic questions on the category $\CAlg_{K_{\le 1}}^{\wedge a \tf}$. As a by-product we also make contact between modern definition of integral perfectoid algebras with the original definition of perfectoid Banach $K$-algebras. In Chapter \ref{chapt_berk_sp} we study the geometry of Banach $K$-algebras via their Berkovich spectrum and establish some of their basic results beyond the topologically of finite type case. Furthermore, leveraging the dictionary we establish the existence of a structure sheaf and $\arc_\varpi$-descent for affinoid perfectoid spaces. Finally, in Chapter \ref{chapt_perfd_spc} we introduce the $\arc_\varpi$-topos $\cX_{\arc_\varpi}$, which we argue is a natural place to do Berkovich geometry, and construct the Berkovich functor (Theorem \ref{intro_const_berko_funct}) assigning a underlying topological space to every $\arc_\varpi$-sheaf. Then, we isolate two natural categories of geometric objects (namely, the categories of perfectoid spaces and $\arc_\varpi$-analytic spaces) as full subcategories of $\cX_{\arc_\varpi}$, and establish a version of the Gerritzen-Grauert theorem for $\arc_\varpi$-sheaves (Theorem \ref{intro_mono_arc_topos}).

\section{Acknowledgments}
First and foremost, I would like to thank my advisor Mattias Jonsson for his support and encouragement. It would be difficult to overstate the positive impact that he has had throughout my years as a graduate student -- not only has his guidance made this thesis immeasurably better, but I have learned so much about life from him. I will always be grateful. I would also like to thank the rest of my doctoral committee for their time and interest in my work, as well as Bhargav Bhatt to whom this thesis owes a great deal of intellectual debt.

I am grateful to my friends at Michigan for enriching my mathematical and social experience, as well as my friends elsewhere for helping me remember everything that life has to offer. And finally, thank you to my family for their unconditional love and support throughout my whole life; and to my wife, Veronica, for being a consistent source of happiness as well as building a life with me we are both proud of.

\newpage

\chapter{Commutative Algebra}\label{chapt_comm_alg}

Throughout this chapter we fix a prime number $p$ and a perfectoid non-archimedean field $K$ together with an object $\varpi \in K$ satisfying $1 > |\varpi^p| \ge |p|$ and  a compatible system of $p$-power roots $\{\varpi^{1/p^n}\}_{n \in \ZZ_{\ge 0}}$. In Section \ref{sect_int_perfd_alg} we recall the definition of integral perfectoid algebras, a generalization of the original definition of perfectoid Banach $K$-algebra, and show stability of integral perfectoid algebras under passing to their $\varpi$-torsion free quotient and integral closures with respect to $\varpi$. In Section \ref{sect_almost_math} we develop the theory of almost mathematics with respect to the ideal $(\varpi^{1/p^\infty}) \subset K_{\le 1}$, taking as a starting point Lurie's derived $\infty$-category $\cD(K_{\le 1})$. Our motivation to do so is to be able to use $\arc_\varpi$-descent for integral perfectoids, established by Bhatt and Scholze \cite[Proposition 8.10]{prisms}, to prove Tate acyclicity for perfectoid Banach $K$-algebras (Theorem \ref{intro_tate_acyclicity_perfd}). In Section \ref{sect_int_closure} we show that the inclusion $\CAlg_{K_{\le 1}}^{\wedge a \tf} \subset \CAlg_{K_{\le 1}}$ of $\varpi$-torsion free $\varpi$-complete almost $K_{\le 1}$-algebras into all $K_{\le 1}$-algebras admits a left adjoint and describe this left adjoint explicitly. In Section \ref{sect_ban_alg} we establish the dictionary (Theorem \ref{intro_dictionary}), which says that there is an equivalence of categories $(-)_{\le 1}: \Ban_K^{\contr} \simeq \CAlg_{K_{\le 1}}^{\wedge a \tf}: [\frac{1}{\varpi}]$. Finally, in Section \ref{sect_perfd_ban_alg} we show that the dictionary induces an equivalence of categories between almost integral perfectoid algebras and perfectoid Banach $K$-algebras.

\section{Integral Perfectoid Algebras}\label{sect_int_perfd_alg}

\subsection{Definitions and basic properties}

\begin{defn}\label{tilting_defn} The tilting functor
\begin{equation*}
	(-)^{\flat}: \{ \text{Rings} \} \longrightarrow \{ \text{Perfect } \FF_p \text{-algebras} \}
\end{equation*}
is defined by
\begin{equation*}
	A^{\flat} := \lim_{a \mapsto a^p} A/p
\end{equation*}
\end{defn}

\begin{lemma}\label{p-Witt_unique_lift} The $p$-typical Witt vector functor
\begin{equation*}
	W(-): \{ \text{Perfect } \FF_p \text{-algebras} \} \longrightarrow \{ p \text{-adically complete } \ZZ_p \text{-algebras} \}
\end{equation*}
can be uniquely characterized as follows: for a perfect $\FF_p$-algebra $R$ there exists an essentially unique $p$-adically complete flat $\ZZ_p$-algebra $W(R)$, such that $W(R)/p = R$.
\end{lemma}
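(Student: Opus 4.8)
The plan is to characterize the $p$-typical Witt vector functor $W(-)$ on perfect $\mathbf{F}_p$-algebras via the uniqueness of flat $p$-adic lifts, which is a standard fact but worth proving carefully. First I would recall the two ingredients needed: (1) for any ring $A$ with a finitely generated ideal $I$, the property of being $I$-adically complete and flat is controlled by the vanishing of $\operatorname{Tor}$, and (2) perfectness of $R$ (i.e. Frobenius $\varphi\colon R\to R$ is an isomorphism) is what forces rigidity of the lift. The existence half is classical: for a perfect $\mathbf{F}_p$-algebra $R$, the ring $W(R)$ of $p$-typical Witt vectors is $p$-adically complete, $p$-torsion free (hence flat over $\mathbf{Z}_p$ since $\mathbf{Z}_p$ is a PID and $p$-torsion free $=$ flat here), and satisfies $W(R)/p \cong R$; I would simply cite the construction of Witt vectors and the fact that on perfect rings the ghost components behave well. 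So the content is the \emph{uniqueness} (``essentially unique''), and the fact that the assignment is functorial and agrees with $W(-)$.

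For uniqueness, suppose $B$ is a $p$-adically complete, $p$-torsion free $\mathbf{Z}_p$-algebra with $B/p \cong R$. The key step is to produce a canonical map $W(R) \to B$. I would do this using Teichmüller lifts: since $R$ is perfect, every element $r \in R$ has a unique system of $p$-power roots $r^{1/p^n}$, and one defines the multiplicative Teichmüller section $[-]\colon R \to B$ by $[r] = \lim_{n\to\infty} \tilde{r}_n^{p^n}$, where $\tilde{r}_n \in B$ is any lift of $r^{1/p^n}$; the limit exists by $p$-adic completeness of $B$ and is independent of the choices because $B$ is $p$-torsion free (two lifts of $r^{1/p^n}$ differ by a multiple of $p$, and raising to the $p^n$-th power kills the discrepancy $p$-adically). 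Every element of $B$ can then be written uniquely as a convergent sum $\sum_{n \ge 0} [c_n] p^n$ with $c_n \in R$, again using completeness and torsion-freeness. The same description holds for $W(R)$ by the standard theory, so sending $\sum [c_n] p^n \in W(R)$ to $\sum [c_n] p^n \in B$ gives a ring homomorphism $W(R) \to B$ lifting the identity on $R$, and it is an isomorphism because it is a bijection on the associated graded pieces for the $p$-adic filtration (both graded pieces are $R$) and both sides are $p$-adically complete. Functoriality in $R$ follows from uniqueness: a map $R \to R'$ of perfect $\mathbf{F}_p$-algebras induces $W(R) \to W(R')$ as the unique lift.

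The main obstacle, and the place to be careful, is precisely the well-definedness and multiplicativity of the Teichmüller map together with the uniqueness of the $p$-adic expansion $\sum [c_n] p^n$ — this is where perfectness and $p$-torsion-freeness are both used in an essential way, and it is exactly the point that fails for non-perfect $\mathbf{F}_p$-algebras (where flat lifts need not be unique or functorial). Everything else is bookkeeping with the $p$-adic filtration. I would therefore structure the proof as: (i) existence of $W(R)$ with the stated properties (cite); (ii) construction of the Teichmüller section into an arbitrary flat complete lift $B$ and the unique expansion; (iii) deduce the comparison isomorphism $W(R) \xrightarrow{\sim} B$ and functoriality. One should also note the harmless abuse in the statement: ``flat $\mathbf{Z}_p$-algebra'' here means $p$-torsion free, and these coincide for $\mathbf{Z}_p$-modules, so no separate argument is needed.
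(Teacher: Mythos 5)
Your overall strategy --- the classical uniqueness of strict $p$-rings via Teichm\"uller expansions --- is viable, and the paper itself offers no argument beyond citing \cite[Proposition 3.12]{phodge_according_Beilinson}, so there is nothing to compare against except the standard proofs. But as written there is a genuine gap at the decisive step: you assert that matching Teichm\"uller expansions, $\sum_n [c_n]p^n \mapsto \sum_n [c_n]p^n$, ``gives a ring homomorphism $W(R) \to B$'', and nothing preceding this justifies additivity (or multiplicativity) of that bijection. Multiplicativity of the Teichm\"uller section together with uniqueness of the expansion only says that both rings are bijectively parametrized by sequences in $R$; it does not say that addition and multiplication are computed by the same rule on those coordinates. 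Concretely, $[a]+[b] = \sum_n [S_n]p^n$ for certain carrying terms $S_n \in R$, and the whole content of the uniqueness theorem is that the $S_n$ (and the analogous product terms) are given by universal polynomials with integer coefficients in the $a^{1/p^m}, b^{1/p^m}$, the \emph{same} polynomials in every $p$-torsion-free $p$-adically complete lift of $R$. That is a separate inductive lemma (this is how Serre proves uniqueness of strict $p$-rings), and your proposal neither states nor proves it; the point you flag as the ``main obstacle'' (well-definedness and multiplicativity of $[-]$, uniqueness of expansions) does not imply it.

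Two standard ways to close the gap: either prove the universal-polynomial lemma, or avoid coordinate bookkeeping by producing the comparison map as a ring homomorphism from the outset. For the latter, use the adjunction $W(-) \dashv (-)^{\flat}$ that the paper records immediately after this lemma: for any $p$-adically complete $B$ one has Fontaine's ring map $\theta\colon \A_{\inf}(B) = W(B^{\flat}) \to B$, and since $B/p \cong R$ is perfect, $B^{\flat} = \lim_{x \mapsto x^p} B/p \cong R$, so $\theta$ gives a canonical ring map $W(R) \to B$ reducing to the identity mod $p$; your associated-graded argument (both graded pieces are $R$, both sides $p$-complete and $p$-torsion free) then shows it is an isomorphism, and uniqueness of the isomorphism follows from the Teichm\"uller description of elements. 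Alternatively, deformation theory works: $L_{R/\FF_p} \simeq 0$ for perfect $R$, so flat lifts of $R$ over $\ZZ/p^n$ are unique up to unique isomorphism for every $n$, and passing to the inverse limit gives the statement. One further small correction: independence of the choice of lifts in $[r] = \lim_n \tilde r_n^{p^n}$ uses only $p$-adic completeness and the congruence $x \equiv y \ (\mathrm{mod}\ p^k) \Rightarrow x^p \equiv y^p \ (\mathrm{mod}\ p^{k+1})$, not $p$-torsion-freeness; torsion-freeness is what you need for uniqueness of the expansion $\sum_n [c_n]p^n$.
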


\begin{proof} \cite[Proposition 3.12]{phodge_according_Beilinson}
\end{proof}

\begin{lemma} Let $R$ be a ring, and $I \subset R$ a finitely generated ideal. Then any morphism $f: M_1 \rightarrow M_2$ of $I$-adically complete modules is $I$-adically continuous.
\end{lemma}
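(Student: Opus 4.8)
The plan is to unwind the definition of $I$-adic continuity and observe that it follows tautologically from the fact that an $R$-linear map carries $I^nM_1$ into $I^nM_2$. Recall that $f\colon M_1 \to M_2$ is continuous for the $I$-adic topologies precisely when, for every $n \ge 0$, there exists $m \ge 0$ with $f(I^m M_1) \subseteq I^n M_2$. Since $f$ is by hypothesis a morphism of $R$-modules, we have $f(I^n M_1) = I^n f(M_1) \subseteq I^n M_2$ for every $n$, so one may simply take $m = n$. In particular $f^{-1}(x + I^n M_2)$ contains $x' + I^n M_1$ for any $x'$ in it, hence is open, and $f$ is continuous.

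The remaining hypotheses are there to guarantee that the $I$-adic topology on an $I$-adically complete module is the expected one, so that this elementary observation has the intended force. Concretely, I would invoke the standard fact that, because $I$ is finitely generated, $I$-adic completion is well behaved: for any $R$-module $M$ the natural map induces an isomorphism $\widehat{M}/I^n\widehat{M} \cong M/I^n M$, so an $I$-adically complete module $M_i$ satisfies $M_i = \lim_n M_i/I^n M_i$ with its $I$-adic topology, and $\bigcap_n I^n M_i = 0$; that is, the topology is separated and complete, with $\{I^n M_i\}_n$ a fundamental system of neighborhoods of $0$. (This is the only place finite generation of $I$ is used; it is the standard statement about adic completions found in any treatment of the subject.) Granting this, the inclusions $f(I^n M_1) \subseteq I^n M_2$ not only give continuity but exhibit $f$ as the inverse limit over $n$ of the induced maps $M_1/I^n M_1 \to M_2/I^n M_2$, which is the form of the statement one actually wants downstream.

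The main point to be careful about — really the only one — is bookkeeping: pinning down that ``$I$-adic topology'' and ``$I$-adically complete'' are meant in the senses above, so that no Artin--Rees-type input is required and no hypothesis beyond finite generation of $I$ and completeness of $M_1, M_2$ is invoked. Once the topologies are fixed, there is nothing left to prove beyond $R$-linearity, and the argument is complete.
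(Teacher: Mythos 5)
Your proof is correct and takes the same approach as the paper: both reduce continuity to the observation that $R$-linearity forces $f(I^n M_1) \subseteq I^n M_2$, so one may take $m = n$ in the definition of $I$-adic continuity. The extra commentary you include on finite generation and the isomorphism $\widehat{M}/I^n\widehat{M} \cong M/I^n M$ is accurate but not used in the argument itself — the paper simply takes $M_i = \lim M_i/I^n M_i$ as the definition of completeness and reads off that $\{I^n M_i\}$ is a fundamental system of neighborhoods of $0$, then applies the same linearity observation.
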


\begin{proof} By definition of $I$-adic completeness we have $M_i = \lim M_i/I^n M$, so the collection of open subsets $\{I^n M_i\} \subset M_i$ form a basis of open neighborhoods around $0 \in M_i$. To show that $f: M_1 \rightarrow M_2$ is $I$-adically continuous it suffices to show that for each $n \ge 0$ there exists an $m \ge 0$ such that $f(I^m M_1) \subset I^n M_2$. By the linearity of $f$ over $R$ it follows that the composition $M_1 \rightarrow M_2 \rightarrow M_2/I^n M$ factors as
\begin{cd}
	M_1 \ar[r, "f"] \ar[d] & M_2 \ar[d] \\
	M_1/I^n M_1 \ar[r] & M_2/I^n M_2
\end{cd}
showing that $f(I^n M_1) \subset I^n M_2$ for all $n \ge 0$.
\end{proof}

\begin{lemma} When restricted to $p$-complete algebras, the tilting functor admits a fully faithful left adjoint, given by the $p$-typical Witt vectors
\begin{equation*}
	W(-): \{ \text{Perfect } \FF_p \text{-algebras} \} \longrightarrow \{ p \text{-adically complete } \ZZ_p \text{-algebras} \}
\end{equation*}
The adjunction $W(-) \dashv (-)^{\flat}$ is specified as follows: let $R$ be a perfect $\FF_p$-algebra and $S$ a $p$-adically complete $\ZZ_p$-algebra, then for any map $f: W(R) \rightarrow S$ there exists a unique $\FF_p$-algebra map $g: R \rightarrow S^{\flat}$ making the following diagram commute
\begin{cd}
	W(R) \ar[rr, "f"] \ar[d, "\bmod p", swap] && S \ar[d, "\bmod p"] \\
	R \ar[r, "g"] & S^{\flat} \ar[r] & S/p
\end{cd}
Where $S^{\flat} \rightarrow S/p$ is the projection $S^{\flat} = \lim_{s \mapsto s^p} S/p \rightarrow S/p$, and $g$ is the canonical factorization of $f \bmod p: R \rightarrow S/p$ through $R \rightarrow S^{\flat}$.
\end{lemma}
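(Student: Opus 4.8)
The plan is to verify the two halves of the adjunction claim separately: first that $W(-)$ lands in the right place with the correct universal property, and then that the adjunction is fully faithful, i.e. that the unit $R \to (W(R))^\flat$ is an isomorphism.

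First I would recall from Lemma \ref{p-Witt_unique_lift} that for a perfect $\FF_p$-algebra $R$, the ring $W(R)$ is a $p$-adically complete flat $\ZZ_p$-algebra with $W(R)/p = R$; in particular the target of $W(-)$ is as stated. To establish the adjunction, fix a perfect $\FF_p$-algebra $R$ and a $p$-adically complete $\ZZ_p$-algebra $S$. Given a ring map $f: W(R) \to S$, reducing mod $p$ gives $\bar f: R = W(R)/p \to S/p$. Since $R$ is perfect, the Frobenius on $R$ is an isomorphism, so $\bar f$ is compatible with the inverse system $\cdots \xrightarrow{x \mapsto x^p} S/p \xrightarrow{x \mapsto x^p} S/p$ in the sense that the maps $R \xrightarrow{\bar f \circ \Fr^{-n}} S/p$ assemble into a map $g: R \to \lim_{s \mapsto s^p} S/p = S^\flat$ with $g$ followed by the projection $S^\flat \to S/p$ equal to $\bar f$. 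This constructs the map $g$ in the diagram and shows $g$ is the unique $\FF_p$-algebra map through which $\bar f$ factors. Conversely, given an $\FF_p$-algebra map $g: R \to S^\flat$, I need to produce $f: W(R) \to S$. Here is the key point: $W$ is functorial on perfect $\FF_p$-algebras, $S^\flat$ is perfect, and $W(S^\flat)$ comes with the canonical comparison map $\theta: W(S^\flat) \to S$ (the ``Fontaine map'', obtained by the uniqueness in Lemma \ref{p-Witt_unique_lift} together with the projection $S^\flat \to S/p$, lifting uniquely because $W(S^\flat)$ is $p$-complete and flat). Then $f := \theta \circ W(g): W(R) \to W(S^\flat) \to S$ does the job, and one checks $f \bmod p = g$ followed by $S^\flat \to S/p$ by naturality. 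Injectivity of $\Hom(W(R), S) \to \Hom(R, S^\flat)$ uses that $W(R)$ is generated over $\ZZ_p$ by Teichmüller lifts $[r]$ (using $p$-completeness and flatness to reduce to the case of a polynomial ring and pass to the limit), and that $[r] \bmod p$ determines $[r]$ via the ghost/Teichmüller formalism — so two lifts of the same $g$ agree on Teichmüller representatives, hence everywhere by continuity and $p$-completeness.

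For full faithfulness I must show the unit $\eta_R: R \to (W(R))^\flat$ is an isomorphism. Unwinding, $(W(R))^\flat = \lim_{x \mapsto x^p} W(R)/p = \lim_{x \mapsto x^p} R$, and since $R$ is perfect the Frobenius transition maps are all isomorphisms, so this limit is canonically $R$ itself, with $\eta_R$ the identity. This is the cleanest step. Equivalently one can cite that tilting restricted to perfect $\FF_p$-algebras is the identity, which is immediate from the definition.

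The main obstacle — really the only non-formal point — is the careful construction of the comparison map $\theta: W(S^\flat) \to S$ and the verification that $f \mapsto f \bmod p$ (composed with $S^\flat \to S/p$) and $g \mapsto \theta \circ W(g)$ are mutually inverse bijections, which amounts to the standard fact that $W(-)$ is left adjoint to $(-)^\flat$ on $p$-complete rings. All of this is classical (it is, e.g., implicit in \cite[Proposition 3.12]{phodge_according_Beilinson} and the surrounding discussion), so in the write-up I would state the construction of $\theta$, note the two naturality squares commute, and invoke $p$-completeness plus flatness of $W(S^\flat)$ for uniqueness of lifts; I do not expect to need any genuinely new computation beyond the Teichmüller expansion bookkeeping.
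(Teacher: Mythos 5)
Your proposal is correct and follows what is, essentially, the only sensible route. The paper's own proof of this lemma consists of a single citation to \cite[Proposition 3.12]{phodge_according_Beilinson}, so there is nothing in the source to compare line by line; what you have written out is precisely the standard argument that reference establishes, and your decomposition into (i) the hom-set bijection and (ii) full faithfulness via the unit $R \to W(R)^\flat$ being an isomorphism is exactly right. The observation that $(W(R))^\flat = \lim_{x \mapsto x^p} W(R)/p = \lim_{x \mapsto x^p} R \simeq R$ because Frobenius on a perfect ring is bijective is indeed the quick step.

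One phrase worth tightening: in the injectivity argument you write that ``$[r] \bmod p$ determines $[r]$ via the ghost/Teichm\"uller formalism,'' but that is not quite the statement you need — you are comparing $f_1([r])$ and $f_2([r])$ in $S$, not recovering $[r]$ inside $W(R)$. The correct assertion is that for a $p$-complete ring $S$ the canonical map $\lim_{s \mapsto s^p} S \to \lim_{s \mapsto s^p} S/p$ is a bijection (this is the content of the paper's Definition \ref{sharp_map_definition}, following \cite[Lemma 3.2(i)]{BMS1}), so an element of $S$ equipped with a compatible system of $p$-power roots is determined by the images of that system in $S/p$. Since $f_i([r])$ comes with the compatible roots $f_i([r^{1/p^n}])$ whose mod-$p$ reductions are forced to be $g(r)_n$, the two lifts must agree on each $[r]$, and then on all of $W(R)$ by $p$-adic continuity. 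This is certainly what you intended by ``the ghost/Teichm\"uller formalism,'' but as written the sentence reads as a statement internal to $W(R)$, where it is vacuous. With that clarification the argument is complete.
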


\begin{proof} \cite[Proposition 3.12]{phodge_according_Beilinson}
\end{proof}

\begin{defn} The Fontaine functor
\begin{equation*}
	\A_{\inf} (-) : \{ p \text{-adically complete } \ZZ_p \text{-algebras} \} \longrightarrow 
	\{ p \text{-adically complete } \ZZ_p \text{-algebras} \}
\end{equation*}
is defined as the composition $\A_{\inf} (-) = W((-)^\flat)$. Hence, for every $p$-complete ring $S$ we obtain the counit of adjunction
\begin{equation*}
	\theta: \A_{\inf} (S) \rightarrow S
\end{equation*}
The counit of adjunction can be characterized as the unique map making the following diagram commute
\begin{cd}
	\A_{\inf} (S) \ar[r] \ar[d, "\bmod p", swap] & S \ar[d, "\bmod p"] \\
	S^{\flat} \ar[r] & S/p
\end{cd}
where the bottom map $S^{\flat} \rightarrow S/p$ is the projection $S^{\flat} = \lim_{s \mapsto s^p} S/p \rightarrow S/p$.
\end{defn}

\begin{defn}\label{sharp_map_definition} Let $A$ be a $\pi$-complete ring, for some element $\pi \in A$ dividing $p$. Then, by \cite[Lemma 3.2(i)]{BMS1} we know that the canonical maps of multiplicative monoids, 
\begin{equation*}
	\lim_{a \mapsto a^p} A \rightarrow \lim_{a \mapsto a^p} A/p \rightarrow \lim_{a \mapsto a^p} A/\pi
\end{equation*}	
are isomorphisms. By projection onto the last coordinate we obtain a map of multiplicative monoids
\begin{equation*}
	\sharp: A^{\flat} = \lim_{a \mapsto a^p} A \rightarrow A \qquad a \mapsto a^{\sharp}
\end{equation*}
which is called the sharp map.
\end{defn}

\begin{lemma} For a perfect $\FF_p$-algebra $R$, there exists a unique multiplicative section
\begin{equation*}
	[-]: R \rightarrow W(R)
\end{equation*}
of the projection map $\bmod p : W(R) \rightarrow R$. Furthermore, for every $f \in W(R)$ there exists a unique $p$-adic expansion
\begin{equation*}
	f = \sum_{i = 0}^{\infty} [a_i]p^i
\end{equation*}
called the Teichmuller expansion of $f$.
\end{lemma}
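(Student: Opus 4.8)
The plan is to establish existence and uniqueness of the multiplicative section $[-]\colon R \to W(R)$ first, and then deduce the $p$-adic (Teichmüller) expansion from it. For the section, I would use the flatness characterization of $W(R)$ from Lemma \ref{p-Witt_unique_lift}: $W(R)$ is $p$-adically complete, $p$-torsion free, and $W(R)/p = R$. To construct $[x]$ for $x \in R$, I would first lift $x$ to any element $\widetilde{x}_0 \in W(R)$ modulo $p$, then lift $x^{1/p}$ to $\widetilde{x}_1$, and so on, so that $\widetilde{x}_n \equiv (\text{lift of } x^{1/p^n}) \bmod p$. The key computation is the standard lemma: if $a \equiv b \bmod p^k$ in a $p$-torsion-free ring, then $a^p \equiv b^p \bmod p^{k+1}$. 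Applying this to the sequence $\widetilde{x}_n^{p^n}$ shows it is $p$-adically Cauchy, and I would define $[x] := \lim_{n} \widetilde{x}_n^{p^n}$, which exists by $p$-adic completeness. One checks this limit is independent of all the choices of lifts (again by the congruence-raising lemma), that $[x] \bmod p = x$ (since $\widetilde{x}_n^{p^n} \equiv x \bmod p$, using perfectness of $R$), and that $[xy] = [x][y]$ (since the lifts multiply and the limit is compatible with multiplication). Uniqueness of a multiplicative section: if $s\colon R \to W(R)$ is any multiplicative section of $\bmod\, p$, then for each $x$ we have $s(x) = s(x^{1/p^n})^{p^n}$, and $s(x^{1/p^n})$ is a lift of $x^{1/p^n}$, so $s(x) = s(x^{1/p^n})^{p^n}$ agrees with the limit defining $[x]$; hence $s = [-]$.

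For the Teichmüller expansion, given $f \in W(R)$, I would construct the coefficients $a_i \in R$ recursively. Set $a_0 := f \bmod p \in W(R)/p = R$; then $f - [a_0] \equiv 0 \bmod p$, so $f - [a_0] = p f_1$ for a unique $f_1 \in W(R)$ (uniqueness by $p$-torsion-freeness). Set $a_1 := f_1 \bmod p$, and iterate: having written $f = \sum_{i=0}^{n-1} [a_i] p^i + p^n f_n$, put $a_n := f_n \bmod p$ and $f_{n+1} := (f_n - [a_n])/p$. The partial sums $\sum_{i=0}^{n-1} [a_i] p^i$ then converge $p$-adically to $f$ by $p$-adic completeness of $W(R)$, giving the expansion $f = \sum_{i=0}^\infty [a_i] p^i$. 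Uniqueness of the expansion follows by the same recursion read backwards: the coefficient $a_0$ is forced to be $f \bmod p$, and then $a_1, a_2, \dots$ are each forced in turn, using that $[-]$ is the unique multiplicative section and that multiplication by $p$ is injective.

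The main obstacle — really the only nontrivial point — is proving that the limit $[x] = \lim_n \widetilde{x}_n^{p^n}$ exists and is well-defined independent of the chosen lifts, which rests entirely on the elementary congruence lemma ``$a \equiv b \bmod p^k \Rightarrow a^p \equiv b^p \bmod p^{k+1}$ in a $p$-torsion-free ring'' together with the perfectness of $R$ (so that $p^n$-th roots $x^{1/p^n}$ exist to be lifted). Everything else is bookkeeping with $p$-adic completeness and $p$-torsion-freeness. I would remark that this is the classical construction of Teichmüller representatives and could alternatively be cited from \cite{phodge_according_Beilinson}, but the argument above is short enough to include.
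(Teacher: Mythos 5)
Your proposal is correct, and it is the standard construction of Teichm\"uller representatives: the paper itself offers no argument for this lemma, deferring entirely to \cite[Lecture 2 - Construction 3.6]{bhatt2018lectures}, and what you write out (lifting $x^{1/p^n}$, taking $p$-adic limits of $p^n$-th powers via the congruence ``$a \equiv b \bmod p^k \Rightarrow a^p \equiv b^p \bmod p^{k+1}$'', then extracting the expansion recursively using $p$-torsion-freeness and completeness) is precisely the argument carried out in that reference. The only cosmetic remark is that the congruence-raising lemma holds in any commutative ring; $p$-torsion-freeness of $W(R)$ is genuinely needed only later, for the uniqueness of the quotients $f_n$ in the expansion step.
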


\begin{proof} \cite[Lecture 2 - Construction 3.6]{bhatt2018lectures}
\end{proof}

\begin{lemma}\label{theta_characterization} For any $p$-complete ring $S$, the counit map $\theta: \A_{\inf} (S) \rightarrow S$ satisfies $\theta([a]) = a^{\sharp}$. Moreover, the map $\theta$ agrees with the map $\theta_1: \A_{\inf} (S) \rightarrow S$ of \cite[Lemma 3.3]{BMS1}.
\end{lemma}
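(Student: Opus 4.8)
\textbf{Proof plan for Lemma \ref{theta_characterization}.}

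The plan is to reduce both assertions to the universal characterizations already assembled in the excerpt: the adjunction $W(-) \dashv (-)^\flat$ together with the commuting square defining $\theta$, and the Teichm\"uller section $[-]\colon R \to W(R)$. First I would verify $\theta([a]) = a^\sharp$ for $a \in S^\flat$. Unwinding the definition of the sharp map (Definition \ref{sharp_map_definition}), $a^\sharp$ is the image of $a$ under the projection $S^\flat = \lim_{s \mapsto s^p} S \to S$ onto the last coordinate, where we have used that $\lim_{s\mapsto s^p} S \to \lim_{s\mapsto s^p} S/p$ is an isomorphism of multiplicative monoids. On the other side, $\theta$ is characterized as the unique map fitting into
\begin{cd}
	\A_{\inf}(S) \ar[r, "\theta"] \ar[d, "\bmod p", swap] & S \ar[d, "\bmod p"] \\
	S^\flat \ar[r] & S/p
\end{cd}
where the bottom arrow is the projection onto the last coordinate followed by reduction mod $p$. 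Since the Teichm\"uller lift $[-]$ is a multiplicative section of $\bmod p\colon \A_{\inf}(S) = W(S^\flat) \to S^\flat$, the composite $\bmod p \circ \theta \circ [-]\colon S^\flat \to S/p$ equals the bottom arrow of the square evaluated on $[a]$, i.e. it is $a \mapsto (a^\sharp \bmod p)$. Now $a \mapsto \theta([a])$ and $a \mapsto a^\sharp$ are both multiplicative maps $S^\flat \to S$ lifting this same map to $S/p$; the key point is that such a multiplicative lift is unique. This uniqueness is exactly the content of \cite[Lemma 3.2]{BMS1} (the identification $\lim_{s\mapsto s^p} S \xrightarrow{\sim} \lim_{s\mapsto s^p} S/p$): a multiplicative map into a $p$-complete ring lifting a given map to $S/p$, and which is compatible with $p$-power operations in the appropriate sense, is forced — concretely, both $\theta([a])$ and $a^\sharp$ are obtained as $\lim_{n} (x_n)^{p^n}$ for any compatible system of lifts $x_n$ of the components of $a$, hence they coincide. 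So I would phrase the argument as: pick representatives, take $p$-power limits, and invoke the convergence lemma.

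For the second assertion, that $\theta$ agrees with $\theta_1$ of \cite[Lemma 3.3]{BMS1}, I would use that $\theta_1$ is by construction the unique ring homomorphism $\A_{\inf}(S) \to S$ sending a Teichm\"uller representative $[a]$ to $a^\sharp$ (and extending $p$-adically via the Teichm\"uller expansion $f = \sum [a_i] p^i \mapsto \sum a_i^\sharp p^i$, using $p$-completeness of $S$). Since we have just shown $\theta([a]) = a^\sharp$ and $\theta$ is a ring homomorphism continuous for the $p$-adic topology, $\theta$ satisfies the same defining property, so $\theta = \theta_1$ by that uniqueness. Alternatively, one observes both maps complete the same mod-$p$ square and both are ring maps, and any two ring maps $W(S^\flat) \to S$ agreeing on all Teichm\"uller lifts agree, because the Teichm\"uller lifts topologically generate $W(S^\flat)$ over $\ZZ_p$.

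The main obstacle I anticipate is purely bookkeeping rather than conceptual: making precise the uniqueness of the multiplicative lift and the passage through the chain of monoid isomorphisms $\lim_{a\mapsto a^p} S \to \lim_{a\mapsto a^p} S/p \to \lim_{a\mapsto a^p} S/\pi$ without circularity, since the sharp map, the Teichm\"uller map, and $\theta$ are all defined via closely related universal properties. The cleanest route is to fix once and for all the description ``take any lifts of the coordinates, raise to successive $p$-th powers, and pass to the limit,'' show this computes $a^\sharp$, $[a] \bmod$ nothing, and $\theta([a])$ simultaneously, and let the convergence statement of \cite[Lemma 3.2]{BMS1} do the rest. Everything else is a formal consequence of the adjunction and $p$-adic continuity already recorded above.
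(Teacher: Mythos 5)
Your argument is sound and, notably, supplies a direct proof where the paper gives none: the paper's proof of this lemma consists entirely of the citation \cite[Footnote page 8]{purityflat}. The working heart of your argument is correct. Setting $x_n := \theta([a^{1/p^n}])$, multiplicativity of the Teichm\"uller map together with $\theta$ being a ring homomorphism gives $x_n^{p^n} = \theta([a])$ for every $n$, the $x_n$ lift the coordinates of $a$ in $S/p$, and the independence of the $p$-power limit $\lim_n x_n^{p^n}$ on the choice of lifts (the content of \cite[Lemma 3.2]{BMS1}) identifies $\theta([a])$ with $a^\sharp$. The second assertion then follows, as you say, from the Teichm\"uller expansion, $p$-completeness of $S$, and the $p$-adic continuity of maps between $p$-complete modules, which the paper records two lemmas earlier. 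One phrasing to tighten: the claim that ``such a multiplicative lift is unique'' is not, as stated, a general truth about multiplicative maps $S^\flat \to S$, and \cite[Lemma 3.2]{BMS1} does not assert it; what actually forces the value is the constancy of $x_n^{p^n}$ in $n$ together with limit-independence, which you do supply in the next clause. Leading with the limit computation rather than with a uniqueness slogan would remove the brief impression that multiplicativity and agreement mod $p$ alone pin down the map.
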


\begin{proof} \cite[Footnote page 8]{purityflat}
\end{proof}

\begin{defn}[Integral Perfectoid]\label{int_perf_def} A ring $S$ is integral perfectoid if
	\begin{enumerate}[(1)]
		\item The ring $S$ is $\pi$-complete for some element $\pi \in S$, such that $\pi^p$ divides $p$.
		\item The counit map $\theta: \A_{\inf}(S) \rightarrow S$ is surjective and its kernel is principal.
	\end{enumerate}
\end{defn}

\begin{example} Any perfect ring of characteristic $p$ is an integral perfectoid ring. Indeed, for any perfect $\FF_p$-algebra $R$ we have that $R$ is complete with respect to $0 \in R$ and the map $\theta: W(R) \rightarrow R$ is generated by $p$.
\end{example}

\begin{rem} Let $A$ be a ring, and $M$ a $J$-adically complete $A$-module for some ideal $J \subset A$. By \cite[Tag 090T]{stacks-project} we know that for any finitely generated ideal $I \subset J \subset A$ the module $M$ is also $I$-adically complete. Therefore, any perfectoid ring $R$ is $p$-adically complete.
\end{rem}

\begin{lemma} A ring $S$ is integral perfectoid if and only if it satisfies the conditions of \cite[Definition 3.5]{BMS1}, that is: $S$ is $\pi$-complete for some element $\pi \in S$ such that $\pi^p$ divides $p$, the Frobenius map $\varphi: S/p \rightarrow S/p$ is surjective, and the kernel $\theta: \A_{\inf} (S) \rightarrow S$ is principal.
\end{lemma}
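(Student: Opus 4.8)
The plan is to match the two definitions condition by condition. Both require that $S$ be $\pi$-complete for some $\pi$ with $\pi^p\mid p$, and both require that $\ker\theta$ be principal; the only discrepancy is that Definition \ref{int_perf_def} asks for the counit $\theta\colon \A_{\inf}(S)\to S$ to be surjective, whereas \cite[Definition 3.5]{BMS1} asks for the Frobenius $\varphi\colon S/p\to S/p$ to be surjective. Hence the entire content of the lemma is the claim: \emph{if $S$ is $\pi$-complete with $\pi^p\mid p$, then $\theta$ is surjective if and only if $\varphi\colon S/p\to S/p$ is surjective.} I would prove this in three steps.

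First I would reduce surjectivity of $\theta$ to surjectivity of its reduction $\bar\theta\colon \A_{\inf}(S)/p\to S/p$. Since $\pi^p$ divides $p$ we have $p\in(\pi)$, so $S$ is $p$-adically complete --- in particular $p$-adically separated --- by \cite[Tag 090T]{stacks-project}, just as recorded in the remark above; moreover $\A_{\inf}(S)=W(S^\flat)$ is $p$-adically complete with $\A_{\inf}(S)/p=S^\flat$ by Lemma \ref{p-Witt_unique_lift}, and $\theta$ is $p$-adically continuous by the continuity lemma above. Granting these, one direction is trivial, and for the other: given $b\in S$, surjectivity of $\bar\theta$ lets one build inductively elements $a_i\in\A_{\inf}(S)$ with $b-\sum_{i<n}p^i\theta(a_i)\in p^nS$ for every $n$; then $a:=\sum_{i\ge0}p^ia_i$ converges in $\A_{\inf}(S)$ by $p$-completeness, and $p$-adic continuity of $\theta$ together with $p$-adic separatedness of $S$ give $\theta(a)=b$.

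Next I would identify $\bar\theta$ with a projection. By Lemma \ref{p-Witt_unique_lift} we have $\A_{\inf}(S)/p=W(S^\flat)/p=S^\flat$, and the diagram characterizing $\theta$ (together with the definition of $\A_{\inf}$) shows that $\bar\theta$ is exactly the projection $S^\flat=\lim_{a\mapsto a^p}S/p\to S/p$ onto the zeroth coordinate. Finally I would observe that this projection is surjective if and only if $\varphi\colon S/p\to S/p$ is surjective: if $\varphi$ is surjective, then any $x_0\in S/p$ admits a recursively chosen compatible tower $x_{n+1}^p=x_n$, giving a preimage $(x_n)_n\in S^\flat$; conversely, for any $(x_n)_n\in S^\flat$ lying over $y\in S/p$ the relation $x_1^p=x_0=y$ shows $y\in\im\varphi$. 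Chaining the three steps yields the desired equivalence, and hence the lemma.

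I do not expect a serious obstacle here. The only points that need care are the bookkeeping of completeness hypotheses --- in particular deducing $p$-adic completeness of $S$ from $\pi$-completeness together with $\pi^p\mid p$ --- and the routine $p$-adic convergence argument in the first step; neither is deep.
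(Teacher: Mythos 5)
Your proof is correct and agrees with the paper on the forward direction (reduce $\theta$ mod $p$ to the projection $S^\flat\to S/p$, then pass to Frobenius via the inverse-limit description). Where you diverge is the converse: the paper outsources it entirely to \cite[Lemma 3.9(v)]{BMS1}, whereas you reprove it from scratch with the standard successive-approximation argument, lifting surjectivity of $\bar\theta$ to surjectivity of $\theta$ using $p$-adic completeness of $\A_{\inf}(S)=W(S^\flat)$, $p$-adic completeness (hence separatedness) of $S$, and $p$-adic continuity of $\theta$. Your version is therefore more self-contained and makes transparent that the principality hypothesis on $\ker\theta$ plays no role in the equivalence ``$\theta$ surjective $\iff$ $\varphi$ surjective'' --- it is simply carried along on both sides of the definitions. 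The paper's citation is shorter; your direct argument is arguably more informative since it isolates exactly which completeness hypotheses are doing the work. Both are valid.
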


\begin{proof} If $S$ is integral perfectoid, it suffices to show that $\varphi: S/p \rightarrow S/p$ is surjective. Indeed, by hypothesis we know that the map $\A_{\inf} (S) \rightarrow S$ is surjective and so its reduction mod $p$, the map $S^{\flat} \rightarrow S/p$, is also surjective. Then, the perfectness of $S^{\flat}$ implies that Frobenius $\varphi: S/p \rightarrow S/p$ is also surjective. Conversely, if $S$ satisfies the conditions of \cite[Definition 3.5]{BMS1}, it suffices to show that the map $\theta: \A_{\inf} (S) \rightarrow S$ is surjective, but this follows from \cite[Lemma 3.9(v)]{BMS1}.
\end{proof}

\begin{lemma}\label{compatible_roots_perfectoid} Let $S$ be an integral perfectoid ring which is $\pi$-complete with respect to an element $\pi \in S$ such that $\pi^p$ divides $p$. Then there exist $u,v \in S^{\times}$ such that $u\pi$ and $vp$ admit systems of $p$-power roots. 
\end{lemma}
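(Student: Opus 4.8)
The plan is to construct the desired units $u, v \in S^\times$ using the Teichm\"uller lift of a well-chosen element of the tilt $S^\flat$. The key point is that, because $S^\flat = \lim_{a \mapsto a^p} S/\pi$ is perfect, it automatically contains $p$-power-compatible systems; the sharp map then transports these down to $S$, at the cost of a unit discrepancy that we must show is actually a unit.

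First I would recall the setup: $S$ is integral perfectoid and $\pi$-complete with $\pi^p \mid p$. Since $\varphi \colon S/\pi \to S/\pi$ is surjective (this follows from the integral perfectoid hypothesis, exactly as in the lemma comparing our definition with \cite[Definition 3.5]{BMS1}), we can choose an element $\bar\pi^\flat \in S^\flat$ whose image under the projection $S^\flat \to S/\pi$ is $\pi \bmod \pi$\,---\,wait, that is zero, so instead: choose $\bar\pi^\flat \in S^\flat$ projecting to $\pi^{1/p} \bmod \pi$ (possible by surjectivity of Frobenius, lifting $\pi$ through successive $p$-th roots modulo $\pi$). Then $(\bar\pi^\flat)^\sharp \in S$ and, tracing through Definition \ref{sharp_map_definition} and the fact that the projection $\lim_{a\mapsto a^p} S \to \lim_{a \mapsto a^p} S/\pi$ is an isomorphism, one finds $(\bar\pi^\flat)^\sharp \equiv \pi \pmod{\pi^2}$. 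Hence $(\bar\pi^\flat)^\sharp = \pi(1 + \pi x)$ for some $x \in S$, and since $S$ is $\pi$-complete, $1 + \pi x \in S^\times$; set $u = (1 + \pi x)^{-1}$, so $u\pi = (\bar\pi^\flat)^\sharp$ admits the system of $p$-power roots $\{((\bar\pi^\flat)^{1/p^n})^\sharp\}_{n}$, using that the sharp map is multiplicative and $\bar\pi^\flat$ has $p$-power roots in the perfect ring $S^\flat$.

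The argument for $v$ is parallel: using the surjectivity of $\theta \colon \A_{\inf}(S) \to S$ together with the fact that its mod-$p$ reduction is $S^\flat \to S/p$, pick $\bar p^\flat \in S^\flat$ with $(\bar p^\flat)^\sharp \equiv p \pmod {p\pi}$ (one lifts $p$ through compatible $p$-th roots modulo $\pi$, using $\pi^p \mid p$ and surjectivity of Frobenius to stay coherent). Then $(\bar p^\flat)^\sharp = p(1 + \pi y)$ with $1 + \pi y \in S^\times$ by $\pi$-completeness, and $v = (1 + \pi y)^{-1}$ works, with the $p$-power roots of $vp$ given by the sharps of the $p$-power roots of $\bar p^\flat$.

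I expect the main obstacle to be the congruence bookkeeping\,---\,precisely, verifying that a Teichm\"uller-type lift of $\pi$ (resp.\ $p$) can be chosen so that its sharp differs from $\pi$ (resp.\ $p$) by something in $\pi S$ rather than merely agreeing modulo $\pi$. This requires carefully exploiting the isomorphism $\lim_{a\mapsto a^p} S \xrightarrow{\sim} \lim_{a \mapsto a^p} S/\pi$ from \cite[Lemma 3.2(i)]{BMS1} (invoked in Definition \ref{sharp_map_definition}) to promote an equality modulo $\pi$ of the first coordinate to actual control of the sharp in $S$, and then checking that the resulting error term is a $\pi$-multiple so that $\pi$-completeness yields invertibility. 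Everything else\,---\,multiplicativity of $\sharp$, perfectness of $S^\flat$ supplying the roots\,---\,is formal.
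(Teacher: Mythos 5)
Your high-level strategy---transport $p$-power roots through $\sharp$ from the perfect tilt and absorb the discrepancy into a unit---is indeed the content of \cite[Lemma~3.9]{BMS1}, which the paper cites without further argument. But the crucial congruence is asserted, not proved, and it does not follow from the choice you make. For $\pi$: ``choose $\bar\pi^\flat \in S^\flat$ projecting to $\pi^{1/p} \bmod \pi$'' is not a determinate instruction, since $S/\pi$ has no distinguished element ``$\pi^{1/p}$''. Lifting $\pi \bmod \pi = 0$ through Frobenius \emph{mod $\pi$} produces a sequence $(0, b_1, b_2, \dots) \in \lim_{a\mapsto a^p}S/\pi$, and the inverse of the isomorphism $\lim_{a\mapsto a^p}S \to \lim_{a\mapsto a^p}S/\pi$ sends it to $a_0 = \lim_k \tilde b_k^{p^k} \in \pi S$ over which you have no further control (the all-zero sequence is one valid choice and gives $a_0 = 0$). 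The congruence $(\bar\pi^\flat)^\sharp \equiv \pi \pmod{\pi^2}$ needs the stronger input that $a \mapsto a^p\colon S/\pi \to S/\pi^p$ is surjective (Lemma~\ref{perfectoid_frob_mod_p}): pick $\tilde b_1$ with $\tilde b_1^p \equiv \pi \pmod{\pi^p}$, extend to a compatible sequence, and a binomial estimate using $\pi^p \mid p$ gives $a_0 \equiv \tilde b_1^p \equiv \pi \pmod{\pi^p}$, hence $a_0 = \pi(1 + \pi^{p-1}t)$. (Equivalently and more cleanly: write $\pi = \theta(f)$ with $f = [a_0] + pg$ in $\A_{\inf}(S)$; then $a_0^\sharp = \pi - p\theta(g)$, and $pS \subset \pi^pS \subset \pi^2 S$ gives the congruence directly.)

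The $p$ case has a more serious gap. From $p = \theta(f')$ with $f' = [a_0'] + pg'$ you only get $(a_0')^\sharp = p(1 - \theta(g'))$, and nothing forces $1 - \theta(g')$ to be a unit: $\theta(g')$ is an arbitrary element of $S$, not a topological nilpotent. The congruence $(\bar p^\flat)^\sharp \equiv p \pmod{p\pi}$ you invoke to fix this does not come out of ``lifting $p$ through Frobenius modulo $\pi$,'' nor from surjectivity of $\theta$ alone. What makes the $p$ case go is the \emph{second} half of the definition of integral perfectoid, which your proposal never touches: $\ker\theta$ is principal, generated by a distinguished element $d = \sum_i [a_i]p^i$ with $a_1 \in S^{\flat\times}$. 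Applying $\theta$ to $d = 0$ in $S$ gives $-a_0^\sharp = p\bigl(a_1^\sharp + a_2^\sharp p + \cdots\bigr)$, and the factor in parentheses is a unit because $a_1^\sharp \in S^\times$ and the remainder lies in $pS \subset \pi^pS$. So the two halves of the lemma genuinely require two different inputs, and the $p$ half needs the prismatic structure of $\ker\theta$; this is the missing ingredient.
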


\begin{proof} \cite[Lemma 3.9]{BMS1}
\end{proof}

\subsection{\texorpdfstring{$\delta$}{delta}-rings and perfect prisms}

\begin{defn} A $\delta$-ring is a pair $(R, \delta)$ where $R$ is a commutative $\ZZ_{(p)}$-algebra and $
	\delta: R \rightarrow R$ is a map of sets with $\delta(0) = \delta(1) = 0$, satisfying the following two identities
\begin{align*}
	\delta(xy) =& x^p \delta(y) + y^p \delta(x) + p \delta(x) \delta(y)  \\
	\delta(x + y) =& \delta(x) + \delta(y) + \frac{x^p + y^p - (x+y)^p}{p} =
	 \delta(x) + \delta(y) - (p-1)! \sum_{i = 1}^{p-1} \frac{x^i}{i!} \frac{y^{p-i}}{(p-i)!}
\end{align*}
A morphism of $\delta$-rings $(R, \delta_R) \rightarrow (S, \delta_S)$ consists of a morphism of commutative rings $f:R \rightarrow S$ such that the following diagram of sets commutes
\begin{cd}
	R \ar[r, "\delta_R"] \ar[d, "f", swap] & R \ar[d, "f"] \\
	S \ar[r, "\delta_S"] & S
\end{cd}
\end{defn}

\begin{rem}[$\delta$-structures and Frobenius lifts] For a given $\delta$-ring $(R, \delta)$, we write $\varphi: R \rightarrow R$ for the map defined by $\varphi(x) = x^p + p \delta(x)$; the identities on $\delta$ ensure that $\varphi: R \rightarrow R$ is a ring homomorphism making the following diagram commute
\begin{cd}
	R \ar[r, "\varphi"] \ar[d, "\bmod p", swap] & R \ar[d, "\bmod p"] \\
	R/p \ar[r, "\text{Frob}"] & R/p
\end{cd}
In other words, $\varphi: R \rightarrow R$ lifts Frobenius on $R/p$.

In this paper we will only make use of delta rings $(R, \delta)$ where the underlying ring $R$ is $p$-torsion free. In this situation, any lift $\varphi: R \rightarrow R$ of the Frobenius on $R/p$ comes from a unique $\delta$-structure on $R$; given by the formula $\delta(x) = \frac{\varphi(x) - x^p}{p}$. In other words, if $R$ is $p$-torsion free then a specifying a $\delta$-structure on $R$ is the same as specifying a morphism of rings $\varphi: R \rightarrow R$ that lifts Frobenius on $R/p$.
\end{rem}

\begin{prop}\label{delta_rings_lim_colim} The category of $\delta$-rings admits all limits and colimits, and the forgetful functor
\begin{equation*}
	\{ \delta \text{-rings} \} \longrightarrow \{ \text{Rings} \}
\end{equation*}
preserves limits and colimits.
\end{prop}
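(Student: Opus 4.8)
The plan is to analyze limits and colimits separately, using in both cases that a $\delta$-structure is extra structure on a ring of a particularly rigid type (it is a single map of sets satisfying polynomial identities), so it transports along the relevant universal constructions.

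First I would treat limits. Given a diagram $i \mapsto (R_i, \delta_i)$ of $\delta$-rings, form the limit $R := \lim_i R_i$ in the category of rings, with projections $p_i \colon R \to R_i$. I claim there is a unique $\delta \colon R \to R$ compatible with the $\delta_i$. Since $R = \lim_i R_i$ as a set as well (the forgetful functor from rings to sets preserves limits), the maps $\delta_i \circ p_i \colon R \to R_i$ assemble, by the universal property of the limit \emph{as a set}, into a unique map of sets $\delta \colon R \to R$ with $p_i \circ \delta = \delta_i \circ p_i$. It remains to check that this $\delta$ satisfies $\delta(0) = \delta(1) = 0$ and the two addition/multiplication identities. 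This is immediate: each identity is an equation between two maps $R \to R$ (or $R \times R \to R$) built from $\delta$ and the ring operations, and composing with the jointly-monic family $\{p_i\}$ reduces it to the corresponding identity in each $R_i$, which holds because $(R_i, \delta_i)$ is a $\delta$-ring. Uniqueness of $\delta$ was built into the construction, and the same joint-monicity argument shows that $(R, \delta)$ with the $p_i$ satisfies the universal property of the limit in $\delta$-rings. Hence limits exist and are computed on underlying rings.

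Next I would treat colimits, where the argument is less formal because the forgetful functor to rings does not obviously create colimits. The cleanest route is to invoke a general adjoint-functor/monadicity principle: the forgetful functor $U \colon \{\delta\text{-rings}\} \to \{\text{Rings}\}$ has a left adjoint (the free $\delta$-ring functor, $R \mapsto R\{x\}$ in the notation of Joyal; concretely the underlying ring of the free $\delta$-ring on $R$ is a polynomial ring $R[x_0, x_1, \dots]$ with $x_0 = $ image of a generator and $\delta(x_n) = x_{n+1}$), and $U$ is conservative and preserves reflexive coequalizers, so by Beck's monadicity theorem $\{\delta\text{-rings}\}$ is the category of algebras for a monad on $\{\text{Rings}\}$; categories of algebras over a cocomplete category for an accessible (here, finitary) monad are cocomplete. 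This gives all colimits. To then show $U$ preserves colimits, I would observe that the monad in question preserves filtered colimits and that colimits along the forgetful functor can be computed: for filtered colimits and for coproducts this is straightforward (a filtered colimit of $\delta$-rings carries an evident $\delta$-structure; a coproduct $\bigotimes R_i$ carries the $\delta$-structure determined on the image of each factor), and a general colimit is built from these together with coequalizers, for which one checks directly that the coequalizer of $\delta$-ring maps, formed in rings, inherits a $\delta$-structure because $\delta$ descends along the quotient — here one uses that the kernel-ideal is generated by differences of elements identified by the two parallel maps, and the $\delta$-identities show $\delta$ of such a difference again lies in the ideal.

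The main obstacle is the colimit half, and specifically verifying that $\delta$ descends along coequalizers (equivalently, along quotients $R \twoheadrightarrow R/J$ where $J$ is generated as an ideal by a $\varphi$-stable — better, $\delta$-stable — set): one must check $\delta(J) \subseteq J$, which amounts to the computation that $\delta(aj) $ and $\delta(a + j)$ lie in $J$ whenever $j \in J$, using the two structural identities and the fact that modulo $J$ the element $j$ vanishes. The limit half is essentially formal once one notes the forgetful functor to \emph{sets} creates limits and the $\delta$-axioms are equational; I do not expect any difficulty there. If one prefers to avoid monadicity entirely, an alternative for colimits is to construct them by hand as quotients of free $\delta$-rings by $\delta$-stable ideals, but the bookkeeping is the same and the monadic formulation is cleaner.
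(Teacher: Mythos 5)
The paper offers no argument of its own here: it simply cites \cite[Remark 2.7]{prisms}, where the statement is obtained by ``general nonsense'' from the fact that a $\delta$-structure on $R$ is the same datum as a ring-homomorphism section of the restriction map $W_2(R) \to R$, so that the forgetful functor $U$ has both a left adjoint (free $\delta$-rings) and a right adjoint (the $p$-typical Witt vectors $W(-)$), whence it preserves all limits and colimits, with existence following from (co)monadicity. Measured against this, your limit argument and your coequalizer argument are correct: limits are the standard equational transport, and your descent check works because $\delta(f(r)-g(r)) \equiv f(\delta_R r) - g(\delta_R r)$ modulo the ideal generated by the differences (using that $f,g$ are $\delta$-maps), so the generating set is $\delta$-stable modulo the ideal and the sum/product identities propagate this to all of $J$, hence $\delta$ descends to the quotient.

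The genuine gap is the coproduct step. You assert that $\bigotimes_i R_i$ ``carries the $\delta$-structure determined on the image of each factor,'' but $\delta$ is neither additive nor multiplicative, so prescribing its values on the images of the factors does not determine a map out of a tensor product; well-definedness (independence of the chosen representation of an element of $A \otimes B$, including in the presence of $p$-torsion, where one cannot fall back on Frobenius lifts) is exactly the content that must be proved, and your plan gives no mechanism for it. The standard repair is the $W_2$-description implicit in the cited remark: since a $\delta$-structure is a \emph{ring map} $R \to W_2(R)$ splitting the restriction, and $W_2$ is functorial, the sections $R_i \to W_2(R_i)$ induce, by the universal property of the coproduct of rings, a ring map $\bigotimes_i R_i \to W_2(\bigotimes_i R_i)$, which one checks is again a section; this also handles infinite coproducts uniformly. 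More efficiently, once you know $U$ has the right adjoint $W(-)$, preservation of \emph{all} colimits is automatic, and both your case-by-case analysis and the monadicity detour (which is anyway redundant given your direct coequalizer construction) can be dropped; existence of colimits then follows from presentability or from comonadicity over $W$.
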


\begin{proof} \cite[Remark 2.7]{prisms}
\end{proof}

\begin{lemma}[Completions] Let $A$ be a $\delta$-ring, and $I \subset A$ be a finitely generated ideal containing $p$. Then, the map $\delta: A \rightarrow A$ is $I$-adically continuous: more precisely, for each $n$ there is some $m$ such that for all $x \in A$, one has $\delta(x + I^m) \subset \delta(x) + I^n$.

Moreover, the $I$-adic completion of $A$, denoted by $A^\wedge_{I}$, acquires a unique $\delta$-structure making the following diagram of sets commute
\begin{cd}
	A \ar[r, "\delta"] \ar[d] & A \ar[d] \\
	A^{\wedge}_I \ar[r, "\delta"] & A^{\wedge}_I
\end{cd}
\end{lemma}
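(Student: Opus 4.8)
The plan is to prove the continuity estimate first, in the sharp form $\delta(I^m) \subseteq I^{m-1}$, and then obtain the extension to the completion as a formal consequence. Write $\varepsilon(x,y) := \delta(x+y) - \delta(x) - \delta(y)$. The additivity identity for $\delta$ rewrites this as $\varepsilon(x,y) = \tfrac{x^p + y^p - (x+y)^p}{p} = -\sum_{i=1}^{p-1}\tfrac{1}{p}\binom{p}{i}x^i y^{p-i}$, and since $\tfrac{1}{p}\binom{p}{i} \in \ZZ$ for $1 \le i \le p-1$ and every monomial appearing is divisible by $xy$, we get $\varepsilon(x,y) \in xyA$. Iterating the additivity identity then gives, for any $u_1, \dots, u_N \in A$, the estimate $\delta(\sum_k u_k) - \sum_k \delta(u_k) \in \sum_{j \ne k} u_j u_k A$.

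Next I would prove $\delta(I^m) \subseteq I^{m-1}$ for all $m \ge 1$ by induction on $m$, the case $m = 1$ being the trivial inclusion $\delta(I) \subseteq A$. Fix $m \ge 1$, assume the claim for $m$, and let $z \in I^{m+1}$; write $z = \sum_k a_k g_k$ with $a_k \in A$ and each $g_k$ a product of $m+1$ generators of $I$. The iterated additivity estimate contributes an error in $I^{2(m+1)} \subseteq I^m$, so it suffices to bound each $\delta(a_k g_k)$, and the Leibniz identity $\delta(ag) = a^p\delta(g) + g^p \delta(a) + p\,\delta(a)\delta(g)$ shows it is enough to prove $\delta(g) \in I^m$ for $g$ a product of $m+1$ generators — for then all three terms of $\delta(a_k g_k)$ lie in $I^m$, using $g^p \in I^{(m+1)p} \subseteq I^m$ and $p \in I$. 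Writing such a $g$ as $g = fh$ with $f$ a single generator and $h$ a product of $m$ generators, the Leibniz identity gives $\delta(g) = f^p \delta(h) + h^p \delta(f) + p\,\delta(f)\delta(h)$, and each summand lies in $I^m$: the first because $f^p \in I^p$ and $\delta(h) \in I^{m-1}$ by the inductive hypothesis, the second because $h^p \in I^{mp}$, and the third because $p \in I$ and $\delta(h) \in I^{m-1}$. This completes the induction.

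Granting the estimate, the continuity assertion is immediate: given $n$, put $m = n+1$; then for any $x \in A$ and $y \in I^{n+1}$ we have $\delta(x+y) - \delta(x) = \delta(y) + \varepsilon(x,y)$ with $\delta(y) \in I^n$ and $\varepsilon(x,y) \in yA \subseteq I^{n+1} \subseteq I^n$, so $\delta(x + I^{n+1}) \subseteq \delta(x) + I^n$. Since this $m$ depends only on $n$, the map $\delta$ is uniformly continuous for the $I$-adic topology and hence extends uniquely to a continuous map $\delta \colon A^\wedge_I \to A^\wedge_I$ (the $n$-th component of $\delta(\bar x)$ being $\delta$ applied to a lift of $\bar x$ modulo $I^{m(n)}$, well-defined modulo $I^n$), agreeing with the original $\delta$ on the dense image of $A$. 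The identities $\delta(0) = \delta(1) = 0$ and the two defining polynomial identities hold on $A$ and hence, both sides being continuous, on all of $A^\wedge_I$; so $(A^\wedge_I, \delta)$ is a $\delta$-ring and the square commutes by construction. For uniqueness, if $\delta'$ is another $\delta$-structure on $A^\wedge_I$ fitting into the square, then $(A^\wedge_I, \delta')$ is a $\delta$-ring with $I A^\wedge_I$ finitely generated and containing $p$, and $A^\wedge_I$ is $I A^\wedge_I$-adically complete by \cite[Tag 090T]{stacks-project}, so the first part of the lemma applies and $\delta'$ is continuous; being continuous and agreeing with $\delta$ on the dense image of $A$, it equals $\delta$.

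The main obstacle is the continuity estimate, and specifically the inductive bound $\delta(I^m) \subseteq I^{m-1}$. Because $A$ may have $p$-torsion one cannot pass through the Frobenius lift and write $\delta = \tfrac{1}{p}(\varphi - (-)^p)$ to read off the estimate; one is forced to track by hand how the non-additive, non-multiplicative operator $\delta$ interacts with powers of $I$, peeling off generators one at a time via the Leibniz identity and using $p \in I$ at each step. Once this estimate is secured, the passage to the completion and the uniqueness statement are entirely formal, and parallel the argument for the linear case recorded earlier in this section.
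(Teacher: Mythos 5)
Your proof is correct. The paper itself gives no argument here — it cites \cite[Lemma 2.17]{prisms} — and your write-up is, in effect, a self-contained reproduction of the standard argument from that reference: establish $\delta(I^m)\subseteq I^{m-1}$ via the observation $\delta(x+y)-\delta(x)-\delta(y)\in xyA$ together with the Leibniz rule and $p\in I$, deduce uniform $I$-adic continuity, extend by density, and verify the axioms and uniqueness by continuity. Two small points worth noting: the Stacks tag you cite for completeness of $A^\wedge_I$ with respect to $IA^\wedge_I$ is not quite the right one (that tag concerns descending a completeness hypothesis to a subideal; the statement you want is that $I$-adic completion along a finitely generated ideal is itself $I$-adically complete, Tag 05GG), and in the extension step it is worth recording explicitly that for $I$ finitely generated the kernel of $A^\wedge_I\to A/I^n$ equals $I^nA^\wedge_I$, since your component-wise construction and your density/Hausdorff appeals all rest on this identification. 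Neither affects the correctness of the argument.
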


\begin{proof} \cite[Lemma 2.17]{prisms}
\end{proof}

\begin{defn} An element $d$ of a $\delta$-ring $A$ is distinguished if $\delta(d)$ is a unit.
\end{defn}

\begin{defn} A $\delta$-ring $A$ is perfect if $\varphi: A \rightarrow A$ is an isomorphism.
\end{defn}

\begin{prop}[Perfect $\delta$-rings]\label{perfect_delta_rings_equiv} The following categories are equivalent:
\begin{enumerate}[(1)]
	\item The category of perfect $p$-complete $\delta$-rings.
	\item The category of $p$-adically complete and $p$-torsion free rings $A$ with $A/p$ being perfect.
	\item The category of perfect $\FF_p$-algebras.
\end{enumerate}
The functor relating $(1)$ and $(2)$ is the forgetful functor; in particular, we learn a posteriori that any ring homomorphism between two perfect $p$-complete $\delta$-rings is automatically a map of $\delta$-rings. The functor relating $(2)$ and $(3)$ are $A \mapsto A/p$ and $R \mapsto W(R)$; in particular, there is a unique $\delta$-structure on $W(R)$ for $R$ perfect of characteristic $p$.
\end{prop}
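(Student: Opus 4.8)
The plan is to establish the two equivalences $(2)\simeq(3)$ and $(1)\simeq(2)$ separately and then compose. The first is essentially a repackaging of the Witt-vector material already set up. By the lemma identifying $W(-)$ as the fully faithful left adjoint of the tilting functor on $p$-complete rings, $W(-)$ embeds the category of perfect $\FF_p$-algebras fully faithfully into $p$-complete rings, so it suffices to identify its essential image with $(2)$. The image lands in $(2)$: $W(R)$ is $p$-complete, $\ZZ_p$-flat (hence $p$-torsion free), and $W(R)/p=R$ is perfect. Conversely, if $A$ lies in $(2)$ it is a $p$-adically complete flat $\ZZ_p$-algebra with $A/p$ perfect, so Lemma \ref{p-Witt_unique_lift} (essential uniqueness of $W$) gives a canonical isomorphism $A\simeq W(A/p)$; and since $W(S)^{\flat}=\lim_{x\mapsto x^{p}}S=S$ for $S$ perfect, the adjunction identifies $\Hom(W(R),W(S))$ with $\Hom(R,S)$. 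Hence $(-)/p$ and $W(-)$ are mutually inverse equivalences between $(2)$ and $(3)$, using only results already in the excerpt.

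For $(1)\simeq(2)$ I must first check that a perfect $p$-complete $\delta$-ring $A$ lies in $(2)$. The ring $A/p$ is perfect: its Frobenius is $\varphi\bmod p$, which is surjective because $\varphi$ is; for injectivity, $\varphi$ is a ring map so $\varphi(p)=p\varphi(1)=p$, hence $\varphi(pA)=p\varphi(A)=pA$, and since $\varphi$ is bijective $\varphi^{-1}(pA)=pA$ — therefore $z^{p}\in pA$ forces $\varphi(z)=z^{p}+p\,\delta(z)\in pA$, so $z\in pA$. The main work is to show $A$ is $p$-torsion free, and the key observation is that $\delta$ on powers of $p$ is forced by the unique $\delta$-structure on $\ZZ_{(p)}$: for $k\ge 1$ one has $\delta(p^{k})=p^{k-1}u_{k}$ in $A$ with $u_{k}=1-p^{k(p-1)}$ a unit (as $A$ is $p$-complete), and correspondingly $\varphi(p^{k})=(p^{k})^{p}+p\,\delta(p^{k})=p^{k}$. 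Given $z$ with $p^{k}z=0$, grouping the $\delta(z)$-terms in $0=\delta(p^{k}z)=(p^{k})^{p}\delta(z)+z^{p}\delta(p^{k})+p\,\delta(p^{k})\delta(z)$ yields $p^{k}\delta(z)+p^{k-1}u_{k}z^{p}=0$, i.e. $p^{k-1}\big(p\,\delta(z)+u_{k}z^{p}\big)=0$. An induction on $k$ with trivial base $A[p^{0}]=0$ then shows $A[p^{k}]\subseteq pA$ for all $k$: granting $A[p^{k-1}]\subseteq pA$, the displayed identity gives $p\,\delta(z)+u_{k}z^{p}\in A[p^{k-1}]\subseteq pA$, whence $z^{p}\in pA$ and so $z\in pA$ by the Frobenius-injectivity above. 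Consequently $A[p^{\infty}]=pA[p^{\infty}]$, and $p$-adic separatedness of $A$ forces $A[p^{\infty}]=0$.

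With $A\in(2)$ established, the forgetful functor $(1)\to(2)$ is well defined, and it remains to see it is an equivalence. It is essentially surjective: any $A\in(2)$ is $p$-torsion free, so a $\delta$-structure on $A$ is the same datum as a Frobenius lift $A\to A$, and transporting the Witt-vector Frobenius along $A\simeq W(A/p)$ produces one which is bijective (since $A/p$ is perfect), i.e. a perfect $\delta$-structure. It is fully faithful because on a $p$-torsion free ring a ring map is a $\delta$-map precisely when it commutes with the Frobenius lifts, and under $(2)\simeq(3)$ a morphism of objects of $(2)$ corresponds to a morphism of perfect $\FF_{p}$-algebras, which automatically commutes with Frobenius; hence every morphism in $(2)$ is already a $\delta$-morphism. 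Composing the inverse of $(1)\simeq(2)$ with $(2)\simeq(3)$ gives the assertion, and this also yields the two \emph{a posteriori} statements, that every ring homomorphism between perfect $p$-complete $\delta$-rings is a $\delta$-homomorphism and that $W(R)$ carries a unique $\delta$-structure for $R$ perfect of characteristic $p$. The only genuinely computational point — and so the main obstacle — is the $p$-torsion-freeness in the middle paragraph; everything else is formal given the Witt-vector lemmas above.
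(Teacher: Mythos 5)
Your proof is correct, but it is genuinely different from what the paper does: the paper offers no argument at all and simply cites \cite[Corollary 2.31]{prisms}, whereas you give a self-contained proof from the Witt-vector lemmas already stated in the text. Your reduction of $(2)\simeq(3)$ to Lemma \ref{p-Witt_unique_lift} and the adjunction $W(-)\dashv(-)^\flat$ is exactly the expected formal argument, and your deduction of full faithfulness of $(1)\to(2)$ (a ring map between $p$-torsion free rings is a $\delta$-map iff it intertwines the Frobenius lifts, and mod-$p$ faithfulness forces this) is fine --- note only that the cancellation $p\,\delta_B(f(x))=p\,f(\delta_A(x))$ uses $p$-torsion freeness of the \emph{target}, which you do have. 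The main divergence from the source you are implicitly reproving is the torsion-freeness step: Bhatt--Scholze's Lemma 2.28 shows that \emph{any} perfect $\delta$-ring is $p$-torsion free with no completeness hypothesis (from $px=0$ one gets $\varphi(x)=p^{p-1}x^p$, and writing $x=\varphi(y)$ with $py=0$ kills $x$ outright), whereas your induction $A[p^k]\subseteq pA$ plus $A[p^\infty]=pA[p^\infty]$ needs $\bigcap_n p^nA=0$, i.e. classical ($p$-adically separated) completeness. That is consistent with this paper's convention that ``$p$-complete'' means classically complete, so your argument is valid for the statement as written; the completeness-free route buys a slightly stronger and more portable lemma (which the paper itself later invokes for non-complete tensor products of perfect $\delta$-rings), while yours stays closer to elementary $\delta$-ring identities, and your computation $\delta(p^k)=p^{k-1}(1-p^{k(p-1)})$ and the resulting identity $p^{k-1}\bigl(p\,\delta(z)+u_k z^p\bigr)=0$ check out.
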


\begin{proof} \cite[Corollary 2.31]{prisms}
\end{proof}

\begin{example} The category of perfect $p$-complete $\delta$-rings admits an initial object given by $W(\FF_p) = \ZZ_p$. Since Frobenius is the identity map on $\FF_p$, the unique morphism of rings $\varphi: \ZZ_p \rightarrow \ZZ_p$ which lifts Frobenius on $\FF_p$ is the identity. This in turn completely characterizes the $\delta$-structure on $\ZZ_p$, which is given by
\begin{equation*}
	\delta(x) = \frac{x - x^p}{p} \in \ZZ_p
\end{equation*}
Specializing to the case where $x = p$, we learn that $\delta(p) = 1 - p^{p-1} \in \ZZ_p^{\times}$, which shows that $p \in \ZZ_p$ is a distinguished element.
\end{example}

\begin{lemma}[Perfect elements have rank $1$]\label{frob_perfect_element} Fix a $p$-adically complete $\delta$-ring $A$ and some $x \in A$ admitting a $p^n$-th root for all $n \ge 0$, then $\delta(x) = 0$.
\end{lemma}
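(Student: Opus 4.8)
The statement to prove is: if $A$ is a $p$-adically complete $\delta$-ring and $x \in A$ admits a $p^n$-th root for all $n \ge 0$, then $\delta(x) = 0$. The natural strategy is to combine the multiplicativity-type identity for $\delta$ on $p$-power products with the $p$-adic completeness of $A$ to force $\delta(x)$ to be infinitely $p$-divisible, hence zero.

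First I would fix compatible notation: write $x_0 = x$ and for each $n$ let $x_n \in A$ be a chosen $p^n$-th root, so $x_n^p = x_{n-1}$ (the hypothesis only gives \emph{some} $p^n$-th root, not a compatible system, but for each fixed $n$ we just need one element whose $p^n$-th power is $x$; to run the inductive step cleanly I would instead pick, for the argument, the element $y := x_1$ with $y^p = x$, and note that $y$ itself again admits $p^n$-th roots for all $n$ — indeed a $p^{n+1}$-th root of $x$ is a $p^n$-th root of $y$). The key computation is to apply $\delta$ to $x = y^p$. Using the formula $\varphi(x) = x^p + p\,\delta(x)$ together with the fact that $\varphi$ is a ring homomorphism, we get $\varphi(x) = \varphi(y)^p = (y^p + p\,\delta(y))^p$. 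Expanding the right-hand side by the binomial theorem, every term except $y^{p^2}$ is divisible by $p^2$ (the $k$-th term carries a factor $\binom{p}{k} p^k$, and $\binom{p}{k}$ is divisible by $p$ for $1 \le k \le p-1$, giving $p^{k+1} \mid$ term for those $k$, while the last term is $p^p \delta(y)^p$ with $p \ge 2$). On the other hand $\varphi(x) = x^p + p\,\delta(x) = y^{p^2} + p\,\delta(x)$. Comparing, we conclude $p\,\delta(x) \equiv 0 \pmod{p^2}$, i.e. $\delta(x) \in pA$ — wait, this requires cancelling a $p$, so we must be slightly careful: we get $p\,\delta(x) = y^{p^2} + (\text{multiple of }p^2) - y^{p^2}$... let me restate: we obtain $y^{p^2} + p\,\delta(x) = y^{p^2} + p^2 z$ for some $z \in A$, hence $p\,\delta(x) = p^2 z$.

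The subtlety is the final cancellation of $p$, which is \emph{not} valid if $A$ has $p$-torsion — and indeed the statement makes no $p$-torsion-freeness hypothesis, so this is exactly where I expect the main obstacle to lie. To circumvent it, rather than cancelling, I would iterate: replacing $x$ by its $p^n$-th roots and running the same expansion shows, by induction, that for every $n$ there is an identity expressing a multiple of $\delta(x)$ by a suitable power of $p$ in terms of things divisible by a higher power of $p$; one then has to bootstrap this to the genuine conclusion $\delta(x) \in \bigcap_n p^n A = (0)$, using $p$-adic completeness (separatedness) of $A$. The clean way to organize this: show first that $\varphi(x)$ itself admits $p^n$-th roots for all $n$ (since $\varphi(x) = \varphi(x_n)^p \cdots$, actually $\varphi$ commutes with taking $p$-power roots in the obvious way), and then observe $\delta(x) = \frac{\varphi(x) - x^p}{p}$ only makes literal sense after the cancellation — so the real content is an identity in $A$ itself. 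I would therefore argue as follows: from $p\,\delta(x) = p\varphi(y)^{p-1}\delta(y)\cdot(\text{unit-ish}) + p^2(\cdots)$ — more precisely, group the binomial expansion of $(y^p + p\delta(y))^p$ as $y^{p^2} + p^2\big(y^{p(p-1)}\delta(y) + p(\cdots)\big)$ — so $p\,\delta(x) = p^2 w$ with $w = y^{p(p-1)}\delta(y) + p(\cdots)$. Now apply the \emph{same} reasoning with $y$ in place of $x$ (legitimate since $y$ again has $p$-power roots): $p\,\delta(y) = p^2 w'$, so $w$ itself lies in $p\,A + p(\cdots) \subseteq pA$; feeding this back, $p\,\delta(x) = p^2 w \in p^3 A$, and inductively $p\,\delta(x) \in p^{n}A$ for all $n$. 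By $p$-adic separatedness of $A$, $p\,\delta(x) = 0$. Finally, to upgrade $p\,\delta(x) = 0$ to $\delta(x) = 0$: repeat once more — the identity $p\,\delta(x) = p^2 w$ with $p\,\delta(x)=0$ only gives $p^2 w = 0$, which is not immediately enough, so instead I would run the induction keeping track of $\delta(x)$ directly via the relation $\delta(x) = y^{p(p-1)}\delta(y) + p(\text{integer combination of }\delta(y)\text{-monomials})$ obtained by cancelling one $p$ \emph{formally inside the displayed binomial identity before multiplying out}, which is legitimate because $(y^p+p\delta(y))^p - y^{p^2} = p\cdot[\text{explicit element}]$ on the nose and comparing with $p\delta(x)$ gives $\delta(x) - [\text{explicit element}] \in \{a : pa = 0\}$; handling this torsion term is the last technical point, and I would dispose of it by noting the explicit element already shows $\delta(x)$ is congruent mod ($p$-torsion) to something in $pA$, then iterating as above to land in $\bigcap_n (p^n A + A[p])$, and finally arguing this intersection is $0$ using that $A$ is $p$-adically complete — or, more simply, by first reducing to the $p$-torsion-free case via the quotient $A \to A/A[p^\infty]$, noting $x$'s image still has $p$-power roots and $\delta$ descends, proving $\delta(x) = 0$ there, and then checking the $p$-torsion part separately. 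I expect this disposal of $p$-torsion to be the only real difficulty; the rest is the binomial expansion plus completeness.
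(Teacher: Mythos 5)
Your overall strategy (expand $\varphi(x)=\varphi(y)^p$ binomially, extract $p$-divisibility, conclude by $p$-adic separatedness) is the right one — it is essentially the argument behind the reference the paper cites, \cite[Lemma 2.32]{prisms} — but, as you yourself sense, everything hinges on the $p$-torsion, and none of your proposed ways of handling it closes. Concretely: (i) in the iteration ``$p\,\delta(y)=p^2w'$, so $w\in pA$'' you are again cancelling a $p$, which is exactly the step you set out to avoid; (ii) the intersection $\bigcap_n\bigl(p^nA+A[p]\bigr)$ contains $A[p]$, so it vanishes only when $A$ is already $p$-torsion-free — completeness is of no help here, and this claimed vanishing is false in general; (iii) passing to $A/A[p^\infty]$ (where $\delta$ does descend) would at best show $\delta(x)\in A[p^\infty]$, and even that needs the quotient to be $p$-adically separated, which is not automatic; combined with $p\,\delta(x)=0$ (all your first branch can yield) this still does not force $\delta(x)=0$, and there is no separate ``$p$-torsion part'' left that you have any means to control.

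The missing idea, which makes the torsion problem disappear entirely, is that the formula obtained by formally dividing your binomial identity by $p$, namely
\begin{equation*}
	\delta(y^p)\;=\;\sum_{i=1}^{p}\binom{p}{i}\,p^{\,i-1}\,y^{p(p-i)}\,\delta(y)^{i},
\end{equation*}
is a \emph{universal} identity, valid in every $\delta$-ring with no flatness hypothesis: it suffices to check it in the free $\delta$-ring on one generator over $\ZZ_{(p)}$ (a polynomial ring $\ZZ_{(p)}[y,\delta(y),\delta^2(y),\dots]$, in particular $p$-torsion-free), where it is exactly your computation, and then it holds everywhere by functoriality. Granting this, every term on the right carries a factor of $p$ (the $i=1$ term via $\binom{p}{1}=p$, the terms $i\ge 2$ via $p^{i-1}$), and an easy induction shows that if $x$ admits a $p^{n}$-th root then $\delta(x)\in p^{n}A$: write $x=z^{p}$ with $z=y_n^{p^{n-1}}$, note $z$ admits a $p^{n-1}$-th root, so $\delta(z)\in p^{n-1}A$ by induction, and the displayed identity then puts $\delta(x)$ in $p^{n}A$. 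Finally $\delta(x)\in\bigcap_n p^nA=0$ by $p$-adic separatedness. No division by $p$ in $A$, and no case analysis on torsion, is ever needed; working with $\delta(x)$ itself rather than with $p\,\delta(x)$ is precisely what your argument is missing.
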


\begin{proof} \cite[Lemma 2.32]{prisms}
\end{proof}

\begin{example}\label{teichmuller_expansion} Let $A$ be a perfect $p$-complete $\delta$-ring, let us show how to explicitly describe the Frobenius lift $\varphi: A \rightarrow A$ and its $\delta$-structure. Since $A$ is a $p$-complete perfect $\delta$-ring, there exists a unique perfect $\FF_p$-algebra $R$, such that $W(R) = A$. In particular, for any $f \in A$ we can write its Teichmuller expansion
\begin{equation*}
	f = \sum_{i = 0}^{\infty} [a_i] p^i
\end{equation*}
Thus, by the $p$-adic continuity of $\delta: A \rightarrow A$, it suffices to describe the values $\delta(p)$ and $\delta([a_i])$ for all $a_i \in R$. Since $\ZZ_p$ is the initial perfect $p$-complete $\delta$-ring, there exists a structure map $\ZZ_p \rightarrow A$ which forces the identity $\delta(p) = 1 - p^{p-1} \in A^{\times}$. On the other hand, using the multiplicativity of the map $[-]: R \rightarrow W(R)$ it follows that $[a_i] \in W(R)$ admits a $p^n$-root for all $n \ge 0$, as $R$ is perfect, which implies that $\delta([a_i]) = 0$.

Similarly, by the $p$-adic continuity of $\varphi: A \rightarrow A$, it suffices to describe the values $\varphi(p)$ and $\varphi([a_i])$ for all $a_i \in R$. Again, using the structure map $\ZZ_p \rightarrow A$ it follows that $\varphi(p) = p$, and from the identity $\varphi([a_i])= [a_i]^p + p \delta([a_i])$ we learn that $\varphi([a_i]) = [a_i]^p = [a_i^p]$, since $[-]: R \rightarrow W(R)$ is multiplicative and $\delta([a_i]) = 0$. Therefore, we can express the Teichmuller expansion of $\varphi(f)$ as follows
\begin{equation*}
	\varphi(f) = \sum_{i = 0}^{\infty} [a_i^p]p^i
\end{equation*}
\end{example}

\begin{lemma}[Distinguished elements in perfect $\delta$-rings] Let $A$ be a perfect $p$-complete $\delta$-ring, and fix $d \in A$. Denote by $R$ the unique perfect $\FF_p$-algebra $R$ such that $W(R) = A$, and $d = \sum_{i = 0}^{\infty} [a_i]p^i$ the corresponding Teichmuller expansion of $d$. Then, $d$ is distinguished if and only if $a_1 \in R^{\times}$.
\end{lemma}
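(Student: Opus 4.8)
The plan is to reduce the distinguishedness of $d$ to a congruence modulo $p$, compute $\delta(d)$ modulo $p$ directly from the Teichmuller expansion, and recognize the answer as $a_1^p \in R$. First I would set up the reduction: since $A$ is a perfect $p$-complete $\delta$-ring, Proposition \ref{perfect_delta_rings_equiv} identifies it with $W(R)$, which is $p$-torsion free and $p$-adically complete by Lemma \ref{p-Witt_unique_lift}. Being $p$-torsion free, its $\delta$-structure is given by $\delta(d) = (\varphi(d) - d^p)/p$; being $p$-adically complete, an element of $A$ is a unit if and only if its image in $A/p = R$ is a unit (lift a mod-$p$ inverse and invert $1 + p(\cdots)$ by a convergent geometric series). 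Hence $d$ is distinguished if and only if the class of $\delta(d)$ in $R$ is a unit, and for this it suffices to know $\varphi(d) - d^p$ modulo $p^2$.

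The computation itself is the crux. By Example \ref{teichmuller_expansion} one has $\varphi(d) = \sum_{i \ge 0} [a_i^p] p^i \equiv [a_0^p] + [a_1^p]\,p \pmod{p^2}$. For $d^p$, I would write $d = [a_0] + p e$ with $e = \sum_{i \ge 1} [a_i] p^{i-1} \in A$ and expand by the binomial theorem: every term $\binom{p}{k}[a_0]^{p-k}(pe)^k$ with $k \ge 1$ lies in $p^2 A$ (for $k = 1$ because $\binom{p}{1}(pe) = p^2 e$, and for $k \ge 2$ because $(pe)^k \in p^2 A$ already), so $d^p \equiv [a_0]^p = [a_0^p] \pmod{p^2}$, using the multiplicativity of $[-]$. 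Subtracting, $\varphi(d) - d^p \equiv [a_1^p]\, p \pmod{p^2}$, whence $\delta(d) \equiv [a_1^p] \pmod p$; reducing the Teichmuller lift modulo $p$ shows that the image of $\delta(d)$ in $R$ is exactly $a_1^p$.

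It then remains to conclude: $d$ is distinguished if and only if $a_1^p \in R^\times$, and since $R$ is perfect the Frobenius $x \mapsto x^p$ is a ring automorphism of $R$, hence preserves and reflects units, so $a_1^p \in R^\times$ if and only if $a_1 \in R^\times$. I do not expect a genuine obstacle; the only point requiring care is to carry out the computation modulo $p^2$ rather than modulo $p$ — the naive congruence $(x+y)^p \equiv x^p + y^p \pmod p$ loses exactly the information we need — together with the elementary bookkeeping that $\binom{p}{1}(pe) = p^2 e$ vanishes mod $p^2$ and that $[a_0]^p = [a_0^p]$.
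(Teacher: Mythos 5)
Your proof is correct and takes the same route as the cited reference \cite[Lemma 2.33]{prisms}, which is all the paper offers as a proof: reduce to a mod-$p$ computation via $p$-completeness, compute $\delta(d) \equiv [a_1^p] \pmod{p}$ from the Teichmuller expansion by expanding $\varphi(d)$ and $d^p$ modulo $p^2$, and conclude using perfectness of $R$ to identify $a_1^p \in R^\times$ with $a_1 \in R^\times$. The bookkeeping (the $k=1$ term of the binomial expansion contributing $p^2 e$, and $[a_0]^p = [a_0^p]$ by multiplicativity of $[-]$) is handled correctly.
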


\begin{proof} \cite[Lemma 2.33]{prisms}
\end{proof}

\begin{lemma}\label{delta_ring_p_torsion} Let $A$ be a perfect $p$-complete $\delta$-ring. Fix a distinguished element $d \in A$. Then
	\begin{enumerate}[(1)]
		\item The element $d \in A$ is a non-zero divisor.
		\item The ring $R = A/d$ has bounded $p^\infty$-torsion; in fact, we have $R[p] = R[p^\infty]$.
	\end{enumerate}
\end{lemma}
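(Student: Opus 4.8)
The plan is to exploit the explicit Teichmüller description of distinguished elements in a perfect $p$-complete $\delta$-ring. Write $A = W(R)$ for the unique perfect $\FF_p$-algebra $R$ with $A/p = R$, and let $d = \sum_{i \ge 0} [a_i] p^i$ be the Teichmüller expansion. By the previous lemma, $d$ being distinguished means $a_1 \in R^{\times}$. For part $(1)$, I would argue that $d$ is a non-zero divisor as follows: suppose $fd = 0$ for some nonzero $f \in A$. Since $A$ is $p$-adically complete and $p$-torsion free (Proposition \ref{perfect_delta_rings_equiv}), write $f = p^m g$ with $g \notin pA$, so $g$ has nonzero image $\bar g \in R$. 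Then $p^m g d = 0$ forces $g d = 0$ by $p$-torsion-freeness. Reducing modulo $p$: $\bar g \cdot \bar d = 0$ in $R$, but $\bar d = a_0$, so $\bar g a_0 = 0$. This alone is not a contradiction if $a_0$ is a zero divisor, so one must push further: from $gd = 0$ and $d = [a_0] + p([a_1] + \cdots)$, after dividing by appropriate powers of $p$ one obtains successively that $\bar g$ annihilates $a_0$, then $a_1$, etc.; since $a_1$ is a unit, at the relevant stage we get $\bar g = 0$, a contradiction. The cleanest way to organize this is: the ideal $(d, p) = (a_0, p)$ and one checks $d$ is a non-zero divisor modulo each $p^n$ by induction, using that $\mathrm{gr}^0$ of the $p$-adic filtration sees $a_0$ but passing to the next graded piece after annihilating $a_0$ brings in $a_1 \in R^{\times}$.

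For part $(2)$, set $S = A/d$. I want to show $S[p] = S[p^\infty]$, i.e. $p$-power torsion in $S$ is already killed by $p$. An element of $S[p^\infty]$ is represented by some $x \in A$ with $p^n x \in dA$ for some $n$; I must show $px \in dA$. Write $p^n x = d y$ for some $y \in A$. Since $d$ is distinguished, $\delta(d) \in A^{\times}$, and the Frobenius $\varphi(d) = d^p + p\delta(d)$ has the property that $\varphi(d) \equiv p \delta(d) \pmod{d}$, so $p$ becomes a unit times a power of $\varphi(d)$ modulo $d$ — more precisely, in $S = A/d$ we have $p \delta(d) = \varphi(d) - d^p \equiv \varphi(d)$, and iterating, $p^n$ (times a unit) equals $\varphi^n(d) \bmod d$. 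So the statement reduces to controlling when $\varphi^n(d)$ divides a multiple of $d$. Concretely: in $A$, $p^n = u \cdot (\text{something}) + d \cdot (\cdots)$ where modulo $d$ the "something" is $\varphi(d)/\delta(d) \cdot \varphi^2(d)/\delta(\cdots)\cdots$; the key input is that $\varphi$ is an isomorphism of $A$ (perfectness), so $\varphi(d)$ is again distinguished and the divisibility $d y = p^n x$ can be "untwisted" by $\varphi^{-n}$. Alternatively — and this is the route I would actually take — reduce mod $d$ to land in $S$, where $p = \varphi(d) \cdot \delta(d)^{-1}$ as an element of $S$ (since $d^p = 0$ there), and then $p^n x = 0$ in $S$ becomes $\varphi(d)^n \delta(d)^{-n} x = 0$; applying $\varphi^{-n}$ (which is an automorphism of $A$ descending appropriately) transports this to $d^n \cdot (\text{unit}) \cdot \varphi^{-n}(x) = 0$, hence $\varphi^{-n}(x) \in$ (the ideal generated by $d^n$ inside... ) — one must be careful that $\varphi$ does not preserve the ideal $(d)$. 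The honest statement is: $\varphi^n(d) A \subseteq dA$ is false, but $dA \subseteq \varphi^n(d)A$ holds after inverting $p$, and the bounded-torsion conclusion follows by a direct filtration argument on $p^n x = dy$.

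The main obstacle I anticipate is part $(2)$: unwinding the interaction between the ideal $(d)$ and the Frobenius $\varphi$ carefully enough to conclude $S[p] = S[p^\infty]$ rather than merely "bounded $p^\infty$-torsion." The cleanest argument I know writes $p = \varphi(d) \cdot v$ in $S = A/d$ for a unit $v$ (using $d^p = 0$ in $S$ and $\varphi(d) = d^p + p\delta(d)$), notes that $\varphi$ induces a bijection on the set of distinguished elements, and then shows that if $\varphi(d)^n x = 0$ in $S$ then already $\varphi(d) x = 0$ — because $\varphi(d)$ is, up to unit and the relation $d^p = 0$, "as non-zero-divisor-like as $d$." This last reduction is the crux and I would expect to spend most of the proof there; everything else (part $(1)$, and the reductions to $p$-torsion-free representatives) is routine bookkeeping with Teichmüller expansions and the $p$-adic filtration. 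I would likely cite \cite[Lemma 2.34]{prisms} or prove it inline depending on how self-contained the section needs to be.
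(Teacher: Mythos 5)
The paper handles this by citation to \cite[Lemma 2.34]{prisms}, so the natural comparison is between your sketch and the Bhatt--Scholze argument (which, as you anticipated, runs through Teichm\"uller expansions). Your part $(1)$ has a concrete gap at the ``push further'' step. Writing $g = \sum_i [b_i]p^i$ and expanding $gd = 0$ with $d = [a_0] + pu$, $u = [a_1] + [a_2]p + \cdots$, the successive $p$-divisions do \emph{not} give ``$\bar g$ annihilates $a_1$''; they give the mixed relations $a_0 b_0 = 0$ and $a_0 b_1 + a_1 b_0 = 0$. To extract $b_0 = 0$ you must first kill the cross term: multiplying the second relation by $a_0$ and using the first gives $a_0^2 b_1 = 0$, and only then does \emph{perfectness} (take $p^k$-th roots: $a_0^{2/p^k} b_1 = 0$, and $a_0^{2/p^k}$ divides $a_0$ for $p^k \ge 2$) upgrade this to $a_0 b_1 = 0$, whence $a_1 b_0 = 0$ and $b_0 = 0$ since $a_1 \in R^\times$. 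Your proposal never invokes perfectness, and without it there is no contradiction. Your ``cleanest'' reformulation via the $p$-adic filtration is in fact false as stated: the associated graded of multiplication by $d$ is multiplication by $a_0$ on \emph{every} graded piece, so $d$ is genuinely not a non-zero divisor modulo $p$ when $a_0$ is a zero divisor, and passing to higher graded pieces never ``brings in $a_1$.''

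For part $(2)$, the identity $p = \varphi(d)\delta(d)^{-1}$ in $S = A/d$ is correct, but the ``untwist by $\varphi^{-n}$'' route is circular. From $\varphi(d)^n\tilde x \in dA$, applying $\varphi^{-1}$ gives $d^n\varphi^{-1}(\tilde x) \in eA$ with $e = \varphi^{-1}(d)$; reducing $d^n = (e^p + p\delta(e))^n$ modulo $e$ yields $e \mid p^n\varphi^{-1}(\tilde x)$, and the desired conclusion $\varphi(d)\tilde x \in dA$ translates (again via $\varphi^{-1}$ and $d \equiv p\delta(e) \pmod e$) into $e \mid p\varphi^{-1}(\tilde x)$ --- which is exactly statement $(2)$ for the distinguished element $e$. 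The Frobenius has transported the problem to an isomorphic copy of itself, not solved it. The route that actually closes is the same Teichm\"uller computation as in $(1)$: lift $p^n x = 0$ to $p^n\tilde x = dy$ with $n \ge 2$, so $dy \in p^2A$; expanding $y = \sum_i[b_i]p^i$ gives $a_0 b_0 = 0$ and $a_0 b_1 + a_1 b_0 = 0$, and the perfectness step above forces $b_0 = 0$, i.e.\ $y \in pA$; then $p^{n-1}\tilde x = d(y/p)$ and one inducts on $n$. In short, the missing idea in both parts is the same one, and it is not Frobenius gymnastics: it is that $R$ being perfect (in particular reduced) lets you promote $a_0^2 b_1 = 0$ to $a_0 b_1 = 0$, which is what isolates $a_1 b_0$ and activates the unit hypothesis on $a_1$.
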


\begin{proof} \cite[Lemma 2.34]{prisms}
\end{proof}

\begin{defn}[Perfect Prisms]\label{perfect_prisms_def} Fix a pair $(A, (d))$ comprising of a perfect $\delta$-ring $A$ and an ideal $(d) \subset A$ generated by a distinguished element $d \in A$. We say that $(A, (d))$ is a perfect prism if $A$ is $(p,d)$-complete. A morphism $(A, (d_1)) \rightarrow (B, (d_2))$ of perfect prisms is a map $f: A \rightarrow B$ of perfect $\delta$-rings, such that $f(d_1) \in (d_2)$.
\end{defn}

\begin{lemma}[Rigidity of maps]\label{rig_prisms} If $(A, (d_1)) \rightarrow (B, (d_2))$ is a map of perfect prisms, then the natural map of $A$-modules $(d_1) \rightarrow (d_2)$ induces an isomorphism $(d_1) \otimes_A B \simeq (d_2)$. In particular $d_1 B = (d_2)$.

Conversely, if $(A, (d))$ is a perfect prism, and $A \rightarrow B$ a map of perfect $\delta$-rings, with $B$ being $(p,d)$-complete, then $(B, (d))$ is also a perfect prism.
\end{lemma}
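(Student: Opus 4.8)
The plan is to reduce both assertions to the single claim that $c \in B^{\times}$, where $c \in B$ is chosen so that $f(d_1) = c\,d_2$ — this is possible since the map $f \colon A \to B$ underlying the given morphism of prisms satisfies $f(d_1) \in (d_2)$. Granting this, the final assertion $d_1 B = (d_2)$ is immediate. For the isomorphism, observe that $d_1$ is a non-zerodivisor in $A$ and $d_2$ a non-zerodivisor in $B$ by Lemma \ref{delta_ring_p_torsion}(1): indeed $A$ and $B$ are perfect $\delta$-rings which are $p$-complete (being $(p,d_i)$-complete), and $d_1, d_2$ are distinguished. Hence multiplication by $d_i$ gives $A$-module (resp. $B$-module) isomorphisms $A \cong (d_1)$ and $B \cong (d_2)$, and under these the natural map $(d_1) \otimes_A B \to (d_2)$ is identified with multiplication by $c \colon B \to B$, which is an isomorphism if and only if $c \in B^{\times}$.

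To prove $c \in B^{\times}$, I would apply $\delta$ to the equality $f(d_1) = c\,d_2$. Since $f$ is a morphism of $\delta$-rings, $\delta(f(d_1)) = f(\delta(d_1))$, and this is a unit of $B$ because $d_1$ is distinguished, i.e. $\delta(d_1) \in A^{\times}$. On the other hand, the product formula for $\delta$ gives $\delta(c\,d_2) = c^{p}\delta(d_2) + d_2^{p}\delta(c) + p\,\delta(c)\delta(d_2)$. Reducing modulo the ideal $(p, d_2) \subset B$ annihilates the last two terms, so $f(\delta(d_1)) \equiv c^{p}\delta(d_2) \pmod{(p,d_2)}$; since the left-hand side is a unit and $\delta(d_2) \in B^{\times}$ (as $d_2$ is distinguished), $c^{p}$, and therefore $c$, is a unit in $B/(p,d_2)$. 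Finally, because $(B,(d_2))$ is a perfect prism, $B$ is $(p,d_2)$-adically complete and $(p,d_2)$ is finitely generated, so any element of $B$ that becomes a unit modulo $(p,d_2)$ is already a unit in $B$ — one inverts $1-x$ for $x \in (p,d_2)$ using the convergent series $\sum_{n \ge 0} x^{n}$. Hence $c \in B^{\times}$.

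For the converse, let $(A,(d))$ be a perfect prism and let $g \colon A \to B$ be a map of perfect $\delta$-rings with $B$ being $(p,d)$-complete. Then the image $g(d)$ is distinguished in $B$: since $g$ is a $\delta$-ring map, $\delta(g(d)) = g(\delta(d))$, which lies in $B^{\times}$ because $\delta(d) \in A^{\times}$. Thus $B$ is a perfect $\delta$-ring, the ideal $(d) \subset B$ is generated by the distinguished element $g(d)$, and $B$ is $(p,d)$-complete by hypothesis — which is exactly the assertion that $(B,(d))$ is a perfect prism.

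I expect the one genuinely substantive step to be the $\delta$-computation modulo $(p,d_2)$ together with the lift of units along the $(p,d_2)$-adic completion; the remaining manipulations with non-zerodivisors and $\delta$-ring morphisms are formal.
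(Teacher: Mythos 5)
Your proof is correct, but it takes a different route from the paper: the paper disposes of both assertions by citing \cite[Lemma 3.5]{prisms} (noting for the converse only that $d$ stays distinguished in $B$ and is a non-zerodivisor by $p$-completeness), whereas you unfold the underlying rigidity argument yourself. Your key step — writing $f(d_1)=c\,d_2$, applying the product formula for $\delta$, reducing modulo $(p,d_2)$ to see $f(\delta(d_1))\equiv c^{p}\delta(d_2)$, and lifting the resulting unit along $(p,d_2)$-adic completeness — is essentially the proof of the Bhatt--Scholze statement on distinguished elements (their Lemma 2.24/3.5), and all the ingredients you use are available in the paper: $d_1,d_2$ are non-zerodivisors by Lemma \ref{delta_ring_p_torsion}, $(p,d_i)$-completeness gives $p$-completeness and the unit-lifting, and the identification of the induced map with multiplication by $c$ via the free modules $(d_1)\cong A$, $(d_2)\cong B$ is exactly right. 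For the converse your check (image of a distinguished element under a $\delta$-map is distinguished, plus the completeness hypothesis) is all that the paper's Definition \ref{perfect_prisms_def} requires, so nothing is missing there either; the paper's extra appeal to \cite[Lemma 3.5]{prisms} at that point is only needed if one wants the stronger conditions in the Bhatt--Scholze notion of prism. The trade-off is the usual one: the paper's proof is shorter by outsourcing the computation, while yours is self-contained at the cost of reproving the cited lemma.
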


\begin{proof} The fact that the canonical map $(d_1) \rightarrow (d_2)$ of $A$-modules induces an isomorphism $(d_1) \otimes_A B \simeq (d_2)$ follows from \cite[Lemma 3.5]{prisms}. For the converse, since $d \in A$ is distinguished and the map $A \rightarrow B$ is a map of perfect $\delta$-rings, it follows that $d \in B$ is also distinguished. By virtue of $B$ being $p$-complete it follows that $d \in B$ is a non-zero divisor, and then the result follows from \cite[Lemma 3.5]{prisms}.
\end{proof}

\begin{lemma} The following categories are equivalent
\begin{enumerate}[(1)]
	\item The category of perfect prisms, in the sense of Definition \ref{perfect_prisms_def}.
	\item The category of pairs $(A, I)$ comprising of a perfect $\delta$-ring $A$ and an ideal $I \subset A$; such that, $I \subset A$ is a Cartier divisor on $\Spec (A)$, the ring $A$ is derived $(p,I)$-complete, and $p \in I + \varphi(I)A$. A morphism $(A,I) \rightarrow (B,J)$ in this category is a map $f: A \rightarrow B$ of perfect $\delta$-rings such that $f(I) \subset J$.
\end{enumerate}
\end{lemma}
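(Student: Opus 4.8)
The plan is to exhibit a pair of mutually inverse functors between the two categories and check that they are well-defined; most of the content is already packaged in the preceding lemmas, so this is mainly an assembly job.

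\medskip

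First I would describe the functor from (1) to (2). Given a perfect prism $(A,(d))$ in the sense of Definition \ref{perfect_prisms_def}, I send it to the pair $(A, I)$ with $I = (d)$. I must verify the three conditions in (2). Since $d$ is distinguished, Lemma \ref{delta_ring_p_torsion}(1) says $d$ is a non-zero divisor, so $I = (d)$ is an invertible ideal, i.e.\ a Cartier divisor on $\Spec(A)$. The ring $A$ is $(p,d)$-complete by hypothesis, hence derived $(p,d)$-complete. For the condition $p \in I + \varphi(I)A$: write $d = \sum [a_i]p^i$ in Teichmüller coordinates; distinguishedness of $d$ means $a_1 \in R^\times$ (by the lemma characterizing distinguished elements in perfect $\delta$-rings), and $\varphi(d) = \sum [a_i^p] p^i$ by Example \ref{teichmuller_expansion}, so modulo $(p^2)$ we have $\varphi(d) \equiv [a_0^p] + [a_1^p] p$, and since $a_1$ is a unit one extracts $p$ up to a unit and an element of $(d) + (p^2)$; a $(p,d)$-adic successive-approximation / completeness argument then upgrades this to $p \in (d) + \varphi(d)A$. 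A morphism of perfect prisms $f\colon A \to B$ with $f(d_1) \in (d_2)$ is sent to itself, and $f(I) \subset J$ is immediate.

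\medskip

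Next I would describe the functor from (2) to (1). Given $(A,I)$ as in (2), the ideal $I$ is an invertible ideal on $\Spec(A)$; I claim it is in fact \emph{principal}, generated by a distinguished element. Since $A$ is a perfect $p$-complete $\delta$-ring, it is $W(R)$ for a perfect $\FF_p$-algebra $R$ (Proposition \ref{perfect_delta_rings_equiv}), and one checks $\operatorname{Pic}(W(R))$ injects into $\operatorname{Pic}(R)$ along reduction mod $p$; but $\operatorname{Pic}$ of a perfect ring in characteristic $p$ vanishes on the relevant invertible ideals — more simply, derived $(p,I)$-completeness together with $I$ being locally free of rank one forces $I/pI$ to be free over $R$, hence $I$ is free over $A$ by completeness and Nakayama, so $I = (d)$ for some non-zero-divisor $d$. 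Then the condition $p \in (d) + \varphi(d)A$ combined with the Teichmüller expansion forces the coefficient $a_1$ of $d = \sum[a_i]p^i$ to be a unit, i.e.\ $d$ is distinguished. Finally $A$ is derived $(p,d)$-complete and $(p,d)$-finitely generated, hence classically $(p,d)$-complete, so $(A,(d))$ is a perfect prism. A morphism in (2) is a map of perfect $\delta$-rings with $f(I) \subset J$, and since $I = (d_1)$, $J = (d_2)$ this is exactly the condition $f(d_1) \in (d_2)$ defining a morphism of perfect prisms.

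\medskip

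The two constructions are visibly inverse on objects (both leave $A$ fixed and match $(d) \leftrightarrow I$) and on morphisms (both are the identity on underlying ring maps), so they give the desired equivalence of categories. The main obstacle is the one genuinely substantive point: showing that an invertible ideal $I$ on $\Spec(A)$ satisfying the hypotheses of (2) is automatically \emph{principal} — this is where perfectness of $A/p$ (killing the relevant Picard group) and $(p,I)$-completeness (to run Nakayama) both get used. Once principality is in hand, the identification of generators with distinguished elements via the $p \in I + \varphi(I)A$ condition and the Teichmüller expansion is a short computation, and the rest is bookkeeping. Alternatively, if one prefers to cite the literature, this equivalence is exactly \cite[Lemma 3.8]{prisms} (or the discussion around it), and the proof reduces to quoting loc.\ cit.
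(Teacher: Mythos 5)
Your overall architecture (two functors that are the identity on underlying rings, matching $(d)\leftrightarrow I$) agrees with the paper, and your forward direction is fine — though it is simpler than your Teichmüller computation: $\varphi(d)=d^p+p\delta(d)$ with $\delta(d)$ a unit gives $p\in (d,\varphi(d))$ immediately, which is the content of \cite[Lemma 2.25]{prisms} that the paper cites.

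The genuine gap is in your principality argument in the direction $(2)\Rightarrow(1)$, which is exactly where the content of the lemma sits. The claim that ``$\operatorname{Pic}$ of a perfect ring in characteristic $p$ vanishes'' is false in general (take the perfection of the coordinate ring of an affine curve of positive genus over $\overline{\FF}_p$: Frobenius acts on $\operatorname{Pic}$ by multiplication by $p$, so prime-to-$p$ classes survive the colimit), and ``derived $(p,I)$-completeness plus local freeness of rank one forces $I/pI$ to be free, hence $I$ free by Nakayama'' is not a valid mechanism: derived completeness does not trivialize the class of an invertible ideal (e.g.\ $I=tJ\subset B[[t]]$ with $J$ a nontrivial invertible ideal is invertible, non-principal, and the ring is derived $I$-complete). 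Tellingly, your argument never uses the hypothesis $p\in I+\varphi(I)A$ at this step, whereas in the actual proof (\cite[Lemma 3.8]{prisms}, which is what the paper quotes) this condition is essential: one uses it to produce a distinguished/primitive element inside $I$ and then shows by a successive-approximation argument in $W(R)$ that this element generates; it is not a Picard-vanishing statement. A second, smaller issue: ``derived $(p,d)$-complete and $(p,d)$ finitely generated, hence classically complete'' is false in general — one needs bounded torsion, which here comes from $p$-torsion-freeness of perfect $\delta$-rings and $d$ being a nonzerodivisor (again packaged in \cite[Lemma 3.8]{prisms}). Your fallback of simply citing loc.\ cit.\ is legitimate and is precisely the paper's proof; but the self-contained argument as written would not survive scrutiny at its key step.
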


\begin{proof} If $(A,(d))$ is a perfect prism, then the hypothesis that $A$ is $(p,d)$-complete implies that $A$ is derived $(p,d)$-complete \cite[Lemma 091R]{stacks-project}. Moreover, since $d \in A$ is distinguished and $A$ is $(p,d)$-complete it follows that $p \in (d, \varphi(d))$ by \cite[Lemma 2.25]{prisms}. On the other hand, if $(A,I)$ is a pair as in $(2)$, then the ideal $I$ is principal and any generator is a distinguished element; and the ring $A$ is classically $(p,I)$-complete \cite[Lemma 3.8]{prisms}.
\end{proof}

\begin{prop}[Perfectoid rings = perfect prisms]\label{perfect_prism_perfectoid} The following two categories are equivalent
\begin{enumerate}[(1)]
	\item The category of integral perfectoid rings $R$.
	\item The category of perfect prisms $(A, (d))$.
\end{enumerate}
The functors are $R \mapsto (\A_{\inf} (R), \ker(\theta))$ and $(A,(d)) \mapsto A/d$ respectively.
\end{prop}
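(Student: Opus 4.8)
The plan is to establish the equivalence by showing the two proposed functors $R \mapsto (\A_{\inf}(R), \ker \theta)$ and $(A, (d)) \mapsto A/d$ are well-defined and mutually inverse. First I would check that $R \mapsto (\A_{\inf}(R), \ker \theta)$ lands in perfect prisms: for $R$ integral perfectoid, $\A_{\inf}(R) = W(R^\flat)$ is a perfect $p$-complete $\delta$-ring by Proposition \ref{perfect_delta_rings_equiv}, since $R^\flat$ is a perfect $\FF_p$-algebra. By the definition of integral perfectoid, $\theta: \A_{\inf}(R) \to R$ is surjective with principal kernel, say $\ker\theta = (d)$; one must verify $d$ is distinguished. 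This is where I would invoke the Teichmüller-expansion criterion (the lemma on distinguished elements in perfect $\delta$-rings): writing $d = \sum [a_i] p^i$, distinguishedness is equivalent to $a_1 \in (R^\flat)^\times$, and this follows from the structure theory of the kernel of $\theta$ (compare \cite[Lemma 3.10]{BMS1} or \cite[Lemma 2.34]{prisms}) together with the fact that $R$ is $\pi$-complete for $\pi$ with $\pi^p \mid p$. Finally $\A_{\inf}(R)$ is $(p,d)$-complete because it is $p$-complete and $\theta$ identifies $\A_{\inf}(R)/d \simeq R$ which is $p$-complete, so $\A_{\inf}(R)$ is $d$-complete relative to its $p$-completeness; hence $(\A_{\inf}(R), (d))$ is a perfect prism.

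Next I would check the reverse functor is well-defined: given a perfect prism $(A, (d))$, I claim $A/d$ is integral perfectoid. By Lemma \ref{delta_ring_p_torsion}, $d$ is a non-zero-divisor and $A/d$ has bounded $p^\infty$-torsion. Since $A$ is $(p,d)$-complete, $A/d$ is $p$-complete, and because $p \in (d, \varphi(d))$ one gets that some element $\pi$ with $\pi^p \mid p$ exists in $A/d$ after possibly adjusting by a unit (using that $\varphi$ is an isomorphism and Lemma \ref{compatible_roots_perfectoid}-type arguments); alternatively one checks $A/d$ is $\pi$-complete for a suitable $\pi$ directly. The heart of the matter is identifying $\A_{\inf}(A/d)$ with $A$ compatibly with $\theta$: since $A$ is a perfect $p$-complete $\delta$-ring, $A = W(A/p)$ and $A/p$ is perfect; one shows $(A/d)^\flat \simeq A/p$ as perfect $\FF_p$-algebras — this uses the tilting isomorphism $\lim_{x \mapsto x^p} A/d \simeq \lim_{x\mapsto x^p}(A/d)/p$ from Definition \ref{sharp_map_definition} and the fact that reduction mod $d$ induces an isomorphism on inverse limits along Frobenius (because $A$ is $d$-complete and $\varphi$ is bijective). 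Hence $\A_{\inf}(A/d) = W((A/d)^\flat) \simeq W(A/p) = A$, and under this identification the counit $\theta$ becomes the quotient $A \to A/d$, so $\ker\theta = (d)$ is principal and $\theta$ is surjective; thus $A/d$ is integral perfectoid.

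The two remaining points are that these constructions are inverse to each other and functorial. For $R$ integral perfectoid, $\A_{\inf}(R)/\ker\theta = R$ by definition of $\theta$ as the counit — this is immediate. For a perfect prism $(A,(d))$, the previous paragraph already supplies the isomorphism $(\A_{\inf}(A/d), \ker\theta) \simeq (A, (d))$, and one must check it respects the distinguished generator, which holds because $\ker\theta = dA$ under the identification. Functoriality in both directions follows from naturality of $\theta$ and of tilting, together with the rigidity of maps of perfect prisms (Lemma \ref{rig_prisms}), which guarantees that a map of perfect $\delta$-rings sending $d_1$ into $(d_2)$ is the same data as a map of the associated perfectoid quotients. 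I expect the main obstacle to be the verification that $\ker\theta$ is generated by a \emph{distinguished} element — equivalently, that the perfectoid condition "$\ker\theta$ principal" automatically yields a distinguished generator — which requires the Teichmüller-coefficient computation and the hypothesis relating $\pi$ to $p$; the rest is a matter of chasing the adjunction $W \dashv (-)^\flat$ through the relevant completions.
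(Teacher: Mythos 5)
The paper proves this result by simply citing \cite[Theorem~3.10]{prisms}; your proposal instead reconstructs the argument of Bhatt–Scholze (and of \cite[Lemma~3.10]{BMS1}), which is entirely appropriate. The overall plan — check both functors land where they should, show they are mutually inverse via the identification $\A_{\inf}(A/d) \simeq A$ coming from $(A/d)^\flat \simeq A/p$, and deduce functoriality from naturality of $\theta$ and rigidity (Lemma~\ref{rig_prisms}) — is the right one and matches the cited proof. Your observation that the crux is showing $\ker\theta$ admits a \emph{distinguished} generator, verified via the Teichmüller-coefficient criterion, is exactly the correct diagnosis.

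One step in your write-up is not quite right as stated. You justify $(p,d)$-completeness of $\A_{\inf}(R)$ by arguing that $\A_{\inf}(R)$ is $p$-complete and $\A_{\inf}(R)/d \simeq R$ is $p$-complete, ``so $\A_{\inf}(R)$ is $d$-complete relative to its $p$-completeness.'' That deduction does not follow on its own: for instance $A = \ZZ_p\langle x\rangle$ with $d = x$ has $A$ and $A/d = \ZZ_p$ both $p$-complete, yet $A$ is not $(p,x)$-complete. The correct route (used in the paper at Lemma~\ref{classical_varpi_complete_perfect_prism}) is to observe that $R^\flat$ is $\varpi^\flat$-complete (Lemma~\ref{completeness_tilt}), hence $W(R^\flat)$ is $(p,[\varpi^\flat])$-complete, and then that $d \in (p,[\varpi^\flat])$ — from the Teichmüller expansion $d = [a_0] + pu$ one has $a_0$ a unit multiple of $(\varpi^\flat)^p$ — which gives $(p,d)$-completeness. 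A similar caveat applies to your sketch of $(A/d)^\flat \simeq A/p$: the claim that ``reduction mod $d$ induces an isomorphism on inverse limits along Frobenius because $A$ is $d$-complete'' is the right intuition but hides the genuine content, namely that $A/p$ is complete along $\bar d = d \bmod p$ and that $\lim_{x\mapsto x^p}(A/p)/\bar d \cong A/p$; this is the heart of \cite[Lemma~3.10]{BMS1} and deserves to be spelled out or cited rather than asserted. With those two points tightened, the argument is complete.
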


\begin{proof} \cite[Theorem 3.10]{prisms}
\end{proof}

\begin{rem} Let $R$ be a integral perfectoid ring, let us explain how to obtain an element $\pi \in R$, such that $R$ is $\pi$-complete and $\pi^p$ divides $p$, from the perfect prism $(A,d) = (\A_{\inf}(R), \ker(\theta))$. Via the isomorphism $A \simeq W(R^\flat)$ we can express $d = [a_0] + pu$ for a unit $u \in A$. Letting $\pi \in R$ be $\theta([a_0^{1/p}]) \in R$ it follows that $\pi^p = -p u$ in $R$. Under this choice of $\pi$ we learn that $\pi \in R$ admits compatible $p$-power roots in $R$, and so does $p \in R$ up to a unit multiple. Then, by virtue of \cite[Tag 0319]{stacks-project} we learn that $R$ is $p$-complete if and only if it is $\pi$-complete.

However, if we have an integral perfectoid ring $R$ and an element $\varpi \in R$ such that $R$ is $\varpi$-complete and $\varpi^p$ divides $p$ in $R$, it need not be true that $\varpi$ is equal to the element $\pi \in R$ produced in the above paragraph.
\end{rem}

\subsection{Tilting correspondence}

Let $R$ be a integral perfectoid ring which is $\varpi$-complete with respect to some $\varpi \in R$ where $\varpi^p$ divides $p$. Recall from Lemma \ref{compatible_roots_perfectoid} that there are (non-canonical) $\varpi^\flat, p^\flat \in R^\flat$ such that $(\varpi^\flat)^\sharp, (p^\flat)^\sharp \in R$ are unit multiples of $\varpi, p$ respectively.

\begin{lemma}\label{mod_p_tilt} Let $R$ be an integral perfectoid ring which is $\varpi$-complete with respect to some $\varpi \in R$ where $\varpi^p$ divides $p$. Then, the sharp map $\sharp: R^\flat \rightarrow R$ induces the following ring isomorphisms
\begin{equation*}
	R^\flat/p^\flat \rightarrow R/p \qquad R^\flat/(\varpi^\flat)^p \rightarrow R/\varpi^p
\end{equation*}
\end{lemma}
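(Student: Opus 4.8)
The plan is to read off both isomorphisms from the perfect prism $(A,(d)) = (\A_{\inf}(R), \ker\theta)$ attached to $R$ by Proposition~\ref{perfect_prism_perfectoid}: here $A = W(R^\flat)$, $\theta\colon A \to R$ is the counit with $\ker\theta = (d)$ for a distinguished element $d$, and $R = A/d$. Two features are decisive. First, $A/p = R^\flat$, and the reduction $\theta \bmod p\colon R^\flat \to R/p$ is precisely the projection $\lim_{a\mapsto a^p} R/p \to R/p$, i.e. the map $\sharp \bmod p$ (this is the commutative square defining $\theta$, together with $\theta([a]) = a^\sharp$ from Lemma~\ref{theta_characterization}). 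Second, both claims are the case $t = p$, respectively $t = \varpi^p$, of the single assertion: for $t \in R$ with $t \mid p$ and a chosen $t^\flat \in R^\flat$ whose sharp is a unit multiple of $t$, the map $\sharp \bmod t$ induces an isomorphism $R^\flat/(t^\flat) \to R/t$; one takes $t = p$, $t^\flat = p^\flat$, and $t = \varpi^p$, $t^\flat = (\varpi^\flat)^p$, the latter having $\big((\varpi^\flat)^p\big)^\sharp = \big((\varpi^\flat)^\sharp\big)^p$, a unit multiple of $\varpi^p$.

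First I would check that $\sharp \bmod t$ is a ring homomorphism: because $t$ divides $p$ in $R$, the element $p$ dies in $R/t$, so $\theta \bmod t$ factors through $A/p = R^\flat$, and by $\theta([a]) = a^\sharp$ this factorization is exactly $\sharp \bmod t$. It is surjective because $\varphi$ is surjective on $R/p$, hence on its quotient $R/t$, while $R^\flat \to R/p$ is onto, so the composite $R^\flat \twoheadrightarrow R/p \twoheadrightarrow R/t$ is onto.

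Next I would compute $\ker(\sharp \bmod t)$ inside the prism. Let $\tilde t \in A$ be the Teichm\"uller lift of $t^\flat$, so $\theta(\tilde t)$ generates $(t) \subset R$ and $R/t = A/(d,\tilde t)$. Lifting a relation $p = t r$ ($r \in R$) through the surjection $\theta$ gives $p \in (d,\tilde t)A$, whence $R/t = A/(p,d,\tilde t) = R^\flat/(\bar d, t^\flat)$, where $\bar d := d \bmod p$ and $\tilde t \bmod p = t^\flat$; thus $\ker(\sharp \bmod t) = (\bar d, t^\flat)R^\flat$. It remains to show $(\bar d, t^\flat) = (t^\flat)$, i.e. $\bar d \in (t^\flat)R^\flat$. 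From the Teichm\"uller expansion $d = \sum_i [a_i]p^i$ and the fact that $d$ is distinguished (so $a_1 \in (R^\flat)^\times$, whence $d = [a_0] + p u$ with $u \in A^\times$ and $\bar d = a_0$), the identity $\theta(d) = 0$ gives $\bar d^\sharp = a_0^\sharp = -p\,\theta(u)$, a unit multiple of $p$. So what is left, in both cases, is the divisibility $\bar d \in (t^\flat)R^\flat$ between two elements of $R^\flat$ whose sharps are unit multiples of $p$ and of $t$ respectively, knowing that $t \mid p$ in $R$ --- equivalently, that the divisibility $t \mid p$ persists after tilting.

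This last step is the crux and the part I expect to be genuinely hard: unlike the formal manipulations above, it cannot be seen purely at the level of multiplicative monoids (where $\varpi$ and the roots $\varpi^{1/p^n}$ may well be zero-divisors in $R$), and it uses the finiteness inherent in the integral perfectoid condition --- the bounded $p^\infty$-torsion of $R = A/d$ (Lemma~\ref{delta_ring_p_torsion}) --- to promote approximately-compatible systems of $p$-power roots into honest elements of $R^\flat$; a convenient equivalent formulation is that Frobenius $R/\varpi \to R/\varpi^p$ is injective. This is a standard ingredient of the tilting correspondence for integral perfectoid rings, and I would invoke it from \cite[Lemma 3.2]{BMS1} (cf. also \cite{scholze2012perfectoid}); granting it, the chain of identifications above delivers both isomorphisms.
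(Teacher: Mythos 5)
Your overall plan is sound and makes explicit what the paper leaves implicit: the paper's own proof is just a pointer to the proof of \cite[Lemma 3.10]{BMS1} together with the principality of $\ker\theta$ (cf.\ \cite[2.1.2.2]{purityflat}), whereas you reconstruct the reduction through the prism $(A,(d))$, the identification $R/t = A/(p,d,\tilde t) = R^\flat/(\bar d, t^\flat)$, and the resulting divisibility $\bar d \in (t^\flat)R^\flat$ as the crux. Two points to tighten. First, the citation at the crux is off: \cite[Lemma 3.2]{BMS1} is the \emph{monoid}-level statement that $\lim_{x\mapsto x^p} S \to \lim_{x\mapsto x^p} S/p$ is bijective for $p$-complete $S$, which is precisely the kind of multiplicative-level fact you correctly observe is insufficient here; the ring-level input you need (ultimately that $(\bar d) = (p^\flat)$, equivalently the injectivity claims) is part of \cite[Lemma 3.10]{BMS1} --- the same reference the paper uses. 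Second, your "convenient equivalent formulation" via injectivity of Frobenius $R/\varpi \to R/\varpi^p$ is left loose: it transparently controls the $t=\varpi^p$ case once one also has the isomorphism $R^\flat/\varpi^\flat \to R/\varpi$ (yet another instance of the same phenomenon, not in the lemma statement), but the $t=p$ case ($\bar d \in (p^\flat)$) is not obviously a reformulation of the same Frobenius injectivity, and conflating the two cases under one "equivalent formulation" hides a step. With the citation corrected to \cite[Lemma 3.10]{BMS1} and the equivalence either argued or dropped in favor of citing the ring isomorphisms from loc.\ cit.\ directly, this is a correct and more detailed version of what the paper does.
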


\begin{proof} Follows from the proof of \cite[Lemma 3.10]{BMS1}, and the fact that the map $\theta: \A_{\inf}(R) \rightarrow R$ is surjective with principal kernel (cf. \cite[2.1.2.2]{purityflat}).
\end{proof}

\begin{lemma}\label{perfectoid_frob_mod_p} Let $R$ be an integral perfectoid ring which is $\varpi$-complete with respect to some $\varpi \in R$ where $\varpi^p$ divides $p$. Then, the $p$-power map $a \mapsto a^p$ induces an isomorphism of rings $R/\varpi \rightarrow R/\varpi^p$.
\end{lemma}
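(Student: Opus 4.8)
The plan is to reduce this to the behaviour of Frobenius on the perfect ring $R^\flat$, transported back to $R$ through the tilting isomorphisms of Lemma \ref{mod_p_tilt}.

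First I would check that $a \mapsto a^p$ descends to a ring homomorphism $\phi\colon R/\varpi \to R/\varpi^p$. Since $\varpi^p$ divides $p$ in $R$ we have $pR \subseteq \varpi^p R$, and since the integer $\binom{p}{i}$ is divisible by $p$ for $1 \le i \le p-1$, it follows that $\binom{p}{i}\cdot r \in \varpi^p R$ for any $r \in R$. Hence if $a - b = \varpi c$ then
\[
a^p - b^p = \sum_{i=1}^{p-1}\binom{p}{i} b^{p-i}(\varpi c)^i + \varpi^p c^p \in \varpi^p R ,
\]
so $\phi$ is well defined; the same computation applied to $(a+b)^p$ gives additivity, and multiplicativity and unitality are clear. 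I would also record that surjectivity of $\phi$ already follows: the Frobenius $R/p \to R/p$ is surjective for integral perfectoid $R$ (cf.\ \cite[Definition 3.5]{BMS1}), so every element of $R$ is congruent to a $p$-th power modulo $p$, hence modulo $\varpi^p$.

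The real content is injectivity, and here I would pass to the tilt. Fix $\varpi^\flat \in R^\flat$ with $(\varpi^\flat)^\sharp$ a unit multiple of $\varpi$, as in Lemma \ref{compatible_roots_perfectoid}. By Lemma \ref{mod_p_tilt}, applied with $\varpi$ in place of $\varpi^p$ (the argument is identical: it uses only that $(\varpi^\flat)^\sharp$ is a unit multiple of a divisor of $p$ and that $R$ is $\varpi$-complete, cf.\ \cite[Lemma 3.10]{BMS1}), the sharp map induces ring isomorphisms $s_1\colon R^\flat/\varpi^\flat \xrightarrow{\sim} R/\varpi$ and $s_p\colon R^\flat/(\varpi^\flat)^p \xrightarrow{\sim} R/\varpi^p$. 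On the tilt side, the $p$-th power map $\psi\colon R^\flat/\varpi^\flat \to R^\flat/(\varpi^\flat)^p$ is a ring isomorphism: it is a well-defined ring map because $R^\flat$ has characteristic $p$, surjective because $R^\flat$ is perfect, and injective because Frobenius is injective on the perfect ring $R^\flat$ — if $x^p = (\varpi^\flat)^p d = (\varpi^\flat d^{1/p})^p$ then $x = \varpi^\flat d^{1/p} \in \varpi^\flat R^\flat$.

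Finally I would check that the square formed by $s_1$, $\psi$, $\phi$, $s_p$ commutes: for $x \in R^\flat$ the two composites send $x$ to $(x^\sharp)^p$ and to $(x^p)^\sharp$ modulo $\varpi^p$, which agree because the sharp map is multiplicative. Since $s_1$, $s_p$ and $\psi$ are isomorphisms, so is $\phi = s_p\circ\psi\circ s_1^{-1}$, which is the assertion. The main obstacle is precisely the injectivity of $\phi$: the elementary implication ``$a^p \in \varpi^p R \Rightarrow a \in \varpi R$'' is not visible directly (in general $R/\varpi$ is not reduced), so one genuinely needs the passage through $R^\flat$, along with the small bookkeeping point that the $\varpi$-version of Lemma \ref{mod_p_tilt} — the isomorphism $s_1$ — must be invoked even though only the $\varpi^p$- and $p$-versions were recorded above.
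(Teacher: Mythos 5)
Your proof is correct. The paper itself disposes of this lemma by citing \cite[Lemma 3.10]{BMS1}, and your argument is essentially the standard tilting argument underlying that citation: well-definedness and surjectivity of $a \mapsto a^p$ from $\varpi^p \mid p$ and Frobenius surjectivity mod $p$, and injectivity by transporting the bijective Frobenius of the perfect ring $R^\flat$ through the sharp-induced identifications. Your one supplement — the $\varpi$-level isomorphism $s_1\colon R^\flat/\varpi^\flat \to R/\varpi$, which the paper's Lemma \ref{mod_p_tilt} does not state — is handled correctly: it follows from the same structural fact used there (up to a unit, $d = p + [\varpi^\flat]^p x$ in $\A_{\inf}(R)$, so $(d,[\varpi^\flat]) = (p,[\varpi^\flat])$), so there is no circularity and no gap.
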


\begin{proof} \cite[Lemma 3.10]{BMS1}
\end{proof}

\begin{lemma}\label{completeness_tilt} Let $R$ be an integral perfectoid ring which is $\varpi$-complete with respect to some $\varpi \in R$ where $\varpi^p$ divides $p$. Then, the tilt $R^\flat$ is $\varpi^\flat$-complete.
\end{lemma}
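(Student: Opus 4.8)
The plan is to reduce the statement to the fact that $R^\flat$ is $\varpi^\flat$-adically separated and complete, using the description of $R^\flat$ as an inverse limit along Frobenius together with the mod-$p$ tilting isomorphisms already established. First I would recall that by definition $R^\flat = \lim_{a \mapsto a^p} R/p$, and that by Definition \ref{sharp_map_definition} this is also $\lim_{a \mapsto a^p} R/\varpi^p$ (since $\varpi^p$ divides $p$), which by Lemma \ref{perfectoid_frob_mod_p} is in turn $\lim_{a \mapsto a^p} R/\varpi$ where now the transition maps are isomorphisms. Concretely, an element of $R^\flat$ is a compatible sequence $(x_0, x_1, \dots)$ with $x_n \in R/\varpi$ and $x_n = x_{n+1}^p$, and the element $\varpi^\flat$ corresponds to a compatible system of $p$-power roots of $\varpi$ in $R$ reduced mod $\varpi$; in particular $(\varpi^\flat)^\sharp$ is a unit multiple of $\varpi$, so being $\varpi^\flat$-complete in $R^\flat$ is equivalent to being complete with respect to the ideal generated by $\varpi^\flat$.

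The key computation is to identify the quotients $R^\flat/(\varpi^\flat)^{p^n}$. I would argue that $(\varpi^\flat)^{p^n} R^\flat$ is the kernel of the projection $R^\flat \to R/\varpi$ onto a suitable coordinate — more precisely, multiplication by $(\varpi^\flat)^{p^n}$ on $R^\flat$ corresponds, under the identification with $\lim_{a\mapsto a^p} R/\varpi$, to a shift composed with multiplication by a power of $\varpi$, so that $R^\flat/(\varpi^\flat)^{p^n}$ is computed by a truncation of the inverse system. Then I would show directly that $R^\flat \to \lim_n R^\flat/(\varpi^\flat)^{p^n}$ is an isomorphism: surjectivity because a compatible system of classes mod $(\varpi^\flat)^{p^n}$ assembles, using the $p$-power-compatibility, into a genuine element of the inverse limit $\lim_{a\mapsto a^p} R/\varpi$; and injectivity because $\bigcap_n (\varpi^\flat)^{p^n} R^\flat = 0$, which follows from the fact that an element killed by all these projections has all coordinates zero. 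It is cleaner to phrase this using the isomorphism $R^\flat \cong \lim_n R/\varpi$ along Frobenius (Lemma \ref{perfectoid_frob_mod_p} makes the transition maps bijective), where the $\varpi^\flat$-adic filtration becomes transparent.

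The main obstacle will be bookkeeping the precise relationship between the ideal $(\varpi^\flat)^{p^n}$ in $R^\flat$ and the combinatorics of the inverse limit — in particular making sure that the power $(\varpi^\flat)^{p^n}$ (rather than $(\varpi^\flat)^n$) is what matches the $n$-th stage of the limit, since the sharp of $\varpi^\flat$ is $\varpi$ but the sharp of $(\varpi^\flat)^{1/p^n}$ is $\varpi^{1/p^n}$. Once this indexing is pinned down, completeness is formal. An alternative, perhaps shorter, route: invoke \cite[Lemma 3.9]{BMS1} (already cited here as Lemma \ref{compatible_roots_perfectoid}) together with the general principle that for a perfect $\FF_p$-algebra the tilt of a $\varpi$-complete perfectoid ring is computed by $\A_{\inf}$, and deduce $\varpi^\flat$-completeness of $R^\flat$ from $(p, \ker\theta)$-completeness of $\A_{\inf}(R) = W(R^\flat)$ by reducing mod $p$; but I would prefer the direct inverse-limit argument above as it is self-contained and only uses Lemmas \ref{mod_p_tilt} and \ref{perfectoid_frob_mod_p}.
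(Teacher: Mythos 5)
Your plan is essentially the paper's proof: the paper likewise identifies $R^\flat/(\varpi^\flat)^{p^n}$ with $R/\varpi$ (via $\varphi^{n}$ on the perfect ring $R^\flat$ together with the sharp map, i.e.\ projection onto the $n$-th coordinate of $\lim_{a \mapsto a^p} R/\varpi$) and then matches the two inverse systems to conclude $R^\flat \simeq \lim_n R^\flat/(\varpi^\flat)^{p^n}$. One small correction: the transition maps of $\lim_{a \mapsto a^p} R/\varpi$ (Frobenius on $R/\varpi$) are surjective but in general not isomorphisms -- what Lemma \ref{perfectoid_frob_mod_p} gives is that $a \mapsto a^p\colon R/\varpi \to R/\varpi^p$ is an isomorphism -- but your argument never actually uses that stronger claim, so the proof goes through as outlined.
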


\begin{proof} It suffices to show that the canonical map $R^\flat \rightarrow \lim R^\flat/(\varpi^\flat)^{p^n}$ is an isomorphism, in what follows we will use $\varphi$ to denote the Frobenius morphism. Notice that we have an isomorphism $\varphi^{n}: R^\flat/\varpi^\flat \rightarrow R^\flat/(\varpi^\flat)^{p^n}$ from Lemma \ref{perfectoid_frob_mod_p}, which we can precompose with the isomorphism $\sharp^{-1}: R/\varpi \rightarrow R^\flat/\varpi^\flat$ from Lemma \ref{mod_p_tilt} to get an isomorphism $g_n: R/\varpi \rightarrow  R^\flat/(\varpi^\flat)^{p^n}$ of rings.

Next, consider the following commutative diagram
	\begin{cd}
		\cdots \ar[r, "\varphi"] & R/\varpi \ar[r, "\varphi"] \ar[d, "g_n"]& R/\varpi \ar[r, "\varphi"] \ar[d, "g_{n-1}"]& \cdots \ar[r, "\varphi"] & R/\varpi \ar[d, "g_0"] \\
		\cdots \ar[r] & R^\flat/(\varpi^\flat)^{p^n} \ar[r] & R^\flat/(\varpi^\flat)^{p^{n-1}} \ar[r] & \cdots \ar[r] & R^\flat/(\varpi^\flat)
	\end{cd}
Since all the vertical maps are isomorphisms we obtain the desired isomorphism $R^\flat \rightarrow \lim R^\flat/(\varpi^\flat)^{p^n}$, proving the claim.
\end{proof}

\begin{prop}\label{prism_tilt_correspondence} Let $R$ be a integral perfectoid ring, with tilt $R^\flat$. Then, the following categories are equivalent
\begin{enumerate}[(1)]
	\item The category of integral perfectoid rings over $R$.
	\item The category of integral perfectoid rings over $R^\flat$.
\end{enumerate}
Where the functor from $(1)$ to $(2)$ is given by the tilting functor. Furthermore, if $R$ is $\varpi$-complete with respect to some $\varpi \in R$ where $\varpi^p$ divides $p$, then the above equivalence restricts to an equivalence between the categories
\begin{enumerate}[(1')]
	\item The category of $\varpi$-complete integral perfectoid rings over $R$.
	\item The category of $\varpi^{\flat}$-complete integral perfectoid rings over $R^\flat$.
\end{enumerate}
\end{prop}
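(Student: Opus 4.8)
The plan is to deduce this from the prismatic dictionary (Proposition \ref{perfect_prism_perfectoid}) together with the rigidity of morphisms of perfect prisms (Lemma \ref{rig_prisms}). For the first equivalence: by Proposition \ref{perfect_prism_perfectoid} an integral perfectoid ring $S$ over $R$ corresponds to a perfect prism $(A_{\inf}(S), \ker\theta_S)$ together with a morphism of perfect prisms $(A_{\inf}(R), \ker\theta_R) \to (A_{\inf}(S), \ker\theta_S)$; equivalently, by forgetting the ideal (which is forced by rigidity, Lemma \ref{rig_prisms}), a morphism of perfect $\delta$-rings $A_{\inf}(R) \to A_{\inf}(S)$ with the target being $(p, d)$-complete. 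Now $A_{\inf}(-) = W((-)^\flat)$, and on perfect $\delta$-rings the functor $W(-)$ is an equivalence onto perfect $p$-complete $\delta$-rings (Proposition \ref{perfect_delta_rings_equiv}); moreover any ring map between perfect $p$-complete $\delta$-rings is automatically a $\delta$-map. Hence the datum of a $\delta$-map $A_{\inf}(R) \to A_{\inf}(S)$ is the same as a ring map $R^\flat \to S^\flat$ of perfect $\FF_p$-algebras, and the $(p,d)$-completeness of $A_{\inf}(S)$ translates (via $\varphi$ being an isomorphism, so $(p,d)$ and $(p, d^{\flat}{}^{\sharp} \text{-type})$ completeness match up) into the condition that $S^\flat$, with the distinguished element coming from $\ker\theta_S$, is itself an integral perfectoid ring over $R^\flat$. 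Running this in reverse, and using that tilting sends $S \mapsto S^\flat$ compatibly, gives the claimed equivalence with the tilting functor as the comparison. I would organize this as: (i) identify $\{$integral perfectoid over $R\}$ with $\{$perfect prisms under $(A_{\inf}(R),\ker\theta_R)\}$; (ii) identify the latter with $\{$perfect $p$-complete $\delta$-rings under $A_{\inf}(R)$ equipped with a choice of distinguished-element ideal$\}$, noting the ideal choice corresponds on both sides to the same perfectoid structure; (iii) apply $W(-) \simeq \mathrm{id}$ to pass to perfect $\FF_p$-algebras under $R^\flat$; (iv) reinterpret that category as integral perfectoid rings over $R^\flat$ by Proposition \ref{perfect_prism_perfectoid} again, now in characteristic $p$.

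For the refinement to $\varpi$-complete objects, fix $\varpi \in R$ with $\varpi^p \mid p$ and let $\varpi^\flat \in R^\flat$ be a choice of compatible system with $(\varpi^\flat)^\sharp$ a unit multiple of $\varpi$ (Lemma \ref{compatible_roots_perfectoid}). The point is that for an integral perfectoid $R$-algebra $S$, the image of $\varpi$ in $S$ still satisfies $\varpi^p \mid p$, and Lemma \ref{completeness_tilt} shows $S$ is $\varpi$-complete if and only if $S^\flat$ is $\varpi^\flat$-complete — strictly, Lemma \ref{completeness_tilt} is stated for $R$ itself but the proof applies verbatim to any $\varpi$-complete integral perfectoid ring, and conversely if $S^\flat$ is $\varpi^\flat$-complete then by \cite[Tag 0319]{stacks-project} (since $p$ and $\varpi$ differ multiplicatively by units after taking $p$-power roots) $S$ is $p$-complete hence $\varpi$-complete. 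So the equivalence of the unrestricted categories carries the full subcategory of $\varpi$-complete objects isomorphically onto the full subcategory of $\varpi^\flat$-complete objects, which is exactly the asserted restriction.

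The main obstacle I anticipate is bookkeeping the distinguished-element ideal across the equivalence: a morphism of perfect prisms only remembers a $\delta$-ring map, the target ideal being determined by rigidity (Lemma \ref{rig_prisms}), but to land back in $\{$integral perfectoid over $R^\flat\}$ I must check that the ideal of $S^\flat$ produced by the composite functor really is $\ker(\theta_{S^\flat}: A_{\inf}(S^\flat) = W((S^\flat)^\flat) \to S^\flat)$, i.e. that it agrees with the ideal predicted by Proposition \ref{perfect_prism_perfectoid} applied in characteristic $p$. This comes down to the observation that for a perfect $\FF_p$-algebra $T$ one has $A_{\inf}(T) = W(T)$ canonically (since $T^\flat = T$) and $\theta$ is generated by $p$, together with the compatibility of the two applications of Proposition \ref{perfect_prism_perfectoid} — essentially the naturality of the prism-perfectoid equivalence under the forgetful/tilting functors, which is exactly the content packaged in the statement. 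A secondary technical point is ensuring the completeness conditions match: $(p,d)$-completeness of $A_{\inf}(S)$ versus $\varpi$- (resp. $\varpi^\flat$-) completeness downstairs; here one uses that $d$ may be taken of the form $[\varpi^\flat] + p(\cdots)$ after adjusting by a unit, so that $(p,d)$-completeness is equivalent to $[\varpi^\flat]$-completeness of $A_{\inf}$, which upon reduction mod $p$ is $\varpi^\flat$-completeness of $S^\flat$ and, via $\theta$, $\varpi$-completeness of $S$.
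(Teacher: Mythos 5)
Your route is essentially the paper's: reduce to perfect prisms via Proposition \ref{perfect_prism_perfectoid}, use rigidity (Lemma \ref{rig_prisms}) to suppress the ideal datum, and compare perfect $\delta$-rings over $A := \A_{\inf}(R)$; your extra pass through perfect $\FF_p$-algebras over $R^\flat$ via Proposition \ref{perfect_delta_rings_equiv} is cosmetically different but logically the same intermediate step, and the $(1')$--$(2')$ refinement is handled in both cases by Lemma \ref{completeness_tilt}.

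However, what your last paragraph calls a ``secondary technical point'' is actually where the argument is incomplete, and you mis-state the translation. Under rigidity, perfect prisms over $(A,(d))$ correspond to $(p,d)$-complete perfect $\delta$-rings over $A$, while perfect prisms over $(A,(p))$ correspond to merely $p$-complete ones. You claim that $(p,d)$-completeness of $\A_{\inf}(S)$ ``translates into the condition that $S^\flat$ is itself an integral perfectoid ring over $R^\flat$''; but that condition is vacuous, since every perfect $\FF_p$-algebra with a map from $R^\flat$ is integral perfectoid. The genuine translation of $(p,d)$-completeness is that $S^\flat$ be $\bar d$-complete, equivalently $\varpi^\flat$-complete, and this is not automatic. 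Concretely, take $R = \cO_{\CC_p}$, so $A = W(\cO_{\CC_p}^\flat)$ with a distinguished generator $d$ satisfying $d \equiv -(\varpi^\flat)^p \pmod p$. The perfect $\delta$-ring $W(\CC_p^\flat)$ is $p$-complete over $A$, but $\varpi^\flat$ is invertible in $\CC_p^\flat$, so $d$ becomes a unit in the $p$-local ring $W(\CC_p^\flat)$ and $W(\CC_p^\flat)$ is not $(p,d)$-complete. Correspondingly, $\CC_p^\flat$ is an integral perfectoid $\cO_{\CC_p}^\flat$-algebra that is not the tilt of any integral perfectoid $\cO_{\CC_p}$-algebra: any such $S$ would satisfy $S/p \simeq \CC_p^\flat/(\varpi^\flat)^p = 0$, hence $S = 0$ by $p$-completeness, contradicting $S^\flat = \CC_p^\flat \neq 0$. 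So what your argument (as written) establishes is an equivalence between $(1)$ and the full subcategory of $(2)$ on $\varpi^\flat$-complete objects, i.e.\ precisely $(1') \simeq (2')$, not the unconditional first clause. You should flag this rather than treat it as bookkeeping; note the paper's own proof shares the gap, asserting that both categories of perfect prisms are ``equivalent to the category of perfect $\delta$-rings over $A$'' without registering that the two relevant completeness conditions differ.
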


\begin{proof} Let $(A, d)$ be the perfect prism corresponding to $R$, then $A/p \simeq R^\flat$ as $A = W(R^\flat)$. Under the equivalence of Proposition \ref{perfect_prism_perfectoid} the category $(1)$ is equivalent to the category of perfect prisms over $(A,d)$, and the category $(2)$ is equivalent to the category of perfect prisms over $(A, p)$. Furthermore, by Lemma \ref{rig_prisms} we know that the category of perfect prisms over $(A,d)$ is equivalent to the category of perfect $\delta$-rings over $A$, and the category of perfect prisms over $(A,p)$ is equivalent to the category of perfect $\delta$-rings over $A$. This provides the desired equivalence of categories.

Finally, let us identify the equivalence of categories we just describes with the tilting functor when going from $(1)$ to $(2)$. Tracing out the equivalence we see that to a integral perfectoid $R$-algebra $S$ we first associate the perfect $\delta$-ring $W(S^\flat) = \A_{\inf}(S)$ and then we mod out by $p$, giving us the functor $S \mapsto S^\flat$. The equivalence between the categories (1') and (2') then follows from Lemma \ref{completeness_tilt}.
\end{proof}

\begin{prop}\label{product_perfectoid}  Let $R$ be an integral perfectoid ring which is $\varpi$-complete with respect to some $\varpi \in R$ where $\varpi^p$ divides $p$. Let $I$ be a set and $\{B_i\}_{i \in I}$ a collection of (classically) $\varpi$-complete $R$-algebras. Then, $\prod_{i \in I} B_i$ is a $\varpi$-complete integral perfectoid if and only if each $B_i$ is so, and then $(\prod_{i \in I} B_i)^{\flat} = \prod_{i \in I} B_i^{\flat}$.
\end{prop}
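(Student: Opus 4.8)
The plan is to reduce the statement to the characterizations of integral perfectoid rings already recalled, working component-by-component and then globally. First I would fix notation: let $C = \prod_{i \in I} B_i$, which is a $\varpi$-complete $R$-algebra by hypothesis (a product of classically $\varpi$-complete modules over a ring in which $\varpi$ is fixed is again classically $\varpi$-complete, since completion commutes with products). Since $\varpi^p \mid p$ in $R$ and hence in each $B_i$ and in $C$, condition (1) of Definition \ref{int_perf_def} is automatic for $C$ and for each $B_i$; only the conditions involving $\theta$ need attention. Using the equivalent criterion (the Lemma following Definition \ref{int_perf_def}), it suffices to control two things: surjectivity of Frobenius $\varphi$ on $C/p$, and the structure of $\ker(\theta)$ on $\mathbb{A}_{\inf}(C)$.

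The key computational input is that tilting and $\mathbb{A}_{\inf}$ commute with products in this setting. Since each $B_i$ is $\varpi$-complete, the tilt $B_i^\flat = \lim_{a\mapsto a^p} B_i/\varpi$ and, because $\varpi^\flat$-adic data is preserved, one gets $C^\flat = \lim_{a \mapsto a^p} (C/\varpi) = \lim_{a\mapsto a^p}\big(\prod_i B_i/\varpi\big) = \prod_i \lim_{a\mapsto a^p} B_i/\varpi = \prod_i B_i^\flat$, using that limits commute with products. Applying $W(-)$, which also commutes with products (it is computed coordinatewise on ghost components, and for perfect $\FF_p$-algebras $W$ preserves products by the characterization in Lemma \ref{p-Witt_unique_lift}), gives $\mathbb{A}_{\inf}(C) = W(C^\flat) = \prod_i W(B_i^\flat) = \prod_i \mathbb{A}_{\inf}(B_i)$, and the map $\theta$ for $C$ is the product of the $\theta_i$. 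Hence $\ker(\theta_C) = \prod_i \ker(\theta_i)$.

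With this in hand the equivalence is a formal exercise. If each $B_i$ is integral perfectoid: $\varphi$ is surjective on each $B_i/p$, hence on $\prod_i B_i/p = C/p$; and by Lemma \ref{compatible_roots_perfectoid} applied to $R$ we may choose $d_i = [a_0^{(i)}] + p u_i$ generating $\ker(\theta_i)$ with a \emph{common} distinguished element structure — more carefully, since all $B_i$ are $R$-algebras and the prism of $R$ is $(A,(d))$ with $A = W(R^\flat)$, rigidity (Lemma \ref{rig_prisms}) forces $\ker(\theta_i) = d \cdot \mathbb{A}_{\inf}(B_i)$ for the image of the \emph{same} $d$; therefore $\ker(\theta_C) = \prod_i d\,\mathbb{A}_{\inf}(B_i) = d\cdot \mathbb{A}_{\inf}(C)$ is principal, and $d$ remains distinguished in $\mathbb{A}_{\inf}(C)$ since $\delta$ acts coordinatewise and a unit in a product is a unit in each factor — so $C$ is integral perfectoid. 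Conversely, if $C$ is integral perfectoid, then $C/p = \prod_i B_i/p$ has surjective Frobenius, which forces $\varphi$ surjective on each $B_i/p$ (a product map is surjective iff each factor is); and writing a principal generator of $\ker(\theta_C) = \prod_i \ker(\theta_i)$ as $(d_i)_i$, the fact that $(d_i)_i$ generates the product ideal forces each $d_i$ to generate $\ker(\theta_i)$, and $\delta((d_i)_i) = (\delta(d_i))_i$ being a unit in the product forces each $\delta(d_i) \in \mathbb{A}_{\inf}(B_i)^\times$; so each $B_i$ is integral perfectoid. Finally the identity $(\prod_i B_i)^\flat = \prod_i B_i^\flat$ is exactly the tilting computation above.

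\textbf{Main obstacle.} The only genuinely delicate point is the claim that $W(-)$ and the formation of the $\theta$-kernel commute with infinite products — in particular that a principal ideal in an infinite product $\prod_i \mathbb{A}_{\inf}(B_i)$ that equals $\prod_i \ker(\theta_i)$ must have all its coordinate projections principal generators, and dually that distinguishedness is detected coordinatewise. This is where one must use the rigidity of prism maps (Lemma \ref{rig_prisms}) over the fixed base prism of $R$ to pin down a \emph{single} distinguished element $d$ that works uniformly across all factors, rather than trying to reconcile $I$-many a priori unrelated generators; handling the converse direction requires the elementary but essential observation that an element of a product is a generator of a product of principal ideals precisely when each coordinate is a generator (which uses that each $\ker(\theta_i)$ is generated by a non-zero-divisor, by Lemma \ref{delta_ring_p_torsion}).
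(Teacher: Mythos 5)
Your proof is correct and rests on the same key observation as the paper's: that $\A_{\inf}(-)$ commutes with products because both $(-)^{\flat}$ and $W(-)$ do, giving $\A_{\inf}(\prod_i B_i) = \prod_i \A_{\inf}(B_i)$. The paper's write-up is essentially one sentence, deferring the remaining bookkeeping to \cite[Proposition 2.1.11(d)]{purityflat}; you carry out that bookkeeping explicitly, in particular using prism rigidity (Lemma \ref{rig_prisms}) over the base prism of $R$ to produce a single generator $d$ serving all factors at once, which is a clean way to establish principality of $\ker(\theta_C)$ without reconciling $I$-many independent generators.
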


\begin{proof} It follows from the definition of $\A_{\inf}(-)$ that it preserves all limits, as $(-)^{\flat}$ and $W(-)$ do so. Hence we have the identity $\A_{\inf}(\prod_{i \in I} B_i) = \prod_{i \in I} \A_{\inf}(B_i)$, proving the result. See also \cite[Proposition 2.1.11(d)]{purityflat}
\end{proof}


\subsection{\texorpdfstring{$\varpi$}{pi}-torsion in integral perfectoid rings}

Let $R$ be a ring, recall that for an element $a \in R$ we denote by $R[a]$ the $a$-torsion of $R$ and by $R[a^\infty]$ the union $\cup_{n \ge 0} R[a^n]$. Similarly, if $a \in R$ admits systems of compatible $p$-power roots, we denote by $R[a^{1/p^\infty}]$ the intersection $\cap_{n \ge 0} R[a^{1/p^n}]$.

\begin{prop}\label{perfectoid_p_torsion} Let $R$ be an integral perfectoid ring, then the sharp map $R^\flat \rightarrow R$ induces an isomorphism $R^\flat[p^\flat] \rightarrow R[p]$ of $\A_{\inf}(R)$-modules.
\end{prop}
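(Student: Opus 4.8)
The plan is to use the description of $R^\flat$ as the multiplicative monoid $\lim_{a\mapsto a^p} R$ together with the identification $\A_{\inf}(R) = W(R^\flat)$, and to transport the statement about $p$-torsion into a statement about the perfect prism $(\A_{\inf}(R),(d))$. Concretely, recall $R = \A_{\inf}(R)/d$ where $d$ is a distinguished element, $\A_{\inf}(R)$ is $p$-torsion free (being $W$ of a perfect $\FF_p$-algebra, cf. Proposition \ref{perfect_delta_rings_equiv}), and $R^\flat = \A_{\inf}(R)/p$. So one wants to compare $(\A_{\inf}(R)/d)[p]$ with the image of $(\A_{\inf}(R)/p)[p^\flat]$ under the sharp map; here I would write $d = [a_0] + p u$ with $u \in \A_{\inf}(R)^\times$ (as recalled in the excerpt), so that $p^\flat$ can be taken to be $a_0 \in R^\flat$ up to a unit, and $(p^\flat)^\sharp = \theta([a_0])$ is a unit multiple of $p$ in $R$.

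First I would set $A = \A_{\inf}(R)$, reduce to showing that multiplication by $d$ on $A$ together with the structure of $A/p$ controls the $p$-torsion of $A/d$. The key algebraic input is that $d$ and $p$ form a regular sequence on $A$ in both orders: $d$ is a non-zero-divisor by Lemma \ref{delta_ring_p_torsion}(1), $p$ is a non-zero-divisor since $A$ is $p$-torsion free, and $A/d = R$ has $R[p] = R[p^\infty]$ bounded $p$-torsion by Lemma \ref{delta_ring_p_torsion}(2). From the short exact sequence $0 \to A \xrightarrow{d} A \to A/d \to 0$ and the snake lemma applied to multiplication by $p$, one gets $0 \to (A/d)[p] \to A/p \xrightarrow{\bar d} A/p$, i.e. $(A/d)[p] \cong (A/p)[\bar d]$, the $\bar d$-torsion of $R^\flat = A/p$. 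Since $\bar d = a_0 = p^\flat$ up to a unit in $R^\flat$, this is exactly $R^\flat[p^\flat]$. So the isomorphism is the composite $R^\flat[p^\flat] = (A/p)[\bar d] \xleftarrow{\sim} (A/d)[p] = R[p]$, and one checks this composite is induced by the sharp map by tracking Teichmüller representatives: an element of $R^\flat[p^\flat]$ lifts to $x \in A$ with $\bar d x \in pA$, hence $dx \in pA$, so $x \bmod d \in R$ is killed by... wait, more carefully, $x$ maps to a $p$-torsion element of $A/d$, and chasing through the identification $\A_{\inf}(R) = W(R^\flat)$ shows the bijection agrees with $a \mapsto a^\sharp$ on the torsion submonoid.

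The main obstacle I anticipate is precisely the compatibility with the \emph{sharp map} rather than just producing \emph{some} isomorphism of $\A_{\inf}(R)$-modules. The snake-lemma argument gives an abstract isomorphism $R[p] \cong (A/p)[\bar d]$ quickly, but verifying that under the composite identification $A/p = R^\flat$ this coincides with $\sharp|_{R^\flat[p^\flat]}$ requires unwinding the definition of $\sharp$ as projection $\lim_{a\mapsto a^p} R \to R$ onto the last coordinate and matching it against the reduction $A \to A/d = R$ restricted to torsion. An alternative, possibly cleaner, route: since $R^\flat$ is perfect of characteristic $p$, an element $y \in R^\flat$ with $p^\flat y = 0$ means $y$ is divisible (after taking $p$-power roots) by arbitrarily high powers of ... one uses that $R^\flat$ is reduced, so $R^\flat[p^\flat] = R^\flat[(p^\flat)^\infty]$, then combines with Lemma \ref{mod_p_tilt} ($R^\flat/p^\flat \cong R/p$) and a limit argument along Frobenius, mirroring the proof of Lemma \ref{completeness_tilt}, to descend the identification $R^\flat[p^\flat] \xrightarrow{\sharp} R[p]$ from the mod-$\varpi$ level. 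Either way the content is bookkeeping with Teichmüller expansions; no genuinely new idea beyond the regular-sequence observation should be needed, and I would expect to cite \cite[Lemma 3.10]{BMS1} or \cite[Proposition 2.1.x]{purityflat} for the hands-on verification.
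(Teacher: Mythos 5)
The paper itself offers no argument here: it simply cites \cite[2.1.2.5]{purityflat}. Your snake-lemma route is a legitimate direct proof, and the key identification $(A/d)[p] \cong (A/p)[\bar d]$ with $\bar d = a_0$ a choice of $p^\flat$ is correct. However, two points need tightening. First, your phrase ``$d$ and $p$ form a regular sequence on $A$ in both orders'' is false precisely in the cases where the proposition has content: if $p$ were a non-zero-divisor on $A/d = R$, then $R[p]$ would vanish and there would be nothing to prove (and symmetrically $\bar d$ is a zero-divisor on $A/p = R^\flat$ exactly when $R^\flat[p^\flat]\neq 0$). The snake lemma only needs that $d$ and $p$ are \emph{each} non-zero-divisors on $A$ itself (which is true: $A$ is $p$-torsion-free and $d$ is distinguished); the off-diagonal torsion is what the argument computes, not a hypothesis. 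You should delete the regular-sequence claim and just cite those two facts.

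Second, the compatibility with $\sharp$ is the real content of the proof and is more than ``bookkeeping.'' Carrying out your Teichm\"uller chase: for $\bar y \in R^\flat[p^\flat]$, take the Teichm\"uller lift $[\bar y]\in A$; writing $d=[a_0]+pu$, one has $[a_0][\bar y]=[a_0\bar y]=0$, so $d[\bar y]=pu[\bar y]$, and the inverse connecting map therefore sends $\bar y$ to $\theta(u[\bar y])=\theta(u)\,\bar y^\sharp$. Thus $\delta^{-1}=\theta(u)\cdot\sharp$ on the torsion submodule, so $\sharp$ differs from the $A$-linear map $\delta^{-1}$ by the unit $\theta(u)\in R^\times$, and this is a bijection onto $R[p]$. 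Note that this computation is also what establishes the assertion ``isomorphism of $\A_{\inf}(R)$-modules'': $\sharp$ is a priori only multiplicative, not additive, and its $A$-linearity on $R^\flat[p^\flat]$ is not formal---it falls out of the identification with $\theta(u)^{-1}\delta^{-1}$. (Alternatively you can normalize $p^\flat$ so that $(p^\flat)^\sharp=-p$ and take $d=[p^\flat]+p$, forcing $u=1$ and making $\delta^{-1}=\sharp$ on the nose.) Your sketched ``alternative route'' via Lemma \ref{mod_p_tilt} and a Frobenius limit is too vague to assess, but the snake-lemma route, once the connecting map is actually computed as above, is complete.
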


\begin{proof} \cite[2.1.2.5]{purityflat}.
\end{proof}

In fact, by Lemma \ref{delta_ring_p_torsion} we know that $R[p] = R[p^\infty]$ and since $R^\flat$ is a perfect ring of characteristic $p$ we have that $R^\flat[p^\flat] = R^\flat[p^{\flat, \infty}]$. In particular, for any $a \in R$ which divides $p$ and admits systems of $p$-power roots, we have that $R[a^n] \subset R[p^\infty] = R[p]$; which in turn implies that the sharp map $R^\flat[a^{\flat, n}] \rightarrow R[a^n]$ is an $\A_{\inf} (R)$-module map for all $n \in \ZZ_{\ge 0}$.

\begin{corollary}\label{perfectoid_torsion} Let $R$ be an integral perfectoid ring which is $\varpi$-complete with respect to some $\varpi \in R$ where $\varpi^p$ divides $p$. Then, the sharp map $\sharp: R^\flat \rightarrow R$ induces an isomorphism $R^\flat[\varpi^{\flat,n}] \rightarrow R[\varpi^n]$ of $\A_{\inf}(R)$-modules, for all $n \in \mathbf{Z}[1/p]$. Moreover, the canonical inclusions induce isomorphisms $R[\varpi^{1/p^\infty}] = R[\varpi] = R[\varpi^\infty]$.
\end{corollary}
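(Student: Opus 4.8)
The plan is to obtain the isomorphism by restricting the $\A_{\inf}(R)$-linear isomorphism $\sharp\colon R^\flat[p^\flat]\overset{\simeq}{\longrightarrow}R[p]$ of Proposition~\ref{perfectoid_p_torsion} to the $\varpi$-torsion, and then to read off the last assertion by transporting an elementary computation in the perfect ring $R^\flat$ back to $R$. First I would normalise the setup: by Lemma~\ref{compatible_roots_perfectoid} I may replace $\varpi$ by the unit multiple $(\varpi^\flat)^\sharp$ (this changes neither the $\varpi$-adic topology on $R$ nor the integer-exponent torsion submodules), so that $\varpi$ admits the compatible system of $p$-power roots $\varpi^{1/p^m}:=\big((\varpi^\flat)^{1/p^m}\big)^\sharp$ and, by multiplicativity of $\sharp$, $\sharp\big((\varpi^\flat)^n\big)=\varpi^n$ for every $n\in\mathbf{Z}[1/p]_{>0}$.

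Next I would establish the two containments that bring everything inside the $p$-torsion. On the one hand, since $\varpi^p$ divides $p$ in $R$, for every $n\in\mathbf{Z}[1/p]_{>0}$ one has $\varpi^n\mid p^N$ for a suitable integer $N$, so $R[\varpi^n]\subseteq R[p^N]\subseteq R[p^\infty]=R[p]$, the last equality by Lemma~\ref{delta_ring_p_torsion}. On the other hand, the isomorphism $R^\flat/(\varpi^\flat)^pR^\flat\overset{\simeq}{\longrightarrow}R/\varpi^pR$, $x\mapsto x^\sharp$, of Lemma~\ref{mod_p_tilt}, together with $\varpi^p\mid p$, forces $p^\flat\in(\varpi^\flat)^pR^\flat$; combining this with the observation that in the perfect ring $R^\flat$ (which is reduced with bijective Frobenius) a short direct manipulation with $p$-th roots gives $R^\flat[(\varpi^\flat)^n]=R^\flat[\varpi^\flat]$ for all $n\in\mathbf{Z}[1/p]_{>0}$, I get $R^\flat[(\varpi^\flat)^n]\subseteq R^\flat[p^\flat]$ as well.

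Now I would restrict $\sharp\colon R^\flat[p^\flat]\overset{\simeq}{\longrightarrow}R[p]$. Multiplicativity gives $\sharp\big(x(\varpi^\flat)^n\big)=\sharp(x)\varpi^n$, so $\sharp$ carries $R^\flat[(\varpi^\flat)^n]$ into $R[\varpi^n]$; conversely, if $x\in R^\flat[p^\flat]$ has $\sharp(x)\in R[\varpi^n]$, then $x(\varpi^\flat)^n\in R^\flat[p^\flat]$ and $\sharp\big(x(\varpi^\flat)^n\big)=\sharp(x)\varpi^n=0$, whence $x(\varpi^\flat)^n=0$ by injectivity of $\sharp$ on $R^\flat[p^\flat]$. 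Thus $R^\flat[(\varpi^\flat)^n]$ is precisely the preimage of $R[\varpi^n]$, so $\sharp$ restricts to a bijection $R^\flat[(\varpi^\flat)^n]\overset{\simeq}{\longrightarrow}R[\varpi^n]$, and this bijection is $\A_{\inf}(R)$-linear since it is the restriction of the $\A_{\inf}(R)$-linear map of Proposition~\ref{perfectoid_p_torsion}. For the final sentence, the same manipulation in $R^\flat$ yields $R^\flat[(\varpi^\flat)^{1/p^\infty}]=R^\flat[\varpi^\flat]=R^\flat[(\varpi^\flat)^\infty]$; since the isomorphisms just constructed are compatible with the inclusions $R[\varpi^{n'}]\subseteq R[\varpi^n]$ for $n'\le n$ (as $\sharp$ matches the corresponding inclusions among the $R^\flat[(\varpi^\flat)^n]$), taking intersections and unions transports these equalities to $R[\varpi^{1/p^\infty}]=R[\varpi]=R[\varpi^\infty]$.

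The main obstacle is that $\sharp$ is only multiplicative, not additive, so the identification $R^\flat[(\varpi^\flat)^n]\cong R[\varpi^n]$ cannot simply be read off from a ring homomorphism; one is forced to route through $R[p]$ and $R^\flat[p^\flat]$, where Proposition~\ref{perfectoid_p_torsion} has already upgraded $\sharp$ to an $\A_{\inf}(R)$-linear isomorphism. Accordingly the two technical inputs I would isolate as small lemmas before assembling the argument are the divisibility $p^\flat\in(\varpi^\flat)^pR^\flat$ (via Lemma~\ref{mod_p_tilt} and $\varpi^p\mid p$) and the exponent-independence $R^\flat[(\varpi^\flat)^n]=R^\flat[\varpi^\flat]$ in the perfect ring $R^\flat$; once these are in place the remainder is bookkeeping with the sharp map.
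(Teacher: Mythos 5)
Your proof is correct and follows the same strategy as the paper's: restrict the $\A_{\inf}(R)$-linear isomorphism $\sharp\colon R^\flat[p^\flat]\to R[p]$ of Proposition~\ref{perfectoid_p_torsion} to the relevant torsion submodules, and deduce the final equalities from the perfectness of $R^\flat$. You supply some details the paper's terse proof elides, namely the verification via multiplicativity of $\sharp$ that $R^\flat[(\varpi^\flat)^n]$ is exactly the preimage of $R[\varpi^n]$, and the divisibility $p^\flat\in(\varpi^\flat)^pR^\flat$ guaranteeing the containment $R^\flat[(\varpi^\flat)^n]\subseteq R^\flat[p^\flat]$.
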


\begin{proof} From Proposition \ref{perfectoid_p_torsion} we learn that the sharp map induces an isomorphism $R^\flat[p^\flat] \rightarrow R[p]$ of $\A_{\inf}$-modules. Moreover, since $R^\flat[p^{\flat,\infty}] = R[p^\flat]$ and $R[p^\infty] = R[p]$ we learn that by restricting to the submodules $R^\flat[\varpi^{\flat, n}] \subset R^\flat[p^\flat]$ and $R[\varpi^{n}] \subset R[p]$ we obtain a $\A_{\inf} (R)$-module isomorphism $R^\flat[\varpi^{\flat,n}] \rightarrow R[\varpi^n]$ for all $n \in \mathbf{Z}[1/p]$. For the second part it suffices to show that $R^\flat[\varpi^{\flat, 1/p^\infty}] = R^\flat[\varpi^\flat] = R^\flat[\varpi^{\flat, \infty}]$, but this is clear as $R^\flat$ is a perfect ring of characteristic $p$.
\end{proof}

\begin{prop}\label{pi_torsionfree_perfectoid} Let $R$ be an integral perfectoid ring which is $\varpi$-complete with respect to some $\varpi \in R$ where $\varpi^p$ divides $p$. Denote by $\overline{R}$ the ring $R/R[\varpi^\infty]$ and by $\overline{R^\flat}$ the ring $R^\flat/R^\flat[\varpi^{\flat, \infty}]$. Then, $\overline{R}$ is a $\varpi$-torsion free integral perfectoid ring, with tilt $\overline{R^\flat}$. Moreover, $\overline{R}$ is $\varpi$-complete.
\end{prop}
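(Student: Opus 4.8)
The plan is to realize $\overline{R}$ as the untilt, along the perfect prism attached to $R$, of the perfect $\mathbf{F}_p$-algebra $\overline{R^\flat}$, and then to read off every assertion from the tilting correspondence (Propositions \ref{perfect_prism_perfectoid} and \ref{prism_tilt_correspondence}). The $\varpi$-torsion freeness of $\overline{R}$ is immediate: if $\varpi x\in R[\varpi^\infty]$ then $\varpi^{n+1}x=0$ for some $n$, so $x\in R[\varpi^\infty]$. For $\varpi$-completeness, recall from Corollary \ref{perfectoid_torsion} that $R[\varpi^\infty]=R[\varpi]$ is killed by $\varpi$ and that for $n\ge 1$ one has $R[\varpi]\cap\varpi^nR=0$ (if $\varpi^nz\in R[\varpi]$ then $z\in R[\varpi^{n+1}]=R[\varpi]$, so $\varpi^nz=0$); hence there is a short exact sequence of towers $0\to R[\varpi]\to R/\varpi^nR\to \overline{R}/\varpi^n\overline{R}\to 0$ with the left-hand tower constant, and passing to inverse limits (the constant tower being Mittag--Leffler) together with $\varpi$-completeness of $R$ gives $\overline{R}=\lim_n \overline{R}/\varpi^n\overline{R}$. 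The same argument applied to $R^\flat$, which is $\varpi^\flat$-complete by Lemma \ref{completeness_tilt}, shows $\overline{R^\flat}=R^\flat/R^\flat[\varpi^\flat]$ is $\varpi^\flat$-complete.

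Next, $\overline{R^\flat}$ is integral perfectoid. By Corollary \ref{perfectoid_torsion} we have $R^\flat[\varpi^\flat]=R^\flat[\varpi^{\flat,\infty}]=R^\flat[\varpi^{\flat,1/p^\infty}]=\bigcap_nR^\flat[\varpi^{\flat,1/p^n}]$, and since $R^\flat$ is a perfect — hence reduced — $\mathbf{F}_p$-algebra this ideal is closed under extraction of $p$-th roots: if $(\varpi^{\flat,1/p^n})x^p=0$ then $(\varpi^{\flat,1/p^{n+1}}x)^p=0$, so $\varpi^{\flat,1/p^{n+1}}x=0$. Thus $\overline{R^\flat}$ is again a perfect $\mathbf{F}_p$-algebra, in particular an integral perfectoid ring; combined with the previous paragraph it is a $\varpi^\flat$-complete integral perfectoid ring, equipped via the quotient map with the structure of an $R^\flat$-algebra.

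By Proposition \ref{prism_tilt_correspondence} (the equivalence of the categories $(1')$ and $(2')$), $\overline{R^\flat}$ corresponds to a unique $\varpi$-complete integral perfectoid ring $S$ over $R$ with $S^\flat\cong\overline{R^\flat}$ compatibly over $R^\flat$; it remains to identify $S$ with $\overline{R}$. Write $(A,(d))=(\A_{\inf}(R),\ker\theta)$ for the perfect prism attached to $R$, so $A=W(R^\flat)$ and $R=A/d$. Tracing the chain of equivalences in the proof of Proposition \ref{prism_tilt_correspondence}, $S=W(\overline{R^\flat})/d\,W(\overline{R^\flat})$, where $W(\overline{R^\flat})$ receives the surjection $A=W(R^\flat)\twoheadrightarrow W(\overline{R^\flat})$ induced by $R^\flat\twoheadrightarrow\overline{R^\flat}$. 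Because $R^\flat[\varpi^\flat]$ is an ideal closed under $p$-th roots, uniqueness of Teichmüller expansions shows this surjection has kernel the ideal $\mathfrak{a}=\{\sum_{i\ge 0}[j_i]p^i : j_i\in R^\flat[\varpi^\flat]\}$, so that $S=A/(dA+\mathfrak{a})=R/\theta(\mathfrak{a})$. Finally, $\theta([j_i])=j_i^\sharp$ by Lemma \ref{theta_characterization}, which lies in $R[\varpi]$ by Corollary \ref{perfectoid_torsion}, and $p\cdot R[\varpi]=0$ since $\varpi^p\mid p$; hence $\theta(\sum_i[j_i]p^i)=j_0^\sharp$ and $\theta(\mathfrak{a})=\sharp(R^\flat[\varpi^\flat])=R[\varpi]=R[\varpi^\infty]$, again by Corollary \ref{perfectoid_torsion}. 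Therefore $S=R/R[\varpi^\infty]=\overline{R}$, which is thus a $\varpi$-complete integral perfectoid ring with tilt $\overline{R^\flat}$.

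The main obstacle is this last identification: one must carefully unwind the equivalence of Proposition \ref{prism_tilt_correspondence} and carry out the Witt-vector bookkeeping — that the Teichmüller expansions of a perfect ideal form an ideal of $W(R^\flat)$ whose quotient is $W(\overline{R^\flat})$, and that $\theta$ carries $\mathfrak{a}$ onto $R[\varpi]$ — while staying alert to the interplay between the topologies defined by $\varpi$, by $\varpi^\flat$, and by the distinguished element $d$ when invoking the $\varpi$-complete refinement of Proposition \ref{prism_tilt_correspondence}. (One could instead verify the Bhatt--Scholze criterion for $\overline{R}$ directly: $\varpi$-completeness is the first paragraph, Frobenius is surjective on $\overline{R}/p=R/(pR+R[\varpi])$ because $R[\varpi^p]\subseteq R[\varpi^\infty]=R[\varpi]$ makes the kernel of $R/p\twoheadrightarrow\overline{R}/p$ Frobenius-stable, and $\ker\theta_{\overline{R}}$ is principal — but computing $(\overline{R})^\flat$ directly turns out to require essentially the same work.)
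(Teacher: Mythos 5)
Your proof is correct, but it takes a genuinely different and more self-contained route than the paper. The paper dispatches the ``integral perfectoid with tilt $\overline{R^\flat}$'' claim by citing \cite[2.1.3]{purityflat}, and for $\varpi$-completeness it observes that $R[\varpi^\infty]=R[\varpi^{1/p^\infty}]$ is killed by $\varpi$ and hence derived $\varpi$-complete, so $\overline{R}$ is derived $\varpi$-complete as the cokernel of a map of derived-complete modules, whence classically $\varpi$-complete once $\varpi$-torsion freeness is known. You instead unwind the perfect prism correspondence explicitly: you show $\overline{R^\flat}$ is a $\varpi^\flat$-complete perfect $\mathbf{F}_p$-algebra, run the tilting equivalence (Propositions \ref{perfect_prism_perfectoid} and \ref{prism_tilt_correspondence}) to produce its untilt $S$, and then identify $S$ with $\overline{R}$ via a Teichm\"uller-expansion computation of $\ker\!\bigl(W(R^\flat)\to W(\overline{R^\flat})\bigr)$ and the identity $\theta(\mathfrak{a})=\sharp(R^\flat[\varpi^\flat])=R[\varpi]$ from Corollary \ref{perfectoid_torsion}; your $\varpi$-completeness argument is likewise hands-on, via the constant-kernel tower and Mittag--Leffler. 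Both approaches are valid. Yours makes the mechanism transparent (at the cost of more Witt-vector bookkeeping), essentially re-deriving the relevant piece of \cite[2.1.3]{purityflat} from scratch; the paper's is shorter because it delegates that work to the citation, and its completeness argument via derived completion is a notch slicker, sidestepping the explicit $\lim^1$ vanishing. One small remark: your first-paragraph completeness argument already establishes everything you later reconfirm from the tilting correspondence, so the final paragraph's completeness byproduct is redundant (harmlessly so).
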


\begin{proof} The fact that $\overline{R}$ is integral perfectoid with tilt $\overline{R^\flat}$ follows from \cite[2.1.3]{purityflat}. To show that $\overline{R}$ is (classically) $\varpi$-complete it suffices to show that $R[\varpi^\infty]$ is derived $\varpi$-complete (\ref{derived_complete_cat}), but this follows from the identity $R[\varpi^\infty] = R[\varpi^{1/p^\infty}]$.
\end{proof}

\subsection{\texorpdfstring{$p$}{p}-integral closedness of integral perfectoid rings}

\begin{defn}\label{p_int_closed_defn} For an injective morphisms $R \hookrightarrow S$ of rings, we say that $R$ is $p$-integrally closed in $S$ if every $a \in S$ where $a^p \in R$ satisfies $a \in R$. The $p$-integral closure of $R$ in $S$ is constructed as $\cup _{n \ge 0} R_n \subset S$, where $R_0 = R$ and $R_{n+1} \subset S$ is the $R_n$-subalgebra generated by all the $a \in S$ such that $a^p \in R_n$; it is the smallest $p$-integrally closed subring of $S$ containing $R$. Clearly, the $p$-integral closure of $R \hookrightarrow S$ is contained in the integral closure.
\end{defn}

\begin{lemma}\label{p_int_closed_frob} Let $R$ be a ring and $\varpi \in R$ a non-zero divisor which satisfies $\varpi^p$ divides $p$. Then, the map
	\begin{equation*}
		\varphi: R/\varpi \rightarrow R/\varpi^p \qquad a \mapsto a^p
	\end{equation*}
is an is injective if and only if $R \subset R[\frac{1}{\varpi}]$ is $p$-integrally closed.
\end{lemma}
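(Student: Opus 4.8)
The plan is to prove the two implications separately, using the characterization of the $p$-integral closure from Definition \ref{p_int_closed_defn} together with the fact that $\varpi$ is a non-zero divisor. Throughout, note that since $\varpi$ is a non-zero divisor and $\varpi^p \mid p$, we have a well-defined additive-multiplicative map $\varphi \colon R/\varpi \to R/\varpi^p$, $a \mapsto a^p$; the key point is that $(a+b)^p \equiv a^p + b^p \pmod{p}$ because all intermediate binomial coefficients are divisible by $p$, and $\varpi^p \mid p$, so the target quotient $R/\varpi^p$ is the natural place for the Frobenius to land and the map is a ring homomorphism.

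First I would prove that if $R \subset R[\tfrac1\varpi]$ is $p$-integrally closed, then $\varphi$ is injective. Suppose $a \in R$ satisfies $a^p \in \varpi^p R$, say $a^p = \varpi^p r$ with $r \in R$. Then in $R[\tfrac1\varpi]$ the element $x := a/\varpi$ satisfies $x^p = r \in R$. By $p$-integral closedness, $x \in R$, i.e. $a \in \varpi R$. This shows $\ker \varphi = 0$, as desired. This direction is essentially immediate.

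The substantive direction is the converse: assuming $\varphi \colon R/\varpi \to R/\varpi^p$ is injective, show that $R$ is $p$-integrally closed in $R[\tfrac1\varpi]$. Take $x \in R[\tfrac1\varpi]$ with $x^p \in R$; I want $x \in R$. Since $x \in R[\tfrac1\varpi]$, write $x = a/\varpi^n$ with $a \in R$ and $n \ge 0$ minimal (using that $\varpi$ is a non-zero divisor, so the representation is well-behaved). If $n = 0$ we are done, so suppose $n \ge 1$. The hypothesis $x^p \in R$ means $a^p/\varpi^{np} \in R$, i.e. $a^p \in \varpi^{np} R \subset \varpi^p R$ (as $np \ge p$). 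Hence $a^p \equiv 0 \pmod{\varpi^p}$, so injectivity of $\varphi$ forces $a \equiv 0 \pmod \varpi$, i.e. $a = \varpi b$ for some $b \in R$. But then $x = a/\varpi^n = b/\varpi^{n-1}$, contradicting minimality of $n$. Therefore $n = 0$ and $x \in R$. The main obstacle here is purely bookkeeping: making sure the "minimal $n$" argument is valid, which requires that $\varpi$ be a non-zero divisor (so that $\varpi^n R \cap \varpi^{n+1} R$ behaves correctly and cancellation is legitimate) — this is exactly the hypothesis in the statement, so no difficulty arises. One should also double-check the edge case where $R$ has $\varpi$-torsion is excluded by hypothesis, and that $a^p \in \varpi^{np}R$ genuinely descends to $a^p \in \varpi^p R$, which it does since $np \ge p$ and $\varpi^{np}R \subseteq \varpi^p R$.
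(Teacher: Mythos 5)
Your proof is correct and follows essentially the same route as the paper: the "$p$-integrally closed $\Rightarrow$ injective" direction is identical (lift $a$, divide by $\varpi$, use $p$-root closedness), and your minimal-$n$ descent in the other direction is just the paper's induction on $n$ (showing $\varpi^n a \in R$ implies $\varpi^{n-1}a \in R$ via injectivity of $\varphi$) phrased as a minimal-counterexample argument. No gaps.
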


\begin{proof} Assume that the map $\varphi: R/\varpi \rightarrow R/\varpi^p$ is injective, and let $a \in R[\frac{1}{\varpi}]$ be an element which satisfies $a^p \in R$. Then, there exists a $n \ge 0$ such that $\varpi^n a \in R$; since we need to show that $a \in R$ we may assume that $n > 0$. We claim that $\varpi^{n-1} a \in R$, which implies $a \in R$ by induction. Indeed, since $a^p \in R$ we have that $\varpi^{pn} a^p \in R$ is sent to zero under the quotient map $R \rightarrow R/\varpi^p$, which in turn implies that $\varpi^n a \in R$ is sent to zero under the map $R \rightarrow R/\varpi$, by injectivity of $\varphi$. Hence, we can conclude that $\varpi^n a \in \varpi R$, and since $\varpi \in R$ is a non-zero divisor it follows that $\varpi^{n-1} a \in R$.
	
Conversely, assume that $R \subset R[\frac{1}{\varpi}]$ is $p$-integrally closed, we need to show that $\varphi: R/\varpi \rightarrow R/\varpi^p$ is injective. For the sake of contradiction, assume that we have a non-zero $a \in R/\varpi$ such that $0 = a^p \in R/\varpi^p$. Let $\tilde{a} \in R$ be a lift of $a \in R/\varpi$ along the quotient map $R \rightarrow R/\varpi$, the assumption that $0 = a^p \in R/\varpi^p$ implies that $\tilde{a}^p \in \varpi^p R$. From the fact that $\varpi \in R$ is a non-zero divisor we can conclude that there is a unique element $\frac{\tilde{a}^p}{\varpi^p} \in R$, which in turn implies that there is a unique element $\frac{\tilde{a}}{\varpi} \in R$ as $R \subset R[\frac{1}{\varpi}]$ is $p$-root closed and $\frac{\tilde{a}}{\varpi} \in R[\frac{1}{\varpi}]$. Thus, we obtain that $\tilde{a} \in \varpi R$, which contradicts the assumption that $a \in R/\varpi$ is non-zero. 
\end{proof}

\begin{corollary}\label{perfectoid_p_int_closed} Let $R$ be a integral perfectoid ring which is $\varpi$-complete with respect to some $\varpi \in R$ where $\varpi^p$ divides $p$, and denote by $\overline{R} := R/R[\varpi^\infty]$ the $\varpi$-torsion free quotient of $R$. Then, $\overline{R} \subset R[\frac{1}{\varpi}]$ is $p$-integrally closed.
\end{corollary}

\begin{proof} By Proposition \ref{pi_torsionfree_perfectoid} we know that $\overline{R}$ is again a integral perfectoid ring, from which we can deduce that the Frobenius map $\varphi: \overline{R}/\varpi \rightarrow \overline{R}/\varpi^p$ is an isomorphism by Lemma \ref{perfectoid_frob_mod_p}. Thus, the identity $\overline{R}[\frac{1}{\varpi}] = R[\frac{1}{\varpi}]$ implies the result by virtue of Lemma \ref{p_int_closed_frob}.
\end{proof}

\newpage

\section{Almost Mathematics}\label{sect_almost_math}

\subsection{Derived completeness}

For any ring $R$ we will denote by $\Mod_R$ the abelian category of $R$-modules, which comes equipped with a symmetric monoidal structure $(\Mod_R, \otimes_R)$ given by the $R$-linear tensor product of modules. For our purposes it would be convenient to work with $R$-modules which are $I$-complete \cite[Tag 0317]{stacks-project} with respect to some ideal $I \subset R$. However, since the category of $I$-complete $R$-modules is not abelian in general \cite[Tag 07JQ]{stacks-project}, we are forced to take a more sophisticated point of view.

\begin{const}\label{derived_complete_cat} In what follows we will make use of the theory of stable $\infty$-categories \cite[Chapter 1]{lurieHA}, especially the $\infty$-categorical enhancement of the derived category of $\Mod_R$, which we denote by $\cD(R)$ and refer the reader to \cite[Section 1.3.5]{lurieHA} for a definition. The category $\cD(R)$ comes equipped with a $t$-structure $(\cD(R)^{\le 0}, \cD(R)^{\ge 0})$ and the canonical inclusion functor $\Mod_R \hookrightarrow \cD(R)$ identifies $\Mod_R$ with the heart of the $t$-structure on $\cD(R)$ (cf. \cite[Proposition 7.1.1.13]{lurieHA}). We will say that $M \in \cD(R)$ is an $R$-module if it is in the essential image of the canonical fully faithful embedding $\Mod_R \hookrightarrow \cD(R)$, and we say that $M$ is an $R$-complex otherwise.

The stable $\infty$-category $\cD(R)$ comes equipped with a tensor product $\otimes_R^L$ which commutes with colimits independently on each variable, endowing $\cD(R)$ with the structure of a symmetric monoidal $\infty$-category. Since we will be constructing tensor products on various categories, let us explain how to extract the (classical) tensor product on $\Mod_R$ from that on $\cD(R)$. First, notice that the connective objects $\cD(R)^{\le 0} \subset \cD(R)$ are stable under the tensor product $\otimes_R^L$ endowing $\cD(R)^{\le 0}$ with a symmetric monoidal structure, given by $\otimes_R^L$. Now, since the canonical inclusion $\Mod_R \hookrightarrow \cD(R)^{\le 0}$ admits a left adjoint
\begin{equation*}
	\tau^{\ge 0}: \cD(R)^{\le 0} \longrightarrow \Mod_R = \cD(R)^\heartsuit
\end{equation*}
which satisfies the following property: if a morphism $M_1 \rightarrow M_2$ is an isomorphism after applying $\tau^{\ge 0}$ then for any other $N \in \cD(R)^{\le 0}$ the canonical map $M_1 \otimes_R^L N \rightarrow M_2 \otimes_R^L N$ is an isomorphism after applying $\tau^{\le 0}$ \cite[Proposition 2.2.1.8]{lurieHA}. Then, we can endow $\Mod_R$ with an essentially unique symmetric monoidal structure such that the truncation map $\tau^{\ge 0}: \cD(R)^{\le 0} \rightarrow \Mod_R = \cD(R)^\heartsuit$ is symmetric monoidal \cite[Proposition 2.2.1.9]{lurieHA}. In particular, the induced monoidal structure on $\Mod_R$ is given by $H^0(- \otimes_R^L -)$ which is exactly the classical tensor product on $\Mod_R$.

Passing to commutative algebra objects, in the sense of \cite[Section 2.1.3]{lurieHA}, this induces a pair of adjoint functors
\begin{equation*}
	\tau^{\ge 0}: \CAlg(\cD(R)^{\le 0}) \rightarrow \CAlg(\Mod_R) \qquad \CAlg(\Mod_R) \hookrightarrow \CAlg(\cD(R)^{\le 0})
\end{equation*}
with $\tau^{\ge 0}$ being a left adjoint to the canonical inclusion.
\end{const}

\begin{defn} To any object $x \in R$ we can associate the following two full subcategories of $\cD(R)$:
	\begin{enumerate}[(1)]
		\item A complex $M \in \cD(R)$ is said to be $x$-local if the canonical multiplication by $x$ map $x: M \rightarrow M$ is an isomorphism. The full subcategory of $\cD(R)$ spanned by $x$-local objects is denoted by $\cD(R)[x^{-1}]$.
		\item A complex $M \in \cD(R)$ is said to be derived $x$-complete if the canonical map
		\begin{equation*}
			M \rightarrow R\lim_n \Cone(x^n: M \rightarrow M)
		\end{equation*}
		is an isomorphism, where $x^n: M \rightarrow M$ is the multiplication by $x^n$ map. The full subcategory of $\cD(R)$ spanned by derived $x$-complete objects is denoted by $\cD(R)^\wedge_x$.
	\end{enumerate}
	For a comparison between our definition of derived $x$-complete complexes and that of \cite[Section 7.3.1]{luriespectral}, we refer the reader to \cite[Tag 091N]{stacks-project}. More generally, for a finitely generated ideal $I \subset R$ we say that an $R$-complex is derived $I$-complete if it is complete with respect to all $x \in I$ \cite[Corollary 7.3.3.3]{luriespectral}.
\end{defn}

\begin{notation} We will use the terminology ``classically $I$-complete'' to refer to the notion discussed in \cite[Tag 0317]{stacks-project}. We use the adjective ``classically'' to distinguish it from the derived notions we just introduced. Sometimes we will abuse language and use the term $I$-complete instead of ``classically $I$-complete'', as we did in the previous section.
\end{notation}

Given an $R$-module $M$ and a finitely generated ideal $I \subset R$, one may ask how the notions of being classically $I$-complete and derived $I$-complete compare -- in general they are distinct. An $R$-module $M$ is classically $I$-complete if and only if $M$ is derived $I$-adically complete and $\cap I^n M = 0$ \cite[Tag 091T]{stacks-project}. In particular, this implies that the derived $I$-completion functor is not a derived functor of the classical $I$-completion. For a more detailed treatment of derived $I$-complete modules we refer the reader to \cite[Tag 091N]{stacks-project} or to \cite[Section 7.3]{luriespectral}.

\begin{rem}\label{perfectoid_bounded_pi_torsion} Given a ring $R$ and an element $\varpi \in R$, if $R$ has bounded $\varpi$-torsion then we have that the classical $\varpi$-completion of $R$ and the derived $\varpi$-completion of $R$ agree \cite[Tag 0BKF]{stacks-project}. Specializing to the main case of interest to us, that of integral perfectoid rings: let $(A,d)$ be a derived $(p,d)$-complete perfect prism, then a priori we only know that $R := A/d$ is derived $p$-complete, but since $R[p] = R[p^\infty]$ (by Lemma \ref{delta_ring_p_torsion}) it follows that $R$ is classically $p$-adically complete. Now, let $\varpi \in R$ be an element that admits compatible $p$-power roots $\varpi^{1/p^n} \in R$, since $R$ has bounded $\varpi$-torsion \cite[2.1.3.2]{purityflat} it follows that $R$ is derived $\varpi$-complete if and only if $R$ is classically $\varpi$-complete.
\end{rem}

\begin{const}\label{const_complete_monoidal_str} From \cite[Proposition 7.3.1.4]{luriespectral} we conclude that the canonical inclusion functor
\begin{equation*}
	i_* : \cD(R)^\wedge_x \longrightarrow \cD(R)
\end{equation*}
admits a left adjoint
\begin{equation*}
	(-)^{\wedge}_x: \cD(R) \longrightarrow \cD(R)^\wedge_x \qquad \text{called the derived } x\text{-completion functor}
\end{equation*}
and that $\cD(R)^\wedge_x$ is itself a stable $\infty$-category. Furthermore, since the derived $x$-completion functor is compatible with the monoidal structure of $\cD(R)$ -- that is if $M_1 \rightarrow M_2$ is an isomorphism after $x$-completing then for any other $N \in \cD(R)$ the map $M_1 \otimes_R^L N \rightarrow M_2 \otimes_R^L N$ is an isomorphism after $x$-completion \cite[Proposition 7.3.5.1]{luriespectral} -- it follows from \cite[Proposition 2.2.1.9.]{lurieHA} that we can endow $\cD(R)^\wedge_x$ with a unique symmetric monoidal structure making the derived $x$-completion functor symmetric monoidal. In particular, for $M, N \in \cD(R)_x^\wedge$ the derived completed tensor product can be computed as
\begin{equation*}
	M \cotimes_R^L N := (M \otimes_R^L N)^\wedge_x
\end{equation*}
\end{const}

One of the most important structural properties of derived $I$-complete $R$-complexes is that we can check they are zero modulo $I$.

\begin{lemma}[Derived Nakayama]\label{derived_nakayama} Let $I \subset R$ be a finitely generated ideal, and $M$ a derived $I$-complete $R$-complex. Then $M = 0$ if and only if $M \otimes_R^L R/I = 0$.
	
\end{lemma}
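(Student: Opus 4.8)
The plan is to reduce the statement to the single-generator case and then exploit the basic structure of derived $x$-complete complexes. First I would observe that one direction is trivial: if $M = 0$ then certainly $M \otimes_R^L R/I = 0$. So the content is the converse. Since $I = (x_1, \dots, x_n)$ is finitely generated, I would proceed by induction on $n$, the number of generators. The key reduction is that $R/I$ can be resolved by the Koszul complex on $x_1, \dots, x_n$, and tensoring with this Koszul complex can be performed one variable at a time: $M \otimes_R^L R/I \simeq \Kos(M; x_1, \dots, x_n)$, and vanishing of this is detected inductively by vanishing of the cones of multiplication by each $x_i$.

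The heart of the argument is the case $n = 1$, say $I = (x)$. Here I would argue as follows. Suppose $M$ is derived $x$-complete and $M \otimes_R^L R/x = 0$, i.e. $\Cone(x : M \to M) = 0$, which means multiplication by $x : M \to M$ is an isomorphism. Then for every $n \geq 1$, multiplication by $x^n : M \to M$ is an isomorphism as well, so $\Cone(x^n : M \to M) = 0$ for all $n$. But by the definition of derived $x$-completeness, the canonical map
\begin{equation*}
	M \longrightarrow R\lim_n \Cone(x^n : M \to M)
\end{equation*}
is an isomorphism, and the target is $R\lim_n 0 = 0$. Hence $M = 0$. This is clean and requires nothing beyond the definitions recalled in the excerpt.

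For the inductive step with $I = (x_1, \dots, x_n)$, set $J = (x_1, \dots, x_{n-1})$. A standard fact (which I would cite from \cite[Tag 091N]{stacks-project} or \cite[Section 7.3]{luriespectral}) is that if $M$ is derived $I$-complete then it is derived $J$-complete, and moreover the cone $\Cone(x_n : M \to M)$ is derived $J$-complete. Now $M \otimes_R^L R/I \simeq \Cone\big(x_n : M \otimes_R^L R/J \to M \otimes_R^L R/J\big)$, so the hypothesis says multiplication by $x_n$ on $M \otimes_R^L R/J$ is an isomorphism; equivalently $(M \otimes_R^L R/J)$ tensored down by $R/(x_n)$ vanishes. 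Applying the $n=1$ case to the $R$-complex $M \otimes_R^L R/J$ — wait, one must be careful that $M \otimes_R^L R/J$ is derived $x_n$-complete; this follows because $M$ is derived $(x_n)$-complete and $R/J$ is a perfect $R$-complex, so the tensor product remains derived $x_n$-complete. Hence the $n=1$ case gives $M \otimes_R^L R/J = 0$, and the inductive hypothesis applied to $M$ (which is derived $J$-complete) yields $M = 0$.

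The main obstacle, and the step I would want to be most careful about, is the bookkeeping of derived completeness under the iterated tensoring: specifically, verifying that each intermediate complex $M \otimes_R^L R/(x_1, \dots, x_k)$ remains derived complete with respect to the remaining generators, so that the inductive machinery can be applied. This is where one must invoke that the Koszul complexes $R/(x_1,\dots,x_k)$ are perfect $R$-complexes and that derived $I$-complete objects form a tensor ideal-like subcategory stable under tensoring with perfect complexes — all of which is available in \cite[Section 7.3]{luriespectral} and \cite[Tag 091N]{stacks-project}, but needs to be threaded through the induction correctly. Everything else is formal manipulation with cones and limits in the stable $\infty$-category $\cD(R)$.
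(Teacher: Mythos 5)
Your base case is the right idea, but the induction as written has a genuine gap, and it occurs at exactly the step you flag. First, a smaller (repairable) issue: you repeatedly identify $R/(x_1,\dots,x_k)$ with a Koszul complex — "$M \otimes_R^L R/I \simeq \Kos(M;x_1,\dots,x_n)$", "$M\otimes_R^L R/x = 0$, i.e. $\Cone(x\colon M\to M)=0$", and "$M \otimes_R^L R/I \simeq \Cone(x_n\colon M\otimes_R^L R/J \to M\otimes_R^L R/J)$". These are quasi-isomorphisms only when the relevant sequence is (Koszul-)regular; in general $\Kos(R;\underline{x})$ has extra cohomology (e.g. $H^{-1}(\Kos(R;x)) = R[x]$). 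The implication you actually need does hold, but for a different reason: every cohomology module of $\Kos(R;\underline{x})$ is killed by $I$, hence is an $R/I$-module, so $M\otimes_R^L R/I = 0$ forces $M\otimes_R^L P = 0$ for all $R/I$-modules $P$ and then, by a finite d\'evissage along the Postnikov tower of the bounded complex $\Kos(R;\underline{x})$, forces $\Kos(M;\underline{x}) = 0$. That two-line argument must replace the claimed quasi-isomorphisms.

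The serious problem is the completeness bookkeeping in the inductive step. You justify that $M\otimes_R^L R/J$ is derived $x_n$-complete by asserting that $R/J$ is a perfect $R$-complex; this is false in general (it holds essentially only when $J$ is generated by a Koszul-regular sequence, or under noetherian/coherence hypotheses guaranteeing a resolution by finite free modules). Without such hypotheses — and this paper works precisely with huge non-noetherian perfectoid rings — there is no reason for $M\otimes_R^L R/J$ to be derived $x_n$-complete, so the $n=1$ case cannot be applied to it and the induction on quotients $R/J$ does not run. The standard repair, which is the argument behind the reference the paper actually cites ([Stacks, Tag 0G1U]), is to induct on Koszul complexes instead: after the d\'evissage above you know $\Kos(M;x_1,\dots,x_n)=0$; now $\Kos(M;x_1,\dots,x_k) = \Cone\bigl(x_k\colon \Kos(M;x_1,\dots,x_{k-1})\to \Kos(M;x_1,\dots,x_{k-1})\bigr)$, and $\Kos(M;x_1,\dots,x_{k-1})$ is derived $x_k$-complete for free, being a finite iterated cone of copies of $M$ (derived complete objects are closed under shifts and cones). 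Your $n=1$ argument then applies verbatim at each stage and peels off the generators one at a time, yielding $M=0$. With those two changes the proof is correct; as written, the key completeness step rests on a false claim.
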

\begin{proof} \cite[Tag 0G1U]{stacks-project}
\end{proof}

Next, we introduce the classically complete tensor product and explain its relation to the derived completed tensor product.

\begin{notation}\label{classical_complete_tensor} Given pair of $R$ modules $M_1, M_2 \in \Mod_R$ and $x \in R$, there is a natural notion of a (classically) $x$-completed tensor product $M_1 \cotimes_R M_2$, given by
\begin{equation*}
	M_1 \cotimes_R M_2 := \lim_n (M_1 \otimes_R M_2)/(x^n)
\end{equation*}
where the limit is computed in $\Mod_R$.
\end{notation}

\begin{warning} Despite what the notation suggest it is not generally true that the derived $x$-completed tensor product $\cotimes_R^L$ is the derived functor of the classically $x$-completed tensor product of $R$-modules. Indeed, given two $R$-modules $M_1, M_2$ we know that $N := H^0(M_1 \cotimes_R^L M_2)$ is a derived $x$-complete module, but since $N$ is not necessarily $x$-separated it need not be classically $x$-complete. We adjective ``derived'' on the monoidal structure $(\cD(R)^\wedge_x, \cotimes_R^L)$ is due to the fact that $\cotimes_R^L$ is defined at the level of stable $\infty$-categories and not because it is a derived functor of $\cotimes_R$.
\end{warning}

\begin{defn} Let $M$ be a $R$-module and $x$ an element of $R$, we say that $M$ is derived $x$-complete if it is derived $x$-complete when considered as an object of $\cD(R)$ via the canonical inclusion $\Mod_R \hookrightarrow \cD(R)$. The category of derived $x$-complete modules is the full subcategory of $\Mod_R$ spanned by the derived $x$-complete modules, we denote this category by $\Mod_{x, R}^\wedge$.
\end{defn}

\begin{prop} The category of derived $x$-complete modules $\Mod_{x, R}^\wedge$ has the following properties:
	\begin{enumerate}[(1)]
		\item It is abelian and it has all limits and colimits.
		\item The canonical inclusion functor $\Mod_{x, R}^\wedge \rightarrow \Mod_R$ is exact and it commutes with all limit.
	 	\item Filtered colimits in $\Mod_{x, R}^\wedge$ are not exact in general. In particular, it is not a Grothendieck abelian category.
	\end{enumerate}
\end{prop}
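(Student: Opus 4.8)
The plan is to verify each of the three assertions about $\Mod_{x,R}^\wedge$ by reducing to facts about derived $x$-complete complexes in $\cD(R)$ that are either cited in the excerpt or standard in the Stacks project. The key structural input is that $\Mod_{x,R}^\wedge$ is by definition the intersection $\Mod_R \cap \cD(R)^\wedge_x$ inside $\cD(R)$, i.e.\ it is the heart of the induced $t$-structure on the stable $\infty$-category $\cD(R)^\wedge_x$; equivalently, it is the full subcategory of $\Mod_R$ on those $M$ with $M \xrightarrow{\sim} R\lim_n \Cone(x^n\colon M\to M)$. I would open by recording this identification, since all three parts follow from it.

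\textit{For part (1)}, I would first show $\Mod_{x,R}^\wedge$ is an abelian category by exhibiting it as the kernel of an exact functor or, more directly, by checking that it is closed under kernels, cokernels, and finite (bi)products computed in $\Mod_R$: closure under kernels and products is immediate because derived $x$-completeness is preserved by $R\lim$ (limits in $\Mod_R$), while closure under cokernels follows from the long exact sequence argument of \cite[Tag 091N]{stacks-project} — given $f\colon M\to N$ with $M,N$ derived complete, $\coker(f)$ sits in a short exact sequence with the derived complete modules $N$ and $\im(f)$, and one checks $\im(f)$ and then $\coker(f)$ are derived complete using that the class of derived $x$-complete objects of $\cD(R)$ is closed under extensions, cones, and fibers. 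For all limits: $\Mod_{x,R}^\wedge$ is closed under limits in $\Mod_R$ as just noted. For all colimits: a colimit in $\Mod_{x,R}^\wedge$ is computed by taking the colimit in $\Mod_R$ and then applying the derived $x$-completion functor $(-)^\wedge_x$ followed by $H^0$ (equivalently $\tau^{\ge 0}$), which is a left adjoint to the inclusion $\Mod_{x,R}^\wedge \hookrightarrow \Mod_R$ — this adjoint exists by composing the derived completion functor of Construction \ref{const_complete_monoidal_str} with $\tau^{\ge 0}$, and a left adjoint from a cocomplete category produces all colimits.

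\textit{For part (2)}, exactness of the inclusion $\Mod_{x,R}^\wedge \to \Mod_R$ means it preserves finite limits and finite colimits; preservation of finite limits is clear since limits agree, and preservation of cokernels is exactly the content verified in part (1) (cokernels in $\Mod_{x,R}^\wedge$ are computed in $\Mod_R$ because the cokernel taken in $\Mod_R$ is already derived complete). Commutation with all limits is the statement that $\Mod_{x,R}^\wedge$ is closed under limits in $\Mod_R$, already used above. \textit{For part (3)}, I would produce an explicit counterexample: take $R = \ZZ_p$, $x = p$, and the filtered colimit $\colim_n \ZZ/p^n$ with transition maps the inclusions (or multiplication maps) realizing $\ZZ_p = \lim_n \ZZ/p^n$ dually; each $\ZZ/p^n$ is derived $p$-complete (being $p$-power torsion, hence $\Cone(p^m\colon -\to-)$ stabilizes), but the colimit computed in $\Mod_{p,\ZZ_p}^\wedge$ is the derived $p$-completion of $\bigoplus$ or the relevant ordinary colimit $\Q_p/\ZZ_p$ completed, which does not agree with the non-complete colimit $\colim \ZZ/p^n$ taken in $\Mod_{\ZZ_p}$; comparing the two exhibits non-exactness of filtered colimits, and since a Grothendieck abelian category has exact filtered colimits by definition, $\Mod_{p,\ZZ_p}^\wedge$ is not Grothendieck. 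The standard reference is that $\widehat{\bigoplus}_{n} R/x$ differs from $\bigoplus_n R/x$.

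\textbf{The main obstacle} I anticipate is part (1)'s closure under cokernels: one must be careful that the cokernel taken naively in $\Mod_R$ is genuinely derived $x$-complete and not merely derived-complete-up-to-a-non-separated-issue. The clean way around this is to work in $\cD(R)^\wedge_x$: the abelian category $\Mod_{x,R}^\wedge$ is the heart of the (left-complete, since $x$-completion is) $t$-structure restricted from $\cD(R)$, and cokernels in the heart are $H^0$ of the cone; one then invokes that $H^0$ of a derived $x$-complete complex is derived $x$-complete (\cite[Tag 091N]{stacks-project}), which is exactly what makes the heart abelian with cokernels computed via $\tau^{\ge 0}$ of the cone — and this coincides with the $\Mod_R$-cokernel precisely when the latter is already complete, which the long exact sequence / extension-closure argument guarantees. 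I would present this $t$-structural viewpoint as the backbone and relegate the module-level manipulations to citations of \cite[Tag 091N]{stacks-project} and \cite[Tag 0A6E]{stacks-project}.
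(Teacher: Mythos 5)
Your overall route is fine and is, in substance, a worked-out version of what the paper leaves to a citation: the paper's own ``proof'' of this proposition is simply a reference to \cite[Tag 0ARC]{stacks-project}, whereas you reconstruct the argument from the machinery the paper itself sets up. For parts (1) and (2) your argument is correct: closure under kernels, products and arbitrary limits is immediate; closure under cokernels follows, as you say, from the fact that $\cD(R)^\wedge_x$ is closed under cones together with Lemma \ref{derived_completeness_on_cohomology} (cohomology modules of a derived $x$-complete complex are derived $x$-complete), so kernels and cokernels in $\Mod_{x,R}^\wedge$ are computed in $\Mod_R$ and the inclusion is exact; and cocompleteness comes from the reflector $H^0((-)^\wedge_x)$, which is indeed the left adjoint the paper constructs via Constructions \ref{const_complete_monoidal_str} and \ref{complete_t_structure}. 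What your write-up buys over the paper is a self-contained proof; what it costs is that the details of part (3) must actually be right, and there you have slips.

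Concretely, in part (3): first, your closing claim that $\widehat{\bigoplus}_n R/x$ differs from $\bigoplus_n R/x$ is false --- any module killed by $x$ is derived $x$-complete, and an arbitrary direct sum of such modules is again killed by $x$, hence derived complete and equal to its completion; the correct standard example is $\bigoplus_n R/x^n$ with growing powers. Second, and relatedly, ``being $p$-power torsion'' does not imply derived $p$-completeness (your own colimit $\mathbf{Q}_p/\ZZ_p$ is $p$-power torsion and not derived complete); what makes each $\ZZ/p^n$ derived complete is that it is killed by the single power $p^n$. Third, the inference ``the colimit in $\Mod_{p,\ZZ_p}^\wedge$ disagrees with the colimit in $\Mod_{\ZZ_p}$, hence filtered colimits are not exact'' is a gap: disagreement only shows the inclusion fails to preserve filtered colimits. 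Your data does finish the job, but you must say how: the colimit of $\ZZ/p \hookrightarrow \ZZ/p^2 \hookrightarrow \cdots$ in $\Mod_{p,\ZZ_p}^\wedge$ is $H^0\big((\mathbf{Q}_p/\ZZ_p)^\wedge_p\big) \simeq H^0(\ZZ_p[1]) = 0$, while the compatible constant subsystem $\ZZ/p \subset \ZZ/p^n$ has colimit $\ZZ/p$; thus a filtered colimit of monomorphisms fails to be a monomorphism, so filtered colimits are not exact (AB5 fails) and the category is not Grothendieck. With those corrections your proof is complete and matches the content of the cited Stacks tag.
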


\begin{proof} \cite[Tag 0ARC]{stacks-project}
\end{proof}

It is due to the fact that $\Mod_{x, R}^\wedge$ is not a Grothendieck abelian category that we are forced to work ``fully derived''; that is, by working on the derived $\infty$-category $\cD(R)^\wedge_x$ from the  beginning and then isolating a convenient abelian category $\Mod_{x, R}^\wedge$ inside of it. In fact, it is not generally true that $\cD(R)^\wedge_x$ is the derived category of $\Mod_{x,R}^\wedge$, especially in the case that is of most interest to us -- when $R$ is a integral perfectoid ring. However, if we were to assume that $R$ is noetherian there is more one can say \cite[Section 7.3.7]{luriespectral}.

\begin{lemma}\label{derived_completeness_on_cohomology} Let $R$ be a ring, $x$ an element of $R$ and $M$ an $R$-complex. Then, the following are equivalent
	\begin{enumerate}[(1)]
		\item The $R$-complex $M$ is derived $x$-complete.
		\item All cohomology groups $H^i(M)$ for $i \in \ZZ$ are derived $x$-complete $R$-modules.
	\end{enumerate}
\end{lemma}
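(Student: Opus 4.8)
The plan is to reduce the statement to a standard fact about stable $\infty$-categories by analyzing the interaction of the $t$-structure on $\cD(R)$ with the limit defining derived $x$-completeness. First I would recall that derived $x$-completeness of $M$ means that the canonical map $M \to R\lim_n \Cone(x^n : M \to M)$ is an isomorphism, equivalently that $R\lim_n M/^{\mathbf{L}}x^n \cdot M \simeq M$ in the notation of the excerpt. The key observation is that each $\Cone(x^n : M \to M)$ sits in a fiber sequence $M \xrightarrow{x^n} M \to \Cone(x^n)$, giving a long exact sequence on cohomology $\cdots \to H^i(M) \xrightarrow{x^n} H^i(M) \to H^i(\Cone(x^n)) \to H^{i+1}(M) \xrightarrow{x^n} \cdots$, which splits into short exact sequences $0 \to H^i(M)/x^n \to H^i(\Cone(x^n)) \to H^{i+1}(M)[x^n] \to 0$.

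For the implication $(2) \Rightarrow (1)$: by \cite[Tag 091N]{stacks-project} (or the cited \cite[Corollary 7.3.3.3]{luriespectral}), the full subcategory $\cD(R)^\wedge_x$ of derived $x$-complete objects is closed under limits and under extensions, and contains all objects of $\Mod_R$ that are derived $x$-complete as modules. If every $H^i(M)$ is derived $x$-complete as an $R$-module, then each Postnikov truncation $\tau^{\le n}\tau^{\ge m}M$ is built by finitely many extensions from shifts of the $H^i(M)$, hence is derived $x$-complete; and $M$ is the limit (over both truncation directions) of these, so $M \in \cD(R)^\wedge_x$. One must be slightly careful that $M$ is genuinely recovered as this limit — this uses left-completeness of the $t$-structure on $\cD(R)$, or alternatively one argues directly with the $R\lim_n$ formula using that $\cD(R)^\wedge_x$ is closed under limits and shifts.

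For the implication $(1) \Rightarrow (2)$: apply the functor $R\lim_n \Cone(x^n : - \to -)$ and take cohomology. Using the Milnor exact sequence for $R\lim_n$ together with the short exact sequences above, $H^i\big(R\lim_n \Cone(x^n:M\to M)\big)$ fits into an exact sequence involving $\lim_n$ and $\lim^1_n$ of the systems $\{H^i(M)/x^n\}$ and $\{H^{i+1}(M)[x^n]\}$; comparing with the intrinsic description of the derived $x$-completion of the module $H^i(M)$ (namely $\lim^1$ of its $x$-power torsion in degree $-1$ and $\lim$ of its quotients in degree $0$, cf. \cite[Tag 091N]{stacks-project}) shows that the derived $x$-completeness of $M$ forces each $H^i(M)$ to equal its own derived $x$-completion. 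A cleaner route, which I would actually prefer to write up: since $\cD(R)^\wedge_x$ is an abelian-category-like full subcategory closed under kernels and cokernels of maps between its objects once we know it is closed under truncations, it suffices to show $\cD(R)^\wedge_x$ is closed under the truncation functors $\tau^{\le n}, \tau^{\ge n}$; granting that, $M$ derived $x$-complete implies each $\tau^{\le i}\tau^{\ge i}M = H^i(M)[-i]$ is derived $x$-complete, i.e.\ $H^i(M)$ is a derived $x$-complete module. The closure of $\cD(R)^\wedge_x$ under truncations is itself a consequence of the fiber sequence $\tau^{\ge n+1}M \to M \to \tau^{\le n}M$ together with the fact that $\cD(R)^\wedge_x$ is a thick subcategory closed under limits (so closed under the $R\lim$ computing $\tau^{\le n}$ as $M/\text{(connective part)}$), and one checks the connective/coconnective pieces land in $\cD(R)^\wedge_x$ by bootstrapping from single modules. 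The main obstacle is precisely this bookkeeping with the $t$-structure: showing $\cD(R)^\wedge_x$ is stable under truncation cleanly, which ultimately rests on the observation that for a single $R$-module $N$, $N$ is derived $x$-complete iff $N[k]$ is for every $k$, and that derived $x$-completeness of a complex can be tested after the two-sided truncations. Everything else is formal manipulation of fiber sequences and $R\lim$, which I would not spell out in full detail, citing \cite[Tag 091N]{stacks-project} and \cite[Section 7.3]{luriespectral} for the module-level and $\infty$-categorical inputs respectively.
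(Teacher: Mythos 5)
The paper offers no argument of its own here (it simply cites \cite[Theorem 7.3.4.1]{luriespectral}), so your write-up has to stand on its own, and as written it has a genuine gap in both directions, located exactly where the lemma has content: complexes that are unbounded in the ``colimit'' direction of the $t$-structure. In your $(2)\Rightarrow(1)$ argument, the assertion that $M$ is ``the limit (over both truncation directions)'' of its bounded Postnikov pieces is false: left-completeness only gives $M \simeq R\lim_n \tau^{\ge -n}M$, and each $\tau^{\ge -n}M$ is still cohomologically unbounded above, hence not a finite extension of shifts of the $H^i(M)$; in the other truncation direction $M$ is a \emph{filtered colimit} of its truncations, and $\cD(R)^{\wedge}_x$ is not closed under filtered colimits (e.g.\ $R[1/x] = \colim(R \xrightarrow{x} R \xrightarrow{x} \cdots)$ is $x$-local, hence essentially never derived $x$-complete, even when $R$ is). So the bounded case you handle is formal, but the unbounded case is not reached. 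Your ``cleaner route'' for $(1)\Rightarrow(2)$ has the same problem in disguise: from the triangle $\tau^{\le n}M \to M \to \tau^{\ge n+1}M$ and thickness of $\cD(R)^{\wedge}_x$ you can only conclude completeness of a third term when two are already known, and the closing ``observation'' that derived $x$-completeness ``can be tested after the two-sided truncations'' is essentially the statement of the lemma, so the justification of closure under truncation is circular as it stands.

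The standard repair avoids truncations altogether: set $T(M) := R\lim(\cdots \xrightarrow{x} M \xrightarrow{x} M)$, which is the fiber of $M \to R\lim_n \Cone(x^n\colon M \to M)$ and agrees up to shift with $R\Hom_R(R[1/x],M)$, so $M$ is derived $x$-complete iff $T(M)=0$. The Milnor sequence for a countable $R\lim$ holds degreewise with no boundedness hypothesis and gives exact sequences $0 \to {\lim}^1_x H^{i-1}(M) \to H^i(T(M)) \to \lim_x H^i(M) \to 0$, while a module $N$ is derived $x$-complete iff $\lim_x N = {\lim}^1_x N = 0$ (equivalently $\Hom_R(R[1/x],N)=\operatorname{Ext}^1_R(R[1/x],N)=0$, via the telescope resolution $0 \to \oplus R \to \oplus R \to R[1/x] \to 0$); both implications then fall out at once, for arbitrary unbounded $M$. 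If you prefer your route through closure under truncation, it can be made non-circular by an amplitude estimate rather than bootstrapping: $T$ carries $\cD(R)^{\le n}$ into $\cD(R)^{\le n+1}$ and $\cD(R)^{\ge n}$ into $\cD(R)^{\ge n}$, so if $T(M)=0$ the triangle forces $T(\tau^{\le n}M) \simeq T(\tau^{\ge n+1}M)[-1]$ to lie in $\cD(R)^{\le n+1} \cap \cD(R)^{\ge n+2} = 0$. Either of these supplies the missing step; as submitted, the unbounded case is not proved.
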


\begin{proof} \cite[Theorem 7.3.4.1]{luriespectral}.
\end{proof}

\begin{const}\label{complete_t_structure} Let $R$ be a ring and $x$ an element of $R$. It follows from Lemma \ref{derived_completeness_on_cohomology} and \cite[Proposition 7.3.4.4]{luriespectral} that the following pair of subcategories
\begin{equation*}
	(\cD(R)_{x}^\wedge)^{\ge 0} := \cD(R)^{\ge 0} \cap (\cD(R)_{x}^\wedge) \qquad
	(\cD(R)_{x}^\wedge)^{\le 0} := \cD(R)^{\le 0} \cap (\cD(R)_{x}^\wedge)
\end{equation*}
determine a $t$-structure on $\cD(R)_x^\wedge$, for which we have that $\cD(R)_x^{\wedge \heartsuit} = \Mod_{R, x}^{\wedge}$. In particular, this implies that the inclusion $\cD(R)^\wedge_x \hookrightarrow \cD(R)$ is $t$-exact while the derived $x$-completion functor $(-)^\wedge_x: \cD(R) \rightarrow \cD(R)^\wedge_{x}$ is only right $t$-exact. Hence, it follows that the full-subcategory $\cD(R)^{\wedge, \le 0}_{x}$ is stable under the derived completed tensor product, endowing $\cD(R)^{\wedge, \le 0}_{x}$ with the structure of a symmetric monoidal $\infty$-category. And since the truncation map
\begin{equation*}
	\tau^{\ge 0}: \cD(R)_x^{\wedge, \le 0} \rightarrow \Mod_{R,x}^\wedge
\end{equation*}
is compatible with the monoidal functor \cite[Proposition 2.2.1.8]{lurieHA} it follows that we can endow $\Mod_{R,x}^\wedge$ with the structure of a symmetric monoidal category whose tensor product is given by $H^0(- \cotimes_R^L -)$.
\end{const}

Summarizing the current situation, we have a commutative diagram of the form
\begin{cd}
	\cD(R)^{\le 0} \ar[r, "(-)^\wedge_x"] \ar[d, "\tau^{\ge 0}", swap] & \cD(R)^{\wedge, \le 0}_{x} \ar[d, "\tau^{\ge 0}"] \\
	\Mod_{R} \ar[r, "H^0(-^\wedge_x)", swap] & \Mod_{R, x}^\wedge
\end{cd}
where all the maps are left adjoints to canonical inclusions on the opposite directions. Commutativity of the diagram follows from the fact that the inclusion maps on the opposite directions clearly commute and the uniqueness of adjoint functors. Furthermore all the symmetric monoidal structures of the categories above (except that on $\cD(R)^{\le 0}$) are completely determined by the requirement that all the functors in the diagram are symmetric monoidal.

\begin{const}\label{complete_algebra_objects} Given a symmetric monoidal $\infty$-category we learn from \cite[Section 2.1.3]{lurieHA} that we can consider the category $\CAlg(\cC)$ of commutative algebra objects in $\cC$; and that symmetric monoidal functors send commutative algebra objects to commutative algebra objects. In particular, this implies that we have a commutative diagram
\begin{cd}
	\CAlg(\cD(R)^{\le 0}) \ar[r, "(-)^\wedge_x"] \ar[d, "\tau^{\ge 0}", swap] & \CAlg(\cD(R)^{\wedge, \le 0}_{x}) \ar[d, "\tau^{\ge 0}"] \\
	\CAlg(\Mod_{R}) \ar[r, "H^0(-^\wedge_x)", swap] & \CAlg(\Mod_{R, x}^\wedge)
\end{cd}
where $\CAlg(\Mod_R)$ is the classical category of commutative $R$-algebras. Moreover, given a pair of morphisms $S_1 \leftarrow R \rightarrow S_2$ in any of the commutative algebra object categories described above (eg. $\CAlg(\cD(R)^{\le 0})$) we have that the pushout can be identified with $S_1 \otimes_R^{?} S_2$, where $\otimes_R^{?}$ is the tensor product (eg. $- \otimes_R^L -$) in the underlying module category (eg. $\cD(R)^{\le 0}$) of the category in question (cf. \cite[Proposition 3.2.4.7]{lurieHA}).

Furthermore, the canonical inclusions (eg. $\Mod_{R, x}^\wedge \hookrightarrow \Mod_R$) induce a commutative diagram of fully-faithful functors
\begin{cd}
	\CAlg(\cD(R)^{\le 0}) & \CAlg(\cD(R)^{\wedge, \le 0}_{x}) \ar[l, hook']\\
	\CAlg(\Mod_{R}) \ar[u, hook]  & \CAlg(\Mod_{R, x}^\wedge) \ar[l, hook'] \ar[u, hook]
\end{cd}
which are the right adjoints to the localization functors described above (cf. \cite[Proposition 2.2.1.9]{lurieHA}).
\end{const}

The following results show that in many circumstances the distinction between classical and derived completion of perfectoid rings, and their corresponding perfect prisms, disappears. Let us also recall that in official definition of perfect prism $(A,d)$ taken in \cite{prisms} it is only required that $A$ is derived $(p,d)$-complete, but by \cite[Lemma 3.8]{prisms} we know that this implies that $A$ is classically $(p,d)$-complete.

\begin{lemma}\label{torsion_perfect_delta_ring} Let $A$ be a $p$-complete perfect $\delta$-ring, $x \in S := A/p$ and $[x] \in A$ its Teichmuller lift. Then, 
\begin{equation*}
	A[x^\infty] = A[x] = A[x^{1/p^\infty}]
\end{equation*}
where $A[x]$ denotes the $[x]$-torsion of $A$.
\end{lemma}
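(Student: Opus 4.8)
The plan is to reduce everything to a statement in characteristic $p$ and then exploit perfectness. By Proposition~\ref{perfect_delta_rings_equiv} the ring $A$ is canonically the Witt vectors $W(S)$ of the perfect $\FF_p$-algebra $S := A/p$, and since $S$ is perfect the element $x$ has unique $p$-power roots $x^{1/p^n}\in S$; multiplicativity of the Teichmuller lift then gives $[x^{1/p^n}]^{p^n}=[x]$, so $[x]\in A$ admits the compatible system of $p$-power roots $\{[x^{1/p^n}]\}$ and all the torsion submodules $A[x]$, $A[x^\infty]$, $A[x^{1/p^\infty}]$ of the statement are defined (with $[x]$ in the role of the torsion element).

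The heart of the argument is the following purely characteristic-$p$ assertion: \emph{for a perfect $\FF_p$-algebra $S$ and any $x\in S$ one has $S[x^{1/p^m}]=S[x]=S[x^{n}]$ for all $m\ge 0$ and $n\ge 1$, hence $S[x^\infty]=S[x]=S[x^{1/p^\infty}]$.} The inclusions $S[x^{1/p^m}]\subseteq S[x]\subseteq S[x^{n}]$ are immediate from $x=(x^{1/p^m})^{p^m}$ and $x^{n}=x^{n-1}x$. For the reverse inclusions I would use that Frobenius on $S$ is bijective: if $x^{n}b=0$, pick $k$ with $p^{k}\ge n$ and write $b=d^{p^{k}}$ (uniquely possible since Frobenius is bijective); then $(xd)^{p^{k}}=x^{p^{k}}d^{p^{k}}=x^{p^{k}-n}(x^{n}b)=0$, so $xd=0$ by injectivity of Frobenius, hence $xb=d^{p^{k}-1}(xd)=0$. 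Similarly, if $xb=0$ write $b=c^{p^{m}}$; then $(x^{1/p^{m}}c)^{p^{m}}=xc^{p^{m}}=xb=0$, so $x^{1/p^{m}}c=0$ and $x^{1/p^{m}}b=c^{p^{m}-1}(x^{1/p^{m}}c)=0$. The claimed chains of equalities follow, and then $S[x^\infty]=\bigcup_n S[x^{n}]=S[x]$ and $S[x^{1/p^\infty}]=\bigcap_m S[x^{1/p^{m}}]=S[x]$.

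Finally I would transport this back to $A=W(S)$ using Teichmuller expansions. Every $a\in A$ is uniquely $a=\sum_{i\ge 0}[b_i]p^{i}$, and multiplicativity of $[-]$ gives $[x^{1/p^{n}}]^{k}\,a=\sum_{i\ge 0}[(x^{1/p^{n}})^{k}b_i]\,p^{i}$; by uniqueness of the expansion together with $[c]=0\iff c=0$, this vanishes precisely when $(x^{1/p^{n}})^{k}b_i=0$ in $S$ for all $i$. Hence $a$ lies in $A[x]$ (resp. $A[x^{n}]$, resp. $A[x^{1/p^{n}}]$) iff every $b_i$ lies in $S[x]$ (resp. $S[x^{n}]$, resp. $S[x^{1/p^{n}}]$); an element killed by some power $[x]^{n}$ has all $b_i\in S[x^{n}]$, and an element of $A[x^{1/p^\infty}]$ has all $b_i\in\bigcap_m S[x^{1/p^{m}}]$. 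Combining with the characteristic-$p$ fact that all these torsion submodules of $S$ coincide with $S[x]$ gives $A[x^\infty]=A[x]=A[x^{1/p^\infty}]$, as desired.

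The only genuinely non-formal step is the reverse inclusion $S[x]\subseteq S[x^{1/p^{m}}]$ (equivalently $S[x^{n}]\subseteq S[x]$); I expect that to be the main, if modest, obstacle, and it is handled exactly by the ``extract a $p$-th root using surjectivity of Frobenius, then cancel using injectivity of Frobenius'' maneuver above. Everything else is bookkeeping with the $W(S)$-presentation.
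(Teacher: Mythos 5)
Your proof is correct and follows essentially the same route as the paper: reduce to coefficientwise statements about $S=A/p$ via uniqueness and multiplicativity of Teichmuller expansions, then use perfectness of $S$ (bijectivity of Frobenius) to identify the various torsion submodules. If anything, your treatment is slightly more complete, since you handle the inclusion $A[x^\infty]\subseteq A[x]$ explicitly, which the paper's proof leaves implicit.
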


\begin{proof} By Proposition \ref{perfect_delta_rings_equiv} we know that $A$ is $p$-torsion free and that $S = A/p$ is perfect, in particular this implies that $A = W(S)$ and that any element $f \in A$ admits a Teichmuller expansion as $f = \sum_{i = 0}^{\infty} [a_i]p^i$ (cf. Example \ref{teichmuller_expansion}). Thus, if $[x]f = 0$ in $A$, as $A$ is $p$-torsion free we learn that $[x][a_i] = [xa_i] = 0$, for all $i$, which is equivalent to requiring that $xa_i = 0$ in $S$. Finally, as $S$ is perfect we can conclude that if $xa_i = 0$ then $x^{1/p^n}a_i = 0$ for all $n \in \ZZ_{\ge 0}$, proving the desired result.
\end{proof}

\begin{lemma}\label{classical_varpi_complete_perfect_prism} Let $S$ be an integral perfectoid ring which is $\varpi$-complete with respect to some $\varpi \in S$ where $\varpi^p$ divides $p$ (cf. \ref{perfectoid_bounded_pi_torsion}). Then, the corresponding perfect prism $(A,d)$ is (classically) $(p, d, [\varpi^{\flat}])$-complete, and $d \in (p, [\varpi^{\flat}])$.
\end{lemma}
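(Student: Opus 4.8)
The plan is to unwind the correspondence between integral perfectoid rings and perfect prisms from Proposition \ref{perfect_prism_perfectoid}, and then to identify the relevant completeness statement with a statement about the tilt $S^\flat = A/p$, which is a perfect $\FF_p$-algebra and hence behaves much more transparently. Write $(A,d) = (\A_{\inf}(S), \ker\theta)$ for the perfect prism attached to $S$, so that $A = W(S^\flat)$ and $A/p = S^\flat$. The two assertions to prove are (i) $A$ is classically $(p,d,[\varpi^\flat])$-complete, and (ii) $d \in (p,[\varpi^\flat])$.

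\textbf{Step 1: reduce completeness to bounded torsion.} We already know from the definition of a perfect prism (together with \cite[Lemma 3.8]{prisms}) that $A$ is classically $(p,d)$-complete. Adjoining the element $[\varpi^\flat]$ to the ideal of definition does not change the topology in the derived sense, since $[\varpi^\flat]$ is topologically nilpotent: indeed $\theta([\varpi^\flat]) = (\varpi^\flat)^\sharp$ is a unit multiple of $\varpi$, which divides $p$ (up to taking $p$-th powers), so $[\varpi^\flat]^p$ differs from $p$ by a unit modulo $d$, and one checks $(p,d,[\varpi^\flat]) $ and $(p,d)$ cut out the same closed subset. Hence $A$ is at least derived $(p,d,[\varpi^\flat])$-complete. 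To upgrade ``derived'' to ``classical'' it suffices, by \cite[Tag 0BKF]{stacks-project} and Remark \ref{perfectoid_bounded_pi_torsion}, to show $A$ has bounded $[\varpi^\flat]$-torsion. But $A = W(S^\flat)$ is $p$-torsion free and $S^\flat$ is perfect, so by Lemma \ref{torsion_perfect_delta_ring} we have $A[[\varpi^\flat]^\infty] = A[[\varpi^\flat]]$; in particular the $[\varpi^\flat]^\infty$-torsion is bounded (it equals the $[\varpi^\flat]$-torsion). This gives (i).

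\textbf{Step 2: the containment $d \in (p,[\varpi^\flat])$.} Write the Teichmüller expansion $d = \sum_{i\ge 0}[a_i]p^i$ with $a_i \in S^\flat$. Since $d$ is distinguished, $a_1 \in (S^\flat)^\times$ by \cite[Lemma 2.33]{prisms}; more importantly, reducing mod $p$ gives $d \equiv [a_0] \pmod p$, so it is enough to show $[a_0] \in (p,[\varpi^\flat])$, equivalently $a_0 \in (\varpi^\flat)$ in $S^\flat$ (using that $S^\flat/[\varpi^\flat] = S^\flat/\varpi^\flat$ and that $[-]$ is multiplicative). Now $\theta(d) = 0$ forces $a_0^\sharp = \theta([a_0])$ to be divisible — in $S$ — by $p$, hence by $\varpi^p$ up to a unit; translating through the isomorphism $S^\flat/(\varpi^\flat)^p \xrightarrow{\ \sharp\ } S/\varpi^p$ of Lemma \ref{mod_p_tilt} we get $a_0 \in ((\varpi^\flat)^p) \subset (\varpi^\flat)$ in $S^\flat$. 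Therefore $[a_0] \in (p,[\varpi^\flat])$ and so $d \in (p,[\varpi^\flat])$.

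\textbf{Main obstacle.} The routine calculations are all short; the one point deserving care is Step 1's reduction from derived to classical completeness — specifically, verifying cleanly that $[\varpi^\flat]$-adic and $(p,d)$-adic topologies on $A$ are comparable enough that derived $(p,d)$-completeness plus bounded $[\varpi^\flat]$-torsion yields classical $(p,d,[\varpi^\flat])$-completeness, rather than just classical $(p,d)$-completeness. This is handled by observing that $[\varpi^\flat] \in (p,d)$ is itself a consequence of Step 2 (so the $(p,d,[\varpi^\flat])$-adic and $(p,d)$-adic topologies literally coincide), which means one should prove the containment $d\in(p,[\varpi^\flat])$ — and likewise $[\varpi^\flat]\in(p,d)$ from $[\varpi^\flat]^p \equiv up \bmod d$ — \emph{before} concluding the completeness statement. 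With that ordering the proof is immediate.
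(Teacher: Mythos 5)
Your Step 2 (the containment $d \in (p,[\varpi^\flat])$) is correct, and it is a nice self-contained replacement for the paper's appeal to \cite[Lemma 3.10]{BMS1}, which supplies the identity $d = v\,(p + [\varpi^\flat]^p x)$ for a unit $v$. The completeness part (Step 1) is where the proposal diverges from the paper and where the reasoning does not go through.

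There are three concrete problems in Step 1. (a) The claim that ``$[\varpi^\flat]^p$ differs from $p$ by a unit modulo $d$'' is not justified in the generality of the lemma: the hypothesis is only a one-sided divisibility $\varpi^p \mid p$ in $S$, i.e., $p \in ([\varpi^\flat]^p, d)$, not equivalence up to a unit. (The remark after Proposition \ref{perfect_prism_perfectoid} explicitly warns that a general $\varpi$ with $\varpi^p \mid p$ need not be the canonical $\pi$ with $\pi^p = -pu$.) (b) Tag 0BKF and Remark \ref{perfectoid_bounded_pi_torsion} are statements about a single element; they do not give the passage from derived $(p,d,[\varpi^\flat])$-completeness to classical $(p,d,[\varpi^\flat])$-completeness, even with bounded $[\varpi^\flat]$-torsion in hand, since the ideal has three generators and boundedness of torsion along one of them says nothing directly about the multi-generator adic topology. (c) The ``Main obstacle'' paragraph asserts $[\varpi^\flat] \in (p,d)$, which is false in general: take $S = \cO_{\CC_p}$ and $\varpi = p^{1/p^2}$, so that $\varpi \notin pS = $ image of $(p,d)$ in $S$, hence $[\varpi^\flat]\notin(p,d)$. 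So the two ideals do not literally coincide.

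The paper's route is both shorter and avoids the derived-to-classical subtlety entirely. Once $d \in (p,[\varpi^\flat])$ is established, the ideals $(p,d,[\varpi^\flat])$ and $(p,[\varpi^\flat])$ are literally equal, so the statement reduces to classical $(p,[\varpi^\flat])$-completeness of $A$. Now $A$ is classically $(p,d)$-complete by definition of perfect prism; and, crucially, $A = W(S^\flat)$ is classically $[\varpi^\flat]$-complete because $S^\flat$ is classically $\varpi^\flat$-complete (Lemma \ref{completeness_tilt}) and the Witt vector functor $W(-)$ preserves limits, so $W(S^\flat) = W(\lim S^\flat/(\varpi^\flat)^n) = \lim W(S^\flat/(\varpi^\flat)^n)$. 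These two classical completenesses are then combined. This Witt-vectors-preserve-limits step is the key input missing from Step 1, and the detour through derived completeness and bounded torsion does not supply a valid substitute.
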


\begin{proof} By definition of a perfect prism it follows that $A$ is classically $(p,d)$-complete, and the $\varpi$-completeness of $R$ implies that $R^\flat$ is $\varpi^\flat$-complete (Lemma \ref{completeness_tilt}). And since the Witt vector functor preserves limits we learn that $W(R^\flat)$ is classically $[\varpi^\flat]$-complete. For the claim that $d \in (p, [\varpi^{\flat}])$, it is shown during the proof of \cite[Lemma 3.10]{BMS1} that (up to a unit) $d = p + [\varpi^{\flat}]^p x$ for some $x \in A$, finishing the proof.
\end{proof}

\begin{prop}\label{varpi_completion_perfectoid} Let $S$ be a integral perfectoid such that an element $\varpi \in S$ admits compatible $p$-power roots, with $(A,d)$ as its corresponding $(p,d)$-complete perfect prism. Then,
\begin{enumerate}[(1)]
	\item The derived and classical $\varpi$-completion of $S$ agree. We denote it by $S^{\wedge}$.
	\item The derived and classical $[\varpi^{\flat}]$-completion of $A$ agree. We denote it by $A^{\wedge}$.
	\item $S^{\wedge}$ is an integral perfectoid ring with $(A^{\wedge},d)$ as its corresponding perfect prism.
\end{enumerate}
\end{prop}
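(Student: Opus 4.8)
The plan is to reduce everything to the already-established dictionary between perfectoid rings and perfect prisms (Proposition \ref{perfect_prism_perfectoid}) together with the interplay between derived and classical completion in the presence of bounded torsion. First I would record that $S$ has bounded $\varpi$-torsion: since $S$ is integral perfectoid and $\varpi$ admits compatible $p$-power roots, Corollary \ref{perfectoid_torsion} (applied after multiplying $\varpi$ by a unit so that it divides $p$, via Lemma \ref{compatible_roots_perfectoid}) gives $S[\varpi^\infty] = S[\varpi]$, so the $\varpi$-torsion is bounded. By \cite[Tag 0BKF]{stacks-project} (as recalled in Remark \ref{perfectoid_bounded_pi_torsion}) this yields part (1): the derived and classical $\varpi$-completions of $S$ agree, and we call the result $S^\wedge$. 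For part (2), I would argue similarly on the prism side: by Lemma \ref{torsion_perfect_delta_ring} the $[\varpi^\flat]$-torsion of $A = W(S^\flat)$ satisfies $A[[\varpi^\flat]^\infty] = A[[\varpi^\flat]]$, so $A$ has bounded $[\varpi^\flat]$-torsion, and again derived and classical $[\varpi^\flat]$-completion coincide; call the result $A^\wedge$.

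Next I would address part (3), which is the substantive point. The natural candidate for the perfect prism attached to $S^\wedge$ is $(A^\wedge, d)$, and I would verify the three conditions: that $A^\wedge$ is a perfect $\delta$-ring, that $d$ is still distinguished in it, and that $A^\wedge$ is $(p,d)$-complete with $S^\wedge \simeq A^\wedge/d$. Perfectness and the $\delta$-structure: completion of a $\delta$-ring along a finitely generated ideal containing a power of $p$ carries a unique $\delta$-structure making the completion map a $\delta$-map — but here I am completing along $([\varpi^\flat])$, which does \emph{not} contain $p$. The fix is to observe that $A$ is already classically $(p,d)$-complete, hence classically $p$-complete, and by Lemma \ref{classical_varpi_complete_perfect_prism} $d \in (p, [\varpi^\flat])$; so the $[\varpi^\flat]$-completion coincides with the $(p, [\varpi^\flat])$-completion, which is completion along an ideal containing $p$, and the usual machinery (the Completions lemma, Proposition \ref{delta_rings_lim_colim}) applies. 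Perfectness of $A^\wedge$ then follows because $\varphi$ is compatible with the completion and $\varphi$ on the perfect ring $A$ is already an isomorphism, so it remains an isomorphism on the completion. That $d$ remains distinguished is immediate since $\delta(d)$ is a unit and units survive to the completion; $d$ remains a nonzerodivisor by $p$-completeness and Lemma \ref{delta_ring_p_torsion}.

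Finally I would identify $A^\wedge/d$ with $S^\wedge$. Since $d$ is a nonzerodivisor on $A$ and $A/d = S$, the sequence $0 \to A \xrightarrow{d} A \to S \to 0$ is exact; applying derived $[\varpi^\flat]$-completion (and using that this is exact on the bounded-torsion modules in play, or simply working in $\cD(A)$ and using the $t$-exactness of the inclusion of complete objects) gives $A^\wedge/d \simeq S^\wedge$ — here one uses that the derived $[\varpi^\flat]$-completion of $S$ along the image of $[\varpi^\flat]$ agrees with its $\varpi$-completion, since $\theta([\varpi^\flat]) = (\varpi^\flat)^\sharp$ is a unit multiple of $\varpi$. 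Then $(A^\wedge, d)$ is a perfect prism with $A^\wedge/d \simeq S^\wedge$, so by Proposition \ref{perfect_prism_perfectoid} $S^\wedge$ is integral perfectoid with corresponding perfect prism $(A^\wedge, d)$, as desired. The main obstacle I anticipate is precisely the bookkeeping in the previous paragraph: ensuring the various completions (along $\varpi$, along $[\varpi^\flat]$, along $(p,[\varpi^\flat])$, along $(p,d)$) are genuinely interchangeable, which rests on the two boundedness-of-torsion inputs and on the containment $d \in (p, [\varpi^\flat])$ from Lemma \ref{classical_varpi_complete_perfect_prism}.
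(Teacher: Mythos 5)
Your proposal follows the same strategy as the paper's proof: parts (1) and (2) are handled identically, via bounded $\varpi$-torsion (resp.\ bounded $[\varpi^\flat]$-torsion, Lemma \ref{torsion_perfect_delta_ring}) and the equivalence of derived and classical completion under bounded torsion; and part (3) is established by showing $(A^\wedge, d)$ is a perfect prism with $A^\wedge/d = S^\wedge$ and invoking the dictionary of Proposition \ref{perfect_prism_perfectoid}. Where you genuinely add value is in spelling out why $A^\wedge$ is a perfect $\delta$-ring. The paper dismisses this as ``by functoriality of derived completion,'' but the completion-of-$\delta$-rings lemma (\cite[Lemma 2.17]{prisms}, recalled in the text) requires the ideal to contain $p$, and $([\varpi^\flat])$ does not. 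Your fix — $A$ is already classically $p$-complete, so the $[\varpi^\flat]$-completion coincides with the $(p,[\varpi^\flat])$-completion, to which the lemma does apply — is the right way to plug this gap. (An equally valid alternative would be to bypass $\delta$-structures entirely and appeal to Proposition \ref{perfect_delta_rings_equiv}: $A^\wedge$ is $p$-complete, $p$-torsion-free by Lemma \ref{completion_tf}, and $A^\wedge/p = (A/p)^\wedge_{[\varpi^\flat]}$ is perfect.)

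One small inaccuracy: you cite Lemma \ref{classical_varpi_complete_perfect_prism} for $d \in (p,[\varpi^\flat])$, but that lemma's hypotheses — $S$ already $\varpi$-complete and $\varpi^p \mid p$ — are precisely what is \emph{not} assumed in the present proposition (the whole point is that $\varpi$ is an arbitrary element with compatible $p$-power roots and $S$ need not be $\varpi$-complete). Fortunately this containment plays no role in the step you invoke it for: the equality of the $[\varpi^\flat]$-adic and $(p,[\varpi^\flat])$-adic completions rests only on $p$-completeness of $A$, which you do have. So the slip is harmless, but the citation should be removed. The remainder — $d$ stays distinguished because $\delta(d)$ is a unit, $d$ stays a nonzerodivisor, and $A^\wedge/d \simeq S^\wedge$ by deriving the short exact sequence $0 \to A \to A \to S \to 0$ and noting $\theta([\varpi^\flat])$ is a unit multiple of $\varpi$ — is correct and matches the paper's terse ``$[\varpi^\flat] = \varpi \bmod d$ \ldots\ by generalities of derived completion.''
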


\begin{proof} Without loss of generality we may assume that $\varpi \in S$ admits compatible $p$-power roots (\ref{compatible_roots_perfectoid}). From \ref{perfectoid_torsion} it follows that $S$ has bounded $\varpi$-torsion, and so derived and classical $\varpi$-completions of $S$ agree. Similarly, by \ref{torsion_perfect_delta_ring} it follows that $A$ has bounded $[\varpi^{\flat}]$-torsion, showing that classical and derived $\varpi$-completions agree. To conclude, we need to show that $(A^\wedge, d)$ is a perfect prism with $A^\wedge/d = S^\wedge$. Indeed, by functoriality of derived completion it follows that $A^\wedge$ is a perfect $\delta$-ring, and by the structure map $A \rightarrow A^\wedge$ it follows that $d \in A^\wedge$ is a distinguish element -- showing that $(A^\wedge, d)$ is a perfect prism. Finally, since $[\varpi^\flat] = \varpi \bmod d$ it follows that $A^\wedge/d = S^\wedge$ by generalities of derived completion.
\end{proof}

\begin{prop}\label{tensor_integral_perfectoid} Let $R$ be an integral perfectoid which is $\varpi$-complete with respect to some $\varpi \in R$ where $\varpi^p$ divides $p$, with corresponding $(p, [\varpi^{\flat}])$-complete perfect prism $(A,d)$. And let $S_1, S_2$ be (classically) $\varpi$-complete integral perfectoid $R$-algebras. Then,
\begin{enumerate}[(1)]
	\item The derived $\varpi$-complete tensor product $S_1 \cotimes_R^L S_2$ and the classical $\varpi$-complete tensor product $S_1 \cotimes_R S_2$ agree.
	\item The derived $(p, [\varpi^{\flat}])$-complete tensor product $\A_{\inf}(S_1) \cotimes^L_A \A_{\inf} (S_2)$ and the classical $(p, [\varpi^{\flat}])$-complete tensor product $\A_{\inf}(S_1) \cotimes_A \A_{\inf} (S_2)$ agree.
	\item $S_1 \cotimes_R S_2$ is an integral perfectoid ring with corresponding perfect prism $(\A_{\inf}(S_1) \cotimes_A \A_{\inf} (S_2), d)$.
\end{enumerate}
\end{prop}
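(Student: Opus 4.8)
\emph{Strategy.} The plan is to transport everything to the world of perfect prisms via $\A_{\inf}$, where the completed tensor product becomes the completed pushout of perfect $\delta$-rings, and then to read off the answer from there. Set $A_i := \A_{\inf}(S_i)$. Since each $S_i$ is an integral perfectoid $R$-algebra, Proposition \ref{perfect_prism_perfectoid} produces a perfect prism for $S_i$, and by rigidity (Lemma \ref{rig_prisms}) applied to the map $(A,(d))\to\A_{\inf}(S_i)$ we have $d\cdot\A_{\inf}(S_i)=\ker(\theta_i)$, so we may use $(A_i,(d))$ as the prism of $S_i$, with $A_i/dA_i=S_i$. Because the $S_i$ are $\varpi$-complete, Lemma \ref{classical_varpi_complete_perfect_prism} shows each $A_i$ is classically $(p,[\varpi^\flat])$-complete and $d\in(p,[\varpi^\flat])$; since $(p,d)$ and $(p,[\varpi^\flat])$ have the same radical, completeness with respect to either ideal means the same thing for modules here. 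Now form the pushout $A_1\otimes_A A_2$ in $\delta$-rings, which by Proposition \ref{delta_rings_lim_colim} is the pushout of underlying rings with the diagonal $\delta$-structure; as the Frobenius lift acts invertibly on $A$, $A_1$, $A_2$, it acts invertibly on $A_1\otimes_A A_2$, so this is again a perfect $\delta$-ring. Let $C$ be its classical $(p,[\varpi^\flat])$-completion. By the Completions lemma $C$ inherits a unique compatible $\delta$-structure whose Frobenius is the completion of an isomorphism, hence an isomorphism, so $C$ is a perfect $(p,d)$-complete $\delta$-ring; the image of the distinguished element $d$ is distinguished in $C$, so by Lemma \ref{rig_prisms} and Lemma \ref{delta_ring_p_torsion} the pair $(C,(d))$ is a perfect prism, $d$ is a nonzerodivisor on $C$, and $C/d$ has bounded $\varpi$-torsion. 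Proposition \ref{perfect_prism_perfectoid} then gives that $C/d$ is integral perfectoid with prism $(C,(d))$.

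Next I would identify $C/d$ with the classical $\varpi$-completed tensor product. Reducing modulo $d$ yields $(A_1\otimes_A A_2)/d=(A_1/dA_1)\otimes_{A/dA}(A_2/dA_2)=S_1\otimes_R S_2$; and, since $d$ is a nonzerodivisor on $C$, $d\in(p,[\varpi^\flat])$, and $[\varpi^\flat]\equiv\varpi$ modulo $d$ (cf.\ the proof of Proposition \ref{varpi_completion_perfectoid}), a routine completion bookkeeping shows $C/d=(S_1\otimes_R S_2)^{\wedge}_{\varpi}$, the classical $\varpi$-completion, i.e.\ $C/d=S_1\cotimes_R S_2$. Since $\A_{\inf}$ preserves limits (as in the proof of Proposition \ref{product_perfectoid}), the prism of $S_1\cotimes_R S_2$ is forced to be $(C,(d))=(\A_{\inf}(S_1)\cotimes_A\A_{\inf}(S_2),d)$ once one knows the classical completed tensor product on the $\A_{\inf}$-side equals $C$ — which is immediate for classical $(p,[\varpi^\flat])$-completion. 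This gives part (3).

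It remains to promote the classical statements to derived ones, i.e.\ to show $(A_1\otimes^L_A A_2)^{\wedge}_{(p,[\varpi^\flat])}$ is discrete and equal to $C$; part (1) then follows by applying $-\otimes^L_A A/d$ and using that $d$ is a nonzerodivisor on $C$, and part (2) is the statement itself. The crux is that $A_1\otimes^L_A A_2$ has discrete derived $p$-completion. First compute $(A_1\otimes^L_A A_2)\otimes^L_A A/p$: since $A$, $A_1$, $A_2$ are $p$-torsion free, $A_i\otimes^L_A A/p=A_i/p=S_i^\flat$, so $(A_1\otimes^L_A A_2)\otimes^L_A A/p=S_1^\flat\otimes^L_{R^\flat}S_2^\flat$. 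This is a simplicial commutative $\FF_p$-algebra on which the Frobenius (induced by the Frobenii of the three perfect factors) is both an isomorphism — because $R^\flat$, $S_1^\flat$, $S_2^\flat$ are perfect — and the zero map on $\pi_i$ for $i>0$ — a standard fact for simplicial commutative $\FF_p$-algebras — hence $S_1^\flat\otimes^L_{R^\flat}S_2^\flat$ is concentrated in degree $0$, equal to $S_1^\flat\otimes_{R^\flat}S_2^\flat$. Feeding this back through the $\lim$/$R^1\lim$ exact sequences for derived $p$-completion (using that $A_1\otimes_A A_2$ is $p$-torsion free, and that the $p$-power-torsion subgroups of the higher $\Tor_i^A(A_1,A_2)$ — which are $p$-divisible by the previous sentence — form systems with surjective transition maps, killing the relevant $R^1\lim$ terms) shows $(A_1\otimes^L_A A_2)^{\wedge}_p=(A_1\otimes_A A_2)^{\wedge}_p$, a discrete ring; $d$-completing and using that $d$ is a nonzerodivisor there gives $(A_1\otimes^L_A A_2)^{\wedge}_{(p,[\varpi^\flat])}=C$, as needed.

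\emph{Main obstacle.} The only genuinely non-formal ingredient is the discreteness of $S_1^\flat\cotimes^L_{R^\flat}S_2^\flat$, i.e.\ the vanishing of higher $\Tor$ for perfect $\FF_p$-algebras, which I would extract from the fact that Frobenius is simultaneously invertible and zero on positive homotopy. Everything else — the prism bookkeeping via Lemmas \ref{rig_prisms}, \ref{delta_ring_p_torsion} and \ref{classical_varpi_complete_perfect_prism}, the comparison of $(p,d)$- and $(p,[\varpi^\flat])$-completions, the identification of $C/d$ with the classical completed tensor product, and the $R^1\lim$ vanishing — is routine, though it must be carried out with some care.
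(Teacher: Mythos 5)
Your proposal is correct and follows essentially the same route as the paper: pass to $\A_{\inf}$, use that the pushout of perfect $\delta$-rings is again perfect (hence $p$-torsion free), reduce modulo $p$ to the tilts where the discreteness of $S_1^\flat\otimes^L_{R^\flat}S_2^\flat$ (Tor-vanishing for perfect $\FF_p$-algebras, which the paper cites rather than reproves) is the only non-formal input, compare derived with classical completions via torsion-freeness and bounded torsion, and then quotient by $d$ to get (1) and (3). One small correction: your aside that $(p,d)$ and $(p,[\varpi^\flat])$ have the same radical fails when $R$ has characteristic $p$ (there $d=p$ up to a unit and $[\varpi^\flat]$ need not be nilpotent modulo $p$), but your argument only ever uses the containment $d\in(p,[\varpi^\flat])$, so nothing breaks.
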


\begin{proof} We begin by proving $(2)$. By derived Nakayama \cite[Tag 0G1U]{stacks-project} we may check that $\A_{\inf}(S_1) \cotimes^L_A \A_{\inf} (S_2)$ is concentrated in degree zero modulo $p$, so it suffices to show that the derived $\varpi^\flat$-complete tensor product $S_1^\flat \cotimes_{R^\flat}^L S_2^\flat$ is concentrated in degree zero. We learn from \cite[Lemma 3.16]{projectivity_affine_grass} that $S_1^\flat \otimes_{R^\flat}^L S_2^\flat$ is concentrated in degree zero as everything in sight is a perfect ring of characteristic $p$, and since $S_1^\flat \otimes_{R^\flat} S_2^\flat$ is perfect it has bounded $\varpi^\flat$-torsion so the derived $\varpi^\flat$-completion agrees with the classical $\varpi^\flat$-completion, showing that $S_1^\flat \cotimes_{R^\flat}^L S_2^\flat = S_1^\flat \cotimes_{R^\flat} S_2^\flat$. Given that $\A_{\inf}(S_1) \cotimes_A^L \A_{\inf} (S_2)$ is concentrated in degree zero, it can be identified with the derived $(p, [\varpi^{\flat}])$-completion of $\A_{\inf}(S_1) \otimes_A \A_{\inf} (S_2)$, and in turn $\A_{\inf}(S_1) \otimes_A \A_{\inf} (S_2)$ can be identified with the pushout of $\A_{\inf}(S_1) \leftarrow A \rightarrow \A_{\inf}(S_2)$ in the category of $\delta$-rings. By the functoriality of Frobenius we learn that $\A_{\inf}(S_1) \otimes_A \A_{\inf} (S_2)$ is a perfect $\delta$-ring, which implies by \cite[Lemma 2.28]{prisms} that $\A_{\inf} (S_1) \otimes_A \A_{\inf}(S_2)$ is $p$-torsion free, so the derived and classical $p$-completions of $\A_{\inf} (S_1) \otimes_A \A_{\inf}(S_2)$ agree. Then, by \ref{torsion_perfect_delta_ring} we learn that $(\A_{\inf} (S_1) \otimes_A \A_{\inf}(S_2))^{\wedge}_{(p)}$ has bounded $[\varpi^\flat]$-torsion, showing that the classical and derived $[\varpi^\flat]$-completions agree. This completes the proof of $(2)$.

Finally, notice that $((\A_{\inf}(S_1) \otimes_A \A_{\inf} (S_2))^{\wedge}_{(p)}, d)$ is a $(p)$-complete perfect prism as $d$ is a distinguished element -- in particular $d$ is a non-zero divisor. Then, generalities of the derived tensor product and derived completion imply that $(\A_{\inf}(S_1) \otimes_A \A_{\inf} (S_2))^{\wedge}_{(p)}/d = (S_1 \otimes_R^L S_2)^{\wedge}_{(p)}$, proving that $(S_1 \otimes_R^L S_2)^{\wedge}_{(p)}$ is an integral perfectoid ring, in particular, it is concentrated in degree zero and it is classically $p$-complete. Then, Proposition \ref{varpi_completion_perfectoid} completes the proof of $(1)$ and $(3)$.
\end{proof}

\subsection{The almost zero category}

Throughout this section $R$ is an integral perfectoid ring which is $\varpi$-complete with respect to some $\varpi \in R$ where $\varpi^p$ divides $p$. In this section we will introduce the category of $\varpi$-almost zero $R$-modules (resp. $R$-complexes) as full subcategories of $\Mod_R$ (resp. $\cD(R)$) and show that it can be identified with the essential image of the fully faithful functors $\Mod_{(R/\varpi)_{\perfd}} \hookrightarrow \Mod_R$ (resp. $\cD((R/\varpi)_{\perfd}) \hookrightarrow \cD(R)$). Furthermore, we will show that all $\varpi$-almost zero $R$-complexes are automatically $\varpi$-complete giving us a $\varpi$-complete variants of this statements.

\begin{defn} Let $R$ be an integral perfectoid ring which is $\varpi$-complete with respect to some $\varpi \in R$ where $\varpi^p$ divides $p$. Define the ideal $(\varpi)_{\perfd} \subset R$ as the kernel of the map $R \rightarrow (R/\varpi)_{\perfd}$ where $(R/\varpi)_{\perfd}$ is defined as $\colim_{\text{Frob}} R/\varpi$, which makes sense since $R/\varpi$ is of characteristic $p$ by assumption.
\end{defn}

Recall that in the situation of the previous definition there exists a $\varpi^\prime \in R$ which admits compatible $p$-power roots and is a unit multiple of $\varpi$. Thus, there is a natural identification between $(R/\varpi)_{\perfd}$ and $R/(\varpi^{\prime 1/p^\infty})$, which in turn implies that $(\varpi)_{\perfd} = (\varpi^{\prime 1/p^\infty})$ -- we are implicitly using the fact that integral perfectoid rings are reduced \cite[Section 2.1.3]{purityflat}. 

Furthermore, since $R$ and $(R/\varpi)_{\perfd}$ are derived $\varpi$-complete, the map $R \rightarrow (R/\varpi)_{\perfd}$ is surjective we can identify $(\varpi)_{\perfd}$ as the fiber of the map $R \rightarrow (R/\varpi)_{\perfd}$ computed in either $\cD(R)$ or $\cD(R)_{\varpi}^\wedge$ -- as the fully faithful inclusion $\cD(R)_{\varpi}^\wedge \hookrightarrow \cD(R)$ preserves fiber sequences. Hence, we can conclude that $(\varpi)_{\perfd}$ is a derived $\varpi$-complete $R$-module.

\begin{defn} We say that an $R$-module $M$ is $\varpi$-almost zero if: for every element $m \in M$ and $x \in (\varpi)_{\perfd}$ we have that $xm = 0$. Similarly we say that an $R$-complex $M$ is $\varpi$-almost zero if $H^i(M)$ is a $\varpi$-almost zero for all $i \in \ZZ$.

Recall that there is a $t$-exact fully-faithful functor $\cD(R)_{\varpi}^\wedge \hookrightarrow \cD(R)$ (\ref{complete_t_structure}), thus the cohomology groups $H^n(M)$ of a derived $\varpi$-complete $R$-complex $M$ is the same whether we consider it as an object of $\cD(R)_{\varpi}^\wedge$ or $\cD(R)$. Hence, we say that a derived $\varpi$-complete $R$-complex $M$ is almost zero if it is almost zero as an $R$-complex.
\end{defn}

\begin{warning} Since we have not assumed that $R$ is $\varpi$-torsion free it is not generally true that the derived $\varpi$-complete $R$-module $(\varpi)_{\perfd}$ is a flat $R$-module.
\end{warning}

\begin{const}\label{const_almost_zero_idempotent} Consider the following commutative diagram $R$-complexes
\begin{cd}
	(\varpi)_{\perfd} \otimes_R^L (\varpi)_{\perfd} \ar[r] \ar[d] & 
	R \otimes^L_R R \ar[r] \ar[d] & (R/\varpi)_{\perfd} \otimes_R^L (R/\varpi)_{\perfd} \ar[d] \\
	(\varpi)_{\perfd} \ar[r] & R \ar[r] & (R/\varpi)_{\perfd}
\end{cd}
where the maps $R \otimes_R^L R \rightarrow R$ and $(R/\varpi)_{\perfd} \otimes_R^L (R/\varpi)_{\perfd} \rightarrow (R/\varpi)_{\perfd}$ are the multiplication maps coming from the commutative algebra structure of $R$ and $(R/\varpi)_{\perfd}$, and the horizontal maps come from the fiber sequence $(\varpi)_{\perfd} \rightarrow R \rightarrow (R/\varpi)_{\perfd}$ and the fact that $- \otimes_R^L -$ commutes with colimits independently on each variable.

We claim that all the vertical maps and their derived $\varpi$-completed variants are isomorphisms. Indeed, it is clear that $R \otimes_R^L R \rightarrow R$ is an isomorphisms, and since $R$ is derived $\varpi$-complete it is also clear that $R \cotimes_R^L R \rightarrow R$ is an isomorphism. Thus, it suffices to show that either
\begin{equation*}
	(\varpi)_{\perfd} \otimes_R^L (\varpi)_{\perfd} \rightarrow (\varpi)_{\perfd} \qquad \text{ or } \qquad
	(R/\varpi)_{\perfd} \otimes_R^L (R/\varpi)_{\perfd} \rightarrow (R/\varpi)_{\perfd}
\end{equation*}
are isomorphisms. Furthermore, since $(\varpi)_{\perfd}$ and $(R/\varpi)_{\perfd}$ are derived $\varpi$-complete it suffices to show that the completed variants are isomorphisms; we proof this in Lemma \ref{almost_zero_idempotent}.

Once we stablish that the vertical maps and their derived completed variants are isomorphisms, it is clear that the following tensor products are zero
\begin{equation*}
	(R/\varpi)_{\perfd} \otimes_R^L (\varpi)_{\perfd} = 0 \qquad \text{ and } \qquad (R/\varpi)_{\perfd} \cotimes_R^L (\varpi)_{\perfd} = 0
\end{equation*}
Indeed, the $R$-module structure on $(R/\varpi)_{\perfd}$ induces a commutative diagram of the form
\begin{cd}
	(R/\varpi)_{\perfd} \otimes_R^L (\varpi)_{\perfd} \ar[r] \ar[d] & 
	(R/\varpi)_{\perfd} \otimes^L_R R \ar[r] \ar[d, "\simeq"] & (R/\varpi)_{\perfd} \otimes_R^L (R/\varpi)_{\perfd} \ar[d, "\simeq"] \\
	0 \ar[r] & (R/\varpi)_{\perfd} \ar[r] & (R/\varpi)_{\perfd}
\end{cd}
from the fiber sequence $(\varpi)_{\perfd} \rightarrow R \rightarrow (R/\varpi)_{\perfd}$, showing that $(R/\varpi)_{\perfd} \otimes_R^L (\varpi)_{\perfd} = 0$.
\end{const}

\begin{lemma}\label{almost_zero_idempotent} Considering $(\varpi)_{\perfd}$ and $(R/\varpi)_{\perfd}$ as objects in $\cD(R)_\varpi^\wedge$ we get that the natural multiplication maps of Construction \ref{const_almost_zero_idempotent}
	\begin{equation*}
		(\varpi)_{\perfd} \cotimes_R^L (\varpi)_{\perfd} \rightarrow (\varpi)_{\perfd} \qquad (R/\varpi)_{\perfd} \cotimes_R^L (R/\varpi)_{\perfd} \rightarrow (R/\varpi)_{\perfd}
	\end{equation*}
are isomorphism.
\end{lemma}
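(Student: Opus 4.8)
The key point is that $(R/\varpi)_{\perfd} \simeq R/(\varpi'^{1/p^\infty})$ for an element $\varpi'$ admitting compatible $p$-power roots, and the quotient $R \to (R/\varpi)_{\perfd}$ is surjective. I would first reduce the two statements to one: since there is a fiber sequence $(\varpi)_{\perfd} \to R \to (R/\varpi)_{\perfd}$ in $\cD(R)_\varpi^\wedge$ and $R \cotimes_R^L R \simeq R$, tensoring this sequence with itself (using that $\cotimes_R^L$ commutes with colimits, hence with fiber sequences, in each variable) shows that the multiplication map on $(\varpi)_{\perfd}$ is an isomorphism if and only if the multiplication map on $(R/\varpi)_{\perfd}$ is — this is exactly the reasoning already sketched in Construction \ref{const_almost_zero_idempotent}. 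So it suffices to treat $(R/\varpi)_{\perfd}$.

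For $(R/\varpi)_{\perfd}$, the plan is to reduce modulo $\varpi$ via derived Nakayama (Lemma \ref{derived_nakayama}): the cofiber $C$ of $(R/\varpi)_{\perfd} \cotimes_R^L (R/\varpi)_{\perfd} \to (R/\varpi)_{\perfd}$ is derived $\varpi$-complete, so $C = 0$ iff $C \otimes_R^L R/\varpi = 0$. Now $(R/\varpi)_{\perfd} \otimes_R^L R/\varpi \simeq (R/\varpi)_{\perfd}/\varpi$, and since $\varpi$ already acts as $0$ on $(R/\varpi)_{\perfd} = \colim_{\Fr} R/\varpi$ (each transition map is Frobenius, so $\varpi$ maps to $\varpi^p$, and in the colimit $\varpi = \varpi^{1/p^{n}\cdot p^n}$ becomes... — more precisely $\varpi$ is nilpotent-to-zero in the colimit because it has a $p$-power root which squares up), one finds $(R/\varpi)_{\perfd} \otimes_R^L R/\varpi$ has a two-term description $(R/\varpi)_{\perfd} \xrightarrow{\ 0\ } (R/\varpi)_{\perfd}$, i.e. $(R/\varpi)_{\perfd} \oplus (R/\varpi)_{\perfd}[1]$. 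Thus it suffices to show the multiplication map $(R/\varpi)_{\perfd} \otimes_{R/\varpi}^L (R/\varpi)_{\perfd} \to (R/\varpi)_{\perfd}$ is an isomorphism, i.e. that $(R/\varpi)_{\perfd}$ is an idempotent $R/\varpi$-algebra. But $(R/\varpi)_{\perfd} = \colim_{\Fr} R/\varpi$ is a filtered colimit of the identity functor along Frobenius maps $R/\varpi \to R/\varpi$; such a colimit is always an idempotent algebra because the multiplication map $B \otimes_A^L B \to B$ for $B = \colim_\Fr A$ is the colimit of the maps $A \otimes_A^L A \xrightarrow{\simeq} A$ — being a localization (in the $\infty$-categorical sense) of $\CAlg_{A}$, the unit $A \to B$ is an epimorphism, equivalently $B \otimes_A^L B \simeq B$.

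The main obstacle, and the step needing care, is verifying that $(R/\varpi)_{\perfd} \otimes_R^L R/\varpi$ really does split as claimed — equivalently, controlling the $\Tor$ of $R/\varpi$ against $(R/\varpi)_{\perfd}$ over $R$. Here one uses that $R$ has bounded (in fact $\varpi^\infty = \varpi$) $\varpi$-torsion, and more specifically Corollary \ref{perfectoid_torsion}: $R[\varpi] = R[\varpi^{1/p^\infty}]$, so the Koszul complex $\Kos(R;\varpi) = [R \xrightarrow{\varpi} R]$ computes $R/\varpi$ together with $\Tor_1^R(R/\varpi, -) = (-)[\varpi]$, and after tensoring with $(R/\varpi)_{\perfd}$ the $\varpi$-torsion submodule of $(R/\varpi)_{\perfd}$ — which contains $(R/\varpi)_{\perfd}[\varpi] = (R/\varpi)_{\perfd}$ since $\varpi$ acts as zero — gives precisely the degree-$1$ term, and the $\varpi$-multiplication differential on $(R/\varpi)_{\perfd}$ is visibly zero. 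Once this splitting is in hand, the reduction chains together: $(R/\varpi)_{\perfd}$ idempotent over $R/\varpi$ $\Rightarrow$ $(R/\varpi)_{\perfd}$ idempotent over $R$ after $\varpi$-completion $\Rightarrow$ (by Construction \ref{const_almost_zero_idempotent}) $(\varpi)_{\perfd}$ idempotent, which is the assertion.
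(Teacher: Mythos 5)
Your reduction of the $(\varpi)_{\perfd}$ statement to the $(R/\varpi)_{\perfd}$ statement via the fiber sequence is fine and is exactly Construction \ref{const_almost_zero_idempotent}, but the core of your argument has a genuine gap. The derived Nakayama step buys you nothing here: $\varpi$ acts as zero on $(R/\varpi)_{\perfd}$, hence also on $(R/\varpi)_{\perfd} \otimes_R^L (R/\varpi)_{\perfd}$ and on the cofiber $C$ of the multiplication map (in particular these objects are already derived $\varpi$-complete, so $\cotimes_R^L$ and $\otimes_R^L$ agree here); for such a $C$ the condition $C \otimes_R^L R/\varpi = 0$ is trivially equivalent to $C = 0$, so no passage ``from $R$ to $R/\varpi$'' has actually occurred. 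What you would need is a comparison of $\Tor$ over $R$ with $\Tor$ over $R/\varpi$, and your Koszul splitting $(R/\varpi)_{\perfd} \otimes_R^L R/\varpi \simeq (R/\varpi)_{\perfd} \oplus (R/\varpi)_{\perfd}[1]$ is exactly where this breaks: in this section $R$ is \emph{not} assumed $\varpi$-torsion free (see the Warning preceding the lemma), so the two-term complex $[R \xrightarrow{\varpi} R]$ is not a resolution of $R/\varpi$, $\Tor_1^R(R/\varpi,-) \neq (-)[\varpi]$ in general, and higher $\Tor$'s need not vanish; Corollary \ref{perfectoid_torsion} controls $R[\varpi]$ but does not give you this splitting.

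Moreover, even granting that reduction, your final claim --- that $B = \colim_{\Fr} A$ is always a derived-idempotent $A$-algebra because the multiplication is ``the colimit of the maps $A \otimes_A^L A \simeq A$'' --- is false: the transition maps are Frobenius-twisted, so the terms of that colimit are of the form $B \otimes_{A, \Fr^n}^L A$, not $A$. Concretely, for $A = \FF_p[x]/(x^2)$ one has $A_{\perf} = \FF_p$, and $\FF_p \otimes_A^L \FF_p$ has nonvanishing $\Tor_i$ in every degree, so $A_{\perf} \otimes_A^L A_{\perf} \not\simeq A_{\perf}$. Your localization/epimorphism argument only yields the \emph{underived} statement $A_{\perf} \otimes_A A_{\perf} \cong A_{\perf}$ --- which is indeed what the paper uses (surjectivity of $R \to (R/\varpi)_{\perfd}$ makes the classical multiplication an isomorphism) --- but the whole difficulty of the lemma is passing from the derived $\varpi$-completed tensor product to the classical one. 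The paper does this via Proposition \ref{tensor_integral_perfectoid}, i.e.\ by tilting to the perfect ring $R^\flat$ and invoking Tor-independence of perfect $\FF_p$-algebras (\cite[Lemma 3.16]{projectivity_affine_grass}); some input of this kind, using that $R/\varpi$ is a quotient of a perfect ring rather than an arbitrary $\FF_p$-algebra, is unavoidable, and your proposal does not supply it.
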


\begin{proof} By Proposition \ref{tensor_integral_perfectoid} we know that the natural map
	\begin{equation*}
		(R/\varpi)_{\perfd} \cotimes_R^L (R/\varpi)_{\perfd} \rightarrow (R/\varpi)_{\perfd} \cotimes_R (R/\varpi)_{\perfd}
	\end{equation*}
from the derived $\varpi$-complete tensor product to the classical $\varpi$-complete tensor product is an isomorphism. Furthermore, since the map $R \rightarrow (R/\varpi)_{\perfd}$ is surjective, it is clear that the multiplication map $(R/\varpi)_{\perfd} \otimes_R (R/\varpi)_{\perfd} \rightarrow (R/\varpi)_{\perfd}$ is an isomorphism, and since $(R/\varpi)_{\perfd}$ is already $\varpi$-complete we learn that
\begin{equation*}
	(R/\varpi)_{\perfd} \cotimes_R (R/\varpi)_{\perfd} \rightarrow (R/\varpi)_{\perfd}
\end{equation*}
is an isomorphism, proving the desired result. Finally, as sketched in Construction \ref{const_almost_zero_idempotent} it follows that $	(\varpi)_{\perfd} \cotimes_R^L (\varpi)_{\perfd} \rightarrow (\varpi)_{\perfd}$ is also an isomorphism.
\end{proof}

\begin{const}\label{const_almost_zero_ff} Recall that the canonical map of rings $R \rightarrow (R/\varpi)_{\perfd}$ induces a pair of adjoint functors $\cD((R/\varpi)_{\perfd}) \rightleftarrows \cD(R)$; moreover, since every object $M \in \cD((R/\varpi)_{\perfd})$ regarded as an $R$-complex is derived $\varpi$-complete it follows that the map $i_*: \cD((R/\varpi)_{\perfd}) \rightarrow \cD(R)$ factors through the inclusion
\begin{cd}
	\cD((R/\varpi)_{\perfd}) \ar[r, "i_*"] \ar[rd, "i_*^\wedge", swap] & \cD(R) \\
	& \cD(R)^\wedge_{\varpi} \ar[u, hook]
\end{cd}
In particular, this implies that both maps $i_*$ and $i_*^\wedge$ preserve all limits. On the other hand, given an $R$-complex $M$ its clear that the $R$-complex $M \otimes_R^L (R/\varpi)_{\perfd}$ is derived $\varpi$-complete, thus the left adjoint to $i_*$ given by $i^*: - \otimes_R^L (R/\varpi)_{\perfd}$ factors through $\cD(R)^\wedge_{\varpi}$, making the following diagram commute
\begin{cd}
	\cD((R/\varpi)_{\perfd})  & \cD(R) \ar[d, "(-)^\wedge"] \ar[l, "i^*", swap] \\
	& \cD(R)^\wedge_{\varpi} \ar[ul, "i^{\wedge *}"]
\end{cd}
In other words we have the identity $- \cotimes_R^L (R/\varpi)_{\perfd} = - \otimes^L_R (R/\varpi)_{\perfd}$. In particular, this construction shows that the adjunction $(i_*, i^*)$ induces an adjunction $(i_*^\wedge, i^{\wedge *})$.
\end{const}

\begin{prop}\label{almost_zero_fully_faithful} The pair of functors of Construction \ref{const_almost_zero_ff}
\begin{equation*}
	i_*: \cD((R/\varpi)_{\perfd}) \longrightarrow \cD(R) \qquad \text{ and } \qquad 
	i_*^{\wedge}: \cD((R/\varpi)_{\perfd}) \longrightarrow \cD(R)_\varpi^\wedge
\end{equation*}
are fully faithful and preserves all limits and colimits.
\end{prop}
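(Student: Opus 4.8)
The plan is to deduce everything formally from the adjunctions $i^* \dashv i_*$ and $i^{\wedge *} \dashv i_*^\wedge$ of Construction \ref{const_almost_zero_ff}, together with the idempotence of $(R/\varpi)_{\perfd}$ over $R$ proved in Lemma \ref{almost_zero_idempotent}. The limit statements are immediate: $i_*$ and $i_*^\wedge$ are right adjoints, so they preserve all limits. It is also convenient to reduce the assertions about $i_*^\wedge$ to those about $i_*$, since $i_*^\wedge$ is the composite of $i_*$ with the fully faithful inclusion $\cD(R)_\varpi^\wedge \hookrightarrow \cD(R)$; thus full faithfulness of $i_*$ will give full faithfulness of $i_*^\wedge$, and the colimit claim for $i_*^\wedge$ will follow from that for $i_*$ after a short remark about reflective subcategories.

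For full faithfulness of $i_*$ I would verify that the counit $\varepsilon \colon i^* i_* \to \Id$ of the adjunction $i^* \dashv i_*$ is an equivalence. Using the projection formula for the ring map $R \to (R/\varpi)_{\perfd}$, the object $i^* i_* M$ is identified with $M \otimes^L_{(R/\varpi)_{\perfd}} \big( (R/\varpi)_{\perfd} \otimes_R^L (R/\varpi)_{\perfd} \big)$, and under this identification $\varepsilon_M$ becomes the map induced by the multiplication morphism $(R/\varpi)_{\perfd} \otimes_R^L (R/\varpi)_{\perfd} \to (R/\varpi)_{\perfd}$, which is an equivalence by Lemma \ref{almost_zero_idempotent}. (Alternatively, since $i^* i_*$ and $\Id$ both preserve colimits and $\cD((R/\varpi)_{\perfd})$ is generated under colimits by shifts of its unit object, it suffices to evaluate $\varepsilon$ on $(R/\varpi)_{\perfd}$, where it is again that multiplication map.) Hence $i_*$, and therefore $i_*^\wedge$, is fully faithful.

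For colimits I would use that $i_*$ is restriction of scalars along $R \to (R/\varpi)_{\perfd}$, hence preserves all colimits because colimits of module complexes are computed on underlying complexes. For $i_*^\wedge$: given a diagram $D$ in $\cD((R/\varpi)_{\perfd})$, the colimit $\colim_{\cD(R)}(i_*\circ D) \simeq i_*(\colim D)$ lies in the essential image of $i_*$, so it is an $(R/\varpi)_{\perfd}$-complex and in particular derived $\varpi$-complete (Construction \ref{const_almost_zero_ff}); since colimits in the reflective subcategory $\cD(R)_\varpi^\wedge$ are computed by applying the reflector $(-)^\wedge_\varpi$ to the colimit formed in $\cD(R)$, and this reflector is the identity on an already complete object, we get $\colim_{\cD(R)_\varpi^\wedge}(i_*^\wedge\circ D) \simeq i_*(\colim D) = i_*^\wedge(\colim D)$.

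I do not anticipate a real obstacle: the mathematical content has been packaged into Lemma \ref{almost_zero_idempotent}, and the rest is bookkeeping with adjunctions, the projection formula, and reflective localizations. The only point warranting a little care is confirming that the counit $\varepsilon$ is genuinely the multiplication map on the generator rather than merely abstractly an equivalence — which is exactly why I favour the projection-formula route, where the comparison is natural and transparent.
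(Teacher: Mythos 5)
Your proof is correct and follows essentially the same route as the paper's: limits from the adjunctions, full faithfulness of $i_*$ by reducing the counit $i^*i_* \to \Id$ to an equivalence via Lemma \ref{almost_zero_idempotent}, full faithfulness and colimits for $i_*^\wedge$ by factoring through $i_*$. The only cosmetic difference is that the paper checks the counit on the generator $(R/\varpi)_{\perfd}$ via the colimit presentation of \cite[Proposition 7.2.4.2]{lurieHA} (your parenthetical ``alternative''), whereas your primary route identifies it via the projection/base-change formula; both bookkeep the same content, and your observation that $i_*$ preserves colimits simply because it is restriction of scalars is if anything a slight simplification of the paper's argument for that step.
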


\begin{proof} By the commutativity of the diagrams in Construction \ref{const_almost_zero_ff} if $i_*$ is fully faithful then $i_*^{\wedge}$ is fully faithful, thus we only proof that $i_*$ is fully faithful. By the adjunction $(i_*, i^*)$, given a pair of objects $M,N \in \cD((R/\varpi)_{\perfd})$ we have a natural isomorphism
\begin{equation*}
	R\Hom_R (i_* M, i_* N) \simeq R\Hom_{(R/\varpi)_{\perfd}} (M \otimes_R^L (R/\varpi), N)
\end{equation*}
Hence, it suffices to show that the canonical map $M \otimes_R^L (R/\varpi)_{\perfd} \rightarrow M$ is an isomorphism in $\cD((R/\varpi)_{\perfd})$. From the fact that every $((R/\varpi)_{\perfd})$-complex $M$ can be presented as a colimit of complexes of the form $(R/\varpi)_{\perfd} [n]$ by \cite[Proposition 7.2.4.2]{lurieHA}, and that $-\otimes_R^L (R/\varpi)_{\perfd}$ commutes with colimits independently on each variable we are reduced to the case where $M = (R/\varpi)_{\perfd}$, which was established in Lemma \ref{almost_zero_idempotent}.

Finally, the adjunctions $(i_*, i^*)$ and $(i_*^\wedge, i^{\wedge *})$ make it clear that the functors $i_*$ and $i_*^\wedge$ preserve all limits. Hence, we need to show that $i_*$ and $i^*$ preserve all colimits, but the commutativity of the diagrams in Construction \ref{const_almost_zero_ff} show that it suffices to show that $i_*$ preserves all colimits. To show that $i_*$ preserves all colimits let $M := \colim M_i \in \cD(R)$ be a colimit computed in $\cD(R)$ of $(R/\varpi)_{\perfd}$-complexes $M_i$; it then suffices to show that the canonical map $M \rightarrow M \otimes_R^L (R/\varpi)_{\perfd}$ is an isomorphism. But this is clear as $M_i \rightarrow  M_i \otimes_R^L (R/\varpi)_{\perfd}$ is an isomorphism for all $M_i$ and $-\otimes_R^L (R/\varpi)_{\perfd}$ commutes with all colimits independently on each variable.
\end{proof}

\begin{rem} From the perspective of higher algebra Proposition \ref{almost_zero_fully_faithful} is rather remarkable; it is essentially never the case that for a surjective map $R \twoheadrightarrow S$ the induced functor $\cD(S) \rightarrow \cD(R)$ is fully faithful. A prominent example is the following: given the surjective map $R \otimes_{\CC} R \twoheadrightarrow R$ induced by multiplication of $\CC$-algebras the functor $\cD(R) \rightarrow \cD(R \otimes_{\CC} R)$ is almost never fully faithful; indeed, $R\Hom_R(R, R) = R$ is concentrated in degree zero, while $R\Hom_{R \otimes_{\CC} R} (R, R)$ identifies with the hochschild cohomology of $R$.
\end{rem}

\begin{const}\label{almost_zero_right_adj} Given that the functor $i_*: \cD((R/\varpi)_{\perfd}) \rightarrow \cD(R)$ of presentable $\infty$-categories preserves all colimits we can invoke the adjoint functor theorem \cite[Corollary 5.5.2.9]{lurieHTT} to guarantee the existence of a right adjoint to $i_*$ which we denote by $i^!$. Consider the following diagram
\begin{cd}
	\cD((R/\varpi)_{\perfd}) & \cD(R)  \ar[l, "i^!", swap] \\
	& \cD(R)^\wedge_{\varpi} \ar[u, hook] \ar[lu, "i^{\wedge !}"]
\end{cd}
where $i^{\wedge !}$ is defined as the composition of the other two functors, making the diagram commute. Since the inclusion $\cD(R)^\wedge_\varpi \rightarrow \cD(R)$ is a right adjoint adjoint to the derived completion functor, it follows that $i^{\wedge !}$ is a right adjoint to $i_*^\wedge$.
\end{const}

Recall that the functors $i_*: \cD((R/\varpi)_{\perfd}) \rightarrow \cD(R)$ and the canonical inclusion $\cD(R)^\wedge_\varpi \rightarrow \cD(R)$ are $t$-exact. Hence, the diagrams of Construction \ref{const_almost_zero_ff} induces fully faithful functors of abelian categories
\begin{cd}
	\Mod_{(R/\varpi)_{\perfd}} \ar[r, "i_*"] \ar[rd, "i_*^\wedge", swap] & \Mod_R \\
	& \Mod_{R, \varpi}^\wedge \ar[u, hook]
\end{cd}
As in the derived version this functors admit left adjoints
\begin{cd}
	\Mod_{(R/\varpi)_{\perfd}} & \Mod_R \ar[l, "H^0(i^* -)", swap] \ar[d, "H^0(-^\wedge_\varpi)"]\\
	& \Mod_{R, \varpi}^\wedge \ar[lu, "H^0(i^{\wedge *} - )"]
\end{cd}
However, the functors $i^*, i^{\wedge *}$ and $(-)^\wedge_{\varpi}$ are only right $t$-exact; thus, at the abelian level we are required to truncate this functors to get the left adjoints of the abelian versions of $i_*$ and $i_*^\wedge$.

\begin{prop}\label{almost_zero_as_modules} The categories of $\varpi$-almost zero $R$-complexes and $\varpi$-almost zero $R$-modules admit the following descriptions
\begin{enumerate}[(1)]
	\item The fully faithful functor $i_*: \Mod_{(R/\varpi)} \rightarrow \Mod_{R}$ identifies the category of $\varpi$-almost zero $R$-modules with the essential image of $i_*$.
	\item The fully faithful functor $i_*: \cD((R/\varpi)_{\perfd}) \rightarrow \cD(R)$ identifies the category of $\varpi$-almost zero $R$-complexes with the essential image of $i_*$.
\end{enumerate}
Once this results are established, from the commutativity of the diagrams in Construction \ref{const_almost_zero_ff} it follows that all $\varpi$-almost zero $R$-modules (resp. $R$-complexes) are automatically derived $\varpi$-complete. Hence, we get the following description of the categories of derived $\varpi$-complete $\varpi$-almost zero $R$-modules (resp. $R$-complexes)
\begin{enumerate}[(1)] \setcounter{enumi}{2}
	\item The fully faithful functor $i_*^\wedge: \Mod_{(R/\varpi)} \rightarrow \Mod_{R, \varpi}^\wedge$ identifies the category of derived $\varpi$-complete $\varpi$-almost zero $R$-modules with the essential image of $i_*^\wedge$.
	\item The fully faithful functor $i_*^{\wedge}: \cD((R/\varpi)_{\perfd}) \rightarrow \cD(R)^\wedge_{\varpi}$ identifies the category of derived $\varpi$-complete $\varpi$-almost zero $R$-complexes with the essential image of $i_*^\wedge$.
\end{enumerate}
In particular, this shows that the category of $\varpi$-almost zero $R$-modules (resp. $R$-complexes) is an abelian category (resp. stable $\infty$-category).
\end{prop}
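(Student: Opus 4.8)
The plan is to reduce everything to the already-established fully faithfulness of $i_* \colon \cD((R/\varpi)_{\perfd}) \hookrightarrow \cD(R)$ (Proposition \ref{almost_zero_fully_faithful}) together with a clean characterization of the essential image. First I would prove $(2)$, which is the heart of the matter. One inclusion is formal: if $M \in \cD((R/\varpi)_{\perfd})$, then each $H^i(i_* M)$ is a module over $(R/\varpi)_{\perfd} = R/(\varpi'^{1/p^\infty})$, hence every element is annihilated by $(\varpi')^{1/p^n}$ for all $n$, hence by all of $(\varpi)_{\perfd} = (\varpi'^{1/p^\infty})$; so $i_* M$ is $\varpi$-almost zero. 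For the converse, let $M \in \cD(R)$ be $\varpi$-almost zero. The key computation is that the counit-type map $M \otimes_R^L (R/\varpi)_{\perfd} \to M$ is an isomorphism: using the idempotence $(R/\varpi)_{\perfd} \otimes_R^L (R/\varpi)_{\perfd} \simeq (R/\varpi)_{\perfd}$ and $(R/\varpi)_{\perfd} \otimes_R^L (\varpi)_{\perfd} = 0$ from Construction \ref{const_almost_zero_idempotent} and Lemma \ref{almost_zero_idempotent}, tensoring the fiber sequence $(\varpi)_{\perfd} \to R \to (R/\varpi)_{\perfd}$ with $M$ shows $M \otimes_R^L (\varpi)_{\perfd} \to M$ has cofiber $M \otimes_R^L (R/\varpi)_{\perfd}$, so it suffices to see $M \otimes_R^L (\varpi)_{\perfd} = 0$, equivalently (after the above) $M \otimes_R^L (\varpi)_{\perfd} \otimes_R^L (R/\varpi)_{\perfd} = 0$ which is immediate, and $M \otimes_R^L (\varpi)_{\perfd}$ being derived $\varpi$-complete (as $(\varpi)_{\perfd}$ is) we conclude by derived Nakayama (Lemma \ref{derived_nakayama}) once we check $M \otimes_R^L (\varpi)_{\perfd} \otimes_R^L R/\varpi = 0$ — here the $\varpi$-almost-zero hypothesis on $M$ enters, since $\varpi \in (\varpi)_{\perfd}$ forces the relevant Tor-terms to vanish. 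Granting this, $M \simeq M \otimes_R^L (R/\varpi)_{\perfd} = i_* i^* M$ lies in the essential image of $i_*$.

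Next, part $(1)$ follows from $(2)$ by restricting to hearts: the functor $i_*$ is $t$-exact (Construction \ref{const_almost_zero_ff}), an $R$-module $M$ is $\varpi$-almost zero by definition iff it is so as an $R$-complex concentrated in degree $0$, and the essential image of $i_* \colon \Mod_{(R/\varpi)_{\perfd}} \to \Mod_R$ is exactly the degree-$0$ part of the essential image of $i_* \colon \cD((R/\varpi)_{\perfd}) \to \cD(R)$ — so a $\varpi$-almost zero module is $i_* i^* M$ with $i^* M$ necessarily discrete. Then parts $(3)$ and $(4)$ are pure bookkeeping: I would invoke the commuting triangles of Construction \ref{const_almost_zero_ff} expressing $i_*$ as $i_*^\wedge$ followed by the inclusion $\cD(R)^\wedge_\varpi \hookrightarrow \cD(R)$ (resp. at the abelian level), note that this inclusion is fully faithful and $t$-exact, and conclude that the essential image of $i_*^\wedge$ is precisely the derived $\varpi$-complete objects inside the essential image of $i_*$; since \emph{every} object in the essential image of $i_*$ is already derived $\varpi$-complete (every $(R/\varpi)_{\perfd}$-complex is, as recorded just before Construction \ref{const_almost_zero_ff} and in Proposition \ref{almost_zero_fully_faithful}), this says $\varpi$-almost zero already implies derived $\varpi$-complete, giving the parenthetical remark and identifying the categories in $(3)$, $(4)$ with the full essential images of $i_*^\wedge$. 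Finally, abelian-ness (resp. stability) of the category of $\varpi$-almost zero modules (resp. complexes) is immediate once it is identified with an essential image of a fully faithful exact functor out of $\Mod_{(R/\varpi)_{\perfd}}$ (resp. an exact fully faithful functor of stable $\infty$-categories out of $\cD((R/\varpi)_{\perfd})$): the essential image is closed under the relevant (co)limits because $i_*$ preserves them (Proposition \ref{almost_zero_fully_faithful}).

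I expect the main obstacle to be the careful handling of the derived Nakayama step for $M \otimes_R^L (\varpi)_{\perfd}$ when $R$ is \emph{not} assumed $\varpi$-torsion free (flagged explicitly in the Warning after the definition of $\varpi$-almost zero): $(\varpi)_{\perfd}$ need not be $R$-flat, so I cannot naively compute $M \otimes_R^L (\varpi)_{\perfd} \otimes_R^L R/\varpi$ as $(M/\varpi) \otimes (\varpi)_{\perfd}$; instead I would argue via the fiber sequence $(\varpi)_{\perfd} \to R \to (R/\varpi)_{\perfd}$ tensored with both $M$ and $R/\varpi$, reducing the vanishing to the identities $(R/\varpi)_{\perfd} \otimes_R^L (\varpi)_{\perfd} = 0$ and the almost-zero hypothesis controlling $M \otimes_R^L R/\varpi$, rather than to any flatness. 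The rest is formal manipulation of adjunctions and $t$-exactness, and I would keep it brief.
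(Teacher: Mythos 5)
Your overall architecture (prove (2) first, restrict to hearts for (1), then bookkeeping for (3)--(4)) is plausible, but the central step of your proof of (2) has a genuine gap, concentrated in the derived Nakayama reduction. First, a fixable imprecision: $M \otimes_R^L (\varpi)_{\perfd}$ is not derived $\varpi$-complete ``because $(\varpi)_{\perfd}$ is'' --- derived completeness is not preserved by tensoring against arbitrary complexes (already $\CC_{p,\le 1}\otimes^L_{\CC_{p,\le 1}}\CC_p=\CC_p$ fails); it is true here, but only because $M$ is almost zero, so that $M$ and $M\otimes_R^L(R/\varpi)_{\perfd}$ have $\varpi$-torsion cohomology and are complete, hence so is the fiber --- i.e.\ even this step already leans on the hypothesis. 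Second, and more seriously, the mod-$\varpi$ vanishing that Lemma \ref{derived_nakayama} demands is never actually established: after reducing modulo $\varpi$ you must show $(M\otimes_R^L R/\varpi)\otimes_R^L(\varpi)_{\perfd}=0$, and $M\otimes_R^L R/\varpi$ is itself a $\varpi$-almost zero complex (its cohomology groups are extensions of subquotients of the $H^i(M)$, and extensions of almost zero modules are almost zero because the ideal $(\varpi)_{\perfd}$ is idempotent), so you are facing exactly the statement you set out to prove, for another almost zero complex; your sketched reduction via the fiber sequence is therefore circular. The slogan ``$\varpi\in(\varpi)_{\perfd}$ forces the relevant Tor-terms to vanish'' carries no force: since $R$ is not assumed $\varpi$-torsion free, $(\varpi)_{\perfd}$ is not flat, and already $(\varpi)_{\perfd}\otimes_R^L R/\varpi\neq 0$ for $R=K_{\le 1}$, so the vanishing must come from the interaction with $M$, and the only mechanism available is the module-level statement (1) --- which in your ordering comes after (2). (A small slip besides: the natural map is the unit $M\to M\otimes_R^L(R/\varpi)_{\perfd}$, not a ``counit-type'' map in the other direction, though your fiber-sequence formulation corrects this.)

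The paper runs the argument in the opposite order and needs no Nakayama at all. For (1), a $\varpi$-almost zero module is killed by the ideal $(\varpi)_{\perfd}$, hence is literally a module over $R/(\varpi)_{\perfd}=(R/\varpi)_{\perfd}$; equivalently the unit $M\to M\otimes_R(R/\varpi)_{\perfd}$ is an isomorphism, and by the full faithfulness of Proposition \ref{almost_zero_fully_faithful} the derived unit $M\to M\otimes_R^L(R/\varpi)_{\perfd}$ is then also an isomorphism. For (2) one bootstraps by Postnikov d\'evissage: each $H^i(M)$ lies in the essential image of $i_*$, hence so does each truncation $\tau^{[a,b]}M$, and the unbounded case follows because $i_*$ preserves all limits and colimits. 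Your handling of (3)--(4) (everything in the image of $i_*$ is derived $\varpi$-complete, plus the commuting triangles of Construction \ref{const_almost_zero_ff}) and of the final abelian/stable assertion is fine and agrees with the paper; if you insert the heart case as the first step, the derived Nakayama scaffolding becomes unnecessary and the rest of your write-up goes through.
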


\begin{proof} To proof $(1)$ it suffices to show that given $\varpi$-almost zero $R$-module $M$ that the unit of adjunction $M \rightarrow M \otimes_{R} (R/\varpi)_{\perfd}$ of the pair $(i_*, H^0(i^*-))$ is an isomorphism. Notice that the unit of adjunction $M = M \otimes_R R \rightarrow M \otimes_{R} (R/\varpi)_{\perfd}$ can be obtained by tensoring $M$ with the canonical quotient map $R \rightarrow (R/\varpi)_{\perfd}$, and that any object of the form $(m, x) \in M \otimes_R R$ where $x \in (\varpi)_{\perfd} \subset R$ is zero. Showing that the map $M \rightarrow M \otimes_{R} (R/\varpi)_{\perfd}$ is an isomorphism. Moreover, since this shows that $M$ is in the essential image of $i_*: \cD((R/\varpi)_{\perfd}) \rightarrow \cD(R)$ it also implies that the unit of adjunction $M \rightarrow M \otimes_R^L (R/\varpi)_{\perfd}$ is an isomorphism.

To proof $(2)$ it suffices to show that for any $\varpi$-almost zero $R$-complex $M$ that the unit of adjunction $M \rightarrow M \otimes_R^L (R/\varpi)_{\perfd}$ is an isomorphism. Recall that any bounded object $N$ in $\cD(R)$ can be expressed as successive extensions of its cohomology groups $H^i(N)$; for example, we have the fiber sequence $H^0(N) \rightarrow \tau^{[0,1]} N \rightarrow H^1(N)[-1]$ presenting $\tau^{[0,1]}N $ as an extension of $H^1(N)[-1]$ by $H^0(N)$. Hence, since $H^i(M) \rightarrow H^i(M) \otimes_R^L (R/\varpi)_{\perfd}$ is an isomorphism for all $i \in \ZZ$ it follows that the map $M \rightarrow M \otimes_R^L (R/\varpi)_{\perfd}$ is an isomorphism for all bounded complexes; for example, its easy to see that since $H^0(M)$ and $H^1(M)[-1]$ are in the image of $i_*$ then so is $\tau^{[0,1]} M$. The case for an unbounded $\varpi$-almost zero $M$ $R$-complex follows by approximating $M$ by its truncations $\tau^{[a,b]} M$, which we already showed are in the image of the functor $i_*: \cD((R/\varpi)_{\perfd}) \rightarrow \cD(R)$, and the fact that $i_*$ preserves all limits and colimits (\ref{almost_zero_fully_faithful}).
\end{proof}

\begin{lemma} An $R$-complex $M$ is $\varpi$-almost zero if and only if $(\varpi)_{\perfd} \otimes_R^L M = 0$.
\end{lemma}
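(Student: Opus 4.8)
The statement to prove is: an $R$-complex $M$ is $\varpi$-almost zero if and only if $(\varpi)_{\perfd} \otimes_R^L M = 0$. My plan is to reduce the claim to the already-established characterization of $\varpi$-almost zero complexes in terms of the fully faithful functor $i_*\colon \cD((R/\varpi)_{\perfd}) \hookrightarrow \cD(R)$ from Proposition \ref{almost_zero_as_modules}. Specifically, Proposition \ref{almost_zero_as_modules}(2) identifies the $\varpi$-almost zero $R$-complexes with the essential image of $i_*$, and the proof of that proposition shows this is equivalent to the unit map $M \rightarrow M \otimes_R^L (R/\varpi)_{\perfd}$ being an isomorphism. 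So the real content is to translate ``$M \otimes_R^L (R/\varpi)_{\perfd} \overset{\simeq}{\leftarrow} M$'' into ``$(\varpi)_{\perfd} \otimes_R^L M = 0$''.

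The key input is the fiber sequence $(\varpi)_{\perfd} \rightarrow R \rightarrow (R/\varpi)_{\perfd}$ recorded just before Construction \ref{const_almost_zero_idempotent}. Tensoring this with $M$ over $R$ (using that $- \otimes_R^L -$ commutes with colimits, hence preserves fiber sequences, in each variable) yields a fiber sequence
\begin{equation*}
	(\varpi)_{\perfd} \otimes_R^L M \longrightarrow M \longrightarrow (R/\varpi)_{\perfd} \otimes_R^L M.
\end{equation*}
The middle-to-right map is precisely the unit of adjunction $M \rightarrow M \otimes_R^L (R/\varpi)_{\perfd}$. From the long exact sequence (equivalently, from the rotation of this fiber sequence), the unit map is an isomorphism if and only if its fiber $(\varpi)_{\perfd} \otimes_R^L M$ vanishes. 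Combining with Proposition \ref{almost_zero_as_modules}(2) then gives both implications at once.

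For the forward direction: if $M$ is $\varpi$-almost zero, then by Proposition \ref{almost_zero_as_modules}(2) it lies in the essential image of $i_*$, and the proof of that proposition shows the unit $M \rightarrow M \otimes_R^L (R/\varpi)_{\perfd}$ is an isomorphism, whence the fiber $(\varpi)_{\perfd} \otimes_R^L M = 0$. For the converse: if $(\varpi)_{\perfd} \otimes_R^L M = 0$, the fiber sequence shows the unit map is an isomorphism, so $M$ is in the essential image of $i_*$, hence $\varpi$-almost zero by the same proposition. I do not anticipate a serious obstacle here; the only point requiring minor care is confirming that the map $M \rightarrow (R/\varpi)_{\perfd} \otimes_R^L M$ arising from the fiber sequence genuinely agrees with the unit of the $(i_*, i^*)$ (or rather $(i_*, H^0(i^*-))$ at the derived level, $i^* = - \otimes_R^L (R/\varpi)_{\perfd}$) adjunction — this is immediate since the unit is obtained by tensoring $M$ with the quotient map $R \rightarrow (R/\varpi)_{\perfd}$, which is exactly the right-hand map of the tensored fiber sequence.
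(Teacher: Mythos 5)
Your proposal is correct and follows essentially the same route as the paper's proof: tensor the fiber sequence $(\varpi)_{\perfd} \rightarrow R \rightarrow (R/\varpi)_{\perfd}$ with $M$, identify the right-hand map with the unit of the $(i_*, i^*)$ adjunction, and invoke Proposition \ref{almost_zero_as_modules} to translate ``almost zero'' into ``unit is an isomorphism,'' which is equivalent to vanishing of its fiber $(\varpi)_{\perfd}\otimes_R^L M$. The extra remark you add verifying that the tensored map really is the adjunction unit is a reasonable (if minor) point of care that the paper takes for granted.
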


\begin{proof} Recall that we have the following fiber sequence of $R$-modules $(\varpi)_{\perfd} \rightarrow R \rightarrow (R/\varpi)_{\perfd}$; tensoring with $M$ we get the fiber sequence
\begin{equation*}
	(\varpi)_{\perfd} \otimes_R^L \rightarrow M \rightarrow M \otimes_R^L (R/\varpi)_{\perfd}
\end{equation*}
From Proposition  \ref{almost_zero_as_modules} we learn that $M$ is $\varpi$-almost zero if and only if the map $M \rightarrow M \otimes_R^L (R/\varpi)_{\perfd}$ is an isomorphism. Hence, $M$ is $\varpi$-almost zero if and only if $M \otimes_R^L (\varpi)_{\perfd} = 0$.
\end{proof}

\subsection{The almost category}

Throughout this section $R$ is an integral perfectoid ring which is $\varpi$-complete with respect to some $\varpi \in R$ where $\varpi^p$ divides $p$. In what follows we will introduce the almost categories $\cD(R)^a \subset \cD(R)$ and $\cD(R)^{\wedge a}_{\varpi} \subset \cD(R)\picomp$ and show that they fit into a commutative diagram
\begin{cd}
	\cD(R)^{\wedge a}_{\varpi} \ar[r, hook] \ar[d, hook] & \cD(R)^a \ar[d, hook] \\
	\cD(R)^\wedge_\varpi \ar[r, hook] & \cD(R)
\end{cd}
given by the canonical inclusions. We will also show that all the canonical inclusions of the diagram above admit left adjoints making the diagram commute
\begin{cd}
	\cD(R) \ar[d, "(-)^\wedge", swap] \ar[r, "(-)^a"]& \cD(R)^a \ar[d, "(-)^\wedge"]\\
	\cD(R)^\wedge_{\varpi} \ar[r, "(-)^{\wedge a}", swap]& \cD(R)_{\varpi}^{\wedge a}
\end{cd}
from which we will induce symmetric monoidal structures on $\cD(R)^a$ and $\cD(R)^{\wedge a}_{\varpi}$ compatible with the localization functors (eg. $(-)^a$ and $(-)^\wedge$) -- as we did in Construction \ref{derived_complete_cat}. However, we warn the reader that without further hypothesis on $R$ (being $\varpi$-torsion free) its not clear how to endow $\cD(R)^a$ and $\cD(R)^{\wedge a}_\varpi$ with a $t$-structure compatible with the localization functors.

\begin{defn}\label{defn_almost_cat} The almost category of $R$-complexes and its $\varpi$-complete variant are defined as follows.
\begin{enumerate}[(1)]
	\item The almost category $\cD(R)^a$ is a full subcategory of $\cD(R)$ spanned by the $R$-complexes $M$ which satisfy the following property: for every $\varpi$-almost zero $R$-complex $N$ the space of maps $R\Hom_R (N, M)$ is contractible. We denote by $j_*: \cD(R)^a \rightarrow \cD(R)$ the canonical inclusion of $\cD(R)^a$ in $\cD(R)$.
	\item The $\varpi$-complete almost category $\cD(R)^{\wedge a}_{\varpi}$ is a full subcategory of $\cD(R)^\wedge_\varpi$ spanned by the derived $\varpi$-complete $R$-complexes $M$ which satisfy the following: for every $\varpi$-almost zero $R$-complex $N$ the space of maps $R\Hom_R (N, M)$ is contractible. We denote by $j_*^\wedge: \cD(R)^{\wedge a}_{\varpi} \rightarrow \cD(R)\picomp$ the canonical inclusion of $\cD(R)^{\wedge a}_{\varpi}$ in $\cD(R)\picomp$.
\end{enumerate}
Intuitively, we are defining the almost category (resp. $\varpi$-complete) as the `orthogonal complement' of the essential image of $i_*: \cD((R/\varpi)_{\perfd}) \rightarrow \cD(R)$ (resp. $i_*^\wedge: \cD((R/\varpi)_{\perfd}) \rightarrow \cD(R)^\wedge_{\varpi}$). We make this perspective precise below.
\end{defn}

Before moving forward, recall that the fully faithful functors from Proposition \ref{almost_zero_fully_faithful}
\begin{equation*}
	i_*: \cD((R/\varpi)_{\perfd}) \rightarrow \cD(R) \qquad \text{ and } \qquad i_*^{\wedge}: \cD((R/\varpi)_{\perfd}) \rightarrow \cD(R)^\wedge_{\varpi}
\end{equation*}
admit left and right adjoints
\begin{align*}
	i^*: \cD(R) \rightarrow \cD((R/\varpi)_{\perfd}) & \qquad i^{\wedge *}: \cD(R)^\wedge_{\varpi} \rightarrow \cD((R/\varpi)_{\perfd}) \qquad \text{ as left adjoints} \\
	i^!: \cD(R) \rightarrow \cD((R/\varpi)_{\perfd}) & \qquad i^{\wedge !}: \cD(R)^\wedge_{\varpi} \rightarrow \cD((R/\varpi)_{\perfd}) \qquad \text{ as right adjoints}
\end{align*}
described in Constructions \ref{const_almost_zero_ff} and \ref{almost_zero_right_adj}.

\begin{const}\label{const_almost_cat_semi_orth_right} The definition of the almost category (\ref{defn_almost_cat}) says exactly that $\cD(R)^a$ (resp. $\cD(R)^{\wedge a}_{\varpi}$) is the right-orthogonal complement (in the sense of \cite[Definition 7.2.1.1]{luriespectral}) of the inclusion $i_*$ (resp. $i_*^{\wedge}$). Moreover, \cite[Proposition 7.2.1.4]{luriespectral} says that since $i_*$ (resp. $i_*^\wedge$) admits a right adjoint given by $i^!$ (resp. $i^{\wedge !}$) the inclusion functors $j_*$ (resp. $j_*^\wedge$) admit a left adjoint which we denote
\begin{equation*}
	(-)^a: \cD(R) \rightarrow \cD(R)^a \qquad \text{ resp. } \qquad (-)^{\wedge a}: \cD(R)\picomp \rightarrow \cD(R)^{\wedge a}_{\varpi} 
\end{equation*}
Even better, \cite[Proposition 7.2.1.4]{luriespectral} implies that $\cD(R)^a$ (resp. $\cD(R)^{\wedge a}_{\varpi}$) is a stable $\infty$-category and that for every $M$ in $\cD(R)$ (resp. $\cD(R)^\wedge_{\varpi}$) there is an essentially unique fiber sequence
\begin{equation*}
	i_* i^! M \longrightarrow M \longrightarrow j_* M^a \qquad \text{ resp. }  \qquad i_*^\wedge i^{\wedge !} M \longrightarrow M \longrightarrow j^\wedge_* (M)^{\wedge a}
\end{equation*}
where the morphisms are given by the (co)unit of the adjunction. Let us summarize the various functors we just introduced with the following diagrams
\begin{cd}
	\cD((R/\varpi)_{\perfd}) \ar[r, shift left = 1ex, "i_*"] & \ar[l, shift left = 1ex, "i^!"]
	\cD(R)_\varpi^\wedge \ar[r, shift left = 1ex, "(-)^a"] & 
	\cD(R)^a \ar[l, shift left = 1ex, "j_*"]
	 &
	\cD((R/\varpi)_{\perfd}) \ar[r, shift left = 1ex, "i_*^\wedge"] & \ar[l, shift left = 1ex, "i^{\wedge !}"]
	\cD(R)_\varpi^\wedge \ar[r, shift left = 1ex, "(-)^{\wedge a}"] & 
	\cD(R)^{\wedge a}_\varpi \ar[l, shift left = 1ex, "j^\wedge_*"]
\end{cd}
\end{const}

\begin{const}\label{const_almost_comp_localization} We are now ready to establish the existence of the commutative diagrams mentioned in the introduction of this section
\begin{cd}
	\cD(R)^{\wedge a}_{\varpi} \ar[r, hook] \ar[d, hook, "j_*^\wedge", swap] & \cD(R)^a \ar[d, hook, "j_*"] &&	
	\cD(R) \ar[d, "(-)^\wedge", swap] \ar[r, "(-)^a"]& \cD(R)^a \ar[d, "(-)^\wedge"]\\
	\cD(R)^\wedge_\varpi \ar[r, hook] & \cD(R) && 
	\cD(R)^\wedge_{\varpi} \ar[r, "(-)^{\wedge a}", swap]& \cD(R)_{\varpi}^{\wedge a}
\end{cd}
On the left hand side diagram the only possibly non-obvious functor is $\cD(R)^{\wedge a}_{\varpi} \rightarrow \cD(R)^a$, but this follows from the fact that all $\varpi$-almost zero $R$-complexes are automatically $\varpi$-complete. We claim that this functor admits a left adjoint $(-)^\wedge: \cD(R)^a \rightarrow \cD(R)^{\wedge a}_{\varpi}$ given by the derived $\varpi$-completion functor; for this it suffices to show that for an object $M \in \cD(R)^a \subset \cD(R)$ its derived $\varpi$-completion $M^\wedge$ is in the full-subcategory $\cD(R)^{\wedge a}_{\varpi} \subset \cD(R)$. Let $N$ be an $\varpi$-almost zero $R$-complex, then by the stability of $\cD(R)^a$ it follows that $R\Hom_R(N, \Kos(M; \varpi^n)) = 0$ for all $n \in \ZZ_{> 0}$, thus we learn that
\begin{equation*}
	R\Hom_R(N, M^\wedge) = R\Hom_R (N, R\lim \Kos(M; \varpi^n)) = R\lim R\Hom_R (N , \Kos(M; \varpi^n)) = 0
\end{equation*} 
showing that $M^\wedge \in \cD(R)^{\wedge a}_{\varpi}$ as desired. It remains to show that the diagram on the right is commutative, but this follows from the fact that all the functors on the right hand side diagram are left adjoints to the functors on the commutative diagram on the left.
\end{const}

Before diving into the next construction let us introduce a couple of subcategories of $\cD(R)$ and $\cD(R)\picomp$.
\begin{enumerate}[(1)]
	\item The subcategory ${}^\perp \cD((R/\varpi)_{\perfd}) \subset \cD(R)$ is the full-subcategory spanned by the $R$-complexes $M$ which satisfy the following: for every $\varpi$-almost zero $R$-complex $N$ the space of maps $R\Hom_R(M, N)$ is contractible.
	\item The subcategory ${}^\perp \cD((R/\varpi)_{\perfd})^\wedge \subset \cD(R)\picomp$ is the full-subcategory spanned by the derived $\varpi$-complete $R$-complexes $M$ which satisfy the following: for every $\varpi$-almost zero $R$-complex $N$ the space of maps $R\Hom_R(M, N)$ is contractible.
\end{enumerate}
In the terminology of \cite[Section 7.2.1]{luriespectral} this subcategories are called the left-orthogonal complement of $\cD((R/\varpi)_{\perfd}) \subset \cD(R)$ (resp. $\cD((R/\varpi)_{\perfd}) \subset \cD(R)\picomp$) -- compare with the definitions in \ref{defn_almost_cat}.

\begin{const}\label{const_almost_cat_semi_orth_left} From \cite[Corollary 7.2.1.7]{luriespectral} we learn that since $i_*$ (resp. $i_*^\wedge$) admits a left adjoint $i^*$ (resp. $i^{\wedge *}$), the fully-faithful functor 
\begin{equation*}
	h_!: {}^\perp \cD((R/\varpi)_{\perfd}) \hookrightarrow \cD(R) \qquad \text{ resp. } \qquad h_!^\wedge: {}^\perp \cD((R/\varpi)_{\perfd})^\wedge \hookrightarrow \cD(R)\picomp
\end{equation*}
admits a right adjoint
\begin{equation*}
	h^*: \cD(R) \rightarrow {}^\perp \cD((R/\varpi)_{\perfd}) \qquad \text{ resp. } \qquad h^{\wedge *}: \cD(R)\picomp \rightarrow {}^\perp \cD((R/\varpi)_{\perfd})^\wedge
\end{equation*}
Even better, \cite[Corollary 7.2.1.7]{luriespectral} implies that ${}^\perp \cD((R/\varpi)_{\perfd})$ (resp. ${}^\perp \cD((R/\varpi)_{\perfd})^\wedge$) is a stable $\infty$-category and that for every $M$ in $\cD(R)$ (resp. $\cD(R)^\wedge_{\varpi}$) there is an essentially unique fiber sequence
\begin{equation*}
	h_! h^* M \rightarrow M \rightarrow i_* i^* M \qquad \text{ resp. } \qquad h_!^\wedge h^{\wedge *} M \rightarrow M \rightarrow i_*^\wedge i^{\wedge *} M \qquad
\end{equation*}
which in turn implies that $h_! h^* M = M \otimes_R^L (\varpi)_{\perfd}$ (resp. $h_!^\wedge h^{\wedge *} M = M \cotimes_R^L (\varpi)_{\perfd} \simeq M \otimes_R^L (\varpi)_{\perfd}$).

Furthermore, by virtue of \cite[Proposition 7.2.1.10]{luriespectral} we learn that by restricting the functor $(-)^a$ (resp. $(-)^{\wedge a}$) to the full-subcategory ${}^\perp \cD((R/\varpi)_{\perfd})$ (resp. ${}^\perp \cD((R/\varpi)_{\perfd})^\wedge$) we get functors
\begin{equation*}
	(-)^a: {}^\perp \cD((R/\varpi)_{\perfd}) \rightarrow \cD(R)^a \qquad \text{ resp. } \qquad (-)^{\wedge a}: {}^\perp \cD((R/\varpi)_{\perfd})^\wedge \rightarrow \cD(R)^{\wedge a}_{\varpi}
\end{equation*}
which induce an equivalence of categories. We claim that we have the identity
\begin{equation*}
	(-)^a \circ h^* \simeq (-)^a \qquad \text{ resp. } \qquad (-)^{\wedge a} \circ h^{\wedge *} \simeq (-)^{\wedge a}
\end{equation*}
Indeed, it suffices to show that $(M \otimes_R^L (R/\varpi)_{\perfd})^a = 0$ (resp. $(M \cotimes_R^L (R/\varpi)_{\perfd})^a = 0$), but this follows from the fact that $M \otimes_R^L (R/\varpi)_{\perfd}$ is $\varpi$-almost zero and the definition of the functors $(-)^a$ (resp. $(-)^{\wedge a}$) from Construction \ref{const_almost_cat_semi_orth_right}. Hence, we can conclude that the functors $(-)^a$ (resp. $(-)^{\wedge a}$) admit left adjoints given by 
\begin{equation*}
	j_! :=  h_! \circ (-)^{a, -1}: \cD(R)^a \rightarrow \cD(R) \qquad \text{ resp. } \qquad j_!^{\wedge}:= h_!^\wedge \circ (-)^{\wedge a, -1}: \cD(R)^{\wedge a}_{\varpi} \rightarrow \cD(R)\picomp
\end{equation*}
where $(-)^{a,-1}$ is the inverse of the functor inducing the equivalence $(-)^a: {}^\perp \cD((R/\varpi)_{\perfd}) \simeq \cD(R)^a$; and $(-)^{\wedge a, -1}$ is defined similarly. We summarize the various functors we have constructed with the following diagram
\begin{cd}
	\cD(R)^a \ar[r, shift left = 1ex, "j_!"] & \ar[l, shift left = 1ex, "(-)^a"]
	\cD(R) \ar[r, shift left = 1ex, "i^*"] & 
	\cD((R/\varpi)_{\perfd})\ar[l, shift left = 1ex, "i_*"]
	&
	\cD(R)^{\wedge a}_\varpi\ar[r, shift left = 1ex, "j_!^\wedge"] & \ar[l, shift left = 1ex, "(-)^{\wedge a}"]
	\cD(R)_\varpi^\wedge \ar[r, shift left = 1ex, "i^*"] & 
	\cD((R/\varpi)_{\perfd})\ar[l, shift left = 1ex, "i_*"]
\end{cd}
which satisfy the following property: for every $M$ in $\cD(R)$ (resp. $\cD(R)^\wedge_\varpi$) there exists an essentially unique fiber sequence
\begin{equation*}
	j_! M^a \rightarrow M \rightarrow i_*i^* M \qquad \text{ resp. } \qquad j_!^\wedge (M)^{\wedge a} \rightarrow M \rightarrow i^{\wedge *} i^\wedge_* M
\end{equation*}
In particular, we have that $j_! M^a = M \otimes_R^L (\varpi)_{\perfd}$ (resp. $j_!^\wedge (M)^{\wedge a} = M \cotimes_R^L (\varpi)_{\perfd} \simeq M \otimes_R^L (\varpi)_{\perfd}$).
\end{const}

The following result provides us with an explicit description $j_* M^a$ (resp. $(j_*^\wedge (M)^{\wedge a})$) in terms of the internal hom to the category $\cD(R)$. 

\begin{const}\label{const_internal_hom} Let us explain how the internal hom of $\cD(R)$ is constructed and explain some subtleties regarding the construction. Fix $M,N \in \cD(R)$, we define a functor
\begin{equation*}
	R\uHom_R (M, N): \cD(R)^{\op} \longrightarrow \Spc \qquad Z \mapsto R\Hom_R (M \otimes_R^L Z, N)
\end{equation*}
It is easy to see that this functor preserves all limits, and since the category $\cD(R)$ is a presentable $\infty$-category it follows that the functor described above is representable by an object $R\uHom_R (M,N) \in \cD(R)$ called the internal hom of $\cD(R)$. The object $R\uHom_R (M,N) \in \cD(R)$ is completely characterized by the fact that there is a natural isomorphism of spaces $R\Hom_R(Z, \uHom_R(M,N)) = R\Hom_R (M \otimes_R^L Z, N)$ -- in other words, it satisfies the tensor-hom adjunction by construction.

Let us emphasize that during the construction of $R\uHom_R (-, -)$ it was critical that the category $\cD(R)$ was a presentable $\infty$-category in order to guarantee that the functor was representable. However, it is generally not true that the category $\cD(R)\picomp$ is presentable -- thus it is not clear how to define an internal hom for $\cD(R)\picomp$ in general. Nevertheless, we claim that for a fixed $M \in \cD(R)\picomp$ the functor
\begin{equation*}
	R\uHom_R ((\varpi)_{\perfd}, M): \cD(R)^{\wedge \op}_{\varpi} \rightarrow \Spc \qquad Z \mapsto R\Hom_R ((\varpi)_{\perfd} \cotimes^L_R Z, M)
\end{equation*}
is representable by the internal hom $R\uHom_R ((\varpi)_{\perfd}, M)$ computed in $\cD(R)$ and that it is derived $\varpi$-complete. Indeed, recall that $(\varpi)_{\perfd} \cotimes_R^L Z \simeq (\varpi)_{\perfd} \otimes_R^L M$, so the internal hom $R\uHom_R ((\varpi)_{\perfd}, M)$ computed in $\cD(R)$ gives us the desired functor when restricted to $\cD(R)\picomp$. Finally, we need to show that $R\uHom_R ((\varpi)_{\perfd}, M)$ is derived $\varpi$-complete whenever $M$ is; recall that a module $M$ is derived $\varpi$-complete if the inverse limit $R\lim (\cdots \rightarrow M \rightarrow M)$ vanishes, with transition maps given by multiplication by $\varpi$. By the identities
\begin{align*}
	& R\lim \Big (  \cdots \rightarrow R\Hom_R ((\varpi)_{\perfd} \cotimes_R^L Z, M) \rightarrow R\Hom_R ((\varpi)_{\perfd} \cotimes_R^L Z, M)  \Big ) \\
	& \simeq R\Hom_R \Big ( (\varpi)_{\perfd} \cotimes_R^L Z, R\lim (\cdots \rightarrow M \rightarrow M) \Big) \simeq R\Hom_R ((\varpi)_{\perfd} \cotimes^L_R Z, 0) \simeq 0
\end{align*}
where $Z$ ranges over all elements in $\cD(R)\picomp$ we can conclude that $R\uHom_R ((\varpi)_{\perfd}, M)$ is derived $\varpi$-complete.
\end{const}

\begin{lemma}\label{almost_unit_internal_hom} Let $M$ be an object of $\cD(R)$ (resp. $\cD(R)\picomp$). Then, the object $j_* M^a \in \cD(R)$ (resp. $j_*^\wedge (M)^{\wedge a} \in \cD(R)\picomp$) admits the following description in terms of the internal homs introduce in the previous construction
\begin{equation*}
	j_* M^a = R\uHom_{R} ((\varpi)_{\perfd}, M) \qquad \text{ resp. } \qquad j_*^\wedge (M)^{\wedge a} = R\uHom_{R} ((\varpi)_{\perfd}, M) 
\end{equation*}
\end{lemma}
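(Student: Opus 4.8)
The claim is that the localization unit $M \to j_* M^a$ (resp. its $\varpi$-complete analog) is computed by the internal hom $R\uHom_R((\varpi)_{\perfd}, M)$. The strategy is to verify the two defining properties of $j_* M^a$: first, that $R\uHom_R((\varpi)_{\perfd}, M)$ lies in the full subcategory $\cD(R)^a$ (resp. $\cD(R)^{\wedge a}_\varpi$); second, that the canonical map $M \to R\uHom_R((\varpi)_{\perfd}, M)$, induced by the multiplication map $(\varpi)_{\perfd} \to R$ (equivalently the counit $(\varpi)_{\perfd} \cotimes_R^L (\varpi)_{\perfd} \simeq (\varpi)_{\perfd}$ of Lemma \ref{almost_zero_idempotent}), becomes an isomorphism after applying $(-)^a$. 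Since $j_*$ is fully faithful and $(-)^a$ is its left adjoint, these two facts together pin down $j_* M^a$ uniquely, by the universal property of the localization.

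For the first point: by definition, an object $X$ lies in $\cD(R)^a$ precisely when $R\Hom_R(N, X)$ is contractible for every $\varpi$-almost zero $R$-complex $N$. Using the tensor-hom adjunction that characterizes the internal hom (Construction \ref{const_internal_hom}), we have
\begin{equation*}
	R\Hom_R\big(N, R\uHom_R((\varpi)_{\perfd}, M)\big) \simeq R\Hom_R\big((\varpi)_{\perfd} \otimes_R^L N, M\big).
\end{equation*}
Now by the Lemma immediately preceding the statement (the one asserting $M$ is $\varpi$-almost zero iff $(\varpi)_{\perfd} \otimes_R^L M = 0$), we get $(\varpi)_{\perfd} \otimes_R^L N = 0$ whenever $N$ is $\varpi$-almost zero, so the right-hand side is contractible. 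Hence $R\uHom_R((\varpi)_{\perfd}, M) \in \cD(R)^a$. In the $\varpi$-complete case, the same computation works — using that $(\varpi)_{\perfd} \cotimes_R^L Z \simeq (\varpi)_{\perfd} \otimes_R^L Z$ — and we additionally invoke the last part of Construction \ref{const_internal_hom}, which shows $R\uHom_R((\varpi)_{\perfd}, M)$ is derived $\varpi$-complete when $M$ is, so it lands in $\cD(R)^{\wedge a}_\varpi$.

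For the second point: by the explicit fiber sequence $j_! M^a \to M \to i_* i^* M$ from Construction \ref{const_almost_cat_semi_orth_left}, the localization $(-)^a$ annihilates exactly the objects in the essential image of $i_*$, i.e.\ the $\varpi$-almost zero complexes; equivalently, a map $M \to X$ with $X \in \cD(R)^a$ is the localization unit iff its fiber is $\varpi$-almost zero. So I would compute the fiber of $M \to R\uHom_R((\varpi)_{\perfd}, M)$. Apply $R\uHom_R(-, M)$ to the fiber sequence $(\varpi)_{\perfd} \to R \to (R/\varpi)_{\perfd}$ to get a fiber sequence $R\uHom_R((R/\varpi)_{\perfd}, M) \to M \to R\uHom_R((\varpi)_{\perfd}, M)$, identifying the fiber with $R\uHom_R((R/\varpi)_{\perfd}, M)$. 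This fiber is $\varpi$-almost zero: for any $\varpi$-almost zero $N$, adjunction gives $R\Hom_R(N, R\uHom_R((R/\varpi)_{\perfd}, M)) \simeq R\Hom_R((R/\varpi)_{\perfd} \otimes_R^L N, M)$, and... actually the cleaner route is to observe that $R\uHom_R((R/\varpi)_{\perfd}, M)$ is a module over $(R/\varpi)_{\perfd}$, hence lies in the essential image of $i_*$ by Proposition \ref{almost_zero_as_modules}, hence is $\varpi$-almost zero. Therefore $M \to R\uHom_R((\varpi)_{\perfd}, M)$ has $\varpi$-almost zero fiber and target in $\cD(R)^a$, so it is the localization unit, giving $j_* M^a = R\uHom_R((\varpi)_{\perfd}, M)$ as claimed. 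The $\varpi$-complete variant follows the same pattern, replacing $j_*, (-)^a$ by $j_*^\wedge, (-)^{\wedge a}$ and using the analogous fiber sequence from Construction \ref{const_almost_cat_semi_orth_left}.

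\textbf{Main obstacle.} The only delicate points are bookkeeping ones: making sure the internal hom $R\uHom_R((\varpi)_{\perfd}, M)$ is genuinely well-behaved in the non-presentable category $\cD(R)\picomp$ (handled already in Construction \ref{const_internal_hom}, which I would simply cite), and correctly identifying that $R\uHom_R((R/\varpi)_{\perfd}, M)$ carries a $(R/\varpi)_{\perfd}$-module structure so that Proposition \ref{almost_zero_as_modules} applies. Everything else is a formal consequence of the tensor-hom adjunction and the semiorthogonal decompositions already set up.
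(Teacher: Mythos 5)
Your proof is correct, but it argues differently from the paper. The paper's proof is a one-step Yoneda computation: for arbitrary $N$ it chains the adjunctions $j_! \dashv (-)^a \dashv j_*$ (resp. their completed variants) to get $R\Hom_R(N, j_*M^a) \simeq R\Hom_{R^a}(N^a, M^a) \simeq R\Hom_R(N \otimes_R^L (\varpi)_{\perfd}, M)$, and then applies the tensor-hom adjunction of Construction \ref{const_internal_hom} to identify this with $R\Hom_R(N, R\uHom_R((\varpi)_{\perfd},M))$, naturally in $N$; this pins down the object with no need to exhibit the unit map or its fiber. You instead verify the universal property of the localization unit directly: you check that $R\uHom_R((\varpi)_{\perfd},M)$ is right-orthogonal to the almost zero complexes (via the same tensor-hom adjunction plus the criterion $(\varpi)_{\perfd}\otimes_R^L N = 0$), and that the canonical map out of $M$ has almost zero fiber, identified as $R\uHom_R((R/\varpi)_{\perfd},M)$ via the fiber sequence $(\varpi)_{\perfd} \to R \to (R/\varpi)_{\perfd}$ and recognized as almost zero through its $(R/\varpi)_{\perfd}$-module structure and Proposition \ref{almost_zero_as_modules}. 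Both routes rest on the same ingredients (idempotency of $(\varpi)_{\perfd}$, the identification $j_!(-)^a = -\otimes_R^L (\varpi)_{\perfd}$, the internal hom being complete when $M$ is); the paper's is shorter and purely formal, while yours yields slightly more, namely the explicit identification of the fiber $i_*i^!M \simeq R\uHom_R((R/\varpi)_{\perfd},M)$ in the decomposition of Construction \ref{const_almost_cat_semi_orth_right}, at the cost of having to invoke exactness of $(-)^a$ and the module-structure observation. One small labelling slip: the fiber sequence exhibiting $j_*M^a$ comes from Construction \ref{const_almost_cat_semi_orth_right} (the one you cite, \ref{const_almost_cat_semi_orth_left}, gives $j_!M^a \to M \to i_*i^*M$, which you use only to see that $(-)^a$ kills exactly the almost zero objects); this does not affect the argument.
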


\begin{proof} It suffices to show that for all $N$ in $\cD(R)$ (resp. $\cD(R)\picomp$) there exists a natural isomorphism
\begin{align*}
	&R\Hom_R (N, j_* M^a) \simeq R\Hom_R(N, R\uHom_R ((\varpi)_{\perfd}, M)) \\
	\text{resp.} \qquad &R\Hom_R (N, j_*^\wedge (M)^{\wedge a}) \simeq R\Hom_R(N, R\uHom_R ((\varpi)_{\perfd}, M))
\end{align*}
Consider the following sequence of isomorphisms obtained formally by the adjunctions among the functors $j_!$, $(-)^a$, and $j_*$ (resp. their completed variants)
\begin{align*}
	&R\Hom_R (N, j_* M^a) \simeq R\Hom_{R^a} (N^a, M^a) \simeq R\Hom_R(N \otimes_R^L (\varpi)_{\perfd}, M) \\
	\text{resp. } \qquad &R\Hom_R (N, j^\wedge_* (M)^{\wedge a}) \simeq R\Hom_{R^a} (N^{\wedge a}, M^{\wedge a}) \simeq R\Hom_R(N \cotimes_R^L (\varpi)_{\perfd}, M)
\end{align*}
Using the tensor-hom adjunction described in Construction \ref{const_internal_hom} the result follows.
\end{proof}

Finally, we want explain how to endow the categories $\cD(R)^a$ and $\cD(R)^{\wedge a}_{\varpi}$ with symmetric monoidal structures making the functors $(-)^a$ and $(-)^{\wedge a}$ symmetric monoidal. Recall the following definition from \cite[Definition 2.2.1.6]{lurieHA}.

\begin{defn}\label{compatible_with_monoidal_structure} Let $\cC$ be a symmetric monoidal category, and $\cD \subset \cC$ a full subcategory of $\cC$ such that the inclusion $\cD \subset \cC$ admits a left adjoint $L: \cC \rightarrow \cD$. We say that $L$ is compatible with the symmetric monoidal structure of $\cC$ if for every morphisms $X \rightarrow Y$ which becomes an isomorphism upon applying $L$, and any $Z \in \cC$, the induced map $X \otimes Z \rightarrow Y \otimes Z$ becomes an isomorphism upon applying $L$.
\end{defn}

\begin{const}\label{const_almost_monoidal_str} Our goal in this construction is to show that all the functors appearing in the following diagram, which we constructed in \ref{const_almost_comp_localization}, are compatible with the symmetric monoidal structures of the source.
\begin{cd}	
	\cD(R) \ar[d, "(-)^\wedge", swap] \ar[r, "(-)^a"]& \cD(R)^a \ar[d, "(-)^\wedge"] \\
	\cD(R)^\wedge_{\varpi} \ar[r, "(-)^{\wedge a}", swap]& \cD(R)_{\varpi}^{\wedge a}
\end{cd}
Once we establish that all the functors above are compatible with the symmetric monoidal structures of the source, it follows from \cite[Proposition 2.2.1.9]{lurieHA} that the target categories admit unique symmetric monoidal structures making the functors above symmetric monoidal.

As explained in \ref{const_complete_monoidal_str} it follows from \cite[Proposition 7.3.5.1]{luriespectral} that the derived $\varpi$-completion functor is compatible with the symmetric monoidal structure of $\cD(R)$. To show that $(-)^a$ is compatible with the symmetric monoidal structure of $\cD(R)$ recall that a morphism $M \rightarrow N$ becomes an isomorphism after applying $(-)^a$ if and only if $M \otimes_R^L (\varpi)_{\perfd} \rightarrow N \otimes_R^L (\varpi)_{\perfd}$, thus making it clear that $(M \otimes_R^L Z)^a \rightarrow (N \otimes_R^L Z)^a$ is an isomorphism for all $Z \in \cD(R)$. A similar argument shows that the functor $(-)^{\wedge a}$ is compatible with the monoidal structure on $\cD(R)^\wedge_\varpi$.

Recall how the functor $(-)^\wedge: \cD(R)^a \rightarrow \cD(R)^{\wedge a}_{\varpi}$ was defined in \ref{const_almost_comp_localization}: its was obtained by derived $\varpi$-completing an element $M \in \cD(R)^a$ when regarded as an object of $\cD(R)$ via the inclusion $j_*: \cD(R)^a \rightarrow \cD(R)$. Hence, we can conclude that a morphism $M \rightarrow N \in \cD(R)^a$ becomes an isomorphism upon derived $\varpi$-completion if and only if $\Cone(M \rightarrow N)[-1]$ is $\varpi$-local. Therefore, it suffices to show that 
\begin{equation*}
	R\uHom_R((\varpi)_{\perfd}, \Cone(M \rightarrow N)[-1] \otimes_R^L Z)
\end{equation*}
is $\varpi$-local for all $Z \in \cD(R)$, whenever $\Cone(M \rightarrow N)[-1]$ is $\varpi$-local. By the definition of $\varpi$-local it suffices to show that for every $K \in \cD(R)$ which is $\varpi$-nilpotent the mapping space
\begin{equation*}
	R\Hom_R \Big ( K, R\uHom_R((\varpi)_{\perfd}, \Cone(M \rightarrow N)[-1] \otimes_R^L Z) \Big ) \simeq *
\end{equation*}
is contractible. But this follows from the fact that $\Cone(M \rightarrow N)[-1] \otimes_R^L Z$ is $\varpi$-local by \cite[Proposition 7.2.4.9]{luriespectral}, and the fact that $K \otimes^L_R (\varpi)_{\perfd}$ is $\varpi$-nilpotent by \cite[Proposition 7.1.1.12]{luriespectral}, together with the tensor-hom adjunction.

To summarize, we have shown that the categories $\cD(R)^a$ and $\cD(R)^{\wedge a}_{\varpi}$ admit symmetric monoidal structures, denoted by $(- \otimes_R^{L a} -)$ and $(- \cotimes^{L a}_R -)$ respectively, making all the functors on the above diagram symmetric monoidal. Furthermore, by virtue of \cite[Proposition 2.2.1.9]{lurieHA} we have the following pair of commutative diagrams
\begin{cd}
	\CAlg(\cD(R)^{\wedge a}_{\varpi}) \ar[r, hook] \ar[d, hook, "j_*^\wedge", swap] & \CAlg(\cD(R)^a) \ar[d, hook, "j_*"] &&
	\CAlg(\cD(R)) \ar[d, "(-)^\wedge", swap] \ar[r, "(-)^a"]& \CAlg(\cD(R)^a) \ar[d, "(-)^\wedge"]\\
	\CAlg(\cD(R)^\wedge_\varpi) \ar[r, hook] & \CAlg(\cD(R)) && 
	\CAlg(\cD(R)^\wedge_{\varpi}) \ar[r, "(-)^{\wedge a}", swap]& \CAlg(\cD(R)_{\varpi}^{\wedge a})
\end{cd}
where the functors on the right hand side diagram are left adjoint to the functors on the right hand side diagram. Compare this diagrams with those in \ref{const_almost_comp_localization}.
\end{const}

\subsection{The abelian categories}

Throughout this section $R$ is an integral perfectoid ring which is $\varpi$-complete with respect to some $\varpi \in R$ where $\varpi^p$ divides $p$, and such that $R$ is $\varpi$-torsion free. The goal of this section is to show that under the assumption that $R$ is $\varpi$-torsion free most of the result in the previous section admit abelian analogs.

We begin by defining $t$-structures on $\cD(R)^a$ and $\cD(R)^{\wedge a}_{\varpi}$.

\begin{lemma}\label{almost_j!_t_exact} The functors
\begin{equation*}
	j_!(-)^a: \cD(R) \rightarrow  \cD(R) \qquad \text{ and } \qquad j_!^\wedge(-)^{\wedge a}: \cD(R)_\varpi^\wedge \rightarrow  \cD(R)_\varpi^\wedge
\end{equation*}
are $t$-exact, where the $t$-structure on $\cD(R)_\varpi^\wedge$ is the one from Construction \ref{complete_t_structure}.
\end{lemma}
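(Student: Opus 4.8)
The plan is to reduce the statement to the key identity $j_!(M)^a \simeq M \otimes_R^L (\varpi)_{\perfd}$ (resp. $j_!^\wedge (M)^{\wedge a} \simeq M \cotimes_R^L (\varpi)_{\perfd}$) established in Construction \ref{const_almost_cat_semi_orth_left}, and then show that tensoring (resp. completed tensoring) with $(\varpi)_{\perfd}$ is $t$-exact. Since under the hypothesis that $R$ is $\varpi$-torsion free the module $(\varpi)_{\perfd} = (\varpi^{1/p^\infty})$ is a filtered colimit of the free rank-one modules $\varpi^{1/p^n} R$ along the inclusion maps, it is a flat $R$-module; consequently $- \otimes_R^L (\varpi)_{\perfd}$ preserves $\cD(R)^{\le 0}$ and $\cD(R)^{\ge 0}$, i.e. it is $t$-exact on $\cD(R)$. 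This gives the first half of the statement. For the $\varpi$-complete case, one combines $t$-exactness of $- \otimes_R^L (\varpi)_{\perfd}$ with the fact that the derived $\varpi$-completion functor is right $t$-exact and that the inclusion $\cD(R)^\wedge_\varpi \hookrightarrow \cD(R)$ is $t$-exact (Construction \ref{complete_t_structure}); since $(\varpi)_{\perfd}$ is already derived $\varpi$-complete and flat, $- \cotimes_R^L (\varpi)_{\perfd}$ restricted to $\cD(R)^\wedge_\varpi$ agrees with the $t$-exact functor $- \otimes_R^L (\varpi)_{\perfd}$ followed by the (in this case redundant) completion, hence is $t$-exact.

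Concretely, first I would record that flatness of $(\varpi)_{\perfd}$ over $R$ follows from writing $(\varpi)_{\perfd} = \colim_n \varpi^{1/p^n} R$ as a filtered colimit of free $R$-modules of rank one (using $\varpi$-torsion freeness so that each $\varpi^{1/p^n} R \simeq R$), together with the fact that filtered colimits of flat modules are flat. Next I would invoke Construction \ref{const_almost_cat_semi_orth_left} to identify $j_!(M)^a = M \otimes_R^L (\varpi)_{\perfd}$, so that $j_!(-)^a$ is literally the functor $- \otimes_R^L (\varpi)_{\perfd}$; $t$-exactness is then immediate from flatness. For the completed version I would again use Construction \ref{const_almost_cat_semi_orth_left} to get $j_!^\wedge (M)^{\wedge a} = M \cotimes_R^L (\varpi)_{\perfd} \simeq M \otimes_R^L (\varpi)_{\perfd}$, and then observe that for $M \in \cD(R)^\wedge_\varpi$ the object $M \otimes_R^L (\varpi)_{\perfd}$ is automatically derived $\varpi$-complete (it is a tensor of a $\varpi$-complete object with a $\varpi$-complete object, or more simply it is $\varpi$-almost the zero complex's complement but lands in $\cD(R)^\wedge_\varpi$ as in Construction \ref{const_almost_zero_ff}). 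Comparing $t$-structures via the $t$-exact inclusion $\cD(R)^\wedge_\varpi \hookrightarrow \cD(R)$, the $t$-exactness of $j_!^\wedge(-)^{\wedge a}$ follows from that of $- \otimes_R^L (\varpi)_{\perfd}$ on $\cD(R)$.

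The main obstacle, such as it is, is making sure that the identification $j_!(M)^a \simeq M \otimes_R^L (\varpi)_{\perfd}$ is used correctly and that the flatness argument genuinely needs — and uses — the $\varpi$-torsion free hypothesis: without it, $(\varpi)_{\perfd}$ need not be flat (this is flagged in the Warning after Definition of $(\varpi)_{\perfd}$), so $- \otimes_R^L (\varpi)_{\perfd}$ would fail to be $t$-exact and the whole argument collapses. A secondary point requiring care is the comparison of the two $t$-structures in the $\varpi$-complete setting: one must check that $M \otimes_R^L (\varpi)_{\perfd}$ lands in $\cD(R)^\wedge_\varpi$ whenever $M$ does, so that $j_!^\wedge(-)^{\wedge a}$ is well-defined as an endofunctor of $\cD(R)^\wedge_\varpi$ and its $t$-exactness can be checked after the $t$-exact forgetful functor to $\cD(R)$. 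Both of these are straightforward given the machinery already set up in Constructions \ref{const_complete_monoidal_str}, \ref{const_almost_zero_ff}, and \ref{const_almost_cat_semi_orth_left}.
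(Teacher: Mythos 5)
Your argument is correct and is essentially the paper's own proof: both identify $j_!(-)^a$ with $-\otimes_R^L(\varpi)_{\perfd}$, establish flatness of $(\varpi)_{\perfd}$ by writing it as a filtered colimit of the free rank-one ideals $(\varpi^{1/p^n})$ (using $\varpi$-torsion freeness), and deduce the completed case from the identity $-\otimes_R^L(\varpi)_{\perfd} \simeq -\cotimes_R^L(\varpi)_{\perfd}$ together with the $t$-exact inclusion $\cD(R)^\wedge_\varpi \hookrightarrow \cD(R)$. The only cosmetic caveat is that your parenthetical justification ``tensor of a $\varpi$-complete object with a $\varpi$-complete object'' is not a valid general principle; the completeness of $M\otimes_R^L(\varpi)_{\perfd}$ should instead be read off from the fiber sequence $(\varpi)_{\perfd}\rightarrow R\rightarrow (R/\varpi)_{\perfd}$ as in Construction \ref{const_almost_zero_ff}, which is in any case the identity you cite.
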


\begin{proof} Since $\varpi$ admits compatible $p$-power roots up to a unit, we may assume without loss of generality that $\varpi$ admits compatible $p$-power roots. Recall from Corollary \ref{perfectoid_torsion} that $R[\varpi] = R[\varpi^{1/p^\infty}] = R[\varpi^\infty]$, which implies that $R$ is $\varpi^q$-torsion free for all $q \in \ZZ[1/p]$; therefore, for any $q \in \ZZ[1/p]$ the multiplication map $- \times \varpi^q: R \longrightarrow (\varpi^q)$ is an isomorphism -- this shows that $(\varpi^q)$ is a free $R$-module.

We claim that $(\varpi)_{\perfd}$ is a flat $R$-module; indeed, it suffices to show that $(\varpi)_{\perfd}$ is a filtered colimit of free $R$-modules, which follows from the following identities
\begin{equation*}
	(\varpi)_{\perfd} \simeq (\varpi^{1/p^\infty}) \simeq \colim ( (\varpi^{1/p^n}) \hookrightarrow (\varpi^{1/p^{n+1}}) \hookrightarrow \cdots)
\end{equation*}
This shows that $j_! (-)^a = - \otimes_R^L (\varpi)_{\perfd}$ is $t$-exact; and the identity $- \otimes_R^L (\varpi)_{\perfd} = - \cotimes^L_R (\varpi)_{\perfd}$ shows that $j_!^\wedge (-)^{\wedge a}$ is also $t$-exact.
\end{proof}

\begin{const}\label{almost_t_structure} Regarding the category $\cD(R)^{a}$ (resp. $\cD(R)^{\wedge a}_{\varpi}$) as a full-subcategory of $\cD(R)$ (resp. $\cD(R)\picomp$) via the inclusions
\begin{equation*}
	j_!: \cD(R)^a \hookrightarrow \cD(R) \qquad \text{ resp. } \qquad j_!^{\wedge}: \cD(R)^{\wedge a}_{\varpi} \hookrightarrow \cD(R)^{\wedge}_{\varpi}
\end{equation*}
We claim that the following subcategories of $\cD(R)^a$ (resp. $\cD(R)^{\wedge a}_{\varpi}$) determine a $t$-structure
\begin{align*}
	(\cD(R)^a)^{\ge 0} := \cD(R)^{\ge 0} \cap \cD(R)^a &\qquad 
	(\cD(R)^a)^{\le 0} := \cD(R)^{\le 0} \cap \cD(R)^a \\
	\text{resp.} \qquad (\cD(R)_{\varpi}^{\wedge a})^{\ge 0} := (\cD(R)_{\varpi}^{\wedge})^{\ge 0} \cap \cD(R)_{\varpi}^{\wedge a} &\qquad 
	(\cD(R)_{\varpi}^{\wedge a})^{\le 0} := (\cD(R)_{\varpi}^{\wedge})^{\le 0} \cap \cD(R)_{\varpi}^{\wedge a}
\end{align*}
where the $t$-structure on $\cD(R)\picomp$ is that of Construction \ref{complete_t_structure}. In order to show that both of this pairs of subcategories determine $t$-structures on $\cD(R)^a$ and $\cD(R)^{\wedge a}_{\varpi}$ it suffices to show that the following pair of morphisms, obtained from the counit of the adjunction, are isomorphisms
\begin{equation*}
	(\tau^{\ge n} M) \otimes_R^L (\varpi)_{\perfd} \rightarrow \tau^{\ge n} M \qquad  (\tau^{\le n} M ) \otimes_R^L (\varpi)_{\perfd} \rightarrow \tau^{\le n} M
\end{equation*}
for any object $M$ in $\cD(R)^a$. From the fiber sequence $\tau^{\le n} M \rightarrow M \rightarrow \tau^{\ge n+1} M$ and the fact that the counit of adjunction $M \otimes^L_R (\varpi)_{\perfd} \rightarrow M$ is an isomorphisms, we conclude that the boundary map must be an isomorphism
\begin{equation*}
	\Big (\tau^{\ge n+1} M \Big ) \Big/^L \Big ( (\tau^{\ge n+1} M) \otimes_R^L (\varpi)_{\perfd} \Big ) \longrightarrow 
	\Big (\tau^{\le n} M \Big ) \Big/^L \Big ( (\tau^{\le n} M) \otimes_R^L (\varpi)_{\perfd} \Big ) \Big[1 \Big]
\end{equation*}
But since the left hand is concentrated in degrees $[n, \infty)$ and the right hand side is concentrated in degrees $(-\infty, n-1]$ it follows that both of them must be zero, proving the result. Let us emphasize that we are using in a critical way the $t$-exactness of $- \otimes_R^L (\varpi)_{\perfd}$ (\ref{almost_j!_t_exact}) to guarantee that the left hand side is concentrated in degrees $[n, \infty)$.
\end{const}

We are now ready to introduce the abelian analogs of the categories $\cD(R)^a$ and $\cD(R)^{\wedge a}_{\varpi}$.

\begin{defn} Recall that if a stable $\infty$-category $\cC$ admits a $t$-structure there exists an abelian subcategory $\cC^\heartsuit \subset \cC$ defined as $\cC^\heartsuit := \cC^{\ge 0} \cap \cC^{\le 0}$. For us, the following two abelian categories will be of interest
\begin{enumerate}[(1)]
	\item The almost category of $R$-module, denoted by $\Mod_R^a$, is the abelian full-subcategory of $\cD(R)^a$ defined as the heart of the $t$-structure on $\cD(R)^a$ from Construction \ref{almost_t_structure}. In other words, $\Mod_R^a := \cD(R)^{a \ge 0} \cap \cD(R)^{a \le 0}$.
	\item The almost category of derived $\varpi$-complete $R$-modules, denoted by $\Mod_{R, \varpi}^{\wedge a}$, is the abelian full-subcategory of $\cD(R)^{\wedge a}_{\varpi}$ defined as the heart of the $t$-structure on $\cD(R)^{\wedge a}_{\varpi}$ from Construction \ref{almost_t_structure}. In other words, $\Mod_{R, \varpi}^{\wedge a} := \cD(R)^{\wedge a \ge 0}_{\varpi} \cap \cD(R)^{\wedge a \le 0}_{\varpi}$.
\end{enumerate}
\end{defn}

Now that the basic definitions are in place, let us explain how to get abelian versions of the functors we introduced in the previous section.

\begin{const}\label{const_abelian_almost_funct} From Lemma \ref{almost_j!_t_exact} and Construction \ref{almost_t_structure} it follows that the pair of adjoint functors
\begin{align*}
	j_!: \cD(R)^a \hookrightarrow \cD(R) &\qquad (-)^a: \cD(R) \rightarrow \cD(R)^a \\
	j_!^\wedge: \cD(R)^{\wedge a}_{\varpi} \hookrightarrow \cD(R)^{\wedge}_{\varpi} &\qquad (-)^{\wedge a}: \cD(R)^\wedge_{\varpi} \rightarrow \cD(R)^{\wedge a}_{\varpi}
\end{align*}
are $t$-exact. Hence, they induce a pair exact adjoint functors of abelian categories
\begin{align*}
	j_!: \Mod_R^a \hookrightarrow \Mod_R &\qquad (-)^a: \Mod_R \rightarrow \Mod_R^a \\
	j_!^\wedge: \Mod_{R, \varpi}^{\wedge a} \hookrightarrow \Mod_{R, \varpi}^{\wedge} &\qquad (-)^a: \Mod_{R, \varpi}^{\wedge} \rightarrow \Mod_{R, \varpi}^{\wedge a}
\end{align*}
with $j_!$ and $j_!^\wedge$ being fully faithful and left adjoint to $(-)^a$ and $(-)^{\wedge a}$ respectively. In particular, we have canonical identifications 
\begin{equation*}
	j_! M^a \simeq M \otimes_R (\varpi)_{\perfd} \qquad j_!^{\wedge} (M)^{\wedge a} \simeq M \otimes_R (\varpi)_{\perfd}
\end{equation*}
for any $M$ in $\Mod_R$ and $\Mod_{R, \varpi}^{\wedge}$ respectively.

On the other hand, the fully faithful functors
\begin{equation*}
	j_*: \cD(R)^a \hookrightarrow \cD(R) \qquad j_*^\wedge: \cD(R)^{\wedge a}_{\varpi} \hookrightarrow \cD(R)^{\wedge}_{\varpi}
\end{equation*}
are only left $t$-exact. In order to see the failure of $t$-exactness recall that $(-)^a$ and $(-)^{\wedge a}$ are $t$-exact, thus it suffices to show that the internal hom $R\uHom_R((\varpi)_{\perfd}, M)$ is a complex concentrated in degrees $[0,1]$ for $M \in \Mod_R$. From the isomorphism $(\varpi)_{\perfd} \simeq \colim (\varpi^{\prime 1/p^n})$ we deduce the identity
\begin{equation*}
	R\uHom_R((\varpi)_{\perfd}, M)  \simeq R\lim R\uHom_R ((\varpi^{\prime 1/p^n}), M)
\end{equation*}
and recall that $\ZZ_{\le 0}$-indexed limits can be expressed as
\begin{equation*}
	R\lim R\uHom_R ((\varpi^{\prime 1/p^n}), M) \simeq \Cone \Big(\prod R\uHom_R ((\varpi^{\prime 1/p^n}), M) \rightarrow \prod R\uHom_R ((\varpi^{\prime 1/p^n}), M) \Big)[-1]
\end{equation*}
Hence, since $R\uHom_R((\varpi^{\prime 1/p^n}), M)$ is concentrated in degree zero, by virtue of $(\varpi^{\prime 1/p^n})$ being a free $R$-module, it follows that $R\uHom_R((\varpi)_{\perfd}, M)$ is an $R$-complex concentrated in degrees $[0,1]$. Therefore at the abelian level we get left exact functors
\begin{equation*}
	\tau^{\le 0} j_*: \Mod_R^a \rightarrow \Mod_R \qquad \tau^{\le 0} j_*^{\wedge}: \Mod_{R, \varpi}^{\wedge a} \rightarrow \Mod_{R, \varpi}^{\wedge}
\end{equation*}
such that $\tau^{\le 0} j_*$ is the right adjoint to $(-)^a$ and $\tau^{\le 0} j_*^{\wedge}$ is the right adjoint to $(-)^{\wedge a}$.
\end{const}

As we did in Lemma \ref{almost_unit_internal_hom} we want to express the objects $\tau^{\le 0} j_* M^a \in \Mod_R$ and $\tau^{\le 0} j^\wedge_* (M)^{\wedge a}$ in terms of the internal hom of $\Mod_R$. 

\begin{const}\label{abelian_internal_hom} Recall how the internal hom of $\Mod_R$ is defined, for a fixed $M,N \in \Mod_R$ we have a functor
\begin{equation*}
	\uHom_R (M, N): \Mod_R^{\op} \rightarrow \Set \qquad Z \mapsto \Hom_R(M \otimes_R Z, N)
\end{equation*}
And since this functor preserves all limits, the presentability of $\Mod_R$ implies that there exists an object $\uHom(M,N) \in \Mod_R$ representing the functor above. Moreover, as in Construction \ref{const_internal_hom} the category $\Mod_{R, \varpi}^{\wedge}$ is not presentable, so it has no intrinsic notion of internal hom. However, we claim that the functor
\begin{equation*}
	\uHom_R((\varpi)_{\perfd}, M): \Mod_{R, \varpi}^{\wedge} \rightarrow \Set \qquad Z \mapsto \Hom_R(H^0((\varpi)_{\perfd} \cotimes^L_R Z), M)
\end{equation*}
is representable by the internal hom $\uHom_R ((\varpi)_{\perfd}, M)$ computed in $\Mod_R$. Indeed, from the identity $(\varpi)_{\perfd} \cotimes_R^L Z \simeq (\varpi)_{\perfd} \otimes_R Z$ it follows that the functor $\uHom((\varpi)_{\perfd}, M): \Mod_{R, \varpi}^{\wedge} \rightarrow \Set$ described above is represented by the internal hom computed in $\Mod_R$. Finally, we need to show that if $M$ is a derived $\varpi$-complete $R$-module, then so is $\uHom((\varpi)_{\perfd}, M)$. Recall that an $R$-module $M$ is derived $\varpi$-complete if and only if $R\lim (\cdots \rightarrow M \rightarrow M) = 0$ with transition maps given by multiplication by $\varpi$. So if $M$ is derived $\varpi$-complete we have the following identities
\begin{align*}
	&R\lim \Big ( \cdots \rightarrow	\uHom_R ((\varpi)_{\perfd}, M) \rightarrow 	\uHom_R ((\varpi)_{\perfd}, M)  \Big) \\
	&\simeq \uHom_R ((\varpi)_{\perfd}, R\lim (\cdots \rightarrow M \rightarrow M)) \simeq 0
\end{align*}
showing that $\uHom_R ((\varpi)_{\perfd}, M)$ is derived $\varpi$-complete.
\end{const}

\begin{lemma}\label{abelian_almost_unit_internal_hom} Let $M$ be an object if $\Mod_R$ (resp. $\Mod_{R, \varpi}^{\wedge}$). Then, the object $\tau^{\le 0} j_* M^a \in \Mod_R$ (resp. $\tau^{\le 0} j_*^\wedge (M)^{\wedge a} \in \Mod_{R, \varpi}^{\wedge}$) admits the following description in terms of the internal homs introduced in the previous construction
\begin{equation*}
	\tau^{\le 0} j_* M^a \simeq \uHom_R ((\varpi)_{\perfd}, M) \qquad \text{ resp. } \qquad \tau^{\le 0} j_*^{\wedge} (M)^{\wedge a} \simeq \uHom_R ((\varpi)_{\perfd}, M) 
\end{equation*}
\end{lemma}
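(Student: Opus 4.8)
The plan is to mimic the proof of Lemma \ref{almost_unit_internal_hom}, transporting that argument from the stable $\infty$-categorical setting to the abelian one, using the abelian adjunctions established in Construction \ref{const_abelian_almost_funct} and the abelian internal hom from Construction \ref{abelian_internal_hom}. Since $\tau^{\le 0} j_*$ is right adjoint to $(-)^a$ (and likewise $\tau^{\le 0} j_*^\wedge$ is right adjoint to $(-)^{\wedge a}$) and $j_!$ is left adjoint to $(-)^a$ with $j_! M^a \simeq M \otimes_R (\varpi)_{\perfd}$, the Yoneda lemma reduces the claim to producing a natural isomorphism of sets, functorial in $N \in \Mod_R$ (resp. $N \in \Mod_{R,\varpi}^\wedge$),
\begin{equation*}
	\Hom_R(N, \tau^{\le 0} j_* M^a) \simeq \Hom_R(N, \uHom_R((\varpi)_{\perfd}, M)).
\end{equation*}

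First I would unwind the left-hand side via the adjunction chain: $\Hom_R(N, \tau^{\le 0} j_* M^a) \simeq \Hom_{R^a}(N^a, M^a) \simeq \Hom_R(j_! N^a, M) \simeq \Hom_R(N \otimes_R (\varpi)_{\perfd}, M)$, where the first isomorphism uses that $\tau^{\le 0} j_*$ is right adjoint to $(-)^a$ at the abelian level, the second uses the $(j_!, (-)^a)$ adjunction, and the last uses the identification $j_! N^a \simeq N \otimes_R (\varpi)_{\perfd}$ from Construction \ref{const_abelian_almost_funct}. Then I would unwind the right-hand side using the tensor-hom adjunction for the internal hom of $\Mod_R$ recalled in Construction \ref{abelian_internal_hom}, namely $\Hom_R(N, \uHom_R((\varpi)_{\perfd}, M)) \simeq \Hom_R((\varpi)_{\perfd} \otimes_R N, M)$. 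These two expressions agree on the nose (up to the symmetry of the tensor product), which gives the desired natural isomorphism and hence the identity $\tau^{\le 0} j_* M^a \simeq \uHom_R((\varpi)_{\perfd}, M)$.

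For the $\varpi$-complete variant the argument is formally identical, but one must be careful that everything stays inside $\Mod_{R,\varpi}^\wedge$. Here the key point, already recorded in Construction \ref{abelian_internal_hom}, is the identity $(\varpi)_{\perfd} \cotimes_R^L Z \simeq (\varpi)_{\perfd} \otimes_R Z$ (which holds because $(\varpi)_{\perfd}$ is a flat $R$-module — see the proof of Lemma \ref{almost_j!_t_exact}, using that $R$ is $\varpi$-torsion free — so the derived completed tensor product with it is already derived $\varpi$-complete and discrete), together with the fact that $\uHom_R((\varpi)_{\perfd}, M)$ is derived $\varpi$-complete whenever $M$ is. With these in hand the same four-step adjunction computation goes through verbatim, now using the abelian adjunctions $j_!^\wedge \dashv (-)^{\wedge a} \dashv \tau^{\le 0} j_*^\wedge$ and the representability of $\uHom_R((\varpi)_{\perfd}, -)$ on $\Mod_{R,\varpi}^\wedge$.

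The main obstacle — really the only subtle point — is bookkeeping the truncations: unlike in the derived statement, $j_*$ is only left $t$-exact, so $\tau^{\le 0} j_* M^a$ is genuinely a truncation of $R\uHom_R((\varpi)_{\perfd}, M)$, not that complex itself. One must check that the naturality and the adjunction isomorphisms above are compatible with applying $\tau^{\le 0}$, equivalently that the functor $N \mapsto \Hom_R((\varpi)_{\perfd} \otimes_R N, M)$ on $\Mod_R$ (which lands in $\Set$) is represented by the abelian internal hom $\uHom_R((\varpi)_{\perfd}, M)$ — this is exactly what Construction \ref{abelian_internal_hom} asserts, so no new work is needed beyond invoking it. I do not expect any genuine difficulty; the content is entirely formal manipulation of the adjunctions assembled in the preceding constructions.
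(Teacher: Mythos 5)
Your proposal is correct and follows essentially the same route as the paper: reduce to a natural isomorphism of $\Hom$-sets via Yoneda, unwind the left side through the adjunctions $j_! \dashv (-)^a \dashv \tau^{\le 0} j_*$ together with $j_! N^a \simeq N \otimes_R (\varpi)_{\perfd}$, and match it with the tensor-hom adjunction for the abelian internal hom, using $N \otimes_R (\varpi)_{\perfd} \simeq N \cotimes_R^L (\varpi)_{\perfd}$ in the complete case. No gaps.
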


\begin{proof} It suffices to show that for all $N \in \Mod_R$ (resp. $\Mod_{R, \varpi}^\wedge$) there exists a natural isomorphism
\begin{align*}
	&\Hom_R (N, \tau^{\le 0} j_* M^a) \simeq \Hom_R (N, \uHom_R ((\varpi)_{\perfd}, M)) \\
	&\Hom_R (N, \tau^{\le 0} j^{\wedge}_* M^{\wedge a}) \simeq \Hom_R (N, \uHom_R ((\varpi)_{\perfd}, M))
\end{align*}
Consider the following sequence of isomorphisms obtained formally by the adjunctions among the functors $j_!$, $(-)^a$ and $\tau^{\le 0} j_*$ (resp. their completed variants)
\begin{align*}
	&\Hom_R (N, \tau^{\le 0} j_* M^a) \simeq \Hom_{R^a} (N^a, M^a) \simeq \Hom_R (N \otimes_R (\varpi)_{\perfd}, M) \\
	&\Hom_R (N, \tau^{\le 0} j_*^{\wedge} M^{\wedge a}) \simeq \Hom_{R^a} (N^{\wedge a}, M^{\wedge a}) \simeq \Hom_R (H^0(N \cotimes^L_R (\varpi)_{\perfd}), M)
\end{align*}
Using the tensor-hom adjunction and the identity $N \otimes_R (\varpi)_{\perfd} \simeq N \cotimes^L_R (\varpi)_{\perfd}$ the result follows.
\end{proof}

\begin{notation}\label{almost_elements} We will often denote the functor $\tau^{\le 0} j_* (-)^a$ (resp. $\tau^{\le 0} j_*^\wedge (-)^{\wedge a}$) by
\begin{align*}
	&(-)_*: \Mod_R \rightarrow \Mod_R \qquad M \mapsto M_* \simeq \uHom_R ((\varpi)_{\perfd}, M) \\
	\text{resp. } \qquad &(-)_*^\wedge: \Mod_R^\wedge \rightarrow \Mod_R^\wedge \qquad M \mapsto M_*^\wedge \simeq \uHom_R ((\varpi)_{\perfd}, M)
\end{align*}
and we will call $M_*$ (resp. $M^\wedge_*$) the almost element of $M$. Moreover, the unit of the adjunction provides us with a functorial morphisms $M \rightarrow M_*$ (resp. $M \rightarrow M_*^\wedge$) described by $m \mapsto (\varepsilon \mapsto \varepsilon m) \in  \uHom_R ((\varpi)_{\perfd}, M)$, where $\varepsilon \in (\varpi)_{\perfd}$. When $M$ is assumed to be $\varpi$-torsion free the $R$-module $M_*$ (resp. $M^\wedge_*$) can be described as the submodule $M_* \subset M[1/\varpi]$ (resp. $M_*^\wedge \subset M[1/\varpi]$) where $m \in M_*$ (resp. $m \in M_*^\wedge$) if and only if $\varepsilon m \in M$ for all $\varepsilon \in (\varpi)_{\perfd}$.
\end{notation}

\begin{lemma}\label{abelian_j*_fully_faithful} The functors
\begin{equation*}
	\tau^{\le 0} j_*: \Mod_R^a \rightarrow \Mod_R \qquad \tau^{\le 0} j_*^\wedge: \Mod_{R, \varpi}^{\wedge a} \rightarrow \Mod_{R, \varpi}^\wedge 
\end{equation*}
are fully faithful.
\end{lemma}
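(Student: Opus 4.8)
The plan is to show that the counit map $(\tau^{\le 0} j_* M^a)^a \to M^a$ is an isomorphism for every $M \in \Mod_R^a$, since a right adjoint is fully faithful precisely when the counit is an isomorphism. Equivalently, using the description $\tau^{\le 0} j_* M^a \simeq \uHom_R((\varpi)_{\perfd}, j_! M^a)$ from Lemma~\ref{abelian_almost_unit_internal_hom}, it suffices to show that the natural map $j_! M^a \to \uHom_R((\varpi)_{\perfd}, j_! M^a)$ becomes an isomorphism after applying $(-)^a$; that is, its cone is $\varpi$-almost zero. First I would reduce to the case of a genuine $R$-module $N := j_! M^a$, which by Construction~\ref{const_abelian_almost_funct} is of the form $N = M_0 \otimes_R (\varpi)_{\perfd}$ for some $M_0 \in \Mod_R$, and in particular is $\varpi$-torsion free since $(\varpi)_{\perfd} \simeq (\varpi^{1/p^\infty})$ is a flat $R$-module (as established in the proof of Lemma~\ref{almost_j!_t_exact}). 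Under this $\varpi$-torsion-freeness hypothesis, Notation~\ref{almost_elements} identifies $N_* = \uHom_R((\varpi)_{\perfd}, N)$ with the submodule $\{x \in N[\tfrac{1}{\varpi}] : \varepsilon x \in N \text{ for all } \varepsilon \in (\varpi)_{\perfd}\}$.

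The core computation is then the following: the inclusion $N \hookrightarrow N_*$ has cokernel killed by $(\varpi)_{\perfd}$. Indeed, given $x \in N_*$ and $\varepsilon \in (\varpi)_{\perfd}$, we have $\varepsilon x \in N$ by definition, so the image of $x$ in $N_*/N$ is annihilated by $\varepsilon$; since $\varepsilon$ was arbitrary, $N_*/N$ is $\varpi$-almost zero. Hence $N \to N_*$ is an isomorphism after applying $(-)^a$, which gives that $(\tau^{\le 0} j_* M^a)^a \to M^a$ is an isomorphism. The same argument applies verbatim in the derived $\varpi$-complete setting using $j_!^\wedge$ and $\tau^{\le 0} j_*^\wedge$, since $j_!^\wedge (M)^{\wedge a} \simeq M_0 \otimes_R (\varpi)_{\perfd}$ is again $\varpi$-torsion free and $\varpi$-complete, and $N \cotimes_R^L (\varpi)_{\perfd} \simeq N \otimes_R (\varpi)_{\perfd}$ by flatness; the functor $\uHom_R((\varpi)_{\perfd}, -)$ preserves derived $\varpi$-completeness as noted in Construction~\ref{abelian_internal_hom}.

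I expect the main obstacle to be the careful bookkeeping around which objects are honestly $\varpi$-torsion free: the functors $j_!$ and $\tau^{\le 0} j_*$ land in $\Mod_R$, but an arbitrary $M \in \Mod_R^a$ need not have a $\varpi$-torsion-free representative, so one genuinely needs to pass through $j_! M^a = (\text{something}) \otimes_R (\varpi)_{\perfd}$ and use that tensoring with the flat module $(\varpi)_{\perfd}$ produces $\varpi$-torsion-free output (here the global hypothesis of this subsection, that $R$ itself is $\varpi$-torsion free, is what makes $(\varpi)_{\perfd}$ flat). A secondary subtlety is checking that the map $N \to N_*$ induced by the unit agrees with the literal inclusion of submodules of $N[\tfrac{1}{\varpi}]$, but this is exactly the explicit description $m \mapsto (\varepsilon \mapsto \varepsilon m)$ recorded in Notation~\ref{almost_elements}, so no real work is needed there.
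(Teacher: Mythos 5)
There is a genuine gap: the claim that $N := j_! M^a$ is $\varpi$-torsion free ``since $(\varpi)_{\perfd}$ is flat'' is false. Flatness of $(\varpi)_{\perfd}$ makes $-\otimes_R(\varpi)_{\perfd}$ exact, but it says nothing about torsion-freeness of the output: for $M_0 = R/\varpi$ one gets $M_0 \otimes_R (\varpi)_{\perfd} \cong (\varpi)_{\perfd}/\varpi(\varpi)_{\perfd}$, which is nonzero (e.g.\ for $R = K_{\le 1}$ the class of $\varpi^{1/p}$ survives) and is killed by $\varpi$; moreover, by the idempotence $(\varpi)_{\perfd}\otimes_R(\varpi)_{\perfd}\simeq(\varpi)_{\perfd}$ this module does lie in the essential image of $j_!$ restricted to the heart, so it is a legitimate $N$ for your argument. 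Consequently the identification of $N_* = \uHom_R((\varpi)_{\perfd},N)$ with $\{x \in N[\tfrac{1}{\varpi}] : \varepsilon x \in N\}$ from Notation \ref{almost_elements} is unavailable (it explicitly assumes $\varpi$-torsion-freeness), and your ``core computation'' collapses: in the example above $N[\tfrac{1}{\varpi}] = 0$ while $N_* \neq 0$. The same objection applies to the completed case, where you assert $j_!^\wedge(M)^{\wedge a}$ is $\varpi$-torsion free and $\varpi$-complete without justification.

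The overall strategy (show the counit is an isomorphism by showing the unit $N \to N_*$ is an almost isomorphism) is salvageable, but it needs an argument that does not pass through torsion-freeness: for an arbitrary $R$-module $N$, the kernel of $N \to \uHom_R((\varpi)_{\perfd},N)$ is visibly killed by $(\varpi)_{\perfd}$, and so is the cokernel, because for $\phi \in \uHom_R((\varpi)_{\perfd},N)$ and $\varepsilon \in (\varpi)_{\perfd}$ one has $(\varepsilon\phi)(\varepsilon') = \varepsilon'\phi(\varepsilon)$, so $\varepsilon\phi$ is the image of $\phi(\varepsilon)$ under the unit; exactness of $(-)^a$ then finishes. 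This is also genuinely different from the paper's route, which avoids the counit altogether and instead checks that the map $\uHom_R((\varpi)_{\perfd},M) \to \uHom_R\big((\varpi)_{\perfd},\uHom_R((\varpi)_{\perfd},M)\big)$ is an isomorphism on the nose, using only $(\varpi)_{\perfd}\otimes_R(\varpi)_{\perfd}\simeq(\varpi)_{\perfd}$ and the tensor-hom adjunction; neither argument needs any torsion-freeness hypothesis on the module, only the standing hypothesis that $R$ itself is $\varpi$-torsion free.
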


\begin{proof} It suffices to show that the units of the adjunctions
\begin{equation*}
	\uHom_R ((\varpi)_{\perfd}, M) \longrightarrow \uHom_R ((\varpi)_{\perfd}, \uHom_R ((\varpi)_{\perfd}, M)) \qquad (\varepsilon \mapsto m) \mapsto (\varepsilon^{\prime} \mapsto (\varepsilon \mapsto \varepsilon^{\prime} m))
\end{equation*}
is an isomorphism. By the identities $(\varpi)_{\perfd} \otimes_R (\varpi)_{\perfd} \simeq (\varpi)_{\perfd}$ and $- \cotimes^L_R (\varpi)_{\perfd} \simeq - \otimes_R (\varpi)_{\perfd}$ and the tensor hom adjunction we get an isomorphism
\begin{equation*}
	\uHom_R ((\varpi)_{\perfd}, \uHom_R ((\varpi)_{\perfd}, M)) \rightarrow \uHom_R ((\varpi)_{\perfd}, M) \qquad (\varepsilon^{\prime} \mapsto (\varepsilon \mapsto \varepsilon^{\prime} m)) \mapsto (\varepsilon \varepsilon^{\prime} \mapsto \varepsilon^{\prime} m)
\end{equation*}
proving that the units of the adjunctions are isomorphisms.
\end{proof}

With the basic functorialities established, we now show that the categories $\Mod_R^a$ and $\Mod_{R, \varpi}^{\wedge a}$ inherit symmetric monoidal structures from the categories $\cD(R)^a$ and $\cD(R)^{\wedge a}_{\varpi}$ respectively.

\begin{const}\label{const_ab_almost_monoidal_str} We follow a procedure similar to that of (\ref{derived_complete_cat}) and (\ref{const_complete_monoidal_str}) to obtain the symmetric monoidal structures on $\Mod_R^a$ and $\Mod_{R, \varpi}^{\wedge a}$. First, we need to show that the full-subcategories $\cD(R)^{a \le 0} \subset \cD(R)^a$ and $\cD(R)^{\wedge a \le 0}_{\varpi} \subset \cD(R)^{\wedge a}_{\varpi}$ are stable under the symmetric monoidal structures of $\cD(R)^a$ and $\cD(R)^{\wedge a}_{\varpi}$ respectively. Let $M,N$ be elements of $\cD(R)^{a \le 0}$ (resp. $\cD(R)^{\wedge a \le 0}_{\varpi}$), and recall that the tensor product of $M$ and $N$ in $\cD(R)^{a}$ (resp. $\cD(R)^{\wedge a}_{\varpi}$) admit the following description
\begin{equation*}
	M \otimes_R^{L a} N \simeq (j_! M \otimes_R^L j_! N)^a \qquad \text{ resp. } \qquad M \cotimes_R^{L a} N \simeq ((j_!^\wedge M) \cotimes_R^L (j_!^\wedge N))^{\wedge a}
\end{equation*}
From Lemma \ref{almost_j!_t_exact} we learn that the functors $(-)^a$ and $j_!$ (resp. $(-)^{\wedge a}$ and $j_!^\wedge$) are $t$-exact, and since $M,N$ being connective implies that $M \otimes_R^L N$ (resp. $M \cotimes_R^L N$) is connective, it follows that $M \otimes_R^{L a} N \in \cD(R)^{a \le 0}$ (resp. $M \cotimes_R^{L a} N \in \cD(R)^{\wedge a \le 0}_{\varpi}$). Thus, by \cite[Proposition 2.2.1.1]{lurieHA} it follows that $\cD(R)^{a \le 0}$ and $\cD(R)^{\wedge a \le 0}_{\varpi}$ inherit symmetric monoidal structures from $\cD(R)^a$ and $\cD(R)^{\wedge a}_{\varpi}$ as they are stable under tensor products.
	
Consider the following pair of commutative diagrams
\begin{cd}
	\Mod_{R, \varpi}^{\wedge a} \ar[r, hook] \ar[d, hook] & \cD(R)^{\wedge a \le 0}_{\varpi} \ar[d, hook] &&
	\cD(R)^{a \le 0} \ar[r, "\tau^{\ge 0}"] \ar[d, "(-)^\wedge", swap] & \Mod_{R}^a  \ar[d, "H^0(-^\wedge)"]\\
	\Mod_R^a \ar[r, hook] & \cD(R)^{a \le 0} &&
	\cD(R)^{\wedge a \le 0}_{\varpi} \ar[r, "\tau^{\ge 0}"] & \Mod_{R, \varpi}^{\wedge a}
\end{cd}
where the functors on the left are the canonical inclusions -- it is not hard to see that the canonical inclusion $\cD(R)^{\wedge a}_{\varpi} \subset \cD(R)^a$ is $t$-exact -- and the functors on the right are the left adjoints to the inclusions. By \cite[Proposition 2.2.1.8]{lurieHA} it follows that the functors $\tau^{\ge 0}$ are compatible with the symmetric monoidal structure of the source (\ref{compatible_with_monoidal_structure}), endowing the categories $\Mod_R^a$ and $\Mod_{R, \varpi}^{\wedge a}$ with an essentially unique symmetric monoidal structures making the truncation functors $\tau^{\ge 0}$ symmetric monoidal. We denote the symmetric monoidal structure on $\Mod_R^a$ by $-\otimes_R^a -$, and the symmetric monoidal structure on $\Mod_{R, \varpi}^{\wedge a}$ by $H^0(- \cotimes^{L a}_R -)$ -- we prefer this notation over $- \cotimes_R^a -$ in other to emphasize that we are taking derived $\varpi$-completions (cf. \ref{classical_complete_tensor}). 

Furthermore, we showed in Construction \ref{const_almost_monoidal_str} that the functor $(-)^{\wedge}: \cD(R)^{a \le 0} \rightarrow \cD(R)^{\wedge a \le 0}_{\varpi}$ is compatible with the symmetric monoidal structure of $\cD(R)^{a \le 0}$. It remains to show that $H^0(-^\wedge): \Mod_R^a \rightarrow \Mod_{R, \varpi}^{\wedge a}$ is compatible with the symmetric monoidal structure of $\Mod_R^a$. Recall how the functor $H^0(-^\wedge)$ is defined: for an object $M \in \Mod_R^a \subset \cD(R)^{a \le 0}$, we first perform the derived $\varpi$-completion, regarding $M$ as an object of $\cD(R)^{a \le 0}$, and then we truncate $M^\wedge$. Thus, we need to show that if a morphism $M \rightarrow N$ in $\Mod_R^a$ induces an isomorphism $\tau^{\ge 0} M^\wedge \rightarrow \tau^{\ge 0} N ^\wedge$, then for any $Z \in \Mod_R^a$ the induced map
\begin{equation*}
	\tau^{\ge 0} (M \otimes^{L a}_R Z)^{\wedge} \rightarrow \tau^{\ge 0} (N \otimes^{L a}_R Z)^{\wedge} 
\end{equation*}
is an isomorphism. But this follows from the fact that the derived $\varpi$-completion $(-)^\wedge$ functor and the truncation functor $\tau^{\ge 0}$ are compatible with the symmetric monoidal structures of their sources.

To summarize we have shown that the categories $\Mod_R^a$ and $\Mod_{R, \varpi}^{\wedge a}$ admit symmetric monoidal structures, denoted by $(- \otimes^a_R -)$ and $H^0(- \cotimes^{L a} -)$ respectively, making the truncation functors $\tau^{\ge 0}$ and the derived completion functors $(-)^\wedge$ and $H^0(-^\wedge)$ symmetric monoidal. Furthermore, by \cite[Proposition 2.2.1.9]{lurieHA} it follows that the above commutative diagrams induce commutative diagrams at the level of commutative algebra objects
\begin{cd}
	\CAlg(\Mod_{R, \varpi}^{\wedge a}) \ar[r, hook] \ar[d, hook] & \CAlg(\cD(R)^{\wedge a \le 0}_{\varpi}) \ar[d, hook] &&
	\CAlg(\cD(R)^{a \le 0}) \ar[r, "\tau^{\ge 0}"] \ar[d, "(-)^\wedge", swap] & \CAlg(\Mod_{R}^a)  \ar[d, "H^0(-^\wedge)"]\\
	\CAlg(\Mod_R^a) \ar[r, hook] & \CAlg(\cD(R)^{a \le 0}) &&
	\CAlg(\cD(R)^{\wedge a \le 0}_{\varpi}) \ar[r, "\tau^{\ge 0}"] & \CAlg(\Mod_{R, \varpi}^{\wedge a})
\end{cd}
where the morphisms on the right hand side are the left adjoints to the morphisms on the left hand side.
\end{const}

\begin{prop}\label{abelian_j*_sym_monoidal} The pair of functors
\begin{equation*}
	(-)^a: \Mod_R \rightarrow \Mod_R^a \qquad (-)^{\wedge a}: \Mod_{R, \varpi}^{\wedge} \rightarrow \Mod_{R, \varpi}^{\wedge a}
\end{equation*}
are symmetric monoidal with respect to the symmetric monoidal structures on $\Mod_R^a$ and $\Mod_{R, \varpi}^{\wedge a}$ described in Construction \ref{const_ab_almost_monoidal_str}. As a direct consequence we learn that the right adjoint fully faithful functors
\begin{equation*}
	\tau^{\le 0} j_*: \Mod_R^a \hookrightarrow \Mod_R \qquad \tau^{\le 0} j_*^\wedge: \Mod_{R, \varpi}^{\wedge a} \hookrightarrow \Mod_{R, \varpi}^{\wedge}
\end{equation*}
are lax-symmetric monoidal. In particular, this implies that we get adjoint functors at the level of commutative algebra objects
\begin{align*}
	&(-)^a: \CAlg(\Mod_R) \rightarrow \CAlg(\Mod_R^a) \qquad \tau^{\le 0} j_*: \CAlg(\Mod_R^a) \hookrightarrow \CAlg(\Mod_R) \\
	&(-)^a: \CAlg(\Mod_{R, \varpi}^{\wedge}) \rightarrow \CAlg(\Mod_{R, \varpi}^{\wedge a}) \qquad \tau^{\le 0} j_*^\wedge: \CAlg(\Mod_{R, \varpi}^{\wedge a}) \hookrightarrow \CAlg(\Mod_{R, \varpi}^\wedge)
\end{align*}
with $\tau^{\le 0} j_*$ and $\tau^{\le 0} j_*^\wedge$ being fully faithful functors.
\end{prop}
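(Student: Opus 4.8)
\emph{Proof sketch.} The plan is to reduce the whole statement to the single assertion that the functors $(-)^a\colon \Mod_R \to \Mod_R^a$ and $(-)^{\wedge a}\colon \Mod_{R,\varpi}^{\wedge} \to \Mod_{R,\varpi}^{\wedge a}$ of Construction \ref{const_abelian_almost_funct} are symmetric monoidal for the structures built in Construction \ref{const_ab_almost_monoidal_str} (this is the actual content, since that construction already treated $\tau^{\ge 0}$, $(-)^\wedge$ and $H^0(-^\wedge)$); everything else then follows from standard facts about symmetric monoidal adjunctions and commutative algebra objects. The first point to record is that $(-)^a$ restricts to a symmetric monoidal functor $\cD(R)^{\le 0} \to \cD(R)^{a\le 0}$: it is symmetric monoidal on all of $\cD(R)$ by Construction \ref{const_almost_monoidal_str}, and it is $t$-exact by Lemma \ref{almost_j!_t_exact}, since $j_!\circ(-)^a \simeq (-)\otimes_R^L (\varpi)_{\perfd}$ and $(\varpi)_{\perfd}$ is a flat $R$-module when $R$ is $\varpi$-torsion free; hence it carries $\cD(R)^{\le 0}$ into the symmetric monoidal subcategory $\cD(R)^{a\le 0}$. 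The same reasoning, using $(-)\cotimes_R^L(\varpi)_{\perfd} \simeq (-)\otimes_R^L(\varpi)_{\perfd}$, gives a symmetric monoidal $t$-exact restriction $\cD(R)^{\wedge,\le 0}_{\varpi} \to \cD(R)^{\wedge a\le 0}_{\varpi}$.

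Next I would invoke the universal property of symmetric monoidal localizations. Recall from Constructions \ref{derived_complete_cat} and \ref{const_ab_almost_monoidal_str} that the truncations $\tau^{\ge 0}\colon \cD(R)^{\le 0} \to \Mod_R$ and $\tau^{\ge 0}\colon \cD(R)^{a\le 0} \to \Mod_R^a$ are symmetric monoidal reflective localizations. Consider the symmetric monoidal composite $\cD(R)^{\le 0} \xrightarrow{(-)^a} \cD(R)^{a\le 0} \xrightarrow{\tau^{\ge 0}} \Mod_R^a$. Because $(-)^a$ is $t$-exact it commutes with $\tau^{\ge 0}$ on connective objects, so this composite inverts every morphism of $\cD(R)^{\le 0}$ inverted by $\tau^{\ge 0}$, i.e. every morphism that is an isomorphism on $H^0$. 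By the universal property of the localization $\tau^{\ge 0}\colon \cD(R)^{\le 0} \to \Mod_R$ in the symmetric monoidal setting \cite[Proposition 2.2.1.9]{lurieHA}, this composite factors, uniquely up to symmetric monoidal equivalence, through a symmetric monoidal functor $\Mod_R \to \Mod_R^a$; and since $(-)^a\colon \Mod_R \to \Mod_R^a$ already makes the relevant square commute (again by $t$-exactness, so that $\tau^{\ge 0}(M^a) \simeq (\tau^{\ge 0}M)^a$), the non-monoidal uniqueness of the factorization through the localization identifies that symmetric monoidal functor with $(-)^a$. Hence $(-)^a\colon \Mod_R\to\Mod_R^a$ is symmetric monoidal; the identical argument with $\cotimes_R^L$ shows $(-)^{\wedge a}\colon \Mod_{R,\varpi}^{\wedge} \to \Mod_{R,\varpi}^{\wedge a}$ is symmetric monoidal. (Alternatively one can check the isomorphism $(M\otimes_R N)^a \simeq M^a\otimes_R^a N^a$ by hand using $(\varpi)_{\perfd}\cotimes_R^L(\varpi)_{\perfd}\simeq(\varpi)_{\perfd}$ from Lemma \ref{almost_zero_idempotent} and $j_!\circ(-)^a \simeq (-)\otimes_R^L(\varpi)_{\perfd}$, but the universal property also supplies the coherence data.)

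Finally, the formal consequences. The right adjoint of a symmetric monoidal functor is canonically lax symmetric monoidal \cite[Corollary 7.3.2.7]{lurieHA}; since $\tau^{\le 0}j_*$ and $\tau^{\le 0}j_*^{\wedge}$ are the right adjoints of $(-)^a$ and $(-)^{\wedge a}$ respectively (Construction \ref{const_abelian_almost_funct}), they are lax symmetric monoidal. A symmetric monoidal functor together with its lax symmetric monoidal right adjoint induces an adjunction between the associated categories of commutative algebra objects, which yields the stated adjoint pairs $(-)^a \dashv \tau^{\le 0}j_*$ and $(-)^{\wedge a}\dashv\tau^{\le 0}j_*^\wedge$ on $\CAlg$. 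For the last clause, $\tau^{\le 0}j_*$ and $\tau^{\le 0}j_*^\wedge$ are fully faithful by Lemma \ref{abelian_j*_fully_faithful}, so the counits $(-)^a\circ(\tau^{\le 0}j_*)\Rightarrow \Id$ and $(-)^{\wedge a}\circ(\tau^{\le 0}j_*^\wedge)\Rightarrow\Id$ are equivalences; applying $\CAlg(-)$, the counits of the induced adjunctions are again equivalences, so $\CAlg(\tau^{\le 0}j_*)$ and $\CAlg(\tau^{\le 0}j_*^\wedge)$ are fully faithful.

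I expect no genuine obstacle here; the one delicate point is the bookkeeping in the second paragraph — identifying the abstractly produced symmetric monoidal functor $\Mod_R\to\Mod_R^a$ with the $(-)^a$ already constructed — which ultimately rests only on the $t$-exactness of $(-)^a$ under the $\varpi$-torsion-freeness hypothesis and on the uniqueness of factorizations through a localization.
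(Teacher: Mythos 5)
Your proof is correct and follows essentially the same approach as the paper: factor the symmetric monoidal localization $\tau^{\ge 0}\colon \cD(R)^{\le 0}\to\Mod_R$, use that the derived $(-)^a$ and the truncations are symmetric monoidal (Constructions \ref{const_almost_monoidal_str} and \ref{const_ab_almost_monoidal_str}), and let $t$-exactness identify the induced functor $\Mod_R\to\Mod_R^a$ with the abelian $(-)^a$. The paper's own proof is considerably terser — it exhibits the same commutative square and simply invokes the symmetric monoidality of the vertical and horizontal arrows, leaving the universal-property bookkeeping you spell out in your second paragraph implicit, and it likewise leaves the consequences (lax monoidal right adjoint, $\CAlg$-adjunction, full faithfulness) unspoken as formal; your write-up fills those gaps without changing the argument.
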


\begin{proof} Consider the following pair of commutative diagrams
\begin{cd}
	\cD(R)^{\le 0} \ar[r, "\tau^{\ge 0}"] \ar[d, "(-)^a", swap] & \Mod_R \ar[d, "(-)^a"] &&
	 \cD(R)^{\wedge \le 0}_{\varpi} \ar[r, "\tau^{\ge 0}"] \ar[d, "(-)^{\wedge a}", swap] & \Mod_{R, \varpi}^{\wedge} \ar[d, "(-)^{\wedge a}"]\\
	\cD(R)^{a \le 0} \ar[r, "\tau^{\ge 0}"] & \Mod_R^a &&
	\cD(R)^{\wedge a \le 0}_{\varpi} \ar[r, "\tau^{\ge 0}"] & \Mod_{R, \varpi}^{\wedge a}
\end{cd}
We know from Construction \ref{const_almost_monoidal_str} that the functors $(-)^a: \cD(R)^{\le 0} \rightarrow \cD(R)^{a \le 0}$ and $(-)^{\wedge a}: \cD(R)^{\wedge}_{\varpi} \rightarrow \cD(R)^{\wedge a}_{\varpi}$ are symmetric monoidal, and from Construction \ref{const_ab_almost_monoidal_str} we learn that all truncation functors $\tau^{\le 0}$ are symmetric monoidal. This shows that $(-)^a: \Mod_R \rightarrow \Mod_R^a$ and $(-)^{\wedge a}: \Mod_{R, \varpi}^{\wedge} \rightarrow \Mod_{R, \varpi}^{\wedge a}$ are symmetric monoidal, as desired.
\end{proof}

We conclude this section by showing how our constructions compare to those of Gabber-Ramero \cite{gabber_ramero_almost}.

\begin{rem} From Lemma \ref{abelian_j*_fully_faithful} we learn that $\Mod_R^a$ can be regarded as a full-subcategory of $\Mod_R$, together with an exact left adjoint to the inclusion $(-)^a: \Mod_R \rightarrow \Mod_R^a$ -- in particular, for a module $M \in \Mod_R$ we have that $M^a \simeq 0$ if and only if $M_* \simeq 0$. We claim that $M^a \simeq 0$ if and only if $M$ is a $\varpi$-almost zero module. Indeed, if $M$ is $\varpi$-almost zero then $M_* \simeq 0$ by Lemma \ref{abelian_almost_unit_internal_hom}; on the other hand, if $M_* \simeq 0$ then $M$ must be $\varpi$-almost zero again by Lemma \ref{abelian_almost_unit_internal_hom}.

This characterizes the exact functor $(-)^a: \Mod_R \rightarrow \Mod_R^a$ with the following universal property: for any exact functor $F: \Mod_R \rightarrow \cC$ such that $F(M) \simeq 0$ if $M$ is $\varpi$-almost zero, then there exists a unique factorization
\begin{equation*}
	F: \Mod_R \rightarrow \Mod_R^a \rightarrow \cC
\end{equation*}
showing that our abelian category $\Mod_R^a$ enjoys the same universal property as the almost category of \cite{gabber_ramero_almost}, proving their equivalence. Furthermore, in Proposition \ref{abelian_j*_sym_monoidal} we show that the functor $(-)^a: \Mod_R \rightarrow \Mod_R^a$ is symmetric monoidal and that this characterizes the monoidal structure on $\Mod_R^a$; this is the same definition of the monoidal structure on the almost category taken in \cite{gabber_ramero_almost}.
\end{rem}

\newpage

\section{Integral Closures}\label{sect_int_closure}

\subsection{\texorpdfstring{$\varpi$}{pi}-torsion free algebras}

Throughout this section $R$ is an integral perfectoid ring which is $\varpi$-complete with respect to some $\varpi \in R$ where $\varpi^p$ divides $p$, and such that $R$ is $\varpi$-torsion free.

This section we will mostly be concerned with various categories of commutative $R$-algebras, we begin by recalling their definitions and some of the results we have already established in previous sections.

\begin{enumerate}[(1)]
	\item The category $\CAlg_R := \CAlg(\Mod_R)$ of commutative $R$-algebras, which we introduced in \ref{derived_complete_cat}; and its full subcategory $\tau^{\le 0}j_*: \CAlg_R^a \hookrightarrow \CAlg_R$, where $\CAlg_{R}^a := \CAlg(\Mod_R^a)$, of almost commutative $R$-algebras which we introduced in \ref{const_ab_almost_monoidal_str}.
	\item The category $\CAlg_R^\wedge := \CAlg(\Mod_{R, \varpi}^{\wedge})$ of derived $\varpi$-complete commutative $R$-algebras, which we introduced in \ref{complete_algebra_objects}; and its full subcategory $\tau^{\le 0} j_*^\wedge: \CAlg_R^{\wedge a} \hookrightarrow \CAlg_R^{\wedge}$, where $\CAlg_{R}^{\wedge a} := \CAlg(\Mod_{R, \varpi}^{\wedge a})$, of derived $\varpi$-complete almost commutative $R$-algebras which we introduced in \ref{const_ab_almost_monoidal_str}.
\end{enumerate}
This categories have fully faithful functors relating them
\begin{cd}
	\CAlg_R^{\wedge a} \ar[r, hook, "\tau^{\le 0} j_*^\wedge"] \ar[d, hook] & \CAlg_R^{\wedge} \ar[d, hook] \\
	\CAlg_R^{a} \ar[r, hook, "\tau^{\le 0} j_*"] & \CAlg_R
\end{cd}
and each of this functors admit left adjoints that fit into the following commutative diagram
\begin{cd}
	\CAlg_R \ar[r, "H^0(-^\wedge)"] \ar[d, "(-)^a", swap] & \CAlg_R^{\wedge} \ar[d, "(-)^{a}"] \\
	\CAlg_R^a \ar[r,  "H^0(-^\wedge)"] & \CAlg_R^{\wedge a} 
\end{cd}
Let us explain how this is relevant for us in this section. Recall that for any symmetric monoidal category $\cC$ we can consider its category of commutative algebra objects $\CAlg(\cC)$, in the sense of \cite[Section 2.1.3]{lurieHA}, and this category has the following property: for any pair of morphisms $R \leftarrow D \rightarrow S$ in $\CAlg(\cC)$ the pushout can be identified with $R \otimes_D S$ by \cite[Proposition 3.2.4.10]{lurieHA}; where $- \otimes -$ comes from the symmetric monoidal structure on $\cC$. For us, this means that we can compute the various exotic tensor products in $\CAlg_R^\wedge$, $\CAlg_R^a$ and $\CAlg_R^{\wedge a}$ in terms of the classical tensor product of $\CAlg_R$ and the functors connecting them.

The goal of this section is to show that all the categories of algebras we just introduced admit $\varpi$-torsion free analogs, and to construct $\varpi$-torsion free tensor products on them. One of the main advantages of introducing $\varpi$-torsion free versions of this categories is that derived $\varpi$-completion and classical $\varpi$-completion agree -- which will be critical when relating this categories to Banach algebras.

The following lemma shows that being $\varpi$-torsion free interacts well with the (derived) $\varpi$-completion functor.

\begin{lemma}\label{completion_tf} Let $B$ be a ring and $\varpi$ an element of $B$. Then, a $B$-module $M \in \Mod_B$ is $\varpi$-torsion free if and only if its derived $\varpi$-completion $M^\wedge$ is a $\varpi$-torsion free $B$-module.
\end{lemma}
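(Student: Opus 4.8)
The plan is to use the standard derived Nakayama / Koszul-complex characterization of derived $\varpi$-completeness, reducing the statement to a computation with $H^0$ and $H^1$ of the Koszul complex. Recall that for an $R$-module $M$ (here I write $B$ for the ring) the derived $\varpi$-completion sits in a canonical fiber sequence whose cohomology is controlled by the inverse system $\{M/\varpi^n M\}$ and its first derived limit $R^1\lim M[\varpi^n]$; concretely, $H^0(M^\wedge) = \lim_n M/\varpi^n M$ and $H^{-1}(M^\wedge) = R^1\lim_n M[\varpi^n]$, where $M[\varpi^n]$ is the $\varpi^n$-torsion. Both of these facts are available via \cite[Tag 091N]{stacks-project} and are consistent with the setup in Construction \ref{derived_complete_cat}. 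Since $M^\wedge$ is an honest $B$-module (concentrated in degree zero) precisely when $H^{-1}(M^\wedge)=0$, and we are told to prove an equivalence about $\varpi$-torsion-freeness, I will track the $\varpi$-torsion of $M^\wedge$ directly.

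First I would handle the easy direction: if $M$ is $\varpi$-torsion free then $M[\varpi^n]=0$ for all $n$, so $R^1\lim_n M[\varpi^n]=0$, hence $M^\wedge = \lim_n M/\varpi^n M$ is concentrated in degree zero; and an inverse limit of the system $M/\varpi^n M$ along the multiplication-by-$\varpi$ transition maps is $\varpi$-torsion free because if $(x_n)$ with $\varpi x_n = 0$ in $M/\varpi^n M$ for all $n$, then lifting and using torsion-freeness of $M$ at each finite stage (here $\varpi\tilde x_n\in\varpi^n M$ forces $\tilde x_n\in\varpi^{n-1}M$) shows the element is compatible with the shifted system and hence zero in the limit — this is a short diagram chase. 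Alternatively, and more cleanly, one can argue via the exact sequence $0\to M\to M\to M/\varpi M\to 0$ (using torsion-freeness) and note that derived $\varpi$-completion, applied to this sequence, yields $0\to M^\wedge\xrightarrow{\varpi} M^\wedge\to M/\varpi M\to 0$ since $M/\varpi M$ is already derived $\varpi$-complete (it is killed by a power of $\varpi$); the injectivity of $\varpi$ on $M^\wedge$ is exactly $\varpi$-torsion-freeness.

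For the converse, I would argue contrapositively. Suppose $M$ has nonzero $\varpi$-torsion; then there is $0\neq m\in M$ with $\varpi m = 0$. Consider the $B$-submodule $N = M[\varpi]\subseteq M$, which is a nonzero module killed by $\varpi$, and the short exact sequence $0\to N\to M\to M/N\to 0$. Applying derived $\varpi$-completion, which is exact on the level of $\infty$-categories (it is a localization, hence a triangulated/exact functor on $\cD(B)$), gives a fiber sequence $N^\wedge\to M^\wedge\to (M/N)^\wedge$. Now $N$ is annihilated by $\varpi$, so it is already derived $\varpi$-complete and $N^\wedge = N$ sits in degree zero; moreover $\varpi$ acts as zero on $N=N^\wedge$, so $N$ embeds into the $\varpi$-torsion of $M^\wedge$ provided the map $N^\wedge\to M^\wedge$ is injective on $H^0$. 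Injectivity on $H^0$ follows from the long exact cohomology sequence together with the fact that $H^{-1}$ of a derived $\varpi$-completion of a module is computed by an $R^1\lim$ of torsion submodules and that $(M/N)$ has strictly smaller $\varpi$-torsion — one needs to be slightly careful here, so a cleaner route is: pick a single element $m\neq 0$ with $\varpi m=0$, let $N=Bm\cong B/I$ for $I\supseteq(\varpi)$, and observe that $N$ is derived $\varpi$-complete and maps to $M^\wedge$; then check the composite $N\to M\to M^\wedge$ is nonzero by noting that $M\to M^\wedge$ is injective on $\varpi$-power-torsion elements (an element $x$ with $\varpi^k x=0$ maps to $0$ in $M^\wedge$ iff $x\in\bigcap_n(\varpi^n M + M[\varpi^\infty])$... ) — \textbf{this is the step I expect to be the main obstacle}, since in full generality (no boundedness of $\varpi$-torsion, no flatness) one must be careful that $M\to M^\wedge$ need not be injective.

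To circumvent that obstacle cleanly, I would instead invoke derived Nakayama (Lemma \ref{derived_nakayama}) and the boundedness remarks: the honest statement to prove is about modules, and the cleanest argument is the one via the two short exact sequences above. For the converse in the form we need, it suffices to show: if $M^\wedge$ is $\varpi$-torsion free, then $M$ is $\varpi$-torsion free. Write $T = M[\varpi^\infty]$ for the $\varpi$-power torsion. There is a sequence $0\to T\to M\to \overline M\to 0$ with $\overline M$ $\varpi$-torsion free. Derived $\varpi$-completion gives $T^\wedge\to M^\wedge\to \overline M^\wedge$. By the easy direction, $\overline M^\wedge$ is $\varpi$-torsion free, and in particular concentrated in degree $0$. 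Hence $H^{-1}(M^\wedge)\cong H^{-1}(T^\wedge)=R^1\lim_n T[\varpi^n]=R^1\lim_n T_n$ where $T_n=T[\varpi^n]$; and $H^0(T^\wedge)=\lim_n T/\varpi^n T$. If this $H^0$ is nonzero it contributes nonzero $\varpi$-torsion to $M^\wedge$ (as the connecting map and the torsion-freeness of $\overline M^\wedge$ force $H^0(T^\wedge)\hookrightarrow M^\wedge$, landing in $M^\wedge[\varpi^{\text{large}}]$), contradiction; so $\lim_n T/\varpi^n T=0$, and similarly one rules out $H^{-1}$. A Mittag-Leffler / elementary argument on the pro-system $\{T_n\}$ — which is an increasing union, so the maps $T_n\to T_{n+1}$ are injective but the transition maps in the relevant tower are multiplication by $\varpi$, i.e. surjective onto $T_n$ from $T_{n+1}$ — then forces $T=0$. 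I would present this via the exact-triangle bookkeeping rather than explicit element chases, citing \cite[Tag 091N]{stacks-project} for the identification of the cohomology of a derived completion, and \cite[Tag 0BKF]{stacks-project} / Remark \ref{perfectoid_bounded_pi_torsion} where a boundedness hypothesis is available to short-circuit the $\lim$/$R^1\lim$ analysis.
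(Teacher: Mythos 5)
Your forward direction is fine, but the converse as written has a genuine gap, and it sits exactly where you flagged trouble. Two of your bookkeeping identifications are off: for a module $M$, one has $H^{-1}(M^\wedge)=\lim_n M[\varpi^n]$ (transition maps given by multiplication by $\varpi$), while $R^1\lim_n M[\varpi^n]$ contributes to $H^0(M^\wedge)$ as a submodule, with quotient $\lim_n M/\varpi^n M$ — not the distribution you state. More seriously, the step that is supposed to produce the contradiction — ``if $H^0(T^\wedge)\neq 0$ it contributes nonzero $\varpi$-torsion to $M^\wedge$, landing in $M^\wedge[\varpi^{\text{large}}]$'' — is false: the completion of a $\varpi$-power-torsion module need not be torsion. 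For $B=\ZZ_p$, $\varpi=p$, $T=\bigoplus_n \ZZ/p^n$, the module $H^0(T^\wedge)$ contains elements killed by no power of $p$, so the mere nonvanishing of $H^0(T^\wedge)$ inside a module gives no torsion element. Likewise your closing Mittag--Leffler remark rests on the claim that $T[\varpi^{n+1}]\xrightarrow{\varpi}T[\varpi^n]$ is surjective, which already fails for $T=\ZZ/p$. So the chain ``$H^0(T^\wedge)\neq 0\Rightarrow$ contradiction, hence $\lim T/\varpi^nT=0$, hence $T=0$'' is not established.

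The skeleton can be repaired, but it needs a different observation: if $M^\wedge$ is a torsion-free module, then the image of any $t\in T=M[\varpi^\infty]$ under $M\to H^0(M^\wedge)$ is a torsion element, hence zero; since $H^{-1}(\overline{M}^\wedge)=0$ gives $H^0(T^\wedge)\hookrightarrow H^0(M^\wedge)$, the map $T\to H^0(T^\wedge)$ is zero, which forces $T=\bigcap_n\varpi^nT$, i.e.\ $\varpi T=T$; a nonzero divisible torsion module has nonzero Tate module $\lim_n T[\varpi^n]=H^{-1}(T^\wedge)\hookrightarrow H^{-1}(M^\wedge)=0$, contradiction (alternatively, divisibility kills both $\lim T/\varpi^nT$ and $R^1\lim T[\varpi^n]$, so $T^\wedge=0$ and derived Nakayama plus $\varpi$-power-torsionness give $T=0$). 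Note also that the paper takes a much shorter route: by Greenlees--May duality $R\Gamma_\varpi(M)\simeq R\Gamma_\varpi(M^\wedge)$, and $H^0R\Gamma_\varpi$ of a module is exactly its $\varpi$-power torsion, so $M[\varpi^\infty]=M^\wedge[\varpi^\infty]$ and both directions follow at once — no $\lim$/$R^1\lim$ analysis of the Koszul tower is needed.
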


\begin{proof} Recall that the local cohomology of $M$ at $\varpi$ can be identified with $\Cone(M \rightarrow M[1/\varpi])[-1]$ (cf. \cite[Tag 0952]{stacks-project}), which implies that we have the identity $R^0 \Gamma_{\varpi} (M) = M[\varpi^\infty]$ identifying the zeroth cohomology group with the $\varpi$-torsion part of $M$. Moreover, Greenlees-May duality (cf. \cite[Tag 0A6V]{stacks-project}) implies that we have the equality
\begin{equation*}
	R\Gamma_\varpi (M) = R \Gamma_\varpi (M^\wedge)
\end{equation*}
showing that $M[\varpi^\infty] = M^\wedge[\varpi^\infty]$ and proving the desired claim.
\end{proof}

We introduce a couple of the categories of $\varpi$-torsion free algebras.

\begin{defn} Let $B$ be a $\varpi$-torsion free ring, for some $\varpi \in B$.
\begin{enumerate}[(1)]
	\item Let $\CAlg_B^{\tf}$ be the full subcategory of $\CAlg_B$ of commutative $B$-algebras whose underlying module is $\varpi$-torsion free.
	\item Let $\CAlg_B^{\wedge \tf}$ be the full subcategory of $\CAlg_B^{\wedge}$ of commutative $B$-algebras whose underlying module if derived $\varpi$-complete and $\varpi$-torsion free. Since this algebras are $\varpi$-torsion free, being derived $\varpi$-complete is equivalent to being classically $\varpi$-complete.
\end{enumerate}
\end{defn}

\begin{prop}\label{torsion_free_algebras}  Let $B$ be a $\varpi$-torsion free ring, for some $\varpi \in B$. Then, the canonical inclusions
\begin{equation*}
	\CAlg_B^{\tf} \hookrightarrow \CAlg_B \qquad \CAlg_B^{\wedge \tf} \hookrightarrow \CAlg_B^{\wedge}
\end{equation*}
admit left adjoints
\begin{align*}
	&(-)^{\tf}: \CAlg_B \rightarrow \CAlg_B^{\tf} \qquad A \mapsto \overline{A}:= A/A[\varpi^\infty] \\
	&(-)^{\wedge \tf}: \CAlg_B^{\wedge} \rightarrow \CAlg_B^{\wedge \tf} \qquad A \mapsto (\overline{A})^\wedge
\end{align*}
Moreover, every functor in the left hand side diagram admits a left adjoint, making the diagram on the right commute
\begin{cd}
	\CAlg_B^{\wedge \tf} \ar[d, hook] \ar[r, hook] & \CAlg_B^{\wedge}  \ar[d, hook] &&
	\CAlg_B \ar[r, "H^0(-^\wedge)"] \ar[d, "(-)^{\tf}", swap] & \CAlg_B^\wedge \ar[d, "(-)^{\wedge \tf}"] \\
	\CAlg_B^{\tf} \ar[r, hook] & \CAlg_B &&
	\CAlg_B^{\tf} \ar[r, "-^\wedge", swap] & \CAlg_B^{\wedge \tf}
\end{cd}
In particular, this implies that for a pair of morphisms $A \leftarrow D \rightarrow C$ in $\CAlg_B^{\wedge \tf}$ the pushout can be expressed as $(A \otimes_D C)^{\tf \wedge}$ -- the classical $\varpi$-completion of the $\varpi$-torsion free quotient of $A \otimes_D C$.
\end{prop}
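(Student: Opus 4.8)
The plan is to verify the adjunction $(-)^{\tf} \dashv (\text{inclusion})$ first, then bootstrap the completed version from it, and finally deduce the pushout formula by the standard yoga of composing left adjoints.

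\textbf{Step 1: The functor $(-)^{\tf}$ and its adjunction.} For $A \in \CAlg_B$, the $\varpi$-torsion submodule $A[\varpi^\infty] \subset A$ is an ideal (if $x \in A[\varpi^\infty]$ and $a \in A$ then $\varpi^n x = 0$ forces $\varpi^n(ax) = 0$), so the quotient $\overline{A} := A/A[\varpi^\infty]$ is a commutative $B$-algebra, and it is $\varpi$-torsion free: if $\varpi \bar y = 0$ in $\overline A$ then $\varpi y \in A[\varpi^\infty]$, so $\varpi^{n+1} y = 0$ for some $n$, hence $y \in A[\varpi^\infty]$ and $\bar y = 0$. This gives a functor $(-)^{\tf}: \CAlg_B \to \CAlg_B^{\tf}$. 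For the adjunction, if $C$ is $\varpi$-torsion free then any $B$-algebra map $A \to C$ must kill $A[\varpi^\infty]$ (since its image lands in $C[\varpi^\infty] = 0$), hence factors uniquely through $\overline A$; conversely any map $\overline A \to C$ pulls back to a map $A \to C$. So $\Hom_{\CAlg_B}(A, C) \simeq \Hom_{\CAlg_B^{\tf}}(\overline A, C)$ naturally, establishing $(-)^{\tf}$ as left adjoint to the inclusion $\CAlg_B^{\tf} \hookrightarrow \CAlg_B$.

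\textbf{Step 2: The completed version.} Define $(-)^{\wedge\tf}: \CAlg_B^\wedge \to \CAlg_B^{\wedge\tf}$ by $A \mapsto (\overline A)^\wedge$, where $(-)^\wedge$ denotes derived $\varpi$-completion. This lands in $\CAlg_B^{\wedge\tf}$ by Lemma \ref{completion_tf}: since $\overline A$ is $\varpi$-torsion free, so is its derived completion, and for $\varpi$-torsion free modules derived and classical completion agree so $(\overline A)^\wedge$ is genuinely an algebra concentrated in degree zero and classically $\varpi$-complete. For the adjunction, note that the inclusion $\CAlg_B^{\wedge\tf} \hookrightarrow \CAlg_B^\wedge$ factors as $\CAlg_B^{\wedge\tf} \hookrightarrow \CAlg_B^{\tf} \hookrightarrow \CAlg_B \hookleftarrow \CAlg_B^\wedge$ in a way compatible with the two localizations $(-)^{\tf}$ and $H^0(-^\wedge)$ (the derived $\varpi$-completion functor on algebras, already constructed in Construction \ref{complete_algebra_objects}); more directly, for $C \in \CAlg_B^{\wedge\tf}$ one has
\begin{align*}
	\Hom_{\CAlg_B^\wedge}(A, C) \simeq \Hom_{\CAlg_B}(A, C) \simeq \Hom_{\CAlg_B^{\tf}}(\overline A, C) \simeq \Hom_{\CAlg_B^{\wedge\tf}}((\overline A)^\wedge, C),
\end{align*}
where the first isomorphism uses that $A$ is already derived $\varpi$-complete (so the inclusion of complete algebras is fully faithful), the second is Step 1, and the third uses that $C$ is derived $\varpi$-complete together with the universal property of derived $\varpi$-completion. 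This simultaneously shows the inclusion $\CAlg_B^{\wedge\tf} \hookrightarrow \CAlg_B^\wedge$ has the stated left adjoint and, by composing with the already-known left adjoints along the other edges of the square, that the square of left adjoints commutes: $((\overline{A})^{\tf})^\wedge$ computed along either path agrees, essentially because $\overline{A}$ and the $\varpi$-torsion-free quotient of $A^\wedge$ have the same derived $\varpi$-completion (again Lemma \ref{completion_tf}).

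\textbf{Step 3: The pushout formula.} Given $A \leftarrow D \rightarrow C$ in $\CAlg_B^{\wedge\tf}$, the pushout in this category is computed by applying the left adjoint $(-)^{\wedge\tf}$ (viewed now as $\CAlg_D^\wedge \to \CAlg_D^{\wedge\tf}$, which exists by the same argument over the base $D$) to the pushout $A \otimes_D C$ taken in ordinary $\CAlg_D$, because left adjoints preserve colimits and the inclusion of the reflective subcategory lets us compute the colimit downstairs and reflect. Hence the pushout is $(A \otimes_D C)^{\wedge\tf} = ((\overline{A \otimes_D C}))^\wedge$, the classical $\varpi$-completion of the $\varpi$-torsion free quotient of $A \otimes_D C$, as claimed. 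The one point requiring a little care, which I expect to be the main (mild) obstacle, is checking that $(-)^{\wedge\tf}$ is well-defined relative to a base $D \in \CAlg_B^{\wedge\tf}$ rather than only relative to $B$ itself — but since $D$ is itself $\varpi$-torsion free and $\varpi$-complete, the same two lemmas ($\varpi$-torsion submodule is an ideal; Lemma \ref{completion_tf}) apply verbatim with $B$ replaced by $D$, so no new difficulty arises.
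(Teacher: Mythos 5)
Your proof is correct and follows essentially the same strategy as the paper: identify $\overline{A} = A/A[\varpi^\infty]$ as the reflector into $\CAlg_B^{\tf}$ via the factorization $A\to\overline{A}\to C$, then use Lemma \ref{completion_tf} (derived completion preserves $\varpi$-torsion-freeness, and classical $=$ derived completion for torsion-free modules) to get the completed version; the chain of hom-set isomorphisms you give in Step 2 is just a re-packaging of the explicit factorization $A\to\overline{A}\to(\overline{A})^\wedge\to C$ that appears in the paper's proof. The paper does not spell out Step 3 or the commutativity of the square of left adjoints, but both are, as you note, formal consequences of uniqueness of left adjoints and the reflective-subcategory colimit formula, so the content matches.
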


\begin{proof} It is clear that the inclusion $\CAlg_B^\wedge \hookrightarrow \CAlg_B$ admits a left adjoint given by $A \mapsto H^0(A^\wedge)$, where $A^\wedge$ denotes the derived $\varpi$-completion of $A$; similarly, by Lemma \ref{completion_tf} it follows that the inclusion $\CAlg_B^{\wedge \tf} \hookrightarrow \CAlg_{B}^{\tf}$ also admits a left adjoint given by $A \mapsto A^\wedge$, where $A^\wedge$ is the classical $\varpi$-completion of $A$ (which agrees with the derived one as $A$ is $\varpi$-torsion free).

Define a functor $\CAlg_B \rightarrow \CAlg_B^{\tf}$ by the formula $A \mapsto \overline{A} := A/A[\varpi^\infty]$ where $\overline{A}$ is obtained by killing the $\varpi$-torsion of $A$ -- it is clear that this map is functorial. To show that this map it a left adjoint to the inclusion $\CAlg_B^{\tf} \hookrightarrow \CAlg_B$ it suffices to notice that for any morphism $A \rightarrow C$, where $C$ is $\varpi$-torsion free, there is an essentially unique factorization as $A \rightarrow A/A[\varpi^\infty] \rightarrow C$, which is clear. On the other hand, we can define a functor $\CAlg_B^{\wedge} \rightarrow \CAlg_B^{\wedge \tf}$ by the rule $A \mapsto (A/A[\varpi^\infty])^\wedge$ where $(-)^\wedge$ is the classical $\varpi$-completion functor which agrees with the derived one -- it is clear that this is a functorial procedure, but let us explain why its necessary to $\varpi$-complete after passing to the $\varpi$-torsion free quotient. Indeed, if $A$ has unbounded $\varpi$-torsion then $A[\varpi^\infty]$ will never by derived $\varpi$-complete (cf. \cite{bhatt2019torsion}), showing that $A/A[\varpi^\infty]$ will never be derived $\varpi$-complete in the unbounded torsion case. Finally, let us show that the map $\CAlg_B^{\wedge} \rightarrow \CAlg_B^{\wedge \tf}$ we just constructed is the left adjoint to the inclusion $\CAlg_B^{\wedge \tf} \rightarrow \CAlg_B^{\wedge}$. Indeed, for any morphism $A \rightarrow C$ in $\CAlg_B^{\wedge}$, where $C$ is $\varpi$-torsion free, will factor as
\begin{equation*}
	A \rightarrow A/A[\varpi^\infty] \rightarrow (A/A[\varpi^\infty])^\wedge \rightarrow C
\end{equation*}
showing that $\CAlg_B^{\wedge} \rightarrow \CAlg_B^{\wedge \tf}$ is indeed the left adjoint to the canonical inclusion in the opposite direction.
\end{proof}

In what follows we will introduce $\varpi$-torsion free categories of almost commutative $R$-algebras, and show that an analogous result to Proposition \ref{torsion_free_algebras} hold in the almost setting.

\begin{defn}\label{defn_comm_alg_almost_tf} Let $R$ is an integral perfectoid ring which is $\varpi$-complete with respect to some $\varpi \in R$ where $\varpi^p$ divides $p$, and such that $R$ is $\varpi$-torsion free. We regard the categories $\CAlg_R^a$ and $\CAlg_R^{\wedge a}$ as subcategories of $\CAlg_R$ and $\CAlg_R^\wedge$ respectively via the functors
\begin{equation*}
	\tau^{\le 0}j_* : \CAlg_R^a \hookrightarrow \CAlg_R \qquad \tau^{\le 0} j_*^\wedge: \CAlg_R^{\wedge a} \hookrightarrow \CAlg_R^\wedge
\end{equation*}
We will be interested in the following categories of almost commutative $R$-algebras
\begin{enumerate}[(1)]
	\item The category $\CAlg_R^{a \tf}$ of $\varpi$-torsion free almost commutative $R$-algebras, defined as $\CAlg_R^{\tf} \cap \CAlg_R^{a}$.
	\item The category $\CAlg_R^{\wedge a \tf}$ of $\varpi$-complete $\varpi$-torsion free commutative $R$-algebras, defined as $\CAlg_R^{\wedge \tf} \cap \CAlg_R^{\wedge a}$.
\end{enumerate}
\end{defn}

\begin{prop}\label{tf_almost_algebras} The canonical inclusion functors
\begin{equation*}
	\CAlg_R^{a \tf} \hookrightarrow \CAlg_R^a \qquad \CAlg_R^{\wedge a \tf} \hookrightarrow \CAlg_R^{\wedge a}
\end{equation*}
admit left adjoints
\begin{align*}
	&(-)^{a \tf}: \CAlg_R^a \rightarrow \CAlg_R^{a \tf} \qquad A \mapsto \Big [ (\tau^{\le 0}j_* A)^{\tf} \Big]^a \\
	&(-)^{\wedge a \tf}: \CAlg_R^{\wedge a} \rightarrow \CAlg_R^{\wedge a \tf} \qquad A \mapsto \Big [ (\tau^{\le 0}j_*^\wedge A)^{\wedge \tf} \Big]^{a} 
\end{align*}
Moreover, every functor in the left hand side diagram admits a left adjoint, making the diagram on the right commute
\begin{cd}
	\CAlg_R^{\wedge a \tf} \ar[r, hook] \ar[d, hook] & \CAlg_R^{\wedge a} \ar[d, hook] &&
	\CAlg_R^a \ar[r, "H^0(-^\wedge)"] \ar[d, "(-)^{a \tf}", swap] & \CAlg_R^{\wedge a} \ar[d, "(-)^{\wedge a \tf}"] \\
	\CAlg_R^{a \tf} \ar[r, hook] & \CAlg_R^a &&
	\CAlg_R^{a \tf} \ar[r, "-^\wedge"]& \CAlg_R^{\wedge a \tf}
\end{cd}
\end{prop}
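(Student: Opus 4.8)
The plan is to run the argument of Proposition \ref{torsion_free_algebras} a second time, now in the almost setting, feeding it the adjunction $(-)^a\dashv\tau^{\le 0}j_*$ (and its completed variant $(-)^{\wedge a}\dashv\tau^{\le 0}j_*^\wedge$) of Proposition \ref{abelian_j*_sym_monoidal}, the full faithfulness of $\tau^{\le 0}j_*$ and $\tau^{\le 0}j_*^\wedge$ from Lemma \ref{abelian_j*_fully_faithful}, and the adjunction $H^0(-^\wedge)\dashv(\CAlg_R^{\wedge a}\hookrightarrow\CAlg_R^a)$ recorded at the start of this section. The only genuinely new verifications concern how the $\varpi$-torsion-free quotient and the $\varpi$-completion interact with almostification, and these rest on the $\varpi$-torsion-freeness of $R$ (which makes $(\varpi)_{\perfd}$ a flat $R$-module, Lemma \ref{almost_j!_t_exact}) and on the description of almost elements in Notation \ref{almost_elements}.

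First I would check that the two candidate functors are well defined. For $A\in\CAlg_R^a$ the underlying algebra $\tau^{\le 0}j_*A\in\CAlg_R$ has a $\varpi$-torsion-free quotient $(\tau^{\le 0}j_*A)^{\tf}\in\CAlg_R^{\tf}$ by Proposition \ref{torsion_free_algebras}, and applying $(-)^a$ yields an object of $\CAlg_R^a$ whose underlying module, by Notation \ref{almost_elements}, is $\uHom_R((\varpi)_{\perfd},(\tau^{\le 0}j_*A)^{\tf})\subset(\tau^{\le 0}j_*A)^{\tf}[\tfrac{1}{\varpi}]$, hence $\varpi$-torsion free; so $[(\tau^{\le 0}j_*A)^{\tf}]^a\in\CAlg_R^{a\tf}$. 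The completed case is identical once one also notes that $\uHom_R((\varpi)_{\perfd},-)$ preserves derived $\varpi$-completeness (Construction \ref{abelian_internal_hom}) and that $(-)^{\wedge\tf}$ outputs $\varpi$-complete $\varpi$-torsion-free algebras, so that $[(\tau^{\le 0}j_*^\wedge A)^{\wedge\tf}]^a\in\CAlg_R^{\wedge a\tf}$.

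Next I would establish the adjunctions by a chain of natural bijections. For $A\in\CAlg_R^a$ and $C\in\CAlg_R^{a\tf}$: using fullness of $\CAlg_R^{a\tf}\subset\CAlg_R^a$, then $(-)^a\dashv\tau^{\le 0}j_*$, then the fact that $\tau^{\le 0}j_*C$ is $\varpi$-torsion free together with $(-)^{\tf}\dashv(\CAlg_R^{\tf}\hookrightarrow\CAlg_R)$ from Proposition \ref{torsion_free_algebras}, and finally full faithfulness of $\tau^{\le 0}j_*$, one obtains $\Hom_{\CAlg_R^{a\tf}}([(\tau^{\le 0}j_*A)^{\tf}]^a,C)\simeq\Hom_{\CAlg_R^a}(A,C)$; the same computation with $\tau^{\le 0}j_*^\wedge$, $(-)^{\wedge\tf}$ and $(-)^{\wedge a}$ in place of their uncompleted counterparts gives the second adjunction. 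For the ``moreover'' clause, the left-hand square is a square of full-subcategory inclusions, so it commutes, and all four of its legs admit left adjoints: the two verticals by the first part, the top one is $H^0(-^\wedge)$, and the bottom one is $(-)^\wedge$, which lands in $\CAlg_R^{\wedge a\tf}$ because for $\varpi$-torsion-free $A$ the derived and classical $\varpi$-completions agree, $A^\wedge$ stays $\varpi$-torsion free (Lemma \ref{completion_tf}) and almost (derived $\varpi$-completion of an almost complex is almost, Construction \ref{const_almost_comp_localization}), with adjointness inherited from $H^0(-^\wedge)$ since $H^0(A^\wedge)=A^\wedge$ there; commutativity of the right-hand square of left adjoints is then automatic by uniqueness of adjoints. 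The likeliest source of friction — more bookkeeping than obstacle — is keeping straight which ``underlying module'' functor ($\tau^{\le 0}j_*$ versus $j_!$) governs the relevant notion of $\varpi$-torsion-freeness and confirming compatibility of $(-)^{\tf}$ and $(-)^\wedge$ with the almostification functors, all of which ultimately reduces to flatness of $(\varpi)_{\perfd}$ over $R$.
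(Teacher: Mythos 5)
Your proposal is correct and follows essentially the same route as the paper: the paper verifies the universal property by factoring a map $\tau^{\le 0}j_*(A)\to\tau^{\le 0}j_*(S)$ first through the $\varpi$-torsion-free quotient (Proposition \ref{torsion_free_algebras}) and then through the almost elements $(-)_*$, which is exactly your chain of adjunctions $(-)^{\tf}\dashv\iota$ and $(-)^a\dashv\tau^{\le 0}j_*$, and it likewise obtains the completion adjoint on the torsion-free almost categories from Lemma \ref{completion_tf} together with Construction \ref{const_almost_comp_localization}. Your explicit torsion-freeness check on $[(\tau^{\le 0}j_*A)^{\tf}]_*$ and the adjoint-uniqueness argument for the commuting square are only spelled out more than in the paper, not different in substance.
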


\begin{proof} During Construction \ref{const_ab_almost_monoidal_str} we showed that the inclusion $\CAlg_R^{\wedge a} \hookrightarrow \CAlg_R^{a}$ admits a left adjoint given by $H^0(-^\wedge)$. To show that the inclusion $\CAlg_R^{a \tf} \hookrightarrow \CAlg_R^{a}$ admits a left adjoint, we realize both of them as subcategories of $\CAlg_R$ via the functor $\tau^{\le 0} j_*: \CAlg_R^a \hookrightarrow \CAlg_R$. Its clear that for any morphism $\tau^{\le 0} j_* (A) \rightarrow \tau^{\le 0} j_* (S)$, where $\tau^{\le 0} j_* (S)$ is $\varpi$-torsion free factors as
\begin{equation*}
	\tau^{\le 0} j_* (A) \longrightarrow \tau^{\le 0} j_* (A)/(\tau^{\le 0} j_* (A)[\varpi^\infty]) \longrightarrow \tau^{\le 0} j_* (S)
\end{equation*}
However, since $\tau^{\le 0} j_* (A)/(\tau^{\le 0} j_* (A)[\varpi^\infty])$ need no be in the essential image of $\tau^{\le 0} j_*$ there is a further factorization
\begin{equation*}
	\tau^{\le 0} j_* (A) \longrightarrow \Big ( \tau^{\le 0} j_* (A)/(\tau^{\le 0} j_* (A)[\varpi^\infty]) \Big )_* \longrightarrow \tau^{\le 0} j_* (S)
\end{equation*}
where $(-)_*$ is the unit of the adjunction, introduced in \ref{almost_elements}. This shows that $(-)^{a \tf}$ is the left adjoint to the inclusion $\CAlg_R^{a \tf} \hookrightarrow \CAlg_R^{a}$. The argument to show that the inclusion $\CAlg_R^{\wedge a \tf} \hookrightarrow \CAlg_R^{\wedge a}$ admits a left adjoint is completely symmetrical to the argument we just described for $\CAlg_R^{a \tf} \hookrightarrow \CAlg_R^{a}$; but in this case we realize the categories $\CAlg_R^{\wedge a \tf} \hookrightarrow \CAlg_R^{\wedge a}$ as full subcategories of $\CAlg_R^{\wedge}$ via the functor $\tau^{\le 0} j_*^\wedge: \CAlg_R^{\wedge a} \hookrightarrow \CAlg_R^\wedge$.

Finally, we show that the inclusion $\CAlg_R^{\wedge a \tf} \hookrightarrow \CAlg_R^{a \tf}$ admits a left adjoint given by $\varpi$-completion. We realize the category $\CAlg_R^{a \tf} \subset \CAlg_R^a$ as a subcategory of $\CAlg_R$ via the inclusion $\tau^{\le 0} j_*$, and the category $\CAlg_R^{\wedge a \tf} \subset \CAlg_R^{\wedge a}$ as a subcategory of $\CAlg_R^{\wedge} \subset \CAlg_R$ via the functor $\tau^{\le 0} j_*^{\wedge}$. Fix an object $A$ in $\CAlg_R^{a \tf} \subset \CAlg_R$, we showed in Construction \ref{const_almost_comp_localization} that $A^\wedge$ will be in contained in $\CAlg_R^{\wedge a} \subset \CAlg_R$, and moreover by Lemma \ref{completion_tf} we learn that $A^\wedge$ is $\varpi$-torsion free. Hence, $A^\wedge$ is contained in $\CAlg_R^{\wedge a \tf} \subset \CAlg_R$, proving that $\varpi$-completion is the left adjoint to the inclusion $\CAlg_R^{\wedge a \tf} \hookrightarrow \CAlg_R^{a \tf}$.
\end{proof}

\begin{prop}\label{almost_funct_tf_alg} Let $R$ is an integral perfectoid ring which is $\varpi$-complete with respect to some $\varpi \in R$ where $\varpi^p$ divides $p$, and such that $R$ is $\varpi$-torsion free. Then, the canonical inclusions
\begin{equation*}
	\tau^{\le 0} j_*: \CAlg_R^{a \tf} \hookrightarrow \CAlg_R^{\tf} \qquad \tau^{\le 0} j_*^\wedge: \CAlg_R^{\wedge a \tf} \hookrightarrow \CAlg_R^{\wedge \tf}
\end{equation*}
introduced in Definition \ref{defn_comm_alg_almost_tf} admit left adjoints
\begin{align*}
	&(-)^a: \CAlg_R^{\tf} \rightarrow \CAlg_R^{a \tf} \qquad A \mapsto A^a\\
	&(-)^{\wedge a}: \CAlg_R^{\wedge \tf} \rightarrow \CAlg_R^{\wedge a \tf} \qquad A \mapsto A^{a}
\end{align*}
In particular, this implies that every functor on the left hand side diagram admits a left adjoint, making the diagram on the right commute
\begin{cd}
	\CAlg_R^{\wedge a \tf} \ar[r, hook]  \ar[d, hook, "\tau^{\le 0} j_*", swap] & \CAlg_R^{a \tf} \ar[d, hook, "\tau^{\le 0} j_*^{\wedge}"] &&
	\CAlg_R^{\tf} \ar[r, "(-)^a"] \ar[d, "-^\wedge", swap] & \CAlg_R^{a \tf} \ar[d, "-^\wedge"]\\
	\CAlg_R^{\wedge \tf} \ar[r, hook] & \CAlg_R^{\tf} &&
	\CAlg_R^{\wedge \tf} \ar[r, "(-)^{a}"] & \CAlg_R^{\wedge a \tf}
\end{cd}
This provides us with a torsion free analog to the diagrams introduced at the  beginning of this section.
\end{prop}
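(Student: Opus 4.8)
The plan is to mimic the argument already used in Proposition \ref{tf_almost_algebras}, where the existence of the left adjoints $(-)^{a\tf}$ and $(-)^{\wedge a\tf}$ was established, and to observe that the present statement is essentially a restriction of those constructions to the torsion-free world. Concretely, I would first check that the functor $(-)^a\colon\CAlg_R\to\CAlg_R^a$ of Proposition \ref{abelian_j*_sym_monoidal} restricts to a functor $\CAlg_R^{\tf}\to\CAlg_R^{a\tf}$: if $A$ is a $\varpi$-torsion free commutative $R$-algebra, then $A^a$ is again $\varpi$-torsion free, since the unit $A\to\tau^{\le 0}j_* A^a\simeq\uHom_R((\varpi)_{\perfd},A)=A_*\subset A[1/\varpi]$ exhibits $A_*$ as a submodule of $A[1/\varpi]$ (using Notation \ref{almost_elements}), and a submodule of a $\varpi$-torsion free module is $\varpi$-torsion free; hence $A^a\in\CAlg_R^{a\tf}$. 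The same computation, applied in $\Mod_{R,\varpi}^\wedge$ and using Lemma \ref{abelian_almost_unit_internal_hom}, shows $(-)^{\wedge a}$ carries $\CAlg_R^{\wedge\tf}$ into $\CAlg_R^{\wedge a\tf}$.

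Next I would verify the adjunctions. For $(-)^a\colon\CAlg_R^{\tf}\to\CAlg_R^{a\tf}$ with right adjoint the inclusion $\tau^{\le 0}j_*$, it suffices to note that for $A\in\CAlg_R^{\tf}$ and $S\in\CAlg_R^{a\tf}$ the natural map $\Hom_{\CAlg_R^{a\tf}}(A^a,S)\to\Hom_{\CAlg_R^{\tf}}(A,\tau^{\le 0}j_*S)$ is the restriction of the already-established adjunction isomorphism $\Hom_{\CAlg_R^a}(A^a,S)\simeq\Hom_{\CAlg_R}(A,\tau^{\le 0}j_*S)$ of Proposition \ref{abelian_j*_sym_monoidal}, since all four categories are full subcategories of $\CAlg_R$ (resp.\ $\CAlg_R^\wedge$) and the relevant objects lie in the appropriate subcategories by the previous paragraph. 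The completed case $(-)^{\wedge a}\colon\CAlg_R^{\wedge\tf}\to\CAlg_R^{\wedge a\tf}$ is identical, working inside $\CAlg_R^\wedge$ and using the companion adjunction $\Hom_{\CAlg_R^{\wedge a}}(A^{\wedge a},S)\simeq\Hom_{\CAlg_R^{\wedge}}(A,\tau^{\le 0}j_*^\wedge S)$.

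For the commuting square of left adjoints on the right, I would argue that all four functors $(-)^a$, $(-)^{\wedge a}$, $H^0(-^\wedge)$ (on the $\tf$-categories, where it coincides with classical $\varpi$-completion by Lemma \ref{completion_tf}), and $-^\wedge$ are left adjoints, and that the corresponding square of right-adjoint inclusions
\begin{cd}
	\CAlg_R^{\wedge a\tf} \ar[r, hook]\ar[d, hook] & \CAlg_R^{a\tf}\ar[d, hook] \\
	\CAlg_R^{\wedge\tf}\ar[r, hook] & \CAlg_R^{\tf}
\end{cd}
obviously commutes; by uniqueness of adjoints the square of left adjoints commutes as well. This is the same bookkeeping device used throughout Section \ref{sect_almost_math}.

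The main obstacle, such as it is, is the torsion-freeness verification: one must be careful that $A^a$ (i.e.\ $A_*=\uHom_R((\varpi)_{\perfd},A)$) is genuinely $\varpi$-torsion free when $A$ is, and not merely almost so. This works because the present section assumes $R$ itself is $\varpi$-torsion free, so $(\varpi)_{\perfd}$ is a flat $R$-module (as shown in the proof of Lemma \ref{almost_j!_t_exact}), the functor $-\otimes_R(\varpi)_{\perfd}$ is exact, and the description $A_*\subset A[1/\varpi]$ from Notation \ref{almost_elements} is available. Everything else is a matter of transporting the already-proven adjunctions of Propositions \ref{torsion_free_algebras}, \ref{tf_almost_algebras}, and \ref{abelian_j*_sym_monoidal} along full-subcategory inclusions, which is routine.
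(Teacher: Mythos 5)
Your proposal is correct and follows essentially the same route as the paper: the key verification in both is that for torsion-free $A$ the almost elements $A_*=\uHom_R((\varpi)_{\perfd},A)\subset A[\frac{1}{\varpi}]$ remain $\varpi$-torsion free (and, in the completed case, $\varpi$-complete via Construction \ref{abelian_internal_hom}), after which the adjunctions of Proposition \ref{abelian_j*_sym_monoidal} restrict and the square of left adjoints commutes by uniqueness of adjoints. No gaps to flag.
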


\begin{proof} In order to show that the inclusion $\CAlg_R^{a \tf} \hookrightarrow \CAlg_R^{\tf}$ admits a left adjoint given by $(-)^a$ it suffices to show that for any element $A$ in $\CAlg_R^{a \tf} \subset \CAlg_R^{\tf} \CAlg_R$ the object $A_* := \uHom_R ({(\varpi)_{\perfd}, A})$ in $\CAlg_R^a \subset \CAlg_R$ is $\varpi$-torsion free. Since $A$ is $\varpi$-torsion free then $A_*$ can be identified with the subring of $A[1/\varpi]$ where $a \in A[1/\varpi]$ belongs to $A_*$ if $\varepsilon a \in A$ for all $\varepsilon \in (\varpi)_{\perfd}$, making it clear that $A_*$ is $\varpi$-torsion free.

On the other hand, to show that the inclusion $\CAlg_R^{\wedge a \tf} \hookrightarrow \CAlg_R^{\wedge \tf}$ admits a left adjoint given by $(-)^a$ it suffices to show that for any element $A \in \CAlg_R^{\wedge a \tf} \subset \CAlg_R^{\wedge \tf}$ the object $A_*:= \uHom_R ({(\varpi)_{\perfd}, A})$ is $\varpi$-complete and $\varpi$-torsion free. We already argued in the previous paragraph that if $A$ is $\varpi$-torsion free then so is $A_*$. Moreover, we showed in \ref{abelian_internal_hom} that if $A$ is derived $\varpi$-complete then so is $A_*$; and since $A$ is $\varpi$-torsion free being derived $\varpi$-complete is equivalent to being classically $\varpi$-complete.
\end{proof}

\begin{corollary}\label{pushout_in_comp-tf-a} Let $A \leftarrow D \rightarrow C$ be a pair of morphisms in $\CAlg_R^{\wedge a \tf}$. Then, the pushout can be described by the following sequence of operations
\begin{cd}
	A \otimes_D C \ar[r, "(-)^{\tf}"] & (A \otimes_D C)^{\tf} \ar[r, "-^\wedge"] & (A \otimes_D C)^{\tf, \wedge} \ar[r, "(-)^{a}"] & (A \otimes_D C)^{\tf, \wedge, a}
\end{cd}
\end{corollary}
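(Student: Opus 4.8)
The plan is to assemble the description of the pushout in $\CAlg_R^{\wedge a \tf}$ from the chain of adjunctions already established in Propositions \ref{torsion_free_algebras}, \ref{tf_almost_algebras}, and \ref{almost_funct_tf_alg}. The key observation is the standard categorical fact that a pushout in a full subcategory $\cD \subseteq \cC$ which is reflective (i.e.\ the inclusion admits a left adjoint $L$) is computed by taking the pushout in $\cC$ and then applying $L$; this follows because $L$ preserves colimits, being a left adjoint, so $L$ of the $\cC$-pushout of the images is the $\cD$-pushout. Here the relevant ambient category is $\CAlg_R$, where pushouts along $A \leftarrow D \rightarrow C$ are the ordinary relative tensor product $A \otimes_D C$ by \cite[Proposition 3.2.4.10]{lurieHA}, as recalled at the beginning of the section.

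First I would observe that $\CAlg_R^{\wedge a \tf}$ is a reflective subcategory of $\CAlg_R$, with reflector given by the composite of the left adjoints constructed above. Concretely, one chases the commutative square of left adjoints in Proposition \ref{tf_almost_algebras} together with Proposition \ref{torsion_free_algebras}: passing from $\CAlg_R$ to $\CAlg_R^{\wedge a \tf}$ one can go via $(-)^{\tf}$ to land in $\CAlg_R^{\tf}$, then $\varpi$-complete to land in $\CAlg_R^{\wedge \tf}$, then apply $(-)^a$ (the left adjoint of $\tau^{\le 0} j_*^\wedge: \CAlg_R^{\wedge a \tf} \hookrightarrow \CAlg_R^{\wedge \tf}$ from Proposition \ref{almost_funct_tf_alg}) to land in $\CAlg_R^{\wedge a \tf}$. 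Each of these three functors is a left adjoint, hence the composite $(-)^{\tf,\wedge,a}: \CAlg_R \rightarrow \CAlg_R^{\wedge a \tf}$ is a left adjoint to the full inclusion, and therefore preserves all colimits, in particular pushouts.

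Next I would apply this to the specific diagram: given $A \leftarrow D \rightarrow C$ in $\CAlg_R^{\wedge a \tf}$, their images in $\CAlg_R$ have pushout $A \otimes_D C$, so applying the reflector gives that the pushout in $\CAlg_R^{\wedge a \tf}$ is $(A \otimes_D C)^{\tf,\wedge,a}$, which is exactly the sequence of operations in the statement. A small point to verify is that the composite of the three left adjoints does indeed coincide, on objects, with ``first kill $\varpi$-torsion, then classically $\varpi$-complete, then pass to the almost category'' — this is immediate from the explicit descriptions of each left adjoint given in the three propositions cited, noting that on a $\varpi$-torsion free algebra the classical and derived $\varpi$-completions agree (Lemma \ref{completion_tf}), so that the intermediate stages are well-defined.

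I do not anticipate a serious obstacle here: the result is essentially bookkeeping on top of the adjunction machinery built in the preceding subsections. The only mild care needed is to make sure the composite of left adjoints is taken in a consistent order and that no subtlety arises from the fact that $\tau^{\le 0} j_*$ is only \emph{left} exact (not exact) as a functor of abelian categories — but since we only use that $(-)^a$, $(-)^{\wedge a}$, $(-)^{\tf}$, $(-)^{\wedge \tf}$ are \emph{left} adjoints (and hence colimit-preserving), their left-exactness plays no role in the argument. The cleanest writeup is: state the general reflective-subcategory principle, identify the reflector as the displayed composite, and conclude.
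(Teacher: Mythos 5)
Your proposal is correct and follows essentially the same route as the paper: the paper's proof is precisely that the pushout in $\CAlg_R$ is $A \otimes_D C$ and that the chain of inclusions $\CAlg_R^{\wedge a \tf} \hookrightarrow \CAlg_R^{\wedge \tf} \hookrightarrow \CAlg_R^{\tf} \hookrightarrow \CAlg_R$ admits left adjoints, so the reflector $(-)^{\tf, \wedge, a}$ computes the pushout in the subcategory. Your added remarks on the order of the composite and on left-exactness being irrelevant are fine but not needed beyond what the paper records.
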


\begin{proof} Recall that the pushout of $A \leftarrow D \rightarrow C$ computed in $\CAlg_R$ can be canonically identified with $A \otimes_D C$. Therefore, the result follows from the fact that all the following morphisms
\begin{equation*}
	\CAlg_R^{\wedge a \tf} \hookrightarrow \CAlg_R^{\wedge \tf} \hookrightarrow \CAlg_R^{\tf} \hookrightarrow \CAlg_R
\end{equation*}
admit left adjoints, as we have established in this section.
\end{proof}

\subsection{\texorpdfstring{$p$}{p}-integrally closed algebras}

Throughout this section $R$ is an integral perfectoid ring which is $\varpi$-complete with respect to some $\varpi \in R$ where $\varpi^p$ divides $p$, and such that $R$ is $\varpi$-torsion free.

\begin{defn} Recall from \ref{p_int_closed_defn} that a ring $A$ with a non-zero divisor $\varpi \in A$ is said to be $p$-integrally closed in $A[1/\varpi]$ if every $a \in A[1/\varpi]$ where $a^p \in A$ satisfies $a \in A$.

The $p$-integral closure of $A$ in $A[1/\varpi]$, denoted by $A^{\pic}$, is constructed as $\cup _{n \ge 0} A_n \subset A[1/\varpi]$, where $A_0 = A$ and $A_{n+1} \subset A[1/\varpi]$ is the $A_n$-subalgebra generated by all the $a \in A[1/\varpi]$ such that $a^p \in A_n$; it is the smallest $p$-integrally closed subring of $A[1/\varpi]$ containing $A$. Clearly, the $p$-integral closure of $A \hookrightarrow A[1/\varpi]$ is contained in the integral closure.
\end{defn}

\begin{lemma}\label{stab_p_int_closure} Let $A$ be a $\varpi$-torsion free $R$-algebra. Assume that $A$ is $p$-integrally closed with respect to $A \subset A[1/\varpi]$. Then,
\begin{enumerate}[(1)]
	\item The $\varpi$-completion of $A$, denoted by $A^\wedge$, is $p$-integrally closed with respect to $A^\wedge \subset A^\wedge [1/\varpi]$.
	\item The almost elements of $A$, denoted by $A_*$ (\ref{almost_elements}), is $p$-integrally closed with respect to $A_* \subset A[1/\varpi]$.
\end{enumerate}
\end{lemma}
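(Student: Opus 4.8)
The plan is to prove the two parts separately, reducing each to the characterization of $p$-integral closedness via Frobenius injectivity established in Lemma \ref{p_int_closed_frob}, which says that for a ring $B$ with a non-zero divisor $\varpi$ satisfying $\varpi^p \mid p$, the ring $B$ is $p$-integrally closed in $B[1/\varpi]$ if and only if the map $\varphi\colon B/\varpi \to B/\varpi^p$, $a \mapsto a^p$, is injective. For part (1), first I would note that $A^\wedge$ is $\varpi$-torsion free by Lemma \ref{completion_tf}, so $\varpi$ remains a non-zero divisor there, and $\varpi^p \mid p$ in $A^\wedge$ since it already does in $R$. Then I would observe that $A^\wedge/\varpi^n = A/\varpi^n$ for all $n$ because completion does not change the quotients by powers of $\varpi$. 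Hence the Frobenius map $\varphi\colon A^\wedge/\varpi \to A^\wedge/\varpi^p$ is identified with $\varphi\colon A/\varpi \to A/\varpi^p$, which is injective by Lemma \ref{p_int_closed_frob} applied to the hypothesis on $A$; applying Lemma \ref{p_int_closed_frob} in the other direction gives that $A^\wedge$ is $p$-integrally closed in $A^\wedge[1/\varpi]$.

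For part (2), recall from Notation \ref{almost_elements} that since $A$ is $\varpi$-torsion free, $A_*$ is the submodule of $A[1/\varpi]$ consisting of those $x$ with $\varepsilon x \in A$ for all $\varepsilon \in (\varpi)_{\perfd}$; in particular $A \subseteq A_* \subseteq A[1/\varpi]$ and $A_*[1/\varpi] = A[1/\varpi]$, and $A_*$ is $\varpi$-torsion free (being a submodule of $A[1/\varpi]$). I would argue directly: suppose $a \in A_*[1/\varpi] = A[1/\varpi]$ satisfies $a^p \in A_*$. Then for every $\varepsilon \in (\varpi)_{\perfd}$ we have $\varepsilon a^p \in A$. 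Since $(\varpi)_{\perfd}$ is, up to a unit, the ideal $(\varpi^{1/p^\infty})$ — this is the identification recorded after the definition of $(\varpi)_{\perfd}$ in Section \ref{sect_almost_math}, using that integral perfectoid rings are reduced — the element $\varepsilon$ may be taken of the form $\varpi^{1/p^n}$. Writing $\varepsilon = \delta^p$ with $\delta = \varpi^{1/p^{n+1}} \in (\varpi)_{\perfd}$, we get $(\delta a)^p = \delta^p a^p = \varepsilon a^p \in A$; since $A$ is $p$-integrally closed and $\delta a \in A[1/\varpi]$, this forces $\delta a \in A$. As $n$ was arbitrary and every element of $(\varpi)_{\perfd}$ is (up to a unit) such a $\delta$, this shows $\varepsilon' a \in A$ for all $\varepsilon' \in (\varpi)_{\perfd}$, i.e. $a \in A_*$. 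Hence $A_*$ is $p$-integrally closed in $A[1/\varpi]$.

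I expect the main subtlety to be in part (2), specifically the bookkeeping around the ideal $(\varpi)_{\perfd}$: one must be careful that $(\varpi)_{\perfd}$ is generated by the elements $\varpi^{1/p^n}$ up to units (so that the condition "$\varepsilon x \in A$ for all $\varepsilon \in (\varpi)_{\perfd}$" is equivalent to "$\varpi^{1/p^n} x \in A$ for all $n$"), and that each such generator is a $p$-th power of another generator of the same ideal — this is what lets the $p$-integral closedness of $A$ be applied. An alternative, perhaps cleaner, route for part (2) would be to use that $A_*$ is the almost-element functor applied to $A$ and that $A^a = (A_*)^a$, combined with the fact that $p$-integral closedness can be checked after passing to $(R/\varpi)_{\perfd}$-local data; but the direct argument above is more transparent and avoids invoking derived machinery. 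Part (1) is routine given Lemma \ref{completion_tf} and Lemma \ref{p_int_closed_frob}.
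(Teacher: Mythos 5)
Your proof is correct. Note that the paper does not argue this lemma itself: it simply cites \cite[Lemma 5.1.1]{bhattlecture_perfectoid}, so there is no in-text proof to match against; your write-up supplies a self-contained argument that is essentially the standard one. For (1), the reduction to Lemma \ref{p_int_closed_frob} is legitimate: $\varpi$ stays a non-zero divisor in $A^\wedge$ by Lemma \ref{completion_tf}, $\varpi^p \mid p$ is inherited from $R$, and the identification $A^\wedge/\varpi^n A^\wedge \cong A/\varpi^n A$ holds for classical completion along a principal (finitely generated) ideal, so the two Frobenius maps coincide and both directions of the criterion do the work. For (2), the bookkeeping is the right one, with one small imprecision: it is not true that every element of $(\varpi)_{\perfd}$ is a unit multiple of some $\varpi'^{1/p^n}$; rather, $(\varpi)_{\perfd}=\bigcup_n(\varpi'^{1/p^n})$ is an increasing union of principal ideals, so every element is an $R$-multiple of some $\varpi'^{1/p^n}$ — which is exactly what is needed, and which you state correctly in your closing paragraph, so this is a wording slip rather than a gap. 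With that phrasing fixed, both parts stand.
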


\begin{proof} \cite[Lemma 5.1.1]{bhattlecture_perfectoid}
\end{proof}

\begin{defn} Let $R$ be an integral perfectoid ring which is $\varpi$-complete with respect to some $\varpi$ where $\varpi^p$ divides $p$, and such that $R$ is $\varpi$-torsion free.
\begin{enumerate}[(1)]
	\item The category $\CAlg_R^{\pic} \subset \CAlg_R^{\tf}$ of $R$-algebras $A$ which are $p$-integrally closed in $A[1/\varpi]$.
	\item The category $\CAlg_R^{\wedge \pic} \subset \CAlg_R^{\wedge \tf}$ of $\varpi$-complete $R$-algebras $A$ which are $p$-integrally closed in $A[1/\varpi]$.
	\item Regarding $\CAlg_R^{a \tf}$ as a full-subcategory of $\CAlg_R^{\tf}$ via the functor $\tau^{\le 0} j_*$, we define the almost category of $p$-integrally closed $R$-algebras as $\CAlg_R^{a \pic} := \CAlg_R^{a \tf} \cap \CAlg_R^{\pic}$.
	\item Regarding $\CAlg_R^{\wedge a \tf}$ as a full-subcategory of $\CAlg_R^{\wedge \tf}$ via the functor $\tau^{\le 0} j_*^\wedge$, we define the almost category of $\varpi$-complete $p$-integrally closed $R$-algebras as $\CAlg_R^{\wedge a \pic} := \CAlg_R^{\wedge a \tf} \cap \CAlg_R^{\wedge \pic}$.
\end{enumerate}
\end{defn}

\begin{prop}\label{pic_algebras} The fully faithful functors
\begin{equation*}
	\CAlg_R^{\pic} \hookrightarrow \CAlg_R^{\tf} \qquad \CAlg_R^{\wedge \pic} \hookrightarrow \CAlg_R^{\wedge \tf}
\end{equation*}
admit left adjoints
\begin{align*}
	&(-)^{\pic}: \CAlg_R^{\tf} \rightarrow \CAlg_R^{\pic} && A \mapsto A^{\pic} \\
	&(-)^{\wedge \pic}: \CAlg_R^{\wedge \tf} \rightarrow \CAlg_R^{\wedge \pic} && A \mapsto (A^{\pic})^\wedge
\end{align*}
Moreover, every functor on the left hand side diagram admits a left adjoint, making the diagram on the right commute
\begin{cd}
	\CAlg_R^{\wedge \pic} \ar[r, hook] \ar[d, hook] & \CAlg_R^{\wedge \tf} \ar[d, hook] &&
	\CAlg_R^{\tf} \ar[r, "-^\wedge"] \ar[d, "(-)^{\pic}", swap] & \CAlg_R^{\wedge \tf} \ar[d, "(-)^{\wedge \pic}"] \\
	\CAlg_R^{\pic} \ar[r, hook] & \CAlg_R^{\tf} &&
	\CAlg_R^{\pic} \ar[r, "-^\wedge"] & \CAlg_R^{\wedge \pic}
\end{cd}
\end{prop}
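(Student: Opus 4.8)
## Proof Plan

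The plan is to establish the existence of each left adjoint by exhibiting an explicit formula and verifying the universal property, then deduce the commutativity of the right-hand diagram formally from uniqueness of adjoints. The key inputs are Lemma \ref{stab_p_int_closure} (stability of $p$-integral closedness under $\varpi$-completion) and Proposition \ref{torsion_free_algebras} (the $\varpi$-completion left adjoint on $\varpi$-torsion-free algebras).

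First I would handle the functor $(-)^{\pic}: \CAlg_R^{\tf} \rightarrow \CAlg_R^{\pic}$. Given a $\varpi$-torsion-free $R$-algebra $A$, form its $p$-integral closure $A^{\pic} = \cup_{n \ge 0} A_n \subset A[1/\varpi]$ as in the definition. This is again $\varpi$-torsion free (being a subalgebra of $A[1/\varpi]$, which is $\varpi$-torsion free since $A$ is) and $p$-integrally closed in $A[1/\varpi]$ by construction; moreover $A^{\pic}[1/\varpi] = A[1/\varpi]$, so it genuinely lies in $\CAlg_R^{\pic}$. For the universal property: any morphism $A \rightarrow C$ with $C \in \CAlg_R^{\pic}$ factors uniquely through $A^{\pic}$. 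Uniqueness is immediate since $A^{\pic} \hookrightarrow A[1/\varpi]$ is injective; for existence one inducts on $n$, showing $A_n \rightarrow C$ extends to $A_{n+1} \rightarrow C$ using that each new generator $a$ has $a^p \in A_n$, its image in $C[1/\varpi]$ satisfies $a^p \in C$ (image of $A_n$), hence $a \in C$ by $p$-integral closedness of $C$; here one uses that $C$ is $\varpi$-torsion free so $C \hookrightarrow C[1/\varpi]$ and the $p$-th root is unique. Taking the colimit gives $A^{\pic} \rightarrow C$.

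Next, for $(-)^{\wedge \pic}: \CAlg_R^{\wedge \tf} \rightarrow \CAlg_R^{\wedge \pic}$, I would set $A \mapsto (A^{\pic})^\wedge$, where $(-)^\wedge$ is classical (equivalently derived, since everything is $\varpi$-torsion free) $\varpi$-completion. By Lemma \ref{stab_p_int_closure}(1), $(A^{\pic})^\wedge$ is $p$-integrally closed in its own $[1/\varpi]$; by Lemma \ref{completion_tf} it is $\varpi$-torsion free; and it is $\varpi$-complete by construction, so it lies in $\CAlg_R^{\wedge \pic}$. The universal property is the composite of two adjunctions: $\CAlg_R^{\wedge \tf} \hookrightarrow \CAlg_R^{\tf}$ has left adjoint $(-)^\wedge$ (Proposition \ref{torsion_free_algebras}), and $\CAlg_R^{\pic} \hookrightarrow \CAlg_R^{\tf}$ has left adjoint $(-)^{\pic}$ just constructed; one checks that $\CAlg_R^{\wedge \pic} = \CAlg_R^{\wedge \tf} \cap \CAlg_R^{\pic}$ inside $\CAlg_R^{\tf}$ and that the composite $(-)^{\wedge} \circ (-)^{\pic}$ lands in the intersection --- the mild point being that $(A^{\pic})^\wedge$ is still $p$-integrally closed, which is exactly Lemma \ref{stab_p_int_closure}(1). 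Equivalently one verifies directly: a map $A \rightarrow C$ with $C \in \CAlg_R^{\wedge \pic}$ factors through $A^{\pic}$ (by the previous paragraph), then through $(A^{\pic})^\wedge$ (as $C$ is $\varpi$-complete).

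Finally, for the commutativity of the right-hand square and the remaining left adjoints: the arrows $\CAlg_R^{\wedge \pic} \hookrightarrow \CAlg_R^{\wedge \tf}$ and $\CAlg_R^{\wedge \pic} \hookrightarrow \CAlg_R^{\pic}$ both have left adjoints --- the latter being $(-)^\wedge$, which preserves $p$-integral closedness by Lemma \ref{stab_p_int_closure}(1) and restricts to a functor $\CAlg_R^{\pic} \rightarrow \CAlg_R^{\wedge \pic}$. Commutativity $(A^\wedge)^{\wedge \pic} \simeq (A^{\pic})^\wedge$ for $A \in \CAlg_R^{\tf}$ follows because both are left adjoint to the same forgetful functor $\CAlg_R^{\wedge \pic} \hookrightarrow \CAlg_R^{\tf}$, and left adjoints are unique up to canonical isomorphism. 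I expect the main obstacle to be the careful bookkeeping in the universal-property induction for $(-)^{\pic}$ --- specifically ensuring at each stage that the extension $A_{n+1} \rightarrow C$ is well-defined and unique, which rests on $\varpi$-torsion-freeness of $C$ guaranteeing that $p$-th roots in $C[1/\varpi]$, when they land in $C$, are unique --- but this is routine once set up, and the genuinely substantive input (stability under completion) is quoted from Lemma \ref{stab_p_int_closure}.
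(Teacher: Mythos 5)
Your proposal is correct and follows essentially the same route as the paper: construct $(-)^{\pic}$ explicitly and check the universal property via the induced map $A[1/\varpi] \rightarrow C[1/\varpi]$ landing in $C$ by $p$-integral closedness, define $(-)^{\wedge\pic} = ((-)^{\pic})^\wedge$ using Lemma \ref{stab_p_int_closure} for stability under completion, and deduce the remaining adjoints and the commuting square formally (the paper is merely terser in the inductive step, and your worry about uniqueness of $p$-th roots is unnecessary since the factorization is just the restriction of the canonical map on $A[1/\varpi]$).
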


\begin{proof} First, we show that the inclusion $\CAlg_R^{\pic} \rightarrow \CAlg_R^{\tf}$ admits a left adjoint given by $(-)^{\pic}$. Consider a morphism $f: A \rightarrow S$ in $\CAlg_R^{\tf}$ and such that $S$ is $p$-integrally closed in $S[1/\varpi]$. We need to show that the morphism $f: A \rightarrow S$ factorizes uniquely as $A \rightarrow A^{\pic} \rightarrow S$. Let $a$ be an element of $A[1/\varpi]$ such that $a^p \in A$, it follows that $f(a) \in S[1/\varpi]$ is an element of $S$ by hypothesis; making it clear that we have a unique factorization $A \rightarrow A^{\pic} \rightarrow S$.

Next, we show that the inclusion $\CAlg_R^{\wedge \pic} \rightarrow \CAlg_R^{\wedge \tf}$ admits a left adjoint given by $(-)^{\wedge \pic}$. Given a morphism $A \rightarrow S$ in $\CAlg_R^{\wedge \tf}$ and such that $S$ is $p$-integrally closed with respect to $S \subset S[1/\varpi]$, the argument in the preceding paragraph shows that we have an essentially unique factorization $A \rightarrow A^{\pic} \rightarrow S$. However, we cannot guarantee that $A^{\pic}$ is $\varpi$-complete, so we obtain a further factorization $A \rightarrow A^{\pic} \rightarrow (A^{\pic})^\wedge \rightarrow S$. And by Lemma \ref{stab_p_int_closure} we can conclude that $(A^{\pic})^\wedge$ is $p$-integrally closed with respect to $A^{\pic, \wedge} \subset A^{\pic, \wedge}[1/\varpi]$, proving that $(-)^{\wedge \pic}$ is indeed the left adjoint to the inclusion $\CAlg_R^{\wedge \pic} \rightarrow \CAlg_R^{\wedge \tf}$.

Finally, recall that we already showed in \ref{torsion_free_algebras} that the inclusion $\CAlg_R^{\wedge \tf} \hookrightarrow \CAlg_R^{\tf}$ admits a left adjoint given by $\varpi$-completion. And by Lemma \ref{stab_p_int_closure} we conclude that $\CAlg_R^{\wedge \pic} \hookrightarrow \CAlg_R^{\pic}$ also admits a left adjoint given by $\varpi$-completion. 
\end{proof}

\begin{prop}\label{pic_almost_algebras} The fully faithful functors
\begin{equation*}
	\CAlg_R^{a \pic} \hookrightarrow \CAlg_R^{a \tf} \qquad \CAlg_R^{\wedge a \pic} \hookrightarrow \CAlg_R^{\wedge a \tf}
\end{equation*}
admit left adjoints
\begin{align*}
	& (-)^{a \pic}: \CAlg_R^{a \tf} \rightarrow \CAlg_R^{a \pic} && A \mapsto \Big [ (\tau^{\le 0} j_* A)^{\pic} \Big ]^a \\
	& (-)^{\wedge a \pic}: \CAlg_R^{\wedge a \tf} \rightarrow \CAlg_R^{\wedge a \pic} && A \mapsto \Big[ ((\tau^{\le 0} j_*^\wedge A)^{\pic})^a \Big ]^\wedge \simeq \Big[ ((\tau^{\le 0} j_*^\wedge A)^{\pic})^\wedge \Big ]^a
\end{align*}
Moreover, every functor on the left hand side admits a left adjoint, making the diagram on the right commute
\begin{cd}
	\CAlg_R^{\wedge a \pic} \ar[r, hook] \ar[d, hook] & \CAlg_R^{a \pic} \ar[d, hook] &&
	\CAlg_R^{a \tf} \ar[r, "-^\wedge"] \ar[d, "(-)^{a \pic}", swap] & \CAlg_R^{\wedge a \tf}  \ar[d, "(-)^{\wedge a \pic}"]\\
	\CAlg_R^{\wedge a \tf} \ar[r, hook] & \CAlg_R^{a \tf} &&
	\CAlg_R^{a \pic} \ar[r, "-^\wedge"] & \CAlg_R^{\wedge a \pic}
\end{cd}
\end{prop}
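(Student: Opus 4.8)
\emph{The plan is} to produce all four left adjoints by composing left adjoints already constructed — exactly as in the proofs of Propositions \ref{pic_algebras} and \ref{tf_almost_algebras} — and to use Lemma \ref{stab_p_int_closure} to check that each composite lands in the intended subcategory rather than merely in $\CAlg_R^{a\tf}$ or $\CAlg_R^{\wedge a\tf}$.

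First I would treat the inclusion $\CAlg_R^{a\pic}\hookrightarrow\CAlg_R^{a\tf}$. Realizing $\CAlg_R^{a\tf}$ as a full subcategory of $\CAlg_R^{\tf}$ via $\tau^{\le 0}j_*$, the candidate reflector sends $A$ to $\big[(\tau^{\le 0}j_* A)^{\pic}\big]^a$: first apply the $p$-integral closure functor $(-)^{\pic}$ of Proposition \ref{pic_algebras}, then the almost functor $(-)^a$ of Proposition \ref{almost_funct_tf_alg}. One checks first that this object lies in $\CAlg_R^{a\pic}$ — its image under $\tau^{\le 0}j_*$ is the module of almost elements $\big((\tau^{\le 0}j_* A)^{\pic}\big)_*$, which is $\varpi$-torsion free and, by Lemma \ref{stab_p_int_closure}(2), $p$-integrally closed — and then the universal property, obtained by chaining: full faithfulness of $\tau^{\le 0}j_*$; the $(-)^{\pic}$-adjunction of Proposition \ref{pic_algebras} applied to the $p$-integrally closed object $\tau^{\le 0}j_* S$; and the adjunction $(-)^a\dashv\tau^{\le 0}j_*$ of Proposition \ref{almost_funct_tf_alg}. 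The $\varpi$-complete case is identical, now working inside $\CAlg_R^{\wedge\tf}$ via $\tau^{\le 0}j_*^\wedge$ and using the $(-)^{\wedge\pic}$-adjunction of Proposition \ref{pic_algebras}; here Lemma \ref{stab_p_int_closure}(1) guarantees that $(\tau^{\le 0}j_*^\wedge A)^{\wedge\pic}=\big((\tau^{\le 0}j_*^\wedge A)^{\pic}\big)^\wedge$ is still $p$-integrally closed, and part (2) again handles its almost elements, so the composite lands in $\CAlg_R^{\wedge a\pic}$. The asserted identity $\big[((\tau^{\le 0}j_*^\wedge A)^{\pic})^a\big]^\wedge\simeq\big[((\tau^{\le 0}j_*^\wedge A)^{\pic})^\wedge\big]^a$ is just the commutativity of the right-hand square of Proposition \ref{almost_funct_tf_alg} (the functors $(-)^\wedge$ and $(-)^a$ commute on the $\varpi$-torsion free categories), applied to $(\tau^{\le 0}j_*^\wedge A)^{\pic}\in\CAlg_R^{\pic}\subset\CAlg_R^{\tf}$.

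For the two diagrams I would argue purely formally. The left-hand square of fully faithful inclusions commutes on the nose, so by uniqueness of adjoints the right-hand square of their left adjoints commutes as well: both $(-)^\wedge\circ(-)^{a\pic}$ and $(-)^{\wedge a\pic}\circ(-)^\wedge$ are the left adjoint of the composite inclusion $\CAlg_R^{\wedge a\pic}\hookrightarrow\CAlg_R^{a\tf}$. The one remaining left adjoint appearing in the diagram, $(-)^\wedge:\CAlg_R^{a\tf}\to\CAlg_R^{\wedge a\tf}$, is supplied by Proposition \ref{almost_funct_tf_alg}, while $(-)^{a\pic}$ and $(-)^{\wedge a\pic}$ were built above.

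I do not expect a genuine obstacle: the whole argument is a formal assembly of adjunctions. The only step requiring real input — and the only place the hypotheses on $R$ enter, via Lemma \ref{stab_p_int_closure} — is the verification that $p$-integral closedness survives passage to almost elements and to $\varpi$-completion; everything else is bookkeeping, keeping straight which of the many fully faithful inclusions is serving as an identification at each stage.
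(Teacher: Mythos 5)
Your approach matches the paper's for the two reflectors: both arguments realize the almost categories inside the non-almost ones via $\tau^{\le 0}j_*$ (resp.\ $\tau^{\le 0}j_*^\wedge$), factor a map $A \to S$ through $A^{\pic}$ (resp.\ $(A^{\pic})^\wedge$) using Proposition \ref{pic_algebras}, then through the almost elements using the $(-)^a \dashv \tau^{\le 0}j_*$ adjunction, and invoke Lemma \ref{stab_p_int_closure} to see that the resulting object is still $p$-integrally closed. That part is correct, including the identification of the two candidate formulas for $(-)^{\wedge a\pic}$ via Proposition \ref{almost_funct_tf_alg}.

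There is, however, a real gap in your final paragraph. You write that ``the one remaining left adjoint appearing in the diagram'' is $(-)^\wedge:\CAlg_R^{a\tf}\to\CAlg_R^{\wedge a\tf}$, but the right-hand square contains \emph{two} completion arrows: you have overlooked the bottom one, $(-)^\wedge:\CAlg_R^{a\pic}\to\CAlg_R^{\wedge a\pic}$, which is the left adjoint of the top inclusion $\CAlg_R^{\wedge a\pic}\hookrightarrow\CAlg_R^{a\pic}$ in the left square. Your ``uniqueness of adjoints'' argument then becomes circular as phrased: when you say that $(-)^\wedge\circ(-)^{a\pic}$ and $(-)^{\wedge a\pic}\circ(-)^\wedge$ agree, the first of those composites already presupposes that the missing left adjoint exists. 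You therefore still owe a proof that $\CAlg_R^{\wedge a\pic}\hookrightarrow\CAlg_R^{a\pic}$ is reflective and that the reflector is $\varpi$-completion. The paper does this directly: it checks that $\varpi$-completion preserves $p$-integral closedness (Lemma \ref{stab_p_int_closure}(1)) and that the essential image of $\tau^{\le 0}j_*$ is stable under $\varpi$-completion (Construction \ref{const_almost_comp_localization}). Alternatively, one can extract existence purely formally — since $\CAlg_R^{a\pic}\hookrightarrow\CAlg_R^{a\tf}$ is fully faithful and the composite $\CAlg_R^{\wedge a\pic}\hookrightarrow\CAlg_R^{a\tf}$ has the left adjoint $(-)^{\wedge a\pic}\circ(-)^\wedge$, its restriction along $\CAlg_R^{a\pic}\hookrightarrow\CAlg_R^{a\tf}$ is left adjoint to the top inclusion — and then use Lemma \ref{stab_p_int_closure}(1) to see that this restriction is precisely $\varpi$-completion. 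Either route is fine, but some verification is needed; ``by uniqueness of adjoints'' on its own does not supply it.
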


\begin{proof} First, let us remark that the isomorphism $\Big[ ((\tau^{\le 0} j_*^\wedge A)^{\pic})^a \Big ]^\wedge \simeq \Big[ ((\tau^{\le 0} j_*^\wedge A)^{\pic})^\wedge \Big ]^a$ follows from the commutativity of the diagrams in Proposition \ref{almost_funct_tf_alg}. To show that the fully faithful functor $\CAlg_R^{a \pic} \hookrightarrow \CAlg_R^{a \tf}$ admits a left adjoint given by $(-)^{a \pic}$, it suffices to show that for any morphism $A \rightarrow S$ in the essential image of $\tau^{\le 0} j_*: \CAlg_R^{a \tf} \hookrightarrow \CAlg_R^{\tf}$ there is an essentially unique factorization $A \rightarrow (A^{\pic})_* \rightarrow S$ whenever $S$ is $p$-integrally closed with respect to $S \subset S[1/\varpi]$. Indeed, we showed in Proposition \ref{pic_algebras} that there is an essentially unique factorization $A \rightarrow A^{\pic} \rightarrow S$; and since $S$ satisfies $S_* \simeq S$ by hypothesis, we have a factorization $A \rightarrow (A^{\pic})_* \rightarrow S$. Then, Lemma \ref{stab_p_int_closure} implies that $(A^{\pic})_* \subset A[1/\varpi]$ is $p$-integrally closed, showing that $(-)^{a \pic}$ is the left adjoint to the inclusion $\CAlg_R^{a \pic} \hookrightarrow \CAlg_R^{a \tf}$.
	
Next, to show that the fully faithful functor $\CAlg_R^{\wedge a \pic} \hookrightarrow \CAlg_R^{\wedge a \tf}$ admits a left adjoint given by $(-)^{\wedge a \pic}$, it suffices to show that for any morphism $A \rightarrow S$ in the essential image of $\tau^{\le 0} j_*^{\wedge}: \CAlg_R^{\wedge a \tf} \rightarrow \CAlg_R^{\wedge \tf}$, where $S$ is assumed to be $p$-integrally closed with respect to $S \subset S[1/\varpi]$, there exists an essentially unique factorization
\begin{equation*}
	A \rightarrow (A^{\pic})^\wedge \rightarrow (A^{\pic, \wedge})_* \rightarrow S
\end{equation*}
Indeed, since $S$ is $\varpi$-complete and $p$-integrally closed it follows from Proposition \ref{pic_algebras} that there exists an essentially unique factorization $A \rightarrow (A^{\pic})^\wedge \rightarrow S$. And since $S$ satisfies $S \simeq S_*$ it follows that we get the desired factorization.

Recall that we already showed in Proposition \ref{tf_almost_algebras} that the inclusion $\CAlg_R^{a \tf} \hookrightarrow \CAlg_R^{\wedge a \tf}$ admits a left adjoint given by $\varpi$-completion. Finally, we need to show that the fully faithful functor $\CAlg_R^{\wedge a \pic} \rightarrow \CAlg_R^{a \pic}$ admits a left adjoint given by $\varpi$-completion. This follows from the fact that $\varpi$-completing preserves being $p$-integrally closed (\ref{stab_p_int_closure}) and the fact that the essential image of $\tau^{\le 0} j_*: \CAlg_R^a \rightarrow \CAlg_R$ is stable under derived $\varpi$-completion, which follows from Construction \ref{const_almost_comp_localization}.
\end{proof}

\begin{prop}\label{almost_funct_pic_algebras} The fully faithful functors
\begin{equation*}
	\tau^{\le 0}j_*: \CAlg_R^{a \pic} \rightarrow \CAlg_R^{\pic} \qquad \tau^{\le 0} j_*^\wedge: \CAlg_R^{\wedge a \pic} \hookrightarrow \CAlg_R^{\wedge \pic}
\end{equation*}
admit left adjoints
\begin{align*}
	& (-)^a: \CAlg_R^{\pic} \rightarrow \CAlg_R^{a \pic} && A \mapsto A^a \\
	& (-)^a: \CAlg_R^{\wedge \pic} \rightarrow \CAlg_R^{\wedge a \pic} && A \mapsto A^a
\end{align*}
Moreover, every functor on the left hand side diagram admits a left adjoint, making the diagram on the right commute
\begin{cd}
	\CAlg_R^{\wedge a \pic} \ar[r, hook, "\tau^{\le 0} j_*"] \ar[d, hook] & \CAlg_R^{\wedge \pic} \ar[d, hook] &&
	\CAlg_R^{\pic} \ar[r, "(-)^a"] \ar[d, "-^\wedge", swap] & \CAlg_R^{a \pic} \ar[d, "-^\wedge"] \\
	\CAlg_R^{a \pic} \ar[r, hook, "\tau^{\le 0} j_*"] & \CAlg_R^{\pic} &&
	\CAlg_R^{\wedge \pic} \ar[r, "(-)^a"] & \CAlg_R^{\wedge a \pic}
\end{cd}
\end{prop}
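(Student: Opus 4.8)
The plan is to bootstrap from the $\varpi$-torsion free case already handled in Proposition~\ref{almost_funct_tf_alg}, the one genuinely non-formal input being Lemma~\ref{stab_p_int_closure}(2): passing to almost elements preserves $p$-integral closedness. Note first that since $\CAlg_R^{a\pic}$ is by construction the intersection $\CAlg_R^{a\tf}\cap\CAlg_R^{\pic}$ taken inside $\CAlg_R^{\tf}$, the functor $\tau^{\le 0}j_*\colon\CAlg_R^{a\pic}\hookrightarrow\CAlg_R^{\pic}$ is simply the inclusion of a full subcategory, hence fully faithful; likewise $\tau^{\le 0}j_*^\wedge\colon\CAlg_R^{\wedge a\pic}\hookrightarrow\CAlg_R^{\wedge\pic}$. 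So the only real content is producing the left adjoints and checking the two squares.

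For the left adjoint on $\CAlg_R^{\pic}$: given $A\in\CAlg_R^{\pic}$ I would set $A^a:=\uHom_R((\varpi)_{\perfd},A)$, which by Proposition~\ref{almost_funct_tf_alg} is the value of the left adjoint $(-)^a\colon\CAlg_R^{\tf}\to\CAlg_R^{a\tf}$ — recall that for the $\varpi$-torsion free $A$ this internal hom is exactly the submodule $A_*\subset A[1/\varpi]$ of almost elements (Notation~\ref{almost_elements}), and is again $\varpi$-torsion free. Then Lemma~\ref{stab_p_int_closure}(2) says precisely that $A_*$ is $p$-integrally closed in $A_*[1/\varpi]=A[1/\varpi]$, so $A^a\in\CAlg_R^{a\tf}\cap\CAlg_R^{\pic}=\CAlg_R^{a\pic}$. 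The universal property is then inherited for free: any morphism $A\to S$ with $S\in\CAlg_R^{a\pic}$ is in particular a morphism in $\CAlg_R^{\tf}$, it factors uniquely through $A\to A^a$ by Proposition~\ref{almost_funct_tf_alg}, and the factoring map automatically lands in the full subcategory $\CAlg_R^{a\pic}$. The $\varpi$-complete case is verbatim the same, invoking in addition that $\uHom_R((\varpi)_{\perfd},A)$ is derived $\varpi$-complete whenever $A$ is (Construction~\ref{abelian_internal_hom}) — and derived equals classical $\varpi$-completeness in the $\varpi$-torsion free setting — so that $A^a\in\CAlg_R^{\wedge a\pic}$, with the universal property inherited from the adjunction $(-)^{\wedge a}\dashv\tau^{\le 0}j_*^\wedge$ of Proposition~\ref{almost_funct_tf_alg}.

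Finally, the left-hand square of inclusions and $\tau^{\le 0}j_*$'s commutes on the nose (both composites are the inclusion $\CAlg_R^{\wedge a\pic}\hookrightarrow\CAlg_R^{\pic}$), and the four arrows of the right-hand square are exactly the left adjoints of the four arrows of the left-hand square: $(-)^a\dashv\tau^{\le 0}j_*$ just established, $(-)^\wedge\dashv$ (inclusion $\CAlg_R^{\wedge\pic}\hookrightarrow\CAlg_R^{\pic}$) from Proposition~\ref{pic_algebras}, and $(-)^\wedge\dashv$ (inclusion $\CAlg_R^{\wedge a\pic}\hookrightarrow\CAlg_R^{a\pic}$) from Proposition~\ref{pic_almost_algebras}; so commutativity of the right square follows from commutativity of the left one together with uniqueness of adjoints. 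I do not expect a serious obstacle here — once Lemma~\ref{stab_p_int_closure}(2) is in hand the argument is pure manipulation of adjoint functors — the only thing to watch being that $\CAlg_R^{a\pic}$ is being identified with an honest full subcategory of $\CAlg_R^{\pic}$, so that "restricting" the earlier adjunctions is literally legitimate rather than requiring a new comparison.
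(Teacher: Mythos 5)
Your proposal is correct and follows essentially the same route as the paper's proof: both produce the left adjoint via the almost elements $A_*=\uHom_R((\varpi)_{\perfd},A)$, invoke Lemma \ref{stab_p_int_closure}(2) for $p$-integral closedness and Construction \ref{abelian_internal_hom} for $\varpi$-completeness, and obtain the remaining arrows and commutativity from Propositions \ref{pic_algebras}, \ref{pic_almost_algebras} and the adjunction of Proposition \ref{almost_funct_tf_alg} (the paper phrases this as directly checking the universal factorization $A\to A_*\to S$, which is the same verification).
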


\begin{proof} We showed in Proposition \ref{pic_algebras} and \ref{pic_almost_algebras} that the inclusions $\CAlg_R^{\wedge \pic} \hookrightarrow \CAlg_R^{\pic}$ and $\CAlg_R^{\wedge a \pic} \hookrightarrow \CAlg_R^{a \pic}$ admit a left adjoint given by $\varpi$-completion. To show that the fully faithful functor $\tau^{\le 0} j_*: \CAlg_R^{a \pic} \hookrightarrow \CAlg_R^{\pic}$ admits a left adjoint given by $(-)^a$ it suffices to show that for any map $A \rightarrow S$ in $\CAlg_R^{\pic}$, where $S$ satisfies $S \simeq S_*$, there exists an essentially unique factorization $A \rightarrow A_* \rightarrow S$ such that $A_*$ is $p$-integrally closed with respect to $A[1/\varpi]$. The essentially unique factorization is clear, and the fact that $A_*$ is $p$-integrally closed follows from Lemma \ref{stab_p_int_closure}.

We proceed similarly to show that the fully faithful functor $\tau^{\le 0} j_*: \CAlg_R^{\wedge a \pic} \hookrightarrow \CAlg_R^{\wedge \pic}$ admits a left adjoint given by $(-)^a$. Let $A \rightarrow S$ be a morphism in $\CAlg_R^{\wedge \pic}$ such that $S$ satisfies $S \simeq S_*$, then we obtain an essentially unique factorization $A \rightarrow A_* \rightarrow S$. Hence, it suffices to show that $A_*$ is $p$-integrally closed with respect to $A_* \subset A[1/\varpi]$ and that $A_*$ is $\varpi$-complete. Lemma \ref{stab_p_int_closure} guarantees that $A_*$ is $p$-integrally closed with respect to $A_* \subset A[1/\varpi]$, and we showed in Construction \ref{abelian_internal_hom} that if $A$ is derived $\varpi$-complete then so is $A_* := \uHom_R ((\varpi)_{\perfd}, A)$.
\end{proof}

\subsection{Integrally closed algebras}

Throughout this section $R$ is an integral perfectoid ring which is $\varpi$-complete with respect to some $\varpi \in R$ where $\varpi^p$ divides $p$, and such that $R$ is $\varpi$-torsion free.

\begin{lemma}\label{stab_ic} Let $A$ be a $\varpi$-torsion free $R$-algebra. Assume that $A$ is integrally closed with respect to $A \subset A[1/\varpi]$. Then,
\begin{enumerate}[(1)]
	\item The $\varpi$-completion of $A$, denoted by $A^\wedge$, is integrally closed with respect to $A^\wedge \subset A^\wedge [1/\varpi]$.
	\item The almost elements of $A$, denoted by $A_*$ (\ref{almost_elements}), is integrally closed with respect to $A_* \subset A[1/\varpi]$.
\end{enumerate}
\end{lemma}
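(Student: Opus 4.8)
The plan is to reduce both statements to the analogous $p$-integral closedness results of Lemma \ref{stab_p_int_closure}, together with a characterization of integral closure in terms of $p$-integral closure in the presence of the perfectoid base $R$. The key observation is that $R$ contains a compatible system of $p$-power roots $\varpi^{1/p^n}$ of the topologically nilpotent unit $\varpi$, so for any $\varpi$-torsion free $R$-algebra $A$ there is plenty of ``approximate $p$-divisibility'' available. First I would recall the standard fact (see e.g.\ \cite[Lemma 5.1.2]{bhattlecture_perfectoid}, or deduce it directly) that for a $\varpi$-torsion free $R$-algebra $A$, the integral closure of $A$ in $A[1/\varpi]$ coincides with the $p$-root closure $A^{\pic}$ — this uses that $p$ is (up to a unit) a $p$-power-th power in $R$, so that monic integral equations can be successively ``$p$-th-rooted'' after inverting $\varpi$; equivalently, an element of $A[1/\varpi]$ that is integral over $A$ already lies in $A^{\pic}$, because the obstruction to integral closedness modulo $\varpi$ and to $p$-root closedness modulo $\varpi$ agree by Lemma \ref{p_int_closed_frob} applied to the perfectoid Frobenius structure. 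Once this identification is in place, ``$A$ is integrally closed in $A[1/\varpi]$'' becomes equivalent to ``$A$ is $p$-integrally closed in $A[1/\varpi]$'' for $\varpi$-torsion free $R$-algebras.

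With that reduction, part (1) follows immediately: by Lemma \ref{completion_tf} the $\varpi$-completion $A^\wedge$ is again $\varpi$-torsion free, and by Lemma \ref{stab_p_int_closure}(1) it is $p$-integrally closed in $A^\wedge[1/\varpi]$; applying the equivalence in the reverse direction (now over the $R$-algebra $A^\wedge$) shows $A^\wedge$ is integrally closed in $A^\wedge[1/\varpi]$. Similarly, for part (2), the module $A_* = \uHom_R((\varpi)_{\perfd}, A)$ is $\varpi$-torsion free whenever $A$ is (it sits inside $A[1/\varpi]$, as recorded in Notation \ref{almost_elements}), so by Lemma \ref{stab_p_int_closure}(2) it is $p$-integrally closed in $A_*[1/\varpi] = A[1/\varpi]$, and the equivalence again upgrades this to integral closedness. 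I would be slightly careful that the identification ``integral closure $=$ $p$-root closure'' is being applied over the correct base and to the correct ambient ring $A[1/\varpi]$ (which is unchanged when passing to $A_*$, and is the $\varpi$-completion's localization in the first case).

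The main obstacle will be establishing cleanly the identification of the integral closure with the $p$-integral closure of $A$ in $A[1/\varpi]$. One direction — $p$-root closed implies integrally closed after localization — is the substantive one: given $a \in A[1/\varpi]$ satisfying a monic polynomial relation over $A$, one must show $a$ already lies in the $p$-root closure. The cleanest route is to reduce modulo $\varpi$: since $A$ is $\varpi$-torsion free, by Lemma \ref{p_int_closed_frob} being $p$-integrally closed in $A[1/\varpi]$ is equivalent to injectivity of the Frobenius $\varphi \colon A/\varpi \to A/\varpi^p$, and for a ring where $\varpi^p \mid p$ and $\varpi$ admits $p$-power roots, $p$-root closure forces the ring to be ``perfectoid-like'' modulo $\varpi$, which is what makes arbitrary monic relations solvable. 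If the direct argument turns out delicate, the fallback is to simply cite \cite[Lemma 5.1.1]{bhattlecture_perfectoid} or its neighbors in the same reference (which is already invoked for Lemma \ref{stab_p_int_closure}), as those lemmas are stated for exactly this setup; I expect a one-line appeal to loc.\ cit.\ to suffice.
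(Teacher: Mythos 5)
Your plan hinges on a false reduction: you assert that for a $\varpi$-torsion free $R$-algebra $A$, the condition ``integrally closed in $A[1/\varpi]$'' coincides with ``$p$-integrally closed in $A[1/\varpi]$'', and then try to pull the result down to Lemma \ref{stab_p_int_closure}. This equivalence does not hold. The entire architecture of Section \ref{sect_int_closure} depends on the inclusions $\CAlg_R^{\tic} \subset \CAlg_R^{\ic} \subset \CAlg_R^{\pic} \subset \CAlg_R^{\tf}$ being \emph{strict}; Proposition \ref{ic_algebras} constructs a genuine left adjoint $(-)^{\ic}\colon \CAlg_R^{\pic} \to \CAlg_R^{\ic}$, which would be the identity functor under your claim. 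Your justification --- that Frobenius injectivity $\varphi\colon A/\varpi \to A/\varpi^p$ (Lemma \ref{p_int_closed_frob}) ``makes arbitrary monic relations solvable'' --- does not follow: that lemma encodes exactly $p$-integral closedness and nothing more; it gives no control over $n$-th degree monic relations when $n$ is not a power of $p$. The collapse you are sensing is real but occurs one level up, in the \emph{almost} category: Propositions \ref{ic_almost_algebras} and \ref{tic_almost_algebras} show $\CAlg_R^{a\,\pic} \simeq \CAlg_R^{a\,\ic} \simeq \CAlg_R^{a\,\tic}$, but this needs the stronger hypothesis $A \simeq A_*$, not merely $\varpi$-torsion freeness. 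Worse, Proposition \ref{almost_funct_ic_algebras}, which sits in that cluster, itself cites Lemma \ref{stab_ic}, so routing the proof through the almost-level collapse would introduce a circularity.

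The paper's proof is simply a citation of \cite[Lemma 5.1.2]{bhattlecture_perfectoid}, which is your stated fallback --- so the fallback is correct, but the main route you describe would not go through. For what it is worth, part (2) admits a short direct argument independent of any $p$-integral reduction: given $a \in A[1/\varpi]$ integral over $A_*$ with monic relation $a^n + c_{n-1}a^{n-1} + \cdots + c_0 = 0$, $c_i \in A_*$, multiply through by $\varepsilon^n$ for $\varepsilon \in (\varpi)_{\perfd}$ to obtain a monic relation for $\varepsilon a$ over $A$ (using $\varepsilon^j c_{n-j} \in A$ since $c_{n-j} \in A_*$), so $\varepsilon a \in A$ by hypothesis and hence $a \in A_*$. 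Part (1) is the more delicate one --- showing integrality propagates to the $\varpi$-completion requires an argument specific to completion and is not a formal consequence of Lemma \ref{stab_p_int_closure}(1); there the appeal to loc.\ cit.\ is genuinely doing work.
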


\begin{proof} \cite[Lemma 5.1.2]{bhattlecture_perfectoid}
\end{proof}

\begin{lemma}\label{ic_of_pic_preserves_complete} Let $A$ be a $\varpi$-torsion free $R$-algebra which is $p$-integrally closed with respect to $A \subset A[1/\varpi]$. Then, if $A$ is $\varpi$-complete, so is $A^{\ic}$.
\end{lemma}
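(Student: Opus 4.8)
The statement to prove is: if $A$ is a $\varpi$-torsion free $R$-algebra that is $p$-integrally closed in $A[1/\varpi]$ and $\varpi$-complete, then its full integral closure $A^{\ic}$ in $A[1/\varpi]$ is again $\varpi$-complete. The natural strategy is to realize $A^{\ic}$ as the target of a filtered colimit process built out of operations that are known to preserve $\varpi$-completeness, and then argue that the colimit stabilizes in a way compatible with $\varpi$-completion. The key input is Lemma \ref{stab_ic}(1): $\varpi$-completion preserves integral closedness. Combined with Lemma \ref{stab_p_int_closure}(1), which says $\varpi$-completion preserves $p$-integral closedness, and with the observation that $\varpi$-completing commutes with inverting $\varpi$ on the level of ``same fraction ring'', I expect to be able to run the following bootstrapping argument.

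First I would recall the construction of $A^{\ic}$ as $\bigcup_{n \ge 0} A_n$ inside $A[1/\varpi]$, where $A_0 = A$ and $A_{n+1}$ is the $A_n$-subalgebra of $A[1/\varpi]$ generated by all elements integral over $A_n$ — equivalently, one can take the usual integral closure in one step, but the inductive description interacts better with completion. The crucial point is that $A^{\ic}[1/\varpi] = A[1/\varpi]$ since integral elements of $A[1/\varpi]$ already live in $A[1/\varpi]$. Now consider $(A^{\ic})^\wedge$, the $\varpi$-completion. By Lemma \ref{stab_ic}(1) applied to $A^{\ic}$ (which is integrally closed and $\varpi$-torsion free, the latter because it is a subring of the $\varpi$-torsion free ring $A[1/\varpi]$), $(A^{\ic})^\wedge$ is integrally closed in $(A^{\ic})^\wedge[1/\varpi]$. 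The plan is to show $A^{\ic} \to (A^{\ic})^\wedge$ is an isomorphism; equivalently that $A^{\ic}$ is already $\varpi$-complete.

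The mechanism I would use: since $A$ is $\varpi$-complete and $A \hookrightarrow A^{\ic}$ is an integral extension with $A^{\ic}$ generated over $A$ by elements of $A[1/\varpi]$ satisfying monic polynomials over $A$, one expects $A^{\ic}$ to be ``not far'' from $\varpi$-complete — the subtlety is exactly the one flagged in the paper's footnote about almost mathematics, namely that unbounded towers of divisibility can fail to descend. The safe route is to observe that $A^{\ic}$ is contained in the integral closure of the $\varpi$-complete ring $A$ inside $A[1/\varpi]$, that $(A^{\ic})^\wedge$ is integral over $A^\wedge = A$ (completion of an integral extension of a $\varpi$-complete Noetherian-free situation stays integral by a standard argument using that each element of $(A^{\ic})^\wedge$ is a $\varpi$-adic limit of integral elements and monic polynomials of bounded degree can be passed to the limit — here one must be slightly careful and may need to bound degrees, which is where I'd invoke that $A^{\ic}$ as an $A$-module is a union of finite $A$-submodules). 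Then $(A^{\ic})^\wedge$ lies in the integral closure of $A$ in $(A^{\ic})^\wedge[1/\varpi] = A[1/\varpi][1/\varpi]$; checking $(A^{\ic})^\wedge$ injects into $A[1/\varpi]$ (using $\varpi$-torsion freeness and the fact that $A^{\ic}/\varpi^n A^{\ic}$ embeds appropriately) identifies $(A^{\ic})^\wedge$ with a subring of $A[1/\varpi]$ integral over $A$ and containing $A^{\ic}$, hence equal to $A^{\ic}$ by maximality of the integral closure.

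\textbf{Main obstacle.} The hard part will be the faithful-flatness / injectivity bookkeeping needed to view $(A^{\ic})^\wedge$ as an \emph{honest subring} of $A[1/\varpi]$ rather than merely receiving a map from $A^{\ic}$: one must rule out that $\varpi$-completion introduces new elements or kills the injection $A^{\ic}\hookrightarrow A[1/\varpi]$ after completing. This is controlled by the fact that $A^{\ic}$ is $\varpi$-torsion free (so derived and classical completion agree and completion is exact on the relevant short exact sequences $0 \to A^{\ic} \to A[1/\varpi] \to A[1/\varpi]/A^{\ic} \to 0$, provided one knows the quotient has no $\varpi$-torsion issues, or equivalently that $A[1/\varpi]$ being already $\varpi$-divisible makes its completion behave predictably). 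I would handle this by noting $A[1/\varpi]$ is $\varpi$-divisible hence its $\varpi$-completion is zero unless one is careful — so the cleaner formulation is to work with $A^{\ic}/\varpi^n$ directly: show $A^{\ic}/\varpi^n A^{\ic} \cong$ the corresponding truncation computed inside $A/\varpi^n$-algebras, using that integral closure commutes with the relevant reductions because $A$ is $p$-integrally closed (this is precisely why the hypothesis that $A$ is $p$-integrally closed, not just $\varpi$-torsion free, is needed). Once the truncations are pinned down, $\lim_n A^{\ic}/\varpi^n A^{\ic} = A^{\ic}$ follows from the finite-module-union structure, completing the proof. I expect the bulk of the genuine work to be in that last reduction-commutes-with-integral-closure step, which may require a short lemma of its own or a citation to \cite{bhattlecture_perfectoid}.
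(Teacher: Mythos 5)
There is a genuine gap, and it sits exactly at the points you flag as ``to be handled''. Your route -- complete $A^{\ic}$ and show nothing new appears -- breaks down in three places. First, integrality does not pass to $\varpi$-adic limits without a uniform bound on the degrees of the monic equations, and the fact that $A^{\ic}$ is a filtered union of finite $A$-submodules gives no such bound along a Cauchy sequence (successive approximants can require ever larger submodules), so ``$(A^{\ic})^\wedge$ is integral over $A$'' is not established. Second, the step you defer to a ``short lemma'' -- that integral closure commutes with reduction modulo $\varpi^n$ -- is not a true statement in this generality, and in any case it is not how the $p$-integral closedness hypothesis actually enters. Third, even if the truncations were pinned down, the union-of-finite-submodules structure does not make the map $A^{\ic} \rightarrow \lim_n A^{\ic}/\varpi^n A^{\ic}$ surjective: an increasing union of complete modules is in general not complete. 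So the proposal, as written, does not close.

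The missing idea is a single divisibility bound which your plan never extracts. For $f \in A^{\ic}$, ordinary integrality gives one $n$ with $\{f^{\ZZ_{\ge 0}}\} \subset \frac{1}{\varpi^n} A$ (all powers of $f$ lie in the finite $A$-module generated by $1, f, \dots, f^{d-1}$). Now $p$-integral closedness of $A$ in $A[\frac{1}{\varpi}]$ lets you take $p$-th roots of the denominator: assuming as one may that $\varpi$ has compatible $p$-power roots, $f^{pm}\varpi^{n} \in A$ forces $f^{m}\varpi^{n/p} \in A$, and iterating this gives $\{f^{\ZZ_{\ge 0}}\} \subset \frac{1}{\varpi^{1/p^k}} A$ for every $k$; in particular $\varpi A^{\ic} \subset A$. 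Hence every element of $A^{\ic}/A$ is killed by $\varpi$, so $A^{\ic}/A$ is derived $\varpi$-complete, and from the exact sequence $0 \rightarrow A \rightarrow A^{\ic} \rightarrow A^{\ic}/A \rightarrow 0$ together with the completeness of $A$ and the stability of derived $\varpi$-complete modules under extensions, $A^{\ic}$ is derived $\varpi$-complete; $\varpi$-torsion freeness (and separatedness, immediate from $A^{\ic} \subset \frac{1}{\varpi}A$) then upgrades this to classical $\varpi$-completeness. This is how the paper argues. Your instinct that $p$-integral closedness is the crux is correct, but it is used through this denominator-shrinking trick, not through any compatibility of integral closure with reduction or with completion.
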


\begin{proof} Recall that if $A$ is integrally closed in $A[1/\varpi]$ this means that if $f \in A^{\ic}$ then there exists a $n \in \ZZ_{\ge 0}$ such that $\{f^{\ZZ_{\ge 0}}\} \subset \frac{1}{\varpi^n} A \subset A[\frac{1}{\varpi}]$. Without loss of generality we may assume that $\varpi$ admits compatible $p$-power roots in $A$, thus by the $p$-integral closedness of $A \subset A[\frac{1}{\varpi}]$ we conclude that if $f^{pm} \varpi^n \in A$ then $f^m \varpi^{n/p} \in A$. By repeating this procedure we can conclude that $\{f^{\ZZ_{\ge 0}} \} \subset \frac{1}{\varpi^{1/p^n}} A$ for all $n \in \ZZ_{\ge 0}$.

From the fiber sequence $A \hookrightarrow A^{\ic} \rightarrow A^{\ic}/A$ to show that $A^{\ic}$ is $\varpi$-complete it suffices to show that $A^{\ic}/A$ is derived $\varpi$-complete. Notice that by the work done in the previous paragraph we get that every element of $A^{\ic}/A$ is $\varpi$-torsion, and so it is derived $\varpi$-complete.
\end{proof}

\begin{defn} Let $R$ be an integral perfectoid ring which is $\varpi$-complete with respect to some $\varpi \in R$ where $\varpi^p$ divides $p$, and such that $R$ is $\varpi$-torsion free.
\begin{enumerate}[(1)]
	\item The category $\CAlg_R^{\ic} \subset \CAlg_R^{\pic} \subset \CAlg_R^{\tf}$ of $R$-algebras $A$ which are integrally closed in $A[1/\varpi]$.
	\item The category $\CAlg_R^{\wedge \ic} \subset \CAlg_R^{\wedge \pic} \subset \CAlg_R^{\wedge \tf}$ of $\varpi$-complete $R$-algebras $A$ which are integrally closed in $A[1/\varpi]$.
	\item Regarding $\CAlg_R^{a \pic} \subset \CAlg_R^{a \tf}$ as full subcategories of $\CAlg_R^{\pic} \subset \CAlg_R^{\tf}$ via the functor $H^0 j_*$, we define the almost category of integrally closed $R$-algebras as $\CAlg_R^{a \ic}:= \CAlg_R^{\ic} \cap \CAlg_R^{a \tf}$.
	\item Regarding $\CAlg_R^{\wedge a \pic} \subset \CAlg_R^{\wedge a \tf}$ as full subcategories of $\CAlg_R^{\wedge \pic} \subset \CAlg_R^{\wedge \tf}$ via the functor $H^0 j_*$, we define the almost category of $\varpi$-complete $R$-algebras as $\CAlg_R^{\wedge a \ic} := \CAlg_R^{\wedge a \tf} \cap \CAlg_R^{\ic}$.
\end{enumerate}
\end{defn}

\begin{prop}\label{ic_algebras} The fully faithful functors
\begin{equation*}
	\CAlg_R^{\ic} \hookrightarrow \CAlg_R^{\pic} \qquad \CAlg_R^{\wedge \ic} \hookrightarrow \CAlg_R^{\wedge \pic}
\end{equation*}
admit left adjoints
\begin{align*}
	&(-)^{\ic}: \CAlg_R^{\pic} \rightarrow \CAlg_R^{\ic} && A \mapsto A^{\ic} \\
	&(-)^{\ic}: \CAlg_R^{\wedge \pic} \rightarrow \CAlg_R^{\wedge \ic} && A \mapsto A^{\ic}
\end{align*}
Moreover, every functor on the left hand side diagram admits a left adjoint, making the diagram on the right commute
\begin{cd}
	\CAlg_R^{\wedge \ic} \ar[r, hook] \ar[d, hook] & \CAlg_R^{\ic} \ar[d, hook] &&
	\CAlg_R^{\pic} \ar[r, "-^\wedge"]  \ar[d, "(-)^{\ic}", swap] & \CAlg_R^{\wedge \pic} \ar[d, "(-)^{\ic}"]  \\
	\CAlg_R^{\wedge \pic} \ar[r, hook] & \CAlg_R^{\pic} &&
	\CAlg_R^{\ic} \ar[r, "-^\wedge"] & \CAlg_R^{\wedge \ic}
\end{cd}
\end{prop}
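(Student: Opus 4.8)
The plan is to follow the same template as Propositions \ref{pic_algebras} and \ref{pic_almost_algebras}: each of the four categories appearing in the diagram is a full subcategory of $\CAlg_R^{\tf}$ cut out by a closure condition, so a left adjoint to an inclusion is obtained by performing the corresponding closure operation and checking, via the stability lemmas \ref{stab_ic}, \ref{ic_of_pic_preserves_complete} and \ref{completion_tf}, that the output again meets all the required conditions. Once each inclusion in the left-hand square is known to admit a left adjoint, commutativity of the right-hand square is automatic from uniqueness of adjoints, since the left-hand square -- being a square of inclusions of full subcategories of $\CAlg_R^{\tf}$ -- strictly commutes.

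First I would treat the inclusion $\CAlg_R^{\ic} \hookrightarrow \CAlg_R^{\pic}$. Given $A \in \CAlg_R^{\pic}$, form its integral closure $A^{\ic} \subset A[1/\varpi]$. It is $\varpi$-torsion free, being a subring of $A[1/\varpi]$ in which $\varpi$ is a unit, and it is integrally closed in $A^{\ic}[1/\varpi] = A[1/\varpi]$; moreover being integrally closed forces it to be $p$-integrally closed, since the $p$-integral closure is contained in the integral closure. Hence $A^{\ic} \in \CAlg_R^{\ic}$. For any morphism $f\colon A \to S$ in $\CAlg_R^{\pic}$ with $S$ integrally closed in $S[1/\varpi]$, every element of $A^{\ic}$ is integral over the image of $A$ in $S$, hence integral over $S$, hence lies in $S$ by integral closedness; this yields the essentially unique factorization $A \to A^{\ic} \to S$, so $(-)^{\ic}$ is left adjoint to the inclusion.

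Next I would handle $\CAlg_R^{\wedge \ic} \hookrightarrow \CAlg_R^{\wedge \pic}$, where the point is that the left adjoint is again simply $A \mapsto A^{\ic}$, with \emph{no} subsequent $\varpi$-completion, in contrast with the left adjoint to $\CAlg_R^{\wedge \pic} \hookrightarrow \CAlg_R^{\wedge \tf}$ in Proposition \ref{pic_algebras}. This is exactly the content of Lemma \ref{ic_of_pic_preserves_complete}: if $A$ is $\varpi$-torsion free, $p$-integrally closed and $\varpi$-complete, then $A^{\ic}$ is already $\varpi$-complete. Thus for $A \in \CAlg_R^{\wedge \pic}$ we get $A^{\ic} \in \CAlg_R^{\wedge \ic}$, and the factorization from the previous paragraph applies verbatim. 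I expect this to be the only step that is not purely formal, and hence the main obstacle -- though all the real work has already been isolated in Lemma \ref{ic_of_pic_preserves_complete}.

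Finally, for the two horizontal inclusions: $\CAlg_R^{\wedge \pic} \hookrightarrow \CAlg_R^{\pic}$ has left adjoint $\varpi$-completion by Proposition \ref{pic_algebras}, and for $\CAlg_R^{\wedge \ic} \hookrightarrow \CAlg_R^{\ic}$ I would check that $\varpi$-completing a $\varpi$-torsion free integrally closed $R$-algebra $A$ produces an object of $\CAlg_R^{\wedge \ic}$: it remains $\varpi$-torsion free by Lemma \ref{completion_tf} and integrally closed in its localization by Lemma \ref{stab_ic}(1), while being $\varpi$-torsion free makes the derived and classical $\varpi$-completions coincide. Having produced left adjoints to all four inclusions and noting that the left-hand diagram of inclusions commutes strictly, the right-hand diagram commutes because both composites $\CAlg_R^{\pic} \to \CAlg_R^{\wedge \ic}$ are left adjoint to the common composite inclusion $\CAlg_R^{\wedge \ic} \hookrightarrow \CAlg_R^{\pic}$, hence canonically isomorphic.
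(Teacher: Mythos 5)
Your proposal is correct and follows essentially the same route as the paper's proof: the left adjoints are produced by integral closure (with the universal property checked via the image of a monic polynomial landing in the integrally closed target) and by $\varpi$-completion, with Lemmas \ref{stab_ic} and \ref{ic_of_pic_preserves_complete} supplying exactly the stability statements needed, and commutativity of the right-hand square following from uniqueness of adjoints. No gaps to report.
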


\begin{proof} We already showed in \ref{pic_algebras} that the fully faithful functor $\CAlg_R^{\wedge \pic} \hookrightarrow \CAlg_R^{\pic}$ admits a left adjoint given by $\varpi$-completion. To show that the functor $\CAlg_R^{\wedge \ic} \hookrightarrow \CAlg_R^{\ic}$ admits a left adjoint given by $\varpi$-completion, notice that for any morphism $A \rightarrow C$ in $\CAlg_R^{\ic}$ and such that $C$ is $\varpi$-complete we get an essentially unique factorization $A \rightarrow A^\wedge \rightarrow C$; where $A^\wedge$ is still integrally closed in $A^\wedge[\frac{1}{\varpi}]$ by \ref{stab_ic}.

Next, we show that the fully faithful functor $\CAlg_R^{\pic} \hookrightarrow \CAlg_R^{\ic}$ admits a left adjoint given by $(-)^{\ic}$. Let $\varphi: A \rightarrow C$ be a morphism in $\CAlg_R^{\pic}$ such that $C$ is integrally closed in $C[\frac{1}{\varpi}]$, and let $a \in A[\frac{1}{\varpi}]$ be an element which is the root of a monic polynomial $g \in A[X]$. Then, $\varphi(a) \in C[\frac{1}{\varpi}]$ is the root of the monic polynomial $\varphi(g) \in C[X]$, but since $C \subset C[\frac{1}{\varpi}]$ is integrally closed it follows that $\varphi(a) \in C$. Hence, it follows that the map $\varphi: A \rightarrow C$ factors as $A \rightarrow A^{\ic} \rightarrow C$; proving that $(-)^{\ic}$ is the left adjoint to $\CAlg_R^{\pic} \hookrightarrow \CAlg_R^{\ic}$. Finally, we know from \ref{ic_of_pic_preserves_complete} that applying $(-)^{\ic}$ preserves $\varpi$-completeness, thus it from the discussion above that the fully faithful functor $\CAlg_R^{\wedge \pic} \hookrightarrow \CAlg_R^{\wedge \ic}$ admits a left adjoint given by $(-)^{\ic}$.
\end{proof}

\begin{prop}\label{ic_almost_algebras} The fully faithful functors
\begin{equation*}
	\CAlg_R^{a \ic} \hookrightarrow \CAlg_R^{a \pic} \qquad \CAlg_R^{\wedge a \ic} \hookrightarrow \CAlg_R^{\wedge a \pic}
\end{equation*}
are equivalences with inverse given by
\begin{align*}
	&(-)^{a \ic}: \CAlg_R^{a \pic} \rightarrow \CAlg_R^{a \ic} && A \mapsto (H^0 j_* A)^{\ic, a} \\
	&(-)^{a \ic}: \CAlg_R^{\wedge a \pic} \rightarrow \CAlg_R^{\wedge a \ic} && A \mapsto (H^0 j_* A)^{\ic, a}
\end{align*}
Moreover, every functor on the left hand side diagram admits a left adjoint, making the diagram on the right commute
\begin{cd}
	\CAlg_R^{\wedge a \ic} \ar[r, hook] \ar[d, hook, "\simeq", swap] & \CAlg_R^{a \ic} \ar[d, hook, "\simeq"] &&
	\CAlg_R^{a \pic} \ar[r, "-^\wedge"] \ar[d, "(-)^{a \ic}", swap]& \CAlg_R^{\wedge a \pic} \ar[d, "(-)^{a \ic}"]\\
	\CAlg_R^{\wedge a \pic}  \ar[r, hook] & \CAlg_R^{a \pic} &&
	\CAlg_R^{a \ic} \ar[r, "-^\wedge"] & \CAlg_R^{\wedge a \ic}
\end{cd}
\end{prop}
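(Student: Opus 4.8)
The plan is to reduce everything to one observation: \emph{the integral closure of a $p$-integrally closed $\varpi$-torsion free $R$-algebra adds only almost elements}. Granting this, the essential surjectivity of the two inclusions (hence that they are equivalences), the description of the inverse, and the left-adjoint/commutativity assertions are all formal consequences of the adjunctions assembled in Propositions \ref{tf_almost_algebras}, \ref{almost_funct_tf_alg}, \ref{pic_almost_algebras} together with Lemmas \ref{stab_ic} and \ref{ic_of_pic_preserves_complete}.

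\emph{Step 1 (the key containment).} Fix $A \in \CAlg_R^{\pic}$ (resp. $\CAlg_R^{\wedge \pic}$); after replacing $\varpi$ by a unit multiple we may assume $\varpi$ admits compatible $p$-power roots in $A$. For $f \in A^{\ic}$, integrality of $f$ over $A$ puts $\{f^n\}_{n \ge 0}$ inside a finitely generated $A$-submodule of $A[\tfrac{1}{\varpi}]$, hence inside $\tfrac{1}{\varpi^N}A$ for some integer $N$. Exactly the computation in the proof of Lemma \ref{ic_of_pic_preserves_complete} then applies: from $f^{pm}\varpi^N \in A$ and $(f^m \varpi^{N/p})^p = f^{pm}\varpi^N$ together with $p$-integral closedness one gets $f^m \varpi^{N/p} \in A$, and iterating gives $\{f^n\}_{n\ge 0} \subset \tfrac{1}{\varpi^{N/p^k}}A$ for every $k \ge 0$. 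Given $j \ge 0$, pick $k$ with $p^{k-j} \ge N$; then $f\varpi^{1/p^j} = (f\varpi^{N/p^k})\cdot \varpi^{(p^{k-j}-N)/p^k} \in A$. Thus $\varepsilon f \in A$ for all $\varepsilon \in (\varpi)_{\perfd}$, i.e. $f \in A_*$ (using the description of $A_*$ for $\varpi$-torsion free $A$ from Notation \ref{almost_elements}). Hence $A \subseteq A^{\ic} \subseteq A_*$, so $A^{\ic}/A$ is $\varpi$-almost zero, and since $(-)^a$ is exact and symmetric monoidal (Proposition \ref{abelian_j*_sym_monoidal}) we conclude $(A^{\ic})^a \simeq A^a$ in $\CAlg_R^a$; in the complete case $A^{\ic}$ is already $\varpi$-complete by Lemma \ref{ic_of_pic_preserves_complete}, so likewise $(A^{\ic})^a \simeq A^a$ in $\CAlg_R^{\wedge a}$.

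\emph{Step 2 (equivalence and inverse).} The inclusion $\CAlg_R^{a \ic} \hookrightarrow \CAlg_R^{a \pic}$ is fully faithful since it is the inclusion of a full subcategory, so it suffices to see the two composites with $(-)^{a \ic}$ are identities. For $A \in \CAlg_R^{a \pic}$, Step 1 gives $(H^0 j_* A)^{\ic,a} \simeq (H^0 j_* A)^a \simeq A$; for $A \in \CAlg_R^{a \ic}$, the object $H^0 j_* A$ is by definition integrally closed in its $\varpi$-localization, so $(H^0 j_* A)^{\ic} = H^0 j_* A$ and hence $(H^0 j_* A)^{\ic,a} = A$. One checks along the way that $(-)^{a \ic}$ indeed lands in $\CAlg_R^{a \ic}$: $(H^0 j_* A)^{\ic}$ is $\varpi$-torsion free (a subring of the $\varpi$-torsion free ring $(H^0 j_* A)[\tfrac{1}{\varpi}]$), and passing to $(-)^a$ preserves $\varpi$-torsion freeness as in the proof of Proposition \ref{almost_funct_tf_alg}. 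The $\varpi$-complete statement is identical, invoking Lemma \ref{ic_of_pic_preserves_complete} to remain inside the $\varpi$-complete categories.

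\emph{Step 3 (left adjoints and commutativity).} The left-hand square commutes, all four functors being full-subcategory inclusions after the above identifications. Each has a left adjoint: the two vertical inclusions are the equivalences of Step 2, with inverse $(-)^{a \ic}$; the bottom inclusion $\CAlg_R^{\wedge a \pic} \hookrightarrow \CAlg_R^{a \pic}$ has left adjoint $\varpi$-completion by Proposition \ref{pic_almost_algebras}; hence the top inclusion $\CAlg_R^{\wedge a \ic} \hookrightarrow \CAlg_R^{a \ic}$ also has a left adjoint, necessarily given by $\varpi$-completion (which preserves integral closedness by Lemma \ref{stab_ic}(1), and preserves being almost, $\varpi$-torsion free and $\varpi$-complete by Propositions \ref{tf_almost_algebras} and \ref{almost_funct_tf_alg}). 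Passing to left adjoints turns the commuting left square into the commuting right square by uniqueness of adjoints. The main obstacle is Step 1 — the honest verification that $A^{\ic}/A$ is $\varpi$-almost zero when $A$ is $p$-integrally closed; everything after that is bookkeeping with the adjunctions established earlier in this chapter.
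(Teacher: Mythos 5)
Your proof is correct and follows essentially the same route as the paper: both extract from the proof of Lemma \ref{ic_of_pic_preserves_complete} the observation that a $p$-integrally closed $\varpi$-torsion free $A$ satisfies $A \subseteq A^{\ic} \subseteq A_*$, and then specialize to $A = H^0 j_* B$ (where $A = A_*$ forces $A^{\ic} = A$) to see that $\CAlg_R^{a\pic}$ already consists of integrally closed algebras. The remaining adjunction bookkeeping in your Step 3 matches the paper's.
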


\begin{proof} To show that the fully faithful functor $\CAlg_R^{a \ic} \hookrightarrow \CAlg_R^{a \pic}$ is an equivalent, with inverse $(-)^{a \ic}$ it suffices to show that if $A \in \CAlg_R^{a \pic}$ then $S := H^0 j_* A$ is integrally closed in $S \subset S[\frac{1}{\varpi}]$. By hypothesis we know that $S$ is $p$-integrally closed in $S[\frac{1}{\varpi}]$, and from the proof of \ref{ic_of_pic_preserves_complete} we can conclude that if $a \in S[\frac{1}{\varpi}]$ is the root of a monic polynomial in $S[X]$, then $\{a^{\ZZ_{\ge 0}} \} \subset \frac{1}{\varpi^{1/p^n}} S$ for all $n \in \ZZ_{\ge 0}$, were we can assume without loss of generality that $\varpi$ admits compatible $p$-power roots. Moreover, as $S$ satisfies $S \simeq S_*$ by definition, it follows that $a \in S$, showing that $S$ is integrally closed in $S[\frac{1}{\varpi}]$. The same argument, coupled with the fact that if $A \in \CAlg_R^{\tf}$ is $\varpi$-complete then so is $A_*$, shows that the fully faithful functor $\CAlg_R^{\wedge a \ic} \hookrightarrow \CAlg_R^{\wedge a \pic}$ is an equivalence.

To conclude, we need to show that the inclusion $\CAlg_R^{\wedge a \ic} \hookrightarrow \CAlg_R^{a \ic}$ admits a left adjoint given by $\varpi$-completion. Let $A \rightarrow C$ be a morphism in the essential image of $H^0j_*: \CAlg_R^{a \ic} \rightarrow \CAlg_R^{\ic}$ such that $C$ is $\varpi$-complete; then there is an essentially unique factorization $A \rightarrow A^\wedge \rightarrow C$, where $A^\wedge$ is integrally closed in $A^\wedge[\frac{1}{\varpi}]$ by \ref{ic_of_pic_preserves_complete}, and satisfies $A^\wedge \simeq (A^\wedge)_*$ by \ref{const_almost_comp_localization}.
\end{proof}

\begin{prop}\label{almost_funct_ic_algebras} The fully faithful functors
\begin{align*}
	&H^0 j_*: \CAlg_R^{a \ic} \hookrightarrow \CAlg_R^{\ic} && H^0j_*: \CAlg_R^{\wedge a \ic} \hookrightarrow \CAlg_R^{\wedge \ic}
\end{align*}
admit left adjoints given by
\begin{align*}
	&(-)^a : \CAlg_R^{\ic} \rightarrow \CAlg_R^{a \ic} && A \mapsto A^a \\
	&(-)^a : \CAlg_R^{\wedge \ic} \rightarrow \CAlg_R^{\wedge a \ic} && A \mapsto A^a
\end{align*}
Moreover, every functor on the left hand side diagram admits a left adjoint, making the diagram on the right commute
\begin{cd}
	\CAlg_R^{\wedge a \ic} \ar[r, hook] \ar[d, hook] & \CAlg_R^{\wedge \ic} \ar[d, hook] &&
	\CAlg_R^{\ic} \ar[r, "(-)^a"] \ar[d, "-^\wedge", swap] & \CAlg_R^{a \ic} \ar[d, "-^\wedge"] \\
	\CAlg_R^{a \ic} \ar[r, hook] & \CAlg_R^{\ic} &&
	\CAlg_R^{\wedge \ic} \ar[r, "(-)^a"] & \CAlg_R^{\wedge a \ic}
\end{cd}
\end{prop}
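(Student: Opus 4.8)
The plan is to reproduce, one level up, the template already used for Propositions \ref{almost_funct_tf_alg} and \ref{almost_funct_pic_algebras}: construct the left adjoint $(-)^a$ by hand via a factorization argument, borrow the left adjoints of the two completion functors from results proven earlier in this section, and then deduce commutativity of the right-hand square purely formally from commutativity of the left-hand square of inclusions.

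First I would show that the fully faithful functor $\tau^{\le 0} j_*: \CAlg_R^{a \ic} \hookrightarrow \CAlg_R^{\ic}$ admits a left adjoint given by $A \mapsto A^a$, equivalently by $A \mapsto A_* = \uHom_R((\varpi)_{\perfd}, A)$ under the identification of $\CAlg_R^{a \ic}$ with its essential image in $\CAlg_R^{\ic}$. Given a morphism $A \to S$ in $\CAlg_R^{\ic}$ with $S$ in this essential image, so $S \simeq S_*$, the unit of the adjunction $(-)^a \dashv \tau^{\le 0} j_*$ established on $\CAlg_R^{\tf}$ in Proposition \ref{almost_funct_tf_alg} produces an essentially unique factorization $A \to A_* \to S$. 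What remains is to check that $A_*$ lies in $\CAlg_R^{a \ic}$: it is $\varpi$-torsion free because $A$ is and $A_* \subset A[1/\varpi]$; it satisfies $A_* \simeq (A_*)_*$ because full faithfulness of $\tau^{\le 0} j_*$ (Lemma \ref{abelian_j*_fully_faithful}) gives $(A_*)^a \simeq A^a$; and it is integrally closed in $A_*[1/\varpi] = A[1/\varpi]$ by Lemma \ref{stab_ic}(2). The same argument verbatim handles $\tau^{\le 0} j_*^\wedge: \CAlg_R^{\wedge a \ic} \hookrightarrow \CAlg_R^{\wedge \ic}$, with the single extra input that $A_* = \uHom_R((\varpi)_{\perfd}, A)$ is derived $\varpi$-complete whenever $A$ is, which is Construction \ref{abelian_internal_hom}.

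Next I would invoke the two vertical left adjoints already established: $\CAlg_R^{\wedge \ic} \hookrightarrow \CAlg_R^{\ic}$ has left adjoint $-^\wedge$ by Proposition \ref{ic_algebras}, and $\CAlg_R^{\wedge a \ic} \hookrightarrow \CAlg_R^{a \ic}$ has left adjoint $-^\wedge$ by Proposition \ref{ic_almost_algebras}. Since the square of fully faithful inclusions on the left commutes on the nose, passing to left adjoints yields a square on the right commuting up to canonical isomorphism, by uniqueness of adjoints; concretely one gets $(A^a)^\wedge \simeq (A^\wedge)^a$ in $\CAlg_R^{\wedge a \ic}$.

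I do not anticipate a genuine obstacle: every component has been assembled in the preceding subsections. The only point requiring care is verifying that the candidate object $A_*$ really belongs to the target category --- simultaneously $\varpi$-torsion free, integrally closed in its $\varpi$-inversion, saturated in the almost sense ($A_* \simeq (A_*)_*$), and, in the complete case, derived $\varpi$-complete --- and these are supplied respectively by the inclusion $A_* \subset A[1/\varpi]$, Lemma \ref{stab_ic}, Lemma \ref{abelian_j*_fully_faithful}, and Construction \ref{abelian_internal_hom}.
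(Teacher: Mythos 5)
Your proposal is correct and follows essentially the same route as the paper's proof: establish the left adjoint $(-)^a$ by checking that $A_*=\uHom_R((\varpi)_{\perfd},A)$ remains integrally closed (Lemma \ref{stab_ic}) and, in the complete case, derived $\varpi$-complete (Construction \ref{abelian_internal_hom}), then import the completion adjoints from Propositions \ref{ic_algebras} and \ref{ic_almost_algebras} and get the commuting right-hand square by uniqueness of adjoints. Your extra verifications (torsion-freeness of $A_*$ and $A_*\simeq (A_*)_*$) are just the details the paper leaves implicit.
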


\begin{proof} To show that the fully faithful functor $H^0 j_*: \CAlg_R^{a \ic} \hookrightarrow \CAlg_R^{\ic}$ admits a left adjoint given by $(-)^a$ it suffices to show that if $A \in \CAlg_R^{\ic}$, then $A_*$ is still integrally closed in $A[\frac{1}{\varpi}]$, which is shown in \ref{stab_ic}. Similarly, to show that the fully faithful functor $H^0 j_*: \CAlg_R^{\wedge a \ic} \hookrightarrow \CAlg_R^{\wedge \ic}$ admits a left adjoint given by $(-)^a$, it suffices to show that if $A \in \CAlg_R^{\wedge \ic}$ then $A_*$ is still integrally closed in $A[\frac{1}{\varpi}]$ and $\varpi$-complete. We have already argued that $A_*$ is integrally closed in $A[\frac{1}{\varpi}]$, and the $\varpi$-completeness of $A_*$ follows from \ref{abelian_internal_hom}.

To conclude, recall we already showed that the fully faithful functors $\CAlg_R^{\wedge \ic} \hookrightarrow \CAlg_R^{\ic}$ and $\CAlg_R^{\wedge a \ic} \hookrightarrow \CAlg_R^{a \ic}$ admit left adjoint given by $\varpi$-completion in \ref{ic_algebras} and \ref{ic_almost_algebras} respectively.
\end{proof}

\subsection{Total integrally closed algebras}

Throughout this section $R$ is an integral perfectoid ring which is $\varpi$-complete with respect to some $\varpi \in R$ where $\varpi^p$ divides $p$, and such that $R$ is $\varpi$-torsion free. In particular, recall that this implies that $\varpi \in R$ admits compatible $p$-power roots up to a unit.

\begin{defn}\label{defn_total_int_closed} A $\varpi$-torsion free $R$-algebra $S$ is said to be total integrally closed if for any $f \in S[1/\varpi]$ which satisfies $\{f^{\ZZ_{\ge 0}}\} \subset \frac{1}{\varpi^m} S \subset S[1/\varpi]$ for some $m \in \ZZ_{\ge 0}$, then $f \in S$.
\end{defn}

\begin{defn}\label{defn_tot_int_closure} For an object $S \in \CAlg_R^{\pic}$ we may define its total integral closure with respect to $S \subset S[\frac{1}{\varpi}]$, denoted by $S^{\tic}$, as the collection of all elements $f \in S[\frac{1}{\varpi}]$ for which there exists an $m \in \ZZ_{\ge 0}$ where $\{f^{\ZZ_{\ge 0}}\} \subset \frac{1}{\varpi^m} S \subset S[\frac{1}{\varpi}]$. One can easily check that $S^{\tic} \subset S[\frac{1}{\varpi}]$ inherits a ring structure from $S[\frac{1}{\varpi}]$.
\end{defn}

\begin{warning} Even though the definition of total integral closure would make sense for an object $S \in \CAlg_R^{\tf}$ the operation described above is not well behaved, in particular the total integral closure of $S \in \CAlg_R^{\tf}$ need not be total integrally closed. Finally, let us remark that when $S$ is noetherian then the total integral closure agree with the integral closure, but in general we may have an strict inclusion $S^{\ic} \subset S^{\tic}$.
\end{warning}

\begin{lemma}\label{stab_tic} Let $A$ be a $\varpi$-torsion free $R$-algebra, and assume that $A$ is total integrally closed with respect to $A \subset A[\frac{1}{\varpi}]$. Then,
\begin{enumerate}[(1)]
	\item The $\varpi$-completion of $A$, denoted by $A^\wedge$, is totally integrally closed with respect to $A^{\wedge} \subset A^{\wedge}[\frac{1}{\varpi}]$.
	\item The almost elements of $A$, denoted by $A_*$ (\ref{almost_elements}), is total integrally closed with respect to $A_* \subset A[\frac{1}{\varpi}]$.
\end{enumerate}
\end{lemma}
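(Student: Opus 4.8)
The plan is to prove the two assertions separately: part (2) is essentially formal once the description of $A_*$ is unwound, whereas part (1) requires a $\varpi$-adic approximation argument. First I would make the harmless reduction to the case where $\varpi$ admits a compatible system of $p$-power roots (multiplying $\varpi$ by a unit changes neither the $\varpi$-completion, by \cite[Tag 0319]{stacks-project}, nor the filtration $\{\varpi^{-m}A\}_{m\ge 0}$ up to a shift), and I would record the two bookkeeping facts used throughout: since $A$ is $\varpi$-torsion free its $\varpi$-completion $A^{\wedge}$ is again $\varpi$-torsion free by Lemma \ref{completion_tf}, and $\varpi$-completion induces isomorphisms $A/\varpi^{j}A \xrightarrow{\sim} A^{\wedge}/\varpi^{j}A^{\wedge}$; together these yield the identity $\varpi^{j}A^{\wedge}\cap A[\tfrac{1}{\varpi}]=\varpi^{j}A$ for every $j\ge 0$, and the ability to cancel powers of $\varpi$ inside $A^{\wedge}[\tfrac{1}{\varpi}]$.

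For part (2), observe that $A_*[\tfrac{1}{\varpi}]=A[\tfrac{1}{\varpi}]$ because $A\subseteq A_*\subseteq A[\tfrac{1}{\varpi}]$, and that $A_*\subseteq\tfrac{1}{\varpi}A$: for $a\in A_*$ one has $\varpi^{1/p^{n}}a\in A$ for all $n$ by Notation \ref{almost_elements}, hence $\varpi a=(\varpi^{1/p^{n}})^{p^{n}-1}\cdot\varpi^{1/p^{n}}a\in A$ since $(\varpi^{1/p^{n}})^{p^{n}-1}\in R\subseteq A$. Consequently, if $f\in A_*[\tfrac{1}{\varpi}]=A[\tfrac{1}{\varpi}]$ satisfies $\{f^{k}\}_{k\ge 0}\subseteq\tfrac{1}{\varpi^{m}}A_*$, then $\{f^{k}\}_{k\ge 0}\subseteq\tfrac{1}{\varpi^{m+1}}A$, so $f\in A$ by the total integral closedness of $A$, and a fortiori $f\in A_*$. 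This settles (2).

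For part (1), let $f\in A^{\wedge}[\tfrac{1}{\varpi}]$ satisfy $\{f^{k}\}_{k\ge 0}\subseteq\tfrac{1}{\varpi^{m}}A^{\wedge}$ for some $m\ge 0$; the goal is $f\in A^{\wedge}$. For each $N\ge 2m$ I would choose $b_{N}\in A$ with $\varpi^{m}f\equiv b_{N}\pmod{\varpi^{N}A^{\wedge}}$ — possible since $A^{\wedge}/\varpi^{N}A^{\wedge}=A/\varpi^{N}A$ — and set $c_{N}:=b_{N}/\varpi^{m}\in A[\tfrac{1}{\varpi}]$, so that $f-c_{N}\in\varpi^{N-m}A^{\wedge}$. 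Expanding $c_{N}^{k}=\bigl(f+(c_{N}-f)\bigr)^{k}$ by the binomial theorem, every term other than $f^{k}$ lies in $\tfrac{1}{\varpi^{m}}\cdot\varpi^{N-m}A^{\wedge}=\varpi^{N-2m}A^{\wedge}\subseteq A^{\wedge}$, while $f^{k}\in\tfrac{1}{\varpi^{m}}A^{\wedge}$; hence $c_{N}^{k}\in\tfrac{1}{\varpi^{m}}A^{\wedge}\cap A[\tfrac{1}{\varpi}]=\tfrac{1}{\varpi^{m}}A$ for all $k$, so $c_{N}\in A$ by the total integral closedness of $A$. Finally $(c_{N})_{N}$ is $\varpi$-adically Cauchy in $A$ with $f-c_{N}\to 0$, so $f$ is the limit in $A^{\wedge}$ of the $c_{N}$, giving $f\in A^{\wedge}$.

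The hard part is the $\varpi$-power bookkeeping: applying the intersection identity $\varpi^{j}A^{\wedge}\cap A[\tfrac{1}{\varpi}]=\varpi^{j}A$ in the right places and checking that $(c_{N})$ really converges to $f$ in the complete, $\varpi$-torsion free ring $A^{\wedge}$; the only nonformal input is the application of total integral closedness of $A$ to each $c_{N}$. Alternatively, since the parallel statements for $p$-integral closure (Lemma \ref{stab_p_int_closure}) and integral closure (Lemma \ref{stab_ic}) are quoted from \cite{bhattlecture_perfectoid}, one could instead cite the corresponding statement for total integral closure there; the self-contained argument above is short enough to include in either case.
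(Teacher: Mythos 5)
Your proof is correct. Note that the paper itself gives no argument for this lemma: its proof is the single citation \cite[Lemma 5.1.3]{bhattlecture_perfectoid}, just as Lemmas \ref{stab_p_int_closure} and \ref{stab_ic} are cited from the same source, so your self-contained argument is genuinely more than what appears in the text (and is presumably close in spirit to what the cited notes do). Your part (2) is the quick formal argument one expects: since $\varpi \in (\varpi)_{\perfd}$ one even gets $A_* \subseteq \tfrac{1}{\varpi}A$ without invoking $p$-power roots, and then total integral closedness of $A$ with exponent $m+1$ finishes it. Your part (1) is the standard approximation argument, and the key inputs are exactly right: $A^\wedge/\varpi^N A^\wedge \cong A/\varpi^N A$ (finitely generated ideal), $\varpi$-torsion freeness of $A^\wedge$ via Lemma \ref{completion_tf}, and the resulting cancellation of $\varpi$-powers. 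Two small points of care, neither of which affects correctness: the ``intersection'' $\varpi^j A^\wedge \cap A[\tfrac{1}{\varpi}] = \varpi^j A$ should really be read as a statement about preimages, since $A \to A^\wedge$ need not be injective in general (e.g.\ if $\varpi$ is invertible in $A$), though with that reading your verification goes through verbatim; and the final identification of $f$ with the limit of the $c_N$ uses that $A^\wedge$ is $\varpi$-adically separated and complete together with $\varpi$-torsion freeness to cancel the factor $\varpi^m$ in $\varpi^m(f-g)=0$ — worth one explicit sentence if you write this up.
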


\begin{proof} \cite[Lemma 5.1.3]{bhattlecture_perfectoid}
\end{proof}

\begin{lemma}\label{tic_of_pic_preserves_complete} Let $S$ be a $\varpi$-torsion free $p$-integrally closed $R$-algebra. Then, the ring $S^{\tic}$ is total integrally closed. Moreover, if $S$ is $\varpi$-complete then $S^{\tic}$ is $\varpi$-complete.
\end{lemma}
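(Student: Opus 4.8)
The statement has two parts: first that $S^{\tic}$ is total integrally closed, and second that $\varpi$-completeness is inherited. For the first part, the plan is to take $f \in S^{\tic}[\frac{1}{\varpi}] = S[\frac{1}{\varpi}]$ with $\{f^{\ZZ_{\ge 0}}\} \subset \frac{1}{\varpi^m} S^{\tic}$ for some $m \in \ZZ_{\ge 0}$, and show $f \in S^{\tic}$. The key observation is that membership in $S^{\tic}$ is witnessed by a \emph{single} bound: $g \in S^{\tic}$ iff there is some $k$ with $\{g^{\ZZ_{\ge 0}}\} \subset \frac{1}{\varpi^k} S$. So I would unwind the hypothesis: for each power $f^n$ we have $f^n \in \frac{1}{\varpi^m} S^{\tic}$, hence $\varpi^m f^n \in S^{\tic}$, hence there is some $k_n$ with $\{(\varpi^m f^n)^{\ZZ_{\ge 0}}\} \subset \frac{1}{\varpi^{k_n}} S$. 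I expect one can bound the $k_n$ uniformly — the cleanest route is to first observe that $\varpi^m f \in S^{\tic}$ already (taking $n=1$), so letting $k = k_1$ we get $\{(\varpi^m f)^{\ZZ_{\ge 0}}\} \subset \frac{1}{\varpi^k} S$, i.e. $\varpi^{mj - k} f^j \in S$ for all $j \ge 0$. Then for $j$ large enough that $mj - k \ge 0$... actually this shows $f^j \in \frac{1}{\varpi^k} S$ eventually, but I need it for \emph{all} powers with a fixed denominator to conclude $f \in S^{\tic}$. The finitely many small powers $f^0, \dots, f^{j_0}$ each lie in $\frac{1}{\varpi^{N}} S$ for some common $N$, so $\{f^{\ZZ_{\ge 0}}\} \subset \frac{1}{\varpi^{\max(k, N)}} S$, giving $f \in S^{\tic}$. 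I would write this out carefully, using $\varpi$-torsion freeness to make the division unambiguous.

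For the second part, assume $S$ is $\varpi$-complete and show $S^{\tic}$ is $\varpi$-complete. The plan mirrors the proof of Lemma \ref{ic_of_pic_preserves_complete}: from the fiber sequence $S \hookrightarrow S^{\tic} \rightarrow S^{\tic}/S$, and the fact that $S$ is derived $\varpi$-complete by hypothesis, it suffices to show $S^{\tic}/S$ is derived $\varpi$-complete. Here I would use that $S$ is $p$-integrally closed (a hypothesis of the lemma), exactly as in Lemma \ref{ic_of_pic_preserves_complete}: for $f \in S^{\tic}$ there is $n$ with $\{f^{\ZZ_{\ge 0}}\} \subset \frac{1}{\varpi^n} S$, and $p$-integral closedness lets us improve this to $\{f^{\ZZ_{\ge 0}}\} \subset \frac{1}{\varpi^{1/p^n}} S$ for all $n$, hence the class of $f$ in $S^{\tic}/S$ is killed by $\varpi^{1/p^n}$ for all $n$ — so every element of $S^{\tic}/S$ is $\varpi$-power-torsion (indeed almost zero), and a torsion module (in this sense) is derived $\varpi$-complete. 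Then derived $\varpi$-completeness of $S^{\tic}$ follows, and since $S^{\tic}$ is $\varpi$-torsion free, derived $\varpi$-completeness coincides with classical $\varpi$-completeness.

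The main obstacle I anticipate is the uniformity argument in the first part: ensuring that the denominators $\varpi^{k_n}$ controlling the powers $f^n$ (and their powers) can be bounded independently of $n$. The trick of reducing to $n=1$ via $\varpi^m f \in S^{\tic}$ should handle it, but I would double-check that $\varpi^m f$ genuinely lands in $S^{\tic}$ from the hypothesis $\{f^{\ZZ_{\ge 0}}\} \subset \frac{1}{\varpi^m} S^{\tic}$ — this requires knowing $(\varpi^m f)^j = \varpi^{mj} f^j \in S^{\tic}$ and that these are bounded, which is where I use that $S^{\tic}$ is already shown (or assumed in this sub-argument) to behave well, or alternatively I work directly: $\varpi^m f^j \in S^{\tic}$ for each $j$ gives, via the single-bound characterization applied to $g = \varpi^m f$ whose powers are $\varpi^{mj} f^j \cdot \varpi^{m(j-1)}$... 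I will need to be slightly careful here, but the structure is routine once the single-witness characterization of $S^{\tic}$-membership is in hand. Everything else reduces to previously established lemmas (\ref{stab_tic}, \ref{ic_of_pic_preserves_complete}) and the standard fact that $\varpi$-power-torsion modules are derived complete.
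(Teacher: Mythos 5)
Your second part is essentially correct and matches the paper: the fiber sequence $S \hookrightarrow S^{\tic} \rightarrow S^{\tic}/S$ reduces the problem to showing $S^{\tic}/S$ is derived $\varpi$-complete, and $p$-integral closedness is exactly what shows every class there is killed by a fixed power of $\varpi$. One small imprecision: "a $\varpi$-power-torsion module is derived $\varpi$-complete" is false in general (e.g.\ $\bigoplus_n R/\varpi^n$ is $\varpi$-power torsion but not derived complete); the correct justification is that $S^{\tic}/S$ is annihilated by the single element $\varpi$, which is what makes the Koszul $R\lim$ trivial.

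The first part has a genuine gap. Your route — pass from $\{f^{\ZZ_{\ge 0}}\} \subset \frac{1}{\varpi^m}S^{\tic}$ to $\varpi^m f \in S^{\tic}$, pick a witness $k$ for $\varpi^m f$, and extract a uniform witness for $f$ — does not converge. The identity you wrote, $\{(\varpi^m f)^{\ZZ_{\ge 0}}\} \subset \frac{1}{\varpi^k}S$ giving $\varpi^{mj-k}f^j \in S$, has a sign error: $(\varpi^m f)^j \in \frac{1}{\varpi^k}S$ actually gives $\varpi^{mj+k}f^j \in S$, a denominator growing linearly in $j$, so no fixed power of $\varpi$ clears all the $f^j$ into $S$ and the "handle small powers separately" patch has no large-$j$ estimate to glue onto. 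More tellingly, your first-part argument never invokes the hypothesis that $S$ is $p$-integrally closed — and that hypothesis is essential, since without it $S^{\tic}$ need not be totally integrally closed at all (see the Warning following Definition \ref{defn_tot_int_closure}). The paper uses $p$-integral closedness up front: from $\varpi^m f^k \in S$ for all $k$, $p$-integral closedness gives $\varpi^{m/p}f^k \in S$, then $\varpi^{m/p^2}f^k \in S$, and so on, yielding $S^{\tic} \subset \frac{1}{\varpi^{1/p^n}}S$ for all $n$, in particular $S^{\tic} \subset \frac{1}{\varpi}S$. With that inclusion, $\frac{1}{\varpi^m}S^{\tic} \subset \frac{1}{\varpi^{m+1}}S$ and total integral closedness of $S^{\tic}$ follows in one line. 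That containment $S^{\tic} \subset \frac{1}{\varpi}S$ is the missing idea; your "single-witness" characterization is true but is not enough to produce it without the $p$-integral-closure bootstrap.
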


\begin{proof} Recall that $\varpi \in R$ admits compatible $p$-power roots up to a unit, thus we assume without loss of generality that $\varpi \in R$ admits compatible $p$-power roots. We claim that if $f \in S^{\tic}$ then $\{f^{\ZZ_{\ge 0}} \} \subset \frac{1}{\varpi^{1/p^n}} S$ for all $n \in \ZZ_{\ge 0}$. Indeed, by assumption we have that there exists an $m \in \ZZ_{\ge 0}$ such that $f^{\ZZ_{\ge 0}} \varpi^m \in S$; and the $p$-integral closure of $S \subset S[\frac{1}{\varpi}]$ implies that if $f^k \varpi^m \in S$ then $f^k \varpi^{m/p} \in S$. By repeating the procedure we get the claim.
	
To show that $S^{\tic}$ is totally integrally closed it suffices to show that if $\{f^{\ZZ_{\ge 0}} \} \subset \frac{1}{\varpi^m} S^{\tic}$ then $\{f^{\ZZ_{\ge 0}} \} \subset \frac{1}{\varpi^{m+1}} S$, but since $S^{\tic} \subset \frac{1}{\varpi} S$ the result follows. Next, to show that $S^{\tic}$ is $\varpi$-complete it suffices to show that it is derived $\varpi$-complete as it is $\varpi$-torsion free by construction; moreover, using the fiber sequence
\begin{equation*}
	S \hookrightarrow S^{\tic} \rightarrow S^{\tic}/S
\end{equation*}
we are reduced to showing that $S_{\tic}/S$ is derived $\varpi$-complete, since the subcategory $\cD(R)\picomp \subset \cD(R)$ is closed under extensions. Thus, we need to show that the canonical map
\begin{equation*}
	S_{\tic}/S \longrightarrow R\lim_{n} \Kos(S_{\tic}/S, \varpi^n)
\end{equation*}
is an isomorphism. Writing this limit explicitly we get the diagram
\begin{cd}
	\cdots \ar[r, "\times \varpi"] & S_{\tic}/S \ar[d, "\times \varpi^n"] \ar[r, "\times \varpi"] & S_{\tic}/S \ar[d, "\times \varpi^{n-1}"] 
	\ar[r, "\times \varpi"] & \cdots\\
	\cdots \ar[r, "\Id"] & S_{\tic}/S  \ar[r, "\Id"] & S_{\tic}/S \ar[r, "\Id"] & \cdots
\end{cd}
But notice that every element of $S_{\tic}/S$ is killed by $\varpi$, so all vertical maps and all the upper horizontal maps are zero, making it clear that the desired map $S_{\tic}/S \rightarrow R\lim_{n} \Kos(S_{\tic}/S, \varpi^n)$ is an isomorphism.
\end{proof}

\begin{defn} Let $R$ is an integral perfectoid ring which is $\varpi$-complete with respect to some $\varpi \in R$ where $\varpi^p$ divides $p$, and such that $R$ is $\varpi$-torsion free.
\begin{enumerate}[(1)]
	\item The category $\CAlg_R^{\tic} \subset \CAlg_R^{\ic}$ of $\varpi$-torsion free $R$-algebras $A$ which are total integrally closed in $A[\frac{1}{\varpi}]$.
	\item The category $\CAlg_R^{\wedge \tic} \subset \CAlg_R^{\wedge \ic}$ of $\varpi$-torsion free $\varpi$-complete $R$-algebras $A$ which are total integrally closed with respect to $A[\frac{1}{\varpi}]$.
	\item Regarding $\CAlg_R^{a \ic}$ as a full-subcategory of $\CAlg_R^{\ic}$ via the functor $H^0 j_*$, we define the almost category of total integrally closed $R$-algebras as $\CAlg_R^{a \tic} := \CAlg_R^{\tic} \cap \CAlg_R^{a \tf}$.
	\item Regarding $\CAlg_R^{\wedge a \ic}$ as a full-subcategory of $\CAlg_R^{\wedge \ic}$ via the functor $H^0 j_*$, we define the almost category of $\varpi$-complete total integrally closed $R$-algebras as $\CAlg_R^{\wedge a \tic} := \CAlg_R^{\wedge a \ic} \cap \CAlg_R^{\tic}$.
\end{enumerate}
\end{defn}

\begin{prop}\label{tic_algebras} The fully faithful functors
\begin{align*}
	\CAlg_R^{\tic} \hookrightarrow \CAlg_R^{\ic} && \CAlg_R^{\wedge \tic} \hookrightarrow \CAlg_R^{\wedge \ic}
\end{align*}
admit left adjoints
\begin{align*}
	&(-)^{\tic}: \CAlg_R^{\ic} \rightarrow \CAlg_R^{\tic} && A \mapsto A^{\tic} \\
	&(-)^{\tic}: \CAlg_R^{\wedge \ic} \rightarrow \CAlg_R^{\wedge \tic} && A \mapsto A^{\tic}
\end{align*}
Moreover, every functor on the left hand side diagram admits a left adjoint, making the diagram on the right commute
\begin{cd}
	\CAlg_R^{\wedge \tic} \ar[r, hook] \ar[d, hook] & \CAlg_R^{\tic} \ar[d, hook] &&
	\CAlg_R^{\ic} \ar[r, "-^\wedge"] \ar[d, "(-)^{\tic}", swap] & \CAlg_R^{\wedge \ic} \ar[d, "(-)^{\tic}"] \\
	\CAlg_R^{\wedge \ic} \ar[r, hook] & \CAlg_R^{\ic} &&
	\CAlg_R^{\tic} \ar[r, "-^\wedge"] & \CAlg_R^{\wedge \tic}
\end{cd}
\end{prop}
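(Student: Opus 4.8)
The plan is to follow the same template as the proof of Proposition \ref{ic_algebras}: establish a left adjoint for each of the four fully faithful functors appearing in the left-hand square one at a time, and then deduce commutativity of the right-hand square formally from uniqueness of adjoints.

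First I would treat the inclusion $\CAlg_R^{\tic} \hookrightarrow \CAlg_R^{\ic}$. Given an object $A \in \CAlg_R^{\ic}$, the total integral closure $A^{\tic} \subset A[\frac{1}{\varpi}]$ of Definition \ref{defn_tot_int_closure} makes sense since $A$ is in particular $p$-integrally closed; it is $\varpi$-torsion free as a subring of $A[\frac{1}{\varpi}]$, and by Lemma \ref{tic_of_pic_preserves_complete} it is total integrally closed, hence integrally closed, so $A^{\tic} \in \CAlg_R^{\tic}$. For the universal property, let $\varphi \colon A \to C$ be a morphism in $\CAlg_R^{\ic}$ with $C$ total integrally closed with respect to $C \subset C[\frac{1}{\varpi}]$. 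If $f \in A^{\tic}$, then $\{f^{\ZZ_{\ge 0}}\} \subset \frac{1}{\varpi^m} A$ for some $m \in \ZZ_{\ge 0}$, whence $\{\varphi(f)^{\ZZ_{\ge 0}}\} \subset \frac{1}{\varpi^m} C$ inside $C[\frac{1}{\varpi}]$, so $\varphi(f) \in C$ by total integral closedness of $C$. This shows $\varphi$ factors essentially uniquely as $A \to A^{\tic} \to C$, exhibiting $(-)^{\tic}$ as left adjoint to the inclusion. The case of $\CAlg_R^{\wedge \tic} \hookrightarrow \CAlg_R^{\wedge \ic}$ is identical, the one extra point being that $A^{\tic}$ is $\varpi$-complete whenever $A$ is, which is precisely the second assertion of Lemma \ref{tic_of_pic_preserves_complete}.

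For the two vertical inclusions I would invoke Proposition \ref{ic_algebras} to recall that $\CAlg_R^{\wedge \ic} \hookrightarrow \CAlg_R^{\ic}$ has $\varpi$-completion as a left adjoint; and for $\CAlg_R^{\wedge \tic} \hookrightarrow \CAlg_R^{\tic}$, given $A \in \CAlg_R^{\tic}$ and a morphism $A \to C$ with $C$ $\varpi$-complete, it factors essentially uniquely as $A \to A^\wedge \to C$, where $A^\wedge$ is still $\varpi$-torsion free by Lemma \ref{completion_tf} and still total integrally closed by Lemma \ref{stab_tic}; hence $A^\wedge \in \CAlg_R^{\wedge \tic}$ and $\varpi$-completion is again the left adjoint.

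Finally, commutativity of the right-hand square is automatic: the two composites $\CAlg_R^{\ic} \to \CAlg_R^{\wedge \tic}$ are both left adjoint to the common composite of fully faithful inclusions $\CAlg_R^{\wedge \tic} \hookrightarrow \CAlg_R^{\ic}$, so they agree up to canonical isomorphism. The only nontrivial inputs are the two stability lemmas \ref{stab_tic} and \ref{tic_of_pic_preserves_complete} — that total integral closedness is preserved under $\varpi$-completion, and that the total-integral-closure operation sends $\varpi$-complete rings to $\varpi$-complete rings — and since both are already available, the remainder of the argument is routine bookkeeping with adjunctions exactly as in the preceding propositions; I do not anticipate any genuine obstacle.
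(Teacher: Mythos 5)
Your proposal is correct and follows essentially the same route as the paper: the universal property of $A^{\tic}$ is checked exactly as in the paper's argument, Lemma \ref{tic_of_pic_preserves_complete} supplies that $A^{\tic}$ is total integrally closed (and $\varpi$-complete in the completed case), Proposition \ref{ic_algebras} and Lemma \ref{stab_tic} handle the vertical adjoints, and commutativity via uniqueness of adjoints matches the convention used throughout the paper. No gaps.
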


\begin{proof} First, we show that the fully faithful functor $\CAlg_R^{\tic} \rightarrow \CAlg_R^{\ic}$ admits a left adjoint given by $(-)^{\tic}$. Let $\varphi: A \rightarrow C$ be a morphism in $\CAlg_R^{\ic}$ and such that $C$ is total integrally closed with respect to $C \subset C[\frac{1}{\varpi}]$, we need to show that there exists an essentially unique factorization of $\varphi$ as $A \rightarrow A^{\tic} \rightarrow C$. Let $f \in A[\frac{1}{\varpi}]$ such that there exists an $m \in \ZZ_{\ge 0}$ where $\{f^{\ZZ_{ \ge 0}} \} \subset \frac{1}{\varpi^m} A$, then $\varphi(f) \in C[\frac{1}{\varpi}]$ satisfies $\{ \varphi(f)^{\ZZ_{\ge 0}} \} \subset \frac{1}{\varpi^m} C$, but since $C$ is total integrally closed it follows that $f \in C$. Thus, it follows that we have an essentially unique factorization of $\varphi$ as $A \rightarrow A^{\tic} \rightarrow C$, and since $A^{\tic}$ is total integrally closed by \ref{tic_of_pic_preserves_complete} it follows that $(-)^{\tic}$ is the desired left adjoint. Similarly, we show that $(-)^{\tic}$ is left adjoint to the fully faithful functor $\CAlg_R^{\wedge \tic} \hookrightarrow \CAlg_R^{\wedge \ic}$. We need to show that for any morphism $\varphi: A \rightarrow C$ in $\CAlg_R^{\wedge \ic}$ where $C$ is total integrally closed, there exists an essentially unique factorization $A \rightarrow A^{\tic} \rightarrow C$; but this is what we showed above. Thus, it remains to show that $A^{\tic}$ is totally integrally closed and $\varpi$-complete, which follows from \ref{tic_of_pic_preserves_complete}.

Finally, recall that we already showed in \ref{ic_algebras} that $\CAlg_R^{\wedge \ic} \hookrightarrow \CAlg_R^{\ic}$ admits a left adjoint given by $\varpi$-completion. It remains to show that $\CAlg_R^{\wedge \tic} \hookrightarrow \CAlg_R^{\tic}$ admits a left adjoint given by $\varpi$-completion. It is clear that for a morphism $A \rightarrow C$ in $\CAlg_R^{\tic}$ and where $C$ is $\varpi$-complete that there is an essentially unique factorization $A \rightarrow A^\wedge \rightarrow C$, and by \ref{stab_tic} it follows that $A^\wedge$ is total integrally closed with respect to $A^\wedge \subset A^\wedge[\frac{1}{\varpi}]$, proving the claim.
\end{proof}

\begin{prop}\label{tic_almost_algebras} The fully faithful functors
\begin{align*}
	\CAlg_R^{a \tic} \hookrightarrow \CAlg_R^{a \ic} && \CAlg_R^{\wedge a \tic} \hookrightarrow \CAlg_R^{\wedge a \ic}
\end{align*}
are equivalences with inverse given by 
\begin{align*}
	&(-)^{a \tic}: \CAlg_R^{a \ic} \rightarrow \CAlg_R^{a \tic} && A \mapsto (H^0 j_* A)^{\tic, a} \\
	&(-)^{a \tic}: \CAlg_R^{\wedge a \ic} \rightarrow \CAlg_R^{\wedge a \tic} && A \mapsto (H^0 j_* A)^{\tic, a}
\end{align*}
Moreover, every functor on the left hand side diagram admits a left adjoint, making the diagram on the right commute
\begin{cd}
	\CAlg_R^{\wedge a \tic} \ar[r, hook] \ar[d, hook, "\simeq", swap] & \CAlg_R^{a \tic} \ar[d, hook, "\simeq"] &&
	\CAlg_R^{a \ic} \ar[r, "-^\wedge"] \ar[d, "(-)^{a \tic}", swap] & \CAlg_R^{\wedge a \ic} \ar[d, "(-)^{a \tic}"] \\
	\CAlg_R^{\wedge a \ic} \ar[r, hook] & \CAlg_R^{a \ic} &&
	\CAlg_R^{a \tic} \ar[r, "-^\wedge"] & \CAlg_R^{\wedge a \tic}
\end{cd}
\end{prop}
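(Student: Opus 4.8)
The plan is to follow the template of Proposition \ref{ic_almost_algebras}, since passing from ``$\ic$'' to ``$\tic$'' requires only one extra input: an $R$-algebra which is \emph{almost} (the canonical map to its almost elements being an isomorphism) and integrally closed in its $\varpi$-inversion is automatically \emph{total} integrally closed. Granting this, the inclusions $\CAlg_R^{a \tic} \hookrightarrow \CAlg_R^{a \ic}$ and $\CAlg_R^{\wedge a \tic} \hookrightarrow \CAlg_R^{\wedge a \ic}$ are full subcategory inclusions which are essentially surjective, hence equivalences; and since $S^{\tic} = S$ whenever $S$ is total integrally closed, their inverses are identified with $A \mapsto (H^0 j_* A)^{\tic, a}$.

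First I would establish the key observation. Let $A \in \CAlg_R^{a \ic}$ and set $S := H^0 j_* A$, a $\varpi$-torsion free $R$-algebra which is integrally closed in $S[\frac{1}{\varpi}]$ (hence $p$-integrally closed) and satisfies $S \simeq S_*$. Fix $f \in S[\frac{1}{\varpi}]$ with $\{ f^{\ZZ_{\ge 0}} \} \subset \frac{1}{\varpi^m} S$ for some $m \in \ZZ_{\ge 0}$; I must show $f \in S$. After replacing $\varpi$ by a unit multiple admitting compatible $p$-power roots (harmless in this section, and affecting neither $\varpi$-completeness nor $S[\frac{1}{\varpi}]$ nor the modules $\frac{1}{\varpi^m} S$), the bootstrapping argument of the first paragraph of Lemma \ref{tic_of_pic_preserves_complete} applies: from $(f^k \varpi^{m/p})^p = f^{kp}\varpi^m \in S$ and $p$-integral closedness one gets $f^k \varpi^{m/p} \in S$, and iterating yields $\varpi^{1/p^n} f \in S$ for every $n \ge 0$. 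Hence $\varepsilon f \in S$ for all $\varepsilon \in (\varpi)_{\perfd} = (\varpi^{1/p^\infty})$, so $f \in S_* \simeq S$ by the description of almost elements recalled in \ref{almost_elements} and \ref{abelian_almost_unit_internal_hom}. The identical argument, run with $S$ additionally $\varpi$-complete (a property neither used nor destroyed above), settles the $\varpi$-complete equivalence.

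Next I would supply the left adjoints to the four functors in the left-hand square. Its bottom arrow $\CAlg_R^{\wedge a \ic} \hookrightarrow \CAlg_R^{a \ic}$ already has $\varpi$-completion as a left adjoint by Proposition \ref{ic_almost_algebras}, and the two vertical (equivalence) arrows have the functors $(-)^{a \tic}$ of the previous step as quasi-inverses, hence as left adjoints. For the top arrow $\CAlg_R^{\wedge a \tic} \hookrightarrow \CAlg_R^{a \tic}$ I would check that $\varpi$-completion works: given $A \in \CAlg_R^{a \tic}$, regarded via $\tau^{\le 0} j_*$ as a $\varpi$-torsion free $R$-algebra, its ($=$ classical $=$ derived) $\varpi$-completion $A^\wedge$ is $\varpi$-torsion free by Lemma \ref{completion_tf}, remains almost (i.e. $A^\wedge \simeq (A^\wedge)_*$) since the essential image of $\tau^{\le 0} j_*$ is stable under derived $\varpi$-completion by Construction \ref{const_almost_comp_localization}, and is total integrally closed by Lemma \ref{stab_tic}(1); thus $A^\wedge \in \CAlg_R^{\wedge a \tic}$, and the universal property of $\varpi$-completion among $\varpi$-complete objects exhibits $A \mapsto A^\wedge$ as the left adjoint. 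Finally, the right-hand square consists precisely of the left adjoints to the (commuting) left-hand square, so it commutes up to canonical isomorphism by uniqueness of adjoints; concretely, once the equivalences of the first step are invoked, both composites send $A$ to its $\varpi$-completion, which already lies in $\CAlg_R^{\wedge a \tic}$.

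I do not anticipate a serious obstacle: all the machinery is in place from the ``$\pic$''/``$\ic$'' propositions. The only step with genuine content is the implication ``almost $+$ integrally closed $\Rightarrow$ total integrally closed'' above, and even that is just the combination of the $p$-integral-closedness bootstrap (Lemma \ref{tic_of_pic_preserves_complete}) with the defining property $S \simeq S_*$ of an almost algebra; the mild bookkeeping point is to keep the ``$\varpi$ admits compatible $p$-power roots up to a unit'' reductions consistent throughout.
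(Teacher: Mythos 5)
Your proof is correct and follows essentially the same route as the paper's: the one genuine step in both is that $S \simeq S_*$ together with $p$-integral closedness (inherited from integral closedness) upgrades, via the bootstrap from Lemma \ref{tic_of_pic_preserves_complete}, to total integral closedness, and the left-adjoint/commutativity bookkeeping is then routine from Propositions \ref{ic_almost_algebras}, \ref{stab_tic}, \ref{completion_tf} and Construction \ref{const_almost_comp_localization}. You are somewhat more explicit than the paper about the right-hand square commuting by uniqueness of adjoints, but the substance is identical.
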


\begin{proof} To show that $\CAlg_R^{a \tic} \hookrightarrow \CAlg_R^{a \ic}$ is an equivalence with inverse $(-)^{a \tic}$, it suffices to show that $S:= H^0 j_* A$ is total integrally closed with respect to $S[\frac{1}{\varpi}]$. By hypothesis we have that $S$ is integrally closed in $S[\frac{1}{\varpi}]$ (in particular $p$-integrally closed), and by the proof of \ref{tic_of_pic_preserves_complete} we can conclude that if $f \in S^{\tic}$ then $\{f^{\ZZ_{\ge 0}} \} \subset \frac{1}{\varpi^{1/p^n}} S$ for all $n \in \ZZ_{\ge 0}$. As $S$ satisfies $S \simeq S_*$ it follows that $f \in S$, showing that $S$ is total integrally closed. The same argument, coupled with the fact that if $A \in \CAlg_R^{\tf}$ is $\varpi$-complete then so is $A_*$, shows that $\CAlg_R^{\wedge a \tic} \hookrightarrow \CAlg_R^{\wedge a \ic}$ is an equivalence.

It remains to show that $\CAlg_R^{\wedge a \tic} \hookrightarrow \CAlg_R^{a \tic}$ admits a left adjoint given by $\varpi$-completion. Let $A \rightarrow C$ be a morphism in the essential image of $H^0 j_*: \CAlg_R^{a \tic} \rightarrow \CAlg_R^{\tic}$ where $C$ is $\varpi$-complete; then there exists an essentially unique factorization $A \rightarrow A^\wedge \rightarrow C$ where $A^\wedge$ is total integrally closed in $A^{\wedge}[\frac{1}{\varpi}]$ by \ref{stab_tic}, and satisfies $A^\wedge \simeq (A^\wedge)_*$ by \ref{const_almost_comp_localization}.
\end{proof}

\begin{prop}\label{almost_funct_tic_algebras} The fully faithful functors
\begin{align*}
	H^0 j_*: \CAlg_R^{a \tic} \hookrightarrow \CAlg_R^{\tic} && H^0 j_*: \CAlg_R^{\wedge a \tic} \hookrightarrow \CAlg_R^{\wedge \tic}
\end{align*}
are equivalences with inverse given by
\begin{align*}
	&(-)^a: \CAlg_R^{\tic} \rightarrow \CAlg_R^{a \tic} && A \mapsto A^a \\
	&(-)^a: \CAlg_R^{\wedge \tic} \rightarrow \CAlg_R^{\wedge a \tic} && A \mapsto A^a
\end{align*}
Moreover, every functor on the left hand side diagram admits a left adjoint, making the diagram on the right commute
\begin{cd}
	\CAlg_R^{\wedge a \tic} \ar[r, hook] \ar[d, hook, "\simeq", swap] & \CAlg_R^{a \tic} \ar[d, hook, "\simeq"] &&
	\CAlg_R^{\tic} \ar[r, "-^\wedge"] \ar[d, "(-)^a", swap] & \CAlg_R^{\wedge \tic} \ar[d, "(-)^a"] \\
	\CAlg_R^{\wedge \tic} \ar[r, hook] & \CAlg_R^{\tic} &&
	\CAlg_R^{a \tic} \ar[r, "-^\wedge"] & \CAlg_R^{\wedge a \tic}
\end{cd}
\end{prop}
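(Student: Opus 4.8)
The statement is the exact analogue, one rung up the ladder, of Propositions \ref{almost_funct_tf_alg}, \ref{almost_funct_pic_algebras} and \ref{almost_funct_ic_algebras}, and the plan is to run the same argument with ``totally integrally closed'' in place of ``($p$-)integrally closed''. The key structural inputs are already in place: Lemma \ref{stab_tic} says that both $(-)^\wedge$ and $(-)_*$ preserve the property of being totally integrally closed in $A[1/\varpi]$, Construction \ref{abelian_internal_hom} says that $(-)_* = \uHom_R((\varpi)_{\perfd}, -)$ preserves derived $\varpi$-completeness, and Constructions \ref{const_almost_comp_localization}/\ref{almost_elements} say that for a $\varpi$-complete $\varpi$-torsion free $R$-algebra the unit $A \to A_*$ is an isomorphism precisely when $A$ is already an almost algebra. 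So the whole proof is an exercise in assembling adjunctions from these ingredients.

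First I would establish that $H^0 j_* : \CAlg_R^{a\tic} \hookrightarrow \CAlg_R^{\tic}$ admits a left adjoint $(-)^a$: given a morphism $A \to C$ in $\CAlg_R^{\tic}$ with $C$ in the essential image of $H^0 j_*$ (equivalently $C \simeq C_*$), it factors essentially uniquely as $A \to A_* \to C$ by the universal property of $(-)_*$ described in Notation \ref{almost_elements}, and $A_*$ lies in $\CAlg_R^{a\tic}$ because Lemma \ref{stab_tic}(2) guarantees $A_*$ is again totally integrally closed in $A[1/\varpi]$ (and $\varpi$-torsion free, since $A$ is). This proves the first functor is fully faithful with the claimed left adjoint. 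The second case, $H^0 j_* : \CAlg_R^{\wedge a\tic} \hookrightarrow \CAlg_R^{\wedge\tic}$, is identical except one additionally invokes Construction \ref{abelian_internal_hom} to know $A_*$ stays $\varpi$-complete when $A$ is. To see these are in fact \emph{equivalences} (not just adjunctions), note that on the subcategory of algebras with $A \simeq A_*$ the unit is an isomorphism — which is exactly the condition cutting out $\CAlg_R^{a\tic}$ inside $\CAlg_R^{\tic}$ — so $H^0 j_*$ and $(-)^a$ are mutually inverse; this mirrors the $\ic$ and $\tic$-almost statements in Propositions \ref{ic_almost_algebras} and \ref{tic_almost_algebras}.

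Next I would verify the commutativity of the square on the right. The left column $\CAlg_R^{\wedge \tic} \hookrightarrow \CAlg_R^{\tic}$ has left adjoint $(-)^\wedge$ by Proposition \ref{tic_algebras}, the left column $\CAlg_R^{\wedge a\tic} \hookrightarrow \CAlg_R^{a\tic}$ has left adjoint $(-)^\wedge$ by Proposition \ref{tic_almost_algebras} (last paragraph of its proof), and the two horizontal equivalences are $(-)^a$; so the claimed square is obtained by passing to left adjoints from the evident commuting square of inclusions, and commutativity follows from uniqueness of adjoints — the same bookkeeping used at the end of Propositions \ref{almost_funct_ic_algebras} and \ref{almost_funct_tic_algebras}. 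Concretely one checks $((A^\wedge)^a$ computed in $\CAlg_R^{\wedge a\tic})$ agrees with $((A^a)^\wedge$ computed in $\CAlg_R^{\wedge a\tic})$, which is immediate once one knows $(-)^\wedge$ and $(-)_*$ commute (Construction \ref{const_almost_comp_localization}).

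I do not expect any genuine obstacle here: all the real analytic content — that $(-)^\wedge$ and $(-)_*$ preserve totally integrally closed algebras, and that $(-)^{\tic}$ of a $\varpi$-complete algebra stays $\varpi$-complete — has been isolated into Lemmas \ref{stab_tic} and \ref{tic_of_pic_preserves_complete} and the earlier almost-mathematics constructions. The only mild subtlety, and the one I would be careful to spell out, is the identification of the essential image of $H^0 j_*$ with the class of $A$ satisfying $A \simeq A_*$ in the \emph{totally} integrally closed setting; this is where one uses that $\varpi$-torsion freeness lets $(-)_*$ be described concretely as the submodule $\{a \in A[1/\varpi] : \varepsilon a \in A \text{ for all } \varepsilon \in (\varpi)_{\perfd}\}$ (Notation \ref{almost_elements}), so that ``$A \simeq A_*$'' is a transparent condition compatible with the totally-integrally-closed hypothesis. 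Everything else is formal manipulation of the adjunctions already constructed in Sections \ref{sect_almost_math} and \ref{sect_int_closure}.
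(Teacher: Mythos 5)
Your structural plan is right, and the square of functors at the end is handled correctly (it is uniqueness of adjoints, exactly as in the earlier propositions in this section). But there is a real gap in the argument for the claimed \emph{equivalence}, which is in fact the only part of the proposition that goes beyond the earlier $\pic$/$\ic$ adjunction statements. You set up the reflective adjunction $(-)^a \dashv H^0 j_*$ and then assert ``$H^0 j_*$ and $(-)^a$ are mutually inverse'' because ``$A \simeq A_*$ is exactly the condition cutting out $\CAlg_R^{a\tic}$ inside $\CAlg_R^{\tic}$.'' That statement is tautological — it is the definition of the almost subcategory — and an adjunction whose unit is an isomorphism precisely on the subcategory is just a reflection, not an equivalence. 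To upgrade to an equivalence you need to show the inclusion is \emph{essentially surjective}, i.e. that every object of $\CAlg_R^{\tic}$ already satisfies $A \simeq A_*$. This is the one place where ``totally integrally closed'' is genuinely used, and your appeals to Lemma \ref{stab_tic} (which says $A_*$ is again totally integrally closed) and to Propositions \ref{ic_almost_algebras} and \ref{tic_almost_algebras} (which compare different flavors of integral closure \emph{within} the almost world) do not cover it.

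The missing step, which is how the paper concludes, is short but must be said: after reducing to $\varpi$ with $p$-power roots, $A_*$ consists of those $a \in A[1/\varpi]$ with $\varpi^{1/p^n} a \in A$ for all $n$; for such $a$ one has $\varpi^{k/p^n} a^k \in A$ for all $k, n$, and taking $n$ large gives $\varpi a^k \in A$ for every $k$, so $a$ lies in the total integral closure of $A$ in $A[1/\varpi]$, which is $A$ by hypothesis. Hence $A_* = A$, so the unit is an isomorphism on \emph{all} of $\CAlg_R^{\tic}$ and the inclusion is an equivalence. Once you insert this observation the rest of your write-up goes through; without it the proof only establishes the adjunction, which is strictly weaker than what the proposition claims.
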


\begin{proof} To show that $H^0 j_*: \CAlg_R^{a \tic} \hookrightarrow \CAlg_R^{\tic}$ is an equivalence it suffices to show that for $A \in \CAlg_R^{\tic}$ we have the identity $A \simeq A_*$. Recall from \ref{almost_elements} that $A_* := \uHom_{R}((\varpi)_{\perfd}, A)$ can be identified with the set of elements of $a \in A[\frac{1}{\varpi}]$ such that $\varepsilon a \in A$ for all $\varepsilon \in (\varpi)_{\perfd}$, and if we assume without loss of generality that $\varpi$ admits compatible $p$-power roots $(\varpi)_{\perfd}$ can be identified with $\cup (\varpi^{1/p^n})$. Thus $A_*$ can be identified with the set of elements $a \in A[\frac{1}{\varpi}]$ which satisfy $a \varpi^{1/p^n} \in A$ for all $n \in \ZZ_{\ge 0}$, and since $A$ is total integrally closed in $A[\frac{1}{\varpi}]$ it follows that $A_* \simeq A$ as desired. The exact same argument shows that $H^0 j_*: \CAlg_R^{\wedge a \tic} \hookrightarrow \CAlg_R^{\wedge \tic}$ is an equivalence with the desired inverse.

The fact that the functors $\CAlg_R^{\wedge \tic} \hookrightarrow \CAlg_R^{\tic}$ and $\CAlg_R^{\wedge a \tic} \hookrightarrow \CAlg_R^{a \tic}$ admit left adjoints given by $\varpi$-completion was already established in \ref{tic_algebras} and \ref{tic_almost_algebras} respectively.
\end{proof}

\begin{prop}\label{tic_preserves_perfectoid} Let $S$ be a $\varpi$-complete $\varpi$-torsion free integral perfectoid $R$-algebra. Then, $S^{\tic}$ is a $\varpi$-complete $\varpi$-torsion free integral perfectoid $R$-algebra.
\end{prop}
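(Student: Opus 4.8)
The plan is to reduce the statement to the assertion that $S^{\tic}$ is integral perfectoid, and then to identify $S^{\tic}$ with the untilt of $(S^\flat)^{\tic}$, which is manifestly integral perfectoid by the tilting correspondence. First I would dispatch the easy parts. Since $S$ is already $\varpi$-torsion free, Corollary \ref{perfectoid_p_int_closed} shows $S$ is $p$-integrally closed in $S[1/\varpi]$, so $S^{\tic}$ is defined; it is $\varpi$-torsion free by construction, and by Lemma \ref{tic_of_pic_preserves_complete} it is totally integrally closed and $\varpi$-complete. Moreover $\varpi^p$ divides $p$ in $S^{\tic}$ because it already does in $R$. So everything reduces to showing $S^{\tic}$ is integral perfectoid.

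Next I would pass to the tilt. We may assume $\varpi$ admits a compatible system of $p$-power roots, as usual in this section. Then $S^\flat$ is a perfect $\FF_p$-algebra which is $\varpi^\flat$-complete by Lemma \ref{completeness_tilt} and $\varpi^\flat$-torsion free by Corollary \ref{perfectoid_torsion} (using that $S$ is $\varpi$-torsion free); being perfect and $\varpi^\flat$-torsion free it is $p$-integrally closed in $S^\flat[1/\varpi^\flat]$ by Lemma \ref{p_int_closed_frob}, so $T := (S^\flat)^{\tic}$ is defined. By Lemma \ref{tic_of_pic_preserves_complete}, $T$ is totally integrally closed and $\varpi^\flat$-complete, and it is $\varpi^\flat$-torsion free by construction. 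Crucially, $T$ is \emph{perfect}: the $p$-power map is bijective on the reduced ring $S^\flat[1/\varpi^\flat]$, and $T$ is closed under extraction of $p$-th roots there, since if $f\in T$ then $f^{1/p}$ again has powers bounded inside $\varpi^{\flat,-1}S^\flat$ by the $p$-integral closedness of $S^\flat$.

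Now I would untilt. Let $(\A_{\inf}(S),(d))$ be the perfect prism attached to $S$ by Proposition \ref{perfect_prism_perfectoid}, so $\A_{\inf}(S)=W(S^\flat)$ and $\A_{\inf}(S)/d=S$. The inclusion $S^\flat\hookrightarrow T$ induces a map of perfect $\delta$-rings $W(S^\flat)\to W(T)$ (Proposition \ref{perfect_delta_rings_equiv}, which also shows this map is injective), and $d$ maps to a distinguished element of $W(T)$. Since $W(T)$ is $p$-complete, is $[\varpi^\flat]$-complete (as $W(-)$ preserves limits and $T$ is $\varpi^\flat$-complete), and $d\in(p,[\varpi^\flat])$ by Lemma \ref{classical_varpi_complete_perfect_prism}, the ring $W(T)$ is $(p,d)$-complete, so $(W(T),(d))$ is a perfect prism by Lemma \ref{rig_prisms}. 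By Proposition \ref{perfect_prism_perfectoid} the ring $S':=W(T)/d$ is integral perfectoid; by Proposition \ref{varpi_completion_perfectoid} it is $\varpi$-complete with tilt $S'^\flat=W(T)/p=T$, and it is $\varpi$-torsion free since $S'[\varpi^n]\simeq T[\varpi^{\flat,n}]=0$ by Corollary \ref{perfectoid_torsion}.

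The remaining step—identifying $S'$ with $S^{\tic}$—is where I expect the real work to lie. One has a natural $R$-algebra map $S=\A_{\inf}(S)/d\to W(T)/d=S'$, and since $[\varpi^\flat]W(T)\subseteq W(S^\flat)$ (check on Teichmüller expansions, using $\varpi^\flat T\subseteq S^\flat$, which holds because $T\subseteq\varpi^{\flat,-1}S^\flat$), this map is injective and realizes $S'$ as a subring of $S[1/\varpi]$ with $\varpi S'\subseteq S$; in particular $S'[1/\varpi]=S[1/\varpi]$. Then every $f\in S'$ has $\{f^n:n\ge 0\}\subseteq\varpi^{-1}S$, so $f\in S^{\tic}$, giving $S'\subseteq S^{\tic}$; conversely, $S'$ is totally integrally closed—this is the delicate point, deduced from the tilting correspondence of Proposition \ref{prism_tilt_correspondence} together with total integral closedness of $S'^\flat=(S^\flat)^{\tic}$, i.e. from a short compatibility lemma between total integral closure and (un)tilting for $\varpi$-complete $\varpi$-torsion-free integral perfectoid rings—so $S^{\tic}\subseteq S'$. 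Hence $S^{\tic}=S'$ is integral perfectoid, which together with the first paragraph completes the proof.
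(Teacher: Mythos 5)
Your peripheral steps are fine: torsion-freeness is automatic, completeness and total integral closedness come from Lemma \ref{tic_of_pic_preserves_complete}, $p$-integral closedness of $S$ from Corollary \ref{perfectoid_p_int_closed}, $T=(S^\flat)^{\tic}$ is indeed perfect, and $(W(T),(d))$ is a perfect prism; even the injectivity of $S\to S':=W(T)/d$ that you assert in passing can be repaired (if $x\in W(S^\flat)$ equals $dy$ with $y\in W(T)$, then $[\varpi^{\flat,1/p^k}]y\in W(S^\flat)$ for all $k$, so $\varpi^{1/p^k}\bar{x}=0$ in $S$, hence $\bar{x}=0$ by $\varpi$-torsion-freeness). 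The genuine gap is exactly the step you flag as ``where the real work lies'': to get $S^{\tic}\subseteq S'$ you need that the untilt $S'$ of $(S^\flat)^{\tic}$ is totally integrally closed, i.e.\ a compatibility of total integral closure (equivalently, of almost elements) with \emph{un}tilting. Proposition \ref{prism_tilt_correspondence}, which you invoke, is only the bare tilting equivalence and says nothing about total integral closure; the results in the paper that do furnish this compatibility (Propositions \ref{equiv_perfd_tic_almost} and \ref{integral_tilt_corr}) occur later and their proofs cite Proposition \ref{tic_preserves_perfectoid} itself, so appealing to them here is circular. Nor is the needed statement a ``short lemma'': the natural proofs either show that the almost elements $S_*$ of a perfectoid ring are again perfectoid, or run a Frobenius computation on $S^{\tic}$ directly --- which is precisely the content of the proposition you are trying to prove.

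For comparison, the paper's proof never tilts: it checks the perfectoid property of $S^{\tic}$ by hand. Given $x\in S^{\tic}$, surjectivity of Frobenius on $S/\varpi^p$ yields $\varpi x=y^p+\varpi^p z$ in $S$ (using $\varpi^{1/p^n}x\in S$); then $y^p/\varpi=x+\varpi^{p-1}z\in S^{\tic}$, and $p$-integral closedness of $S^{\tic}$ gives $y/\varpi^{1/p}\in S^{\tic}$, proving Frobenius surjectivity on $S^{\tic}/\varpi^p$; injectivity of $S^{\tic}/\varpi\to S^{\tic}/\varpi^p$ is another one-line use of $p$-integral closedness; completeness is Lemma \ref{tic_of_pic_preserves_complete}, and one concludes by the criterion of \cite[Lemma 3.10]{BMS1}. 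If you wish to keep your tilting route, you must first prove the untilting compatibility for total integral closure without the later machinery, and any such proof will in effect reproduce these elementary Frobenius manipulations, so the detour through $W(T)$ buys nothing here.
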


\begin{proof} First, we show that the Frobenius map $\varphi: S^{\tic}/\varpi^p \rightarrow S^{\tic}/\varpi^p$ is surjective. Let $\overline{x} \in S^{\tic}/\varpi^p$, and $x \in S^{\tic}$ such that $x = \overline{x} \bmod \varpi^p$, then since $S$ is integral perfectoid we know that Frobenius is surjective $\varphi: S/\varpi^p \rightarrow S/\varpi^p$, which in turn implies that there exists $y, z \in S$ such that
\begin{equation*}
	\varpi x = y^p + \varpi^p z \in S
\end{equation*}
where we are implicitly using that $\varpi^{1/p^n} x \in S$ for all $n \in \ZZ_{\ge 0}$. We claim that $y^p/\varpi \in S^{\tic} \subset S[1/\varpi]$. Indeed, since $x + \varpi^{p-1} z = y^p/\varpi \in S[1/\varpi]$, it would suffice to show that $\varpi^{1/p^n} (x + \varpi^{p-1}z) \in S$, but this is clear as $z \in S$ and $\varpi^{1/p^n} x \in S$. And since $S^{\tic} \subset S[1/\varpi]$ is $p$-integrally closed it follows that $y/\varpi^{1/p} \in S^{\tic}$, showing that $\varphi: S^{\tic}/\varpi^p \rightarrow S^{\tic}/\varpi^p$ is surjective.

Next, we need to show that the Frobenius map $\varphi: S^{\tic}/\varpi \rightarrow S^{\tic}/\varpi^p$ is injective. Let $\overline{x} \in S^{\tic}/\varpi$ such that $\varphi(\overline{x}) = 0$, then there exists $x \in S^{\tic}$ such that $x = \overline{x} \bmod \varpi$, and a $y \in S^{\tic}$ such that $x^p = \varpi^p y \in S^{\tic}$. It suffices to show that $x \in \varpi S$, by construction $x^p/\varpi^p \in S^{\tic}$ and by the $p$-integral closedness of $S^{\tic}$ it follows that $x/\varpi \in S^{\tic}$, showing that $x \in \varpi S^{\tic}$ as desired.

Finally, we by Lemma \ref{tic_of_pic_preserves_complete} it follows that $S^{\tic}$ is $\varpi$-complete and it is $\varpi$-torsion free by construction. To show that $S^{\tic}$ is integral perfectoid it suffices to show that the Frobenius map $\varphi: S^{\tic}/\varpi \rightarrow S^{\tic}/\varpi^p$ is surjective by \cite[Lemma 3.10]{BMS1}, but this follows from the fact that Frobenius $\varphi: S^{\tic}/\varpi^p \rightarrow S^{\tic}/\varpi^p$ factors as $S^{\tic}/\varpi^p \twoheadrightarrow S^{\tic}/\varpi \rightarrow S^{\tic}/\varpi^p$, where the first map is the canonical projection.
\end{proof}

\newpage

\section{Banach Algebras}\label{sect_ban_alg}

\subsection{Definitions and basic properties}

Throughout this section all Banach rings are assumed to be non-archimedean. Furthermore, we will assume that all (non-archimedean) Banach rings $R$ have a topological nilpotent unit, that is, there exists a $\varpi \in R^{\times}$ such that $\varpi^n \rightarrow 0$ as $n \rightarrow \infty$.

\begin{defn}\label{defn_banach} A (non-archimedean) Banach ring $R$ is a ring $R$ equipped with a map $|-|: R \rightarrow \RR_{\ge 0}$ satisfying
\begin{enumerate}[(1)]
	\item $|f| = 0$ only if $f = 0$
	\item $|f| = |-f|$
 	\item $|fg| \le |f||g|$
  \item $|f + g| \le \max(|f|, |g|)$
  \item $R$ is complete with respect to the metric $d(f,g) = |f-g|$
\end{enumerate}
A map $|-|: R \rightarrow \RR_{\ge 0}$ satisfying the above conditions is called a (non-archimedean) norm on $R$. A morphism of Banach rings $\varphi: R \rightarrow S$ is a morphisms of rings $R \rightarrow S$ which is bounded, that is, there exists a $C > 0$ such that $|\varphi(f)|_{S} \le C|f|_{R}$ for all $f \in R$. We denote the category of all (non-archimedean) Banach rings by $\Ban$.

An alternative category of interest is the category of $\Ban^{\contr}$ of Banach rings with contractive maps. Recall that a map $\varphi: R \rightarrow S$ of Banach rings is contractive if $|\varphi(f)|_{S} \le |f|_{R}$ for all $f \in R$.
\end{defn}

In what follows it is convenient to sometimes leave the world of Banach algebras and work with general semi-normed and normed rings. We do not review this notions here, but refer the reader to Section 1.1 and 1.2 of \cite{BGR} for details.

\begin{defn} A (non-archimedean) Banach ring $K$ is called a non-archimedean field if the norm on $K$ is multiplicative, that is, we have $|xy| = |x| |y|$ for any pair of elements $x,y \in K$.
\end{defn}

\begin{defn}\label{defn_banach_algebras} Let $B$ be a (non-archimedean) Banach ring.  A (non-archimedean) Banach $B$-algebra $R$ is a Banach ring $R$ together with a contractive structure map $\varphi: B \rightarrow R$ of Banach rings. A morphisms of Banach $B$-algebras $R_1 \rightarrow R_2$ is a morphisms of Banach rings that commute with the structure map from $B$. We denote the category of (non-archimedean) Banach $B$-algebras by $\Ban_B$.

We will also be interested in the category $\Ban_B^{\contr}$ of Banach $B$-algebras with contractive maps. Unlike the situation with $\Ban_B$ it is easy to see that $\Ban_B^{\contr}$ is equivalent to the category $\Ban_{B /}^{\contr}$ of Banach rings $R$ and contractive maps, equipped with a contractive map $B \rightarrow R$.
\end{defn}

\begin{rem} Given a contractive map of Banach rings $\varphi: K \rightarrow R$, if we further assume that $K$ is a non-archimedean field then the map $\varphi: K \rightarrow R$ is an isometry; that is, we have an equality $|\varphi(a)|_R = |a|_K$.
\end{rem}

Next, we would like to show that the norm in the definition of Banach rings can be recovered (up to isomorphism) from the topology on the Banach ring. With this in mind, notice that any Banach ring is a topological Hausdorff ring with continuous multiplication (cf. \cite[Section 1.2]{BGR}), and that bounded morphisms of Banach rings give rise to continuous morphisms of topological Hausdorff rings (cf. \cite[Section 1.1]{BGR})

\begin{prop}\label{banach_to_topological_ff} Let $B$ be a (non-archimedean) Banach ring. Then, the forgetful functor
	\begin{equation*}
		\Ban_B \longrightarrow \{\text{Topological Hausdorff } B \text{-algebras} \}
	\end{equation*}
is fully faithful.
\end{prop}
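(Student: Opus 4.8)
The plan is to show that the forgetful functor $\Ban_B \to \{\text{Topological Hausdorff }B\text{-algebras}\}$ is fully faithful, which amounts to two claims: (i) the functor is injective on morphism sets, and (ii) every continuous $B$-algebra homomorphism between (the underlying topological rings of) two Banach $B$-algebras is automatically bounded. Claim (i) is immediate: a bounded morphism of Banach rings is in particular a set-theoretic function, and two bounded morphisms that agree as continuous maps of topological spaces agree as functions, hence are equal in $\Ban_B$. So the entire content is claim (ii): continuity implies boundedness. (Here I should also note that different choices of equivalent norm on a Banach ring give the same topology, so ``fully faithful'' is the correct statement rather than ``an isomorphism of categories''.)

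For claim (ii), let $\varphi\colon R \to S$ be a $B$-algebra homomorphism that is continuous for the norm topologies. First I would fix the standard basic open neighborhoods of $0$: the balls $B_R(\varepsilon) = \{f \in R : |f|_R < \varepsilon\}$ form a neighborhood basis at $0 \in R$, and likewise for $S$. Continuity of $\varphi$ at $0$ gives, for the unit ball $B_S(1)$, some $\delta > 0$ with $\varphi(B_R(\delta)) \subseteq B_S(1)$. The key step is then to leverage the existence of the topologically nilpotent unit $\varpi \in R^\times$ (guaranteed by the blanket assumption of this section): since $|\varpi|_R < 1$, the powers $|\varpi^n|_R \to 0$, so every $f \in R$ can be rescaled by a suitable power of $\varpi$ into the ball $B_R(\delta)$. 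Concretely, given $f \in R$ with $f \neq 0$, choose $n = n(f)$ minimal with $|\varpi^n f|_R < \delta$; then $|\varphi(\varpi^n f)|_S < 1$, and using submultiplicativity $|\varphi(f)|_S = |\varphi(\varpi)^{-n}\varphi(\varpi^n f)|_S \le |\varphi(\varpi)^{-1}|_S^{\,n}\, |\varphi(\varpi^n f)|_S$. To turn this into a genuine bound of the form $|\varphi(f)|_S \le C|f|_R$ I would control $n(f)$ in terms of $|f|_R$: minimality gives $|\varpi^{n-1} f|_R \ge \delta$, i.e. $|\varpi|_R^{\,n-1} |f|_R \ge \delta$, so $|\varpi|_R^{\,n} \ge \delta |\varpi|_R / |f|_R$, which bounds $n$ logarithmically in $1/|f|_R$; combined with the estimate above this yields $|\varphi(f)|_S \le C |f|_R^{\alpha}$ for appropriate constants. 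A cleaner route that avoids the exponent $\alpha$: replace $\varpi$ by a suitable power so that $|\varphi(\varpi)^{-1}|_S \le |\varpi|_R^{-1}$ fails to be an obstruction — more carefully, one uses that $\varphi(\varpi)$ is again topologically nilpotent in $S$ (continuity sends $\varpi^n \to 0$ to $\varphi(\varpi)^n \to 0$), hence $|\varphi(\varpi)|_S < 1$, and a symmetric argument on both sides pins down the comparison. I would present whichever of these gives the inequality $|\varphi(f)|_S \le C|f|_R$ most economically.

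The main obstacle I anticipate is precisely this last bookkeeping: extracting an honest linear bound $|\varphi(f)|_S \le C|f|_R$ rather than a merely sublinear or Hölder-type estimate, since the rescaling trick naturally produces powers. The fix is to observe that for Banach \emph{rings} the norm is submultiplicative and there is a topologically nilpotent unit on both sides, and to run the rescaling argument symmetrically: pick $N$ large enough that $|\varphi(\varpi^N)|_S \le |\varpi|_R$ (possible since $\varphi(\varpi)$ is topologically nilpotent), and then for arbitrary $f$ write $f = \varpi^{-kN}(\varpi^{kN} f)$ where $k$ is chosen so that $\varpi^{kN}f$ lies in $B_R(\delta)$ but $\varpi^{(k-1)N}f$ does not; the two-sided comparison $|\varphi(\varpi^{-kN})|_S \le |\varpi^{-kN}|_R$-type inequalities then collapse the exponents and deliver the desired $C$. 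Once boundedness is established, $\varphi$ is a morphism in $\Ban_B$ whose underlying continuous map is the one we started with, completing the proof of fullness; faithfulness was already noted, so the functor is fully faithful.
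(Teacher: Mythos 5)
Your reduction is fine: faithfulness is trivial because $\Ban_B$ is a concrete category, and the entire content is that a continuous $B$-algebra homomorphism $\varphi\colon R \to S$ is automatically bounded. But the rescaling argument for fullness does not close, and the fix you sketch does not address the actual obstruction. With $n$ minimal so that $|\varpi^n f|_R < \delta$, you obtain $|\varphi(f)|_S \le |\varphi(\varpi)^{-n}|_S$ and, from minimality, $\delta \le |\varpi^{n-1}|_R\,|f|_R$. To conclude $|\varphi(f)|_S \le C|f|_R$ you would therefore need a uniform-in-$n$ bound on $|\varphi(\varpi)^{-n}|_S \cdot |\varpi^{n-1}|_R$. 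Continuity only controls $\varphi$ near $0$, i.e.\ it bounds $|\varphi(\varpi^n)|_S$ for large $n$, and says nothing about the \emph{inverses} $\varpi^{-n}$, which are large elements; submultiplicativity of the norm gives no comparison between $|\varphi(\varpi)^{-n}|_S$ and $|\varpi^n|_R$. Your proposed fix also leans on the claim that topological nilpotence of $\varphi(\varpi)$ forces $|\varphi(\varpi)|_S < 1$, which fails for general Banach rings (it only gives spectral radius $< 1$): in $\mathbf{Q}_p[T]/(T^2-p)$ with the max norm, $T$ is a topologically nilpotent unit with $|T| = 1$. The rescaling trick does yield a clean linear bound when $B$ is a non-archimedean \emph{field}, because scalars then rescale the norm multiplicatively; here $B$ is merely a Banach ring and the blanket topologically nilpotent unit need not satisfy $|\varpi^n| = |\varpi|^n$. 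That is exactly where your argument stalls, and no choice of $N$ or of a replacement power of $\varpi$ repairs it.

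The paper sidesteps this entirely by working at the level of topologies instead of norms. It factors $\varphi$ as $R \twoheadrightarrow \im(\varphi) \hookrightarrow S$, endows $\im(\varphi)$ with the subspace topology, and invokes the open mapping theorem of Henkel (\cite[Theorem 0.1]{henkel2014open}) for topological rings admitting a zero sequence of units — this is where the blanket topologically nilpotent unit of $B$ and the countable neighborhood basis $\{R_{<1/n}\}$ enter — to conclude that the surjective continuous map $R \to \im(\varphi)$ is open, hence bounded. No explicit constant $C$ is ever produced; boundedness drops out of a black-box open mapping theorem. If you want a self-contained proof rather than a citation, what you should reproduce is a Baire-category/Banach--Schauder type argument, not a rescaling estimate.
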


\begin{proof} It suffices to show that for any continuous map of Banach $B$-algebras $f: R \rightarrow S$ regarded as topological Hausdorff $B$-algebras, the map $f: R \rightarrow S$ is bounded with respect to their norms. Endowing the ring $\im(f) \subset S$ with the subspace topology (which comes from the norm inherited from $S$), it suffices to show that the continuous map $R \rightarrow \im(f)$ is bounded. But since the map $R \rightarrow \im(f)$ is surjective, it is an open map by the open mapping theorem \cite[Theorem 0.1]{henkel2014open}, so in particular it is bounded.

Let us emphasize that in order to invoke \cite[Theorem 0.1]{henkel2014open} its critical that $B$ admits a topological nilpotent unit $\varpi \in B^{\times}$. Furthermore, one needs to show that every element $R \in \Ban_B$ admits a countable fundamental systems of open neighborhoods of $0$, but this is clear as $\{R_{< 1/n}\}$ form such a fundamental system for $R$.
\end{proof}

Of special interest to us will be the subcategory of uniform Banach algebras, which contains all perfectoid Banach algebras.

\begin{defn} A Banach ring $R$ is said to be uniform, if the norm is power-multiplicative, that is $|a^n| = |a|^n$ for all $a \in R$ and $n \ge 1$.
\end{defn}

The following provides a convenient mechanism for proving that various uniform Banach algebras are non-archimedean. We often use it without mention in the sequel.

\begin{lemma}\label{recog_na} Let $A$ be a uniform Banach ring. Then, $A$ is non-archimedean if and only if $|n|_A \le 1$ for all $n \in \ZZ$.
\end{lemma}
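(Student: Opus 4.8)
The plan is to prove both implications directly from the definitions, using the power-multiplicativity of the norm on $A$ as the crucial extra ingredient. One direction is immediate: if $A$ is non-archimedean then the norm satisfies the ultrametric inequality $|f+g| \le \max(|f|,|g|)$, and since $1 \in A$ has $|1| = |1^2| \le |1|^2$ forcing $|1| \ge 1$, while $|1| = |1 \cdot 1| \le |1||1|$ gives no upper bound directly — instead note $|1|=|1|^n$ for all $n$ by power-multiplicativity, hence $|1| = 1$. Then by induction and the ultrametric inequality $|n|_A = |1 + \cdots + 1|_A \le \max(|1|_A, \dots, |1|_A) = 1$ for every $n \in \ZZ_{\ge 0}$, and $|-n|_A = |n|_A$ handles the negatives. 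So the forward implication needs essentially no work.

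The substantive direction is the converse: assume $A$ is a uniform Banach ring with $|n|_A \le 1$ for all $n \in \ZZ$, and deduce that $|f+g| \le \max(|f|,|g|)$ for all $f,g \in A$. The standard trick is to expand a power $(f+g)^N$ using the binomial theorem, which is valid in the commutative ring $A$: $(f+g)^N = \sum_{k=0}^N \binom{N}{k} f^k g^{N-k}$. Applying the (sub-multiplicative, archimedean-style) triangle inequality that a general Banach norm does satisfy — namely $|x+y| \le |x| + |y|$ is \emph{not} assumed, so one must instead use the weaker bound coming from the definition; here I would use that every Banach ring norm satisfies $|x_1 + \cdots + x_m| \le m \cdot \max_i |x_i|$, which follows from iterating $|x+y|\le\max(|x|,|y|)$ in the non-archimedean axiom \dots but wait, that axiom is what we are trying to prove. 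The correct elementary input is: for \emph{any} norm (even archimedean) one has $|x_1 + \cdots + x_m| \le \sum_i |x_i| \le m \max_i |x_i|$ only if the triangle inequality holds, which is not among the five axioms. The axioms as stated in Definition \ref{defn_banach} already include the ultrametric inequality as axiom (4), so strictly speaking every object of $\Ban$ is non-archimedean by fiat; the content of the lemma is really about a \emph{seminorm} or a map satisfying only (1)--(3),(5). I would therefore interpret the statement in the intended way — working with a semi-normed ring as permitted by the reference to \cite[Section 1.1, 1.2]{BGR} — where only $|x+y| \le |x|+|y|$ is assumed, and show power-multiplicativity plus $|n| \le 1$ upgrades this to the ultrametric inequality.

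Granting that reading, the key computation is: for each $N \ge 1$,
\begin{equation*}
	|f+g|^N = |(f+g)^N| = \Bigl| \sum_{k=0}^N \binom{N}{k} f^k g^{N-k} \Bigr| \le \sum_{k=0}^N \Bigl| \binom{N}{k} \Bigr|\, |f|^k |g|^{N-k} \le (N+1) \max(|f|,|g|)^N,
\end{equation*}
using $|\binom{N}{k}| \le 1$, $|f^k| = |f|^k$, $|g^{N-k}| = |g|^{N-k}$ (power-multiplicativity), and bounding each of the $N+1$ terms by $\max(|f|,|g|)^N$. Taking $N$-th roots gives $|f+g| \le (N+1)^{1/N} \max(|f|,|g|)$, and letting $N \to \infty$ yields $|f+g| \le \max(|f|,|g|)$, which is the ultrametric inequality. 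The main obstacle is purely expository: pinning down in which category (Banach rings with axiom (4) built in, versus semi-normed rings) the statement is being asserted, and invoking the binomial theorem together with $|\binom{N}{k}|\le 1$ cleanly; once that is set up the limiting argument is routine. I would close by remarking that the same argument shows a power-multiplicative seminorm on any ring with $|n|\le 1$ is automatically non-archimedean, which is the form in which the lemma is used later.
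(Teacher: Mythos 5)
Your proof is correct and follows essentially the same route as the paper: the forward direction is immediate, and the converse uses the binomial expansion of $(f+g)^N$, the bound $|\binom{N}{k}| \le 1$, power-multiplicativity, and the limit $(N+1)^{1/N} \to 1$, exactly as in the paper's argument. Your side remark about interpreting the norm as merely subadditive (rather than already ultrametric by axiom (4)) is the right reading, and it matches what the paper's own computation implicitly assumes.
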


\begin{proof} If $A$ is non-archimedean its clear that $|n|_{A} \le 1$ for all $n \in \ZZ$. Conversely, assume that the norm on $A$ is power-multiplicative and that $|n|_{A} \le 1$ for all $n \in\ZZ$. Then, for any pair of objects $a,b \in A$ we have the identity
\begin{align*}
	|a+b| = |(a+b)^n|^{1/n} = \Big | \sum_{j = 0}^n \binom{n}{j} a^j b^{n-j} \Big |^{1/n} \le \Big ( \sum_{j = 0}^n |\binom{n}{j}| |a|^j |b|^{n-j} \Big )^{1/n} \le (n+1)^{1/n} \max(|a|, |b|)
\end{align*}
ans since $\lim_{n \rightarrow \infty} (n+1)^{1/n} = 1$, it follows that $|a+b| \le \max(|a|, |b|)$.
\end{proof}

In general, given a Banach ring $R$ we can always construct a power-multiplicative semi-norm (cf. \cite[Section 1.1]{BGR}) on $R$ from the given norm.

\begin{defn}\label{defn_sp_radius} Let $R$ be a Banach ring, we define the spectral radius $\rho: R \rightarrow \RR_{\ge 0}$ by the formula
\begin{equation*}
	\rho(a) := \lim_{n} |a^n|^{1/n} 
\end{equation*}
\end{defn}

In particular, we know that the limit exists and that it gives rise to a power-multiplicative semi-norm on $R$  which satisfies $\rho(a) \le |a|$ for all $a \in R$ \cite[Section 1.3.2]{BGR}. Furthermore, since it clearly satisfies $\rho(m) \le 1$ for all $m \in \ZZ$ we conclude that the induced semi-norm is non-archimedean \cite[Section 1.2]{BGR}.

\begin{lemma} Let $R$ be a Banach ring, and $|-|_1$ and $|-|_2$ be two equivalent norms on $R$. Then, $\rho_1 = \rho_2$, where $\rho_i$ is the spectral radius coming from the norm $|-|_i$.
\end{lemma}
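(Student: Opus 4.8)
The plan is to reduce the statement to the fact that the spectral radius $\rho$ of a Banach ring depends only on the underlying topological ring, and then invoke that two equivalent norms induce the same topology. First I would recall that by definition two norms $|-|_1$ and $|-|_2$ on $R$ are equivalent precisely when there exist constants $c, C > 0$ with $c\,|f|_1 \le |f|_2 \le C\,|f|_1$ for all $f \in R$; in particular the identity map $(R, |-|_1) \to (R, |-|_2)$ and its inverse are both bounded. The key computation is then to estimate $\rho_2(a) = \lim_n |a^n|_2^{1/n}$ in terms of $|-|_1$: applying the inequalities to the element $a^n$ gives
\begin{equation*}
	c^{1/n} |a^n|_1^{1/n} \le |a^n|_2^{1/n} \le C^{1/n} |a^n|_1^{1/n}.
\end{equation*}
Since $c^{1/n} \to 1$ and $C^{1/n} \to 1$ as $n \to \infty$, taking limits (both limits exist by \cite[Section 1.3.2]{BGR}, as recalled after Definition \ref{defn_sp_radius}) yields $\rho_2(a) = \lim_n |a^n|_1^{1/n} = \rho_1(a)$ for every $a \in R$, which is exactly the claim.

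An alternative, more conceptual route would be to use Proposition \ref{banach_to_topological_ff}: equivalent norms induce the same topological ring structure on $R$, the forgetful functor to topological Hausdorff rings is fully faithful, and $\rho$ can be characterized intrinsically from the topology (as the sequence $|a^n|^{1/n}$ converges to the same value for any norm defining a given topology, by the squeeze argument above applied with the boundedness of the identity in both directions). I would present the direct estimate as the main proof since it is shortest and self-contained, perhaps remarking on the topological interpretation afterward.

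I do not expect any genuine obstacle here: the only thing to be careful about is ensuring the limits defining $\rho_1$ and $\rho_2$ exist before interchanging them with the inequalities, but this is already guaranteed by the discussion preceding the lemma. The estimate $c^{1/n}, C^{1/n} \to 1$ is elementary, so the whole argument is a two-line sandwich computation.
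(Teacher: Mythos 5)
Your proposal is correct and coincides with the paper's own argument: the paper likewise takes the sandwich inequality $C_1|a^n|_1 \le |a^n|_2 \le C_2|a^n|_1$, extracts $n$-th roots, and uses $\lim_n C_i^{1/n} = 1$ to conclude $\rho_1 = \rho_2$. No further comment is needed.
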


\begin{proof} If $|-|_1$ and $|-|_2$ are equivalent norms on $R$, then we know that there exists constants $C_1, C_2 >0$ such that
\begin{equation*}
	C_1 |a^n|_1 \le |a^n|_2 \le C_2 |a^n|_1
\end{equation*}
and $\lim_n C_i^{1/n} = 1$ the result follows.
\end{proof}

\begin{lemma}\label{morphisms_unif_ban_contractive} Let $f: R_1 \rightarrow R_2$ be a morphisms of Banach rings where $R_2$ is uniform, in other words $R_2$ are endowed with the spectral radius norm. Then, the morphism $f$ is contractive, that is, we have $|f(a)|_{R_2} \le |a|_{R_1}$.
\end{lemma}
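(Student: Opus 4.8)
The plan is to reduce the statement to the defining property of the spectral radius norm as the \emph{largest} power-multiplicative seminorm bounded above by the original norm, and then to exploit that $f$ is bounded. First I would recall that since $f: R_1 \rightarrow R_2$ is a morphism of Banach rings it is bounded: there is a constant $C > 0$ with $|f(a)|_{R_2} \le C |a|_{R_1}$ for all $a \in R_1$. The key observation is that boundedness is automatically upgraded to contractivity once we pass to spectral radius norms, by the standard limiting trick already used in Lemma \ref{recog_na} and in the proof that $\rho$ is power-multiplicative.

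The main step is the following computation. Fix $a \in R_1$ and $n \ge 1$. Since $f$ is a ring homomorphism we have $f(a)^n = f(a^n)$, so applying the bound to $a^n$ gives
\begin{equation*}
	|f(a)^n|_{R_2} = |f(a^n)|_{R_2} \le C |a^n|_{R_1}.
\end{equation*}
Now take $n$-th roots and use that $R_2$ is uniform, so $|f(a)^n|_{R_2} = |f(a)|_{R_2}^n$ (equivalently, $|f(a)|_{R_2} = \rho(f(a))$ since $R_2$ carries the spectral radius norm):
\begin{equation*}
	|f(a)|_{R_2} = |f(a)^n|_{R_2}^{1/n} \le C^{1/n} |a^n|_{R_1}^{1/n}.
\end{equation*}
Letting $n \rightarrow \infty$, we have $C^{1/n} \rightarrow 1$ and $|a^n|_{R_1}^{1/n} \rightarrow \rho(a) \le |a|_{R_1}$ by Definition \ref{defn_sp_radius} and the inequality $\rho \le |-|$ recorded after it. Hence $|f(a)|_{R_2} \le \rho(a) \le |a|_{R_1}$, which is the desired contractivity; in fact this shows the slightly sharper statement that $f$ is bounded by the spectral seminorm on the source.

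There is no serious obstacle here — the only point requiring a little care is the bookkeeping with the constant $C$, namely that $C^{1/n} \to 1$ so that the multiplicative constant disappears in the limit, exactly as in the passage from bounded to contractive seminorms elsewhere in this section. One should also note that the argument only uses uniformity of the \emph{target} $R_2$ (to identify $|f(a)^n|_{R_2}$ with $|f(a)|_{R_2}^n$); no hypothesis on $R_1$ beyond being a Banach ring is needed, since on the source we only ever invoke the inequality $\rho_{R_1} \le |-|_{R_1}$.
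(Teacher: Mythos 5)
Your argument is correct and complete. The paper itself does not spell out a proof here — it simply cites \cite[Section 1.3.1]{BGR} — and the argument you give is precisely the standard one from that reference: exploit $f(a^n) = f(a)^n$, the bound $|f(a^n)|_{R_2} \le C|a^n|_{R_1}$, power-multiplicativity of the target norm, and then let $n \to \infty$ so that the constant $C^{1/n}$ washes out and $|a^n|_{R_1}^{1/n}$ converges to $\rho_{R_1}(a) \le |a|_{R_1}$. Your closing remarks — that uniformity is only needed on the target, and that one in fact gets the sharper estimate $|f(a)|_{R_2} \le \rho_{R_1}(a)$ — are both accurate and are consistent with the way the lemma is used in the paper (e.g.\ in Construction \ref{const_uniformization} and Proposition \ref{adjunction_uniform_ban}).
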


\begin{proof} \cite[Section 1.3.1]{BGR}
\end{proof}

\begin{defn}\label{defn_uniform_banach} Let $\uBan \subset \Ban$ be the full-subcategory spanned by uniform Banach rings, and if $B$ is a Banach algebra we denote by $\uBan_B \subset \Ban_B$ the full subcategory spanned by uniform Banach $B$-algebras.

Recall that as part of the definition of Banach $B$-algebras we have that the structure map $B \rightarrow R$ is contractive. By Lemma \ref{morphisms_unif_ban_contractive} we know that all morphisms that have as target a uniform Banach algebra are contractive. Hence, the category $\uBan_{B/}$ of uniform Banach rings equipped with a bounded map $B \rightarrow R$ is equivalent to $\uBan_B$. Moreover, by Proposition \ref{adjunction_uniform_ban} we conclude that $\uBan_B = \uBan_{B^u}$, where $B^u$ is the uniformization of $B$ defined in Construction \ref{const_uniformization}.
\end{defn}

Next, we introduce the notion of uniformization for general Banach rings and characterized them by a universal property.

\begin{const}\label{const_uniformization} Let $(R, |-|)$ be a Banach ring with norm $|-|$. Then, we claim that there exists a (canonical) contractive map of Banach rings $(R, |-|) \rightarrow (R^u, \rho)$, where $R^u$ is the completion of $R$ with respect to $\rho$, and $R \rightarrow R^u$ is the completion map of their underlying sets. By \cite[Section 1.1.7]{BGR} we learn that $R^u$ is complete as a group, and that the completion map of the underlying sets $R \rightarrow R^u$ lifts to a contractive map of complete groups. Multiplication on $R^u$ is defined by using the density of the image $R \rightarrow R^u$, which also shows that $R \rightarrow R^u$ is a ring map. Finally, the fact that $R^u$ is non-archimedean follows from \ref{recog_na} and the fact that $R \rightarrow R^u$ is contractive with $R$ non-archimedean.
\end{const}

\begin{prop}\label{adjunction_uniform_ban} Let $B$ be a (non-archimedean) Banach ring. The inclusion $\uBan_B \hookrightarrow \Ban_B$ admits a left adjoint
	\begin{equation*}
		(-)^u: \Ban_B \longrightarrow \uBan_B
	\end{equation*}
which we call the uniformization functor.
\end{prop}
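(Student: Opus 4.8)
The plan is to verify that the construction $(-)^u \colon \Ban_B \to \uBan_B$ from Construction \ref{const_uniformization} satisfies the universal property of a left adjoint to the inclusion $\uBan_B \hookrightarrow \Ban_B$. Concretely, I must show that for every Banach $B$-algebra $R$ and every uniform Banach $B$-algebra $S$, precomposition with the canonical map $R \to R^u$ induces a bijection
\begin{equation*}
	\Hom_{\uBan_B}(R^u, S) \overset{\simeq}{\longrightarrow} \Hom_{\Ban_B}(R, S).
\end{equation*}
First I would check that $(-)^u$ is functorial: given a bounded $B$-algebra map $f \colon R_1 \to R_2$, the composite $R_1 \to R_2 \to R_2^u$ is a map into a uniform Banach ring, hence contractive with respect to the spectral radius by Lemma \ref{morphisms_unif_ban_contractive} (applied after noting $\rho_{R_2}(f(a)) = \lim |f(a)^n|^{1/n} \le \lim |f(a^n)|^{1/n} \le \rho_{R_1}(a)$ since $f$ is bounded and $\rho$ is insensitive to the bounding constant); by the density of $R_1$ in $R_1^u$ it extends uniquely to a contractive map $f^u \colon R_1^u \to R_2^u$ compatible with $B \to B^u$, and uniqueness gives functoriality.

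Next I would establish the adjunction bijection itself. Given $g \colon R \to S$ bounded with $S$ uniform, by Lemma \ref{morphisms_unif_ban_contractive} $g$ is already contractive for the spectral radius on $S$, and the computation above shows $\rho_S(g(a)) \le \rho_R(a)$, i.e. $g$ is $\rho$-contractive as a map of normed rings $(R,\rho_R) \to (S,\rho_S)$. Since $R^u$ is the completion of $R$ for $\rho_R$ and $S$ is already $\rho$-complete, $g$ extends uniquely to a contractive map $g^u \colon R^u \to S$, and the extension is a ring map and compatible with the structure from $B$ by density of $R$ in $R^u$ (together with $B \to B^u$, the latter being the special case $R = B$). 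Conversely, any $h \colon R^u \to S$ restricts along $R \to R^u$ to a bounded $B$-algebra map. The two assignments are mutually inverse: one direction is uniqueness of the extension just noted, and the other uses that $R$ has dense image in $R^u$, so a map out of $R^u$ is determined by its restriction.

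The main obstacle I anticipate is a bookkeeping issue rather than a deep one: one must be careful that $R^u$ as defined in Construction \ref{const_uniformization} is genuinely an object of $\uBan_B$ — that the structure map $B \to R^u$ is contractive and that $R^u$ is non-archimedean and uniform — but these are already handled in Construction \ref{const_uniformization} (via Lemma \ref{recog_na}) and in the discussion preceding Definition \ref{defn_uniform_banach}. The one subtlety worth spelling out is that a bounded (not a priori contractive) map $g \colon R \to S$ with $S$ uniform is automatically contractive for the \emph{spectral} seminorm on the source, so that it factors through the $\rho_R$-completion; this is exactly Lemma \ref{morphisms_unif_ban_contractive} combined with the elementary fact that $\rho$ depends only on the equivalence class of the norm. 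With that in hand the verification is formal. Finally, I would record the identity $\uBan_B = \uBan_{B^u}$ (already asserted in Definition \ref{defn_uniform_banach}): since any map into a uniform Banach ring is contractive, a uniform Banach $B$-algebra is the same as a uniform Banach ring $R$ with a bounded map $B \to R$, which by the adjunction is the same as one with a map $B^u \to R$.
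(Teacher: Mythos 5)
Your proposal is correct and follows essentially the same route as the paper: the key input is the universal property of completion of seminormed rings (the paper cites \cite[Section 1.1.7, Proposition 6]{BGR}, which you invoke implicitly by saying $g$ extends uniquely to the $\rho$-completion), and the algebra structure on the extension is inherited by density of $R$ in $R^u$. You are somewhat more explicit than the paper about the preliminary spectral-radius estimate $\rho_S(g(a)) \le \rho_R(a)$, which shows a bounded map into a uniform target is automatically $\rho$-contractive, but this is the same content as the paper's appeal to Lemma \ref{morphisms_unif_ban_contractive} and to the equivalence-invariance of $\rho$.
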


\begin{proof} It suffices to show that for any morphisms $R \rightarrow S$ of Banach $B$-algebras, where $S$ is uniform, admits a unique factorization as $R \rightarrow R^u \rightarrow S$, where $R \rightarrow R^u$ is the uniformization map. As abstract groups this follows from Proposition 6 of \cite[Section 1.1.7]{BGR}, and by Construction \ref{const_uniformization} we know that $R \rightarrow R^u$ is a ring map, so it suffices to show that $R^u \rightarrow S$ is a ring map. But since multiplication on $R^u$ is completely determined by the dense image of $R \rightarrow R^u$, it follows that $R^u \rightarrow S$ is a ring map as $R \rightarrow S$ is a ring map.
\end{proof}

We are now ready to introduce the tensor product of Banach $B$-algebras.

\begin{const}\label{const_comp_tensor_ban} Let $B$ be a (non-archimedean) Banach ring and $R, S$ be elements of $\Ban_B$. Then we can endow the ring $R \otimes_B S$ with the semi-norm
\begin{equation*}
	|f| = \inf\{ \max_{i} \{ |r_i||s_i| \}  \text{ for } f = \sum_{i} r_i \otimes s_i \}
\end{equation*}
and denote by $R \cotimes_B S$ the completion of $R \otimes_B S$ with respect to the semi-norm above, making $R \cotimes_B S$ into a Banach ring. Furthermore, $R \cotimes_B S$ can be characterized as the pushout of the following diagram in the category $\Ban_B$
\begin{cd}
	B \ar[r] \ar[d] & R \ar[d] \\
	S \ar[r] & R \cotimes_B S
\end{cd}
Furthermore, we learn that the structure map $B \rightarrow R \cotimes_B S$ is contractive, and the maps $S \rightarrow R \cotimes_B S$ and $R \rightarrow R \cotimes_B S$ are contractive (cf. \cite[Section 3.1]{BGR}); and by construction it is clear that $R \cotimes_B S$ is non-archimedean. In other words, we have shown that the category $\Ban_B$ has finite coproducts, since $R \sqcup S = R \cotimes_B S$ as $B$ is the initial object.

A similar argument shows that in the category $\Ban_B^{\contr}$ of Banach $B$-algebras with contractive maps, for any pair of maps $R \leftarrow D \rightarrow S$ the pushout can be identified with $R \cotimes_D S$ (cf. \cite[Section 3.1]{BGR}).
\end{const}

\begin{warning} We want to emphasize that the proofs in \cite[Section 3.1]{BGR} use in a critical way the fact that the structure map $B \rightarrow R$ and $B \rightarrow S$ are contractive. Therefore, we cannot extend the same argument to show that any pair of maps $R \leftarrow D \rightarrow S$ in $\Ban_B$ have $R \cotimes_D S$ as its pushout, as its not guaranteed that either map $R \leftarrow D \rightarrow S$ is contractive, we only know they are bounded.
\end{warning}

However, the situation is slightly better if we restrict ourselves to uniform Banach algebras.

\begin{const}\label{const_unif_tensor_ban} Let $B$ be a Banach ring, and $R,S$ elements of $\uBan_B$. Then, we can endow $R \cotimes_B S$ from Construction \ref{const_comp_tensor_ban} with the spectral radius semi-norm $\rho$, and we denote its completion with respect to $\rho$ by $R \cotimes^u_B S$. We know that $R \cotimes^u_B S$ is a Banach ring, as it is the completion of a Banach ring, and that the structure map $B \rightarrow R \cotimes^u_B S$ is contractive, lifting $R \cotimes^u_B S$ to an element of $\uBan_B$. Furthermore by the adjunction of Proposition \ref{adjunction_uniform_ban} we learn that $R \cotimes^u_B S$ can be characterized as the pushout of the following diagram in the category $\uBan_B$
\begin{cd}
	B \ar[r] \ar[d] & R \ar[d] \\
	S \ar[r] & R \cotimes^u_B S
\end{cd}
Even better, say we have a pair of maps $R \leftarrow D \rightarrow S$ in $\uBan_B$, we claim that the pushout of $R \leftarrow D \rightarrow S$ can be identified with $R \cotimes_D^u S$. Indeed, any commutative diagram in $\uBan_B$ of the following form
\begin{cd}
	D \ar[r] \ar[d] & R \ar[d] \\
	S \ar[r] & H
\end{cd}
can be lifted to a diagram in $\uBan_D$, from which we obtain a unique map $R \cotimes_D^u S \rightarrow H$ making the diagram commute.
\end{const}

We conclude this section by showing that various natural topologies on $R$ are equivalent.

\begin{lemma}\label{equiv_topologies_seminormed} Let $R$ be a semi-normed ring with topological nilpotent unit $\varpi \in R^\times$. The following systems of open neighborhoods around zero are cofinal
\begin{equation*}
	\{\varpi^n R_{\le 1} \}_{n \in \ZZ_{\ge 0}} \qquad \{R_{\le |\varpi^n|} \}_{n \in \ZZ_{\ge 0}} \qquad \{R_{\le |\varpi|^n} \}_{n \in \ZZ_{\ge 0}}  \qquad \{R_{\le r} \}_{r \in \RR_{> 0}}
\end{equation*}
In other words, they induce equivalent topologies on $R$.
\end{lemma}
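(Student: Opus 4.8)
The statement asserts that four descending chains of subsets of $R$ are mutually cofinal as fundamental systems of neighborhoods of $0$. The strategy is to verify cofinality pairwise by a short chain of inclusions, using only the semi-norm axioms, the submultiplicativity $|fg| \le |f||g|$, and the hypothesis that $\varpi \in R^\times$ is topologically nilpotent (so $|\varpi^n| \to 0$). First I would fix notation: write $r_n := |\varpi^n|$ and note $r_n \to 0$ and $|\varpi|^n \ge |\varpi^n| = r_n$ by submultiplicativity, while conversely $r_n$ need not equal $|\varpi|^n$ in a merely semi-normed ring. The key containments I would establish are: (i) $R_{\le |\varpi|^n} \subseteq R_{\le r_n}$ is immediate since $|\varpi^n| \le |\varpi|^n$; (ii) conversely, given $r_n$ there is an $m$ with $|\varpi|^m \le r_n$ (because $|\varpi| < 1$ as $\varpi$ is topologically nilpotent, forcing $|\varpi|^m \to 0$), hence $R_{\le r_m'} \subseteq \dots$; I would instead more directly argue that for any real $r > 0$ there is $n$ with $r_n = |\varpi^n| \le r$, which handles cofinality of $\{R_{\le r}\}_{r > 0}$ against $\{R_{\le r_n}\}$, and also (since $|\varpi|^n \to 0$) against $\{R_{\le |\varpi|^n}\}$.

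The one genuinely non-formal comparison is the one involving $\varpi^n R_{\le 1}$. For the inclusion $\varpi^n R_{\le 1} \subseteq R_{\le r_n}$: if $x = \varpi^n y$ with $|y| \le 1$ then $|x| \le |\varpi^n||y| \le r_n$, which is clean. The reverse direction — that every $R_{\le r_n}$ contains some $\varpi^m R_{\le 1}$ — is the main obstacle, because it requires dividing by $\varpi$ and controlling norms, and in a semi-normed (not power-multiplicative) ring one does not have $|\varpi^{-1}| = |\varpi|^{-1}$. The way around this is to use that $\varpi$ is a \emph{unit}: pick $C = |\varpi^{-1}| < \infty$ (finite since $\varpi^{-1} \in R$ and the semi-norm is defined on all of $R$), so that for any $x$ with $|x| \le r$ we have $|\varpi^{-k} x| \le C^k r$. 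Then given $n$, choose $k$ large enough that $r_{n+k} = |\varpi^{n+k}|$ is small — specifically so small that $C^n \cdot (\text{something})$... actually the cleaner route: I would show directly $R_{\le r_{n+1}} \cdot$ — hmm, let me instead phrase it as: for the chains to be cofinal it suffices to interleave them, and the only delicate interleaving is $\varpi^n R_{\le 1}$ versus $R_{\le r_n}$, handled by the observation that $R_{\le r_{2n}} \subseteq \varpi^n R_{\le |\varpi^{-n}| r_{2n}} \subseteq \varpi^n R_{\le |\varpi^{-n}||\varpi^{2n}|} \subseteq \varpi^n R_{\le |\varpi^{n}|} \subseteq \varpi^n R_{\le 1}$, using submultiplicativity $|\varpi^{-n}||\varpi^{2n}| \ge |\varpi^{-n}\varpi^{2n}| = |\varpi^n|$ — wait, that inequality goes the wrong way. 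The correct bookkeeping: from $x \in R_{\le r_m}$ we get $\varpi^{-n} x \in R_{\le |\varpi^{-n}| r_m}$, and we want $|\varpi^{-n}| r_m \le 1$, i.e. $r_m \le |\varpi^{-n}|^{-1}$; since $r_m \to 0$ such $m$ exists, giving $x = \varpi^n(\varpi^{-n}x) \in \varpi^n R_{\le 1}$. That closes the loop.

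So the skeleton of the write-up is: (1) record $|\varpi| \le 1$ actually $< 1$ and $r_n := |\varpi^n| \to 0$; (2) $R_{\le |\varpi|^n} \subseteq R_{\le r_n}$ and for each $n$ find $m$ with $|\varpi|^m \le r_n$ hence $R_{\le r_m} \subseteq$ — actually $R_{\le r_n} \subseteq R_{\le |\varpi|^n}$ is false in general; rather $\{R_{\le |\varpi|^n}\}$ and $\{R_{\le r_n}\}$ are cofinal via $r_n \le |\varpi|^n$ one way and $|\varpi|^m \le r_n$ for suitable... no: $|\varpi|^m \le$ — I should just say both are cofinal in $\{R_{\le r}\}_{r>0}$ and that settles it; (3) $\{R_{\le r}\}_{r>0}$ cofinal with $\{R_{\le r_n}\}$: one way trivial, other way because $r_n \to 0$; (4) the $\varpi^n R_{\le 1}$ comparison as above. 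The main obstacle is exactly step (4) and its resolution is the unit trick plus $r_n \to 0$; everything else is a two-line inequality chase. I expect the final proof to be under half a page.

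\begin{proof}
Write $r_n := |\varpi^n|$. Since $\varpi$ is a topological nilpotent unit we have $r_n \to 0$, and by submultiplicativity $r_n = |\varpi^n| \le |\varpi|^n$; in particular $|\varpi| < 1$, so $|\varpi|^n \to 0$ as well. Thus the systems $\{R_{\le r_n}\}_n$ and $\{R_{\le |\varpi|^n}\}_n$ are each a subfamily of $\{R_{\le r}\}_{r \in \RR_{>0}}$, and conversely, given any $r > 0$ we may choose $n$ with $r_n \le r$ and $n'$ with $|\varpi|^{n'} \le r$, so $R_{\le r_n} \subseteq R_{\le r}$ and $R_{\le |\varpi|^{n'}} \subseteq R_{\le r}$. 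Hence the last three families are mutually cofinal, all equivalent to $\{R_{\le r}\}_{r > 0}$.

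It remains to compare $\{\varpi^n R_{\le 1}\}_n$ with, say, $\{R_{\le r_n}\}_n$. If $x = \varpi^n y$ with $|y| \le 1$, then $|x| \le |\varpi^n| |y| \le r_n$, so $\varpi^n R_{\le 1} \subseteq R_{\le r_n}$. Conversely, fix $n \in \ZZ_{\ge 0}$ and set $C := |\varpi^{-1}|^n$, which is a finite real number since $\varpi^{-1} \in R$. Because $r_m \to 0$ we may choose $m \ge n$ with $C \cdot r_m \le 1$. Then for any $x \in R_{\le r_m}$ we have $|\varpi^{-n} x| \le |\varpi^{-1}|^n |x| \le C r_m \le 1$, so $x = \varpi^n (\varpi^{-n} x) \in \varpi^n R_{\le 1}$. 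Thus $R_{\le r_m} \subseteq \varpi^n R_{\le 1}$, which together with the previous inclusion shows $\{\varpi^n R_{\le 1}\}_n$ is cofinal with $\{R_{\le r_n}\}_n$, hence with all four families. Therefore all four systems induce the same topology on $R$.
\end{proof}
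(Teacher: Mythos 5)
Your proof is correct and follows essentially the same route as the paper's: the inclusions $\varpi^n R_{\le 1}\subset R_{\le|\varpi^n|}\subset R_{\le|\varpi|^n}$ via submultiplicativity, and the reverse interleaving by writing $x=\varpi^n(\varpi^{-n}x)$ and using submultiplicativity once the ambient radius is small enough; your variant, which compares $\varpi^n R_{\le 1}$ against $R_{\le|\varpi^m|}$ and only needs $|\varpi^m|\to 0$, is if anything slightly more economical than the paper's comparison against $R_{\le|\varpi|^N}$. The one blemish is the inference ``in particular $|\varpi|<1$'': this does not follow from $|\varpi^n|\le|\varpi^n|\cdot 1\le|\varpi|^n$ together with $|\varpi^n|\to 0$ (a rescaled submultiplicative norm can have $|\varpi|\ge 1$ while $\varpi$ is still topologically nilpotent), and it is used only to make $\{R_{\le|\varpi|^n}\}_{n}$ cofinal in $\{R_{\le r}\}_{r>0}$; the paper's proof relies on the same implicit assumption, so it is not a gap relative to the reference, but you should state $|\varpi|<1$ (or $|\varpi|^n\to 0$) as a standing hypothesis rather than derive it from submultiplicativity.
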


\begin{proof} By the non-archimedean triangle inequality it is clear that every subset of $R$ in $\{\varpi^n R_{\le 1}, R_{\le |\varpi^n|}, R_{\le |\varpi|^n}, R_{\le r}\}$ is an open subset of $R$. Moreover, since $\varpi \in R$ is a topological nilpotent unit, it follows that $\{R_{\le |\varpi|^n} \}_{n \in \ZZ_{\ge 0}}$ and $\{R_{\le r} \}_{r \in \RR_{> 0}}$ are cofinal systems of open neighborhoods. Thus, it remains to see that $\{\varpi^n R_{\le 1} \}_{n \in \ZZ_{\ge 0}}$,  $\{R_{\le |\varpi^n|} \}_{n \in \ZZ_{\ge 0}}$ and $\{R_{\le |\varpi|^n} \}_{n \in \ZZ_{\ge 0}}$ are cofinal systems.

For a fixed $n \in \ZZ_{\ge 0}$ we claim that we have inclusions
\begin{equation*}
	\varpi^n R_{\le 1} \subset R_{\le |\varpi^n|} \subset R_{\le |\varpi|^n}
\end{equation*}
Indeed, if $a \in \varpi^n R_{\le 1}$ we learn that there exists a $b \in R_{\le 1}$ such that $\varpi^n b = a$, and by the submultiplicativity of the semi-norm on $R$ we get that $|a| \le |\varpi^n| |b| \le |\varpi^n|$, as desired. The inequality $|\varpi^n| \le |\varpi|^n$ then implies the second inclusion. Finally, we need to show that for each $n \in \ZZ_{\ge 0}$ there exists a $N \gg 0$ such that $R_{\le |\varpi|^N} \subset \varpi^n R_{\le 1}$. Since $\varpi \in R^{\times}$ is topologically nilpotent, there exists a $N \gg 0$ such that $|\varpi|^N |\varpi^{-n}| \le 1$ for any fixed $n$, we claim that any such $N$ would satisfy $R_{\le |\varpi|^N} \subset \varpi^n R_{\le 1}$. It suffices to show that $|a \varpi^{-n}| \le 1$ for any $a \in R_{\le |\varpi|^N}$, which follows by the inequalities $|a \varpi^{-n}| \le |a| |\varpi^{-n}| \le |\varpi|^N |\varpi^{-n}| \le 1$.
\end{proof}

\begin{corollary}\label{banach_is_pi_comp} If $R$ is a Banach ring, then $R_{\le 1}$ is (classically) $\varpi$-complete.
\end{corollary}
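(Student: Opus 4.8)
The plan is to deduce Corollary \ref{banach_is_pi_comp} directly from Lemma \ref{equiv_topologies_seminormed}. The point is that by definition a Banach ring $R$ is complete with respect to the metric induced by its norm, and the topology of $R$ restricted to $R_{\le 1}$ is, by the lemma, also induced by the system of open neighborhoods $\{\varpi^n R_{\le 1}\}_{n \ge 0}$ of zero --- which is precisely the $\varpi$-adic topology on $R_{\le 1}$. So the statement amounts to checking that the completeness of $R$ transfers to classical $\varpi$-completeness of $R_{\le 1}$, i.e.\ that the canonical map $R_{\le 1} \to \lim_n R_{\le 1}/\varpi^n R_{\le 1}$ is an isomorphism.

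First I would fix a Cauchy sequence $(a_k)$ in $R_{\le 1}$ with respect to the $\varpi$-adic topology, meaning that for every $n$ there is $K$ with $a_k - a_{k'} \in \varpi^n R_{\le 1}$ for $k, k' \ge K$; by the inclusions established in the proof of Lemma \ref{equiv_topologies_seminormed} (namely $\varpi^n R_{\le 1} \subset R_{\le |\varpi^n|}$), such a sequence is also Cauchy for the norm on $R$, hence converges to some $a \in R$ by completeness of $R$. Since $|a_k| \le 1$ for all $k$ and the norm is continuous with the non-archimedean inequality, $|a| \le 1$, so $a \in R_{\le 1}$. Next I would check that $a$ is the $\varpi$-adic limit: given $n$, pick $N$ with $R_{\le |\varpi|^N} \subset \varpi^n R_{\le 1}$ (possible by the cofinality in the lemma) and $K$ large enough that $|a - a_k| \le |\varpi|^N$ for $k \ge K$; then $a - a_k \in \varpi^n R_{\le 1}$, showing $a_k \to a$ $\varpi$-adically. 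This establishes surjectivity of $R_{\le 1} \to \lim_n R_{\le 1}/\varpi^n R_{\le 1}$; injectivity is the statement $\bigcap_n \varpi^n R_{\le 1} = 0$, which follows because $\bigcap_n \varpi^n R_{\le 1} \subset \bigcap_n R_{\le |\varpi|^n} = \{0\}$, using that $\varpi$ is a topological nilpotent unit so $|\varpi|^n \to 0$ and the norm is a genuine norm (so only $0$ has norm $0$).

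I do not anticipate a serious obstacle here: the whole content is bookkeeping with the three cofinal neighborhood bases from Lemma \ref{equiv_topologies_seminormed}, and the only mild subtlety is making sure the two notions of Cauchy sequence (metric and $\varpi$-adic) and the two notions of limit agree on $R_{\le 1}$, which is exactly what the cofinality statement delivers. If one prefers a slicker phrasing, one can observe that the identity map $(R_{\le 1}, \varpi\text{-adic}) \to (R_{\le 1}, \text{subspace topology from } R)$ is a homeomorphism by the lemma, that $R_{\le 1}$ is closed in $R$ (being the preimage of the closed set $[0,1]$ under the continuous map $|-|$), hence complete as a metric space, and that a metric space whose topology coincides with the $\varpi$-adic one and which is complete and Hausdorff is classically $\varpi$-complete. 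Either route is short; I would write out the sequence-chasing version since it is the most self-contained.
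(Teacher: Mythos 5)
Your proposal is correct and is essentially the paper's own argument: the paper likewise deduces the result from Lemma \ref{equiv_topologies_seminormed}, noting that $R_{\le 1}$ is closed (and open) in $R$, hence complete for the norm topology, which by the cofinality of $\{\varpi^n R_{\le 1}\}$ with $\{R_{\le r}\}$ is the same as completeness for the $\varpi$-adic topology. Your sequence-chasing version merely unpacks the details (surjectivity and separatedness) that the paper leaves implicit, and your ``slicker phrasing'' is precisely the proof given there.
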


\begin{proof} Since $R_{\le 1} \subset R$ is a closed and open subset of $R$, it follows that $R_{\le 1}$ is complete with respect to the norm $|-|$ inherited from $R$; equivalently, $R_{\le 1}$ is complete with respect to the topology induced by $\{ R_{\le r} \}_{r \in (0,1]}$. By definition, being $\varpi$-complete means that the canonical map $R_{\le 1} \rightarrow \lim R_{\le 1}/\varpi^n$ is an isomorphism; this is equivalent to showing that $R_{\le 1}$ is complete with respect to the topology induced by $\{\varpi^n R_{\le 1}\}$. Thus, the result follows from Lemma \ref{equiv_topologies_seminormed}.
\end{proof}

\begin{prop}\label{filtered_colim_banach_contr} Let $I$ be a filtered category, consider a functor $I \rightarrow \Ban_K^{\contr}$ indexing a family of Banach $K$-algebras $\{A_i\}_{i \in I}$ with contractive transition maps $A_i \rightarrow A_j$. Then, $\colim_I A_i$ computed in $\Ban^{\contr}_K$ exists and can be computed as follows: let $A = \colim_I A_i$ be the colimit computed in the category of rings and $\psi_i: A_i \rightarrow A$ the natural maps, endow $A$ with the seminorm
\begin{equation*}
	|a|_A := \inf \{ |a_i|_{A_i} \text{ for all } a_i \in A_i \text{ such that } \psi_i(a_i) = a \}
\end{equation*}
Denote by $A^\wedge$ the completion of $A$ with respect to the given seminorm. Then, $\colim_I A_i$ computed in $\Ban_K^{\contr}$ is isomorphic to $A^\wedge$.

Furthermore, if we have a collection of bounded morphisms $\{ h_i: A_i \rightarrow B\}_{i \in I}$ of Banach $K$-algebras satisfying the following conditions:
\begin{enumerate}[(1)]
	\item The morphisms $\{h_i\}_{i \in I}$ are compatible, that is, the composition $A_j \rightarrow A_i \rightarrow B$ is equal to $h_j$.
	\item There exists a constant $C > 0$ such that $|h_i(a_i)|_{B} \le C |a_i|_{A_i}$ uniformly for all $h_i$.
\end{enumerate}
Then, there exists a unique bounded map $h^{\wedge}: A^{\wedge} \rightarrow B$ such $|h^\wedge (a)|_B \le C|a|_{A^{\wedge}}$ and the composition $A_i \rightarrow A^{\wedge} \rightarrow B$ is equal to $h_i$.
\end{prop}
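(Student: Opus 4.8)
The plan is to verify in turn that the seminorm defined on $A = \colim_I A_i$ is submultiplicative and non-archimedean, that its completion $A^\wedge$ is a Banach $K$-algebra, and finally that $A^\wedge$ satisfies the universal property of the filtered colimit in $\Ban_K^{\contr}$. First I would check that $|-|_A$ is a seminorm: symmetry and the ultrametric inequality $|a + b|_A \le \max(|a|_A, |b|_A)$ follow immediately from the filteredness of $I$ — given $a, b \in A$, choose a common index $i$ with lifts $a_i, b_i \in A_i$, use $|a_i + b_i|_{A_i} \le \max(|a_i|_{A_i}, |b_i|_{A_i})$, and pass to the infimum; submultiplicativity $|ab|_A \le |a|_A |b|_A$ is analogous. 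The structure map $K \to A$ is contractive because each $K \to A_i$ is an isometry (using the remark that contractive maps out of a non-archimedean field are isometries) and hence $|a|_A \le |a|_K$ for $a$ in the image of $K$; the other inequality uses that the composite $K \to A_i \to A$ already has norm $|a|_K$. Completing $A$ with respect to $|-|_A$ yields a non-archimedean Banach $K$-algebra $A^\wedge$ by the same reasoning as in Construction \ref{const_uniformization}, and the natural maps $\psi_i^\wedge: A_i \to A^\wedge$ are contractive by construction of the seminorm.

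Next I would establish the universal property, which has two parts: uniqueness and existence of the comparison map to a cocone. For a \emph{contractive} cocone $\{g_i : A_i \to B\}$ in $\Ban_K^{\contr}$, the maps $g_i$ assemble (at the level of abstract rings) to a unique ring map $g : A \to B$; contractivity of each $g_i$ forces $|g(a)|_B \le |a_i|_{A_i}$ for every lift $a_i$ of $a$, hence $|g(a)|_B \le |a|_A$, so $g$ is contractive on $A$ and extends uniquely by continuity to a contractive map $g^\wedge : A^\wedge \to B$. Uniqueness follows because $A$ has dense image in $A^\wedge$ and any two maps agreeing on a dense subset of a Banach ring and continuous must agree. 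This shows $A^\wedge = \colim_I A_i$ in $\Ban_K^{\contr}$.

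For the "furthermore" clause I would argue identically but keeping track of the constant $C$. Given compatible \emph{bounded} maps $h_i : A_i \to B$ with $|h_i(a_i)|_B \le C|a_i|_{A_i}$ uniformly, they again glue to a ring map $h : A \to B$, and for $a \in A$ and any lift $a_i$ one has $|h(a)|_B = |h_i(a_i)|_B \le C|a_i|_{A_i}$; taking the infimum over lifts gives $|h(a)|_B \le C|a|_A$. Thus $h$ is bounded (with the same constant $C$) and, since a bounded map is uniformly continuous, it extends uniquely to a bounded map $h^\wedge : A^\wedge \to B$ satisfying $|h^\wedge(a)|_B \le C|a|_{A^\wedge}$; the compatibility $h^\wedge \circ \psi_i^\wedge = h_i$ holds on the dense subring $A_i$ hence everywhere. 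The main obstacle — really the only point requiring care rather than routine bookkeeping — is checking that the candidate seminorm $|-|_A$ is genuinely definite, i.e. that $A$ is separated for it so that its completion is reasonable; but here one should be careful: the seminorm need not be a norm (the colimit can fail to be reduced or can have $|-|_A$-null elements), which is precisely why we pass to the completion $A^\wedge$ of the \emph{seminormed} ring $A$ rather than insisting $A$ embed into it, and the verification that completion of a seminormed ring produces a Banach ring is exactly \cite[Section 1.1.7]{BGR} together with Lemma \ref{recog_na} for the non-archimedean property.
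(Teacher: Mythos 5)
Your proposal is correct and follows essentially the same route as the paper's proof: assemble the cocone to a ring map on the algebraic colimit $A$, observe that the infimum definition of $|-|_A$ makes that map contractive (respectively $C$-bounded in the ``furthermore'' clause), and extend to the completion $A^{\wedge}$ via \cite[Section 1.1.7, Proposition 6]{BGR}. You in fact supply a step the paper leaves implicit, namely the verification that $|-|_A$ is a non-archimedean submultiplicative seminorm (using filteredness and contractivity of the transition maps); the only small slip is the appeal to Lemma~\ref{recog_na} for the non-archimedean property of $A^{\wedge}$ — that lemma requires power-multiplicativity, which $|-|_A$ need not have, but the non-archimedean inequality passes to completions directly so nothing is lost.
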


\begin{proof} Let $S$ be a Banach $K$-algebra, together with a collection of contractive morphisms $s_i: A_i \rightarrow S$ such that $A_j \rightarrow A_i \rightarrow S$ is equal to $s_j$. Since $A$ is defined as $\colim_I A_i$ in the category of rings it is clear that there exists a morphism of rings $s: A \rightarrow S$ such that $s \circ \psi_i = s_i$, we claim that endowing $A$ with the seminorm $|-|_A$ makes the map $s: A \rightarrow S$ contractive. We need to show that for all $a \in A$ we have the inequality $|s(a)|_S \le |a|_A$; indeed, let $\{a_i\}$ be the collection of all elements in $\{A_i\}$ which satisfy $\psi_i(a_i) = a$, then by hypothesis we have that $|s(a)|_S = |s_i(a_i)|_S \le |a_i|_{A_i}$, which in turn implies that $|s(a)|_S \le |a|_A$ as desired. Finally, since $S$ is complete with respect to its norm it follows that the map $A \rightarrow S$ factors as $A \rightarrow A^{\wedge} \rightarrow S$. The fact that $A^\wedge \rightarrow S$ is contractive follows from Proposition $6(ii)$ \cite[Section 1.1.7]{BGR}.

Next, as in the previous paragraph we have an induced map $h: A \rightarrow B$, which we need to show satisfies $|h(a)|_B \le C|a|_A$ for all $a \in A$. Let $\{a_i\}$ be the collection of elements in $\{A_i\}$ which satisfy $\psi_i(a_i) = a$, then by hypothesis we have that $|h(a)|_B \le C|a_i|_{A_i}$, which in turn implies that $|h(a)|_B \le C|a|_A$ as desired. Finally, since $B$ is complete with respect to its norm it follows that the map $A \rightarrow B$ factors as $A \rightarrow A^{\wedge} \rightarrow B$. The fact that $h^{\wedge}: A^{\wedge} \rightarrow B$ satisfies the bound $|h^{\wedge}(a)|_B \le C|a|_{A^{\wedge}}$ follows from Proposition $6(ii)$ \cite[Section 1.1.7]{BGR}.
\end{proof}

\subsection{Powerbounded elements}

Throughout this section all Banach rings are assumed to be non-archimedean. Furthermore, we will assume that all (non-archimedean) Banach rings $R$ have a topological nilpotent unit, that is, there exists a $\varpi \in R^{\times}$ such that $\varpi^n \rightarrow 0$ as $n \rightarrow \infty$.

\begin{defn}\label{defn_powerbounded} Let $R$ be a Banach ring, we denote by $R^\circ \subset R$ the subset of elements $f \in R$ such that 
\begin{equation*}
	\{ f^{\ZZ_{\ge 0}} \} \subset \frac{1}{\varpi^m} R_{\le 1}  \qquad \text{for some } m \in \ZZ_{\ge 0}
\end{equation*}
where $R_{\le 1}$ is the set of elements of $R$ with norm $\le 1$ (cf. Definition \ref{defn_total_int_closed}). One can easily check that $R^\circ \subset R$ inherits a ring structure from $R$, and we call $R^\circ$ the ring of powerbounded elements of $R$. Notice that Lemma \ref{equiv_topologies_seminormed} implies that an element $f \in R$ is powerbounded if and only if $\{|f^n|\}_{n \in \ZZ_{\ge 0}} \subset \RR_{\ge 0}$ is a bounded subset.
\end{defn}

The main goal of this section is to show that the spectral radius is closely related to the ring of powerbounded elements.

\begin{const} For a Banach ring $R$, let $R^{\circ, r} \subset R$ be the subset of elements of $R$, where
\begin{equation*}
	R^{\circ, r} : = \{ a \in R \text{ such that } r^{-n} |a^n| \text{ is bounded } \} \qquad \text{for a fixed } r \in \RR_{> 0}
\end{equation*}
This gives rise to a $\RR_{> 0}$-indexed filtration $\Fil^\circ R$ on $R$
\begin{equation*}
	\cdots \subset R^{\circ, r_1} \subset R^{\circ, r_2} \subset \cdots \subset R \qquad \text{with } \Fil^\circ_{r} R = R^{\circ, r}
\end{equation*}
where $r_1 < r_2$. Finally, let $\widehat{\Fil^\circ} R$ be the `right-continuous' completion of $\Fil^\circ R$, which is defined by the formula
\begin{equation*}
	\widehat{\Fil^\circ_r} R := \bigcap_{r^\prime > r} R^{\circ, r^\prime} \qquad \text{for all } r \in \RR_{\ge 0}
\end{equation*}
By construction, we have the following identity $\inf \{ r | a \in \Fil^\circ_r R \} = \min \{ r | a \in \widehat{\Fil^\circ_r} R \}$.
\end{const}

The following result shows that we can recover Temkin's construction of the graded reduction of $R$ (cf. \cite[Section 3]{temkin2004local})
\begin{equation*}
	\widehat{\text{Gr}^\circ} R := \bigoplus_{r \in \RR_{\ge 0}} \widehat{\Fil^\circ_r} R / \widehat{\Fil^\circ_{<r}} R \qquad \text{where} \qquad  \widehat{\Fil^\circ_{< r}} := \bigcup_{r^{\prime} < r} \widehat{\Fil^\circ_{r^\prime}} R
\end{equation*}
from the $\RR_{\ge 0}$-indexed filtration $\widehat{\Fil^\circ} R$.

\begin{prop}\label{spec_radius_power_bdd} Let $R$ be a Banach ring. Then,
\begin{equation*}
	R_{\rho \le r} = \widehat{\Fil^\circ_r} R
\end{equation*}
where $R_{\rho \le r} \subset R$ is the subset of elements of $R$ which have spectral radius $\le r$.
\end{prop}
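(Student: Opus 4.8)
The statement asserts the equality of two subsets of $R$: the set $R_{\rho \le r}$ of elements with spectral radius at most $r$, and the ``completed filtration'' piece $\widehat{\Fil^\circ_r} R = \bigcap_{r' > r} R^{\circ, r'}$. The plan is to prove the two inclusions separately, in each case unwinding the definitions of $\rho$ and of $R^{\circ, r'}$.

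\emph{First inclusion: $R_{\rho \le r} \subset \widehat{\Fil^\circ_r} R$.} Suppose $a \in R$ has $\rho(a) = \lim_n |a^n|^{1/n} \le r$. Fix any $r' > r$. Then $\rho(a) < r'$, so by the definition of the limit there is an $N$ with $|a^n|^{1/n} \le r'$, i.e. $|a^n| \le (r')^n$, for all $n \ge N$; thus $(r')^{-n}|a^n| \le 1$ for $n \ge N$, and the finitely many values for $n < N$ are automatically bounded. Hence $(r')^{-n}|a^n|$ is a bounded sequence, so $a \in R^{\circ, r'}$. Since $r' > r$ was arbitrary, $a \in \bigcap_{r' > r} R^{\circ, r'} = \widehat{\Fil^\circ_r} R$.

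\emph{Second inclusion: $\widehat{\Fil^\circ_r} R \subset R_{\rho \le r}$.} Suppose $a \in R^{\circ, r'}$ for every $r' > r$. Fix $r' > r$; then there is a constant $C > 0$ with $(r')^{-n}|a^n| \le C$ for all $n$, i.e. $|a^n| \le C (r')^n$. Taking $n$-th roots, $|a^n|^{1/n} \le C^{1/n} r'$, and letting $n \to \infty$ (using $C^{1/n} \to 1$) gives $\rho(a) = \lim_n |a^n|^{1/n} \le r'$. Since this holds for all $r' > r$, we conclude $\rho(a) \le r$, i.e. $a \in R_{\rho \le r}$. Here I am using that $\rho$ is well-defined, i.e. the limit $\lim_n |a^n|^{1/n}$ exists, which is recorded in the discussion following Definition \ref{defn_sp_radius} (citing \cite[Section 1.3.2]{BGR}).

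\emph{On the main obstacle.} There is essentially no obstacle here: both inclusions are direct epsilon-style manipulations of the definition of the spectral radius limit together with the fact that $C^{1/n} \to 1$ for any fixed constant $C > 0$. The only points requiring a little care are (i) making sure one always works with an auxiliary $r' > r$ strictly, since $R^{\circ, r}$ itself uses the \emph{open} condition implicitly through $\widehat{\Fil^\circ}$, and (ii) invoking the existence of the limit defining $\rho$ rather than just a $\limsup$; both are handled by the results quoted above. I would present the proof as the two short paragraphs above, perhaps combined into a single paragraph since each direction is only a few lines.
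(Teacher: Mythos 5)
Your proof is correct. The inclusion $\widehat{\Fil^\circ_r} R \subset R_{\rho \le r}$ is argued exactly as in the paper: from boundedness of $(r')^{-n}|a^n|$ one extracts $|a^n|^{1/n} \le C^{1/n} r'$ and lets $n \to \infty$. Where you diverge is the other inclusion $R_{\rho \le r} \subset \widehat{\Fil^\circ_r} R$: you use directly that $|a^n|^{1/n} \to \rho(a) < r'$, so the sequence $(r')^{-n}|a^n|$ is eventually $\le 1$ and hence bounded, with the finitely many initial terms handled trivially. The paper instead sets $r_n = |a^n|$, invokes Fekete's lemma in the form $\rho(a) = \inf_n r_n^{1/n}$, and proves the sharper statement that $a \in R^{\circ, r_n^{1/n}}$ for \emph{every} $n$, by writing an arbitrary exponent as $m + nk$ with $0 \le m < n$ and bounding each block $\{r_n^{-m/n - k}|a^{m+nk}|\}_k$ via submultiplicativity. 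Your route is shorter and needs only the existence of the limit defining $\rho$ (which the paper records from \cite[Section 1.3.2]{BGR}); the paper's route buys the mild refinement that $a$ already lies in the uncompleted filtration piece $\Fil^\circ_{|a^n|^{1/n}} R$ for each fixed $n$, not merely in $R^{\circ, r'}$ for every $r' > r$. For the equality as stated, both arguments are fully adequate, and your two points of care (working with strict $r' > r$, and using the limit rather than a bare $\limsup$) are the right ones.
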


\begin{proof} We will first show that $\widehat{\Fil^\circ_r} R \subset R_{\rho \le r}$. By construction, if $a \in \widehat{\Fil^\circ_r} R$ then for each $r^\prime > r$ there exists a $N_{r^\prime} \gg 0$ such that $r^{\prime -1} |a^n|^{1/n} \le N_{r^\prime}^{1/n}$ for all $n \in \ZZ_{\ge 0}$. The limit $\lim N_{r^\prime}^{1/n} = 1$ implies that $\rho(a) \le r^\prime$ for all $r^\prime > r$, showing that $\widehat{\Fil^\circ_r} R \subset R_{\rho \le r}$ as desired.

Now we need to show that $R_{\rho \le r} \subset \widehat{\Fil^\circ_r} R$. Let $a \in R$ be an element of $R$ such that $\rho(a) \le r$, setting $r_n = |a^n|$ we conclude that $\rho(a) = \inf r_n^{1/n}$ by Fekete's Lemma (cf. \cite[Section 1.3.2]{BGR}). Thus, if we show that $a \in R^{\circ, r_n^{1/n}}$ for all $n \in \ZZ_{> 0}$ it would follow that $a \in \widehat{\Fil^\circ_r} R$. For each integer $m$ which satisfies $0 \le m < n$, consider the following subsets of $\RR_{\ge 0}$
\begin{equation*}
	B_m := \{ r_n^{-\frac{m}{n} - k} |a^{m + nk}| \text{ for } k \in \ZZ_{> 0} \}
\end{equation*}
By the inequality $r_n^{-\frac{m}{n} - k} |a^{m+nk}| \le (r_n^{-\frac{m}{n}})|a^m| (r_n^{-k}) |a^{nk}|$ it follows that every subset $B_m \subset \RR_{\ge 0}$ is a bounded subset. Hence, it follows that the set $\cup B_m = \{r_n^{-k/n} |a^k|\}_{k \in \ZZ_{> 0}}$ is bounded as there are only finitely many $B_m$'s, proving that $a \in R^{\circ, r_n^{1/n}}$.
\end{proof}

\begin{rem} Let $R$ be a Banach ring and $|-|_1$ and $|-|_2$ be equivalent norms on $R$. Then, we know that the spectral radius of $|-|_1$ and the spectral radius of $|-|_2$ are the same, showing that the construction of $\widehat{\Fil^\circ_r} R$ only depends on the norm up to equivalence.
\end{rem}

The following example shows that its essential that we complete $\Fil^\circ R$ in order to get the equality $R_{\rho \le r} = \widehat{\Fil^\circ_r} R$.

\begin{example}\label{example_powerbdd_stric_sp_radius} We will construct a Banach algebra $R$ and an element $t \in R$ such that $\rho(t) \le 1$ but $\{t^n\}_{n \in \ZZ_{\ge 0}} \subset R$ is not a bounded subset. Define a function $|-|: \CC_p[T] \rightarrow \RR_{\ge 0}$ by the formula
\begin{equation*}
	|\sum a_i T^i| = \max \{ |a_i| (i + 1)\}
\end{equation*}
It is clear that this defines a non-archimedean norm on $\CC_p[T]$, and we define $R$ to be the completion of $\CC_p[T]$ with respect to $|-|$. Its easy to see that $T \in R$ is not power-bounded, but we claim that $\rho(T) = 1$. Indeed, we have that $\rho(T) = \lim (n + 1)^{1/n} = 1$, showing that the inclusion $R^\circ \subset R_{\rho \le 1}$ is strict. 
\end{example}

We finish this section by showing that various notions of uniformity agree. Recall that in the original definition of perfectoid Banach algebra \cite[Definition 5.1]{scholze2012perfectoid} we find the condition that $R^\circ \subset R$ is open and bounded, we will show that this is equivalent to requiring that $R$ is uniform in our sense. Furthermore, we will show that completing a Banach algebra with respect to the topology $\{\varpi^n R^\circ \}$ is equivalent to uniformizing.

\begin{corollary}\label{equiv_topologies_spec_rad} Let $R$ be a Banach ring. Then, the following systems of open neighborhoods around zero are cofinal
\begin{equation*}
	\{R_{\rho \le r} \}_{r \in \RR_{\ge 0}} \qquad \{\varpi^n R_{\rho \le 1} \}_{n \in \ZZ_{\ge 0}} \qquad \{\varpi^n R^\circ \}_{n \in \ZZ_{\ge 0}}
\end{equation*}
In other words, they induce equivalent topologies on $R$.
\end{corollary}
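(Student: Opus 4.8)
The plan is to deduce the corollary from Lemma \ref{equiv_topologies_seminormed} applied to the \emph{semi-normed} ring $(R,\rho)$, where $\rho$ is the spectral radius of Definition \ref{defn_sp_radius}, and then to sandwich $\{\varpi^n R^\circ\}$ between the resulting systems. The first thing to check is that $(R,\rho)$ really is a semi-normed ring with topological nilpotent unit $\varpi$. The only nontrivial point is $\rho(\varpi) < 1$: by Fekete's lemma $\rho(\varpi) = \inf_n |\varpi^n|^{1/n}$, so if $\rho(\varpi) \ge 1$ then $|\varpi^n| \ge 1$ for all $n$, contradicting $\varpi^n \to 0$. Hence $\rho(\varpi) < 1$, and power-multiplicativity gives $\rho(\varpi^n) = \rho(\varpi)^n \to 0$, so $(R,\rho)$ satisfies the hypotheses of Lemma \ref{equiv_topologies_seminormed}. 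Applying that lemma, in which the role of $R_{\le 1}$ and $R_{\le r}$ is now played by $R_{\rho \le 1}$ and $R_{\rho \le r}$, we obtain at once that $\{\varpi^n R_{\rho \le 1}\}_{n \in \ZZ_{\ge 0}}$ and $\{R_{\rho \le r}\}_{r \in \RR_{> 0}}$ induce equivalent topologies (the value $r = 0$ playing no role, exactly as in Lemma \ref{equiv_topologies_seminormed}).

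The remaining step, which I expect to be the crux, is to compare $\{\varpi^n R^\circ\}$ with these systems. One inclusion is soft: if $f \in R^\circ$ then $\{|f^n|\}$ is bounded, so $\rho(f) = \lim |f^n|^{1/n} \le 1$, whence $R^\circ \subseteq R_{\rho \le 1}$ and $\varpi^n R^\circ \subseteq \varpi^n R_{\rho \le 1}$ for every $n$. For the reverse inclusion I would invoke Proposition \ref{spec_radius_power_bdd}, which identifies $R_{\rho \le s}$ with the completed filtration $\widehat{\Fil^\circ_s} R = \bigcap_{r > s} R^{\circ, r}$. Given $a \in R_{\rho \le 1}$ we have $\rho(\varpi a) = \rho(\varpi)\rho(a) \le \rho(\varpi) < 1$, so $\varpi a \in R_{\rho \le \rho(\varpi)} = \bigcap_{r > \rho(\varpi)} R^{\circ, r} \subseteq R^{\circ, 1} = R^\circ$, the last inclusion using precisely that $1 > \rho(\varpi)$ so that $r = 1$ is one of the indices in the intersection. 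Therefore $\varpi R_{\rho \le 1} \subseteq R^\circ$, and consequently $\varpi^{n+1} R_{\rho \le 1} \subseteq \varpi^n R^\circ \subseteq \varpi^n R_{\rho \le 1}$ for all $n$, which shows that $\{\varpi^n R^\circ\}$ is mutually cofinal with $\{\varpi^n R_{\rho \le 1}\}$ and hence with $\{R_{\rho \le r}\}$.

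It is worth recording why passing through $\rho$ (equivalently, through the completed filtration) is unavoidable: Example \ref{example_powerbdd_stric_sp_radius} produces $R$ and $t \in R$ with $\rho(t) \le 1$ but $t \notin R^\circ$, so $R^\circ \subsetneq R_{\rho \le 1}$ in general and the factor $\varpi$ in $\varpi R_{\rho \le 1} \subseteq R^\circ$ genuinely cannot be dropped; this is the only subtle point in the argument, and everything else is a direct quotation of Lemma \ref{equiv_topologies_seminormed} and Proposition \ref{spec_radius_power_bdd}. Finally, openness of the sets in question follows from the non-archimedean triangle inequality exactly as in the proof of Lemma \ref{equiv_topologies_seminormed}: each $R_{\rho \le r}$ is a union of translates $a + R_{\le r}$ of norm-balls (since $\rho(a+c) \le \max(\rho(a),|c|) \le r$ whenever $\rho(a) \le r$ and $|c| \le r$), and each $\varpi^n R^\circ = \bigcup_m \varpi^{\,n-m} R_{\le 1}$ is likewise a union of norm-balls.
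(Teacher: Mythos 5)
Your proof follows the same route as the paper: apply Lemma~\ref{equiv_topologies_seminormed} to the semi-normed ring $(R,\rho)$ to identify $\{R_{\rho\le r}\}$ and $\{\varpi^n R_{\rho\le 1}\}$, and then use Proposition~\ref{spec_radius_power_bdd} to sandwich $\{\varpi^n R^\circ\}$ between them via $R^\circ \subset R_{\rho\le 1}$ and $\varpi R_{\rho\le 1} \subset R^\circ$. Your handling of the second inclusion is actually a touch more careful than the paper's, which writes the intermediate bound as $R_{\rho\le |\varpi|}$; you correctly route through $\rho(\varpi)$ and supply the Fekete argument for $\rho(\varpi)<1$, which is the precise fact being used (whereas $|\varpi|<1$ is not guaranteed for a general Banach ring norm). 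This is all fine.

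The one genuine error is in the closing openness remark, where you assert
\[
\varpi^n R^\circ \;=\; \bigcup_m \varpi^{\,n-m} R_{\le 1}.
\]
This equality is false: for any $a\in R$, topological nilpotence of $\varpi$ forces $|\varpi^m a|\le |\varpi^m|\,|a|\to 0$, so $\varpi^m a\in R_{\le 1}$ for $m\gg 0$; hence $\bigcup_m \varpi^{-m} R_{\le 1} = R$, while $R^\circ\ne R$ in general (e.g.\ $\varpi^{-1}\notin R^\circ$). The right way to see that $R^\circ$ is open is to note that $R^\circ$ is an additive subgroup (in fact a subring) of $R$, using the non-archimedean inequality $|(f+g)^n| \le \max_k |f^k|\,|g^{n-k}|$, and that it contains the open subset $R_{\le 1}$; a subgroup of a topological group containing an open neighborhood of $0$ is open. (This is essentially what the paper's proof does when it observes that $fR_{\le 1}\subset R^\circ$ for $f\in R^\circ$.) The openness of $\varpi^n R^\circ$ then follows because multiplication by the unit $\varpi^n$ is a homeomorphism. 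The rest of the argument, and in particular the cofinality claims that constitute the substance of the corollary, is correct.
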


\begin{proof} Since $\rho$ is a non-archimedean semi-norm on $R$ it follows that every subset in $\{R_{\rho \le r}, \varpi^n R_{\rho \le 1} \}$ is an open subset of $R$. We claim that $R^\circ \subset R$ is an open subset; indeed, since $R_{\le 1} \subset R$ is open it suffices to show that $f R_{\le 1} \subset R$ is an open subset and that $f R_{\le 1} \subset R^\circ$ for all $f \in R^\circ$. But both of this assertions are clear.

It follows from Proposition \ref{equiv_topologies_seminormed} that $\{R_{\rho \le r} \}_{r \in \RR_{\ge 0}}$ and $\{\varpi^n R_{\rho \le 1} \}_{n \in \ZZ_{\ge 0}}$ are cofinal systems of neighborhoods, thus it suffices to show that $ \{\varpi^n R_{\rho \le 1} \}_{n \in \ZZ_{\ge 0}}$ and $\{\varpi^n R^\circ \}_{n \in \ZZ_{\ge 0}}$ are cofinal. From Proposition \ref{spec_radius_power_bdd} we learn that $R^\circ \subset R_{\rho \le 1}$, which in turn implies that $\varpi^n R^\circ \subset \varpi^n R_{\rho \le 1}$ for all $n \in \ZZ_{\ge 0}$. Conversely, if $a \in \varpi R_{\rho \le 1} \subset R_{\rho \le |\varpi|}$ then $a \in R^\circ$ again by Proposition \ref{spec_radius_power_bdd}, which implies that $\varpi^n R_{\rho \le 1} \subset \varpi^{n-1} R^\circ$ as desired.
\end{proof}

\begin{corollary}\label{equiv_uniform_power_bdd} Let $R$ be a Banach ring. Then, the norm on $R$ equivalent to the spectral radius norm if and only if $R^\circ \subset R$ is bounded. In other words, $R$ is uniform up to isomorphism.
\end{corollary}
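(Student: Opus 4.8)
The statement asserts that for a Banach ring $R$, the norm $|-|$ is equivalent to the spectral radius norm $\rho$ if and only if $R^\circ \subset R$ is bounded. The plan is to run both implications through the cofinality of topologies established in Corollary \ref{equiv_topologies_spec_rad}. The key observation is that ``$R^\circ$ is bounded'' can be rephrased entirely in terms of open neighborhoods of zero: $R^\circ$ is bounded precisely when it is absorbed by the basic open sets $R_{\le r}$ of the $|-|$-topology, i.e.\ when for every $r > 0$ there exists $n$ with $\varpi^n R^\circ \subset R_{\le r}$; equivalently, by Lemma \ref{equiv_topologies_seminormed}, when $\{\varpi^n R^\circ\}_{n}$ is cofinal with $\{\varpi^n R_{\le 1}\}_n$.

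For the forward direction, I would assume $|-|$ is equivalent to $\rho$. Then the identity functor is an isomorphism of Banach rings $(R, |-|) \to (R, \rho)$, and since $(R,\rho)$ is uniform, Proposition \ref{spec_radius_power_bdd} (or rather its corollary that $R^\circ = R_{\rho \le 1}$ when $R$ is already spectral) shows $R^\circ = R_{\rho \le 1}$, which is bounded in the $\rho$-topology hence, by equivalence of norms, in the $|-|$-topology. More directly: equivalence of norms gives constants $C_1, C_2 > 0$ with $C_1 \rho \le |-| \le C_2 \rho$ (after passing to the spectral radius, which by the earlier lemma depends only on the equivalence class), and combined with Proposition \ref{spec_radius_power_bdd} giving $R^\circ \subseteq R_{\rho \le 1}$, we get $|a| \le C_2$ for all $a \in R^\circ$, so $R^\circ$ is bounded.

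For the reverse direction, assume $R^\circ \subset R$ is bounded, so there is a constant $C > 0$ with $|a| \le C$ for all $a \in R^\circ$. We always have $\rho(a) \le |a|$. For the reverse inequality (up to a constant), the plan is: given $a \in R$ with $a \ne 0$, consider $b = a/\lambda$ for a scalar $\lambda$ with $|\lambda|$ slightly larger than $\rho(a)$; one checks $\rho(b) < 1$, so by Proposition \ref{spec_radius_power_bdd} the element $b$ lies in $\widehat{\Fil^\circ_1} R = R_{\rho \le 1}$, and in fact for $|\lambda| > \rho(a)$ strictly one gets $b \in R^{\circ, r'}$ for some $r' < 1$, hence $\{b^n\}$ is bounded, i.e.\ $b \in R^\circ$. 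Then boundedness of $R^\circ$ gives $|b| \le C$, so $|a| \le C|\lambda|$; letting $|\lambda| \to \rho(a)$ (using that $\rho(a) = \inf_n |a^n|^{1/n}$ and the value group of $\CC_p$, or more carefully working with a sequence of scalars and the density of $|\CC_p^\times|$ in $\RR_{>0}$) yields $|a| \le C \rho(a)$. Combined with $\rho(a) \le |a|$, this shows $|-|$ and $\rho$ are equivalent norms.

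\textbf{Main obstacle.} The delicate point is the reverse direction: controlling the passage $|\lambda| \to \rho(a)$ without a scalar field whose value group is all of $\RR_{>0}$. One must either invoke that $\CC_p$ (or the ground field $K$) has dense enough value group, or argue more intrinsically using the filtration $\widehat{\Fil^\circ} R$ directly: since $R^\circ$ bounded means $\widehat{\Fil^\circ_1} R = R_{\rho \le 1}$ is bounded, and the filtration is compatible with multiplication by $\varpi$, one deduces $R_{\rho \le |\varpi|^n}$ is contained in $|\varpi|^n \cdot (\text{bounded set})$, giving the cofinality of $\{R_{\rho \le r}\}$ with $\{R_{\le r}\}$ directly — which is exactly equivalence of the two topologies, hence (for complete norms, by the open mapping theorem as in Proposition \ref{banach_to_topological_ff}, or more elementarily since both are genuine norms) equivalence of the norms. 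I would lean on this topological reformulation to sidestep the scalar-field issue entirely.
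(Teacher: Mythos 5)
Your first attempt in the reverse direction (scaling $a$ by a scalar $\lambda$ with $|\lambda|$ just above $\rho(a)$) does run into the value-group obstacle you flag, and the pivot in your final paragraph is exactly the paper's route: show the $|-|$- and $\rho$-topologies coincide by comparing systems of neighborhoods of $0$, then upgrade topological equivalence to equivalence of norms via Proposition \ref{banach_to_topological_ff} and Lemma \ref{equiv_topologies_seminormed}. The forward direction is identical in both. The one step you gloss over is ``$R^\circ$ bounded means $\widehat{\Fil^\circ_1} R = R_{\rho \le 1}$ is bounded'': note $R^\circ = \Fil^\circ_1 R$ while $R_{\rho \le 1} = \widehat{\Fil^\circ_1} R$, and the inclusion $R^\circ \subset R_{\rho \le 1}$ can be strict (Example \ref{example_powerbdd_stric_sp_radius}), so this implication is not definitional. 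What makes it work is the nontrivial inclusion $\varpi R_{\rho \le 1} \subset R^\circ$, which follows from Proposition \ref{spec_radius_power_bdd}: $\varpi R_{\rho \le 1} \subset R_{\rho \le |\varpi|} = \widehat{\Fil^\circ_{|\varpi|}} R \subset R^{\circ,1} = R^\circ$. That is precisely the content of Corollary \ref{equiv_topologies_spec_rad}, which the paper cites at this point; with that inclusion in hand, boundedness of $R^\circ$ gives $R_{\rho \le 1} \subset \varpi^{-1} R^\circ$ bounded, and the rest of your argument (cofinality of $\{R_{\rho \le r}\}$ and $\{R_{\le r}\}$, hence equivalence of norms) goes through as you describe. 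So: essentially the paper's proof, with that one intermediate inclusion to be made explicit.
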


\begin{proof} If the norm of $R$ is equivalent to the spectral radius then $R^\circ \subset R_{\rho \le 1} \subset \frac{1}{\varpi^{n}} R_{\le 1}$ showing that $R^\circ \subset R$ is bounded. For the converse, by Proposition \ref{banach_to_topological_ff} and Lemma \ref{equiv_topologies_seminormed} we know that $R$ having a norm equivalent to the spectral radius is equivalent to the requirement that $\{\varpi^n R_{\rho \le 1}\}_{n \in \ZZ_{\ge 0}}$ is a fundamental system of open neighborhoods around $0 \in R$. Hence, it suffices to show that $\{\varpi^n R^\circ \}_{n \in \ZZ_{\ge 0}}$ and $\{\varpi^n R_{\le 1} \}_{n \in \ZZ_{\ge 0}}$ are cofinal systems of neighborhoods, since by Corollary \ref{equiv_topologies_spec_rad} we already know that $\{\varpi^n R_{\rho \le 1}\}_{n \in \ZZ_{\ge 0}}$ and $\{\varpi^n R^\circ \}_{n \in \ZZ_{\ge 0}}$ are cofinal. Clearly we have that $R_{\le 1} \subset R^\circ$ which implies that $\varpi^n R_{\le 1} \subset \varpi^n R^\circ$ for all $n \in \ZZ_{\ge 0}$. For the converse, since $R^\circ$ is bounded it follows that there exists a $N \gg 0$ such that $\varpi^N R^\circ \subset R_{\le 1}$ which implies that $\varpi^{N + n} R^\circ \subset \varpi^n R_{\le 1}$.
\end{proof}

\subsection{The dictionary}

The main purpose of this section is to show to what extent we can pass between Banach $K$-algebras to $\varpi$-complete $\varpi$-torsion free $K_{\le 1}$-algebras without losing any information. We begin constructing the functors relating $\CAlg_{K_{\le 1}}^{\wedge \tf}$ with the category $\Ban_K^{\contr}$ of Banach $K$-algebras with contractive maps (cf. Definition \ref{defn_banach_algebras}).

\begin{const}\label{from_tf-comp_to_banach} To construct the functor
\begin{equation*}
	(-) \Big[ \frac{1}{\varpi} \Big]: \CAlg_{K_{\le 1}}^{\wedge \tf} \longrightarrow \Ban_K^{\contr} \qquad A \mapsto A \Big[\frac{1}{\varpi} \Big]
\end{equation*}
it suffices to show that the canonical functor $(-)[1/\varpi]: \CAlg_{K_{\le 1}}^{\wedge \tf} \rightarrow \CAlg_K$ admits a lift to $\Ban_K^{\contr}$. Let $A$ be an object of $\CAlg_{K_{\le 1}}^{\wedge \tf}$, it is clear that $A[\frac{1}{\varpi}]$ will be an (abstract) $K$-algebra. We need to show that we can endow $A[\frac{1}{\varpi}]$ with a norm making $A[\frac{1}{\varpi}]$ into a Banach ring, and such that for all morphisms $A \rightarrow C$ in $\CAlg_{K_{\le 1}}^{\wedge \tf}$ the induced map $A[\frac{1}{\varpi}] \rightarrow C[\frac{1}{\varpi}]$ is contractive with respect to the given norm.

Define a function
\begin{equation*}
	|-|: A \Big[\frac{1}{\varpi} \Big] \longrightarrow \RR_{\ge 0} \qquad \qquad |a| = \inf \{|k|_K \text{ such that } a \in k A \text{ over all } k \in K \} 
\end{equation*}
First, notice that it is clear that the norm induced on $K_{\le 1} [\frac{1}{\varpi}]$ is the same norm as the norm on $K$. We need to show that $|-|$ is a (non-archimedean) norm on $A[\frac{1}{\varpi}]$. To check that $|a| = 0$ implies that $a = 0$ recall that $\varpi \in K$ is a topological nilpotent unit, so this implies that $a \in \bigcap_{n \in \ZZ} (\varpi^n A)$, but since $A$ is classically $\varpi$-complete it follows that $a = 0$. It is also clear that $|a| = |-a|$ since $a \in kA$ if and only if $-a \in kA$. For the non-archimedean triangle inequality and the submultiplicativity of $|-|$ fix $f \in k_f A$ and $g \in k_g A$. Then, $f + g \in \max(k_f, k_g) A$ where the maximum is taken with respect to the norms of $k_f$ and $k_g$ -- this implies that $|f + g| \le \max(|f|, |g|)$. Similarly, we have that $fg \in k_f k_g A$ which implies that $|fg| \le |f||g|$. We have shown that $|-|$ defines a non-archimedean norm on $A[\frac{1}{\varpi}]$.

Next, we show that $A[\frac{1}{\varpi}]$ is Banach with respect to the norm $|-|: A[\frac{1}{\varpi}] \rightarrow \RR_{\ge 0}$ described above. For this, it suffices to show that the following systems of open neighborhoods around zero are cofinal (by \ref{equiv_topologies_seminormed} and the classical $\varpi$-completeness of $A$): the system of neighborhoods $\{\varpi^n A\}$ and the system of neighborhoods $\{\varpi^n A[\frac{1}{\varpi}]_{\le 1}\}$. We clearly have that $\varpi^n A \subset \varpi^n A[\frac{1}{\varpi}]_{\le 1}$, and by construction we have that $\varpi^{n+1} A[\frac{1}{\varpi}]_{\le 1} \subset \varpi^n A$ -- showing that $A[\frac{1}{\varpi}]$ is Banach with respect to the norm constructed above. Finally, we remark that it is clear by construction that for any morphism $A \rightarrow C$ in $\CAlg_{K_{\le 1}}^{\wedge \tf}$ the induced map $A[\frac{1}{\varpi}] \rightarrow C[\frac{1}{\varpi}]$ is contractive with respect to the given norm.
\end{const}

\begin{const}\label{from_banach_to_tf-comp} To construct the functor
\begin{equation*}
	(-)_{\le 1}: \Ban_K^{\contr} \longrightarrow \CAlg_{K_{\le 1}}^{\wedge \tf} \qquad R \mapsto R_{\le 1}
\end{equation*}
it suffices to show that the canonical functor $(-)_{\le 1}: \Ban_K^{\contr} \rightarrow \CAlg_{K_{\le 1}}$ admits a lift to $\CAlg_{K_{\le 1}}^{\wedge \tf}$. We need to check that $R_{\le 1}$ is $\varpi$-complete and $\varpi$-torsion free. It is clear by construction that $R_{\le 1} \subset R$ is $\varpi$-torsion free, and Corollary \ref{banach_is_pi_comp} shows that $R_{\le 1}$ is classically $\varpi$-complete.
\end{const}

The next few results show that if we restrict ourselves to the essential image of $H^0 j_*: \CAlg_{K_{\le 1}}^{\wedge a \tf} \hookrightarrow \CAlg_{K_{\le 1}}^{\wedge \tf}$ the functors we just defined are close to being an equivalence of categories; and if the value group of $K$ is dense, then it is actually an equivalence of categories.

\begin{lemma} The essential image of the functor $(-)_{\le 1}: \Ban_K^{\contr} \rightarrow \CAlg_{K_{\le 1}}^{\wedge \tf}$ is contained in the essential image of $H^0 j_*: \CAlg_{K_{\le 1}}^{\wedge a \tf} \hookrightarrow \CAlg_{K_{\le 1}}^{\wedge \tf}$.
\end{lemma}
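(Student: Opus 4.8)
The plan is to identify the essential image of $H^0 j_*\colon \CAlg_{K_{\le 1}}^{\wedge a\tf}\hookrightarrow \CAlg_{K_{\le 1}}^{\wedge\tf}$ as the full subcategory of those $A$ for which the unit of adjunction $A\to A_*^{\wedge}$ is an isomorphism, and then to verify this condition by hand when $A=R_{\le 1}$. Recall from Proposition~\ref{almost_funct_tf_alg} that $H^0 j_*=\tau^{\le 0}j_*^\wedge$ is the fully faithful right adjoint of $(-)^a\colon \CAlg_{K_{\le 1}}^{\wedge\tf}\to \CAlg_{K_{\le 1}}^{\wedge a\tf}$; by full faithfulness, an object $A\in\CAlg_{K_{\le 1}}^{\wedge\tf}$ lies in its essential image if and only if the unit $A\to H^0 j_*(A^a)$ is an isomorphism. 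Moreover, by Lemma~\ref{abelian_almost_unit_internal_hom} we have $H^0 j_*(A^a)=\uHom_{K_{\le 1}}((\varpi)_{\perfd},A)=A_*^{\wedge}$, so it suffices to prove $R_{\le 1}\simeq (R_{\le 1})_*^{\wedge}$.

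First I would make $(R_{\le 1})_*^{\wedge}$ explicit. Since $\varpi\in K$ admits a compatible system of $p$-power roots, $(\varpi)_{\perfd}=(\varpi^{1/p^{\infty}})=\bigcup_{n\ge 0}(\varpi^{1/p^n})\subset K_{\le 1}$, and since $R_{\le 1}$ is $\varpi$-torsion free (Construction~\ref{from_banach_to_tf-comp}), the concrete description of almost elements in Notation~\ref{almost_elements} gives
\[
	(R_{\le 1})_*^{\wedge}=\bigl\{\, m\in R_{\le 1}[\tfrac1\varpi] : \varpi^{1/p^n}m\in R_{\le 1}\ \text{for all }n\ge 0\,\bigr\}.
\]
Also $R_{\le 1}[\tfrac1\varpi]=R$, because $\varpi$ is a topological nilpotent unit: $\varpi$ is invertible in $R$ and for any $x\in R$ one has $\varpi^n x\in R_{\le 1}$ for $n\gg 0$. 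Thus the unit map is simply the inclusion $R_{\le 1}\hookrightarrow (R_{\le 1})_*^{\wedge}\subset R$, and the task reduces to proving this inclusion is surjective.

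So suppose $m\in R$ satisfies $\varpi^{1/p^n}m\in R_{\le 1}$ for all $n$. Since $K$ is a non-archimedean field the structure map $K\to R$ is an isometry, and the norm on $K$ is multiplicative, so $|m|=|\varpi^{-1/p^n}\cdot\varpi^{1/p^n}m|\le |\varpi|_K^{-1/p^n}\,|\varpi^{1/p^n}m|\le |\varpi|_K^{-1/p^n}$ for every $n$. From $|\varpi^p|_K<1$ we get $|\varpi|_K<1$, hence $\inf_{n}|\varpi|_K^{-1/p^n}=1$, whence $|m|\le 1$, i.e.\ $m\in R_{\le 1}$; the reverse inclusion $R_{\le 1}\subset (R_{\le 1})_*^{\wedge}$ is immediate from submultiplicativity of the norm. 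This shows $(R_{\le 1})_*^{\wedge}=R_{\le 1}$, so $R_{\le 1}$ lies in the essential image of $H^0 j_*$, completing the proof. There is no serious obstacle; the only points to watch are the harmless limit computation $\inf_n |\varpi|_K^{-1/p^n}=1$ and keeping the identification of $(R_{\le 1})_*^{\wedge}$ with an honest submodule of $R$ straight.
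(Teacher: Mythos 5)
Your proof is correct and follows essentially the same route as the paper: reduce via full faithfulness of $H^0 j_*$ to showing $R_{\le 1} \simeq (R_{\le 1})_*^{\wedge}$, identify the almost elements concretely as $\{m \in R : \varpi^{1/p^n}m \in R_{\le 1}\ \forall n\}$, and conclude by the norm estimate $|\varpi^{1/p^n}||m| = |\varpi^{1/p^n}m| \le 1$ together with $|\varpi|^{1/p^n}\to 1$. The paper invokes \cite[Section 1.2.2]{BGR} for multiplicativity $|\varepsilon a| = |\varepsilon||a|$ when $\varepsilon\in K$, where you reach the same bound via submultiplicativity applied to $m = \varpi^{-1/p^n}(\varpi^{1/p^n}m)$; these are interchangeable.
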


\begin{proof} Since $H^0 j_*$ is fully faithful it suffices to show that for any Banach $K$-algebra $A$ the associated $K_{\le 1}$-algebra $A$ satisfies $A_{\le 1} \simeq (A_{\le 1})_*$, or in other words we need to show that $a \in (A_{\le 1})_* \subset A$ satisfies $|a| \le 1$. Indeed, recall that since $A_{\le 1}$ is $\varpi$-torsion free we can identify $(A_{\le 1})_* \subset A$ with the elements $a \in A$ such that $\varepsilon a \in A_{\le 1}$ for all $\varepsilon \in (\varpi)_{\perfd}$. From the fact that $K \rightarrow A$ is an isometry and \cite[Section 1.2.2]{BGR} it follows that $|\varepsilon a| = |\varepsilon| |a|$, which implies that $|a| \le 1/|\varepsilon|$ for all $\varepsilon \in (\varpi)_{\perfd}$. Without loss of generality we may assume that $\varpi \in K$ admits compatible $p$-power roots; since $|\varpi^{1/p^n}| \rightarrow 1$ as $n \rightarrow \infty$ it follows that $|a| \le 1$.
\end{proof}

The following result can be found in \cite[Section 2.3]{andrelemme}.

\begin{prop}\label{equiv_almost_tf_banach} Define the functor $(-)[\frac{1}{\varpi}]: \CAlg_{K_{\le 1}}^{\wedge a \tf} \rightarrow \Ban_K^{\contr}$ as the composition
\begin{cd}
	\CAlg_{K_{\le 1}}^{\wedge a \tf} \ar[rr, hook, "H^0 j_*"] && \CAlg_{K_{\le 1}}^{\wedge \tf} \ar[rr, "(-) \lbrack \frac{1}{\varpi} \rbrack "] && \Ban_{K}^{\contr}
\end{cd}
This functor is an equivalence of categories with inverse given by  $(-)_{\le 1}: \Ban_K^{\contr} \rightarrow \CAlg_{K_{\le 1}}^{\wedge a \tf}$.
\end{prop}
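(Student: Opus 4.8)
The plan is to produce, for every Banach $K$-algebra $R$ with contractive structure map, an isometric isomorphism $R_{\le 1}[\tfrac{1}{\varpi}] \overset{\simeq}{\longrightarrow} R$ in $\Ban_K^{\contr}$, and for every $A \in \CAlg_{K_{\le 1}}^{\wedge a \tf}$ (which we realize, via the fully faithful $H^0 j_*$, as a $\varpi$-complete $\varpi$-torsion free $K_{\le 1}$-algebra satisfying $A \simeq A_*$) an isomorphism $A \overset{\simeq}{\longrightarrow} (A[\tfrac{1}{\varpi}])_{\le 1}$, both natural in their argument; the two families together then exhibit $(-)[\tfrac{1}{\varpi}]$ and $(-)_{\le 1}$ as quasi-inverse equivalences. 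Throughout I will use the following observation: since $K$ is a perfectoid field, $\varpi$ admits a compatible system of $p$-power roots, so $|K^{\times}|$ contains all $|\varpi|^{m/p^n}$ with $m \in \ZZ$, $n \ge 0$, and is therefore dense in $\RR_{> 0}$.

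For the first isomorphism, I would first check that $R_{\le 1}[\tfrac{1}{\varpi}]$ and $R$ agree as rings: the comparison map is injective because $\varpi \in K^{\times} \subseteq R^{\times}$ is a non-zero-divisor and $R_{\le 1}$ is $\varpi$-torsion free, and surjective because $|\varpi^n r|_R = |\varpi|_K^n |r|_R \to 0$ forces $\varpi^n r \in R_{\le 1}$ for $n \gg 0$, so $r = \varpi^{-n}(\varpi^n r)$. It then remains to see that the norm on $R_{\le 1}[\tfrac{1}{\varpi}]$ from Construction \ref{from_tf-comp_to_banach}, namely $|a| = \inf\{|k|_K : a \in k R_{\le 1}\}$, is exactly $|\cdot|_R$. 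Using that $K \to R$ is an isometry (the remark following Definition \ref{defn_banach_algebras}), the submultiplicativity inequality together with its reverse gives $|a/k|_R = |a|_R/|k|_K$ for $k \ne 0$, hence $a \in k R_{\le 1}$ iff $|a|_R \le |k|_K$; the reconstructed norm is thus $\inf\{|k|_K : |a|_R \le |k|_K\}$, which equals $|a|_R$ precisely because $|K^{\times}|$ is dense. Naturality is immediate since the underlying ring map is the identity.

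For the second isomorphism, the inclusion $A \hookrightarrow (A[\tfrac{1}{\varpi}])_{\le 1}$ is clear, since $a \in A = 1\cdot A$ gives $|a| \le 1$. Conversely, if $b \in A[\tfrac{1}{\varpi}]$ has $|b| \le 1$, then for each $n$ we have $\inf\{|k|_K : b \in kA\} \le 1 < |\varpi|^{-1/p^n}$, so there is $k \in K$ with $b \in kA$ and $|k|_K < |\varpi|^{-1/p^n}$; writing $b = ka$ with $a \in A$, the element $\varpi^{1/p^n}k$ has $K$-norm $|\varpi|^{1/p^n}|k|_K < 1$, whence $\varpi^{1/p^n}b = (\varpi^{1/p^n}k)\,a \in K_{\le 1}\cdot A \subseteq A$. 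Since $A$ is $\varpi$-torsion free, $A_* = \uHom_{K_{\le 1}}((\varpi)_{\perfd}, A)$ is the set of $b \in A[\tfrac{1}{\varpi}]$ with $\varepsilon b \in A$ for all $\varepsilon \in (\varpi)_{\perfd} = \bigcup_n (\varpi^{1/p^n})$ (Notation \ref{almost_elements}), so the previous step gives $b \in A_* = A$. Again naturality is automatic, and compatibility of the two families of isomorphisms with the action of the functors on morphisms is formal.

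The main obstacle is the second isomorphism — recovering the integral subalgebra $A$ from its localization $A[\tfrac{1}{\varpi}]$. This is exactly where almost mathematics is indispensable: dropping the hypothesis $A \simeq A_*$, one would only obtain $(A[\tfrac{1}{\varpi}])_{\le 1} = A_*$, so the argument genuinely uses both the defining property of the essential image of $H^0 j_*$ and the compatible $p$-power roots of $\varpi$. A secondary point requiring care is that isomorphisms in $\Ban_K^{\contr}$ are isometries, which forces the reconstructed norm to be pinned down on the nose rather than only up to equivalence; this is precisely what density of $|K^{\times}|$ provides, and it is why no hypothesis on $K$ beyond being perfectoid is needed.
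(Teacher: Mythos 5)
Your proof is correct and follows essentially the same route as the paper's: both establish the two natural isomorphisms $R_{\le 1}[\tfrac{1}{\varpi}] \simeq R$ and $A \simeq (A[\tfrac{1}{\varpi}])_{\le 1}$ by exploiting the density of $|K^\times|$ (via $p$-power roots of $\varpi$, since $K$ is perfectoid) and the defining property $A \simeq A_*$ of the essential image of $H^0 j_*$. The only cosmetic difference is that where you compute the reconstructed norm directly as $\inf\{|k|_K : |a|_R \le |k|_K\} = |a|_R$, the paper phrases the same calculation as showing the identity map is contractive in both directions; these are the same argument.
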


\begin{proof} Let $A$ be an element of $\CAlg_{K_{\le 1}}^{\wedge a \tf}$ regarded as an object of $\CAlg_{K_{\le 1}}^{\wedge \tf}$ via the functor $H^0 j_*$. We claim that $A = (A[\frac{1}{\varpi}])_{\le 1}$. Indeed, by construction we have an inclusion $A \subset (A[\frac{1}{\varpi}])_{\le 1}$, so it suffices to show that if $a \in A[\frac{1}{\varpi}]$ satisfies $|a| \le 1$ then $a \in A$. Recall that by the definition of the norm on $A[1/\varpi]$ it follows that $|a| \le 1$ if and only if $a \in \frac{1}{\varepsilon} A$ for all $\varepsilon$ in the maximal ideal $\frakm = (\varpi)_{\perfd}$ of $K_{\le 1}$, and since $A$ satisfies $A \simeq A_*$ it follows that $a \in A$. 
	
On the other hand, its clear that $(A_{\le 1})[\frac{1}{\varpi}]$ and $A$ are isomorphic as abstract rings, we claim that the identity map $(A_{\le 1})[\frac{1}{\varpi}] \rightarrow A$ is contractive. Indeed, if $a \in kA_{\le 1}$ then there exists an $\alpha \in A_{\le 1}$ such that $|a| = |k| |\alpha|$, which in turn implies that $|a| \le |k|$. Hence, it remains to show that the map $(A_{\le 1})[\frac{1}{\varpi}] \rightarrow A$ is an isometry. First, we need the following auxiliary result result: the value group $|K^{\times}| \subset \RR_{\ge 0}$ is dense. Indeed, since $K$ is a perfectoid field we may assume that the topological nilpotent unit $\varpi \in K^{\times}$ admits compatible $p$-power root, then since $\ZZ[\frac{1}{p}] \subset \RR$ is a dense subset, it follows that $|\varpi|^{\ZZ[1/p]} \subset \RR_{\ge 0}$ is dense, proving that $|K^{\times}| \subset \RR_{\ge 0}$ is dense.

Set $S := (A_{\le 1})[\frac{1}{\varpi}]$, it remains to show that the map $S \rightarrow A$, which is the identity at the level of rings, induces an equality between the norms $|-|_A$ and $|-|_S$. Fix some $a \in A$, and pick a sequence of $\{k_i\} \subset K^{\times}$, with decreasing norms, such that $|k_i| \rightarrow |a|_A$. We claim that $|a|_S \le |k_i|$ for all $\{k_i\}$, we have that $|a/k_i| = |a|/|k_i|$, which implies that $a/k_i \in A_{\le 1}$ and so $a \in k_i A_{\le 1}$, proving the claim. Hence, we have that $|a|_S \le |a|_A$; and since the counit of the adjunction $S \rightarrow A$ is a contractive map, it follows that $|a|_S = |a|_A$ for all $a \in A$.
\end{proof}

\begin{corollary}\label{tensor_dic_int_to_ban} Let $A \leftarrow B \rightarrow C$ be a pair of morphisms in $\CAlg_{K_{\le 1}}^{\wedge a \tf}$. Then, we have a canonical isomorphism in $\Ban_K^{\contr}$.
\begin{equation*}
	(A \otimes_B C)^{\tf, \wedge, a}[1/\varpi] \simeq A[1/\varpi] \cotimes_{B[1/\varpi]} C[1/\varpi]
\end{equation*}
\end{corollary}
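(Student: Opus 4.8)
The plan is to combine the equivalence of categories from Proposition \ref{equiv_almost_tf_banach} with the explicit description of pushouts in $\CAlg_{K_{\le 1}}^{\wedge a \tf}$ obtained via Corollary \ref{pushout_in_comp-tf-a}, together with the compatibility of the functor $(-)_{\le 1}$ with coproducts coming from Construction \ref{const_comp_tensor_ban}. The left-hand side, $(A \otimes_B C)^{\tf,\wedge,a}[1/\varpi]$, is by Corollary \ref{pushout_in_comp-tf-a} precisely the image under $(-)[1/\varpi]$ of the pushout of $A \leftarrow B \rightarrow C$ computed in $\CAlg_{K_{\le 1}}^{\wedge a \tf}$ (keeping in mind that here $A, B, C$ are viewed in $\CAlg_{K_{\le 1}}$ via $H^0 j_*$, and the iterated operations $(-)^{\tf}$, $(-)^{\wedge}$, $(-)^a$ are exactly the composite left adjoint). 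The right-hand side is the pushout of $A[1/\varpi] \leftarrow B[1/\varpi] \rightarrow C[1/\varpi]$ computed in $\Ban_K^{\contr}$, by the last sentence of Construction \ref{const_comp_tensor_ban}. So the statement reduces to the assertion that the equivalence $(-)[1/\varpi] \colon \CAlg_{K_{\le 1}}^{\wedge a \tf} \xrightarrow{\ \simeq\ } \Ban_K^{\contr}$ of Proposition \ref{equiv_almost_tf_banach} carries the pushout in the source to the pushout in the target.

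First I would make precise that $(-)[1/\varpi] \colon \CAlg_{K_{\le 1}}^{\wedge a \tf} \to \Ban_K^{\contr}$, being an equivalence of categories, preserves all colimits that exist, and in particular finite pushouts; dually its inverse $(-)_{\le 1}$ preserves pushouts as well. Thus, writing $P$ for the pushout of $A \leftarrow B \rightarrow C$ in $\CAlg_{K_{\le 1}}^{\wedge a \tf}$, we get that $P[1/\varpi]$ is the pushout of $A[1/\varpi] \leftarrow B[1/\varpi] \rightarrow C[1/\varpi]$ in $\Ban_K^{\contr}$, which is $A[1/\varpi] \cotimes_{B[1/\varpi]} C[1/\varpi]$. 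It then remains only to identify $P$ with $(A \otimes_B C)^{\tf,\wedge,a}$ as an object of $\CAlg_{K_{\le 1}}^{\wedge a \tf}$; but this is exactly the content of Corollary \ref{pushout_in_comp-tf-a} (applied with the integral perfectoid ring $R = K_{\le 1}$, which is $\varpi$-complete, $\varpi$-torsion free, with $\varpi^p \mid p$ by hypothesis on $K$). Chaining these two identifications gives the claimed canonical isomorphism, and I would remark that the isomorphism is canonical precisely because it is built from the (co)units of the adjunctions and the universal property of pushouts, so no choices are involved.

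The one subtlety I would be careful about — and the step most likely to require a sentence or two of justification rather than a one-line citation — is the bookkeeping of which category each object lives in and which functor is being applied where. Specifically, Corollary \ref{pushout_in_comp-tf-a} is stated for $A \leftarrow D \rightarrow C$ in $\CAlg_R^{\wedge a \tf}$ and describes the pushout there as $(A \otimes_D C)^{\tf,\wedge,a}$, where $A \otimes_D C$ is the pushout computed in $\CAlg_R$ after applying $\tau^{\le 0} j_*^\wedge$ (i.e.\ restricting scalars / forgetting the almost structure). One must check that under $(-)[1/\varpi] = (-)[1/\varpi] \circ H^0 j_*$ the notation $A[1/\varpi]$ in the statement matches this: since $H^0 j_*$ is fully faithful and $(-)[1/\varpi]$ on $\CAlg_{K_{\le 1}}^{\wedge \tf}$ inverts $(-)_{\le 1}$ (Proposition \ref{equiv_almost_tf_banach}), there is no ambiguity, but I would spell this out. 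Everything else — that $(-)[1/\varpi]$ preserves the relevant colimits, that pushouts in $\Ban_K^{\contr}$ along contractive maps are completed tensor products — has already been established earlier in the excerpt, so the proof is essentially a formal assembly of Proposition \ref{equiv_almost_tf_banach}, Corollary \ref{pushout_in_comp-tf-a}, and Construction \ref{const_comp_tensor_ban}.
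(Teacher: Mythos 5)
Your proposal is correct and follows exactly the paper's own argument: identify the left side as the pushout in $\CAlg_{K_{\le 1}}^{\wedge a \tf}$ via Corollary \ref{pushout_in_comp-tf-a}, the right side as the pushout in $\Ban_K^{\contr}$ via Construction \ref{const_comp_tensor_ban}, and conclude because the equivalence of Proposition \ref{equiv_almost_tf_banach} preserves pushouts. The extra bookkeeping you flag is fine but not needed beyond what the paper already records.
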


\begin{proof} Recall that the pushout of $A \leftarrow B \rightarrow C$ in $\CAlg_{K_{\le 1}}^{\wedge a \tf}$ can be computed as $(A \otimes_B C)^{\tf, \wedge, a}$; and that the pushout of $A[1/\varpi] \leftarrow B[1/\varpi] \rightarrow C[1/\varpi]$ in $\Ban_K^{\contr}$ can be identified with the completed tensor product. Since the functor $(-)[1/\varpi]: \CAlg_{K_{\le 1}}^{\wedge a \tf} \rightarrow \Ban_K^{\contr}$ is an equivalence it preserves pushouts, proving the result.
\end{proof}

\begin{corollary}\label{tensor_dic_ban_to_int} Let $A \leftarrow B \rightarrow C$ be a pair of morphisms in $\Ban_K^{\contr}$. Then, we have a canonical isomorphism in $\Ban_K^{\contr}$
\begin{equation*}
	(A_{\le 1} \otimes_{B_{\le 1}} C_{\le 1})^{\tf, \wedge, a}[1/\varpi] \rightarrow A \cotimes_B C
\end{equation*}
\end{corollary}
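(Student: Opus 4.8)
The plan is to deduce this directly from the dictionary (Proposition \ref{equiv_almost_tf_banach}) together with the description of pushouts in $\CAlg_{K_{\le 1}}^{\wedge a \tf}$ recorded in Corollary \ref{pushout_in_comp-tf-a}. First I would apply the functor $(-)_{\le 1}$ to the diagram $A \leftarrow B \rightarrow C$. By the lemma immediately preceding Proposition \ref{equiv_almost_tf_banach}, the essential image of $(-)_{\le 1}: \Ban_K^{\contr} \to \CAlg_{K_{\le 1}}^{\wedge \tf}$ is contained in the essential image of $H^0 j_*: \CAlg_{K_{\le 1}}^{\wedge a \tf} \hookrightarrow \CAlg_{K_{\le 1}}^{\wedge \tf}$, so we obtain a diagram $A_{\le 1} \leftarrow B_{\le 1} \rightarrow C_{\le 1}$ in $\CAlg_{K_{\le 1}}^{\wedge a \tf}$. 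By Corollary \ref{pushout_in_comp-tf-a}, its pushout in $\CAlg_{K_{\le 1}}^{\wedge a \tf}$ is $(A_{\le 1} \otimes_{B_{\le 1}} C_{\le 1})^{\tf, \wedge, a}$.

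Next I would use that, by Proposition \ref{equiv_almost_tf_banach}, the functor $(-)[1/\varpi]: \CAlg_{K_{\le 1}}^{\wedge a \tf} \to \Ban_K^{\contr}$ is an equivalence of categories with inverse $(-)_{\le 1}$; in particular it preserves pushouts. Hence $(A_{\le 1} \otimes_{B_{\le 1}} C_{\le 1})^{\tf, \wedge, a}[1/\varpi]$ is the pushout in $\Ban_K^{\contr}$ of $A_{\le 1}[1/\varpi] \leftarrow B_{\le 1}[1/\varpi] \rightarrow C_{\le 1}[1/\varpi]$. The (iso)morphisms of adjunction give canonical isometric identifications $A_{\le 1}[1/\varpi] \simeq A$, $B_{\le 1}[1/\varpi] \simeq B$, $C_{\le 1}[1/\varpi] \simeq C$ — this is exactly what is shown in the course of the proof of Proposition \ref{equiv_almost_tf_banach} — so this pushout agrees with the pushout of $A \leftarrow B \rightarrow C$ in $\Ban_K^{\contr}$, which by Construction \ref{const_comp_tensor_ban} is the completed tensor product $A \cotimes_B C$. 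Chaining these identifications yields the asserted canonical isomorphism.

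I do not expect a serious obstacle: all the substance lies in Proposition \ref{equiv_almost_tf_banach} and Corollary \ref{pushout_in_comp-tf-a}, which are already available. The only point deserving a little care is verifying that the map named in the statement is literally the one produced by this chain — that is, that the equivalence $(-)[1/\varpi]$ carries the pushout cocone of $A_{\le 1} \leftarrow B_{\le 1} \rightarrow C_{\le 1}$ to the pushout cocone of $A \leftarrow B \rightarrow C$, and that the counit is natural — but this is a routine bookkeeping of universal properties rather than an essential difficulty. (One can alternatively phrase the whole argument as: $(-)[1/\varpi]$, being an equivalence, takes the initial object under $A_{\le 1} \leftarrow B_{\le 1} \rightarrow C_{\le 1}$ in $\CAlg_{K_{\le 1}}^{\wedge a \tf}$ to the initial object under $A \leftarrow B \rightarrow C$ in $\Ban_K^{\contr}$, and then invoke Corollaries \ref{pushout_in_comp-tf-a} and \ref{tensor_dic_int_to_ban} to identify both sides.)
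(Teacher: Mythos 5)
Your proposal is correct and takes essentially the same route as the paper: the paper's proof is a one-line statement invoking exactly the same three ingredients (the equivalence of Proposition \ref{equiv_almost_tf_banach}, Corollary \ref{pushout_in_comp-tf-a} identifying the pushout in $\CAlg_{K_{\le 1}}^{\wedge a \tf}$, and Construction \ref{const_comp_tensor_ban} identifying the pushout in $\Ban_K^{\contr}$). You have simply unpacked the bookkeeping that the paper leaves implicit.
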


\begin{proof} Follows directly from the equivalence of categories \ref{equiv_almost_tf_banach}, the description of $(A_{\le 1} \otimes_{B_{\le 1}} C_{\le 1})^{\tf, \wedge, a}$ as the pushout of $A_{\le 1} \leftarrow B_{\le 1} \rightarrow C_{\le 1}$ in $\CAlg_{K_{\le 1}}^{\wedge a \tf}$, and the description of $A \cotimes_B C$ as the pushout of $A \leftarrow B \rightarrow C$ in $\Ban_K^{\contr}$.
\end{proof}

\begin{corollary}\label{non_zero_comp_tensor_ban} Let $A \leftarrow K \rightarrow B$ in $\Ban_K^{\contr}$. Then, the completed tensor product $A \cotimes_K C$ is non-zero.
\end{corollary}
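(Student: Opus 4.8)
The plan is to reduce the non-vanishing of $A \cotimes_K C$ to the non-vanishing of an integral tensor product over $K_{\le 1}$, where we can leverage that $K_{\le 1}$ is a valuation ring (and hence that $A_{\le 1}$ and $C_{\le 1}$ are faithfully flat $K_{\le 1}$-modules, being $\varpi$-torsion free). First I would invoke Corollary \ref{tensor_dic_ban_to_int} to identify $A \cotimes_K C$ with $(A_{\le 1} \otimes_{K_{\le 1}} C_{\le 1})^{\tf, \wedge, a}[1/\varpi]$. Since inverting $\varpi$ and passing to the almost category are both faithful on nonzero $\varpi$-torsion-free objects, it suffices to show that the $K_{\le 1}$-algebra $M := A_{\le 1} \otimes_{K_{\le 1}} C_{\le 1}$ has nonzero $\varpi$-adic completion; equivalently, since $M$ need not be $\varpi$-torsion free, that $\overline{M} := M/M[\varpi^\infty]$ has nonzero $\varpi$-completion.

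The key step is the following: because $K_{\le 1}$ is a rank-one valuation ring and $A_{\le 1}, C_{\le 1}$ are $\varpi$-torsion free, they are flat — indeed torsion-free modules over a valuation ring are flat — and moreover faithfully flat, since the structure map $K \to A$ being an isometry forces $1 \in A_{\le 1}$ to have norm exactly $1$, so $A_{\le 1} \neq \varpi A_{\le 1}$ and $A_{\le 1}/\varpi \neq 0$ (and likewise for $C_{\le 1}$). Therefore $M/\varpi \simeq (A_{\le 1}/\varpi) \otimes_{K_{\le 1}/\varpi} (C_{\le 1}/\varpi)$ is a tensor product of nonzero modules over the field $K_{\le 1}/\varpi$ (note $\varpi$ generates the maximal ideal up to radicals, and $K_{\le 1}/\varpi$ — or rather $K_{\le 1}/\varpi K_{\le 1}$ — is a nonzero ring; one passes to the residue field if needed), hence nonzero. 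Then $\overline{M}/\varpi$ surjects onto a nonzero module — more precisely $\overline{M} \otimes_{K_{\le 1}} K_{\le 1}/\varpi$ is nonzero because killing $\varpi^\inf$-torsion does not change the quotient by $\varpi$ in an essential way; one checks $M[\varpi^\infty] \subseteq \varpi M$ is false but rather that $M[\varpi^\infty]/\varpi M[\varpi^\infty]$ maps into $M/\varpi M$ with the cokernel $\overline{M}/\varpi\overline{M}$ still nonzero by a short diagram chase. Finally, derived Nakayama (Lemma \ref{derived_nakayama}) applied to the $\varpi$-complete $\varpi$-torsion-free ring $\overline{M}^\wedge$ shows $\overline{M}^\wedge \neq 0$ since $\overline{M}^\wedge \otimes_{K_{\le 1}}^L K_{\le 1}/\varpi \simeq \overline{M} \otimes^L_{K_{\le 1}} K_{\le 1}/\varpi \neq 0$.

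Assembling: $(A_{\le 1} \otimes_{K_{\le 1}} C_{\le 1})^{\tf,\wedge} = \overline{M}^\wedge \neq 0$, it is $\varpi$-torsion free by Lemma \ref{completion_tf}, so inverting $\varpi$ gives a nonzero Banach $K$-algebra, and its image in the almost category is nonzero since $\overline{M}^\wedge[1/\varpi]$ is a $K$-algebra containing $1 \neq 0$ and the functor $(-)^a$ applied to a nonzero $\varpi$-torsion-free object over $K_{\le 1}$ is nonzero. Hence $A \cotimes_K C \neq 0$.

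I expect the main obstacle to be the bookkeeping around $\varpi^\infty$-torsion: the integral tensor product $M = A_{\le 1} \otimes_{K_{\le 1}} C_{\le 1}$ is flat over $K_{\le 1}$ (tensor of flats), so in fact $M$ is already $\varpi$-torsion free and $M[\varpi^\infty] = 0$, which collapses the second paragraph considerably — but one must be careful to actually justify flatness of $A_{\le 1} \otimes_{K_{\le 1}} C_{\le 1}$ from flatness of the factors, and to confirm that $\varpi$-completion of a nonzero flat faithfully-flat $K_{\le 1}$-module stays nonzero (which is exactly derived Nakayama). An alternative, possibly cleaner route avoiding Corollary \ref{tensor_dic_ban_to_int}: pick any point $x \in \mathcal{M}(A)$ and $y \in \mathcal{M}(C)$, giving contractive maps $A \to \mathcal{H}(x)$ and $C \to \mathcal{H}(y)$ to non-archimedean fields over $K$; then $\mathcal{H}(x) \cotimes_K \mathcal{H}(y)$ receives a map from $A \cotimes_K C$, so it suffices to treat the case of two non-archimedean field extensions of $K$, where $\mathcal{H}(x)_{\le 1} \otimes_{K_{\le 1}} \mathcal{H}(y)_{\le 1}$ is visibly nonzero and $\varpi$-torsion free, and one finishes as above. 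I would present the argument via Corollary \ref{tensor_dic_ban_to_int} as the primary line, with the reduction to fields as a remark.
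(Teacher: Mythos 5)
Your proposal is correct and follows essentially the same route as the paper: identify $A \cotimes_K C$ with the integral tensor product via Corollary \ref{tensor_dic_ban_to_int}, use flatness of the $\varpi$-torsion-free modules $A_{\le 1}, C_{\le 1}$ over the valuation ring $K_{\le 1}$ to conclude $A_{\le 1}\otimes_{K_{\le 1}} C_{\le 1}$ is faithfully flat and torsion free, hence nonzero mod $\varpi$, so the completion is nonzero, and observe that the passage to almost elements and inverting $\varpi$ cannot kill a nonzero $\varpi$-torsion-free object. The worried detour in your second paragraph about $M[\varpi^\infty]$ is unnecessary, as you yourself note in the third paragraph — flatness of the tensor product makes $M$ already torsion free, which is exactly what the paper uses.
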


\begin{proof} Since $A_{\le 1}$ and $B_{\le 1}$ are torsion free $K_{\le 1}$-algebras it follows that they are flat $K_{\le 1}$-algebras (\cite[Lemma 0539]{stacks-project}). Furthermore, as $A$, $B$, $A_{\le 1}/\varpi$ and $B_{\le 1}/\varpi$ are non-zero, we can conclude that $A_{\le 1}$ and $B_{\le 1}$ are faithfully flat $K_{\le 1}$-algebras. Therefore, as being faithfully flat is stable under base-change and composition we conclude that $A_{\le 1} \otimes_{K_{\le 1}} B_{\le 1}$ is faithfully flat over $K_{\le 1}$, in particular it is torsion free and $(A_{\le 1} \otimes_{K_{\le 1}} B_{\le 1})/\varpi$ is non-zero.

Hence, we can conclude that $A_{\le 1} \cotimes_{K_{\le 1}} B_{\le 1}$ is non-zero as it admits a surjective map to $(A_{\le 1} \otimes_{K_{\le 1}} B_{\le 1})/\varpi$, and torsion free by \ref{completion_tf}. This in turn implies that $(A_{\le 1} \cotimes_{K_{\le 1}} B_{\le 1})_*$ (\ref{almost_elements}) is non-zero and torsion free, which implies that $A \cotimes_B C$ is non-zero by \ref{tensor_dic_ban_to_int}.
\end{proof}

\begin{const}\label{const_funct_uBan_tic_dic} Let $A$ be an object of $\CAlg_{K_{\le 1}}^{\wedge \tic} \subset \CAlg_{K_{\le 1}}^{\wedge \text{tf}}$, we claim that the norm on $A[\frac{1}{\varpi}]$ (from Construction \ref{from_tf-comp_to_banach}) is power-multiplicative, and so $A[\frac{1}{\varpi}] \in \uBan_K \subset \Ban_K^{\contr}$. Recall that by the submultiplicativity of non-archimedean norms we have the inequality $|a^2| \le |a|^2$ for all $a \in A[\frac{1}{\varpi}]$, thus we need to show that $|a^2| \ge |a|^2$. Let $k \in K$ be an element such that $a^2/k \in A = A[\frac{1}{\varpi}]_{\le 1}$. By the density of the value group $|K^{\times}| \subset \RR_{\ge 0}$ we may pick a sequence $\{k_1, \dots, k_i, \dots \} \subset K^{\times}$ such that $\{|k_i|\}$ is decreasing and $|k_i| \rightarrow \sqrt{|k|}$. We claim that $a/k_i \in A$ for all $k_i$. Indeed, since by assumption we have that $|k_i|^2 \ge |k|$ it follows that $a^2/k_i^2 \in A$, which in turn implies that $\{a^n/k_i^n\} \subset A[\frac{1}{\varpi}]$ is a bounded set. Hence, by the assumption that $A$ is totally integrally closed we can conclude that $a/k_i \in A$ for all $k_i$. We have shown that for any $k \in K$ such that $|a^2| \le |k|$ we have that $|a| \le \sqrt{|k|}$, proving that $|a|^2 \le |a^2|$. Hence, we get a functor
\begin{equation*}
	(-) \Big[ \frac{1}{\varpi} \Big]: \CAlg_{K_{\le 1}}^{\wedge \tic} \longrightarrow \uBan_K \qquad A \mapsto A \Big[\frac{1}{\varpi} \Big]
\end{equation*}
On the other hand, given an object $R \subset \uBan_K \subset \Ban_K^{\contr}$ we need to show that $R_{\le 1} \subset R$ is total integrally closed. Let $f$ be an element of $R$ such that $\{f^{\ZZ_{\ge 0}} \} \subset \frac{1}{\varpi^n}R_{\le 1}$, then $f \in R^\circ$, but since $R^\circ = R_{\le 1}$ it follows that $R_{\le 1} \subset R$ is total integrally closed. Thus, using that maps between uniform Banach algebras are always contractive we get a functor
\begin{equation*}
	(-)_{\le 1}: \uBan_K \rightarrow \CAlg_{K_{\le 1}}^{\wedge \tic}
\end{equation*}
\end{const}

\begin{prop}\label{uBan_tic_dict} The following pair of functors determine an equivalence of categories
\begin{cd}
	(-)_{\le 1}: \uBan_K \ar[r, shift left = 1.5] & \CAlg_{K_{\le 1}}^{\wedge \tic} \ar[l, shift left = 1.5] : (-)\Big[ \frac{1}{\varpi} \Big]
\end{cd}
Between the category of uniform Banach $K$-algebras and the category of $\varpi$-complete $\varpi$-torsion free total integrally closed $K_{\le 1}$-algebras.
\end{prop}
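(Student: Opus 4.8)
The plan is to upgrade the equivalence $(-)_{\le 1} : \Ban_K^{\contr} \simeq \CAlg_{K_{\le 1}}^{\wedge a \tf} : (-)[\frac{1}{\varpi}]$ of Proposition \ref{equiv_almost_tf_banach} to an equivalence between the indicated full subcategories. Since both functors are already mutually inverse equivalences on the larger categories, it suffices to check that they restrict correctly, i.e. that the functor $(-)_{\le 1}$ carries $\uBan_K$ into $\CAlg_{K_{\le 1}}^{\wedge \tic}$, that $(-)[\frac{1}{\varpi}]$ carries $\CAlg_{K_{\le 1}}^{\wedge \tic}$ into $\uBan_K$, and that these two restricted functors still compose to the identity in both directions. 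The first two of these were essentially done in Construction \ref{const_funct_uBan_tic_dic}, so the real content is to verify that $\uBan_K$ and $\CAlg_{K_{\le 1}}^{\wedge \tic}$ correspond to each other \emph{exactly} under the established equivalence — i.e. that an object $A \in \CAlg_{K_{\le 1}}^{\wedge a \tf}$ lies in $\CAlg_{K_{\le 1}}^{\wedge \tic}$ if and only if $A[\frac{1}{\varpi}]$ is uniform, or equivalently that $R \in \Ban_K^{\contr}$ is (isometrically) uniform if and only if $R_{\le 1}$ is totally integrally closed in $R_{\le 1}[\frac{1}{\varpi}] = R$.

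\textbf{Key steps.} First I would record that Construction \ref{const_funct_uBan_tic_dic} already produces the two restricted functors $(-)[\frac{1}{\varpi}] : \CAlg_{K_{\le 1}}^{\wedge \tic} \to \uBan_K$ and $(-)_{\le 1} : \uBan_K \to \CAlg_{K_{\le 1}}^{\wedge \tic}$, the latter using the identity $R^\circ = R_{\le 1}$ for uniform $R$ (so that the powerbounded/totally-integrally-closed conditions match) and the fact (Lemma \ref{morphisms_unif_ban_contractive}) that morphisms targeting uniform Banach algebras are automatically contractive, which is what makes $\uBan_K$ sit inside $\Ban_K^{\contr}$. Second, I would observe that since $\CAlg_{K_{\le 1}}^{\wedge \tic} \subset \CAlg_{K_{\le 1}}^{\wedge a \tf}$ (an object that is totally integrally closed satisfies $A \simeq A_*$ by the argument in Proposition \ref{almost_funct_tic_algebras}, hence lies in the essential image of $H^0 j_*$) and $\uBan_K \subset \Ban_K^{\contr}$, the restricted functors are simply the restrictions of the equivalence of Proposition \ref{equiv_almost_tf_banach}. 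Third, the natural isomorphisms $A \xrightarrow{\simeq} (A[\frac{1}{\varpi}])_{\le 1}$ and $(R_{\le 1})[\frac{1}{\varpi}] \xrightarrow{\simeq} R$ constructed in the proof of Proposition \ref{equiv_almost_tf_banach} are the required unit and counit; one only needs that both sides of each isomorphism now lie in the smaller subcategories, which is exactly what steps one and two establish. Putting these together gives the equivalence $\uBan_K \simeq \CAlg_{K_{\le 1}}^{\wedge \tic}$.

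\textbf{Main obstacle.} The subtle point, and the one I expect to require the most care, is showing that the equivalence restricts \emph{surjectively}: given $A \in \CAlg_{K_{\le 1}}^{\wedge \tic}$, the Banach algebra $R = A[\frac{1}{\varpi}]$ must be uniform \emph{with its norm equal to} (not merely equivalent to) the spectral radius, since $\uBan_K$ is a non-full subcategory of $\Ban_K^{\contr}$ where the choice of norm matters. This is precisely the power-multiplicativity computation carried out in Construction \ref{const_funct_uBan_tic_dic}: using density of the value group $|K^\times| \subset \RR_{\ge 0}$ (which holds because $K$ is perfectoid, so $\varpi$ has $p$-power roots and $|\varpi|^{\ZZ[1/p]}$ is dense) together with total integral closedness, one shows $|a^2|_{A[\frac{1}{\varpi}]} \ge |a|_{A[\frac{1}{\varpi}]}^2$, hence $|a^n| = |a|^n$. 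Conversely, for uniform $R$ one must check $R^\circ = R_{\le 1}$, which follows from Corollary \ref{equiv_uniform_power_bdd} and Proposition \ref{spec_radius_power_bdd}. Once these norm-level matters are pinned down, the rest is formal: the equivalence of Proposition \ref{equiv_almost_tf_banach} does all the remaining work, and we conclude that $(-)_{\le 1}$ and $(-)[\frac{1}{\varpi}]$ are mutually inverse equivalences between $\uBan_K$ and $\CAlg_{K_{\le 1}}^{\wedge \tic}$.
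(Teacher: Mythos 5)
Your proposal is correct and follows essentially the same route as the paper: the paper's proof is a one-line citation of Proposition \ref{equiv_almost_tf_banach} and Construction \ref{const_funct_uBan_tic_dic}, and you have simply unpacked what those references provide — the power-multiplicativity computation that sends $\CAlg_{K_{\le 1}}^{\wedge\tic}$ into $\uBan_K$, the identity $R^\circ = R_{\le 1}$ for uniform $R$, the inclusion $\CAlg_{K_{\le 1}}^{\wedge\tic} \subset \CAlg_{K_{\le 1}}^{\wedge a \tf}$ via $A \simeq A_*$, and the formal reduction to the already-established equivalence. Your emphasis on the non-full inclusion $\uBan_K \subset \Ban_K^{\contr}$ and the need to get the norm on the nose (not merely up to equivalence) is exactly the subtle point, and you handle it correctly.
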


\begin{proof} Follows directly from the equivalence of categories of \ref{equiv_almost_tf_banach} and Construction \ref{const_funct_uBan_tic_dic}.
\end{proof}

\newpage

\section{Perfectoid Banach Algebras}\label{sect_perfd_ban_alg}

\subsection{Almost perfectoid algebras}

Throughout this section fix an integral perfectoid ring $B$ which is $\varpi$-complete with respect to some $\varpi \in B$ where $\varpi^p$ divides $p$, and such that $B$ is $\varpi$-torsion free. 

\begin{defn}\label{defn_various_perfectoid} We will be concerned with the following three flavors of integral perfectoid $B$-algebras.
\begin{enumerate}[(1)]
	\item The full subcategory of $\CAlg_B^{\wedge}$ spanned by all $\varpi$-complete integral perfectoid $B$-algebras. We denote this category by $\Perfd_B^{\Prism}$.
	\item The full subcategory of $\CAlg_B^{\wedge a}$ spanned by the image of the functor $(-)^a: \Perfd_B^{\Prism} \rightarrow \CAlg_B^{\wedge a}$ (cf. \ref{abelian_j*_sym_monoidal}). We denote this category by $\Perfd_B^{\Prism a}$.
	\item The full subcategory of $\CAlg_B^{\wedge \tic}$ spanned by all the $\varpi$-complete integral perfectoid $B$-algebras $A$ which are total integrally closed with respect to $A[\frac{1}{\varpi}]$. We denote this category by $\Perfd_B^{\Prism \tic}$.
\end{enumerate}
Notice how we are requiring all the perfectoid $B$-algebras to be $\varpi$-complete, which is not automatic from the definition of integral perfectoid ring.
\end{defn}

\begin{prop}\label{equiv_perfd_tic_almost} The functor
\begin{equation*}
	(-)^a: \Perfd_B^{\Prism \tic} \longrightarrow \Perfd_B^{\Prism a}
\end{equation*}
is an equivalence of categories with inverse given by $H^0 j_*: \Perfd_B^{\Prism a} \rightarrow \Perfd_B^{\Prism \tic}$.
\end{prop}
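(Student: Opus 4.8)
The plan is to exhibit $(-)^a$ and $H^0 j_*$ as mutually inverse functors between $\Perfd_B^{\Prism\tic}$ and $\Perfd_B^{\Prism a}$, leveraging the equivalences already established in Section~\ref{sect_int_closure}. The key observation is that $\Perfd_B^{\Prism\tic} \subset \CAlg_B^{\wedge\tic}$ and $\Perfd_B^{\Prism a} \subset \CAlg_B^{\wedge a\tic}$ are full subcategories, and we already know from Proposition~\ref{almost_funct_tic_algebras} that $H^0 j_*: \CAlg_B^{\wedge a\tic} \to \CAlg_B^{\wedge\tic}$ is an equivalence with inverse $(-)^a$. So the entire content of the statement is that these mutually inverse functors carry $\Perfd_B^{\Prism\tic}$ into $\Perfd_B^{\Prism a}$ and vice versa; fully faithfulness and the unit/counit being isomorphisms then come for free by restriction.

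First I would check that $(-)^a$ sends $\Perfd_B^{\Prism\tic}$ into $\Perfd_B^{\Prism a}$. By definition $\Perfd_B^{\Prism a}$ is the essential image of $(-)^a: \Perfd_B^{\Prism} \to \CAlg_B^{\wedge a}$, and since $\Perfd_B^{\Prism\tic} \subset \Perfd_B^{\Prism}$, applying $(-)^a$ to an object of $\Perfd_B^{\Prism\tic}$ lands in $\Perfd_B^{\Prism a}$ by construction. Conversely, I must check that $H^0 j_*$ sends $\Perfd_B^{\Prism a}$ into $\Perfd_B^{\Prism\tic}$. Take $A \in \Perfd_B^{\Prism a}$, so $A = S^a$ for some $\varpi$-complete integral perfectoid $S \in \Perfd_B^{\Prism}$. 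The candidate is that $H^0 j_* A$ is the total integral closure $S^{\tic}$ (equivalently $(S^{\tic})^a$ has $H^0 j_*$ equal to $S^{\tic}$, since $\tau^{\le 0} j_*$ is fully faithful on the tic-subcategory and $S^{\tic}$ is already tic by Proposition~\ref{tic_preserves_perfectoid}, hence satisfies $S^{\tic} \simeq (S^{\tic})_*$). The crucial input here is Proposition~\ref{tic_preserves_perfectoid}: the total integral closure of a $\varpi$-complete $\varpi$-torsion free integral perfectoid $B$-algebra is again $\varpi$-complete $\varpi$-torsion free integral perfectoid. Combined with the fact that $S^a \simeq (S^{\tic})^a$ in $\CAlg_B^{\wedge a}$ -- which follows because $S \hookrightarrow S^{\tic}$ has cokernel killed by $\varpi^{1/p^n}$ for all $n$ (from the proof of Lemma~\ref{tic_of_pic_preserves_complete}), i.e.\ the map is an almost isomorphism -- we get $H^0 j_* A \simeq H^0 j_* (S^{\tic})^a \simeq S^{\tic} \in \Perfd_B^{\Prism\tic}$.

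The remaining bookkeeping is then formal: since $H^0 j_*: \CAlg_B^{\wedge a\tic} \to \CAlg_B^{\wedge\tic}$ is an equivalence with inverse $(-)^{a}$ (Proposition~\ref{almost_funct_tic_algebras}), and we have shown both functors restrict to functors between the full subcategories $\Perfd_B^{\Prism\tic}$ and $\Perfd_B^{\Prism a}$, the restrictions remain mutually inverse equivalences -- the unit $\mathrm{id} \Rightarrow H^0 j_* \circ (-)^a$ and counit $(-)^a \circ H^0 j_* \Rightarrow \mathrm{id}$ are already natural isomorphisms before restriction. I expect the main obstacle to be the verification that $S^a \simeq (S^{\tic})^a$, i.e.\ that passing to the total integral closure is invisible almost-ally; this requires knowing that every element of $S^{\tic}/S$ is annihilated by an arbitrarily small power of $\varpi$, which is exactly what the first paragraph of the proof of Lemma~\ref{tic_of_pic_preserves_complete} provides (using that $S$, being integral perfectoid and $\varpi$-torsion free, is $p$-integrally closed in $S[1/\varpi]$ by Corollary~\ref{perfectoid_p_int_closed}). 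One should also double-check that $\Perfd_B^{\Prism a}$, being defined as the essential image of $(-)^a$ on $\Perfd_B^{\Prism}$ rather than on the smaller $\Perfd_B^{\Prism\tic}$, does not cause a mismatch -- but this is resolved precisely by the almost-isomorphism $S^a \simeq (S^{\tic})^a$, which shows the two essential images coincide.
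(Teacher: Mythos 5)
Your overall route is the same as the paper's: everything is reduced to showing that $H^0 j_*$ carries $\Perfd_B^{\Prism a}$ into $\Perfd_B^{\Prism \tic}$, by identifying $H^0 j_*(S^a)$ with a total integral closure (via Proposition \ref{tic_preserves_perfectoid} and the observation that passing to the total integral closure is an almost isomorphism), and then restricting the equivalence $H^0 j_*: \CAlg_B^{\wedge a \tic} \rightleftarrows \CAlg_B^{\wedge \tic}: (-)^a$ of Proposition \ref{almost_funct_tic_algebras}. There is, however, one genuine gap: you write $A = S^a$ with $S \in \Perfd_B^{\Prism}$ and then treat $S$ as $\varpi$-torsion free -- you form $S^{\tic}$, speak of $S$ being ``$p$-integrally closed in $S[1/\varpi]$'', and invoke Corollary \ref{perfectoid_p_int_closed} for $S$ itself. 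But Definition \ref{defn_various_perfectoid}(1) only requires objects of $\Perfd_B^{\Prism}$ to be $\varpi$-complete integral perfectoid; they may have $\varpi$-torsion. In that case $S \rightarrow S[\frac{1}{\varpi}]$ is not injective, the total integral closure of Definition \ref{defn_tot_int_closure} is only defined (and only well behaved) for objects of $\CAlg_B^{\pic}$, hence in particular $\varpi$-torsion free ones, and Corollary \ref{perfectoid_p_int_closed} asserts $p$-integral closedness for the torsion-free quotient $\overline{S} := S/S[\varpi^\infty]$, not for $S$. So as written the key step does not parse for a general representative $S$.

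The repair is exactly the first move of the paper's proof, and lies entirely within your toolkit: replace $S$ by $\overline{S}$, which is again a $\varpi$-complete $\varpi$-torsion free integral perfectoid $B$-algebra by Proposition \ref{pi_torsionfree_perfectoid}, and note that $S^a \simeq \overline{S}^a$ because the kernel $S[\varpi^\infty] = S[\varpi^{1/p^\infty}]$ (Corollary \ref{perfectoid_torsion}) is almost zero. With $\overline{S}$ in place of $S$ your argument runs verbatim and then coincides with the paper's chain $S_* \simeq \overline{S}_* \simeq (\overline{S}^{\tic})_* \simeq \overline{S}^{\tic}$; your only other deviation -- extracting the almost-isomorphism $\overline{S} \rightarrow \overline{S}^{\tic}$ directly from the first paragraph of the proof of Lemma \ref{tic_of_pic_preserves_complete} rather than passing through Propositions \ref{ic_almost_algebras} and \ref{tic_almost_algebras} -- is harmless and in fact slightly more direct. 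The concluding bookkeeping via Proposition \ref{almost_funct_tic_algebras} is fine once one knows that $\Perfd_B^{\Prism a}$ sits inside $\CAlg_B^{\wedge a \tic}$, which is precisely what the identification $H^0 j_*(S^a) \simeq \overline{S}^{\tic}$ provides.
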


\begin{proof} We need show that the fully faithful functor $H^0 j_*: \Perfd_B^{\Prism a} \rightarrow \CAlg_B^{\wedge}$ has its image contained in $\Perfd_B^{\Prism \tic}$; equivalently we need to show that the functor $(-)_*: \Perfd_B^{\Prism} \rightarrow \CAlg_B^{\wedge}$, which we introduced in \ref{abelian_j*_sym_monoidal} and identified with $\uHom_B ((\varpi)_{\perfd}, -)$ in \ref{abelian_almost_unit_internal_hom}, has its image contained in $\Perfd_B^{\Prism \tic}$. Let $A$ be an object of $\Perfd_B^{\Prism}$; recall from \ref{perfectoid_torsion} that $A$ satisfies $A[\varpi^\infty] = A[\varpi^{1/p^\infty}]$ and denote by $\overline{A}$ the $\varpi$-complete $\varpi$-torsion free integral perfectoid $B$-algebra $A/A[\varpi^\infty]$ (cf. \ref{pi_torsionfree_perfectoid}), we have that the morphism $A \rightarrow \overline{A}$ induces an isomorphism $A_* \rightarrow \overline{A}_*$ since $A[\varpi^{1/p^\infty}]^a = 0$ by definition. Moreover, as $\overline{A}$ is a $\varpi$-complete $\varpi$-torsion free integral perfectoid $B$-algebra it follows from \ref{tic_preserves_perfectoid} that $\overline{A}^{\tic}$ is again an $\varpi$-complete $\varpi$-torsion free integral perfectoid $B$-algebra. And since $\overline{A}$ is $p$-integrally closed (\ref{perfectoid_p_int_closed}) it follows from \ref{ic_almost_algebras} and \ref{tic_almost_algebras} that the morphism $\overline{A} \rightarrow \overline{A}^{\tic}$ induces an isomorphism $\overline{A}_* \rightarrow (\overline{A}^{\tic})_*$. Finally, from \ref{almost_funct_tic_algebras} we learn that the canonical map $\overline{A}^{\tic} \rightarrow (\overline{A}^{\tic})_*$ is an isomorphism. To summarize we have shown that the canonical map $A_* \rightarrow \overline{A}^{\tic} \simeq (\overline{A}^{\tic})_*$ is an isomorphism, showing that we get a fully faithful functor $H^0 j_*: \Perfd_B^{\Prism a} \rightarrow \Perfd_B^{\Prism \tic}$.
	
To conclude, the above argument shows that the the category $\Perfd_B^{\Prism a} \subset \CAlg_B^{\wedge a}$ can actually be realized as a full-subcategory of $\CAlg_B^{\wedge a \tic}$. And recall from \ref{almost_funct_tic_algebras} that we get have an equivalence $H^0 j_*: \CAlg_B^{\wedge a \tic} \rightleftarrows \CAlg_B^{\wedge \tic}: (-)^a$, proving the desired equivalence between $\Perfd_B^{\Prism a}$ and $\Perfd_B^{\Prism \tic}$.
\end{proof}

\begin{corollary}\label{tensor_perfd_tic} Let $A \leftarrow D \rightarrow C$ be a pair of morphisms in $\Perfd_B^{\Prism \tic}$. Then, its pushout computed in $\CAlg_B^{\wedge \tic}$ can be identified with $(A \cotimes_D C)_*$, where $- \cotimes_C -$ denotes the (classically) $\varpi$-complete tensor product. Moreover $(A \cotimes_D C)_*$ is an object of $\Perfd_{B}^{\Prism \tic}$.
\end{corollary}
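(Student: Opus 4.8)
The plan is: (i) recognize $S:=A\cotimes_D C$ as an honest $\varpi$-complete integral perfectoid $B$-algebra; (ii) identify the almost elements $S_*$ with $\overline{S}^{\tic}$, which takes care of the ``moreover'' clause; and (iii) compute the pushout by passing to the almost category, where it becomes the almostification of a relative tensor product that already lies in the relevant reflective subcategory.

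For (i): as an object of $\Perfd_B^{\Prism\tic}\subset\CAlg_{B}^{\wedge\tic}$, the ring $D$ is a $\varpi$-complete, $\varpi$-torsion free integral perfectoid ring in which $\varpi^p$ divides $p$ (the divisibility inherited from $B$), and $A,C$ are $\varpi$-torsion free, hence classically $\varpi$-complete, integral perfectoid $D$-algebras. Applying Proposition \ref{tensor_integral_perfectoid} with $R=D$, $S_1=A$, $S_2=C$ shows that $A\cotimes_D^L C$ is concentrated in degree zero, coincides with the classical $\varpi$-complete tensor product $S=A\cotimes_D C$, and that $S$ is a $\varpi$-complete integral perfectoid $B$-algebra; in particular $S\in\Perfd_B^{\Prism}$, and (by the discussion around Construction \ref{complete_algebra_objects}) $S$ represents the pushout of $A\leftarrow D\rightarrow C$ in $\CAlg_B^{\wedge}$.

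For (ii): by Corollary \ref{perfectoid_torsion} the $\varpi$-power torsion $S[\varpi^\infty]=S[\varpi^{1/p^\infty}]$ is $\varpi$-almost zero, so the surjection $S\to\overline{S}$ onto the $\varpi$-torsion free quotient becomes an isomorphism after applying $(-)^a$, whence $S_*\simeq\overline{S}_*$. Now $\overline{S}$ is again $\varpi$-complete, $\varpi$-torsion free and integral perfectoid (Proposition \ref{pi_torsionfree_perfectoid}), in particular $p$-integrally closed (Corollary \ref{perfectoid_p_int_closed}); so, exactly as in the proof of Proposition \ref{equiv_perfd_tic_almost}, Propositions \ref{ic_almost_algebras}, \ref{tic_almost_algebras} and \ref{almost_funct_tic_algebras} give $\overline{S}_*\simeq(\overline{S}^{\tic})_*\simeq\overline{S}^{\tic}$, and $\overline{S}^{\tic}$ is still a $\varpi$-complete, $\varpi$-torsion free integral perfectoid ring by Proposition \ref{tic_preserves_perfectoid} (and totally integrally closed by construction, cf.\ Lemma \ref{tic_of_pic_preserves_complete}). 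Hence $S_*=\overline{S}^{\tic}\in\Perfd_B^{\Prism\tic}$, which is the ``moreover'' assertion.

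For (iii): since $(-)^a\colon\CAlg_B^{\wedge}\to\CAlg_B^{\wedge a}$ is a left adjoint (Proposition \ref{abelian_j*_sym_monoidal}) and, by (i), the pushout of $A\leftarrow D\rightarrow C$ in $\CAlg_B^{\wedge}$ is $S$, the pushout of $A^a\leftarrow D^a\rightarrow C^a$ in $\CAlg_B^{\wedge a}$ is $S^a$. By (ii) this object already lies in the full subcategory $\CAlg_B^{\wedge a\tic}\subset\CAlg_B^{\wedge a}$, and so do the three terms $A^a,D^a,C^a$ of the span; since $\CAlg_B^{\wedge a\tic}$ is full, $S^a$ is therefore also the pushout computed in $\CAlg_B^{\wedge a\tic}$. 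Transporting along the equivalence $H^0 j_*\colon\CAlg_B^{\wedge a\tic}\xrightarrow{\ \sim\ }\CAlg_B^{\wedge\tic}$ of Proposition \ref{almost_funct_tic_algebras} identifies the pushout of $A\leftarrow D\rightarrow C$ in $\CAlg_B^{\wedge\tic}$ with $H^0 j_*(S^a)=S_*$, as claimed. The one place where genuine input is needed is the assertion in (iii) that $S^a$ lands in $\CAlg_B^{\wedge a\tic}$ — equivalently that $S_*$ is totally integrally closed — and this is precisely what the computation of (ii) supplies; everything else is formal manipulation of the adjunctions and reflections established in Sections \ref{sect_almost_math} and \ref{sect_int_closure}.
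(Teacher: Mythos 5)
Your proposal is correct and follows essentially the same route as the paper: identify the pushout in $\CAlg_B^{\wedge}$ with $A\cotimes_D C$ via Proposition \ref{tensor_integral_perfectoid}, pass to the almost category using that $(-)^a$ is a left adjoint, observe the result lies in $\CAlg_B^{\wedge a \tic}$, and transport along the equivalence $H^0 j_*$ of Proposition \ref{almost_funct_tic_algebras}. The only difference is cosmetic: where the paper simply cites Proposition \ref{equiv_perfd_tic_almost} to see that $(A\cotimes_D C)^a\in\Perfd_B^{\Prism a}\subset\CAlg_B^{\wedge a\tic}$ and to deduce the ``moreover'' clause, you re-run that proposition's proof inline (identifying $S_*\simeq\overline{S}^{\tic}$), which is fine but not a new argument.
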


\begin{proof} Recall from \ref{tensor_integral_perfectoid} that the pushout of $A \leftarrow D \rightarrow C$ computed in $\CAlg_B^{\wedge}$ can be identified with the integral perfectoid $B$-algebra $A \cotimes_D C$. Moreover, we know that the functor $(-)^a: \CAlg_B^\wedge \rightarrow \CAlg_B^{\wedge a}$ is a left adjoint to $H^0 j_*$ by \ref{abelian_j*_sym_monoidal}, so the pushout of $A^a \leftarrow D^a \rightarrow C^a$ in $\CAlg_B^{\wedge a}$ can be identified with $(A \cotimes_C D)^a$, in particular we learn that $(A \cotimes_D C)^a$ is an object of $\Perfd_B^{\Prism a} \subset \CAlg_B^{\wedge a \tic}$. Since the fully faithful functor $\CAlg_B^{\wedge a \tic} \hookrightarrow \CAlg_B^{\wedge a}$ admits a left adjoint, it follows that $(A \cotimes_D C)^a$ can be identified with the pushout of $A^a \leftarrow D^a \rightarrow C^a$ in $\CAlg_B^{\wedge a \tic}$. Under the equivalence $H^0 j_*: \CAlg_B^{\wedge a \tic} \rightarrow \CAlg_B^{\wedge \tic}$ from \ref{almost_funct_tic_algebras} it follows that the pushout of $A \leftarrow B \rightarrow C$ computed in $\CAlg_B^{\wedge \tic}$ can be identified with $(A \cotimes_D C)_*$ as desired. Finally, the fact that $(A \cotimes_D C)_*$ is an object of $\Perfd_B^{\Prism \tic}$ follows from the fact that $(A \cotimes_D C)^a$ was an object of $\Perfd_B^{\Prism a}$ and the equivalence from \ref{equiv_perfd_tic_almost}.
\end{proof}

\begin{prop}\label{integral_tilt_corr} The tilting functor determines an equivalence of categories 
\begin{align*}
	\Perfd_B^{\Prism \tic} \longrightarrow \Perfd_{B^\flat}^{\Prism \tic} && A \mapsto A^\flat
\end{align*}
\end{prop}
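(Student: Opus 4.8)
The plan is to reduce the claim to the corresponding tilting equivalence for \emph{all} $\varpi$-complete integral perfectoid algebras and then show that the total-integrally-closed condition is preserved in both directions. Concretely, I would first invoke Proposition \ref{prism_tilt_correspondence} (or rather the equivalence $\Perfd_B^{\Prism} \simeq \Perfd_{B^\flat}^{\Prism}$ of $\varpi$-complete integral perfectoid algebras it provides, using Lemma \ref{completeness_tilt} for the completeness bookkeeping): to a $\varpi$-complete $\varpi$-torsion free integral perfectoid $B$-algebra $A$ we associate $A^\flat$, which is automatically $\varpi^\flat$-torsion free since $B^\flat$ (hence $A^\flat$) is a perfect $\FF_p$-algebra. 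So the only content is that $A$ total integrally closed (in $A[1/\varpi]$) if and only if $A^\flat$ is total integrally closed (in $A^\flat[1/\varpi^\flat]$).

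The key step is therefore a comparison of the total integral closure operation under tilting. The cleanest route, I expect, is to combine Proposition \ref{equiv_perfd_tic_almost}, which identifies $\Perfd_B^{\Prism \tic}$ with the almost category $\Perfd_B^{\Prism a}$ via $(-)^a$ and $H^0 j_*$, with the analogous statement over $B^\flat$, and then observe that tilting is compatible with almost mathematics. That is, the ideal $(\varpi)_{\perfd} \subset B$ is carried to $(\varpi^\flat)_{\perfd} \subset B^\flat$ under the identification $B/\varpi \simeq B^\flat/\varpi^\flat$ from Lemma \ref{mod_p_tilt}, so the notion of ``$\varpi$-almost zero $B$-module'' matches ``$\varpi^\flat$-almost zero $B^\flat$-module'' modulo the equivalence of Proposition \ref{prism_tilt_correspondence}; hence $(-)^a$ on both sides are intertwined by tilting. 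Since $\Perfd_B^{\Prism a}$ is by \emph{definition} the essential image of $(-)^a \colon \Perfd_B^{\Prism} \to \CAlg_B^{\wedge a}$ (Definition \ref{defn_various_perfectoid}(2)), and tilting is an equivalence $\Perfd_B^{\Prism} \simeq \Perfd_{B^\flat}^{\Prism}$, it follows formally that it induces an equivalence $\Perfd_B^{\Prism a} \simeq \Perfd_{B^\flat}^{\Prism a}$, and then Proposition \ref{equiv_perfd_tic_almost} transports this back to $\Perfd_B^{\Prism \tic} \simeq \Perfd_{B^\flat}^{\Prism \tic}$, with the functor being $A \mapsto A^\flat$ on the nose (since $(H^0 j_*$ of the tilt of $A^a)$ recovers the total integral closure of $A^\flat$, which for $A$ already total integrally closed is $A^\flat$ itself, using Proposition \ref{tic_preserves_perfectoid} to know $A^\flat$ stays perfectoid).

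Alternatively, and perhaps more transparently, one can argue directly: given $A \in \Perfd_B^{\Prism \tic}$, one checks $A^\flat$ is total integrally closed using the sharp map. An element $f \in A^\flat[1/\varpi^\flat]$ with $\{f^n\}_n \subset \tfrac{1}{\varpi^{\flat m}} A^\flat$ has $f^\sharp \in A[1/\varpi]$ with $\{(f^\sharp)^n\} = \{(f^n)^\sharp\}$ bounded by $\tfrac{1}{\varpi^m} A$ up to a unit (via Corollary \ref{perfectoid_torsion} and multiplicativity of $\sharp$), so $f^\sharp \in A$; since $A^\flat = \lim_{x \mapsto x^p} A$ and each coordinate of $f$ is obtained by iterated $p$-th roots which one controls the same way, $f \in A^\flat$. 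The converse direction is symmetric, using that $A \simeq W(A^\flat)/d \cdot$ (the untilt) and that $W(-)$ of a total integrally closed perfect ring behaves well. I expect the main obstacle to be precisely this bookkeeping with the $\varpi$-power roots and the inverse limit defining $(-)^\flat$: one must be careful that ``$\{f^n\}$ bounded'' for $f$ in the tilt translates correctly coordinate-by-coordinate, which is where Corollary \ref{perfectoid_torsion}, Lemma \ref{mod_p_tilt}, and the reducedness of integral perfectoid rings all get used. Given the machinery already in place, though, I would favor the almost-category argument of the previous paragraph, which sidesteps these computations by leveraging Propositions \ref{equiv_perfd_tic_almost} and \ref{prism_tilt_correspondence} as black boxes.
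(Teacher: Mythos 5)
Your ``direct'' argument in the final paragraph is essentially the paper's proof: the paper likewise first combines Lemma \ref{completeness_tilt} with Proposition \ref{prism_tilt_correspondence} to get the equivalence $\Perfd_B^{\Prism} \simeq \Perfd_{B^\flat}^{\Prism}$, notes that $\varpi$-torsion freeness passes through the tilt, and then transfers total integral closedness via the multiplicative sharp map; the paper phrases this last step through Proposition \ref{equiv_perfd_tic_almost} (reduce to: $A \simeq A_*$ implies $A^\flat \simeq (A^\flat)_*$), but the mechanism --- apply $\sharp$ to the boundedness/almost condition, use that $A$ is total integrally closed to get $a^\sharp \in A$, and run the same argument on all $p$-power roots to conclude $a \in A^\flat = \lim_{x \mapsto x^p} A$ --- is the one you describe. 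Two corrections to that paragraph: torsion freeness of $A^\flat$ is \emph{not} automatic from $A^\flat$ being a perfect $\FF_p$-algebra (perfect rings can have zero divisors); the correct input is Corollary \ref{perfectoid_torsion}, which identifies $A^\flat[\varpi^{\flat,n}]$ with $A[\varpi^n]$ via $\sharp$. Likewise, the bound on $\{(f^\sharp)^n\}$ follows from multiplicativity of $\sharp$ together with $(\varpi^\flat)^\sharp$ being a unit multiple of $\varpi$ (Lemma \ref{compatible_roots_perfectoid}), not from Corollary \ref{perfectoid_torsion}.

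The route you say you would favor --- transporting through $\Perfd^{\Prism a}$ --- does not in fact sidestep this computation, and as written it is circular. Even granting that tilting induces an equivalence $\Perfd_B^{\Prism a} \simeq \Perfd_{B^\flat}^{\Prism a}$ (itself not purely formal: you must verify that tilting intertwines the localizations $(-)^a$, i.e.\ carries $(\varpi)_{\perfd}$-almost isomorphisms of integral perfectoids to $(\varpi^\flat)_{\perfd}$-almost isomorphisms, and the appeal to Lemma \ref{mod_p_tilt} only gestures at this), composing with Proposition \ref{equiv_perfd_tic_almost} on both sides yields the functor $A \mapsto H^0 j_*\bigl((A^\flat)^a\bigr) = (A^\flat)_*$, not $A \mapsto A^\flat$. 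Identifying the two is precisely the assertion that $A^\flat$ is total integrally closed whenever $A$ is --- the very point at issue --- and Proposition \ref{tic_preserves_perfectoid} (stability of perfectoidness under $(-)^{\tic}$) does not supply it. So the sharp-map argument of your last paragraph is unavoidable; together with the symmetric check in the untilting direction (needed for essential surjectivity, which the paper also leaves implicit), it constitutes the proof.
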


\begin{proof} Let $A$ be an object of $\Perfd_B^{\Prism}$. Recall from \ref{completeness_tilt} that if $A$ is $\varpi$-complete then $A^\flat$ is $\varpi^\flat$-complete, which implies by \ref{prism_tilt_correspondence} that the tilting functor $\Perfd_B^{\Prism} \rightarrow \Perfd_{B^\flat}^{\Prism}$ is an equivalence of categories. Moreover, by \ref{perfectoid_torsion} we learn that if $A$ is $\varpi$-torsion free then $A^\flat$ is $\varpi^\flat$-torsion free. Thus, it remains to show that if a $\varpi$-torsion free $A$ is total integrally closed with respect to $A[\frac{1}{\varpi}]$, then $A^\flat$ is total integrally closed with respect to $A^\flat[\frac{1}{\varpi^\flat}]$; but from \ref{equiv_perfd_tic_almost} we learn that it suffices to show that if $A_* \simeq A$ then $A^\flat \simeq (A^\flat)_*$.
	
Recall that for any $A \in \Perfd_B^{\Prism}$ we have a morphism of multiplicative monoids $\sharp: A^\flat \rightarrow A$ which maps $\varpi^\flat$ to a multiplicative unit of $\varpi$, which implies that we get another morphism of multiplicative monoids $\sharp: A^\flat [\frac{1}{\varpi^\flat}] \rightarrow A[\frac{1}{\varpi}]$. Therefore, if we have an object $a \in A^\flat[\frac{1}{\varpi^\flat}]$ which satisfies $a \varepsilon^\flat \in A^\flat$ for all $\varepsilon^\flat \in (\varpi^{\flat, 1/p^\infty})$ then $\varepsilon a^\sharp \in A$ for all $\varepsilon \in (\varpi^{\sharp, 1/p^\infty}) = (\varpi)_{\perfd}$; and since $A$ satisfies $A \simeq A_*$ it follows that follows that $a^\sharp \in A$ which implies that $a \in A^\flat$ showing that $A^\flat \simeq (A^\flat)_*$ as desired.
\end{proof}

\subsection{Definition and basic properties}

\begin{defn}\label{defn_banach_perfectoid} A perfectoid field $K$ is a non-archimedean field for which exists a $\varpi \in K$ which satisfies $1 > |\varpi^p| \ge |p|$, and such that the Frobenius morphism $K_{\le 1}/\varpi^p \rightarrow K_{\le 1}/\varpi^p$ is surjective.

A perfectoid Banach $K$-algebra is a uniform Banach $K$-algebra $R$ such that the Frobenius morphism $\varphi: R_{\le 1}/\varpi^p \rightarrow R_{\le 1}/\varpi^p$ is surjective. We will denote the full-subcategory of $\uBan_K$ spanned by the perfectoid Banach $K$-algebras by $\Perfd_K^{\Ban}$.
\end{defn}

\begin{lemma}\label{perfectoid_field_is_int_perfectoid} If $K$ is a Banach perfectoid field, then $K_{\le 1}$ is a integral perfectoid algebra which is $\varpi$-complete $\varpi$-torsion free and total integrally closed with respect to $K_{\le 1} \subset K$.
\end{lemma}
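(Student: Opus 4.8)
The claim has several parts, and the plan is to verify them in the order: (i) $K_{\le 1}$ is $\varpi$-torsion free, (ii) $K_{\le 1}$ is $\varpi$-complete, (iii) $K_{\le 1}$ is an integral perfectoid ring, and (iv) $K_{\le 1}$ is total integrally closed in $K = K_{\le 1}[\tfrac1\varpi]$. The first two are essentially immediate: $K_{\le 1} \subset K$ is a subring of a field, hence $\varpi$-torsion free since $\varpi \in K^\times$; and $\varpi$-completeness of $K_{\le 1}$ is exactly Corollary \ref{banach_is_pi_comp} applied to the Banach ring $K$. For (iii), I would check the conditions of Definition \ref{int_perf_def} via the equivalent Frobenius criterion: since $\varpi^p$ divides $p$ in $K_{\le 1}$ (because $|\varpi^p| \ge |p|$ forces $|p/\varpi^p| \le 1$, so $p/\varpi^p \in K_{\le 1}$), and $K_{\le 1}$ is $\varpi$-complete hence $p$-complete, the only nontrivial hypothesis is that $\varphi: K_{\le 1}/\varpi^p \to K_{\le 1}/\varpi^p$ is surjective — which is precisely part of the hypothesis that $K$ is a perfectoid field (Definition \ref{defn_banach_perfectoid}). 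Then one invokes the lemma equating Definition \ref{int_perf_def} with the conditions of \cite[Definition 3.5]{BMS1} (which uses surjectivity of Frobenius plus principality of $\ker\theta$; the latter follows formally from the Frobenius surjectivity in the valuation-ring/rank-one case, or one can cite that rank-one perfectoid valuation rings are integral perfectoid as is standard).

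For part (iv), the total integral closedness, the key point is that $K$ is a field with a \emph{multiplicative} norm, so $K_{\le 1} = \{x \in K : |x| \le 1\}$ is a valuation ring. Suppose $f \in K = K_{\le 1}[\tfrac1\varpi]$ satisfies $\{f^{\ZZ_{\ge 0}}\} \subset \tfrac{1}{\varpi^m} K_{\le 1}$ for some $m \ge 0$, i.e. $|f^n| \le |\varpi^{-m}| = |\varpi|^{-m}$ for all $n \ge 0$. By multiplicativity of the norm, $|f^n| = |f|^n$, so $|f|^n \le |\varpi|^{-m}$ for all $n$; letting $n \to \infty$ forces $|f| \le 1$, hence $f \in K_{\le 1}$. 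So $K_{\le 1}$ is total integrally closed in the sense of Definition \ref{defn_total_int_closed}. This is the cleanest argument and avoids any appeal to the $\tic$-machinery of the previous section.

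I anticipate the \emph{main obstacle} — if any — to be purely bookkeeping: matching the precise form of the hypotheses. Specifically one must be careful that the $\varpi$ in Definition \ref{defn_banach_perfectoid} (satisfying $1 > |\varpi^p| \ge |p|$) is a legitimate choice for all the cited results in Section \ref{sect_int_perfd_alg}, which typically demand an element whose $p$-th power divides $p$; the inequality $|\varpi^p| \ge |p|$ gives exactly $p \in \varpi^p K_{\le 1}$, so this is fine. A secondary subtlety is that \cite[Lemma 3.10]{BMS1} and the surrounding results are often phrased for $\varpi$ admitting compatible $p$-power roots; but since $K$ is a perfectoid field, up to a unit $\varpi$ does admit such roots (this is Lemma \ref{compatible_roots_perfectoid} applied to $K_{\le 1}$, once we know it is integral perfectoid — or one simply notes the norm being multiplicative and the value group structure gives this directly), so one may reduce to that case without loss of generality. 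Modulo these routine checks, the proof is short: torsion-freeness and completeness are formal, integral perfectoidness is the Frobenius hypothesis plus a citation, and total integral closedness is the one-line limit argument using $|f^n| = |f|^n$.
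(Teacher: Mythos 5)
Your parts (i), (ii) and (iv) are correct, and in fact more elementary than the paper's route: the paper obtains $\varpi$-completeness, $\varpi$-torsion freeness and total integral closedness in one stroke from the uniform dictionary (Proposition \ref{uBan_tic_dict}, using that a multiplicative norm is in particular power-multiplicative), whereas you argue directly via Corollary \ref{banach_is_pi_comp} and the one-line estimate $|f^n|=|f|^n$; both are fine, and your limit argument is essentially Lemma \ref{rk_one_val_tot_int_closed}.

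The weak point is step (iii). The assertion that principality of $\ker\theta$ ``follows formally from the Frobenius surjectivity in the valuation-ring/rank-one case,'' or the fallback of citing it as ``standard,'' is not a proof: surjectivity of $\varphi$ on $K_{\le 1}/\varpi^p$ alone does not give principality of $\ker\theta$ — one must genuinely use the valuation/multiplicativity structure, and that is exactly the input you have not yet fed into this step. The repair is short and stays inside the paper's toolkit, and it is what the paper does: by \cite[Lemma 3.10]{BMS1} it suffices to show that the ($p$-power) map $K_{\le 1}/\varpi \rightarrow K_{\le 1}/\varpi^p$, $a \mapsto a^p$, is bijective; surjectivity is the perfectoid-field hypothesis, and injectivity follows from Lemma \ref{p_int_closed_frob} because $K_{\le 1}$ is $p$-integrally closed in $K$ — which you already established in your part (iv) (total integral closedness implies $p$-integral closedness; alternatively, multiplicativity gives it in one line: $a^p \in \varpi^p K_{\le 1}$ forces $|a| \le |\varpi|$). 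With this substitution your proof is complete; also note that with this criterion you never need to reduce to a $\varpi$ admitting compatible $p$-power roots, so the circularity you worried about disappears.
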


\begin{proof} Since non-archimedean fields are uniform by definition, it follows from Proposition \ref{uBan_tic_dict} that $K_{\le 1}$ is $\varpi$-complete $\varpi$-torsion free and totally integrally closed with respect to $K_{\le 1} \subset K$. To show that $K_{\le 1}$ is an integral perfectoid algebra, it suffices to show that the induced surjective morphism $K_{\le 1}/\varpi \rightarrow K_{\le 1}/\varpi^p$ given by $a \mapsto a^p$, is injective (cf. \cite[Lemma 3.10]{BMS1}), but this follows from Lemma \ref{p_int_closed_frob}.
\end{proof}

For the rest of the section fix a Banach perfectoid field $K$, and an element $\varpi \in K$ which satisfies $1 > |\varpi^p| \ge |p|$. We now introduce the various flavors of integral perfectoid algebras that will be relevant for us.

\begin{lemma} If $R$ is a perfectoid Banach $K$-algebra, then $R_{\le 1}$ is an object of $\Perfd_{K_{\le 1}}^{\Prism \tic}$.
\end{lemma}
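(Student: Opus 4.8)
The statement asks me to verify that if $R$ is a perfectoid Banach $K$-algebra, then $R_{\le 1}$ lies in $\Perfd_{K_{\le 1}}^{\Prism\tic}$, i.e. $R_{\le 1}$ is a $\varpi$-complete, $\varpi$-torsion free, integral perfectoid $K_{\le 1}$-algebra which is total integrally closed with respect to $R_{\le 1} \subset R_{\le 1}[\frac{1}{\varpi}] = R$. The strategy is to assemble these four properties from results already in hand. The non-obvious input is that a perfectoid Banach $K$-algebra is uniform by Definition \ref{defn_banach_perfectoid}, so I may apply the uniform dictionary of Proposition \ref{uBan_tic_dict}.

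\textbf{Step 1: the ``easy'' three properties.} Since $R$ is uniform, Proposition \ref{uBan_tic_dict} (equivalently Construction \ref{const_funct_uBan_tic_dic} together with Corollary \ref{banach_is_pi_comp}) already tells us that $R_{\le 1}$ is a $\varpi$-complete, $\varpi$-torsion free, total integrally closed $K_{\le 1}$-algebra, i.e. an object of $\CAlg_{K_{\le 1}}^{\wedge\tic}$. This is precisely three of the four conditions, so all that remains is to check that $R_{\le 1}$ is an \emph{integral perfectoid ring} in the sense of Definition \ref{int_perf_def}.

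\textbf{Step 2: integral perfectoidness via the Frobenius criterion.} By the lemma following Definition \ref{int_perf_def} (the one giving the \cite[Definition 3.5]{BMS1} criterion), a ring $S$ that is $\pi$-complete for some $\pi$ with $\pi^p \mid p$ is integral perfectoid provided the Frobenius $\varphi\colon S/p \to S/p$ is surjective and $\ker(\theta)$ is principal. However, a cleaner route — the same one used in Lemma \ref{perfectoid_field_is_int_perfectoid} — is to invoke \cite[Lemma 3.10]{BMS1}: for a ring $S$ that is $\varpi$-complete with $\varpi^p \mid p$, being integral perfectoid is equivalent to the map $S/\varpi \to S/\varpi^p$, $a \mapsto a^p$, being an isomorphism. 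Now $R_{\le 1}$ is $\varpi$-complete (Step 1) and $\varpi^p \mid p$ since $|\varpi^p| \ge |p|$ forces $p/\varpi^p \in R_{\le 1}$ (using that $K \to R$ is an isometry, $|p/\varpi^p|_R = |p/\varpi^p|_K \le 1$). By the defining hypothesis on $R$ the Frobenius $\varphi\colon R_{\le 1}/\varpi^p \to R_{\le 1}/\varpi^p$ is surjective, hence so is $R_{\le 1}/\varpi \to R_{\le 1}/\varpi^p$, $a \mapsto a^p$, as this factors as $R_{\le 1}/\varpi^p \twoheadrightarrow R_{\le 1}/\varpi \xrightarrow{a\mapsto a^p} R_{\le 1}/\varpi^p$ with first map the canonical projection. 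For injectivity I would appeal to Lemma \ref{p_int_closed_frob}: $R_{\le 1}$ is $\varpi$-torsion free (so $\varpi$ is a non-zero divisor) and $R_{\le 1} \subset R_{\le 1}[\frac{1}{\varpi}] = R$ is $p$-integrally closed — indeed $R^{\circ} = R_{\le 1}$ by uniformity (Corollary \ref{equiv_uniform_power_bdd}), and $R^{\circ}$ is in particular $p$-integrally closed, hence $a^p \in R_{\le 1}$ with $a \in R$ forces $\{a^{\ZZ_{\ge 0}}\}$ bounded, so $a \in R^{\circ} = R_{\le 1}$. Then Lemma \ref{p_int_closed_frob} gives that $R_{\le 1}/\varpi \to R_{\le 1}/\varpi^p$ is injective. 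Combining, it is an isomorphism, so $R_{\le 1}$ is integral perfectoid.

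\textbf{Step 3: conclusion.} Assembling Steps 1 and 2, $R_{\le 1}$ is a $\varpi$-complete $\varpi$-torsion free integral perfectoid $K_{\le 1}$-algebra which is total integrally closed with respect to $R_{\le 1} \subset R_{\le 1}[\frac{1}{\varpi}]$, which is exactly the condition to belong to $\Perfd_{K_{\le 1}}^{\Prism\tic}$ (Definition \ref{defn_various_perfectoid}). The main (and really only) point of potential friction is making sure the $p$-integral closedness hypothesis needed for Lemma \ref{p_int_closed_frob} is genuinely available; this is handled by the identification $R^{\circ} = R_{\le 1}$ for uniform $R$, which is Corollary \ref{equiv_uniform_power_bdd}. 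Everything else is a direct citation of the dictionary and the $\varpi$-adic/$p$-adic comparison lemmas established earlier in this section and in Section \ref{sect_int_perfd_alg}.
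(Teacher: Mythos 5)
Your proof is correct and follows essentially the same route as the paper: reduce via Proposition \ref{uBan_tic_dict} to showing $R_{\le 1}$ is integral perfectoid, use the defining Frobenius surjectivity for surjectivity of $a \mapsto a^p : R_{\le 1}/\varpi \to R_{\le 1}/\varpi^p$, and invoke Lemma \ref{p_int_closed_frob} for injectivity. One small redundancy: in Step 2 you rederive $p$-integral closedness of $R_{\le 1} \subset R$ via the identification $R^\circ = R_{\le 1}$, but Step 1 already gives you total integral closedness, which is strictly stronger and immediately supplies the hypothesis of Lemma \ref{p_int_closed_frob}.
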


\begin{proof} Since $R$ is uniform by definition, it follows from \ref{uBan_tic_dict} that $R_{\le 1} \in \CAlg_{K_{\le 1}}^{\wedge \tic}$, so it remains to show that $R_{\le 1}$ is an integral perfectoid algebra. By hypothesis we know that Frobenius $R/\varpi^p \rightarrow R/\varpi^p$ is surjective, which means that the induced map $R/\varpi \rightarrow R/\varpi^p$ given by $a \mapsto a^p$ is surjective. Hence, it remains to show that $R/\varpi \rightarrow R/\varpi^p$ is injective (cf. \cite[Lemma 3.10]{BMS1}), but this follows from Lemma \ref{p_int_closed_frob}.
\end{proof}

\begin{prop}\label{equiv_perfd_ban_tic} The functor
\begin{equation*}
	(-)_{\le 1}: \Perfd_K^{\Ban} \longrightarrow \Perfd_{K_{\le 1}}^{\Prism \tic}
\end{equation*}
determines an equivalence of categories with inverse $(-)[\frac{1}{\varpi}]$.
\end{prop}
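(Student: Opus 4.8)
The strategy is to assemble this equivalence from the pieces already in place: the general dictionary equivalence $(-)_{\le 1}: \uBan_K \simeq \CAlg_{K_{\le 1}}^{\wedge \tic}$ of Proposition \ref{uBan_tic_dict}, together with the identification of the two full subcategories cut out on either side. First I would recall that $\Perfd_K^{\Ban} \subset \uBan_K$ is by definition the full subcategory spanned by uniform Banach $K$-algebras $R$ for which $\varphi: R_{\le 1}/\varpi^p \rightarrow R_{\le 1}/\varpi^p$ is surjective, and that $\Perfd_{K_{\le 1}}^{\Prism \tic} \subset \CAlg_{K_{\le 1}}^{\wedge \tic}$ is by definition the full subcategory spanned by $\varpi$-complete $\varpi$-torsion free integral perfectoid $K_{\le 1}$-algebras which are total integrally closed with respect to $A \subset A[\frac{1}{\varpi}]$. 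Since both inclusions are fully faithful and the functors $(-)_{\le 1}$ and $(-)[\frac{1}{\varpi}]$ restrict to these subcategories by the two lemmas immediately preceding the statement (which show $R_{\le 1} \in \Perfd_{K_{\le 1}}^{\Prism \tic}$ for $R \in \Perfd_K^{\Ban}$, and implicitly give the reverse containment), it suffices to check that the restricted functors land in the correct subcategories and that the unit and counit isomorphisms of the ambient equivalence restrict.

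The main content is therefore a bookkeeping argument: given $A \in \Perfd_{K_{\le 1}}^{\Prism \tic}$, I must show $R := A[\frac{1}{\varpi}]$ is a \emph{perfectoid} Banach $K$-algebra, i.e. that $R$ is uniform (immediate from Proposition \ref{uBan_tic_dict}, since $A \in \CAlg_{K_{\le 1}}^{\wedge \tic}$) and that $\varphi: R_{\le 1}/\varpi^p \to R_{\le 1}/\varpi^p$ is surjective. Here I would use that $R_{\le 1} = A$ under the dictionary (the counit $(A[\frac{1}{\varpi}])_{\le 1} \simeq A$ established in the proof of Proposition \ref{equiv_almost_tf_banach}), and that $A$ being integral perfectoid forces $\varphi: A/\varpi^p \to A/\varpi^p$ surjective by \cite[Lemma 3.10]{BMS1} — indeed this is already invoked in the lemma showing $R_{\le 1} \in \Perfd_{K_{\le 1}}^{\Prism \tic}$. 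Conversely, for $R \in \Perfd_K^{\Ban}$ the preceding lemma already gives $R_{\le 1} \in \Perfd_{K_{\le 1}}^{\Prism \tic}$. Then the unit $R \simeq (R_{\le 1})[\frac{1}{\varpi}]$ and counit $(A[\frac{1}{\varpi}])_{\le 1} \simeq A$ are simply the restrictions of the corresponding natural isomorphisms from Proposition \ref{equiv_almost_tf_banach}/\ref{uBan_tic_dict}, so no new computation is needed.

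The proof would then read essentially: ``By Proposition \ref{uBan_tic_dict} the functors $(-)_{\le 1}$ and $(-)[\frac{1}{\varpi}]$ give mutually inverse equivalences $\uBan_K \simeq \CAlg_{K_{\le 1}}^{\wedge \tic}$. The two preceding lemmas show this equivalence carries $\Perfd_K^{\Ban}$ into $\Perfd_{K_{\le 1}}^{\Prism \tic}$ under $(-)_{\le 1}$, and the computation $R_{\le 1} = A$ together with \cite[Lemma 3.10]{BMS1} and Lemma \ref{p_int_closed_frob} shows it carries $\Perfd_{K_{\le 1}}^{\Prism \tic}$ into $\Perfd_K^{\Ban}$ under $(-)[\frac{1}{\varpi}]$. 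Since both $\Perfd_K^{\Ban} \subset \uBan_K$ and $\Perfd_{K_{\le 1}}^{\Prism \tic} \subset \CAlg_{K_{\le 1}}^{\wedge \tic}$ are full, the restricted functors remain mutually inverse equivalences.''

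\textbf{Expected main obstacle.} There is no deep obstacle; the only subtlety is ensuring the surjectivity of Frobenius on $R_{\le 1}/\varpi^p$ transfers cleanly in both directions. One must be slightly careful that the definition of integral perfectoid used in $\Perfd_{K_{\le 1}}^{\Prism \tic}$ (via $\theta$ having principal kernel and $\varpi$-completeness) is genuinely equivalent, for a $\varpi$-torsion free $\varpi$-complete ring, to the statement that $\varphi: A/\varpi \to A/\varpi^p$ is an isomorphism — but this is exactly the content of \cite[Lemma 3.10]{BMS1} combined with Lemma \ref{p_int_closed_frob} and has already been used verbatim in the two lemmas preceding this proposition, so the argument is a direct invocation rather than new work.
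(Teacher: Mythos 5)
Your proposal is correct and takes essentially the same route as the paper: restrict the ambient equivalence of Proposition \ref{uBan_tic_dict}, use the preceding lemma for the forward direction, and for the reverse direction use the identification $A[\frac{1}{\varpi}]_{\le 1} \simeq A$ together with \cite[Lemma 3.10]{BMS1} to get surjectivity of Frobenius on $A/\varpi^p$ (the paper phrases this last step slightly more directly, by factoring Frobenius as $A/\varpi^p \twoheadrightarrow A/\varpi \rightarrow A/\varpi^p$, without separately invoking Lemma \ref{p_int_closed_frob}).
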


\begin{proof} From the equivalence of categories $(-)_{\le 1}: \uBan_K \leftrightarrows \CAlg_{K_{\le 1}}^{\wedge \tic}: (-)[\frac{1}{\varpi}]$ established in \ref{uBan_tic_dict}, it remains to show that if $A \in \Perfd_{K_{\le 1}}^{\Prism \tic}$ then $A[\frac{1}{\varpi}]_{\rho}$ is a perfectoid Banach $K$-algebra. From the identity $A[\frac{1}{\varpi}]_{\rho, \le 1} \simeq A$, we learn that it suffices to show that the Frobenius morphism $A/\varpi^p \rightarrow A/\varpi^p$ is surjective. From \cite[Lemma 3.10]{BMS1} we know that the $p$-power map $A/\varpi \rightarrow A/\varpi^p$ is an isomorphism, and since Frobenius can be realized as the composition $A/\varpi^p \twoheadrightarrow A/\varpi \rightarrow A/\varpi^p$ the result follows.
\end{proof}

\begin{corollary}\label{tensor_ban_perfectoid} Let $A \leftarrow B \rightarrow C$ be a pair of morphisms in $\Perfd_K^{\Ban}$. Then, $A \cotimes_B C$ is a perfectoid Banach $K$-algebra, in particular, it is endowed with the spectral radius norm.
\end{corollary}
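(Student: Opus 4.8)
The plan is to deduce this corollary directly from the equivalence of categories $(-)_{\le 1}: \Perfd_K^{\Ban} \simeq \Perfd_{K_{\le 1}}^{\Prism \tic}$ established in Proposition \ref{equiv_perfd_ban_tic}, together with the fact that equivalences of categories preserve pushouts. Concretely, the completed tensor product $A \cotimes_B C$ is the pushout of $A \leftarrow B \rightarrow C$ computed in $\Ban_K^{\contr}$ (Construction \ref{const_comp_tensor_ban}); since $A, B, C$ are uniform, by Construction \ref{const_unif_tensor_ban} and the adjunction of Proposition \ref{adjunction_uniform_ban} this pushout, when formed in $\uBan_K$, is $A \cotimes_B^u C$, the uniformization of $A \cotimes_B C$. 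So the real content is showing this pushout stays inside the full subcategory $\Perfd_K^{\Ban} \subset \uBan_K$, and that its norm is already the spectral radius (so that $A \cotimes_B C = A \cotimes_B^u C$).

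First I would transport the diagram $A \leftarrow B \rightarrow C$ across the equivalence to a diagram $A_{\le 1} \leftarrow B_{\le 1} \rightarrow C_{\le 1}$ in $\Perfd_{K_{\le 1}}^{\Prism \tic}$. By Corollary \ref{tensor_perfd_tic}, the pushout of this diagram in $\CAlg_{K_{\le 1}}^{\wedge \tic}$ is $(A_{\le 1} \cotimes_{B_{\le 1}} C_{\le 1})_*$, and — crucially — that corollary already asserts this object lies in $\Perfd_{K_{\le 1}}^{\Prism \tic}$. Applying the inverse equivalence $(-)[\tfrac{1}{\varpi}]$, which by Proposition \ref{equiv_perfd_ban_tic} lands in $\Perfd_K^{\Ban}$ and preserves pushouts, I get that the pushout of $A \leftarrow B \rightarrow C$ in $\Ban_K^{\contr}$ is represented by the perfectoid Banach $K$-algebra $(A_{\le 1} \cotimes_{B_{\le 1}} C_{\le 1})_*[\tfrac{1}{\varpi}]$. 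Comparing with Corollary \ref{tensor_dic_ban_to_int}, which identifies $(A_{\le 1} \otimes_{B_{\le 1}} C_{\le 1})^{\tf, \wedge, a}[1/\varpi]$ with $A \cotimes_B C$ (and noting that for $A_{\le 1}, B_{\le 1}, C_{\le 1}$ already $\varpi$-torsion free and $p$-integrally closed, the completed tensor product $A_{\le 1} \cotimes_{B_{\le 1}} C_{\le 1}$ is $\varpi$-torsion free by Lemma \ref{completion_tf}, so $(-)^{\tf, \wedge, a}$ collapses to $(-)_*^a$ up to the total integral closure absorbed in Corollary \ref{tensor_perfd_tic}), I conclude $A \cotimes_B C$ is this perfectoid Banach $K$-algebra.

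Finally, for the assertion about the norm: since $A \cotimes_B C$ is a perfectoid Banach $K$-algebra, it is in particular uniform, hence by Corollary \ref{equiv_uniform_power_bdd} its norm is equivalent to the spectral radius norm; and because it lies in the essential image of $(-)[\tfrac{1}{\varpi}]$ restricted to $\CAlg_{K_{\le 1}}^{\wedge \tic}$, by Construction \ref{const_funct_uBan_tic_dic} the norm constructed on it is genuinely power-multiplicative, i.e. it \emph{is} the spectral radius norm, not merely equivalent to it. This also shows that the pushout computed in $\Ban_K^{\contr}$ coincides with the one computed in $\uBan_K$ and with $A \cotimes_B^u C$. I expect the main obstacle to be bookkeeping: making sure that the various completions, $\varpi$-torsion-free quotients, almost-ification functors and total integral closures appearing in $(-)^{\tf,\wedge,a}$, in $(-)_*$, and in Corollary \ref{tensor_perfd_tic} are genuinely compatible under the chain of equivalences, so that the object produced by Corollary \ref{tensor_perfd_tic} matches the one in Corollary \ref{tensor_dic_ban_to_int} on the nose. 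Everything else is a formal consequence of equivalences preserving pushouts.
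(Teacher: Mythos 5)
There is a genuine gap at the decisive step. The equivalence of Proposition \ref{equiv_perfd_ban_tic} (equivalently, Proposition \ref{uBan_tic_dict}) only transports pushouts computed in $\Perfd_{K_{\le 1}}^{\Prism \tic} \subset \CAlg_{K_{\le 1}}^{\wedge \tic}$ to pushouts computed in $\Perfd_K^{\Ban} \subset \uBan_K$. So what Corollary \ref{tensor_perfd_tic} plus the inverse equivalence actually gives you is that the pushout of $A \leftarrow B \rightarrow C$ \emph{in $\uBan_K$}, i.e.\ the uniformization $A \cotimes_B^u C$, is perfectoid. It does not by itself say anything about the pushout in $\Ban_K^{\contr}$, which is $A \cotimes_B C$ with the tensor-product norm; whether that norm is already power-multiplicative is exactly the content of the corollary, and your sentence ``I get that the pushout of $A \leftarrow B \rightarrow C$ in $\Ban_K^{\contr}$ is represented by $(A_{\le 1} \cotimes_{B_{\le 1}} C_{\le 1})_*[\frac{1}{\varpi}]$'' asserts precisely what still needs proof.

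Your attempted bridge through Corollary \ref{tensor_dic_ban_to_int} does not close this. The parenthetical claim that $A_{\le 1} \cotimes_{B_{\le 1}} C_{\le 1}$ is $\varpi$-torsion free ``by Lemma \ref{completion_tf}'' is a misapplication: that lemma transfers torsion-freeness from the uncompleted module, but $A_{\le 1} \otimes_{B_{\le 1}} C_{\le 1}$ is not known to be $\varpi$-torsion free (only $A_{\le 1}$ and $C_{\le 1}$ are flat over the valuation ring, not over $B_{\le 1}$), and in fact the completed tensor product of integral perfectoids generally does have $\varpi$-torsion — it is merely \emph{almost} zero, which is a consequence of perfectoidness (Corollary \ref{perfectoid_torsion}), not of formal completion nonsense. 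Moreover, even granting torsion-freeness, the phrase ``up to the total integral closure absorbed in Corollary \ref{tensor_perfd_tic}'' is exactly the gap, not an argument: the pushout in $\CAlg_{K_{\le 1}}^{\wedge a \tf}$ (which is what corresponds to $A \cotimes_B C$ via Corollary \ref{tensor_dic_ban_to_int}) could a priori differ from its tic-reflection, i.e.\ $A \cotimes_B C$ could sit strictly inside $A \cotimes_B^u C$, as happens for general uniform Banach algebras. The missing ingredient — and the route the paper takes — is Proposition \ref{tensor_integral_perfectoid}: the classically $\varpi$-completed tensor product $A_{\le 1} \cotimes_{B_{\le 1}} C_{\le 1}$ is itself an integral perfectoid ring, so by Proposition \ref{equiv_perfd_tic_almost} its almostification lies in $\Perfd_{K_{\le 1}}^{\Prism a}$ and is already (almost) $\varpi$-torsion free and totally integrally closed; hence it coincides with the $\CAlg_{K_{\le 1}}^{\wedge a \tf}$-pushout $(A_{\le 1} \otimes_{B_{\le 1}} C_{\le 1})^{\tf, \wedge, a}$, whose generic fiber is $A \cotimes_B C$ by Corollary \ref{tensor_dic_ban_to_int} (the paper cites \ref{tensor_dic_int_to_ban}), giving the isometric identification with a perfectoid Banach algebra. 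Without invoking \ref{tensor_integral_perfectoid} (directly or in an equivalent form) your argument only proves the statement for $A \cotimes_B^u C$.
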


\begin{proof} From \ref{tensor_integral_perfectoid} we learn that $A_{\le 1} \cotimes_{B_{\le 1}}  C_{\le 1}$ is an object of $\Perfd_{K_{\le 1}}^{\Prism}$, and from \ref{equiv_perfd_tic_almost} we learn that $(A_{\le 1} \cotimes_{B_{\le 1}}  C_{\le 1})^a \in \Perfd_{K_{\le 1}}^{\Prism a}$ is $\varpi$-torsion free. Hence, it follows from \ref{tensor_dic_int_to_ban} that we have an isometric isomorphism
\begin{equation*}
	(A_{\le 1} \cotimes_{B_{\le 1}}  C_{\le 1})^a [\frac{1}{\varpi}] \simeq A \cotimes_B C
\end{equation*}
proving that $A \cotimes_B C$ is a perfectoid Banach $K$-algebra by \ref{equiv_perfd_ban_tic}.
\end{proof}

\subsection{Valuation rings}

For the definitions and basic results we will follow \cite[00I8]{stacks-project}.

\begin{defn}[valuation rings] We follow \cite[00I8]{stacks-project} as our official definition of valuation rings.
\begin{enumerate}[(1)]
	\item Let $K$ be a field. Let $A, B$ be local rings contained in $K$. We say that $B$ dominates $A$ if $A \subset B$ and $\frakm_A = A \cap \frakm_B$.
	\item Let $A$ be a ring. We say that $A$ is a valuation ring if $A$ is a local domain and $A$ is maximal for all the relations of dominations among local rings contained in the fraction field of $A$.
	\item Let $A$ be a valuation ring with fraction field $K$. If $R \subset K$ is a subring of $K$, then we say $A$ is centered on $R$ if $R \subset A$.
\end{enumerate}
\end{defn}

\begin{lemma}\label{properties_value_group} Let $A$ be a valuation ring with fraction field $K$. Set $\Gamma = K^\times/A^\times$, and define $\gamma_1 \le \gamma_2$ in $\Gamma$ if and only if $\gamma_1/\gamma_2 \in \Gamma$ is in the image of the canonical map $A\setminus\{0\} \rightarrow \Gamma$. Then,
\begin{enumerate}[(1)]
	\item The pair $(\Gamma, \le)$ is a totally ordered abelian group, and we call it the value group of the valuation ring $A$.
	\item The induced map $v: A \rightarrow \Gamma \cup \{0\}$ is called the valuation associated to $A$, we sometimes abuse notation and also call the map $v: K \rightarrow \Gamma \cup \{0\}$ by the same name. Furthermore, $v: A \rightarrow \Gamma \cup \{ 0 \}$ and $v: K \rightarrow \Gamma \cup \{ 0\}$ satisfy the following conditions
	\begin{enumerate}[(2.1)]
		\item $v(a) = 1$ if and only if $a \in A^{\times}$, and $v(a) = 0$ if and only if $a = 0$.
		\item $v(ab) = v(a)v(b)$.
		\item $v(a + b) \le \max(v(a), v(b))$.
	\end{enumerate}
	In particular, the valuation on $A$ factors as $v: A \rightarrow \Gamma_{\le 1} \cup \{0\} \rightarrow \Gamma \cup \{0\}$.
	\item We have the following equalities
	\begin{equation*}
		A = \{x \in K | v(x) \le 1 \} \qquad \frakm_A = \{x \in K | v(x) < 1 \} \qquad A^\times = \{x \in K | v(x) = 1 \}
	\end{equation*}
	where we make $\Gamma \cup \{0\}$ a totally ordered commutative monoid by declaring that $0 < \gamma$ for all $\gamma \in \Gamma$.
\end{enumerate}
\end{lemma}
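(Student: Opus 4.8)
This is a standard foundational lemma about valuation rings, so the proof plan is essentially an exercise in bookkeeping from the definition of domination. The reference given is \cite[00I8]{stacks-project}, which already contains all of this; the plan below simply organizes how one would present it.

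\medskip

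The plan is to first establish that $(\Gamma, \le)$ is a totally ordered abelian group. Since $\Gamma = K^\times/A^\times$ is evidently an abelian group, the only content is the total ordering. First I would check that the relation $\le$ is well-defined and transitive: $\gamma_1 \le \gamma_2$ means $\gamma_1\gamma_2^{-1}$ lies in the image of $A \setminus \{0\}$, and transitivity follows because $A$ is closed under multiplication. Reflexivity is clear since $1 \in A$. Antisymmetry: if both $\gamma_1\gamma_2^{-1}$ and $\gamma_2\gamma_1^{-1}$ are images of elements of $A$, say of $a$ and $b$, then $ab \in A^\times$, so $a \in A^\times$, forcing $\gamma_1 = \gamma_2$. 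Compatibility with the group law is immediate from multiplicativity. The key nontrivial point — and the main obstacle — is \emph{totality} of the order: given any $x \in K^\times$, one must show that either $x \in A$ or $x^{-1} \in A$. This is exactly the place where maximality of $A$ among dominated local subrings of $K$ is used: if neither $x$ nor $x^{-1}$ lay in $A$, one constructs a strictly larger local ring dominating $A$ (adjoining $x$ or $x^{-1}$ and localizing at an appropriate prime), contradicting maximality. I would either invoke \cite[00I8]{stacks-project} for this step or reproduce the standard argument that $A[x]$ or $A[x^{-1}]$ admits a prime ideal lying over $\frakm_A$.

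\medskip

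Next I would define $v\colon K^\times \to \Gamma$ as the quotient map and extend it by $v(0) = 0$, and verify properties (2.1)--(2.3). Property (2.1) is the definition of $\Gamma$ together with the convention $v(0)=0$; (2.2) is multiplicativity of the quotient map; (2.3), the ultrametric inequality, is the heart: given $a, b \in A$ not both zero, one uses totality to assume WLOG $v(b) \le v(a)$, i.e. $b/a \in A$ (when $a \ne 0$), whence $a + b = a(1 + b/a) \in A \cdot A \subseteq A$ gives $v(a+b) \le v(a) = \max(v(a), v(b))$; the degenerate cases where $a$ or $b$ is zero are trivial. The factorization through $\Gamma_{\le 1} \cup \{0\}$ on $A$ is then just the observation that $v(A \setminus \{0\}) \subseteq \Gamma_{\le 1}$ by the definition of $\le$.

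\medskip

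Finally, for part (3), the three set equalities: $A = \{x \in K \mid v(x) \le 1\}$ holds because $v(x) \le 1$ means $x \cdot 1^{-1} = x$ is the image of an element of $A \setminus \{0\}$ (or $x = 0$), i.e. $x \in A$, and conversely; $A^\times = \{x \mid v(x) = 1\}$ is the definition of $\Gamma$; and $\frakm_A = A \setminus A^\times = \{x \in K \mid v(x) < 1\}$ since in a local ring the maximal ideal is the complement of the units, and $v(x) \le 1$ with $v(x) \ne 1$ is exactly $v(x) < 1$ by totality. Throughout, one keeps the convention $0 < \gamma$ for all $\gamma \in \Gamma$ so that $\Gamma \cup \{0\}$ is a totally ordered monoid. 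The only genuinely substantive input is the totality of the order via maximality; everything else is formal manipulation, so I would keep that part brief and lean on the Stacks Project reference.
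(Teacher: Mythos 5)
Your proposal is correct and takes essentially the same route as the paper, which simply delegates parts (1)--(3) to \cite[Tags 00ID, 00IF, 00IG]{stacks-project}; your writeup just fills in the standard details behind those citations. You also correctly identify the only substantive input — totality of the order via maximality of $A$ with respect to domination — with the rest being formal bookkeeping.
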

	
\begin{proof} Part $(1)$ follows from \cite[Tag 00ID]{stacks-project}, by writing the group structure on $\Gamma$ multiplicatively. Its customary to transform multiplicative notation on $\Gamma$ to additive notation by the rule $x \mapsto - \log(x)$, in which case $(2)$ follows from \cite[Tag 00IF]{stacks-project}. Finally, $(3)$ follows from \cite[Tag 00IG]{stacks-project}.
\end{proof}

\begin{lemma}\label{prime_ideals_valuation_rings} Let $(\Gamma \cup\{0\}, \le)$ be a totally ordered commutative monoid. An ideal of $\Gamma \cup\{0\}$ is a subset $I \subset \Gamma \cup\{0\}$, such that all elements of $I$ are $\le 1$ and $\gamma \in I$, $\gamma^{\prime} \le \gamma$ implies $\gamma^{\prime} \in I$. We say than an ideal of $\Gamma \cup \{0\}$ is prime if it satisfies the following conditions
	\begin{enumerate}[(1)]
		\item For all $\gamma \in I$, we have $\gamma < 1$.
		\item If $\gamma_1 \gamma_2 \in I$ and $\gamma_1, \gamma_2 \le 1$, then $\gamma_1 \in I$ or $\gamma_2 \in I$.
	\end{enumerate}
Let $R$ be a valuation ring, then the map $v: R \rightarrow K^{\times}/R^{\times} \cup \{0\}$ induces a bijection of ideals. Furthermore, this bijection is inclusion preserving, and maps prime ideals to prime ideals.
\end{lemma}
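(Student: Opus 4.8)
The plan is to construct the bijection explicitly using the valuation map $v \colon R \to \Gamma \cup \{0\}$ of Lemma \ref{properties_value_group}, where $\Gamma = K^\times / R^\times$. Given an ideal $I \subset R$, I would set $v(I) := \{ v(a) : a \in I \} \subset \Gamma \cup \{0\}$; conversely, given an ideal $J \subset \Gamma \cup \{0\}$ (in the monoid sense defined in the statement), I would set $v^{-1}(J) := \{ a \in R : v(a) \in J \}$. The first step is to check that these assignments land in the right places: that $v(I)$ is downward-closed (this uses part (3) of Lemma \ref{properties_value_group}, namely that $x \in R$ with $v(x) \le v(a)$ forces $x \in Ra \subset I$ since $x/a \in R$) and absorbs multiplication, and that $v^{-1}(J)$ is an ideal of $R$ (immediate from multiplicativity $v(ab) = v(a)v(b)$ and the ultrametric inequality $v(a+b) \le \max(v(a),v(b))$ together with downward-closedness of $J$).

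The second step is to verify that the two constructions are mutually inverse. For $v^{-1}(v(I)) = I$ one inclusion is trivial; for the other, if $v(a) = v(b)$ with $b \in I$ then $a/b \in R^\times$, so $a = (a/b) b \in I$. For $v(v^{-1}(J)) = J$ one uses that $v$ is surjective onto $\Gamma \cup \{0\}$ (every class in $K^\times/R^\times$ has a representative, and if it is $\le 1$ it comes from $R \setminus \{0\}$; here downward-closedness of $J$ guarantees every element of $J$ is $\le 1$, hence in the image of $R \setminus \{0\} \to \Gamma$, with $0 \in J$ coming from $0 \in R$). Inclusion-preservation in both directions is then formal from the definitions. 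Finally I would check the statement about primes: $\mathfrak{p} \subset R$ is prime iff $R/\mathfrak{p}$ is a domain iff $v(\mathfrak{p})$ satisfies condition (2) (if $v(a)v(b) = v(ab) \in v(\mathfrak{p})$ and both $\le 1$, i.e. $a, b \in R$, then $ab \in \mathfrak{p}$ by the previous paragraph's style of argument, so $a$ or $b$ lies in $\mathfrak{p}$), and condition (1) corresponds to $\mathfrak{p}$ being a proper ideal (no units), which holds for prime ideals; conversely a prime ideal of $\Gamma \cup \{0\}$ pulls back to a proper ideal of $R$ closed under the implication $ab \in v^{-1}(J) \Rightarrow a \in v^{-1}(J)$ or $b \in v^{-1}(J)$.

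I do not anticipate a genuine obstacle here — this is a standard dictionary lemma, cited to \cite[00I8]{stacks-project} — so the only mild care needed is in the bookkeeping around $0$ and units: making sure that downward-closedness of a monoid ideal $J$ forces all its elements $< 1$ or $= 1$ to be handled correctly (in particular the statement's condition (1) for primes excludes $1$, matching properness of $\mathfrak{p}$), and that surjectivity of $v$ restricted to $R \setminus \{0\}$ onto $(\Gamma \cup \{0\})_{\le 1}$ is invoked cleanly. Since all of this is immediate from Lemma \ref{properties_value_group}, the proof can be kept quite short, essentially just recording the two maps and the one-line verifications.
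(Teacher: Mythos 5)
Your proof is correct, and it spells out exactly the standard argument that the paper simply outsources to \cite[Tag 00IH]{stacks-project} (after translating additive notation there to the multiplicative notation used here). The verifications you record — downward closure of $v(I)$ via $v(x)\le v(a) \Rightarrow x/a \in R$, mutual inverseness from $v(a)=v(b) \Rightarrow a/b \in R^{\times}$, and the translation of primality into conditions (1)--(2) — are precisely the content behind that citation, so there is no gap and no real divergence in method.
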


\begin{proof} After rewriting the group structure on $\Gamma \cup \{0\}$ multiplicatively, it follows from \cite[Tag 00IH]{stacks-project}.
\end{proof}

\begin{lemma}\label{arch_rank_one_valuations} Let $R$ be a valuation ring with fraction field $K$. The value group $\Gamma = K^{\times}/R^{\times}$ is said to be of rank one if $\Gamma \not = 1$ and for every $0 < \gamma < 1$ in $\Gamma$ and any $\gamma^{\prime} \in \Gamma$, there exists a $n \gg 0$ such that $\gamma^{n} < \gamma^{\prime}$. Then, the value group $\Gamma = K^{\times}/R^{\times}$ is of rank one if and only if $R$ has Krull dimension one.
\end{lemma}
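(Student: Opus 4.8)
The statement is a standard dictionary fact between the order-theoretic ``rank one'' condition on the value group and the Krull-dimension-one condition on the valuation ring, so the plan is to translate everything through the bijection of Lemma \ref{prime_ideals_valuation_rings}. First I would recall that for a valuation ring $R$ with fraction field $K$ and value group $\Gamma = K^\times/R^\times$, the map $v \colon R \to \Gamma \cup \{0\}$ induces an inclusion-preserving bijection between ideals of $R$ and ideals of the totally ordered commutative monoid $\Gamma \cup \{0\}$, and this bijection sends prime ideals to prime ideals (Lemma \ref{prime_ideals_valuation_rings}). Hence $R$ has Krull dimension one precisely when $\Gamma \cup \{0\}$ has exactly two prime ideals, namely $\{0\}$ and the maximal one $\Gamma_{<1} \cup \{0\}$; equivalently, $\Gamma \neq 1$ (so the two are distinct) and there is no prime ideal strictly between them.

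The core of the argument is the equivalence between ``no intermediate prime ideal of $\Gamma \cup \{0\}$'' and the archimedean-type condition defining rank one. In one direction, suppose $\Gamma$ has rank one in the sense of the statement: $\Gamma \neq 1$, and for every $0 < \gamma < 1$ and every $\gamma' \in \Gamma$ there is $n \gg 0$ with $\gamma^n < \gamma'$. I would show the only primes are $\{0\}$ and $\mathfrak{m} := \Gamma_{<1} \cup \{0\}$. Indeed, let $P$ be a prime ideal with $P \neq \{0\}$, pick $\gamma \in P$ with $0 < \gamma < 1$ (such $\gamma$ exists since $P$ consists of elements $\le 1$ and cannot be $\{0\}$ or contain $1$). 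For any $\gamma' \in \Gamma_{\le 1}$ with $\gamma' \neq 0$, rank one gives $n$ with $\gamma^n < \gamma' \le 1$... wait, I need $\gamma^n \le \gamma'$, which follows since $\gamma^n < \gamma'$ or I apply the condition to get $\gamma^n$ below $\gamma'$; then $\gamma^n \in P$ because $P$ is an ideal (downward closed) and $\gamma^n \le \gamma$ actually lies in $P$... more carefully: $\gamma \in P$ and $P$ prime implies $\gamma^n \in P$ is automatic, and then $\gamma' \in P$ since $\gamma' \ge \gamma^n$ would go the wrong way — so instead use that $\gamma^n \le \gamma'$ forces nothing directly; the right move is: since $\gamma^n < \gamma'$ and $\gamma^n \in P$, downward closure gives every element $\le \gamma^n$ is in $P$, and I need $\gamma' \in P$, so I should pick $n$ with $\gamma^n \ge$ is impossible. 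Let me restate: rank one says for $0<\gamma<1$ and any $\gamma'$, some $\gamma^n < \gamma'$. Apply it with the roles so that given $\gamma' < 1$ nonzero, I get $\gamma^m < \gamma'$ fails to put $\gamma'$ in $P$. The correct formulation: to show $\mathfrak m \subseteq P$, take $\gamma' \in \mathfrak m$, i.e. $0 < \gamma' < 1$; by rank one applied to $\gamma'$ (in place of the generator) and $\gamma$, there is $n$ with $(\gamma')^n < \gamma$, hence $(\gamma')^n \in P$ by downward closure from $\gamma \in P$, hence $\gamma' \in P$ since $P$ is prime and $(\gamma')^n = \gamma' \cdots \gamma' \in P$ forces $\gamma' \in P$. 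So $P = \mathfrak m$, giving Krull dimension one.

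Conversely, suppose $R$ has Krull dimension one, so the only primes of $\Gamma \cup \{0\}$ are $\{0\}$ and $\mathfrak m = \Gamma_{<1} \cup \{0\}$, and $\Gamma \neq 1$. I must verify the rank-one inequality. Fix $0 < \gamma < 1$ and $\gamma' \in \Gamma$; I may assume $\gamma' < 1$ (if $\gamma' \ge 1$, take $n=1$). Consider the set $P_\gamma := \{ \delta \in \Gamma \cup \{0\} : \delta \le \gamma^n \text{ for some } n \ge 1\} \cup \{0\}$. I would check $P_\gamma$ is an ideal: it is downward closed by construction, and all elements are $\le \gamma < 1$. It is prime: if $\delta_1 \delta_2 \le \gamma^n$ with $\delta_1, \delta_2 \le 1$, then since $\Gamma$ is totally ordered one of $\delta_i$ is $\le \gamma^{\lceil n/2 \rceil}$ — more precisely, $\min(\delta_1,\delta_2)^2 \le \delta_1\delta_2 \le \gamma^n$ so $\min(\delta_1,\delta_2) \le \gamma^{n/2}$, hence $\le \gamma^{\lceil n/2\rceil}$ if $\gamma<1$, putting it in $P_\gamma$. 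Since $\gamma \in P_\gamma$ and $\gamma \neq 0$, $P_\gamma \neq \{0\}$, so by hypothesis $P_\gamma = \mathfrak m$. Therefore $\gamma' \in \mathfrak m = P_\gamma$, which by definition of $P_\gamma$ means $\gamma' \le \gamma^n$ for some $n$, i.e. $\gamma^n \le \gamma' $ — again I have the inequality direction to watch, but $\gamma' \le \gamma^n$ is exactly what membership in $P_\gamma$ gives, and that is the ``$\gamma^n$ dominates $\gamma'$ from above'' which combined with $\gamma^{n+1} < \gamma^n \le \gamma'$ being false — so the honest conclusion is $\gamma^n \ge \gamma'$... I realize the statement's inequality $\gamma^n < \gamma'$ with $\gamma<1$ means $\gamma^n$ is \emph{small}, below $\gamma'$; membership $\gamma' \in P_\gamma$ gives $\gamma' \le \gamma^{m}$ for some $m$, which is the reverse. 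The resolution: the rank-one condition as literally written is equivalent to its ``reverse'' by replacing $\gamma'$ with larger elements, or more simply the intended reading is that $\Gamma_{<1}$ has no proper nonzero convex subgroup; I would reconcile the two phrasings at the start of the proof (noting $\gamma^n \to 0$ cofinally downward is the same convexity statement) and then the argument above closes the loop. \textbf{The main obstacle} will be getting these order-direction bookkeeping details exactly right and cleanly matching the paper's multiplicative convention for $\Gamma$, together with the precise (dis)agreement between ``rank one'' as stated and the convexity/cofinality reformulation used in the Krull-dimension translation; the topos-theoretic and almost-mathematics machinery of the rest of the paper plays no role here.
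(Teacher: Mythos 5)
Your forward direction (rank one $\Rightarrow$ Krull dimension one) is correct and is essentially the paper's argument: via the bijection of Lemma \ref{prime_ideals_valuation_rings} one takes a nonzero prime $P$, a nonzero $\gamma \in P$, and for $\gamma' \in \frakm$ uses the rank-one condition to find $(\gamma')^n < \gamma$, whence $(\gamma')^n \in P$ by downward closure and $\gamma' \in P$ by primality. The genuine gap is in the converse, and it is twofold. First, your set $P_\gamma = \{\delta : \delta \le \gamma^n \text{ for some } n \ge 1\}\cup\{0\}$ is not prime in general, and your verification does not close: from $\delta_1\delta_2 \le \gamma^n$ you can only get $\min(\delta_1,\delta_2) \le \gamma^{\lfloor n/2 \rfloor}$, which for $n=1$ is vacuous. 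Concretely, already for $\Gamma = \RR_{>0}$ and $\gamma = 1/2$ one has $(3/4)(3/4) = 9/16 \le 1/2$ while $3/4 \notin P_\gamma$; so $P_\gamma$ is a non-prime ideal, and in particular $P_\gamma \ne \frakm$, so the dichotomy of primes gives you nothing. Second, even granting $P_\gamma = \frakm$, the conclusion would be $\gamma' \le \gamma^n$, which is the reverse of the inequality in the statement, and your closing claim that the condition ``is equivalent to its reverse'' is false: with $\gamma = 1/2$, $\gamma' = 3/4$ in $\RR_{>0}$ there is no $n$ with $\gamma' \le \gamma^n$, although this group certainly has rank one. So the hand-wave at the end cannot be made rigorous; the argument as written proves (or would prove) a false statement.

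The repair is close by and amounts to using the \emph{radical} of a principal ideal rather than the downward closure of the powers. On the monoid side: fix $0<\gamma'<1$ and set $Q := \{\delta : \delta^m \le \gamma' \text{ for some } m \ge 1\}\cup\{0\}$. This is a nonzero ideal all of whose elements are $<1$, and here your min-trick does work: if $(\delta_1\delta_2)^m \le \gamma'$ then $\min(\delta_1,\delta_2)^{2m} \le (\delta_1\delta_2)^m \le \gamma'$, so $Q$ is prime, hence $Q = \frakm$ by the Krull dimension one hypothesis; membership of any $0<\gamma<1$ in $Q$ gives $\gamma^m \le \gamma'$, and one more power gives the strict inequality. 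This is exactly the paper's converse, phrased on the ring side: pick $0 \ne r \in \frakm_R$ with $v(r) = \gamma'$, use that radicals of ideals in a valuation ring are prime to get $\sqrt{rR} = \frakm_R$, so $h^n \in rR$ for every $h \in \frakm_R$, i.e. $v(h)^n \le v(r)$.
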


Based on this result we will often abuse language and call valuation rings of rank one, what should be properly called valuation rings of Krull dimension one.

\begin{proof} See for example \cite[Theorem 10.7]{matsumura1989commutative} for a proof, we record it here for completeness. If the value group $\Gamma$ has rank one, by \ref{prime_ideals_valuation_rings} it suffices to show that $\Gamma \cup \{0\}$ has exactly two prime ideals, $\{0\}$ and $\frakm = \{x < 1 | x \in \Gamma \cup\{0\} \}$. The assumption that $\Gamma \not = 1$ already implies that $\{0\} \not= \frakm$, now assume that there exists a non-zero prime ideal $\frakp$ of $\Gamma \cup \{0\}$ such that there exists a $\gamma \in \frakm$ but $\gamma \not\in \frakp$. As $\frakp$ is non-zero there exists a $0 \not= \gamma^{\prime} \in \frakp$, but then the assumption that $\Gamma$ has rank one implies that there exists a $n \gg 0$ such that $\gamma^n < \gamma^{\prime}$, which implies that $\gamma \in \frakp$, contradicting our assumption. This proves that $R$ has Krull dimension one.

Conversely, if $R$ has Krull dimension one, pick $0 \not= r \in \frakm_R$ and by the primality of radical ideals in valuation rings we learn that $\sqrt{rR} = \frakm_R$. Therefore, for any $h \in \frakm_R$ we have that $0 \not= h^n \in rR$, proving that the value group $K^{\times}/R^{\times}$ is of rank one.
\end{proof}

\begin{lemma}\label{embed_value_group_reals_rk_one} Let $R$ be a rank one valuation ring with fraction field $K$. Then, there exists a order-preserving multiplicative injective map $K^{\times}/R^{\times} =: \Gamma \hookrightarrow \RR_{> 0}^{\times}$; furthermore, this extends to an order preserving multiplicative injective map $\Gamma \cup \{0\} \hookrightarrow \RR_{> 0}^{\times} \cup \{0\}$. 
	
Conversely, if $R$ is a valuation ring with fraction field $K$, if there is an order-preserving multiplicative injective map $K^{\times}/R^{\times} \hookrightarrow \RR_{> 0}^{\times}$ then $K^{\times}/R^{\times}$ is of rank one.
\end{lemma}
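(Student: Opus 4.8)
The statement has two halves and I would treat them separately. For the forward direction, suppose $R$ is a rank one valuation ring with fraction field $K$ and value group $\Gamma = K^\times/R^\times$, which by Lemma \ref{arch_rank_one_valuations} means $R$ has Krull dimension one. The plan is to build the embedding $\Gamma \hookrightarrow \RR_{>0}^\times$ by a classical Archimedean-group argument (essentially Hölder's theorem for ordered abelian groups). First I would fix any element $0 < \gamma_0 < 1$ of $\Gamma$ to serve as a ``base'', and define, for an arbitrary $\gamma \in \Gamma$, the real number
\begin{equation*}
	\lambda(\gamma) := \inf \Big\{ \tfrac{m}{n} \in \mathbf{Q} \ \Big|\ n > 0,\ \gamma^n \le \gamma_0^{m} \Big\}.
\end{equation*}
The rank one hypothesis (the Archimedean property: for any $0 < \gamma < 1$ and any $\gamma' \in \Gamma$ some power $\gamma^n < \gamma'$) is exactly what guarantees that for each $\gamma$ the set of such fractions is nonempty and bounded below, so $\lambda(\gamma)$ is a well-defined real number. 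I would then check that $\gamma \mapse^{-\lambda(\gamma)}$ — or more cleanly the map $\gamma \mapsto c^{\lambda(\gamma)}$ for a fixed $c \in (0,1)$ — is a group homomorphism into $\RR_{>0}^\times$, that it is order-preserving, and that it is injective: injectivity again uses the Archimedean property, since if $\gamma \neq 1$ then comparing powers of $\gamma$ against powers of $\gamma_0$ pins down a nonzero value of $\lambda$. Extending to $\Gamma \cup \{0\} \hookrightarrow \RR_{\ge 0}^\times \cup \{0\}$ is then trivial by sending $0 \mapsto 0$, using the convention from Lemma \ref{properties_value_group} that $0 < \gamma$ for all $\gamma \in \Gamma$.

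For the converse, suppose $R$ is any valuation ring with fraction field $K$ and there is an order-preserving multiplicative injection $\iota \colon K^\times/R^\times \hookrightarrow \RR_{>0}^\times$. I want to conclude $\Gamma := K^\times/R^\times$ has rank one, i.e. satisfies the Archimedean property. This is immediate: $\RR_{>0}^\times$ itself is Archimedean (for real numbers $0 < x < 1$ and any $y > 0$ there is $n$ with $x^n < y$), and an order-preserving injective homomorphism reflects this property. Concretely, given $0 < \gamma < 1$ in $\Gamma$ and $\gamma' \in \Gamma$, apply $\iota$ to get $0 < \iota(\gamma) < 1$ and $\iota(\gamma') > 0$ in $\RR_{>0}^\times$; pick $n$ with $\iota(\gamma)^n < \iota(\gamma')$, i.e. $\iota(\gamma^n) < \iota(\gamma')$, and since $\iota$ is order-preserving and injective this forces $\gamma^n < \gamma'$. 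One should also note $\Gamma \neq 1$, which follows because the image of $\iota$ is nontrivial (otherwise $K = R$ is a field and there is nothing being claimed, or one simply excludes that degenerate case). By Lemma \ref{arch_rank_one_valuations} this gives Krull dimension one.

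The only genuinely delicate point is the well-definedness and homomorphism property of $\lambda$ in the forward direction — in particular verifying that $\lambda(\gamma_1 \gamma_2) = \lambda(\gamma_1) + \lambda(\gamma_2)$, which requires combining approximating inequalities $\gamma_1^{n_1} \le \gamma_0^{m_1}$, $\gamma_2^{n_2} \le \gamma_0^{m_2}$ into a single one for $\gamma_1\gamma_2$ and passing to the infimum carefully, and checking the reverse inequality using the Archimedean property to produce lower bounds. This is the standard proof of Hölder's theorem, so while it is the ``hard part'' it is entirely routine; I would either carry it out in a few lines or cite a reference such as \cite[Theorem 10.7]{matsumura1989commutative} together with the observation that a rank one ordered abelian group embeds in $\RR$. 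Everything else — monotonicity, the extension to $0$, and the entire converse direction — is formal.
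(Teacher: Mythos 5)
The paper disposes of this lemma with a one-line citation to Matsumura's Theorem~10.6, which is exactly H\"older's embedding theorem for Archimedean (rank-one) ordered abelian groups; your sketch simply unpacks that theorem, and your converse -- reflecting the Archimedean property along an order-preserving multiplicative injection into $\RR_{>0}^\times$ -- is correct and standard. Two corrections to the proposal. First, the relevant reference is Matsumura's Theorem~10.6, not~10.7; 10.7 is what underlies Lemma~\ref{arch_rank_one_valuations}. Second, the orientation of your H\"older construction is off: with $0 < \gamma_0 < 1$ fixed and the constraint $\gamma^n \le \gamma_0^m$ (for $n > 0$), the set of admissible $m/n$ is downward-closed in $\mathbf{Q}$ -- for $\gamma = \gamma_0$ it is exactly $\{q \in \mathbf{Q} : q \le 1\}$ -- so the $\inf$ you wrote is $-\infty$ for every $\gamma$. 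You want a $\sup$ (equivalently the Dedekind cut), which gives $\lambda(\gamma_0) = 1$, and then $\gamma \mapsto c^{\lambda(\gamma)}$ for fixed $c \in (0,1)$ is the desired embedding. That kind of sign bookkeeping, plus the additivity check for $\lambda$, is precisely what makes the paper's direct citation to Matsumura the cleaner choice; your outline is otherwise sound.
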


\begin{proof} Follows from \cite[Theorem 10.6]{matsumura1989commutative}.
\end{proof}

\begin{lemma}\label{recog_val_rings} Let $K$ be a field, and $v: K \rightarrow \Gamma \cup \{0\}$ a map satisfying: $v(a) = 0$ if and only if $a = 0$, $v(ab) = v(a)v(b)$ and $v(a + b) \le \max(v(a), v(b))$. Then, $R = \{x \in K| v(x) \le 1 \}$ is a valuation ring.
\end{lemma}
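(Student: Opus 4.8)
The claim is the converse direction of the Ostrowski-style correspondence between valuations on a field $K$ and subrings $R\subset K$: given a multiplicative, non-archimedean map $v:K\to\Gamma\cup\{0\}$ as in the hypotheses, the set $R=\{x\in K\mid v(x)\le 1\}$ is a valuation ring. The plan is to verify the defining properties in two stages: first that $R$ is a local domain with fraction field $K$, and then that $R$ is maximal for the domination relation among local subrings of $K$.

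First I would check that $R$ is a subring of $K$: closure under addition is exactly the non-archimedean inequality $v(a+b)\le\max(v(a),v(b))\le 1$, closure under multiplication follows from $v(ab)=v(a)v(b)\le 1$, and $v(-1)=1$ (since $v(-1)^2=v(1)=1$ and $\Gamma\cup\{0\}$ is totally ordered, so $v(-1)=1$), giving closure under negation; $v(1)=1$ puts $1\in R$. Since $v$ is multiplicative and $v(x)=0$ only for $x=0$, $R$ has no zero divisors, so it is a domain, and clearly $\Frac(R)=K$ because any $x\in K^\times$ with $v(x)>1$ has $v(x^{-1})=v(x)^{-1}<1$, so $x=1/(x^{-1})$ with $x^{-1}\in R$. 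Next I would identify the units and the maximal ideal: $x\in R^\times$ iff $v(x)\le 1$ and $v(x^{-1})\le 1$ iff $v(x)=1$, and $\frakm:=\{x\in K\mid v(x)<1\}\subset R$ is an ideal (again by the non-archimedean inequality and multiplicativity) whose complement in $R$ is precisely $R^\times$; hence $R$ is local with maximal ideal $\frakm$.

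It remains to show $R$ is maximal among local subrings of $K$ under domination. Here the key step is the standard observation that for any $x\in K^\times$, either $v(x)\le 1$ or $v(x^{-1})\le 1$ (since $v(x)v(x^{-1})=1$ and the value monoid is totally ordered), so $x\in R$ or $x^{-1}\in R$. Now suppose a local subring $B\subset K$ dominates $R$, i.e. $R\subset B$ and $\frakm_B\cap R=\frakm$. Take any $x\in B$; if $x\notin R$ then $v(x)>1$, so $x^{-1}\in\frakm\subset\frakm_B$, whence $x^{-1}$ is a non-unit of $B$ while $x\in B$ is its inverse --- a contradiction. Therefore $B\subseteq R$, so $B=R$, establishing maximality.

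The only mildly delicate point --- and the one I would present carefully --- is the dichotomy $x\in R$ or $x^{-1}\in R$ together with the compatibility of the total order on $\Gamma\cup\{0\}$ with multiplication, which is what forces $v(x)v(x^{-1})=1$ to split as claimed; everything else is a routine unwinding of the axioms on $v$. I expect no serious obstacle: this is essentially \cite[Tag 00IA]{stacks-project} read in reverse, and the proof is a short diagram-free argument once the ring and locality structure of $R$ is in place.
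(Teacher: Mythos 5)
Your proof is correct, but it takes a longer, more self-contained route than the paper does. The paper's proof is a one-liner: it observes exactly the dichotomy you isolate as ``the only mildly delicate point'' --- for every $x \in K^\times$, either $v(x) \le 1$ or $v(x^{-1}) \le 1$ --- and then cites \cite[Tag 052K]{stacks-project}, which says precisely that a subring $R \subset K$ with this property is a valuation ring. Your proof, by contrast, re-derives the content of that tag: you verify by hand that $R$ is a local domain with explicit maximal ideal $\frakm = \{v < 1\}$, that $\Frac(R) = K$, and that $R$ is maximal under domination, the last step being the standard argument that any $x \in B \setminus R$ in a dominating local ring $B$ forces $x^{-1} \in \frakm \subset \frakm_B$ to be simultaneously a unit and a non-unit of $B$. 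Both proofs hinge on the same dichotomy; the paper's buys brevity by outsourcing the rest to a citation, while yours buys transparency by unwinding the definition. One small slip: the reference you gesture at, \cite[Tag 00IA]{stacks-project}, is about dominating a local ring by a valuation ring (an existence statement via Zorn), not the characterization you are actually re-proving; the relevant tag is 052K.
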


\begin{proof} Since $R \subset K$, and for all $x \in K$ we have that $v(x) \le 1$ or $v(x^{-1}) \le 1$ or both, it follows that $R$ is a valuation ring by \cite[Tag 052K]{stacks-project}.
\end{proof}

\begin{example}\label{non_arch_field_val_rank_one} Let $K$ be a non-archimedean field with (necessarily) multiplicative norm $|-|: K \rightarrow \RR_{\ge 0}^{\times}$, then $K_{\le 1}$ is a rank one valuation ring. Indeed, by Lemma \ref{recog_val_rings} we know that $K_{\le 1}$ is a valuation ring, thus to show that $K_{\le 1}$ is of rank one it suffices to show that we have an order preserving multiplicative injective map $K^{\times}/K_{\le 1}^{\times} \hookrightarrow \RR_{> 0}^{\times}$. We claim this embedding is given by the non-archimedean norm $|-|$, indeed by construction we have that $x \in K^{\times}$ satisfies $|x| = 1$ if and only if $x \in K_{\le 1}^{\times}$, thus the map $|-|: K^{\times} \rightarrow \RR_{> 0}^{\times}$ factors as
\begin{equation*}
	|-|: K^{\times} \rightarrow K^{\times}/K_{\le 1}^{\times} \hookrightarrow \RR_{> 0}^{\times}
\end{equation*}
and the fact that the injective map $K^{\times}/K_{\le 1}^{\times} \hookrightarrow \RR_{> 0}^{\times}$ is order preserving follows from the construction.
\end{example}

\begin{lemma}\label{equiv_extensions_rk_one} Let $f: V \rightarrow W$ be a morphism of rank one valuation rings, and $\frakm_V \subset V$, $\frakm_W \subset W$ their respective maximal ideals. Then, the following are equivalent
\begin{enumerate}[(1)]
	\item The morphism $f: V \rightarrow W$ is faithfully flat.
	\item The induced map $|\Spec(W)| \rightarrow |\Spec(V)|$ of underlying topological spaces is the identity map of Sierpinski spaces.
	\item The morphism $f: V \rightarrow W$ is injective and $f(\frakm_V) \subset \frakm_W$.
\end{enumerate}
If a morphism of rank one valuations $f: V \rightarrow W$ satisfies this equivalent conditions we say that $f$ is an extension of rank one valuation rings.
\end{lemma}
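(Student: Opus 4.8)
The strategy is to prove the cyclic chain of implications $(1) \Rightarrow (2) \Rightarrow (3) \Rightarrow (1)$, using the dictionary between rank one valuation rings and their value groups developed in Lemmas~\ref{properties_value_group}, \ref{prime_ideals_valuation_rings}, \ref{arch_rank_one_valuations}, and the topological description of $\Spec$ of a rank one valuation ring (which has exactly two points, the generic point and the closed point, forming a Sierpinski space). Throughout I will write $\Gamma_V = K_V^\times / V^\times$ and $\Gamma_W = K_W^\times / W^\times$ for the value groups, where $K_V = \Frac(V)$ and $K_W = \Frac(W)$; by Lemma~\ref{embed_value_group_reals_rk_one} both admit order-preserving multiplicative embeddings into $\RR_{>0}^\times$.

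For $(1) \Rightarrow (2)$: if $f$ is faithfully flat then $\Spec(W) \rightarrow \Spec(V)$ is surjective, and since both spectra are two-point Sierpinski spaces (the closed point corresponding to the maximal ideal, the open point to $(0)$), a surjection of such spaces that is also continuous must be the identity once we check that the closed point of $\Spec(W)$ maps to the closed point of $\Spec(V)$. Flatness forces the map to be injective (a flat local homomorphism of domains is injective, as $V \hookrightarrow V \otimes_V W = W$), so $(0)$ maps to $(0)$; surjectivity then forces $\frakm_W$ to lie over $\frakm_V$, i.e. the map is the identity of Sierpinski spaces. For $(2) \Rightarrow (3)$: the statement that $|\Spec(W)| \to |\Spec(V)|$ is the identity of Sierpinski spaces says precisely that $\frakm_V = f^{-1}(\frakm_W)$ and that the generic point maps to the generic point; the latter means $\ker(f)$ is contained in $(0)$, i.e.\ $f$ is injective, while the former gives $f(\frakm_V) \subset \frakm_W$ directly.

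The main work is $(3) \Rightarrow (1)$. Since $V$ and $W$ are valuation rings they are torsion-free over themselves and torsion-free modules over a valuation ring are flat (a module over a valuation ring is flat iff it is torsion-free, \cite[Tag 0539]{stacks-project}), so $W$ is automatically flat over $V$ once we know $f$ is injective --- this is where hypothesis $(3)$ gives injectivity. It remains to check \emph{faithful} flatness, i.e.\ that $\Spec(W) \to \Spec(V)$ is surjective, equivalently that $\frakm_V W \neq W$, equivalently $f(\frakm_V) \subset \frakm_W$; but that is exactly the second half of $(3)$. One should double-check the small point that $f(\frakm_V) \subset \frakm_W$ together with injectivity implies $\frakm_V = f^{-1}(\frakm_W)$ (the reverse inclusion is automatic since $f^{-1}(\frakm_W)$ is a prime ideal of $V$ contained in... no: $V$ has exactly two primes, $(0)$ and $\frakm_V$; $f^{-1}(\frakm_W)$ contains $f(\frakm_V)$'s preimage hence is nonzero, so it equals $\frakm_V$), so that $f$ is in fact a local homomorphism, which is the cleanest way to see faithful flatness of a flat local map.

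\textbf{Expected obstacle.} There is no deep obstacle here; the proof is a bookkeeping exercise translating between the three languages (flatness, topology of the two-point spectrum, and the ideal-theoretic condition). The one place to be careful is making sure the rank one hypothesis is genuinely used --- it enters through Lemma~\ref{arch_rank_one_valuations} to guarantee that $\Spec(V)$ and $\Spec(W)$ have exactly two points, so that ``surjective continuous map of Sierpinski spaces'' can be upgraded to ``identity of Sierpinski spaces'' and conversely; without the rank one assumption the equivalence $(1) \Leftrightarrow (2)$ in this form would fail. I would state this dependence explicitly when invoking Lemma~\ref{arch_rank_one_valuations}.
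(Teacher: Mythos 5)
Your proof is correct and follows essentially the same route as the paper: the same cyclic chain $(1)\Rightarrow(2)\Rightarrow(3)\Rightarrow(1)$, using surjectivity of faithfully flat maps on $\Spec$, the two-point (Sierpinski) description of the spectrum of a Krull-dimension-one valuation ring, and torsion-freeness over a valuation ring giving flatness, with the ideal-theoretic condition upgrading this to faithful flatness. The only cosmetic differences are that the paper settles $(2)\Rightarrow(3)$ by contradiction (factoring through $\Spec(V/\pi)$ or $\Spec(V[\pi^{-1}])$) where you read both conditions off directly from $f^{-1}(\frakm_W)=\frakm_V$ and $\ker f = 0$, and your parenthetical justification of injectivity in $(1)\Rightarrow(2)$ is garbled --- just cite that faithfully flat (or even flat between domains) implies injective, rather than ``$V \hookrightarrow V\otimes_V W = W$''.
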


\begin{proof} $(1) \Rightarrow (2)$. Recall that a faithfully flat morphism $\Spec(R) \rightarrow \Spec(S)$ is surjective on its underlying topological spaces \cite[Tag 00HQ]{stacks-project}, and that for any rank one valuation ring $V$ we have the identification $|\Spec(V) | = \{ \eta, s \}$, where the latter is the Sierpinski space. Hence, we conclude that $|\Spec(W)| \rightarrow |\Spec(V)|$ is the identity map as it is a continuous surjective maps of Sierpinski spaces.

$(2) \Rightarrow (3)$. Let $\pi \in \frakm_V$ be a non-zero element, if $f(\pi) \in W$ were zero then then the map $\Spec(W) \rightarrow \Spec(V)$ will factor as $\Spec(W) \rightarrow \Spec(V/\pi) \rightarrow \Spec(V)$ showing that the desired map cannot be surjective. Similarly, if $f(\pi) \in W$ is not an element of the maximal ideal $\frakm_W \subset W$, then the map $\Spec(W) \rightarrow \Spec(V)$ factors as $\Spec(W) \rightarrow \Spec(V[\pi^{-1}]) \rightarrow \Spec(V)$, which implies that the desired map cannot be surjective since $V[\pi^{-1}] = \Frac(V)$.

$(3) \Rightarrow (1)$. From the hypothesis it follows that $f^{-1} (\frakm_W) = \frakm_V$ and $f^{-1} (0) = 0$, showing that the induced morphism $\Spec(W) \rightarrow \Spec(V)$ is surjective, so it suffices to show that $V \rightarrow W$ is flat by \cite[Tag 00HQ]{stacks-project}. But since $W$ is a domain and the map $f: V \rightarrow W$ is injective, it follows that $W$ is torsion-free as a $V$-module, proving that $f$ is flat by \cite[Tag 0539]{stacks-project}.
\end{proof}

\begin{prop} Let $f: V \rightarrow W$ be an extension of rank one valuation rings, with fraction fields $K_V$ and $K_W$ respectively. Then, there is a unique multiplicative order-preserving embedding $K_V^{\times}/V^{\times} \hookrightarrow K_W^{\times}/W^{\times}$ which is compatible with the map $K_V \rightarrow K_W$. Furthermore, if we fix a multiplicative order-preserving embedding $K_V^{\times}/V^{\times} \hookrightarrow \RR_{> 0}^{\times}$ (as in \ref{embed_value_group_reals_rk_one}) and assume that it has dense image, then there is a unique multiplicative order-preserving embedding $K_W^{\times}/W^{\times} \hookrightarrow \RR_{ > 0}^{\times}$ compatible with $K_V^{\times}/V^{\times} \hookrightarrow \RR_{> 0}^{\times}$.
\end{prop}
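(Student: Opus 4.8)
The statement has two parts. For the first part, I would use the standard correspondence between rank-one valuation rings and their value groups: given an extension $f \colon V \hookrightarrow W$ as in Lemma \ref{equiv_extensions_rk_one}, the induced map on fraction fields $K_V \to K_W$ is injective (since $f$ is injective and $W$ is a domain containing $V$, the induced map of fraction fields is forced). I would then observe that $f$ descends to a map $K_V^\times/V^\times \to K_W^\times/W^\times$; this map is a multiplicative homomorphism by functoriality, and it is \emph{injective} precisely because $f(\frakm_V) \subset \frakm_W$ and $f(V \setminus \frakm_V) \subset W \setminus \frakm_W$ (the latter because $f$, being local and faithfully flat, reflects units — an element $v \in V$ with $f(v) \in W^\times$ must satisfy $v \notin \frakm_V$ by Lemma \ref{equiv_extensions_rk_one}(3), hence $v \in V^\times$). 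Order-preservation follows from the description of the order in Lemma \ref{properties_value_group}: $\gamma_1 \le \gamma_2$ in $\Gamma_V$ iff $\gamma_1/\gamma_2$ lifts to $V \setminus \{0\}$, and such a lift maps into $W \setminus \{0\}$ under $f$. Uniqueness is immediate since the embedding is determined on the image of $K_V^\times$, which is all of $K_V^\times/V^\times$.

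For the second part, suppose we have fixed an order-preserving multiplicative embedding $\iota_V \colon \Gamma_V := K_V^\times/V^\times \hookrightarrow \RR_{>0}^\times$ with dense image, and we want to extend it (compatibly, via the embedding $\Gamma_V \hookrightarrow \Gamma_W := K_W^\times/W^\times$ from part one) to an order-preserving multiplicative embedding $\iota_W \colon \Gamma_W \hookrightarrow \RR_{>0}^\times$. By Lemma \ref{embed_value_group_reals_rk_one} \emph{some} order-preserving multiplicative embedding $\Gamma_W \hookrightarrow \RR_{>0}^\times$ exists since $W$ is rank one; the content is that it can be chosen to restrict to $\iota_V$ on $\Gamma_V$, and that this choice is \emph{unique}. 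Both assertions I would deduce from the density of $\iota_V(\Gamma_V)$: for any $\gamma \in \Gamma_W$, the set of $\delta \in \Gamma_V$ with $\iota_V(\delta) < $ (any chosen embedding value of) $\gamma$ and the set with $\iota_V(\delta) > $ that value form a Dedekind cut of $\iota_V(\Gamma_V) \subset \RR_{>0}^\times$, and density forces this cut to pin down a unique real number, which must therefore be the value $\iota_W(\gamma)$. Concretely: start from any embedding $\iota_W^0$ given by Lemma \ref{embed_value_group_reals_rk_one}; its restriction $\iota_W^0|_{\Gamma_V}$ is another order-preserving multiplicative embedding of the rank-one group $\Gamma_V$ into $\RR_{>0}^\times$, so by the uniqueness-up-to-scaling (in additive terms, scaling by a positive real; this is the classical Hölder-type rigidity, which is what Lemma \ref{embed_value_group_reals_rk_one} encodes) there is a unique $\lambda > 0$ with $\iota_W^0|_{\Gamma_V}^{\,\lambda} = \iota_V$; then $\iota_W := (\iota_W^0)^{\lambda}$ is the desired extension. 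For uniqueness, if $\iota_W'$ is another extension then $\iota_W'$ and $\iota_W$ agree on $\Gamma_V$, and since $\Gamma_V$ is cofinal and coinitial in $\Gamma_W$ in the rank-one order and $\iota_V(\Gamma_V)$ is dense, an order-preserving multiplicative map out of $\Gamma_W$ agreeing with $\iota_W$ on $\Gamma_V$ is forced to agree everywhere by the Dedekind-cut/squeeze argument.

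\textbf{Main obstacle.}
The genuinely delicate point is the rigidity input: that two order-preserving multiplicative embeddings of a \emph{rank-one} ordered abelian group into $\RR_{>0}^\times$ differ by raising to a positive real power (equivalently, in additive notation, by a positive scalar — this is the Hölder embedding theorem). I would invoke this through Lemma \ref{embed_value_group_reals_rk_one} and the theorem of Matsumura cited there (\cite[Theorem 10.6]{matsumura1989commutative}), rather than reprove it; one must be a little careful that the lemma as stated gives existence of the embedding, and extract the uniqueness-up-to-power from the same circle of ideas (it is standard but not literally stated). The other place requiring care is verifying that the real number produced by the Dedekind cut is actually \emph{attained} by $\iota_W(\gamma)$ and not merely squeezed between $\iota_V$-values — this is where density of $\iota_V(\Gamma_V)$ is used essentially, and it is also the reason the hypothesis ``dense image'' cannot be dropped. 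Everything else (functoriality of value groups, order-preservation, compatibility with $K_V \to K_W$) is routine bookkeeping with Lemmas \ref{properties_value_group} and \ref{equiv_extensions_rk_one}.
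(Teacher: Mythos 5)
Your first part coincides with the paper's argument: injectivity of $K_V^\times/V^\times \to K_W^\times/W^\times$ from $f^{-1}(W^\times)=V^\times$ (Lemma \ref{equiv_extensions_rk_one}) and order-preservation from the description of the order in Lemma \ref{properties_value_group}. For the second part you take a genuinely different route. The paper constructs the extension explicitly by the formula $|a|_W = \inf\{|\gamma|_V : \gamma \in K_V^\times/V^\times,\ a \le \gamma\}$; uniqueness and order-preservation are then read off directly, multiplicativity is verified with increasing approximating sequences from $\Gamma_V$ (this is exactly where density is indispensable — without dense image the infimum formula is not multiplicative), and injectivity uses the rank-one (Archimedean) property of $\Gamma_W$. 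You instead start from an arbitrary embedding $\iota_W^0$ of $\Gamma_W$ supplied by Lemma \ref{embed_value_group_reals_rk_one}, rescale it by a positive real power so that it restricts to $\iota_V$, and get uniqueness by a Dedekind-cut squeeze using density and the cofinality/coinitiality of $\Gamma_V$ in $\Gamma_W$ (which, as you note, follows from rank one and $f(\frakm_V)\subset\frakm_W$). What each buys: the paper's construction is self-contained given its stated lemmas and makes the role of the density hypothesis transparent; your route is cleaner once granted the Hölder-type rigidity (two order-preserving multiplicative embeddings of an Archimedean ordered group into $\RR_{>0}^\times$ differ by raising to a positive power), and in fact with that rigidity applied to $\Gamma_W$ the density hypothesis is not even needed for uniqueness, since agreement at a single nontrivial element of $\Gamma_V$ already pins the exponent to $1$. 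The one caveat, which you flag yourself, is that this rigidity is not stated in Lemma \ref{embed_value_group_reals_rk_one} nor elsewhere in the paper, so in this write-up it would have to be proved (the standard argument via rational approximation of $\log$-ratios is short) or cited precisely; as it stands that step is an external input rather than a gap that would make the argument fail.
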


\begin{proof} From \ref{equiv_extensions_rk_one} we learn that the induced map $K_V^{\times} \hookrightarrow K_W^{\times}$ is injective and $f^{-1}(W^{\times}) = V^{\times}$, which implies that the induced map $K_V^{\times}/V^{\times} \rightarrow K_W^{\times}/W^{\times}$ is multiplicative and injective. Furthermore, from the definition of the order structure on value groups (cf. \ref{properties_value_group}) one can easily check that the induced map $K_V^{\times}/V^{\times} \rightarrow K_W^{\times}/W^{\times}$ is order preserving.

Next, fix a multiplicative order-preserving embedding $|-|_V: K_V^{\times}/V^{\times} \hookrightarrow \RR_{> 0}^{\times}$, and define
\begin{align*}
	|-|_W: K_W^{\times}/W^{\times} \rightarrow \RR_{> 0}^{\times} && |a|_W = \inf\{|\gamma|_V \text{ for all } \gamma \in K_V^{\times}/V^{\times} \text{such that } a \in (K_W^{\times}/W^{\times})_{\le \gamma} \}
\end{align*}
Its clear from the construction and the fact that $|-|_V$ has dense image that $|-|_W$ is the unique order preserving map $K_W^{\times}/W^{\times} \rightarrow \RR_{> 0}^{\times}$ which is compatible with $|-|_V$. To show that $|-|_W$ is multiplicative, notice that by the density of the value group $|-|_V: K^{\times}_V/V^{\times} \hookrightarrow \RR_{> 0}^{\times}$ we can find a increasing sequence of elements $\{\gamma_{a,n}\}, \{\gamma_{b,n}\} \subset K^{\times}_V/V^{\times}$ such that
\begin{align*}
	\lim_{n \rightarrow \infty} |\gamma_{a,n}|_V = |a|_W && \lim_{n \rightarrow \infty} |\gamma_{b,n}|_V = |b|_W
\end{align*}
then the multiplicativity of $|-|_V$ implies that $\lim_{n \rightarrow \infty} |\gamma_{a,n} \gamma_{b,n}| = |a|_W |b|_W$; and as both sequences $\{\gamma_{a,n}\}, \{\gamma_{b,n}\}$ are increasing we learn that $\gamma_{a,n} \gamma_{b, n} \le ab$, implying that $|a|_W |b|_W \le |ab|_W$. On the other hand, fix two objects $a,b \in K_W^{\times}/W^{\times}$, then its clear that if $a \le \gamma_a$ and $b \le \gamma_b$ for $\gamma_a, \gamma_b \in K^{\times}_V/V^{\times}$, then $ab \le \gamma_1 \gamma_2$ showing that $|ab|_W \le |a|_W |b|_W$. This proves that $|-|_W: K_W^{\times}/W^{\times} \rightarrow \RR_{> 0}^{\times}$ is multiplicative. 

It remains to show that $|-|_W$ is injective. Its clear from the construction of the order on $K_W^{\times}/W^{\times}$ that if $a \le b$ and $b \le a$ then $a = b \in K_W^{\times}/W^{\times}$, thus for two distinct elements $a, b \in K_W^{\times}/W^{\times}$ we may assume that $a < b$, and in particular $a/b < 1$. By the multiplicativity of $|-|_W$ it then suffices to show that $|a/b|_W < 1 \in \RR_{> 0}^{\times}$. As $W$ has rank one for any element $\gamma \in (K_V^{\times}/V^{\times})_{< 1}$ there exists a $n \gg 0$ such that $(a/b)^n < \gamma < 1$ which in turn implies that $|a/b|^n_W < 1$, proving the claim.
\end{proof}

\begin{lemma}\label{rk_one_val_tot_int_closed} Let $V$ be a rank one valuation ring with fraction field $K$, and fix a multiplicative order preserving embedding $|-|_V: K^{\times}/V^{\times} \hookrightarrow \RR^{\times}_{> 0}$. Then, for any $\pi \in \frakm \subset V$ we have that $V \subset V[\frac{1}{\pi}] = K$ is $\pi$-total integrally closed (cf. \ref{defn_total_int_closed}).
\end{lemma}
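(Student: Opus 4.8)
The statement to prove is that a rank one valuation ring $V$ with fraction field $K$ and a chosen embedding $|-|_V\colon K^\times/V^\times \hookrightarrow \RR_{>0}^\times$ satisfies: for any $\pi \in \frakm \subset V$, the subring $V \subset V[\frac{1}{\pi}] = K$ is $\pi$-total integrally closed in the sense of Definition \ref{defn_total_int_closed}. Unwinding that definition, I need to show that if $f \in K$ satisfies $\{f^n : n \in \ZZ_{\ge 0}\} \subset \frac{1}{\pi^m} V$ for some fixed $m \in \ZZ_{\ge 0}$, then $f \in V$. First I would observe that $V[\frac{1}{\pi}] = K$: since $\pi \in \frakm$ is a non-zero element of the valuation ring $V$ and $V$ has Krull dimension one (Lemma \ref{arch_rank_one_valuations}), we have $\sqrt{\pi V} = \frakm_V$, so inverting $\pi$ inverts every nonzero element, giving the fraction field $K$.

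The core of the argument is a valuation-theoretic estimate. Translate the hypothesis through the valuation $v\colon K \to \Gamma \cup \{0\}$ where $\Gamma = K^\times/V^\times$, using the multiplicative order-preserving embedding $|-|_V \colon \Gamma \cup \{0\} \hookrightarrow \RR_{\ge 0}^\times$ (extended to send $0 \mapsto 0$, cf. Lemma \ref{embed_value_group_reals_rk_one}); write $|x|$ for $|v(x)|_V \in \RR_{\ge 0}$. The condition $f^n \in \frac{1}{\pi^m} V$ for all $n$ says precisely that $\pi^m f^n \in V$, i.e. $|\pi|^m |f|^n \le 1$, i.e. $|f|^n \le |\pi|^{-m}$ for all $n \in \ZZ_{\ge 0}$, using that $|-|$ is multiplicative on $K^\times$ and that membership in $V$ is detected by $|x| \le 1$ (Lemma \ref{properties_value_group}(3)). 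Now if $|f| > 1$, then $|f|^n \to \infty$ as a real number, contradicting the uniform bound $|f|^n \le |\pi|^{-m}$. Hence $|f| \le 1$, which by Lemma \ref{properties_value_group}(3) means exactly $f \in V$. That completes the proof.

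The plan is therefore quite short and the only thing to be careful about is the bookkeeping between the abstract totally ordered value group and its real embedding: the hypothesis of Definition \ref{defn_total_int_closed} is naturally phrased in terms of the fixed real-valued absolute value $|-|_V$, and I want to make sure the archimedean property of $\RR_{>0}^\times$ (powers of a real number $>1$ are unbounded) is what is doing the work, rather than invoking rank one again. In fact I would not even need the rank one hypothesis for the implication "$|f|^n$ bounded $\Rightarrow |f| \le 1$" once the value group is embedded in $\RR_{>0}^\times$; rank one is used only to know such an embedding exists and to identify $V[\frac{1}{\pi}]$ with $K$. I do not anticipate a genuine obstacle here — the statement is essentially a restatement of the fact that a bounded geometric sequence in $\RR_{>0}$ has ratio $\le 1$ — but I would take care to state the reduction to $|f|\le 1$ cleanly, since this same lemma is presumably used downstream to show that rank one valuation rings give rise (via $(-)[\frac{1}{\varpi}]$) to non-archimedean fields, i.e.\ to objects of $\uBan_K$.
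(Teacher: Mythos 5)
Your argument is correct and is essentially the paper's own proof: translate the boundedness hypothesis $\pi^m f^n \in V$ into $|f|^n|\pi|^m \le 1$ via the real-valued multiplicative embedding, conclude $|f|\le 1$ from the archimedean property of $\RR_{>0}$, and invoke Lemma \ref{properties_value_group}(3) to get $f \in V$. The extra remark identifying $V[\frac{1}{\pi}]$ with $K$ is harmless but not needed beyond what the paper already assumes.
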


\begin{proof} We need to show that for any $f \in K$ if $\{f^{\ZZ_{\ge 0}} \} \subset \frac{1}{\pi^n}V$ for some $n$, then $f \in V$. Indeed, by assumption we have that $\{f^{\ZZ_{\ge 0}} \pi^n \} \subset V$ which implies that $|f^{\ZZ_{\ge 0}} \pi^n|_V \le 1$ in $\RR_{> 0}^{\times}$. Then, the multiplicativity of $|-|_V$ implies that $|f|_V \le 1$, which shows that $f \in V$ by \ref{properties_value_group}.
\end{proof}

\begin{lemma}\label{completion_rank_one_val} Let $V$ be a rank one valuation ring with fraction field $K$, and fix a multiplicative order preserving embedding $|-|: K^{\times}/V^{\times} \hookrightarrow \RR_{> 0}^{\times}$. Then, for any $\pi \in \frakm_V \subset V$, the canonical map $V \rightarrow V_{\pi}^{\wedge}$ is an extension of rank one valuation rings, where $V_{\pi}^{\wedge}$ denotes the (classical, equivalently derived) $\pi$-completion of $V$.
\end{lemma}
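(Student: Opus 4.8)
The plan is to verify that $V_\pi^\wedge$ is again a rank one valuation ring and then check that the canonical map $V \to V_\pi^\wedge$ satisfies one of the equivalent conditions of Lemma \ref{equiv_extensions_rk_one}; I will aim for condition $(3)$, namely that the map is injective with $f(\frakm_V) \subset \frakm_{V_\pi^\wedge}$. First I would observe that $V$ is a domain and $\pi$ is a non-zero non-unit, so $V$ is $\pi$-torsion free and hence (by Remark \ref{perfectoid_bounded_pi_torsion}, or just directly) the classical and derived $\pi$-completions agree; this is what makes $V_\pi^\wedge = \lim_n V/\pi^n$ a reasonable object to work with. The key structural input is that the value group $\Gamma = K^\times/V^\times$ embeds order-preservingly and multiplicatively into $\RR_{>0}^\times$ via the fixed $|-|$, so that $V$ carries a genuine real-valued (rank one) absolute value; then $V$ with its $\pi$-adic topology is exactly the unit ball of the normed field obtained by completing $K$ with respect to $|-|$, and $V_\pi^\wedge$ is the unit ball of that complete normed field.

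The main steps, in order, are as follows. Step one: extend $|-|$ to a (still multiplicative, non-archimedean) absolute value on $\widehat{K} := $ the completion of $K$ with respect to $|-|$; multiplicativity passes to the completion because multiplication is continuous and $K$ is dense, and the non-archimedean inequality is a closed condition. Step two: identify $V_\pi^\wedge$ with $\widehat{K}_{\le 1} = \{x \in \widehat{K} : |x| \le 1\}$. For this I would show that the $\pi$-adic topology on $V$ coincides with the topology induced by $|-|$ restricted to $V$ — this is the rank-one analog of Lemma \ref{equiv_topologies_seminormed}, using that for $0 < |\pi| < 1$ the sets $\{\pi^n V\}$ and $\{V_{\le r}\}$ are cofinal (because $\Gamma \hookrightarrow \RR_{>0}^\times$ and rank one forces $|\pi|^n \to 0$, and conversely any $r$ is $\ge |\pi|^N$ for $N$ large). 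Given that, completing $V$ $\pi$-adically is the same as taking the closure of $V$ inside $\widehat{K}$, which is precisely $\widehat{K}_{\le 1}$. Step three: conclude $V_\pi^\wedge$ is a rank one valuation ring: it is $\widehat{K}_{\le 1}$ for the non-archimedean field $\widehat{K}$, hence a valuation ring by Lemma \ref{recog_val_rings}, and it has rank one since the absolute value gives an order-preserving multiplicative injection of its value group into $\RR_{>0}^\times$ (Lemma \ref{embed_value_group_reals_rk_one}, converse direction) — indeed $\widehat K^\times/\widehat K_{\le 1}^\times$ is just the closure of $|\Gamma|$ in $\RR_{>0}^\times$. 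Step four: the map $V \to V_\pi^\wedge$ is injective (it is $\pi$-torsion free and $\pi$-separated, or simply: it is the inclusion $V \hookrightarrow \widehat K_{\le 1}$), and it carries $\frakm_V = \{|x| < 1\} \cap V$ into $\frakm_{V_\pi^\wedge} = \{|x| < 1\} \cap \widehat K_{\le 1}$ since $|-|$ is compatible. By Lemma \ref{equiv_extensions_rk_one}, $V \to V_\pi^\wedge$ is therefore an extension of rank one valuation rings.

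The step I expect to be the main obstacle is Step two — pinning down that the $\pi$-adic completion of $V$ really is $\widehat{K}_{\le 1}$ and not something larger or subtler. The potential pitfall is that "$\pi$-adic completion" is an algebraic operation while "$|-|$-completion" is metric, and a priori a Cauchy sequence in $V$ for the metric $|-|$ need not have its differences lying in $\pi^n V$ for controlled $n$ unless one knows the two filtrations are cofinal; conversely one must rule out that $\bigcap_n \pi^n V \ne 0$, which is automatic here since $|\pi^n| \to 0$ and $|-|$ is a genuine norm on $V$. Once the cofinality of $\{\pi^n V\}$ and $\{V_{\le r}\}$ is established — which is a short argument using rank one and the embedding into $\RR_{>0}^\times$ — everything else is bookkeeping. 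An alternative route that sidesteps introducing $\widehat K$ explicitly would be to work directly: show $V_\pi^\wedge$ is a domain (it is $\pi$-torsion free and its $\pi$-adic-ness plus rank one of $V/\pi^n$... actually this is messier), so I prefer the normed-field packaging above, which makes the rank-one and domain properties transparent.
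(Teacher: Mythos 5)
Your proposal is correct and follows essentially the same route as the paper: identify the $\pi$-adic completion of $V$ with the closed unit ball of the completion of $K$ with respect to the fixed real-valued norm (using that rank one forces $|\pi|^n \to 0$, so the $\pi$-adic and norm topologies agree), then invoke Lemma \ref{recog_val_rings} and the rank-one criterion of Lemma \ref{embed_value_group_reals_rk_one}/\ref{arch_rank_one_valuations}, and finally verify condition $(3)$ of Lemma \ref{equiv_extensions_rk_one}. The cofinality worry you flag in Step two is handled exactly as you suggest (in fact $V_{\le |\pi|^n} = \pi^n V$ by multiplicativity of the valuation), which is the same observation the paper makes.
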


\begin{proof} As $V$ has is a rank one valuation ring we learn that $|\pi^n| \rightarrow 0$ as $n \rightarrow \infty$, thus classical $\pi$-completion, which is defined as $V_{\pi}^{\wedge} := \lim V/\pi^n$, agrees with the completion of $V$ with respect to the map $|-|: V \rightarrow \RR_{> 0}^{\times} \cup \{0\}$. The characterization of $V_{\pi}^{\wedge}$ as the completion of $V$ with respect to $|-|$ together with the multiplicativity of $|-|$, shows that one can further identify $V^{\wedge}_{\pi}$ with the subring of $K_{\pi}^{\wedge}$ of elements $a \in K_{\pi}^{\wedge}$ which satisfy $|a| \le 1$; where $K_{\pi}^{\wedge}$ is the completion of $K$ with respect to the norm $|-|: K \rightarrow \RR_{> 0}^{\times} \cup \{0\}$. Then Lemma \ref{recog_val_rings} show that $V_{\pi}^{\wedge}$ is a valuation ring, and Lemma \ref{arch_rank_one_valuations} show that $V^{\wedge}_{\pi}$ is a rank one valuation ring. Finally, the characterization of $V_{\pi}^{\wedge}$ as the completion of $V$ with respect to $|-|: V \rightarrow \RR_{ > 0}^{\times}$ show that $f:V \rightarrow V_{\pi}^{\wedge}$ is injective and that $f^{-1}(\frakm_{V^{\wedge}_{\pi}}) \subset \frakm_V$, which proves the claim by \ref{equiv_extensions_rk_one}.
\end{proof}

\begin{lemma}\label{properties_tilt_val_rings} Let $R$ be an integral perfectoid ring. Then,
\begin{enumerate}[(1)]
	\item If $R$ is a domain, then $R^{\flat}$ is a domain.
	\item If $R$ is a domain, then the isomorphism of multiplicative monoids $R^{\flat} \rightarrow \lim_{x \mapsto x^p} R$ induces an isomorphism of multiplicative monoids $\Frac(R^{\flat}) \rightarrow \lim_{x \mapsto x^p} \Frac(R)$. In particular, we obtain a multiplicative map $\sharp: \Frac(R^\flat) \rightarrow \Frac(R)$, by projection onto the last coordinate.
	\item If $R$ is a valuation ring, set $\Gamma = \Frac(R)^{\times}/R^{\times}$ and let $v: \Frac(R) \rightarrow \Gamma \cup \{0\}$ be the valuation associated to $R$ (cf. \ref{properties_value_group}). Then, the composition
	\begin{cd}
		v_{\sharp}: \Frac(R^{\flat}) \ar[r, "\sharp"] & \Frac(R) \ar[r, "v"] & \Gamma \cup \{0\}
	\end{cd}
	satisfies the hypothesis of Lemma \ref{recog_val_rings} and $R^{\flat} = \{x \in \Frac(R^{\flat}) | v_{\sharp}(x) \le 1 \}$. In particular $R^\flat$ is a valuation ring.
	\item If $R$ is a valuation ring, the multiplicativity of the maps $\sharp: \Frac(R^{\flat}) \rightarrow \Frac(R)$ and $\sharp: R^{\flat} \rightarrow R$ induce an isomorphism of ordered multiplicative monoids
	\begin{equation*}
		\sharp: \Frac(R^{\flat})^{\times}/R^{\flat \times} \rightarrow \Frac(R)^{\times}/R^{\times}
	\end{equation*}
	with respect to the order defined in Lemma \ref{properties_value_group}. In particular, the Krull dimension of $R^{\flat}$ is the same as the Krull dimension of $R$.
\end{enumerate}
\end{lemma}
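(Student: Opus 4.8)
The plan is to establish the four claims in sequence, each feeding into the next, with tilting of perfect prisms providing the algebraic backbone throughout. For $(1)$, recall from Proposition \ref{perfect_prism_perfectoid} that an integral perfectoid ring $R$ corresponds to a perfect prism $(A,(d))$ with $A/d = R$ and $A = W(R^\flat)$. Since $R^\flat = A/p$, I would use that $A = W(R^\flat)$ is $p$-torsion free (Proposition \ref{perfect_delta_rings_equiv}), so $A$ embeds in $A[1/p]$; if $R$ is a domain then $d \in A$ is a non-zero divisor (Lemma \ref{delta_ring_p_torsion}), hence $A$ is a domain (as $A/d = R$ is a domain and... actually more directly: $A \hookrightarrow A[1/p] = W(R^\flat)[1/p]$ and we need $R^\flat$ a domain). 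The cleanest route: $R^\flat = \lim_{x\mapsto x^p} R/p$, but when $R$ is a domain it is better to use the isomorphism of multiplicative monoids $R^\flat \cong \lim_{x \mapsto x^p} R$ from Definition \ref{sharp_map_definition}. An element of $\lim_{x\mapsto x^p} R$ is a sequence $(x_0, x_1, \dots)$ with $x_{i+1}^p = x_i$; if a product of two such sequences is zero, then $x_0 y_0 = 0$ in $R$, so (domain) $x_0 = 0$ or $y_0 = 0$, and then by taking $p$-power roots compatibly all $x_i = 0$ or all $y_i = 0$ — so $R^\flat$ is a domain.

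For $(2)$: the multiplicative monoid isomorphism $R^\flat \xrightarrow{\simeq} \lim_{x\mapsto x^p} R$ extends to $\Frac(R^\flat) \to \lim_{x\mapsto x^p}\Frac(R)$ because a nonzero element of $R^\flat$ maps to a sequence of nonzero elements (by $(1)$ and the compatible $p$-power root structure), hence becomes invertible in the limit-of-fraction-fields; conversely any element of $\lim_{x\mapsto x^p}\Frac(R)$ is a compatible system $(f_i)$ with $f_{i+1}^p = f_i$, and writing $f_i = a_i/b_i$ one clears denominators compatibly to exhibit it as $x/y$ with $x,y \in \lim_{x\mapsto x^p} R = R^\flat$. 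Projection onto the last coordinate gives the multiplicative $\sharp\colon \Frac(R^\flat) \to \Frac(R)$; compatibility with $\sharp\colon R^\flat \to R$ is immediate from the construction.

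For $(3)$: set $v_\sharp = v \circ \sharp$. That $v_\sharp(a) = 0 \iff a = 0$ follows since $\sharp$ on $\Frac(R^\flat)$ is injective on the complement of $0$ (a nonzero compatible system has nonzero last coordinate because it has nonzero \emph{every} coordinate), and multiplicativity $v_\sharp(ab) = v_\sharp(a)v_\sharp(b)$ is immediate from multiplicativity of $\sharp$ and $v$. The main point — and the one genuinely requiring an argument — is the ultrametric inequality $v_\sharp(a+b) \le \max(v_\sharp(a), v_\sharp(b))$: here $\sharp$ is only multiplicative, not additive, so $(a+b)^\sharp \ne a^\sharp + b^\sharp$ in general. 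I would reduce to the case $v_\sharp(b) \le v_\sharp(a) \le 1$ (using multiplicativity to rescale, as one does for valuations), so that $a, b \in R^\flat$ with $b/a \in R^\flat$ (by $R^\flat$ a valuation ring... no — we cannot assume that yet). Instead: reduce mod $\pi$-like elements. Write $a, b$ with $a^\sharp, b^\sharp$ having $v$-value $\le 1$; one wants $v((a+b)^\sharp) \le 1$, i.e. $(a+b)^\sharp \in R$. Now $(a+b)^\sharp = \lim_{n} (a_n + b_n)^{p^n}$ where $a_n, b_n \in R$ are the $n$-th components with $a_n^{p^n} \to a^\sharp$ etc. — more precisely, using Lemma \ref{mod_p_tilt} / Lemma \ref{perfectoid_frob_mod_p} the reduction mod $\pi$ of $\sharp$ identifies $R^\flat/\varpi^\flat \cong R/\varpi$ as rings, so the ring structure is respected modulo $\varpi$; then one shows $(a+b)^\sharp \equiv a^\sharp + b^\sharp \pmod{\varpi \cdot R}$ after choosing $\varpi, \varpi^\flat$ compatibly, and iterates with $\varpi^{1/p^n}$-approximations to conclude $(a+b)^\sharp \in R_{v \le 1} = R$ by $\varpi$-completeness of $R$. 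Once the three conditions hold, Lemma \ref{recog_val_rings} gives that $\{x \in \Frac(R^\flat) \mid v_\sharp(x) \le 1\}$ is a valuation ring; the identification of this set with $R^\flat$ follows from $\sharp$ restricting to $R^\flat \to R$, the equalities in Lemma \ref{properties_value_group}(3), and the mod-$\varpi$ isomorphism pinning down which elements of $\Frac(R^\flat)$ lie in $R^\flat$. I expect this additivity-modulo-$\varpi$ estimate to be the crux of the whole lemma.

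For $(4)$: given $R$ a valuation ring, parts $(2)$–$(3)$ show $\sharp$ induces a surjection $\Frac(R^\flat)^\times \to \Frac(R)^\times$ at the level of value-group representatives (surjectivity uses that $\sharp\colon \Frac(R^\flat) \to \Frac(R)$ hits a cofinal set of valuations — indeed every unit-times-$\pi^{\mathbf{Z}[1/p]}$ is in the image since $\varpi$ admits $p$-power roots; one argues the induced map on $\Gamma$ is bijective by checking kernel and image). Concretely: $x \in R^{\flat\times} \iff v_\sharp(x) = 1 \iff x^\sharp \in R^\times$, so $\sharp$ descends to an injective map $\Frac(R^\flat)^\times/R^{\flat\times} \hookrightarrow \Gamma$; it is surjective because for $\gamma \in \Gamma$ pick $r \in R$ with $v(r) = \gamma$, and using surjectivity of Frobenius on $R/\varpi^p$ (Lemma \ref{perfectoid_frob_mod_p}) lift compatibly to produce $s \in \lim_{x\mapsto x^p} R = R^\flat$ with $s^\sharp$ a unit multiple of $r$, so $v_\sharp(s) = \gamma$. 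Order-preservation is immediate from $v_\sharp = v\circ\sharp$ and $\sharp$ respecting the monoid ordering (which is defined via the image of $R\setminus\{0\}$, matched by $R^\flat \setminus \{0\} \to R \setminus \{0\}$). Finally the Krull dimension claim follows from Lemma \ref{prime_ideals_valuation_rings}: prime ideals of a valuation ring correspond to prime ideals of its value monoid, and an order-isomorphism of value groups yields a bijection of prime ideals, hence equal Krull dimensions. No single step here is hard once $(2)$ and $(3)$ are in hand; the only care needed is the surjectivity onto $\Gamma$, which is exactly where perfectoidness (Frobenius surjectivity) enters.
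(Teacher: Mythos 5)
Parts (1), (2), and (4) of your proposal track the paper closely. Part (1) is the same argument; part (2) is in the same spirit as the paper's localization argument (both rest on a cofinality statement that the paper attributes to Lemma \ref{compatible_roots_perfectoid}, and your "clear denominators compatibly" is about as hand-wavy as the paper's appeal there); part (4) is the same surjectivity/injectivity argument using Frobenius surjectivity on $R/\varpi^p$ for $p$-divisibility of the value group and Lemma \ref{prime_ideals_valuation_rings} for the Krull dimension statement.

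The gap is in part (3), in the proof of the ultrametric inequality $v_\sharp(a+b) \le \max(v_\sharp(a), v_\sharp(b))$. You rescale to $v_\sharp(b) \le v_\sharp(a) \le 1$ and then aim to show $(a+b)^\sharp \in R$, i.e. $v_\sharp(a+b) \le 1$. That target is wrong: when $v_\sharp(a) < 1$ it is strictly weaker than the needed $v_\sharp(a+b) \le v_\sharp(a)$. Worse, once $v_\sharp(a), v_\sharp(b) \le 1$ the statement $(a+b)^\sharp \in R$ is essentially free — each $p$-power root component of $a$ and $b$ has $v_\sharp \le 1$ (since $\gamma^{p^n} \le 1 \Rightarrow \gamma \le 1$ in a totally ordered group), so $a, b \in R^\flat$, hence $a + b \in R^\flat$ and $(a+b)^\sharp \in R$ with no $\varpi^{1/p^n}$-iteration at all. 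What actually must be proved, after reducing to $a, b \in R^\flat$ with $v(a^\sharp) \ge v(b^\sharp)$, is the sharper divisibility $(a+b)^\sharp \in a^\sharp R$. The paper does this by using the formula $(a+b)^\sharp = \lim_n (a^{1/p^n\sharp} + b^{1/p^n\sharp})^{p^n}$ (which you also write down), noting that each approximant $(a^{1/p^n\sharp} + b^{1/p^n\sharp})^{p^n}$ lies in $a^\sharp R$ by the ultrametric inequality of $v$ inside $R$, and then passing to the limit using $p$-completeness. Equivalently, you could normalize to $v_\sharp(a) = 1$ rather than $\le 1$ (divide through by $a$); then $b \in R^\flat$ and $1 + b \in R^\flat$ finishes it immediately, and the modular-approximation machinery drops out entirely. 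As written, though, your reduction proves the wrong inequality.
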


\begin{proof} We begin by proving $(1)$. Recall from \ref{sharp_map_definition} that we have an isomorphism of multiplicative monoids $R^{\flat} \rightarrow \lim_{x \mapsto x^p} R$. Assume that we have $\{a_n\}, \{b_n\} \in R^{\flat} = \lim_{x \mapsto x^p} R$ such that $\{a_n\} \{b_n\} = 0$ in $R^{\flat}$ which implies by construction that $a_n b_n = 0$ in $R$ for all $n$. Then, as $R$ is a domain, either $a_0 = 0$ or $b_0= 0$, without loss of generality we may assume that $a_0 = 0$. As we have that $a_n^p = a_{n-1}$, then the domain assumption shows again that $a_n = 0$ for all $n$, proving that $\{a_n\} = 0 \in R^{\flat}$.

For $(2)$, first recall that we have a multiplicative map $\sharp: R^{\flat} \rightarrow R$ which is obtained as the projection onto the last coordinate $R^{\flat} = \lim_{x \mapsto x^p} R \rightarrow R$. As everything in sight is multiplicative, it follows that for any $a \in R^{\flat}$ we have an isomorphism $R^{\flat}[1/a] = \lim_{x \mapsto x^p} R[1/a^{\sharp}]$. Then, since every element $r \in R$ is in the image of $\sharp: R^{\flat} \rightarrow R$ up to a unit (\ref{compatible_roots_perfectoid}), passing to the limit inverting all $a \in R^{\flat}$ shows that we have an isomorphism of multiplicative monoids $\Frac(R^{\flat}) \rightarrow \lim_{x \mapsto x^p} \Frac(R)$.

Next we prove $(3)$. As the maps $v: \Frac(R) \rightarrow \Gamma \cup \{0\}$ and $\sharp: \Frac(R^\flat) \rightarrow \Frac(R)$ are multiplicative maps and $\Frac(R^\flat)$ is a field, in order to show that $v_{\sharp}$ satisfies the hypothesis of Lemma \ref{recog_val_rings} it suffices to show that for $a,b \in \Frac(R^{\flat})$ then $v_{\sharp}(a + b) \le \max(v_{\sharp}(a), v_{\sharp}(b))$. Furthermore, as $a,b \in \Frac(R^\flat)$ we know that there exists an $f \in \Frac(R^\flat)$ such that $af, bf \in R^\flat$, thus by the multiplicativity of $v_{\sharp}$ we may assume that $a,b \in R^\flat$. As $R^\flat$ is perfect, we know that $a,b$ admit compatible $p$-power roots, which we denote by $a^{1/p^n}, b^{1/p^n}$ respectively, then as $(a + b)^{\sharp} = a^{\sharp} + b^{\sharp} \bmod p$ we can conclude by the binomial theorem that
\begin{equation*}
	(a^{1/p^n} + b^{1/p^n})^{p^n \sharp} = (a + b)^{\sharp} = (a^{1/p^n \sharp} + b^{1/p^n \sharp})^{p^n} \bmod p^{n+1}
\end{equation*}
which by the $p$-completeness of $R$ implies that $\lim_{n \rightarrow \infty} (a^{1/p^n \sharp} + b^{1/p^n \sharp})^{p^n} = (a+b)^{\sharp}$ in $R$. Observing that $v(a^{1/p^n \sharp} + b^{1/p^n \sharp})^{p^n} \le \max(v(a^{\sharp}), v(b^{\sharp}))$, and assuming without loss of generality that $v(b^\sharp) \le v(a^\sharp)$, we learn that
\begin{equation*}
	(a^{1/p^n \sharp} + b^{1/p^n \sharp})^{p^n}/(a)^{\sharp} \in R
\end{equation*}
for all $n$, which implies by the $p$-completeness of $R$ that $(a + b)^{\sharp}/a^{\sharp} \in R$. This proves that $v_{\sharp}(a + b) \le \max(v_{\sharp}(a), v_{\sharp}(b))$, showing that $v_{\sharp}$ satisfies the conditions of Lemma \ref{recog_val_rings}. It remains to show that $R^{\flat} = \{x \in \Frac(R^{\flat}) | v_{\sharp}(x) \le 1 \}$. Indeed, as the map $\sharp: \Frac(R^\flat) \rightarrow \Frac(R)$ is compatible with $\sharp: R^{\flat} \rightarrow R$ it follows that $R^{\flat} \subset \{x \in \Frac(R^{\flat}) | v_{\sharp}(x) \le 1 \}$. Now, assume that $x \in \Frac(R^\flat)$ satisfies $v_{\sharp}(x) \le 1$, then $x^{\sharp} \in R \subset \Frac(R)$ and it admits compatible $p$-power roots in $R$, proving that $x \in R^{\flat}$ as desired.

Finally, we proof $(4)$. Its clear that we have a multiplicative map $\sharp: \Frac(R^{\flat})^{\times}/R^{\flat \times} \rightarrow \Frac(R)^{\times}/R^{\times}$, to show that it preserves the order fix $a,b \in \Frac(R^{\flat})^{\times}/R^{\flat \times}$ and let $A,B \in \Frac(R^{\flat})$ be corresponding lifts, then $a \le b$ if and only if $A/B \in R^{\flat}$, which in turn implies that $A^{\sharp}/B^{\sharp} \in R$, proving that $a^{\sharp} \le b^{\sharp} \in \Frac(R)^{\times}/R^{\times}$. Moreover, the fact that every element of $R$ admits compatible $p$-power roots up to a unit (\ref{compatible_roots_perfectoid}), implies that $\sharp: \Frac(R^{\flat})^{\times}/R^{\flat \times} \rightarrow \Frac(R)^{\times}/R^{\times}$ is surjective, it remains to show that if $x^{\sharp} = 1$ then $x = 1$ in $\Frac(R^{\flat})^{\times}/R^{\flat \times}$. Assume that $x^{\sharp} = 1$, and pick a lift $X \in R^{\flat \times} \subset \Frac(R^{\flat})^{\times}$, then the assumption that $x^{\sharp} = 1$ implies that $X^{\sharp} \in R^{\times}$ and thus any $p$-power root of $X$ will be an element of $R^{\times}$, proving the result. The claim about the Krull dimension of $R^{\flat}$ follows from \ref{prime_ideals_valuation_rings}.
\end{proof}

\begin{prop}\label{integral_tilt_corr_val_one} Let $V$ be an integral perfectoid rank one valuation ring. Then, the following categories are equivalent
\begin{enumerate}[(1)]
	\item The category of integral perfectoid rank one valuation rings $W$ over $V$, such that the structure map $V \rightarrow W$ is faithfully flat.
	\item The category of integral perfectoid rank one valuation rings $W^{\flat}$ over $V^{\flat}$, such that the structure map $V^{\flat} \rightarrow W^{\flat}$ is faithfully flat.
\end{enumerate}
where the functor from $(1)$ to $(2)$ is given by the tilting functor. Furthermore, if $V$ is $\varpi$-complete with respect to some $\varpi \in V$ where $\varpi^p$ divided $p$, then the above equivalence restricts to an equivalence between the categories
\begin{enumerate}[(1')]
	\item The category of integral perfectoid rank one valuation rings $W$ over $V$, such that $W$ is $\varpi$-complete and the structure map $V \rightarrow W$ is faithfully flat.
	\item The category of integral perfectoid rank one valuation rings $W^{\flat}$ over $V^{\flat}$, such that $W^{\flat}$ is $\varpi^\flat$-complete and the structure map $V^{\flat} \rightarrow W^{\flat}$ is faithfully flat.
\end{enumerate}
\end{prop}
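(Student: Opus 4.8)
The strategy is to reduce everything to Proposition \ref{prism_tilt_correspondence} (the prism--tilt correspondence for integral perfectoid rings over a fixed base) and then check that the three extra conditions imposed here --- being a rank one valuation ring, the structure map being faithfully flat, and (in the primed version) being $\varpi$-complete --- are preserved by the tilting functor and its inverse. The key observation is that all three conditions can be read off from the value group and from the multiplicative monoid structure, both of which tilting controls via the sharp map by Lemma \ref{properties_tilt_val_rings}.

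First I would invoke Proposition \ref{prism_tilt_correspondence}: since $V$ is an integral perfectoid ring, the category of integral perfectoid rings over $V$ is equivalent to the category of integral perfectoid rings over $V^\flat$, with the functor $S \mapsto S^\flat$. So it remains to check that this equivalence matches up the two sub(categories) described in the statement. By Lemma \ref{properties_tilt_val_rings}, if $W$ is an integral perfectoid \emph{domain} then $W^\flat$ is a domain, and if $W$ is moreover an integral perfectoid \emph{valuation ring} then $W^\flat$ is a valuation ring with $W^\flat = \{x \in \Frac(W^\flat) \mid v_\sharp(x) \le 1\}$; and part (4) of that lemma gives an order-isomorphism $\Frac(W^\flat)^\times/W^{\flat\times} \simeq \Frac(W)^\times/W^\times$ of value groups, so $W^\flat$ has Krull dimension one if and only if $W$ does --- i.e. $W$ is rank one iff $W^\flat$ is rank one, by Lemma \ref{arch_rank_one_valuations}. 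Conversely, starting from a rank one valuation ring $W^\flat$ over $V^\flat$, its untilt $W$ (the integral perfectoid ring with $W^\flat = \lim_{x\mapsto x^p} W$) must be a valuation ring: one can either run the same Lemma \ref{properties_tilt_val_rings}-style argument backwards, or --- cleaner --- note that since $V^\flat \to W^\flat$ is faithfully flat and $W^\flat$ untilts to an integral perfectoid ring over $V$, faithful flatness of $V \to W$ will force $W$ to be a domain of Krull dimension one and hence (being integral perfectoid, so reduced and with the right local structure) a valuation ring. This is where the faithful flatness hypothesis does real work, and I'd want to argue it carefully.

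Next I would match up the faithful flatness condition on the structure map. For a map of rank one valuation rings, Lemma \ref{equiv_extensions_rk_one} says faithful flatness is equivalent to the map being injective with $f(\frakm_V) \subset \frakm_W$, i.e. to being an ``extension of rank one valuation rings''. Using the compatibility of the sharp maps $\sharp\colon \Frac(V^\flat) \to \Frac(V)$ and $\sharp\colon \Frac(W^\flat) \to \Frac(W)$ with $\sharp\colon V^\flat \to V$, $W^\flat \to W$ (Lemma \ref{properties_tilt_val_rings}(2),(4)), one checks that $V^\flat \to W^\flat$ is injective iff $V \to W$ is (both are detected on the level of the injective sharp-images of value groups, and tilting is an isomorphism there), and that $\frakm_{V^\flat}$ maps into $\frakm_{W^\flat}$ iff $\frakm_V$ maps into $\frakm_W$ (since $\frakm$ is cut out by $v_\sharp < 1$ on each side and the value-group isomorphism is order-preserving). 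So $V\to W$ is an extension of rank one valuation rings iff $V^\flat \to W^\flat$ is, which by Lemma \ref{equiv_extensions_rk_one} is exactly the faithful flatness condition on both sides. This gives the equivalence between (1) and (2).

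Finally, for the restricted equivalence between (1$'$) and (2$'$): with $V$ additionally $\varpi$-complete where $\varpi^p \mid p$, Lemma \ref{completeness_tilt} shows that a $\varpi$-complete integral perfectoid $W$ has $\varpi^\flat$-complete tilt $W^\flat$, and conversely Proposition \ref{prism_tilt_correspondence} already records that the equivalence restricts to one between $\varpi$-complete integral perfectoid rings over $V$ and $\varpi^\flat$-complete integral perfectoid rings over $V^\flat$. Intersecting this with the already-established equivalence between (1) and (2) gives the claim. \textbf{The main obstacle} I anticipate is the converse direction of the valuation-ring check: showing that the untilt of a faithfully-flat rank one valuation ring over $V^\flat$ is again a valuation ring. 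The tilting side statements in Lemma \ref{properties_tilt_val_rings} all start from ``$R$ is a valuation ring'', so for the untilt I would need either a fresh argument (e.g. analysing $W = W^\flat/d$ for the distinguished element $d$ via the structure of $\A_{\inf}$, using that $W$ is $p$-torsion free and reduced and that its value group is forced to be totally ordered of rank one by faithful flatness over the rank one valuation ring $V$) or a soft argument that the sharp map identifies $W$ with $\{x \in \Frac(W) \mid v(x) \le 1\}$ for the valuation induced from $W^\flat$.
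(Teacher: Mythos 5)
Your proposal is essentially the paper's own proof: the paper disposes of $(1)\simeq(2)$ by citing Proposition \ref{prism_tilt_correspondence} together with Lemma \ref{properties_tilt_val_rings}, and of $(1')\simeq(2')$ by citing Lemma \ref{completeness_tilt}, which is exactly your route, and your unpacking of faithful flatness through Lemma \ref{equiv_extensions_rk_one} and the order-preserving value-group isomorphism of Lemma \ref{properties_tilt_val_rings}(4) is precisely what the paper leaves implicit. The obstacle you single out --- that the \emph{untilt} of a rank one valuation ring over $V^\flat$ is again a valuation ring, which Lemma \ref{properties_tilt_val_rings} does not give since it only runs from $W$ to $W^\flat$ --- is likewise not spelled out in the paper, so your write-up is no less complete than the original; it is a genuine (if standard) point that must be supplied to make the equivalence of the \emph{sub}categories airtight. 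To close it along the lines you sketch: using Lemma \ref{mod_p_tilt}, the rank-one property of $W^\flat$ and completeness, one shows every nonzero element of the untilt $W$ is a unit multiple of $a^\sharp$ for some $a \in W^\flat$; since integral perfectoid rings are reduced, $\sharp$ kills no nonzero element, so $W$ is a domain with totally ordered divisibility and value group identified with that of $W^\flat$, hence a rank one valuation ring, after which faithful flatness and $\varpi$-completeness transfer exactly as you describe via Lemmas \ref{equiv_extensions_rk_one} and \ref{completeness_tilt}.
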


\begin{proof} The equivalence between the categories $(1)$ and $(2)$ follows from Proposition \ref{prism_tilt_correspondence} and Lemma \ref{properties_tilt_val_rings}. Similarly, the equivalence between the categories (1') and (2') then follow from Lemma \ref{completeness_tilt}.
\end{proof}

\begin{lemma}\label{na_field_rk_one_dict} Let $K$ be a perfectoid non-archimedean field with norm $|-|: K \rightarrow \RR_{\ge 0}^{\times}$ and $\varpi \in K$ such that $1 > |\varpi^p| \ge |p|$. Then, the following categories are equivalent
\begin{enumerate}[(1)]
	\item The category of non-archimedean fields over $L$ over $K$
	\item The category of $\varpi$-complete rank one valuation rings $V$, with faithfully flat structure map $K_{\le 1} \rightarrow V$. In particular, all maps between rank one valuation rings are forced to be faithfully flat.
\end{enumerate}
Where the functor $(1) \rightarrow (2)$ is given by $L \mapsto L_{\le 1}$, with inverse given by $V \mapsto V[\frac{1}{\varpi}]$ (cf. \ref{equiv_almost_tf_banach}).
\end{lemma}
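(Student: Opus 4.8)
The plan is to exhibit the two functors and check that they are mutually inverse, leaning heavily on the dictionary already established in Proposition \ref{equiv_almost_tf_banach} together with the concrete recognition results for rank one valuation rings from this section. First I would observe that the statement is essentially a restriction of the equivalence $(-)_{\le 1}: \Ban_K^{\contr} \simeq \CAlg_{K_{\le 1}}^{\wedge a \tf}: (-)[\frac{1}{\varpi}]$ to suitable full subcategories on each side, so the real content is to identify the two subcategories under this equivalence. On the analytic side, a non-archimedean field $L$ over $K$ is in particular a uniform Banach $K$-algebra (its norm is multiplicative, hence power-multiplicative), and conversely; so by Proposition \ref{uBan_tic_dict} the equivalence already restricts to $\uBan_K \simeq \CAlg_{K_{\le 1}}^{\wedge \tic}$, and it remains to pin down which objects of $\CAlg_{K_{\le 1}}^{\wedge \tic}$ arise from non-archimedean fields.

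The key step is the following chain of identifications for $V := L_{\le 1}$. By Example \ref{non_arch_field_val_rank_one}, $L_{\le 1}$ is a rank one valuation ring with fraction field $L$, and by Corollary \ref{banach_is_pi_comp} it is $\varpi$-complete; the structure map $K_{\le 1} \to L_{\le 1}$ is injective (as $K \hookrightarrow L$ is an isometric embedding of fields, cf. the remark after Definition \ref{defn_banach_algebras}) and sends $\frakm_{K_{\le 1}}$ into $\frakm_{L_{\le 1}}$, so it is faithfully flat by the equivalence $(3) \Rightarrow (1)$ of Lemma \ref{equiv_extensions_rk_one}. Conversely, given a $\varpi$-complete rank one valuation ring $V$ with faithfully flat structure map $K_{\le 1} \to V$, I would set $L := V[\frac{1}{\varpi}]$ and equip it with the norm of Construction \ref{from_tf-comp_to_banach}; fixing an order-preserving multiplicative embedding $|-|_V: \Frac(V)^\times/V^\times \hookrightarrow \RR_{>0}^\times$ via Lemma \ref{embed_value_group_reals_rk_one}, one checks that $V[\frac{1}{\varpi}] = \Frac(V)$ (since $\varpi \in \frakm_V$ and $V$ has Krull dimension one, so $V[\frac{1}{\varpi}]$ is the fraction field), and that the norm on $V[\frac{1}{\varpi}]$ coincides with the multiplicative valuation-norm extending $|-|_V$ — this last point uses that $V$ is totally integrally closed in $\Frac(V)$ (Lemma \ref{rk_one_val_tot_int_closed}), exactly as in Construction \ref{const_funct_uBan_tic_dic}, to see the norm is power-multiplicative, and then multiplicativity is automatic for a uniform Banach ring whose unit ball is a rank one valuation ring. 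Hence $L$ is a non-archimedean field over $K$, and $L_{\le 1} = V$.

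Finally I would assemble these into a statement that the functors $L \mapsto L_{\le 1}$ and $V \mapsto V[\frac{1}{\varpi}]$ are well-defined on the stated subcategories, and note that they are mutually inverse because they are restrictions of the mutually inverse functors of Proposition \ref{equiv_almost_tf_banach} (resp. Proposition \ref{uBan_tic_dict}); naturality and the verification on morphisms is inherited for free. The main obstacle I anticipate is the careful bookkeeping in the converse direction: showing that the abstract norm produced by Construction \ref{from_tf-comp_to_banach} on $V[\frac{1}{\varpi}]$ is genuinely multiplicative (not merely power-multiplicative), which is where the rank one hypothesis and total integral closedness are both essential — for higher rank valuation rings the unit ball is still totally integrally closed but $V[\frac{1}{\varpi}]$ need not be a field, so the final statement would fail, and the proof must visibly use rank one at this point (via Lemma \ref{arch_rank_one_valuations} and Lemma \ref{embed_value_group_reals_rk_one}). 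A secondary point to get right is the equivalence between ``faithfully flat structure map'' and ``injective with $\frakm \mapsto \frakm$'', but this is precisely Lemma \ref{equiv_extensions_rk_one} and requires no new work.
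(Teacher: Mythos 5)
Your proposal is correct and follows essentially the same route as the paper's own proof: both reduce the statement to the dictionary of Propositions \ref{equiv_almost_tf_banach} and \ref{uBan_tic_dict}, use Example \ref{non_arch_field_val_rank_one} and Lemma \ref{equiv_extensions_rk_one} for the direction $L \mapsto L_{\le 1}$, and use faithful flatness plus rank one together with Lemma \ref{rk_one_val_tot_int_closed} to see that $V[\frac{1}{\varpi}]$ is a uniform Banach field, hence a non-archimedean field. The only phrase worth tightening is ``multiplicativity is automatic'': as in the proof of Proposition \ref{equiv_almost_tf_banach}, it rests on the density of $|K^{\times}|$ and the compatible embedding of the value group of $V$ into $\RR_{>0}^{\times}$ (Lemma \ref{embed_value_group_reals_rk_one}), which identifies the gauge norm of Construction \ref{from_tf-comp_to_banach} with the multiplicative valuation rather than being a formal property of uniform Banach rings with valuation-ring unit ball.
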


\begin{proof} It was explained in Example \ref{non_arch_field_val_rank_one} that $L_{\le 1}$ is a rank one valuation ring, and the fact that $K_{\le 1} \rightarrow L_{\le 1}$ is faithfully flat follows from \ref{equiv_extensions_rk_one}. On the other hand, it follows from \ref{equiv_extensions_rk_one} that if $K_{\le 1} \rightarrow V$ is faithfully flat then $V[\frac{1}{\varpi}]$ is a field. Finally, since rank one valuation rings are totally integrally closed in their fraction field (\ref{rk_one_val_tot_int_closed}), we learn from the equivalence \ref{uBan_tic_dict} that $V[\frac{1}{\varpi}]$ is uniform so it is a non-archimedean field, $L_{\le 1}$ is $\varpi$-complete and that the functors above describe an equivalence of categories. 
\end{proof}

\subsection{Tilting correspondence}

Let $V$ be an integral perfectoid rank one valuation ring, which is $\varpi$-complete with respect to some $\varpi \in V$ such that $\varpi^p$ divides $p$, and denote by $K$ its fraction field. For example $K$ could be a non-archimedean perfectoid field with $\varpi \in K$ satisfying $1 > |\varpi^p| > |p|$ and $V = K_{\le 1}$ (cf. \ref{non_arch_field_val_rank_one} and \ref{perfectoid_field_is_int_perfectoid}). Fix an multiplicative order preserving injective map of $|-|: K^{\times}/V^{\times} \hookrightarrow \RR_{> 0}^{\times}$, which is equivalent to specifying a non-archimedean norm $|-|: K \rightarrow \RR_{\ge 0}^{\times}$. And denote by $V^{\flat}$ and $K^{\flat}$ the tilt of $V$ and $K$ respectively (cf. \ref{integral_tilt_corr_val_one}), then the order preserving multiplicative isomorphism $\sharp: K^{\flat \times}/V^{\flat \times} \rightarrow K^{\times}/V^{\times}$ determines a unique multiplicative order preserving injective map $|-|: K^{\flat \times}/V^{\flat \times} \hookrightarrow \RR_{> 0}^{\times}$ compatible with the one on $K^{\times}/V^{\times}$, which in turn induces a unique non-archimedean multiplicative norm on $|-|: K^{\flat} \rightarrow \RR_{\ge 0}^{\times}$, making $K^{\flat}$ a perfectoid non-archimedean field. More concretely, if $K$ is a perfectoid non-archimedean field, then the composition
\begin{cd}
	K^{\flat} \ar[r, "\sharp"] & K \ar[r, "|-|"] & \RR_{\ge 0}^{\times}
\end{cd}
determines a multiplicative norm on the perfectoid field $K^{\flat}$, this was checked in the proof of Lemma \ref{properties_tilt_val_rings}(3).

\begin{prop}\label{tilting_corr_banach} Let $K$ be a perfectoid non-archimedean field with norm $|-|: K \rightarrow \RR_{\ge 0}^{\times}$ and $\varpi \in K$ such that $1 > |\varpi^p| > |p|$. Let $K^{\flat}$ be its tilt, which has norm $|-|: K^{\flat} \rightarrow \RR_{\ge 0}^{\times}$ given by $x \mapsto |x^\sharp|$. Then, the composition
\begin{cd}
	\Perfd_{K}^{\Ban} \ar[rr, "(-)_{\le 1}"] && 
	\Perfd_{K_{\le 1}}^{\Prism \tic} \ar[r, "(-)^{\flat}"] & 
	\Perfd_{K^{\flat}_{\le 1}}^{\Prism \tic} \ar[rr, "(-)\lbrack \frac{1}{\varpi^\flat} \rbrack"] && 
	\Perfd_{K^{\flat}}^{\Ban}
\end{cd}
determines an equivalence of categories; we will often abuse notation and denote the composition by $(-)^\flat$. This equivalence has the following properties
\begin{enumerate}[(1)]
	\item It identifies the subcategories of non-archimedean field over $K$, with the category of non-archimedean fields over $K^{\flat}$. Furthermore, if $L$ is a non-archimedean field over $K$ with multiplicative norm $|-|_L$, then $L^{\flat}$ will have a multiplicative norm given by $L^\flat \ni l \mapsto |l^\sharp|_L$ (cf. Lemma \ref{properties_value_group}(3)).
	\item If $A$ is an object of $\Perfd_K^{\Ban}$ with power-multiplicative norm $|-|_A$, then $A^\flat \in \Perfd_{K^{\flat}}^{\Ban}$ will have power multiplicative norm given by $A^\flat \ni a \mapsto |a^\sharp|_A$. A priori the sharp map is only defined on $\sharp: A^{\flat}_{\le 1} \rightarrow A_{\le 1}$, but its multiplicativity implies it naturally extends to
	\begin{equation*}
		\sharp: A^{\flat}_{\le 1}[\frac{1}{\varpi^\flat}] = A^\flat \longrightarrow A = A_{\le 1}[\frac{1}{\varpi}]
	\end{equation*}
	Furthermore, the map $\sharp: A^\flat \rightarrow A$ determines a bijection between multiplicative semi-norms $A \rightarrow \RR_{\ge 0}^{\times}$ and $A^\flat \rightarrow \RR_{\ge 0}^{\times}$ (cf. \ref{defn_berko_spectrum}).
\end{enumerate}
Finally, let us remark that the equivalence $\Perfd_{K}^{\Ban} \simeq \Perfd_{K_{\le 1}}^{\Prism \tic}$, which was deduced from \ref{equiv_almost_tf_banach}, relies critically on the fact that $K$ already has a non-archimedean multiplicative, norm thus its important to fix the norm on $K^\flat$ through the map $\sharp: K^\flat \rightarrow K$.
\end{prop}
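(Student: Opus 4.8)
The plan is to assemble the claimed equivalence purely from results already proved, reserving genuine work only for the compatibility statements about norms. The functor $(-)^{\flat}: \Perfd_K^{\Ban} \to \Perfd_{K^{\flat}}^{\Ban}$ is defined as the composition of three functors, each of which is an equivalence: first $(-)_{\le 1}: \Perfd_K^{\Ban} \xrightarrow{\simeq} \Perfd_{K_{\le 1}}^{\Prism \tic}$ from Proposition \ref{equiv_perfd_ban_tic}; then the integral tilting equivalence $(-)^{\flat}: \Perfd_{K_{\le 1}}^{\Prism \tic} \xrightarrow{\simeq} \Perfd_{K^{\flat}_{\le 1}}^{\Prism \tic}$ from Proposition \ref{integral_tilt_corr}; and finally $(-)[\tfrac{1}{\varpi^{\flat}}]: \Perfd_{K^{\flat}_{\le 1}}^{\Prism \tic} \xrightarrow{\simeq} \Perfd_{K^{\flat}}^{\Ban}$, again by Proposition \ref{equiv_perfd_ban_tic} applied to the perfectoid field $K^{\flat}$. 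The only preliminary point is to verify that $K^{\flat}$, equipped with the norm $x \mapsto |x^{\sharp}|$, is indeed a perfectoid non-archimedean field so that Proposition \ref{equiv_perfd_ban_tic} applies to it; this is exactly the content of the paragraph preceding the statement, where the order-preserving multiplicative isomorphism $\sharp: K^{\flat \times}/V^{\flat \times} \to K^{\times}/V^{\times}$ of Lemma \ref{properties_tilt_val_rings}(4) transports the embedding into $\RR_{>0}^{\times}$, and Lemma \ref{properties_tilt_val_rings}(3) identifies $K^{\flat}_{\le 1}$ with a rank one valuation ring; integral perfectoidness of $K^{\flat}_{\le 1} = (K_{\le 1})^{\flat}$ is automatic since tilting of integral perfectoid rings lands in integral perfectoid rings (Proposition \ref{perfect_prism_perfectoid} / Proposition \ref{prism_tilt_correspondence}).

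For property (1), the plan is to observe that under the equivalence of Lemma \ref{na_field_rk_one_dict}, non-archimedean fields over $K$ correspond to $\varpi$-complete rank one valuation rings $V'$ faithfully flat over $K_{\le 1}$, and likewise for $K^{\flat}$; the integral tilting equivalence $(-)^{\flat}$ restricted to such objects gives again a $\varpi^{\flat}$-complete rank one valuation ring faithfully flat over $K^{\flat}_{\le 1}$ by Proposition \ref{integral_tilt_corr_val_one}, so the three-functor composite carries non-archimedean fields over $K$ to non-archimedean fields over $K^{\flat}$ and is an equivalence on these subcategories. The formula for the norm, $L^{\flat} \ni l \mapsto |l^{\sharp}|_L$, then follows from Lemma \ref{properties_tilt_val_rings}(3): the composite $v_{\sharp} = v \circ \sharp$ is precisely the valuation defining $L^{\flat}_{\le 1}$ inside $\Frac(L^{\flat})$, and under the chosen embedding of the value group into $\RR_{>0}^{\times}$ this reads off as $|l^{\sharp}|_L$.

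For property (2), I would first note that the sharp map extends from $\sharp: A^{\flat}_{\le 1} \to A_{\le 1}$ to $\sharp: A^{\flat} = A^{\flat}_{\le 1}[\tfrac{1}{\varpi^{\flat}}] \to A = A_{\le 1}[\tfrac{1}{\varpi}]$ by multiplicativity, exactly because $(\varpi^{\flat})^{\sharp}$ is a unit multiple of $\varpi$ (Lemma \ref{compatible_roots_perfectoid}), so inverting $\varpi^{\flat}$ on the source is matched by inverting $\varpi$ on the target. Next, using that $A_{\le 1}^{\flat}$ is the perfect $\FF_p$-algebra $\lim_{x \mapsto x^p} A_{\le 1}/\varpi$ and that $A$ is uniform so $|-|_A = \rho$, the claim that $a \mapsto |a^{\sharp}|_A$ is power-multiplicative on $A^{\flat}$ reduces to the identity $|(a^{p^n})^{\sharp}|_A = |a^{\sharp}|_A^{p^n}$ together with the observation that $A^{\flat}_{\le 1}$ is exactly the set of $a$ with $|a^{\sharp}|_A \le 1$ (which comes from Lemma \ref{mod_p_tilt} and Corollary \ref{perfectoid_torsion}, giving $A^{\flat}_{\le 1}/\varpi^{\flat} \simeq A_{\le 1}/\varpi$ compatibly with $\sharp$). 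The final bijection between multiplicative semi-norms on $A$ and on $A^{\flat}$ is then a formal consequence of $\sharp$ being a multiplicative surjection whose ``kernel direction'' is controlled by $p$-power roots: given a multiplicative semi-norm $|-|'$ on $A$, the composite $a \mapsto |a^{\sharp}|'$ is a multiplicative semi-norm on $A^{\flat}$ (non-archimedean because of the binomial-theorem estimate in the proof of Lemma \ref{properties_tilt_val_rings}(3), which shows $(a+b)^{\sharp}$ is a $p$-adic limit of $(a^{1/p^n \sharp} + b^{1/p^n \sharp})^{p^n}$), and conversely any multiplicative semi-norm on $A^{\flat}$ arises this way, essentially by the tilting bijection for points of the Berkovich spectrum which will be used in Definition \ref{defn_berko_spectrum}.

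I expect the main obstacle to be property (2), specifically checking that the recipe $|-|' \mapsto (a \mapsto |a^{\sharp}|')$ is a genuine bijection on multiplicative semi-norms rather than just a well-defined map in one direction; one must carefully verify that the non-archimedean triangle inequality is preserved through $\sharp$ in both directions and that surjectivity holds, and this is where the $p$-adic continuity arguments from Lemma \ref{properties_tilt_val_rings}(3) do the real work. Everything else is bookkeeping: composing three established equivalences and transporting the explicit norm formulas along the identifications $A^{\flat}_{\le 1}/\varpi^{\flat} \simeq A_{\le 1}/\varpi$ and $\Frac(R^{\flat})^{\times}/R^{\flat\times} \simeq \Frac(R)^{\times}/R^{\times}$.
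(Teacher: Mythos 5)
The overall structure of your argument — compose the three equivalences and transport norms along value groups — matches the paper exactly, and parts (1) and the extension of $\sharp$ to $A^\flat = A^\flat_{\le 1}[\tfrac{1}{\varpi^\flat}]$ in (2) are handled the same way. The genuine gap is the bijection of multiplicative semi-norms in (2). You correctly identify this as the main obstacle and you verify the forward direction (that $a \mapsto |a^\sharp|'$ lands in multiplicative semi-norms on $A^\flat$, using the binomial estimate from Lemma \ref{properties_tilt_val_rings}(3)), but the converse direction is where your argument becomes circular: appealing to ``the tilting bijection for points of the Berkovich spectrum'' is precisely the content of Proposition \ref{tilting_ban_topo}, which \emph{follows from} and depends on the present proposition, not the other way around. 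And a direct elementwise untilt of a semi-norm from $A^\flat$ to $A$ fails because $\sharp: A^\flat \to A$ is only multiplicative, not additive, so there is no obvious way to produce a semi-norm on $A$ from one on $A^\flat$ by a pointwise formula.

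The paper's resolution is to route through completed residue fields and exploit the categorical equivalence already established. A multiplicative semi-norm $x$ on $A$ factors as $A \to \cH(x) \to \overline{\cH(x)}^\wedge \to \overline{\cH(x)}^{\wedge\times}/\overline{\cH(x)}^{\wedge\times}_{\le 1} \cup\{0\} \hookrightarrow \RR_{\ge 0}^\times$, where $\overline{\cH(x)}^\wedge$ is a perfectoid field; since tilting is an equivalence on perfectoid Banach $K$-algebras (the very equivalence just established in the first part of the proposition, and part (1) for fields), one can tilt the entire factorization to obtain a semi-norm $x^\flat$ on $A^\flat$, and functoriality of $\sharp$ identifies $x^\flat$ with $x \circ \sharp$. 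The bijectivity is then automatic because the tilting equivalence is symmetric — the inverse comes from untilting the factorization. So the missing ingredient in your proposal is precisely the observation that a multiplicative semi-norm factors through a perfectoid field, which converts the pointwise untilt problem you cannot solve into a categorical untilt that you already can.
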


\begin{proof} It follows from Proposition \ref{equiv_perfd_ban_tic} and Proposition \ref{integral_tilt_corr} that the composition
\begin{align*}
	(-)^\flat: \Perfd_{K}^{\Ban} \longrightarrow \Perfd_{K^{\flat}}^{\Ban}
\end{align*}
determines an equivalence of categories. For (1) the fact that $(-)^{\flat}$ induces an equivalence between perfectoid non-archimedean fields is a combination of Proposition \ref{integral_tilt_corr} and Lemma \ref{na_field_rk_one_dict}. To complete the proof of $(1)$ we need to check that the norm on $L^\flat$ is given by $L^\flat \ni l \mapsto |l^\sharp|_L$. Indeed, from Lemma \ref{embed_value_group_reals_rk_one} we learn that the map $K^{\flat \times}/K_{\le 1}^{\flat \times} \hookrightarrow \RR_{> 0}^{\times}$ completely determines the map $L^{\flat \times}/L_{\le 1}^{\flat \times} \hookrightarrow \RR_{> 0}^{\times}$, then the claim follows from the isomorphism of multiplicative ordered value groups $\sharp: L^{\flat \times}/L_{\le 1}^{\flat \times} \rightarrow L^{\times}/L_{\le 1}^{\times}$, which is compatible with the isomorphism $\sharp: K^{\flat \times}/K_{\le 1}^{\flat \times} \rightarrow K^{\times}/K_{\le 1}^{\times}$.

In order to proof (2), recall that since $A$ and $A^{\flat}$ are uniform the Berkovich maximum modulus principle \ref{berko_max_modulus} implies that their norms are completely determined by their multiplicative seminorms $A,A^\flat \rightarrow \RR_{\ge 0}$ (\ref{defn_berko_spectrum}). Thus, it remains to show that the map $\sharp: A^\flat \rightarrow A$ determines a bijection between the multiplicative seminorms on $A$ and $A^\flat$. Indeed, if $x: A \rightarrow \RR_{\ge 0}^{\times}$ is the multiplicative semi-norm then $x$ factors as
\begin{equation*}
	x: A \rightarrow \cH(x) \rightarrow \overline{\cH(x)}^\wedge \rightarrow \overline{\cH(x)}^{\wedge \times}/\overline{\cH(x)}^{\wedge \times}_{\le 1} \cup \{0\} \hookrightarrow \RR_{\ge 0}^{\times}
\end{equation*}
where $\cH(x)$ is a non-archimedean field (cf. \ref{defn_completed_residue_field}), $\overline{\cH(x)}^\wedge$ is the completion of its algebraic closure (in particular it is a perfectoid field), the map $\overline{\cH(x)}^\wedge \rightarrow \overline{\cH(x)}^{\wedge \times}/\overline{\cH(x)}^{\wedge \times} \cup \{0\}$ is the canonical map to its value group (\ref{properties_value_group}), and $\overline{\cH(x)}^{\wedge \times}/\overline{\cH(x)}^{\wedge \times} \hookrightarrow \RR_{>0 }^{\times}$ the unique multiplicative ordered map compatible with $K^{\times}/K_{\le 1}^{\times} \hookrightarrow \RR_{> 0}^{\times}$ (\ref{embed_value_group_reals_rk_one}). Tilting this collection of maps gives rise to a multiplicative seminorm
\begin{equation*}
	x^\flat: A^\flat \rightarrow \overline{\cH(x)}^{\wedge \flat} \rightarrow \overline{\cH(x)}^{\wedge \flat \times}/\overline{\cH(x)}^{\wedge \flat \times}_{\le 1} \cup \{0\} \hookrightarrow \RR_{\ge 0}^{\times}
\end{equation*}
And by the functoriality of the sharp map we can conclude that this identifies $x^\flat: A^\flat \rightarrow \RR_{\ge 0}^{\times}$ with $x \circ \sharp$. Then, the equivalence $\Perfd_{K}^{\Ban} \simeq \Perfd_{K^\flat}^{\Ban}$ shows that the sharp map $\sharp: A^\flat \rightarrow A$ determines a bijection between the multiplicative semi-norms on $A$ and $A^\flat$, finishing the proof of (2).
\end{proof}

\begin{prop}\label{tilting_ban_topo} Let $K$ be a perfectoid non-archimedean field with norm $|-|: K \rightarrow \RR_{\ge 0}^{\times}$ and $\varpi \in K$ such that $1 > |\varpi^p| > |p|$. Let $K^{\flat}$ be its tilt, which has norm $|-|: K^{\flat} \rightarrow \RR_{\ge 0}^{\times}$ given by $x \mapsto |x^\sharp|$. Let $A$ and $A^\flat$ be perfectoid Banach algebras over $K$ and $K^\flat$ respectively, then
\begin{enumerate}[(1)]
	\item The sharp map $\sharp: A^\flat \rightarrow A$ determines a homeomorphism $\sharp^*: |\cM(A)| \rightarrow |\cM(A^\flat)|$ (cf. \ref{defn_berko_spectrum}), given explicitly by $x \mapsto x^\flat$ where $x^\flat = x \circ \sharp$.
	\item If $A \rightarrow B$ is a morphism of perfectoid Banach $K$-algebras, and the induced map $|\cM(B)| \rightarrow |\cM(A)|$ has image $U \subset |\cM(A)|$. Then, the corresponding map $A^\flat \rightarrow B^\flat$ induces a map $|\cM(B^\flat)| \rightarrow |\cM(A^\flat)|$ with image $\sharp^{*, -1}(U) \subset |\cM(A^\flat)|$.
\end{enumerate}
\end{prop}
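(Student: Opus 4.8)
The plan is to deduce both statements from the tilting correspondence for perfectoid Banach algebras (Proposition \ref{tilting_corr_banach}) together with the definition of the Berkovich spectrum as the space of multiplicative seminorms. For part (1), Proposition \ref{tilting_corr_banach}(2) already tells us that precomposition with the sharp map $\sharp: A^\flat \rightarrow A$ sets up a bijection between multiplicative seminorms on $A$ and multiplicative seminorms on $A^\flat$; this is precisely a bijection $\sharp^*: |\cM(A)| \rightarrow |\cM(A^\flat)|$ on underlying sets, sending $x$ to $x^\flat = x \circ \sharp$. So the only remaining content of (1) is that $\sharp^*$ and its inverse are continuous. First I would recall that the topology on a Berkovich spectrum is the coarsest one making the evaluation maps $x \mapsto x(f)$ continuous for all $f$ in the algebra. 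Then continuity of $\sharp^*$ reduces to the observation that for $f \in A^\flat$, the composite $x \mapsto x^\flat(f) = x(f^\sharp)$ is the evaluation map at $f^\sharp \in A$, hence continuous. For continuity of the inverse, one uses the description of the inverse map from Proposition \ref{tilting_corr_banach}(2): given a seminorm $y$ on $A^\flat$, it factors through $\cH(y) \to \overline{\cH(y)}^{\wedge}$ and the value group, and untilting via the equivalence $\Perfd_K^{\Ban} \simeq \Perfd_{K^\flat}^{\Ban}$ produces the seminorm $y^\sharp$ on $A$; concretely $y^\sharp(g)$ for $g \in A$ is obtained by writing $g$ in terms of approximations by elements of the form $h^\sharp$ with $h \in A^\flat_{\le 1}$ and using the identity $(a+b)^\sharp \equiv a^\sharp + b^\sharp$ modulo small powers of $\varpi$, exactly as in Lemma \ref{properties_tilt_val_rings}(3). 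Since both $\sharp^*$ and its inverse are continuous bijections between compact Hausdorff spaces, $\sharp^*$ is a homeomorphism.

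For part (2), I would argue purely functorially. Given $A \rightarrow B$ in $\Perfd_K^{\Ban}$, the equivalence $(-)^\flat: \Perfd_K^{\Ban} \simeq \Perfd_{K^\flat}^{\Ban}$ of Proposition \ref{tilting_corr_banach} produces $A^\flat \rightarrow B^\flat$, and the sharp maps fit into a commutative square
\begin{cd}
	B^\flat \ar[r] & B \\
	A^\flat \ar[r] \ar[u] & A \ar[u]
\end{cd}
with horizontal arrows the sharp maps — this commutativity is the naturality of $\sharp$, which follows from the functoriality of $(-)^\flat$ and the description of $\sharp$ on integral elements. Applying the (contravariant) Berkovich functor and using part (1), we get a commutative square of compact Hausdorff spaces in which the two vertical arrows are the homeomorphisms $\sharp^*$. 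Therefore the image of $|\cM(B^\flat)| \rightarrow |\cM(A^\flat)|$ is the $\sharp^*$-image of the image of $|\cM(B)| \rightarrow |\cM(A)|$, i.e. $\sharp^{*,-1}(U)$ when we identify $|\cM(A^\flat)|$ with $|\cM(A)|$ via $\sharp^*$ — more precisely, writing $\sharp^*_A: |\cM(A)| \to |\cM(A^\flat)|$ and $\sharp^*_B$ similarly, the image is $\sharp^*_A(U)$, and under the identification in the statement this is what is meant by $\sharp^{*,-1}(U)$ (the statement's $\sharp^*$ is the homeomorphism in the direction $|\cM(A)| \to |\cM(A^\flat)|$, so its inverse image notation should be read with appropriate care; in any case the set is $\sharp^*_A(U)$).

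The main obstacle I anticipate is the continuity of the inverse of $\sharp^*$ in part (1): unlike continuity of $\sharp^*$ itself, which is immediate from the evaluation-map characterization of the topology, the inverse is described indirectly through residue fields, completions of algebraic closures, and untilting of value groups, so one must check that the assignment $y \mapsto y^\sharp$ varies continuously. The cleanest route is probably to avoid analyzing the inverse directly: since $|\cM(A)|$ and $|\cM(A^\flat)|$ are both compact Hausdorff (Berkovich's theorem, Definition \ref{defn_berko_spectrum}), any continuous bijection between them is automatically a homeomorphism. So it suffices to prove $\sharp^*$ is a \emph{continuous bijection}, and bijectivity is exactly Proposition \ref{tilting_corr_banach}(2). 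This sidesteps the only genuinely delicate point, and reduces the whole of (1) to the trivial continuity statement plus an already-established bijection. With (1) in hand, (2) is a formal diagram chase as described above.
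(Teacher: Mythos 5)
Your part (1) is essentially the paper's own argument: bijectivity of $\sharp^*$ is quoted from Proposition \ref{tilting_corr_banach}, continuity follows from the evaluation-map description of the topology together with $|f(x^\flat)| = |f^\sharp(x)|$, and the homeomorphism property comes for free because a continuous bijection of compact Hausdorff spaces is an isomorphism (the paper cites \ref{monadicity_comp_set} for exactly this); your worry about the inverse, and your resolution of it, match the paper. For part (2) you diverge: the paper argues pointwise, showing that a multiplicative seminorm $x$ on $A$ factors through $B$ if and only if $x^\flat$ factors through $B^\flat$, by tilting the factorization $A \to B \to \cH(x)_B \to \overline{\cH(x)}_B^{\wedge} \to \RR_{\ge 0}$ exactly as in the proof of Proposition \ref{tilting_corr_banach}, whereas you argue formally from naturality of $\sharp$ and part (1). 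Your route is valid and arguably cleaner, but be precise about one point: $\sharp$ is only a map of multiplicative monoids, not a morphism in $\Ban_K$, so you cannot literally ``apply the Berkovich functor'' to the square of sharp maps. What you actually use is (i) the monoid-level commutativity $\sharp_B \circ (A^\flat \to B^\flat) = (A \to B) \circ \sharp_A$, which is immediate from $\sharp$ being projection onto the last coordinate of $\lim_{x \mapsto x^p}(-)_{\le 1}$ and the functoriality of that limit, and (ii) the resulting set-level identity $\sharp_A^*(y|_A) = (\sharp_B^* y)|_{A^\flat}$ for $y \in |\cM(B)|$, since both sides are $y$ precomposed with the same monoid map $A^\flat \to B$. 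Together with surjectivity of $\sharp_B^*$ (it is a bijection by (1)), this gives that the image of $|\cM(B^\flat)| \to |\cM(A^\flat)|$ is $\sharp_A^*(U)$, which is the set the statement denotes $\sharp^{*,-1}(U)$; your reading of that notation is correct. What the paper's pointwise argument buys is independence from part (1) and from any naturality statement (it only re-runs the residue-field tilting construction); what your argument buys is brevity and a clear separation of the topological content (part (1)) from the purely functorial content (part (2)).
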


\begin{proof} We already showed in Proposition \ref{tilting_corr_banach}(1) that the map $\sharp: A^\flat \rightarrow A$ determines a bijection $\sharp^*: |\cM(A)| \rightarrow |\cM(A^\flat)|$ given by $x \mapsto x^\flat$, as both topological spaces are compact hausdorff (\ref{berko_sp_is_comp}) to show that $\sharp^*$ is a homeomorphism it suffices to show that $\sharp^*$ is continuous (\ref{monadicity_comp_set}). Recall that the topology on $|\cM(A)|$ is defined as the weakest topology making the all maps $|\cM(A)| \rightarrow \RR_{\ge 0}$, defined as $x \mapsto |g(x)|$ for all $g \in A$, continuous. Thus it suffices to show that the maps $|\cM(A)| \rightarrow \RR_{\ge 0}$ given by $x \mapsto |f(x^\flat)|$ are continuous for all $f \in A^\flat$, but since $|f(x^\flat)| = |f^\sharp(x)|$ by \ref{tilting_corr_banach} the claim follows. This completes the proof of (1).

In order to proof (2), it suffice to show that a multiplicative semi-norm $x: A \rightarrow \RR_{\ge 0}$ factors as $A \rightarrow B \rightarrow \RR_{\ge 0}$ if and only if $x^\flat: A^\flat \rightarrow \RR_{\ge 0}$ factors as $A^\flat \rightarrow B^\flat \rightarrow \RR_{\ge 0}$. Indeed, assume that $x: A \rightarrow \RR_{\ge 0}$ factors through $B$, let $\cH(x)_B$ the the non-archimedean field associated to the multiplicative semi-norm $B \rightarrow \RR_{\ge 0}$ (cf. \ref{defn_completed_residue_field}), and $\overline{\cH(x)}_B^{\wedge}$ the completion of its algebraic closure (in particular, it is a perfectoid field). Then, $x$ admits a factorization as
\begin{equation*}
	x: A \rightarrow B \rightarrow \cH(x)_B \rightarrow \overline{\cH(x)}_B^\wedge \rightarrow \overline{\cH(x)}_B^{\wedge \times}/\overline{\cH(x)}^{\wedge \times}_{B, \le 1} \cup \{0\} \hookrightarrow \RR_{\ge 0}^{\times}
\end{equation*}
Tilting this collection of maps we get the map (cf. the proof of \ref{tilting_corr_banach})
\begin{equation*}
	x^\flat: A^\flat \rightarrow B^\flat \rightarrow \overline{\cH(x)}_{B^\flat}^{\wedge \flat} \rightarrow \overline{\cH(x)}_{B^\flat}^{\wedge \flat \times}/\overline{\cH(x)}^{\wedge \flat \times}_{B^\flat, \le 1} \cup \{0\} \hookrightarrow \RR_{\ge 0}^{\times}
\end{equation*}
showing that if $x: A \rightarrow \RR_{\ge 0}$ factors through $B$, then $x^\flat: A^\flat \rightarrow \RR_{\ge 0}$ factors through $B^\flat$. A completely symmetrical argument shows the converse, completing the proof of (2).
\end{proof}

\newpage

\chapter{The Berkovich Spectrum}\label{chapt_berk_sp}

Throughout this chapter we fix a prime number $p$ and a perfectoid non-archimedean field $K$ together with an object $\varpi \in K$ satisfying $1 > |\varpi^p| \ge |p|$ and  a compatible system of $p$-power roots $\{\varpi^{1/p^n}\}_{n \in \ZZ_{\ge 0}}$. In Section \ref{sect_basic_prop_berko_sp} we introduce the Berkovich spectrum following \cite{berkovich_spectral}, and recall some favorable categorical properties of the category of compact Hausdorff spaces which we will leverage in the next chapter. In Section \ref{sect_rational_domains} we establish the basic properties of rational domains of Banach $K$-algebras without any finiteness assumptions -- subtle distinctions appear between ``algebraic'' and ``topological'' definitions of rational domains, though these disappear after uniformization. In Section \ref{sect_struct_presheaf_perfd} we show that affinoid perfectoid spaces admit a well-behaved structure presheaf, in particular we show that the topological rational domains admit a unique representative by an affinoid perfectoid space, and that completed residue fields of affinoid perfectoid spaces are perfectoid Banach $K$-algebras. Finally, in Section \ref{sect_tate_acyclicity_perfd} we leverage the dictionary (Theorem \ref{intro_dictionary}) and $\arc$-descent of integral perfectoid algebras of Bhatt and Scholze (\cite[Proposition 8.10]{prisms}) to prove Tate acyclicity for perfectoid Banach $K$-algebras (Theorem \ref{intro_tate_acyclicity_perfd}).

\section{Basic Properties}\label{sect_basic_prop_berko_sp}

\subsection{Introducing the spectrum}

Throughout this section fix a non-trivially valued non-archimedean field $K$, and recall that all Banach $K$-algebras are assumed to be non-archimedean. We will be particularly interested in the opposite category of Banach $K$-algebras, which we denote by $\Ban_K^{\op}$ -- to a Banach $K$-algebra $A$ we use the symbol $\cM(A)$ to denote the corresponding object in $\Ban_K^{\op}$, and to a morphism $A \rightarrow B$ in $\Ban_K$ we denote by $\cM(B) \rightarrow \cM(A)$ the corresponding morphism in $\Ban_K^{\op}$. In \cite{berkovich_spectral} Berkovich introduced a functor
\begin{align*}
	|-|: \Ban_K^{\op} \longrightarrow \Comp && \cM(A) \mapsto |\cM(A)|
\end{align*}
which associates to $\cM(A)$ its ``underlying topological space'', which we will proof is in fact a compact hausdorff space. We will denote by $\Comp$ the category of compact hausdorff spaces. In what follows we will review the basic properties of this construction, as covered in \cite[Chapter 1]{berkovich_spectral}.

\begin{defn}\label{defn_berko_spectrum} Let $A$ be a Banach $K$-algebra, define $|\cM(A)|$ to be the set of non-archimedean semi-norms (i.e. satisfying $(2), (3)$ and $(4)$ of \ref{defn_banach})
\begin{align*}
	x: A \longrightarrow \RR_{\ge 0} && f \mapsto |f(x)|
\end{align*}
which satisfy the following additional properties
\begin{enumerate}[(1)]
	\item The map $x: A \rightarrow \RR_{\ge 0}$ is bounded, that is, for all $f \in A$ we have that $|f(x)| \le |f|_{A}$, where $|-|_A$ denotes the norm on $A$.
	\item The map $x: A \rightarrow \RR_{\ge 0}$ is multiplicative, that is, for any pair of objects $f,g \in A$ we have that $|fg(x)| = |f(x)||g(x)|$ and $|1(x)| = 1$.
\end{enumerate}
We will call the semi-norms $A \rightarrow \RR_{\ge 0}$ satisfying conditions $(1)$ and $(2)$ rank one valuations (or multiplicative semi-norms) on $A$. Furthermore, we endow $|\cM(A)|$ with the weakest topology making the maps
\begin{align*}
	|\cM(A)| \longrightarrow \RR_{\ge 0} && x \mapsto |f(x)| \text{ for all } f \in A
\end{align*}
continuous.
\end{defn}

\begin{lemma}\label{bdd_valuations} Let $A$ be a Banach $K$-algebra, and $x: A \rightarrow \RR_{\ge 0}$ be a map satisfying $(2)$ of \ref{defn_berko_spectrum}. Then, $x$ satisfies $(1)$ if and only if there exists a $C > 0$ such that $|f(x)| \le C |f|_A$ for all $f \in A$.
\end{lemma}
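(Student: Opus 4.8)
The plan is as follows. One implication is immediate: if $x$ satisfies condition $(1)$ of Definition \ref{defn_berko_spectrum}, then $|f(x)| \le |f|_A$ for all $f \in A$, which is precisely the asserted inequality with $C = 1$. So the real content is the converse, and the key point is that boundedness \emph{up to a constant} can be upgraded to boundedness by $1$ using multiplicativity of $x$ together with submultiplicativity of the norm on $A$ — the standard spectral-radius trick.

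Concretely, suppose $C > 0$ satisfies $|f(x)| \le C\,|f|_A$ for every $f \in A$. I would fix $f \in A$ and an integer $n \ge 1$. Using property $(2)$ of Definition \ref{defn_berko_spectrum} (multiplicativity of $x$) we have $|f(x)|^n = |f^n(x)|$, and applying the hypothesis to the element $f^n$ gives $|f^n(x)| \le C\,|f^n|_A$. Since $|-|_A$ is a non-archimedean norm on the Banach $K$-algebra $A$, it is submultiplicative (axiom $(3)$ of Definition \ref{defn_banach}), so $|f^n|_A \le |f|_A^{\,n}$. Combining these inequalities yields $|f(x)|^n \le C\,|f|_A^{\,n}$, hence $|f(x)| \le C^{1/n}\,|f|_A$.

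Finally I would let $n \to \infty$: since $C > 0$ is fixed, $C^{1/n} \to 1$, and therefore $|f(x)| \le |f|_A$. As $f$ was arbitrary, $x$ satisfies condition $(1)$, which completes the proof. I do not anticipate any genuine obstacle here; the only subtlety worth flagging is that the argument uses multiplicativity of $x$ in an essential way — it would fail for a merely $C$-bounded semi-norm that is not multiplicative — which is exactly why the hypothesis that $x$ satisfies $(2)$ of Definition \ref{defn_berko_spectrum} is imposed.
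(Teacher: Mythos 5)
Your proof is correct and follows the same route as the paper: the trivial direction with $C=1$, and for the converse the standard trick of applying the hypothesis to $f^n$, using multiplicativity of $x$ and submultiplicativity of $|-|_A$ to get $|f(x)| \le C^{1/n}|f|_A$, then letting $n \to \infty$. Nothing is missing.
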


\begin{proof} If $x: A \rightarrow \RR_{\ge 0}$ satisfies $(1)$ then its clear that $|f(x)| \le C |f|_A$ with $C = 1$ for all $f \in A$. On the other hand, since $x$ satisfies $(2)$ the inequality $|f^n(x)| \le C |f^n|_A$ implies that $|f(x)| \le C^{1/n} |f|_A$ for all $n \ge 0$, which in turn implies that $|f(x)| \le |f|_{A}$.
\end{proof}

\begin{prop}\label{berko_sp_is_comp} For any non-zero Banach $K$-algebra $A$, the topological space $|\cM(A)|$ is a non-empty compact hausdorff space.
\end{prop}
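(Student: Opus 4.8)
The plan is to prove the two assertions — non-emptiness and compact Hausdorffness — separately, following Berkovich's original argument closely.

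First I would establish that $|\cM(A)|$ is non-empty. The key point is that a non-zero Banach $K$-algebra $A$ admits a maximal ideal $\frakm$, and the quotient $A/\frakm$ is a field. One then needs a multiplicative bounded seminorm; the natural candidate is the spectral seminorm $\rho$ (Definition \ref{defn_sp_radius}) pushed down to a residue field. Concretely, passing to a quotient $A \to A/\frakm$, completing, and using that a complete normed field admits a multiplicative norm (or, more carefully, passing to the residue field of a suitable valuation on the complete field) produces a point $x \in |\cM(A)|$. An alternative and perhaps cleaner route: the spectral seminorm $\rho$ on $A$ is power-multiplicative and non-archimedean (as recorded after Definition \ref{defn_sp_radius}), and one can show that for any power-multiplicative seminorm there exists a multiplicative seminorm bounded above by it — this is a Zorn's lemma argument on the set of power-multiplicative seminorms $\le \rho$, whose minimal elements are multiplicative. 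Since $\rho(a) \le |a|_A$, any such seminorm is automatically bounded, giving a point of $|\cM(A)|$. I expect the existence of minimal elements (hence non-emptiness of the poset) to require care: one must check that a decreasing chain of power-multiplicative seminorms has a power-multiplicative infimum, and that $A \ne 0$ forces $\rho \not\equiv 0$, i.e. some element is not topologically nilpotent; the latter follows because $1 \in A$ has $\rho(1) = 1$.

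Next I would prove that $|\cM(A)|$ is compact and Hausdorff. The strategy is to realize $|\cM(A)|$ as a closed subspace of the product $\prod_{f \in A} [0, |f|_A]$, equipped with the product topology, via the evaluation map $x \mapsto (|f(x)|)_{f \in A}$. This map is injective and, by the very definition of the topology on $|\cM(A)|$ as the weakest making all $x \mapsto |f(x)|$ continuous, it is a homeomorphism onto its image. The product $\prod_{f \in A}[0,|f|_A]$ is compact Hausdorff by Tychonoff, so it remains to verify that the image is closed. This is where the bulk of the work lies: one must check that the conditions cutting out $|\cM(A)|$ — namely $|0| = 0$, $|1| = 1$, $|f+g| \le \max(|f|,|g|)$, $|fg| = |f||g|$, and boundedness $|f| \le |f|_A$ — are all closed conditions in the product topology. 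Each such condition, being defined by (non-strict) inequalities or equalities among finitely many coordinate functions $\mathrm{pr}_f$, $\mathrm{pr}_g$, $\mathrm{pr}_{f+g}$, $\mathrm{pr}_{fg}$ which are continuous on the product, defines a closed subset; the intersection over all $f, g \in A$ is closed. Combining: $|\cM(A)|$ is a closed subspace of a compact Hausdorff space, hence compact Hausdorff.

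The main obstacle I anticipate is the non-emptiness statement rather than the topological one — the compactness argument is essentially formal once the closedness of the defining conditions is checked, whereas producing an actual multiplicative bounded seminorm on an arbitrary non-zero Banach $K$-algebra genuinely uses either a maximal-ideal/residue-field construction together with the existence of valuations on complete fields, or a Zorn's lemma argument on the poset of power-multiplicative seminorms. I would lean toward the Zorn's lemma approach since it keeps everything internal to $A$ and makes the boundedness automatic via $\rho \le |-|_A$, but I would cite \cite[Chapter 1]{berkovich_spectral} for the detailed verification that minimal power-multiplicative seminorms are multiplicative, as this is standard and somewhat technical.
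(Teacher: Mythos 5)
Your proposal is correct and takes essentially the same approach as the paper: the compactness argument is identical (embed $|\cM(A)|$ into the Tychonoff product $\prod_{f \in A}[0,|f|_A]$ and observe that the image is cut out by closed conditions), and for non-emptiness the paper simply cites \cite[Theorem 1.2.1]{berkovich_spectral}, whose content is precisely the Zorn's-lemma argument on bounded power-multiplicative seminorms that you sketch.
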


\begin{proof} The fact that $|\cM(A)|$ is non-empty follows from \cite[Theorem 1.2.1]{berkovich_spectral}. We present an alternative proof of the fact that $|\cM(A)|$ is a compact hausdorff space which we learn from Mattias Jonsson. From the construction of $|\cM(A)|$ as a topological space it is clear that we have an injective morphism
\begin{align*}
	|\cM(A)| \longrightarrow \prod_{f \in A} [0, |f|_A] && x \mapsto \prod_{f \in A} |f(x)|
\end{align*} 
which is an homeomorphism onto its image. Thus, it remains to show that the image of $|\cM(A)| \rightarrow \prod_{f \in A} [0, |f|_A]$ is a closed subset. Indeed, a point $(t_f)_{f \in A} \in \prod_{f \in A} [0, |f|_A]$ lies in the image of $\cM(A)$ if and only if it satisfies the following conditions $t_0 = 0$, $t_1 = 1$, $t_f = t_{-f}$, $t_{f + g} \le t_{f} + t_{g}$ and $t_{fg} = t_f t_g$; and since this conditions define a closed subset of $\prod_{f \in A} [0, |f|_A]$ the result follows.
\end{proof}

\begin{prop}\label{berko_sp_functor} The assignment $\cM(A) \mapsto |\cM(A)|$ determines a functor $\Ban_K^{\op} \rightarrow \Comp$.
\end{prop}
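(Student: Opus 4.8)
The plan is to show that a bounded morphism $\varphi \colon A \to B$ of Banach $K$-algebras induces a continuous map $|\cM(B)| \to |\cM(A)|$ by pullback of multiplicative seminorms, and that this assignment is functorial. So first I would define, for a bounded morphism $\varphi\colon A \to B$, the map $|\varphi|\colon |\cM(B)| \to |\cM(A)|$ by sending a multiplicative seminorm $y\colon B \to \RR_{\ge 0}$ to the composite $y \circ \varphi \colon A \to \RR_{\ge 0}$. The main point is that $y \circ \varphi$ is again a point of $|\cM(A)|$: it visibly satisfies conditions $(2),(3),(4)$ of Definition \ref{defn_banach} since these are preserved under composition with a ring homomorphism, and it is multiplicative with $|1(y\circ\varphi)| = |\varphi(1)(y)| = |1(y)| = 1$. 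For boundedness, since $\varphi$ is a bounded morphism there is a $C > 0$ with $|\varphi(f)|_B \le C|f|_A$ for all $f \in A$, hence $|f(y\circ\varphi)| = |\varphi(f)(y)| \le |\varphi(f)|_B \le C|f|_A$; by Lemma \ref{bdd_valuations} this is enough to conclude that $y\circ\varphi$ lies in $|\cM(A)|$.

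Next I would check continuity of $|\varphi|$. Recall that $|\cM(A)|$ carries the weakest topology making all evaluation maps $x \mapsto |f(x)|$ (for $f \in A$) continuous, so it suffices to check that for every $f \in A$ the composite $|\cM(B)| \xrightarrow{|\varphi|} |\cM(A)| \xrightarrow{|f(-)|} \RR_{\ge 0}$ is continuous. But this composite sends $y \mapsto |f(y \circ \varphi)| = |\varphi(f)(y)|$, which is exactly the evaluation map on $|\cM(B)|$ associated to the element $\varphi(f) \in B$, and hence is continuous by definition of the topology on $|\cM(B)|$. This establishes that $|\varphi|$ is a continuous map of compact Hausdorff spaces (the spaces being compact Hausdorff by Proposition \ref{berko_sp_is_comp}), i.e.\ a morphism in $\Comp$.

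Finally I would verify the functoriality axioms. For the identity morphism $\Id_A \colon A \to A$, the induced map sends $x \mapsto x \circ \Id_A = x$, so $|\Id_A| = \Id_{|\cM(A)|}$. For composability, given bounded morphisms $\varphi\colon A \to B$ and $\psi\colon B \to C$, the induced map $|\psi \circ \varphi|$ sends $z \in |\cM(C)|$ to $z \circ (\psi \circ \varphi) = (z \circ \psi) \circ \varphi = |\varphi|(|\psi|(z))$, so $|\psi \circ \varphi| = |\varphi| \circ |\psi|$, which is the contravariant functoriality appropriate to $\Ban_K^{\op} \to \Comp$ (equivalently, in the language of $\cM(-)$, a morphism $\cM(B) \to \cM(A)$ induces $|\cM(B)| \to |\cM(A)|$, composing covariantly). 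There is essentially no hard step here: everything follows formally once the seminorm-pullback is seen to land in the right set and to be continuous, and the only mild subtlety is the use of Lemma \ref{bdd_valuations} to pass from the constant $C$ coming from boundedness to the genuine bound $|f(x)| \le |f|_A$ required in Definition \ref{defn_berko_spectrum}.
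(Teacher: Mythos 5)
Your proposal is correct and follows essentially the same route as the paper: pull back multiplicative seminorms along the bounded map, use Lemma \ref{bdd_valuations} to convert the boundedness constant $C$ into the genuine bound $|f(x)| \le |f|_A$, and verify continuity from the definition of the weak topology (the paper phrases this via the embedding $|\cM(B)| \subset \prod_{f \in B}[0,|f|_B]$, which is the same check). The explicit verification of identities and composition is a harmless addition the paper leaves implicit.
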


\begin{proof} Since morphisms of Banach $K$-algebras $\varphi: B \rightarrow A$ are bounded, it follows from Lemma \ref{bdd_valuations} that if we have a rank one valuation $A \rightarrow \RR_{\ge 0}$, then the composition $B \rightarrow A \rightarrow \RR_{\ge 0}$ is a rank one valuation on $B$. Hence, we get an induced map of sets $|\cM(A)| \rightarrow |\cM(B)|$, it remains to show that this map is continuous. From the proof of \ref{berko_sp_is_comp} we learn that we can realize $|\cM(B)|$ as a closed subset of $\prod_{f \in B} [0, |f|_B]$, and from Lemma \ref{bdd_valuations} we learn that we have a map of sets
\begin{align*}
	|\cM(A)| \longrightarrow \prod_{f \in B} [0, |f|_B] && x \mapsto |\varphi(f)(x)|
\end{align*}
whose image is contained in $|\cM(B)| \subset \prod_{f \in B} [0, |f|_B]$. It remains to show that the map $|\cM(A)| \rightarrow \prod_{f \in B} [0, |f|_B]$ is continuous, but this is clear from the definition of the topology on $|\cM(A)|$.
\end{proof}

\begin{prop}\label{recog_unit_berko} Let $A$ be a Banach $K$-algebra. An element $f \in A$ is invertible if and only if $|f(x)| \not= 0$ for all $x \in |\cM(A)|$.
\end{prop}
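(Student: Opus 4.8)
The plan is to prove both directions of the equivalence. The forward direction is immediate: if $f \in A$ is invertible with inverse $f^{-1}$, then for any $x \in |\cM(A)|$ the multiplicativity condition (2) of Definition \ref{defn_berko_spectrum} gives $|f(x)| \cdot |f^{-1}(x)| = |1(x)| = 1$, so $|f(x)| \neq 0$. The substance is in the converse.

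For the converse, suppose $f \in A$ is not invertible; I want to produce a point $x \in |\cM(A)|$ with $|f(x)| = 0$. The natural approach is to pass to the quotient $A/\overline{(f)}$, where $\overline{(f)}$ denotes the closure of the ideal generated by $f$. First I would argue that $\overline{(f)}$ is a proper ideal: if it were all of $A$, then $1$ would lie in the closure of $(f)$, so there would be $g \in A$ with $|1 - fg|_A < 1$; but then $fg = 1 - (1 - fg)$ is invertible (its the difference of a unit and a topologically nilpotent element, using completeness of $A$ to sum the geometric series), which would force $f$ to be invertible, a contradiction. Hence $A/\overline{(f)}$ is a nonzero Banach $K$-algebra with the quotient (residue) seminorm, and the quotient map $\pi: A \to A/\overline{(f)}$ is a bounded (indeed contractive) morphism of Banach $K$-algebras sending $f$ to $0$.

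Now I would apply Proposition \ref{berko_sp_is_comp} to the nonzero Banach $K$-algebra $A/\overline{(f)}$ to obtain a point $y \in |\cM(A/\overline{(f)})|$, i.e.\ a bounded multiplicative seminorm on $A/\overline{(f)}$. By functoriality of the Berkovich spectrum (Proposition \ref{berko_sp_functor}), pulling back along $\pi$ gives a point $x := \pi^*(y) \in |\cM(A)|$, and by construction $|f(x)| = |y(\pi(f))| = |y(0)| = 0$. This completes the converse.

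The main obstacle — really the only nontrivial input — is establishing that $\overline{(f)}$ is a proper ideal, which rests on the standard fact that in a Banach ring an element within distance $<1$ (in the sense $|1-u|_A < 1$) of the identity is a unit via the convergent Neumann series $\sum_n (1-u)^n$; completeness of $A$ is what makes this series converge. The rest is a routine assembly of Propositions \ref{berko_sp_is_comp} and \ref{berko_sp_functor} together with the definition of the quotient seminorm on $A/\overline{(f)}$ (for which one may cite \cite[Section 1.1]{BGR}). I would also remark that the quotient $A/\overline{(f)}$ is automatically non-archimedean since it is a quotient of a non-archimedean Banach $K$-algebra.
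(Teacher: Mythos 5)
Your proof is correct. Both directions are sound, and the overall architecture matches the paper's: reduce the converse to finding a nonzero Banach $K$-algebra quotient of $A$ in which $f$ maps to zero, then apply Proposition \ref{berko_sp_is_comp} together with functoriality of $|\cM(-)|$. The difference lies in which closed proper ideal you quotient by. The paper takes a maximal ideal $\frakm \ni f$ and cites the fact that maximal ideals of Banach rings are automatically closed (\cite[1.2.4/5]{BGR}), whereas you quotient by $\overline{(f)}$ and prove properness directly by the Neumann-series argument. Your route is slightly more self-contained: the fact the paper cites is itself proved by essentially the observation you make (the set of units is open, via the convergent geometric series), so you are in effect inlining that input rather than outsourcing it. Both versions deposit the full weight of the argument on Proposition \ref{berko_sp_is_comp}. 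One small point worth keeping in view, though it costs nothing here: the quotient map $A \twoheadrightarrow A/\overline{(f)}$ is contractive for the residue norm, so $A/\overline{(f)}$ is a Banach $K$-algebra in the sense of Definition \ref{defn_banach_algebras}, and as you note it is non-archimedean; no extra verification is needed to land in the category where Proposition \ref{berko_sp_is_comp} applies.
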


\begin{proof} If $f \in A$ is invertible, then for any $x \in |\cM(A)|$ we have the identity $1 = |ff^{-1} (x)| = |f(x)| |f^{-1}(x)|$ showing that $|f(x)| \not= 0$ for all $x \in |\cM(A)|$. Conversely, if $f \in A$ is not invertible, then $f \in \frakm \subset A$ is an element of a maximal ideal of $A$. As maximal ideals of Banach $K$-algebras are automatically closed (\cite[1.2.4/5]{BGR}) it follows that $\kappa := A/\frakm$ endowed with the quotient norm $A \twoheadrightarrow \kappa$ is a Banach field, which in turn implies that $|\cM(\kappa)|$ is non-empty by \ref{berko_sp_is_comp}. It then follows that for any rank one valuation $x \in |\cM(\kappa)|$ the composition $A \twoheadrightarrow \kappa \rightarrow \RR_{\ge 0}$ will send $f \mapsto 0$, as desired. 
\end{proof}

\begin{defn} Let $\{A_i\}_{i \in I}$ be a collection of Banach $K$-algebras indexed by a set $I$. Then, we define the bounded direct product of $\{A_i\}_{i \in I}$ as
\begin{equation*}
	\prod_{i \in I} A_i := \{ a = (a_i)_{i \in I} \text{ which satisfy } |a| := \sup_{i \in I} |a_i|_{A_i} < \infty \} 
\end{equation*}
with norm $|a| = \sup_{i \in I} |a_i|_{A_i}$. Its easy to see that the bounded direct product $\prod A_i$ is itself a non-archimedean Banach $K$-algebra. Furthermore, the bounded direct product $\prod A_i$ enjoys the following universal property: for a collection of contractive morphisms $\{B \rightarrow A_i\}_{i \in I}$ there in an essentially unique morphism
\begin{equation*}
	B \longrightarrow \prod_{i \in I} A_i
\end{equation*}
factoring the maps $B \rightarrow \prod A_i \rightarrow A_i$; or in other words we have an identity $\Hom_{\Ban_K^{\contr}} (B, \prod A_i) = \prod \Hom_{\Ban_K^{\contr}} (B, A_i)$. However, it is not clear that $\prod A_i$ enjoys the same universal property with respect to a collection of bounded (not necessarily contractive) morphisms $\{B \rightarrow A_i\}_{i \in I}$.
\end{defn}

\begin{lemma}\label{stone_cech_berko_sp} Let $\{K_i\}_{i \in I}$ be a collection of non-archimedean fields indexed by a set $I$. Then, $|\cM(\prod_{I} K_i)|$ is homeomorphic to $\beta(I)$, the Stone-Cech compactification of $I$ as a discrete set. Furthermore, given a subset $J \subset I$ the quotient map $\prod_I K_i \rightarrow \prod_{J} K_i$ induces an injective map
\begin{equation*}
	|\cM(\prod_J K_i)| \hookrightarrow |\cM(\prod_I K_i)|
\end{equation*}
which corresponds to the canonical inclusion $\beta(J) \subset \beta(I)$.
\end{lemma}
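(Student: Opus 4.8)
The statement has two parts: identifying $|\cM(\prod_I K_i)|$ with $\beta(I)$, and matching the inclusion induced by a subset $J \subset I$ with $\beta(J) \hookrightarrow \beta(I)$. For the first part I would exploit the universal property of the Stone-\v Cech compactification: $\beta(I)$ is the universal compact Hausdorff space receiving a map from the discrete set $I$. There is an evident map $I \to |\cM(\prod_I K_i)|$ sending $i$ to the multiplicative seminorm $x_i$ obtained by composing the projection $\prod_I K_i \to K_i$ with the norm $|-|_{K_i}$ (this is a rank one valuation because $K_i$ is a non-archimedean field, hence has a multiplicative norm, and the projection is contractive). By the universal property this extends uniquely to a continuous map $\beta(I) \to |\cM(\prod_I K_i)|$, and I need to show it is a homeomorphism. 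Since both sides are compact Hausdorff (Proposition \ref{berko_sp_is_comp}), it suffices to show the map is a continuous bijection.

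For surjectivity I would argue that every $x \in |\cM(\prod_I K_i)|$ is a limit (along an ultrafilter) of the points $x_i$. Concretely, think of $\prod_I K_i$ as containing the idempotents $e_i$ (the element which is $1$ in the $i$-th coordinate and $0$ elsewhere); for a multiplicative seminorm $x$, multiplicativity forces $|e_i(x)| \in \{0,1\}$ and $|e_i e_j (x)| = |e_i(x)||e_j(x)|$, so at most one $e_i$ has $|e_i(x)| = 1$ unless all vanish. The family $\{i : |e_i(x)| = 1\}$ together with the ``large'' index sets $S$ (those with $|e_S(x)| = 1$, where $e_S$ is the characteristic idempotent of $S \subseteq I$) forms an ultrafilter $\mathcal{U}$ on $I$; one then checks $x = \lim_{\mathcal{U}} x_i$ by showing $|f(x)| = \lim_{\mathcal{U}} |f_i(x_i)|$ for each $f = (f_i) \in \prod_I K_i$, using that $f - f\cdot e_S$ is small for $S$ large in the sense measured by $x$. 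This realizes $x$ as the image of the ultrafilter point $\mathcal{U} \in \beta(I)$. For injectivity, distinct ultrafilters are separated by some idempotent $e_S$, and $|e_S(x_{\mathcal U})| = 1$ iff $S \in \mathcal U$, so the seminorms they map to differ on $e_S$.

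For the second part, the quotient map $q \colon \prod_I K_i \to \prod_J K_i$ (projection onto the $J$-coordinates, which is contractive hence bounded) induces by Proposition \ref{berko_sp_functor} a continuous map $q^* \colon |\cM(\prod_J K_i)| \to |\cM(\prod_I K_i)|$; under the identifications of the first part this map sends the ultrafilter point $\mathcal V$ on $J$ to the ultrafilter on $I$ generated by $\mathcal V$, which is exactly the standard description of $\beta(J) \hookrightarrow \beta(I)$. Injectivity of $q^*$ then follows from injectivity of $\beta(J) \to \beta(I)$ (or directly: an ultrafilter on $I$ containing $J$ is determined by its trace on $J$).

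\textbf{Main obstacle.} The delicate point is the surjectivity argument: extracting the ultrafilter $\mathcal U$ from an arbitrary multiplicative seminorm $x$ and verifying that $|f(x)| = \lim_{\mathcal U} |f_i(x_i)|$ for \emph{every} $f$ in the bounded product. One must be careful that $x$ need not be ``supported'' on a single coordinate — it can genuinely be a limit along a nonprincipal ultrafilter — and the key estimate is that for $f = (f_i)$ with $\sup_i |f_i|_{K_i} = r$, and any $\varepsilon > 0$, the set $S_\varepsilon = \{i : \big| |f_i|_{K_i} - \lim_{\mathcal U}|f_j|_{K_j}\big| < \varepsilon\}$ lies in $\mathcal U$, so that $f$ agrees with $f \cdot e_{S_\varepsilon}$ up to an element of small seminorm; multiplicativity of $x$ on $e_{S_\varepsilon}$ then pins down $|f(x)|$. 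Getting this approximation clean, and handling the boundary case where $\lim_{\mathcal U}|f_i|_{K_i} = 0$, is where the real work lies; everything else is formal manipulation of universal properties.
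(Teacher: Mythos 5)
The paper's proof of this lemma is a bare citation to Berkovich's \emph{Spectral Theory}, Proposition~1.2.3, so there is no in-paper argument to compare against; your plan reconstructs the standard proof underlying that reference — universal property of $\beta(I)$ to produce a continuous comparison map $\beta(I)\to|\cM(\prod_I K_i)|$, idempotents to extract an ultrafilter from a multiplicative seminorm, an approximate-truncation estimate for surjectivity, and the $\beta(J)\hookrightarrow\beta(I)$ matching via $e_S\mapsto e_{S\cap J}$ — and the outline is correct.

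One calibration of where the work actually lies, since you flag it as the main obstacle but misplace the difficulty. The truncation by $e_{S_\varepsilon}$ gives only the upper bound $|f(x)|\le r:=\lim_{\mathcal{U}}|f_i|_{K_i}$, and it does so exactly rather than ``up to small seminorm'': since $I\setminus S_\varepsilon\notin\mathcal{U}$, multiplicativity gives $|fe_{I\setminus S_\varepsilon}(x)|=|f(x)|\,|e_{I\setminus S_\varepsilon}(x)|=0$, so $|f(x)|=|fe_{S_\varepsilon}(x)|\le\|fe_{S_\varepsilon}\|\le r+\varepsilon$. In particular the ``boundary case'' $r=0$ you worry about is the \emph{easy} one: the upper bound alone forces $|f(x)|=0$. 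The genuine missing step is the lower bound when $r>0$, which needs a new idea your sketch does not supply: for $0<\varepsilon<r$, each $f_i$ with $i\in S_\varepsilon$ is a nonzero element of the field $K_i$ with $|f_i^{-1}|_{K_i}\le(r-\varepsilon)^{-1}$, so $h:=(f_i^{-1})_{i\in S_\varepsilon}$, extended by $0$ off $S_\varepsilon$, lies in the bounded product $\prod_I K_i$ and satisfies $fh=e_{S_\varepsilon}$; multiplicativity gives $|f(x)|\,|h(x)|=|e_{S_\varepsilon}(x)|=1$, hence $|f(x)|\ge\|h\|^{-1}\ge r-\varepsilon$. This approximate-inverse step is the one place the hypothesis that the $K_i$ are fields (rather than arbitrary Banach rings) is used, and it deserves to be made explicit in a complete write-up.
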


\begin{proof} This follows from \cite[Proposition 1.2.3]{berkovich_spectral} and its proof. We refer the reader to \cite[Section 3.2]{lurie_ultracategories} for a discussion on the Stone Cech compactification $\beta(I)$ of a discrete set $I$.
\end{proof}

\begin{defn}\label{defn_completed_residue_field} Let $A$ be a Banach $K$-algebra and $x: A \rightarrow \RR_{\ge 0}$ a rank one valuation, in the sense of \ref{defn_berko_spectrum}. Then, its clear from the definitions that the $\ker(x)$ the kernel of $x$ is equal to a prime ideal $\frakp_x \subset A$. Then, the rank one valuation $x: A \rightarrow \RR_{\ge 0}$ determines a rank one valuation on the domain $x: A/\frakp_x \rightarrow \RR_{\ge 0}$, and by the multiplicativity of $x$ it extends to the fraction field
\begin{eqnarray}
	x: \Frac(A/\frakp_x) \rightarrow \RR_{\ge 0}
\end{eqnarray}
we denote by $\cH(x)$ the completion of $\Frac(A/\frakp_x)$ with respect to $x$, and call it the completed residue field of $A$ at $x \in |\cM(A)|$. In particular, we get that the map $x: A \rightarrow \RR_{\ge 0}$ factor uniquely as $x: A \rightarrow \cH(x) \rightarrow \RR_{\ge 0}$. By construction, it follows that $\cH(x)$ is uniform, and so the map $A \rightarrow \cH(x)$ is contractive, showing that $\cH(x)$ is non-archimedean by \ref{recog_na}.

From \ref{stone_cech_berko_sp} it follows that $|\cM(\cH(x))|$ is a singleton, and from the construction of the map $A \rightarrow \cH(x)$ it follows that the image of $|\cM(\cH(x))| \rightarrow |\cM(A)|$ is exactly $x \in |\cM(A)|$.
\end{defn}

\begin{defn} Let $A$ be a Banach $K$-algebra, the Gelfand transform is the map 
\begin{equation*}
	A \rightarrow \prod_{x \in |\cM(A)|} \cH(x)
\end{equation*}
induced from the collection of contractive morphisms $\{A \rightarrow \cH(x)\}_{x \in |\cM(A)|}$. The induced map of Berkovich spectrum
\begin{equation*}
	|\cM(\prod_{x \in |\cM(A)|} \cH(x))| \longrightarrow |\cM(A)|
\end{equation*}
is surjective, as for any $x \in |\cM(A)|$ the map $A \rightarrow \cH(x)$ factors as $A \rightarrow \prod_{y \in |\cM(A)|} \cH(y) \rightarrow \cH(x)$ which by functoriality of $|\cM(-)|$ produces a map
\begin{equation*}
	|\cM(\cH(x))| \rightarrow |\cM(\prod_{y \in |\cM(A)|} \cH(y))| \rightarrow |\cM(A)|
\end{equation*}
showing surjectivity. Furthermore, the same argument shows that the map $|\cM(\prod \cH(x))| \rightarrow |\cM(A)|$ of compact hausdorff spaces can be identified with the map $\beta(|\cM(A)|^\delta) \rightarrow |\cM(A)|$ from the Stone Cech compactification of $|\cM(A)|$ considered as a discrete set (cf. \ref{monadicity_comp_set}).
\end{defn}

\begin{prop}\label{univ_completed_residue} Let $A$ be a Banach $K$-algebra. Fix a point $x \in |\cM(A)|$, then the induced morphisms $\cM(\cH(x)) \rightarrow \cM(A)$ satisfies the following universal property: for each map $\cM(K_x) \rightarrow \cM(A)$ from a non-archimedean field $K_x$, whose image $|\cM(K_x)| \rightarrow |\cM(A)|$ is equal to $x \in |\cM(A)|$, there exist a unique factorization
\begin{align*}
	A \longrightarrow \cH(x) \longrightarrow K_x && \cM(K_x) \rightarrow \cM(\cH(x)) \rightarrow \cM(A)
\end{align*}
\end{prop}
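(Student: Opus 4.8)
The plan is to exhibit the factorization explicitly using the completed residue field construction of Definition \ref{defn_completed_residue_field}, and then establish uniqueness by a density argument. First I would unpack what the hypothesis gives us: a contractive morphism of Banach $K$-algebras $A \rightarrow K_x$ whose induced map $|\cM(K_x)| \rightarrow |\cM(A)|$ has image the single point $x \in |\cM(A)|$. Since $K_x$ is a non-archimedean field, by Lemma \ref{stone_cech_berko_sp} the space $|\cM(K_x)|$ is a singleton, say $\{y\}$, and the image condition says precisely that $y$, viewed as a multiplicative semi-norm on $A$, equals $x$. Concretely, writing $\psi: A \rightarrow K_x$ for our map, this means $|\psi(f)|_{K_x} = |f(x)|$ for all $f \in A$.

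Next I would produce the first arrow $A \rightarrow \cH(x)$, which is the canonical one from Definition \ref{defn_completed_residue_field}. For the middle arrow $\cH(x) \rightarrow K_x$, I would argue as follows. The kernel of $x: A \rightarrow \RR_{\ge 0}$ is the prime ideal $\frakp_x$, and the identity $|\psi(f)|_{K_x} = |f(x)|$ forces $\psi(\frakp_x) = 0$ (since $|\psi(f)|_{K_x} = 0$ iff $\psi(f) = 0$ in the field $K_x$), so $\psi$ factors through a ring map $A/\frakp_x \rightarrow K_x$; as $K_x$ is a field this extends uniquely to the fraction field $\Frac(A/\frakp_x) \rightarrow K_x$, and this map is an isometry for the valuation $x$ on $\Frac(A/\frakp_x)$ and the norm on $K_x$ by the same identity (extended multiplicatively to quotients of ratios). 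Since $K_x$ is complete, this isometry extends uniquely to the completion, giving a contractive (indeed isometric) map $\cH(x) \rightarrow K_x$. By construction the composite $A \rightarrow \cH(x) \rightarrow K_x$ agrees with $\psi$ on $A$, hence equals $\psi$.

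Finally, for uniqueness: suppose $\varphi_1, \varphi_2: \cH(x) \rightarrow K_x$ both make the triangle commute. Then $\varphi_1, \varphi_2$ agree on the image of $A \rightarrow \cH(x)$, hence on the subring generated by that image and its inverses, hence on $A/\frakp_x$ and on $\Frac(A/\frakp_x)$. But $\Frac(A/\frakp_x)$ is dense in $\cH(x)$ by definition of the latter as a completion, and $\varphi_1, \varphi_2$ are continuous (being bounded morphisms of Banach rings, cf. Proposition \ref{banach_to_topological_ff}), so they agree on all of $\cH(x)$. I expect the main obstacle to be a minor bookkeeping point rather than a conceptual one: namely, verifying carefully that the image condition at the level of Berkovich spectra translates exactly into the equality of semi-norms $|\psi(f)|_{K_x} = |f(x)|$ on $A$ (using that $|\cM(K_x)|$ is a single point with its unique semi-norm being the norm of $K_x$ itself, via Lemma \ref{stone_cech_berko_sp}), and that this equality is genuinely what is needed to factor through $\cH(x)$ rather than merely through some completed residue field at a possibly different point. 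Once that identification is pinned down, the rest is the standard universal property of completion.
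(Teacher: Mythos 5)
Your proposal is correct and follows essentially the same route as the paper: identify the unique point of $|\cM(K_x)|$ with the norm of $K_x$, translate the image condition into the equality $|\psi(f)|_{K_x} = |f(x)|$, factor through $A/\frakp_x$ and $\Frac(A/\frakp_x)$, and use completeness of $K_x$ to extend to $\cH(x)$. The explicit uniqueness-by-density argument you add is only implicit in the paper but is the expected standard completion argument, so there is no substantive difference.
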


\begin{proof} For any non-archimedean field $K_x$ we claim that the rank one valuation $K_x \rightarrow \RR_{\ge 0}$ corresponding to the unique point of $\cM(K_x)$ is exactly the norm $|-|_{K_x}: K_x \rightarrow \RR_{\ge 0}$ on $K_x$. Indeed, since $K_x$ is a non-archimedean field it follows that the norm $|-|_{K_x}$ defines a point of $|\cM(K_x)|$, and since $|\cM(K_x)|$ is a singleton the claim follows.

Let $x \in |\cM(A)|$ be the point which is on the image of the map $|\cM(K_x)| \rightarrow |\cM(A)|$, it follows directly from the definitions that the composition $A \rightarrow K_x \rightarrow \RR_{\ge 0}$ of the map $A \rightarrow K_x$ followed by $|-|_{K_x}: K_x \rightarrow \RR_{\ge 0}$, is exactly the rank one valuation $x: A \rightarrow \RR_{\ge 0}$ corresponding to the point $x \in |\cM(A)|$. Therefore, it follows that we get a factorization
\begin{equation*}
	A \rightarrow \Frac(A/\frakp_x) \rightarrow K_x \rightarrow \RR_{\ge 0}
\end{equation*}
and as $K_x$ is complete with respect to $|-|_{K_x}: K_x \rightarrow \RR_{\ge 0}$ it follows that the map $\Frac(A/\frakp_x) \rightarrow K_x$ factors through the completion of $\Frac(A/\frakp_x)$ with respect to $\Frac(A/\frakp_x) \rightarrow K_x \rightarrow \RR_{\ge 0}$, which is exactly $\cH(x)$, proving the result.
\end{proof}

\begin{prop}[Berkovich's maximum modulus principle]\label{berko_max_modulus} Let $A$ be a Banach $K$-algebra. Then, the spectral radius $\rho_A$ of $A$ (cf. \ref{defn_sp_radius}) satisfies
\begin{equation*}
	\rho_A (f) = \max_{x \in |\cM(A)|} |f(x)|
\end{equation*}
for all $f \in A$.
\end{prop}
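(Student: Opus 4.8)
The plan is to prove the two inequalities $\rho_A(f) \ge \max_{x \in |\cM(A)|} |f(x)|$ and $\rho_A(f) \le \sup_{x \in |\cM(A)|} |f(x)|$ separately, and then use compactness of $|\cM(A)|$ together with continuity of $x \mapsto |f(x)|$ to upgrade the supremum to a genuine maximum (the maximum being attained is then automatic since $|\cM(A)|$ is a nonempty compact Hausdorff space by Proposition \ref{berko_sp_is_comp} and the evaluation map is continuous by construction of the topology on $|\cM(A)|$).

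For the inequality $|f(x)| \le \rho_A(f)$ for every $x \in |\cM(A)|$: each point $x$ is a multiplicative bounded seminorm, so $|f^n(x)| = |f(x)|^n$ and boundedness gives $|f(x)|^n = |f^n(x)| \le |f^n|_A$, hence $|f(x)| \le |f^n|_A^{1/n}$ for all $n$, and passing to the limit yields $|f(x)| \le \rho_A(f)$. Taking the supremum over $x$ gives one direction, and this already shows $\sup_x |f(x)| < \infty$.

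The substantive direction is $\rho_A(f) \le \sup_{x \in |\cM(A)|} |f(x)|$. Here I would use Proposition \ref{recog_unit_berko}: an element is invertible precisely when it is nonvanishing on $|\cM(A)|$. The standard argument (following \cite[Theorem 1.3.1]{berkovich_spectral}) runs as follows. Set $r = \sup_{x \in |\cM(A)|} |f(x)|$; I want to show $\rho_A(f) \le r$, equivalently that for every real $t > r$ the element $f$ lies in the ``radius $t$'' part of the filtration — more precisely, that $\rho_A(f) \le t$. Fix $\lambda \in K^\times$ (using that $K$ is nontrivially valued, and passing to a finite extension or using density of the value group if needed, or more robustly arguing over $\cH(x)$) or rather argue directly: for any $t > r$ the element $t^{-1}f$ — interpreted via an auxiliary scalar from $K$ whose norm approximates $t$, or by a base-change trick — is ``topologically nilpotent in spectral radius'', so one shows $1 - \mu f$ is invertible for all $\mu$ with $|\mu|$ small enough that $|\mu f(x)| < 1$ on $|\cM(A)|$, and then a resolvent/geometric-series estimate for the norm of $(1-\mu f)^{-1}$ as a function of $\mu$, combined with a maximum-principle argument on the disk (or the elementary fact that a power-bounded-on-a-disk resolvent forces $\rho_A(f) \le 1/|\mu|$), gives the bound. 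Concretely: for $|\mu f(x)| < 1$ everywhere, $1 - \mu f \in A^\times$ by \ref{recog_unit_berko}, so $\mu \notin$ the spectrum of $f$ in the relevant sense; letting $|\mu|$ range up to just below $1/r$ and estimating $|(1-\mu f)^{-1}|$ via $\limsup |f^n|^{1/n} = \rho_A(f)$ forces $\rho_A(f) \le r$.

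The main obstacle I anticipate is the last step: getting from ``$1 - \mu f$ invertible whenever $|\mu| < 1/r$'' to the quantitative bound $\rho_A(f) \le r$ on the spectral radius. This is exactly where Berkovich's original argument invokes a one-variable maximum modulus principle for the $A$-valued ``function'' $\mu \mapsto (1-\mu f)^{-1}$ on the open disk of radius $1/r$ in $K$ (or its completed algebraic closure), extracting that its coefficients $f^n$ satisfy $|f^n|_A \le C \cdot r^{n}$ asymptotically. I would either cite \cite[Theorem 1.3.1]{berkovich_spectral} directly for this analytic input, or reproduce the short argument: expand $(1-\mu f)^{-1} = \sum \mu^n f^n$, note this converges for $|\mu|$ small, observe via \ref{recog_unit_berko} that it extends (as an element of $A$ depending on $\mu$) over the whole disk $|\mu| < 1/r$, apply Cauchy-type estimates to conclude $\limsup_n |f^n|_A^{1/n} \le r$, i.e. $\rho_A(f) \le r$. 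Combined with the elementary reverse inequality and compactness, this completes the proof.
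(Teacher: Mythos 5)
The paper's proof of this proposition is simply a citation to \cite[Theorem~1.3.1]{berkovich_spectral}, and your proposal ultimately defers to the same source for the hard inequality $\rho_A(f)\le \sup_x|f(x)|$, so the two are essentially the same. Your elaboration of the easy inequality via multiplicativity and boundedness, and of attainment of the supremum via compactness of $|\cM(A)|$ and continuity of $x\mapsto|f(x)|$, is correct and consistent with what \cite[Theorem~1.3.1]{berkovich_spectral} provides.
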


\begin{proof} \cite[Theorem 1.3.1]{berkovich_spectral}
\end{proof}

\begin{corollary}\label{recog_na_field} Let $A$ be a uniform Banach $K$-algebra which satisfies $|\cM(A)| = \pt$. Then, $A$ is a non-archimedean field.
\end{corollary}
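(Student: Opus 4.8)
The plan is to show that every non-zero element of $A$ is invertible, since a uniform Banach $K$-algebra in which every non-zero element is a unit, whose norm is then automatically multiplicative, is a non-archimedean field. First I would recall that by Berkovich's maximum modulus principle (Proposition \ref{berko_max_modulus}), for every $f \in A$ we have $\rho_A(f) = \max_{x \in |\cM(A)|} |f(x)|$, and since $|\cM(A)| = \pt$ consists of a single multiplicative seminorm $x_0 \colon A \to \RR_{\ge 0}$, this reads $\rho_A(f) = |f(x_0)|$. Because $A$ is uniform, the norm on $A$ equals the spectral radius $\rho_A$, so $|f|_A = |f(x_0)|$ for all $f \in A$; in particular the norm on $A$ is itself multiplicative, being equal to the multiplicative seminorm $x_0$.

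Next I would use Proposition \ref{recog_unit_berko}: an element $f \in A$ is invertible if and only if $|f(x)| \ne 0$ for all $x \in |\cM(A)|$. Given $f \ne 0$, we have $|f|_A > 0$ by the first axiom of a norm, and by the previous paragraph $|f|_A = |f(x_0)|$, so $|f(x_0)| > 0$. Since $x_0$ is the unique point of $|\cM(A)|$, this shows $|f(x)| \ne 0$ for all $x \in |\cM(A)|$, hence $f$ is invertible. Therefore $A$ is a field, and it is complete with respect to a multiplicative norm, so it is a non-archimedean field by Definition \ref{defn_banach} together with the definition of non-archimedean field; the non-archimedean triangle inequality is inherited from $A$ being a non-archimedean Banach $K$-algebra.

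The only subtle point — and the step I would be most careful about — is making sure the uniformity hypothesis is genuinely used to conclude $|f|_A = \rho_A(f)$ rather than merely $\rho_A(f) \le |f|_A$; this is exactly the content of $A$ being uniform (its norm is power-multiplicative, hence equal to its spectral radius, cf. Definition \ref{defn_sp_radius} and the discussion following it). No serious obstacle is expected beyond assembling these ingredients in the right order.
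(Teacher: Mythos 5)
Your argument is correct and follows essentially the same route as the paper: both use uniformity together with Berkovich's maximum modulus principle (Proposition \ref{berko_max_modulus}) to identify $|-|_A$ with the multiplicative seminorm given by the unique point, and then show every nonzero element is invertible. The only cosmetic difference is that you conclude invertibility by citing Proposition \ref{recog_unit_berko}, whereas the paper re-runs the underlying closed-maximal-ideal/quotient argument directly --- but that is exactly how \ref{recog_unit_berko} is proved, so the substance is identical.
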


\begin{proof} Since $A$ is uniform it follows from the definition that $\rho_A = |-|_A$, furthermore Berkovich's maximum modulus principle (\ref{berko_max_modulus}) guarantees that $\rho_A(f) = |f(x)|$ for the unique point $x \in |\cM(A)|$, in particular this implies that $\rho_A$ is a multiplicative norm. It remains to show that $A$ is a field. Assume that there exists a non-zero maximal ideal $\frakm \subset A$, then as maximal ideals of Banach rings are closed \cite[Corollary 1.2.4/5]{BGR} it follows that $\kappa := A/\frakm$ is a non-zero Banach $K$-algebra endowed with the quotient norm $A \twoheadrightarrow \kappa$. Finally, from \ref{berko_sp_is_comp} we learn that $|\cM(\kappa)|$ is non-empty and so there exists a rank one valuation $y: A \twoheadrightarrow \kappa \rightarrow \RR_{\ge 0}$ which satisfies $|\frakm(y)| = 0$, and so different from the point $x \in |\cM(A)|$ corresponding to $\rho_A$. We have reached a contradiction.
\end{proof}

\subsection{The category of compact hausdorff spaces}

Our goal in this section is to review some properties of the category $\Comp$, the category of compact hausdorff spaces, which will play an important role for us. We will follow \cite[Appendix A and B]{lurie_ultracategories}.

\begin{defn}[Effective epimorphism]\label{defn_eff_epi} Let $\cC$ be a category that admits fiber products, and suppose we are given a morphism $f: X \rightarrow Y$ in $\cC$. Let $X \times_Y X$ denote the fiber product of $X$ with itself over $Y$, and let $\pi, \pi^{\prime}: X \times_Y X \rightarrow X$ denote the projection onto the two factors. We will say that $f$ is an effective epimorphism if it exhibits $Y$ as a coequalizer of the maps $\pi, \pi^{\prime}: X \times_Y X \rightarrow X$. In other words, $f$ is an effective epimorphism if, for every object $Z \in \cC$, composition with $f$ induces a bijection
\begin{equation*}
	\Hom_{\cC} (Y, Z) = \{ u \in \Hom_{\cC} (X,Z) : u \circ \pi = u \circ \pi^\prime \}
\end{equation*}
\end{defn}

\begin{rem} Let $\cC$ be a category that admits fiber products. Then every effective epimorphism is an epimorphism. Indeed, let $f: X \rightarrow Y$ be an effective epimorphism, we would need to show that for every object $Z \in \cC$ the following map
\begin{equation*}
	\Hom_{\cC} (Y, Z) \longrightarrow \Hom_{\cC} (X, Z)
\end{equation*}
is an injection, but this follows from the definition above.
\end{rem}

\begin{example} In the category of sets every surjective map is an effective epimorphism: if $g: X \rightarrow Y$ is a surjective map of sets, then we can recover $Y$ as the quotient of $X$ by the equivalence relation $R = X \times_Y X = \{ (x, x^\prime): g(x) = g(x^\prime) \}$.
\end{example}

\begin{prop}\label{monadicity_comp_set} The forgetful functor $U: \Comp \rightarrow \Set$ has the following properties
\begin{enumerate}[(1)]
	\item The functor $U$ admits a left adjoint $\beta: \Set \rightarrow \Comp$, given by $I \mapsto \beta(I)$, where $\beta(I)$ denotes the Stone-Cech compactification of the set $I$. In particular, $U$ preserves limits.
	\item The functor $U$ detects isomorphisms. That is, any map $f: X \rightarrow Y$ in $\Comp$ is an isomorphism if and only if $U(f)$ is an isomorphism.
	\item For any pair of maps $X \rightrightarrows Y$ in $\Comp$ which admit a common section $Y \rightarrow X$, there exists a coequalizer $\coeq(X \rightrightarrows Y)$ in $\Comp$, and the canonical map 
	\begin{equation*}
		\coeq(U(X) \rightrightarrows U(Y) ) \rightarrow U(\coeq(X \rightrightarrows Y))
	\end{equation*}
	is an isomorphism. 
\end{enumerate}
\end{prop}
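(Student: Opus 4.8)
The plan is to prove the three statements essentially in order, but I would reorganize slightly: the natural logical flow is (1), then (2), then (3), since (3) builds on the structure established in (1) and the fact that $U$ detects isomorphisms from (2). For part (1), I would first recall the universal property of the Stone–Čech compactification $\beta(I)$ of a set $I$ (viewed as a discrete space): continuous maps $\beta(I) \to X$ into a compact Hausdorff space $X$ are in natural bijection with set maps $I \to U(X)$. This gives the adjunction $\beta \dashv U$ immediately, and since $U$ is a right adjoint it preserves all limits that exist in $\Comp$ — in particular it preserves the limits we care about (finite products, equalizers, cofiltered limits). I would cite \cite[Section 3.2]{lurie_ultracategories} for the construction of $\beta(I)$ and its universal property, matching the reference already used in Lemma \ref{stone_cech_berko_sp}.

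For part (2), the statement is that a continuous bijection of compact Hausdorff spaces is a homeomorphism. This is the classical point-set fact: a continuous bijection $f\colon X \to Y$ with $X$ compact and $Y$ Hausdorff is a closed map (the image of a closed subset of $X$ is compact, hence closed in $Y$), so $f^{-1}$ is continuous. I would state this and give the one-line argument. The only subtlety worth a remark is that $U$ detecting isomorphisms is exactly what makes the "underlying set" functor a conservative functor, which is what one needs downstream.

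For part (3), given $X \rightrightarrows Y$ in $\Comp$ with a common section $s\colon Y \to X$, I would form the coequalizer $Q := \coeq(U(X) \rightrightarrows U(Y))$ in $\Set$, equip it with the quotient topology via the surjection $q\colon U(Y) \twoheadrightarrow Q$, and argue that (a) this topological quotient is compact (continuous image of the compact space $Y$), and (b) it is Hausdorff. Compactness is immediate. Hausdorffness is the main obstacle: for a general pair of maps the set-theoretic quotient of a compact Hausdorff space need not be Hausdorff, and this is precisely where the hypothesis of a common section is used — together with the fact that $\Comp$ has enough limits (part (1)) one shows the relation $R = U(Y) \times_Q U(Y) \subseteq U(Y) \times U(Y)$ is closed. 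Concretely, $R$ is the smallest equivalence relation containing the image of $(d_0, d_1)\colon U(X) \to U(Y) \times U(Y)$; using the section $s$ one checks this image already contains the diagonal and is symmetric, so $R$ is the transitive closure, and one argues this closure is closed in $U(Y)\times U(Y)$ by a compactness argument (iterated composition of a closed relation on a compact space, stabilizing). Once $Q$ is known to be compact Hausdorff, the quotient map $Y \to Q$ is a map in $\Comp$, and I would verify it is the coequalizer in $\Comp$: any map $Y \to Z$ in $\Comp$ equalizing the two maps factors uniquely through $Q$ as a set map by the universal property of the set-theoretic coequalizer, and this factorization is automatically continuous because $Q$ carries the quotient topology. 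This simultaneously shows the coequalizer exists in $\Comp$ and that $U$ preserves it, giving the claimed isomorphism $\coeq(U(X) \rightrightarrows U(Y)) \xrightarrow{\sim} U(\coeq(X \rightrightarrows Y))$. I would note that the cleanest reference for the whole package is \cite[Appendix A]{lurie_ultracategories}, where this is phrased as the statement that $U\colon \Comp \to \Set$ is comonadic (or monadic), and I would mildly prefer to cite that and sketch the Hausdorffness point rather than reprove everything from scratch.
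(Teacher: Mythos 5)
Your treatments of (1) and (2) are fine: the adjunction $\beta \dashv U$ via the universal property of the Stone--Čech compactification of a discrete set, and the closed-map argument that a continuous bijection of compact Hausdorff spaces is a homeomorphism, are exactly the standard arguments. The paper simply cites \cite[V.6.2, VI.9]{categories_maclane} for these points (the second via monadicity of $U$), so your more explicit route loses nothing there.

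Part (3) is where there is a genuine gap, and it sits precisely at the step you flagged. You put the quotient topology on the set-theoretic coequalizer and must prove Hausdorffness; for this you assert that the image of $(d_0,d_1)\colon U(X)\to U(Y)\times U(Y)$ is symmetric and that its transitive closure is closed because the iterated compositions "stabilize" by compactness. Symmetry is the minor issue (it need not hold, but you may pass to the symmetric closure, which is still closed); the fatal one is transitivity: the increasing union of iterated compositions of a closed reflexive symmetric relation on a compact Hausdorff space need not stabilize and need not be closed. Concretely, take $Y=S^1$, let $\Gamma\subset S^1\times S^1$ be the graph of an irrational rotation, and set $X=S^1\sqcup\Gamma$, with $d_0,d_1$ both the identity on the first summand and the two projections on $\Gamma$; the inclusion of the first summand is a common section. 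The generated equivalence relation is the dense, non-closed orbit equivalence relation, the set-theoretic quotient is uncountable and non-Hausdorff in the quotient topology, while every continuous coequalizing map out of $S^1$ into a compact Hausdorff space is constant, so the coequalizer in $\Comp$ is a point. In particular the hypothesis "common section" alone cannot support the conclusion, so your Hausdorffness step is not repairable at this generality; the correct hypothesis is that the pair be $U$-split in the sense of Beck's monadicity theorem (besides a section $t$ of the quotient map one needs $h\colon U(Y)\to U(X)$ with $U(d_0)h=\mathrm{id}$ and $U(d_1)h=t\circ e$), which is also what the paper's appeal to monadicity of $U$ actually requires, and its own verification of splitness is looser than it should be. Note, however, that in every place the proposition is invoked later (Propositions \ref{epimorphism_comp}, \ref{all_equiv_eff_comp} and \ref{general_coeq_comp}) the pair is a kernel pair or an honest closed equivalence relation $R\subseteq Y\times Y$; in those cases the relation you must quotient by is already a closed equivalence relation, and your quotient-topology argument goes through verbatim. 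So either restrict (3) to that situation, or strengthen the hypothesis to $U$-splitness and quote Beck's theorem.
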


\begin{proof} It is shown in \cite[V.6.2]{categories_maclane} that the left adjoint to $U: \Comp \rightarrow \Set$ is given by $\beta$, proving $(1)$. Statement $(2)$ follows from the fact that $U$ is a monadic functor, which was shown in \cite[VI.9]{categories_maclane}. To show statement $(3)$ recall that $U$ being monadic implies that for a pair of morphisms $X \rightrightarrows Y$ which admit a common section $Y \rightarrow X$ the coequalizer $\coeq(X \rightrightarrows Y)$ exists in $\Comp$ and the morphism $\coeq(U(X) \rightrightarrows U(Y) ) \rightarrow U(\coeq(X \rightrightarrows Y))$ is an isomorphism if $\coeq(U(X) \rightrightarrows U(Y))$ exists in $\Set$ and the map $U(Y) \rightarrow \coeq(U(X) \rightrightarrows U(Y))$ admits a section. It is clear that $\coeq(U(X) \rightrightarrows U(Y))$ exists in $\Set$ as the category of sets admits all (small) colimits and its clear that the map $U(Y) \rightarrow \coeq(U(X) \rightrightarrows U(Y))$ admits a section as it is a surjective map of sets.
\end{proof}

\begin{example} Let $\cC$ be a category that admits fiber products, and let $X \rightarrow Y$ be a morphism. We claim that the pair of projection maps $X\times_Y X \rightrightarrows X$ admit a common section. Indeed the diagonal map $\Delta: X \rightarrow X \times_Y X$ provides a common section.
\end{example}

\begin{prop}\label{epimorphism_comp} Let $f: X \rightarrow Y$ be a morphism in $\Comp$. Then $f$ is an effective epimorphism if and only if $U(f)$ is surjective.
\end{prop}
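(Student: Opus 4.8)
The plan is to reduce the statement to the two implications and handle each using the categorical machinery already assembled in Proposition \ref{monadicity_comp_set}. For the direction ``effective epimorphism $\Rightarrow$ $U(f)$ surjective'': every effective epimorphism is an epimorphism, and I claim that epimorphisms of compact Hausdorff spaces are surjective. Indeed, if $f: X \to Y$ is not surjective, then since $X$ is compact and $Y$ is Hausdorff the image $f(X) \subset Y$ is a proper closed subset, so there is a point $y \in Y \setminus f(X)$; using normality of $Y$ together with Urysohn's lemma one produces two distinct maps $Y \rightrightarrows [0,1]$ (for instance the constant map at $0$ and a map separating $y$ from $f(X)$) that agree after precomposition with $f$, contradicting that $f$ is an epimorphism. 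Alternatively, and perhaps more cleanly, one can argue using the adjunction $\beta \dashv U$: pick $y \notin f(X)$, consider the two maps $Y \to \beta(\{0,1\})$ obtained from the characteristic function of $\{y\}$ and the constant function, and observe they agree on $X$ but not on $Y$.

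For the converse ``$U(f)$ surjective $\Rightarrow$ $f$ effective epimorphism'': I want to show that $f: X \to Y$ exhibits $Y$ as the coequalizer in $\Comp$ of the two projections $\pi, \pi': X \times_Y X \rightrightarrows X$. By Proposition \ref{monadicity_comp_set}(3), since these two projections admit the common section $\Delta: X \to X \times_Y X$ (as noted in the Example preceding Proposition \ref{epimorphism_comp}), the coequalizer $\coeq(X \times_Y X \rightrightarrows X)$ exists in $\Comp$ and is computed by the forgetful functor: $U(\coeq(X \times_Y X \rightrightarrows X)) \simeq \coeq(U(X \times_Y X) \rightrightarrows U(X))$. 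Now $U$ preserves the fiber product $X \times_Y X$ (Proposition \ref{monadicity_comp_set}(1), as $U$ preserves limits), so $U(X \times_Y X) = U(X) \times_{U(Y)} U(X)$, which is the kernel pair of $U(f)$ in $\Set$. Since $U(f)$ is a surjection of sets, it is an effective epimorphism in $\Set$, hence $\coeq(U(X) \times_{U(Y)} U(X) \rightrightarrows U(X)) \simeq U(Y)$ with the map induced by $U(f)$. Therefore the canonical comparison map $\coeq(X \times_Y X \rightrightarrows X) \to Y$ in $\Comp$ becomes an isomorphism after applying $U$, and by Proposition \ref{monadicity_comp_set}(2) the functor $U$ detects isomorphisms, so the comparison map is already an isomorphism in $\Comp$. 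This is precisely the statement that $f$ is an effective epimorphism.

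I expect the main obstacle to be purely bookkeeping: one must check that the natural maps line up correctly — that the common section needed to invoke Proposition \ref{monadicity_comp_set}(3) is the diagonal, that $U$ genuinely carries $X \times_Y X$ to the set-theoretic kernel pair (this is where preservation of limits by $U$ enters), and that the comparison map $\coeq_{\Comp} \to Y$ is the one induced by the coequalizer universal property and not some other map. None of these steps is deep, but they must be stated in the right order so that the appeal to monadicity is legitimate. The surjectivity-of-epimorphisms argument in the first half is the only place genuine point-set topology (compactness of $X$, Hausdorffness/normality of $Y$, Urysohn) is invoked, and I would keep it brief.
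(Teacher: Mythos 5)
Your converse direction is essentially the paper's proof verbatim: apply $U$ to the kernel pair via preservation of limits, use surjectivity to identify the coequalizer in $\Set$, invoke monadicity (via the diagonal as common section) to transport the coequalizer back to $\Comp$, and use conservativity of $U$ to upgrade the bijection to a homeomorphism.

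Your forward direction is a genuinely different route, and worth comparing. The paper applies $U$ to the isomorphism $\coeq(X \times_Y X \rightrightarrows X) \to Y$, uses that $U$ preserves reflexive coequalizers and limits, and concludes that $U(f)$ is an effective epimorphism in $\Set$, hence surjective — a purely monadic argument that never mentions a point of $Y$. You instead reduce to ``epimorphisms in $\Comp$ are surjective'' and prove that by Urysohn: if $f(X) \subsetneq Y$, then $f(X)$ is closed (compact image in Hausdorff), and a Urysohn function separating a missing point from $f(X)$ gives two distinct maps $Y \to [0,1]$ equalized by $f$. This is more elementary and arguably more transparent, at the cost of importing normality of $Y$ and Urysohn's lemma where the paper uses only the abstract properties of $U$ already recorded in Proposition~\ref{monadicity_comp_set}. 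Both are correct; the paper's version is better aligned with the categorical bookkeeping that follows in the chapter.

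One small flag: your ``alternative, perhaps more cleanly'' sketch of the forward direction via the adjunction $\beta \dashv U$ does not actually work. The characteristic function of $\{y\}$ is a set map $U(Y) \to \{0,1\}$, and the adjunction $\Hom_{\Comp}(\beta(S), Z) \simeq \Hom_{\Set}(S, U(Z))$ converts set maps \emph{out of} a discrete set into continuous maps \emph{out of} its Stone--\v{C}ech compactification — it does not turn a set map on $U(Y)$ into a continuous map on $Y$. Since $\{y\}$ is rarely clopen in $Y$, the characteristic function is rarely continuous, and one genuinely needs Urysohn (or the paper's monadic argument). Delete the alternative sketch and keep the Urysohn argument, or switch to the paper's approach.
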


\begin{proof} If $f$ is an effective epimorphism, by definition we have that the canonical map $\coeq(X \times_Y X \rightrightarrows X) \rightarrow Y$ is an isomorphism. Since the forgetful functor $U: \Comp \rightarrow \Set$ preserves reflective co-equalizers it follows that the canonical map
\begin{equation*}
	\coeq( U(X \times_Y X) \rightrightarrows U(X)) \rightarrow U(Y)
\end{equation*}
is an isomorphism. Moreover, since $U$ also preserves limits we have that $U(X \times_Y X) = U(X) \times_{U(Y)} U(X)$ proving that $U(f): U(X) \rightarrow U(Y)$ is an effective epimorphism in the category $\Set$, and so it is surjective.

Conversely, assume that $U(f): U(X) \rightarrow U(Y)$ is surjective, then it is an effective epimorphism in $\Set$, and so we have that the canonical map
\begin{equation*}
	\coeq( U(X) \times_{U(Y)} U(X) \rightrightarrows U(X) ) \rightarrow U(Y)
\end{equation*}
is an isomorphism, and as the pair of morphisms $X \times_Y X \rightrightarrows X$ admits a common section given by the diagonal $\Delta: X \rightarrow X \times_Y X$ it follows from the monadicity of $U$ that $\coeq(X \times_Y X \rightrightarrows X)$ exists in $\Comp$. We need to show that the canonical map $\coeq(X \times_Y X \rightrightarrows X) \rightarrow Y$ is an isomorphism, but since $U$ detects isomorphisms it suffices to show that
\begin{equation*}
	\coeq( U(X \times_Y X) \rightrightarrows U(X) ) \rightarrow U(Y)
\end{equation*}
is an isomorphism. But this follows from the previous assertion and the fact that $U$ preserves limits.
\end{proof}

\begin{example} Let $X$ be an object of $\Comp$, by adjunction we have a surjective map
\begin{equation*}
	\beta(U(X)) \longrightarrow X
\end{equation*}
from the Stone-Cech compactification of $X$ considered as a discrete set. Then the previous result implies that the canonical map
\begin{equation*}
	\coeq( \beta(U(X)) \times_X \beta(U(X)) \rightrightarrows \beta(U(X)) ) \longrightarrow X
\end{equation*}
is an isomorphism. 

Furthermore, let us mention that $\beta(U(X))$ is a profinite set \cite[Proposition 3.2.3]{lurie_ultracategories}, and that $\beta(U(X)) \times_X \beta(U(X))$ is also a profinite set. Indeed, since $\beta(U(X))$ is a profinite set we have that $\beta(U(X)) = \lim S_i$ where $S_i$ are finite sets, and since limits commute with limits we learn that
\begin{equation*}
	\beta(U(X)) \times_X \beta(U(X)) = \lim ( S_i \times_X S_i )
\end{equation*}
showing that $\beta(U(X)) \times_X \beta(U(X))$ is also a profinite set.
\end{example}

\begin{defn}[Regular Categories]\label{defn_regular_cat} Let $\cC$ be a category. We will say that $\cC$ is regular if the following conditions are satisfied
\begin{enumerate}[(R1)]
	\item The category $\cC$ admits finite limits.
	\item Every morphism $f: X \rightarrow Z$ in $\cC$ can be written as a composition $X\twoheadrightarrow Y \hookrightarrow Z$, where $X \twoheadrightarrow Y$ is an effective epimorphism and $Y \hookrightarrow Z$ is a monomorphism. 
	\item The collection of effective epimorphisms in $\cC$ is closed under pullbacks. That is, if we are given a pullback diagram
	\begin{cd}
		X^{\prime} \ar[r] \ar[d, "f^{\prime}"] & X \ar[d, "f"] \\
		Y^{\prime} \ar[r] & Y
	\end{cd}
	in $\cC$ where $f$ is an effective epimorphism, the morphism $f^{\prime}$ is also an effective epimorphism.
\end{enumerate}
\end{defn}

\begin{corollary} Let $\cC$ be a category which admits pullbacks and satisfies condition (R2) of the above definition. Then, for every morphism $f: X \rightarrow Z$ the factorization $X \twoheadrightarrow Y \hookrightarrow Z$ is functorial and unique up to unique isomorphism. From now on we will write $\im(f)$ instead of $Y$.
\end{corollary}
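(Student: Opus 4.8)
The statement to prove is that in a category $\cC$ admitting pullbacks and satisfying condition (R2) of Definition \ref{defn_regular_cat}, the factorization $X \twoheadrightarrow \im(f) \hookrightarrow Z$ of a morphism $f \colon X \to Z$ is unique up to unique isomorphism and functorial.

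\bigskip

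The plan is to first establish uniqueness of the factorization, and then deduce functoriality from it. For uniqueness, suppose we are given two factorizations $X \xrightarrow{p_1} Y_1 \xrightarrow{i_1} Z$ and $X \xrightarrow{p_2} Y_2 \xrightarrow{i_2} Z$, where each $p_k$ is an effective epimorphism and each $i_k$ is a monomorphism. The key observation is that since $i_2$ is a monomorphism, the pair of projection maps $\pi, \pi' \colon X \times_Z X \rightrightarrows X$ (note that the kernel pair of $f$ coincides with that of $p_1$, since $i_1$ is mono, so $X \times_Z X = X \times_{Y_1} X$) satisfy $i_2 p_2 \pi = f \pi = f \pi' = i_2 p_2 \pi'$, hence $p_2 \pi = p_2 \pi'$ because $i_2$ is mono. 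Since $p_1$ is an effective epimorphism, it exhibits $Y_1$ as the coequalizer of $\pi, \pi'$, so $p_2$ factors uniquely through $p_1$ via a map $u \colon Y_1 \to Y_2$ with $u p_1 = p_2$. Symmetrically one gets $v \colon Y_2 \to Y_1$ with $v p_2 = p_1$. Then $vu p_1 = v p_2 = p_1$ and since $p_1$ is an epimorphism (every effective epimorphism is an epimorphism, as noted in the remark after Definition \ref{defn_eff_epi}) we get $vu = \mathrm{id}_{Y_1}$; symmetrically $uv = \mathrm{id}_{Y_2}$. This gives the isomorphism $u$, and it is unique among morphisms compatible with the factorization data because $p_1$ is an epimorphism (any two such would agree after precomposition with $p_1$). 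One also checks $i_1 = i_2 u$, again using that $p_1$ is an epimorphism and $i_1 p_1 = f = i_2 p_2 = i_2 u p_1$.

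\bigskip

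For functoriality: given a commutative square
\begin{cd}
	X \ar[r, "a"] \ar[d, "f"] & X' \ar[d, "f'"] \\
	Z \ar[r, "b"] & Z'
\end{cd}
with factorizations $X \twoheadrightarrow \im(f) \hookrightarrow Z$ and $X' \twoheadrightarrow \im(f') \hookrightarrow Z'$, I want to produce a canonical map $\im(f) \to \im(f')$. The composite $X \xrightarrow{a} X' \twoheadrightarrow \im(f')$ is a morphism out of $X$, and I claim it coequalizes the kernel pair $\pi, \pi'$ of $f$: indeed the monomorphism $\im(f') \hookrightarrow Z'$ postcomposed with this map equals $b f$, and $b f \pi = b f \pi'$ since $f \pi = f \pi'$; as $\im(f') \hookrightarrow Z'$ is mono we conclude the composite itself coequalizes $\pi, \pi'$. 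Since $X \twoheadrightarrow \im(f)$ is an effective epimorphism, the composite factors uniquely through it, giving the desired map $\im(f) \to \im(f')$. Uniqueness of this factorization (again because $X \twoheadrightarrow \im(f)$ is epi) makes the assignment compatible with composition of squares and with identities, which is functoriality. Note this argument does not even require condition (R3).

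\bigskip

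I do not expect a serious obstacle here; the whole argument is a standard diagram chase of the kind familiar from the theory of regular categories, and every ingredient needed --- that effective epimorphisms are epimorphisms, and the universal (coequalizer) property of effective epimorphisms from Definition \ref{defn_eff_epi} --- is already available in the excerpt. The only point requiring a little care is the identification of the kernel pair of $f$ with that of the effective epimorphism part $X \twoheadrightarrow \im(f)$, which follows immediately from $\im(f) \hookrightarrow Z$ being a monomorphism. If anything the mildly delicate bookkeeping is checking that the isomorphism in the uniqueness statement is compatible with \emph{both} the epimorphism leg and the monomorphism leg simultaneously, but both checks reduce to cancelling an epimorphism on the right.
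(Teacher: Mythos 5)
Your argument is correct. Note that the paper itself gives no argument here: its "proof" is just a pointer to A.1.4 and A.1.5 of \cite[Appendix A]{lurie_ultracategories}, so what you have written is precisely the standard diagram chase underlying that citation, made self-contained. The two ingredients you isolate are exactly the right ones: since the mono leg $i_1$ identifies $X \times_{Y_1} X$ with the kernel pair $X \times_Z X$ of $f$, the effective epimorphism $X \twoheadrightarrow Y_1$ is the coequalizer of $X \times_Z X \rightrightarrows X$, and everything else (the comparison isomorphism $u$ with $vu = \mathrm{id}$, $uv = \mathrm{id}$, the compatibility $i_2 u = i_1$, and the functoriality map $\im(f) \to \im(f')$ together with its compatibility with the mono legs and with pasting of squares) follows by cancelling the epimorphism $X \twoheadrightarrow \im(f)$ on the right. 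Your remark that (R3) is never used is also consistent with the statement, which only assumes pullbacks and (R2); the payoff of your version over the paper's is simply that the reader sees the (short) argument rather than being sent to the reference, at the cost of a few lines. One tiny presentational suggestion: when you state functoriality, spell out in one line that the induced map $\im(f) \to \im(f')$ also makes the square with the monomorphism legs commute (both composites equal $b \circ i_1$ after precomposition with the epimorphism), since that is part of what "functorial factorization" asserts.
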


\begin{proof} Follows from A.1.4 and A.1.5 in \cite[Appendix A]{lurie_ultracategories}.
\end{proof}

\begin{prop}\label{comp_regular_cat} $\Comp$ is a regular category. Moreover, the category $\Comp$ admits all limits.
\end{prop}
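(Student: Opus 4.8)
The plan is to verify the three conditions (R1), (R2), (R3) of Definition \ref{defn_regular_cat} directly, and then separately note that $\Comp$ admits all (small) limits. For (R1) and the final clause about arbitrary limits, I would use Proposition \ref{monadicity_comp_set}: since the forgetful functor $U: \Comp \rightarrow \Set$ admits a left adjoint $\beta$, it preserves all limits. Concretely, given a diagram in $\Comp$, form the limit of the underlying sets in $\Set$, equip it with the subspace topology inherited from the product $\prod X_i$ of the underlying compact Hausdorff spaces (which is compact Hausdorff by Tychonoff), observe that the limit is a closed subset hence itself compact Hausdorff, and check it has the correct universal property. In particular $\Comp$ has all finite limits, giving (R1).

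For (R2), I would take a morphism $f: X \rightarrow Z$ in $\Comp$ and set $Y := f(X) \subset Z$ with the subspace topology. Since $X$ is compact and $Z$ is Hausdorff, $f(X)$ is a closed subset of $Z$, hence compact Hausdorff, so $Y \hookrightarrow Z$ is a (closed) monomorphism in $\Comp$. The corestriction $X \rightarrow Y$ is surjective on underlying sets, so by Proposition \ref{epimorphism_comp} it is an effective epimorphism in $\Comp$. This exhibits the required factorization $X \twoheadrightarrow Y \hookrightarrow Z$.

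For (R3), I would use the characterization of effective epimorphisms from Proposition \ref{epimorphism_comp}: a morphism in $\Comp$ is an effective epimorphism precisely when it is surjective on underlying sets. Given the pullback square in the statement with $f$ an effective epimorphism, apply $U$; since $U$ preserves limits (Proposition \ref{monadicity_comp_set}(1)), the image square in $\Set$ is again a pullback. Surjectivity of maps of sets is stable under pullback in $\Set$, so $U(f')$ is surjective, and hence $f'$ is an effective epimorphism in $\Comp$ by Proposition \ref{epimorphism_comp} again.

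I do not expect any serious obstacle here: every ingredient — Tychonoff's theorem, closedness of the image of a compact set in a Hausdorff space, and the monadicity/adjunction facts already recorded in Proposition \ref{monadicity_comp_set} and Proposition \ref{epimorphism_comp} — is either standard point-set topology or stated earlier in the excerpt. The only mild subtlety is being careful that the topology one puts on a limit (subspace topology from the product) genuinely realizes the categorical limit in $\Comp$, but this is immediate from the universal property of products and subspaces. The argument is essentially a matter of assembling these facts in the right order.
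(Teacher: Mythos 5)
Your proposal is correct and follows essentially the same route as the paper's proof: form limits as closed subsets of Tychonoff products to obtain (R1) and completeness, take the set-theoretic image with the subspace topology for (R2), and combine Proposition~\ref{epimorphism_comp} with limit-preservation of the forgetful functor for (R3). No substantive differences from the paper's argument.
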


\begin{proof} Let $\cI \rightarrow \Comp$ be diagram category sending $i \mapsto X_i$, we know from \cite[Tag 08ZT]{stacks-project} that the category of topological spaces has all limits and that the forgetful functor to sets commutes with limits. Therefore, it suffices to show that the topological space $Z: = \lim_{\cI} X_i$ is a compact Hausdorff space. Recall that $Z$ can be realized as the equalizer of a pair of morphisms $\prod X_i \rightrightarrows \prod X_i$, and that each $\prod X_i$ is a compact Hausdorff space by Tychonoff's theorem \cite[Tag 08ZU]{stacks-project}. Since the forgetful functor $U: \Top \rightarrow \Set$ commutes with all limits it follows that we can identify $U(Z)$ with a subset of $\prod U(X_i)$, and from the proof of \cite[08ZV]{stacks-project} we can deduce that $U(Z) \subset \prod X_i$ is a closed subset, showing that $Z$ is a compact Hausdorff space. This proves that $\Comp$ has all limits, and in particular that (R1) is satisfied.
	
Let $f: X \rightarrow Z$ be a morphisms in $\Comp$, then the map of sets $U(f)$ admits a unique factorization as $U(X) \twoheadrightarrow \im(U(f)) \hookrightarrow U(Z)$ as a surjective map followed by an injective map. We endow $\im(U(f))$ with the subspace topology, and we denote this topological space by $Y$. Since $f$ is a map of compact hausdorff spaces it follows that $\im(U(f)) \subset Z$ is a closed subset, showing that $Y$ is compact hausdorff. Moreover since $X \rightarrow Y$ is surjective it follows from \ref{epimorphism_comp} that it is an effective epimorphism. It is clear that $Y \rightarrow Z$ is a monomorphism, as it is injective. This finishes the proof of (R2).

Finally (R3) is a direct consequence of the fact that surjective maps of sets are closed under base-change and the forgetful functor $U: \Comp \rightarrow \Set$ preserves limits.
\end{proof}

\begin{prop}\label{mono_comp} Let $f: X \rightarrow Y$ be a morphism of compact hausdorff spaces. Then, the following are equivalent
\begin{enumerate}[(1)]
	\item The morphism $f: X \rightarrow Y$ is a monomorphism in $\Comp$.
	\item The morphism of sets $U(f): U(X) \rightarrow U(Y)$ is injective.
	\item The morphism $f: X \rightarrow Y$ determines an isomorphism between $X$ and $\im(f) \subset Y$ with the subspace topology.
	\item Let $g: Z \rightarrow Y$ be any morphism of compact hausdorff spaces such that $\im(g) \subset \im(f) \subset Y$, then there exists a unique factorization $Z \rightarrow X \rightarrow Y$ making the following diagram commute
	\begin{cd}
		Z \ar[r, "g"] \ar[rd, dashed] & Y \\
		& X \ar[u, "f", swap]
	\end{cd}
\end{enumerate}
\end{prop}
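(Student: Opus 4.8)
The plan is to prove the cycle of implications $(1) \Rightarrow (2) \Rightarrow (3) \Rightarrow (4) \Rightarrow (1)$, using the monadicity of the forgetful functor $U: \Comp \rightarrow \Set$ (Proposition \ref{monadicity_comp_set}) and the regularity of $\Comp$ (Proposition \ref{comp_regular_cat}) as the main tools. Several of these implications are essentially formal once we know that $U$ preserves limits and detects isomorphisms.

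For $(1) \Rightarrow (2)$: a monomorphism $f: X \rightarrow Y$ in $\Comp$ is characterized by the fact that the two projections $X \times_Y X \rightrightarrows X$ agree, i.e.\ the diagonal $\Delta: X \rightarrow X \times_Y X$ is an isomorphism. Since $U$ preserves fiber products (it preserves all limits by Proposition \ref{monadicity_comp_set}(1)) and detects isomorphisms (Proposition \ref{monadicity_comp_set}(2)), the map $U(X) \rightarrow U(X) \times_{U(Y)} U(X)$ is an isomorphism, which says exactly that $U(f)$ is injective.

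For $(2) \Rightarrow (3)$: given that $U(f)$ is injective, the set-level factorization $U(X) \twoheadrightarrow \im(U(f)) \hookrightarrow U(Y)$ from the proof of Proposition \ref{comp_regular_cat} has $U(X) \rightarrow \im(U(f))$ a bijection. Endowing $\im(U(f))$ with the subspace topology of $Y$ gives the compact Hausdorff space $\im(f)$, and the continuous bijection $X \rightarrow \im(f)$ is an isomorphism in $\Comp$ by Proposition \ref{monadicity_comp_set}(2) (a continuous bijection of compact Hausdorff spaces is a homeomorphism). For $(3) \Rightarrow (4)$: if $X \cong \im(f) \subset Y$ with the subspace topology, and $g: Z \rightarrow Y$ has $\im(g) \subset \im(f)$, then at the level of sets $U(g)$ factors uniquely through $U(\im(f)) = U(X)$; since $\im(f)$ carries the subspace topology, this set-theoretic factorization $Z \rightarrow \im(f)$ is automatically continuous, giving the required unique lift. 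Uniqueness is inherited from uniqueness at the level of sets.

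For $(4) \Rightarrow (1)$: suppose $f$ satisfies the lifting property in $(4)$. To see $f$ is a monomorphism, take any two maps $u, v: W \rightrightarrows X$ with $f u = f v =: g$. Then $\im(g) \subset \im(f)$, so by $(4)$ there is a \emph{unique} map $W \rightarrow X$ making the triangle commute; since both $u$ and $v$ do, we get $u = v$. I expect no serious obstacle here; the only point requiring mild care is the observation that in $(3) \Rightarrow (4)$ a set map into a subspace is continuous iff the composite into the ambient space is, which is standard point-set topology, and the repeated invocation that a continuous bijection of compact Hausdorff spaces is a homeomorphism, which is exactly the content of $U$ detecting isomorphisms (Proposition \ref{monadicity_comp_set}(2)).
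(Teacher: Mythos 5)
Your proof is correct and follows essentially the same route as the paper: the cycle $(1)\Rightarrow(2)\Rightarrow(3)\Rightarrow(4)\Rightarrow(1)$, using the image factorization from Proposition \ref{comp_regular_cat} and the fact that continuous bijections of compact Hausdorff spaces are isomorphisms (Proposition \ref{monadicity_comp_set}). The only cosmetic difference is in $(1)\Rightarrow(2)$, where the paper simply notes that $U$, being a right adjoint, preserves monomorphisms, while you use the diagonal characterization; also note that there you only need $U$ to \emph{preserve} isomorphisms (automatic) and fiber products, not that it detects them.
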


\begin{proof} $(1) \Rightarrow (2)$. Follows from the fact that the forgetful functor $U: \Comp \rightarrow \Set$ is a right adjoint and so it preserves monomorphisms. The claim follows from the fact that monomorphisms in $\Set$ are exactly the injective maps of sets.

$(2) \Rightarrow (3)$. From the proof of Proposition \ref{comp_regular_cat} we can conclude that $f$ admits a unique factorization $X \twoheadrightarrow \im(f) \hookrightarrow Y$, where $\im(f) \subset Y$ is endowed with the subspace topology. By assumption we know that $f: X \rightarrow Y$ is injective, so the surjective map $X \twoheadrightarrow \im(f)$ is also injective. The claim then follows from the fact that bijections of compact hausdorff spaces are isomorphisms (\ref{monadicity_comp_set}).

$(3) \Rightarrow (4)$. From the proof of \ref{comp_regular_cat} we know that $g: Z \rightarrow Y$ admits a unique factorization $Z \twoheadrightarrow \im(g) \hookrightarrow Y$, where $\im(g) \subset Y$ is endowed with the subspace topology. From the hypothesis of $(3)$ we know that the map $\im(g) \hookrightarrow Y$ factors uniquely as $\im(g) \rightarrow X \rightarrow Y$, proving the claim.

$(4) \Rightarrow (1)$. Follows directly from the definition of monomorphisms.
\end{proof}

\begin{defn}[Equivalence Relations]\label{defn_equiv_relation} Let $\cC$ be a category which admits finite limits and let $X$ be an object of $\cC$. We say that a monomorphism $R \hookrightarrow X \times X$ is an equivalence relation if, for every object $Y \in \cC$, the image of the induced map
\begin{equation*}
	\Hom_{\cC} (Y, R) \rightarrow \Hom_{\cC} (Y, X \times X) \simeq \Hom_{\cC} (Y, X) \times \Hom_{\cC} (Y, X)
\end{equation*}
is an equivalence relation on the set $\Hom_{\cC} (Y, X)$.
\end{defn}

\begin{example} Let $\cC$ be a category which admits finite limits and let $f: X \rightarrow Y$ be a morphism in $\cC$. Then the fiber product $X \times_Y X$ comes equipped with a canonical monomorphism $X \times_Y X \hookrightarrow X \times X$ which presents $X \times_Y X$ as an equivalence relation on $X$. To see that $X \times_Y X \rightarrow X \times X$ is a monomorphism, recall that this map fits into the pullback diagram
\begin{cd}
	X \times_Y X \ar[r] \ar[d]  & X \times X \ar[d] \\
	Y \ar[r, "\Delta"] & Y \times Y
\end{cd}
and since the diagonal map $\Delta: Y \rightarrow Y \times Y$ is always a monomorphism, and monomorphisms are stable under base change the claim follows.
\end{example}

\begin{defn}[Effective Equivalence Relations]\label{defn_eff_equiv_relation} Let $\cC$ be a category which admits finite limits and let $X$ be an object of $\cC$. We will say that an equivalence relation $R$ on $X$ is effective if there exists an effective epimorphism $f: X \twoheadrightarrow Y$ such that $R = X \times_Y X \hookrightarrow X \times X$.
\end{defn}

\begin{rem}\label{comp_equiv_relation_eff_epi} Let $\cC$ be a category which admits finite limits, let $X$ be an object of $\cC$, and let $R$ be an effective equivalence relation on $X$. Then there exists an effective epimorphism $f: X \twoheadrightarrow Y$ in $\cC$ such that $R = X \times_Y X$. The assumption that $f$ is an effective epimorphism implies that it exhibits $Y$ as the coequalizer of the diagram $R \rightrightarrows X$. In particular, $Y$ is determined (up to unique isomorphism) bu the equivalence relation $R$; we will emphasize this dependence by denoting $Y$ by $X/R$. It follows that the construction $R \mapsto X/R$ induced a bijection
\begin{equation*}
	\{ \text{Effective equivalence relations } R \hookrightarrow X \times X \} / \text{iso} \simeq
	\{ \text{Effective epimorphisms } X \twoheadrightarrow Y \} / \text{iso}
\end{equation*}
The inverse bijection carries an effective epimorphism $f: X \rightarrow Y$ to the equivalence relation $X \times_Y X$.
\end{rem}

\begin{prop}\label{all_equiv_eff_comp} Every equivalence relation on an object $X \in \Comp$ is effective.
\end{prop}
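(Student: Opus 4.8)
The goal is to show that every equivalence relation $R \hookrightarrow X \times X$ on a compact Hausdorff space $X$ is effective, i.e. that $R$ arises as $X \times_{X/R} X$ for some effective epimorphism $X \twoheadrightarrow X/R$. The plan is to construct the quotient directly as a topological space and then verify that it lands in $\Comp$ and that it induces the correct equivalence relation. First I would pass to the level of sets: by Proposition~\ref{monadicity_comp_set} the forgetful functor $U \colon \Comp \to \Set$ preserves limits, so $U(R) \hookrightarrow U(X) \times U(X)$ is a monomorphism, and the condition of Definition~\ref{defn_equiv_relation} applied with $Y$ ranging over all compact Hausdorff spaces (in particular the one-point space, and profinite sets) forces $U(R)$ to be an honest equivalence relation on the set $U(X)$. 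Then I would form the set-theoretic quotient $Q := U(X)/U(R)$, equipped with the quotient topology coming from the surjection $q \colon U(X) \to Q$ (here using the given topology on $X$, not the discrete one).

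The main step is to show that $Q$, with this quotient topology, is a compact Hausdorff space and that $R$, as a subspace of $X \times X$, is precisely the fiber product $X \times_Q X$ in $\Top$ (equivalently in $\Comp$). Compactness of $Q$ is immediate since it is a continuous image of the compact space $X$. The crucial point is Hausdorffness: this is where the hypothesis that $R$ is an equivalence relation \emph{in $\Comp$} — hence that $R \subset X \times X$ is a \emph{closed} subset (as a monomorphism of compact Hausdorff spaces has closed image by Proposition~\ref{mono_comp}, being a homeomorphism onto its image, and the image of a compact space inside a Hausdorff space is closed) — gets used. The standard fact from point-set topology is that if $X$ is compact Hausdorff and $R \subseteq X \times X$ is a closed equivalence relation, then $X/R$ is compact Hausdorff and $q$ is a closed map; I would cite this (e.g. via \cite[Tag 08ZT]{stacks-project} for the relevant generalities, or simply record the argument: given distinct classes $[a] \neq [b]$, the $R$-saturations of $a$ and $b$ are disjoint closed sets, and closedness of $q$ together with compactness produces disjoint saturated open neighborhoods, which descend to separating opens in $Q$). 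The hardest part of the write-up will be cleanly arguing that $q$ is closed and deducing Hausdorffness — this is the genuine topological content, everything else is formal.

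Once $Q \in \Comp$ is established, I would verify that $q \colon X \to Q$ is an effective epimorphism: by Proposition~\ref{epimorphism_comp} it suffices that $U(q)$ is surjective, which holds by construction. Finally I must check $R = X \times_Q X$ as subobjects of $X \times X$. At the level of sets this is exactly the statement that $U(R)$ is the kernel pair of $U(q)$, which is the definition of the set-theoretic quotient; since $U$ preserves the limit $X \times_Q X$ and both $R$ and $X \times_Q X$ carry the subspace topology from $X \times X$ (the former by the remark that monomorphisms in $\Comp$ are subspace embeddings, Proposition~\ref{mono_comp}(3); the latter because fiber products in $\Top$ carry the subspace topology), the set-level equality upgrades to a homeomorphism. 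This exhibits $R$ as $X \times_{X/R} X$ with $X \to X/R$ an effective epimorphism, so $R$ is effective in the sense of Definition~\ref{defn_eff_equiv_relation}, completing the proof. I would also remark, via Remark~\ref{comp_equiv_relation_eff_epi}, that this establishes the bijection between effective equivalence relations and effective epimorphisms out of $X$ in $\Comp$.
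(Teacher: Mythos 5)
Your proof is correct, but it takes a genuinely different route from the paper's. You construct the quotient $Q = X/R$ explicitly as a topological space with the quotient topology, then discharge the hard part of the argument — Hausdorffness of $Q$, via the standard fact that a \emph{closed} equivalence relation on a compact Hausdorff space has compact Hausdorff quotient with closed quotient map — by hand. The paper instead bypasses all explicit point-set topology by invoking the monadicity package already recorded in Proposition~\ref{monadicity_comp_set}: since the diagonal $\Delta\colon X \to R$ is a common section of $R \rightrightarrows X$, Proposition~\ref{monadicity_comp_set}(3) guarantees that the coequalizer $Y := \coeq(R \rightrightarrows X)$ exists in $\Comp$ and is preserved by $U$; then since $U(R) = U(X) \times_{U(Y)} U(X)$ (effectivity in $\Set$) and $U$ preserves fiber products and detects isomorphisms, the comparison map $R \to X \times_Y X$ is forced to be a homeomorphism. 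The paper's argument is shorter and purely formal once monadicity is in place — the point-set content you prove directly is precisely what's packaged into Mac~Lane's monadicity theorem for $U\colon \Comp \to \Set$. Your version is more self-contained and elementary, but carries the cost that the Hausdorffness argument you sketch (closedness of saturations of closed sets, hence of $q$, hence separation of classes via normality) needs to be spelled out carefully in the right order to avoid circularity; as you note yourself, that is the genuine work. Both proofs are sound; the paper's choice is consistent with its general strategy of packaging point-set facts once into categorical lemmas and never touching them again.
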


\begin{proof} Let $R \subset X \times X$ be an equivalence relation on $X$, we need to show that $\coeq(R \rightrightarrows X) =: Y$ exists and that it satisfies $R \simeq X \times_Y X$. Recall that the functor $U: \Comp \rightarrow \Set$ preserves equivalence relations, and since all equivalence relations on $\Set$ are effective it follows that $\coeq(U(R) \rightrightarrows U(X))=: S$ exists and it satisfies $U(R) \simeq U(X) \times_S U(X)$. Now, since the pair of maps $R \rightrightarrows X$ admits a common section given by the diagonal $\Delta: X \rightarrow R \subset X \times X$ it follows from \ref{monadicity_comp_set} that $\coeq(R \rightrightarrows X) =: Y$ exists an it satisfies $U(Y) = S$. It remains to show that the canonical map $R \rightarrow X \times_Y X$ is an equivalence. From the identity $U(R) \simeq U(X) \times_{U(Y)} U(X)$ it follows that $R \rightarrow X \times_Y X$ is an equivalence, as bijections of compact hausdorff spaces are homeomorphisms by \ref{monadicity_comp_set}.
\end{proof}

\begin{prop}\label{surj_are_effective_regular_cat} Let $\cC$ be a regular category, let $X$ be an object of $\cC$, and let $R \hookrightarrow X \times X$ be an equivalence relation on $X$. The following conditions are equivalent
\begin{enumerate}[(1)]
	\item The equivalence relation $R$ is effective.
	\item There exists a morphism $f: X \rightarrow Y$ such that $R = X \times_Y X \hookrightarrow X \times X$. Furthermore, the canonical map $X/R \rightarrow \im(f)$ is an isomorphism.
\end{enumerate}
\end{prop}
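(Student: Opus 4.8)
The plan is to prove the equivalence $(1)\Leftrightarrow(2)$ by exhibiting, in both directions, the canonical ``quotient map" $f\colon X\twoheadrightarrow X/R$ and analyzing its image factorization using the regularity of $\cC$. First I would treat the implication $(2)\Rightarrow(1)$, which is the easier half. Suppose we are given $f\colon X\to Y$ with $R = X\times_Y X \hookrightarrow X\times X$, and that the canonical map $X/R\to\im(f)$ is an isomorphism. Since $\cC$ is regular, $f$ factors as $X\xtwoheadrightarrow{p}\im(f)\hookrightarrow Y$ with $p$ an effective epimorphism and $\im(f)\hookrightarrow Y$ a monomorphism. The key point is that forming $X\times_{(-)}X$ along the mono $\im(f)\hookrightarrow Y$ does not change the fiber product: because $\im(f)\hookrightarrow Y$ is a monomorphism, the square computing $X\times_{\im(f)}X$ and the square computing $X\times_Y X$ have the same pullback, so $R = X\times_Y X = X\times_{\im(f)} X$. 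Now $p\colon X\twoheadrightarrow\im(f)$ is an effective epimorphism with $X\times_{\im(f)}X = R$, which is exactly the definition (Definition \ref{defn_eff_equiv_relation}) of $R$ being an effective equivalence relation. (If one wants $X/R$ to literally be $\im(f)$, this also uses the uniqueness of coequalizers, i.e.\ Remark \ref{comp_equiv_relation_eff_epi}, to identify $X/R$ with $\im(f)$.)

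For $(1)\Rightarrow(2)$, assume $R$ is effective, so by Definition \ref{defn_eff_equiv_relation} there is an effective epimorphism $f\colon X\twoheadrightarrow Y$ with $R = X\times_Y X\hookrightarrow X\times X$. This already produces a morphism $f$ of the required type, so it only remains to check that the canonical map $X/R\to\im(f)$ is an isomorphism. But since $f$ is itself an effective epimorphism, it is in particular an epimorphism, and by condition (R2) of regularity its image factorization $X\twoheadrightarrow\im(f)\hookrightarrow Y$ has $\im(f)\hookrightarrow Y$ both a monomorphism and an epimorphism. Here I would invoke the standard fact that in a regular category a morphism that is both a monomorphism and an effective epimorphism (equivalently: every epimorphism which is also a mono in a regular category splits off as iso after the image factorization) is an isomorphism — more precisely, since $f$ is an effective epimorphism it coincides with its own image in the sense that $\im(f)\to Y$ is an isomorphism. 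Consequently $Y\cong\im(f)$, and since $f$ exhibits $Y$ as the coequalizer of $R\rightrightarrows X$, we get $X/R\cong Y\cong\im(f)$, as desired.

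The main obstacle I anticipate is getting the bookkeeping around (R2) exactly right in the $(1)\Rightarrow(2)$ direction: one has to be careful that ``effective epimorphism'' is not automatically stable under the image factorization in a way that trivializes the argument, and that the identification $\im(f)\cong Y$ genuinely follows from $f$ being an \emph{effective} (not merely a strong) epimorphism together with (R2). The clean way to phrase it is: an effective epimorphism $f$ satisfies $f = \coeq(X\times_Y X\rightrightarrows X)$, and applying (R2) to factor $f$ as $X\twoheadrightarrow \im(f)\hookrightarrow Y$, the universal property of the coequalizer forces the mono $\im(f)\hookrightarrow Y$ to be split epi, hence iso. Everything else is formal manipulation of fiber products and the uniqueness of image factorizations (Corollary after Definition \ref{defn_regular_cat}), together with Remark \ref{comp_equiv_relation_eff_epi}.
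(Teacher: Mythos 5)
Your proof is correct and follows essentially the same route as the paper, which gives no argument of its own but cites \cite[Proposition A.2.5]{lurie_ultracategories}: the cited proof is exactly your image-factorization argument (factor $f$ through $\im(f)$, observe the kernel pair is unchanged along the monomorphism $\im(f)\hookrightarrow Y$, and in the converse direction use the coequalizer property of the effective epimorphism to split the mono $\im(f)\hookrightarrow Y$). Your own caveat is well placed — ``mono plus epi implies iso'' is not valid in a general category — but the split-epimorphism argument you give at the end is the correct fix, so no gap remains.
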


\begin{proof} \cite[Proposition A.2.5]{lurie_ultracategories}
\end{proof}

\begin{prop}\label{general_coeq_comp} Let $X \rightarrow Y$ and $Z \rightarrow X \times_Y X$ be a surjective maps of compact hausdorff spaces, and define the morphisms $Z \rightrightarrows X$ as the composition $Z \rightarrow X \times_Y X \rightrightarrows X$. Then, the canonical map
\begin{equation*}
	\coeq(Z \rightrightarrows X) \rightarrow Y
\end{equation*}
is an isomorphism of compact hausdorff spaces.
\end{prop}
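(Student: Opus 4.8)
The plan is to reduce the claim to facts already established about $\Comp$: that it is a regular category (Proposition \ref{comp_regular_cat}), that surjections are precisely the effective epimorphisms (Proposition \ref{epimorphism_comp}), that all equivalence relations are effective (Proposition \ref{all_equiv_eff_comp}), and the characterization of effectivity in a regular category (Proposition \ref{surj_are_effective_regular_cat}). The key observation is that the coequalizer of $Z \rightrightarrows X$ should only depend on the image of $Z \to X \times_Y X$, and that image is all of $X \times_Y X$ since $Z \to X \times_Y X$ is surjective. So the plan is to first handle the case $Z = X \times_Y X$, and then show that precomposing with a surjection onto $X \times_Y X$ does not change the coequalizer.

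\textbf{Step 1: the case $Z = X \times_Y X$.} First I would treat the morphism $f \colon X \to Y$. Since $f$ is surjective, it is an effective epimorphism by Proposition \ref{epimorphism_comp}, so by Remark \ref{comp_equiv_relation_eff_epi} (or directly from the definition of effective epimorphism) the canonical map $\coeq(X \times_Y X \rightrightarrows X) \to Y$ is an isomorphism. This disposes of the special case.

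\textbf{Step 2: reduce the general $Z$ to Step 1.} Now let $Z \to X\times_Y X$ be an arbitrary surjection, and let $\pi,\pi'\colon Z \rightrightarrows X$ be the two composites. I would show that $\coeq(Z \rightrightarrows X) \simeq \coeq(X\times_Y X \rightrightarrows X)$ by a universal-property argument: a morphism $u\colon X \to W$ coequalizes $Z \rightrightarrows X$ if and only if the two composites $Z \rightrightarrows X \xrightarrow{u} W$ agree, and since $Z \to X\times_Y X$ is an epimorphism (surjections in $\Comp$ are epimorphisms, being effective epimorphisms), this holds if and only if the two composites $X\times_Y X \rightrightarrows X \xrightarrow{u} W$ agree, i.e. if and only if $u$ coequalizes $X\times_Y X \rightrightarrows X$. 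Hence the two coequalizer problems have the same solutions, and (both coequalizers existing, by Proposition \ref{monadicity_comp_set}(3), since the pairs $Z\rightrightarrows X$ and $X\times_Y X \rightrightarrows X$ each admit a common section — the diagonal $X \to X\times_Y X \to Z$ need not land in $Z$, but one can instead note $X\times_Y X \rightrightarrows X$ has the common section $\Delta$ and then use Step 2's universal property directly, or simply invoke that $Z\rightrightarrows X$ has common section coming from any section of the surjection $Z \to X\times_Y X$ composed with $\Delta$) the canonical map between them is an isomorphism. Composing with the isomorphism of Step 1 gives $\coeq(Z \rightrightarrows X) \xrightarrow{\simeq} Y$, as desired.

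\textbf{Expected main obstacle.} The substantive points are all packaged into the cited results, so the proof is essentially bookkeeping; the one place demanding care is the existence of the coequalizer $\coeq(Z \rightrightarrows X)$ in $\Comp$ — one wants to apply Proposition \ref{monadicity_comp_set}(3), which requires a common section of the pair $Z \rightrightarrows X$. Since the diagonal of $X/Y$ lifts (non-canonically) through the surjection $Z \twoheadrightarrow X\times_Y X$, such a section exists, and then the forgetful functor $U$ commutes with the coequalizer; combined with the fact that $U$ detects isomorphisms, the canonical map $\coeq(Z \rightrightarrows X) \to Y$ is an isomorphism because it is so after applying $U$ (where it reduces to the statement that a set-quotient by an equivalence relation is unchanged by replacing the relation by a surjective preimage of it). I would present the argument in this last form — pass to $\Set$ via $U$, use that $U$ preserves these coequalizers and all limits, check bijectivity there, and conclude by Proposition \ref{monadicity_comp_set}(2) — as it is the cleanest route and avoids repeatedly reproving epimorphism facts inside $\Comp$.
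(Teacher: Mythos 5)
Your Steps~1 and~2, read with your ``option (a),'' constitute a correct proof and it is essentially the same argument as the paper's: the paper passes through the regular-category image factorization $Z \twoheadrightarrow \im(f) \hookrightarrow X\times X$ and identifies $\im(f)$ with $R = X\times_Y X$, while you argue directly by universal property using that $Z \twoheadrightarrow X\times_Y X$ is an epimorphism — the same underlying observation, your version packaged a little more economically. Note also that Step~2 already takes care of existence with no further fuss: it shows that $u\colon X \to W$ coequalizes $Z \rightrightarrows X$ if and only if it coequalizes $X\times_Y X \rightrightarrows X$, so $Y$ represents both universal properties simultaneously, and $\coeq(Z\rightrightarrows X)$ exists because $\coeq(X\times_Y X\rightrightarrows X)$ does.

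The passages about sections, however, are wrong. A continuous surjection of compact Hausdorff spaces admits no continuous section in general, and in particular the diagonal $\Delta\colon X \to X\times_Y X$ need not lift along $Z \twoheadrightarrow X\times_Y X$. For instance take $Y = \pt$, $X = S^1$, and $Z = [0,1]^2 \to S^1\times S^1$ given by $(s,t) \mapsto (e^{2\pi i s}, e^{2\pi i t})$: a lift of $\Delta$ would yield a continuous section of $[0,1]\to S^1$, which does not exist. Consequently your proposed ``cleanest route'' in the final paragraph — applying Proposition~\ref{monadicity_comp_set}(3) directly to the pair $Z\rightrightarrows X$ and passing to $\Set$ — is not available, since that proposition requires exactly the common section you cannot produce. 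You should present the proof as Step~1 plus the universal-property argument of Step~2 (your option~(a)); the section/reflectivity machinery for the pair $Z\rightrightarrows X$ is both unnecessary and, as deployed, false.
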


\begin{proof} Since the morphism $X \rightarrow Y$ is surjective, it follows from \ref{epimorphism_comp} that the canonical map $\coeq(X \times_Y X \rightrightarrows X) \rightarrow Y$ is an isomorphism. In particular, the monomorphism $R:= X \times_Y X \hookrightarrow X \times X$ determines an effective equivalence relation on $X$ uniquely characterizing the identity $X/R \simeq Y$ (cf. \ref{comp_equiv_relation_eff_epi}). From the pair of maps $Z \rightrightarrows X$ we can produce a map $f: Z \rightarrow X \times X$, which admits a unique factorization as $Z \twoheadrightarrow \im(f) \hookrightarrow X \times X$, as $\Comp$ is a regular category (\ref{comp_regular_cat}). Thus, it remains to show that $\im(f) \subset X \times X$ determines an equivalence relation on $X$ equal to $R$. Indeed, by construction we have a surjective map $Z \twoheadrightarrow R$ and a monomorphism $R \hookrightarrow X$ whose composition is $f$, then the uniqueness of the factorization $Z \twoheadrightarrow \im(f) \hookrightarrow X$ shows that $\im(f) = R$ as desired.
\end{proof}

\subsection{Remarks on the topology of the spectrum}

Throughout this subsection $K$ is a non-trivially valued non-archimedean field.

\begin{prop}\label{uniform_berko_sp} Let $A$ be a Banach $K$-algebra, and $A \rightarrow A^u$ the uniformization map introduced in \ref{adjunction_uniform_ban}. Then, the induced map $|\cM(A^u)| \rightarrow |\cM(A)|$ is an isomorphism of compact hausdorff spaces.
\end{prop}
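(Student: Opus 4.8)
The plan is to show that the uniformization map $A \to A^u$ induces a bijection on Berkovich spectra which is automatically a homeomorphism by the monadicity of $\Comp$ over $\Set$ (Proposition~\ref{monadicity_comp_set}). First I would recall that, by Berkovich's construction of the spectrum (Definition~\ref{defn_berko_spectrum}), a point of $|\cM(A)|$ is a bounded multiplicative semi-norm $x \colon A \to \RR_{\ge 0}$, and since $x$ is multiplicative it satisfies $|f(x)| = \rho_x(f) := \lim_n |f^n(x)|^{1/n} \le \rho_A(f)$; that is, every point of $|\cM(A)|$ is in fact bounded by the spectral radius $\rho_A$ rather than just by the norm $|-|_A$ (this is the content of Lemma~\ref{bdd_valuations}). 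Consequently each $x \in |\cM(A)|$ is continuous for the topology on $A$ generated by $\{\varpi^n A_{\rho \le 1}\}$, which by Corollary~\ref{equiv_topologies_spec_rad} is the same as the topology obtained by completing with respect to $\rho_A$. Hence $x$ extends uniquely to a multiplicative semi-norm on the completion $A^u$, and this extension is bounded by $\rho_{A^u} = \rho_A$ (the spectral radius only depends on the norm up to equivalence, and $A^u$ is uniform so its norm \emph{is} $\rho_{A^u}$). This produces a map $|\cM(A)| \to |\cM(A^u)|$ inverse to the map $|\cM(A^u)| \to |\cM(A)|$ induced by functoriality (Proposition~\ref{berko_sp_functor}) from $A \to A^u$.

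Next I would verify that these two maps are mutually inverse as maps of sets. In one direction: given $y \in |\cM(A^u)|$, its restriction along $A \to A^u$ is a point of $|\cM(A)|$ whose unique extension back to $A^u$ must equal $y$, because $A$ has dense image in $A^u$ and multiplicative semi-norms are continuous (and $\RR_{\ge 0}$ is Hausdorff). In the other direction: given $x \in |\cM(A)|$, restricting its extension $\tilde x$ on $A^u$ back along $A \to A^u$ recovers $x$ by construction. So $U(|\cM(A^u)|) \to U(|\cM(A)|)$ is a bijection of sets, where $U \colon \Comp \to \Set$ is the forgetful functor.

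Finally, by Proposition~\ref{monadicity_comp_set}(2) the functor $U$ detects isomorphisms, so a bijective continuous map of compact Hausdorff spaces is automatically a homeomorphism; therefore $|\cM(A^u)| \to |\cM(A)|$ is an isomorphism in $\Comp$. The main obstacle — though it is more a matter of bookkeeping than of genuine difficulty — is the verification that a bounded multiplicative semi-norm on $A$ really is continuous for the $\rho_A$-topology and hence extends to $A^u$; this rests precisely on the inequality $|f(x)| \le \rho_A(f)$ coming from multiplicativity together with the identification of cofinal systems of neighborhoods in Corollary~\ref{equiv_topologies_spec_rad}. Alternatively, one could phrase the whole argument through the universal property: since every map from $A$ to a uniform Banach $K$-algebra (in particular to any completed residue field $\cH(x)$, which is uniform) factors uniquely through $A^u$ by Proposition~\ref{adjunction_uniform_ban}, the set of points of $|\cM(A)|$ and $|\cM(A^u)|$ coincide, and then one concludes as before with Proposition~\ref{monadicity_comp_set}.
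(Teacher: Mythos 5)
Your proof is correct, and your \emph{main} argument takes a genuinely different route from the paper's. The paper's proof is essentially your final "alternatively" paragraph: it factors any point $x \colon A \to \RR_{\ge 0}$ through its completed residue field $\cH(x)$ (a non-archimedean field, hence uniform), and then the universal property of $(-)^u$ from Proposition~\ref{adjunction_uniform_ban} gives the factorization $A \to A^u \to \cH(x) \to \RR_{\ge 0}$, after which monadicity (Proposition~\ref{monadicity_comp_set}) finishes. This is clean because it never has to talk about topologies on $A$ at all — uniformity of $\cH(x)$ does all the work. Your primary argument is more analytic: you observe directly that a multiplicative bounded semi-norm on $A$ is automatically dominated by the spectral radius $\rho_A$, hence is continuous for the $\rho_A$-topology (identified via Corollary~\ref{equiv_topologies_spec_rad}), and therefore extends uniquely by density to the completion $A^u$; then a density argument shows the two maps of underlying sets are inverse. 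This buys you a concrete picture of \emph{why} the spectrum cannot see more than the uniformization and avoids invoking $\cH(x)$, at the cost of a somewhat longer verification. One small quibble: the inequality $|f(x)| \le \rho_A(f)$ is not literally "the content of Lemma~\ref{bdd_valuations}" — that lemma only upgrades boundedness by $C|f|_A$ to boundedness by $|f|_A$. The spectral-radius bound you actually need follows from the same elementary trick (take $n$-th roots of $|f^n(x)| = |f(x)|^n \le |f^n|_A$), so the argument stands, but the citation should point to the argument rather than the statement.
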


\begin{proof} From \ref{monadicity_comp_set} it suffices to show that the induced map $|\cM(A^u) \rightarrow |\cM(A)|$ of compact hausdorff spaces is a bijection. Hence, we need to show that any rank one valuation $x: A \rightarrow \RR_{\ge 0}$ admits an essentially unique factorization $A \rightarrow A^u \rightarrow \RR_{\ge 0}$. From the definition of completed residue fields (\ref{defn_completed_residue_field}) it follows that the map $x$ factors as $x: A \rightarrow \cH(x) \rightarrow \RR_{\ge 0}$, which in turn implies that the map $x$ factors as
\begin{equation*}
	A \rightarrow A^u \rightarrow \cH(x) \rightarrow \RR_{\ge 0}
\end{equation*}
since non-archimedean fields are uniform and the fact that the inclusion $\uBan_K \hookrightarrow \Ban_K$ admits a left adjoint given by the uniformization functor $(-)^u$ (cf. \ref{adjunction_uniform_ban}).
\end{proof}

\begin{prop}\label{berko_sp_disj_union} Let $\{A_i\}_{i \in I}$ be a finite collection of Banach $K$-algebras. Then, the canonical map
\begin{equation*}
	|\cM(\prod_{i \in I} A_i)| \longrightarrow \bigsqcup_{i \in I} |\cM(A_i)|
\end{equation*}
is an isomorphism of compact hausdorff spaces.
\end{prop}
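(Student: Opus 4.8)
The plan is to verify the claim by exhibiting an explicit two-sided inverse at the level of compact Hausdorff spaces, using the universal property of the bounded direct product together with the recognition criterion for isomorphisms in $\Comp$ (\ref{monadicity_comp_set}). First I would observe that for a finite index set $I$ the bounded direct product $\prod_{i \in I} A_i$ coincides with the usual (finite) product, and that each projection $\prod_{i \in I} A_i \to A_j$ is a contractive morphism of Banach $K$-algebras, hence induces a continuous map $|\cM(A_j)| \to |\cM(\prod_i A_i)|$ by Proposition \ref{berko_sp_functor}. Assembling these over $j \in I$ yields a continuous map $\bigsqcup_{i \in I} |\cM(A_i)| \to |\cM(\prod_i A_i)|$, which I claim is inverse to the canonical map in the statement. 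Since $\Comp$ admits finite coproducts (finite disjoint unions of compact Hausdorff spaces are compact Hausdorff), both sides are genuinely objects of $\Comp$, and by \ref{monadicity_comp_set}(2) it suffices to check that the canonical map is a bijection of underlying sets, or equivalently that the assembled map above is a set-theoretic inverse.

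Next I would analyze the fibers explicitly. Given a rank one valuation $x\colon \prod_{i\in I} A_i \to \RR_{\ge 0}$, write $e_i \in \prod_{i\in I} A_i$ for the idempotent supported at $i$; these satisfy $e_i e_j = 0$ for $i \neq j$, $e_i^2 = e_i$, and $\sum_{i\in I} e_i = 1$. Multiplicativity of $x$ forces $|e_i(x)|^2 = |e_i(x)|$, so each $|e_i(x)| \in \{0,1\}$, and $|e_i(x)||e_j(x)| = |e_ie_j(x)| = 0$ for $i \neq j$; combined with $1 = |1(x)| \le \max_i |e_i(x)|$ (the non-archimedean inequality, applied to $1 = \sum e_i$), exactly one of the $|e_i(x)|$ equals $1$ and the rest vanish. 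For that distinguished index $i_0$, the valuation $x$ kills the ideal $\bigoplus_{i \neq i_0} A_i$ (since $a = e_{i_0} a$ for $a$ supported outside $i_0$ would give $|a(x)| = |e_{i_0}(x)||a(x)|$... more directly, $a \cdot e_{i_0} = 0$ forces $|a(x)||e_{i_0}(x)| = 0$, hence $|a(x)| = 0$), so $x$ factors uniquely through the projection $\prod_i A_i \twoheadrightarrow A_{i_0}$. This shows the canonical map is a bijection onto $\bigsqcup_i |\cM(A_i)|$, with inverse as described; continuity of the inverse was already noted, so by \ref{monadicity_comp_set} it is a homeomorphism. Finally I would check that the two maps are mutually inverse on the nose rather than merely bijective, which is immediate from tracing a valuation through the projection and back.

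The main obstacle — and it is a mild one — is the case analysis showing that exactly one component valuation $|e_i(x)|$ is nonzero; this uses finiteness of $I$ crucially (for infinite $I$ one can have valuations "at infinity" living on the Stone--Čech boundary, cf. Lemma \ref{stone_cech_berko_sp}), and one must be careful to invoke the non-archimedean triangle inequality in the form $|1(x)| \le \max_i |e_i(x)|$ to rule out the possibility that all $|e_i(x)| = 0$. Everything else is a routine unwinding of the universal property of $\prod_{i\in I} A_i$ and the definitions of the Berkovich spectrum.
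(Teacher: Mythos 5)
Your proof is correct and follows essentially the same route as the paper: both arguments reduce the statement to a set-theoretic bijection and then invoke \ref{monadicity_comp_set} to upgrade a continuous bijection of compact Hausdorff spaces to a homeomorphism. The only cosmetic difference is that where you verify by hand, via the idempotents $e_i$, multiplicativity, and the ultrametric inequality, that every bounded multiplicative seminorm on $\prod_{i\in I} A_i$ factors uniquely through exactly one projection, the paper gets the same factorization by observing that $\ker(x)$ is a prime ideal and citing the structure of prime ideals in a finite product of rings -- the identical underlying fact.
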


\begin{proof} First, let us construct a map $|\cM(\prod A_i)| \rightarrow \sqcup |\cM(A_i)|$ which we will later show is a bijection, which will prove by \ref{monadicity_comp_set} that it is an isomorphism of compact hausdorff spaces. Recall that $\cM(\prod A_i)$ can be identified with the coproduct of $\{\cM(A_i)\}_{i \in I}$ in the category $\Ban_K^{\op}$, and that $\sqcup |\cM(A_i)|$ can be identified with the coproduct of $\{|\cM(A_i)|\}_{i \in I}$ in the category $\Comp$. The existence of the functor $|-|: \Ban_K^{\op} \rightarrow \Comp$ guarantees that we get a natural comparison map $|\cM(\prod A_i)| \rightarrow \sqcup |\cM(A_i)|$.

Its clear that the map $|\cM(\prod A_i)| \rightarrow \sqcup |\cM(A_i)|$ is surjective, by construction for each $i \in I$ we get a morphism $|\cM(A_i)| \rightarrow |\cM(\prod A_i)|$ such that when composed with $|\cM(\prod A_i)| \rightarrow \sqcup |\cM(A_i)|$ we get the identity map on $|\cM(A_i)|$. It remains to show injectivity, for this it suffices to show that every rank one valuation $x: \prod A_i \rightarrow \RR_{\ge 0}$ admits a factorization $\prod A_i \rightarrow A_i \rightarrow \RR_{\ge 0}$ through some $A_i$. Let $\frakp_x = \ker(x)$ be the prime ideal of $\prod A_i$ which is the kernel $x$, then the map $x$ admits a factorization as $\prod A_i \rightarrow (\prod A_i)/\frakp_x \rightarrow \RR_{\ge 0}$, but generalities on prime ideals on products of rings guarantee that the quotient map $\prod A_i \rightarrow (\prod A_i)/\frakp_x$ admits a factorization as $\prod A_i \rightarrow A_i \rightarrow (\prod A_i)/\frakp_x$ for some $A_i$.
\end{proof}

Recall from \ref{filtered_colim_banach_contr} that if $I$ is a filtered category and $I \rightarrow \Ban_K^{\contr}$ a functor indexing a collection of Banach $K$-algebras $\{A_i\}_{i \in I}$ with contractive transition maps, then $\colim_I A_i$ computed in $\Ban_K^{\contr}$ exists and it admits an explicit description.

\begin{prop}\label{cofiltered_lim_berko_sp} Let $I$ be a filtered category and $I \rightarrow \Ban_K^{\contr}$ a functor indexing a collection of Banach $K$-algebras $\{A_i\}_{i \in I}$ with contractive transition maps. Then, the canonical map
\begin{equation*}
	|\cM(\colim_I A_i) | \rightarrow \lim_{I^{\text{op}}} |\cM(A_i) |
\end{equation*}
where $\colim_I A_i$ is computed in $\Ban_K^{\contr}$, is an isomorphism of compact hausdorff spaces.
\end{prop}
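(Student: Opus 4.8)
The plan is to reduce to a statement about sets by invoking the monadicity of the forgetful functor $U\colon\Comp\to\Set$ (Proposition \ref{monadicity_comp_set}). Since $U$ detects isomorphisms, it suffices to show that the underlying map of sets
\begin{equation*}
	U\big(|\cM(\colim_I A_i)|\big) \longrightarrow U\big(\lim_{I^{\op}} |\cM(A_i)|\big) = \lim_{I^{\op}} U(|\cM(A_i)|)
\end{equation*}
is a bijection, where we use that $U$ preserves limits. So the whole problem becomes: show that a rank one valuation on $\colim_I A_i$ (computed in $\Ban_K^{\contr}$) is the same data as a compatible family of rank one valuations on the $A_i$.

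First I would recall the explicit description of $A := \colim_I A_i$ from Proposition \ref{filtered_colim_banach_contr}: it is the completion $B^\wedge$ of the colimit $B = \colim_I A_i$ taken in the category of rings, equipped with the seminorm $|a|_B = \inf\{|a_i|_{A_i} : \psi_i(a_i) = a\}$, where $\psi_i\colon A_i \to B$ are the structure maps. Given a compatible family of rank one valuations $x_i\colon A_i \to \RR_{\ge 0}$ — compatible meaning $x_j$ restricted along $A_j \to A_i$ equals $x_i$ whenever there is a transition map — these glue to a multiplicative map $x\colon B \to \RR_{\ge 0}$ on the ring-theoretic colimit. This $x$ is automatically bounded with respect to $|-|_B$: for $a \in B$ and any representative $a_i \in A_i$ with $\psi_i(a_i) = a$ we have $|x(a)| = |x_i(a_i)| \le |a_i|_{A_i}$, hence $|x(a)| \le |a|_B$ by taking the infimum. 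A bounded multiplicative seminorm on $B$ is uniformly continuous for the $|-|_B$-topology, so it extends uniquely to the completion $B^\wedge = A$, giving a point of $|\cM(A)|$; one checks the extension is still multiplicative by density. Conversely, given $x \in |\cM(A)|$, precomposing with $A_i \to A$ gives a compatible family $\{x_i\}$ by functoriality of $|\cM(-)|$ (Proposition \ref{berko_sp_functor}), and these two constructions are mutually inverse — the key point being that $B$ has dense image in $A$, so a point of $|\cM(A)|$ is determined by its restriction to $B$, hence to the $A_i$.

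The main obstacle, I expect, is the surjectivity direction: namely checking that the extension of $x\colon B \to \RR_{\ge 0}$ to the completion $B^\wedge$ exists as a \emph{multiplicative} seminorm and lands in the correct bounded range $[0,|a|_{A}]$ for each $a$. Boundedness is what makes the extension well-defined (a non-archimedean bounded map on $B$ is $|-|_B$-Lipschitz, hence extends continuously), and multiplicativity of the extension then follows from continuity of multiplication together with density of $B$ in $A$; one should also verify the extension does not acquire a kernel issue, but since multiplicative seminorms are allowed to have kernels (they need only satisfy conditions (2),(3),(4) of Definition \ref{defn_banach}) this is not a problem. An alternative, perhaps cleaner, route for the harder direction is to use the description of $|\cM(A)|$ as a closed subset of $\prod_{a \in A}[0,|a|_A]$ from the proof of Proposition \ref{berko_sp_is_comp}: a compatible family $\{x_i\}$ determines a point $(t_a)$ of this product by $t_a = x_i(a_i)$ for any representative, and the defining relations ($t_0 = 0$, $t_1 = 1$, $t_{f+g}\le\max(t_f,t_g)$, $t_{fg} = t_f t_g$) hold on the dense subring $B$, hence on all of $A$ by continuity of the coordinate functions — so $(t_a)$ lies in the closed subset $|\cM(A)|$. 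Either way the structure is: reduce to sets via monadicity, use the explicit colimit description, and let density plus boundedness do the extension work.
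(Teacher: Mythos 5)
Your proposal is correct and follows essentially the same path as the paper's proof: reduce to a bijection of underlying sets via the monadicity statement (Proposition \ref{monadicity_comp_set}), get injectivity from density of the image of the $A_i$ in the completed colimit, and get surjectivity by gluing a compatible family into a multiplicative map on the ring-theoretic colimit, bounding it by the infimum seminorm of Proposition \ref{filtered_colim_banach_contr}, and extending to the completion. The only (minor) difference is the extension step: you extend the $\RR_{\ge 0}$-valued seminorm directly by $1$-Lipschitz continuity and check multiplicativity by density, whereas the paper factors the valuation through the complete field $\cH(x)$ as in Definition \ref{defn_completed_residue_field} and invokes the universal property of completion (BGR 1.1.7, Prop.\ 6(ii)); both mechanisms are sound.
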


\begin{proof} We follow our convention and denote by $\cM(B)$ the object of $\Ban_K^{\contr}$ which corresponds to the Banach $K$-algebra $B$. The functor $I \rightarrow \Ban_K^{\contr}$ gives rise to a functor $I^{\op} \rightarrow \Ban_K^{\contr, \op}$, and notice that $\cM(\colim_I A_i)$ can be identified with $\lim_{I^{\op}} \cM(A_i)$ computed in $\Ban_K^{\contr, \op}$. Then, the existence of the functor $|-|: \Ban_K^{\contr, \op} \rightarrow \Comp$ yields a canonical map $|\cM(\colim_I A_i)| \rightarrow \lim_{I^{\op}} |\cM(A_i)|$. By \ref{monadicity_comp_set} it suffices to show that this map is a bijection.
	
We first show injectivity. Consider the collection $\{\psi_i: A_i \rightarrow \colim_I A_i\}$ of natural maps coming from the definition of colimits, let $A := \cup_{i \in I} \im(\psi_i)$ be the union of all the images of $\psi_i$, and recall from \ref{filtered_colim_banach_contr} that $A \subset \colim_I A_i$ is dense. Then, if we have two distinct rank one valuations $v_1, v_2: \colim_I A_i \rightarrow \RR_{\ge 0}$ their restrictions to $A \subset \colim_I A_i$ gives us distinct maps $v_1, v_2: A \rightarrow \RR_{\ge 0}$. Therefore, we can conclude that there exists a $j \in I$ such that if $i > j$ (equivalently, there exists a map $j \rightarrow i$ in $I$) the rank one valuations $v_{1, i}, v_{2, i}: A_i \rightarrow A \rightarrow \RR_{\ge 0}$ will be distinct. In other words, for all $i > j$ the canonical maps $|\cM(\colim_I A_i)| \rightarrow |\cM(A_i)|$ send $v_1, v_2 \in |\cM(\colim_I A_i)|$ to distinct points of $|\cM(A_i)|$. Since the forgetful functor $\Comp \rightarrow \Set$ preserves limits (\ref{monadicity_comp_set}), it follows that $v_1, v_2$ will get mapped to distinct points under the map $|\cM(\colim_I A_i) | \rightarrow \lim_{I^{\text{op}}} |\cM(A_i) |$, proving injectivity.

To show surjectivity, we pick a point $x \in \lim_{I^{\text{op}}} |\cM(A_i) |$, which in turn gives us a collection of points $\{x_i \in |\cM(A_i)|\}$ such that for all map $j \rightarrow i$ in $I$, the induced map $|\cM(A_i)| \rightarrow |\cM(A_j)|$ satisfies $x_i \mapsto x_j$. In other words, we have a collection of rank one valuations $\{v_i: A_i \rightarrow \RR_{\ge 0}\}$ such that for any $i > j$ we have a commutative diagram
\begin{cd}
	A_j \ar[r] \ar[rd, "v_j", swap] & A_i \ar[d, "v_i"]\\
	& \RR_{\ge 0}
\end{cd}
This in turn induces a multiplicative map $v: A \rightarrow \RR_{\ge 0}$, and by endowing $A$ with the seminorm from \ref{filtered_colim_banach_contr} its easy to see that $v: A \rightarrow \RR_{\ge 0}$ satisfies $v(a) \le |a|_A$, making $v$ a rank one valuation on the seminormed ring $A$. Following the same recipe as in \ref{defn_completed_residue_field} we can construct a factorization of $v$ into a contractive map $A \rightarrow \cH(x)$ to a non-archimedean field and a rank one valuation $\cH(x) \rightarrow \RR_{\ge 0}$, then by Proposition $6(ii)$ of \cite[Section 1.1.7]{BGR} we get a factorization $A \rightarrow \colim_I A_i \rightarrow \cH(x)$, proving that we have a rank one valuation $v_A: \colim_I A_i \rightarrow \cH(x) \rightarrow \RR_{\ge 0}$ such that $v_A \circ \psi_i = v_i$. This proves surjectivity of the canonical map $|\cM(\colim_I A_i) | \rightarrow \lim_{I^{\text{op}}} |\cM(A_i) |$.

\end{proof}

\newpage

\section{Rational Domains}\label{sect_rational_domains}

\subsection{Definition and basic properties}

Let $K$ be a non-trivially valued non-archimedean field, and $\varpi \in K^{\times}$ a topological nilpotent unit.

\begin{defn} Let $A$ be a Banach $K$-algebra with norm $|-|_A$. Define $A\langle T_1, \dots, T_n \rangle$ as the set
\begin{equation*}
	A\langle T_1, \dots, T_n \rangle := \Big \{\sum_{\nu \in \ZZ_{\ge 0}^n} a_\nu T^\nu \in A [\![T_1, \dots, T_n]\!] \text{  such that } \lim_{|\nu| \rightarrow \infty} |a_\nu|_A \rightarrow 0 \Big \}
\end{equation*}
Then, Proposition 2 of \cite[Section 1.4.1]{BGR} shows that $A\langle T_1, \dots, T_n \rangle$ inherits an $A$-algebra structure from the inclusion $A\langle T_1, \dots, T_n \rangle \subset A [\![T_1, \dots, T_n]\!]$. Furthermore, Proposition 3 of \cite[Section 1.4.1]{BGR} shows that $A\langle T_1, \dots, T_n \rangle$ is a Banach $A$-algebra (in particular a Banach $K$-algebra) with norm
\begin{align*}
	|-| : A\langle T_1, \dots, T_n \rangle \rightarrow \RR_{\ge 0} && \sum_{\nu \in \ZZ_{\ge 0}^n} a_\nu T^{\nu} \mapsto \max_{\nu \in \ZZ_{\ge 0}^n} |a_\nu|_A
\end{align*}
and that the polynomials $A[T_1, \dots, T_n] \subset A\langle T_1, \dots, T_n \rangle$ are dense. Notice that its clear from the definition that $A \langle T_1 \rangle \langle T_2 \rangle \simeq A \langle T_1, T_2 \rangle$ are isometrically isomorphic.
\end{defn}

\begin{prop}\label{free_banach_algebras} Let $f: A \rightarrow B$ be a morphism of Banach $K$-algebras. Then, for any collection $\{b_1, \dots, b_n\} \subset B^{\circ} \subset B$ of power bounded elements of $B$ (cf. \ref{defn_powerbounded}) there exists a unique morphism of Banach $K$-algebras $\tilde{f}: A \langle T_1, \dots, T_n \rangle \rightarrow B$ sending $T_i \mapsto b_i$ making the following diagram commute
\begin{cd}
	A \ar[r, "f"] \ar[d, hook] & B \\
	A \langle T_1, \dots, T_n \rangle \ar[ru, dashed, "\tilde{f}", swap]
\end{cd}
Furthermore, if $A \rightarrow B$ is contractive and the collection $\{b_1, \dots, b_n\} \subset B_{\le 1} \subset B$, then the induced map $\tilde{f}: A \langle T_1, \dots, T_n \rangle \rightarrow B$ sending $T_i \mapsto b_i$ is contractive.
\end{prop}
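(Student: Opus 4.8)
The plan is to prove the universal property by first establishing it at the level of abstract $A$-algebras and then upgrading to a statement about Banach $K$-algebras by controlling norms. Given $f \colon A \to B$ and power bounded elements $b_1, \dots, b_n \in B^\circ$, the underlying ring map $A[T_1, \dots, T_n] \to B$ sending $T_i \mapsto b_i$ is determined uniquely by the universal property of the polynomial ring. The key point will be to show this extends (uniquely, by density of $A[T_1, \dots, T_n] \subset A\langle T_1, \dots, T_n\rangle$) to a bounded morphism on the Tate algebra, which amounts to a boundedness estimate: we must bound $|\tilde f(\sum a_\nu T^\nu)|_B = |\sum f(a_\nu) b^\nu|_B$ in terms of $\max_\nu |a_\nu|_A$.

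First I would record that since each $b_i$ is power bounded, by Definition \ref{defn_powerbounded} there is an $m_i \in \ZZ_{\ge 0}$ with $\{b_i^{\ZZ_{\ge 0}}\} \subset \tfrac{1}{\varpi^{m_i}} B_{\le 1}$; setting $m = \max_i m_i$ and using submultiplicativity of $|-|_B$, one gets a uniform bound $|b^\nu|_B \le C$ for all multi-indices $\nu$, where $C = |\varpi|^{-nm}$ (or a similar explicit constant). Since $f$ is bounded, say $|f(a)|_B \le D|a|_A$, the non-archimedean triangle inequality gives $|\sum f(a_\nu) b^\nu|_B \le \max_\nu |f(a_\nu)|_B |b^\nu|_B \le CD \max_\nu |a_\nu|_A = CD\, |\sum a_\nu T^\nu|$. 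This shows the ring map $A[T_1, \dots, T_n] \to B$ is bounded for the restricted norm, hence extends uniquely to a bounded morphism $\tilde f \colon A\langle T_1, \dots, T_n\rangle \to B$ of Banach $K$-algebras (completeness of $B$ plus density of the polynomials); uniqueness of the extension follows again from density together with the fact that bounded maps are continuous.

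For the commutativity of the triangle, the composite $A \hookrightarrow A\langle T_1, \dots, T_n\rangle \xrightarrow{\tilde f} B$ agrees with $f$ on the dense subring $A$ — indeed it agrees by construction on $A[T_1,\dots,T_n]$ restricted to constants — so equals $f$. For the contractive refinement: if $f$ is contractive we may take $D = 1$, and if moreover each $b_i \in B_{\le 1}$ then $b^\nu \in B_{\le 1}$ for all $\nu$, so $|b^\nu|_B \le 1$ and the estimate becomes $|\sum f(a_\nu)b^\nu|_B \le \max_\nu |a_\nu|_A$; thus $\tilde f$ is contractive on the dense subring $A[T_1,\dots,T_n]$ and hence on all of $A\langle T_1, \dots, T_n\rangle$ by continuity. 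The main (mild) obstacle is simply being careful with the explicit constant $C$ coming from the power boundedness data and confirming it is independent of $\nu$ — everything else is a routine density-plus-boundedness argument, and the contractive case is where the hypotheses are tailored precisely so that $C = 1$.
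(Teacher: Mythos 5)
Your argument is correct, and it is essentially the proof the paper intends: the paper simply cites \cite[Proposition 1.4.3/1]{BGR}, whose proof is exactly this density-plus-boundedness argument (power-boundedness gives a uniform bound on $|b^\nu|_B$, hence convergence of $\sum f(a_\nu)b^\nu$ in the complete ring $B$, with uniqueness and the contractive refinement following from density of $A[T_1,\dots,T_n]$ and continuity). The only nitpick is the phrase ``agrees with $f$ on the dense subring $A$'' --- $A$ is not dense in $A\langle T_1,\dots,T_n\rangle$, but density is not needed there since the composite agrees with $f$ on constants by construction.
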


\begin{proof} Follows from \cite[Proposition 1.4.3/1]{BGR} and its proof.
\end{proof}

\begin{prop} Let $A \rightarrow B$ be a contractive map of Banach $K$-algebras. Then, there is a natural isometric isomorphism
\begin{align*}
	B \cotimes_A A \langle T_1, \dots, T_n \rangle \rightarrow B \langle T_1, \dots, T_n \rangle && 1 \otimes T_i \mapsto T_i
\end{align*}
In particular, we have a natural isometric isomorphism
\begin{equation*}
	A \langle T_1, \dots, T_n \rangle \cotimes_A A \langle X_1, \dots, X_m \rangle \simeq A \langle T_1, \dots, T_n, X_1, \dots, X_m \rangle 
\end{equation*}
\end{prop}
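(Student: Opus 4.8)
The plan is to prove the isomorphism by verifying the universal property of the pushout $B \cotimes_A A\langle T_1, \dots, T_n\rangle$ in $\Ban_K^{\contr}$ (Construction \ref{const_comp_tensor_ban}), rather than by manipulating the explicit seminorm. First I would observe that both $B \cotimes_A A\langle T_1, \dots, T_n\rangle$ and $B\langle T_1, \dots, T_n\rangle$ are Banach $B$-algebras equipped with contractive maps from $B$, so it suffices to show they represent the same functor on $\Ban_B^{\contr}$. Concretely, I would show that $B\langle T_1, \dots, T_n\rangle$ satisfies the pushout universal property: given a Banach $K$-algebra $C$ together with contractive maps $B \to C$ and $A\langle T_1, \dots, T_n\rangle \to C$ agreeing on $A$, the map $A\langle T_1, \dots, T_n\rangle \to C$ sends each $T_i$ to a power-bounded element $c_i \in C_{\le 1}$ (since $\|T_i\| \le 1$ and the map is contractive), and then by Proposition \ref{free_banach_algebras} applied to $B \to C$ there is a unique contractive map $B\langle T_1, \dots, T_n\rangle \to C$ sending $T_i \mapsto c_i$; uniqueness and compatibility with the two given maps follow because the polynomial ring $B[T_1, \dots, T_n]$ is dense in $B\langle T_1, \dots, T_n\rangle$ and a contractive map is determined on a dense subset. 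This identifies $B\langle T_1, \dots, T_n\rangle$ with the pushout, hence with $B \cotimes_A A\langle T_1, \dots, T_n\rangle$, and by the uniqueness of pushouts the comparison map $1 \otimes T_i \mapsto T_i$ is the isomorphism; a short check that the seminorm on the algebraic tensor product descends to the Gauss norm on $B\langle T_1, \dots, T_n\rangle$ upgrades this to an isometry.

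The main obstacle I anticipate is the isometry claim, as opposed to mere isomorphism of Banach algebras. The universal-property argument gives a contractive isomorphism with contractive inverse, which by the open mapping theorem (used in Proposition \ref{banach_to_topological_ff}) is a topological isomorphism, but to get an honest isometry one must argue directly with the seminorms. For this I would note that for $f = \sum_\nu b_\nu T^\nu \in B[T_1, \dots, T_n]$ the element $\sum_\nu b_\nu \otimes T^\nu$ in $B \otimes_A A\langle T_1, \dots, T_n\rangle$ has tensor seminorm $\le \max_\nu |b_\nu|_B$ (taking the evident presentation $b_\nu \otimes T^\nu$), while the reverse inequality $\ge \max_\nu |b_\nu|_B$ follows by applying a suitable coordinate functional or by evaluating at $T_i = 0$ after extracting a single coefficient; since polynomials are dense this pins the seminorm down on a dense subset and hence everywhere by continuity and completeness.

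Finally, the displayed special case is obtained by applying the general statement with $B = A\langle X_1, \dots, X_m\rangle$ and then invoking the already-noted isometric identification $A\langle X_1, \dots, X_m\rangle\langle T_1, \dots, T_n\rangle \simeq A\langle T_1, \dots, T_n, X_1, \dots, X_m\rangle$, which follows by iterating the isometric isomorphism $A\langle T_1\rangle\langle T_2\rangle \simeq A\langle T_1, T_2\rangle$ recorded just after the definition of $A\langle T_1, \dots, T_n\rangle$. I expect this reduction to be routine once the general statement is in hand. Overall the argument is short: the substance is the universal property check plus the dense-subset seminorm comparison, and everything else is bookkeeping.
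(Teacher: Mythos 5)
Your proposal is correct and follows essentially the same route as the paper: both identify $B \cotimes_A A\langle T_1,\dots,T_n\rangle$ and $B\langle T_1,\dots,T_n\rangle$ via the pushout universal property of Construction \ref{const_comp_tensor_ban} together with Proposition \ref{free_banach_algebras}, producing mutually inverse contractive comparison maps. Your extra dense-subset seminorm computation is fine but unnecessary: a contractive isomorphism with contractive inverse is automatically an isometry (no open mapping theorem needed), which is exactly how the paper concludes.
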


\begin{proof} Since the maps $A \rightarrow B$ and $A \rightarrow A \langle T_1, \dots, T_n \rangle$ are contractive it follows from \ref{const_comp_tensor_ban} that $B \cotimes_A A \langle T_1, \dots, T_n \rangle$ exists in $\Ban_K$. Recall that the universal property of $B \cotimes_A A \langle T_1, \dots, T_n \rangle$ (cf. \ref{const_comp_tensor_ban}) says that a morphism $B \cotimes_A A \langle T_1, \dots, T_n \rangle \rightarrow Z$ in $\Ban_K$ is equivalent to specifying a pair of bounded $A$-algebra maps $B \rightarrow Z$ and $A \langle T_1, \dots, T_n \rangle \rightarrow Z$, which by \ref{free_banach_algebras} is equivalent to specifying a bounded $A$-algebra map $B \rightarrow Z$ and a collection of power bounded elements $\{z_1, \dots, z_n \} \subset Z$, which is in turn equivalent to specifying a map $B \langle T_1, \dots, T_n \rangle \rightarrow Z$ by \ref{free_banach_algebras}. This implies that the map $B \cotimes_A A \langle T_1, \dots, T_n \rangle \rightarrow B \langle T_1, \dots, T_n \rangle$ induced from the following pair of contractive $A$-algebra maps $A \langle T_1, \dots, T_n \rangle \rightarrow B \langle T_1, \dots, T_n \rangle$ and $B \rightarrow B \langle T_1, \dots, T_n \rangle$ is an contractive isomorphism, and it sends $1 \otimes T_i \mapsto T_i$.

By virtue of Proposition \ref{free_banach_algebras} there is a unique contractive map
\begin{align*}
	B \langle T_1, \dots, T_n \rangle  \rightarrow B \cotimes_A A \langle T_1, \dots, T_n \rangle && T_i \mapsto 1 \otimes T_i
\end{align*}
of Banach $K$-algebras, in particular the uniqueness implies that it must be the inverse of $B \cotimes_A A \langle T_1, \dots, T_n \rangle \rightarrow B \langle T_1, \dots, T_n \rangle$, proving the result.
\end{proof}

\begin{defn}\label{defn_rational_domain} Let $A$ be a Banach $K$-algebra, and $\{f_1, \dots, f_n \} \subset A$ a finite collection of elements that generate the unit ideal of $A$. For a fixed $f_i \in \{f_1, \dots, f_n \}$ let $I_{f_i}$ be the ideal of $A \langle T_1, \dots, T_n \rangle$ generated by
\begin{equation*}
	I_{f_i} := (f_i T_1 - f_1, \dots, f_i T_n - f_n) \subset A \langle T_1, \dots, T_n \rangle
\end{equation*}
We define a rational domain of $A$ to be given by the Banach $K$-algebra
\begin{equation*}
	A \blang \frac{f_1, \dots, f_n}{f_i} \brang := (A \langle T_1, \dots, T_n \rangle / I_{f_i})^{\wedge}
\end{equation*}
together with a morphism given by the composition
\begin{equation*}
	A \hookrightarrow A \langle T_1, \dots, T_n \rangle \twoheadrightarrow A \blang \frac{f_1, \dots, f_n}{f_i} \brang
\end{equation*}
We remark that in general the ideal $I_{f_i} \subset A \langle T_1, \dots, T_n \rangle$ need not be closed, thus we need to pass to the completion of the semi-normed ring $A \langle T_1, \dots, T_n \rangle / I_{f_i}$, endowed with the quotient semi-norm, to obtain a Banach $K$-algebra. Its clear from the construction that the morphism $A \rightarrow A \blang \frac{f_1, \dots, f_n}{f_i} \brang$ is contractive.

Its also easy to see that the rational domain $A \blang \frac{f_1, \dots, f_n}{f_i} \brang$ is non-zero. Indeed, it suffice to show that $I_{f_i} \subset \frakm \subset A \langle T_1, \dots, T_n \rangle$ for some maximal ideal $\frakm$, as maximal ideals of Banach $K$-algebras are automatically closed (\cite[1.2.4/5]{BGR}). But its clear that $1 \not\in I_{f_i}$, proving the claim.
\end{defn}

\begin{lemma}\label{unit_in_rational_domain} Let $A$ be a Banach $K$-algebra, and $A \rightarrow A \blang \frac{f_1, \dots, f_n}{f_i} \brang$ a rational domain of $A$. Then, $f_i \in A \blang \frac{f_1, \dots, f_n}{f_i} \brang$ is a unit.
\end{lemma}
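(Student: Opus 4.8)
The plan is to exhibit an explicit inverse for $f_i$ already at the level of the (possibly non-complete) semi-normed quotient ring $A\langle T_1,\dots,T_n\rangle/I_{f_i}$, and then observe that units are preserved under the ring homomorphism to its completion $A\blang \frac{f_1,\dots,f_n}{f_i}\brang$.

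First I would use the standing hypothesis in Definition \ref{defn_rational_domain} that $\{f_1,\dots,f_n\}$ generate the unit ideal of $A$: choose $g_1,\dots,g_n \in A$ with $\sum_{j=1}^n g_j f_j = 1$. Next I would record that by construction of $I_{f_i} = (f_iT_1 - f_1,\dots,f_iT_n-f_n)$, the images of $f_j$ and $f_iT_j$ in $A\langle T_1,\dots,T_n\rangle/I_{f_i}$ coincide for every $j$. Combining these two facts gives, in $A\langle T_1,\dots,T_n\rangle/I_{f_i}$,
\begin{equation*}
	1 = \sum_{j=1}^n g_j f_j = \sum_{j=1}^n g_j (f_i T_j) = f_i \cdot \Big(\sum_{j=1}^n g_j T_j\Big),
\end{equation*}
so $f_i$ is invertible there, with inverse the image of $\sum_{j=1}^n g_j T_j$.

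Finally I would invoke the fact that the structure morphism $A\langle T_1,\dots,T_n\rangle/I_{f_i} \to (A\langle T_1,\dots,T_n\rangle/I_{f_i})^\wedge = A\blang \frac{f_1,\dots,f_n}{f_i}\brang$ is a ring homomorphism, and ring homomorphisms send units to units; hence the image of $f_i$ in $A\blang \frac{f_1,\dots,f_n}{f_i}\brang$ is a unit, with inverse the image of $\sum_{j=1}^n g_j T_j$. There is no real obstacle here — the only point worth being careful about is not to conflate $I_{f_i}$ with its closure, but since the computation takes place before completion this causes no trouble; the completion step is purely formal.
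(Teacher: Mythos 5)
Your argument is correct and is essentially the paper's proof: both pick $g_1,\dots,g_n$ with $\sum_j g_j f_j = 1$ and verify that $f_i\cdot\sum_j g_j T_j - 1$ lies in $I_{f_i}$, exhibiting the image of $\sum_j g_j T_j$ as the inverse of $f_i$. Your extra remark that the unit is produced already in the semi-normed quotient and then transported along the ring homomorphism to the completion is just a slightly more explicit phrasing of the same computation.
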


\begin{proof} From the definition of rational domains it suffices to show that there exists a $X \in A \blang \frac{f_1, \dots, f_n}{f_i} \brang$ such that $f_iX -1 \in I_{f_i}$. As $\{f_1, \dots, f_n \} \subset A$ generate the unit ideal, there exists $\{h_1, \dots, h_n \} \subset A$ such that $\sum_j h_j f_j = 1$, we claim that $X = \sum_j h_j T_j$ will satisfy $f_i X - 1 \in I_{f_i}$. Indeed its easy to see that 
\begin{equation*}
	f_i X - 1 = \sum_j f_i h_j T_j - h_j f_j \in I_{f_i}
\end{equation*}
proving the claim.
\end{proof}

\begin{prop}\label{rat_domains_are_mono} Let $A$ be a Banach $K$-algebra, and $A \rightarrow A \blang \frac{f_1, \dots, f_n}{f_i} \brang$ a rational domain of $A$. Passing to the opposite category, the induced map
\begin{equation*}
	\cM \Big ( A \blang \frac{f_1, \dots, f_n}{f_i} \brang \Big ) \longrightarrow \cM(A)
\end{equation*}
is a monomorphism in $\Ban_K^{\op}$.
\end{prop}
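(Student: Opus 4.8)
The claim that $\cM\big(A\blang\frac{f_1,\dots,f_n}{f_i}\brang\big)\to\cM(A)$ is a monomorphism in $\Ban_K^{\op}$ is, by definition of the opposite category, the assertion that the structure map $\varphi\colon A\to B$ is an \emph{epimorphism} in $\Ban_K$, where I abbreviate $B:=A\blang\frac{f_1,\dots,f_n}{f_i}\brang$. So the plan is to verify this epimorphism condition directly, using the presentation of $B$ from Definition \ref{defn_rational_domain} together with Lemma \ref{unit_in_rational_domain}. Write $q\colon A\langle T_1,\dots,T_n\rangle\to B$ for the canonical map (surjective onto the semi-normed quotient $A\langle T_1,\dots,T_n\rangle/I_{f_i}$, then densely into its completion $B$), so that $\varphi$ is the composite $A\hookrightarrow A\langle T_1,\dots,T_n\rangle\xrightarrow{q}B$.

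Now fix any $C\in\Ban_K$ and any two bounded morphisms $\psi_1,\psi_2\colon B\to C$ with $\psi_1\circ\varphi=\psi_2\circ\varphi=:\chi\colon A\to C$. I would first precompose with $q$ to get bounded morphisms $\tilde\psi_k:=\psi_k\circ q\colon A\langle T_1,\dots,T_n\rangle\to C$, both restricting to $\chi$ on $A$. Since the generators $f_iT_j-f_j$ of $I_{f_i}$ lie in $\ker q$, each $\tilde\psi_k$ annihilates them, which gives $\chi(f_i)\,\tilde\psi_k(T_j)=\chi(f_j)$ in $C$ for every $j$. By Lemma \ref{unit_in_rational_domain}, the image of $f_i$ in $B$ is a unit; hence $\chi(f_i)$, being the $\psi_k$-image of that unit, is a unit in $C$, and therefore $\tilde\psi_k(T_j)=\chi(f_i)^{-1}\chi(f_j)$ --- an element of $C$ that does not depend on $k$. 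This invertibility of $\chi(f_i)$ is the only nontrivial input, and it is exactly the algebraic shadow of the fact that $f_i$ does not vanish on the rational domain; everything else is formal.

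Finally, $\tilde\psi_1$ and $\tilde\psi_2$ agree on $A$ and on each $T_j$, hence on the subalgebra $A[T_1,\dots,T_n]$, which is dense in $A\langle T_1,\dots,T_n\rangle$; since $C$ is Hausdorff and the $\tilde\psi_k$ are continuous, $\tilde\psi_1=\tilde\psi_2$. As $q$ has dense image and $C$ is Hausdorff, it follows that $\psi_1=\psi_2$. Thus $\varphi$ is an epimorphism in $\Ban_K$, which is the desired statement. The one point that deserves a sentence of care in the write-up is that a morphism out of $B$ is determined by its precomposition with $q$ (dense image plus Hausdorff target); beyond that I do not anticipate any obstacle, and in particular the argument never uses any finiteness hypothesis on $A$.
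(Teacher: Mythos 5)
Your proof is correct and takes essentially the same approach as the paper: translate to epimorphism of $A\to B$ in $\Ban_K$, use Lemma \ref{unit_in_rational_domain} to force $\tilde\psi_k(T_j)=\chi(f_i)^{-1}\chi(f_j)$, and conclude by density. The only difference is cosmetic: the paper establishes power-boundedness of $h(f_j)/h(f_i)$ in $B$ and then appeals to the uniqueness clause of Proposition \ref{free_banach_algebras}, whereas you bypass that check by arguing uniqueness directly from density of $A[T_1,\dots,T_n]$ plus continuity into the Hausdorff target — a small streamlining, since power-boundedness is only needed for existence of the extension, not for its uniqueness.
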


\begin{proof} We need to show that for any morphism $h: A \rightarrow B$ of Banach $K$-algebras, if there exists a factorization of $h$ as
\begin{equation*}
	A \rightarrow A \blang \frac{f_1, \dots, f_n}{f_i} \brang \rightarrow B
\end{equation*}
then the factorization is unique. Assume that such a factorization exists, then from \ref{unit_in_rational_domain} we learn that $h(f_i) \in B$ is a unit, and that any $A$-algebra map $A \blang \frac{f_1, \dots, f_n}{f_i} \brang \rightarrow B$ must satisfy $f_j/f_i \mapsto h(f_j)/h(f_i)$. We claim that $h(f_j)/h(f_i) \in B$ is power-bounded. Indeed, by construction  we know that the quotient map $A \langle T_1, \dots, T_n \rangle \rightarrow A \blang \frac{f_1, \dots, f_n}{f_i} \brang$ is contractive and sends $T_j \mapsto f_j/f_i$, then the existence of the map $A \blang \frac{f_1, \dots, f_n}{f_i} \brang \rightarrow B$ guarantees that that $h(f_j)/h(f_i) \in B$ is power bounded. 

Then, from \ref{free_banach_algebras} we learn that there is a unique $A$-algebra morphism of Banach $K$-algebra
\begin{align*}
	A \langle T_1, \dots, T_n \rangle \rightarrow B && T_j \mapsto h(f_j)/h(f_i)
\end{align*}
as $I_{f_i}$ is in the kernel of this map, it follows from the definition of $A \blang \frac{f_1, \dots, f_n}{f_i} \brang$ that there is a unique map $A \blang \frac{f_1, \dots, f_n}{f_i} \brang \rightarrow B$ which factors
\begin{align*}
	A \langle T_1, \dots, T_n \rangle \rightarrow A \blang \frac{f_1, \dots, f_n}{f_i} \brang \rightarrow  B && T_j \mapsto h(f_j)/h(f_i)
\end{align*}
Showing that there is a unique $A$-algebra morphism of Banach $K$-algebras $A \blang \frac{f_1, \dots, f_n}{f_i} \brang \rightarrow  B$ satisfying $f_j/f_i \mapsto h(f_j)/h(f_i)$, proving the result.
\end{proof}

\begin{defn}\label{defn_top_rat_domain} Let $A$ be a Banach $K$-algebra and $\{f_1, \dots, f_n\} \subset A$ a finite collection of elements generating the unit ideal. Define
\begin{equation*}
	|\cM(A)| \Big ( \frac{f_1, \dots, f_n}{f_i} \Big ) := \{x \in |\cM(A)| \text{ such that } |f_j(x)| \le |f_i(x) | \text{ for all } 1 \le j \le n \} 
\end{equation*}
and endow it with the subspace topology $|\cM(A)|\Big ( \frac{f_1, \dots, f_n}{f_i} \Big ) \subset |\cM(A)|$. From the injective map of compact hausdorff spaces $|\cM(A)| \rightarrow \prod_{f \in A} [0, |f|_A]$ (cf. \ref{berko_sp_is_comp}), we learn that $|\cM(A)|\Big ( \frac{f_1, \dots, f_n}{f_i} \Big )$ is a closed subset of $|\cM(A)|$ as it can be realized as the intersection of $|\cM(A)| \subset \prod_{f \in A} [0, |f|_A]$ and the subset of points $(t_f)_{f \in A} \in \prod_{f \in A} [0, |f|_A]$ which satisfy $t_{f_j} \le t_{f_i}$. This shows that
\begin{equation*}
	|\cM(A)|\Big ( \frac{f_1, \dots, f_n}{f_i} \Big ) \hookrightarrow |\cM(A)|
\end{equation*}
is a monomorphism of compact hausdorff spaces (cf. \ref{mono_comp}).

Furthermore, we claim that $|f_i(x)| \not = 0$ for all $x \in |\cM(A)| \Big ( \frac{f_1, \dots, f_n}{f_i} \Big ) \subset |\cM(A)|$. Indeed, if $|f_i(x)| = 0$, then $|f_j(x)| = 0$ for all $1 \le j \le n$, and since $\{f_1, \dots, f_n\} \subset A$ generate the unit ideal this would imply that $|1(x)| = 0$, contradicting the definition of rank one valuations on $A$ (cf. \ref{defn_berko_spectrum}).
\end{defn}

\begin{prop}\label{rational_dom_univ_prop} Let $B$ be a uniform Banach $K$-algebra, and $h: A \rightarrow B$ a (necessarily contractive) morphism of Banach $K$-algebras. If the induced map $|\cM(B)| \rightarrow |\cM(A)|$ of compact hausdorff spaces has its image contained in $|\cM(A)|\Big ( \frac{f_1, \dots, f_n}{f_i} \Big ) \subset |\cM(A)|$, then the morphism $h: A \rightarrow B$ admits a unique factorization
\begin{equation*}
	A \longrightarrow A \blang \frac{f_1, \dots, f_n}{f_i} \brang \longrightarrow B
\end{equation*}
\end{prop}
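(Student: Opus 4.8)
The statement is a universal mapping property for rational domains when the target is uniform, which refines Proposition \ref{rat_domains_are_mono} (where we only proved $A\langle f_1,\dots,f_n/f_i\rangle$ is a monomorphism). The main point is to show a factorization \emph{exists}; uniqueness is already contained in Proposition \ref{rat_domains_are_mono}. The plan is to produce the factorization by checking that the generators $f_j/f_i$ make sense in $B$ and are power-bounded, and then invoke the universal property of $A\langle T_1,\dots,T_n\rangle$ (Proposition \ref{free_banach_algebras}).

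First I would argue that $h(f_i)\in B$ is invertible. Since the image of $|\cM(B)|\to|\cM(A)|$ lies in $|\cM(A)|(\frac{f_1,\dots,f_n}{f_i})$, and by the last paragraph of Definition \ref{defn_top_rat_domain} we have $|f_i(x)|\ne 0$ for every $x$ in that subset, it follows that $|h(f_i)(y)|=|f_i(|\cM(h)|(y))|\ne 0$ for all $y\in|\cM(B)|$. By Proposition \ref{recog_unit_berko}, $h(f_i)\in B^{\times}$. Next I would check that each element $h(f_j)/h(f_i)\in B$ is power-bounded: by Berkovich's maximum modulus principle (Proposition \ref{berko_max_modulus}), for a uniform $B$ the norm equals the spectral radius, so $|h(f_j)/h(f_i)|_B=\max_{y\in|\cM(B)|}|h(f_j)(y)|/|h(f_i)(y)|=\max_{y}|f_j(|\cM(h)|(y))|/|f_i(|\cM(h)|(y))|\le 1$, using again that the image lands in the rational subset where $|f_j(x)|\le|f_i(x)|$. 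Hence $\{h(f_j)/h(f_i)\}_{j}\subset B_{\le 1}\subset B^{\circ}$.

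With power-bounded elements in hand, Proposition \ref{free_banach_algebras} gives a unique bounded (in fact contractive, since $h$ is contractive and the elements lie in $B_{\le 1}$) $A$-algebra map $A\langle T_1,\dots,T_n\rangle\to B$ sending $T_j\mapsto h(f_j)/h(f_i)$. The ideal $I_{f_i}=(f_iT_1-f_1,\dots,f_iT_n-f_n)$ maps to zero in $B$, because $h(f_i)\cdot\big(h(f_j)/h(f_i)\big)-h(f_j)=0$ and $h(f_i)$ is a unit. Since $B$ is complete, the map factors through the quotient and then through its completion, i.e.\ through $A\langle\frac{f_1,\dots,f_n}{f_i}\rangle=(A\langle T_1,\dots,T_n\rangle/I_{f_i})^{\wedge}$, yielding the desired factorization $A\to A\langle\frac{f_1,\dots,f_n}{f_i}\rangle\to B$. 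Uniqueness then follows from Proposition \ref{rat_domains_are_mono}, which says $\cM(A\langle\frac{f_1,\dots,f_n}{f_i}\rangle)\to\cM(A)$ is a monomorphism in $\Ban_K^{\op}$.

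The only step requiring genuine care — the ``hard part,'' though it is not deep — is the power-boundedness estimate for $h(f_j)/h(f_i)$: one must use uniformity of $B$ in an essential way (so that $|-|_B=\rho_B$) together with the maximum modulus principle, since without uniformity the inequality $|f_j(x)|\le|f_i(x)|$ on $|\cM(B)|$ only controls the spectral seminorm, not the given norm, and the factorization can genuinely fail (cf.\ the remark after Theorem \ref{intro_mono_perfd} about $A$ versus $A^u$). Everything else is a formal consequence of the universal property of $A\langle T_1,\dots,T_n\rangle$ and the definition of the rational domain as a completed quotient.
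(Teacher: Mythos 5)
Your proposal is correct and follows essentially the same route as the paper's proof: invertibility of $h(f_i)$ via Proposition \ref{recog_unit_berko}, power-boundedness of $h(f_j)/h(f_i)$ from multiplicativity of the valuations, the maximum modulus principle and uniformity of $B$, then the universal property of $A\langle T_1,\dots,T_n\rangle$ (Proposition \ref{free_banach_algebras}) together with the vanishing of $I_{f_i}$, with uniqueness coming from Proposition \ref{rat_domains_are_mono}. No gaps.
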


\begin{proof} From \ref{rat_domains_are_mono} we learn that if $h$ admits a factorization as $A \longrightarrow A \blang \frac{f_1, \dots, f_n}{f_i} \brang \longrightarrow B$ then its must necessarily be unique. From the assumption that the image of $|\cM(B)| \rightarrow |\cM(A)|$ is contained in $|\cM(A)|\Big ( \frac{f_1, \dots, f_n}{f_i} \Big ) \subset |\cM(A)|$ we can deduce that
\begin{align*}
	|h(f_j)(x)| \le |h(f_i)(x)| \not= 0 && \text{for all } x \in |\cM(B)| \text{  and all } 1 \le j \le n
\end{align*}
This implies that $h(f_i) \in B$ is a unit by virtue of \ref{recog_unit_berko}, and that
\begin{align*}
	\Big | \frac{h(f_j)}{h(f_i)} (x) \Big| \le 1 && \text{for all } x \in |\cM(B)| \text{  and all } 1 \le j \le n
\end{align*}
by the multiplicativity of rank one valuations. Berkovich's maximum modulus principle \ref{berko_max_modulus} then implies that $\rho_B (h(f_j)/h(f_i)) \le 1$, which in turn implies that $h(f_j)/h(f_i) \in B$ is power bounded in $B$ by the assumption that $B$ is uniform.

From \ref{free_banach_algebras} we learn that there is a unique $A$-algebra map of Banach $K$-algebras
\begin{align*}
	A \langle T_1, \dots, T_n \rangle \rightarrow B && T_j \mapsto h(f_j)/h(f_i)
\end{align*}
and as the ideal $I_{f_i} \subset A \langle T_1, \dots, T_n \rangle$ (cf. \ref{defn_rational_domain}) is in the kernel of the map, we get an induced map
\begin{equation*}
	A \blang \frac{f_1, \dots, f_n}{f_i} \brang \longrightarrow B
\end{equation*}
as desired.
\end{proof}

\begin{example} Let us give an example showing that the hypothesis that $B$ is uniform in Proposition \ref{rational_dom_univ_prop} cannot be removed. This stands in contrast with what happens in Huber's theory of adic spaces where there is not uniformity assumption (cf. \cite[Proposition 1.3]{huber_rigid_varieties}). Recall the example from \ref{example_powerbdd_stric_sp_radius}: let $A$ be the Banach $\CC_p$-algebra defined as the completion of $\CC_p[T]$ with respect to the norm
\begin{align*}
	|-|: \CC_p[T] \rightarrow \RR_{\ge 0} && |\sum a_i T^i| = \max\{|a_i|(i+1) \}
\end{align*}
and whose spectral radius satisfies $\rho(T) = 1$. This implies by \ref{uniform_berko_sp} that all rank one valuations $x: A \rightarrow \RR_{\ge 0}$ satisfy $|T(x)| \le 1$, in particular the map of compact hausdorff spaces
\begin{equation*}
	|\cM(A)| \Big ( \frac{T}{1} \Big) \rightarrow |\cM(A)|
\end{equation*}
is an isomorphism. To complete the example it suffices to show that the identity map $A \rightarrow A$ does not admit a factorization
\begin{equation*}
	A \rightarrow A \blang \frac{T}{1} \brang \rightarrow A
\end{equation*}
Indeed, its clear from the construction that $T \in A \blang \frac{T}{1} \brang$ has norm $\le 1$, making it impossible to have a bounded map $A \blang \frac{T}{1} \brang \rightarrow A$ satisfying $T \mapsto T$.
\end{example}

\begin{prop}\label{mono_berko_sp_injective} Let $f: \cM(B) \rightarrow \cM(A)$ be a monomorphism in $\Ban_K^{\op}$. Then, the induced map $|f|: | \cM(B) | \rightarrow | \cM(A) |$ in $\Comp$ is injective; equivalently, $|f|$ is a monomorphism in $\Comp$ (cf. \ref{mono_comp}).
\end{prop}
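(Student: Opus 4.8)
The plan is to exploit the universal property that characterizes monomorphisms in $\Ban_K^{\op}$ and test it against completed residue fields. Suppose $f: \cM(B) \to \cM(A)$ is a monomorphism in $\Ban_K^{\op}$, i.e. $A \to B$ is a morphism of Banach $K$-algebras such that for every Banach $K$-algebra $C$ and every morphism $A \to C$ there is at most one factorization $A \to B \to C$. I want to show the induced map $|f|: |\cM(B)| \to |\cM(A)|$ is injective. So suppose $x_1, x_2 \in |\cM(B)|$ satisfy $|f|(x_1) = |f|(x_2) =: y \in |\cM(A)|$.

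First I would use the machinery of completed residue fields from Definition~\ref{defn_completed_residue_field}: each $x_i$ gives a contractive map $B \to \cH(x_i)$ whose induced map on Berkovich spectra picks out $x_i \in |\cM(B)|$, and composing with $A \to B$ gives a map $A \to \cH(x_i)$ picking out $y \in |\cM(A)|$. By the universal property of $\cH(y)$ (Proposition~\ref{univ_completed_residue}), since $A \to \cH(x_i)$ is a map to a non-archimedean field whose image in $|\cM(A)|$ is exactly $y$, it factors uniquely through $A \to \cH(y) \to \cH(x_i)$. The key trick is now to consider the bounded direct product $\cH(x_1) \times \cH(x_2)$ (or, to be safe with contractivity, work in $\Ban_K^{\contr}$ using that all these maps to uniform algebras are contractive by Lemma~\ref{morphisms_unif_ban_contractive}), which receives a map $B \to \cH(x_1) \times \cH(x_2)$ from the pair of maps $B \to \cH(x_i)$. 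Precomposing with $A \to B$ gives a map $A \to \cH(x_1) \times \cH(x_2)$; but this same map also factors as $A \to \cH(y) \xrightarrow{\Delta} \cH(y) \times \cH(y) \to \cH(x_1) \times \cH(x_2)$ using the diagonal and the two maps $\cH(y) \to \cH(x_i)$ — wait, more carefully: both $\cH(x_i)$-components of $A \to \cH(x_1)\times\cH(x_2)$ factor through $A \to \cH(y)$, so the composite $A \to B \to \cH(x_1)\times\cH(x_2)$ equals the composite $A \to \cH(y) \to \cH(x_1)\times\cH(x_2)$ where the last map is induced by the two field extensions.

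The point of this setup is to produce \emph{two} factorizations of the map $A \to \cH(x_1) \times \cH(x_2)$ through $A \to B$ and conclude they agree by the monomorphism hypothesis. The first factorization is the tautological one $A \to B \to \cH(x_1) \times \cH(x_2)$. For a second factorization I would compose $A \to B \to \cH(x_1)$ (just the first component, picking out $x_1$) with the map $\cH(x_1) \to \cH(x_1)\times\cH(x_2)$ built from $\mathrm{id}$ on the first factor and the composite $\cH(x_1) \leftarrow A \to \cH(y) \to \cH(x_2)$ — hmm, this requires a map $\cH(x_1) \to \cH(x_2)$, which need not exist. The cleaner approach: instead of a product, consider that the map $A \to \cH(x_1)$ and the map $A \to \cH(x_2)$ both factor through $B$; if $x_1 \neq x_2$ these give genuinely different points, but I should instead test the monomorphism property directly. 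Take $C = \cH(x_1)$. The composite $A \to B \xrightarrow{b \mapsto b(x_1)} \cH(x_1)$ is one factorization through $B$ of the map $A \to \cH(x_1)$. If also $B \xrightarrow{b \mapsto b(x_2)} \cH(x_2)$ and there were a field map $\cH(x_2) \to \cH(x_1)$ over $A$... this is getting delicate, so the honest move is: since $|f|(x_1) = |f|(x_2) = y$, both $B$-algebra structures on the respective residue fields restrict to the \emph{same} $A$-algebra-to-$\cH(y)$ map, and I would build the map $A \to \cH(x_1)\sqcup_{\cH(y)}\cH(x_2)$ (completed tensor product over $\cH(y)$), which is nonzero by Corollary~\ref{non_zero_comp_tensor_ban}, hence has nonempty Berkovich spectrum, giving a point $z$ and maps $B \to \cH(z)$ in two ways (via the two structure maps $\cH(x_i) \to \cH(x_1)\cotimes_{\cH(y)}\cH(x_2) \to \cH(z)$) which agree on $A$; the monomorphism hypothesis forces these two maps $B \to \cH(z)$ to coincide, and chasing back this says $b(x_1)$ and $b(x_2)$ have the same image in $\cH(z)$ for all $b$, forcing $x_1 = x_2$ as valuations on $B$.

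The main obstacle I anticipate is the bookkeeping around contractivity versus mere boundedness — the universal properties of completed tensor products and bounded direct products behave best in $\Ban_K^{\contr}$, while a monomorphism in $\Ban_K^{\op}$ is an epimorphism of (possibly only bounded) morphisms. However, since all the target algebras in play ($\cH(x_i)$, their tensor products, and $\cH(z)$) are uniform, Lemma~\ref{morphisms_unif_ban_contractive} guarantees every relevant morphism is automatically contractive, so I can freely pass to $\Ban_K^{\contr}$ and invoke Corollary~\ref{non_zero_comp_tensor_ban} and Construction~\ref{const_comp_tensor_ban}. A secondary subtlety is verifying that the two maps $B \to \cH(z)$ genuinely restrict to the same $A$-algebra map $A \to \cH(z)$ — this follows because both factor the fixed map $A \to \cH(y) \to \cH(z)$, using the uniqueness clause in Proposition~\ref{univ_completed_residue}. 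Once the two maps $B \to \cH(z)$ are shown equal, reading off that $x_1 = x_2$ is immediate since a point of $|\cM(B)|$ is determined by the composite $B \to \cH(\cdot) \to \RR_{\ge 0}$.
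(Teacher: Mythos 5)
Your final "honest move" paragraph is exactly the paper's proof: form the completed tensor product $\cH(x_1) \cotimes_{\cH(y)} \cH(x_2)$ over the completed residue field $\cH(y)$, show it is non-zero, pick a point $z$ there with completed residue field $\cH(z)$, produce two morphisms $B \to \cH(z)$ that agree after precomposition with $A \to B$, and invoke the monomorphism hypothesis to conclude they coincide (hence $x_1 = x_2$). You also correctly flag that all morphisms into the relevant uniform algebras are automatically contractive by Lemma~\ref{morphisms_unif_ban_contractive}, so the completed tensor product exists.

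There is one genuine citation error, though. You justify $\cH(x_1) \cotimes_{\cH(y)} \cH(x_2) \ne 0$ by Corollary~\ref{non_zero_comp_tensor_ban}, but that corollary is stated and proved only for tensoring over the base field $K$ (and its proof leans on the dictionary $\Ban_K^{\contr} \simeq \CAlg_{K_{\le 1}}^{\wedge a \tf}$, which is pinned to the perfectoid base $K_{\le 1}$ — it does not carry over verbatim to a general non-archimedean $\cH(y)$, whose ring of integers need not be perfectoid). The paper instead argues directly: the field extension $\cH(y) \hookrightarrow \cH(x_i)$ is faithfully flat, so the algebraic tensor $\cH(x_1) \otimes_{\cH(y)} \cH(x_2)$ is non-zero, and Gruson's theorem (\cite[Theorem 3.2.1(iv)]{gruson_theorie_padic}, already cited in the paper) guarantees that $\cH(x_1) \otimes_{\cH(y)} \cH(x_2) \to \cH(x_1) \cotimes_{\cH(y)} \cH(x_2)$ is injective, hence the completed tensor is non-zero too. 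Replacing your appeal to Corollary~\ref{non_zero_comp_tensor_ban} with this Gruson-plus-flatness argument closes the gap; the rest of your reasoning goes through unchanged.
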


\begin{proof} Assume that we have two points $y_1, y_2 \in |\cM(B)|$ such that $|f|(y_1) = |f|(y_2) = x \in |\cM(A)|$, we need to show that $y_1 = y_2$. Let $\cM(\cH(y_1)) \rightarrow \cM(B)$ and $\cM(\cH(y_2)) \rightarrow \cM(B)$ be the morphisms in $\Ban_K^{\op}$ corresponding to the points $y_1, y_2 \in |\cM(B)|$ as described in \ref{defn_completed_residue_field}. We claim that it suffices to show that there exists a non-archimedean field $\kappa$ together with a pair of morphisms $\cH(y_1) \rightarrow \kappa$ and $\cH(y_2) \rightarrow \kappa$ making the following diagram commute
\begin{cd}
	\cM(\kappa) \ar[r] \ar[d] & \cM(\cH(y_1)) \ar[d, "f"] \\
	\cM(\cH(y_2)) \ar[r, "f"] & \cM(A)
\end{cd}
Indeed, if such a diagram exists then the pair of morphisms
\begin{equation*}
	(\cM(\kappa) \rightarrow \cM(\cH(y_i)) \rightarrow \cM(B))_{i \in \{1, 2\}} \in \Maps_{\Ban_K^{\op}} (\cM(\kappa), \cM(B)) 
\end{equation*}
would be mapped to the same morphism
\begin{equation*}
	(\cM(\kappa) \rightarrow \cM(\cH(y_i)) \rightarrow \cM(A))_{i \in \{1, 2\} } \in \Maps_{\Ban_K^{\op}} (\cM(\kappa), \cM(A)) 
\end{equation*}
under the map $\Maps_{\Ban_K^{\op}} (\cM(\kappa), \cM(B)) \rightarrow \Maps_{\Ban_K^{\op}} (\cM(\kappa), \cM(A))$ induced by $f$. But since $f$ is a monomorphism we deduce that the pair of morphisms $(\cM(\kappa) \rightarrow \cM(\cH(y_i)) \rightarrow \cM(B))_{i \in \{1, 2\}}$ are in fact equal, showing that $y_1 = y_2 \in |\cM(B)|$.

Recall that $x \in |\cM(A)|$ was defined as $x := |f|(y_i)$, denote by $\cM(\cH(x)) \rightarrow \cM(A)$ the morphism in $\Ban_K^{\op}$ corresponding to the point $x \in |\cM(A)|$. From the universal property of completed residue fields (\ref{univ_completed_residue}) we learn that the maps $\cM(\cH(y_i)) \rightarrow \cM(A)$ admit a unique factorization as $\cM(\cH(y_i)) \rightarrow \cM(\cH(x)) \rightarrow \cM(A)$. We claim that it suffices to show that the completed tensor product $\cH(y_1) \cotimes_{\cH(x)} \cH(y_2) \not= 0$. Indeed, we know from \ref{const_comp_tensor_ban} that the following diagram is a pullback diagram in $\Ban_K^{\op}$
\begin{cd}
	\cM(\cH(y_1) \cotimes_{\cH(x)} \cH(y_2)) \ar[r] \ar[d] & \cM(\cH(y_1)) \ar[d] \\
	\cM(\cH(y_2)) \ar[r]  & \cM(\cH(x))
\end{cd}
It is clear that the completed tensor product $\cH(y_1) \cotimes_{\cH(x)} \cH(y_2)$ is defined as the morphisms $\cH(x) \rightarrow \cH(y_i)$ are contractive, since all bounded morphisms between uniform Banach $K$-algebras are contractive (\ref{morphisms_unif_ban_contractive}). Then, if $\cH(y_1) \cotimes_{\cH(x)} \cH(y_2) \not= 0$ we know from \ref{berko_sp_is_comp} that $\cM(\cH(y_1) \cotimes_{\cH(x)} \cH(y_2)) \not = \emptyset$, so by picking a point $z \in |\cM(\cH(y_1) \cotimes_{\cH(x)} \cH(y_2)) |$ we can produce a non-archimedean field $\cH(z)$ together with a morphism $\cM(z) \rightarrow \cM(\cH(y_1) \cotimes_{\cH(x)} \cH(y_2))$, which would provide the desired maps $\cM(z) \rightarrow \cM(\cH(y_i))$.

Finally, since the morphisms $\cH(y_i) \rightarrow \cH(x)$ considered as abstract field are faithfully flat we can conclude that the tensor product $\cH(y_1) \otimes_{\cH(x)} \cH(y_2) \not= 0$. Gruson's Theorem \cite[Theorem 3.2.1(iv)]{gruson_theorie_padic} guarantees that the canonical map
\begin{equation*}
	\cH(y_1) \otimes_{\cH(x)} \cH(y_2) \longrightarrow  \cH(y_1) \cotimes_{\cH(x)} \cH(y_2)
\end{equation*}
is injective. Proving that $\cH(y_1) \cotimes_{\cH(x)} \cH(y_2) \not= 0$ as desired.
\end{proof}

\begin{corollary}\label{topo_img_rat_domain} The the rational domain $A \rightarrow A \blang \frac{f_1, \dots, f_n}{f_i} \brang$ induces an injective morphism
\begin{equation*}
	|f|: \Big | \cM \Big ( A \blang \frac{f_1, \dots, f_n}{f_i} \brang \Big ) \Big | \longrightarrow | \cM(A) |
\end{equation*}
with image equal to $|\cM(A)| \Big ( \frac{f_1, \dots, f_n}{f_i} \Big )$. In particular, $|f|$ determines an homeomorphism $ \Big | \cM \Big ( A \blang \frac{f_1, \dots, f_n}{f_i} \brang \Big ) \Big | \simeq |\cM(A)| \Big ( \frac{f_1, \dots, f_n}{f_i} \Big )$ of compact hausdorff spaces (cf. \ref{mono_comp}).
\end{corollary}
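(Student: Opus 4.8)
The plan is to prove the two claims in turn: first that the induced map $|f|$ is injective with the asserted image, and then deduce the homeomorphism statement from the general topological input in Proposition \ref{mono_comp}. For injectivity, I would invoke Proposition \ref{rat_domains_are_mono}, which tells us that $\cM(A\blang \frac{f_1,\dots,f_n}{f_i}\brang) \to \cM(A)$ is a monomorphism in $\Ban_K^{\op}$, together with Proposition \ref{mono_berko_sp_injective}, which says that monomorphisms in $\Ban_K^{\op}$ induce injective (equivalently, monomorphic) maps on Berkovich spectra. This immediately gives that $|f|$ is injective.

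Next I would identify the image of $|f|$. One containment, $\im(|f|) \subseteq |\cM(A)|\big(\frac{f_1,\dots,f_n}{f_i}\big)$, follows from the construction: the quotient map $A\langle T_1,\dots,T_n\rangle \twoheadrightarrow A\blang \frac{f_1,\dots,f_n}{f_i}\brang$ is contractive and sends $T_j \mapsto f_j/f_i$, so for any rank one valuation $y$ on $A\blang \frac{f_1,\dots,f_n}{f_i}\brang$ we have $|T_j(y)| \le 1$ (since $T_j$ is powerbounded, using Berkovich's maximum modulus principle \ref{berko_max_modulus} and uniformity is not even needed — the norm bound $\le 1$ suffices directly via $|T_j(y)|\le |T_j| \le 1$), hence $|f_j(x)| = |f_i(x)|\,|T_j(y)| \le |f_i(x)|$ where $x = |f|(y)$ and we use that $f_i$ is a unit in the rational domain by Lemma \ref{unit_in_rational_domain}. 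For the reverse containment, I would take a point $x \in |\cM(A)|\big(\frac{f_1,\dots,f_n}{f_i}\big)$ and use the completed residue field $\cH(x)$ together with the canonical map $A \to \cH(x)$ (Definition \ref{defn_completed_residue_field}); since $\cH(x)$ is a uniform (indeed non-archimedean) Banach $K$-algebra and $|\cM(\cH(x))| \to |\cM(A)|$ has image exactly $\{x\} \subseteq |\cM(A)|\big(\frac{f_1,\dots,f_n}{f_i}\big)$, Proposition \ref{rational_dom_univ_prop} yields a factorization $A \to A\blang \frac{f_1,\dots,f_n}{f_i}\brang \to \cH(x)$. Functoriality of $|\cM(-)|$ then produces a point of $|\cM(A\blang\frac{f_1,\dots,f_n}{f_i}\brang)|$ mapping to $x$, establishing surjectivity onto $|\cM(A)|\big(\frac{f_1,\dots,f_n}{f_i}\big)$.

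Finally, for the homeomorphism claim: by Definition \ref{defn_top_rat_domain}, $|\cM(A)|\big(\frac{f_1,\dots,f_n}{f_i}\big) \hookrightarrow |\cM(A)|$ is a monomorphism of compact Hausdorff spaces carrying the subspace topology. We have shown $|f|$ is a monomorphism in $\Comp$ with image precisely this closed subset, so by the equivalence of conditions $(1) \Leftrightarrow (3)$ in Proposition \ref{mono_comp}, $|f|$ determines an isomorphism between $|\cM(A\blang\frac{f_1,\dots,f_n}{f_i}\brang)|$ and its image $|\cM(A)|\big(\frac{f_1,\dots,f_n}{f_i}\big)$ with the subspace topology. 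I do not anticipate a serious obstacle here; the only point requiring slight care is making the surjectivity argument clean — ensuring that the map $A \to \cH(x)$ really does factor through the rational domain via \ref{rational_dom_univ_prop}, which hinges on $\cH(x)$ being uniform (guaranteed since completed residue fields are non-archimedean fields, hence uniform) and on the image of $|\cM(\cH(x))|$ lying in the rational subset, which is exactly the hypothesis $x \in |\cM(A)|\big(\frac{f_1,\dots,f_n}{f_i}\big)$.
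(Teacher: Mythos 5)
Your proposal is correct and follows essentially the same route as the paper: injectivity via \ref{rat_domains_are_mono} plus \ref{mono_berko_sp_injective}, and surjectivity onto the rational subset via the factorization through $\cH(x)$ guaranteed by \ref{rational_dom_univ_prop}. The only small difference is that you explicitly verify the containment $\im(|f|) \subseteq |\cM(A)|\big(\tfrac{f_1,\dots,f_n}{f_i}\big)$ using contractivity of the quotient map, $|T_j| \le 1$, and Lemma \ref{unit_in_rational_domain}, whereas the paper treats that direction as implicit; this is a worthwhile addition for completeness but does not change the argument.
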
 

\begin{proof} We know from \ref{rat_domains_are_mono} that the morphism $f: \cM \Big ( A \blang \frac{f_1, \dots, f_n}{f_i} \brang \Big ) \longrightarrow \cM(A)$ is a monomorphism in $\Ban_K^{\op}$, and therefore $|f|$ is injective by \ref{mono_berko_sp_injective}. It remains to show that for every $x \in |\cM(A)| \Big ( \frac{f_1, \dots, f_n}{f_i} \Big ) \subset |\cM(A)|$, the induced map $A \rightarrow \cH(x)$ admits a factorization as
\begin{equation*}
	A \rightarrow A \blang \frac{f_1, \dots, f_n}{f_i} \brang \rightarrow \cH(x)
\end{equation*}
but this follows from \ref{rational_dom_univ_prop}.
\end{proof}

\begin{prop}\label{tensor_of_rat_domains} Let $A \rightarrow A \blang \frac{f_1, \dots, f_n}{f_i} \brang$ and $A \rightarrow A \blang \frac{g_1, \dots, g_m}{g_j} \brang$ be a pair of rational domains of the Banach $K$-algebra $A$. Then, the canonical map
\begin{equation*}
	A \rightarrow A \blang \frac{f_1, \dots, f_n}{f_i} \brang \cotimes_A A \blang \frac{g_1, \dots, g_m}{g_j} \brang
\end{equation*}
is isometrically isomorphic to the rational domain
\begin{equation*}
	A \rightarrow A \blang \frac{f_1g_1, \dots, f_k g_l, \dots, f_n g_m}{f_i g_j} \brang
\end{equation*}
where the numerator ranges over all pairs $f_k g_l$.
\end{prop}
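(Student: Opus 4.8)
The plan is to establish the isometric isomorphism by comparing universal properties, using the theory of rational domains as quotients of Tate algebras. First I would set up notation: write $R_1 = A\blang \frac{f_1,\dots,f_n}{f_i}\brang$ and $R_2 = A\blang \frac{g_1,\dots,g_m}{g_j}\brang$, and let $P = A\blang \frac{\dots f_k g_l \dots}{f_i g_j}\brang$ be the putative rational domain, noting that the elements $\{f_k g_l\}$ do generate the unit ideal in $A$ since the $f_k$ do and the $g_l$ do. The key observation is that both $R_1 \cotimes_A R_2$ and $P$ are Banach $K$-algebras equipped with a canonical contractive $A$-algebra structure, and I want to produce mutually inverse contractive $A$-algebra maps between them. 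Since $R_1 \cotimes_A R_2$ is the pushout of $R_1 \leftarrow A \rightarrow R_2$ in $\Ban_K^{\contr}$ (Construction \ref{const_comp_tensor_ban}), it suffices to produce compatible contractive maps $R_1 \to P$ and $R_2 \to P$, and conversely a single contractive map $P \to R_1 \cotimes_A R_2$.

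To construct $R_1 \to P$: in $P$ the element $f_i g_j$ is a unit (by Lemma \ref{unit_in_rational_domain}), hence so is $f_i$, and for each $k$ the element $f_k/f_i = (f_k g_j)/(f_i g_j)$ makes sense in $P$. I need to check $f_k/f_i$ is power-bounded in $P$. The cleanest route is to use that $P$ is non-archimedean and that in $P$ we have $|f_k g_j(x)| \le |f_i g_j(x)|$ for all $x \in |\cM(P)|$ — this follows from the topological description of the image of $|\cM(P)| \to |\cM(A)|$ via Corollary \ref{topo_img_rat_domain}, which identifies this image with $|\cM(A)|\big(\frac{\dots f_k g_l\dots}{f_i g_j}\big)$, on which $|f_k g_j(x)| \le |f_i g_j(x)|$ and $|f_i g_j(x)| \neq 0$; dividing by $|g_j(x)| \neq 0$ gives $|f_k/f_i(x)| \le 1$. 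Then Berkovich's maximum modulus principle (Proposition \ref{berko_max_modulus}) would give $\rho_P(f_k/f_i) \le 1$; however $P$ need not be uniform, so power-boundedness is not immediate. Instead I would argue directly at the level of Tate algebras: the ideal $I_{f_i g_j}$ in $A\langle T_{kl}\rangle$ contains the relations $f_i g_j T_{kl} = f_k g_l$, and by combining these one checks that the image of $f_k/f_i$ (expressed via the $T$-variables and the $h$'s witnessing $\sum h_{kl} f_k g_l = 1$) lies in the image of $A\langle S_1,\dots,S_n\rangle$, hence is power-bounded. Granting this, Proposition \ref{free_banach_algebras} gives a unique contractive $A$-algebra map $A\langle S_1,\dots,S_n\rangle \to P$ sending $S_k \mapsto f_k/f_i$, which kills $I_{f_i}$ and so factors through $R_1 \to P$; symmetrically $R_2 \to P$. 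These are compatible over $A$, yielding $\Phi\colon R_1 \cotimes_A R_2 \to P$.

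For the reverse map: in $R_1 \cotimes_A R_2$ the images of $f_i$ and $g_j$ are units (each is already a unit in $R_1$, resp. $R_2$), so $f_i g_j$ is a unit, and $(f_k g_l)/(f_i g_j) = (f_k/f_i)(g_l/g_j)$ is a product of two power-bounded elements — $f_k/f_i$ is power-bounded coming from $R_1$, $g_l/g_j$ from $R_2$, and both remain power-bounded in $R_1 \cotimes_A R_2$ since the structure maps $R_1 \to R_1 \cotimes_A R_2$ and $R_2 \to R_1 \cotimes_A R_2$ are contractive — hence their product is power-bounded. Proposition \ref{free_banach_algebras} then produces a contractive $A$-algebra map $A\langle T_{kl}\rangle \to R_1 \cotimes_A R_2$ sending $T_{kl} \mapsto (f_k g_l)/(f_i g_j)$, which kills $I_{f_i g_j}$ and factors through $\Psi\colon P \to R_1 \cotimes_A R_2$. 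Finally $\Phi$ and $\Psi$ are mutually inverse: both composites are contractive $A$-algebra endomorphisms fixing the distinguished generators ($S_k \mapsto f_k/f_i$, $T_{kl} \mapsto f_k g_l/f_i g_j$), and by the uniqueness clauses in Proposition \ref{free_banach_algebras} together with density of the polynomial/image subalgebras, they equal the identity. Since $\Phi$ and $\Psi$ are both contractive and mutually inverse, they are isometric isomorphisms. I expect the main obstacle to be the power-boundedness verification in the first direction — specifically showing $f_k/f_i$ is power-bounded in the possibly non-uniform algebra $P$ — which forces the explicit Tate-algebra manipulation rather than a clean appeal to the maximum modulus principle; everything else is a routine application of the universal property of $A\langle T\rangle$ and of completed tensor products.
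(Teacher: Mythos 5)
Your overall strategy — build contractive $A$-algebra maps in both directions by appealing to the universal property of $A\langle T\rangle$ (Proposition \ref{free_banach_algebras}) and check they invert each other — matches the paper's approach exactly. But you misdiagnose the step you call ``the main obstacle,'' and the workaround you sketch does not land.

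Showing that $f_k/f_i$ has norm $\le 1$ (not merely power-bounded) in $P = A\blang \frac{\dots f_kg_l\dots}{f_ig_j}\brang$ requires neither uniformity of $P$, nor the maximum modulus principle, nor any manipulation with the witnesses $h_{kl}$. It is immediate from the definition of $P$ as a quotient: $f_ig_j$ is a unit in $P$ (Lemma \ref{unit_in_rational_domain}), hence $g_j$ is a unit, so the relation $f_i g_j T_{kj} = f_k g_j$ in $P$ cancels to $T_{kj} = f_k/f_i$. Since the quotient map $A\langle T_{1,1},\dots,T_{n,m}\rangle \to P$ is contractive and $|T_{kj}| \le 1$ there, one gets $|f_k/f_i|_P \le 1$ outright; similarly $g_l/g_j = T_{il}$ has norm $\le 1$. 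Your proposed substitute — arguing that $f_k/f_i$ ``lies in the image of $A\langle S_1,\dots,S_n\rangle$'' via the unit-ideal witnesses — is pointed in a different direction and does not give the norm bound you need for a contractive lift; the identity $f_k/f_i = T_{kj}$ is what does. (Your argument for the other direction $P \to R_1 \cotimes_A R_2$ is correct and agrees with the paper, modulo the same minor slip of saying ``power-bounded'' where ``norm $\le 1$'' is what is required for contractivity.) With this one observation inserted, your proof is essentially the paper's.
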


\begin{proof} As rational domains $A \rightarrow A \blang \frac{f_1, \dots, f_n}{f_i} \brang$ are contractive morphisms, it follows that the tensor product $A \rightarrow A \blang \frac{f_1, \dots, f_n}{f_i} \brang \cotimes_A A \blang \frac{g_1, \dots, g_m}{g_j} \brang$ exists and the morphism is contractive. Furthermore, as all $f_k/f_i$ and $g_l/g_j$ have norms $\le 1$ on their respective rational domains, it follows that there exists a unique contractive map
\begin{align*}
	A \langle T_{1,1}, \dots, T_{k,l}, \dots, T_{n,m} \rangle \rightarrow A \blang \frac{f_1, \dots, f_n}{f_i} \brang \cotimes_A A \blang \frac{g_1, \dots, g_m}{g_j} \brang && T_{k,l} \mapsto \frac{f_k}{f_i} \otimes \frac{g_l}{g_j}
\end{align*}
and as the ideal 
\begin{equation*}
	I_{f_i g_j} := (f_i g_j T_{1,1} - f_1 g_1, \dots, f_i g_j T_{k,l} - f_k g_l, \dots, f_i g_j T_{n,m} - f_n g_m ) \subset A \langle T_{1,1}, \dots, T_{k,l}, \dots, T_{n,m} \rangle 
\end{equation*}
is contained in the kernel of the map, we get an induced contractive map
\begin{align*}
	A \blang \frac{f_1g_1, \dots, f_k g_l, \dots, f_n g_m}{f_i g_j} \brang \rightarrow A \blang \frac{f_1, \dots, f_n}{f_i} \brang \cotimes_A A \blang \frac{g_1, \dots, g_m}{g_j} \brang && \frac{f_k g_l}{f_i g_j} \mapsto \frac{f_k}{f_i} \otimes \frac{g_l}{g_j}
\end{align*}
On the other hand, its clear by construction that all $f_k/f_i$ and $g_l/g_j$ are elements of $A \blang \frac{f_1g_1, \dots, f_k g_l, \dots, f_n g_m}{f_i g_j} \brang$ and have norm $\le 1$. Hence, there exists a unique contractive map
\begin{align*}
	A \langle X_1, \dots, X_n \rangle \cotimes_A A \langle Y_1, \dots, Y_m\rangle \rightarrow A \blang \frac{f_1g_1, \dots, f_k g_l, \dots, f_n g_m}{f_i g_j} \brang && X_k \mapsto \frac{f_k}{f_i}, Y_l \mapsto \frac{g_l}{g_j}
\end{align*}
which contains the ideal $(I_{f_i}, I_{g_j})$ (see \ref{defn_rational_domain} for a definition), inducing a contractive map
\begin{align*}
	A \blang \frac{f_1, \dots, f_n}{f_i} \brang \cotimes_A A \blang \frac{g_1, \dots, g_m}{g_j} \brang \rightarrow A \blang \frac{f_1g_1, \dots, f_k g_l, \dots, f_n g_m}{f_i g_j} \brang && \frac{f_k}{f_i} \otimes \frac{g_l}{g_j} \mapsto \frac{f_k g_l}{f_i g_j}
\end{align*}
proving the desires isometric isomorphism.
\end{proof}

\subsection{Fiber products}

Let $K$ be a non-trivially valued non-archimedean field, and $\varpi \in K^{\times}$ a topological nilpotent unit.

\begin{prop}\label{mono_iso_on_residue_fields} Let $A \rightarrow B$ be a contractive morphism of Banach $K$-algebras inducing a monomorphism $f: \cM(B) \rightarrow \cM(A)$ in $\Ban_K^{\op}$. Fix $y \in |\cM(B)|$ and $x := |f|(y) \in |\cM(A)|$; then, the canonical map, induced from the universal property of completed residue field (\ref{univ_completed_residue}),
\begin{equation*}
	\cH(x) \longrightarrow \cH(y)
\end{equation*}
is an isomorphism of Banach $K$-algebras. Furthermore, the following pair of morphisms which factor $\cH(x) \rightarrow \cH(y)$ (whose construction we describe in the proof)
\begin{align*}
	\cH(x) \rightarrow \cH(x) \cotimes_A B && \cH(x) \cotimes_A B \rightarrow \cH(y)
\end{align*}
are isometric isomorphisms of Banach $K$-algebras.
\end{prop}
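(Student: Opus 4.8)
The plan is to first reduce everything to a single key non-vanishing/base-change statement about the completed tensor product $\cH(x) \cotimes_A B$. By the universal property of completed residue fields (\ref{univ_completed_residue}), the composite $A \rightarrow B \rightarrow \cH(y)$ factors uniquely through $A \rightarrow \cH(x)$, which by the universal property of the completed tensor product (\ref{const_comp_tensor_ban}) gives a canonical contractive map $\cH(x) \cotimes_A B \rightarrow \cH(y)$; the maps $\cH(x) \rightarrow \cH(x) \cotimes_A B$ and $\cH(x) \rightarrow \cH(y)$ are the structure map and the composite, respectively. Note the tensor product exists since $A \rightarrow \cH(x)$ is contractive (as $\cH(x)$ is uniform, \ref{morphisms_unif_ban_contractive}) and $A \rightarrow B$ is contractive by hypothesis. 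So it suffices to show $\cH(x) \rightarrow \cH(x) \cotimes_A B$ is an isometric isomorphism, since then the composite $\cH(x) \cotimes_A B \rightarrow \cH(y)$ would have to factor the isomorphism $\cH(x) \xrightarrow{\sim} \cH(y)$.

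Next I would use the fact that $f$ is a monomorphism to control the fiber product geometrically. Applying $\cM(-)$ to the pushout square defining $\cH(x) \cotimes_A B$, we get a pullback square in $\Ban_K^{\op}$, so $\cM(\cH(x) \cotimes_A B) = \cM(\cH(x)) \times_{\cM(A)} \cM(B)$. Since $\cH(x) \rightarrow A$ is a monomorphism (the map $\cM(\cH(x)) \rightarrow \cM(A)$ is a mono by \ref{stone_cech_berko_sp} and the universal property, or more precisely because $|\cM(\cH(x))|$ is a point), the base change $\cM(\cH(x) \cotimes_A B) \rightarrow \cM(\cH(x))$ is again a monomorphism; and the base change of the monomorphism $f$ along $\cM(\cH(x)) \rightarrow \cM(A)$ shows $\cM(\cH(x) \cotimes_A B) \rightarrow \cM(\cH(x))$ is a monomorphism too. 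Using \ref{mono_berko_sp_injective}, the underlying map $|\cM(\cH(x) \cotimes_A B)| \rightarrow |\cM(\cH(x))| = \pt$ is injective, hence $|\cM(\cH(x) \cotimes_A B)|$ is either empty or a point. It is nonempty because $y \in |\cM(B)|$ maps to $x$, so the set-theoretic fiber product is nonempty, and then $\cM(\cH(x) \cotimes_A B) = \cM(\cH(x)) \times_{\cM(A)} \cM(B)$ is nonempty by the same faithful-flatness/Gruson argument as in the proof of \ref{mono_berko_sp_injective} (the completed tensor product of nonzero objects over a common nonzero base is nonzero). Hence $|\cM(\cH(x) \cotimes_A B)| = \pt$.

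Then I would invoke \ref{recog_na_field}: if $\cH(x) \cotimes_A B$ were uniform with one-point Berkovich spectrum it would be a non-archimedean field. Uniformity, however, is not automatic for a completed tensor product, so instead I would argue that the structure map $\cH(x) \rightarrow \cH(x) \cotimes_A B$ is itself an isomorphism directly: it is a contractive ring map, it is injective because $\cH(x) \rightarrow \cH(x) \cotimes_A B$ splits off a copy of the field $\cH(x)$ (using Gruson's theorem \ref{gruson_theorie_padic}, cited in \ref{mono_berko_sp_injective}, that $\cH(x) \otimes_A B \rightarrow \cH(x) \cotimes_A B$ is injective and the obvious splitting of $\cH(x) \rightarrow \cH(x) \otimes_A B$ when... ). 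Actually the cleanest route, which I would take, is: since $f$ is a monomorphism, the two projections $\cM(\cH(x) \cotimes_A B) \rightrightarrows \cM(\cH(x))$ agree (both equal the unique map to a point, but more: the diagonal $\cM(\cH(x) \cotimes_A B) \rightarrow \cM(\cH(x) \cotimes_A B) \times_{\cM(\cH(x))} \cM(\cH(x) \cotimes_A B)$ is an isomorphism, which by the universal property of monomorphisms forces $\cH(x) \cotimes_A B$ to already be an $A$-algebra map from $\cH(x)$ that is initial). Spelled out: for any Banach $K$-algebra $C$, a map $\cM(C) \rightarrow \cM(\cH(x) \cotimes_A B) = \cM(\cH(x)) \times_{\cM(A)} \cM(B)$ is a pair of maps to $\cM(\cH(x))$ and $\cM(B)$ agreeing over $\cM(A)$; but since $f: \cM(B) \rightarrow \cM(A)$ is a monomorphism, a map $\cM(C) \rightarrow \cM(\cH(x))$ over $\cM(A)$ determines the map $\cM(C) \rightarrow \cM(B)$ uniquely \emph{provided} $\im(|\cM(C)| \rightarrow |\cM(A)|)$ lands in $\im|f|$ — and it does, since it lands in $\{x\} \subseteq \im|f|$. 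Hence $\Maps(\cM(C), \cM(\cH(x) \cotimes_A B)) = \Maps(\cM(C), \cM(\cH(x)))$ naturally, so $\cM(\cH(x)) \rightarrow \cM(\cH(x) \cotimes_A B)$ is an isomorphism, i.e. $\cH(x) \cotimes_A B \rightarrow \cH(x)$ is an isomorphism of Banach $K$-algebras; combined with contractivity in both directions it is isometric. Finally, composing with the isometric embedding $\cH(x) \hookrightarrow \cH(y)$ coming from \ref{univ_completed_residue} and noting $\cH(y)$ is generated (topologically) by the image of $B$, hence of $\cH(x) \cotimes_A B$, gives that $\cH(x) \rightarrow \cH(y)$ is an isometric isomorphism as well.

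The main obstacle I anticipate is the non-vanishing of $\cH(x) \cotimes_A B$ and the subtle point that, unlike the setting of \ref{mono_berko_sp_injective}, here $A$ and $B$ need not be uniform, so I cannot directly quote that proof; I will need to either reduce to the uniformizations (using \ref{uniform_berko_sp} to see that $|\cM(-)|$ is unchanged, and that $\cH(x), \cH(y)$ only see the uniformization) or redo the faithful-flatness argument carefully at the level of $(-)_{\le 1}$ and then invert $\varpi$, invoking \ref{non_zero_comp_tensor_ban} which is exactly the statement that $\cH(x) \cotimes_K B \neq 0$ — though here the base is $A$, not $K$, so I will instead factor through $\cH(x)$ and use that $\cH(x) \cotimes_A B$ receives a map from $\cH(x) \cotimes_K B \neq 0$... more carefully, I will use that $B \rightarrow \cH(y)$ is nonzero and factors through $\cH(x) \cotimes_A B$, which immediately gives $\cH(x) \cotimes_A B \neq 0$ and hence $\cM(\cH(x) \cotimes_A B) \neq \emptyset$ by \ref{berko_sp_is_comp}. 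With that in hand the one-point-spectrum argument above goes through cleanly.
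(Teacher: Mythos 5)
Your setup (existence of the tensor product, the factorization $\cH(x) \rightarrow \cH(x) \cotimes_A B \rightarrow \cH(y)$, nonvanishing of $\cH(x) \cotimes_A B$ because $B \rightarrow \cH(y)$ factors through it, and the final density argument that a contractive map from a non-archimedean field is an isometry with closed image containing $\Frac(B/\frakp_y)$) is all fine, and is close in spirit to the first step of the paper's proof. The problem is the central step, where you claim that $\cM(\cH(x)) \times_{\cM(A)} \cM(B) \rightarrow \cM(\cH(x))$ is an isomorphism because ``since $f$ is a monomorphism, a map $\cM(C) \rightarrow \cM(\cH(x))$ over $\cM(A)$ determines the map $\cM(C) \rightarrow \cM(B)$ uniquely provided the image lands in $\im|f|$.'' A monomorphism gives you only \emph{uniqueness} of such a lift, never its \emph{existence}. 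Existence of a lift whenever the set-theoretic image is contained in $\im|f|$ is precisely the ``analytic/affinoid domain'' property, which is strictly stronger than being a monomorphism for general Banach $K$-algebras: for instance $\cM(K\langle T\rangle/(T^2)) \rightarrow \cM(K\langle T\rangle)$ is a monomorphism whose image is the origin, yet the map $\cM(K\langle T\rangle/(T^3)) \rightarrow \cM(K\langle T\rangle)$, $T \mapsto T$, has image the origin and admits no factorization through it. Worse, the equivalence ``monomorphism $\Rightarrow$ domain'' in the perfectoid setting (Theorem \ref{perfd_spc_mono}) is deduced in this paper \emph{from} the present proposition, so invoking it here is circular. (Your opening reduction — ``it suffices to show $\cH(x) \rightarrow \cH(x)\cotimes_A B$ is an isomorphism, since the composite factors the isomorphism $\cH(x) \simeq \cH(y)$'' — also presupposes the conclusion, though your closing paragraph effectively replaces it with the correct density argument.)

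To repair the key step you need an argument that uses only the monomorphism hypothesis. The paper proceeds as follows: set $B_x := \cH(x) \cotimes_A B$; base change of the mono $f$ shows $\cM(B_x) \rightarrow \cM(\cH(x))$ is a monomorphism, hence the completed codiagonal $B_x \cotimes_{\cH(x)} B_x \rightarrow B_x$ is an isomorphism. Gruson's theorem gives injectivity of $B_x \otimes_{\cH(x)} B_x \rightarrow B_x \cotimes_{\cH(x)} B_x$, so the algebraic codiagonal is injective, and being surjective it is bijective. Since $\cH(x)$ is a field and $B_x \neq 0$, the map $\cH(x) \rightarrow B_x$ is faithfully flat, and tensoring it with $B_x$ and composing with the (now bijective) algebraic codiagonal recovers the identity, forcing $\cH(x) \rightarrow B_x$ to be a ring isomorphism; a final argument with a rank one valuation on $B_x$ and $|\cM(\cH(x))| = \pt$ upgrades this to an isometry. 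Your one-point-spectrum observation and \ref{recog_na_field} only identify the uniformization $(B_x)^u$ with a field (this is how the paper handles $\cH(y)$), but as you yourself note, uniformity of $B_x$ is not automatic, so some argument of this flavor — not the lifting claim — is genuinely needed.
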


\begin{proof} Let us begin by explaining the strategy of the proof. Since the pair of maps $\cH(x) \leftarrow A \rightarrow B$ are contractive, it follows that $\cH(x) \cotimes_A B$ exists and its characterized as the pushout of $\cH(x) \leftarrow A \rightarrow B$ in $\Ban_K$ (\ref{const_comp_tensor_ban}). Hence, the universal property of completed residue field (\ref{univ_completed_residue}) together with the fact that $|f|(y) = x$ yield the following commutative diagram in $\Ban_K^{\op}$
\begin{cd}
	\cM(\cH(y)) \ar[rrd, bend left = 15] \ar[rdd, bend right = 15] \ar[rd, dashed] \\
	& \cM(\cH(x) \ar[r] \ar[d] \cotimes_A B) & \cM(B) \ar[d] \\
	& \cM(\cH(x)) \ar[r] & \cM(A)
\end{cd}
The proof consists of two main steps. First, as $\cH(y)$ is uniform it follows from \ref{adjunction_uniform_ban} that the morphism $\cH(x) \cotimes_A B \rightarrow \cH(y)$ factors through $(\cH(x) \cotimes_A B)^u \rightarrow \cH(y)$ which we will show is an isomorphism of Banach $K$-algebras, and since they are both uniform it is automatically isometric. The next step consists on showing that $\cH(x) \rightarrow \cH(x)\cotimes_A B$ is an isometric isomorphism of Banach $K$-algebras, proving in particular that $\cH(x) \cotimes_A B$ is itself uniform. This will complete the proof of the result.

First, we show that $(\cH(x) \cotimes_A B)^u \rightarrow \cH(y)$ is an isomorphism. Since $\cM(B) \rightarrow \cM(A)$ is a monomorphism in $\Ban_K^{\op}$, and monomorphisms are stable under base-change, it follows that $\cM(\cH(x) \cotimes_A B) \rightarrow \cM(\cH(x))$ is a monomorphism in $\Ban_K^{\op}$. In particular, \ref{mono_berko_sp_injective} implies that $|\cM(\cH(x) \cotimes_A B)| = \pt$, so \ref{recog_na_field} implies that $(\cH(x) \cotimes_A B)^u$ is a non-archimedean field. From the construction of $(\cH(x) \cotimes_A B)^u$ we learn that it enjoys the following universal property among non-archimedean fields: if $\kappa$ is a non-archimedean field together with a morphism $\cM(\kappa) \rightarrow \cM(B)$ such that $|f|(|\cM(\kappa)|) = x \in |\cM(A)|$ then there exists a unique factorization $\cM(\kappa) \rightarrow \cM((\cH(x) \cotimes_A B)^u) \rightarrow \cM(B)$; but this is exactly the same universal property that $\cH(y)$ enjoys (\ref{univ_completed_residue}) as we have the identity $|f|^{-1}(x) = y$, proving that the canonical map $(\cH(x) \cotimes_A B)^u \rightarrow \cH(y)$ is an isometric isomorphism.

It remains to show that the canonical map $\cH(x) \rightarrow \cH(x) \cotimes_A B =: B_x$ is an isometric isomorphism in $\Ban_K^{\op}$. Since monomorphisms are stable under base-change it follows that the canonical map $\cM(B_x) \rightarrow \cM(\cH(x))$ is a monomorphism in $\Ban_K^{\op}$. Then, since the map $\cH(x) \rightarrow B_x$ is contractive (\ref{const_comp_tensor_ban}) we learn that the tensor product $B_x \cotimes_{\cH(x)} B_x$ exists, and generalities on monomorphisms guarantee that the codiagonal map $B_x \cotimes_{\cH(x)} B_x \rightarrow B_x$ is an isometric isomorphism in $\Ban_K$. Furthermore, Gruson's Theorem \cite[Theorem 3.2.1(iv)]{gruson_theorie_padic} tells us that the morphism of abstract rings $B_x \otimes_{\cH(x)} B_x \hookrightarrow B_x \cotimes_{\cH(x)} B_x$ is injective; which in turn implies that the algebraically defined codiagonal map $B_x \otimes_{\cH(x)} B_x \rightarrow B_x$ is also injective. However, since the algebraic codiagonal map $B_x \otimes_{\cH(x)} B_x \rightarrow B_x$ is surjective (i.e. affine schemes are separated), it follows that $B_x \otimes_{\cH(x)} B_x \rightarrow B_x$ is an isomorphism of abstract rings.

To show that the contractive map $\varphi: \cH(x) \rightarrow B_x$ is an isomorphism of abstract rings, it suffices to check so after tensoring with $- \otimes_{\cH(x)} B_x$, as the map $\varphi: \cH(x) \rightarrow B_x$ is faithfully flat. Hence, we need to show that the map $\varphi \otimes \text{id}: \cH(x) \otimes_{\cH(x)} B_x \longrightarrow B_x \otimes_{\cH(x)} B_x$ is an isomorphism. Post-composing $\varphi \otimes \text{id}$ with the codiagonal map
\begin{equation*}
	 \text{id}: \cH(x) \otimes_{\cH(x)} B_x \longrightarrow B_x \otimes_{\cH(x)} B_x \longrightarrow B_x
\end{equation*}
recovers the identity map. This implies that $\varphi \otimes \text{id}$ is bijective, and so we have shown that the contractive map $\cH(x) \rightarrow B_x$ is an isomorphism of abstract rings, in particular $B_x$ is a field. Finally, we need to show that $\varphi: \cH(x) \rightarrow B_x$ is an isometry. Let $z: B_x \rightarrow \RR_{\ge 0}$ be a rank one valuation, which exists by \ref{berko_sp_is_comp}, and for the sake of contradiction assume there exists a $a \in \cH(x)$ such that $\rho_{\cH(x)} (a) > |\varphi(a)|_{B_x}$. Then, by construction we get that $\rho_{\cH(x)} (a) > |z(\varphi(a))|$, giving rise to a rank one valuation $\cH(x) \rightarrow B_x \rightarrow \RR_{\ge 0}$ which is distinct from $\rho_{\cH(x)}$. But this contradicts the fact that $|\cM(\cH(x))| = \pt$ (\ref{stone_cech_berko_sp}), proving that $\cH(x) \rightarrow B_x$ is an isometric isomorphism, as desired.
\end{proof}

\begin{prop}\label{general_fiber_prod_berko_sp} Let $A \rightarrow B$ and $A \rightarrow C$ be contractive morphisms of Banach $K$-algebras. Then, the canonical map of compact hausdorff spaces (whose construction we describe in the proof)
\begin{equation*}
	q: |\cM(B \cotimes_A C) | \longrightarrow |\cM(B)| \times_{|\cM(A)|} |\cM(C)|
\end{equation*}
is surjective.
\end{prop}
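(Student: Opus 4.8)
The plan is to reduce the surjectivity of $q$ to a statement about completed residue fields, where we can invoke the tensor product computation from Proposition \ref{mono_iso_on_residue_fields} together with the non-vanishing of completed tensor products established in the commutative algebra chapter. First I would construct the map $q$ itself: since $B \cotimes_A C$ is the pushout of $B \leftarrow A \rightarrow C$ in $\Ban_K$ (here all structure maps are contractive so Construction \ref{const_comp_tensor_ban} applies), applying the functor $|-|: \Ban_K^{\op} \rightarrow \Comp$ to the pushout square yields a cone over the corresponding diagram in $\Comp$, hence a canonical map $|\cM(B \cotimes_A C)| \rightarrow |\cM(B)| \times_{|\cM(A)|} |\cM(C)|$ (the fiber product exists in $\Comp$ by Proposition \ref{comp_regular_cat}).

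Next I would take a point $(y, z)$ in the fiber product, so $y \in |\cM(B)|$ and $z \in |\cM(C)|$ both mapping to a common point $x \in |\cM(A)|$, and I want to produce a preimage in $|\cM(B \cotimes_A C)|$. The key move is to pass to completed residue fields: by the universal property of completed residue fields (Proposition \ref{univ_completed_residue}), the maps $A \to \cH(y)$ and $A \to \cH(z)$ factor through $A \to \cH(x)$, giving contractive maps $\cH(x) \to \cH(y)$ and $\cH(x) \to \cH(z)$. Now form the completed tensor product $\cH(y) \cotimes_{\cH(x)} \cH(z)$ — this exists and is non-archimedean since the maps out of $\cH(x)$ are contractive (in fact isometries, as $\cH(x)$ is a non-archimedean field). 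By Corollary \ref{non_zero_comp_tensor_ban}, this completed tensor product is non-zero, hence by Proposition \ref{berko_sp_is_comp} its Berkovich spectrum is non-empty; pick a point $w$ there, giving a non-archimedean field $\cH(w)$ with compatible maps $\cM(\cH(w)) \to \cM(\cH(y))$ and $\cM(\cH(w)) \to \cM(\cH(z))$.

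Then I would assemble these into a map out of $B \cotimes_A C$. The composites $B \to \cH(y) \to \cH(w)$ and $C \to \cH(z) \to \cH(w)$ agree on $A$ (they both restrict to $A \to \cH(x) \to \cH(w)$), so by the universal property of $B \cotimes_A C$ as a pushout in $\Ban_K$ they induce a contractive map $B \cotimes_A C \to \cH(w)$, i.e. a point $v \in |\cM(B \cotimes_A C)|$. By construction the image of $v$ under $|\cM(B \cotimes_A C)| \to |\cM(B)|$ is $y$ — since the composite $B \to B \cotimes_A C \to \cH(w)$ factors as $B \to \cH(y) \to \cH(w)$ and the point of $|\cM(B)|$ determined by $B \to \cH(w)$ is $y$ by the universal property in \ref{defn_completed_residue_field} — and similarly its image in $|\cM(C)|$ is $z$. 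Hence $q(v) = (y,z)$, proving surjectivity.

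The main obstacle I anticipate is keeping careful track of the contractivity hypotheses so that all the completed tensor products and pushouts are legitimately formed in $\Ban_K$ (not merely as abstract rings) and that the comparison maps are actually well-defined morphisms of Banach $K$-algebras rather than just bounded maps; the excerpt's Warning after Construction \ref{const_comp_tensor_ban} flags exactly this subtlety. Fortunately all the relevant structure maps here — $A \to B$, $A \to C$ by hypothesis, and the maps out of the non-archimedean field $\cH(x)$, which are automatically isometries — are contractive, so Construction \ref{const_comp_tensor_ban} and Corollary \ref{non_zero_comp_tensor_ban} apply directly. The other point requiring care is verifying the image computation $q(v) = (y,z)$, which amounts to unwinding that the point of $|\cM(B)|$ cut out by a map $B \to L$ to a non-archimedean field depends only on the induced rank one valuation, together with the uniqueness in Proposition \ref{univ_completed_residue}.
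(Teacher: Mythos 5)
Your overall strategy is exactly the paper's: construct $q$ from the pushout description of $B \cotimes_A C$ and functoriality of $|-|$, pass to completed residue fields via Proposition \ref{univ_completed_residue}, show the completed tensor product of residue fields over $\cH(x)$ is non-zero, pick a point of its spectrum, and push it down to get a preimage of $(y,z)$. The one place where your justification does not go through as written is the key non-vanishing step: Corollary \ref{non_zero_comp_tensor_ban} is stated (and proved, via the dictionary over $K_{\le 1}$) only for completed tensor products taken over the ground perfectoid field $K$, i.e. for $A \cotimes_K B$, whereas you need $\cH(y) \cotimes_{\cH(x)} \cH(z)$ over the completed residue field $\cH(x)$, which is in general neither $K$ nor perfectoid. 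So the citation, as it stands, does not cover your situation.

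The gap is easily repaired, and the paper's proof shows one way: by Gruson's theorem the canonical map $\cH(y) \otimes_{\cH(x)} \cH(z) \rightarrow \cH(y) \cotimes_{\cH(x)} \cH(z)$ is injective, and the algebraic tensor product of two field extensions is non-zero by faithful flatness, so the completed tensor product is non-zero; this is the same device used in Proposition \ref{mono_berko_sp_injective}. Alternatively, the proof of Corollary \ref{non_zero_comp_tensor_ban} adapts to the base $\cH(x)$, since $\cH(x)_{\le 1}$ is again a rank one valuation ring so that $\varpi$-torsion-free algebras over it are (faithfully) flat, and the identification of Corollary \ref{tensor_dic_ban_to_int} is stated for an arbitrary contractive pair in $\Ban_K^{\contr}$. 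With that substitution your argument matches the paper's; the remaining verifications you flag (contractivity of all structure maps, and that $q(v) = (y,z)$ follows from the factorization through $\cH(y)$, $\cH(z)$ and the uniqueness in Proposition \ref{univ_completed_residue}) are handled in the paper exactly as you describe.
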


\begin{proof} Given that the morphisms $B \leftarrow A \rightarrow C$ are contractive, it follows that its pushout in $\Ban_K$ can be identified with $B \cotimes_A C$; equivalently, the fiber product of $\cM(B) \rightarrow \cM(A) \leftarrow \cM(C)$ computed in $\Ban_K^{\op}$ is $\cM(B \cotimes_A C)$. Then, the characterization of $\cM(B \cotimes_A C)$ as a fiber product together with the existence of the functor $|-|: \Ban_K^{\op} \rightarrow \Comp$ (\ref{berko_sp_functor}) gives rise to the map $q: |\cM(B \cotimes_A C) | \rightarrow |\cM(B)| \times_{|\cM(A)|} |\cM(C)|$.

Recall that since the forgetful functor $\Comp \rightarrow \Set$ preserves limits (\ref{monadicity_comp_set}), we can conclude that specifying a point in $|\cM(B)| \times_{|\cM(A)|} |\cM(C)|$ is equivalent to specifying a pair of point $x_b \in |\cM(B)|$ and $x_c \in |\cM(C)|$ which have the same image $x_a \in |\cM(A)|$. Then, the universal property of completed residue fields (\ref{univ_completed_residue}) guarantee that we have the following commutative diagram in $\Ban_K^{\op}$
\begin{cd}
	\cM(\cH(x_b)) \ar[r] \ar[d] & \cM(\cH(x_a)) \ar[d] & \cM(\cH(x_c)) \ar[l] \ar[d] \\
	\cM(B) \ar[r] & \cM(A) & \cM(C) \ar[l]
\end{cd}
Then, the interaction of fiber product with the functor $|-|: \Ban_K^{\op} \rightarrow \Comp$ gives rise to the following diagram of compact hausdorff spaces
\begin{cd}
	\vert \cM(\cH(x_b) \cotimes_{\cH(x_a)} \cH(x_c)) \vert \ar[r] \ar[d] &
	\vert \cM(\cH(x_b))\vert \times_{ \vert \cM(\cH(x_a)) \vert} \vert \cM(\cH(x_c))\vert = \pt \ar[d] \\
	\vert \cM(B \cotimes_A C) \vert \ar[r, "q"] & \vert \cM(B)\vert \times_{\vert\cM(A)\vert} \vert\cM(C)\vert
\end{cd}
Hence, it remains to show that $\cH(x_b) \cotimes_{\cH(x_a)} \cH(x_c) \not= 0$, as then $|\cM(\cH(x_b) \cotimes_{\cH(x_a)} \cH(x_c))| \not= \emptyset$ by \ref{berko_sp_is_comp}. Recall that Gruson's Theorem \cite[Theorem 3.2.1(iv)]{gruson_theorie_padic} tells us that the map $\cH(x_b) \otimes_{\cH(x_a)} \cH(x_c) \hookrightarrow \cH(x_b) \cotimes_{\cH(x_a)} \cH(x_c)$ is injective, and since $\cH(x_b) \otimes_{\cH(x_a)} \cH(x_c) \not= 0$ by faithful flatness of $\cH(x_a) \rightarrow \cH(x_b)$ the claim follows.
\end{proof}

\begin{corollary}\label{mono_fiber_prod_berko_sp} Let $A \rightarrow B$ and $A \rightarrow C$ be contractive morphisms of Banach $K$-algebras, and assume that the map $\cM(B) \rightarrow \cM(A)$ is a monomorphism in $\Ban_K^{\op}$. Then, the canonical map from \ref{general_fiber_prod_berko_sp}
\begin{equation*}
	q: |\cM(B \cotimes_A C) | \longrightarrow |\cM(B)| \times_{|\cM(A)|} |\cM(C)|
\end{equation*}
is an isomorphism of compact hausdorff spaces.
\end{corollary}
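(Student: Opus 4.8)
The plan is to combine Proposition \ref{general_fiber_prod_berko_sp}, which already gives surjectivity of $q$, with an injectivity argument that exploits the monomorphism hypothesis on $\cM(B) \rightarrow \cM(A)$. Since the forgetful functor $\Comp \rightarrow \Set$ detects isomorphisms (\ref{monadicity_comp_set}), it suffices to show $q$ is a bijection, so the remaining task is injectivity. First I would reduce to residue fields: given two points $z_1, z_2 \in |\cM(B \cotimes_A C)|$ mapping to the same pair $(x_b, x_c) \in |\cM(B)| \times_{|\cM(A)|} |\cM(C)|$ with common image $x_a \in |\cM(A)|$, I want to exhibit a single non-archimedean field $\kappa$ receiving compatible maps from $\cH(z_1)$ and $\cH(z_2)$ over $B \cotimes_A C$, which by the monomorphism property of $\cM(B \cotimes_A C) \to \cM(A)$ (stable under base change, hence a monomorphism since $\cM(B) \to \cM(A)$ is) forces $z_1 = z_2$.

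The key input is Proposition \ref{mono_iso_on_residue_fields}: because $\cM(B) \rightarrow \cM(A)$ is a monomorphism, the map $\cH(x_a) \rightarrow \cH(x_b)$ is an isomorphism of Banach $K$-algebras. Then for any point $z$ of $\cM(B \cotimes_A C)$ lying over $x_b$ and $x_c$, the completed residue field $\cH(z)$ receives a map from $\cH(x_b) \cotimes_A C \simeq \cH(x_a) \cotimes_A C \simeq C \cotimes_A \cH(x_a)$; and since $\cH(x_a) \simeq \cH(x_b)$, one checks that $\cH(x_b) \cotimes_A C$ maps to $\cH(x_c) \cotimes_{\cH(x_a)}(\cH(x_b)\cotimes_A C)$, which by \ref{mono_iso_on_residue_fields} applied to the base-changed monomorphism $\cM(\cH(x_c)\cotimes_A B) \to \cM(\cH(x_c))$ is a non-archimedean field — in fact it is canonically $\cH(x_c)$ itself up to the isomorphism $\cH(x_a) \simeq \cH(x_b)$. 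The upshot is that every point $z$ over $(x_b, x_c)$ has completed residue field canonically identified with $\cH(x_c)$, so the two maps $B \cotimes_A C \rightarrow \cH(z_i) \simeq \cH(x_c)$ agree. Taking $\kappa = \cH(x_c)$, this gives the required pair of coinciding maps $\cM(\cH(x_c)) \to \cM(B \cotimes_A C)$, and the monomorphism property concludes $z_1 = z_2$.

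I expect the main obstacle to be bookkeeping around the chain of isometric isomorphisms $\cH(x_a) \simeq \cH(x_b)$ and the identification $\cH(x_c) \cotimes_{\cH(x_a)} (\cH(x_b) \cotimes_A C) \simeq \cH(x_c) \cotimes_{\cH(x_a)}(\cH(x_a) \cotimes_A C) \simeq C \cotimes_A \cH(x_c)$ — ensuring everything is compatible over $B \cotimes_A C$ and that the application of \ref{mono_iso_on_residue_fields} to the base change $\cM(B) \times_{\cM(A)} \cM(\cH(x_c)) \to \cM(\cH(x_c))$ genuinely produces a field with the universal property pinning it down as $\cH(z)$. An alternative, cleaner route that I would try first is to directly factor $\cM(B \cotimes_A C) \to \cM(C)$ through the base-changed monomorphism $\cM(B \cotimes_A C) = \cM(C \cotimes_A B) \hookrightarrow \cM(C)$ (monomorphisms being stable under base change by \ref{const_comp_tensor_ban}), so that $|\cM(B \cotimes_A C)| \hookrightarrow |\cM(C)|$ is injective by \ref{mono_berko_sp_injective}; combined with the already-established surjectivity and the observation that a point of $|\cM(C)|$ lies in the image of $q$ iff its residue field admits a compatible map from $B$ over $A$ — which, since $\cH(x_a) \simeq \cH(x_b)$ by \ref{mono_iso_on_residue_fields}, happens precisely when $x_c$ maps to the subspace $\im(|\cM(B)| \to |\cM(A)|)$ — this identifies $|\cM(B \cotimes_A C)|$ with the fiber product as a subspace of $|\cM(C)|$, giving the homeomorphism at once.
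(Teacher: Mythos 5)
Your ``alternative, cleaner route'' is exactly the paper's proof: surjectivity of $q$ from Proposition \ref{general_fiber_prod_berko_sp}, plus injectivity obtained by noting that monomorphisms are stable under base change, so $\cM(B \cotimes_A C) \rightarrow \cM(C)$ is a monomorphism, hence $|\cM(B \cotimes_A C)| \rightarrow |\cM(C)|$ is injective by \ref{mono_berko_sp_injective}, and since this map factors through $q$ followed by the projection to $|\cM(C)|$, the map $q$ is itself injective; then \ref{monadicity_comp_set} upgrades the bijection to a homeomorphism. Note that once you have this, the extra step of characterizing the image of $q$ inside $|\cM(C)|$ via residue fields is unnecessary. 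Your primary residue-field route is heavier but essentially workable; be aware, though, of one misstatement in it: base change gives that $\cM(B \cotimes_A C) \rightarrow \cM(C)$ is a monomorphism, not the composite $\cM(B \cotimes_A C) \rightarrow \cM(A)$ (the latter need not be one since $\cM(C) \rightarrow \cM(A)$ is arbitrary). As written your argument does not actually need that claim, since after the identifications via \ref{mono_iso_on_residue_fields} the two seminorms $B \cotimes_A C \rightarrow \cH(z_i) \simeq \cH(x_c)$ are shown to coincide outright, which already forces $z_1 = z_2$; but the bookkeeping you anticipate (nonemptiness of the fiber so that \ref{mono_iso_on_residue_fields}-type arguments apply to $\cH(x_c) \cotimes_A B$, and compatibility of the identifications over $B \cotimes_A C$) is real, and the base-change argument of the paper avoids it entirely.
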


\begin{proof} Since we already proved that $q$ is surjective (\ref{general_fiber_prod_berko_sp}), it suffices to show that $q$ is injective by \ref{monadicity_comp_set}. Since monomorphisms are stable under base-change and the functor $|-|: \Ban_K^{\op} \rightarrow \Comp$ sends monomorphisms in $\Ban_K^{\op}$ to monomorphisms in $\Comp$ by \ref{mono_berko_sp_injective}, it follows that the map $|\cM(B \cotimes_A C)| \rightarrow |\cM(C)|$ is injective (cf. \ref{mono_comp}). Furthermore, as the map $|\cM(B \cotimes_A C)| \rightarrow |\cM(C)|$ factors as
\begin{equation*}
	|\cM(B \cotimes_A C)| \longrightarrow |\cM(B)| \times_{|\cM(A)|} |\cM(C)| \longrightarrow |\cM(C)|
\end{equation*}
we are able to conclude that $q$ is injective as desired.
\end{proof}

\begin{prop}\label{gluing_berko_sp} Let $\{A \rightarrow B_i\}_{i \in I}$ be a finite collection of contractive morphisms of Banach $K$-algebras, such that the induced map of compact hausdorff spaces
\begin{equation*}
	\bigsqcup_{i \in I} |\cM(B_i)| \rightarrow |\cM(A)|
\end{equation*}
is surjective. Then, the canonical map
\begin{equation*}
	\coeq \Big( \vert \cM( \prod_{(i,j) \in I \times I} B_i \cotimes_A B_j) | \rightrightarrows |\cM( \prod_{i \in I} B_i) | \Big ) \longrightarrow |\cM(A)|
\end{equation*}
is an isomorphism.
\end{prop}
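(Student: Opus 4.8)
The plan is to reduce the statement to the general coequalizer result for compact Hausdorff spaces, namely Proposition \ref{general_coeq_comp}, which says that given a surjection $X \to Y$ together with a surjection $Z \to X \times_Y X$ of compact Hausdorff spaces, the induced map $\coeq(Z \rightrightarrows X) \to Y$ is an isomorphism. First I would set $X = |\cM(\prod_{i \in I} B_i)|$, $Y = |\cM(A)|$ and $Z = |\cM(\prod_{(i,j)\in I\times I} B_i \cotimes_A B_j)|$. The first task is to identify $X$ with $\bigsqcup_{i\in I} |\cM(B_i)|$; this is exactly Proposition \ref{berko_sp_disj_union}, using that $I$ is finite. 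Under this identification the map $X \to Y$ is the disjoint-union map $\bigsqcup_i |\cM(B_i)| \to |\cM(A)|$, which is surjective by hypothesis, so $X \to Y$ is a surjection of compact Hausdorff spaces.

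Next I would analyze $Z$. Since $I \times I$ is finite, Proposition \ref{berko_sp_disj_union} again gives $Z \cong \bigsqcup_{(i,j)} |\cM(B_i \cotimes_A B_j)|$. For each pair $(i,j)$ there is by Proposition \ref{general_fiber_prod_berko_sp} a canonical surjection
\begin{equation*}
	|\cM(B_i \cotimes_A B_j)| \twoheadrightarrow |\cM(B_i)| \times_{|\cM(A)|} |\cM(B_j)|.
\end{equation*}
Taking the disjoint union over all $(i,j)$, and using that the forgetful functor $\Comp \to \Set$ preserves the relevant finite limits (Proposition \ref{monadicity_comp_set}) together with the fact that finite disjoint unions of compact Hausdorff spaces distribute over the fiber product in the sense that $\bigsqcup_{i,j} \big(|\cM(B_i)| \times_{|\cM(A)|} |\cM(B_j)|\big) \cong X \times_Y X$, I would conclude that the natural map $Z \to X \times_Y X$ is surjective. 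I should be careful here to check that the two projection maps $Z \rightrightarrows X$ appearing in the statement of the proposition agree, under all these identifications, with the composite $Z \to X \times_Y X \rightrightarrows X$; this is a bookkeeping check, tracing through the definitions of the two maps $\prod_{i,j} B_i \cotimes_A B_j \rightrightarrows \prod_i B_i$ induced by the two inclusions $B_i \hookrightarrow B_i \cotimes_A B_j$ and $B_j \hookrightarrow B_i \cotimes_A B_j$, and using the compatibility of $|-| \colon \Ban_K^{\op} \to \Comp$ with coproducts and fiber products.

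With these two facts in hand — $X \to Y$ surjective and $Z \to X\times_Y X$ surjective — Proposition \ref{general_coeq_comp} immediately yields that the canonical map $\coeq(Z \rightrightarrows X) \to Y$ is an isomorphism, which is precisely the assertion. The main obstacle I anticipate is the compatibility check in the previous paragraph: one must verify that the abstract coequalizer diagram built from the Banach algebra maps $\prod_{i,j} B_i \cotimes_A B_j \rightrightarrows \prod_i B_i \to A$, after applying $|-|$, really does match the diagram $Z \rightrightarrows X \to Y$ to which Proposition \ref{general_coeq_comp} applies — in particular that $|-|$ sends the finite product $\prod$ to a disjoint union (Proposition \ref{berko_sp_disj_union}) and the completed tensor product to (a space surjecting onto) the fiber product (Proposition \ref{general_fiber_prod_berko_sp}), all functorially in the two projections. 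Once the diagram is correctly set up, the conclusion is purely formal. I would also note at the outset that the map in the statement is well-defined because $|-| \colon \Ban_K^{\op} \to \Comp$ is a functor and $\prod_i B_i$ together with the two maps from $\prod_{i,j} B_i \cotimes_A B_j$ forms the relevant diagram whose colimit receives a map to $|\cM(A)|$.
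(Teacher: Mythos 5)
Your proposal is correct and follows essentially the same route as the paper: identify the finite products with disjoint unions via Proposition \ref{berko_sp_disj_union}, use Proposition \ref{general_fiber_prod_berko_sp} to get a surjection onto $X \times_Y X$, and conclude by Proposition \ref{general_coeq_comp} using the hypothesis that $\bigsqcup_i |\cM(B_i)| \rightarrow |\cM(A)|$ is surjective. The compatibility check you flag is the same implicit bookkeeping the paper performs, so there is no gap.
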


\begin{proof} Recall from \ref{berko_sp_disj_union} that we have canonical isomorphisms of compact hausdorff spaces
\begin{align*}
	|\cM( \prod_{i \in I} B_i) | \simeq \bigsqcup_{i \in I} |\cM(B_i)| && |\cM(\prod_{(i,j) \in I \times I} B_i \cotimes_A B_j)| \simeq \bigsqcup_{(i,j) \in I \times I} |\cM(B_i \cotimes_A B_j)|
\end{align*}
Since by assumption the induced map $\sqcup |\cM(B_i)| \rightarrow |\cM(A)|$ is surjective, it follows from \ref{epimorphism_comp} that the canonical map
\begin{equation*}
	\coeq \Big ( \bigsqcup_{(i,j) \in I \times I} |\cM(B_i)| \times_{|\cM(A)|} |\cM(B_j)| \rightrightarrows \bigsqcup_{i \in I} |\cM(B_i)|   \Big ) \rightarrow |\cM(A)|
\end{equation*}
is an isomorphism. Thus, by \ref{general_coeq_comp} it suffices to show that the canonical map
\begin{equation*}
	|\cM(B_i \cotimes_A B_j)| \rightarrow |\cM(B_i)| \times_{|\cM(A)|} |\cM(B_j)|
\end{equation*}
is a surjective map of compact hausdorff spaces, but this follows from \ref{general_fiber_prod_berko_sp}.
\end{proof}

\subsection{Stalks}

Let $K$ be a non-trivially valued non-archimedean field, and $\varpi \in K^{\times}$ a topological nilpotent unit.

\begin{defn}\label{defn_stalks} Let $A$ be a Banach $K$-algebra, and $x$ a point of $|\cM(A)|$. We define the Banach $K$-algebra $\cO_x^{\wedge}$ as
\begin{align*}
	\cO_x^\wedge := \colim_{x \in |\cM(A_V)| \subset |\cM(A)|} A_V && \text{computed in } \Ban_K
\end{align*}
where the colimit ranges over all rational domains $A \rightarrow A_V$ which satisfy $x \in |\cM(A_V)| \subset |\cM(A)|$. Let us explain why the colimit exists in $\Ban_K$: as rational domains $A \rightarrow A_V$ are given by contractive morphisms, it suffices to show by \ref{filtered_colim_banach_contr} that the colimit is filtered. Indeed, if $A \rightarrow A_{V}$ and $A \rightarrow A_W$ are rational domains satisfying $x \in |\cM(A_V)| \cap |\cM(A_W)|$ then $A \rightarrow A_V \cotimes_A A_W$ is a rational domain satisfying $x \in |\cM(A_V \cotimes_A A_W)|$ by \ref{tensor_of_rat_domains} and \ref{mono_fiber_prod_berko_sp}.
\end{defn}

\begin{prop}\label{properties_stalk} Let $A$ be a Banach $K$-algebra, and $x$ a point of $|\cM(A)|$. Then,
\begin{enumerate}[(1)]
	\item The canonical map $\cM(\cO_x^\wedge) \rightarrow \cM(A)$ is a monomorphism in $\Ban_K^{\op}$.
	\item The induced injective map $|\cM(\cO_x^{\wedge})| \rightarrow |\cM(A)|$ identifies $|\cM(\cO_x^{\wedge})|$ with $x \in |\cM(A)|$. In particular, we have that $|\cM(\cO_x^{\wedge})| = \pt$.
	\item Let $B$ be a uniform Banach $K$-algebra and $A \rightarrow B$ a morphism of Banach $K$-algebras. If the induced map $|\cM(B)| \rightarrow |\cM(A)|$ has its image contained in $x \in |\cM(A)|$. Then, the morphism $A \rightarrow B$ admits a unique factorization
	\begin{equation*}
		A \rightarrow \cO_x^\wedge \rightarrow B
	\end{equation*}
	\item The morphism $\cO_x^\wedge \rightarrow \cH(x)$ of Banach $K$-algebras induced from the canonical map $A \rightarrow \cH(x)$ and $(3)$ induces an isomorphism on $\cO_x^{\wedge, u} \rightarrow \cH(x)$. In particular, this implies that if $A \rightarrow B$ is a morphism as in $(3)$ then there exists a unique factorization
	\begin{equation*}
		A \rightarrow \cH(x) \rightarrow B
	\end{equation*}
\end{enumerate}
\end{prop}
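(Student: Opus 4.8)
\textbf{Proof strategy for Proposition \ref{properties_stalk}.}

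The plan is to prove the four parts in order, with each part feeding into the next. For part $(1)$, I would express $\cO_x^\wedge$ as the filtered colimit $\colim_{x \in |\cM(A_V)|} A_V$ over rational domains containing $x$, as in Definition \ref{defn_stalks}. Passing to opposite categories, $\cM(\cO_x^\wedge) = \lim \cM(A_V)$, and since each $\cM(A_V) \to \cM(A)$ is a monomorphism in $\Ban_K^{\op}$ by Proposition \ref{rat_domains_are_mono}, and limits of monomorphisms over a cofiltered diagram remain monomorphisms (monomorphisms are stable under base change and limits), the composite $\cM(\cO_x^\wedge) \to \cM(A)$ is a monomorphism. Concretely one checks directly: given two maps $\cM(C) \rightrightarrows \cM(\cO_x^\wedge)$ agreeing after composing to $\cM(A)$, they agree after composing to each $\cM(A_V)$ by the monomorphism property, hence agree into the limit.

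For part $(2)$, by Proposition \ref{mono_berko_sp_injective} the map $|\cM(\cO_x^\wedge)| \to |\cM(A)|$ is injective. By Proposition \ref{cofiltered_lim_berko_sp} applied to the filtered system of rational domains, $|\cM(\cO_x^\wedge)| \simeq \lim |\cM(A_V)|$, and by Corollary \ref{topo_img_rat_domain} the image of each $|\cM(A_V)|$ is the closed subset $|\cM(A)|(\frac{f_1,\dots,f_n}{f_i})$ containing $x$. The intersection of all such rational neighborhoods of $x$ is $\{x\}$ itself: this uses that rational domains form a basis (or at least a cofinal family of neighborhoods) in the sense that their intersection separates points — more carefully, if $y \neq x$ then since $|\cM(A)|$ is Hausdorff and the functions $|f(-)|$ generate the topology, one finds $f, g \in A$ with $|f(x)| \le |g(x)|$ but $|f(y)| > |g(y)|$, giving a rational domain containing $x$ but not $y$. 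Hence $|\cM(\cO_x^\wedge)| = \{x\}$, a point.

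For part $(3)$, let $A \to B$ with $B$ uniform and $|\cM(B)| \to |\cM(A)|$ landing in $\{x\}$. For each rational domain $A \to A_V$ with $x \in |\cM(A_V)|$, Corollary \ref{topo_img_rat_domain} shows $|\cM(B)| \to |\cM(A)|$ has image in $|\cM(A_V)| = |\cM(A)|(\frac{f_1,\dots,f_n}{f_i})$, so by Proposition \ref{rational_dom_univ_prop} (using that $B$ is uniform) the map $A \to B$ factors uniquely through $A_V \to B$. These factorizations are compatible with the transition maps in the filtered system (again by uniqueness), so by the universal property of the filtered colimit in $\Ban_K$ (Proposition \ref{filtered_colim_banach_contr}) — or more precisely, by the bounded-uniformity of the colimit and the uniform bound coming from contractivity of each $A_V \to B$ — we obtain a unique bounded map $\cO_x^\wedge \to B$ factoring $A \to B$. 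The main obstacle here is ensuring the factorization $\cO_x^\wedge \to B$ is genuinely bounded: since all the maps $A_V \to B$ can be taken contractive (as $B$ is uniform, Lemma \ref{morphisms_unif_ban_contractive}), the uniform constant $C = 1$ works in Proposition \ref{filtered_colim_banach_contr}, so this is actually routine.

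For part $(4)$, the map $\cO_x^\wedge \to \cH(x)$ comes from applying $(3)$ to the contractive map $A \to \cH(x)$ (whose Berkovich spectrum image is $\{x\}$ by Definition \ref{defn_completed_residue_field}). Since $\cH(x)$ is uniform with $|\cM(\cH(x))| = \pt$, it is a non-archimedean field by Corollary \ref{recog_na_field}; and $\cO_x^{\wedge,u}$, being uniform with $|\cM(\cO_x^{\wedge,u})| = |\cM(\cO_x^\wedge)| = \pt$ (Proposition \ref{uniform_berko_sp} and part $(2)$), is also a non-archimedean field by Corollary \ref{recog_na_field}. The map $\cO_x^{\wedge,u} \to \cH(x)$ is then a map of non-archimedean fields; I would show it is an isomorphism by verifying it satisfies the same universal property as $\cH(x)$ among non-archimedean fields receiving a map from $A$ and lying over $x$ — namely Proposition \ref{univ_completed_residue} — which follows by combining $(3)$ with the fact that any non-archimedean field $K_x$ with a map $A \to K_x$ over $x$ is in particular uniform, so $A \to K_x$ factors through $\cO_x^\wedge$, hence (uniformizing) through $\cO_x^{\wedge,u}$. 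Uniqueness of the object satisfying this universal property gives the isomorphism. The final clause is then immediate: $A \to B$ factors through $\cO_x^\wedge$ by $(3)$, hence through $\cO_x^{\wedge,u} \simeq \cH(x)$ since $B$ is uniform.
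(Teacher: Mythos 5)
Your proof is correct and follows essentially the same route as the paper's: part (1) via Proposition \ref{rat_domains_are_mono} and stability of monomorphisms under limits, part (2) via Proposition \ref{cofiltered_lim_berko_sp} together with a rational domain separating $x$ from any $y$, part (3) via Proposition \ref{rational_dom_univ_prop} and the colimit description (with the contractivity/boundedness point handled as in Proposition \ref{filtered_colim_banach_contr}), and part (4) by showing $\cO_x^{\wedge,u}$ is a non-archimedean field (Corollary \ref{recog_na_field}) satisfying the universal property of Proposition \ref{univ_completed_residue}. The only spot where the paper is more explicit is in (2): the separating pair is produced by taking $g$ to be a scalar $C \in |K^{\times}|^{\mathbf{Q}}$ with $|f(x)| \le C < |f(y)|$, which is exactly how your claimed $f,g$ are realized.
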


\begin{proof} In order to proof $(1)$ recall that every morphism $\cM(A_V) \rightarrow \cM(A)$ from a rational domain is a monomorphism (\ref{rat_domains_are_mono}), and since monomorphisms are stable under limits it follows that $\cM(\cO_x^\wedge) \rightarrow \cM(A)$ is a monomorphism in $\Ban_K^{\op}$. For $(2)$ recall that we have the identity
\begin{equation*}
	|\cM(\cO_x^\wedge)| \simeq \lim |\cM(A_V)| \rightarrow |\cM(A)|
\end{equation*}
from \ref{cofiltered_lim_berko_sp}, where the limit ranges over all rational domains $\cM(A_V) \rightarrow \cM(A)$ which satisfy $x \in |\cM(A_V)| \subset |\cM(A)|$. As the forgetful functor $\Comp \rightarrow \Set$ preserves limits (\ref{monadicity_comp_set}) it follows that $x \in |\cM(\cO_x^\wedge)| \subset |\cM(A)|$, it remains to show that for every other $y \in |\cM(A)|$ there exists a rational domain $\cM(A_W) \rightarrow \cM(A)$ containing $x$ satisfying $y \not\in |\cM(A_W)| \subset |\cM(A)|$. Notice that there exists an element $f \in A$ such that $|f(x)| \not= |f(y)|$, and without loss of generality assume that $|f(x)| < |f(y)|$. Since $K$ is non-trivially valued, it follows that there exists a $C \in |K^{\times}|^{\mathbf{Q}}$ such that $|f(x)| \le C$ and $|f(y)| > C$. Then, we can produce the following rational domains
\begin{align*}
	A \rightarrow A_1 := A \frac{\langle C^{-1} T \rangle }{(f - T)}^{\wedge} && A \rightarrow A_2 := A \frac{\langle C^{-1} T \rangle }{(fT - 1)}^{\wedge}
\end{align*}
were the injective map $|\cM(A_1)| \rightarrow |\cM(A)|$ has image $|\cM(A)| \Big ( \frac{f}{C} \Big )$, and $|\cM(A_2)| \rightarrow |\cM(A)|$ has image $|\cM(A)| \Big ( \frac{C}{f} \Big )$. Showing that $x \in |\cM(A_1)| \subset |\cM(A)|$ and $y \not\in |\cM(A_1)| \subset |\cM(A)|$, finishing the proof of $(2)$.

In order to proof $(3)$ it suffices to show that the factorization exists, as uniqueness follows from the fact that $\cM(\cO_x^\wedge) \rightarrow \cM(A)$ is a monomorphism, which we established in $(1)$. For any rational domain $\cM(A_V) \rightarrow \cM(A)$ whose image contains $x \in |\cM(A)|$, we know from \ref{rational_dom_univ_prop} that there exists a unique factorization $A \rightarrow A_V \rightarrow B$. Then, from the definition of $A \rightarrow \cO_x^{\wedge}$ as a colimit it we obtain the desired factorization $A \rightarrow \cO_x^{\wedge} \rightarrow B$.

Finally, we proof $(4)$. Recall that the canonical map $\cO_x^\wedge \rightarrow (\cO_x^{\wedge, u})$ defines an isomorphism of compact hausdorff spaces $|\cM(\cO_x^{\wedge, u})| \rightarrow |\cM(\cO_x^{\wedge})|$ (\ref{uniform_berko_sp}). Furthermore, it follows from \ref{recog_na_field} that $\cO_x^{\wedge, u}$ is a non-archimedean field, which still satisfies the universal property from $(3)$ by \ref{adjunction_uniform_ban}. In particular, this implies that for any non-archimedean field $\kappa$ together with a morphism $A \rightarrow \kappa$ which induces a map $|\cM(\kappa)| \rightarrow |\cM(A)|$ with image $x \in |\cM(A)|$ then there exists a unique factorization $A \rightarrow \cO_x^{\wedge, u} \rightarrow \kappa$. But this is the same universal property that $A \rightarrow \cH(x)$ enjoys (\ref{univ_completed_residue}), finishing the proof of $(4)$.
\end{proof}

\begin{corollary}\label{banach_fiber_at_pt} Let $A \rightarrow B$ be a contractive morphism of Banach $K$-algebras, and fix a point $x \in |\cM(A)|$. Then, the canonical morphism of compact hausdorff spaces
\begin{equation*}
	|\cM(\cH(x) \cotimes_A B )| \longrightarrow | \cM(\cH(x))|\times_{|\cM(A)|} |\cM(B)| \simeq |f|^{-1} (x)
\end{equation*}
is an isomorphism.
\end{corollary}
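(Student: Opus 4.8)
The plan is to deduce the statement directly from Corollary \ref{mono_fiber_prod_berko_sp}, which computes $|\cM(B' \cotimes_A C')|$ as the fibre product $|\cM(B')|\times_{|\cM(A)|}|\cM(C')|$ whenever one of the two structure maps is a monomorphism in $\Ban_K^{\op}$. Applying it with $B' = \cH(x)$ and $C' = B$, the conclusion follows once we check two things: that the completed tensor product $\cH(x)\cotimes_A B$ is defined, and that $\cM(\cH(x))\to\cM(A)$ is a monomorphism in $\Ban_K^{\op}$. The first point is immediate: the structure map $A\to\cH(x)$ is automatically contractive because $\cH(x)$ is uniform (Lemma \ref{morphisms_unif_ban_contractive}), and $A\to B$ is contractive by hypothesis, so $\cH(x)\cotimes_A B$ exists by Construction \ref{const_comp_tensor_ban}.

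The substance of the argument is therefore the monomorphism claim, equivalently the assertion that $A\to\cH(x)$ is an epimorphism in $\Ban_K$. I would factor the canonical map as
\begin{equation*}
	A\longrightarrow \cO_x^\wedge \longrightarrow \cO_x^{\wedge, u} \overset{\simeq}{\longrightarrow} \cH(x),
\end{equation*}
using Proposition \ref{properties_stalk}: taking $B=\cH(x)$ in part (3) shows the composite $A\to\cO_x^\wedge\to\cH(x)$ is the canonical map, and part (4) shows that $\cO_x^\wedge\to\cH(x)$ factors through an isomorphism $\cO_x^{\wedge, u}\simeq\cH(x)$. Each factor is an epimorphism in $\Ban_K$: the map $A\to\cO_x^\wedge$ is an epimorphism because $\cM(\cO_x^\wedge)\to\cM(A)$ is a monomorphism in $\Ban_K^{\op}$ by Proposition \ref{properties_stalk}(1); the uniformization $\cO_x^\wedge\to\cO_x^{\wedge, u}$ is an epimorphism because by Construction \ref{const_uniformization} it is a completion map and hence has dense image, so any two bounded maps out of $\cO_x^{\wedge, u}$ that agree on $\cO_x^\wedge$ agree; and the last map is an isomorphism. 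Since epimorphisms compose, $A\to\cH(x)$ is an epimorphism, so $\cM(\cH(x))\to\cM(A)$ is a monomorphism in $\Ban_K^{\op}$.

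With this established, Corollary \ref{mono_fiber_prod_berko_sp} shows that the canonical map $|\cM(\cH(x)\cotimes_A B)|\to|\cM(\cH(x))|\times_{|\cM(A)|}|\cM(B)|$ is an isomorphism of compact Hausdorff spaces, and it only remains to identify the target with $|f|^{-1}(x)$. By Definition \ref{defn_completed_residue_field} the space $|\cM(\cH(x))|$ is a single point whose image in $|\cM(A)|$ is $x$; since the forgetful functor $\Comp\to\Set$ preserves limits (Proposition \ref{monadicity_comp_set}) and limits in $\Comp$ agree with those in $\Top$ (Proposition \ref{comp_regular_cat}), the fibre product $|\cM(\cH(x))|\times_{|\cM(A)|}|\cM(B)|$ is precisely the closed subspace $|f|^{-1}(x)\subset|\cM(B)|$. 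I expect the only genuine obstacle to be the verification that $\cM(\cH(x))\to\cM(A)$ is a monomorphism; the remaining steps are formal.
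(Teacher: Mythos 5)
Your proof is correct, but it takes a genuinely different route from the paper's. The paper never asserts that $\cM(\cH(x)) \rightarrow \cM(A)$ is a monomorphism in $\Ban_K^{\op}$ for a general Banach $K$-algebra $A$ (it is even cautious, in a footnote, about the contractive analogue of that statement); instead it applies Corollary \ref{mono_fiber_prod_berko_sp} to the stalk $\cO_x^{\wedge}$, whose monomorphism property is exactly Proposition \ref{properties_stalk}(1), obtaining $|\cM(\cO_x^{\wedge} \cotimes_A B)| \simeq |f|^{-1}(x)$, and then identifies $|\cM(\cH(x) \cotimes_A B)|$ with $|\cM(\cO_x^{\wedge} \cotimes_A B)|$ by uniformizing: since $(-)^u$ is a left adjoint (Proposition \ref{adjunction_uniform_ban}) and $\cotimes$ is a pushout (Construction \ref{const_comp_tensor_ban}), one has $(\cO_x^{\wedge} \cotimes_A B)^u \simeq (\cO_x^{\wedge, u} \cotimes_A B)^u \simeq (\cH(x) \cotimes_A B)^u$, and $|\cM(-)|$ is insensitive to uniformization by Proposition \ref{uniform_berko_sp}. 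You instead push the monomorphism statement all the way to $\cH(x)$ itself, factoring $A \rightarrow \cO_x^{\wedge} \rightarrow \cO_x^{\wedge, u} \simeq \cH(x)$ and combining Proposition \ref{properties_stalk}(1),(4) with the density of the image of the uniformization map (two bounded maps out of $\cO_x^{\wedge, u}$ agreeing on a dense subring agree, by continuity of bounded maps and Hausdorffness of the target --- worth stating explicitly), so that a single application of Corollary \ref{mono_fiber_prod_berko_sp} to the pair $\cH(x) \leftarrow A \rightarrow B$ finishes the argument. Your route is slightly more direct and yields as a byproduct that $\cM(\cH(x)) \rightarrow \cM(A)$ is a monomorphism in $\Ban_K^{\op}$ for arbitrary $A$, a statement the paper only records for $\cO_x^{\wedge}$; the paper's route buys independence from that claim, at the cost of the (purely formal) compatibility of $(-)^u$ with completed tensor products. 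Both arguments conclude identically, identifying the fibre product over the one-point space $|\cM(\cH(x))|$ with $|f|^{-1}(x)$ via preservation of limits by the forgetful functor to sets.
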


\begin{proof} From the construction its clear that the map $A \rightarrow \cO_x^{\wedge}$ is contractive and from \ref{properties_stalk} we know that map $\cM(\cO_x^{\wedge}) \rightarrow \cM(A)$ is a monomorphism in $\Ban_K^{\op}$. Therefore, \ref{mono_fiber_prod_berko_sp} implies that the canonical map
\begin{equation*}
	|\cM(\cO_x^{\wedge} \cotimes_A B)| \longrightarrow | \cM(\cO_x^{\wedge})|\times_{|\cM(A)|} |\cM(B)| \simeq |f|^{-1} (x)
\end{equation*}
is an isomorphism of compact hausdorff spaces. From the maps $\cM(\cH(x)) \rightarrow \cM(\cO_x^{\wedge}) \rightarrow \cM(A)$ we get an induced morphism
\begin{equation*}
	|\cM(\cH(x) \cotimes_A B)| \rightarrow |\cM(\cO_x^{\wedge} \cotimes_A B)|
\end{equation*}
we claim that it is an isomorphism of compact hausdorff spaces. Indeed, it suffices to check that the induced map $(\cO_x^{\wedge} \cotimes_A B)^u \rightarrow (\cH(x) \cotimes_A B)^u$ is an isomorphism by \ref{uniform_berko_sp}. Using the characterization of $- \cotimes_A -$ as pushouts (\ref{const_comp_tensor_ban}) and the fact that $(-)^u: \Ban_K \rightarrow \uBan_K$ is a left adjoint (\ref{adjunction_uniform_ban}) it follows that we have the identity $(\cO_x^{\wedge, u} \cotimes_A B)^u \simeq (\cO_x^{\wedge} \cotimes_A B)^u$, which proves the claim by virtue of \ref{properties_stalk}. The result then follows from the fact that the canonical map $\cO_x^{\wedge} \rightarrow \cH(x)$ induces an isomorphism $|\cM(\cH(x))| \rightarrow |\cM(\cO_x^{\wedge})|$ of compact hausdorff spaces. 
\end{proof}

\newpage

\section{Structure presheaf of Perfectoids}\label{sect_struct_presheaf_perfd}

\subsection{Rational domains}

Let $K$ be a perfectoid non-archimedean field and a topological nilpotent unit $\varpi \in K^{\times}$.

\begin{defn}\label{int_free_completed_alg} Let $A$ be an object of $\CAlg_{K_{\le 1}}^{\wedge \tf}$, define $A[T_1, \dots, T_n]^{\wedge}$ as the $\varpi$-completion of $A[T_1, \dots, T_n]$. This object enjoys the following universal property in $\CAlg_{K_{\le 1}}^{\wedge \tf}$: given a morphism $f: A \rightarrow B$ in $\CAlg_{K_{\le 1}}^{\wedge \tf}$ and a choice of objects $\{b_1, \dots, b_n \} \subset B$, there exists a unique morphism $\tilde{f}: A[T_1, \dots, T_n]^{\wedge} \rightarrow B$ sending $T_i \mapsto b_i$ making the following diagram commute
\begin{cd}
	A \ar[r, "f"] \ar[d, hook] & B \\
	A [T_1, \dots, T_n]^{\wedge} \ar[ru, "\tilde{f}", dashed, swap]
\end{cd}
\end{defn}

\begin{prop}\label{equiv_free_completed_alg_int_ban} Let $A \in \CAlg_{K_{\le 1}}^{\wedge \tf}$ be an object in the essential image of $H^0 j_*: \CAlg_{K_{\le 1}}^{\wedge a \tf} \rightarrow \CAlg_{K_{\le 1}}^{\wedge \tf}$ (cf. \ref{almost_funct_tf_alg}). Then,
\begin{enumerate}[(1)]
	\item The object $A[T_1, \dots, T_n]^{\wedge} \in \CAlg_{K_{\le 1}}^{\wedge \tf}$ is in the essential image of $H^0 j_*$.
	\item Under the equivalence of \ref{equiv_almost_tf_banach}, the object $A[T_1, \dots, T_n]^{\wedge} \in \CAlg_{K_{\le 1}}^{\wedge a \tf}$ corresponds to $A[\frac{1}{\varpi}] \langle T_1, \dots, T_n \rangle \in \Ban_K^{\contr}$ defined in \ref{free_banach_algebras}.
\end{enumerate}
\end{prop}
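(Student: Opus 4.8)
The plan is to prove the two claims in sequence, using the dictionary (Proposition \ref{equiv_almost_tf_banach}) together with the explicit description of the functors $(-)_{\le 1}$ and $(-)[\frac{1}{\varpi}]$ from Constructions \ref{from_tf-comp_to_banach} and \ref{from_banach_to_tf-comp}. For $(1)$, I would show directly that $A[T_1, \dots, T_n]^\wedge$ satisfies $A[T_1,\dots,T_n]^\wedge \simeq (A[T_1,\dots,T_n]^\wedge)_*$, i.e. that it lies in the essential image of $H^0 j_*$, which by the characterization recalled just before \ref{equiv_almost_tf_banach} is exactly what ``being in the essential image'' means. Since $A$ is $\varpi$-torsion free, so is the polynomial ring $A[T_1, \dots, T_n]$ and hence (by Lemma \ref{completion_tf}) its $\varpi$-completion; thus by Notation \ref{almost_elements} the module of almost elements can be computed inside $A[T_1,\dots,T_n]^\wedge[\frac{1}{\varpi}]$ as those $f$ with $\varepsilon f \in A[T_1,\dots,T_n]^\wedge$ for all $\varepsilon \in (\varpi)_{\perfd}$. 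Writing $f = \sum_\nu a_\nu T^\nu$ with $a_\nu \to 0$ in $A[\frac{1}{\varpi}]$ (using that elements of the $\varpi$-completion of the polynomial ring are convergent power series with coefficients in $A[\frac 1\varpi]$), the condition $\varepsilon f \in A[T_1,\dots,T_n]^\wedge$ forces $\varepsilon a_\nu \in A$ for every $\nu$, so each $a_\nu \in A_* = A$; conversely such $f$ clearly lies in $A[T_1,\dots,T_n]^\wedge$. This gives $A[T_1,\dots,T_n]^\wedge \simeq (A[T_1,\dots,T_n]^\wedge)_*$.

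For $(2)$, the cleanest route is to compare universal properties rather than chase norms. Both functors $(-)[\frac{1}{\varpi}]: \CAlg_{K_{\le 1}}^{\wedge a \tf} \to \Ban_K^{\contr}$ and $(-)_{\le 1}$ in the other direction are equivalences by \ref{equiv_almost_tf_banach}, so it suffices to identify $(A[T_1,\dots,T_n]^\wedge)[\frac{1}{\varpi}]$ with $A[\frac{1}{\varpi}]\langle T_1,\dots,T_n\rangle$ by matching their universal properties in $\Ban_K^{\contr}$. On one side, $A[\frac{1}{\varpi}]\langle T_1,\dots,T_n\rangle$ corepresents, by the contractive part of Proposition \ref{free_banach_algebras}, the functor sending a Banach $K$-algebra $R$ equipped with a contractive map $A[\frac 1\varpi] \to R$ to $n$-tuples in $R_{\le 1}$. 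On the other side, $(A[T_1,\dots,T_n]^\wedge)[\frac 1\varpi]$ corepresents the same functor: a contractive map $(A[T_1,\dots,T_n]^\wedge)[\frac 1\varpi] \to R$ of Banach $K$-algebras is the same (by the equivalence \ref{equiv_almost_tf_banach}, applied after passing to $(-)_{\le 1}$, or directly by the description of the norm in \ref{from_tf-comp_to_banach}) as a morphism $A[T_1,\dots,T_n]^\wedge \to R_{\le 1}$ in $\CAlg_{K_{\le 1}}^{\wedge a\tf}$, which by the universal property in Definition \ref{int_free_completed_alg} is a morphism $A \to R_{\le 1}$ (equivalently a contractive map $A[\frac 1\varpi] \to R$) together with a choice of $n$ elements of $R_{\le 1}$. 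Yoneda then yields the desired isometric isomorphism, compatible with $T_i \mapsto T_i$.

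I expect the main technical obstacle to be bookkeeping around the precise form of elements of $A[T_1, \dots, T_n]^\wedge$ and $A[\frac{1}{\varpi}]\langle T_1, \dots, T_n\rangle$, in particular verifying that the norm on $(A[T_1,\dots,T_n]^\wedge)[\frac 1\varpi]$ produced by Construction \ref{from_tf-comp_to_banach} genuinely coincides (not just up to equivalence) with the Gauss-type norm $|\sum a_\nu T^\nu| = \max_\nu |a_\nu|$ on $A[\frac 1\varpi]\langle T_1,\dots,T_n\rangle$. This requires knowing that $A[\frac 1\varpi]\langle T_1,\dots,T_n\rangle_{\le 1}$, the unit ball for the Gauss norm, is exactly $A[T_1,\dots,T_n]^\wedge$ — which follows once $(1)$ is established, since $A$ is total-integrally-closed-type data is not needed here, only $\varpi$-torsion freeness and $\varpi$-completeness together with $A = A_*$. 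A clean way to finesse this is to observe that $(1)$ plus the already-known fact $A[\frac 1\varpi] = $ (the Banach algebra with unit ball $A$) reduces the matching of norms to the statement that $(A[T_1,\dots,T_n]^\wedge)[\frac 1\varpi]$ has unit ball $A[T_1,\dots,T_n]^\wedge$, which is immediate from the construction of the norm in \ref{from_tf-comp_to_banach}. With the norms identified and the underlying rings identified, compatibility with $T_i$ is automatic, completing the proof.
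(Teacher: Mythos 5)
Your proposal is correct and follows essentially the same route as the paper: part (1) by checking coefficient-wise that $(A[T_1,\dots,T_n]^{\wedge})_* = A[T_1,\dots,T_n]^{\wedge}$ using $A = A_*$, and part (2) by matching the universal properties of \ref{int_free_completed_alg} and \ref{free_banach_algebras} under the equivalence \ref{equiv_almost_tf_banach}. Your extra bookkeeping identifying the Gauss norm with the norm from Construction \ref{from_tf-comp_to_banach} is a harmless elaboration of what the paper leaves implicit.
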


\begin{proof} In order to show $(1)$ it suffices to show that $(A[T_1, \dots, T_n]^{\wedge})_* = A[T_1, \dots, T_n]^{\wedge}$ (cf. \ref{almost_elements}). Let $f = \sum_{\nu \in \ZZ_{\ge 0}^n} a_\nu T^{\nu}$ be an object of $A[T_1, \dots, T_n]^{\wedge}[\frac{1}{\varpi}]$ and assume that $\varepsilon f \in A[T_1, \dots, T_n]^{\wedge}$ for all $\varepsilon \in (\varpi)_{\perfd}$. Then, $\varepsilon a_\nu \in A$ for all $\varepsilon$, implying that $a_\nu \in A$ by the assumptions on $A$, proving that $f \in A[T_1, \dots, T_n]^{\wedge}$ as desired. Statement $(2)$ follows from the equivalence \ref{equiv_almost_tf_banach} and the fact that $A[T_1, \dots, T_n]^{\wedge} \in \CAlg_{K_{\le 1}}^{\wedge a \tf}$ and $A[\frac{1}{\varpi}] \langle T_1, \dots, T_n \rangle \in \Ban_K^{\contr}$ have the same universal property (cf. \ref{int_free_completed_alg} and \ref{free_banach_algebras}).
\end{proof}

\begin{prop}\label{int_model_rat_domain} Let $A$ be a Banach $K$-algebra, and $\{f_1, \dots, f_n\} \subset A$ a collection of elements generating the unit ideal which have norm $\le 1$. Then, the rational domain $A \blang \frac{f_1, \dots, f_n}{f_i} \brang$ corresponds to
\begin{equation*}
	\Big ( A_{\le 1} [T_1, \dots, T_n]^{\wedge} / (f_i T_1 - f_1, \dots, f_i T_n - f_n) \Big )^{\wedge a \tf} \in \CAlg_{K_{\le 1}}^{\wedge a \tf}
\end{equation*}
under the equivalence \ref{equiv_almost_tf_banach}. Furthermore, as the rational domain $A \blang \frac{f_1, \dots, f_n}{f_i} \brang$ only depends on the ideal $(f_i T_1 - f_1, \dots, f_i T_n - f_n)$ one may always assume that $|f_j|_A \le 1$ for all $j$ by replacing $f_j \mapsto \varpi^n f_j$.
\end{prop}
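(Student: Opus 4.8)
The plan is to identify both sides with the pushout of a span in $\CAlg_{K_{\le 1}}^{\wedge a \tf}$, and then transport that identification through the dictionary equivalence (Proposition \ref{equiv_almost_tf_banach}). Concretely, consider the span
\begin{equation*}
	A_{\le 1} \longleftarrow K_{\le 1}[T_1, \dots, T_n]^{\wedge} \longrightarrow A_{\le 1}[T_1, \dots, T_n]^{\wedge},
\end{equation*}
where the right arrow is the structure map and the left arrow sends $T_j \mapsto f_j$ (this is legitimate precisely because each $f_j \in A_{\le 1}$, using the universal property in \ref{int_free_completed_alg}). The pushout of this span in $\CAlg_{K_{\le 1}}^{\wedge a \tf}$ is computed, by Corollary \ref{pushout_in_comp-tf-a}, as $(-)^{\wedge a \tf}$ applied to the ordinary tensor product, and one checks that $A_{\le 1}[T_1, \dots, T_n]^{\wedge} \otimes_{K_{\le 1}[T_1,\dots,T_n]^{\wedge}} A_{\le 1}$ (as a classical $\varpi$-completion) is isomorphic to $A_{\le 1}[T_1, \dots, T_n]^{\wedge}/(f_iT_1 - f_1, \dots)$ — this last step is pure commutative algebra: killing the relations $f_j \otimes 1 - 1 \otimes T_j$ in the tensor product. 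Thus the claimed object is exactly this pushout.

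Next, I would apply the equivalence $(-)[\tfrac{1}{\varpi}]: \CAlg_{K_{\le 1}}^{\wedge a \tf} \simeq \Ban_K^{\contr}$ of Proposition \ref{equiv_almost_tf_banach}, which preserves pushouts (it is an equivalence). By Proposition \ref{equiv_free_completed_alg_int_ban}, the object $A_{\le 1}[T_1, \dots, T_n]^{\wedge}$ corresponds to $A\langle T_1, \dots, T_n\rangle$ and $K_{\le 1}[T_1,\dots,T_n]^{\wedge}$ corresponds to $K\langle T_1, \dots, T_n\rangle$, with the two structure maps of the span matching up. Hence the image of the pushout under $(-)[\tfrac{1}{\varpi}]$ is the pushout in $\Ban_K^{\contr}$ of $A \leftarrow K\langle T_1, \dots, T_n\rangle \to A\langle T_1, \dots, T_n\rangle$ where the left map sends $T_j \mapsto f_j$; by the computation of pushouts via completed tensor products in \ref{const_comp_tensor_ban} this is the completion of $A\langle T_1,\dots,T_n\rangle \otimes_{K\langle T_1,\dots,T_n\rangle} A$, i.e.\ the completion of $A\langle T_1, \dots, T_n\rangle/(f_iT_1 - f_1, \dots, f_iT_n - f_n)$, which is by Definition \ref{defn_rational_domain} precisely $A\blang \frac{f_1, \dots, f_n}{f_i} \brang$. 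The final sentence about replacing $f_j$ by $\varpi^n f_j$ is immediate: scaling all $f_j$ (and hence $f_i$) by a common unit multiple of a power of $\varpi$ does not change the ideal $(f_iT_1 - f_1, \dots, f_iT_nn - f_n)$ after rescaling the $T_j$ appropriately, and it does not change the generated-unit-ideal hypothesis since $\varpi$ is a unit in $A$; so one may always arrange $|f_j|_A \le 1$.

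The main obstacle I anticipate is the careful bookkeeping in the commutative-algebra identification of the tensor product with the quotient by the relations — one must verify that $(A_{\le 1}[T_1,\dots,T_n]^{\wedge} \otimes_{K_{\le 1}[T_1,\dots,T_n]^{\wedge}} A_{\le 1})$, with the $T_j$ on the $K_{\le 1}[\underline{T}]$-factor acting as multiplication by $f_j$ on $A_{\le 1}$, really is $A_{\le 1}[T_1,\dots,T_n]^{\wedge}/(f_iT_j - f_j)$ rather than something with extra $\varpi$-torsion or incompleteness, and that the subsequent $(-)^{\wedge a\tf}$ does not collapse anything unexpectedly. However, since $(-)^{\wedge a\tf}$ is built into the definition of the pushout in $\CAlg_{K_{\le 1}}^{\wedge a \tf}$ and the right-hand side of the statement already has $(-)^{\wedge a\tf}$ applied, this is handled formally: one never needs to control the torsion or completion defects explicitly, only to recognize both sides as the same formal pushout. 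A secondary point requiring attention is matching the structure maps of the two spans under the dictionary, but this follows from the compatibility of universal properties recorded in \ref{int_free_completed_alg}, \ref{free_banach_algebras}, and \ref{equiv_free_completed_alg_int_ban}.
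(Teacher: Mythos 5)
Your proposal has a genuine error in the choice of span, which makes the main computation false. You consider the span
\begin{equation*}
A_{\le 1} \xleftarrow{\,T_j \mapsto f_j\,} K_{\le 1}[T_1,\dots,T_n]^{\wedge} \xrightarrow{\,\text{struct.}\,} A_{\le 1}[T_1,\dots,T_n]^{\wedge},
\end{equation*}
but the resulting tensor product is \emph{not} $A_{\le 1}[T]^{\wedge}/(f_iT_1 - f_1,\dots)$. Since $A_{\le 1}[T]^{\wedge} \simeq A_{\le 1} \otimes_{K_{\le 1}} K_{\le 1}[T]^{\wedge}$ (completed), associativity gives
\begin{equation*}
A_{\le 1}[T]^{\wedge} \otimes_{K_{\le 1}[T]^{\wedge}} A_{\le 1} \;\simeq\; A_{\le 1} \otimes_{K_{\le 1}} \big(K_{\le 1}[T]^{\wedge} \otimes_{K_{\le 1}[T]^{\wedge}} A_{\le 1}\big) \;\simeq\; A_{\le 1} \otimes_{K_{\le 1}} A_{\le 1},
\end{equation*}
which after completion, torsion-free quotient, and almost passage corresponds to $A \cotimes_K A$ on the Banach side, not to the rational domain. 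The relation actually imposed by this tensor product is $T_j \otimes 1 = 1 \otimes f_j$ (identifying the polynomial variable $T_j$ with $f_j$ in the \emph{second} copy of $A_{\le 1}$ --- note your displayed relation ``$f_j \otimes 1 - 1 \otimes T_j$'' is typed with the factors swapped, which already signals the confusion), and this does nothing to relate the two copies of $A_{\le 1}$ to each other, nor does it involve $f_i$ at all. You would need the span to produce the relations $f_iT_j - f_j$, and your chosen legs simply do not.

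The paper's proof uses a different span. The source stays $K_{\le 1}[T]^{\wedge}$ and the right leg is the map $A_{\le 1}[T]^{\wedge}$, but the maps are: the left leg $q_{\le 1}: K_{\le 1}[T]^{\wedge} \to K_{\le 1}$ sends $T_j \mapsto 0$, and the right leg $h_{\le 1}: K_{\le 1}[T]^{\wedge} \to A_{\le 1}[T]^{\wedge}$ sends $T_j \mapsto f_iT_j - f_j$ (legitimate because $|f_iT_j - f_j| \le 1$, using the hypothesis $|f_j| \le 1$ and the universal property of \ref{int_free_completed_alg} / \ref{free_banach_algebras}). The pushout $K_{\le 1} \otimes_{K_{\le 1}[T]^{\wedge}} A_{\le 1}[T]^{\wedge}$ then genuinely computes $A_{\le 1}[T]^{\wedge}/(f_iT_1 - f_1,\dots, f_iT_n - f_n)$, since $K_{\le 1} = K_{\le 1}[T]^{\wedge}/(T_1,\dots,T_n)$ and the $T_j$ are pushed to the relations $f_iT_j - f_j$ via $h_{\le 1}$. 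Everything else in your outline --- transporting the pushout through the dictionary of Proposition \ref{equiv_almost_tf_banach}, using Proposition \ref{equiv_free_completed_alg_int_ban} to match the Tate algebras, using Corollary \ref{pushout_in_comp-tf-a} to identify the pushout formula, and the final remark about rescaling the $f_j$ --- is structurally fine once the span is corrected, and does essentially match the paper's argument.
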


\begin{proof} Let $h: K \langle T_1, \dots, T_n \rangle \rightarrow A \langle T_1, \dots, T_n \rangle$ be the unique morphism of Banach $K$-algebras satisfying $h(T_j) = f_i T_j - f_j$ (\ref{free_banach_algebras}), and $q: K \langle T_1, \dots, T_n \rangle \rightarrow K$ the unique map satisfying $q(T_j) = 0$. As $h$ and $q$ are contractive maps we learn that the rational domain $A \blang \frac{f_1, \dots, f_n}{f_i} \brang$ can be presented as the pushout of the following diagram
\begin{cd}
	K \langle T_1, \dots, T_n \rangle \ar[r, "h"] \ar[d, "q"] & A \langle T_1, \dots, T_n \rangle \ar[d] \\
	K \ar[r] & A \blang \frac{f_1, \dots, f_n}{f_i} \brang
\end{cd}
On the other hand, under \ref{equiv_free_completed_alg_int_ban} we learn that the morphisms $h$ and $q$ induce morphisms $h_{\le 1}: K_{\le 1} [T_1, \dots, T_n]^{\wedge} \rightarrow A_{\le 1}[T_1, \dots, T_n]^{\wedge}$ and $q_{\le 1}: K_{\le 1} [T_1, \dots, T_n]^{\wedge} \rightarrow K_{\le 1}$ in $\CAlg_{K_{\le 1}}^{\wedge \tf a}$ under the equivalence \ref{equiv_almost_tf_banach}. Then, \ref{pushout_in_comp-tf-a} implies that the following diagram in $\CAlg_{K_{\le 1}}^{\wedge a \tf}$ is a pushout diagram
\begin{cd}
	K_{\le 1} [T_1, \dots, T_n]^{\wedge} \ar[r, "h_{\le 1}"] \ar[d, "q_{\le 1}"] & A_{\le 1}[T_1, \dots, T_n]^{\wedge} \ar[d]\\
	K_{\le 1} \ar[r] & \Big ( A_{\le 1} [T_1, \dots, T_n]^{\wedge} / (f_i T_1 - f_1, \dots, f_i T_n - f_n) \Big )^{\wedge a \tf}
\end{cd}
Finally, the equivalence \ref{equiv_almost_tf_banach} proves the result.
\end{proof}

\begin{defn} Let $A$ be an object of $\CAlg_{K_{\le 1}}^{\wedge \tf}$, define $A[T_1^{1/p^\infty}, \dots, T_n^{1/p^\infty}]$ as the following colimit computed in $\CAlg_{K_{\le 1}}$ (equivalently, in $\CAlg_{K_{\le 1}}^{\tf}$)
\begin{cd}
	A[T_1^{1/p^\infty}, \dots, T_n^{1/p^\infty}] := \colim_{\ZZ_{\ge 0}} \Big ( A[T_1, \dots, T_n] \ar[r, "\varphi"] &  A[T_1, \dots, T_n] \ar[r, "\varphi"] & \cdots  \Big )
\end{cd}
where $\varphi: A[T_1, \dots, T_n] \rightarrow A[T_1, \dots, T_n]$ is the unique $A$-algebra map sending $T_i \mapsto T_i^p$. This object satisfies the following universal property in $\CAlg_{K_{\le 1}}$: given a morphism $f: A \rightarrow B$ in $\CAlg_{K_{\le 1}}$ and a choice of objects $\{b_1, \dots, b_n\} \subset B$ together with a choice of compatible $p$-power roots for each $b_i$, there exists a unique morphism $\tilde{f}: A[T_1^{1/p^\infty}, \dots, T_n^{1/p^\infty}] \rightarrow B$ sending $T_i^{1/p^n} \mapsto b_i^{1/p^n}$ making the following diagram commute
\begin{cd}
	A \ar[r, "f"] \ar[d, hook] & B \\
	A [T_1^{1/p^\infty}, \dots, T_n^{1/p^\infty}] \ar[ru, "\tilde{f}", dashed, swap]
\end{cd}
\end{defn}

\begin{defn}\label{int_free_perfect_alg} Let $A$ be an object of $\CAlg_{K_{\le 1}}^{\wedge \tf}$, define $A[T_1^{1/p^\infty}, \dots, T_n^{1/p^\infty}]^{\wedge}$ as the $\varpi$-completion of $A[T_1^{1/p^\infty}, \dots, T_n^{1/p^\infty}]$. Given that the fully faithful functor $\CAlg_{K_{\le 1}}^{\wedge \tf} \subset \CAlg_{K_{\le 1}}^{\tf}$ admits a left adjoint given by $\varpi$-completion (\ref{torsion_free_algebras}), there is a canonical identification of $A[T_1^{1/p^\infty}, \dots, T_n^{1/p^\infty}]^{\wedge}$ with the following colimit computed in $\CAlg_{K_{\le 1}}^{\wedge \tf}$ (equivalently, in $\CAlg_{K_{\le 1}}^{\wedge}$)
\begin{cd}
	A[T_1^{1/p^\infty}, \dots, T_n^{1/p^\infty}]^{\wedge} := \colim_{\ZZ_{\ge 0}} \Big ( A[T_1, \dots, T_n]^{\wedge} \ar[r, "\varphi"] &  A[T_1, \dots, T_n] \ar[r, "\varphi"]^{\wedge} & \cdots  \Big )
\end{cd}
where $\varphi$ is determined by the rule $T_i \mapsto T_i^p$. Again, by virtue of the fact that $\varpi$-completion is left adjoint to the inclusion $\CAlg_{K_{\le 1}}^{\wedge \tf} \subset \CAlg_{K_{\le 1}}^{\tf}$ it follows that $A[T_1^{1/p^\infty}, \dots, T_n^{1/p^\infty}]^{\wedge}$ admits the following universal property in $\CAlg_{K_{\le 1}}^{\wedge \tf}$: given morphism $f: A \rightarrow B$ in $\CAlg_{K_{\le 1}}^{\wedge \tf}$ and a choice of objects $\{b_1, \dots, b_n\} \subset B$ together with a choice of compatible $p$-power roots for each $b_i$, there exists a unique morphism $\tilde{f}: A[T_1^{1/p^\infty}, \dots, T_n^{1/p^\infty}]^{\wedge} \rightarrow B$ sending $T_i^{1/p^n} \mapsto b_i^{1/p^n}$ making the following diagram commute
\begin{cd}
	A \ar[r, "f"] \ar[d, hook] & B \\
	A [T_1^{1/p^\infty}, \dots, T_n^{1/p^\infty}]^{\wedge} \ar[ru, "\tilde{f}", dashed, swap]
\end{cd}
Finally, let us remark that the derived and classical $\varpi$-completion of $A[T_1^{1/p^\infty}, \dots, T_n^{1/p^\infty}]$ agree as it is a $\varpi$-torsion free algebra.
\end{defn}

\begin{prop}\label{free_alg_perfect_is_perfectoid} Let $A$ be an object of $\Perfd_{K_{\le 1}}^{\Prism a}$ (\ref{defn_various_perfectoid}), and $(\A_{\inf}(A), d)$ its corresponding prefect prism under the equivalence \ref{perfect_prism_perfectoid}. Then,
\begin{enumerate}[(1)]
	\item $A[T_1^{1/p^\infty}, \dots, T_n^{1/p^\infty}]^{\wedge}$ is an object of $\Perfd_{K_{\le 1}}^{\Prism a}$.
	\item The derived and classical $(p, [\varpi^{\flat}])$-completion of $\A_{\inf}(A)[T_1^{1/p^\infty}, \dots, T_n^{1/p^\infty}]$ agree. We denote it by $\A_{\inf}(A)[T_1^{1/p^\infty}, \dots, T_n^{1/p^\infty}]^{\wedge}$.
	\item $\A_{\inf}(A)[T_1^{1/p^\infty}, \dots, T_n^{1/p^\infty}]^{\wedge}$ is a perfect $(p, [\varpi^\flat])$-complete $\delta$-ring, and
	\begin{align*}
		\varphi: \A_{\inf}(A)[T_1^{1/p^\infty}, \dots, T_n^{1/p^\infty}]^{\wedge} \longrightarrow \A_{\inf}(A)[T_1^{1/p^\infty}, \dots, T_n^{1/p^\infty}]^{\wedge} && \varphi(T_i^{1/p^k}) = T_i^{1/p^{k-1}}
	\end{align*}
	its Frobenius lift.
	\item The corresponding perfect prism of $A[T_1^{1/p^\infty}, \dots, T_n^{1/p^\infty}]^{\wedge}$ is $(\A_{\inf}(A)[T_1^{1/p^\infty}, \dots, T_n^{1/p^\infty}]^{\wedge}, d)$.
\end{enumerate}	
\end{prop}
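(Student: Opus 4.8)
The plan is to reduce everything to understanding the uncompleted ring $\A_{\inf}(A)[T_1^{1/p^\infty},\dots,T_n^{1/p^\infty}]$ as a perfect $\delta$-ring, and then feed this into the structure theory of perfect prisms (Proposition \ref{perfect_prism_perfectoid}) together with the completion facts from Section \ref{sect_almost_math}. Throughout I would replace $A$ by the honest integral perfectoid ring $H^0 j_* A \in \Perfd_{K_{\le 1}}^{\Prism \tic}$ (which is $\varpi$-complete and $\varpi$-torsion free), write $R := \A_{\inf}(A) = W(A^\flat)$ for its perfect prism ring with distinguished element $d$, and recover the almost statements at the end by applying $(-)^a$; this is legitimate since $\Perfd_{K_{\le 1}}^{\Prism a}$ is by definition the image of $(-)^a : \Perfd_{K_{\le 1}}^{\Prism} \to \CAlg_{K_{\le 1}}^{\wedge a}$.

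The heart of the argument is the claim that $R[T_1^{1/p^\infty},\dots,T_n^{1/p^\infty}]$ — the colimit of $R[T_1,\dots,T_n]$ along the $R$-algebra endomorphism $T_i \mapsto T_i^p$ — is a perfect $\delta$-ring. First I would note that $R[T_1,\dots,T_n]$ carries a unique $\delta$-structure over $R$ with $\delta(T_i)=0$ (polynomial ring over a $\delta$-ring), whose Frobenius lift is $\varphi_R$ on coefficients and $T_i \mapsto T_i^p$; hence the transition maps of the colimit are exactly this Frobenius lift, so the colimit acquires a $\delta$-structure and a Frobenius $\varphi$ which is $\varphi_R$ on $R$ and sends $T_i^{1/p^k} \mapsto T_i^{1/p^{k-1}}$ (equivalently, each $T_i^{1/p^k}$ has $\delta=0$ by Lemma \ref{frob_perfect_element}, as it admits compatible $p$-power roots). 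Since $\varphi_R$ is bijective and $T_i^{1/p^k}\mapsto T_i^{1/p^{k-1}}$ has the evident inverse inside the colimit, $\varphi$ is bijective, so $R[T_1^{1/p^\infty},\dots]$ is perfect; it is moreover $p$-torsion free (a filtered colimit of $p$-torsion free rings), with mod-$p$ reduction the perfection $A^\flat[T_1^{1/p^\infty},\dots]$.

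Next I would establish (2) and (3) by completing in two stages, mirroring the proof of Proposition \ref{varpi_completion_perfectoid}. Since $R[T^{1/p^\infty}]$ is $p$-torsion free, its derived and classical $p$-completions agree and $R[T^{1/p^\infty}]^\wedge_p$ is still a perfect $p$-complete $\delta$-ring by Proposition \ref{perfect_delta_rings_equiv} (it is $p$-torsion free with perfect mod-$p$ reduction); in fact $R[T^{1/p^\infty}]^\wedge_p = W(A^\flat[T^{1/p^\infty}])$. Then Lemma \ref{torsion_perfect_delta_ring} shows this ring has bounded $[\varpi^\flat]$-torsion, so its derived and classical $[\varpi^\flat]$-completions agree; recalling that $p \in ([\varpi^\flat],d)$ (from $d = p + [\varpi^\flat]^p x$ up to a unit, cf. Lemma \ref{classical_varpi_complete_perfect_prism}) so that the $(p,d)$- and $(p,[\varpi^\flat])$-adic topologies coincide, we obtain (2): the derived and classical $(p,[\varpi^\flat])$-completions of $R[T^{1/p^\infty}]$ agree; call it $R[T^{1/p^\infty}]^\wedge$. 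For (3), $R[T^{1/p^\infty}]^\wedge = W\big((A^\flat[T^{1/p^\infty}])^\wedge_{\varpi^\flat}\big)$ (Witt vectors commute with limits and $A^\flat$ is $\varpi^\flat$-torsion free), and the $\varpi^\flat$-completion of the perfect $\FF_p$-algebra $A^\flat[T^{1/p^\infty}]$ is again perfect; so $R[T^{1/p^\infty}]^\wedge$ is a perfect $(p,[\varpi^\flat])$-complete $\delta$-ring, with Frobenius lift determined by $\varphi_R$ on coefficients and the continuous extension of $T_i^{1/p^k}\mapsto T_i^{1/p^{k-1}}$, as claimed. Since $d$ stays distinguished in $R[T^{1/p^\infty}]^\wedge$ (it is distinguished in $R$ and the map is one of perfect $\delta$-rings, cf. Lemma \ref{rig_prisms}), $(R[T^{1/p^\infty}]^\wedge, d)$ is a perfect prism.

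Finally, for (4) and (1) I would compute the quotient by $d$. As $d$ is a nonzerodivisor in $R$ it stays one in $R[T^{1/p^\infty}]$, so $R[T^{1/p^\infty}]/d = (R/d)[T^{1/p^\infty}] = A[T^{1/p^\infty}]$; completing, and using $\varpi = [\varpi^\flat] \bmod d$ together with the standard identity $M^\wedge_{(x,d)}/d \simeq (M/d)^\wedge_x$ for $d$ a nonzerodivisor (valid derivedly, hence classically here by bounded torsion), gives $R[T^{1/p^\infty}]^\wedge/d = A[T^{1/p^\infty}]^\wedge$. Thus $(R[T^{1/p^\infty}]^\wedge, d)$ is the perfect prism attached to $A[T^{1/p^\infty}]^\wedge$ by Proposition \ref{perfect_prism_perfectoid}, which is (4); in particular $A[T^{1/p^\infty}]^\wedge$ is a $\varpi$-complete integral perfectoid ring, so its almost version lies in $\Perfd_{K_{\le 1}}^{\Prism a}$, giving (1). (Alternatively, (1) and (4) can be obtained by identifying $A[T^{1/p^\infty}]^\wedge$ with the completed tensor product $A \cotimes_{K_{\le 1}} K_{\le 1}[T^{1/p^\infty}]^\wedge$ via universal properties and invoking Proposition \ref{tensor_integral_perfectoid}.) The main obstacle I expect is the $\delta$-ring bookkeeping of the second paragraph — pinning down the $\delta$-structure on the colimit and checking Frobenius becomes bijective — together with the care needed to interchange derived and classical completions and to match the $(p,d)$- and $(p,[\varpi^\flat])$-adic topologies on the polynomial ring; the rest is formal manipulation with the results of Sections \ref{sect_int_perfd_alg} and \ref{sect_almost_math}.
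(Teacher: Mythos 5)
Your proof is correct and takes essentially the same route as the paper: a two-stage completion (first $p$, then $[\varpi^\flat]$) using $p$-torsion-freeness, Proposition \ref{perfect_delta_rings_equiv}, the rank-one lemma \ref{frob_perfect_element} for the Frobenius lift and Lemma \ref{torsion_perfect_delta_ring} for bounded $[\varpi^\flat]$-torsion, followed by reduction mod $d$ to obtain (1) and (4). Your extra preliminaries (the explicit $\delta$-structure on the uncompleted colimit, the Witt-vector identification of the completion, and the comparison of the $(p,d)$- and $(p,[\varpi^\flat])$-adic topologies) are refinements of steps the paper treats implicitly, not a different argument.
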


\begin{proof} We begin by proving $(2)$. As $\A_{\inf}(A)$ is $p$-torsion free so is $\A_{\inf}(A)[T_1^{1/p^\infty}, \dots, T_n^{1/p^\infty}]$, showing that the derived and classical $p$-completion agree, we denote by by $\A_{\inf}(A)[T_1^{1/p^\infty}, \dots, T_n^{1/p^\infty}]^{\wedge}_{(p)}$. As $p$-completion preserves $p$-torsion freeness (\ref{completion_tf}), and since $\A_{\inf}(A)[T_1^{1/p^\infty}, \dots, T_n^{1/p^\infty}]^{\wedge}_{(p)}/p = A^{\flat}[T_1^{1/p^\infty}, \dots, T_n^{1/p^\infty}]$ is a perfect algebra of characteristic $p$, we learn from \ref{perfect_delta_rings_equiv} that $\A_{\inf}(A)[T_1^{1/p^\infty}, \dots, T_n^{1/p^\infty}]^{\wedge}_{(p)}$ is a perfect $\delta$-ring. Furthermore, as $T_i^{1/p^k}$ are elements of rank one (\ref{frob_perfect_element}) we can conclude that the Frobenius lift on $\A_{\inf}(A)[T_1^{1/p^\infty}, \dots, T_n^{1/p^\infty}]^{\wedge}_{(p)}$ is given by the Frobenius lift on $\A_{\inf}(A)$ and $T_i^{1/p^k} \mapsto T_i^{1/p^{k-1}}$. As $\A_{\inf}(A)[T_1^{1/p^\infty}, \dots, T_n^{1/p^\infty}]^{\wedge}_{(p)}$ is a perfect $\delta$-ring, it follows that it has bounded $[\varpi^{\flat}]$-torsion, showing that its derived and classical $[\varpi^{\flat}]$-completion agree. This completes the proof of $(2)$, and the description of the Frobenius lift on $\A_{\inf}(A)[T_1^{1/p^\infty}, \dots, T_n^{1/p^\infty}]^{\wedge}_{(p)}$ implies $(3)$.

Finally, as $d$ is a distinguished element it follows that $(\A_{\inf}(A)[T_1^{1/p^\infty}, \dots, T_n^{1/p^\infty}]^{\wedge}, d)$ is a perfect prism, in particular $d$ is a non-zero divisor. And as $\A_{\inf}(A)[T_1^{1/p^\infty}, \dots, T_n^{1/p^\infty}]/d = A[T_1^{1/p^\infty}, \dots, T_n^{1/p^\infty}]$, generalities on derived completions imply that $\A_{\inf}(A)[T_1^{1/p^\infty}, \dots, T_n^{1/p^\infty}]^{\wedge}/d = A[T_1^{1/p^\infty}, \dots, T_n^{1/p^\infty}]^{\wedge}$. Finishing the proof of $(1)$ and $(4)$.
\end{proof}

\begin{defn}\label{free_perfect_banach_alg} Let $A$ be a Banach $K$-algebra with norm $|-|_A$. Define the morphism $\varphi: A \langle T_1, \dots, T_n \rangle \rightarrow A \langle T_1, \dots, T_n \rangle$ as the unique contractive morphism of $A$-algebras satisfying $T_i \mapsto T_i^p$ (cf. \ref{free_banach_algebras}). We define $A \langle T_1^{1/p^\infty}, \dots, T_n^{1/p^{\infty}} \rangle$ as the colimit computed in $\Ban_K^{\contr}$
\begin{cd}
	A \langle T_1^{1/p^\infty}, \dots, T_n^{1/p^{\infty}} \rangle := \colim_{\ZZ_{\ge 0}} \Big ( A \langle T_1, \dots, T_n \rangle \ar[r, "\varphi"] & A \langle T_1, \dots, T_n \rangle \ar[r, "\varphi"] & \cdots \Big )^{\wedge}
\end{cd}
which is guaranteed to exist as all transition maps are contractive (\ref{filtered_colim_banach_contr}). As $\varphi$ is an isometry we learn that the norm on $A \langle T_1^{1/p^\infty}, \dots, T_n^{1/p^{\infty}} \rangle$ is given by
\begin{align*}
	|-|: A \langle T_1^{1/p^\infty}, \dots, T_n^{1/p^{\infty}} \rangle \rightarrow \RR_{\ge 0} && \sum_{\nu \in \ZZ[\frac{1}{p}]_{\ge 0}^n} a_\nu T^{\nu} \mapsto \max_{\nu \in \ZZ[\frac{1}{p}]_{\ge 0}^n} |a_\nu|_A
\end{align*}
We remark that the morphism $A \rightarrow A \langle T_1^{1/p^\infty}, \dots, T_n^{1/p^{\infty}} \rangle$ enjoys the following universal property in $\Ban_K^{\contr}$: given a contractive morphism $A \rightarrow B$ and a collection of objects $\{b_1, \dots, b_n\} \subset B_{\le 1} \subset B$ together with a choice of compatible $p$-power roots $b_i^{1/p^n} \in B_{\le 1}$ for each $b_i$, there exists a unique contractive morphism $\tilde{f}: A \langle T_1^{1/p^\infty}, \dots, T_n^{1/p^{\infty}} \rangle \rightarrow B$ sending $T_i^{1/p^n} \mapsto b_i^{1/p^n}$ making the following diagram commute
\begin{cd}
	A \ar[r, "f"] \ar[d, hook] & B \\
	A \langle T_1^{1/p^\infty}, \dots, T_n^{1/p^{\infty}} \rangle \ar[ru, "\tilde{f}", dashed, swap]
\end{cd}
This universal property is a consequence of the definition of $A \langle T_1^{1/p^\infty}, \dots, T_n^{1/p^{\infty}} \rangle$ together with \ref{free_banach_algebras}.

More generally, given a bounded morphism $A \rightarrow B$ of Banach $K$-algebras, a constant $C > 0$, and a collection of objects $\{b_1, \dots, b_n\} \subset B^{\circ} \subset B$ together with a choice of compatible $p$-power roots $b_i^{1/p^n} \in B_{\le C}$ for each $b_i$, there exists a unique bounded morphism $\tilde{f}:  A \langle T_1^{1/p^\infty}, \dots, T_n^{1/p^{\infty}} \rangle \rightarrow B$ sending $T_i^{1/p^n} \mapsto b_i^{1/p^n}$ making the following diagram commute
\begin{cd}
	A \ar[r, "f"] \ar[d, hook] & B \\
	A \langle T_1^{1/p^\infty}, \dots, T_n^{1/p^{\infty}} \rangle \ar[ru, "\tilde{f}", dashed, swap]
\end{cd}
This universal property is a consequence of the definition of $A \langle T_1^{1/p^\infty}, \dots, T_n^{1/p^{\infty}} \rangle$ together with \ref{free_banach_algebras} and \ref{filtered_colim_banach_contr}.
\end{defn}

\begin{prop}\label{equiv_free_perfect_completed_alg_int_ban} Let $A \in \CAlg_{K_{\le 1}}^{\wedge \tf}$ be an object in the essential image of $H^0 j_*: \CAlg_{K_{\le 1}}^{\wedge a \tf} \rightarrow \CAlg_{K_{\le 1}}^{\wedge \tf}$ (cf. \ref{almost_funct_tf_alg}). Then,
\begin{enumerate}[(1)]
	\item The object $A[T_1^{1/p^\infty}, \dots, T_n^{1/p^\infty}]^{\wedge} \in \CAlg_{K_{\le 1}}^{\wedge \tf}$ is in the essential image of $H^0 j_*$.
	\item Under the equivalence of \ref{equiv_almost_tf_banach}, the object $A[T_1^{1/p^\infty}, \dots, T_n^{1/p^\infty}]^{\wedge} \in \CAlg_{K_{\le 1}}^{\wedge a \tf}$ corresponds to $A[\frac{1}{\varpi}]\langle T_1^{1/p^\infty}, \dots, T_n^{1/p^{\infty}} \rangle \in \Ban_K^{\contr}$.
\end{enumerate}
\end{prop}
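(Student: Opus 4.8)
The plan is to mimic the proof of Proposition \ref{int_model_rat_domain}, treating $A[T_1^{1/p^\infty}, \dots, T_n^{1/p^\infty}]^\wedge$ as a filtered colimit of the pieces $A[T_1, \dots, T_n]^\wedge$ and using that both the equivalence \ref{equiv_almost_tf_banach} and the passage to filtered colimits are well understood. For part $(1)$, the cleanest route is to recall from Definition \ref{int_free_perfect_alg} that $A[T_1^{1/p^\infty}, \dots, T_n^{1/p^\infty}]^\wedge$ is the colimit in $\CAlg_{K_{\le 1}}^{\wedge \tf}$ of the Frobenius tower $A[T_1,\dots,T_n]^\wedge \xrightarrow{\varphi} A[T_1,\dots,T_n]^\wedge \to \cdots$, and that by Proposition \ref{equiv_free_completed_alg_int_ban}(1) each term of this tower lies in the essential image of $H^0 j_*$. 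So the statement reduces to: the essential image of $H^0 j_* : \CAlg_{K_{\le 1}}^{\wedge a \tf} \hookrightarrow \CAlg_{K_{\le 1}}^{\wedge \tf}$ is closed under filtered colimits computed in $\CAlg_{K_{\le 1}}^{\wedge \tf}$. Since $H^0 j_*$ is fully faithful (Lemma \ref{abelian_j*_fully_faithful}) it suffices to check that $B_* \simeq B$ for $B = A[T_1^{1/p^\infty}, \dots, T_n^{1/p^\infty}]^\wedge$; concretely, $B$ is $\varpi$-torsion free so $B_*$ is the submodule of $B[1/\varpi]$ of elements $f$ with $\varepsilon f \in B$ for all $\varepsilon \in (\varpi)_{\perfd}$, and writing $f = \sum_{\nu \in \ZZ[1/p]_{\ge 0}^n} a_\nu T^\nu$ (a convergent sum, with coefficients in $A[1/\varpi]$) the condition $\varepsilon f \in B$ forces $\varepsilon a_\nu \in A$ for each $\nu$, hence $a_\nu \in A$ since $A \simeq A_*$, hence $f \in B$.

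For part $(2)$, the strategy is again to identify both sides as a filtered colimit and then invoke Proposition \ref{equiv_free_completed_alg_int_ban}(2) termwise. The functor $(-)[\frac{1}{\varpi}] : \CAlg_{K_{\le 1}}^{\wedge a \tf} \to \Ban_K^{\contr}$ is an equivalence of categories (Proposition \ref{equiv_almost_tf_banach}), so in particular it commutes with filtered colimits. On the algebraic side, $A[T_1^{1/p^\infty}, \dots, T_n^{1/p^\infty}]^\wedge$ is by Definition \ref{int_free_perfect_alg} the colimit (in $\CAlg_{K_{\le 1}}^{\wedge \tf}$, equivalently in $\CAlg_{K_{\le 1}}^{\wedge a \tf}$ once part $(1)$ is in place) of the tower of $A[T_1, \dots, T_n]^\wedge$'s under $\varphi : T_i \mapsto T_i^p$; on the Banach side, $A[\frac{1}{\varpi}]\langle T_1^{1/p^\infty},\dots,T_n^{1/p^\infty}\rangle$ is by Definition \ref{free_perfect_banach_alg} the colimit in $\Ban_K^{\contr}$ of the tower of $A[\frac{1}{\varpi}]\langle T_1,\dots,T_n\rangle$'s under the same $\varphi$. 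By Proposition \ref{equiv_free_completed_alg_int_ban}(2), applying $(-)[\frac{1}{\varpi}]$ carries each $A[T_1,\dots,T_n]^\wedge$ to $A[\frac{1}{\varpi}]\langle T_1,\dots,T_n\rangle$, and one checks that it intertwines the two Frobenius transition maps (both are determined by $T_i \mapsto T_i^p$, and the equivalence is compatible with the universal properties of Definitions \ref{int_free_completed_alg} and \ref{free_banach_algebras}). Passing to the colimit then gives the claimed identification.

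The main obstacle, such as it is, is purely bookkeeping: making sure that the colimit in Definition \ref{int_free_perfect_alg} can legitimately be regarded as a filtered colimit \emph{inside} $\CAlg_{K_{\le 1}}^{\wedge a \tf}$ (rather than $\CAlg_{K_{\le 1}}^{\wedge \tf}$ or $\CAlg_{K_{\le 1}}^{\wedge}$), which is exactly what part $(1)$ secures; and dually, checking that $(-)[\frac{1}{\varpi}]$, being an equivalence, preserves this filtered colimit and matches up the transition maps. There is one small subtlety worth spelling out: in the colimit of the Frobenius tower one should check that the inclusions $A[T_1,\dots,T_n]^\wedge \hookrightarrow A[T_1^{1/p^\infty},\dots,T_n^{1/p^\infty}]^\wedge$ are well-behaved enough that an element of the colimit with a given $\varpi$-adic expansion has all its coefficients in some finite stage, but this is immediate from the fact that each $T^\nu$ with $\nu \in \ZZ[1/p]^n_{\ge 0}$ already appears in a fixed finite stage. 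Once these are in place, everything follows formally from \ref{equiv_free_completed_alg_int_ban} and \ref{equiv_almost_tf_banach}.
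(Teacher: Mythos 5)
Your argument is correct, and part (1) is essentially the paper's own proof: both verify directly that $B_* = B$ for $B = A[T_1^{1/p^\infty},\dots,T_n^{1/p^\infty}]^{\wedge}$ by extracting coefficients of the expansion $f = \sum_{\nu \in \ZZ[1/p]_{\ge 0}^n} a_\nu T^\nu$ and using $A \simeq A_*$ coefficientwise; your framing via closure of the essential image under filtered colimits is just packaging around that same check. For part (2) you take a genuinely different, though closely related, route. The paper's proof is a one-line universal-property match: under the equivalence of Proposition \ref{equiv_almost_tf_banach}, the objects $A[T_1^{1/p^\infty},\dots,T_n^{1/p^\infty}]^{\wedge}$ and $A[\frac{1}{\varpi}]\langle T_1^{1/p^\infty},\dots,T_n^{1/p^\infty}\rangle$ represent the same functor, by the universal properties recorded in Definitions \ref{int_free_perfect_alg} and \ref{free_perfect_banach_alg}. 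You instead identify both sides as filtered colimits of the Frobenius towers, apply the finite-variable case (Proposition \ref{equiv_free_completed_alg_int_ban}(2)) termwise, check the transition maps correspond, and use that the equivalence preserves colimits; this is exactly the strategy the paper deploys later for Proposition \ref{int_model_perfect_rat_domain}, and it has the advantage of making the compatibility with the tower explicit, at the cost of the bookkeeping you flag about computing the colimit inside $\CAlg_{K_{\le 1}}^{\wedge a \tf}$ (which part (1) legitimately secures, since the reflective inclusion via $H^0 j_*$ means the ambient colimit, once shown to lie in the essential image, is the colimit in the subcategory). One small inessential slip: your remark that an element of the colimit "has all its coefficients in some finite stage" is only true before $\varpi$-completion; for the completed object the coefficients merely tend to zero $\varpi$-adically, but this does not matter, since your decisive step (and the paper's) is the coefficientwise implication $\varepsilon a_\nu \in A$ for all $\varepsilon \in (\varpi)_{\perfd}$ forces $a_\nu \in A$, which applies directly to the completion.
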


\begin{proof} In order to show $(1)$ it suffices to show that $(A[T_1^{1/p^\infty}, \dots, T_n^{1/p^\infty}]^{\wedge})_* = A[T_1^{1/p^\infty}, \dots, T_n^{1/p^\infty}]^{\wedge}$ (cf. \ref{almost_elements}). Let $f = \sum_{\nu \in \ZZ[\frac{1}{p}]_{\ge 0}^n} a_\nu T^{\nu}$ be an object of $A[T_1^{1/p^\infty}, \dots, T_n^{1/p^\infty}]^{\wedge}[\frac{1}{\varpi}]$ and assume that $\varepsilon f \in A[T_1^{1/p^\infty}, \dots, T_n^{1/p^\infty}]^{\wedge}$ for all $\varepsilon \in (\varpi)_{\perfd}$. Then, $\varepsilon a_\nu \in A$ for all $\varepsilon$, implying that $a_\nu \in A$ by the assumptions on $A$, proving that $f \in A[T_1^{1/p^\infty}, \dots, T_n^{1/p^\infty}]^{\wedge}$ as desired. Statement $(2)$ follows from the equivalence \ref{equiv_almost_tf_banach} and the fact that $A[T_1^{1/p^\infty}, \dots, T_n^{1/p^\infty}]^{\wedge} \in \CAlg_{K_{\le 1}}^{\wedge a \tf}$ and $A[\frac{1}{\varpi}] \langle T_1^{1/p^\infty}, \dots, T_n^{1/p^\infty} \rangle \in \Ban_K^{\contr}$ have the same universal property (cf. \ref{int_free_perfect_alg} and \ref{free_perfect_banach_alg}).
\end{proof}

\begin{defn}\label{defn_perfect_rat_domains} Let $A$ be a Banach $K$-algebra, and $\{f_1, \dots, f_n\} \subset A$ a collection of elements generating the unit ideal, together with a choice of compatible $p$-power roots $f_i^{1/p^m} \in A$ for each $f_i$. Let $I_{f_i} \subset A \langle T_1, \dots, T_n \rangle$ be the ideal generated by $(f_i T_1 - f_1, \dots, f_i T_n - f_n)$, and $I_{f_i}^{1/p^m} \subset A \langle T_1^{1/p^\infty}, \dots, T_n^{1/p^\infty} \rangle$ the following ideal
\begin{align*}
	I_{f_i}^{1/p^m} := (f_i^{1/p^m} T_1^{1/p^m} - f_1^{1/p^m}, \dots, f_i^{1/p^m} T_n^{1/p^m} - f_n^{1/p^m}) && I_{f_i}^{1/p^\infty} = \bigcup_{m \in \ZZ_{\ge 0}} I_{f_i}^{1/p^m}
\end{align*}
Notice that the ideal $I_{f_i}^{1/p^\infty}$ admits an exhaustive increasing filtration
\begin{equation*}
	I_{f_i} \subset I_{f_i}^{1/p} \subset \cdots \subset I_{f_i}^{1/p^m} \subset \cdots \subset I_{f_i}^{1/p^\infty}
\end{equation*}
We define the perfected rational domain $A \rightarrow A \blang \frac{f_1^{1/p^\infty}, \dots, f_n^{1/p^\infty}}{f_i^{1/p^\infty}} \brang$ as the contractive morphisms obtained as the composition of the following maps
\begin{equation*}
	A \rightarrow A \langle T_1^{1/p^\infty}, \dots, T_n^{1/p^\infty} \rangle \rightarrow (A \langle T_1^{1/p^\infty}, \dots, T_n^{1/p^\infty} \rangle/I_{f_i}^{1/p^\infty})^{\wedge} =: A \blang \frac{f_1^{1/p^\infty}, \dots, f_n^{1/p^\infty}}{f_i^{1/p^\infty}} \brang
\end{equation*}
Furthermore, its clear by construction that the morphism $A \rightarrow A \blang \frac{f_1^{1/p^\infty}, \dots, f_n^{1/p^\infty}}{f_i^{1/p^\infty}} \brang$ admits a factorization into a pair of contractive maps
\begin{equation*}
	A \rightarrow A \blang \frac{f_1, \dots, f_n}{f_i} \brang \rightarrow A \blang \frac{f_1^{1/p^\infty}, \dots, f_n^{1/p^\infty}}{f_i^{1/p^\infty}} \brang
\end{equation*}
\end{defn}

\begin{lemma}\label{equiv_rat_domain_and_perfected} Let $q: A \blang \frac{f_1, \dots, f_n}{f_i} \brang \rightarrow A \blang \frac{f_1^{1/p^\infty}, \dots, f_n^{1/p^\infty}}{f_i^{1/p^\infty}} \brang$ be the contractive morphism of Banach $K$-algebras introduced in \ref{defn_perfect_rat_domains}. Then, $q$ admits a bounded inverse.
\end{lemma}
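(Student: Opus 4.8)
The plan is to build the bounded inverse $\psi$ of $q$ by hand. The starting observation is that $f_i$ is a unit in $B:=A\blang\frac{f_1,\dots,f_n}{f_i}\brang$ by Lemma \ref{unit_in_rational_domain}, with $f_i^{-1}=\sum_k h_k T_k$ for some $\{h_k\}\subset A$ satisfying $\sum_k h_k f_k=1$; consequently the chosen root $f_i^{1/p^m}\in A$ maps to a unit of $B$, and the elements $T_j^{1/p^m}:=f_j^{1/p^m}(f_i^{1/p^m})^{-1}\in B$ form a compatible system of $p$-power roots of the powerbounded element $T_j=f_j/f_i\in B^\circ$. I would then apply the bounded form of the universal property of $A\langle T_1^{1/p^\infty},\dots,T_n^{1/p^\infty}\rangle$ from \ref{free_perfect_banach_alg}, taking the structure map $A\to B$, the $b_j:=T_j$, and the roots $b_j^{1/p^m}:=T_j^{1/p^m}$: provided one exhibits a constant $C>0$ with $|T_j^{1/p^m}|_B\le C$ for all $j,m$, this yields a bounded map $A\langle T_1^{1/p^\infty},\dots\rangle\to B$ which kills $I_{f_i}^{1/p^\infty}$ (since $f_i^{1/p^m}T_j^{1/p^m}=f_j^{1/p^m}$ in $B$) and hence descends to a bounded map $\psi:A\blang\frac{f_1^{1/p^\infty},\dots,f_n^{1/p^\infty}}{f_i^{1/p^\infty}}\brang\to B$.

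That $\psi$ is inverse to $q$ is then formal: both composites $\psi\circ q$ and $q\circ\psi$ fix the image of $A$ and send $T_j\mapsto T_j$, and since these generate a dense subring of each side, density and continuity force both composites to be the identity. A cleaner way to understand the relation between the two algebras — and one I would use for the norm estimate below — is that the natural inclusion of abstract rings $A[T_1,\dots,T_n]/I_{f_i}\hookrightarrow A[T_1^{1/p^\infty},\dots]/I_{f_i}^{1/p^\infty}$ is an \emph{isomorphism}: it is surjective because, $f_i$ being a unit, each generator $T_j^{1/p^m}$ is already the polynomial $f_j^{1/p^m}(f_i^{1/p^m})^{p^m-1}\sum_k h_k T_k$ in $T_1,\dots,T_n$ with coefficients in $A$, and it is injective because it admits a one-sided inverse sending $T_j^{1/p^m}\mapsto f_j^{1/p^m}(f_i^{1/p^m})^{-1}$.

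The real content, and what I expect to be the main obstacle, is the uniform bound $|T_j^{1/p^m}|_B\le C$. For a non-uniform $A$ the naive estimate $|T_j^{1/p^m}|_B\le|f_j^{1/p^m}|_A\,|f_i^{1/p^m}|_A^{p^m-1}\max_k|h_k|_A$ need not be bounded in $m$, so I would pass to the integral models via the dictionary (Theorem \ref{intro_dictionary}). After rescaling $f_j\mapsto\varpi^N f_j$ so that $|f_j|_A\le 1$, Proposition \ref{int_model_rat_domain} identifies $B_{\le 1}$ with $(A_{\le 1}[T_1,\dots,T_n]^\wedge/(f_iT_j-f_j))^{\wedge a\tf}$, and the pattern of \ref{equiv_free_perfect_completed_alg_int_ban} together with \ref{pushout_in_comp-tf-a} identifies the $(-)_{\le 1}$ of the perfected rational domain as the corresponding $\varpi$-complete torsion-free almost $K_{\le 1}$-algebra built from $A_{\le 1}[T_1^{1/p^\infty},\dots]^\wedge$ modulo $I_{f_i}^{1/p^\infty}$. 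Running the same ``adjoining the higher roots is redundant once $f_i$ is inverted'' argument at the integral level shows that these two $K_{\le 1}$-lattices live inside a common $K$-algebra and are commensurable — each contained in a bounded power $\varpi^{-L}$, with $L$ independent of $m$, of the other — so $T_j^{1/p^m}\in\varpi^{-L}B_{\le 1}$ and hence $|T_j^{1/p^m}|_B\le|\varpi|^{-L}=:C$. Berkovich's maximum modulus principle \ref{berko_max_modulus}, which gives $\rho_B(T_j^{1/p^m})=(\max_{x}|f_j(x)|/|f_i(x)|)^{1/p^m}\le 1$ on the rational domain, is the qualitative input that makes the $\varpi$-adic lattice comparison go through; working directly with the Banach norms rather than the integral models is exactly what does not work.
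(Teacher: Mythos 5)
Your construction of the candidate inverse is exactly the paper's: $f_i$, hence each $f_i^{1/p^m}$, is a unit in $B := A\blang \frac{f_1,\dots,f_n}{f_i}\brang$ by Lemma \ref{unit_in_rational_domain}, so the compatible roots $(f_j/f_i)^{1/p^m}$ already live in $B$; one feeds them into the bounded universal property of $A\langle T_1^{1/p^\infty},\dots,T_n^{1/p^\infty}\rangle$ from \ref{free_perfect_banach_alg}, checks that $I_{f_i}^{1/p^\infty}$ is killed, and concludes that the two maps are mutually inverse (the paper simply calls this clear; your density argument is the standard justification, and your observation that the uncompleted quotients already coincide is fine but not needed). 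The divergence is at the single quantitative point, the uniform bound $|(f_j/f_i)^{1/p^m}|_B \le C$, which is what \ref{free_perfect_banach_alg} requires.

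There your proposal has a genuine gap. You declare that the direct norm estimate ``does not work'' and replace it by the assertion that the integral model of $B$ and that of the perfected rational domain are commensurable lattices, each contained in a fixed $\varpi^{-L}$-multiple of the other. This is never proved, and as stated it is essentially the lemma itself: both algebras are completions of the same abstract ring $A[T_1,\dots,T_n]/I_{f_i}$ with respect to two seminorms, and a two-sided comparison of the corresponding unit balls is precisely the boundedness of $q^{-1}$ — so using the unit ball of the target to bound $T_j^{1/p^m}$ inside $B$ is circular unless that comparison is established independently. The auxiliary inputs you cite do not supply it: Berkovich's maximum modulus only gives $\rho_B((f_j/f_i)^{1/p^m})\le 1$, which bounds nothing in a non-uniform Banach algebra (cf. Example \ref{example_powerbdd_stric_sp_radius}), and Proposition \ref{int_model_perfect_rat_domain} is proved only for uniform $A$ with $|f_j|\le 1$ and describes the perfected algebra, not $B$. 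The paper instead proves the bound directly from the invertibility of $f_i$: pick $m$ with $|\varpi^m/f_i|_B\le 1$ and write $(f_j/f_i)^{1/p^n} = f_j^{1/p^n}(f_i^{1/p^n})^{p^n-1}\cdot f_i^{-1}$, so that once the images of the chosen roots have norm $\le 1$ (the situation your rescaling is meant to arrange, and the one in force in all later applications, where $A$ is uniform) one gets $|(f_j/f_i)^{1/p^n}|_B\le|\varpi|^{-m}$ uniformly in $n$. In short, the direct Banach-norm estimate is exactly what carries the paper's proof; your write-up replaces it with an unproven claim rather than a proof, and the detour through the dictionary buys nothing here.
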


\begin{proof} Since $\frac{1}{f_i} \in A \blang \frac{f_1, \dots, f_n}{f_i} \brang$ by \ref{unit_in_rational_domain}, we can conclude that $\Big( \frac{f_j}{f_i} \Big )^{1/p^n} \in A \blang \frac{f_1, \dots, f_n}{f_i} \brang$ for all $n \in \ZZ_{\ge 0}$. Moreover, as $\varpi \in K$ is a topological nilpotent unit we know that there exists a $m > 0$ such that $|\frac{\varpi^m}{f_i}| \le 1$ in $A \blang \frac{f_1, \dots, f_n}{f_i} \brang$, hence we can conclude that $|\Big( \frac{f_j}{f_i} \Big )^{1/p^n}| \le |\varpi^{-m}|$ for all $\Big (\frac{f_j}{f_i} \Big)^{1/p^n} \in A \blang \frac{f_1, \dots, f_n}{f_i} \brang$. This bound, together with the fact that each $\frac{f_j}{f_i}$ is power-bounded in $A \blang \frac{f_1, \dots, f_n}{f_i} \brang$ implies that there exists a bounded map $A \langle T_1^{1/p^\infty}, \dots, T_n^{1/p^\infty} \rangle \rightarrow A \blang \frac{f_1, \dots, f_n}{f_i} \brang$ sending $T_j^{1/p^n} \mapsto \Big (\frac{f_j}{f_i} \Big)^{1/p^n}$ (cf. \ref{free_perfect_banach_alg}); and since $I_{f_i}^{1/p^\infty}$ is in the kernel we get an induced bounded map
\begin{align*}
	q^{-1}: A \blang \frac{f_1^{1/p^\infty}, \dots, f_n^{1/p^\infty}}{f_i^{1/p^\infty}} \brang \rightarrow A \blang \frac{f_1, \dots, f_n}{f_i} \brang && \Big (\frac{f_j}{f_i} \Big)^{1/p^n} \mapsto \Big (\frac{f_j}{f_i} \Big)^{1/p^n}
\end{align*}
Its clear that this morphisms are inverses of each other, proving the result.
\end{proof}

\begin{prop}\label{int_model_perfect_rat_domain} Let $A$ be a uniform Banach $K$-algebra, and $\{f_1, \dots, f_n\} \subset A$ a collection of elements generating the unit ideal which have norm $\le 1$, together with a choice of compatible $p$-power roots $f_i^{1/p^m} \in A$ for each $f_i$. Then, the perfected rational domain $A \blang \frac{f_1^{1/p^\infty}, \dots, f_n^{1/p^\infty}}{f_i^{1/p^\infty}} \brang$ corresponds to
\begin{equation*}
	\Big ( A_{\le 1}[T_1^{1/p^\infty}, \dots, T_n^{1/p^\infty}]^{\wedge} / I_{f_i, \le 1}^{1/p^\infty}   \Big )^{\wedge a \tf} \in \CAlg_{K_{\le 1}}^{\wedge a \tf}
\end{equation*}
under the equivalence \ref{equiv_almost_tf_banach}, and where the ideal $I_{f_i, \le 1}^{1/p^\infty} \subset A_{\le 1}[T_1^{1/p^\infty}, \dots, T_n^{1/p^\infty}]^{\wedge}$ is defined as
\begin{align*}
	I_{f_i, \le 1}^{1/p^m} := (f_i^{1/p^m} T_1^{1/p^m} - f_1^{1/p^m}, \dots, f_i^{1/p^m} T_n^{1/p^m} - f_n^{1/p^m}) && I_{f_i, \le 1}^{1/p^\infty} = \bigcup_{m \in \ZZ_{\ge 0}} I_{f_i, \le 1}^{1/p^m}
\end{align*}
Again, notice that the ideal $I_{f_i, \le 1}^{1/p^\infty}$ admits an exhaustive increasing filtration
\begin{equation*}
	I_{f_i, \le 1} \subset I_{f_i, \le 1}^{1/p} \subset \cdots \subset I_{f_i, \le 1}^{1/p^m} \subset \cdots \subset I_{f_i, \le 1}^{1/p^\infty}
\end{equation*}
Furthermore, as $A_{\le 1}[T_1^{1/p^\infty}, \dots, T_n^{1/p^\infty}]$ is $\varpi$-torsion free its derived and classical $\varpi$-completion agree, the same holds for the ideal $I_{f_i}^{1/p^\infty}$.
\end{prop}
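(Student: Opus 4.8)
The plan is to reduce the statement to the non-perfected case, Proposition \ref{int_model_rat_domain}, by exhibiting both sides as filtered colimits over $m \in \ZZ_{\ge 0}$ of the rational-domain constructions attached to the shifted data $\{f_1^{1/p^m}, \dots, f_n^{1/p^m}\}$. First I would record two elementary observations: since $A$ is uniform and $|f_j|_A \le 1$ we have $|f_j^{1/p^m}|_A = |f_j|_A^{1/p^m} \le 1$, and writing $\sum_j h_j f_j = 1$ with $f_j = (f_j^{1/p^m})^{p^m}$ shows that $\{f_1^{1/p^m}, \dots, f_n^{1/p^m}\}$ again generates the unit ideal of $A$. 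Relabelling the variable $T_j^{1/p^m}$ as a degree-one variable then identifies the ``$m$-th layer'' $A \langle T_1^{1/p^m}, \dots, T_n^{1/p^m} \rangle / I_{f_i}^{1/p^m}$ of the perfected rational domain isometrically with the ordinary rational domain $A \blang \frac{f_1^{1/p^m}, \dots, f_n^{1/p^m}}{f_i^{1/p^m}} \brang$ of Definition \ref{defn_rational_domain}; here the bound $|f_j^{1/p^m}|_A \le 1$ is precisely what makes \ref{defn_rational_domain} and, later, \ref{int_model_rat_domain} applicable.

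Next I would unwind Definition \ref{defn_perfect_rat_domains}, the colimit presentation $A \langle T_1^{1/p^\infty}, \dots, T_n^{1/p^\infty} \rangle = \colim_m A \langle T_1, \dots, T_n \rangle$ with Frobenius transition maps of \ref{free_perfect_banach_alg}, the exhaustive increasing filtration $I_{f_i} \subset I_{f_i}^{1/p} \subset \cdots$, and the description of filtered colimits in $\Ban_K^{\contr}$ from \ref{filtered_colim_banach_contr}, to obtain an isomorphism in $\Ban_K^{\contr}$
\begin{align*}
	A \blang \frac{f_1^{1/p^\infty}, \dots, f_n^{1/p^\infty}}{f_i^{1/p^\infty}} \brang \;\simeq\; \colim_{m \in \ZZ_{\ge 0}} A \blang \frac{f_1^{1/p^m}, \dots, f_n^{1/p^m}}{f_i^{1/p^m}} \brang,
\end{align*}
where the transition maps are the contractive $A$-algebra maps determined by $\tfrac{f_j^{1/p^m}}{f_i^{1/p^m}} \mapsto \big(\tfrac{f_j^{1/p^{m+1}}}{f_i^{1/p^{m+1}}}\big)^p$. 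In essence this asserts that quotienting the colimit $A \langle T_1^{1/p^\infty}, \dots \rangle$ by the union of the ideals $I_{f_i}^{1/p^m}$ and then completing agrees with forming the completed filtered colimit of the quotients $A \langle T_1^{1/p^m}, \dots \rangle / I_{f_i}^{1/p^m}$; this is the step where one must check that passing to quotients, completing, and forming filtered colimits in $\Ban_K^{\contr}$ are mutually compatible, and I expect it to be the main technical point of the argument.

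Finally I would transport this colimit through the dictionary. By Proposition \ref{int_model_rat_domain} each layer $A \blang \frac{f_1^{1/p^m}, \dots, f_n^{1/p^m}}{f_i^{1/p^m}} \brang$ corresponds, under the equivalence $(-)_{\le 1}\colon \Ban_K^{\contr} \simeq \CAlg_{K_{\le 1}}^{\wedge a \tf}$ of \ref{equiv_almost_tf_banach}, to $\big( A_{\le 1}[T_1^{(m)}, \dots, T_n^{(m)}]^{\wedge} / (f_i^{1/p^m} T_1^{(m)} - f_1^{1/p^m}, \dots, f_i^{1/p^m} T_n^{(m)} - f_n^{1/p^m}) \big)^{\wedge a \tf}$, with $T_j^{(m)}$ the integral avatar of $T_j^{1/p^m}$; this identification is natural, so the transition maps become $T_j^{(m)} \mapsto (T_j^{(m+1)})^p$. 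Since \ref{equiv_almost_tf_banach} preserves colimits and, by \ref{tf_almost_algebras}, the functor $(-)^{\wedge a \tf}\colon \CAlg_{K_{\le 1}} \to \CAlg_{K_{\le 1}}^{\wedge a \tf}$ is a left adjoint and hence preserves colimits, the colimit in $\CAlg_{K_{\le 1}}^{\wedge a \tf}$ of these objects is $\big( \colim_m A_{\le 1}[T_1^{(m)}, \dots, T_n^{(m)}] / (f_i^{1/p^m} T_j^{(m)} - f_j^{1/p^m}) \big)^{\wedge a \tf}$, the inner colimit taken in $\CAlg_{K_{\le 1}}$ (the inner $\varpi$-completion of each layer being absorbed by the outer $(-)^{\wedge a \tf}$, since any map to a $\varpi$-complete target factors through the completion). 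With the transition maps $T_j^{(m)} \mapsto (T_j^{(m+1)})^p$ the colimit of the polynomial rings is exactly the Frobenius colimit $A_{\le 1}[T_1^{1/p^\infty}, \dots, T_n^{1/p^\infty}]$, and the colimit of the quotient ideals is $\bigcup_m I_{f_i, \le 1}^{1/p^m} = I_{f_i, \le 1}^{1/p^\infty}$, so the colimit is $A_{\le 1}[T_1^{1/p^\infty}, \dots, T_n^{1/p^\infty}] / I_{f_i, \le 1}^{1/p^\infty}$. Applying $(-)^{\wedge a \tf}$, which again absorbs the $\varpi$-completion so that $\big( A_{\le 1}[T_1^{1/p^\infty}, \dots, T_n^{1/p^\infty}] / I_{f_i, \le 1}^{1/p^\infty} \big)^{\wedge a \tf} = \big( A_{\le 1}[T_1^{1/p^\infty}, \dots, T_n^{1/p^\infty}]^{\wedge} / I_{f_i, \le 1}^{1/p^\infty} \big)^{\wedge a \tf}$, and using that $A_{\le 1}[T_1^{1/p^\infty}, \dots, T_n^{1/p^\infty}]$ is $\varpi$-torsion free so that classical and derived $\varpi$-completions coincide throughout, yields the asserted description.
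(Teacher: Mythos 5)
Your proposal is correct, but it organizes the colimit differently from the paper. You decompose the perfected rational domain into a Frobenius tower of \emph{ordinary} rational domains $A \blang \frac{f_1^{1/p^m}, \dots, f_n^{1/p^m}}{f_i^{1/p^m}} \brang$ in degree-one variables, with transition maps $T_j^{(m)} \mapsto (T_j^{(m+1)})^p$, so that Proposition \ref{int_model_rat_domain} can be cited verbatim at each layer; the colimit of the polynomial rings on the integral side then produces $A_{\le 1}[T_1^{1/p^\infty}, \dots, T_n^{1/p^\infty}]$ by perfection. The paper instead keeps the fully perfected free algebras $A\langle T_1^{1/p^\infty}, \dots, T_n^{1/p^\infty}\rangle$ and $A_{\le 1}[T_1^{1/p^\infty}, \dots, T_n^{1/p^\infty}]^{\wedge}$ at every stage and filters only by the relations, setting $Q_k = (A\langle T^{1/p^\infty}\rangle/I_{f_i}^{1/p^k})^{\wedge}$ and $P_k = (A_{\le 1}[T^{1/p^\infty}]^{\wedge}/I_{f_i,\le 1}^{1/p^k})^{\wedge a \tf}$, identifying each pair by rerunning the pushout argument of \ref{int_model_rat_domain} with the perfected free algebras (via \ref{equiv_free_perfect_completed_alg_int_ban} and \ref{pushout_in_comp-tf-a}), with transition maps given by successive quotients; the identification $\colim_k Q_k \simeq A \blang \frac{f_1^{1/p^\infty}, \dots, f_n^{1/p^\infty}}{f_i^{1/p^\infty}} \brang$ is then essentially definitional since $I_{f_i}^{1/p^\infty} = \bigcup_k I_{f_i}^{1/p^k}$. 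What your route buys is that \ref{int_model_rat_domain} is used as a black box; what it costs is exactly the step you flag, the Banach-side identification of the perfected rational domain with the Frobenius colimit of finite-level rational domains — this does go through, by producing mutually inverse contractive maps from the universal properties \ref{free_banach_algebras}, \ref{free_perfect_banach_alg} and \ref{filtered_colim_banach_contr} (uniformity of $A$ gives $|f_j^{1/p^m}| \le 1$, so the shifted tuples are admissible), together with the mild naturality check that the identification of \ref{int_model_rat_domain} intertwines your Frobenius-twisted transition maps, which the paper's quotient-by-relations transition maps make automatic. Your final manipulation, absorbing the inner $\varpi$-completions into the outer $(-)^{\wedge a \tf}$ and commuting it with the filtered colimit via the left-adjoint property from Section \ref{sect_int_closure}, matches the paper's use of the same adjunctions.
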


\begin{proof} First, let us remark that since $A$ is uniform, if $|f_i|_A \le 1$ then $|f_i^{1/p^m}|_A \le 1$ for all $m \in \ZZ_{\ge 0}$. For the duration of this proof let $P_k$ be the objects of $\CAlg_{K_{\le 1}}^{\wedge a \tf}$ defined as
\begin{equation*}
	P_k := \Big ( A_{\le 1}[T_1^{1/p^\infty}, \dots, T_n^{1/p^\infty}]^{\wedge} / I_{f_i, \le 1}^{1/p^k}   \Big )^{\wedge a \tf} \in \CAlg_{K_{\le 1}}^{\wedge a \tf}
\end{equation*}
And recall that $P_k$ can be described as a pushout of the following diagram, computed in $\CAlg_{K_{\le 1}}^{\wedge a \tf}$
\begin{cd}
	A_{\le 1} [T_1, \dots, T_n]^{\wedge} \ar[r, "h_k"] \ar[d] & A_{\le 1}[T_1^{1/p^\infty}, \dots, T_n^{1/p^\infty}]^{\wedge} \ar[d] \\
	A_{\le 1} \ar[r] & P_k
\end{cd}
where $h_k(T_j) = f_i^{1/p^k} T_j^{1/p^k} - f_j^{1/p^k}$. Using the characterization of $P_k$ as a pushout of the above diagram and the equivalence \ref{equiv_almost_tf_banach}, we can conclude, by a similar argument to the one employed in \ref{int_model_rat_domain}, that $P_k$ corresponds to the Banach $K$-algebra 
\begin{equation*}
	Q_k := (A\langle T_1^{1/p^\infty}, \dots, T_n^{1/p^{\infty}} \rangle/ I_{f_i}^{1/p^k})^{\wedge}
\end{equation*}
where the ideal $I_{f_i}^{1/p^m} \subset A\langle T_1^{1/p^\infty}, \dots, T_n^{1/p^{\infty}}\rangle$ was introduced in \ref{defn_perfect_rat_domains}.

Finally, notice that we have canonical identifications $(P_k/I_{f_i, \le 1}^{1/p^{k + 1}})^{\wedge a \tf} \simeq P_{k+1}$ and $(Q_k/I_{f_i}^{1/p^{k+1}})^{\wedge} \simeq Q_{k+1}$, providing us with morphisms $P_k \rightarrow P_{k+1}$ and $Q_k \rightarrow Q_{k+1}$ which correspond to each other under the equivalence \ref{equiv_almost_tf_banach}. In particular, this implies that $P_\infty = \colim_k P_k$ (computed in $\CAlg_{K_{\le 1}}^{\wedge a \tf}$) and $Q_\infty = \colim_k Q_k$ (computed in $\Ban_K^{\contr}$) corresponds to each other under the equivalence \ref{equiv_almost_tf_banach}. And by construction we conclude that
\begin{align*}
	P_{\infty} = \Big ( A_{\le 1}[T_1^{1/p^\infty}, \dots, T_n^{1/p^\infty}]^{\wedge} / I_{f_i, \le 1}^{1/p^\infty}   \Big )^{\wedge a \tf} &&
	Q_{\infty} = A \blang \frac{f_1^{1/p^\infty}, \dots, f_n^{1/p^\infty}}{f_i^{1/p^\infty}} \brang
\end{align*}
proving the desired result.
\end{proof}

\begin{lemma}\label{perfect_delta_ring_two_ideals} Let $R$ be an element of $\Perfd_{K_{\le 1}}^{\Prism a}$, and $(A,d)$ its corresponding classically $(p, [\varpi^\flat])$-complete perfect prism.  Let $a,b \in A/p$ and $[a],[b] \in A$ its Teichmuller lifts. Define the ideals of $A$
\begin{align*}
	[I] = \bigcup_{m \in \ZZ_{\ge 0}} \Big ( [a^{1/p^m} - b^{1/p^m}] \Big ) && I = \bigcup_{m \in \ZZ_{\ge 0}} \Big ( [a^{1/p^m}] - [b^{1/p^m}] \Big )
\end{align*}
Then,
\begin{enumerate}[(1)]
	\item The derived and classical $(p, [\varpi^\flat])$-completions of $I$ (resp. $[I]$) agree. We denote it by $I^{\wedge}$ (resp. $[I]^{\wedge})$.
	\item There is an inclusion $[I]^{\wedge} \subset I^{\wedge}$ of ideals of $A$.
	\item $A/[I]^{\wedge} = (A/[I])^{\wedge}$ is a $(p, [\varpi^\flat])$-complete perfect $\delta$-ring.
\end{enumerate}
\end{lemma}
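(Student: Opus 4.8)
\textbf{Proof proposal for Lemma \ref{perfect_delta_ring_two_ideals}.}

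The plan is to reduce all three statements to properties of the perfect $\FF_p$-algebra $S := A/p$ together with the structure of $A = W(S)$ via Teichm\"uller expansions, and then to bootstrap from the single-generator case established in Lemma \ref{torsion_perfect_delta_ring}. For $(3)$, I would first treat the \emph{non-completed} quotient $A/[I]$. Since $A$ is a perfect $p$-complete $\delta$-ring (Proposition \ref{perfect_delta_rings_equiv}), any element of the ideal $([a^{1/p^m} - b^{1/p^m}])$ is, up to unit and $p$-adic corrections, controlled by the element $a^{1/p^m} - b^{1/p^m} \in S$; so $[I] \bmod p$ is the ideal $J := \bigcup_m (a^{1/p^m} - b^{1/p^m}) \subset S$, which is a \emph{perfect} ideal of $S$ (it is closed under $p$-th roots by construction, since $(a^{1/p^{m+1}} - b^{1/p^{m+1}})^p \equiv a^{1/p^m} - b^{1/p^m} \bmod p$ and $S$ is perfect, so actually $(a^{1/p^{m+1}} - b^{1/p^{m+1}})$ generates an ideal containing $(a^{1/p^m} - b^{1/p^m})$). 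Hence $S/J$ is a perfect $\FF_p$-algebra, and I claim $A/[I] \simeq W(S/J)$: indeed $A/[I]$ is $p$-torsion free because $[I]$ is generated by Teichm\"uller elements (here I invoke the same mechanism as in Lemma \ref{torsion_perfect_delta_ring}, namely that $[x]f = 0$ in $A$ forces $xa_i = 0$ in $S$ for each Teichm\"uller coefficient of $f$, so multiplication by a Teichm\"uller element is ``detected coefficientwise''), and $(A/[I])/p = S/J$ is perfect, so Proposition \ref{perfect_delta_rings_equiv} identifies $A/[I]$ with the unique $p$-complete $p$-torsion free $\delta$-ring lifting $S/J$. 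Then $(A/[I])^\wedge_{(p,[\varpi^\flat])}$ is again perfect $\delta$ by functoriality of derived completion (as in Proposition \ref{varpi_completion_perfectoid}), proving $(3)$.

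For $(1)$: the $p$-torsion freeness just established shows $A/[I]$ has derived $p$-completion equal to its classical $p$-completion; and $W(S/J)$ is a perfect $\delta$-ring, hence has bounded $[\varpi^\flat]$-torsion by Lemma \ref{torsion_perfect_delta_ring} applied to the image of $\varpi^\flat$, so classical and derived $[\varpi^\flat]$-completions of $A/[I]$ agree. Passing to the short exact sequence $0 \to [I] \to A \to A/[I] \to 0$ and using that $A$ is already classically $(p,[\varpi^\flat])$-complete (Lemma \ref{classical_varpi_complete_perfect_prism}), I conclude that $[I]$ itself has derived $(p,[\varpi^\flat])$-completion concentrated in degree zero and equal to the classical one, and that $A/[I]^\wedge = (A/[I])^\wedge$. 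The argument for $I$ is entirely parallel once one observes that modulo $p$ the ideal $I$ \emph{also} reduces to $J$ (since $[a^{1/p^m}] - [b^{1/p^m}] \equiv a^{1/p^m} - b^{1/p^m} \bmod p$), so $A/I$ is again $p$-torsion free with perfect reduction $S/J$, hence $A/I \simeq W(S/J) \simeq A/[I]$ as well — this already foreshadows that the inclusion in $(2)$ is not merely an inclusion but becomes an equality after completion.

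For $(2)$: I would show the inclusion at the level of generators. Working in $A = W(S)$, expand: since $a^{1/p^m}, b^{1/p^m}$ have compatible $p$-power roots in $S$ and Teichm\"uller lifts are multiplicative with $\delta$ vanishing on them (Lemma \ref{frob_perfect_element}, Example \ref{teichmuller_expansion}), the binomial-type identity $[a^{1/p^{m}}] - [b^{1/p^{m}}] = ([a^{1/p^{m+1}}] - [b^{1/p^{m+1}}])\cdot(\text{sum of } [a^{i/p^{m+1}}][b^{(p-1-i)/p^{m+1}}]) + p\cdot(\text{error})$ shows $[a^{1/p^m}] - [b^{1/p^m}] \in I + pA$; iterating and using $(p,[\varpi^\flat])$-completeness pushes the error term into $I^\wedge$, and similarly $[a^{1/p^m} - b^{1/p^m}]^{\sharp\text{-type}}$ comparisons give $[a^{1/p^m}-b^{1/p^m}] \in I + (p,[\varpi^\flat])^N A$ for all $N$. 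Concretely: modulo $p$ both ideals equal $J$, so $[I] \subseteq I + pA$; feeding this back through the Teichm\"uller expansion (every element of $[I]$ has all its coefficients, after the leading one, again expressible in terms of differences of $p$-power roots) yields $[I] \subseteq I + p^n A$ for all $n$, whence $[I]^\wedge \subseteq I^\wedge$ after $(p,[\varpi^\flat])$-completion. The main obstacle I anticipate is precisely making this ``coefficientwise bootstrap'' clean: one must carefully track that the higher Teichm\"uller coefficients appearing when one rewrites $[a^{1/p^m} - b^{1/p^m}]$ in terms of $[a^{1/p^k}] - [b^{1/p^k}]$ stay inside $I$ (not just inside $A$), which is where the perfectness of $S$ and the compatibility of the chosen $p$-power roots of $a, b$ are essential; the completions are then a formality given the torsion bounds from $(1)$.
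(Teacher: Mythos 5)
Your route to part (2) — the heart of the lemma — does not go through. The ``binomial-type identity'' you display only expresses $[a^{1/p^m}]-[b^{1/p^m}]$ as a multiple of $[a^{1/p^{m+1}}]-[b^{1/p^{m+1}}]$, i.e.\ it shows $I$ is a rising union of principal ideals; it says nothing about the inclusion $[I]\subset I^{\wedge}$ that (2) actually asks for. The coefficientwise bootstrap also stalls after one step: writing a generator $[a^{1/p^m}-b^{1/p^m}]\,[h_0]$ as $\bigl([a^{1/p^m}]-[b^{1/p^m}]\bigr)[h_0]+p\,u$, the correction $u=p^{-1}\bigl([a^{1/p^m}-b^{1/p^m}]-[a^{1/p^m}]+[b^{1/p^m}]\bigr)[h_0]$ is just some element of $A$ — it lies in neither $I$ nor $[I]$ — so iterating gives $[I]\subset I+pA$ and nothing more; the higher Teichm\"uller coefficients do \emph{not} ``stay inside $I$''. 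The missing ingredient, which is exactly the paper's key step, is the congruence $\bigl([a^{1/p^{m+k}}]-[b^{1/p^{m+k}}]\bigr)^{p^k}\equiv [a^{1/p^m}-b^{1/p^m}] \pmod{p^{k+1}}$ (binomial theorem plus multiplicativity of Teichm\"uller lifts and perfectness of $A/p$), i.e.\ $[a^{1/p^m}-b^{1/p^m}]=\lim_k\bigl([a^{1/p^{m+k}}]-[b^{1/p^{m+k}}]\bigr)^{p^k}$: one approximates the Teichm\"uller lift of the difference by $p^k$-th powers of the \emph{deeper} generators of $I$, not by tracking coefficients of a fixed expression, and $(p,[\varpi^\flat])$-completeness then puts $[I]$ inside $I^{\wedge}$.

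Two intermediate claims are also incorrect as stated, though less fatal. First, $A/[I]\simeq W(S/J)$ fails before completion: $A/[I]=\colim_m A/([a^{1/p^m}-b^{1/p^m}])$ need not be $p$-adically separated (for $S$ the perfection of $\FF_p[x]$, $a=x$, $b=0$, the element $\sum_j p^j[x^{1/p^j}]$ lies in $\bigcap_k([I]+p^kA)$ but not in $[I]$), so Proposition \ref{perfect_delta_rings_equiv} does not apply directly; what is true — and does suffice for your (3) — is that $A/[I]$ is $p$-torsion free with perfect reduction $S/J$, so its $(p,[\varpi^\flat])$-completion is a perfect $\delta$-ring. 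That corrected version is a legitimate alternative to the paper's proof of (3), which instead realizes $(A/[I])^{\wedge}$ as a completed pushout of perfect $\delta$-rings along $A[T^{1/p^\infty}]^{\wedge}\to A$, $T^{1/p^n}\mapsto[a-b]^{1/p^n}$. Second, your torsion-freeness mechanism (Teichm\"uller coefficients) does not apply to $I$, whose generators $[a^{1/p^m}]-[b^{1/p^m}]$ are not Teichm\"uller elements, so ``$A/I$ is $p$-torsion free, hence $A/I\simeq A/[I]$'' is unjustified (and unnecessary: for (1) it is enough that $I,[I]\subset A$ are $p$-torsion free as submodules of the $p$-torsion free ring $A$ and that their $p$-completions have bounded $[\varpi^\flat]$-torsion by Lemma \ref{torsion_perfect_delta_ring}, which is how the paper argues).
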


\begin{proof} As $I$ and $[I]$ are ideals of $A$, it follows they are both $p$-torsion free, thus their derived and classical $p$-completions agree, we denote this $p$-completions by $I^{\wedge}_{(p)}$ and $[I]^{\wedge}_{(p)}$. In particular, $I^{\wedge}_{(p)}$ and $[I]^{\wedge}_{(p)}$ are still ideals of $A$, thus they have bounded $[\varpi^\flat]$-torsion (\ref{torsion_perfect_delta_ring}), which in turn implies that their classical and derived $[\varpi^\flat]$-completions agree. Finishing the proof of $(1)$.

We claim that in $A$ we have the following identity
\begin{equation*}
	\lim_{m \rightarrow \infty} \Big ([a^{1/p^m}] - [b^{1/p^m}] \Big)^{p^m} = [a - b]
\end{equation*}
and it evident generalizations to $[a^{1/p^m} - b^{1/p^m}]$. Once we establish this identity, $(2)$ would clearly follow. Indeed, recall we have the identities $[a^{1/p^m}] - [b^{1/p^m}] = [a^{1/p^m} - b^{1/p^m}] \bmod p$, which in turn imply by the binomial theorem (cf. \cite[Lemma 2.0.5]{bhattlecture_perfectoid}) that
\begin{equation*}
	\Big ( [a^{1/p^m}] - [b^{1/p^m}]   \Big)^{p^k} = [a^{1/p^m} - b^{1/p^m}]^{p^k} \bmod p^{k+1}
\end{equation*}
which by $p$-completeness of $A$ implies the desired identities, finishing the proof of $(2)$.

Next we prove $(3)$. Notice that the identity $(A/[I])^{\wedge} = A/[I]^{\wedge}$ is immediate from generalities on derived completions. Define a map of perfect $(p, [\varpi^\flat])$-complete $\delta$-rings $h: A[T^{1/p^\infty}]^{\wedge} \rightarrow A$ where $h(T^{1/p^n}) = [a-b]^{1/p^n}$, then we can identify $(A/[I])^{\wedge}$ as the $(p, [\varpi^\flat])$-complete pushout of the following diagram
\begin{cd}
	A[T^{1/p^\infty}]^{\wedge} \ar[r, "h"] \ar[d, "T^{1/p^n} \mapsto 0"] & A \ar[d] \\
	A \ar[r] & (A/[I])^{\wedge}
\end{cd}
and as everything in sight is a perfect $\delta$-ring, it follows that $(A/[I])^{\wedge}$ is a $(p, [\varpi^\flat])$-complete perfect $\delta$-ring, proving $(3)$.
\end{proof}

\begin{prop}\label{perfect_rat_domain_is_perfectoid} Let $A$ be a perfectoid Banach $K$-algebra, and $\{f_1, \dots, f_n\} \subset A$ be a collection of elements generating the unit ideal of $A$, together with a choice of compatible $p$-power roots $f_i^{1/p^m} \in A$ for each $f_i$. Then, the perfected rational domain $A \blang \frac{f_1^{1/p^\infty}, \dots, f_n^{1/p^\infty}}{f_i^{1/p^\infty}} \brang$ is a perfectoid Banach $K$-algebra.

Since we can always assume that $|f_j| \le 1$ for all $f_j$, if $K_{\le 1}$ corresponds to the perfect prism $(\A_{\inf}(K_{\le 1}), d)$, define the $(p, [\varpi^\flat])$-complete perfect prism $(W,d)$ as
\begin{align*}
	& W := \A_{\inf} (A_{\le 1})[T_1^{1/p^\infty}, \dots, T_n^{1/p^\infty}]^{\wedge} / [\cI]^{\wedge} && \text{where}\\
	& [\cI] := \bigcup_{m \in \ZZ_{\ge 0}} \Big ([f_1^{\flat, 1/p^m} - T_1^{1/p^m} f_i^{\flat, 1/p^m}], \dots, [f_n^{\flat, 1/p^m} - T_n^{1/p^m} f_i^{\flat, 1/p^m}] \Big) && \text{and} \\
	& [\cI]^{\wedge} := [\cI]^{\wedge}_{(p, [\varpi^\flat])}
\end{align*}
where the completion of $[\cI]^{\wedge}_{(p, [\varpi^\flat])}$ is derived and also classical. Then, the perfected rational domain $A \blang \frac{f_1^{1/p^\infty}, \dots, f_n^{1/p^\infty}}{f_i^{1/p^\infty}} \brang$ corresponds to $(W/d)^a \in \Perfd_{K_{\le 1}}^{\Prism a}$ under the equivalence \ref{equiv_perfd_ban_tic}, and the map $A \rightarrow (W/d)^a [\frac{1}{\varpi}]$ is the untilt (\ref{tilting_corr_banach}) of the map
\begin{equation*}
	A^{\flat} \rightarrow A^{\flat} \blang \frac{f_1^{\flat, 1/p^\infty}, \dots, f_n^{\flat, 1/p^\infty}}{f_i^{\flat, 1/p^\infty}} \brang
\end{equation*}
\end{prop}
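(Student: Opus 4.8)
The strategy is to reduce the statement about the perfectoid Banach algebra $A \blang \frac{f_1^{1/p^\infty}, \dots, f_n^{1/p^\infty}}{f_i^{1/p^\infty}} \brang$ to a purely prismatic computation via the dictionary, and then exploit the tilting correspondence to make the relevant ideal perfectly explicit. First I would normalize: by Proposition \ref{int_model_perfect_rat_domain} (and the remark there) we may assume $|f_j|_A \le 1$ for all $j$, so that all $f_j^{1/p^m}$ lie in $A_{\le 1}$. By Proposition \ref{equiv_perfd_ban_tic} it suffices to produce an object of $\Perfd_{K_{\le 1}}^{\Prism \tic}$ whose inversion of $\varpi$ recovers $A \blang \frac{f_1^{1/p^\infty}, \dots, f_n^{1/p^\infty}}{f_i^{1/p^\infty}} \brang$; and by Propositions \ref{equiv_perfd_tic_almost} and \ref{perfect_prism_perfectoid} it is enough to exhibit the corresponding perfect prism. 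So the real content is: (i) the pair $(W,d)$ defined in the statement is a $(p,[\varpi^\flat])$-complete perfect prism, and (ii) $(W/d)^a[\frac{1}{\varpi}]$ is isometrically the perfected rational domain, with the structure map being the untilt of the displayed characteristic-$p$ map.

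For (i), I would apply Lemma \ref{perfect_rat_domain_is_perfectoid}'s own preparatory input, namely Lemma \ref{perfect_delta_ring_two_ideals}: the ideal $[\cI]$ is generated by Teichmüller lifts of the form $[g_1^{1/p^m} - g_2^{1/p^m}]$ where $g_1 = f_j^\flat$, $g_2 = T_j f_i^\flat$, living in the perfect $(p,[\varpi^\flat])$-complete $\delta$-ring $\A_{\inf}(A_{\le 1})[T_1^{1/p^\infty},\dots,T_n^{1/p^\infty}]^\wedge$ — which is a perfect prism component by Proposition \ref{free_alg_perfect_is_perfectoid}. Part (3) of Lemma \ref{perfect_delta_ring_two_ideals} (applied one generator at a time, or its evident multivariable generalization) shows $W = \A_{\inf}(A_{\le 1})[T_1^{1/p^\infty},\dots,T_n^{1/p^\infty}]^\wedge/[\cI]^\wedge$ is again a $(p,[\varpi^\flat])$-complete perfect $\delta$-ring, and since $d$ is distinguished it stays distinguished (hence a nonzerodivisor) in $W$, so $(W,d)$ is a perfect prism, as claimed.

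For (ii), I would first identify $W/d$ in characteristic $p$. Since $W/p = A_{\le 1}^\flat[T_1^{1/p^\infty},\dots,T_n^{1/p^\infty}]/[\cI \bmod p]$ and $[\cI] \bmod p = I_{f_i^\flat,\le 1}^{1/p^\infty}$ (the Teichmüller/actual distinction disappears mod $p$), Proposition \ref{int_model_perfect_rat_domain} applied to the perfectoid Banach $K^\flat$-algebra $A^\flat$ identifies $(W/p)^a = (A_{\le 1}^\flat[T_\bullet^{1/p^\infty}]^\wedge/I^{1/p^\infty}_{f^\flat_i,\le1})^{\wedge a \tf}$ with the tilt of the perfected rational domain, i.e.\ with $(A^\flat \blang \tfrac{f_1^{\flat,1/p^\infty},\dots,f_n^{\flat,1/p^\infty}}{f_i^{\flat,1/p^\infty}} \brang)_{\le 1}^a$. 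Then Proposition \ref{perfect_prism_perfectoid} (prisms $=$ perfectoid rings) together with Propositions \ref{integral_tilt_corr} and \ref{tilting_corr_banach} transports this across the untilt: $(W/d)^a$ is the unique integral perfectoid $K_{\le1}$-algebra (up to almost) with tilt $(A^\flat \blang \cdots \brang)_{\le1}^a$, and by Proposition \ref{tilting_corr_banach} its associated Banach algebra is the untilt of $A^\flat \blang \tfrac{f_1^{\flat,1/p^\infty},\dots,f_n^{\flat,1/p^\infty}}{f_i^{\flat,1/p^\infty}} \brang$. It remains to check this untilt is genuinely $A \blang \tfrac{f_1^{1/p^\infty},\dots,f_n^{1/p^\infty}}{f_i^{1/p^\infty}} \brang$, i.e.\ that the structure map $A \to (W/d)^a[\frac1\varpi]$ agrees with the canonical one; this I would do by matching universal properties via Proposition \ref{int_model_perfect_rat_domain} again, or by observing that the element $T_j \bmod d$ untilts the element $T_j$ of the characteristic-$p$ side and that the sharp map sends $f_j^{\flat,1/p^m} \mapsto f_j^{1/p^m}$ by construction of the compatible roots.

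\textbf{Main obstacle.} The delicate point is the compatibility of all the completions and almost-structures with tilting: specifically, that passing to $(W/d)^a$ and then inverting $\varpi$ \emph{commutes} with the two-step comparison $\Perfd_K^{\Ban} \simeq \Perfd_{K_{\le1}}^{\Prism\tic} \xrightarrow{\flat} \Perfd_{K^\flat_{\le1}}^{\Prism\tic} \simeq \Perfd_{K^\flat}^{\Ban}$, and that the ideal $[\cI]^\wedge$ is exactly the kernel of $W \to (A\blang\cdots\brang)_{\le1,\perfd}$-type map rather than merely contained in it. The containment $[\cI]^\wedge \subset \cI^\wedge$ from Lemma \ref{perfect_delta_ring_two_ideals}(2) goes the right way to control one direction, but proving the quotients agree — equivalently, that modding out by the Teichmüller ideal $[\cI]^\wedge$ gives the same perfectoid ring as modding out by $\cI^\wedge$ after $(-)^{\wedge a \tf}$ — requires the mod-$p$ reduction argument above to be run carefully, using that both ideals have the same image in $W/p$ and that derived Nakayama (Lemma \ref{derived_nakayama}) lets us conclude from the mod-$p$ statement. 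I expect the bookkeeping of which ring the untilt structure map lands in, and checking it is an isometry, to be where most of the routine-but-nontrivial work lies.
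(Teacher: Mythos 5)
You have correctly identified the two main tools (Lemma \ref{perfect_delta_ring_two_ideals} to verify $(W,d)$ is a perfect prism, and the tilting correspondence to transport the computation), and your mod-$p$ reduction of $W$ is sound in outline. However, there is a genuine gap in the endgame: after establishing that $(W/d)^a[\frac{1}{\varpi}]$ is \emph{the untilt} of $A^\flat\blang \frac{f_1^{\flat,1/p^\infty},\dots}{f_i^{\flat,1/p^\infty}}\brang$, you still need to prove that this untilt coincides with $A\blang \frac{f_1^{1/p^\infty},\dots}{f_i^{1/p^\infty}}\brang$. This is precisely the assertion that the tilting equivalence commutes with forming perfected rational domains, which is not a prior black box --- it is essentially the content of the proposition. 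Your suggestion to ``match universal properties'' is the right instinct, but the universal property in question (Proposition \ref{rational_dom_univ_prop}) is stated in terms of the Berkovich spectrum, so carrying it out forces you into the topological argument the paper runs anyway: using Proposition \ref{tilting_ban_topo} to show $|\cM((W/d)^a[\frac{1}{\varpi}])|$ lands inside the rational domain, and hence the structure map factors through $A\blang\cdots\brang$.

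Your alternative suggestion --- observing that $T_j \bmod d$ untilts $T_j$ and $f_j^{\flat,1/p^m}$ sharps to $f_j^{1/p^m}$ --- does not close the gap either, because the reduction mod $d$ of the Teichm\"uller lift $[f_j^{\flat,1/p^m} - T_j^{1/p^m}f_i^{\flat,1/p^m}]$ is the \emph{sharp of a difference}, not the difference of sharps; the sharp map is multiplicative but not additive. Thus the relations imposed by $[\cI]$ on $W/d$ are a priori strictly smaller than the relations $I^{1/p^\infty}_{f_i,\le 1}$ defining the naive integral model of the perfected rational domain, and Lemma \ref{perfect_delta_ring_two_ideals}(2) only gives the containment $[\cI]^\wedge \subset \cI^\wedge$, not equality. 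The paper resolves this by producing a retraction: the topological factorization yields a map $A_{\le 1}[T^{1/p^\infty}_\bullet]^\wedge/(I^{1/p^\infty}_{f_i,\le1})^\wedge \to (W/d)_*$ that together with the surjection $W/d \twoheadrightarrow A_{\le 1}[T^{1/p^\infty}_\bullet]^\wedge/(I^{1/p^\infty}_{f_i,\le1})^\wedge$ factors the almost unit map; since the surjection then factors the identity of $(W/d)^a$ in the abelian category $\Mod_{K_{\le1},\varpi}^{\wedge a}$, it is simultaneously an epi and a mono, hence an isomorphism. Your plan does not reproduce this idempotent-splitting step, nor the derived-Nakayama argument that would substitute for it; you would need to run a careful version of one of these to conclude, and the Teichm\"uller/additivity subtlety is exactly where a naive Nakayama argument on $\cI^\wedge/[\cI]^\wedge$ breaks (the same image mod $p$ gives $\cI^\wedge\subset[\cI]^\wedge + pP^\wedge$, not $\cI^\wedge = [\cI]^\wedge + p\cI^\wedge$, so Nakayama does not directly apply).
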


\begin{proof} Let us begin by recalling that the classical and derived $(p, [\varpi^\flat])$-completion of $P:=\A_{\inf} (A_{\le 1})[T_1^{1/p^\infty}, \dots, T_n^{1/p^\infty}]$ agree (\ref{free_alg_perfect_is_perfectoid}), we denote its $(p, [\varpi^\flat])$-completion by $P^{\wedge}$. The same argument as in Lemma \ref{perfect_delta_ring_two_ideals}(1) shows that the derived and classical $(p, [\varpi^\flat])$-completion of $[\cI]$ agree, and the same holds for the ideals $\cI, (\cI, d), ([\cI], d) \subset P^{\wedge}$, where
\begin{equation*}
	\cI := \bigcup_{m \in \ZZ_{\ge 0}} \Big ([f_1^{\flat, 1/p^m}] - [T_1^{1/p^m} f_i^{\flat, 1/p^m}], \dots, [f_n^{\flat, 1/p^m}] - [T_n^{1/p^m} f_i^{\flat, 1/p^m}] \Big)
\end{equation*}
Then, from Lemma \ref{perfect_delta_ring_two_ideals}(2) we learn that the canonical map $P^{\wedge} \rightarrow P^{\wedge}/\cI^{\wedge} = (P/ \cI)^{\wedge}$ factors as
\begin{align*}
	P^{\wedge} \rightarrow W := P^{\wedge}/[\cI]^{\wedge} \twoheadrightarrow P^{\wedge}/\cI^{\wedge} = (P/\cI)^{\wedge}
\end{align*}
By the same argument as in Lemma \ref{perfect_delta_ring_two_ideals}(3) we learn that $W$ is a perfect $\delta$-ring, and since $d \in \A_{\inf}(K_{\le 1})$ is a distinguished element it follows that $(W, d)$ is a perfect prism. In particular, $W/d$ can be identified with the $K_{\le 1}$-algebra
\begin{align*}
	&W/d = \colim_{m} A_{\le 1}[T_1^{1/p^\infty}, \dots, T_n^{1/p^\infty}]^{\wedge} / [I_{f_i, \le 1}^{1/p^m}]^{\wedge} && \text{where} \\
	&[I_{f_i, \le 1}^{1/p^m}] := \Big ( (f_i^{1/p^m} T_1^{1/p^m} - f_1^{1/p^m})^{\sharp}, \dots, (f_i^{1/p^m} T_n^{1/p^m} - f_n^{1/p^m})^{\sharp} \Big) && [I_{f_i}^{1/p^\infty}] = \bigcup_{m \in \ZZ_{\ge 0}} [I_{f_i, \le 1}^{1/p^m}]
\end{align*}
where all the completions are derived (also classical) with respect to $\varpi$. Reducing the above sequence of maps modulo $d$ gives us maps
\begin{equation*}
	A_{\le 1} \rightarrow A_{\le 1}[T_1^{1/p^\infty}, \dots, T_n^{1/p^\infty}]^{\wedge} \rightarrow W/d \twoheadrightarrow A_{\le 1}[T_1^{1/p^\infty}, \dots, T_n^{1/p^\infty}]^{\wedge}/(I_{f_i, \le 1}^{1/p^\infty})^{\wedge}
\end{equation*}
We are implicitly using the fact that $\Big( \cI_{f_i, \le 1}^{1/p^\infty} \Big)/d = I_{f_i, \le 1}^{1/p^\infty}$, which follows from the identity $[f_j^{\flat, 1/p^m}] - [T_j^{1/p^m} f_i^{\flat, 1/p^m}] = f_j^{1/p^m} - T_j^{1/p^m} f_i^{1/p^m} \bmod d$ -- see \ref{int_model_perfect_rat_domain} for the definition of the ideal $I_{f_i}^{1/p^\infty}$.

It is clear from the construction of $W/d$ that $A \rightarrow (W/d)^a[\frac{1}{\varpi}]$ is the untilt (\ref{tilting_corr_banach}) of the map $A^{\flat} \rightarrow A^{\flat} \blang \frac{f_1^{\flat, 1/p^\infty}, \dots, f_n^{\flat, 1/p^\infty}}{f_i^{\flat, 1/p^\infty}} \brang$, in particular it follows from Proposition \ref{tilting_ban_topo} that the induced map $|\cM((W/d)^a[\frac{1}{\varpi}])| \rightarrow |\cM(A)|$ has image contained in $|\cM(A)|\Big (\frac{f_1, \dots, f_n}{f_i} \Big)$. Then, \ref{rational_dom_univ_prop} and \ref{equiv_rat_domain_and_perfected} imply that the map $A \rightarrow (W/d)^a[\frac{1}{\varpi}]$ factors as
\begin{equation*}
	A \rightarrow A \blang \frac{f_1^{1/p^\infty}, \dots, f_n^{1/p^\infty}}{f_i^{1/p^\infty}} \brang \rightarrow (W/d)^a[\frac{1}{\varpi}]
\end{equation*}
Using this factorization, together with the fact that $A_{\le 1} \rightarrow (W/d)^a$ is an epimorphism in $\Perfd_{K_{\le 1}}^{\Prism a}$ (cf. \ref{rat_domains_are_mono}), we learn that the unit of the adjunction $W/d \rightarrow (W/d)_*$ (cf. \ref{almost_elements}) factors as 
\begin{align*}
	W/d \twoheadrightarrow A_{\le 1}[T_1^{1/p^\infty}, \dots, T_n^{1/p^\infty}]^{\wedge}/(I_{f_i, \le 1}^{1/p^\infty})^{\wedge} \rightarrow (W/d)_*
\end{align*}
passing to the almost category $\Mod_{K_{\le 1}, \varpi}^{\wedge a}$ shows that the identity map $(W/d)^a \rightarrow (W/d)^a$ factors as
\begin{cd}
	& \Big ( A_{\le 1}[T_1^{1/p^\infty}, \dots, T_n^{1/p^\infty}]^{\wedge}/(I_{f_i, \le 1}^{1/p^\infty})^{\wedge} \Big )^a \ar[rd] \\
	(W/d)^a \ar[ru, two heads] \ar[rr] && (W/d)^a
\end{cd}
Where the map $(W/d)^a \twoheadrightarrow \Big ( A_{\le 1}[T_1^{1/p^\infty}, \dots, T_n^{1/p^\infty}]^{\wedge}/(I_{f_i, \le 1}^{1/p^\infty})^{\wedge} \Big )^a$ is an epimorphism in $\Mod_{K_{\le 1}, \varpi}^{\wedge a}$, since left adjoints preserve epimorphisms; and it is also a monomorphism since it factors the identity map $(W/d)^a \rightarrow (W/d)^a$. Thus, since $\Mod_{K_{\le 1}, \varpi}^{\wedge a}$ is an abelian category it follows that
\begin{equation*}
	(W/d)^a \rightarrow \Big ( A_{\le 1}[T_1^{1/p^\infty}, \dots, T_n^{1/p^\infty}]^{\wedge}/(I_{f_i, \le 1}^{1/p^\infty})^{\wedge} \Big )^a
\end{equation*}
is an isomorphism in $\Mod_{K_{\le 1}, \varpi}^{\wedge a}$, proving the result (cf. \ref{int_model_perfect_rat_domain}).
\end{proof}

\subsection{Structure presheaf on affinoid perfectoids}

Let $K$ be a perfectoid non-archimedean field. 

\begin{lemma}[Approximation Lemma]\label{approx_lemma} Let $A$ be a perfectoid Banach $K$-algebra, for any $f \in A$ and any $c \ge 0, \varepsilon > 0$, there exists a $f_{c, \varepsilon} \in A$ admitting compatible $p$-power roots $f_{c, \varepsilon}^{1/p^m} \in A$ such that for all $x \in |\cM(A)|$ we have
\begin{equation*}
	|(f - f_{c, \varepsilon})(x)| \le |\varpi|^{1-\varepsilon} \max (|f(x)|, |\varpi|^c)
\end{equation*}
\end{lemma}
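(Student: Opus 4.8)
The plan is to follow the classical tilting-approximation argument of Scholze, adapted to the Berkovich framework developed above. First I would make two harmless reductions. By Proposition \ref{berko_max_modulus} every $x \in |\cM(A)|$ satisfies $|f(x)| \le |f|_A = \rho_A(f)$, so after replacing $f$ by $\varpi^{-N}f$ for $N \gg 0$ (which multiplies both sides of the desired inequality by $|\varpi|^{-N}$ and merely shifts $c$) we may assume $f \in A_{\le 1}$; and since $\varpi$ admits compatible $p$-power roots in $K \subset A$, rescaling back by $\varpi^N$ preserves the property of admitting compatible $p$-power roots. Second, pass to the tilt $A^\flat$ (Proposition \ref{tilting_corr_banach}), with its multiplicative sharp map $\sharp : A^\flat \to A$; because $A^\flat$ is perfect, every element of the form $g^\sharp$ with $g \in A^\flat_{\le 1}$ automatically admits compatible $p$-power roots in $A_{\le 1}$, namely $(g^{1/p^m})^\sharp$. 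So the candidate $f_{c,\varepsilon}$ will be produced as a single sharp.

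Next I would build a finite ``Teichm\"uller-type'' expansion of $f$. By Lemma \ref{mod_p_tilt} the sharp map induces a surjection $A^\flat_{\le 1} \twoheadrightarrow A_{\le 1}/\varpi$, so one may inductively choose $g_0, g_1, \dots \in A^\flat_{\le 1}$ with $f - \sum_{i < n} g_i^\sharp \varpi^i = \varpi^n f_n$ for some $f_n \in A_{\le 1}$ (each step solves a congruence mod $\varpi$ and divides by $\varpi$; no completeness is needed for the finite identity, though Corollary \ref{banach_is_pi_comp} gives genuine $\varpi$-adic convergence). Fixing $\varpi^\flat \in K^\flat$ with $(\varpi^\flat)^\sharp = \varpi$, I would then set
\begin{equation*}
	f_{c,\varepsilon} \;:=\; \Big( \sum_{i=0}^{n-1} g_i \,(\varpi^\flat)^i \Big)^{\!\sharp} \;\in\; A_{\le 1},
\end{equation*}
for a value of $n$ depending on $c$ and $\varepsilon$ to be fixed in the estimate; by the previous paragraph this element admits compatible $p$-power roots.

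Finally I would estimate $|(f - f_{c,\varepsilon})(x)|$ by splitting
\begin{equation*}
	f - f_{c,\varepsilon} \;=\; \underbrace{\Big(f - \sum_{i<n} g_i^\sharp \varpi^i\Big)}_{=\,\varpi^n f_n} \;+\; \underbrace{\Big(\sum_{i<n} (g_i(\varpi^\flat)^i)^\sharp - \big(\textstyle\sum_{i<n} g_i(\varpi^\flat)^i\big)^\sharp\Big)}_{\text{non-additivity of }\sharp},
\end{equation*}
using multiplicativity of $\sharp$ to identify $g_i^\sharp\varpi^i = (g_i(\varpi^\flat)^i)^\sharp$. The first term has $|\varpi^n f_n(x)| \le |\varpi|^n$ for all $x$, so choosing $n$ with $|\varpi|^n \le |\varpi|^{1-\varepsilon}|\varpi|^c$ disposes of it. For the second term I would iterate the two-summand estimate for the sharp map: for $a,b \in A^\flat_{\le 1}$ one has $a^\sharp + b^\sharp \equiv (a+b)^\sharp \bmod \varpi$, and passing through $p^m$-th roots (where $(a+b)^{1/p^m} = a^{1/p^m} + b^{1/p^m}$ in the perfect ring $A^\flat$) and applying the binomial expansion exactly as in the proof of Lemma \ref{properties_tilt_val_rings}(3) sharpens this to a congruence with controlled error. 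Translating sizes to the spectrum via $|g^\sharp(x)| = x^\flat(g)$ (Proposition \ref{tilting_ban_topo}), and using that the partial-sum identities force $|g_i^\sharp(x)\,\varpi^i| \le \max(|f(x)|, |\varpi|^{i})$, one sees that every error term produced by the non-additivity at a point $x$ is bounded by $|\varpi|^{1-\varepsilon}\max(|f(x)|, |\varpi|^c)$ once $n$ and the number of $p$-power roots are chosen large enough. The bookkeeping in this last step — extracting exactly the exponent $1-\varepsilon$ rather than $1$, and seeing why the additive buffer $|\varpi|^c$ is unavoidable when $|f(x)|$ is small — is the main obstacle; everything else is formal manipulation with the tilting dictionary.
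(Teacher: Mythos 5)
There is a genuine gap, and it sits exactly where you defer the ``bookkeeping''. First, note that the paper does not prove this lemma at all: its proof is the citation \cite[Corollary 6.7(i)]{scholze2012perfectoid}, so what you are attempting is a from-scratch reproof of Scholze's approximation lemma. Your greedy digit expansion cannot deliver the stated estimate when $c$ is large. The heart of the lemma is the behaviour at points where $|f(x)|$ is small: if $|f(x)| \le |\varpi|^c$ then the requirement is $|f_{c,\varepsilon}(x)| \le |\varpi|^{1-\varepsilon+c}$, i.e.\ the approximant must itself almost vanish at $x$ to order roughly $c$. In your construction the higher digits are only bounded by $1$; for instance $|g_1^\sharp(x)\varpi|$ can be exactly $|\varpi|$ at a zero of $f$ (your claimed bound $|g_i^\sharp(x)\varpi^i| \le \max(|f(x)|,|\varpi|^i)$ is true but trivially so --- it only uses $|g_i^\sharp(x)|\le 1$ --- and is too weak). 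The exact digit identity does force the \emph{sum} $\sum_{i<n} g_i^\sharp(x)\varpi^i = -\varpi^n f_n(x)$ to be tiny at such a point, but once you replace this sum by the single sharp $\bigl(\sum_i g_i(\varpi^\flat)^i\bigr)^\sharp$, the only control you have is the non-additivity defect of $\sharp$, which is divisible by $p$ and, even with the $p^m$-th root/binomial refinement, is pointwise of size about $|p|\cdot\max_i|g_i^\sharp(x)\varpi^i|^{1-\varepsilon'} \approx |p\varpi|$ at zeros of $f$ --- a bound independent of $c$. Hence for $c$ larger than roughly $p+\varepsilon$ the required inequality fails no matter how large you take $n$ and $m$. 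Iterating (approximating the error and adding $\varpi^p s^\sharp$, etc.) does not rescue this, because a sum of sharps need not admit compatible $p$-power roots, which is the whole constraint on $f_{c,\varepsilon}$.

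What you call bookkeeping is therefore the entire content of \cite[Lemma 6.5 and Corollary 6.7]{scholze2012perfectoid}: the approximant must be built by an induction on $c$ in which the correction is added \emph{inside the tilt}, $g \mapsto g+h$ with $h \in A^{\flat}_{\le 1}$, and the cross terms of $(g+h)^\sharp$ are controlled using the inductive input that $|g^\sharp(x)| = |f(x)|$ wherever $|f(x)| \ge |\varpi|^{c}$ (so the relative smallness propagates), after reducing to a free perfectoid algebra $K\langle T^{1/p^\infty},\dots\rangle$ where one can work with homogeneous elements. None of that mechanism is present in your sketch, and without it the estimate with the buffer $|\varpi|^c$ for arbitrary $c\ge 0$ is out of reach. (A small additional slip: to reduce to $f \in A_{\le 1}$ you should multiply by $\varpi^{N}$, not $\varpi^{-N}$; the rescaling itself is harmless since $\varpi$ has compatible $p$-power roots in $K$.)
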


\begin{proof} \cite[Corollary 6.7 (i)]{scholze2012perfectoid}
\end{proof}

\begin{prop} Let $A$ be a perfectoid Banach $K$-algebra, and $\{f_1, \dots, f_n \} \subset A$ a collection of elements generating the unit ideal of $A$. Then, there exists a collection of elements $\{g_1, \dots, g_n\} \subset A$ generating the unit ideal of $A$, admitting compatible $p$-power roots $g_j^{1/p^m} \in A$, and such that we have an equality
\begin{equation*}
	|\cM(A)| \Big (\frac{f_1, \dots, f_n}{f_i} \Big) = |\cM(A)| \Big (\frac{g_1, \dots, g_n}{g_i} \Big)
\end{equation*}
of compact hausdorff subspaces of $|\cM(A)|$.
\end{prop}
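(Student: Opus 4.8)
The plan is to perturb each $f_j$ using Scholze's Approximation Lemma \ref{approx_lemma} and verify that, for an aggressive enough choice of the parameters $c$ and $\varepsilon$, the perturbed elements $g_j$ cut out exactly the same rational subspace as the $f_j$. The first step is to extract a \emph{uniform} lower bound from the partition-of-unity hypothesis. Since $\{f_1,\dots,f_n\}$ generates the unit ideal, choose $h_1,\dots,h_n\in A$ with $1=\sum_j h_j f_j$; then for every $x\in|\cM(A)|$ boundedness of rank one valuations (Definition \ref{defn_berko_spectrum}(1)) gives $1=|1(x)|\le\max_j|h_j(x)|\,|f_j(x)|\le(\max_j|h_j|_A)\cdot\max_j|f_j(x)|$, so $\max_j|f_j(x)|\ge\delta$ for all $x$, where $\delta:=(\max_j|h_j|_A)^{-1}>0$.

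Next I would fix $\varepsilon=\tfrac12$ (any $\varepsilon\in(0,1)$ works) and pick $c\ge 0$ with $|\varpi|^c<\delta$. Applying \ref{approx_lemma} to each $f_j$ produces $g_j:=(f_j)_{c,\varepsilon}\in A$ admitting compatible $p$-power roots $g_j^{1/p^m}\in A$ and satisfying $|(f_j-g_j)(x)|\le|\varpi|^{1-\varepsilon}\max(|f_j(x)|,|\varpi|^c)$ for all $x$. I would then check that $\{g_1,\dots,g_n\}$ still generates the unit ideal: for each $x$, the index $j$ realizing $|f_j(x)|=\max_k|f_k(x)|\ge\delta>|\varpi|^c$ has perturbation $|(f_j-g_j)(x)|\le|\varpi|^{1-\varepsilon}|f_j(x)|<|f_j(x)|$, so by the ultrametric inequality $|g_j(x)|=|f_j(x)|\ge\delta$; hence the $g_j$ have no common zero on $|\cM(A)|$, and if $(g_1,\dots,g_n)$ were proper it would lie in a (closed) maximal ideal $\frakm$ with $A/\frakm$ a nonzero Banach field, whose nonempty Berkovich spectrum (\ref{berko_sp_is_comp}) would supply a common zero — exactly the contradiction used in \ref{recog_unit_berko}.

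Finally I would prove the equality of subspaces pointwise. If $|f_j(x)|\le|f_i(x)|$ for all $j$, then $|f_i(x)|=\max_j|f_j(x)|\ge\delta>|\varpi|^c$, so as above $|g_i(x)|=|f_i(x)|$; and for $j\ne i$, either $|f_j(x)|\ge|\varpi|^c$, in which case $|g_j(x)|\le\max(|f_j(x)|,|\varpi|^{1-\varepsilon}|f_j(x)|)=|f_j(x)|\le|g_i(x)|$, or $|f_j(x)|<|\varpi|^c$, in which case $|g_j(x)|\le\max(|f_j(x)|,|\varpi|^{c+1-\varepsilon})<|\varpi|^c<\delta\le|g_i(x)|$ using $\varepsilon<1$. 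So $x$ lies in the $g$-domain. The reverse inclusion is symmetric: from $|g_j(x)|\le|g_i(x)|$ for all $j$ one gets $|g_i(x)|\ge\delta>|\varpi|^c$, which forces $|f_i(x)|=|g_i(x)|$ (the alternative $|f_i(x)|\le|\varpi|^c$ is impossible, since it would make $|g_i(x)|\le\max(|f_i(x)|,|\varpi|^{c+1-\varepsilon})\le|\varpi|^c<\delta$), and then the same dichotomy on $|f_j(x)|$ versus $|\varpi|^c$ yields $|f_j(x)|\le|f_i(x)|$ for all $j$. The one place requiring care — and the only genuine obstacle — is the behaviour at points where some $|f_j(x)|$ falls below $|\varpi|^c$: there the approximation bound degenerates to roughly $|\varpi|^{c+1-\varepsilon}$, but the choice $\varepsilon<1$ keeps this strictly below $|\varpi|^c$, which by the first step is strictly below the uniform floor $\delta\le|f_i(x)|$, so such a term can never overtake the denominator. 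Everything else is a routine ultrametric case check.
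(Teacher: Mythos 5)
Your proof is correct and follows essentially the same route as the paper: perturb each $f_j$ via the Approximation Lemma with a fixed $\varepsilon<1$ and threshold $|\varpi|^c$, then run the ultrametric case analysis on $|f_j(x)|$ versus $|\varpi|^c$ in both inclusions, plus the closed-maximal-ideal argument for generation of the unit ideal. The only real difference is how the uniform floor is produced — the paper gets the threshold $c$ from compactness of the rational domain and, in the reverse inclusion, from surjectivity of $\sqcup_i|\cM(A)|\bigl(\tfrac{f_1,\dots,f_n}{f_i}\bigr)\to|\cM(A)|$, whereas you extract the constant $\delta$ directly from $1=\sum_j h_jf_j$, which serves both purposes at once.
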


\begin{proof} First, we will show that we have an inclusion $|\cM(A)| \Big (\frac{f_1, \dots, f_n}{f_i} \Big) \subset |\cM(A)| \Big (\frac{g_1, \dots, g_n}{g_i} \Big)$ as subsets of $|\cM(A)|$. Indeed, as $|f_i(x)| \not= 0$ for all $x \in |\cM(A)| \Big (\frac{f_1, \dots, f_n}{f_i} \Big)$ (cf. \ref{defn_top_rat_domain}), the existence of the continuous functions $|\cM(A)| \Big (\frac{f_1, \dots, f_n}{f_i} \Big) \rightarrow \RR_{\ge 0}$ defined by $x \mapsto |f_i(x)|$ guarantee that there exists a $c \gg 0 $ such that $|f_i(x)| > |\varpi|^c$ for all $x \in |\cM(A)| \Big (\frac{f_1, \dots, f_n}{f_i} \Big)$ and all $1 \le i \le n$. Then, the non-archimedean triangle inequality implies that $|f_i(x)| = |g_i(x)|$ for all $x \in |\cM(A)| \Big (\frac{f_1, \dots, f_n}{f_i} \Big)$, where $g_i = f_{i, c, \varepsilon}$ (as defined in \ref{approx_lemma}). For the same fixed $c \gg 0$ we set $g_j = f_{j, c, \varepsilon}$, and we claim that $|f_j(x)| \le |f_i(x)|$ implies that $|g_j(x)| \le |g_i(x)|$. If $|f_j(x)| \ge |\varpi|^c$, then the non-archimedean triangle inequality implies that $|f_j(x)| = |g_j(x)|$, in which case the claim is clear. On the other hand if $|f_j(x)| < |\varpi|^c$ and $|f_j(x)| \not= |g_j(x)|$, then the identities 
\begin{equation*}
	|(f_j - g_j)(x)| = \max(|f_j(x)|, |g_j(x)|) < |\varpi|^c < |f_i(x)| = |g_i(x)|
\end{equation*}	
prove the desired claim.

Next, we show that the collection of elements $\{g_1, \dots, g_n\} \subset A$ introduced above generates the unit ideal of $A$. Its easy to see from the definition of $|\cM(A)| \Big (\frac{f_1, \dots, f_n}{f_i} \Big)$ (cf. \ref{defn_top_rat_domain}) that the following map of compact hausdorff spaces
\begin{equation*}
	h: \bigsqcup_{1 \le i \le n} |\cM(A)| \Big (\frac{f_1, \dots, f_n}{f_i} \Big) \longrightarrow |\cM(A)|
\end{equation*}
is surjective. For the sake of contradiction, assume that there exists a maximal ideal $\frakm \subset A$ containing the ideal $(g_1, \dots, g_n)$; then any point $x \in |\cM(A)|$ contained in the image of $|\cM(A/\frakm)| \rightarrow |\cM(A)|$ will satisfy $|g_i(x)| = 0$ for all $g_i$. But this contradicts the surjectivity of the map $h$, as $|g_i(x)| \not= 0$ for all $x \in |\cM(A)| \Big (\frac{f_1, \dots, f_n}{f_i} \Big)$.

Finally, we need to show that we have an inclusion $|\cM(A)| \Big (\frac{g_1, \dots, g_n}{g_i} \Big) \subset |\cM(A)| \Big (\frac{f_1, \dots, f_n}{f_i} \Big)$. We claim that for all $x \in |\cM(A)| \Big (\frac{g_1, \dots, g_n}{g_i} \Big)$ we have the equality $|f_i(x)| = |g_i(x)|$ and $|g_i(x)| \ge |\varpi|^c$. Indeed, if $|f_i(x)| \ge |\varpi|^c$ then the argument in the first paragraphs shows that $|f_i(x)| = |g_i(x)|$, thus for the sake of contradiction we may assume that $|f_i(x)| < |\varpi|^c$ and $|f_i(x)| \not= |g_i(x)|$. Then, we have the identities
\begin{equation*}
	|(f_i - g_i)(x)| = \max( |f_i(x)|, |g_i(x)| ) < |\varpi|^c
\end{equation*}
which implies by hypothesis that $|g_j(x)| < |\varpi|^c$ for all $1 \le j \le n$. However, the surjectivity of $h$ shows that there exists a $j$ such that $x \in |\cM(A)| \Big (\frac{f_1, \dots, f_n}{f_j} \Big)$ which in turn implies that $|g_j(x)| = |f_j(x)| > |\varpi|^c$, contradicting the inequality $|g_j(x)| < |\varpi|^c$. We have shown that $|f_i(x)| = |g_i(x)|$ and $|g_i(x)| \ge |\varpi|^c$ for all $x \in |\cM(A)| \Big (\frac{g_1, \dots, g_n}{g_i} \Big)$. It remains to show that if $|g_j(x)| \le |g_i(x)|$ then $|f_j(x)| \le |f_i(x)|$. If $|f_j(x)| \ge |\varpi|^c$ then the non-archimedean triangle inequality implies that $|f_j(x)| = |g_j(x)|$, in which case the result is clear. Thus, we may assume that $|f_j(x)| < |\varpi|^c$ and $|f_j(x)| \not= |g_j(x)|$, then the identities
\begin{equation*}
	|(f_j - g_j)(x)| = \max(|f_j(x)|, |g_j(x)|) < |\varpi|^c \le |f_i(x)| = |g_i(x)|
\end{equation*}	
proving the result.

\end{proof}

\begin{defn}\label{defn_rational_site} Let $A$ be a Banach $K$-algebra. Define $|\cM(A)|_{\rat}$ as the category whose objects are compact hausdorff subsets $U \subset |\cM(A)|$ for which there exists a collection of elements $\{f_1, \dots, f_n\} \subset A$ generating the unit ideal of $A$ such that
\begin{equation*}
	U = |\cM(A)|\Big (\frac{f_1, \dots, f_n}{f_i} \Big) \subset |\cM(A)|
\end{equation*}
For a pair of object $U,V \in |\cM(A)|_{\rat}$ the morphisms are described by
\begin{equation*}
	\Maps_{|\cM(A)|_{\rat}} (U, V) = 
	\begin{cases}
		* & \text{ if } U \subset V \\
		\emptyset & \text{ otherwise}
	\end{cases}
\end{equation*}
Furthermore, notice that if $V = |\cM(A)| \Big (\frac{g_1, \dots, g_m}{g_j} \Big)$ then $U \cap V \in |\cM(A)|_{\rat}$ as
\begin{equation*}
	U \cap V = |\cM(A)| \Big (\frac{f_1g_1, \dots, f_k g_l , \dots,  g_m}{f_ig_j} \Big)
\end{equation*}
where the numerator ranges over all pairs $f_k g_l$.
\end{defn}

\begin{thm}[Structure presheaf]\label{struct_presheaf_perfectoid} Let $A$ be a perfectoid Banach $K$-algebra, set $X = \cM(A)$ and $(U \rightarrow V) \subset |X|_{\rat}$. Then, there exists a functor
\begin{align*}
	\cO_X(-): |X|_{\rat}^{\op} \longrightarrow \Perfd_K^{\Ban} && U \mapsto \cO_X(U)
\end{align*}
satisfying the following conditions
\begin{enumerate}[(1)]
	\item $\cO_X (X) = A$
	\item There exists a collection of elements $\{f_1, \dots, f_n\} \subset A$ generating the unit ideal, together with a choice of compatible $p$-power roots $f_j^{1/p^m} \in A$ for all $m \in \ZZ_{\ge 0}$, such that
	\begin{equation*}
		\cO_X(U) = A \blang \frac{f_1^{1/p^\infty}, \dots, f_n^{1/p^\infty}}{f_i^{1/p^\infty}} \brang
	\end{equation*}
	Furthermore, the perfectoid Banach $K$-algebra $\cO_X(U)$ is independent of the choice of $\{f_1, \dots, f_n\} \subset A$ and its $p$-power roots.
	\item The induced map $\cM(\cO_X(U)) \rightarrow \cM(\cO_X(V))$ is a monomorphism in $\Ban_K^{\op}$.
	\item The induced map $f: |\cM(\cO_X(U))| \rightarrow |\cM(\cO_X(V))|$ of compact hausdorff spaces is injective and satisfies $\im(f) = U \subset |\cM(\cO_X(V))|$.
	\item For any morphism $f: \cM(B) \rightarrow \cM(\cO_X(V))$ from a uniform Banach $K$-algebra $B$, such that $\im(|f|) \subset U \subset V$, there exists a unique factorization of $\cO_X(V) \rightarrow B$ as
	\begin{equation*}
		\cO_X(V) \rightarrow \cO_X(U) \rightarrow B
	\end{equation*}
	\item For any triple of objects $U, W, Z \in |X|_{\rat}$ satisfying $U \cup W \subset Z$, we have the identity
	\begin{equation*}
		\cO_X(U \cap W) = \cO_X(U) \cotimes_{\cO_X(Z)} \cO_X(W)
	\end{equation*}
\end{enumerate}
Furthermore, under the equivalence $(-)_{\le 1}: \Perfd_{K}^{\Ban} \rightleftarrows \Perfd_{K_{\le 1}}^{\Prism a}: (-)[\frac{1}{\varpi}]$ (\ref{equiv_perfd_ban_tic}) we obtain another functor
\begin{align*}
	\cO_X(-)_{\le 1}: |X|_{\rat}^{\op} \longrightarrow \Perfd_{K_{\le 1}}^{\Prism a} && U \mapsto \cO_X(U)_{\le 1}
\end{align*}
\end{thm}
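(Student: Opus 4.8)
The plan is to build $\cO_X$ on objects via the perfected rational domains of Definition \ref{defn_perfect_rat_domains}, show the construction is independent of all choices by means of the universal property \ref{rational_dom_univ_prop}, and then deduce functoriality together with properties (1)--(6) from that universal property and from the fact that (perfected) rational domains are monomorphisms in $\Ban_K^{\op}$.

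First I would fix, for each $U \in |X|_{\rat}$, a presentation $U = |\cM(A)|(\frac{f_1,\dots,f_n}{f_i})$; by the approximation proposition established just before Definition \ref{defn_rational_site} (built on Lemma \ref{approx_lemma}) we may arrange that each $f_j$ carries compatible $p$-power roots $f_j^{1/p^m} \in A$, so the perfected rational domain $A\blang\frac{f_1^{1/p^\infty},\dots,f_n^{1/p^\infty}}{f_i^{1/p^\infty}}\brang$ is defined; set $\cO_X(U)$ equal to it. It is a perfectoid Banach $K$-algebra by Proposition \ref{perfect_rat_domain_is_perfectoid}, it receives a contractive map from $A$, and combining Lemma \ref{equiv_rat_domain_and_perfected}, Proposition \ref{rat_domains_are_mono}, Corollary \ref{topo_img_rat_domain} and Proposition \ref{mono_berko_sp_injective} the induced map $\cM(\cO_X(U))\to\cM(A)$ is a monomorphism in $\Ban_K^{\op}$ whose underlying map of Berkovich spectra is injective with image exactly $U$. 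The crucial observation is then: for any uniform Banach $K$-algebra $B$ with a morphism $A\to B$ whose Berkovich image lies in $U$, the map factors uniquely through $A\to\cO_X(U)$ — this is Proposition \ref{rational_dom_univ_prop} for the presentation $U=|\cM(A)|(\frac{f}{f_i})$ together with Lemma \ref{equiv_rat_domain_and_perfected}. Feeding in this observation with $B$ ranging over the algebras attached to two presentations of the same $U$, and noting the factorizations are compatible and unique over $A$, shows $\cO_X(U)$ is independent up to unique $A$-algebra isomorphism of all choices; this yields both (2) and its independence clause.

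Next I would promote $\cO_X$ to a functor and check (1), (3), (4). For $U\subset V$ the map $A\to\cO_X(U)$ has Berkovich image $U\subset V$, so it factors uniquely through $\cO_X(V)$, giving a contractive map $\cO_X(V)\to\cO_X(U)$; uniqueness of such factorizations makes the assignment functorial with $\cO_X(V\subset V)=\Id$. Property (1) is the case $n=1$, $f_1=1$, where the perfected rational domain collapses to $A$. For (3): the composite $\cM(\cO_X(U))\to\cM(\cO_X(V))\to\cM(A)$ is a monomorphism in $\Ban_K^{\op}$, hence so is the first arrow; (4) then follows from Proposition \ref{mono_berko_sp_injective} for injectivity, while the image-$U$ map $|\cM(\cO_X(U))|\to|X|$ factors through the injection $|\cM(\cO_X(V))|\hookrightarrow|X|$ identifying $|\cM(\cO_X(V))|$ with $V$, so the image in $|\cM(\cO_X(V))|$ is $U$. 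For (5): composing a given $f\colon\cM(B)\to\cM(\cO_X(V))$ ($B$ uniform, $\im|f|\subset U$) against $A\to\cO_X(V)$ gives $A\to B$ with Berkovich image in $U$, which factors uniquely through $\cO_X(U)$; since $A\to\cO_X(V)$ is an epimorphism of Banach $K$-algebras (equivalently $\cM(\cO_X(V))\to\cM(A)$ a monomorphism in $\Ban_K^{\op}$) the two maps $\cO_X(V)\to B$ — the given one and $\cO_X(V)\to\cO_X(U)\to B$ — agree, and the $\cO_X(U)$-factorization is unique, giving (5). For (6), note $U\cap W\in|X|_{\rat}$ by Definition \ref{defn_rational_site}; the maps $\cO_X(Z)\to\cO_X(U),\cO_X(W)$ are contractive (uniform targets), so $\cO_X(U)\cotimes_{\cO_X(Z)}\cO_X(W)$ is defined and is perfectoid by Corollary \ref{tensor_ban_perfectoid}, and by Corollary \ref{mono_fiber_prod_berko_sp} its Berkovich spectrum is $|\cM(\cO_X(U))|\times_{|\cM(\cO_X(Z))|}|\cM(\cO_X(W))|$, which under (4) is $U\cap W$; the crucial observation applied in both directions then identifies this tensor product with $\cO_X(U\cap W)$ as an $A$-algebra, the comparison maps being mutually inverse because they are $A$-algebra endomorphisms of objects whose structure maps are monomorphisms in $\Ban_K^{\op}$. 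Finally, the last sentence is immediate: post-compose $\cO_X(-)$ with the equivalence $(-)_{\le 1}\colon\Perfd_K^{\Ban}\simeq\Perfd_{K_{\le 1}}^{\Prism a}$ of Proposition \ref{equiv_perfd_ban_tic}.

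I expect the main obstacle to be the independence/well-definedness step, i.e. canonically and compatibly identifying the perfected rational domains attached to different defining functions and different $p$-power roots; once that universal property is in hand, functoriality and (3)--(6) are formal consequences of it and of monomorphy in $\Ban_K^{\op}$. A secondary subtlety is (6), where one must correctly match the Berkovich fiber product with the rational subset $U\cap W$ and upgrade the resulting homeomorphism to an isomorphism of perfectoid Banach $K$-algebras via the universal property.
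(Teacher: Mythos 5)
Your proposal follows essentially the same route as the paper: fix a presentation of each $U$ via the approximation lemma to get $p$-power roots, define $\cO_X(U)$ as the associated perfected rational domain, identify it as the uniformization of the ordinary rational domain via Lemma \ref{equiv_rat_domain_and_perfected} so that Proposition \ref{rational_dom_univ_prop} furnishes a universal property among uniform Banach $K$-algebras, then extract independence of choices, functoriality, and (1)--(5) formally from that universal property and monomorphy. For (6) you identify the Berkovich spectrum of the completed tensor product with $U\cap W$ and then produce mutually inverse maps from the universal property in both directions, whereas the paper observes directly that both sides satisfy the same universal property; these are minor rearrangements of the same argument.
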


\begin{proof} We begin by constructing the functor $\cO_X(-)$, which will involve making some choices and then we will show that the functor is independent of the choices. From the approximation lemma (\ref{approx_lemma}) we know that for each $U \in |X|_{\rat}$ there exists a collection of $\{f_1, \dots, f_n\} \subset A$ generating the unit ideal, together with a choice of compatible $p$-power roots $f_j^{1/p^m} \in A$ for all $m \in \ZZ_{\ge 0}$, such that
\begin{align*}
	U = |\cM(A)|\Big (\frac{f_1, \dots, f_n}{f_i} \Big) && \text{and set} && \cO_X(U) = A \blang \frac{f_1^{1/p^\infty}, \dots, f_n^{1/p^\infty}}{f_i^{1/p^\infty}} \brang
\end{align*}
Then, by \ref{perfect_rat_domain_is_perfectoid} we can conclude that $\cO_X(U) \in \Perfd_{K}^{\Ban}$ -- in particular, for $U = X$ we choose $\cO_X(X) = A$, this finishes the proof of $(1)$.

Next, recall that in \ref{equiv_rat_domain_and_perfected} we proved we have an isomorphism of Banach $K$-algebras
\begin{equation*}
	A \blang \frac{f_1, \dots, f_n}{f_i} \brang \simeq A \blang \frac{f_1^{1/p^\infty}, \dots, f_n^{1/p^\infty}}{f_i^{1/p^\infty}} \brang = \cO_X(U)
\end{equation*}
in particular, as the right hand side is a uniform Banach $K$-algebra the isomorphism presents $\cO_X(U)$ as the uniformization of $A \blang \frac{f_1, \dots, f_n}{f_i} \brang$. Assume that we have a morphism $f: \cM(B) \rightarrow X$ such that $\im(|f|) \subset U \subset |X|$, we showed in \ref{rational_dom_univ_prop} that the morphism $A \rightarrow B$ admits an essentially unique factorization
\begin{equation*}
	A \rightarrow A \blang \frac{f_1, \dots, f_n}{f_i} \brang \rightarrow \cO_X(U) \rightarrow B
\end{equation*}
characterizing $\cO_X(U)$ among all uniform Banach $K$-algebras by a universal property. In turn, this implies that we have a functor $\cO_X(-): |X|_{\rat}^{\op} \rightarrow \Perfd_K^{\Ban}$ and that $\cO_X(U)$ is independent of the choice of $\{f_1, \dots, f_n\} \subset A$ and its $p$-power roots. This completed the proof of $(2)$, for $(5)$ it remains to show that $\cM(\cO_X(U)) \rightarrow \cM(\cO_X(V))$ is a monomorphism in $\Ban_K^{\op}$.

In \ref{rat_domains_are_mono} we proved that $\cM \Big (A \blang \frac{f_1, \dots, f_n}{f_i} \brang \Big) \rightarrow X$ is a monomorphism in $\Ban_K^{\op}$, which in turn implies that $\cM ( \cO_X(U) ) \rightarrow X$ is a monomorphism in $\Ban_K^{\op}$ for all $U \in |X|_{\rat}$. Furthermore, if $U \subset V$ the fact that $\cM ( \cO_X(U) ) \rightarrow X$ admits a unique factorization as $\cM(\cO_X(U)) \rightarrow \cM(\cO_X(V)) \rightarrow X$ implies that $\cM(\cO_X(U)) \rightarrow \cM(\cO_X(V))$ is a monomorphism in $\Ban_K^{\op}$, proving $(3)$ and $(5)$.  Statement $(4)$ then follows from \ref{mono_berko_sp_injective} and \ref{topo_img_rat_domain}.

Finally, recall that $(5)$ characterizes the map $\cO_X(Z) \rightarrow \cO_X(U \cap W)$ as the unique morphisms of perfectoid Banach $K$-algebras such that if $f: \cM(B) \rightarrow \cM(\cO_X(Z))$, from a uniform Banach $K$-algebra $B$, and $\im(|f|) \subset U \cap W$, then there exists a unique factorization $\cO_X(Z) \rightarrow \cO_X(U \cap W) \rightarrow B$ of the map $\cO_X(Z) \rightarrow B$. On the other hand, $\cO_X(Z) \rightarrow \cO_X(U) \cotimes_{\cO_X(Z)} \cO_X(W)$ is a morphism of perfectoid Banach $K$-algebras (\ref{tensor_ban_perfectoid}) and we claim that it satisfies the same universal property as $\cO_X(U \cap W)$. Indeed, if $B$ is a uniform Banach $K$-algebra such that $f: \cM(B) \rightarrow \cM(\cO_X(Z))$ satisfies $\im(|f|) \subset U \cap W \subset |Z|$, then the map $\cO_X(Z) \rightarrow B$ admits unique factorization as
\begin{align*}
	\cO_X(Z) \rightarrow \cO_X(U) \rightarrow B && \cO_X(Z) \rightarrow \cO_X(W) \rightarrow B
\end{align*}
which in turn implies that there is an essentially unique factorization of $\cO_X(Z) \rightarrow B$ as $\cO_X(Z) \rightarrow \cO_X(U) \cotimes_{\cO_X(Z)} \cO_X(W) \rightarrow B$, proving that $\cO_X(U \cap W)$ and $\cO_X(U) \cotimes_{\cO_X(Z)} \cO_X(W)$ satisfy the same universal property. This finishes the proof of $(6)$.
\end{proof}

\begin{prop}\label{colimit_of_perfectoids} Let $\cI \rightarrow \Ban_K^{\contr}$ be a functor such that $\cI \ni i \mapsto A_i \in \Perfd_K^{\Ban}$. Then, the $\varpi$-completed colimit $\colim_\cI A_i$ is a perfectoid Banach $K$-algebra.
\end{prop}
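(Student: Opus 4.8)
The plan is to reduce the statement about perfectoid Banach $K$-algebras to a statement about integral perfectoid algebras via the dictionary, where filtered colimits are much easier to handle because derived and classical $\varpi$-completion agree on $\varpi$-torsion free objects. First I would invoke the equivalence of categories $(-)_{\le 1}: \Perfd_K^{\Ban} \simeq \Perfd_{K_{\le 1}}^{\Prism \tic}$ from Proposition \ref{equiv_perfd_ban_tic}, so that the functor $\cI \to \Ban_K^{\contr}$ factoring through $\Perfd_K^{\Ban}$ corresponds to a functor $\cI \to \CAlg_{K_{\le 1}}^{\wedge a \tic}$ valued in $\Perfd_{K_{\le 1}}^{\Prism \tic}$. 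By Construction \ref{from_banach_to_tf-comp} and the explicit description of filtered colimits in $\Ban_K^{\contr}$ from Proposition \ref{filtered_colim_banach_contr}, the $\varpi$-completed colimit $A := \colim_\cI A_i$ has $A_{\le 1}$ identified with a suitable $\varpi$-completion of the colimit of the $A_{i, \le 1}$; so it suffices to show that this object is again an object of $\Perfd_{K_{\le 1}}^{\Prism \tic}$, i.e. that it is $\varpi$-complete, $\varpi$-torsion free, totally integrally closed, and integral perfectoid.

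The key step is to compute the colimit on the integral side. The colimit of the $A_{i, \le 1}$ in $\CAlg_{K_{\le 1}}$ is $\varpi$-torsion free (a filtered colimit of $\varpi$-torsion free modules is $\varpi$-torsion free), and passing to the classical $\varpi$-completion, which agrees with the derived one by Lemma \ref{completion_tf}, produces a $\varpi$-complete $\varpi$-torsion free $K_{\le 1}$-algebra $\overline{A}$. To see that $\overline{A}$ is integral perfectoid, I would check that the Frobenius $\overline{A}/\varpi \to \overline{A}/\varpi^p$, $a \mapsto a^p$, is an isomorphism, using \cite[Lemma 3.10]{BMS1}: surjectivity and injectivity modulo $\varpi$ are preserved under filtered colimits of perfectoid algebras of characteristic $p$ (equivalently, work on the tilts, where a filtered colimit of perfect $\FF_p$-algebras is perfect), and then $\varpi$-completion preserves these properties since the relevant quotients are unchanged by $\varpi$-completion. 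Alternatively, on the prismatic side, a filtered colimit of perfect $\delta$-rings is a perfect $\delta$-ring (Proposition \ref{delta_rings_lim_colim} plus functoriality of Frobenius), and $\varpi$-completion preserves this, so the corresponding perfect prism structure survives, giving integral perfectoidness via Proposition \ref{perfect_prism_perfectoid}. Finally, to conclude that the object lies in $\Perfd_{K_{\le 1}}^{\Prism \tic}$ rather than merely $\Perfd_{K_{\le 1}}^{\Prism}$, I would apply the total integral closure functor $(-)^{\tic}$ and the almost-element functor $(-)_*$: by Proposition \ref{tic_preserves_perfectoid} these preserve integral perfectoidness, and the resulting object $(\overline{A}^{\tic})_*$ is the correct representative. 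One must then match this with the $(-)_{\le 1}$ of the Banach colimit; here the universal property of the completed colimit in $\Ban_K^{\contr}$ (Proposition \ref{filtered_colim_banach_contr}) together with the equivalence $\Perfd_K^{\Ban} \simeq \Perfd_{K_{\le 1}}^{\Prism \tic}$ forces the identification.

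The main obstacle I anticipate is bookkeeping around completions and almost mathematics: specifically, verifying that the $\varpi$-completed colimit computed in $\Ban_K^{\contr}$ really corresponds, under $(-)_{\le 1}$, to $(\overline{A}^{\tic})_*$ rather than to some intermediate non-totally-integrally-closed object. The subtlety is that $(-)_{\le 1}$ lands in $\CAlg_{K_{\le 1}}^{\wedge a \tf}$, which is only equivalent to $\Ban_K^{\contr}$ after also imposing total integral closure to get $\uBan_K$; since a filtered colimit of uniform Banach algebras need not a priori be uniform before completion, one has to argue that the spectral-radius completion performed in $\uBan_K$ (Construction \ref{const_unif_tensor_ban}, Proposition \ref{adjunction_uniform_ban}) matches applying $(-)^{\tic}$ on the integral side. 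This should follow cleanly from the uniqueness of left adjoints and the commuting diagrams in Proposition \ref{tic_algebras} and Proposition \ref{almost_funct_tic_algebras}, but it is the point requiring the most care.
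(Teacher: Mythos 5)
There is a genuine gap: your argument is written for \emph{filtered} diagrams, but the statement (and the paper's proof) allows an arbitrary functor $\cI \rightarrow \Ban_K^{\contr}$. Several of your key steps use filteredness essentially: the explicit description of the colimit in Proposition \ref{filtered_colim_banach_contr}, the claim that a colimit of $\varpi$-torsion free algebras is $\varpi$-torsion free, and the claim that bijectivity of Frobenius $A/\varpi \rightarrow A/\varpi^p$ (equivalently perfectness of the tilts) passes to the colimit. None of these survive for general $\cI$: already for a coproduct (a completed tensor product of perfectoids) the ring-level colimit can acquire $\varpi$-torsion, and for a coequalizer injectivity of Frobenius mod $\varpi$ can fail, so your intermediate object $\overline{A}$ need not be integral perfectoid and the route through \cite[Lemma 3.10]{BMS1} collapses. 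Your parenthetical ``alternative'' via $\delta$-rings is in fact the correct mechanism, but as stated you again restrict it to filtered colimits, and it is not needed there.

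The paper's proof works in the generality of the statement precisely by avoiding any direct check on the ring-level colimit: it transports the diagram along $\A_{\inf}(-)$, sets $W = \colim_{\cI} \A_{\inf}(A_{i,\le 1})$ in $(p,d,[\varpi^\flat])$-complete algebras, and observes that $W$ is a perfect $\delta$-ring because the forgetful functor from $\delta$-rings preserves \emph{all} colimits (Proposition \ref{delta_rings_lim_colim}) and the Frobenius of a colimit is the colimit of the Frobenii, hence an isomorphism -- no filteredness required. Rigidity (Lemma \ref{rig_prisms}) makes $(W,d)$ a perfect prism, and base change along $\A_{\inf}(K_{\le 1}) \rightarrow K_{\le 1}$, which commutes with colimits, identifies $W/d$ with $\colim_{\cI} A_{i,\le 1}$ computed in $\CAlg_{K_{\le 1}}^{\wedge}$, so this colimit lies in $\Perfd_{K_{\le 1}}^{\Prism}$. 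The colimit that actually matches the Banach-side completed colimit under Proposition \ref{equiv_almost_tf_banach} is the one in $\CAlg_{K_{\le 1}}^{\wedge a \tf}$, obtained by applying the left adjoint $(-)^{\wedge a \tf}$, which on $\Perfd_{K_{\le 1}}^{\Prism}$ is just $(-)^a$ (Proposition \ref{equiv_perfd_tic_almost}); this also dissolves the bookkeeping issue you flag at the end, since one never has to compare the spectral-radius completion on the Banach side with $(-)^{\tic}$ by hand -- the dictionary of Proposition \ref{equiv_almost_tf_banach} identifies colimits on the nose, and perfectoidness of the result is read off from membership in $\Perfd_{K_{\le 1}}^{\Prism a}$.
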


\begin{proof} Under the equivalence $(-)[\frac{1}{\varpi}]: \Ban_K^{\contr} \rightleftarrows \CAlg_{K_{\le 1}}^{\wedge a \tf}: (-)_{\le 1}$ (\ref{equiv_almost_tf_banach}), we obtain a functor
\begin{align*}
	\cI \rightarrow \CAlg_{K_{\le 1}}^{\wedge a \tf} && i \mapsto A_{i, \le 1} \in \Perfd_{K_{\le 1}}^{\Prism a}
\end{align*}
so it suffices to show that $\colim_{\cI} A_{i, \le 1}$ is an element of $\Perfd_{K_{\le 1}}^{\Prism a} \subset  \CAlg_{K_{\le 1}}^{\wedge a \tf}$. Recall that we have an inclusion $\CAlg_{K_{\le 1}}^{\wedge a \tf} \subset \CAlg_{K_{\le 1}}^{\wedge}$ which admits a left adjoint given by (cf. \ref{torsion_free_algebras} and \ref{tf_almost_algebras})
\begin{align*}
	(-)^{\wedge a \tf}: \CAlg_{K_{\le 1}}^{\wedge} \rightarrow \CAlg_{K_{\le 1}}^{\wedge a \tf} && R \mapsto (H^0 j_* R)^{\tf, \wedge, a}
\end{align*}
where $(-)^{\tf, \wedge, a}$ correspond to first passing to the $\varpi$-torsion free quotient, followed by $\varpi$-completion and then passing to the almost category. In \ref{equiv_perfd_tic_almost} we showed that when restricted to $\Perfd_{K_{\le 1}}^{\Prism} \subset \CAlg_{K_{\le 1}}^{\wedge}$ the functor $(-)^{\wedge a \tf}$ is identified with $(-)^a: \Perfd_{K_{\le 1}}^{\Prism} \rightarrow \Perfd_{K_{\le 1}}^{\Prism a}$. Hence, using the inclusion $\CAlg_{K_{\le 1}}^{\wedge a \tf} \subset \CAlg_{K_{\le 1}}^{\wedge}$ it suffices to show that the functor
\begin{align*}
	\cI \rightarrow \CAlg_{K_{\le 1}}^{\wedge} && i \mapsto A_{i, \le 1} \in \Perfd_{K_{\le 1}}^{\Prism}
\end{align*}
satisfies that $\colim_{\cI} A_{i, \le 1}$, computed in $\CAlg_{K_{\le 1}}^{\wedge}$, is an element of $\Perfd_{K_{\le 1}}^{\Prism}$.

Using the functoriality of $\A_{\inf}(-)$ we produce a functor $\cI \rightarrow \CAlg_{\A_{\inf}(K_{\le 1})}^{\wedge}$, to the category of $(p, d, [\varpi^\flat])$-complete $\A_{\inf}(K_{\le 1})$-algebras, given by $i \mapsto \A_{\inf}(A_{i, \le 1})$, where $d$ is a distinguished element of $\A_{\inf}(K_{\le 1})$ such that $\A_{\inf}(K_{\le 1})/d = K_{\le 1}$ (cf. \ref{perfect_prism_perfectoid}). Define $W = \colim_{\cI} \A_{\inf}(A_{i, \le 1})$, we claim that $W$ admits the structure of a perfect $\delta$-ring. Indeed, each $\A_{\inf}(A_{i, \le 1})$ admits a functorial structure of a perfect $\delta$-ring, and since the forgetful functor
\begin{equation*}
	\delta\text{-Rings}_{\A_{\inf}(K_{\le 1})}^{\wedge} \rightarrow \CAlg_{\A_{\inf}(K_{\le 1})}^{\wedge}
\end{equation*}
preserves all colimits (\ref{delta_rings_lim_colim}), we can conclude that $W$ admits the structure of a perfect $\delta$-ring compatible with the maps $\A_{\inf}(A_{i, \le 1}) \rightarrow W$. Furthermore, by the rigidity theorem (\ref{rig_prisms}) it follows that $(W,d)$ is a perfect prism, and so $W/d \in \Perfd_{K_{\le 1}}^{\Prism}$.

It remains to show that $W/d = \colim_{\cI} A_{i, \le 1}$, where the colimit is computed in $\CAlg_{K_{\le 1}}^{\wedge}$, but this follows from the following identities
\begin{align*}
	W/d = W \cotimes_{A_{\inf}(K_{\le 1})} K_{\le 1} & \simeq (\colim_{\cI} \A_{\inf}(A_{i, \le 1})) \cotimes_{A_{\inf}(K_{\le 1})} K_{\le 1} \\
	&\simeq \colim_{\cI} ( \A_{\inf}(A_{i, \le 1})\cotimes_{A_{\inf}(K_{\le 1})} K_{\le 1} ) \\
	&\simeq \colim_{\cI} (A_{i, \le 1})
\end{align*}
proving the result, where we are implicitly using the fact that $- \cotimes_{A_{\inf}(K_{\le 1})} K_{\le 1}$ commutes with all colimits.

\end{proof}

\begin{thm}\label{stalks_perfectoid} Let $A$ be a perfectoid Banach $K$-algebra, and set $X = \cM(A)$. Fix a point $x \in |X|$ and define $\cO_{X,x} \in \Ban_K$ as
\begin{align*}
	\cO_{X,x} = \colim_{x \in U} \cO_X(U) && \text{computed in } \Ban_K
\end{align*}
where the colimit ranges over all $x \in U \in |X|_{\rat}$. Then,
\begin{enumerate}[(1)]
	\item The Banach $K$-algebra $\cO_{X,x}$ is a perfectoid Banach $K$-algebra.
	\item The induced map $\cM(\cO_{X,x}) \rightarrow \cM(\cO_X(U))$ is a monomorphism in $\Ban_K^{\op}$.
	\item The induced injective map $|\cM(\cO_{X,x})| \rightarrow U$ identifies $|\cM(\cO_{X,x})|$ with $x \in U \subset |X|$. In particular, we have that $|\cM(\cO_{X,x})| = \pt$.
	\item Let $B$ be a uniform Banach $K$-algebra and $\cO_X(U) \rightarrow B$ a morphism of Banach $K$-algebras. If the induced map $|\cM(B)| \rightarrow |\cM(\cO_X(U))|$ has its image contained in $x \in U$, then the morphism $\cO_X(U) \rightarrow B$ admits a unique factorization
	\begin{equation*}
		\cO_X(U) \rightarrow \cO_{X,x} \rightarrow B
	\end{equation*}
	\item The morphism $\cO_{X, x} \rightarrow \cH(x)$ of Banach $K$-algebras, induced from the canonical map $\cO_X(U) \rightarrow \cH(x)$ and (3), is an isomorphism of Banach $K$-algebras. In particular, $\cH(x)$ is a perfectoid field.
\end{enumerate}
\end{thm}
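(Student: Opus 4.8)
The plan is to prove the five assertions in order, in each case reducing to the corresponding statement for the structure presheaf of Theorem \ref{struct_presheaf_perfectoid} together with the behaviour of the Berkovich spectrum under filtered colimits. First observe that the colimit defining $\cO_{X,x}$ is filtered with contractive transition maps: any two rational domains $U,V$ through $x$ admit the common refinement $U\cap V\in|X|_{\rat}$, which still contains $x$ (cf. \ref{defn_rational_site}), and morphisms of rational domains are contractive; hence $\cO_{X,x}$ exists as in \ref{filtered_colim_banach_contr}, and by Proposition \ref{colimit_of_perfectoids} it is a perfectoid Banach $K$-algebra, proving (1). For (2), fix $U$ with $x\in U$; by Theorem \ref{struct_presheaf_perfectoid}(3) each map $\cM(\cO_X(V))\to\cM(\cO_X(U))$ with $x\in V\subseteq U$ is a monomorphism in $\Ban_K^{\op}$, and since $\{V:x\in V\subseteq U\}$ is cofinal in $\{V:x\in V\}$ (replace $V$ by $V\cap U$), the structure map exhibits $\cM(\cO_{X,x})$ as a cofiltered limit of these monomorphisms; a compatible family of maps $T\to\cM(\cO_X(V))$ is then determined by the resulting map $T\to\cM(\cO_X(U))$, so $\cM(\cO_{X,x})\to\cM(\cO_X(U))$ is itself a monomorphism. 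Taking $U=X$ shows in particular that $\cM(\cO_{X,x})\to\cM(A)$ is a monomorphism.

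For (3), Proposition \ref{cofiltered_lim_berko_sp} identifies $|\cM(\cO_{X,x})|$ with $\lim_{x\in U}|\cM(\cO_X(U))|$; by Theorem \ref{struct_presheaf_perfectoid}(4) we have $|\cM(\cO_X(U))|=U$ as a subspace of $|X|$, and since the forgetful functor $\Comp\to\Set$ preserves limits (\ref{monadicity_comp_set}) the underlying set of this limit is $\bigcap_{x\in U\in|X|_{\rat}}U$. This intersection is $\{x\}$: given $y\in|X|$ with $y\neq x$, pick $f\in A$ with $|f(x)|\neq|f(y)|$ and, using that $K$ is non-trivially valued, build a rational domain separating $x$ from $y$ exactly as in the proof of Proposition \ref{properties_stalk}(2). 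Injectivity of $|\cM(\cO_{X,x})|\to U$ then follows from (2) via Proposition \ref{mono_berko_sp_injective}. For (4), uniqueness is immediate from (2); for existence, given a uniform $B$ and $\cO_X(U)\to B$ whose induced map $|\cM(B)|\to|\cM(\cO_X(U))|$ has image $\{x\}$, Theorem \ref{struct_presheaf_perfectoid}(5) gives for each $V$ with $x\in V\subseteq U$ a unique factorization $\cO_X(U)\to\cO_X(V)\to B$, these are compatible by uniqueness, and since $B$ is uniform all the maps $\cO_X(V)\to B$ are contractive (\ref{morphisms_unif_ban_contractive}); they therefore induce a contractive map $\cO_{X,x}=\colim_{x\in V\subseteq U}\cO_X(V)\to B$ with the required factorization property.

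Finally, (5). By (1) the algebra $\cO_{X,x}$ is uniform, and by (3) $|\cM(\cO_{X,x})|=\pt$, so Corollary \ref{recog_na_field} shows $\cO_{X,x}$ is a non-archimedean field; moreover it is perfectoid. I claim that $A\to\cO_{X,x}$ satisfies the universal property of the completed residue field \ref{univ_completed_residue}: for any non-archimedean field $K_x$ with a map $A\to K_x$ whose induced map of spectra has image $\{x\}$, part (4) with $U=X$ (note $K_x$ is uniform) produces the required unique factorization $A\to\cO_{X,x}\to K_x$, uniqueness coming from (2). Hence both $\cM(\cH(x))$ and $\cM(\cO_{X,x})$ represent the same presheaf over $\cM(A)$, so the map $q\colon\cO_{X,x}\to\cH(x)$ induced by $A\to\cH(x)$ and the map $r\colon\cH(x)\to\cO_{X,x}$ from \ref{univ_completed_residue} are mutually inverse: the composites $q\circ r$ and $r\circ q$ are endomorphisms of $\cH(x)$ and of $\cO_{X,x}$ respectively that commute with the maps from $A$, hence equal the identity by the uniqueness clauses of \ref{univ_completed_residue} and of (2). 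Thus $q$ is an isomorphism of Banach $K$-algebras and $\cH(x)\cong\cO_{X,x}$ is a perfectoid field. The only point needing genuine care — rather than a serious obstacle — is the equality $\bigcap_{x\in U}U=\{x\}$ in (3), where the non-triviality of the valuation on $K$ enters, together with keeping track of the correct base object ($\cM(A)$ versus $\cM(\cO_X(U))$) when invoking the monomorphism property in (2) and (5).
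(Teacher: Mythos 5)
Your proof is correct and follows essentially the same route as the paper's: Proposition~\ref{colimit_of_perfectoids} for (1), cofinality plus stability of monomorphisms under cofiltered limits for (2), Proposition~\ref{cofiltered_lim_berko_sp} and a separation-by-rational-domains argument for (3), the universal property from Theorem~\ref{struct_presheaf_perfectoid}(5) together with \ref{morphisms_unif_ban_contractive} for (4), and an identification of universal properties to conclude $\cO_{X,x}\simeq\cH(x)$ in (5). One small imprecision in (3): if you really construct the separating domain ``exactly as in'' Proposition~\ref{properties_stalk}(2), the constant lands in $|K^{\times}|^{\mathbf{Q}}$ rather than $|K^{\times}|$, so the resulting locus is not literally an element of $|X|_{\rat}$ as defined in~\ref{defn_rational_site}; you need either to raise $f$ to a $p$th power to clear denominators, or (as the paper does) to invoke density of the value group of the perfectoid field $K$ and pick $C\in K^{\times}$ directly. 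This is a one-line fix, not a gap in the idea.
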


\begin{proof} Statement (1) follows from Proposition \ref{colimit_of_perfectoids}. For statement (2), a cofinality argument show that $\cO_{X,x} = \colim_{x \in V \subset U} \cO_X(V)$ for any $U \in |X|_{\rat}$ containing $x$; then the fact that $\cM(\cO_X(V)) \rightarrow \cM(\cO_X(U))$ is a monomorphism in $\Ban_K^{\op}$ for all $x \in V \subset U \subset |X|$, as shown in \ref{struct_presheaf_perfectoid}(3), implies that $\cM(\cO_{X,x}) \rightarrow \cM(\cO_X(U))$ is a monomorphism in $\Ban_K^{\op}$, as monomorphisms are stable under limits.

For (3) recall that we have the identity $|\cM(\cO_{X, x})| \simeq \lim_{x \in V \subset U} |\cM(\cO_X(V))| \rightarrow |\cM(\cO_X(U))|$ from \ref{cofiltered_lim_berko_sp}. As the forgetful functor $\Comp \rightarrow \Set$ preserves limits (\ref{monadicity_comp_set}) it follows that $x \in |\cM(\cO_{X,x})| \subset |\cM(\cO_X(U))|$, it remains to show that for every other $y \in |\cM(\cO_X(U))|$ there exists a rational domain $W \in |X|_{\rat}$ such that $W \subset U$ and $|\cM(\cO_{X}(W))| \rightarrow |\cM(\cO_X(U))|$ contains $x$ and $y \not\in |\cM(W)| \subset |\cM(\cO_X(U))|$. By assumption there exists a $f \in \cO_{X}(U)$ such that $|f(x)| \not= |f(y)|$, and without loss of generality assume that $|f(x)| < |f(y)|$. Then, since the value group of $K$ is dense in $\RR_{\ge 0}$, there exists a $C \in K$ such that $|f(x)| \le |C|$ and $|f(y)| > |C|$. Then, the rational domains
\begin{align*}
	\cO_X(U) \rightarrow W_1 := \cO_{X}(U) \frac{\langle C^{-1} T \rangle}{(f - T)}^{\wedge} && \cO_X(U) \rightarrow W_2 := \cO_{X}(U) \frac{\langle C^{-1} T \rangle}{(fT - 1)}^{\wedge}
\end{align*}
induce injective maps $|\cM(W_1)| \rightarrow |\cM(\cO_X(U))|$ and $|\cM(W_2)| \rightarrow |\cM(\cO_X(U))|$, which have image $|\cM(\cO_X(U))|(\frac{f}{C})$ and $|\cM(\cO_X(U))|(\frac{C}{f})$ respectively. Showing that $x \in W_1 \subset U$ and $y \not\in W_1$, finishing the proof of (3). To proof (4), recall that for any $W \in |X|_{\rat}$ satisfying $x \in W \subset U$ we have that the map $\cO_X(U) \rightarrow B$ factors as $\cO_{X}(U) \rightarrow \cO_X(W) \rightarrow B$ by \ref{struct_presheaf_perfectoid}, the claim then follow from the definition of $\cO_{X,x}$.

Finally, we proof (5). Recall that the universal property of $\cO_X(U) \rightarrow \cH(x)$ (cf. \ref{properties_stalk}(4) and \ref{mono_iso_on_residue_fields}) characterizes $\cH(x)$ among all uniform Banach $K$-algebras with a map from $\cO_X(U)$, which is the same universal property we showed that $\cO_X(U) \rightarrow \cO_{X,x}$ has in (4), completing the proof of (5).
\end{proof}

\newpage

\section{Tate acyclicity for Perfectoids}\label{sect_tate_acyclicity_perfd}

\begin{convention} Throughout this section we want to systematically work with categories of sheaves on $\Perfd_{V}^{\Prism}$, unfortunately for technical reasons we are forced to fix an uncountable strong limit cardinal $\kappa$ (i.e. $\kappa$ is uncountable and $\lambda < \kappa$ then $2^{\lambda} < \kappa$) and restrict our attention to the category $\Perfd_{V, < \kappa}^{\Prism}$ where the cardinality of the perfectoid rings is bounded by $\kappa$. This has the benefit that the collection of object in $\Perfd_{V, < \kappa}^{\Prism}$ is a set, which implies that now the category of sheaves $\Shv(\Perfd_{V, < \kappa}^{\Prism})$ is a topos.

Fortunately, none of the constructions we consider depend on the uncountable strong limit cardinal $\kappa$. Thus, in order to unburden notation we will drop the $\kappa$ from our notation, denoting by $\Perfd_{V}^{\Prism}$ what should be denoted by $\Perfd_{V, < \kappa}^{\Prism}$; or denote by $\Sch_{\qcqs}$ the category of $\qcqs$ schemes with cardinality bounded by $\kappa$.
\end{convention}

\subsection{Various flavors of the arc-topology}

\begin{defn}\label{defn_arc} A map of qcqs schemes $f: Y \rightarrow X$ (eg. $X$ and $Y$ could be affine) is an $\arc$-cover if for any valuation ring $V$ of Krull dimension $\le 1$ and a map $\Spec(V) \rightarrow X$, there exists an extension $\Spec(W) \rightarrow \Spec(V)$ of valuation rings of Krull dimension $\le 1$ and a map $\Spec(W) \rightarrow Y$ making the following diagram commute
\begin{cd}
	\Spec(W) \ar[r] \ar[d] & Y \ar[d] \\
	\Spec(V) \ar[r] & X
\end{cd}
Where an extension of valuation ring $V \rightarrow W$ is a faithfully flat map of valuation rings (cf. \ref{equiv_extensions_rk_one}). 

Its easy to check that the collection of $\arc$-cover in the category $\Sch_{\qcqs}$ (of qcqs schemes) satisfy the conditions of \cite[Proposition A.3.2.1]{luriespectral}, giving rise to a finitary Grothendieck topology on $\Sch_{\qcqs}$ which we call the $\arc$-topology.
\end{defn}

\begin{lemma}\label{faithfully_flat_implies_arc_cover} Let $R \rightarrow S$ be a faithfully flat of rings, then it is an $\arc$-cover.
\end{lemma}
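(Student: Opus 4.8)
The plan is to unwind the definition of $\arc$-cover (\ref{defn_arc}) applied to $\Spec(S) \to \Spec(R)$ and reduce it to the following ring-theoretic assertion: given a valuation ring $V$ of Krull dimension $\le 1$ with fraction field $K$ together with a ring map $R \to V$, there exists an extension $V \to W$ of valuation rings of Krull dimension $\le 1$ (that is, a faithfully flat map, by the definition in \ref{defn_arc}) and a map $S \to W$ of $R$-algebras. The argument is base change followed by a valuation-ring existence argument, with a final rank-reduction step.

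First I would form $B := S \otimes_R V$. Since $R \to S$ is faithfully flat and faithful flatness is stable under base change, $B$ is a nonzero faithfully flat $V$-algebra, so $\Spec(B) \to \Spec(V)$ is surjective \cite[Tag 00HQ]{stacks-project}. Choose a prime $\mathfrak{q} \subset B$ lying over $\frakm_V$, and then a minimal prime $\frakp \subseteq \mathfrak{q}$ of $B$. By going-down for the flat map $V \to B$, minimal primes of $B$ lie over the unique minimal prime $(0)$ of the domain $V$, so $\frakp \cap V = (0)$; hence $D := B/\frakp$ is a domain into which $V$ embeds, and the image $\overline{\mathfrak{q}}$ of $\mathfrak{q}$ in $D$ is a prime lying over $\frakm_V$. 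Writing $E := \Frac(D) \supseteq K$, the localization $D_{\overline{\mathfrak{q}}}$ is a local domain with fraction field $E$, and $V \to D_{\overline{\mathfrak{q}}}$ is a local homomorphism since $\frakm_V = \overline{\mathfrak{q}} \cap V \subseteq \overline{\mathfrak{q}}$. By the standard fact that every local subring of a field is dominated by a valuation ring of that field \cite[Tag 00IA]{stacks-project}, choose a valuation ring $\widetilde{W} \subseteq E$ dominating $D_{\overline{\mathfrak{q}}}$; then $\widetilde{W}$ dominates $V$, and since $V$ is a valuation ring of $K$ maximal for domination we get $\widetilde{W} \cap K = V$. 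The composite $S \to B \to D \subseteq \widetilde{W}$ is then a lift on the level of rings, compatible with $R \to V \to \widetilde{W}$ because every map in sight is an $R$-algebra map.

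\textbf{The main obstacle} is that $\widetilde{W}$ need not have Krull dimension $\le 1$, so it must be cut down. Since $\Spec(\widetilde{W})$ is totally ordered, the union $\frakp_\ast$ of all primes of $\widetilde{W}$ lying over $(0) \in \Spec(V)$ is again prime (a union of a chain of primes) and still lies over $(0)$, and the intersection $\mathfrak{q}_\ast$ of all primes of $\widetilde{W}$ lying over $\frakm_V$ is prime and still lies over $\frakm_V$; moreover there are no primes of $\widetilde{W}$ strictly between $\frakp_\ast$ and $\mathfrak{q}_\ast$, since any such prime would meet $V$ in $(0)$ or in $\frakm_V$ and hence be comparable to one of the two. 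Therefore $W := (\widetilde{W}/\frakp_\ast)_{\mathfrak{q}_\ast/\frakp_\ast}$ is a valuation ring with exactly two primes, i.e. of Krull dimension $\le 1$; its fraction field $\kappa(\frakp_\ast)$ contains $K$, the induced map $V \to W$ is injective and local (because $\mathfrak{q}_\ast \cap V = \frakm_V$), so $W$ dominates $V$, and the map $S \to D \subseteq \widetilde{W} \to W$ still lands in $W$. By \ref{equiv_extensions_rk_one} a dominating map of rank-$\le 1$ valuation rings is faithfully flat — the case $V = K$ being immediate, as then one may just take $W = \Frac(D)$ and any field extension is faithfully flat — so $V \to W$ is an extension in the sense of \ref{defn_arc}. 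This produces the required commutative square
\begin{cd}
	\Spec(W) \ar[r] \ar[d] & \Spec(S) \ar[d] \\
	\Spec(V) \ar[r] & \Spec(R)
\end{cd}
and hence shows that $\Spec(S) \to \Spec(R)$ is an $\arc$-cover.
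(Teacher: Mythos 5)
Your proof is correct and follows essentially the same route as the paper: base change to $S \otimes_R V$, pass to a local domain lying over $(\frakm_V,(0))$, dominate it by a valuation ring of its fraction field via \cite[Tag 00IA]{stacks-project}, and then squeeze that valuation ring down to Krull dimension $\le 1$, with the zero-dimensional case dispatched by a residue/fraction field. The only cosmetic difference is in the cutting-down step: the paper takes the primes $\bigcap_n \pi^n V'$ and $\sqrt{\pi V'}$ for a nonzero $\pi \in \frakm_V$, which coincide with your $\frakp_\ast$ (union of primes over $(0)$) and $\mathfrak{q}_\ast$ (intersection of primes over $\frakm_V$).
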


\begin{proof} First, notice that since $R \rightarrow S$ is faithfully flat, the induced map of topological spaces $\Spec (S) \rightarrow \Spec (R)$ is surjective. Let $V$ be a valuation ring of Krull dimension $\le 1$, and a map $\Spec(V) \rightarrow \Spec(R)$, then since faithfully flat maps are closed under base-change it follows that $\Spec(S \otimes_R V) =:\Spec(D) \rightarrow \Spec(V)$ is faithfully flat, and in particular surjective. If $V$ has Krull dimension zero, pick a point $y \in \Spec(D)$, then the map $\Spec(\kappa(y)) \rightarrow \Spec(V)$ is an extension of valuation rings, with the desired lifting property. Hence, we may assume that $V$ is a rank one valuation ring. Let $\mathfrak{p}_2 \subset \mathfrak{p}_1$ two prime ideals on $D$ laying over the zero ideal and the maximal ideal, whose existence is guaranteed by the going-down theorem, and at the expense of possibly loosing flatness we replace $D$ by $(D/\mathfrak{p}_2)_{\mathfrak{p}_1}$, and still have a surjective map $\Spec (D) \rightarrow \Spec (V)$. Then by \cite[Tag 00IA]{stacks-project} we can find a valuation ring $V^{\prime}$ such that $D \rightarrow V^\prime$ dominates $D$ (see \cite[Tag 00I8]{stacks-project} for a definition), so we obtain a surjective map $\Spec(V^{\prime}) \rightarrow \Spec(V)$ of valuation rings.
	
Next, pick a non-zero element $\pi \in \frakm \subset V$, then the image of $\pi$ under the map $V \rightarrow V^{\prime}$ is a non-zero non-unit element of $V^{\prime}$. Let $\mathfrak{q}_0 = \cap \pi^n V^{\prime}$ and $\mathfrak{q}_1 = \sqrt{\pi V^\prime}$, which are prime ideals of $V^{\prime}$ (\ref{prime_ideals_valuation_rings}). Set $W = (V^\prime/\mathfrak{q}_0)_{\mathfrak{q}_1}$, so we get a map $V \rightarrow W$ of rank 1 valuation rings, which send $\pi \in V$ to non-zero element of the maximal ideal of $W$. This shows that we have a faithfully flat map $\Spec(W) \rightarrow \Spec(V)$ of rank one valuation rings, with the appropriate lifting properties. The result follows.
\end{proof}

\begin{rem}\label{canonical_arc_covers} Any qcqs scheme $X$ admits an $\arc$-cover $Y \rightarrow X$, where $Y$ is the spectrum of a product of absolutely integrally closed valuation rings of Krull dimension $\le 1$. Indeed, we define an equivalence class of valuation rings of Krull dimension $\le 1$ mapping to $X$ as follows: we say that $\Spec(V_1) \rightarrow X$ and $\Spec(V_2) \rightarrow X$ are in the same equivalence class if there is a third map $\Spec(V) \rightarrow X$ such that the map $\Spec(V_i) \rightarrow X$ factors as $\Spec(V_i) \rightarrow \Spec(V) \rightarrow X$. Denote by $\Val_X$ this equivalence class of valuation rings of Krull dimension $\le 1$ mapping to $X$. Picking a absolutely integrally closed representative element $V_x$ for each class $x \in \Val_X$ we obtain an $\arc$-covering
\begin{equation*}
	\Spec(\prod_{x \in \Val_X} V_x) \rightarrow X 
\end{equation*}
Finally, for the sake of completeness let us show that this cover $\Spec(\prod_{x \in \Val_X} V_x) \rightarrow X$ is compatible with the choice of an uncountable strong limit cardinal $\kappa$. We say that $X$ has cardinality $< \kappa$ if locally on the Zariski topology $X$ is of the form $\Spec(R)$ where $R$ has cardinality $\le \lambda < \kappa$. Then, there are $\le \lambda$ equivalence classes in $\Val_X$, and each equivalence class admits an absolutely integrally closed representative of size $\le \lambda$, and the product $\prod_{x \in \Val_X} V_x$ has cardinality $\le \lambda^\lambda \le 2^{\lambda \times \lambda} \le 2^{2^\lambda} < \kappa$.
\end{rem}

\begin{defn}\label{defn_pi_complete_arc} Let $V$ be a rank one valuation ring (cf. \ref{arch_rank_one_valuations}), and fix (possibly zero) $\varpi \in V$. A map of qcqs $V$-schemes $f: Y \rightarrow X$ is an $\varpi$-complete $\arc$-cover if for any $\varpi$-complete $V$-valuation ring $W_1$ of Krull dimension $\le 1$ and a map $\Spf(W_1) \rightarrow X$, there exists an extension $\Spf(W_2) \rightarrow \Spf(W_1)$ of $\varpi$-complete valuation rings of Krull dimension $\le 1$ and a map $\Spf(W_2) \rightarrow Y$ making the following diagram commute
\begin{cd}
	\Spf(W_2) \ar[r] \ar[d] & Y \ar[d] \\
	\Spf(W_1) \ar[r] & X
\end{cd}
It is easy to check that the collection of $\varpi$-complete $\arc$-covers in the category $\Sch_{V,\qcqs}$ (resp. in the category of $\varpi$-complete qcqs $V$-schemes, denoted by $\Sch_{V, \qcqs, \varpi}^{\wedge}$) satisfy the conditions of \cite[Proposition A.3.2.1]{luriespectral}, giving rise to a finitary Grothendieck topology on $\Sch_{V, \qcqs}$ (resp. $\Sch_{V, \qcqs, \varpi}^{\wedge}$) which we call the $\varpi$-complete $\arc$-topology.
\end{defn}

\begin{lemma}\label{arc_implies_pi_complete_arc} Let $V$ be a rank one valuation ring, and fix (possibly zero) $\varpi \in V$. If a map of qcqs $V$-schemes $f: X \rightarrow Y$ is an $\arc$-cover then it is a $\varpi$-complete $\arc$-cover.
\end{lemma}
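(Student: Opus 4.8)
The statement to prove is Lemma \ref{arc_implies_pi_complete_arc}: if $V$ is a rank one valuation ring, $\varpi \in V$ a fixed (possibly zero) element, and $f : X \to Y$ a map of qcqs $V$-schemes which is an $\arc$-cover, then $f$ is a $\varpi$-complete $\arc$-cover. The plan is to check the lifting criterion of Definition \ref{defn_pi_complete_arc} directly, using that the completion of a rank one valuation ring is again a rank one valuation ring and sits inside it via an extension.

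First I would unwind the definitions. Let $W_1$ be a $\varpi$-complete $V$-valuation ring of Krull dimension $\le 1$ together with a map $\Spf(W_1) \to X$; since $W_1$ is $\varpi$-complete, a morphism $\Spf(W_1) \to X$ with $X$ a qcqs $V$-scheme is the same datum as a morphism $\Spec(W_1) \to X$ of schemes (as $X$ is not assumed $\varpi$-complete, this is the relevant adjunction; one may also simply observe that a continuous map to a scheme from the formal spectrum of an adic ring factors through $\Spec$). Then I would apply the hypothesis that $f$ is an $\arc$-cover to the composite $\Spec(W_1) \to X \to Y$: by Definition \ref{defn_arc} there is an extension $\Spec(W') \to \Spec(W_1)$ of valuation rings of Krull dimension $\le 1$ and a map $\Spec(W') \to X$ lifting it, i.e. making the square with $X \to Y$ commute. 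Here $W'$ need not be $\varpi$-complete, so the next step is to replace $W'$ by its $\varpi$-completion.

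The key step is then to pass to completions. By Lemma \ref{completion_rank_one_val} (applied with $\pi = \varpi$, after fixing a multiplicative order-preserving embedding of the value group into $\RR_{>0}$, which exists by Lemma \ref{embed_value_group_reals_rk_one}), the canonical map $W' \to W_2 := (W')^\wedge_\varpi$ is an extension of rank one valuation rings; in particular $W_2$ is a $\varpi$-complete valuation ring of Krull dimension $\le 1$, and it is a $V$-algebra since $V \to W' \to W_2$. Composing, we get $\Spf(W_2) \to \Spf(W')^{\wedge}_\varpi \to \Spec(W') \to X$ (using that $W'$ is a $W_1$-algebra and $W_1$ is already $\varpi$-complete, so the extension $W_1 \to W'$ induces $W_1 \to W_2$, which is again an extension of $\varpi$-complete rank one valuation rings by the same lemma, making the relevant triangle over $\Spf(W_1)$ commute). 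The square
\begin{cd}
	\Spf(W_2) \ar[r] \ar[d] & X \ar[d, "f"] \\
	\Spf(W_1) \ar[r] & Y
\end{cd}
commutes because it is obtained by $\varpi$-completing the commuting square over $\Spec(W')$, so $f$ satisfies the criterion of Definition \ref{defn_pi_complete_arc}.

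I expect the main obstacle to be the bookkeeping around the identification of $\Spf$-maps with $\Spec$-maps and the compatibility of the two completion steps (ensuring $W_1 \to W_2$ is genuinely an extension of $\varpi$-complete rank one valuation rings, not merely a ring map), together with the edge case $\varpi = 0$, where ``$\varpi$-complete'' is vacuous and the statement reduces to the tautology that an $\arc$-cover is an $\arc$-cover — this degenerate case should be dispatched in one line. A secondary subtlety is the degenerate case where $W'$ has Krull dimension zero (so $\varpi$ maps to a unit or to $0$ in $W'$): there $\varpi$-completion is either trivial or kills the ring, and one argues separately that $W_1$ itself, or an appropriate residue field, already provides the required lift; I would handle this by noting that if $\varpi$ is invertible or nilpotent in $W'$ then $W'$ is already $\varpi$-complete and one takes $W_2 = W'$.
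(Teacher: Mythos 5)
Your proposal is correct and takes essentially the same route as the paper's proof: apply the $\arc$-lifting property to obtain an extension $W_1 \to W'$ together with a lift to the source, then use Lemma \ref{completion_rank_one_val} to replace $W'$ by its $\varpi$-completion $W_2$ (extensions, being faithfully flat, compose), with the degenerate dimension-zero/$\varpi=0$ situation handled separately exactly as in the paper's case split on $\dim W_1$. One small correction to your final remark: the branch ``$\varpi$ invertible in $W'$'' cannot occur, since $\varpi$-completeness of $W_1$ forces $\varpi \in \frakm_{W_1}$ or $\varpi = 0$ in $W_1$, and faithful flatness of $W_1 \to W'$ then puts $\varpi$ in $\frakm_{W'}$ or makes it zero; moreover a nonzero ring in which $\varpi$ is invertible is never $\varpi$-complete, so that branch should be discarded as vacuous rather than resolved by taking $W_2 = W'$.
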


\begin{proof} Let $W_1$ be a $\varpi$-complete valuation ring of Krull dimension $\le 1$ and a map $\Spf(W_1) \rightarrow X$. Then, since $f$ is an $\arc$-cover we know that there exists a valuation ring $W_2$ of Krull dimension $\le 1$ (not necessarily $\varpi$-complete) a extension $W_1 \rightarrow W_2$ and a map $\Spec(W_2) \rightarrow Y$ making the following diagram commute
\begin{cd}
	\Spec(W_2) \ar[r] \ar[d] & Y \ar[d] \\
	\Spf(W_1) \ar[r] & X
\end{cd}
There are two situations to consider: when $W_1$ has Krull dimension $1$, and when it has Krull dimension $0$ in which case $W_1$ is a field with $0 = \varpi \in W_1$. If $W_1$ has Krull dimension $1$ then $W_2$ must also have Krull dimension $1$, and by Lemma \ref{completion_rank_one_val} we learn that the $\varpi$-completion map $W_2 \rightarrow W_{2, \varpi}^{\wedge}$ is faithfully flat, showing that $W_1 \rightarrow W_{2, \varpi}^{\wedge}$ is faithfully flat and proving the claim. On the other hand if $W_1$ has Krull dimension $0$, then since $\varpi = 0$ it follows that $\varpi = 0 \in W_2$ showing that $W_2$ is already $\varpi$-complete. This completes the proof of the result.
\end{proof}

\begin{example} Let $R \rightarrow S$ be a $\varpi$-complete $\arc$-cover. Then $R \rightarrow S \times R[\frac{1}{\varpi}]$ is an $\arc$-cover.
\end{example}

\begin{rem}\label{canonical_pi_complete_arc_covers} Following Remark \ref{canonical_arc_covers} we claim that any qcqs $V$-scheme $X$ admits a $\varpi$-complete $\arc$-cover $Y \rightarrow X$, where $Y$ is a product of $\varpi$-complete absolutely integrally closed valuation rings of Krull dimension $\le 1$. Indeed, let $\Val_X$ be the equivalence class of valuation rings of Krull dimension $\le 1$ mapping to $X$, pick an absolutely integrally closed representative element $V_x$ for each class $x \in \Val_X$, and denote by $V_x^{\wedge}$ its $\varpi$-completion. We obtain the following $\varpi$-complete $\arc$-cover
\begin{equation*}
	\Spf(\prod_{x \in \Val_X} V_x^{\wedge}) \rightarrow X
\end{equation*}
Furthermore, if $X$ has cardinality $< \kappa$ for an uncountable strong limit cardinal it follows that each $V_x^{\wedge}$ also has cardinality $< \kappa$, and so does the product $\prod_{x \in \Val_X} V_x^{\wedge}$.
\end{rem}

\begin{rem} Fixing $V = \ZZ_p$ and $\varpi = p \in \ZZ_p$, we obtain from Definition \ref{defn_pi_complete_arc} a notion of the $p$-complete $\arc$-topology on the category of $p$-complete qcqs schemes. This definition differs slightly from the one given in \cite[Definition 8.7]{prisms}, as in the latter all valuation rings are assumed to be of Krull dimension $1$. We claim that if $f: Y \rightarrow X$ is a morphisms of $p$-complete qcqs schemes, then it is an $p$-complete $\arc$-cover in the sense of Definition \ref{defn_pi_complete_arc} if and only if it is a $p$-complete $\arc$-cover in the sense of \cite[Definition 8.7]{prisms}. It suffices to show that if $f: Y \rightarrow X$ is a $p$-complete $\arc$-cover in the sense of \cite[Definition 8.7]{prisms} and $W_1$ has Krull dimension $0$ (in particular $W_1$ is a field with $0 = p \in W_1$) there exists a faithfully flat map $W_1 \rightarrow W_2$ of $p$-complete valuation rings of Krull dimension $\le 1$ and a map $\Spf(W_2) \rightarrow Y$ making the appropriate diagram commute. Indeed, since $W_1$ is a field then $W_1[[t]]$ is a $p$-complete valuation ring of Krull dimension $1$, and the canonical map $W_1 \rightarrow W_1[[t]]$ induces a map
\begin{equation*}
	\Spf(W_1[[t]]) \rightarrow \Spf(W_1) \rightarrow X
\end{equation*}
Then, since $f: Y \rightarrow X$ is a $p$-complete $\arc$-cover in the sense of \cite[Definition 8.7]{prisms} we know that there exists a faithfully flat map $W_1[[t]] \rightarrow W_2$ of $p$-complete valuation rings of Krull dimension $1$, and a map $\Spf(W_2) \rightarrow Y$ making the following diagram commute
\begin{cd}
	\Spf(W_2) \ar[rr] \ar[d] && Y \ar[d] \\
	\Spf(W_1[[t]]) \ar[r] & \Spf(W_1) \ar[r] & X
\end{cd}
The claim follows from the fact that $W_1$ is a field and so $\Spf(W_2) \rightarrow \Spf(W_1)$ is faithfully flat.
\end{rem}

\begin{defn}\label{defn_arc_pi} Let $V$ be a rank one valuation ring (cf. \ref{arch_rank_one_valuations}), and fix a non-zero $\varpi \in V$. A map of qcqs $V$-schemes $f: Y \rightarrow X$ is an $\arc_{\varpi}$-cover if for any $\varpi$-complete $V$-valuation ring $W_1$ of Krull dimension $\le 1$ with a faithfully flat structure map $V \rightarrow W_1$, and a map $\Spf(W_1) \rightarrow X$, there exists an extension $\Spf(W_2) \rightarrow \Spf(W_1)$ of $\varpi$-complete $V$-valuation rings of Krull dimension $\le 1$ and a map $\Spf(W_2) \rightarrow Y$ making the following diagram commute
\begin{cd}
	\Spf(W_2) \ar[r] \ar[d] & Y \ar[d] \\
	\Spf(W_1) \ar[r] & X
\end{cd}
The additional hypothesis that the structure map $V \rightarrow W_1$ is a faithfully flat imposes the condition that $0 \not=\varpi \in W_1, W_2$ (cf. \ref{equiv_extensions_rk_one}), and so it rules out the possibility that $W_1$ or $W_2$ have Krull dimension zero. Its clear that the definition of $\arc_\varpi$-cover does not depend on the choice of $\varpi \in \frakm_V \subset V$, and that if $f: Y \rightarrow X$ is a $\varpi$-complete $\arc$-cover then it is also a $\arc_\varpi$-cover.

It is easy to check that the collection of  $\arc_{\varpi}$-covers in the category $\Sch_{V,\qcqs}$ (resp. $\Sch_{V, \qcqs, \varpi}^{\wedge}$) satisfy the conditions of \cite[Proposition A.3.2.1]{luriespectral}, giving rise to a finitary Grothendieck topology on $\Sch_{V, \qcqs}$ (resp. $\Sch_{V, \qcqs, \varpi}^{\wedge}$) which we call the $\arc_{\varpi}$-topology.
\end{defn}

\begin{example} Let $R \rightarrow S$ be a $\arc_{\varpi}$-cover. Then $R \rightarrow S \times R/\varpi$ is a $\varpi$-complete $\arc$-cover, and $R \rightarrow S \times R/\varpi \times R[\frac{1}{\varpi}]$ is an $\arc$-cover.
\end{example}

\begin{rem}\label{canonical_arc_pi_covers} Following Remark \ref{canonical_arc_covers} we claim that any $V$ scheme $X$ admits an $\arc_{\varpi}$-cover $Y \rightarrow X$, where $Y$ is a product of $\varpi$-complete absolutely integrally closed rank one valuation rings with $\varpi \not= 0$. Indeed, let $\Val_X$ be the equivalence class of valuation rings (of Krull dimension $\le 1$) mapping to $X$, and let $\Val_{X, \varpi} \subset \Val_{X}$ the subset of equivalence classes that admit a representative where the valuation rings satisfies $\varpi \not= 0$. Then the map
\begin{equation*}
	\Spf(\prod_{x \in \Val_{X, \varpi}} V_x^{\wedge}) \rightarrow X
\end{equation*}
is the desired $\arc_\varpi$-cover. Furthermore, if $X$ has cardinality $< \kappa$ for an uncountable strong limit cardinal, then so does $\prod_{x \in \Val_{X, \varpi}} V_x^{\wedge}$.
\end{rem}

\begin{rem} In the definition of $\arc_{\varpi}$-covers given in \cite[Definition 6.14]{arc_topology} it is not required that the rank one valuation rings involved are $\varpi$-complete. However, due to Lemma \ref{completion_rank_one_val} we know that a map $f: X \rightarrow Y$ of $\varpi$-complete qcqs $V$-schemes is an $\arc_{\varpi}$-cover in their sense if and only if it is an $\arc_{\varpi}$-cover in the sense of Definition \ref{defn_arc_pi}.
\end{rem}

Next, we provide a quick review of the sheaf axiom in the setting of $\infty$-categories. For the sake of simplicity we will often restrict ourselves to sheaves of the form $F: \Sch_{V, \qcqs}^{\op} \rightarrow \cC$, where $\cC$ is an $\infty$-category, however most of the following discussion works more generally for sheaves on a finitary site -- we refer the reader to \cite[Section A.3]{luriespectral} for a more detailed discussion.

\begin{notation} We let $\Delta$ be the category of simplices: the objects are finite linearly ordered sets $[n] = \{0 < 1 < \cdots < n\}$ for $n >0$, and the morphisms in $\Delta$ are nondecreasing functions. Similarly, we define the augmented simplicial category $\Delta_+$ as the category obtained from $\Delta$ by formally adding a initial object. Let $\cC$ be an $\infty$-category, a cosimplicial object in $\cC$ is a functor $X^{\bullet}: \Delta \rightarrow \cC$. The limit of this diagram is called the totalization of $X^{\bullet}$ and denote it by $\Tot(X^{\bullet}) \in \cC$, assuming the limit exists.

An important source of examples of cosimplicial objects comes from the Cech nerve construction. For example, fix a rank one valuation ring $V$ and a non-zero element $\varpi \in V$, then for any morphism $R \rightarrow S$ in $\CAlg_{V}$ (resp. $\CAlg_{V, \varpi}^{\wedge}$) we obtain the following cosimplicial object
\begin{cd}[column sep=small, row sep = small]
	& S \ar[r, shift left = 1.5] \ar[r, shift right = 1.5]
	& \ar[l] S \otimes_R S \ar[r, shift left = 3] \ar[r] \ar[r, shift right = 3]
	& \ar[l, shift left = 1.5] \ar[l, shift right = 1.5] S \otimes_R S \otimes_R S \cdots \\
	\text{resp. }  
	& S \ar[r, shift left = 1.5] \ar[r, shift right = 1.5]
	& \ar[l] S \cotimes_R S \ar[r, shift left = 3] \ar[r] \ar[r, shift right = 3]
	& \ar[l, shift left = 1.5] \ar[l, shift right = 1.5] S \cotimes_R S \cotimes_R S \cdots
\end{cd}
which we denote by $S^{\bullet/R}: \Delta \rightarrow \CAlg_V$ (resp. $S^{\bullet/R}: \Delta \rightarrow \CAlg_{V, \varpi}^{\wedge}$).

Finally, let us introduce the notion of partial totalization. For each $n \ge 0$ we denote by $\Tot_n(X^{\bullet})$ the limit of the $n$-truncated cosimplicial object $\Delta_{\le n} \hookrightarrow \Delta \overset{X^{\bullet}} \rightarrow \cC$. Then, by commuting limits with limits we learn that $\Tot (X^{\bullet}) = \lim_n \Tot_n(X^\bullet)$.
\end{notation}

\begin{defn}\label{defn_infty_sheaf} Let $F: \Sch_{V, \qcqs}^{\op} \rightarrow \cC$ (resp. $F:\Sch_{V, \qcqs, \varpi}^{\wedge, \op} \rightarrow \cC$) be a presheaf valued in an $\infty$-category $\cC$. We will say that $F$ satisfies descent for a morphism $Y \rightarrow X$ in $\Sch_{V, \qcqs}$ (resp. $\Sch_{V, \qcqs, \varpi}^{\wedge}$) if it satisfies the $\infty$-categorical sheaf axioms with respect to $Y \rightarrow X$, in other words, if the natural map
\begin{align*}
	F(X) \rightarrow \Tot (F(Y^{\bullet/X})) && \text{where} \qquad Y^{n/X} = Y \times_X \cdots \times_X Y \qquad n \text{ times}
\end{align*}
is an equivalence. If this property holds for all maps $f: Y \rightarrow X$ that are covers in $\tau$, for $\tau \in \{ \arc, \varpi\text{-complete} \arc, \arc_{\varpi} \}$, and further if $F$ carries finite disjoint union to finite products, then we shall say that $F$ satisfies $\tau$-descent or is a $\tau$-sheaf.
\end{defn}

\begin{rem} By \cite[Proposition A.3.3.1]{luriespectral} it follows that the notion of $\tau$-sheaf given above is equivalent to the notion of $\cC$-valued sheaf with respect to $\tau$ given in \cite[Sect A.3.3]{luriespectral}, where the sheaf axiom is formulated in terms of covering sieves as opposed to Cech descent.
\end{rem}

\begin{defn}\label{defn_hypercomplete_sheaf} Endow $\Sch_{V, \qcqs}$ with a Grothendieck topology $\tau$. A hypercovering in $\Sch_{V, \qcqs}$ is an augmented simplicial object $X_{-1} \rightarrow X_{\bullet}$ in $\Sch_{V, \qcqs}$ such that for every $n \ge 0$ the canonical map
\begin{equation*}
	X_n \rightarrow (\cosk_{n-1}(X_{\bullet} \rightarrow X_{-1}))_n
\end{equation*}
is an $\tau$-cover. Here $\cosk_n$ denotes the augmented coskeleton functor, defined as the right Kan extension along the inclusion $\Delta_{+, \le n}^{\op} \hookrightarrow \Delta_{+}^{\op}$.

Let $F: \Sch_{V,\qcqs}^{\op} \rightarrow \cC$ be a presheaf valued in the $\infty$-category $\cC$. We say that $F$ is a hypercomplete $\cC$-valued sheaf if and only if the following properties are satisfied:
\begin{enumerate}[(1)]
	\item $F$ preserved finite products
	\item For every hypercover $U_{\bullet} \rightarrow X$ in $\Sch_{V,\qcqs}$, the canonical morphism
	\begin{equation*}
		F(X) \rightarrow \Tot(U_{\bullet})
	\end{equation*}
	is an isomorphism in $\cC$.
\end{enumerate}
See \cite[Proposition A.5.7.2]{luriespectral} and \cite[Section A.3]{mann_six_funct_rig} for comparisons with alternative definitions of hypercomplete sheaves\footnote{One advantage of the alternative definitions of (hypercomplete) sheaf $F: \cC \rightarrow \cD$ is that there are no hypothesis on the category $\cC$, which will be useful when considering (hypercomplete) sheaves on a basis $\cB \subset \cC$ (cf. the hypothesis on \cite[Section A.3.3]{luriespectral}), for example $\cB \subset \cC$ may not be closed under finite limits.}.

The main advantage of hypercomplete sheaves is that they are determined by its values on a basis $\cB \subset \Sch_{V, \qcqs}$. A basis of $\Sch_{V, \qcqs}$ is a full-subcategory $\cB \subset \Sch_{V, \qcqs}$ such that for every element $X \in \Sch_{V,\qcqs}$, admits a $\tau$-covering $Y \rightarrow X$, with $Y \in \cB$. Then, we learn from \cite[Proposition A.3.11]{mann_six_funct_rig} that the restriction functor
\begin{align*}
	\Shv_{\tau} (\Sch_{V, \qcqs}, \cD)^{\wedge} \rightarrow \Shv_{\tau}(\cB, \cD)^{\wedge} && F \mapsto F|_{\cB}
\end{align*}
determines an equivalence of categories of hypercomplete $\cD$-valued $\tau$-sheaves. The inverse of this equivalence if given right Kan extension along $\cB \subset \Sch_{V, \qcqs}$.
\end{defn}

In what follows we make some light use of the notion of an $\infty$-category $\cC$ compactly generated by cotruncated objects, we refer the reader to \cite[Definition 3.1.4]{cdh_descent} and \cite[Definition 3.5]{arc_topology} for the basic definitions. The most relevant examples of $\infty$-categories compactly generated by cotruncated objects for us are the derived $\infty$-category of coconnective objects $\cD(R)^{\ge 0}$ of a ring $R$ and the $\infty$-category $\Spc_{\le n}$ of $n$-truncated spaces.

\begin{prop}\label{sheaf_implies_hypersheaf} Let $\cD$ be a $\infty$-category generated by cotruncated objects, for example $\cD = \cD(A)^{\ge d}$ or $\cD = \Set$, and let $\cC$ be a $1$-category equipped with a Grothendieck topology $\tau$. If $F: \cC^{\op} \rightarrow \cD$ is a $\tau$-sheaf, then $F$ is a hypercomplete $\tau$-sheaf.
\end{prop}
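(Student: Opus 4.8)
Let $\cD$ be an $\infty$-category generated by cotruncated objects (e.g. $\cD = \cD(A)^{\ge d}$ or $\cD = \Set$), and let $\cC$ be a $1$-category equipped with a Grothendieck topology $\tau$. If $F \colon \cC^{\op} \to \cD$ is a $\tau$-sheaf, then $F$ is a hypercomplete $\tau$-sheaf.

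\textbf{Proof plan.} The plan is to reduce the statement to the known fact that, in an $\infty$-topos, every sheaf valued in an $n$-truncated (or more generally cotruncated, or compactly-generated-by-cotruncated) target is automatically hypercomplete. First I would recall that a $\tau$-sheaf $F \colon \cC^{\op} \to \cD$ is the same data as an object of $\Shv_\tau(\cC; \cD) \simeq \Shv_\tau(\cC) \otimes \cD$, i.e. a $\cD$-valued sheaf on the associated $\infty$-topos $\cX := \Shv_\tau(\cC)$; this is the content of \cite[Proposition A.3.3.1]{luriespectral} combined with the general theory of sheaves with values in a presentable $\infty$-category. So it suffices to show: if $\cD$ is generated by cotruncated objects, then every object of $\Shv_\tau(\cX; \cD)$ is hypercomplete, where hypercompleteness is tested against $\tau$-hypercovers exactly as in Definition \ref{defn_hypercomplete_sheaf}.

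The key step is the reduction to the cotruncated generators. Since $\cD$ is compactly generated by a set of cotruncated objects $\{D_i\}$, a morphism $G \to G'$ in $\Shv_\tau(\cX; \cD)$ is an equivalence if and only if, for every $i$, the induced map on mapping spectra (or mapping spaces) $\Map_{\cD}(D_i, G(-)) \to \Map_{\cD}(D_i, G'(-))$ is an equivalence of presheaves; equivalently, after composing with corepresentable functors, it suffices to check hypercompleteness for $\cD$-valued sheaves of the form $\Map_{\cD}(D_i, F(-))$, which take values in a \emph{cotruncated} (bounded-above, $n$-truncated for some uniform $n$) $\infty$-category of spaces or spectra. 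For such targets, the relevant input is \cite[Lemma 3.4.1]{cdh_descent} (or \cite[Proposition 3.6]{arc_topology}): a sheaf on an $\infty$-topos valued in $n$-truncated spaces is $n$-truncated, and $n$-truncated objects of any $\infty$-topos are hypercomplete, because hypercovers only see finitely much homotopical information when the target has bounded homotopy dimension. Concretely, given a $\tau$-hypercover $U_\bullet \to X$ and an $n$-truncated-valued sheaf $H$, the Čech-to-hypercohomology comparison $H(X) \to \Tot(H(U_\bullet))$ is an equivalence because $\Tot(H(U_\bullet)) = \lim_m \Tot_m(H(U_\bullet))$ and, for $m > n$, the partial totalizations stabilize (the tower is eventually constant in each homotopy group), reducing the hypercover descent to finite-stage Čech descent, which holds by the sheaf axiom (Definition \ref{defn_infty_sheaf}). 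Assembling these across all generators $D_i$ gives hypercompleteness of $F$ itself.

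I would then close the argument by noting that the two model cases in the statement are instances: $\cD(A)^{\ge d}$ is compactly generated by the objects $A[k]$ for $k \le -d$, which are $(-d)$-cotruncated in the appropriate sense (\cite[Proposition 7.1.1.13]{lurieHA} gives the $t$-structure and its compatibility with filtered colimits), and $\Set \simeq \Spc_{\le 0}$ is generated by the single $0$-truncated object $\ast$, so the general reduction applies verbatim. The main obstacle I anticipate is purely bookkeeping: making precise the passage from ``$F$ valued in $\cD$'' to ``$\Map_{\cD}(D_i, F(-))$ valued in a bounded $\infty$-category of spaces'' while keeping track of which $\infty$-topos one is sheafifying over — in particular that the site $\cC$ here (various schemes $\Sch_{V,\qcqs}$ or a basis such as $\Perfd_V^{\Prism}$) genuinely presents an $\infty$-topos after restricting to the cardinality bound $\kappa$ fixed in the Convention, so that \cite[Section A.3]{luriespectral} applies. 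Once that framework is in place, the cotruncatedness argument is essentially formal.
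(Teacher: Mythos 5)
Your proposal is correct and takes essentially the same approach as the paper: reduce via the (Yoneda/corepresentable) embedding to checking hypercompleteness of $\Maps_{\cD}(D,F(-))$ for compact cotruncated generators $D$, observe that these land in $\Spc_{\le n}$ for some $n$, and invoke the fact that truncated objects in an $\infty$-topos are hypercomplete (the paper cites \cite[Lemma 6.5.2.9]{lurieHTT} where you cite \cite[Lemma 3.4.1]{cdh_descent} and \cite[Proposition 3.6]{arc_topology}, but these are the same fact). Your additional explicit argument via stabilization of the partial totalization tower is a correct unwinding of that cited lemma, not a different route.
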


\begin{proof} By the Yoneda Embedding (cf. \cite[Proposition 5.1.3.2]{lurieHTT}), it suffices to show that for every compact object $D \in \cD$ the composition $h^D \circ F: \cC^{\op} \rightarrow \Spc$, where $h^D := \Maps_{\cD}(D, -): \cD \rightarrow \Spc$, is a hypercomplete $\tau$-sheaf. By the hypothesis that $\cD$ is generated by cotruncated objects, it follows that $h^D \circ F$ has its image contained in $\Spc_{\le n}$, then it follows that $h^D \circ F$ is hypercomplete by \cite[Lemma 6.5.2.9]{lurieHTT}, proving the desired result.
\end{proof}

\begin{example} Since Zariski covers are $\tau$-covers (\ref{faithfully_flat_implies_arc_cover}) for $\tau \in \{ \arc, \varpi\text{-complete} \arc, \arc_{\varpi} \}$, it follows that any hypercomplete $\tau$-sheaf on $\Sch_{V, \qcqs}$ is completely determined by its values on $\CAlg_{V}^{\op} \subset \Sch_{V, \qcqs}$. Even better, if $\tau \in \{\varpi\text{-complete} \arc, \arc_{\varpi} \}$, then we learn from Remarks \ref{canonical_arc_pi_covers} and \ref{canonical_arc_pi_covers} that $\CAlg_{V}$ admits a $\tau$-basis given by $\Perfd_{V}^{\Prism} \subset \CAlg_{V}$.
\end{example}

\begin{prop}\label{BK_formula} Let $F: \Sch_{V, \qcqs}^{\op} \rightarrow \cD(A)$ (resp. $F:\Sch_{V, \qcqs, \varpi}^{\wedge, \op} \rightarrow \cD(A)$) be a presheaf valued in the derived $\infty$-category $\cD(A)$ for some ring $A$. Let $X^{\bullet}: \Delta \rightarrow \cD(A)$ be a cosimplicial object, and $X^{-1} \rightarrow X^{\bullet}$ an augmented cosimplicial object; assume that $F(X^{-1})$ and $F(X^n)$ are contained in $\Mod_A \subset \cD(A)$. Then, the canonical map
\begin{equation*}
	F(X^{-1}) \rightarrow \Tot (F(X^n))
\end{equation*}
is an equivalence, if and only if the following complex
\begin{equation*}
	0 \rightarrow F(X^{-1}) \rightarrow F(X^0) \rightarrow F(X^1) \rightarrow \cdots
\end{equation*}
is acyclic. Where the complex corresponds to the cosimplicial object $F(X^{\bullet}):\Delta \rightarrow \cD(A)$ under the Dold-Kan correspondence.
\end{prop}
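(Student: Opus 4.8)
The plan is to reduce the assertion to a purely homological statement about cosimplicial objects of $\cD(A)$ all of whose terms lie in the heart $\Mod_A \subset \cD(A)$, and then to combine the Dold--Kan correspondence with the Bousfield--Kan description of the totalization of such a cosimplicial object. The argument will be identical for presheaves on $\Sch_{V, \qcqs}$ and on $\Sch_{V, \qcqs, \varpi}^{\wedge}$, since it uses only the values of $F$ on the objects $X^{-1}$ and $X^n$, and nothing about $F$ being a sheaf.

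First I would observe that, by hypothesis, the cosimplicial object $F(X^{\bullet})\colon \Delta \to \cD(A)$ factors through the full subcategory $\Mod_A \subset \cD(A)$; since $\Mod_A$ is the heart of the $t$-structure and hence an ordinary category, this means $F(X^{\bullet})$ is simply an ordinary cosimplicial $A$-module, and the coaugmentation exhibits $F(X^{-1}) \to F(X^{\bullet})$ as an augmented cosimplicial $A$-module. The Dold--Kan correspondence for the abelian category $\Mod_A$ (cf. \cite[\S 1.2]{lurieHA}, \cite{stacks-project}) identifies $F(X^{\bullet})$ with the non-negatively graded cochain complex $C^{\bullet}$ having $C^n = F(X^n)$ and differential the alternating sum of the coface maps. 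Moreover the coaugmentation $F(X^{-1}) \to F(X^0)$ lands in the cocycles of $C^{\bullet}$, so it defines a map of complexes $F(X^{-1})[0] \to C^{\bullet}$ whose mapping cone is exactly the augmented complex
\begin{equation*}
	0 \to F(X^{-1}) \to F(X^0) \to F(X^1) \to \cdots .
\end{equation*}

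The main step, which I expect to be the crux of the argument, is the identification of the totalization $\Tot(F(X^{\bullet}))$ — computed as a limit over $\Delta$ in $\cD(A)$ — with the object of $\cD(A)^{\ge 0}$ represented by the cochain complex $C^{\bullet}$. This is the Bousfield--Kan type description of the homotopy limit of a cosimplicial object: writing $\Tot(F(X^{\bullet})) = \lim_n \Tot_n(F(X^{\bullet}))$ as the limit of the partial totalizations along the restriction maps $\Tot_n \to \Tot_{n-1}$, the fiber of each such map is a shift of the (conormalized) term of $F(X^{\bullet})$ in degree $n$, so that the tower of the $\Tot_n$'s realizes precisely the stupid filtration of $C^{\bullet}$; I would cite \cite[\S 1.2]{lurieHA} (together with \cite{stacks-project}) for this, noting that it can also be proven directly from Dold--Kan and the $t$-structure on $\cD(A)$. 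Under this identification the canonical map $F(X^{-1}) \to \Tot(F(X^{\bullet}))$ agrees with the chain map $F(X^{-1})[0] \to C^{\bullet}$ produced above.

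Finally I would conclude: a morphism in the stable $\infty$-category $\cD(A)$ is an equivalence if and only if its cofiber vanishes, and by the two previous paragraphs the cofiber of $F(X^{-1}) \to \Tot(F(X^{\bullet}))$ is the augmented complex displayed above; hence the map is an equivalence precisely when that complex is acyclic. I would also emphasize that the hypothesis that $F(X^{-1})$ and every $F(X^n)$ lie in $\Mod_A$ is used in an essential way twice: once to pass from an abstract cosimplicial object of $\cD(A)$ to an honest cosimplicial $A$-module (so that Dold--Kan applies and the differentials acquire the explicit alternating-sum form), and once to give meaning to the displayed complex of $A$-modules. Beyond invoking these two standard facts, the proof is purely formal.
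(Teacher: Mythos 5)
Your proposal is correct and follows essentially the same route as the paper, which simply cites the Dold--Kan correspondence (\cite[Section 1.2.3]{lurieHA}, \cite[Tag 019H]{stacks-project}) together with the Bousfield--Kan formula (\cite[Chapter 11]{bousfield_kan}, \cite[Proposition 1.2.4.5]{lurieHA}); your write-up just unwinds those citations, identifying $\Tot(F(X^\bullet))$ with the associated cochain complex via the tower of partial totalizations and checking the cofiber of the coaugmentation is the augmented complex. No gaps to report.
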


\begin{proof} See \cite[Section 1.2.3]{lurieHA} for a review of the simplicial Dold-Kan correspondence, and \cite[Tag 019H]{stacks-project} for the corresponding cosimplicial (and logically equivalent) Dold-Kan correspondence. The claim is then the Bousfield-Kan formula \cite[Chapter 11]{bousfield_kan}, or \cite[Proposition 1.2.4.5]{lurieHA} for a simplicial $\infty$-categorical variant.
\end{proof}

\begin{prop}\label{commute_Tot} Let $\cC$ and $\cD$ be stable $\infty$-categories with endowed with a right-complete $t$-structure, $X^{\bullet}$ a cosimplicial object of $\cC^{\ge 0} \subset \cC$, and $F: \cC \rightarrow \cD$ a exact functor of stable $\infty$-categories. If $F(X^{\bullet}) \subset \cD^{\ge d}$ for some $d \in \ZZ$, then $\Tot(X^{\bullet})$ and $\Tot F(X^{\bullet})$ exist and the canonical map
\begin{equation*}
	F(\Tot(X^{\bullet})) \rightarrow \Tot F(X^{\bullet}) 
\end{equation*}
is an isomorphism in $\cD$.
\end{prop}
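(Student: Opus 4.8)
The plan is to reduce the statement about totalizations to a statement about finite partial totalizations, and then handle the limit over partial totalizations separately using the connectivity hypothesis. Recall that for any cosimplicial object $Y^\bullet$ one has $\Tot(Y^\bullet) = \lim_n \Tot_n(Y^\bullet)$, where $\Tot_n(Y^\bullet)$ is the limit of the $n$-truncated cosimplicial diagram $\Delta_{\le n} \hookrightarrow \Delta \overset{Y^\bullet}{\to} \cC$. Since $\Delta_{\le n}$ is a finite category and $F$ is exact — hence preserves all finite limits — the natural map $F(\Tot_n(X^\bullet)) \to \Tot_n(F(X^\bullet))$ is an equivalence in $\cD$ for every $n$. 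Before doing this I would first record that $\Tot(X^\bullet)$ exists: each $\Tot_n(X^\bullet)$ exists (finite limit in a stable $\infty$-category), and the transition maps $\Tot_n(X^\bullet) \to \Tot_{n-1}(X^\bullet)$ have fibers built out of shifts of cototalization terms, so the tower $\{\Tot_n(X^\bullet)\}$ has a limit in $\cC$ by right-completeness of the $t$-structure; similarly for $F(X^\bullet)$ in $\cD$.

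The key remaining point is to exchange $F$ with the cofiltered limit $\lim_n \Tot_n$. This is where the connectivity hypotheses enter. First, since $X^\bullet \subset \cC^{\ge 0}$, the standard estimate on partial totalizations (the cofiber of $\Tot_n \to \Tot_{n-1}$ is $\Omega^n$ of the $n$-th term of the conormalized complex, which lies in $\cC^{\ge n}$) shows the tower $\{\Tot_n(X^\bullet)\}$ is, up to the fixed bound coming from $\Tot_0(X^\bullet) = X^0 \in \cC^{\ge 0}$, eventually highly coconnective in its transition maps; concretely, $\mathrm{fib}(\Tot_n \to \Tot_{n-1}) \in \cC^{\ge n-1}$ roughly. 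Hence $\Tot(X^\bullet) = \lim_n \Tot_n(X^\bullet)$ is a limit of a tower whose maps become isomorphisms in any fixed range of degrees for $n$ large, so by right-completeness the map $\Tot(X^\bullet) \to \Tot_n(X^\bullet)$ is an equivalence in degrees $\ge -N + n$ or so. The same applies in $\cD$: using the hypothesis $F(X^\bullet) \subset \cD^{\ge d}$, the tower $\{\Tot_n(F(X^\bullet))\}$ also has transition maps that are isomorphisms in a growing range, so $\Tot(F(X^\bullet)) \to \Tot_n(F(X^\bullet))$ is an equivalence in degrees $\ge d - 1 + n$, say.

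Putting these together: I would show the composite
\begin{equation*}
F(\Tot(X^\bullet)) \longrightarrow F(\Tot_n(X^\bullet)) \overset{\simeq}{\longrightarrow} \Tot_n(F(X^\bullet)) \longleftarrow \Tot(F(X^\bullet))
\end{equation*}
consists, for each fixed truncation level $m$, of maps that become equivalences after applying $\tau^{\le m}$ once $n \gg 0$: the middle map is always an equivalence (finite limits), the left map is an equivalence on $\tau^{\le m}$ for $n$ large because $F$ is exact and preserves the connectivity estimate of the $X^\bullet$-tower (or more simply because $\Tot(X^\bullet) \to \Tot_n(X^\bullet)$ is already highly connective and $F$ is exact hence bounded-below-preserving in a controlled way), and the right map is an equivalence on $\tau^{\le m}$ for $n$ large by the $\cD^{\ge d}$ estimate. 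Since this holds for all $m$ and both $\cD$ and $\cC$ have right-complete $t$-structures, the canonical map $F(\Tot(X^\bullet)) \to \Tot(F(X^\bullet))$ is an equivalence.

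The main obstacle I anticipate is bookkeeping the connectivity estimates cleanly — in particular verifying that applying the exact functor $F$ to the highly coconnective fibers of the $\Tot_n$-tower of $X^\bullet$ still yields something of controlled coconnectivity in $\cD$, since an arbitrary exact functor need not be $t$-exact or even left $t$-exact. The right way around this is not to push $F$ through the $X^\bullet$-tower estimate directly, but to use that $F(\Tot_n(X^\bullet)) \simeq \Tot_n(F(X^\bullet))$ (pure finite-limit statement, no connectivity needed) and then run all the connectivity arguments on the $\cD$-side tower $\{\Tot_n(F(X^\bullet))\}$, where the hypothesis $F(X^\bullet) \subset \cD^{\ge d}$ gives exactly the needed bound, together with the independent fact that $\Tot(X^\bullet) \to \Tot_n(X^\bullet)$ is an equivalence on $\tau^{\le m}$ for $n$ large (which only uses $X^\bullet \subset \cC^{\ge 0}$ and right-completeness of $\cC$, not $F$ at all). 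This separates the two connectivity inputs and avoids ever needing $F$ to interact with the $t$-structures beyond exactness.
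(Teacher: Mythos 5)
Your argument is essentially the paper's proof: both reduce to the equivalence $F(\Tot_n(X^{\bullet})) \simeq \Tot_n F(X^{\bullet})$ coming from exactness of $F$, then use the coconnectivity hypotheses $X^{\bullet} \subset \cC^{\ge 0}$ and $F(X^{\bullet}) \subset \cD^{\ge d}$ to see that $\Tot$ and $\Tot_n$ agree on cohomology in a range growing with $n$ on each side separately, and finally invoke right-completeness to upgrade a cohomology-level isomorphism to an equivalence. The only difference is presentational -- the paper cites \cite[Proposition 1.2.4.5]{lurieHA} for the convergence estimate on the partial-totalization towers, where you derive it by hand from the fibers $\Omega^n N^n(X^\bullet)$ -- and your remark that the connectivity argument must be run on the $\cD$-side tower rather than pushed through $F$ is exactly how the paper uses its two hypotheses.
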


\begin{proof} Since $F$ is an exact functor of stable $\infty$-categories, by definition we know that it commutes with finite limits, hence the canonical map $F(\Tot_n(X^{\bullet})) \rightarrow \Tot_n F(X^{\bullet})$ is an isomorphism. Using the fact that $\Tot_n$ and $\Tot$ agree on $H^i(-)$ for $i < n$ (cf. \cite[Proposition 1.2.4.5]{lurieHA}) the result follows. The hypothesis that $X^{\bullet} \subset \cC^{\ge 0}$ and that $F(X^{\bullet}) \subset \cD^{\ge d}$ are there to ensure that we can apply \cite[Proposition 1.2.4.5]{lurieHA} to the problem at hand, while the right completeness allows us to check isomorphisms at the level of cohomology groups.
\end{proof}

\begin{prop}\label{perfection_arc_sheaf} Let $V$ be a ring of characteristic $p$. Then, the functor
\begin{align*}
	F: \Sch_{V, \qcqs}^{\op} \rightarrow \cD(V) && X \mapsto R\Gamma(X_{\perf}, \cO_{X, \perf})
\end{align*}
is a hypercomplete $\arc$-sheaf. In particular, if $A \rightarrow B$ is an $\arc$-cover of perfect $V$-algebras, the following complex is acyclic
\begin{align*}
	0 \rightarrow A \rightarrow B \rightarrow B \otimes_A B \rightarrow B \otimes_A B \otimes_A B \rightarrow \cdots
\end{align*}
\end{prop}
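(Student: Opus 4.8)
The plan is to reduce the statement to the affine $\arc$-descent theorem for perfect rings of Bhatt--Mathew \cite{arc_topology}, and then bootstrap to general qcqs schemes and to hypercompleteness by formal arguments. First I would record the cheap structural facts about $F$. Since $X \mapsto X_{\perf}$ carries finite disjoint unions to finite disjoint unions and $R\Gamma$ of a finite disjoint union is a finite product, $F$ preserves finite products; since $\cO_{X, \perf}$ is an honest quasicoherent sheaf on the qcqs scheme $X_{\perf}$ (note $X_{\perf} \to X$ is affine and a universal homeomorphism, so $X_\perf$ is again qcqs), the complex $F(X)$ lies in $\cD(V)^{\ge 0}$, and $F$ satisfies Zariski descent by the \v{C}ech-to-derived spectral sequence. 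Because Zariski covers are $\arc$-covers by Lemma \ref{faithfully_flat_implies_arc_cover}, and because any $\arc$-cover of a qcqs scheme can be refined --- using quasicompactness, stability of $\arc$-covers under base change, and Zariski covers by affines of both the total space and of $X$ --- to an $\arc$-cover whose members are affine and whose \v{C}ech nerve consists of affine schemes, it suffices to prove that $A \mapsto A_{\perf}$ is an $\arc$-sheaf on the opposite category of characteristic-$p$ $V$-algebras.

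Next I would reduce to perfect $A$. The map $\Spec(A_{\perf}) \to \Spec A$ is a universal homeomorphism, hence an $\arc$-cover; its \v{C}ech nerve becomes ``constant'' after perfection, so $F$ trivially satisfies descent along it and $F(\Spec A) \simeq F(\Spec A_{\perf})$ as an $\arc$-local matter, reducing us to $\arc$-descent for covers of affine perfect schemes. Here I invoke the main input: by the $\arc$-descent theorem of Bhatt--Mathew \cite{arc_topology}, the functor carrying a perfect $\FF_p$-algebra to itself is an $\arc$-sheaf, and the passage from base $\FF_p$ to base $V$ is harmless since the $\arc$-topology on $V$-schemes is induced from that on $\FF_p$-schemes and $F$ is $V$-linear. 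For a cover $\Spec B \to \Spec A$ of affine perfect schemes the \v{C}ech terms $B^{\otimes_A n}$ are again perfect (Frobenius on a tensor product is the tensor of the Frobenii), hence have no higher Tor and coincide with their own perfections, so the sheaf condition reads $A \simeq \Tot(B^{\otimes_A \bullet})$, which is exactly what \cite{arc_topology} supplies.

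Having shown $F$ is an $\arc$-sheaf valued in $\cD(V)^{\ge 0}$, hypercompleteness is automatic: $\cD(V)^{\ge 0}$ is generated by cotruncated objects, so Proposition \ref{sheaf_implies_hypersheaf} applies and $F$ is a hypercomplete $\arc$-sheaf. For the ``in particular'', given an $\arc$-cover $A \to B$ of perfect $V$-algebras, the sheaf property gives $A \simeq \Tot(B^{\otimes_A \bullet})$ with every term concentrated in degree zero, so Proposition \ref{BK_formula} (the Bousfield--Kan / Dold--Kan comparison, applied with base ring $V$) identifies this equivalence with the acyclicity of the Amitsur complex $0 \to A \to B \to B \otimes_A B \to \cdots$.

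The main obstacle is the affine $\arc$-descent statement for perfect rings, which I would cite rather than reprove; granting that, the only genuine care needed is in the reductions --- verifying that perfection commutes with the relevant \v{C}ech nerves and tensor products (so that no higher Tor appears and the terms stay perfect), and that the auxiliary covers used (universal homeomorphisms, refinements of $\arc$-covers of affines by affine $\arc$-covers) are indeed $\arc$-covers in the sense of Definition \ref{defn_arc}.
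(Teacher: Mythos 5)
Your proposal is essentially correct, but it organizes the argument differently from the paper and leans harder on a black-box citation.  The paper never reduces to the affine case by hand: it starts from the $v$-descent result for perfect rings in Bhatt--Scholze's Witt-vector-Grassmannian paper (their Theorem 4.1(a)), observes via Proposition \ref{sheaf_implies_hypersheaf} that this already gives a hypercomplete $v$-sheaf on $\Perf_V^{\op}$, and then \emph{defines} $F$ on all of $\Sch_{V,\qcqs}$ by right Kan extension along the basis $\Perf_V^{\op}\subset\Sch_{V,\qcqs}$; descent for the extended functor is then automatic from the formalism of bases.  Crucially, the paper does \emph{not} treat arc-descent for perfect rings as a citable fact: it invokes the Bhatt--Mathew criterion (Theorem 4.1 of \cite{arc_topology}) that a connective $v$-sheaf is an $\arc$-sheaf iff a Milnor-type excision square holds for absolutely integrally closed valuation rings of rank $\le 2$, and then reduces the required exactness of $0\to W\to W/\frakp\oplus W_\frakp\to\kappa(\frakp)\to 0$ to \cite[Lemma 6.3]{projectivity_affine_grass} after noting that AIC valuation rings and their localizations/quotients are already perfect.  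Your proof instead does the qcqs-to-affine reduction by hand (Zariski descent plus refinement by affine $\arc$-covers), reduces to perfect $A$ by the universal-homeomorphism trick, and then cites ``the $\arc$-descent theorem of Bhatt--Mathew'' for perfect $\FF_p$-algebras as a finished result.  Both are valid; what the paper's route buys is that it makes visible exactly which excision input drives the $v$-to-$\arc$ upgrade, and it avoids the somewhat delicate business of refining arbitrary $\arc$-covers by affine ones (that care is genuinely needed if one goes your way, as you note).  What your route buys is brevity, at the cost of an imprecise citation: to make your argument airtight you should either point to a specific theorem in \cite{arc_topology} stating $\arc$-descent for the identity functor on perfect rings, or --- if no such statement exists there --- fill in the same two-step derivation ($v$-descent from \cite{projectivity_affine_grass} plus the arc/$v$ criterion plus AIC excision) that the paper uses.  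Everything else in your reductions is sound: perfection inverting universal homeomorphisms handles $A\to A_\perf$, and the vanishing of higher Tor for tensor products of perfect rings (again from \cite{projectivity_affine_grass}) justifies working with underived \v{C}ech nerves, exactly as you say.
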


\begin{proof} Combining \cite[Theorem 4.1(a)]{projectivity_affine_grass} with Proposition \ref{sheaf_implies_hypersheaf} we learn that the functor
\begin{align*}
	(-)_{\perf}: \Perf_{V} \rightarrow \cD(V) && A \mapsto A
\end{align*}
is a hypercomplete $v$-sheaf. Then, as the map $B \rightarrow B_{\perf}$ from $B \in \CAlg_{V}$ is a $v$-cover and $B_{\perf} \in \CAlg_{V}$, we learn that $\Perf^{\op}_{V} \subset \Sch_{V, \qcqs}$ determines a basis for the $v$-topology, and so by right Kan extending the functor $(-)_{\perf}$ along $\Perf_{V}^{\op} \hookrightarrow \Sch_{V, \qcqs}$ we obtain a hypercomplete $v$-sheaf
\begin{align*}
	F: \Sch_{V, \qcqs}^{\op} \rightarrow \cD(V) && X \mapsto R\Gamma(X_{\perf}, \cO_{X, \perf})
\end{align*}
(cf. \ref{defn_hypercomplete_sheaf}). Given that the functor $F$ takes values in $\cD(V)^{\ge 0} \subset \cD(V)$ we learn from \cite[Theorem 4.1]{arc_topology} that to check that $F$ is an $\arc$-sheaf it suffices to show that for any absolutely integrally closed $V$-valuation ring $W$ and a prime ideal $\frakp \subset W$ we have an exact sequence
\begin{equation*}
	0 \rightarrow W_{\perf} \rightarrow (W/\frakp)_{\perf} \oplus (W_{\frakp})_{\perf} \rightarrow \kappa(\frakp)_{\perf} \rightarrow 0
\end{equation*}
However, since absolutely integrally closed valuation rings are perfect, we have the identity $W = W_{\perf}$; furthermore, we learn from \cite[Tag 0DCN]{stacks-project} and \cite[Tag 088Y]{stacks-project} that if $W$ is an absolutely integrally closed valuation ring so is $W_{\frakp}$ and $W/\frakp$, so we have the identities
\begin{align*}
	(W/\frakp)_{\perf} \simeq W/\frakp && (W_{\frakp})_{\perf} \simeq W_{\frakp} && \kappa(\frakp)_{\perf} \simeq \kappa(\frakp)
\end{align*}
Hence, it remains to show that the following is an exact sequence
\begin{equation*}
	0 \rightarrow W \rightarrow W/\frakp \oplus W_{\frakp} \rightarrow \kappa(\frakp) \rightarrow 0
\end{equation*}
which was proven in \cite[Lemma 6.3]{projectivity_affine_grass}. Hypercompleteness follows from the fact that $F$ takes values on $\cD(V)^{\ge 0} \subset \cD(V)$ (\ref{sheaf_implies_hypersheaf}).
\end{proof}

Finally, we show that under the equivalence \ref{equiv_almost_tf_banach} a map $\cM(S) \rightarrow \cM(R)$ in $\Ban_K^{\contr}$ is surjective if and only if $R \rightarrow S$ is an $\arc_{\varpi}$-cover.

\begin{prop}\label{arc_pi_cover_for_banach} Let $K$ be a perfectoid non-archimedean field and $\varpi \in K$ such that $1 > |\varpi^p| > |p|$, in particular $K_{\le 1}$ is a perfectoid rank one valuation ring (cf. Example \ref{non_arch_field_val_rank_one} and Lemma \ref{perfectoid_field_is_int_perfectoid}). For a morphism $R \rightarrow S$ in $\Ban_K^{\contr}$ the following are equivalent
\begin{enumerate}[(1)]
	\item The induced map of compact hausdorff spaces $|\cM(S)| \rightarrow |\cM(R)|$ is surjective.
	\item The induced map $\Spf(S_{\le 1}) \rightarrow \Spf(R_{\le 1})$ is an $\arc_{\varpi}$-cover.
\end{enumerate}
\end{prop}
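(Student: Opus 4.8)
The plan is to run both implications through the dictionary of Proposition \ref{na_field_rk_one_dict}, which identifies the class of ``test objects'' appearing in Definition \ref{defn_arc_pi} --- namely $\varpi$-complete rank one valuation rings $W$ equipped with a faithfully flat structure map $K_{\le 1}\to W$ together with a map $\Spf(W)\to\Spf(R_{\le 1})$ --- with contractive morphisms $R\to L$ into non-archimedean fields $L$, via $W\mapsto W[\tfrac{1}{\varpi}]$ and $L\mapsto L_{\le 1}$; moreover an extension $W_1\to W_2$ of such valuation rings corresponds, by \ref{equiv_extensions_rk_one} and \ref{na_field_rk_one_dict}, to an isometric embedding of non-archimedean fields $L_1\hookrightarrow L_2$. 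The one point that needs care here is that whenever we produce a completed residue field $\cH(z)$ of a nonzero Banach $K$-algebra, it is a non-trivially valued non-archimedean field (the topological nilpotent unit $\varpi$ survives because $K\to\cH(z)$ is an isometry), and the induced map $K_{\le 1}\to\cH(z)_{\le 1}$ is faithfully flat by Example \ref{non_arch_field_val_rank_one} and \ref{equiv_extensions_rk_one}; thus $\cH(z)_{\le 1}$ is a legitimate test object, and any isometric field map over it induces an extension of valuation rings.

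For $(2)\Rightarrow(1)$: fix $x\in|\cM(R)|$ and consider the canonical map $R\to\cH(x)$. By the previous paragraph $\Spf(\cH(x)_{\le 1})\to\Spf(R_{\le 1})$ is an admissible test, so the $\arc_\varpi$-cover hypothesis yields an extension $\Spf(W_2)\to\Spf(\cH(x)_{\le 1})$ and a map $\Spf(W_2)\to\Spf(S_{\le 1})$ over $\Spf(R_{\le 1})$. Inverting $\varpi$ gives a non-archimedean field $L_2:=W_2[\tfrac{1}{\varpi}]$, a contractive map $S\to L_2$, and an isometric map $\cH(x)\to L_2$ with $R\to S\to L_2$ equal to $R\to\cH(x)\to L_2$. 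The map $S\to L_2$ determines a point $y\in|\cM(S)|$, and by functoriality its image in $|\cM(R)|$ is the valuation $R\to\cH(x)\to L_2\to\RR_{\ge0}$, which by construction of $\cH(x)$ (cf. \ref{defn_completed_residue_field}) maps to $x$. As $x$ was arbitrary, $|\cM(S)|\to|\cM(R)|$ is surjective.

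For $(1)\Rightarrow(2)$: let $W_1$ be a $\varpi$-complete rank one valuation ring with faithfully flat structure map $K_{\le 1}\to W_1$ together with a map $R_{\le 1}\to W_1$; set $L_1:=W_1[\tfrac{1}{\varpi}]$ and let $x\in|\cM(R)|$ be the image of the unique point of $|\cM(L_1)|$ under the contractive map $R\to L_1$. By surjectivity there is $y\in|\cM(S)|$ mapping to $x$, so $y$ together with the unique point of $|\cM(L_1)|$ defines a point of $|\cM(S)|\times_{|\cM(R)|}|\cM(L_1)|$; since $R\to S$ and $R\to L_1$ are contractive, Proposition \ref{general_fiber_prod_berko_sp} provides a surjection $|\cM(S\cotimes_R L_1)|\to|\cM(S)|\times_{|\cM(R)|}|\cM(L_1)|$, so $|\cM(S\cotimes_R L_1)|$ is nonempty and in particular $S\cotimes_R L_1\neq0$. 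Choose $z\in|\cM(S\cotimes_R L_1)|$ and put $W_2:=\cH(z)_{\le 1}$. The composite $L_1\to S\cotimes_R L_1\to\cH(z)$ is an isometric map of non-archimedean fields, so $\Spf(W_2)\to\Spf(W_1)$ is an extension; the composite $S\to S\cotimes_R L_1\to\cH(z)$ is contractive, giving $S_{\le 1}\to W_2$; and the two composites $R\to L_1\to\cH(z)$ and $R\to S\to\cH(z)$ agree because $S\cotimes_R L_1$ is the pushout of $L_1\leftarrow R\to S$. This produces exactly the lift required by Definition \ref{defn_arc_pi}, so $\Spf(S_{\le 1})\to\Spf(R_{\le 1})$ is an $\arc_\varpi$-cover.

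The argument is essentially bookkeeping once the dictionary is in place; the one substantive input is the nonemptiness of $|\cM(S\cotimes_R L_1)|$ in the second implication, which is where Proposition \ref{general_fiber_prod_berko_sp} (surjectivity of the canonical map on Berkovich spectra of a completed tensor product) does the real work. The main thing to watch throughout is that every valuation ring we manufacture genuinely has a faithfully flat structure map over $K_{\le 1}$ --- equivalently that $\varpi$ remains a nonzero, topologically nilpotent element --- so that we never step outside the class of test objects sanctioned by Definition \ref{defn_arc_pi}.
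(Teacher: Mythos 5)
Your proof is correct and follows essentially the same route as the paper: both reduce surjectivity of $|\cM(S)|\to|\cM(R)|$ to a lifting property against non-archimedean fields, obtain the lift from the nonvanishing of a completed tensor product, and translate test objects via the dictionary of Lemma \ref{na_field_rk_one_dict}. The only cosmetic difference is that in $(1)\Rightarrow(2)$ you invoke Proposition \ref{general_fiber_prod_berko_sp} applied to $S\cotimes_R L_1$, whereas the paper factors through the completed residue fields $\cH(x)$, $\cH(y)$ and applies Corollary \ref{non_zero_comp_tensor_ban} directly --- but \ref{general_fiber_prod_berko_sp} is itself proved by exactly that residue-field argument, so the substance is identical.
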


\begin{proof} We begin by proving the following: a map $|\cM(S)| \rightarrow |\cM(R)|$ is surjective if and only if for any $K$-non-archimedean field $L$ and a map $\cM(L) \rightarrow \cM(R)$, there exists a morphism $L \rightarrow F$ of non-archimedean fields and a map $\cM(F) \rightarrow \cM(S)$ making the following diagram commute
\begin{cd}
	\cM(F) \ar[r] \ar[d] & \cM(S) \ar[d] \\
	\cM(L) \ar[r] & \cM(R)
\end{cd}
Indeed, if the condition on non-archimedean fields hold then its clear that the map $|\cM(S)| \rightarrow |\cM(R)|$ is surjective. On the other hand, assume that $|\cM(S)| \rightarrow |\cM(R)|$ is surjective, recall that universal property of the map $\cM(\cH(x)) \rightarrow \cM(R)$ (cf. \ref{univ_completed_residue}): for any map $\cM(P) \rightarrow \cM(R)$ from a non-archimedean field $P$ with image $x \in |\cM(R)|$, then the map $\cM(P) \rightarrow \cM(R)$ admits a unique factorization as $\cM(P) \rightarrow \cM(\cH(x)) \rightarrow \cM(R)$. Fix $x \in |\cM(R)|$ and $y \in |\cM(S)|$ such that $y \mapsto x$, then by the universal property of completed residue fields we learn that we have a commutative diagram
\begin{cd}
	\cM(\cH(y)) \ar[r] \ar[d] & \cM(S) \ar[d] \\
	\cM(\cH(x)) \ar[r] & \cM(R)
\end{cd}
Without loss of generality assume that $|\cM(L)| \rightarrow |\cM(R)|$ has image $x \in |\cM(R)|$, then it factors through $\cM(L) \rightarrow \cM(\cH(x))$. By \ref{non_zero_comp_tensor_ban} we learn that $Z:= L \cotimes_{\cH(x)} \cH(y)$ is non-zero, and so $|\cM(Z)|$ is non-empty by \ref{berko_sp_is_comp}, so we fix $z \in |\cM(Z)|$ and we obtain a commutative diagram
\begin{cd}
	\cM(\cH(z)) \ar[r] \ar[rd] & \cM(Z) \ar[r] \ar[d] & \cM(\cH(y)) \ar[r] \ar[d] & \cM(S) \ar[d] \\
	&\cM(L) \ar[r] & \cM(\cH(x)) \ar[r] & \cM(R)
\end{cd}
proving the desired implication. The statement of the proposition then follows from Lemma \ref{na_field_rk_one_dict}.
\end{proof}

\subsection{Perfectoidization and \texorpdfstring{$\arc_{\varpi}$}{arc pi}-descent}

Fix a perfectoid non-archimedean field $K$ and let $(A,d)$ the perfect prism corresponding to $K_{\le 1}$ with frobenius lift $\varphi_A: A \rightarrow A$.

\begin{defn} In \cite[Section 7.2]{prisms} Bhatt and Scholze introduced a cohomology theory called derived prismatic cohomology
\begin{align*}
	\Prism_{-}: \CAlg_{K_{\le 1}, p}^{\wedge} \longrightarrow \cD(A)_{(p, d)}^{\wedge} && R \mapsto \Prism_R
\end{align*}
which associates to any derived $p$-complete $K_{\le 1}$-algebra a derived $(p, d)$-complete commutative algebra object in $\cD(A)^{\wedge}_{(p,d)}$ equipped with a $\varphi_A$-semilinear endomorphism $\varphi: \Prism_R \rightarrow \Prism_R$. For details and various properties of this construction we refer the reader to \cite[Section 7.2]{prisms}.
\end{defn}

\begin{rem} Notice that unlike Bhatt and Scholze we denote the derived prismatic cohomology of $R \in \CAlg_{K_{\le 1},p}^{\wedge}$ by $\Prism_R$ as opposed to $\Prism_{R/A}$, this is due to the fact that since we are working over a perfect prism $(A,d)$ derived prismatic cohomology does not depend on the base $A$. Indeed, for any $R \in \CAlg_{K_{\le 1}, p}^{\wedge}$ the canonical map of sites $(R/A)_{\Prism} \rightarrow (R)_{\Prism}$ is an equivalence (cf. \cite[Lemma 4.8]{prisms}) between the absolute prismatic site of $R$ and the prismatic site of $R/A$ -- in particular we learn that we have a canonical isomorphism $R\Gamma (R_{\Prism}, \cO_{\Prism}) \simeq  R\Gamma ((R/A)_{\Prism}, \cO_{\Prism})$. If we further assume that $R$ is smooth we know from \cite[Construction 7.6]{prisms} that there is a canonical equivalence $\Prism_{R/A} \simeq R\Gamma ((R/A)_{\Prism}, \cO_{\Prism})$, where $\Prism_{R/A}$ is the derived prismatic cohomology of $(R/A)$, showing in particular that the derived prismatic cohomology of a smooth $R \in \CAlg_{K_{\le 1}, p}^{\wedge}$ is independent of the base $A$, and we choose to denote it by $\Prism_R$ to emphasize this independence. Finally, as the derived prismatic cohomology of a general $R \in \CAlg_{K_{\le 1}, p}^{\wedge}$ is defined as the left Kan extension from the subcategory of polynomial algebras, it follows that $\Prism_{R/A}$ is independent of the base $A$, and so we choose to denote it by $\Prism_R$.
\end{rem}

\begin{defn} Fix a $p$-complete $K_{\le 1}$-algebra $R$.
\begin{enumerate}[(1)]
	\item The perfection $\Prism_{R, \perf}$ of $\Prism_R$ is defined as
	\begin{align*}
		\Prism_{R, \perf} := \colim_{\varphi_R} \Big (\Prism_{R} \rightarrow \varphi_{A*} \Prism_R \rightarrow \varphi_{A*}^{2} \Prism_R \rightarrow \cdots  \Big )^{\wedge}_{(p,d)} \in \cD(A)^{\wedge}_{(p,d)}
	\end{align*}
	the derived $(p,d)$-complete $E_{\infty}$-$A$-algebra equipped with a $\varphi_A$-semilinear automorphism induced by $\varphi_R$.
	\item The perfectoidization $R_{\perfd}$ of $R$ is defined as
	\begin{align*}
		R_{\perfd} := \Prism_{R, \perf} \otimes_A^L A/d \in \cD(K_{\le 1})^{\wedge}_{p}
	\end{align*}
	This is a derived $p$-complete $E_{\infty}$-$K_{\le 1}$-algebra.
\end{enumerate}
\end{defn}

\begin{prop}\label{properties_perfd} Fix a $p$-complete $K_{\le 1}$-algebra $R$. Then,
\begin{enumerate}[(1)]
	\item Assume that $0 = p \in R$, then $R_{\perfd}$ coincides with the usual direct limit perfection of $R_{\perf}$ of $R$.
	\item Both $\Prism_{R, \perf}$ and $R_{\perfd}$ lie in $\cD^{\ge 0}(A)$ and $\cD^{\ge 0}(K_{\le 1})$ respectively.
	\item For any choice of topology on $(R)_{\Prism}^{\perf}$ between the flat and chaotic topology, there exists a canonical map
	\begin{align*}
		\Prism_{R, \perf} \rightarrow R\Gamma(R_{\Prism}^{\perf}, \cO_{\Prism})
	\end{align*}
	which is an equivalence. In particular, the perfectoidization
	\begin{align*}
		R_{\perfd} \simeq R\Gamma (R_{\Prism}^{\perf}, \overline{\cO}_{\Prism}) \simeq \lim_{R \rightarrow S} S
	\end{align*}
	is the derived limit of $S$ over all maps from $R$ to a perfectoid ring $S$, and does not depend on the choice of base prism $(A, d)$.
	\item Assume that $R$ is semiperfectoid, that is, there exists a surjective map $P \twoheadrightarrow R$ from an integral perfectoid $K_{\le 1}$-algebra. Then, $\Prism_{R, \perf}$ is concentrated in degree zero, and $(\Prism_{R, \perf}, d)$ is the initial object of $R_{\Prism}^{\perf}$.
	\item In general, $\Prism_{-, \perf}$ and $(-)_{\perfd}$ are defined for derived $p$-complete simplicial commutative $K_{\le 1}$-algebras $R$. In those cases, $\Prism_{R, \perf}$ and $R_{\perfd}$ only depend on $\pi_0(R)$.
\end{enumerate}
\end{prop}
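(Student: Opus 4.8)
The plan is to obtain each of the five items from the corresponding statement in \cite[Section 8]{prisms}, after one preliminary normalization: over the perfect prism $(A,d)$ attached to $K_{\le 1}$ the relative prismatic cohomology $\Prism_{R/A}$ agrees with the absolute prismatic cohomology $R\Gamma(R_{\Prism}, \cO_{\Prism})$ (using \cite[Section 4]{prisms} and the remark above), which is why we may drop the base from the notation; consequently $\Prism_{R,\perf}$ and $R_{\perfd}$ reference $(A,d)$ only through the identification $A/d \simeq K_{\le 1}$, and this remark already contains the content of the ``independence of base prism'' assertion in (3). With this in hand the proof is essentially a sequence of citations together with some connectivity bookkeeping.

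For (1), if $0 = p \in R$ then $R$ is a $K_{\le 1}/p$-algebra, and the de Rham/crystalline description of prismatic cohomology in characteristic $p$ together with the explicit behaviour of perfection under $\otimes_A^L A/d$ identifies $R_{\perfd}$ with the derived $p$-completion of $\colim_{x \mapsto x^p} R$; this is the content of \cite[Section 8]{prisms} (the characteristic-$p$ lemma on perfectoidization). For (2), the Hodge--Tate comparison places $\Prism_R$ in $\cD^{\ge 0}(A)$ for a (discrete, hence connective) $K_{\le 1}$-algebra $R$; a filtered colimit of coconnective objects is coconnective, and the derived $(p,d)$-completion preserves $\cD^{\ge 0}$ here because $\Prism_R$ is already derived $(p,d)$-complete with well-behaved Frobenius, so $\Prism_{R,\perf}\in\cD^{\ge 0}(A)$, and then $R_{\perfd} = \Prism_{R,\perf}\otimes_A^L A/d \in \cD^{\ge 0}(K_{\le 1})$ as established in loc. cit. (the only subtlety being that reduction modulo the nonzerodivisor $d$ a priori lands in $\cD^{\ge -1}$, and one must check the boundary term vanishes).

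For (3), by construction $\Prism_{R,\perf}$ is the $\varphi$-colimit perfection of $\Prism_R = R\Gamma(R_{\Prism},\cO_{\Prism})$, and one checks this computes $R\Gamma(R_{\Prism}^{\perf},\cO_{\Prism})$; reducing modulo $d$ then gives $R_{\perfd}\simeq R\Gamma(R_{\Prism}^{\perf},\overline{\cO}_{\Prism})$, all of which is \cite[Section 8]{prisms}. Since the equivalence ``perfect prisms $\leftrightarrow$ integral perfectoid rings'' of Proposition \ref{perfect_prism_perfectoid} identifies an object of $R_{\Prism}^{\perf}$ together with its $\overline{\cO}$-value with an integral perfectoid ring $S$ receiving a map from $R$, the cohomology of this site is precisely the derived limit $\lim_{R\to S}S$, and independence of the base prism follows since $R_{\Prism}^{\perf}$ is intrinsic to $R$. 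For (4), when there is a surjection $P\twoheadrightarrow R$ from an integral perfectoid ring, $R_{\Prism}^{\perf}$ has an initial object (the universal semiperfectoid perfect prism over $R$), so the derived limit in (3) is computed at a single object: it is therefore concentrated in degree zero and equals $(\Prism_{R,\perf},d)$ as that initial object --- again loc. cit. Finally (5) follows from (3): a ring map from $R$ to the discrete ring underlying a perfect prism factors uniquely through $\pi_0(R)$, so $R_{\Prism}^{\perf}\simeq (\pi_0 R)_{\Prism}^{\perf}$, whence $\Prism_{R,\perf}$ and $R_{\perfd}$ depend only on $\pi_0(R)$; the extension of the definitions to derived $p$-complete simplicial commutative $K_{\le 1}$-algebras is the left Kan extension already used to define $\Prism_{-}$.

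The main obstacle, given that the substantive results are imported from \cite[Section 8]{prisms}, is the connectivity bookkeeping in (2): checking that coconnectivity is preserved first under the filtered colimit, then under the derived $(p,d)$-completion, and then under $-\otimes_A^L A/d$. That is where I would spend the most care; the remaining items are direct appeals to loc. cit. once the absolute-versus-relative identification of prismatic cohomology over $(A,d)$ is recorded.
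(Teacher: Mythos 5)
Your proposal is correct and follows essentially the same route as the paper: a sequence of citations to the relevant results in [prisms, Section 8] (specifically Example 8.3 for (1), Lemma 8.4 for (2), Proposition 8.5 for (3)), with (4) using the initial-object argument via [prisms, Lemmas 7.2 and 3.9] and (5) following from the site equivalence $R_{\Prism}^{\perf} = (\pi_0 R)_{\Prism}^{\perf}$ together with (3). The extra prose you spend on the connectivity bookkeeping in (2) and the universal-semiperfectoid-perfect-prism remark in (4) only amplifies what the cited statements already provide, so there is no substantive divergence from the paper's argument.
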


\begin{proof} (1) is \cite[Example 8.3]{prisms}, (2) is \cite[Lemma 8.4]{prisms}, and (3) is \cite[Proposition 8.5]{prisms}. To see (4) recall that in \cite[Lemma 7.2]{prisms} it is proven that if $R$ is semiperfectoid then $R_{\Prism}$ admits an initial object $(\Prism_R^{\init}, d)$, which by definition implies that $\Prism_R^{\init}$ is concentrated in degree zero. Then, \cite[Lemma 3.9]{prisms} shows that
\begin{equation*}
	\Prism_{R, \perf}^{\init} := \colim_{\varphi_R} \Big (\Prism_{R}^{\init} \rightarrow \varphi_{A*} \Prism_R^{\init} \rightarrow \varphi_{A*}^{2} \Prism_R^{\init} \rightarrow \cdots  \Big )^{\wedge}_{(p,d)}
\end{equation*}
is concentrated in degree zero and $(\Prism_{R, \perf}^{\init}, d)$ is the initial object of $R_{\Prism}^{\perf}$; the result then follows from the isomorphism $\Prism_{R, \perf} \rightarrow \Prism_{R, \perf}^{\init} = R\Gamma(R_{\Prism}^{\perf}, \cO_{\Prism})$ coming from (3). Finally, (5) follows from the fact the equivalence of sites $R_{\Prism}^{\perf} = (\pi_0(R))_{\Prism}^{\perf}$ and (3).
\end{proof}

Since the subcategory $\Perfd_{K_{\le 1}}^{\Prism} \subset \CAlg_{K_{\le 1}}^{\wedge}$ is stable under pullbacks and coproducts, it follows that $\Perfd_{K_{\le 1}}^{\Prism}$ inherits the $\varpi$-complete $\arc$-topology from $\CAlg_{K_{\le 1}}^{\wedge}$.

\begin{prop}\label{pi_complete_arc_descent_for_perfectoids} The functor
\begin{align*}
	F: \Perfd_{K_{\le 1}}^{\Prism} \rightarrow \cD(K_{\le 1})^{\wedge}_{\varpi} && R \mapsto R
\end{align*}
is a hypercomplete $\varpi$-complete $\arc$-sheaf. In particular, if $R \rightarrow S$ is a $\varpi$-complete $\arc$-cover then the following complex is acyclic
\begin{equation*}
	0 \rightarrow R \rightarrow S \rightarrow S \cotimes_R S \rightarrow S \cotimes_R S \cotimes_R S \rightarrow \cdots
\end{equation*}
\end{prop}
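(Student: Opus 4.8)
The plan is to reduce $\varpi$-complete $\arc$-descent for the functor $R \mapsto R$ on $\Perfd_{K_{\le 1}}^{\Prism}$ to the already-established $\arc$-descent for the derived prismatic cohomology-perfection functor, which in turn rests on $\arc$-descent for perfect rings of characteristic $p$ (Proposition \ref{perfection_arc_sheaf}) via the tilting correspondence. The key observation is that on $\Perfd_{K_{\le 1}}^{\Prism}$, the assignment $R \mapsto R_{\perfd}$ recovers $R$ itself (since perfectoid rings are their own perfectoidizations), and $R_{\perfd} = \Prism_{R,\perf} \otimes_A^L A/d$ where $\Prism_{R,\perf}$ is controlled by characteristic $p$ data after reduction mod $p$.

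First I would set up the reduction to characteristic $p$. Recall that for a perfect prism $(A,d)$ we have $A/p = K_{\le 1}^\flat$ which is perfect, and by Proposition \ref{prism_tilt_correspondence} the category $\Perfd_{K_{\le 1}}^{\Prism}$ (restricted to $\varpi$-complete objects) is equivalent, via tilting, to $\Perfd_{K_{\le 1}^\flat}^{\Prism}$, i.e. to $\varpi^\flat$-complete perfect $K_{\le 1}^\flat$-algebras. Moreover, by Proposition \ref{tilting_ban_topo} and Proposition \ref{properties_tilt_val_rings} the tilting equivalence is compatible with the $\varpi$-complete $\arc$-topology: a map $R \to S$ of $\varpi$-complete integral perfectoid rings is a $\varpi$-complete $\arc$-cover if and only if $R^\flat \to S^\flat$ is one (this uses that valuation rings tilt to valuation rings of the same Krull dimension, and that completeness is preserved in both directions by Lemma \ref{completeness_tilt}). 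So it suffices to prove that $F^\flat: \Perfd_{K_{\le 1}^\flat}^{\Prism} \to \cD(K_{\le 1}^\flat)^{\wedge}_{\varpi^\flat}$, $R \mapsto R$, is a hypercomplete $\varpi^\flat$-complete $\arc$-sheaf.

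Next I would establish the characteristic $p$ statement. For perfect rings of characteristic $p$, $\arc$-descent is Proposition \ref{perfection_arc_sheaf}; I need the $\varpi^\flat$-complete analogue. Here the strategy is: the functor $R \mapsto R$ on perfect $\FF_p$-algebras is a hypercomplete $\arc$-sheaf valued in $\cD(\FF_p)^{\ge 0}$, and the $\varpi^\flat$-completion functor $\cD(K_{\le 1}^\flat) \to \cD(K_{\le 1}^\flat)^{\wedge}_{\varpi^\flat}$ is exact; applying Proposition \ref{commute_Tot} together with Proposition \ref{arc_implies_pi_complete_arc} (which says $\arc$-covers are $\varpi$-complete $\arc$-covers) and Remark \ref{canonical_pi_complete_arc_covers} (which gives a basis of the $\varpi^\flat$-complete $\arc$-topology by products of $\varpi^\flat$-complete absolutely integrally closed valuation rings) lets me transfer descent along completion. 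More precisely: for a $\varpi^\flat$-complete $\arc$-cover $R \to S$ of $\varpi^\flat$-complete perfect algebras, I write $R = (\widetilde R)^\wedge_{\varpi^\flat}$ and $S = (\widetilde S)^\wedge_{\varpi^\flat}$ where $\widetilde R \to \widetilde S$ (an uncompleted $\arc$-cover built from the valuation ring presentation) satisfies uncompleted $\arc$-descent by Proposition \ref{perfection_arc_sheaf}; then completing and using that completion commutes with the relevant $\Tot$ (all terms being coconnective, Proposition \ref{commute_Tot}) and with the completed tensor products (Construction \ref{const_complete_monoidal_str}, and Proposition \ref{tensor_integral_perfectoid} for agreement of derived and classical completed tensor products in the perfectoid setting) gives the result. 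Finally, hypercompleteness follows automatically from Proposition \ref{sheaf_implies_hypersheaf} since $F$ takes values in a coconnective derived category, which is generated by cotruncated objects.

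The main obstacle I anticipate is the bookkeeping around the completed tensor products and verifying that $\arc$-descent genuinely survives $\varpi$-completion — specifically, making sure that the Cech nerve $S^{\bullet/R}$ computed with $\cotimes$ in $\cD(K_{\le 1})^\wedge_\varpi$ agrees, after the tilting reduction, with the $\varpi^\flat$-completion of the honest Cech nerve, and that the associated complex in Proposition \ref{BK_formula} stays in the heart so that acyclicity can be checked naively. This requires invoking Proposition \ref{tensor_integral_perfectoid} carefully to know that $S_i \cotimes_R S_j$ stays a $\varpi$-complete integral perfectoid ring concentrated in degree zero, and then Proposition \ref{properties_tilt_val_rings} to match the tilts. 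Once these compatibilities are in place, the descent statement itself is a formal consequence of the characteristic $p$ case and exactness of completion; the final sentence about acyclicity of $0 \to R \to S \to S \cotimes_R S \to \cdots$ is then just Proposition \ref{BK_formula} applied to the (coconnective, in-the-heart) Cech complex.
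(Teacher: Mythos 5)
Your overall route (tilt to characteristic $p$, use $\arc$-descent for perfect rings, then complete) is the one the paper takes, but the reduction step as you state it has a real gap. The tilting correspondence is an equivalence of sites, yet the two sheaf conditions live in different target categories: the original one in $\cD(K_{\le 1})^{\wedge}_{\varpi}$, the tilted one in $\cD(K_{\le 1}^{\flat})^{\wedge}_{\varpi^{\flat}}$. Knowing that $R^{\flat} \rightarrow \Tot(S^{\flat\, \bullet/R^{\flat}})^{\wedge}_{\varpi^{\flat}}$ is an equivalence does not by itself yield $R \rightarrow \Tot(S^{\bullet/R})^{\wedge}_{\varpi}$; an untilting mechanism is needed. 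The paper supplies it by passing through $\A_{\inf}$: using Proposition \ref{commute_Tot} (all terms are coconnective) one checks that $- \otimes_A A/d$ and reduction mod $p$ commute with the relevant totalizations, and then derived Nakayama for derived $(p,[\varpi^{\flat}])$-complete complexes lets one deduce the $\A_{\inf}$-level equivalence $\A_{\inf}(R) \rightarrow \Tot(\A_{\inf}(S^{\bullet/R}))^{\wedge}$ from its reduction mod $p$, which is precisely the tilted statement; reducing mod $d$ then gives the claim. Without this (or an equivalent transfer argument) the sentence ``so it suffices to prove the characteristic $p$ statement'' is not justified.

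The characteristic $p$ step also does not go through as written. A $\varpi^{\flat}$-complete $\arc$-cover $R \rightarrow S$ is in general \emph{not} an $\arc$-cover: the complete topology imposes no lifting condition over rank one valuation rings in which $\varpi^{\flat}$ is invertible, so Proposition \ref{perfection_arc_sheaf} does not apply to $R \rightarrow S$ directly, and your proposed decompletion $\widetilde{R} \rightarrow \widetilde{S}$ is neither constructed nor has an evident candidate ($R$ and $S$ are already complete). The paper's device is concrete: replace $S$ by $S \times R[\frac{1}{\varpi}]$, which \emph{is} an honest $\arc$-cover of $R$; Proposition \ref{perfection_arc_sheaf} gives descent for its Cech nerve in $\cD(K_{\le 1})$, and after derived (equals classical) $\varpi$-completion, which commutes with the totalization by Proposition \ref{commute_Tot} and is symmetric monoidal, the factor $R[\frac{1}{\varpi}]$ dies, so the completed Cech nerve of $S \times R[\frac{1}{\varpi}]$ is identified with $(S^{\bullet/R})^{\wedge}_{\varpi}$, yielding the desired equivalence. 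Your final points — hypercompleteness from coconnectivity via Proposition \ref{sheaf_implies_hypersheaf}, and the passage to the acyclic complex via Proposition \ref{BK_formula} using Proposition \ref{tensor_integral_perfectoid} to keep the Cech terms in the heart — are correct and agree with the paper.
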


\begin{proof} We follow \cite[Proposition 8.10]{prisms}. We claim that it suffices to show that $F^{\flat}: \Perfd_{K_{\le 1}^{\flat}}^{\Prism} \rightarrow \cD(K_{\le 1}^{\flat})$ given by $R^\flat \mapsto R^\flat$ is a $\varpi^\flat$-complete $\arc$-sheaf. Indeed, its clear that to show that $F$ is a $\varpi$-complete $\arc$-sheaf it suffices to show that for any $\varpi$-complete $\arc$-cover $R \rightarrow S$, the canonical map
\begin{align*}
	R \rightarrow \Tot(S^{\bullet/R})^{\wedge}_{\varpi} && \text{in } \cD(K_{\le 1})^{\wedge}_{\varpi}
\end{align*}
is an equivalence. Since the functor $ - \otimes_{A} A/d: \cD(A)^{\wedge}_{([\varpi^\flat], p)} \rightarrow \cD(K_{\le 1})^{\wedge}_{\varpi}$ satisfies the hypothesis of \ref{commute_Tot} for all objects $\A_{\inf}(R)$ and $\A_{\inf}(S^{n/R})$ it follows that it suffices to check that the canonical map
\begin{align*}
	\A_{\inf}(R) \rightarrow \Tot(\A_{\inf}(S^{\bullet/R}))^{\wedge}_{([\varpi^\flat, p])} && \text{in } \cD(A)^{\wedge}_{([\varpi^\flat], p)}
\end{align*}
is an equivalence. The same argument as above together with the derived Nakayama lemma we conclude that it suffices to show that the canonical map
\begin{align*}
	R^{\flat} \rightarrow \Tot(S^{\flat \bullet/R^\flat})^{\wedge}_{\varpi^\flat} && \text{in } \cD(K_{\le 1}^\flat)^{\wedge}_{\varpi^\flat}
\end{align*}
is an equivalence. Since $R^\flat \rightarrow S^\flat$ is a $\varpi^\flat$-complete $\arc$-cover if and only if $R \rightarrow S$ is a $\varpi$-complete $\arc$-cover, by the tilting correspondence, we have reduced to showing that $F^{\flat}: \Perfd_{K_{\le 1}^{\flat}}^{\Prism} \rightarrow \cD(K_{\le 1}^{\flat})$ is a $\varpi^\flat$-complete $\arc$-sheaf.

For the rest of the proof we assume that $R$, $S$ and $K_{\le 1}$ are of characteristic $p$. Recall that if $R \rightarrow S$ is a $\varpi$-complete $\arc$-cover in $\Perfd_{K_{\le 1}}^{\Prism}$ then the map $R \rightarrow S \times R[\frac{1}{\varpi}]$ is an $\arc$-cover in $\Perfd_{K_{\le 1}}^{\Prism}$. And in Proposition \ref{perfection_arc_sheaf} we showed that the canonical map
\begin{align*}
	R \rightarrow \Tot((S \times R[\frac{1}{\varpi}])^{\bullet/R}) && \text{in } \cD(K_{\le 1})
\end{align*}
is an equivalence. Then, as the derived $\varpi$-completion functor $(-)_{\varpi}^{\wedge}: \cD(K_{\le 1}) \rightarrow \cD(K_{\le 1})^{\wedge}_{\varpi}$ agrees with the classical $\varpi$-completion and satisfies the hypothesis of \ref{commute_Tot} for all objects $R$ and $(S \times R[\frac{1}{\varpi}])^{n/R}$ it follows that the canonical map
\begin{align*}
	R \rightarrow \Tot \Big ((S \times R[\frac{1}{\varpi}])^{\bullet/R} \Big )^{\wedge}_{\varpi} && \text{in } \cD(K_{\le 1})^{\wedge}_{\varpi}
\end{align*}
is an equivalence. Finally, we notice that $(S \times R[\frac{1}{\varpi}])^{\wedge}_{\varpi} \simeq S$, and since the derived $\varpi$-completion functor $(-)_{\varpi}^{\wedge}: \cD(K_{\le 1}) \rightarrow \cD(K_{\le 1})^{\wedge}_{\varpi}$ is symmetric monoidal, it follows that $\Big ((S \times R[\frac{1}{\varpi}])^{\bullet/R} \Big )^{\wedge}_{\varpi} \simeq (S^{\bullet/R})^{\wedge}_{\varpi}$, proving the result. The fact that $F$ is a hypercomplete sheaf follows from the fact that it takes values on $\cD(K_{\le 1})^{\ge 0} \subset \cD(K_{\le 1})$ and \ref{sheaf_implies_hypersheaf}.
\end{proof}

\begin{thm}\label{arc_pi_descent_for_perfectoids} The functor
\begin{align*}
	F: \Perfd_{K_{\le 1}}^{\Prism} \rightarrow \cD(K_{\le 1})^{\wedge a}_{\varpi} && R \mapsto R^a
\end{align*}
is a hypercomplete $\arc_{\varpi}$-sheaf. In particular, if $R \rightarrow S$ is a $\arc_{\varpi}$-cover then the following complex is almost acyclic
\begin{align*}
	0 \rightarrow R^a \rightarrow S^a \rightarrow S^a \cotimes_{R^a}^a S^a \rightarrow S^a \cotimes_{R^a}^a S^a \cotimes_{R^a}^a S^a \rightarrow \cdots
\end{align*}
\end{thm}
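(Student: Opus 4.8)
The goal is to upgrade the $\varpi$-complete $\arc$-descent statement of Proposition \ref{pi_complete_arc_descent_for_perfectoids} to $\arc_\varpi$-descent, at the cost of passing to the almost category. The basic strategy is the same bootstrapping trick used in \cite{prisms}: if $R \rightarrow S$ is an $\arc_\varpi$-cover in $\Perfd_{K_{\le 1}}^{\Prism}$, then $R \rightarrow S \times R/\varpi \times R[\tfrac{1}{\varpi}]$ is an honest $\arc$-cover, and $R \rightarrow S \times R/\varpi$ is a $\varpi$-complete $\arc$-cover. So the plan is to combine the sheaf property for the $\varpi$-complete $\arc$-topology (which we already have, integrally, by Proposition \ref{pi_complete_arc_descent_for_perfectoids}) with the observation that the ``extra'' factor $R/\varpi$ becomes invisible after passing to the almost category. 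The main point is that $R/\varpi$, and more generally anything of the form $T \cotimes_R R/\varpi = T/\varpi$, is $\varpi$-almost zero in the sense relevant to the ideal $(\varpi^{1/p^\infty}) \subset K_{\le 1}$ — wait, it is not literally almost zero, but its image under $(-)^a$ together with the Frobenius structure is what we must analyze. Let me restructure.

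\textbf{Step 1: reduce to characteristic $p$ via tilting.} Exactly as in the proof of Proposition \ref{pi_complete_arc_descent_for_perfectoids}, I would first observe that checking $F$ is an $\arc_\varpi$-sheaf amounts to checking, for every $\arc_\varpi$-cover $R \rightarrow S$, that $R^a \rightarrow \Tot((S^{\bullet/R})^a)$ is an equivalence in $\cD(K_{\le 1})^{\wedge a}_\varpi$; then use the functor $-\otimes_A A/d$, Construction \ref{commute_Tot}, derived Nakayama, and the tilting correspondence (Proposition \ref{prism_tilt_correspondence} together with the behavior of $(-)_{\perfd}$ and the almost structure from \ref{const_almost_monoidal_str}) to reduce to the case where $R$, $S$, $K_{\le 1}$ all have characteristic $p$, with $\varpi$ admitting compatible $p$-power roots. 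Here $F$ becomes $R \mapsto (R_{\perf})^a$ where $R_{\perf}$ is the $\varpi$-completed colimit perfection, i.e.\ just $R$ since $R$ is already perfect.

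\textbf{Step 2: split the $\arc_\varpi$-cover into a $\varpi$-complete $\arc$-cover plus a Frobenius-acyclic piece.} In characteristic $p$, given the $\arc_\varpi$-cover $R \rightarrow S$, form $T := S \times R/\varpi$; then $R \rightarrow T$ is a $\varpi$-complete $\arc$-cover (one checks: any $\varpi$-complete rank-one valuation ring mapping to $R$ either lifts to $S$ by the $\arc_\varpi$ property, or — in the degenerate case where $\varpi$ dies — factors through $R/\varpi$). By Proposition \ref{pi_complete_arc_descent_for_perfectoids} the Čech complex of $R \rightarrow T$ is acyclic, hence so is its image in $\cD(K_{\le 1})^{\wedge a}_\varpi$ under $(-)^a$. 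Now I would compare the Čech complex of $R \rightarrow S$ with that of $R \rightarrow T$: writing $T^{\bullet/R}$ in terms of $S^{\bullet/R}$ expands into a product over subsets indexed by which factor ($S$ vs.\ $R/\varpi$) one picks in each slot, and every term involving at least one $R/\varpi$ factor is of the form $(\text{something})\cotimes_R R/\varpi$, i.e.\ a $\varpi$-torsion $R$-module. The claim is that all such ``mixed'' terms contribute nothing after $(-)^a$, or more precisely assemble into an acyclic subcomplex, so that $\Tot((T^{\bullet/R})^a) \simeq \Tot((S^{\bullet/R})^a)$. The cleanest way to see this: the projection $T \to R/\varpi$ and inclusion-of-factors make $T^{\bullet/R}$ a product of two augmented cosimplicial objects over $R$, one with cover $S$ and one with cover $R/\varpi$; the totalization of a product is a product of totalizations, $\Tot((R/\varpi)^{\bullet/R})$ computes $(R/\varpi)_{\perfd}$-type data which I need to show is almost zero, and then acyclicity for $R \to T$ forces acyclicity for $R \to S$. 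Alternatively, and perhaps more robustly, I would instead use the \emph{three-term} cover $R \to S \times R/\varpi \times R[\tfrac 1\varpi]$ which is an actual $\arc$-cover, invoke Proposition \ref{perfection_arc_sheaf} for $R \to$ this cover, and then peel off the $R/\varpi$ and $R[\tfrac1\varpi]$ factors: $R[\tfrac1\varpi]$ is killed by derived $\varpi$-completion (and the Čech complex of $R \to S \times R[\tfrac1\varpi]$ $\varpi$-completes to that of $R \to S$ since $(S\times R[\tfrac1\varpi])^{\wedge}_\varpi = S$, cf.\ the end of \ref{pi_complete_arc_descent_for_perfectoids}), while $R/\varpi$ is killed by $(-)^a$ — and crucially the full Čech complex of $R \to S\times R/\varpi$ already $\varpi$-completes correctly, so the real content is just showing the $R/\varpi$ factor disappears almost-integrally.

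\textbf{Step 3: the almost vanishing.} The heart of the matter is: for $T$ a $\varpi$-complete $\varpi$-torsion free integral perfectoid $R$-algebra, the module $(T/\varpi)^a \in \Mod_{K_{\le 1},\varpi}^{\wedge a}$ is \emph{not} zero, so the naive ``peel off $R/\varpi$'' does not work on the nose; rather, what is true is that the Frobenius-semilinear structure (colimit perfection) on the $R/\varpi$-branch makes its totalization almost zero. Concretely, $\colim_{\varphi} R/\varpi = (R/\varpi)_{\perf} = R/(\varpi^{1/p^\infty})$, and the point is that $(\varpi^{1/p^\infty})^a = 0$ is a unit ideal almost-zero phenomenon — i.e.\ $(R/(\varpi^{1/p^\infty}))^a \simeq (R/\varpi)_{\perfd}{}^a$ but this is exactly the kind of object that Proposition \ref{almost_zero_as_modules} identifies with an almost-zero module, hence dies under $(-)^a$. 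So I would show that after applying the colimit-perfection built into $(-)_{\perf}$ the $R/\varpi$-branch of the Čech complex has all terms $\varpi$-almost zero, hence is zero in $\cD(K_{\le 1})^{\wedge a}_\varpi$, and therefore $\Tot$ of the $T$-complex equals $\Tot$ of the $S$-complex in the almost category. Combined with Step 2 this gives the equivalence $R^a \xrightarrow{\ \sim\ } \Tot((S^{\bullet/R})^a)$. Hypercompleteness is then automatic from \ref{sheaf_implies_hypersheaf} since $F$ takes values in the coconnective part $\cD(K_{\le 1})^{\wedge a, \ge 0}_\varpi$ (using the $t$-structure of Construction \ref{almost_t_structure}, valid because $R$ is $\varpi$-torsion free so we are in the setting where the almost $t$-structure exists).

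\textbf{Main obstacle.} The delicate step is Step 3 — making precise the sense in which the $R/\varpi$-branch (with its Frobenius-colimit structure) becomes almost zero, and in particular checking that one really may interchange the $\varpi$-adic completion, the Frobenius colimit, the totalization, and the passage to the almost category in the right order. The interaction between $\Tot$ (an $\mathbf{N}$-indexed limit, so only right $t$-exact behavior is controlled) and derived $\varpi$-completion is exactly the source of why we must work ``fully derived'' and why the almost category — rather than an honest module statement — is forced on us; I expect the bookkeeping to mirror closely the argument in \cite[Proposition 8.10]{prisms} but with the extra layer of $(-)^a$ inserted, and the key lemma to invoke repeatedly is Construction \ref{commute_Tot} to push the various exact functors through the totalization.
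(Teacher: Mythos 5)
Your core idea — bootstrap from $\varpi$-complete $\arc$-descent by adjoining an extra factor that becomes invisible after $(-)^a$ — is the right one, but the execution is more convoluted than necessary, and the ``main obstacle'' you flag disappears once you make the correct choice of auxiliary cover. Three specific issues.

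First, the reduction to characteristic $p$ in your Step 1 is superfluous. That reduction was already performed inside the proof of Proposition \ref{pi_complete_arc_descent_for_perfectoids}; the present theorem needs only to build on that proposition as a black box, not to re-run the tilting argument.

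Second, and most importantly: you take $T := S \times R/\varpi$, which is indeed a $\varpi$-complete $\arc$-cover of $R$, but it is \emph{not} an integral perfectoid ring ($R/\varpi$ is not perfect even when $R$ is), so Proposition \ref{pi_complete_arc_descent_for_perfectoids} — stated for $\Perfd_{K_{\le 1}}^{\Prism}$ — does not apply to it. You then notice in Step 3 that the Frobenius perfection $(R/\varpi)_{\perf}$ is what really matters, but you treat this as a post-hoc modification, which is what creates the worry about interchanging completion, colimit perfection, totalization, and $(-)^a$. The fix is to take $T := S \times (R/\varpi)_{\perf}$ from the start: $(R/\varpi)_{\perf}$ is a perfect $\FF_p$-algebra, hence integral perfectoid, so $T \in \Perfd_{K_{\le 1}}^{\Prism}$; and $R \rightarrow T$ is still a $\varpi$-complete $\arc$-cover because $\varpi$-complete $\arc$-covers are tested by absolutely integrally closed (hence perfect) valuation rings of Krull dimension $\le 1$, and any map from $R/\varpi$ to such a ring factors uniquely through $(R/\varpi)_{\perf}$. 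With this $T$ the argument is a short direct chain: Proposition \ref{pi_complete_arc_descent_for_perfectoids} gives $R \overset{\sim}{\rightarrow} \Tot(T^{\bullet/R})^{\wedge}_\varpi$; the functor $(-)^a$ is a right adjoint and so preserves the totalization; and $(-)^a$ is symmetric monoidal with $(R/\varpi)_{\perf}^a = 0$, so $(T^{\bullet/R})^{\wedge a}_\varpi \simeq (S^{\bullet/R})^{\wedge a}_\varpi$. No delicate bookkeeping remains.

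Third, the structural claim in your Step 2 that $T^{\bullet/R}$ ``is a product of two augmented cosimplicial objects over $R$'' is not correct: the $n$-th term of the Čech nerve of $S \times Q$ over $R$ is $(S \times Q)^{\cotimes n}$, which distributes as a product over all mixed monomials in $S$ and $Q$, not as the product of the two separate Čech nerves. This doesn't sink the argument — the symmetric monoidality of $(-)^a$ together with $(R/\varpi)_{\perf}^a = 0$ kills every mixed term simultaneously — but you should not rely on that decomposition. Likewise the alternative three-term cover by $S \times R/\varpi \times R[\tfrac{1}{\varpi}]$ is an unnecessary detour once you have the two-term perfectoid cover.
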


\begin{proof} To show that $F$ is a $\arc_\varpi$-sheaf it suffices to show that for any $\arc_{\varpi}$-cover $R \rightarrow S$ the canonical map
\begin{align*}
	R \rightarrow \Tot(S^{\bullet/R})^{\wedge a}_{\varpi} && \text{in } \cD(K_{\le 1})_{\varpi}^{\wedge a}
\end{align*}
is an equivalence. We claim that if $R \rightarrow S$ is an $\arc_{\varpi}$-cover then $R \rightarrow S \times (R/\varpi)_{\perf}$ is a $\varpi$-complete $\arc$-cover; indeed, its clear that if $R \rightarrow S$ is an $\arc_{\varpi}$-cover then $R \rightarrow S \times R/\varpi$ is a $\varpi$-complete $\arc$-cover, but since $\varpi$-complete $\arc$-covers can be tested by $\varpi$-complete absolutely integrally closed valuation rings $V$ of Krull dimension $\le 1$, any map $S \times R/\varpi \rightarrow V$ will factor as
\begin{equation*}
	S \times R/\varpi \rightarrow S \times (R/\varpi)_{\perf} \rightarrow V
\end{equation*}
proving the desired claim. Therefore, we learn from Proposition \ref{pi_complete_arc_descent_for_perfectoids} that the canonical map
\begin{align*}
	R \rightarrow \Tot\Big ( (S \times (R/\varpi)_{\perf})^{\bullet/R}   \Big )^{\wedge}_{\varpi} && \text{in } \cD(K_{\le 1})^{\wedge}_{\varpi}
\end{align*}
is an equivalence. Then, since the functor $(-)^a: \cD(K_{\le 1})^{\wedge}_{\varpi} \rightarrow \cD(K_{\le 1})^{\wedge a}_{\varpi}$ preserves all limits it follows that the canonical map 
\begin{align*}
	R \rightarrow \Tot\Big ( (S \times (R/\varpi)_{\perf})^{\bullet/R}   \Big )^{\wedge a}_{\varpi} && \text{in } \cD(K_{\le 1})^{\wedge a}_{\varpi}
\end{align*}
is an equivalence. Finally, we notice that $(S \times (R/\varpi)_{\perf})^a \simeq S$ , and since the functor $(-)^a: \cD(K_{\le 1})^{\wedge}_{\varpi} \rightarrow \cD(K_{\le 1})^{\wedge a}_{\varpi}$ is symmetric monoidal it follows that $\Big ( (S \times (R/\varpi)_{\perf})^{\bullet/R}   \Big )^{\wedge a}_{\varpi} \simeq (S^{\bullet/R})^{\wedge a}_{\varpi}$, proving the result. The fact that $F$ is a hypercomplete sheaf follows from the fact that it takes values on $\cD(K_{\le 1})^{a, \ge 0} \subset \cD(K_{\le 1})^a$ and \ref{sheaf_implies_hypersheaf}.
\end{proof}

\begin{corollary}\label{arc_pi_descent_for_bananach_perfectoids} Let $R \rightarrow S$ be a morphism of $\Perfd_{K}^{\Ban}$, such that the induced map of compact Hausdorff spaces $|\cM(S)| \rightarrow |\cM(R)|$ is surjective. Then, the following complex is exact and admissible
\begin{align*}
	0 \rightarrow R \rightarrow S \rightarrow S \cotimes_R S \rightarrow S \cotimes_R S \cotimes_R S \rightarrow \cdots
\end{align*}
\end{corollary}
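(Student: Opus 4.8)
The strategy is to transport the statement to integral perfectoid $K_{\le 1}$-algebras via the dictionary, invoke $\arc_\varpi$-descent there (Theorem \ref{arc_pi_descent_for_perfectoids}), and then descend the resulting acyclicity back to $\Ban_K$.

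First I would set up the translation. By Proposition \ref{arc_pi_cover_for_banach}, the hypothesis that $|\cM(S)| \to |\cM(R)|$ is surjective is equivalent to $\Spf(S_{\le 1}) \to \Spf(R_{\le 1})$ being an $\arc_\varpi$-cover of integral perfectoid $K_{\le 1}$-algebras. Under the equivalences $(-)_{\le 1}\colon \Perfd_K^{\Ban} \simeq \Perfd_{K_{\le 1}}^{\Prism \tic}$ (Proposition \ref{equiv_perfd_ban_tic}) and $(-)^a\colon \Perfd_{K_{\le 1}}^{\Prism \tic} \simeq \Perfd_{K_{\le 1}}^{\Prism a}$ (Proposition \ref{equiv_perfd_tic_almost}), together with Corollary \ref{tensor_ban_perfectoid}, Corollary \ref{tensor_dic_int_to_ban} and Proposition \ref{tensor_integral_perfectoid} identifying the completed tensor products (the last also guaranteeing each $S^{\cotimes n}_{\le 1}$ is $\varpi$-torsion free and concentrated in degree zero), the \v{C}ech nerve $S^{\bullet/R}$ formed in $\Ban_K^{\contr}$ corresponds to $(S_{\le 1}^{\bullet/R_{\le 1}})^a$, the $(-)^a$ of the \v{C}ech nerve $S_{\le 1}^{\bullet/R_{\le 1}}$ formed in $\Perfd_{K_{\le 1}}^{\Prism}$.

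Next I would apply descent. By Theorem \ref{arc_pi_descent_for_perfectoids} the functor $A \mapsto A^a$ is an $\arc_\varpi$-sheaf valued in $\cD(K_{\le 1})^{\wedge a}_{\varpi}$, so $R_{\le 1}^a \to \Tot\big((S_{\le 1}^{\bullet/R_{\le 1}})^a\big)$ is an equivalence. Applying the fully faithful, limit-preserving inclusion $j_*^\wedge\colon \cD(K_{\le 1})^{\wedge a}_{\varpi} \hookrightarrow \cD(K_{\le 1})^{\wedge}_{\varpi}$ — and using that, because each term lies in $\Perfd_{K_{\le 1}}^{\Prism \tic}$, $j_*^\wedge$ carries it to the module $(S^{\cotimes n})_{\le 1}$ concentrated in degree zero — the Bousfield--Kan formula (Proposition \ref{BK_formula}) converts this $\Tot$-equivalence into the acyclicity of the cochain complex
\begin{equation*}
D^\bullet \colon \quad 0 \to R_{\le 1} \to S_{\le 1} \to (S \cotimes_R S)_{\le 1} \to (S \cotimes_R S \cotimes_R S)_{\le 1} \to \cdots
\end{equation*}
of $\varpi$-complete $\varpi$-torsion free $K_{\le 1}$-modules. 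Inverting $\varpi$ recovers the complex of the statement, $D^\bullet[\tfrac{1}{\varpi}] = (0 \to R \to S \to S \cotimes_R S \to \cdots)$, which is therefore exact as a complex of abstract $K$-vector spaces. For admissibility I would split $D^\bullet$ into short exact sequences $0 \to Z^n \to D^n \to Z^{n+1} \to 0$ with $Z^n = \ker(D^n \to D^{n+1})$; each $Z^n$ is derived $\varpi$-complete (built from derived $\varpi$-complete modules under cofibers, $\cD(K_{\le 1})^{\wedge}_{\varpi}$ being closed under these) and $\varpi$-torsion free, hence classically $\varpi$-complete, so $Z^n[\tfrac{1}{\varpi}]$ is a closed Banach subspace of $D^n[\tfrac{1}{\varpi}]$. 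Each coboundary map $D^n[\tfrac{1}{\varpi}] \to D^{n+1}[\tfrac{1}{\varpi}]$ then factors as the surjection $D^n[\tfrac{1}{\varpi}] \twoheadrightarrow Z^{n+1}[\tfrac{1}{\varpi}]$, which is open by the open mapping theorem (as used in Proposition \ref{banach_to_topological_ff}), followed by the closed inclusion $Z^{n+1}[\tfrac{1}{\varpi}] \hookrightarrow D^{n+1}[\tfrac{1}{\varpi}]$, so the complex is exact and admissible.

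The main obstacle is the bookkeeping in the translation step: one must check that the various completed tensor products, the functors $(-)^a$, $H^0 j_*^\wedge = (-)_*$ and $(-)[\tfrac{1}{\varpi}]$, and the \v{C}ech differentials are all compatible, so that the complex produced by $\arc_\varpi$-descent on the integral side really is the $\varpi$-adic unit-ball complex of the Banach \v{C}ech complex. Relatedly, the degree-zero concentration of $j_*^\wedge$ on the \v{C}ech terms (i.e.\ vanishing of the relevant $\varprojlim^1$, using $\varpi$-completeness) is what promotes the \emph{almost} acyclicity delivered by Theorem \ref{arc_pi_descent_for_perfectoids} to the honest acyclicity of $D^\bullet$ needed for the $Z^n$-argument — this is precisely where the total integral closedness of the terms, and hence the passage $\Perfd_{K_{\le 1}}^{\Prism a} \simeq \Perfd_{K_{\le 1}}^{\Prism \tic}$, is used essentially.
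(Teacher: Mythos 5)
The first half of your translation and the use of $\arc_\varpi$-descent (Theorem \ref{arc_pi_descent_for_perfectoids}) follow the paper's proof closely, but the step where you pass through $j_*^\wedge$ to land on an honest (non-almost) complex $D^\bullet$ introduces a gap.

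The precise issue is the assertion that, because each \v{C}ech term lies in $\Perfd_{K_{\le 1}}^{\Prism \tic}$, the functor $j_*^\wedge$ carries it to the module $(S^{\cotimes n})_{\le 1}$ \emph{concentrated in degree zero}. Total integral closedness gives you $H^0 j_*^\wedge\big((S^{\cotimes n})_{\le 1}^a\big) = (S^{\cotimes n})_{\le 1,*} = (S^{\cotimes n})_{\le 1}$, but the paper's Construction \ref{const_abelian_almost_funct} explicitly notes that $j_*$ is only \emph{left} $t$-exact — the derived internal hom $R\uHom_{K_{\le 1}}((\varpi)_{\perfd}, M)$ sits in cohomological degrees $[0,1]$, and nothing in Section \ref{sect_int_closure} rules out a nonzero $H^1$ for $\varpi$-complete totally integrally closed modules. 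In fact the $H^1$ \emph{does} fail to vanish in general: if it always vanished, then Proposition \ref{BK_formula} would force the unit-ball \v{C}ech complex
\begin{equation*}
0 \rightarrow R_{\le 1} \rightarrow S_{\le 1} \rightarrow (S \cotimes_R S)_{\le 1} \rightarrow \cdots
\end{equation*}
to be \emph{exact} for every rational cover of an affinoid perfectoid, whereas the paper (see the footnote to Theorem \ref{intro_tate_acyclicity_perfd} and Corollary \ref{tate_acyclicity}) only claims \emph{almost} exactness of this complex — precisely because its cohomology can be nonzero, merely $\varpi^{1/p^\infty}$-torsion. So the intermediate statement you need — honest acyclicity of $D^\bullet$ — is false, and your use of the Bousfield–Kan formula is not applicable since its hypothesis (all cosimplicial terms concentrated in degree zero under $j_*^\wedge$) does not hold. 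Your admissibility argument inherits the same problem, as it presupposes the splitting $0 \to Z^n \to D^n \to Z^{n+1} \to 0$, which requires exactness of $D^\bullet$ at the unit-ball level.

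The paper sidesteps this entirely by never leaving the almost category until the very last moment: it takes the almost-acyclic complex delivered by Theorem \ref{arc_pi_descent_for_perfectoids} in $\cD(K_{\le 1})^{\wedge a}_{\varpi}$ and applies the exact functor $(-)[\tfrac{1}{\varpi}]\colon \cD(K_{\le 1})^a \to \cD(K)$, which kills almost-zero modules and hence turns almost acyclicity directly into acyclicity; the identification of the terms is Corollary \ref{tensor_dic_ban_to_int}. Admissibility is then immediate from the open mapping theorem applied to the Banach complex, with no need for the unit-ball intermediary. Your proof can be repaired by adopting this order of operations: invert $\varpi$ first, then argue about strictness of the resulting Banach maps using closedness of kernels and the open mapping theorem, rather than attempting to exhibit an integral strict resolution.
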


\begin{proof} Once exactness is established admissibility is automatic by the open mapping theorem, thus it suffices to show that the complex is exact. From Proposition \ref{arc_pi_cover_for_banach} we learn that the map $R_{\le 1} \rightarrow S_{\le 1}$ is an $\arc_{\varpi}$-cover in $\Perfd_{K_{\le 1}}^{\Prism}$, and so in particular the following complex is almost acyclic (equivalently, almost exact) by Proposition \ref{arc_pi_descent_for_perfectoids} 
\begin{align*}
	0 \rightarrow R_{\le 1} \rightarrow S_{\le 1} \rightarrow S_{\le 1} \cotimes_{R_{\le 1}}^a S_{\le 1} \rightarrow S_{\le 1} \cotimes_{R_{\le 1}}^a S_{\le 1} \cotimes_{R_{\le 1}}^a S_{\le 1} \rightarrow \cdots && \text{in } \cD(K_{\le 1})^{\wedge a}_{\varpi}
\end{align*}
where we are implicitly using Propositions \ref{equiv_perfd_ban_tic} and \ref{equiv_perfd_tic_almost} to conclude that $R_{\le 1} = R_{\le 1}^a$ and $S_{\le 1} = S_{\le 1}^a$, and the fact that the subcategory $\Perfd_{K_{\le 1}}^{\Prism} \subset \CAlg_{K_{\le 1}}^{\wedge}$ is clossed under tensor products (\ref{tensor_integral_perfectoid}). Then, since the functor $(-)[\frac{1}{\varpi}]: \cD(K_{\le 1})^a \rightarrow \cD(K)$ is exact it follows that the following complex is acyclic
\begin{align*}
	0 \rightarrow R \rightarrow S \rightarrow (S_{\le 1} \cotimes_{R_{\le 1}}^a S_{\le 1})[\frac{1}{\varpi}] \rightarrow (S_{\le 1} \cotimes_{R_{\le 1}}^a S_{\le 1} \cotimes_{R_{\le 1}}^a S_{\le 1})[\frac{1}{\varpi}] \rightarrow \cdots && \text{in } \cD(K)
\end{align*}
and by Corollary \ref{tensor_dic_ban_to_int} we learn that we have a canonical identification
\begin{equation*}
	\Big ( S_{\le 1} \cotimes_{R_{\le 1}}^a S_{\le 1} \Big )[\frac{1}{\varpi}] \simeq S \cotimes_R S
\end{equation*}
proving the desired result.
\end{proof}

\begin{corollary}[Tate's Acyclicity]\label{tate_acyclicity} Let $X = \cM(R)$ be an object of $\Perfd_{K}^{\Ban, \op}$, and $\{U_i\}_{i \in I}$ a finite collection of elements of $|X|_{\rat}$ (cf. Definition \ref{defn_rational_site}) such that the induced map $\sqcup U_i \rightarrow |X|$ is surjective. Then, the following complex is exact and admissible
\begin{align*}
	0 \rightarrow \cO_X(X) \rightarrow \prod_{i \in I} \cO_X(U_i) \rightarrow \prod_{i,j \in I} \cO_X(U_i \cap U_j) \rightarrow \prod_{i,j,k \in I} \cO_X(U_i \cap U_j \cap U_k) \rightarrow \cdots
\end{align*}
Furthermore, the complex
\begin{align*}
	0 \rightarrow \cO_X(X)_{\le 1} \rightarrow \prod_{i \in I} \cO_X(U_i)_{\le 1} \rightarrow \prod_{i,j \in I} \cO_X(U_i \cap U_j)_{\le 1} \rightarrow \prod_{i,j,k \in I} \cO_X(U_i \cap U_j \cap U_k)_{\le 1} \rightarrow \cdots
\end{align*}
is almost acyclic. The functors $\cO_{X}(-)$ and $\cO_X(-)_{\le 1}$ are defined in Theorem \ref{struct_presheaf_perfectoid}.
\end{corollary}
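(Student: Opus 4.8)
The plan is to deduce this directly from $\arc_\varpi$-descent for a single cover (Corollary \ref{arc_pi_descent_for_bananach_perfectoids}, together with Theorem \ref{arc_pi_descent_for_perfectoids} for the integral statement) and the properties of the structure presheaf recorded in Theorem \ref{struct_presheaf_perfectoid}. The only real content beyond citing these results is matching up the \v{C}ech complex of a well-chosen cover with the Tate complex over rational domains.

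First I would set $S := \prod_{i \in I} \cO_X(U_i)$, the (finite, hence bounded) product of the perfectoid Banach $K$-algebras $\cO_X(U_i)$. Since $I$ is finite it is immediate that $S \in \Perfd_K^{\Ban}$: its model $S_{\le 1} = \prod_i \cO_X(U_i)_{\le 1}$ is uniform, lies in $\Perfd_{K_{\le 1}}^{\Prism}$ by Proposition \ref{product_perfectoid}, and has surjective Frobenius on $S_{\le 1}/\varpi^p$ because Frobenius on a product is the product of the Frobenii. By Proposition \ref{berko_sp_disj_union} we have $|\cM(S)| \simeq \bigsqcup_i |\cM(\cO_X(U_i))|$, and by Theorem \ref{struct_presheaf_perfectoid}(4) the image of $|\cM(\cO_X(U_i))| \to |\cM(R)|$ is exactly $U_i$; hence the image of $|\cM(S)| \to |\cM(R)| = |X|$ is $\bigcup_i U_i = |X|$, i.e. the map is surjective. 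Corollary \ref{arc_pi_descent_for_bananach_perfectoids} then gives that the \v{C}ech complex
\[
 0 \rightarrow R \rightarrow S \rightarrow S \cotimes_R S \rightarrow S \cotimes_R S \cotimes_R S \rightarrow \cdots
\]
is exact and admissible, while applying Proposition \ref{arc_pi_cover_for_banach} (to see that $R_{\le 1} \to S_{\le 1}$ is an $\arc_\varpi$-cover) and Theorem \ref{arc_pi_descent_for_perfectoids} gives that the corresponding complex of $\cO_X(-)_{\le 1}$'s is almost acyclic in $\Mod_{K_{\le 1},\varpi}^{\wedge a}$.

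The remaining task is to identify these complexes with the Tate complexes. The completed tensor product over $R$ commutes with finite products: on $(-)_{\le 1}$-models this reduces to the classical fact that $(-)\otimes_{R_{\le 1}}(-)$ commutes with finite direct sums, together with the observation that the operations $(-)^{\tf}$, $\varpi$-completion and $(-)^a$ entering the description of $\cotimes_R$ in Corollary \ref{pushout_in_comp-tf-a} each commute with finite products. Consequently the $n$-th \v{C}ech term $S \cotimes_R \cdots \cotimes_R S$ ($n+1$ factors) decomposes as $\prod_{(i_0,\dots,i_n)} \cO_X(U_{i_0}) \cotimes_R \cdots \cotimes_R \cO_X(U_{i_n})$. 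Now Theorem \ref{struct_presheaf_perfectoid}(6), applied with ambient domain $Z = X$ (so that $U \cup W \subset Z$ holds trivially) and using that finite intersections of rational domains are again rational domains (Definition \ref{defn_rational_site}), gives by induction on $n$ an isometric identification $\cO_X(U_{i_0}) \cotimes_R \cdots \cotimes_R \cO_X(U_{i_n}) \simeq \cO_X(U_{i_0} \cap \cdots \cap U_{i_n})$. These identifications are natural in the $U_{i_k}$, so they intertwine the cofaces of the conerve of $R \to S$ (omit a tensor factor) with the alternating restriction maps of the Tate complex (restrict along $U_{i_0}\cap\cdots\cap U_{i_n} \hookrightarrow U_{i_0}\cap\cdots\cap \widehat{U_{i_k}}\cap\cdots\cap U_{i_n}$); hence the two cosimplicial objects, and therefore the associated cochain complexes, are isomorphic, and exactness, admissibility, and almost-acyclicity of the $(-)_{\le 1}$-version transfer. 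The same decomposition argument works verbatim for the integral complex inside $\Perfd_{K_{\le 1}}^{\Prism a}$ using Corollary \ref{tensor_perfd_tic} in place of Theorem \ref{struct_presheaf_perfectoid}(6).

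I expect the main technical point to be the commutation of $\cotimes_R$ with finite products together with the inductive identification of iterated tensor products with intersections --- not because either step is deep, but because one must keep careful track of the cosimplicial structure to be certain the termwise isomorphisms assemble into an isomorphism of complexes (rather than merely a termwise agreement), and because the $(-)_{\le 1}$ statement must be run inside the almost category $\Mod_{K_{\le 1},\varpi}^{\wedge a}$ rather than in $\Ban_K$. Everything else is a direct application of results already established in the excerpt.
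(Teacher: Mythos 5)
Your proposal is correct and follows essentially the same route as the paper: the paper's (very terse) proof likewise applies Corollary \ref{arc_pi_descent_for_bananach_perfectoids} (resp. Theorem \ref{arc_pi_descent_for_perfectoids} together with Corollary \ref{tensor_dic_ban_to_int}) to the $\arc_\varpi$-cover $R \rightarrow \prod_i \cO_X(U_i)$ and uses Theorem \ref{struct_presheaf_perfectoid} to identify the \v{C}ech terms with products of $\cO_X$ of intersections. Your write-up simply makes explicit the steps the paper leaves implicit (perfectoidness and surjectivity for the product cover, compatibility of $\cotimes_R$ with finite products, and the inductive identification of iterated tensor products with intersections), all of which are handled correctly.
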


\begin{proof} The exactness and admissibility of the first complex (involving the functor $\cO_{X}(-)$) is a direct consequence of Corollary \ref{arc_pi_descent_for_bananach_perfectoids} and Theorem \ref{struct_presheaf_perfectoid}. On the other hand, the almost acyclicity of the second complex (involving the functor $\cO_{X}(-)_{\le 1}$) is a direct consequence of Proposition \ref{arc_pi_descent_for_perfectoids} and the fact that given a morphism of perfectoid Banach $K$-algebras $R \rightarrow S$ we have a canonical isomorphism $S_{\le 1} \cotimes^a_{R_{\le 1}} S_{\le 1} \simeq (S \cotimes_R S)_{\le 1}$ by Corollary \ref{tensor_dic_ban_to_int}.
\end{proof}

\begin{prop}\label{perfectoidization_pi_complete_arc_descent} The functor $(-)_{\perfd}:  \CAlg_{K_{\le 1}, \varpi}^{\wedge} \rightarrow \cD(K_{\le 1})^{\wedge}_{\varpi}$ satisfies the following properties:
\begin{enumerate}[(1)]
	\item The functor $(-)_{\perfd}$ is a hypercomplete $\varpi$-complete $\arc$-sheaf.
	\item For any $S \in \CAlg_{K_{\le 1}, \varpi}^{\wedge}$, there is a canonical identification $S_{\perfd} = R\Gamma_{\varpi-\arc}(S, \cO)$.
	\item For any pair of morphisms $S_1 \leftarrow S_3 \rightarrow S_2$ in $\CAlg_{K_{\le 1}, \varpi}^{\wedge}$, the canonically induced map
	\begin{align*}
		(S_{1, \perfd}) \cotimes_{S_{3, \perfd}}^L (S_{2, \perfd}) \rightarrow (S_1 \cotimes_{S_3}^L S_2)_{\perfd}
	\end{align*}
	is an equivalence. In order to make sense of $(S_1 \cotimes_{S_3}^L S_2)_{\perfd}$ recall Proposition \ref{properties_perfd}(5).
	\item Let $S$ be an object of $\CAlg_{K_{\le 1}, \varpi}^{\wedge}$, and assume that $S_{\perfd}$ is connective (equivalently, by Proposition \ref{properties_perfd}(2), concentrated in degree $0$). Then $S_{\perfd}$ is an object of $\Perfd_{K_{\le 1}}^{\Prism}$ and $S \rightarrow S_{\perfd}$ is the universal map from $S$ to a perfectoid ring.
\end{enumerate}
\end{prop}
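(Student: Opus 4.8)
The plan is to prove parts (1) and (2) together by recognizing $(-)_{\perfd}$ as the right Kan extension, along the inclusion $(\Perfd_{K_{\le 1}}^{\Prism})^{\op}\hookrightarrow(\CAlg_{K_{\le 1},\varpi}^{\wedge})^{\op}$, of the structure presheaf $R\mapsto R$ on perfectoids. By Remark \ref{canonical_pi_complete_arc_covers}, $\Perfd_{K_{\le 1}}^{\Prism}$ is a basis for the $\varpi$-complete $\arc$-topology on $\CAlg_{K_{\le 1},\varpi}^{\wedge}$, and Proposition \ref{pi_complete_arc_descent_for_perfectoids} says precisely that $R\mapsto R$ is a hypercomplete $\varpi$-complete $\arc$-sheaf on this basis, valued in $\cD(K_{\le 1})^{\wedge}_{\varpi}$. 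By the basis-extension formalism of Definition \ref{defn_hypercomplete_sheaf} (cf.\ \cite[Proposition A.3.11]{mann_six_funct_rig}), its right Kan extension $\cF$ to all of $\CAlg_{K_{\le 1},\varpi}^{\wedge}$ is again a hypercomplete $\varpi$-complete $\arc$-sheaf; moreover, since the structure presheaf already satisfies descent on the basis, the $\varpi$-complete $\arc$-sheafification $R\Gamma_{\varpi\text{-}\arc}(-,\cO)$ coincides with $\cF$. So both (1) and (2) reduce to the single identification $\cF(S)\simeq S_{\perfd}$.

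\textbf{Identification.} By construction $\cF(S)=\lim_{(S\to R)\in(\Perfd_{K_{\le 1}}^{\Prism})_{S/}}R$, and I would match this with the limit description $S_{\perfd}\simeq R\Gamma(S_{\Prism}^{\perf},\overline{\cO}_{\Prism})\simeq\lim_{S\to T}T$ of Proposition \ref{properties_perfd}(3), the limit taken over all maps from $S$ to a perfectoid ring $T$. The equivalence $\Perfd_{K_{\le 1}}^{\Prism}\simeq\{\text{perfect prisms over }(\A_{\inf}(K_{\le 1}),d)\}$ of Proposition \ref{perfect_prism_perfectoid} identifies $(\Perfd_{K_{\le 1}}^{\Prism})_{S/}$ with the perfect prismatic site of $S$, once one observes that, $S$ being $\varpi$-complete, the $\varpi$-complete perfectoid rings are coinitial among all perfectoid rings receiving a map from $S$ (apply $\varpi$-completion, which preserves being integral perfectoid by Proposition \ref{varpi_completion_perfectoid}). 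As a sanity check this gives $R_{\perfd}=R$ for $R$ perfectoid (the identity $R\to R$ is initial, equivalently use Proposition \ref{properties_perfd}(4) with the surjection $R\twoheadrightarrow R$), so $\cF$ restricts to the structure presheaf on the basis, which is what makes the sheafification statement in (2) meaningful.

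\textbf{Base change.} For (3) I would pass through derived prismatic cohomology. Writing $A=\A_{\inf}(K_{\le 1})$, we have $S_{\perfd}=\Prism_{S,\perf}\otimes_A^L A/d$, and derived prismatic cohomology, being left Kan extended from polynomial $K_{\le 1}$-algebras — where it satisfies base change over the base prism — converts $(p,d)$-completed derived tensor products over $K_{\le 1}$ into $(p,d)$-completed derived tensor products over $A$: $\Prism_{S_1\cotimes^L_{S_3}S_2}\simeq\Prism_{S_1}\cotimes^L_{\Prism_{S_3}}\Prism_{S_2}$. Perfection is the $(p,d)$-completed colimit along Frobenius and colimits commute with tensor products, so the same base change holds for $\Prism_{-,\perf}$; tensoring down by $A/d$, and using that $(-)_{\perfd}$ only depends on $\pi_0$ (Proposition \ref{properties_perfd}(5)), yields (3).

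\textbf{Discrete case.} For (4), suppose $S_{\perfd}$ is connective, hence (by Proposition \ref{properties_perfd}(2)) concentrated in degree $0$. From the cofiber sequence $\Prism_{S,\perf}\xrightarrow{d}\Prism_{S,\perf}\to\Prism_{S,\perf}/^L d=S_{\perfd}$, together with $\Prism_{S,\perf}\in\cD^{\ge0}$ and derived $(p,d)$-completeness, a standard derived Nakayama argument on the long exact sequence in cohomology shows $\Prism_{S,\perf}$ is concentrated in degree $0$ and $d$-torsion free. Thus $\Prism_{S,\perf}$ is a discrete $(p,d)$-complete ring on which the Frobenius lift is an isomorphism; by \cite[\S 8]{prisms} (equivalently, once $p$-torsion freeness is in hand, by Proposition \ref{perfect_delta_rings_equiv}) this makes $\Prism_{S,\perf}$ a perfect $\delta$-ring and $(\Prism_{S,\perf},d)$ a perfect prism, so $S_{\perfd}=\Prism_{S,\perf}/d$ is integral perfectoid by Proposition \ref{perfect_prism_perfectoid}; it is $\varpi$-complete since it is derived $p$-complete and perfectoid (Propositions \ref{varpi_completion_perfectoid} and \ref{delta_ring_p_torsion}), hence lies in $\Perfd_{K_{\le 1}}^{\Prism}$. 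For universality, given a $\varpi$-complete integral perfectoid $R'$ with a map $S\to R'$, functoriality of $(-)_{\perfd}$ furnishes an $S$-algebra map $S_{\perfd}\to(R')_{\perfd}=R'$ (the equality because $(-)_{\perfd}$ is the identity on $\Perfd_{K_{\le 1}}^{\Prism}$, by the sanity check above), and any $S$-algebra map $S_{\perfd}\to R'$ equals its own perfectoidization, hence equals this canonical one. The main obstacle I anticipate is the compatibility bookkeeping behind $\cF(S)\simeq S_{\perfd}$ — in particular the coinitiality of the $\varpi$-complete perfect prismatic site inside the full one, and the comparison of the Kan-extension limit with the cohomology-of-the-perfect-prismatic-site of \cite[\S 8]{prisms} — together with, in (4), the step from ``$\Prism_{S,\perf}$ discrete'' to ``$(\Prism_{S,\perf},d)$ a perfect prism,'' where one must rule out $p$-torsion.
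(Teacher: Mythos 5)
Your treatment of (1) and (2) follows the paper's proof exactly: restrict to the basis $\Perfd_{K_{\le 1}}^{\Prism}$ (Remark \ref{canonical_pi_complete_arc_covers}), invoke Proposition \ref{pi_complete_arc_descent_for_perfectoids} there, realize both $(-)_{\perfd}$ and $R\Gamma_{\varpi-\arc}(-,\cO)$ as right Kan extensions of this common restriction, and conclude via the basis-extension equivalence for hypercomplete sheaves. For (3) and (4) the paper simply cites \cite[Proposition 8.13 and Corollary 8.14]{prisms}; your sketches reproduce those arguments (base change for $\Prism$ via left Kan extension from polynomial algebras, and the derived Nakayama argument showing $\Prism_{S,\perf}$ is discrete and $d$-torsion free), and you correctly flag the delicate passage from ``discrete with Frobenius automorphism'' to ``perfect prism'' before deferring to \cite{prisms}, so this part is no less complete than the paper's citation-level treatment.

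The one step I would push back on is your ``coinitiality'' parenthetical. The limit in Proposition \ref{properties_perfd}(3) runs over \emph{all} perfectoid rings under $S$, whereas the right Kan extension along $\Perfd_{K_{\le 1}}^{\Prism}\subset\CAlg_{K_{\le 1},\varpi}^{\wedge}$ is a limit over the $\varpi$-complete ones only, so a comparison is indeed owed; but $\varpi$-completion supplies a map $T\to T^{\wedge}_{\varpi}$ \emph{out of} an arbitrary perfectoid $T$ into the subcategory, which is the cofinality condition relevant to restricting colimits, not the initiality condition needed to restrict a limit (weak contractibility, for each perfectoid $T$ under $S$, of the category of $\varpi$-complete perfectoid $S$-algebras mapping \emph{to} $T$). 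As written your observation does not establish the limit comparison. It can be repaired: the fiber of $T\to T^{\wedge}_{\varpi}$ is $R\Hom_{K_{\le 1}}(K_{\le 1}[1/\varpi],T)$, hence $\varpi$-local, so once one knows the limit in question is derived $\varpi$-complete (which is in any case needed for the functor to land in $\cD(K_{\le 1})^{\wedge}_{\varpi}$) the two limits agree; but some such argument must be supplied. The same $p$-complete versus $\varpi$-complete bookkeeping recurs in your (4): being derived $p$-complete and perfectoid does not by itself give the $\varpi$-completeness required by Definition \ref{defn_various_perfectoid}, which should instead be extracted from the limit description furnished by (1)--(2). To be fair, the paper's own proof elides exactly this point by quoting Proposition \ref{properties_perfd}(3) at face value, so your write-up is not less rigorous than the original; the patch you offered is just not yet the right one.
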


\begin{proof} Recall that if $R$ is an object of $\Perfd_{K_{\le 1}}^{\Prism}$ then the canonical map $R \rightarrow R_{\perfd}$ is an isomorphism (cf. Proposition \ref{properties_perfd}(3)). By virtue of Proposition \ref{pi_complete_arc_descent_for_perfectoids} we learn that when restricting the functor $(-)_{\perfd}$ to the subcategory $\Perfd_{K_{\le 1}}^{\Prism} \subset \CAlg_{K_{\le 1}, \varpi}^{\wedge}$ we obtain a hypercomplete $\varpi$-complete $\arc$-sheaf
\begin{align*}
	(-)_{\perfd}: \Perfd_{K_{\le 1}}^{\Prism} \rightarrow \cD(K_{\le 1})^{\wedge}_{\varpi} && R \mapsto R
\end{align*}
and Proposition \ref{properties_perfd}(3) shows that $(-)_{\perfd}$ can be realized as the right Kan extension of $(-)_{\perfd}$ along the inclusion $\Perfd_{K_{\le 1}}^{\Prism} \subset \cD(K_{\le 1})^{\wedge}_{\varpi}$. Furthermore, recall that since $\Perfd_{K_{\le 1}}^{\Prism}$ forms a basis in $\varpi$-complete $\arc$-topology (Remark \ref{canonical_pi_complete_arc_covers}), the canonical map
\begin{align*}
	\Shv_{\varpi-\arc}(\CAlg_{K_{\le 1}, \varpi}^{\wedge, \op}, \cD(K_{\le 1}))^{\wedge} \rightarrow \Shv_{\varpi-\arc}(\Perfd_{K_{\le 1}}^{\Prism, \op}, \cD(K_{\le 1}))^{\wedge} && F \mapsto F|_{\Perfd_{K_{\le 1}}^{\Prism}}
\end{align*}
determines an equivalence of categories of hypercomplete sheaves (cf. \ref{defn_hypercomplete_sheaf}) for the $\varpi$-complete $\arc$ topology; with the inverse given by right Kan extension along the inclusion $\Perfd_{K_{\le 1}}^{\Prism} \subset \cD(K_{\le 1})^{\wedge}_{\varpi}$. This proves that $(-)_{\perfd}$ must be a hypercomplete $\varpi$-complete $\arc$-sheaf, finishing the proof of (1). 

The argument for (2) is similar, by Proposition \ref{pi_complete_arc_descent_for_perfectoids} we learn that when restrict the functor $R\Gamma_{\varpi-\arc}(-, \cO)$ to the subcategory $\Perfd_{K_{\le 1}}^{\Prism} \subset \CAlg_{K_{\le 1}, \varpi}^{\wedge}$ we obtain a hypercomplete $\varpi$-complete $\arc$-sheaf
\begin{align*}
	R\Gamma_{\varpi-\arc}(-, \cO): \Perfd_{K_{\le 1}}^{\Prism} \rightarrow \cD(K_{\le 1})^{\wedge}_{\varpi} && R \mapsto R
\end{align*}
Then, since $\Perfd_{K_{\le 1}}^{\Prism}$ forms a basis in $\varpi$-complete $\arc$-topology (Remark \ref{canonical_pi_complete_arc_covers}), it follows from the definitions that $R\Gamma_{\varpi-\arc}(-, \cO): \CAlg_{K_{\le 1}, \varpi}^{\wedge} \rightarrow \cD(K_{\le 1})^{\wedge}_{\varpi}$ is the right Kan extension of $R\Gamma_{\varpi-\arc}(-, \cO)$ along the inclusion $\Perfd_{K_{\le 1}}^{\Prism} \subset \CAlg_{K_{\le 1}, \varpi}^{\wedge}$. The rest of the argument follows as in (1).

Part (3) follows from the description of $(-)_{\perfd}$ in terms of derived prismatic cohomology \cite[Proposition 8.13]{prisms}, and (4) is \cite[Corollary 8.14.]{prisms}.
\end{proof}

\begin{prop}\label{perfectoidization_arc_pi_descent} Define the functor $(-)_{\perfd}^a: \CAlg_{K_{\le 1}, \varpi}^{\wedge} \rightarrow \cD(K_{\le 1})^{\wedge a}_{\varpi}$ as the following composition
\begin{align*}
	(-)_{\perfd}^a: \CAlg_{K_{\le 1}, \varpi}^{\wedge} \overset{(-)_{\perfd}}{\longrightarrow} \cD(K_{\le 1})^{\wedge}_{\varpi} \overset{(-)^{a}}{\longrightarrow} \cD(K_{\le 1})^{\wedge a}_{\varpi}
\end{align*}
The functor $(-)_{\perfd}^a$ satisfies the following properties.
\begin{enumerate}[(1)]
	\item For any $S \in \CAlg_{K_{\le 1}}^{\wedge}$, there is a canonical identitifaction $S_{\perfd}^a = \lim_{S \rightarrow P} P$, where the limit ranges over all maps $S \rightarrow P$ where $P \in \Perfd_{K_{\le 1}}^{\Prism a}$, and is computed in $\cD(K_{\le 1})^{\wedge a}_{\varpi}$.
	\item The functor $(-)_{\perfd}^a$ is a hypercomplete $\arc_\varpi$-sheaf.
	\item For any $S \in \CAlg_{K_{\le 1}}^{\wedge}$, there is a canonical identification $S_{\perfd}^a = R\Gamma_{\arc_\varpi}(S, \cO^a)$.
	\item For any pair of morphisms $S_1 \leftarrow S_3 \rightarrow S_2$ in $\CAlg_{K_{\le 1}, \varpi}^{\wedge}$, the canonically induced map
	\begin{align*}
		(S_{1, \perfd}^a) \cotimes_{S_{3, \perfd}^a}^{L, a} (S_{2, \perfd}^a) \rightarrow (S_1 \cotimes_{S_3}^L S_2)_{\perfd}^a
	\end{align*}
	is an equivalence. In order to make sense of $(S_1 \cotimes_{S_3}^L S_2)_{\perfd}$ recall Proposition \ref{properties_perfd}(5).
\end{enumerate}
\end{prop}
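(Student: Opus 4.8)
The plan is to deduce all four statements formally, by bootstrapping from the almost $\arc_\varpi$-descent for integral perfectoid rings (Theorem \ref{arc_pi_descent_for_perfectoids}), the description of perfectoidization as a limit over maps to perfectoid rings (Proposition \ref{properties_perfd}(3)), and the $\varpi$-complete $\arc$-descent for the perfectoidization functor together with its Künneth property (Proposition \ref{perfectoidization_pi_complete_arc_descent}). No new geometric input is needed; the point is to organize the reductions correctly, and the only genuinely delicate step is the cofinality argument in part (1).

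I would start with (4), as it is the most self-contained. By Proposition \ref{perfectoidization_pi_complete_arc_descent}(3), the canonical map $(S_{1,\perfd}) \cotimes^L_{S_{3,\perfd}} (S_{2,\perfd}) \to (S_1 \cotimes^L_{S_3} S_2)_{\perfd}$ is an equivalence in $\cD(K_{\le 1})^{\wedge}_\varpi$. Applying the symmetric monoidal functor $(-)^a = (-)^{\wedge a}\colon \cD(K_{\le 1})^{\wedge}_\varpi \to \cD(K_{\le 1})^{\wedge a}_\varpi$ of Construction \ref{const_almost_monoidal_str}, which by definition carries $\cotimes^L$ to $\cotimes^{L,a}$, yields the claimed equivalence $(S_{1,\perfd}^a) \cotimes^{L,a}_{S_{3,\perfd}^a} (S_{2,\perfd}^a) \to (S_1 \cotimes^L_{S_3} S_2)_{\perfd}^a$, with the target interpreted via Proposition \ref{properties_perfd}(5) as before.

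Next I would treat (1). Since $(-)^a$ preserves limits, applying it to the identity $S_{\perfd} = \lim_{S \to S'} S'$ of Proposition \ref{properties_perfd}(3) — the limit being over the comma category of maps from $S$ to integral perfectoid rings $S'$ — gives $S_{\perfd}^a = \lim_{S \to S'} (S')^a$. A cofinality argument then identifies this with the limit over maps $S^a \to P$ with $P \in \Perfd_{K_{\le 1}}^{\Prism a}$: by the proof of Proposition \ref{equiv_perfd_tic_almost}, every integral perfectoid $S'$ receives a map from $S$ that factors through $(\overline{S'})^{\tic} \in \Perfd_{K_{\le 1}}^{\Prism \tic}$, with $((\overline{S'})^{\tic})^a \simeq (S')^a$, so maps factoring through an object of $\Perfd_{K_{\le 1}}^{\Prism \tic}$ are cofinal; under the equivalence $\Perfd_{K_{\le 1}}^{\Prism \tic} \simeq \Perfd_{K_{\le 1}}^{\Prism a}$ and the adjunction $\Hom(S, H^0 j_* P) = \Hom(S^a, P)$ these correspond precisely to the maps $S^a \to P$. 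For (2) and (3), the key observation is that the restriction of $(-)_{\perfd}^a$ to $\Perfd_{K_{\le 1}}^{\Prism} \subset \CAlg_{K_{\le 1},\varpi}^{\wedge}$ is the functor $R \mapsto R^a$ (as $R_{\perfd} = R$ for integral perfectoid $R$, by Proposition \ref{properties_perfd}(3)), and that $(-)_{\perfd}^a$ is the right Kan extension of this restriction along $\Perfd_{K_{\le 1}}^{\Prism} \hookrightarrow \CAlg_{K_{\le 1},\varpi}^{\wedge}$ — this is Proposition \ref{properties_perfd}(3) applied termwise followed by limit-preservation of $(-)^a$, exactly mirroring the proof of Proposition \ref{perfectoidization_pi_complete_arc_descent}(1). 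Since $\Perfd_{K_{\le 1}}^{\Prism}$ is a basis for the $\arc_\varpi$-topology on $\CAlg_{K_{\le 1},\varpi}^{\wedge}$ (Remark \ref{canonical_arc_pi_covers}, using Proposition \ref{product_perfectoid} to see that the products of $\varpi$-complete absolutely integrally closed rank-one valuation rings there are integral perfectoid) and $R \mapsto R^a$ is a hypercomplete $\arc_\varpi$-sheaf on $\Perfd_{K_{\le 1}}^{\Prism}$ by Theorem \ref{arc_pi_descent_for_perfectoids}, the equivalence between hypercomplete sheaves on the site and on a basis, with inverse given by right Kan extension (cf. the discussion in \ref{defn_hypercomplete_sheaf}), shows that $(-)_{\perfd}^a$ is a hypercomplete $\arc_\varpi$-sheaf, giving (2). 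For (3), the identical reasoning applied to $R\Gamma_{\arc_\varpi}(-,\cO^a)$ — whose restriction to $\Perfd_{K_{\le 1}}^{\Prism}$ is also $R \mapsto R^a$ by Theorem \ref{arc_pi_descent_for_perfectoids} — exhibits it as the same right Kan extension, hence $S_{\perfd}^a \simeq R\Gamma_{\arc_\varpi}(S,\cO^a)$.

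The main obstacle is the cofinality bookkeeping in (1): one must verify carefully that the limit over the comma category $(S \downarrow \Perfd_{K_{\le 1}}^{\Prism})$ matches the limit indexed by maps $S^a \to P$ into almost integral perfectoids, which involves the interplay of $(-)^a$, $H^0 j_*$, $(-)^{\tic}$ and the equivalences of Proposition \ref{equiv_perfd_tic_almost}; in particular one should check that the comparison functor between index categories is initial, not merely essentially surjective. The remaining arguments are formal and follow the proofs of Proposition \ref{perfectoidization_pi_complete_arc_descent} and Theorem \ref{arc_pi_descent_for_perfectoids} almost verbatim.
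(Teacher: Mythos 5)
Your proposal is correct, and while parts (1), (3), (4) run essentially parallel to the paper's arguments, your treatment of (2) takes a genuinely different route. The paper proves $\arc_\varpi$-descent for $(-)_{\perfd}^a$ directly: given an $\arc_\varpi$-cover $S \to R$, it observes that $S \to R \times (S/\varpi)_{\perf}$ is a $\varpi$-complete $\arc$-cover, invokes Proposition \ref{perfectoidization_pi_complete_arc_descent}(1) for that cover, and then uses that $(-)^a$ is symmetric monoidal together with $((S/\varpi)_{\perf})^a \simeq 0$ to strip away the extra factor from the \v{C}ech nerve; hypercompleteness is then deduced from coconnectivity via Proposition \ref{sheaf_implies_hypersheaf}. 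You instead deduce (2) from (1): once one knows that $(-)_{\perfd}^a$ is the right Kan extension of its restriction $R \mapsto R^a$ along $\Perfd_{K_{\le 1}}^{\Prism} \subset \CAlg_{K_{\le 1},\varpi}^{\wedge}$ (which is exactly the content of the limit formula in (1)), and that this restriction is already a hypercomplete $\arc_\varpi$-sheaf by Theorem \ref{arc_pi_descent_for_perfectoids}, the equivalence between hypercomplete sheaves on a site and on a basis does the rest in one stroke. This is a legitimate shortcut: the paper's explicit \v{C}ech computation is what one would do if the limit formula (1) were unavailable or if one wanted to avoid the sheaf-on-a-basis machinery; your version makes (1) the load-bearing step and absorbs (2) and (3) into the same right-Kan-extension reasoning, at the cost of needing the cofinality argument in (1) to be airtight. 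You correctly identify that cofinality check as the delicate point; for completeness, it holds because $(-)^{\tf}$ followed by $(-)^{\tic}$ is a composite of left adjoints and so gives a left adjoint to $\Perfd_{K_{\le 1}}^{\Prism\tic} \hookrightarrow \Perfd_{K_{\le 1}}^{\Prism}$, making the induced functor on comma categories under $S$ initial; combined with $\Perfd_{K_{\le 1}}^{\Prism\tic} \simeq \Perfd_{K_{\le 1}}^{\Prism a}$ this gives exactly the re-indexing the statement requires. One small slip in phrasing: you write that the map $S \to S'$ ``factors through'' $(\overline{S'})^{\tic}$, when what you mean is that it \emph{composes} with $S' \to (\overline{S'})^{\tic}$; the categorical content (a reflector, hence initiality) is the same, so this does not affect the correctness of the argument.
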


\begin{proof} For the proof of (1), recall from Proposition \ref{properties_perfd}(3) that there is a canonical identification $S_{\perfd} = \lim_{S \rightarrow R} R$ where the limit ranges over all maps $S \rightarrow R$ where $R$ is a perfectoid ring. Then, the fact that $(-)^a$ is both a left and right adjoint shows that $S_{\perfd}^a = \lim_{S \rightarrow R} R^a$, which proves the result. For $(2)$ notice that by $(1)$ we already know that $(-)_{\perfd}^a$ preserves finite products, thus to show that it is an $\arc_\varpi$-sheaf it suffice to show that for any $\arc_\varpi$-cover $S \rightarrow R$ the canonical map
\begin{align*}
	S_{\perfd}^a \rightarrow \Tot (R^{\bullet/S})^{a}_{\perfd}
\end{align*}
is an isomorphism. To show this, recall that if $S \rightarrow R$ is an $\arc_\varpi$-cover then $S \rightarrow R \times (S/\varpi)_{\perf}$ is a $\varpi$-complete $\arc$-cover, where $(S/\varpi)_{\perf}$ is the classical direct limit perfection. Thus, from Proposition \ref{perfectoidization_pi_complete_arc_descent}(1) it follows that the map canonical map 
\begin{align*}
	S_{\perfd}^a \rightarrow \Tot \Big((R \times (S/\varpi)_{\perf})^{\bullet/S} \Big)^{a}_{\perfd}
\end{align*}
is an isomorphism. Then, by the compatibility of $(-)_{\perfd}$ and $(-)^a$ with tensor products and finite products, and the fact that $(S/\varpi)_{\perf} \simeq ((S/\varpi)_{\perf})_{\perfd}$ we learn that
\begin{align*}
	\Big((R \times (S/\varpi)_{\perf})^{\bullet/S} \Big)^{a}_{\perfd} \simeq (R^{\bullet/S})^{a}_{\perfd} && \text{as} \qquad (S/\varpi)_{\perf}^a \simeq 0
\end{align*}
Finally, hypercompleteness follows from the fact that $(S)_{\perfd}^a \in \cD(K_{\le 1})^{\ge 0}$ by part (1) and Proposition \ref{sheaf_implies_hypersheaf}, completing the proof of (2).

In order to prove (3), recall from Theorem \ref{arc_pi_descent_for_perfectoids} that when we restrict the functor $R\Gamma_{\arc_\varpi}(-, \cO^a)$ to the subcategory $\Perfd_{K_{\le 1}}^{\Prism a} \subset \CAlg_{K_{\le 1}, \varpi}^{\wedge}$ we obtain a hypercomplete $\arc_\varpi$-sheaf
\begin{align*}
	R\Gamma_{\arc_\varpi}(-, \cO^a): \Perfd_{K_{\le 1}}^{\Prism a} \rightarrow \cD(K_{\le 1})^{\wedge a}_{\varpi} && R \mapsto R = R^a
\end{align*}
We claim that the category $\Perfd_{K_{\le 1}}^{\Prism a} \subset \CAlg_{K_{\le 1}, \varpi}^{\wedge}$ is a basis for the $\arc_\varpi$-topology. Indeed, we already showed that $\Perfd_{K_{\le 1}}^{\Prism}$ is a basis for the $\arc_\varpi$-topology (Remark \ref{canonical_arc_pi_covers}), thus it remains to show that for any map $P \rightarrow V$ in $\Perfd_{K_{\le 1}}^{\Prism}$ where $V$ is a rank one valuation ring with faithful flat structure map $K_{\le 1} \rightarrow V$, there exists a unique factorization as $P \rightarrow P^a \rightarrow V$, but this follows from the fact that $V \in \Perfd_{K_{\le 1}}^{\Prism a}$. Therefore, it follows from the definitions that $R\Gamma_{\arc_\varpi}(-, \cO^a): \CAlg_{K_{\le 1}, \varpi}^{\wedge} \rightarrow \cD(K_{\le 1})^{\wedge a}_{\varpi}$ is the right Kan extension of $R\Gamma_{\arc_\varpi}(-, \cO^a)$ along the inclusion $\Perfd_{K_{\le 1}}^{\Prism a} \subset  \CAlg_{K_{\le 1}, \varpi}^{\wedge}$, but this is the same description given of $(-)_{\perfd}^a$ in part (1). This completes the proof of (3).

Statement (4) follows from the analogous statement in Proposition \ref{perfectoidization_pi_complete_arc_descent} and the fact that the functor $(-)^a: \cD(K_{\le 1})^{\wedge}_{\varpi} \rightarrow \cD(K_{\le 1})^{\wedge a}_{\varpi}$ is symmetric monoidal.
\end{proof}

\begin{prop}\label{perfectoidization_banach} Define the functor $(-)_{\le 1, \perfd}^a: \Ban_K^{\contr} \rightarrow \cD(K_{\le 1})^{\wedge a}$ as the following composition
\begin{align*}
	(-)_{\le 1, \perfd}^a: \Ban_{K}^{\contr} \overset{(-)_{\le 1}}{\longrightarrow} \CAlg_{K_{\le 1}, \varpi}^{\wedge} \overset{(-)_{\perfd}^a}{\longrightarrow} \cD(K_{\le 1})^{\wedge a}_{\varpi}
\end{align*}
The functor $(-)_{\le 1, \perfd}^a$ satisfies the following properties
\begin{enumerate}[(1)]
	\item For any $A \in \Ban_K^{\contr}$, there exists a canonical identification $A_{\le 1, \perfd}^a = \lim_{A \rightarrow P} P_{\le 1}$ where the limit ranges over all maps $A \rightarrow P$ where $P \in \Perfd_K^{\Ban}$, and is computed in $\cD(K_{\le 1})^{\wedge a}_{\varpi}$.
	\item The functor $(-)_{\le 1, \perfd}^a$ is a hypercomplete $\arc_\varpi$-sheaf.
	\item For any $A \in \Ban_K^{\contr}$, there is a canonical identification $A_{\le 1, \perfd}^a = R\Gamma_{\arc_\varpi}(A, \cO_{\le 1}^a)$.
	\item For any pair of morphisms $A_1 \leftarrow A_3 \rightarrow A_2$ in $\Ban_K^{\contr}$, the canonically induced map
	\begin{align*}
		(A_{1, \le 1, \perfd}^a) \cotimes_{A_{3, \le 1, \perfd}^a}^{L, a} (A_{2, \le 1, \perfd}^a) \rightarrow (A_1 \cotimes_{A_3} A_2)_{\le 1, \perfd}^a
	\end{align*}
	is an equivalence.
\end{enumerate}
\end{prop}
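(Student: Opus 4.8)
The plan is to deduce everything from the corresponding statements for $(-)^{a}_{\perfd}$ on $\CAlg_{K_{\le 1},\varpi}^{\wedge}$ (Proposition \ref{perfectoidization_arc_pi_descent}) by transporting along the dictionary (Proposition \ref{equiv_almost_tf_banach}). The basic observation that makes this work is that $(-)^{a}_{\perfd}$ factors through the left adjoint $(-)^{\wedge a \tf}$: every object of $\Perfd_{K_{\le 1}}^{\Prism a}$ lies in $\CAlg_{K_{\le 1}}^{\wedge a \tf}$, so any map $S \rightarrow P$ from $S \in \CAlg_{K_{\le 1},\varpi}^{\wedge}$ to a perfectoid $P$ factors uniquely through $S \rightarrow S^{\wedge a \tf}$, whence $S^{a}_{\perfd} \simeq (S^{\wedge a \tf})^{a}_{\perfd}$ by Proposition \ref{perfectoidization_arc_pi_descent}(1); together with the fact that $(-)_{\perfd}$ depends only on $\pi_{0}$ (Proposition \ref{properties_perfd}(5)), this shows that $(-)^{a}_{\perfd}$ is insensitive to killing $\varpi$-torsion, $\varpi$-completing, passing to almost elements, and replacing a tensor product by its derived variant. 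Since $(-)_{\le 1}\colon \Ban_{K}^{\contr} \simeq \CAlg_{K_{\le 1}}^{\wedge a \tf}$ is an equivalence, $(-)^{a}_{\le 1,\perfd}$ is just the restriction of $(-)^{a}_{\perfd}$ along it, and the equivalence $\Perfd_{K}^{\Ban} \simeq \Perfd_{K_{\le 1}}^{\Prism a}$ (Propositions \ref{equiv_perfd_ban_tic} and \ref{equiv_perfd_tic_almost}) matches $P$ with $P_{\le 1}$. For (1), the dictionary identifies the category of maps $\{A \rightarrow P\}$ from $\Perfd_{K}^{\Ban}$ with the category of maps $\{A_{\le 1} \rightarrow P_{\le 1}\}$ from $\Perfd_{K_{\le 1}}^{\Prism a}$, compatibly with $P \mapsto P_{\le 1}$, so (1) is Proposition \ref{perfectoidization_arc_pi_descent}(1) applied to $S = A_{\le 1}$.

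For (2): by Definition \ref{defn_arc_pi}, Proposition \ref{arc_pi_cover_for_banach} and Proposition \ref{berko_sp_disj_union}, a finite family $\{\cM(B_{i}) \rightarrow \cM(A)\}$ is an $\arc_{\varpi}$-cover in $\Ban_{K}^{\contr,\op}$ precisely when $\Spf\big((\prod_{i} B_{i})_{\le 1}\big) \rightarrow \Spf(A_{\le 1})$ is an $\arc_{\varpi}$-cover; and by Corollary \ref{tensor_dic_ban_to_int} the \v{C}ech nerve of $A \rightarrow \prod_{i} B_{i}$ formed with $\cotimes_{A}$ corresponds, after $(-)_{\le 1}$, to the \v{C}ech nerve of $A_{\le 1} \rightarrow (\prod_{i} B_{i})_{\le 1}$ formed with the $(-\otimes-)^{\tf,\wedge,a}$ pushout of $\CAlg_{K_{\le 1}}^{\wedge a \tf}$. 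By the first paragraph, applying $(-)^{a}_{\perfd}$ to this nerve agrees with applying it to the derived completed \v{C}ech nerve of $A_{\le 1} \rightarrow (\prod_{i} B_{i})_{\le 1}$, so the descent isomorphism is supplied by Proposition \ref{perfectoidization_arc_pi_descent}(2). Finite products are preserved because $(-)_{\le 1}$ sends the bounded product $\prod_{i} B_{i}$ to $\prod_{i} B_{i,\le 1}$ and $|\cM(\prod_{i} B_{i})| = \sqcup_{i} |\cM(B_{i})|$ (Proposition \ref{berko_sp_disj_union}); hypercompleteness follows since the values lie in the coconnective part of $\cD(K_{\le 1})$ (Proposition \ref{properties_perfd}(2)) via Proposition \ref{sheaf_implies_hypersheaf}.

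For (3): by Example \ref{intro_cover_perfd} the subcategory $\Perfd_{K}^{\Ban,\op} \subset \Ban_{K}^{\contr,\op}$ is a basis for the $\arc_{\varpi}$-topology, and on it $(-)^{a}_{\le 1,\perfd}$ restricts to $P \mapsto P_{\le 1}$, which is an $\arc_{\varpi}$-sheaf by Theorem \ref{arc_pi_descent_for_perfectoids} transported through the dictionary; hence the hypercomplete $\arc_{\varpi}$-sheaf $(-)^{a}_{\le 1,\perfd}$ is the right Kan extension of its restriction to this basis, i.e.\ it is $R\Gamma_{\arc_{\varpi}}(-, \cO_{\le 1}^{a})$ (cf.\ Definition \ref{defn_hypercomplete_sheaf}). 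For (4): apply Proposition \ref{perfectoidization_arc_pi_descent}(4) with $S_{i} = A_{i,\le 1}$ to get $(A_{1,\le 1})^{a}_{\perfd} \cotimes^{L,a}_{(A_{3,\le 1})^{a}_{\perfd}} (A_{2,\le 1})^{a}_{\perfd} \simeq (A_{1,\le 1} \cotimes^{L}_{A_{3,\le 1}} A_{2,\le 1})^{a}_{\perfd}$, then identify the right-hand side with $(A_{1} \cotimes_{A_{3}} A_{2})^{a}_{\le 1,\perfd}$: by Corollary \ref{tensor_dic_ban_to_int} we have $(A_{1} \cotimes_{A_{3}} A_{2})_{\le 1} = (A_{1,\le 1} \otimes_{A_{3,\le 1}} A_{2,\le 1})^{\tf,\wedge,a}$, and $(-)^{a}_{\perfd}$ sends this to the same object as $A_{1,\le 1} \cotimes^{L}_{A_{3,\le 1}} A_{2,\le 1}$ by the $\pi_{0}$-invariance and factorization-through-$(-)^{\wedge a \tf}$ of the first paragraph.

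The hard part will be the bookkeeping underlying steps (2) and (4): reconciling the completed tensor products that define \v{C}ech nerves and fiber products in $\Ban_{K}^{\contr,\op}$ with the classical and derived tensor products, the $\varpi$-torsion-free quotients, the $\varpi$-completions, and the almost-ifications that intervene on the $K_{\le 1}$-side, and verifying cleanly that every one of these discrepancies is annihilated after applying $(-)^{a}_{\perfd}$. All of this is encoded in the remark that $(-)^{a}_{\perfd}$ factors through $(-)^{\wedge a \tf}$ and depends only on $\pi_{0}$, but making that remark precise and invoking it at each point is where the real work lies; once it is in place, each of (1)--(4) is a formal consequence of the corresponding item in Proposition \ref{perfectoidization_arc_pi_descent} together with the dictionary.
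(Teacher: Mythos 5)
Your proposal is correct and follows essentially the same route as the paper: everything is reduced to Proposition \ref{perfectoidization_arc_pi_descent} via the dictionary, with the key bookkeeping handled by the observation that $(-)^{a}_{\perfd}$ is insensitive to $(-)^{\tf,\wedge,a}$ (a consequence of its universal-property description in Proposition \ref{perfectoidization_arc_pi_descent}(1)) and depends only on $\pi_0$ (Proposition \ref{properties_perfd}(5)), plus Corollary \ref{tensor_dic_ban_to_int} to match the completed tensor products defining \v{C}ech nerves on the Banach side with the $(-\otimes-)^{\tf,\wedge,a}$ pushouts on the integral side. The paper's proof phrases the same reduction by applying these facts termwise to the \v{C}ech nerve rather than packaging them as "factoring through $(-)^{\wedge a \tf}$", but the content is identical.
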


\begin{proof} Part (1) follows directly from Proposition \ref{perfectoidization_arc_pi_descent}(1), together with the dictionary Proposition \ref{equiv_almost_tf_banach} and Proposition \ref{equiv_perfd_ban_tic}. For the proof of (2), recall that we showed in Proposition \ref{perfectoidization_arc_pi_descent}(2) that if $A \rightarrow B$ is an $\arc_\varpi$-cover then the canonical map
\begin{align*}
	(A_{\le 1})_{\perfd}^a \rightarrow \Tot(B_{\le 1}^{\bullet/A_{\le 1}})^a_{\perfd}
\end{align*}
is an isomorphism, thus to show that $(-)_{\le 1, \perfd}^a$ is an $\arc_\varpi$-sheaf it remains to show that there is a canonical isomorphism $(B_{\le 1}^{\bullet/A_{\le 1}})^a_{\perfd} \simeq (B^{\bullet/A})_{\le 1, \perfd}^a$. Recall from the Proposition \ref{equiv_almost_tf_banach} that there is a canonical isomorphism $(B_{\le 1}^{\bullet/A_{\le 1}})^{\tf, \wedge, a} \simeq (B^{\bullet/A})_{\le 1}$ and by Proposition \ref{perfectoidization_arc_pi_descent}(1) it is clear that the canonical map $(B_{\le 1}^{\bullet/A_{\le 1}}) \rightarrow (B_{\le 1}^{\bullet/A_{\le 1}})^{\tf, \wedge, a}$ induces an isomorphism
\begin{align*}
	(B_{\le 1}^{\bullet/A_{\le 1}})_{\perfd}^a \overset{\simeq}{\longrightarrow} \Big( (B_{\le 1}^{\bullet/A_{\le 1}})^{\tf, \wedge, a} \Big)_{\perfd}^a
\end{align*}
Finally, hypercompleteness follows from the fact that $(A)_{\le 1, \perfd}^a \in \cD(K_{\le 1})^{\ge 0}$ by part (1) and Proposition \ref{sheaf_implies_hypersheaf}, completing the proof of (2).

In order to prove (3), recall from the proof of Corollary \ref{arc_pi_descent_for_bananach_perfectoids} that when we restrict the functor $R\Gamma_{\arc_\varpi}(-, \cO_{\le 1}^a)$ to the subcategory $\Perfd_{K}^{\Ban} \subset \Ban_K^{\contr}$ we obtain a hypercomplete $\arc_\varpi$-sheaf
\begin{align*}
	R\Gamma_{\arc_\varpi}(-, \cO_{\le 1}^a): \Perfd_K^{\Ban} \rightarrow \cD(K_{\le 1})^{\wedge a}_{\varpi} && R \mapsto R_{\le 1} = R_{\le 1}^a
\end{align*}
From the dictionary (Propositions \ref{equiv_almost_tf_banach} and \ref{equiv_perfd_ban_tic}) and the proof of Proposition \ref{perfectoidization_arc_pi_descent}(3) we learn that $\Perfd_{K}^{\Ban} \subset \Ban_K^{\contr}$ is a basis for the $\arc_\varpi$-topology. Therefore, it follows from the definition of $R\Gamma_{\arc_\varpi}(-, \cO_{\le 1}^a): \Ban_K^{\contr} \rightarrow \cD(K_{\le 1})^{\wedge a}_{\varpi}$ is the right Kan extension of $R\Gamma_{\arc_\varpi}(-, \cO_{\le 1}^a)$ along the inclusion $\Perfd_K^{\Ban} \subset \Ban_K^{\contr}$, but this is the same description given of $(-)_{\le 1, \perfd}^a$ given in part (1). This completes the proof of (3).

Finally, we prove (4). From Proposition \ref{perfectoidization_arc_pi_descent}(4) we have that the canonical map
\begin{align*}
	(A_{1, \le 1, \perfd}^a) \cotimes_{A_{3, \le 1, \perfd}^a}^{L, a} (A_{2, \le 1, \perfd}^a) \rightarrow (A_{1, \le 1} \cotimes^L_{A_{3, \le 1}} A_{2, \le 1})_{\perfd}^a
\end{align*}
is an isomorphism, and by Proposition \ref{properties_perfd}(5) we also know that we have a canonical identification $(A_{1, \le 1} \cotimes^L_{A_{3, \le 1}} A_{2, \le 1})_{\perfd}^a \simeq (\pi_0(A_{1, \le 1} \cotimes^L_{A_{3, \le 1}} A_{2, \le 1}))_{\perfd}^a$. The result then follows from the fact that the canonical map $\pi_0(A_{1, \le 1} \cotimes^L_{A_{3, \le 1}} A_{2, \le 1}) \rightarrow \pi_0(A_{1, \le 1} \cotimes^L_{A_{3, \le 1}} A_{2, \le 1})^{\tf, \wedge, a}$ becomes an isomorphism after applying $(-)_{\perfd}^a$ as showed in part (2), together with the identity $(A_1 \cotimes_{A_3} A_2)_{\le 1} \simeq (\pi_0(A_1 \cotimes_{A_3}^L A_2))^{\tf, \wedge, a}$.
\end{proof}

\newpage

\chapter{Perfectoid Spaces}\label{chapt_perfd_spc}

Throughout this chapter we fix a prime number $p$ and a perfectoid non-archimedean field $K$ together with an object $\varpi \in K$ satisfying $1 > |\varpi^p| \ge |p|$ and  a compatible system of $p$-power roots $\{\varpi^{1/p^n}\}_{n \in \ZZ_{\ge 0}}$. In Section \ref{sect_coh_topoi} we recall the basic properties of coherent topoi, as it will form the categorical bedrock for the definitions of the $\arc_\varpi$-topos $\cX_{\arc_\varpi}$ and the category of condensed sets $\Cond$. In Section \ref{sect_berko_funct} we introduce the main players of this work, namely the $\arc_\varpi$-topos $\cX_{\arc_\varpi}$ and the Berkovich functor $|-|: \cX_{\arc_\varpi} \rightarrow \Cond$ which extends the Berkovich spectrum functor $|-|: \Ban_K^{\contr, \op} \rightarrow \Comp$. Furthermore, we establish the main results for the $\arc_\varpi$-topos like our version of the Gerritzen-Grauert theorem (Theorem \ref{intro_mono_arc_topos}) for $\arc_\varpi$-sheaves. Finally, in Section \ref{sect_analytic_spaces} we isolate the categories of perfectoid spaces\footnote{Generalizing the notion of affinoid perfectoid space.} and $\arc_\varpi$-analytic spaces as full subcategories of $\cX_{\arc_\varpi}$, and we specialize our results from Section \ref{sect_berko_funct} to these settings where the statements become slightly more concrete. Moreover, we show the under some mild hypothesis\footnote{Analogous to local compactness in topology.} $\arc_\varpi$-analytic spaces admit a well-behaved theory of open subsets.

\section{Digression: Coherent Topoi}\label{sect_coh_topoi}

\subsection{Sheaves on a (finitary) site}

\begin{defn}
Let $\cC$ be a small category, define its presheaf category as
\begin{equation*}
	\PreShv(\cC) := \text{Funct} (\cC^{\op}, \Set)
\end{equation*}
Furthermore, there is a functor
\begin{equation*}
	\Yo : \cC \longrightarrow \PreShv(\cC) \qquad c \mapsto \Yo(c) := \Maps_{\cC} (- , c)
\end{equation*}
called the Yoneda embedding.
\end{defn}

\begin{prop}[Properties of PreShv]\label{prop_presheaves} Let $\cC$ be a (small) category and $\PreShv(\cC)$ its presheaf category as described above.
\begin{enumerate}[(1)]
	\item The category $\PreShv(\cC)$ has all (small) limits and colimits. Moreover, limits and colimits are computed objectwise: let $\{X_i\}_{\cI}$ be a (small) $\cI$-indexed diagram in $\PreShv(\cC)$, then for every $c \in \cC$ we have
	\begin{equation*}
		(\colim_{\cI} X_i ) (c) = \colim_{\cI} X_i(c) \qquad \text{and} \qquad (\lim_{\cI} X_i ) (c) = \lim_{\cI} X_i(c)
	\end{equation*}
	\item The Yoneda embedding $\Yo: \cC \rightarrow \PreShv(\cC)$ is fully faithful and it preserves all (small) limits.
	\item (Yoneda Lemma) There exists a natural bijection
	\begin{equation*}
		\Maps_{\PreShv(\cC)} (\Yo(c), F) \simeq F(c) \qquad (\Yo(c) \rightarrow F) \mapsto \Big (\text{Id} \in \Yo (c)(c) \mapsto F(c) \Big)
	\end{equation*}
	Where we make sense of $\text{Id} \in \Yo(c)(c)$ via the identification $\Yo(c)(c) = \Maps_{\cC} (c, c)$.
	\item Every object of $\PreShv(\cC)$ can be described as a colimit of a diagram in $\cC$. More concretely, let $X \in \PreShv(\cC)$ and $c \in \cC$, then
	\begin{equation*}
		\colim_{\Yo(c) \rightarrow X} \Yo(c) = X
	\end{equation*}
	\item (Universal Property of PreShv) Let $\cD$ be a category with all (small) colimits and a functor $F: \cC \rightarrow \cD$. Then, there exists an essetially unique colimit preserving functor $L: \PreShv(\cC) \rightarrow \cD$ making the following diagram commute
	\begin{cd}
		\cC \ar[r, "\Yo"] \ar[rd, "F", swap] & \PreShv(\cC) \ar[d, "L"] \\
		& \cD
	\end{cd}
\end{enumerate}
\end{prop}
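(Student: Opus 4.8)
The plan is to take the Yoneda lemma (3) as the technical heart of the proof and deduce everything else from it, together with the elementary fact that a category of $\Set$-valued functors inherits all limits and colimits objectwise. First I would dispatch (1): since $\Set$ admits all small limits and colimits, and $\PreShv(\cC) = \text{Funct}(\cC^{\op}, \Set)$, one forms, for a diagram $\{X_i\}_{i \in \cI}$, the presheaf $c \mapsto \lim_{\cI} X_i(c)$ (resp. $c \mapsto \colim_{\cI} X_i(c)$), with functoriality in $c$ induced by that of each $X_i$, and checks directly that it satisfies the universal property of the limit (resp. colimit) in $\PreShv(\cC)$. This is routine.

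Next I would prove (3) by direct inspection: given a natural transformation $\eta\colon \Yo(c) \to F$, naturality applied to an arbitrary $f\colon d \to c$ forces $\eta_d(f) = F(f)(\eta_c(\mathrm{id}_c))$, so $\eta$ is determined by the single element $\eta_c(\mathrm{id}_c) \in F(c)$; conversely any $x \in F(c)$ defines a natural transformation via $f \mapsto F(f)(x)$, and these assignments are mutually inverse and natural in both $F$ and $c$. From (3), full faithfulness in (2) is immediate by taking $F = \Yo(c')$, giving $\Maps_{\PreShv(\cC)}(\Yo(c), \Yo(c')) \simeq \Yo(c')(c) = \Maps_{\cC}(c, c')$. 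That $\Yo$ preserves small limits then follows from (1): for a diagram $\{c_i\}$ in $\cC$, each corepresentable $\Maps_{\cC}(d, -)$ preserves limits, so $\Yo(\lim_i c_i)(d) = \Maps_{\cC}(d, \lim_i c_i) = \lim_i \Maps_{\cC}(d, c_i) = (\lim_i \Yo(c_i))(d)$, the last equality by (1), and these identifications are natural in $d$.

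For (4) I would consider the comma category of pairs $(c, u\colon \Yo(c) \to X)$ — equivalently, by (3), the category of elements of $X$, whose objects are pairs $(c, x \in X(c))$ and whose morphisms $(c, x) \to (c', x')$ are maps $f\colon c \to c'$ with $X(f)(x') = x$ — and show the tautological cocone exhibits $\colim_{\Yo(c) \to X} \Yo(c) \simeq X$. The cleanest route is to compute this colimit objectwise via (1): at $d \in \cC$ it is the quotient of $\coprod_{(c,x)} \Maps_{\cC}(d, c)$ by the relation generated by the morphisms of the index category, and one checks the map sending $(g\colon d \to c, x)$ to $X(g)(x) \in X(d)$ is well defined, surjective (using the pairs $(d, -)$ with $g = \mathrm{id}_d$), and injective (using the morphisms of the index category to see any two preimages are identified). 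Finally (5): given a cocomplete $\cD$ and $F\colon \cC \to \cD$, define $L(X) := \colim_{\Yo(c) \to X} F(c)$ over the same index category; then $L \circ \Yo \simeq F$ because the category of elements of $\Yo(c)$ has the terminal object $\mathrm{id}_{\Yo(c)}$, so the colimit collapses; $L$ preserves colimits by the usual interchange of iterated colimits; and essential uniqueness follows from (4), since any colimit-preserving $L'$ with $L' \circ \Yo \simeq F$ satisfies $L'(X) \simeq L'(\colim \Yo(c)) \simeq \colim L'(\Yo(c)) \simeq \colim F(c) \simeq L(X)$.

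The step I expect to be the main obstacle is (4): pinning down precisely that the equivalence relation on $\coprod_{(c,x)} \Maps_{\cC}(d,c)$ generated by the morphisms of the category of elements is exactly the partition into fibers of the map to $X(d)$ requires a careful zig-zag argument rather than a purely formal manipulation. Everything else is either a direct consequence of the Yoneda lemma or a standard functor-category generality, so once (3) and (4) are in place the proposition assembles quickly.
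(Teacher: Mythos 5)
The paper gives no argument for this proposition (its proof is recorded as ``Omitted''), so there is nothing to compare against; your write-up is a correct rendering of the standard proofs, with the right logical order (objectwise (co)limits, Yoneda, then density and free cocompletion). The only point I would tighten is the cocontinuity of $L$ in (5): ``interchange of iterated colimits'' is slightly loose because the index category $\Yo(c) \rightarrow X$ changes with $X$; the clean fix is to observe that your formula $L(X) = \colim_{\Yo(c) \rightarrow X} F(c)$ exhibits $L$ as left adjoint to $R: \cD \rightarrow \PreShv(\cC)$, $R(d)(c) = \Maps_{\cD}(F(c), d)$ --- indeed, using (3) and (4),
\begin{equation*}
	\Maps_{\cD}(L(X), d) \simeq \lim_{\Yo(c) \rightarrow X} \Maps_{\cD}(F(c), d) \simeq \lim_{\Yo(c) \rightarrow X} \Maps_{\PreShv(\cC)}(\Yo(c), R(d)) \simeq \Maps_{\PreShv(\cC)}(X, R(d)),
\end{equation*}
so $L$ preserves all colimits because it is a left adjoint. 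Your zig-zag worry in (4) is also less serious than you suggest: given $(g: d \rightarrow c, x)$ with image $X(g)(x)$, the morphism $g$ itself defines a map $(d, X(g)(x)) \rightarrow (c, x)$ in the category of elements identifying $(g, x)$ with $(\mathrm{id}_d, X(g)(x))$, so any two elements with the same image are linked through the identity representative in two steps.
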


\begin{proof} Omitted.
\end{proof}

In the following example we will show that the Yoneda embedding functor $\Yo$ does not preserve most colimits, even in situations where the colimit has concrete geometric meaning.

\begin{example}
Denote by $\Aff = \CAlg^{\op}$ the category of affine schemes and let $\Spec (R) \in \Aff$. Recall that for any affine scheme $\Spec (R)$ its sheaf cohomology satisfies $H^0(\Spec(R), \cO_R) = R$, this can be reformulated as follows: let $\{U_i \rightarrow \Spec(R) \}_{i \in I}$ be a finite Zariski cover, where $U_i = \Spec(R[f_i^{-1}])$, then,
\begin{equation*}
	\coeq \Big(\sqcup_{i,j \in I} (U_{i} \times_{\Spec (R)} U_{j}) \rightrightarrows  \sqcup_{i \in I} U_{i} \Big) \simeq \Spec (R) \qquad \text{in } \Aff
\end{equation*}
However, the above equality no longer holds true after we apply the Yoneda embedding functor $\Yo$. Denote by $W$ the coequalizer of $\Big (\sqcup_{i,j \in I} (U_{i} \times_{\Spec (R)} U_{j}) \rightrightarrows  \sqcup_{i \in I} U_{i} \Big )$ in $\PreShv(\Aff)$, we want to show that the canonical map $W \rightarrow \Spec (R)$ is generally not an isomorphism. It suffices to show that the the identity map $\Spec (R) \rightarrow \Spec (R)$ does not factor though $W \rightarrow \Spec (R)$. Indeed, by definition of $W$ we have that the canonical map $\sqcup_{i \in I} U_{i} \rightarrow \Spec (R)$ induces a surjective map of sets
\begin{equation*}
	\sqcup_{i \in I} U_{i} (\Spec (R)) \twoheadrightarrow W(\Spec (R))
\end{equation*}
Therefore, the identity map $\Spec (R) \rightarrow \Spec (R)$ will factor through $W$ if and only if it factors though some $U_{i} \rightarrow \Spec (R)$, which is impossible unless $f_i \in R^{\times}$ for some $f_i$.
\end{example}

\begin{defn}\label{defn_shv} Let $\cC$ be a (small) category which admits finite limits and $\tau$ a (finitary) Grothendieck topology on $\cC$. The sheaf category of $(\cC, \tau)$, which we denote by $\Shv_{\tau} (\cC)$, is defined as the full subcategory
\begin{equation*}
	\Shv_\tau (\cC) \subset \PreShv(\cC)
\end{equation*}
such that if $F \in \Shv_{\tau} (\cC)$ and $\{U_i \rightarrow X\}_{i \in I}$ is a covering in $\tau$, where $I$ is a finite set, then
\begin{equation*}
	F(X) \simeq \eq \Big (\prod_{i \in I} F(U_i) \rightrightarrows \prod_{i,j \in I} F(U_i \times_X U_j) \Big)
\end{equation*}
In particular, if $U \times_X V \simeq \emptyset$, that is $U,V$ are disjoint in $X$, then $F(U \sqcup V) \simeq F(U) \times F(V)$ (cf. \ref{defn_infty_sheaf}).

The hypothesis that $I$ is a finite set in the above discussion is to guarantee that the topology $\tau$ is finitary -- that is, that for every covering $\{U_j \rightarrow X\}_{j \in J}$ there exists a finite subset $I \subset J$ such that $\{U_j \rightarrow X\}_{j \in I}$ is a covering. Finitary topologies guarantee that the category $\Shv_{\tau}(\cC)$ is better behaved. The $\arc$, $\varpi$-complete $\arc$, and $\arc_{\varpi}$ topologies introduced previously are all finitary.
\end{defn}

\begin{prop}[Properties of Shv]\label{prop_sheaves} Let $\cC$ be a (small) category which admits finite limits and $\Shv_{\tau} (\cC)$ its sheaf category as described above.
\begin{enumerate}[(1)]
	\item The inclusion functor $\Shv_{\tau} (\cC) \hookrightarrow \PreShv(\cC)$ admits a left adjoint $\PreShv(\cC) \rightarrow \Shv_{\tau} (\cC)$ called the sheafification functor. Furthermore, the sheafification functor $\PreShv(\cC) \rightarrow \Shv_{\tau} (\cC)$ preserves finite limits.
	\item The category $\Shv_{\tau} (\cC)$ has all (small) limits and colimits. Moreover, limits are be computed objectwise, while colimits can be described as the sheafification of the objectwise colimit. In particular, if $\{c_i \rightarrow c\}_{i \in I}$ is a covering in $(\cC, \tau)$ then
	\begin{equation*}
		\coeq \Big ( \sqcup_{i, j \in I} \Yo_{\tau} (c_i \times_{c} c_j) \rightrightarrows \sqcup_{i \in I}
		\Yo_{\tau} (c_i) \Big ) \simeq \Yo_{\tau} (c)
	\end{equation*}
	\item The sheafified Yoneda functor $\Yo_{\tau}: \cC \rightarrow \Shv_{\tau}(\cC)$ is defined as the composition of the Yoneda embedding followed by the sheafification functor
	\begin{cd}
		\Yo_{\tau}: \cC \ar[r, "\Yo"] & \PreShv(\cC) \ar[r] & \Shv_{\tau}(\cC)
	\end{cd}
	The functor $\Yo_{\tau}$ preserves all finite limits, but it need not be fully faithful. 
	\item (Universal Property of Shv) Let $\cD$ be a category with all (small) colimits, and a functor $F: \cC \rightarrow \cD$ such that for every covering $\{c_i \rightarrow c\}_{i \in I}$ in $(\cC, \tau)$ we have an isomorphism
	\begin{equation*}
		\coeq \Big ( \sqcup_{i,j \in I} F(c_i \times_c c_j) \rightrightarrows \sqcup_{i \in I} F(c_i) \Big ) \simeq F(c)
	\end{equation*}
	Then, there exists an essentially unique colimit preserving functor $L: \Shv_{\tau} (\cC) \rightarrow \cD$ making the following diagram commute
	\begin{cd}
		\cC \ar[r, "\Yo_{\tau}"] \ar[rd, "F", swap] & \Shv_{\tau}(\cC) \ar[d, "L"] \\
		& \cD
	\end{cd}
\end{enumerate}
\end{prop}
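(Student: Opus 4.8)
The plan is to deduce all four statements from the explicit construction of the sheafification functor together with standard facts about Grothendieck topoi; the relevant background is \cite[Expos\'e II]{SGA4} and \cite[Section 6.2]{lurieHTT}.

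First I would construct, for each $F \in \PreShv(\cC)$, its plus-construction $F^{+}$ by the usual formula $F^{+}(c) = \colim_{S} \check{H}^0(S, F)$, where $S$ ranges over the poset of covering sieves of $c$ ordered by reverse inclusion and $\check{H}^0(S,F) = \lim_{(c_i \to c) \in S} F(c_i)$. The stability and local-character axioms of a Grothendieck topology make this indexing poset filtered, so $F \mapsto F^{+}$ is a filtered colimit of finite limits and hence commutes with finite limits; the classical fact that $F^{++}$ is always a sheaf and that $F \to F^{++}$ exhibits it as the reflection of $F$ into $\Shv_\tau(\cC)$ then gives the left adjoint of part (1) and its left-exactness. (Finitariness of $\tau$ is not needed for this, but it is what will later make $\Shv_\tau(\cC)$ coherent, which is used elsewhere.) For part (2), the sheaf condition is a requirement that $F$ be local with respect to a set of maps of presheaves, i.e. a limit condition, so a limit of sheaves computed objectwise in $\PreShv(\cC)$ is again a sheaf; thus $\Shv_\tau(\cC)$ is closed under limits in $\PreShv(\cC)$ and limits are computed objectwise. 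Colimits are obtained by taking the objectwise colimit in $\PreShv(\cC)$ and sheafifying, their universal property being immediate from the adjunction in (1). For the displayed coequalizer formula: the map $\sqcup_i \Yo_\tau(c_i) \to \Yo_\tau(c)$ is an epimorphism of sheaves by construction of the sheafification together with the hypothesis that $\{c_i \to c\}$ is a covering; since $\Shv_\tau(\cC)$ is a Grothendieck topos every epimorphism is effective, so $\Yo_\tau(c)$ is the coequalizer of the kernel pair of this map, and because $\Yo_\tau$ is left exact this kernel pair is $\sqcup_{i,j} \Yo_\tau(c_i \times_c c_j)$. Part (3) is then formal: $\Yo_\tau$ is the composite of the Yoneda embedding, which preserves all limits, with sheafification, which preserves finite limits by (1); failure of full faithfulness is witnessed by any two non-isomorphic objects of $\cC$ with isomorphic images, for instance $\cM(A)$ and $\cM(A^u)$ in the $\arc_\varpi$-topology.

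For part (4) I would first invoke the universal property of $\PreShv(\cC)$ (the presheaf Proposition, part (5)) to extend $F$ to an essentially unique colimit-preserving functor $\widetilde F \colon \PreShv(\cC) \to \cD$. It then remains to show that $\widetilde F$ factors through the sheafification, i.e. that it carries every sheafification-equivalence to an isomorphism of $\cD$. Since sheafification is the localization of $\PreShv(\cC)$ at the set of covering-sieve inclusions $S \hookrightarrow \Yo(c)$, and $\widetilde F$ preserves colimits, it suffices to check that $\widetilde F(S \hookrightarrow \Yo(c))$ is an isomorphism for a covering family $\{c_i \to c\}$ generating $S$; writing the presheaf $S$ as the coequalizer of the Čech pair $\sqcup_{i,j} \Yo(c_i \times_c c_j) \rightrightarrows \sqcup_i \Yo(c_i)$ in $\PreShv(\cC)$ and applying $\widetilde F$, this reduces precisely to the hypothesis $\coeq(\sqcup_{i,j} F(c_i \times_c c_j) \rightrightarrows \sqcup_i F(c_i)) \simeq F(c)$. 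Once the factorization $\widetilde F \simeq L \circ (\text{sheafification})$ is in hand, $L$ is colimit-preserving (both $\widetilde F$ and sheafification are) and makes the triangle commute, while uniqueness follows since every sheaf is a colimit of representables and $L$ is determined on representables.

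The step I expect to be the main obstacle is this last one — matching the Čech-coequalizer hypothesis on $F$ with the localization-theoretic description of sheafification. The delicate point is that the covering sieve $S$ is \emph{not} literally the coequalizer $\coeq(\sqcup \Yo(c_i \times_c c_j) \rightrightarrows \sqcup \Yo(c_i))$ computed in $\PreShv(\cC)$; that presheaf coequalizer only surjects onto $S$ and sheafifies to $\Yo_\tau(c)$, so one must argue that $\widetilde F$ already inverts the comparison map, which in turn uses colimit-preservation of $\widetilde F$ together with the hypothesis applied to refinements of the cover. Being careful here is exactly what makes the argument valid for an arbitrary finitary Grothendieck topology rather than only for one generated by a pretopology closed under pullback.
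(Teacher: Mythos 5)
The paper offers no proof of this proposition at all — it is recorded as a standard fact with the proof omitted — so there is nothing of the paper's to compare your argument against; judged on its own, your proof is correct and follows the standard route (plus-construction for (1)–(2), formal consequences for (3), left Kan extension plus the universal property of the localization for (4)).

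One remark on where you locate the difficulty in (4): the step you single out as the main obstacle is in fact not an obstacle under the standing hypotheses. Since $\cC$ admits finite limits, the presheaf coequalizer $\coeq\big(\sqcup_{i,j}\Yo(c_i\times_c c_j)\rightrightarrows \sqcup_i \Yo(c_i)\big)$ is \emph{literally} the sieve $S$ generated by $\{c_i\to c\}$, not merely a presheaf surjecting onto it: two sections $g\colon d\to c_i$ and $h\colon d\to c_j$ with equal composites to $c$ are identified in a single step via $(g,h)\colon d\to c_i\times_c c_j$, so the relation ``equal image in $\Hom(d,c)$'' is already an equivalence relation and the objectwise coequalizer is the image of $\sqcup_i\Hom(d,c_i)\to\Hom(d,c)$, i.e.\ $S(d)$. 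Hence $\widetilde F(S\hookrightarrow \Yo(c))$ is invertible directly from colimit-preservation of $\widetilde F$ and the hypothesis on $F$, with no refinement argument needed. The places that actually deserve the care you are spending there are: (a) passing from sieves generated by covering families to arbitrary covering sieves of the topology (pull the family back along each $d\to c$ in $S$, use universality of colimits in $\PreShv(\cC)$ and two-out-of-three, or observe the topology is generated by such sieves); and (b) the assertion that a colimit-preserving $\widetilde F$ inverting the generating sieve inclusions inverts all local isomorphisms and therefore descends to a colimit-preserving $L$ on $\Shv_\tau(\cC)$ with $L\circ(\text{sheafification})\simeq \widetilde F$ — this is the universal property of localizing a presentable category at a small set of maps (the maps inverted by $\widetilde F$ form a strongly saturated class), which you invoke implicitly and should state or cite. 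Two minor points: in (2), identifying the kernel pair of $\sqcup_i\Yo_\tau(c_i)\to\Yo_\tau(c)$ with $\sqcup_{i,j}\Yo_\tau(c_i\times_c c_j)$ uses disjointness and universality of coproducts in the topos in addition to left exactness of $\Yo_\tau$; and your witness for the failure of full faithfulness, the uniformization map becoming invertible under $\Yo_{\arc_\varpi}$, is precisely the paper's Example \ref{banach_yoneda_arc_pi}, so that choice is apt.
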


\begin{proof}
	Omitted.
\end{proof}

\begin{defn}[Exact Categories]\label{defn_exact_cat} Let $\cC$ be a category. We say that $\cC$ is exact if it satisfies the following axioms
\begin{enumerate}[(1)]
	\item The categorty $\cC$ admits finite limits.
	\item Every equivalence relation on an object $X$ is effective. (cf. Definition \ref{defn_eff_equiv_relation})
	\item The collection of effective epimorphisms in $\cC$ (cf. Definition \ref{defn_eff_epi}) is closed under pullback. That is, if we are given a pullback diagram
	\begin{cd}
		X^{\prime} \ar[r] \ar[d, "f^{\prime}"] & X \ar[d, "f"] \\
		Y^{\prime} \ar[r] & Y
	\end{cd}
	in $\cC$ where $f$ is an effective epimorphism, the morphism $f^{\prime}$ is also an effective epimorphism.
\end{enumerate}
\end{defn}

\begin{example} The categories of sets is exact.
\end{example}

\begin{prop} Let $\cC$ be a category. If $\cC$ is exact, then it is a regular category (in the sense of Definition \ref{defn_regular_cat}).
\end{prop}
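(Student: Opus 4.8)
The plan is to verify the three axioms (R1), (R2), (R3) of Definition \ref{defn_regular_cat}. Axioms (R1) and (R3) are literally axioms (1) and (3) of Definition \ref{defn_exact_cat}, so the entire content lies in the factorization axiom (R2): every morphism $f\colon X \to Z$ in $\cC$ should factor as an effective epimorphism followed by a monomorphism. The approach I would take is the classical embedding-free argument of Barr: build the factorization out of the kernel pair of $f$ and its coequalizer.

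Concretely, I would first form the fiber product $R := X \times_Z X$ together with its canonical monomorphism $R \hookrightarrow X \times X$, which exhibits $R$ as an equivalence relation on $X$ (the example following Definition \ref{defn_equiv_relation}; the map in question is a base change of the diagonal $Z \to Z \times Z$, hence a monomorphism). By axiom (2) of exactness this equivalence relation is effective, so by Remark \ref{comp_equiv_relation_eff_epi} there is an effective epimorphism $q\colon X \twoheadrightarrow X/R$ exhibiting $X/R$ as the coequalizer of the two projections $R \rightrightarrows X$, with $R = X \times_{X/R} X$ equal to the kernel pair of $q$. Since $f$ coequalizes the two projections $R \rightrightarrows X$ (by the very definition of $X \times_Z X$), there is a unique $\bar f\colon X/R \to Z$ with $f = \bar f \circ q$. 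It then remains to prove that $\bar f$ is a monomorphism, i.e.\ that its diagonal $X/R \to (X/R) \times_Z (X/R)$ is an isomorphism.

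The main work — and the only genuinely subtle point — is this last claim. Write $P := (X/R) \times_Z (X/R)$ with projections $p_1, p_2\colon P \to X/R$; since $\bar f$ is a monomorphism precisely when $p_1 = p_2$ and the diagonal section $\Delta\colon X/R \to P$ is inverse to it, it suffices to show $p_1 = p_2$. I would factor the canonical comparison map $R \to P$ induced by $q\times q$ as a composite of two base changes of $q$, namely $R = X \times_Z X \to (X/R) \times_Z X \to (X/R) \times_Z (X/R) = P$, where the first map is obtained by pulling back $q$ along a projection of $(X/R) \times_Z X$ and the second by pulling back $q$ along a projection of $P$. By axiom (3) each factor is an effective epimorphism, hence an epimorphism, so the composite $R \to P$ is an epimorphism. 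Precomposing $p_1$ and $p_2$ with $R \to P$ yields $q \circ \mathrm{pr}_1$ and $q \circ \mathrm{pr}_2\colon R \rightrightarrows X \to X/R$, which coincide because $R$ is the kernel pair of $q$; since $R \to P$ is epic, this forces $p_1 = p_2 =: p$. Finally, testing against the jointly monic pair $(p_1, p_2)\colon P \hookrightarrow (X/R) \times (X/R)$ gives $p \circ \Delta = \mathrm{id}_{X/R}$ and $\Delta \circ p = \mathrm{id}_P$, so $\Delta$ is an isomorphism, $\bar f$ is a monomorphism, and $f = \bar f \circ q$ is the required factorization.

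The one place where care is needed is that a composite of effective epimorphisms is not in general an effective epimorphism; the argument above avoids this by only using that the composite $R \to P$ is an ordinary epimorphism, which is all that is required to conclude $p_1 = p_2$. Everything else is a routine diagram chase exploiting pullback-stability of effective epimorphisms and the defining property of the kernel pair, so I do not anticipate any further obstacle.
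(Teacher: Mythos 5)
Your proof is correct: the only nontrivial axiom is (R2), and your kernel-pair construction — take $R = X\times_Z X$, use exactness to get the effective epimorphism $q\colon X\twoheadrightarrow X/R$ with kernel pair $R$, and then show the induced map $X/R\to Z$ is a monomorphism by exhibiting $R\to (X/R)\times_Z(X/R)$ as a composite of two pullbacks of $q$ (hence an epimorphism, which is all that is needed) — handles it, including the correct observation that only plain epimorphy of the composite is required. The paper gives no argument of its own, simply citing \cite[Proposition A.2.8]{lurie_ultracategories}, and your argument is essentially the standard proof found there.
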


\begin{proof} \cite[Proposition A.2.8]{lurie_ultracategories}
\end{proof}

\subsection{Quasicompact Quasiseparated objects}

\begin{defn}[Disjoint Coproduct]\label{defn_disj_coprod} Let $\cC$ be a category which admits fiber products, and let $X,Y \in \cC$ be objects which admits a coproducts $X \sqcup Y$. We will say that $X \sqcup Y$ is a disjoint coproduct of $X$ and $Y$ if the following pair of conditions is satisfied
\begin{enumerate}[(1)]
	\item Each of the maps $X \rightarrow (X \sqcup Y) \leftarrow Y$ is a monomorphism.
	\item The fiber product $X \times_{X \sqcup Y} Y$ is the initial object of $\cC$.
\end{enumerate}
\end{defn}

\begin{defn}[Grothendieck Topos]\label{defn_topos} A (Grothendieck) topos is a category $\cX$ satisfying the following axioms:
\begin{enumerate}[(1)]
	\item The category $\cX$ is exact (Definition \ref{defn_exact_cat}).
	\item The category $\cX$ admits small coproducts, and coproducts in $\cX$ are disjoint (Definition \ref{defn_disj_coprod}).
	\item The formation of small coproduct in $\cX$ is compatible with pullbacks. That is, for every collection of objects $\{X_i\}_{i \in I}$ having coproduct $X = \bigsqcup_{i \in I} X_i$ and every morphism $f: Y \rightarrow X$, the projection maps $\{X_i \times_X Y \rightarrow Y\}_{i \in I}$ exhibit $Y$ as a coproduct of the objects $\{X_i \times_X Y\}_{i \in I}$.
	\item The category $\cX$ is locally small, and there exists a small subcategory $\cX_0$ which generates $\cX$ in the sense that every object $X \in \cX$ admits an effective epimorphism $\bigsqcup_{i \in I} U_i \rightarrow X$, where each $U_i$ belongs to $\cX_0$.
\end{enumerate}
\end{defn}

\begin{example} The category $\Set$ is a topos.
\end{example}

\begin{prop}\label{shv_is_topos} Let $\cC$ be a small category equipped with a Grothendieck topology $\tau$. Then the category of sheaves $\Shv_{\tau} (\cC)$ is a topos. Furthermore, $\Shv_{\tau} (\cC)$ is generated (in the sense of Definition \ref{defn_topos}(4)) by the essential image of $\Yo_{\tau}: \cC \rightarrow \Shv_{\tau}(\cC)$.
\end{prop}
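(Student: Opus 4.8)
The plan is to verify the four axioms of Definition \ref{defn_topos} for $\cX := \Shv_\tau(\cC)$, leaning on the properties of $\Shv_\tau(\cC)$ collected in Proposition \ref{prop_sheaves}, and on the fact that the analogous axioms hold in $\Set$ together with the observation that limits in $\Shv_\tau(\cC)$ are computed objectwise. For axiom (1), exactness, I would check the three conditions of Definition \ref{defn_exact_cat}: finite limits exist (they are computed objectwise, using Proposition \ref{prop_sheaves}(2) and Proposition \ref{prop_presheaves}(1)); every equivalence relation is effective, which I would deduce from the corresponding fact in $\Set$ by the standard argument that the sheafification functor $\PreShv(\cC) \to \Shv_\tau(\cC)$ preserves finite limits (Proposition \ref{prop_sheaves}(1)) and is a left adjoint, hence preserves colimits, so an equivalence relation $R \hookrightarrow X \times X$ of sheaves is the sheafification of an objectwise equivalence relation and its objectwise quotient sheafifies to $X/R$; and effective epimorphisms are stable under pullback, which again follows from stability of surjections under pullback in $\Set$ together with the objectwise computation of pullbacks and the fact that a map of sheaves is an effective epimorphism iff it is so after sheafifying the objectwise image. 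The key technical input throughout is that sheafification is exact (left exact because stated, right exact because it is a left adjoint).

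For axiom (2), I would note that small coproducts exist in $\Shv_\tau(\cC)$ by Proposition \ref{prop_sheaves}(2) (sheafify the objectwise coproduct), and that they are disjoint: the inclusions $X \to X \sqcup Y$ are monomorphisms and $X \times_{X\sqcup Y} Y$ is initial. Here I would first establish disjointness of coproducts in $\PreShv(\cC)$ objectwise (immediate from disjointness in $\Set$), and then transport it across sheafification using left-exactness of sheafification for the fiber product and the fact that it preserves coproducts and the initial object. Axiom (3), compatibility of small coproducts with pullback, is handled the same way: it holds objectwise in $\PreShv(\cC)$ because it holds in $\Set$, and it is preserved by sheafification since sheafification commutes with both pullbacks (left-exact) and coproducts (left adjoint).

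For axiom (4), local smallness is inherited from $\PreShv(\cC)$, and the generating subcategory is provided by the essential image of $\Yo_\tau \colon \cC \to \Shv_\tau(\cC)$: given any sheaf $F$, Proposition \ref{prop_presheaves}(4) expresses $F$ as the objectwise colimit of the representables $\Yo(c)$ over the maps $\Yo(c) \to F$, and applying sheafification (which preserves this colimit) exhibits $F$ as a colimit of the $\Yo_\tau(c)$; one then checks that the canonical map $\bigsqcup_{\Yo_\tau(c)\to F} \Yo_\tau(c) \to F$ is an effective epimorphism, equivalently that it is an epimorphism onto $F$ after sheafifying its objectwise image, which is clear since it is already an objectwise epimorphism of presheaves. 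This simultaneously proves the ``furthermore'' clause. I expect the main obstacle to be the careful bookkeeping around effective epimorphisms and effective equivalence relations in $\Shv_\tau(\cC)$ — specifically, pinning down the statement that a morphism of sheaves is an effective epimorphism precisely when its objectwise-image presheaf sheafifies to the target, and checking that this characterization interacts correctly with pullbacks — since everything else reduces quite mechanically to the corresponding facts in $\Set$ via exactness of sheafification.
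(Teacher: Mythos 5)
Your argument is correct, but it takes a genuinely different route from the paper's for the ``is a topos'' claim. The paper does not verify the Giraud axioms in-house at all: it simply invokes \cite[Corollary C.1.7]{lurie_ultracategories} for that statement. You instead carry out the full verification, reducing each axiom to the corresponding fact in $\Set$ via exactness of sheafification — it is a left adjoint, hence preserves colimits, coproducts, epimorphisms and the initial object, and it is left-exact by Proposition \ref{prop_sheaves}(1), hence preserves finite limits, monomorphisms and kernel pairs — which is exactly what makes the objectwise arguments transport across the unit $X \to X^\#$. Your treatment of the two delicate points (that $X \times_{(X/R)^\#} X = R$ via left-exactness applied to the presheaf-level identity, and that a sheaf morphism is an effective epimorphism precisely when its presheaf image sheafifies to the target, which then behaves well under pullback again by left-exactness) is terse but sound. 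For the ``furthermore'' clause your argument and the paper's coincide: both present a sheaf $F$ as the sheafification of $\colim_{\Yo(c) \to F} \Yo(c)$, combining Proposition \ref{prop_presheaves}(4) with Proposition \ref{prop_sheaves}(1). What your route buys is self-containment and an explicit demonstration of how sheafification makes the Giraud conditions descend from $\Set$; what the paper's route buys is brevity and an outsourcing of the bookkeeping around effective epimorphisms — the part you rightly flag as the main obstacle — to a precisely stated reference.
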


\begin{proof} It follows from \cite[Corollary C.1.7]{lurie_ultracategories} that $\Shv_{\tau} (\cC)$ is a topos, the fact that it is generated from the essential image of $\Yo_{\tau}: \cC \rightarrow \Shv_{\tau}(\cC)$ is a combination of Proposition \ref{prop_presheaves}(4) and Proposition \ref{prop_sheaves}(1).
\end{proof}

\begin{lemma}\label{morphism_of_shv} Let $\cC$ be a small category endowed with a topology $\tau$, and write $\cX$ for the topos $\Shv_{\tau}(\cC)$. Then
\begin{enumerate}[(1)]
	\item A morphism $F \rightarrow G$ in $\cX$ is a monomorphism if for every object $U \in \cC$ the map $F(U) \rightarrow G(U)$ is injective.
	\item A morphism $F \rightarrow G$ in $\cX$ is an epimorphism if for every object $U \in \cC$ and every section $s \in G(U)$ there exists a covering $\{U_i \rightarrow U\}$ such that for each $i$ the restriction $s|_{U_i}$ is in the image of $F(U_i) \rightarrow G(U_i)$.
	\item A morphism $F \rightarrow G$ in $\cX$ is an isomorphism if and only if it is a monomorphism and a epimorphism.
\end{enumerate}
\end{lemma}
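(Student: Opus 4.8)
The plan is to establish the three assertions in order, leveraging the fact that $\cX = \Shv_\tau(\cC)$ is a topos (Proposition \ref{shv_is_topos}), so that all the formal machinery of exact categories and effective epimorphisms developed in Section \ref{sect_coh_topoi} is available. For (1), I would argue that a morphism $F \to G$ of presheaves is a monomorphism in $\PreShv(\cC)$ precisely when each $F(U) \to G(U)$ is injective, since limits (and hence the defining fiber product $F \times_G F$) are computed objectwise in $\PreShv(\cC)$ by Proposition \ref{prop_presheaves}(1); concretely, $F \to G$ is mono iff $F \times_G F \to F$ is an isomorphism iff $F(U) \times_{G(U)} F(U) \to F(U)$ is a bijection for all $U$ iff $F(U) \to G(U)$ is injective for all $U$. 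Then I would observe that the inclusion $\Shv_\tau(\cC) \hookrightarrow \PreShv(\cC)$ is a right adjoint (Proposition \ref{prop_sheaves}(1)) and hence preserves limits, so a morphism of sheaves is a monomorphism in $\cX$ iff it is one in $\PreShv(\cC)$, which by the above is the stated objectwise injectivity condition.

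For (2), the description of epimorphisms is the ``local surjectivity'' condition familiar from sheaf theory. I would proceed by recalling that $F \to G$ is an epimorphism in $\cX$ iff its image (in the sense of the regular structure on the topos, cf.\ the factorization into effective epi followed by mono) is all of $G$, equivalently iff $G$ is the sheafification of the presheaf image of $F \to G$. Unwinding the construction of sheafification via covering sieves (Proposition \ref{prop_sheaves}(1)), the presheaf image has the same sheafification as $G$ exactly when, for every $U \in \cC$ and every $s \in G(U)$, there is a covering $\{U_i \to U\}$ with each $s|_{U_i}$ in the image of $F(U_i) \to G(U_i)$ — this is the ``locally surjective'' criterion. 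I expect this to be the main obstacle, since it requires being careful about the explicit plus-construction description of sheafification and the interaction between the presheaf-level image and the sheaf-level image; one clean route is to cite the standard fact (e.g.\ \cite[Corollary C.1.7]{lurie_ultracategories} or SGA4-style references) that in a topos a morphism is an epimorphism iff it is locally surjective with respect to the generating site.

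For (3), I would use the fact that $\cX$ is a topos and hence in particular a regular (indeed exact) category, so every morphism $f: F \to G$ admits a factorization $F \twoheadrightarrow \im(f) \hookrightarrow G$ into an effective epimorphism followed by a monomorphism. In a topos, every epimorphism is effective (this is part of the exactness axioms, cf.\ Definition \ref{defn_topos} together with the discussion of effective epimorphisms and regular categories), so if $f$ is an epimorphism then $F \twoheadrightarrow \im(f)$ together with $\im(f) \hookrightarrow G$ being both epi and mono forces $\im(f) \to G$ to be an isomorphism, hence $f$ itself is the effective epimorphism onto $G$; if moreover $f$ is a monomorphism, then $F \to \im(f)$ is simultaneously an effective epimorphism and a monomorphism, which in any category with the relevant factorization forces it to be an isomorphism, and therefore $f$ is an isomorphism. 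The converse is trivial since isomorphisms are both monomorphisms and epimorphisms. I would keep this last argument short, as it is purely formal once the factorization system on the topos is invoked.
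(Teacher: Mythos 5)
The paper itself omits the proof of this lemma, so there is no argument of record to compare against; judged on its own, your proposal is essentially sound, and parts (1) is handled exactly right (objectwise limits in $\PreShv(\cC)$ via Proposition \ref{prop_presheaves}(1), plus the limit-preserving fully faithful inclusion of Proposition \ref{prop_sheaves}(1)). Two points deserve tightening. For (2), your route through the image factorization and the plus-construction works, but it is heavier than needed, and the fallback you suggest --- citing ``the standard fact that in a topos a morphism is an epimorphism iff it is locally surjective'' --- is citing the very statement being proven. The elementary argument is shorter: given $G \rightrightarrows H$ equalized by $F \rightarrow G$, for $s \in G(U)$ pick a cover $\{U_i \rightarrow U\}$ on which $s$ lifts to $F(U_i)$; the two images of $s$ in $H(U)$ then agree after restriction to the cover, hence agree because $H$ is (separated, in particular) a sheaf. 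This uses nothing beyond the sheaf axiom and proves the ``if'' direction directly.

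For (3), as written your first step is circular: you conclude that $\im(f) \hookrightarrow G$ is an isomorphism from its ``being both epi and mono'', which is exactly the implication the lemma asserts. The repair is the principle you state at the end --- an \emph{effective} epimorphism that is a monomorphism is an isomorphism (its kernel pair is trivial, so it coequalizes two equal maps) --- applied once to $f$ itself: since in a topos every epimorphism is effective, $G = \coeq(F \times_G F \rightrightarrows F)$, and monomorphy of $f$ forces the two projections to coincide, whence $f$ is an isomorphism. Note also that ``every epimorphism is effective'' is not literally part of the exactness axioms in Definition \ref{defn_topos} (those only assert effectivity of equivalence relations and pullback-stability of effective epis); in this paper it is recorded separately as Lemma \ref{epi_equal_eff_epi_topos}(1), and in the concrete topos $\Shv_{\tau}(\cC)$ it can alternatively be deduced from the local-surjectivity description in part (2). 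With these adjustments the proposal is a complete and correct proof at the level of detail the paper expects.
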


\begin{proof} Ommited.
\end{proof}

\begin{lemma}\label{epi_equal_eff_epi_topos} Let $\cX$ be a topos, then
\begin{enumerate}[(1)]
	\item A map $F \rightarrow G$ in $\cX$ is an epimorphism if and only if it is an effective epimorphism, that is, the canonical map
	\begin{align*}
		\coeq(F \times_G F \rightrightarrows F) \rightarrow G
	\end{align*}
	is an isomorphism.
	\item A map $f: F \rightarrow G$ in $\cX$ admits a functorial and unique factorization as
	\begin{align*}
		F \twoheadrightarrow \im(f) \hookrightarrow G
	\end{align*}
	where $F \twoheadrightarrow \im(f)$ is an effective epimorphism and $\im(f) \hookrightarrow G$ is a monomorphism.
	\item Given a map $f: F \rightarrow G$ in $\cX$, the canonical map $F \times_{\im(f)} F \rightarrow F \times_G F$ is an equivalence. In particular, the canonical map $\coeq(F \times_G F \rightrightarrows F) \rightarrow G$ admits a unique and functorial factorization as
	\begin{align*}
		\coeq(F \times_G F \rightrightarrows F) \rightarrow \im(f) \hookrightarrow G
	\end{align*}
	and the map $\coeq(F \times_G F \rightrightarrows F) \rightarrow \im(f)$ is an isomorphism.
\end{enumerate}
\end{lemma}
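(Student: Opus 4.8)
\textbf{Proof plan for Lemma \ref{epi_equal_eff_epi_topos}.} The plan is to reduce everything to the case of sheaf topoi $\cX = \Shv_\tau(\cC)$ — which is harmless by Proposition \ref{shv_is_topos}, since every topos arises this way — and then to exploit the fact that sheafification $\PreShv(\cC)\to\cX$ is a left-exact left adjoint (Proposition \ref{prop_sheaves}(1)). The three statements are interlocking, so I would prove them together rather than in sequence: (3) is really the engine, (2) follows from (1), and (1) follows from (2) applied to $F\to\im(f)$ together with a separate argument that effective epimorphisms are epimorphisms.

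First I would establish the factorization in (2). Given $f\colon F\to G$, form the equivalence relation $R := F\times_G F \hookrightarrow F\times F$ (this is a monomorphism: pull back the diagonal $G\to G\times G$, and monomorphisms are stable under base change). In a topos every equivalence relation is effective (axiom (1), Definition \ref{defn_exact_cat}), so there is an effective epimorphism $F\twoheadrightarrow F/R$ with $F\times_{F/R}F = R$. The universal property of the coequalizer gives a canonical map $F/R = \coeq(R\rightrightarrows F)\to G$, and I would define $\im(f)$ to be $F/R$. The key point, which is exactly statement (3), is that $\im(f)\to G$ is a monomorphism: this amounts to showing $\im(f)\times_G\im(f)\to\im(f)\times\im(f)$ is a mono, equivalently (after pulling back along the epi $F\times F\twoheadrightarrow \im(f)\times\im(f)$ and using that effective epis are stable under pullback, axiom (3)) that $F\times_G F\to F\times_{\im(f)}F$ is an isomorphism; but $F\times_{\im(f)}F = R = F\times_G F$ by construction. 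Uniqueness and functoriality of the factorization then follow from the usual orthogonality between effective epimorphisms and monomorphisms, which I would check directly using the coequalizer universal property. To carry out the image argument concretely in $\Shv_\tau(\cC)$ I would use Lemma \ref{morphism_of_shv}: the presheaf image of $f$ sheafifies to a subobject of $G$, and since sheafification preserves finite limits the sheafified image computes $F/R$.

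Next, statement (1). If $f\colon F\to G$ is an effective epimorphism then $\coeq(F\times_G F\rightrightarrows F)\xrightarrow{\ \sim\ }G$, so in the factorization of (2) we get $\im(f)\simeq G$ and in particular $f$ is an epimorphism. Conversely, if $f$ is an epimorphism, then the monomorphism $\im(f)\hookrightarrow G$ from (2) is also an epimorphism (it is a retract of the map $F\to G$ up to the epi $F\to\im(f)$, or more simply: $F\to\im(f)\to G$ with $F\to G$ epi forces $\im(f)\to G$ epi), hence an isomorphism by Lemma \ref{morphism_of_shv}(3); therefore $\coeq(F\times_G F\rightrightarrows F)\simeq\im(f)\simeq G$, i.e. $f$ is effective. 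Statement (3) was already proved en route to (2), and its final clause — the factorization of the canonical map $\coeq(F\times_G F\rightrightarrows F)\to G$ through $\im(f)$ — is immediate since $\coeq(F\times_G F\rightrightarrows F) = \coeq(R\rightrightarrows F) = \im(f)$ by construction.

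The main obstacle I anticipate is the verification that $\im(f)\hookrightarrow G$ is genuinely a monomorphism in statement (3): this is where the topos axioms (effectivity of equivalence relations, stability of effective epimorphisms under pullback) are used in an essential way, and one must be careful that the equivalence relation $F\times_G F$ really does reconstruct $F\times_{\im(f)}F$ rather than something larger. In the sheaf model this is transparent because sheafification is left exact, so I would lean on that; the only mild subtlety is checking that the presheaf-level image, which need not be a sheaf, has sheafification equal to the coequalizer $F/R$ computed in $\cX$ — this follows from left-exactness of sheafification applied to the epi-mono factorization in $\PreShv(\cC)$ combined with Proposition \ref{prop_sheaves}(2). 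Everything else is formal manipulation of universal properties.
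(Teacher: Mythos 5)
The paper provides no proof for this lemma (the source reads \verb|\begin{proof} Ommited.\end{proof}|), so there is no argument to compare yours against; your proposal stands on its own. The approach you outline is the standard one: build $\im(f)$ as the quotient of $F$ by the equivalence relation $R = F\times_G F$, use effectivity of equivalence relations to get $F\times_{\im(f)}F = R = F\times_G F$, and then descend the resulting isomorphism along a suitable effective epimorphism to conclude that $\im(f)\hookrightarrow G$ is a monomorphism. That is correct and complete in substance, and you are right that the descent step is where the topos axioms do real work.

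One phrasing in the middle is off and should be fixed before this could be inserted as a proof. You write that $\im(f)\to G$ being a monomorphism ``amounts to showing $\im(f)\times_G\im(f)\to\im(f)\times\im(f)$ is a mono.'' But that map is \emph{always} a monomorphism --- it is the base change of the diagonal $G\to G\times G$ --- so it is not the thing to be checked. What one must show is that the diagonal $\Delta\colon\im(f)\to\im(f)\times_G\im(f)$ is an \emph{isomorphism}. To see this, pull $\Delta$ back along the effective epimorphism $F\times_G F\twoheadrightarrow\im(f)\times_G\im(f)$ (itself a composite of two base changes of $F\twoheadrightarrow\im(f)$): the pullback is $F\times_{\im(f)}F\hookrightarrow F\times_G F$, which is an isomorphism since both sides equal $R$ by effectivity. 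A map whose pullback along an effective epimorphism is an epimorphism is itself an epimorphism (the composite $F\times_{\im(f)}F\to F\times_G F\to\im(f)\times_G\im(f)$ is an epi factoring through $\Delta$), so $\Delta$ is epi; it is also mono, and topoi are balanced, so $\Delta$ is an isomorphism. Also note that the map you call ``$F\times_G F\to F\times_{\im(f)}F$'' naturally points the other way, $F\times_{\im(f)}F\to F\times_G F$, induced by $\im(f)\to G$; harmless here since both are $R$, but worth getting straight. With those corrections the argument is clean, and the rest --- orthogonality for uniqueness/functoriality, and the derivation of (1) from (2) --- is fine as you wrote it.
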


\begin{proof} Ommited.
\end{proof}

\begin{defn}\label{defn_qcqs} Let $\cX$ be a topos. 
	
\begin{enumerate}[(1)]
	\item \textit{Quasicompact}: We say that an object $X \in \cX$ is quasicompact, if for for every collection of morphisms $\{U_i \rightarrow X \}_{i \in I}$ such that $\bigsqcup_{i \in I} U_i \rightarrow X$ is an effective epimorphism, there exists a finite subset $I_0 \subset I$ such that $\bigsqcup_{i \in I_0} U_i \rightarrow X$ is an effective epimorphism. Moreover, a morphism $Y \rightarrow X$ in $\cX$ is said to be quasicompact if for every morphism $U \rightarrow X$, where $U$ is quasicompact, the fiber product $Y \times_X U$ is quasicompact.
	\item \textit{Quasiseparated}: We say that an object $X \in \cX$ is quasiseparated if for every pair of morphisms $U \rightarrow X \leftarrow V$, where $U,V$ are quasicompact, the fiber product $U \times_X V$ is quasicompact. Moreover, a morphism $X \rightarrow Y$ is quasiseparated if the diagonal morphism $\Delta: X \rightarrow X \times_Y X$ is quasicompact.
	\item \textit{Quasicompact Quasiseparated (qcqs)}: Finally, we say that an object $X \in \cX$ is qcqs if it is quasicompact and quasiseparated, and we say that a morphism $X \rightarrow Y$ is qcqs if it is quasicompact and quasiseparated. Quasicompact quasiseparated objects are also called coherent in the literature. 
\end{enumerate}
\end{defn}

\begin{example} Any mononorphism in a topos is quasiseparated.
\end{example}

\begin{prop}\label{properties_qcqs_morphism} Let $\cX$ be a topos
\begin{enumerate}[(1)]
	\item Every isomorphism in $\cX$ is a qcqs morphism; that is, it is quasicompact and quasiseparated. The composition of quasicompact (resp. quasiseparated) morphisms is quasicompact (resp. quasiseparated).
	\item Let $f: X \rightarrow Y$ and $g: Y^{\prime} \rightarrow Y$ be morphisms in $\cX$, and $f^{\prime}: X^{\prime} := X \times_Y Y^{\prime} \rightarrow Y$ the basechange of $f$ along $g$. If $f$ is quasicompact (resp. quasiseparated) then $f^{\prime}$ is quasicompact (resp. quasiseparated).
	\item Let $f: X \rightarrow Y$ and $g: Y \rightarrow Z$ be morphisms in $\cX$. If $g \circ f$ is quasiseparated, then $f$ is quasiseparated.
	\item Let $f: X \rightarrow Y$ and $g: Y \rightarrow Z$ be morphisms in $\cX$, and $g$ is quasiseparated. If $g \circ f$ is quasicompact (resp. qcqs), then $f$ is quasicompact (resp. qcqs).
\end{enumerate}
\end{prop}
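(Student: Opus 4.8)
The plan is to prove the quasicompact assertions first and then bootstrap the quasiseparated ones, since quasiseparatedness is defined by quasicompactness of the diagonal. For the quasicompact half of (1) and (2): an isomorphism $f: Y \to X$ is quasicompact since for any quasicompact $U \to X$ the fiber product $Y \times_X U \simeq U$ is quasicompact; the base change $f': X\times_Y Y' \to Y'$ of a quasicompact $f: X \to Y$ along $g: Y' \to Y$ is quasicompact since for quasicompact $U \to Y'$ one has a canonical identification $(X\times_Y Y')\times_{Y'} U \simeq X\times_Y U$, which is quasicompact because $f$ is; and if $f: X \to Y$ and $g: Y \to Z$ are quasicompact then for quasicompact $U \to Z$ the object $Y\times_Z U$ is quasicompact, hence so is $X\times_Y(Y\times_Z U)\simeq X\times_Z U$, so $g\circ f$ is quasicompact. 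This gives that quasicompact morphisms contain isomorphisms and are stable under base change and composition.

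Next I would record the two cartesian-square identities that drive the rest. First, for composable $f: X \to Y$ and $g: Y \to Z$, the square with corners $X\times_Y X$, $X\times_Z X$, $Y$, $Y\times_Z Y$ (horizontal maps the natural inclusion $X\times_Y X \to X\times_Z X$ and the diagonal $\Delta_g: Y \to Y\times_Z Y$, vertical maps $X\times_Y X \to Y$ and $f\times f: X\times_Z X \to Y\times_Z Y$) is cartesian; hence $X\times_Y X \to X\times_Z X$ is a base change of $\Delta_g$ (in particular a monomorphism), and $\Delta_{g\circ f}$ factors as $X \xrightarrow{\Delta_f} X\times_Y X \to X\times_Z X$. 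Second, for the graph $\Gamma_f: X \to X\times_Z Y$ (formed from $g\circ f: X \to Z$ and $g: Y \to Z$), the square with corners $X$, $X\times_Z Y$, $Y$, $Y\times_Z Y$ (top map $\Gamma_f$, bottom map $\Delta_g$, left map $f$, right map $f\times\mathrm{id}_Y$) is cartesian, while the projection $p: X\times_Z Y \to Y$ is by definition the base change of $g\circ f$ along $g$, and $f = p\circ\Gamma_f$. Both identities are verified by the routine diagram chase with generalized elements.

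With these in hand I would conclude as follows. For the quasiseparated half of (1): the diagonal of an isomorphism is an isomorphism, hence quasicompact, and for a composite $g\circ f$ of quasiseparated morphisms $\Delta_f$ is quasicompact, the map $X\times_Y X \to X\times_Z X$ is a base change of the quasicompact $\Delta_g$ hence quasicompact, so their composite $\Delta_{g\circ f}$ is quasicompact. For the quasiseparated half of (2): from $X'\times_{Y'}X' \simeq (X\times_Y X)\times_Y Y'$ one sees $\Delta_{f'}$ is a base change of $\Delta_f$, hence quasicompact. For (3): the fiber product $X\times_{X\times_Z X}(X\times_Y X)$ is canonically $X$ — the projection to $X$ is split by the map induced by $\Delta_f$ and is a monomorphism (being a base change of the monomorphism $X\times_Y X \to X\times_Z X$), hence an isomorphism — and under this identification the other projection is $\Delta_f$; thus $\Delta_f$ is a base change of $\Delta_{g\circ f}$, which is quasicompact when $g\circ f$ is quasiseparated. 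For (4): $\Gamma_f$ is a base change of the quasicompact $\Delta_g$ (using $g$ quasiseparated) and $p$ is a base change of the quasicompact $g\circ f$, so $f = p\circ\Gamma_f$ is a composite of quasicompact morphisms hence quasicompact; the qcqs case then follows by combining this with part (3).

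The only non-formal content is the verification of the two cartesian squares above, and in particular the claim in (3) that $X\times_{X\times_Z X}(X\times_Y X)\to X$ is an isomorphism; these are exactly the manipulations of \cite[Expose VI]{SGA4} and I expect no genuine difficulty beyond careful bookkeeping, especially since in a topos one may freely test such identities on generalized elements or recognize the squares as pullbacks of pullbacks.
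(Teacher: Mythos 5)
Your proof is correct and complete; the paper itself gives no argument for this proposition, simply citing \cite[Exp. VI, Proposition 1.8]{SGA4}, and your argument is the standard one from that source: stability of quasicompact morphisms under base change and composition, the cartesian square exhibiting $X\times_Y X \to X\times_Z X$ as a base change of $\Delta_g$, and the graph factorization $f = p\circ\Gamma_f$. The key verifications — the two cartesian squares and the observation that a monomorphism admitting a section is an isomorphism, used to identify $X\times_{X\times_Z X}(X\times_Y X)\simeq X$ in (3) — are all carried out correctly.
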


\begin{proof} \cite[Exp. VI, Proposition 1.8]{SGA4}
\end{proof}

\begin{lemma}\label{alt_defn_qc} Let $\cX$ be a topos and $X$ be an object of $\cX$. Suppose that there exists a finite set $I$ and a collection of morphisms $\{U_i \rightarrow X\}_{i \in I}$ where each $U_i$ is quasicompact and such that the induced map $\bigsqcup_{i \in I} U_i \rightarrow X$ is an epimorphism. Then, $X$ is quasicompact.
\end{lemma}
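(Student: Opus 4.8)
The plan is to reduce quasicompactness of $X$ to that of the $U_i$ by pulling back an arbitrary cover along each $U_i \to X$. Concretely, suppose we are given a collection $\{V_j \to X\}_{j \in J}$ such that $\bigsqcup_{j \in J} V_j \to X$ is an effective epimorphism; we must produce a finite subset $J_0 \subseteq J$ for which $\bigsqcup_{j \in J_0} V_j \to X$ is still an effective epimorphism. First, for each $i \in I$, I would form the pullback of $\bigsqcup_{j \in J} V_j \to X$ along $U_i \to X$. By the compatibility of coproducts with pullbacks in a topos (Definition \ref{defn_topos}(3)) this pullback is $\bigsqcup_{j \in J}(V_j \times_X U_i) \to U_i$, and since $\cX$ is exact (Definition \ref{defn_exact_cat}), effective epimorphisms are stable under pullback, so this map is again an effective epimorphism. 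Using quasicompactness of $U_i$, I extract a finite subset $J_i \subseteq J$ with $\bigsqcup_{j \in J_i}(V_j \times_X U_i) \to U_i$ an effective epimorphism.

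Next I set $J_0 := \bigcup_{i \in I} J_i$, which is finite because $I$ is finite. It remains to check that $\bigsqcup_{j \in J_0} V_j \to X$ is an effective epimorphism, and by Lemma \ref{epi_equal_eff_epi_topos} it suffices to show it is an epimorphism. For this I would form the coproduct over $i \in I$ of the maps from the previous paragraph, obtaining $\bigsqcup_{i \in I} \bigsqcup_{j \in J_i}(V_j \times_X U_i) \to \bigsqcup_{i \in I} U_i$, which is an epimorphism because a coproduct of epimorphisms is an epimorphism (a direct check from the universal property of coproducts). Composing with the hypothesised epimorphism $\bigsqcup_{i \in I} U_i \to X$ shows the total map $\bigsqcup_{i \in I} \bigsqcup_{j \in J_i}(V_j \times_X U_i) \to X$ is an epimorphism. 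On the other hand, via the projections $V_j \times_X U_i \to V_j$ and the coproduct inclusions $V_j \hookrightarrow \bigsqcup_{j \in J_0} V_j$ (legitimate since $J_i \subseteq J_0$), this total map factors through $\bigsqcup_{j \in J_0} V_j \to X$, the factorization being consistent because the square defining $V_j \times_X U_i$ commutes. Since a composite which is an epimorphism forces its second factor to be an epimorphism, we conclude that $\bigsqcup_{j \in J_0} V_j \to X$ is an epimorphism, hence an effective epimorphism, which is exactly what is required.

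I expect no serious obstacle here: the proof is an assembly of formal facts about topoi, and the only points needing care are bookkeeping ones — namely that each pulled-back family $\{V_j \times_X U_i \to U_i\}$ is genuinely an effective epimorphism (so that the quasicompactness of $U_i$ applies), which rests on stability of effective epimorphisms under pullback in an exact category together with Definition \ref{defn_topos}(3), and the compatibility of the various factorizations through $X$. The identification of epimorphisms with effective epimorphisms (Lemma \ref{epi_equal_eff_epi_topos}) is used twice: once implicitly to read off an epimorphism from the quasicompactness of each $U_i$, and once at the end to upgrade the epimorphism we produced back to an effective epimorphism.
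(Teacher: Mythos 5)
Your argument is correct and self-contained. The paper itself does not give a proof here — it cites Proposition C.5.2 of Lurie's \emph{Ultracategories} — so your direct verification is a genuine expansion rather than a variation on the paper's reasoning. The bookkeeping all checks out: stability of effective epimorphisms under pullback together with the compatibility of coproducts and pullbacks gives that $\bigsqcup_{j\in J}(V_j\times_X U_i)\to U_i$ is an effective epimorphism; quasicompactness of $U_i$ yields a finite $J_i$; the coproduct of the resulting epimorphisms over $i\in I$ is an epimorphism (a direct check against the universal property of coproducts and coproduct injections, or equivalently using that coequalizers commute with coproducts plus disjointness/universality of coproducts in a topos); composing with $\bigsqcup_i U_i\to X$ and then factoring through $\bigsqcup_{j\in J_0}V_j\to X$ via the projections $V_j\times_X U_i\to V_j$ is licit because the pullback squares commute; and ``second factor of an epimorphism is an epimorphism'' finishes it. One point worth flagging for the reader: the factorization step tacitly uses that the coproduct injections $V_j\hookrightarrow\bigsqcup_{j\in J_0}V_j$ assemble the various maps $V_j\times_X U_i\to V_j$ (over $j\in J_i\subseteq J_0$) into a single map out of the double coproduct, which again rests on the universal property of coproducts and is completely routine, but is the one place an unstated coherence check is being performed.
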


\begin{proof} \cite[Proposition C.5.2]{lurie_ultracategories}
\end{proof}

\begin{defn}[Coherent Topos]\label{defn_coherent_topos} Let $\cX$ be a topos. We say that $\cX$ is coherent if there exists a (small) collection of objects $\cU$ satisfying the following conditions
\begin{enumerate}[(1)]
	\item The collection $\cU$ generates $\cX$: that is, every object $X \in \cX$ admits a collection of morphisms $\{U_i \rightarrow X\}_{i \in I}$ such that $\bigsqcup_{i \in I} U_i \rightarrow X$ is an epimorphism, and where each $U_i$ belongs to $\cU$.
	\item The collection $\cU$ is closed under finite products. In particular, it contains the final object of $\cX$.
	\item Every object of $\cU$ is quasicompact and quasiseparated.
\end{enumerate}
\end{defn}

\begin{prop}\label{qcqs_check_local} Let $\cX$ be a coherent topos, and $\cU$ a subcategory of $\cX$ satisfying the conditions of Definition \ref{defn_coherent_topos}. Then:
\begin{enumerate}[(1)]
	\item A morphism $X \rightarrow Y$ in $\cX$ is quasicompact if and only if for every morphism $U \rightarrow Y$ where $U \in \cU$, the fiber product $X \times_Y U$ is quasicompact.
	\item An object $X \in \cX$ is qcqs if and only if it is quasicompact and, for every pair of morphisms $U \rightarrow X \leftarrow V$ where $U,V \in \cU$, the fiber product $U \times_X V$ is quasicompact.
	\item Let $X \rightarrow Y$ be a morphism in $\cX$, and $U \rightarrow Y$ an epimorphism ($U$ is not necessarily contained in $\cU$). If the induced map $X \times_Y U \rightarrow U$ is quasicompact, then so is $X \rightarrow Y$.
\end{enumerate}	
\end{prop}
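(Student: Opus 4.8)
The plan is to deduce all three statements from a single mechanism: the criterion of Lemma \ref{alt_defn_qc} (an object admitting a finite epimorphic family of quasicompact objects mapping to it is quasicompact), combined with two structural features of a coherent topos — that coproducts are compatible with pullback (Definition \ref{defn_topos}(3), so effective epimorphisms base-change to effective epimorphisms and commute with disjoint unions) and that $\cU$ generates $\cX$ by finite families over quasicompact targets (Definitions \ref{defn_coherent_topos}, \ref{defn_qcqs}). The forward implications in (1) and (2) are immediate: objects of $\cU$ are quasicompact by Definition \ref{defn_coherent_topos}(3), so if $X \rightarrow Y$ is quasicompact then $X \times_Y U$ is quasicompact for every $U \in \cU$, and if $X$ is qcqs then it is in particular quasicompact and $U \times_X V$ is quasicompact for $U,V \in \cU$. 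So the content is in the converse directions.

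For the converse of (1), I would take an arbitrary quasicompact object $V$ with a morphism $V \rightarrow Y$ and show $X \times_Y V$ is quasicompact. Since $\cU$ generates, choose a family $\{U_i \rightarrow V\}_{i \in I}$ with $U_i \in \cU$ and $\bigsqcup_{i \in I} U_i \rightarrow V$ an effective epimorphism; quasicompactness of $V$ lets me shrink to a finite subset $I_0 \subseteq I$. Base-changing this finite epimorphic family along $X \times_Y V \rightarrow V$ and using compatibility of coproducts with pullback gives a finite effective epimorphism $\bigsqcup_{i \in I_0} \big(U_i \times_V (X \times_Y V)\big) \rightarrow X \times_Y V$, and $U_i \times_V (X \times_Y V) \simeq X \times_Y U_i$ by base change of $X \rightarrow Y$ along $U_i \rightarrow V \rightarrow Y$. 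Each $X \times_Y U_i$ is quasicompact by hypothesis, so Lemma \ref{alt_defn_qc} finishes it.

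For the converse of (2), I would keep the hypothesis that $X$ is quasicompact and verify quasiseparatedness, i.e. (Definition \ref{defn_qcqs}(2)) that $A \times_X B$ is quasicompact for any quasicompact $A \rightarrow X \leftarrow B$. Cover $A$ by finitely many $U_i \in \cU$ and $B$ by finitely many $V_j \in \cU$ (possible since $A,B$ are quasicompact); by iterated base change the finite family $\bigsqcup_{i,j} U_i \times_X V_j \rightarrow A \times_X B$ is an effective epimorphism, and each $U_i \times_X V_j$ is a fiber product over $X$ of two objects of $\cU$, hence quasicompact by hypothesis. Lemma \ref{alt_defn_qc} again concludes. (Alternatively one could phrase this via the diagonal $X \rightarrow X \times X$ and part (1), but the direct argument avoids needing $X \times X$ to be handled by $\cU$.)

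The main obstacle is part (3), because the epimorphism $U \rightarrow Y$ is by an object $U$ that need not be quasicompact, so one cannot simply base-change $U$ and read off a finite subcover. My plan: by part (1) it suffices to show $X \times_Y W$ is quasicompact for every $W \in \cU$. Base-change $U \rightarrow Y$ along $W \rightarrow Y$ to get an effective epimorphism $U \times_Y W \rightarrow W$; now cover $U \times_Y W$ by a (possibly infinite) family $\{Z_k \rightarrow U \times_Y W\}$ with $Z_k \in \cU$, compose to get $\bigsqcup_k Z_k \rightarrow W$, and use quasicompactness of $W$ to pass to a finite subset $K_0$ with $\bigsqcup_{k \in K_0} Z_k \rightarrow W$ still an effective epimorphism. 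Each $Z_k$ comes equipped with compatible maps to $U$ and to $W$ over $Y$, and base-changing the finite epimorphism along $X \times_Y W \rightarrow W$ gives a finite effective epimorphism onto $X \times_Y W$ whose terms are $Z_k \times_W (X \times_Y W) \simeq X \times_Y Z_k \simeq Z_k \times_U (X \times_Y U)$; the last identification is where the hypothesis is used — since $Z_k$ is quasicompact and $X \times_Y U \rightarrow U$ is quasicompact, $Z_k \times_U (X \times_Y U)$ is quasicompact. Lemma \ref{alt_defn_qc} then shows $X \times_Y W$ is quasicompact, and part (1) gives that $X \rightarrow Y$ is quasicompact. The only real care needed throughout is bookkeeping the two successive base changes and checking at each stage that the relevant maps remain effective epimorphisms (which is exactly Definition \ref{defn_topos}(3) together with Lemma \ref{epi_equal_eff_epi_topos}).
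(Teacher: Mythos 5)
Your proof is correct, and all three parts go through as written: the forward directions are immediate from quasicompactness of objects of $\cU$, and each converse is handled by producing a finite effective-epimorphic family with quasicompact terms and invoking Lemma \ref{alt_defn_qc}, with the base-change bookkeeping (stability of epimorphisms under pullback, compatibility of coproducts with pullback, and the identifications $U_i \times_V (X \times_Y V) \simeq X \times_Y U_i$ and $Z_k \times_W (X \times_Y W) \simeq Z_k \times_Y X \simeq Z_k \times_U (X \times_Y U)$) all checking out. The one point of comparison worth noting is that the paper does not argue this at all: its proof consists of citations to \cite[Corollaries A.2.1.4 and A.2.1.5]{luriespectral}. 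So you have supplied a self-contained argument where the paper defers to Lurie; the mechanism you use — cover by finitely many generators using quasicompactness, base change, then apply the finite-cover criterion — is the standard one underlying those results, and in part (3) your reduction via part (1) to targets $W \in \cU$, followed by refining the pulled-back cover $U \times_Y W \rightarrow W$ by generators and exploiting quasicompactness of $X \times_Y U \rightarrow U$ over the quasicompact $Z_k$, is exactly the right way to deal with the fact that $U$ itself need not be quasicompact.
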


\begin{proof} Part (1) and (2) are \cite[Corollary A.2.1.4]{luriespectral}, and Part (3) is \cite[Corollary A.2.1.5]{luriespectral}.
\end{proof}

\begin{prop}\label{comp_relative_absolute_qcqs} Let $\cX$ be a coherent topos, $\cU$ a subcategory of $\cX$ satisfying the conditions of Definition \ref{defn_coherent_topos}, and $\pt \in \cX$ the final object if $\cX$. Then,
\begin{enumerate}[(1)]
	\item An object $X \in \cX$ is quasicompact if and only if the canonical map $X \rightarrow \pt$ is quasicompact.
	\item An object $X \in \cX$ is quasiseparated if and only if the canonical map $X \rightarrow \pt$ is quasiseparated.
\end{enumerate}
\end{prop}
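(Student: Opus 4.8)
The plan is to reduce the absolute statements about an object $X \in \cX$ to the relative statements about the morphism $X \to \pt$ by systematically exploiting the fact that $\pt$ is the final object, so that $X \times_{\pt} Y \simeq X \times Y$ for every $Y$, and that $\pt$ itself belongs to $\cU$ (Definition \ref{defn_coherent_topos}(2)). Recall that $\pt$ is quasicompact (every effective epimorphism $\sqcup U_i \to \pt$ admits a finite sub-effective-epimorphism because $\pt \in \cU$, hence is qcqs) and that the identity $\pt \to \pt$ is an isomorphism, hence quasicompact and quasiseparated by Proposition \ref{properties_qcqs_morphism}(1). Each of (1) and (2) is an ``if and only if'', and in each direction one implication is formal from the definitions while the other requires invoking the coherence of $\cX$ through Proposition \ref{qcqs_check_local}.

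For (1): if $X \to \pt$ is quasicompact, then since $\pt \to \pt$ is quasicompact we may compose (Proposition \ref{properties_qcqs_morphism}(1)); but more to the point, $\pt$ is quasicompact, so by the definition of a quasicompact morphism (Definition \ref{defn_qcqs}(1)) the fiber product $X \times_{\pt} \pt \simeq X$ is quasicompact. Conversely, suppose $X$ is a quasicompact object. To show $X \to \pt$ is a quasicompact morphism we must check that for every morphism $U \to \pt$ with $U$ quasicompact, the fiber product $X \times_{\pt} U \simeq X \times U$ is quasicompact. Here I would pick a collection $\{V_j \to X\}_{j \in J}$ with $V_j \in \cU$ and $\sqcup V_j \to X$ an epimorphism, and similarly $\{W_k \to U\}_{k \in K}$ with $W_k \in \cU$. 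Since $X$ and $U$ are quasicompact we may take $J$ and $K$ finite (using the definition of quasicompactness applied to these covers, after noting each $V_j, W_k$ is quasicompact). Then $\sqcup_{j,k} (V_j \times W_k) \to X \times U$ is an epimorphism with finite index set, and each $V_j \times W_k \in \cU$ is quasicompact; by Lemma \ref{alt_defn_qc} this forces $X \times U$ to be quasicompact. Then Proposition \ref{qcqs_check_local}(1) (with $Y = \pt$, using that $\cU$ generates) upgrades this to: $X \to \pt$ is a quasicompact morphism.

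For (2): if $X \to \pt$ is quasiseparated, then its diagonal $\Delta \colon X \to X \times_{\pt} X \simeq X \times X$ is quasicompact. To see $X$ is a quasiseparated object, take $U \to X \leftarrow V$ with $U, V$ quasicompact; then $U \times_X V$ fits into the pullback square with $U \times V \to X \times X$ along $\Delta$, so $U \times_X V \simeq (U \times V) \times_{X \times X} X$, which is the base change of the quasicompact morphism $\Delta$ along $U \times V \to X \times X$, hence quasicompact by Proposition \ref{properties_qcqs_morphism}(2) — provided $U \times V$ is quasicompact, which follows from the argument in the previous paragraph (finite cover by objects of $\cU$ plus Lemma \ref{alt_defn_qc}). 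Conversely, suppose $X$ is a quasiseparated object; I want $\Delta \colon X \to X \times X$ quasicompact. By Proposition \ref{qcqs_check_local}(1) it suffices to check that for every $U \to X \times X$ with $U \in \cU$ the fiber product $X \times_{X \times X} U$ is quasicompact. Writing the map $U \to X \times X$ as a pair $(a,b)$ with $a, b \colon U \to X$, one identifies $X \times_{X \times X} U$ with the fiber product $U \times_{(a,b),X \times X, \Delta} X$, which unwinds (a standard diagram chase with the two projections $X \times X \to X$) to the fiber product of $a$ and $b$ over $X$, i.e. to an object of the form $U' \times_X U''$ built from quasicompact pieces mapping to $X$; since $U \in \cU$ is quasicompact and $X$ is quasiseparated, this fiber product is quasicompact.

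The main obstacle I anticipate is not any single deep input — all the machinery is already assembled in Propositions \ref{properties_qcqs_morphism}, \ref{qcqs_check_local} and Lemma \ref{alt_defn_qc} — but rather the bookkeeping in the converse direction of (2): correctly identifying the pullback $X \times_{X\times X} U$ along the diagonal with an honest fiber product of two quasicompact-indexed families over $X$, and making sure the coherence hypothesis is invoked in the right place (namely to reduce checking quasicompactness of $\Delta$ to test objects $U \in \cU$, and to reduce quasicompactness of the various binary products to finite covers by objects of $\cU$). Once the diagonal is handled, everything else is a direct translation between ``object'' and ``morphism to $\pt$'' using that $X \times_{\pt} Y = X \times Y$.
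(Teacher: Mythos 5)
For part (1) and the forward direction of part (2), your argument is correct and matches the paper's in spirit, with one harmless detour: where you show $X \times U$ is quasicompact by choosing finite covers of $X$ and $U$ by objects of $\cU$ and invoking Lemma \ref{alt_defn_qc}, the paper simply observes that this is the statement that $\pt$ is quasiseparated (which holds because $\pt \in \cU$ and every object of $\cU$ is qcqs). You are re-deriving the same fact inline; either route is fine, and the subsequent invocation of Proposition \ref{qcqs_check_local}(1) is actually unnecessary since you have already checked the defining condition against every quasicompact $U$, not merely those in $\cU$.

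The converse direction of part (2) has a genuine gap, and it is precisely at the bookkeeping step you yourself flag as the likely obstacle. You write the test morphism $U \to X \times X$, with $U \in \cU$, as a pair $(a,b)$ and claim that the pullback $X \times_{X \times X} U$ ``unwinds \ldots to the fiber product of $a$ and $b$ over $X$, i.e.\ to an object of the form $U' \times_X U''$''. It does not: the pullback of $\Delta_X \colon X \to X \times X$ along $(a,b) \colon U \to X \times X$ is the \emph{equalizer} of $a, b \colon U \rightrightarrows X$, a subobject of $U$, whereas the fiber product $U \times_{a, X, b} U$ lives over $U \times U$; these are different objects and only one of them is reached directly by applying quasiseparatedness of $X$. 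The correct identification, which the paper uses, comes from factoring $(a,b)$ as $U \xrightarrow{\Delta_U} U \times U \xrightarrow{a \times b} X \times X$, giving a two-step pullback that exhibits $X \times_{X \times X} U$ as $U \times_{U \times U} (U \times_X U)$. This requires \emph{two} inputs to close: quasiseparatedness of $X$ gives that $U \times_X U$ is quasicompact, and then quasiseparatedness of $U \times U$ — which you get for free because $\cU$ is closed under finite products and consists of qcqs objects, Definition \ref{defn_coherent_topos}(2),(3) — gives that the pullback of the two quasicompacts $U$ and $U \times_X U$ over $U \times U$ is quasicompact. Your sketch only supplies the first input; without the factorization through $U \times U$ and an appeal to its quasiseparatedness, the argument does not conclude.
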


\begin{proof} First we proof (1). If $X$ is quasicompact, from the assumption that $\pt$ is quasiseparated we learn that for any quasicompact object $Y \in \cX$ the fiber product $X \times Y$ is quasicompact, proving that the morphism $X \rightarrow \pt$ is quasicompact. On the other hand, if the morphism $X \rightarrow \pt$ is quasicompact, since $\pt$ is quasicompact it follows that $X \times_{\pt} \pt \simeq X$ is quasicompact, finishing the proof of (1).

Next we proof (2). Assume that the morphism $X \rightarrow \pt$ is quasiseparated, then by definition we have that the diagonal map $\Delta: X \rightarrow X \times X$ is quasicompact. Given a pair of morphism $f: U \rightarrow X \leftarrow V :g$ where $U,V$ are quasicompact, it follows that $U \times V$ is quasicompact (since $\pt$ is quasiseparated), and so from the following pullback diagram
\begin{cd}
	U \times_X V \ar[r] \ar[d] & U \times V \ar[d, "f \times g"] \\
	X \ar[r, "\Delta"] & X \times X
\end{cd}
we learn that $U \times_X V$ is quasicompact, proving that if $X \rightarrow \pt$ is quasiseparated then $X$ is quasiseparated. On the other hand, we need to show that if $X$ is quasiseparated then the diagonal map $\Delta: X \rightarrow X \times X$ is quasicompact, by Proposition \ref{qcqs_check_local} it suffices to check that for and any morphism $U \rightarrow X \times X$ where $U \in \cU$, the fiber product $U \times_{X \times X} X$ is quasicompact. In order to proof this, first notice that any morphism $U \rightarrow X \times X$ admits a canonical factorization as $U \overset{\Delta}{\longrightarrow} U \times U \overset{f \times g}{\longrightarrow} X \times X$, and that we have the following commutative diagram, where each commuting square is a pullback square
\begin{cd}
	U \times_{X \times X} \ar[r] \ar[d] X & U \ar[d, "\Delta"] \\
	U \times_X U \ar[r] \ar[d] & U \times U \ar[d] \\
	X \ar[r, "\Delta"] & X \times X
\end{cd}
By the hypothesis that $X$ is quasiseparated it follows that $U \times_X U$ is quasicompact, and by the hypothesis on $\cU$ from Definition \ref{defn_coherent_topos} it follows that $U \times U$ is qcqs. Then, since $U \times U$ is quasiseparated it follows that $U \times_{X \times X} X$ is quasicompact, finishing the proof of (2).
\end{proof}

\begin{prop}\label{prop_qcqs_obj} Let $\cX$ be a coherent topos, and $\cX_{\qcqs} \subset \cX$ the full-subcategory of $\qcqs$ objects of $\cX$. Then,
\begin{enumerate}[(1)]
	\item The full subcategory $\cX_{\qcqs} \subset \cX$ is closed under finite limits.
	\item The full subcategory $\cX_{\qcqs} \subset \cX$ is closed under finite coproducts.
	\item Supposed we have an epimorphism $X \rightarrow Y$, if $X$ is qcqs and the fiber product $X \times_Y X$ is quasicompact, then $Y$ is qcqs.
\end{enumerate}
\end{prop}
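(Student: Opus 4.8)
The plan is to verify the three closure properties by reducing each to the behaviour of the point object $\pt \in \cX$ via Proposition \ref{comp_relative_absolute_qcqs}, together with the stability properties of quasicompact and quasiseparated morphisms recorded in Proposition \ref{properties_qcqs_morphism}, and the fact (Proposition \ref{qcqs_check_local}(3)) that quasicompactness of a morphism can be checked after an epimorphic base change.

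For part (1), since $\cX_{\qcqs}$ being closed under finite limits is equivalent to it being closed under finite products and fibre products, I would argue as follows. If $X, Y$ are qcqs, then $X \times Y$ maps to $\pt$ through $X \times Y \to Y \to \pt$; both morphisms are quasicompact and quasiseparated (the first by base change of $X \to \pt$ along $Y \to \pt$, using Proposition \ref{properties_qcqs_morphism}(2), the second by hypothesis), so the composite is quasicompact and quasiseparated by Proposition \ref{properties_qcqs_morphism}(1), hence $X \times Y$ is qcqs by Proposition \ref{comp_relative_absolute_qcqs}. For a fibre product $X \times_Z Y$ with $X, Y, Z$ qcqs, factor $X \times_Z Y \to X \times Y \to \pt$: the first map is a base change of the diagonal $\Delta \colon Z \to Z \times Z$, which is quasicompact because $Z$ is quasiseparated (this is exactly the content of the proof of Proposition \ref{comp_relative_absolute_qcqs}(2)), and is also quasiseparated since it is a monomorphism; the second map is qcqs by the product case just handled; so $X \times_Z Y$ is qcqs by Proposition \ref{comp_relative_absolute_qcqs} again.

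For part (2), closure under finite coproducts: given qcqs objects $X_1, \dots, X_n$, quasicompactness of $\bigsqcup X_i$ is immediate from Lemma \ref{alt_defn_qc} applied to the identity-induced epimorphism $\bigsqcup X_i \to \bigsqcup X_i$. For quasiseparatedness I would use Proposition \ref{comp_relative_absolute_qcqs}(2) and check that the diagonal of $\bigsqcup X_i \to \pt$ is quasicompact; here the disjointness of coproducts in a topos (Definition \ref{defn_topos}) is essential: the fibre product $(\bigsqcup_i X_i) \times (\bigsqcup_j X_j)$ decomposes as $\bigsqcup_{i,j} (X_i \times X_j)$, and the diagonal $\bigsqcup_i X_i \to \bigsqcup_{i,j}(X_i \times X_j)$ lands in the ``diagonal'' summands $\bigsqcup_i (X_i \times X_i)$ via the diagonals $X_i \to X_i \times X_i$ (the cross terms receive the initial object). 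Each $X_i \to X_i \times X_i$ is quasicompact since $X_i$ is quasiseparated, and a finite coproduct of quasicompact morphisms into the summands of a finite coproduct is quasicompact (using compatibility of coproducts with pullback, Definition \ref{defn_topos}(3), and Lemma \ref{alt_defn_qc}); hence the total diagonal is quasicompact.

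For part (3), suppose $f \colon X \to Y$ is an epimorphism with $X$ qcqs and $X \times_Y X$ quasicompact. Quasicompactness of $Y$: by Lemma \ref{epi_equal_eff_epi_topos}(1) the epimorphism $f$ is effective, so $X \to Y$ is an effective epimorphism with $X$ quasicompact, and Lemma \ref{alt_defn_qc} gives that $Y$ is quasicompact. Quasiseparatedness of $Y$: by Proposition \ref{comp_relative_absolute_qcqs}(2) it suffices to show the diagonal $\Delta_Y \colon Y \to Y \times Y$ is quasicompact, and by Proposition \ref{qcqs_check_local}(3) it suffices to check this after the epimorphic base change $X \times X \to Y \times Y$ (which is an epimorphism since $f$ is, using compatibility of products with epimorphisms in a topos). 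The base change of $\Delta_Y$ along $X \times X \to Y \times Y$ is precisely $X \times_Y X \to X \times X$; since $X \times_Y X$ is quasicompact by hypothesis and $X \times X$ is quasiseparated (it is qcqs by part (1)), this morphism is quasicompact by Proposition \ref{properties_qcqs_morphism}(4) (a morphism into a quasiseparated object from a quasicompact object is quasicompact). Therefore $\Delta_Y$ is quasicompact, so $Y$ is quasiseparated, and combined with the above $Y$ is qcqs.

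The main obstacle I anticipate is part (2): the bookkeeping needed to justify that the diagonal of a finite coproduct is quasicompact requires care in invoking the disjointness and pullback-compatibility axioms of a topos to decompose the relevant fibre products into summands, and then assembling quasicompactness of a ``diagonal'' map out of quasicompactness on each summand. Parts (1) and (3) are more routine diagram-chasing once one commits to routing everything through the object $\pt$ and the morphism stability lemmas.
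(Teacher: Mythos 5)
Your parts (1) and (3) are correct: routing everything through the final object via Proposition \ref{comp_relative_absolute_qcqs} and the stability statements of Proposition \ref{properties_qcqs_morphism} works (for (1) note also that the empty product $\pt$ is qcqs since it lies in $\cU$, so products plus fibre products do generate all finite limits), and in (3) the identification of the base change of $\Delta_Y$ along the epimorphism $X \times X \rightarrow Y \times Y$ with $X \times_Y X \rightarrow X \times X$, followed by Proposition \ref{qcqs_check_local}(3) and the observation that a quasicompact object mapping to a quasiseparated object gives a quasicompact morphism (Propositions \ref{comp_relative_absolute_qcqs} and \ref{properties_qcqs_morphism}(4)), is exactly the right mechanism. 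Be aware that the paper itself gives no argument here: it simply cites \cite[Propositions C.5.9, C.5.12 and C.5.13]{lurie_ultracategories}, so any self-contained proof such as yours is by default a different route, and what it buys is an argument internal to the lemmas already recorded in this section.

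In part (2), however, there is a genuine gap at the step ``a finite coproduct of quasicompact morphisms into the summands of a finite coproduct is quasicompact, by Definition \ref{defn_topos}(3) and Lemma \ref{alt_defn_qc}.'' To test quasicompactness of $\bigsqcup_i g_i \colon \bigsqcup_i A_i \rightarrow \bigsqcup_i B_i$ you take a quasicompact $U \rightarrow \bigsqcup_i B_i$, decompose $U \cong \bigsqcup_i U_i$ with $U_i = U \times_{\bigsqcup_j B_j} B_i$ using Definition \ref{defn_topos}(3), and then want each $A_i \times_{B_i} U_i$ quasicompact so that Lemma \ref{alt_defn_qc} applies; but quasicompactness of $g_i$ only helps if each $U_i$ is quasicompact. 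So you need the sublemma that the summands of a finite coproduct decomposition of a quasicompact object are quasicompact (equivalently, that the coproduct inclusions $B_i \hookrightarrow \bigsqcup_j B_j$ are quasicompact morphisms), and neither of your two cited ingredients delivers this. The sublemma is true, and its proof is precisely where the remaining topos axioms enter: given a family $\{W_\alpha \rightarrow U_i\}$ with $\bigsqcup_\alpha W_\alpha \rightarrow U_i$ an epimorphism, adjoin the inclusions $U_j \hookrightarrow U$ for $j \neq i$, extract a finite jointly epimorphic subfamily using quasicompactness of $U$, and pull back along the monomorphism $U_i \hookrightarrow U$: epimorphisms are stable under pullback, $W_\alpha \times_U U_i \cong W_\alpha$ because $W_\alpha \rightarrow U$ factors through $U_i$, and $U_j \times_U U_i$ is initial for $j \neq i$ by disjointness of coproducts (Definition \ref{defn_disj_coprod}), so finitely many $W_\alpha$ already cover $U_i$. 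With this lemma inserted your argument for (2) goes through; note that the seemingly simpler alternative of checking quasiseparatedness of $\bigsqcup_i X_i$ directly from Definition \ref{defn_qcqs}(2), via $U \times_S V \cong \bigsqcup_i (U_i \times_{X_i} V_i)$, needs exactly the same sublemma, so it cannot be avoided.
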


\begin{proof} Part (1) is \cite[Proposition C.5.9]{lurie_ultracategories}, Part (2) is \cite[Proposition C.5.12]{lurie_ultracategories}, and finally Part (3) is \cite[Proposition C.5.13]{lurie_ultracategories}.
\end{proof}

\begin{prop}\label{finitary_implies_coh} Let $\cC$ be a (small) category which admits finite limits, in particular it admits a final object, endowed with a (finitary) Grothendieck topology $\tau$. Then,
\begin{enumerate}[(1)]
	\item The topos $\Shv_{\tau} (\cC)$ is coherent.
	\item Let $X$ be any object of $\cC$, then $\Yo_{\tau} (X)$ is a qcqs object of the topos $\Shv_{\tau}(\cC)$, where $\Yo_{\tau}$ is the sheafified Yoneda functor $\Yo_{\tau}: \cC \rightarrow \Shv_{\tau}(\cC)$.
\end{enumerate}
\end{prop}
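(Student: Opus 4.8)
The plan is to take $\cU$ to be the essential image of the sheafified Yoneda functor $\Yo_\tau \colon \cC \to \Shv_\tau(\cC)$ and to verify the three conditions of Definition \ref{defn_coherent_topos}. Condition $(1)$ — that $\cU$ generates $\Shv_\tau(\cC)$ — is immediate from Proposition \ref{shv_is_topos}. Condition $(2)$ — closure under finite products, including the final object — follows because $\Yo_\tau$ preserves finite limits (Proposition \ref{prop_sheaves}) and $\cC$ admits finite limits, so $\Yo_\tau(c)\times\Yo_\tau(d)\simeq\Yo_\tau(c\times d)$ and the final object of $\Shv_\tau(\cC)$ is $\Yo_\tau$ of the final object of $\cC$. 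Thus the whole proposition reduces to condition $(3)$, which is exactly statement $(2)$: every representable sheaf $\Yo_\tau(X)$ is quasicompact and quasiseparated.

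For quasicompactness I would argue directly, this being the step where finitariness of $\tau$ is essential. Suppose $\{F_j\to\Yo_\tau(X)\}_{j\in J}$ is a family whose coproduct is an effective epimorphism onto $\Yo_\tau(X)$, equivalently an epimorphism by Lemma \ref{epi_equal_eff_epi_topos}. Using the generation property of $\Shv_\tau(\cC)$, choose for each $j$ an epimorphism $\bigsqcup_k\Yo_\tau(c_{jk})\to F_j$, producing an epimorphism $\phi\colon\bigsqcup_{j,k}\Yo_\tau(c_{jk})\to\Yo_\tau(X)$. Apply the epimorphism criterion of Lemma \ref{morphism_of_shv} to the distinguished section $\overline{\mathrm{id}_X}\in\Yo_\tau(X)(X)$ (the image of $\mathrm{id}_X\in\Yo(X)(X)$ under $\Yo(X)(X)\to\Yo_\tau(X)(X)$): there is a $\tau$-covering of $X$ over which $\overline{\mathrm{id}_X}$ lifts along $\phi$. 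Unwinding the sheafifications hidden inside the coproduct sheaf $\bigsqcup_{j,k}\Yo_\tau(c_{jk})$ and refining, one obtains a covering $\{V_n\to X\}_{n\in N}$ together with, for each $n$, an index pair $(j_n,k_n)$ and a section $t_n\in\Yo_\tau(c_{j_nk_n})(V_n)$ whose image in $\Yo_\tau(X)(V_n)$ is $\overline{\mathrm{id}_X}|_{V_n}$; since $\tau$ is finitary I may take $N$ finite. By the Yoneda lemma in $\Shv_\tau(\cC)$ (valid because $\Yo_\tau$ is the sheafification of $\Yo$), each $t_n$ corresponds to a map $\Yo_\tau(V_n)\to\Yo_\tau(c_{j_nk_n})$ whose composite with $\phi$ is the canonical map $\Yo_\tau(V_n)\to\Yo_\tau(X)$. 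As $\{V_n\to X\}$ is a covering, $\bigsqcup_n\Yo_\tau(V_n)\to\Yo_\tau(X)$ is an epimorphism by Proposition \ref{prop_sheaves}, hence so is the intermediate map $\bigsqcup_n\Yo_\tau(c_{j_nk_n})\to\Yo_\tau(X)$ through which it factors, and therefore so is $\bigsqcup_{j\in\{j_1,\dots,j_N\}}F_j\to\Yo_\tau(X)$. This exhibits the required finite subcover.

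With quasicompactness established, quasiseparatedness of $\Yo_\tau(X)$ is formal. Given quasicompact objects $U,V$ equipped with maps to $\Yo_\tau(X)$, pick finite families $\{c_i\}_{i\in I}$ and $\{d_\ell\}_{\ell\in L}$ in $\cC$ with epimorphisms $\bigsqcup_i\Yo_\tau(c_i)\to U$ and $\bigsqcup_\ell\Yo_\tau(d_\ell)\to V$, where finiteness uses quasicompactness of $U$ and $V$ together with the generation property. Since epimorphisms in a topos are stable under base change and coproducts commute with pullback, the induced map $\bigsqcup_{i,\ell}\bigl(\Yo_\tau(c_i)\times_{\Yo_\tau(X)}\Yo_\tau(d_\ell)\bigr)\to U\times_{\Yo_\tau(X)}V$ is an epimorphism; and each $\Yo_\tau(c_i)\times_{\Yo_\tau(X)}\Yo_\tau(d_\ell)\simeq\Yo_\tau(c_i\times_X d_\ell)$ is quasicompact by the previous paragraph, using that $\cC$ admits finite limits. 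Hence $U\times_{\Yo_\tau(X)}V$ is quasicompact by Lemma \ref{alt_defn_qc}. This proves statement $(2)$, and combined with the first paragraph it shows $\cU$ satisfies Definition \ref{defn_coherent_topos}, so $\Shv_\tau(\cC)$ is a coherent topos, which is statement $(1)$.

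The main obstacle is the quasicompactness argument: one must pass between presheaves and sheaves with care, since the coproduct $\bigsqcup_{j,k}\Yo_\tau(c_{jk})$ and the objects $\Yo_\tau(c)$ are sheafifications whose sections over an object are not the naive sets of maps, and it is precisely in controlling the successive refining coverings that the finitariness of $\tau$ must be used.
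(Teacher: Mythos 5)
Your route is genuinely different from the paper's: the paper disposes of this proposition by citing \cite[Proposition C.6.3]{lurie_ultracategories} (together with Proposition \ref{shv_is_topos}), whereas you verify Definition \ref{defn_coherent_topos} directly with $\cU$ the essential image of $\Yo_\tau$. Your quasicompactness argument is essentially sound: the bookkeeping you flag (a section of a sheafification locally comes from the underlying presheaf, a section of a coproduct of sheaves locally lands in a single summand, and finitariness then extracts a finite subfamily that still covers) is exactly the standard argument. One small point: you use Lemma \ref{morphism_of_shv}(2) in the ``only if'' direction (every section of the target of an epimorphism lifts locally), while the lemma as stated only gives the ``if'' direction; the converse is true and standard (the image sheaf of an epimorphism is the whole target, and it is the sheafification of the presheaf image), but you should say this rather than cite the lemma for it.

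There is, however, a genuine gap in the quasiseparatedness step. You write $\Yo_\tau(c_i)\times_{\Yo_\tau(X)}\Yo_\tau(d_\ell)\simeq\Yo_\tau(c_i\times_X d_\ell)$, but the maps $\Yo_\tau(c_i)\to\Yo_\tau(X)$ and $\Yo_\tau(d_\ell)\to\Yo_\tau(X)$ arise by composing through the abstract quasicompact objects $U$ and $V$, and $\Yo_\tau$ is not fully faithful: such a map is an element of $\Yo_\tau(X)(c_i)$, i.e.\ a section of the sheafification of $\Hom_{\cC}(-,X)$, not an element of $\Hom_{\cC}(c_i,X)$. So there need be no morphisms $c_i\to X$, $d_\ell\to X$ in $\cC$ at all, and the object $c_i\times_X d_\ell$ whose quasicompactness you invoke is not even defined; preservation of finite limits by $\Yo_\tau$ only applies to diagrams coming from $\cC$. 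The repair is the same local-lifting device you already use for quasicompactness: since sections of $\Yo_\tau(X)$ locally come from $\Hom_{\cC}(-,X)$, refine each $c_i$ by a finite covering $\{e_{ir}\to c_i\}$ (finitariness again) over which the composite $\Yo_\tau(e_{ir})\to\Yo_\tau(X)$ is induced by an honest morphism $e_{ir}\to X$ in $\cC$, and similarly refine the $d_\ell$ by $\{f_{\ell s}\to d_\ell\}$. Then $\bigsqcup_{i,r}\Yo_\tau(e_{ir})\to U$ and $\bigsqcup_{\ell,s}\Yo_\tau(f_{\ell s})\to V$ are still epimorphisms with finite index sets, the fiber products $\Yo_\tau(e_{ir})\times_{\Yo_\tau(X)}\Yo_\tau(f_{\ell s})\simeq\Yo_\tau(e_{ir}\times_X f_{\ell s})$ are now legitimately representable and hence quasicompact by your first part, and Lemma \ref{alt_defn_qc} concludes that $U\times_{\Yo_\tau(X)}V$ is quasicompact, as you intended. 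With that insertion your proof is complete and is a reasonable self-contained alternative to the paper's citation.
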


\begin{proof} This is \cite[Proposition C.6.3]{lurie_ultracategories}, or \cite[Proposition A.3.1.3]{luriespectral} for an $\infty$-categorical version. We already proved in Proposition \ref{shv_is_topos} that the category $\Shv_{\tau}(\cC)$ is a topos.
\end{proof}

\begin{prop}\label{saturate_qs} Let $\cX$ be a coherent topos, and $X$ an object of $\cX$. Then, $X$ is quasiseparated if and only if there exists a filtered category $\cI$ and a functor $\cI \rightarrow \cX$ where $i \mapsto j$ is sent to a monomorphism $U_i \hookrightarrow U_j$ between qcqs objects and such that
\begin{align*}
	\colim_{\cI} U_i \simeq X
\end{align*}
Once the conditions described above of the functor $\cI \rightarrow \cX$ are satisfied it is immediate from the construction that the induced maps $U_i \hookrightarrow X$ is a monomorphism. This result implies that quasiseparated objects are closed under fiber products.
\end{prop}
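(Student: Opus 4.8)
The plan is to prove both implications separately, with the forward direction (quasiseparated $\Rightarrow$ filtered colimit of qcqs subobjects) being the substantive one and the reverse direction being essentially formal. For the reverse direction, suppose $X \simeq \colim_{\cI} U_i$ as in the statement, with each $U_i$ qcqs and each transition map a monomorphism. First I would observe that since the transition maps are monomorphisms and $\cI$ is filtered, each structure map $U_i \hookrightarrow X$ is a monomorphism: this follows because a filtered colimit of monomorphisms is a monomorphism in a topos (monomorphisms are detected by the diagonal being an isomorphism, and filtered colimits commute with finite limits and preserve isomorphisms in a topos), and one can factor $U_i \to X$ through $\colim_{j \geq i} U_j$. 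Then, to see $X$ is quasiseparated, by Proposition \ref{comp_relative_absolute_qcqs}(2) it suffices to check $X \to \pt$ is quasiseparated, equivalently (using Proposition \ref{qcqs_check_local}) that for any two maps $U \to X \leftarrow V$ with $U, V \in \cU$ (the generating qcqs subcategory), the fiber product $U \times_X V$ is quasicompact. Since $U$ and $V$ are quasicompact and $X \simeq \colim_\cI U_i$ is a filtered colimit, the maps $U \to X$ and $V \to X$ factor through some $U_i$ (here I use that a quasicompact object is "finitely presented" with respect to filtered colimits of monomorphisms — more carefully, $\Maps(U, \colim U_i) \simeq \colim \Maps(U, U_i)$ because the $U_i \hookrightarrow X$ are monomorphisms and $U$ is quasicompact, so a section over $U$ lands in some $U_i$). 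Then $U \times_X V \simeq U \times_{U_i} V$, which is quasicompact since $U_i$ is quasiseparated.

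For the forward direction, suppose $X$ is quasiseparated; I want to exhibit $X$ as a filtered colimit of qcqs subobjects. The natural candidate is the category $\cI = \Sub(X)_{\qcqs}$ of qcqs subobjects of $X$ (in the sense of Definition \ref{into_defn_max_atlas}, with $Y \hookrightarrow X$ ranging over monomorphisms from qcqs objects), together with the tautological functor $\cI \to \cX$, $(Y \hookrightarrow X) \mapsto Y$. I would first check that $\cI$ is filtered: given $Y_1 \hookrightarrow X \hookleftarrow Y_2$, the fiber product $Y_1 \times_X Y_2$ is a monomorphism into $X$ (monomorphisms are stable under base change) and is qcqs — it is quasiseparated as any monomorphism is, and it is quasicompact precisely because $X$ is quasiseparated (Definition \ref{defn_qcqs}(2), applied to the two quasicompact objects $Y_1, Y_2$ mapping to $X$). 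And given two morphisms $Y_1 \rightrightarrows Y_2$ in $\cI$, they automatically coincide since both are sub-objects and morphisms in $\Sub(X)_{\qcqs}$ are monomorphisms, so coequalizers are trivial. Hence $\cI$ is filtered (indeed cofiltered-free and directed).

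The remaining, and main, task is to show the canonical map $\colim_{\cI} Y \to X$ is an isomorphism. By Lemma \ref{morphism_of_shv}(3) (or Lemma \ref{epi_equal_eff_epi_topos}) it suffices to show this map is both a monomorphism and an epimorphism. It is a filtered colimit of monomorphisms $Y \hookrightarrow X$, hence a monomorphism as in the reverse direction. For surjectivity, I would use the coherence of $\cX$: by Proposition \ref{finitary_implies_coh} the topos $\cX = \Shv_\tau(\cC)$ is coherent and generated by the qcqs objects $\Yo_\tau(U)$, $U \in \cC$. So it is enough to show that for every $U \in \cC$ and every map $\Yo_\tau(U) \to X$, this map factors through some $Y \in \cI$ — then $\bigsqcup_{Y \in \cI} Y \to X$ is an epimorphism, and since epimorphisms in a topos are effective (Lemma \ref{epi_equal_eff_epi_topos}), $\colim_\cI Y \to X$ is an epimorphism. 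Given $f: \Yo_\tau(U) \to X$, the image $\im(f) \hookrightarrow X$ (Lemma \ref{epi_equal_eff_epi_topos}(2)) is a monomorphism into $X$ which receives an epimorphism from the quasicompact object $\Yo_\tau(U)$; by Lemma \ref{alt_defn_qc}, $\im(f)$ is quasicompact, and being a monomorphism it is quasiseparated, so $\im(f)$ is qcqs, i.e. $\im(f) \in \cI$, and $f$ factors through it. This finishes surjectivity and hence the proof. The one point requiring care — and the place I expect to spend the most effort — is justifying that a map from a quasicompact object into the filtered colimit $\colim_\cI Y$ factors through a finite stage; I would handle this via the image factorization argument just described (which avoids any direct "compactness" claim about $\colim_\cI Y$ and instead works with $X$ directly), together with the observation that $\cI$ is already closed under the pairwise intersections needed to see the colimit is filtered. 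Finally, the closure of quasiseparated objects under fiber products follows: if $X_1, X_2$ are quasiseparated over a quasiseparated $Z$, write each as such a filtered colimit and use that filtered colimits commute with the finite limit $X_1 \times_Z X_2$, together with the fact established above that a filtered colimit of qcqs subobjects with monomorphic transition maps is quasiseparated.
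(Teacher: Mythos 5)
Your reverse direction is correct and closely parallels the paper's: both use that a quasicompact object mapping to $\colim U_i$ factors through some $U_i$ (you phrase this via a finite-presentation heuristic, the paper by realizing $\cX$ as $\Shv_{\text{coh}}(\cX_{\qcqs})$ and then lifting from a qcqs epimorphic cover of a general quasicompact object, but the arguments are essentially the same). Your forward direction differs from the paper's in the choice of indexing category: you use the full poset $\Sub(X)_{\qcqs}$, whereas the paper starts from a single epimorphic cover $\sqcup_{i\in I} V_i \twoheadrightarrow X$ and indexes by finite subsets $J\subset I$, taking $U_J := \im(\sqcup_{i\in J}V_i \to X)$. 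Your choice is a reasonable ``maximal'' version of the same idea, and the epimorphism-via-image-factorization argument at the end is fine.

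However, there is a genuine error in your check that $\Sub(X)_{\qcqs}$ is filtered. Filteredness for a poset requires an \emph{upper bound} for any two objects, i.e. given $Y_1, Y_2 \hookrightarrow X$ you need a qcqs subobject $Y_3$ with $Y_1 \hookrightarrow Y_3 \hookleftarrow Y_2$. You instead produce $Y_1 \times_X Y_2$, which is the \emph{intersection} and hence a lower bound, so it does not witness filteredness. (The intersection being qcqs is a consequence of $X$ being quasiseparated, but that is not what is needed here.) The correct construction is the union: set $Y_3 := \im(Y_1 \sqcup Y_2 \to X)$; this is quasicompact because it receives an epimorphism from the quasicompact object $Y_1 \sqcup Y_2$ (Lemma \ref{alt_defn_qc}), and quasiseparated because it is a subobject of the quasiseparated object $X$, so $Y_3 \in \Sub(X)_{\qcqs}$ and both $Y_1, Y_2$ factor through it. This is exactly the operation the paper's $U_{J_1}\subset U_{J_2}$ construction performs. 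With this repair, and a remark that $\Sub(X)_{\qcqs}$ is small because $\cX$ is well-powered, your argument goes through.
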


\begin{proof} First, assume that $X$ is quasiseparated. Since $\cX$ is a coherent topos by assumption, we know that there exists a collection of morphisms $\{V_i \rightarrow X\}_{i \in I}$ from qcqs objects such that the induced map $p: \sqcup_{i \in I} V_i \rightarrow X$ is an epimorphisms. For each finite subset $J \subset I$ let us explain how to associate to it a qcqs object $U_J$ together with a monomorphism $U_J \hookrightarrow X$: by virtue of working on a topos the map $p_J: \sqcup_{i \in J} V_i \rightarrow X$ admits a unique factorization as
\begin{align*}
	\sqcup_{i \in J} V_i \twoheadrightarrow \im(p_J) \hookrightarrow X && \text{set} \qquad U_J = \im(p_J)
\end{align*}
To show that $U_J$ is qcqs notice that since it $\sqcup_{i \in J} V_i \twoheadrightarrow U_J$ is an epimorphism, it follows from Lemma \ref{alt_defn_qc} that $U_J$ is quasicompact, and since it $X$ is quasiseparated and $U_J \hookrightarrow X$ a monomorphism it follows that $U_J$ is quasiseparated, proving that $U_J$ is qcqs. Moreover, notice how by construction we have that if $J_1 \subset J_2$ then the monomorphism $U_{J_1} \rightarrow X$ admits a unique factorization as $U_{J_1} \rightarrow U_{J_2} \rightarrow X$, showing that the induced map $U_{J_1} \rightarrow U_{J_2}$ is also a monomorphism.

Let $\cI$ be the category of all finite subsets of $I$, with a morphism $J_1 \rightarrow J_2$ if we have an inclusion $J_1 \subset J_2$ in $I$. Then, the construction above can be summarized into the existence of a functor $\cI \rightarrow X$ where $J \mapsto U_J$. It remains to show that the canonical map $\colim_{\cI} U_J \rightarrow X$ is an equivalence. Since filtered colimits preserve monomorphisms it follows that the induced map $\colim_{\cI} U_J \rightarrow X$ is a monomorphism, thus it remains to show that it is an epimorphism, but this follows from the fact that $p: \sqcup_{i \in I} V_i \rightarrow X$ is a epimorphism and it factors as
\begin{align}
	\sqcup_{i \in I} V_i \rightarrow \colim_{\cI} U_J \rightarrow X
\end{align}
by construction, finishing the proof of the first implication.

On the other hand, assume that there exists a functor $\cI \rightarrow \cX$ satisfying the desired hypothesis. In order to show that $X$ is quasiseparated we need to show that for any pair of morphisms $Y_1 \rightarrow X \leftarrow Y_2$ from quasicompact objects of $\cX$, the fiber product $Y_1 \times_X Y_2$ is quasicompact. Recall that since $\cX$ is a coherent topos it can be realized as $\Shv_{\text{coh}}(\cX_{\qcqs})$ the category of sheaves on $\cX_{\qcqs}$ with respect to the coherent topology (\cite[Proposition C.6.4]{lurie_ultracategories}); thus, for any morphism $Z \rightarrow X$ from a qcqs object there exists a $i \in \cI$ such that there exists a unique factorization $Z \rightarrow U_i \hookrightarrow X$. We claim that for any morphism $Y \rightarrow X$, where $Y$ is only assumed to be quasicompact, there exists a $i \in \cI$ such that there exists a unique factorization $Y \rightarrow U_i \hookrightarrow X$. Indeed, since $\cX$ is coherent pick an epimorphism $Z \twoheadrightarrow Y$ from a qcqs object $Z$. Then, from the work done above we know that the map $Z \rightarrow X$ admits unique factorization as $Z \rightarrow U_i \hookrightarrow X$, so the following identity
\begin{align*}
	\coeq(Z \times_Y Z \rightrightarrows Z) \overset{\simeq}{\longrightarrow} Y
\end{align*}
together with the fact that $U_i \hookrightarrow X$ is a monomorphism, implies that there exists a unique factorization $Y \rightarrow U_i \hookrightarrow X$, as desired. To prove that $Y_1 \times_X Y_2$ is quasicompact, by the work done above we know that there exists a monomorphism $U_i \hookrightarrow X$ from a qcqs object such that $Y_j \rightarrow X$ factors as $Y_j \rightarrow U_i \hookrightarrow X$, thus we have the identity $Y_1 \times_X Y_2 \simeq Y_1 \times_{U_i} Y_2$, and since $U_i$ is qcqs the result follows.
\end{proof}

\newpage

\newpage

\section{The Berkovich Functor}\label{sect_berko_funct}

\subsection{Formalities on condensed sets}

\begin{defn}\label{defn_cond_set} Fix an uncountable strong limit cardinal $\kappa$, and consider the category of compact Hausdorff spaces of cardinality $< \kappa$, which we denote by $\Comp$, and let $\eff$ be the collection of all finite families $\{U_i \rightarrow X \}_{i \in I}$ of jointly surjective maps. Then,
	\begin{enumerate}[(1)]
		\item The collection $\eff$ determines a finitary Grothendieck topology on $\Comp$.
		\item The category of condensed sets is defined as
		\begin{equation*}
			\Cond : = \Shv_{\eff} (\Comp)
		\end{equation*}
		We warn the reader that what we call a condensed set here is called a $\kappa$-condensed set in \cite{condensedlectures}. We refer the reader to \cite[Appendix to Lecture II]{condensedlectures} for a definition of condensed sets which do not depend on a cut-off cardinal $\kappa$ -- we have decided to fix an cut-off cardinal in order to simplify exposition.
		\item The sheafified Yoneda embedding $\Yo_{\eff}: \Comp \rightarrow \Shv_{\eff} (\Comp)$ is fully faithful. This is a direct consequence of Proposition \ref{monadicity_comp_set}(3).	
\end{enumerate}
\end{defn}

\begin{rem} The fully faithful functor $\ProFin \hookrightarrow \Comp$ induces a functor $\Shv_{\eff}(\Comp) \rightarrow \Shv_{\eff}(\ProFin)$, which is an equivalence of categories by virtue of the fact that $\ProFin \subset \Comp$ forms a basis under the topology $\eff$.
\end{rem}

\begin{notation}\label{cond_underlying_set} Let $X$ be a condensed set, then by definition $X$ is a functor $X: \Comp^{\op} \rightarrow \Set$; when evaluated on a point $* \in \Comp$ we obtain a set $X(*)$ which we will call the underlying set of the condensed set $X$. In particular, when $X$ is a compact Hausdorff space the set $X(*)$ is exactly the underlying set of the compact hausdorff space $X$.
\end{notation}

\begin{prop}\label{epi_condensed_sets} The category $\Cond$ has the following basic properties:
\begin{enumerate}[(1)]
	\item The category $\Cond$ is a coherent topos (cf. Definition \ref{defn_coherent_topos}).
	\item The sheafified Yoneda embedding $\Yo_{\eff}: \Comp \rightarrow \Cond$ defines an equivalence of categories between $\Comp$ and the category of qcqs condensed sets (cf. Definition \ref{defn_qcqs}), which we denote by $\Cond_{\qcqs}$.
	\item Let $F \rightarrow G$ be a morphism of condensed sets, and assume that $G$ is qcqs and $F$ is quasicompact. Then, the morphism $F \rightarrow G$ is an epimorphism in $\Comp$ if and only if the induced map of sets $F(*) \rightarrow G(*)$ is surjective.
	\item Let $F \rightarrow G$ be a morphism of condensed sets, and assume that both $G$ and $F$ are qcqs. Then, the morphism $F \rightarrow G$ is an isomorphism if and only if $F(*) \overset{\simeq}{\longrightarrow} G(*)$.
\end{enumerate}
\end{prop}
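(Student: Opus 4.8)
The plan is to establish the four assertions essentially in the order stated, leveraging the general results on coherent topoi from Section \ref{sect_coh_topoi} together with the point-set topology recorded in Proposition \ref{monadicity_comp_set}. For (1), I would invoke Proposition \ref{finitary_implies_coh}: since $\Comp$ admits finite limits (Proposition \ref{comp_regular_cat}, which also gives that $\Comp$ has all limits) and the $\eff$-topology is finitary by Definition \ref{defn_cond_set}(1), the topos $\Cond = \Shv_{\eff}(\Comp)$ is coherent, with the generating subcategory $\cU$ being the essential image of $\Yo_{\eff}$ (equivalently, one may take $\ProFin$). One should also note that $\cU$ is closed under finite products since $\Comp$ is, and that every object of $\cU$ is qcqs by Proposition \ref{finitary_implies_coh}(2).

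For (2), the functor $\Yo_{\eff}$ is fully faithful by Definition \ref{defn_cond_set}(3) (which rests on Proposition \ref{monadicity_comp_set}(3)), and its image consists of qcqs objects by Proposition \ref{finitary_implies_coh}(2). So the content is essential surjectivity onto $\Cond_{\qcqs}$: given a qcqs condensed set $X$, one must produce a compact Hausdorff space $Y$ with $\Yo_{\eff}(Y) \simeq X$. Here I would argue as follows. Since $\Cond$ is coherent and $X$ is quasicompact, there is a finite family $\{Y_i \to X\}$ with $Y_i \in \Comp$ and $\sqcup Y_i \to X$ an epimorphism; set $Y_0 = \sqcup Y_i \in \Comp$, and let $R = Y_0 \times_X Y_0$. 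Since $X$ is quasiseparated, $R$ is quasicompact, and being a subobject of $Y_0 \times Y_0 \in \Comp$ (a mono in a topos is quasiseparated), $R$ is qcqs, hence $R \in \Comp$ as well — more carefully, $R \hookrightarrow Y_0 \times Y_0$ is a monomorphism of condensed sets which restricts to a monomorphism of compact Hausdorff spaces after evaluating, and by Proposition \ref{mono_comp} this exhibits $R(\ast)$ as a closed subspace of $Y_0 \times Y_0$; I would take $Y := \coeq(R \rightrightarrows Y_0)$ computed in $\Comp$, which exists since the pair admits a common section (the diagonal) by Proposition \ref{monadicity_comp_set}(3). Then $R \to Y_0 \times_Y Y_0$ is an effective equivalence relation in $\Comp$ by Proposition \ref{all_equiv_eff_comp}, so $\Yo_{\eff}(Y) \simeq \coeq(\Yo_{\eff}(R) \rightrightarrows \Yo_{\eff}(Y_0))$ in $\Cond$ (using that $\Yo_{\eff}$ preserves this reflexive coequalizer, which follows from Proposition \ref{monadicity_comp_set}(3) giving that $\Yo_{\eff}$ of the coequalizer is the coequalizer), and the latter is $X$ because $\sqcup Y_i \to X$ is an effective epimorphism (epimorphisms are effective in a topos, Lemma \ref{epi_equal_eff_epi_topos}(1)).

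For (3), one direction is formal: an epimorphism of condensed sets restricted along $\ast \in \Comp$ need not a priori be surjective on sections, so I would argue via (2). Given $F \to G$ with $G$ qcqs and $F$ quasicompact, replace $F$ by its image $\im(F \to G) \hookrightarrow G$ (Lemma \ref{epi_equal_eff_epi_topos}(2)); the image is quasicompact (quotient of $F$) and quasiseparated (mono into the quasiseparated $G$), hence qcqs, hence corresponds under (2) to a compact Hausdorff space $Z$ with a monomorphism $Z \hookrightarrow G$. Then $F \to G$ is an epimorphism iff $Z \to G$ is an isomorphism iff (by Proposition \ref{monadicity_comp_set}(2), bijections of compact Hausdorff spaces are homeomorphisms) $Z(\ast) \to G(\ast)$ is a bijection; and $Z(\ast)$ is precisely the image of $F(\ast) \to G(\ast)$ since the functor $X \mapsto X(\ast)$ preserves the mono–epi factorization for morphisms between qcqs objects (as it corresponds to the forgetful functor $\Comp \to \Set$, which by Proposition \ref{monadicity_comp_set} preserves limits and reflexive coequalizers, hence the factorization built in Proposition \ref{comp_regular_cat}). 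This gives that $F \to G$ is epi iff $F(\ast) \to G(\ast)$ is surjective. Finally (4) follows by combining: a morphism of qcqs condensed sets is an isomorphism iff it is both a monomorphism and an epimorphism (Lemma \ref{epi_equal_eff_epi_topos} / \ref{morphism_of_shv}(3)); by part (3) it is an epimorphism iff $F(\ast) \to G(\ast)$ is surjective, and by the same image argument it is a monomorphism iff $F(\ast) \to G(\ast)$ is injective, so it is an isomorphism iff $F(\ast) \to G(\ast)$ is a bijection.

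The main obstacle I anticipate is in part (2): making rigorous that the coequalizer of $R \rightrightarrows Y_0$ formed inside $\Comp$ is sent by $\Yo_{\eff}$ to the coequalizer formed inside $\Cond$. The point is that $\Yo_{\eff}$ preserves finite limits but a priori no colimits, so one cannot directly say it commutes with coequalizers; the resolution is that this particular coequalizer is \emph{reflexive} (hence computed after evaluation at $\ast$ by Proposition \ref{monadicity_comp_set}(3)) and moreover realizes an \emph{effective equivalence relation} in the coherent topos $\Cond$ (via $\sqcup Y_i \to X$ being an effective epimorphism and $R = (\sqcup Y_i)\times_X(\sqcup Y_i)$), which is exactly the situation of Proposition \ref{surj_are_effective_regular_cat} / the discussion around effective equivalence relations. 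I would want to check carefully that $R$, built as a fiber product in $\Cond$, is actually represented by a compact Hausdorff space — this uses that fiber products in $\Cond$ of objects from $\Comp$ are computed in $\Comp$ (because $\Yo_{\eff}$ preserves finite limits and is fully faithful), so no difficulty there, but it deserves an explicit sentence.
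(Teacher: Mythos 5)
There is a genuine gap, and it sits exactly at the crux of part (2). After choosing the epimorphism $Y_0 \to X$ with $Y_0 \in \Comp$ and forming the kernel pair $R = Y_0 \times_X Y_0$, you assert ``$R$ is qcqs, hence $R \in \Comp$'' --- but the implication ``qcqs condensed set $\Rightarrow$ representable by a compact Hausdorff space'' is precisely the essential surjectivity you are in the middle of proving, so this is circular. Your two attempted repairs do not close the circle: invoking Proposition \ref{mono_comp} presupposes that $R$ is already a compact Hausdorff space (that proposition is about morphisms \emph{in} $\Comp$, not about a quasicompact subsheaf of $\Yo_{\eff}(Y_0\times Y_0)$), and your closing remark that $R$ is representable ``because fiber products in $\Cond$ of objects from $\Comp$ are computed in $\Comp$'' does not apply, since the fiber product is taken over $X$, which is not (yet known to be) in $\Comp$ --- if it were, there would be nothing left to prove. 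You have also misplaced the anticipated difficulty: the preservation of the coequalizer by $\Yo_{\eff}$ is the easy step (it is the sheaf condition of Proposition \ref{prop_sheaves}(2), packaged in the paper as Lemma \ref{yo_eff_preserves_general_coeq}), whereas the representability of the kernel pair $R$ is the step that actually requires work.

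The paper's proof spends essentially all of its effort on this point, and the argument is worth internalizing: since $R$ is quasicompact, choose an epimorphism $T \to R$ with $T \in \Comp$; because $R \hookrightarrow Y_0 \times Y_0$ is a monomorphism, the kernel pair of $T \to R$ agrees with that of the composite $T \to Y_0 \times Y_0$, i.e.\ $T \times_R T \simeq T \times_{Y_0 \times Y_0} T$, which is a fiber product of compact Hausdorff spaces and hence lies in $\Comp$. Then $R \simeq \coeq\bigl(T \times_{Y_0\times Y_0} T \rightrightarrows T\bigr)$ in $\Cond$, and this coequalizer of compact Hausdorff spaces is representable (it is $\Yo_{\eff}$ of the image of $T \to Y_0 \times Y_0$ in $\Comp$, using Propositions \ref{comp_regular_cat}, \ref{all_equiv_eff_comp} and the sheaf condition). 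With $R \in \Comp$ established, your reflexive-coequalizer conclusion of (2) goes through as in the paper. A smaller issue: in (3) you justify ``$Z(\ast)=\im(F(\ast)\to G(\ast))$'' by appealing to the forgetful functor $\Comp \to \Set$, but $F$ is only assumed quasicompact, not in $\Comp$; what you actually need is that any epimorphism of condensed sets is surjective on $\ast$-sections (a short argument: a section over $\ast$ lifts after pulling back along some nonempty $U \to \ast$ in an $\eff$-cover, and restricting along any point $\ast \to U$ gives a lift over $\ast$). With that stated, your routes through (3) and (4) are fine and close to the paper's.
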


\begin{proof} Proposition \ref{finitary_implies_coh} already implies (1), and shows that the category of compact Hausdorff spaces $\Comp$ is a subcategory of $\Cond_{\qcqs}$. To finish the proof of (2) we need to show that if $X \in \Cond_{\qcqs}$ then $X$ lies in the essential image of $\Yo_{\eff}: \Comp \hookrightarrow \Cond$. By assumption on $X$, there exists a compact hausdorff space $S$ together with an epimorphism $S \rightarrow X$ of condensed sets such that the fiber product $R:= S \times_X S \subset S \times S$ is a quasicompact subcondensed set of the compact Hausdorff space $S \times S$. By virtue of Proposition \ref{all_equiv_eff_comp}, it remains to show that $R \subset S \times S$ is a compact hausdorff space. As $R$ is quasicompact it follows that there exists a compact hausdorff space $T$ together with an epimorphism $T \rightarrow R$, then since $\Cond$ is a topos it follows that $\coeq(T \times_R T \rightrightarrows T) \rightarrow R$ is an equivalence (cf. Lemma \ref{epi_equal_eff_epi_topos}), and moreover since $R \subset S \times S$ is a monomorphism we have that the following arrows are equivalences
\begin{equation*}
	\coeq(T \times_R T \rightrightarrows T) \overset{\simeq}{\longrightarrow} \coeq(T \times_{S \times S} T \rightrightarrows T) \overset{\simeq}{\longrightarrow} R
\end{equation*}
Using the presentation of $R$ as the coequalizer of two compact hausdorff spaces, namely $\coeq(T \times_{S \times S} T \rightrightarrows T)$,  it follows that $R$ is a compact hausdorff space, finishing the proof of (2).

Next we proof (3). Pick an epimorphism $T \rightarrow G$ from a compact hausdorff space, then the basechange $T \times_G F$ is also quasicompact as $F$ is quasicompact and $G$ is quasiseparated, and pick another compact Hausdorff space $S$ together with an epimorphism $S \rightarrow T \times_G F$. If $F \rightarrow G$ is an epimorphism, then set $T = \pt$ and by the stability of (effective) epimorphisms on a topos we learn that $S \not= \emptyset$, proving that the induced map $F(*) \rightarrow G(*)$ is surjective. On the other hand, if the map $F(*) \rightarrow G(*)$ is surjective, then the induced map $T \times_G F(*) \rightarrow T(*)$ is surjective which in turn implies that the induced map $S(*) \rightarrow T(*)$ is a surjective map of sets; but since $S$ and $T$ are compact hausdorff spaces it follows that $S \rightarrow T$ is an epimorphism (by Proposition \ref{monadicity_comp_set}), and thus so is the composition $S \rightarrow T \rightarrow G$, proving that $F \rightarrow G$ is also an epimorphism as the map $S \rightarrow G$ factors as $S \rightarrow F \rightarrow G$.

Part (4) follows directly from Part (1) and Proposition \ref{monadicity_comp_set}; however, we provide an alternative proof which will generalize to the setting of $\arc_{\varpi}$-sheaves. If $F \rightarrow G$ is an isomorphism it is clear that the induced map $F(*) \rightarrow G(*)$ is an isomorphism of sets. On the other hand, we know from part (3) that the induced map $F \rightarrow G$ is an epimorphism. Hence, since we are working on a topos it suffices to show that $F \rightarrow G$ is a monomorphism. Recall that a map $F \rightarrow G$ is a monomorphism if and only if the diagonal map $\Delta: F \rightarrow F \times_G F$ is an isomorphism, and also recall that for any morphism $F \rightarrow G$ the diagonal map $\Delta: F \rightarrow F \times_G F$ is a monomorphism. Thus it remains to show that the diagonal map $F \rightarrow F \times_G F$ is an epimorphism, but this follows from the fact that $F \times_G F$ is qcqs, the hypothesis that $F(*) \simeq G(*)$ and part (3). This completes the proof.
\end{proof}

\begin{prop}\label{mono_condensed_sets} Let $F \rightarrow G$ be a morphism in $\Cond$, and assume that $F$ is qcqs and $G$ is quasiseparated. Then, the following are equivalent
\begin{enumerate}[(1)]
	\item The morphism $F \hookrightarrow G$ is a monomorphism.
	\item The induced map of sets $F(*) \hookrightarrow G(*)$ is injective.
	\item For any morphism $Z \rightarrow G$ in $\Cond$ such that $\im(Z(*) \rightarrow G(*)) \subset \im(F(*) \rightarrow G(*))$, there exists a unique morphism $Z \rightarrow F$ making the following diagram commute
	\begin{cd}
		& Z \ar[d] \ar[dashed, ld] \\
		F \ar[r] & G
	\end{cd}
\end{enumerate}
\end{prop}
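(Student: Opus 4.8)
\textbf{Proof strategy for Proposition \ref{mono_condensed_sets}.}

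The plan is to prove the cyclic implications $(1) \Rightarrow (3) \Rightarrow (2) \Rightarrow (1)$, modelling the argument on the proof of Proposition \ref{epi_condensed_sets}(4) but making essential use of the quasiseparatedness of $G$ via Proposition \ref{saturate_qs}. First, for $(1) \Rightarrow (3)$: suppose $F \hookrightarrow G$ is a monomorphism and $Z \rightarrow G$ is a morphism with $\im(Z(*) \rightarrow G(*)) \subset \im(F(*) \rightarrow G(*))$. Form the fiber product $Z \times_G F$; since $F \hookrightarrow G$ is a monomorphism, the projection $Z \times_G F \rightarrow Z$ is a monomorphism, so it suffices to show it is an epimorphism, as then it will be an isomorphism and its inverse composed with $Z \times_G F \rightarrow F$ gives the desired unique factorization (uniqueness being automatic from $F \hookrightarrow G$ monic). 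To check the epimorphism claim I would first reduce to the case $Z$ qcqs: by Proposition \ref{saturate_qs} applied to $Z$ — or rather, covering $Z$ by qcqs objects $\{V_j \rightarrow Z\}$ and noting epimorphisms can be checked after such a cover (Proposition \ref{qcqs_check_local}(3)) — we may assume $Z$ is quasicompact. Now $Z \times_G F$ is quasicompact: here is where quasiseparatedness of $G$ enters, since $Z$ and $F$ are both quasicompact and $G$ is quasiseparated, so Definition \ref{defn_qcqs}(2) gives quasicompactness of the fiber product. Then by Proposition \ref{epi_condensed_sets}(3) the map $Z \times_G F \rightarrow Z$ is an epimorphism iff $(Z \times_G F)(*) \rightarrow Z(*)$ is surjective; since the functor $(-)(*)$ preserves finite limits (it is evaluation at an object, hence preserves limits), $(Z \times_G F)(*) = Z(*) \times_{G(*)} F(*)$, and the image hypothesis $\im(Z(*) \rightarrow G(*)) \subset \im(F(*) \rightarrow G(*))$ is precisely what makes this projection of sets surjective.

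Next, $(3) \Rightarrow (2)$: this is the easy implication. Suppose $(3)$ holds and let $x_1, x_2 \in F(*)$ have the same image in $G(*)$. Apply $(3)$ with $Z = \pt$ and the two maps $\pt \rightarrow G$ given by the common image; each $x_i$ provides a lift $\pt \rightarrow F$, and the uniqueness clause in $(3)$ forces $x_1 = x_2$. (One needs to know that a point of $F(*)$ is the same as a morphism $\pt \rightarrow F$ in $\Cond$, which holds since $\Yo_{\eff}(\pt)$ is the terminal object and by the Yoneda lemma, Proposition \ref{prop_presheaves}(3), such morphisms are in bijection with $F(\pt) = F(*)$.)

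Finally, $(2) \Rightarrow (1)$: suppose $F(*) \hookrightarrow G(*)$ is injective. We must show the diagonal $\Delta \colon F \rightarrow F \times_G F$ is an isomorphism. It is always a monomorphism, so by the usual trick in a topos it suffices to show it is an epimorphism. Now $F \times_G F$ is qcqs — again using that $F$ is qcqs and $G$ is quasiseparated, together with Proposition \ref{prop_qcqs_obj}(1) for quasiseparatedness of the fiber product — and $F$ is quasicompact, so Proposition \ref{epi_condensed_sets}(3) reduces us to checking that $F(*) \rightarrow (F \times_G F)(*) = F(*) \times_{G(*)} F(*)$ is surjective; but this diagonal map of sets is surjective precisely because $F(*) \rightarrow G(*)$ is injective. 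Hence $\Delta$ is an isomorphism and $F \hookrightarrow G$ is a monomorphism, completing the cycle.

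I expect the main obstacle to be the reduction-to-qcqs step in $(1) \Rightarrow (3)$: one must be careful that the factorization $Z \rightarrow F$ obtained locally on a qcqs cover of $Z$ glues to a well-defined global morphism, which is where the monomorphism property of $F \hookrightarrow G$ (ensuring uniqueness of all local factorizations, hence automatic compatibility on overlaps) does the work. Everything else is a routine application of the finite-limit-preservation of evaluation at $*$ and the epimorphism criterion of Proposition \ref{epi_condensed_sets}(3).
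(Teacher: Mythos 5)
Your proposal is correct, but your route through $(2)\Rightarrow(1)$ is genuinely different from the paper's. The paper handles $(2)\Rightarrow(1)$ by invoking Proposition \ref{saturate_qs} to write the quasiseparated object $G$ as a filtered colimit of qcqs subobjects $G_i \hookrightarrow G$, noting that monomorphisms are stable under filtered colimits, and reducing each map $F \times_G G_i \rightarrow G_i$ to a map of compact Hausdorff spaces, where it quotes the topological statement Proposition \ref{mono_comp}. You instead run the diagonal trick entirely inside the topos: $\Delta\colon F \rightarrow F \times_G F$ is an epimorphism by the criterion of Proposition \ref{epi_condensed_sets}(3), using that evaluation at $*$ computes the fiber product of sets. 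Your version is the one that transports verbatim to the $\arc_\varpi$-setting (it is essentially how Propositions \ref{iso_arc_topos} and \ref{mono_arc_topos} are proved later), whereas the paper's version leans on the concrete identification of qcqs condensed sets with compact Hausdorff spaces; both work here. Your $(1)\Rightarrow(3)$ is essentially the paper's argument (base change to $F \times_G Z \rightarrow Z$, monomorphism plus epimorphism equals isomorphism, with the epimorphism checked on underlying sets), the only difference being that the paper reduces $Z$ to a compact Hausdorff space via the colimit presentation rather than pulling back along a cover by qcqs objects, and your $(3)\Rightarrow(2)$ with $Z=\pt$ matches the paper's trivial direction in substance.

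Two citations should be repaired, though neither is a genuine gap. Proposition \ref{qcqs_check_local}(3) concerns quasicompactness of morphisms, not descent of epimorphisms along a cover; the fact you actually need (if $U \twoheadrightarrow Z$ is an epimorphism and $F \times_G U \rightarrow U$ is an epimorphism, then $F \times_G Z \rightarrow Z$ is one) is immediate in a topos, since the composite $F \times_G U \rightarrow U \rightarrow Z$ is an epimorphism factoring through $F \times_G Z$. Likewise Proposition \ref{prop_qcqs_obj}(1) does not directly give quasiseparatedness of $F \times_G F$, because $G$ is not assumed qcqs; instead use that quasiseparated objects are closed under fiber products (the closing remark of Proposition \ref{saturate_qs}), or note that $F \times_G F \hookrightarrow F \times F$ is a monomorphism into the qcqs object $F \times F$. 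Quasicompactness of $F \times_G F$ and of $F \times_G Z$ comes, as you say, from quasiseparatedness of $G$.
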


\begin{proof} We begin by proving the equivalence $(1) \simeq (2)$. The implication $(1) \Rightarrow (2)$ is trivial: if $F \rightarrow G$ is a monomorphism, then $F(*) \rightarrow G(*)$ is injective by Lemma \ref{morphism_of_shv}. For the implication $(2) \Rightarrow (1)$, assume that $F(*) \rightarrow G(*)$ is injective and recall from Proposition \ref{saturate_qs} that there exists a collection of qcqs subobjects $\{G_i \subset G\}$ such that $\colim G_i \simeq G$. Thus, since monomorphisms are closed under filered colimits it suffices to show that each of the induced maps $F_i := F \times_G G_i \rightarrow G_i$ are monomorphisms. From the hypothesis we learn that for every $i$ the induced map $F_i(*) \rightarrow G_i(*)$ is injective, and since $G$ is quasiseparated it follows that $F_i$ is quasicompact, from the fact that the morphism $F_i \hookrightarrow F$ is a monomorphism it follows that $F_i$ is quasiseparated as $F$ is, proving that for every $i$ the object $F_i$ is qcqs. To conclude, recall that the sheafified Yoneda embedding $\Yo_{\eff}: \Comp \hookrightarrow \Cond$ preserves monomorphisms, and that since $F_i$ and $G_i$ are qcqs they lie in the essential image of $\Yo_{\eff}$ (by Proposition \ref{epi_condensed_sets}(2)), then the result follows from Proposition \ref{mono_comp}.

It remains to show the equivalence $(3) \simeq (1) + (2)$. To prove the implication $(3) \Rightarrow (1)$, we begin by noticing that if $\im(Z(*) \rightarrow G(*)) \not\subset \im(F(*) \rightarrow G(*))$ then there is no morphism $Z \rightarrow F$ that would make the desired diagram commute, hence the fact that $F \rightarrow G$ is a monomorphism follows from the uniqueness of the lift. It remains to show that $(1) + (2) \Rightarrow (3)$. From the presentation $\Cond = \Shv_{\eff} (\Comp)$ and Propositions \ref{prop_presheaves}(1) and \ref{prop_sheaves}(1) it follows that $Z$ can be realized as $\colim_{i \in \cI} Q_i \simeq Z$, where each $Q_i$ is a compact hausdorff space, thus we may assume that $Z$ is a compact hausdorff space. Since we assume that $F \hookrightarrow G$ is a monomorphism, it follows that $F \times_G Z \rightarrow Z$ is also a monomorphism, hence it remains to show that $F \times_G Z \rightarrow Z$ is an epimorphism as then it would be an isomorphism. As $G$ is quasiseparated and $F$ and $Z$ are qcqs it follows that $F \times_G Z$ is quasicompact, thus to show it is an epimorphism, we learn from Proposition \ref{epi_condensed_sets} that we only need the map $F \times_G Z(*) \rightarrow Z(*)$ to be surjective, but this is clear from the hypothesis.
\end{proof}

The following result will be useful when constructing the Berkovich functor next section.

\begin{lemma}\label{yo_eff_preserves_general_coeq} Let $X \rightarrow Y$ and $Z \rightarrow X \times_Y X$ be surjective maps of compact hausdorff spaces, and define the morphisms $Z \rightrightarrows X$ as the composition $Z \rightarrow X \times_Y X \rightrightarrows X$. Then, the canonical map
\begin{align*}
	\coeq(\Yo_{\eff}(Z) \rightrightarrows \Yo_{\eff}(X)) \rightarrow \Yo_{\eff}(Y)
\end{align*}
is an isomorphism of condensed sets. For the sake of completeness, recall that in Propostion \ref{general_coeq_comp} we proved that the same statement holds when the coequalizer is computed in the category $\Comp$.
\end{lemma}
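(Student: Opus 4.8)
The goal is to lift the coequalizer statement of Proposition \ref{general_coeq_comp} from $\Comp$ to $\Cond$ along the fully faithful embedding $\Yo_{\eff}$. The key structural fact I would exploit is that $\Yo_{\eff}$ does not preserve arbitrary colimits, but it \emph{does} preserve the specific coequalizers appearing here, because of the effectivity built into the $\eff$-topology (this is the same phenomenon behind Proposition \ref{monadicity_comp_set}(3) and behind the sheaf condition itself). Concretely, first I would reduce to an assertion about effective epimorphisms and equivalence relations in the topos $\Cond$, then verify the hypotheses of that assertion using the point-set input from Section \ref{sect_basic_prop_berko_sp}.

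\textbf{Step 1: identify the target as an image.} Since $X \to Y$ is a surjection of compact Hausdorff spaces, Proposition \ref{epi_condensed_sets}(3) tells us that $\Yo_{\eff}(X) \to \Yo_{\eff}(Y)$ is an epimorphism of condensed sets (here $\Yo_{\eff}(X)$ is quasicompact and $\Yo_{\eff}(Y)$ is qcqs). Because $\Cond$ is a topos, every epimorphism is effective (Lemma \ref{epi_equal_eff_epi_topos}(1)), so the canonical map
\begin{equation*}
	\coeq\Big( \Yo_{\eff}(X) \times_{\Yo_{\eff}(Y)} \Yo_{\eff}(X) \rightrightarrows \Yo_{\eff}(X) \Big) \overset{\simeq}{\longrightarrow} \Yo_{\eff}(Y)
\end{equation*}
is an isomorphism. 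Moreover $\Yo_{\eff}$, being a sheafified Yoneda functor, preserves finite limits (Proposition \ref{prop_sheaves}(3)), so $\Yo_{\eff}(X) \times_{\Yo_{\eff}(Y)} \Yo_{\eff}(X) \simeq \Yo_{\eff}(X \times_Y X)$. Thus it remains to show that
\begin{equation*}
	\coeq\Big( \Yo_{\eff}(Z) \rightrightarrows \Yo_{\eff}(X) \Big) \overset{\simeq}{\longrightarrow} \coeq\Big( \Yo_{\eff}(X \times_Y X) \rightrightarrows \Yo_{\eff}(X) \Big).
\end{equation*}

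\textbf{Step 2: compare the two coequalizers.} Both coequalizers receive a common map from $\Yo_{\eff}(X)$, and the map $Z \to X \times_Y X$ is surjective, hence $\Yo_{\eff}(Z) \to \Yo_{\eff}(X \times_Y X)$ is an epimorphism of condensed sets by Proposition \ref{epi_condensed_sets}(3) again. A general fact in any topos is that the coequalizer of a parallel pair is unchanged when one replaces the pair by a pair that surjects onto it compatibly: writing $R := \Yo_{\eff}(X \times_Y X)$, the two maps $\Yo_{\eff}(Z) \rightrightarrows \Yo_{\eff}(X)$ factor through $R \rightrightarrows \Yo_{\eff}(X)$ via the epimorphism $\Yo_{\eff}(Z) \twoheadrightarrow R$, and since $\Yo_{\eff}(Z) \twoheadrightarrow R$ is an effective epimorphism, the induced map on coequalizers is an isomorphism. (This is exactly the topos-theoretic shadow of the argument used to prove Proposition \ref{general_coeq_comp}; alternatively one can phrase it through Proposition \ref{surj_are_effective_regular_cat} applied in the regular category $\Cond$, noting that $\Yo_{\eff}(X \times_Y X) \hookrightarrow \Yo_{\eff}(X) \times \Yo_{\eff}(X)$ is an equivalence relation.) Combining Steps 1 and 2 gives the claim.

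\textbf{Main obstacle.} The only genuinely delicate point is Step 2 --- the assertion that passing from the honest fiber product $\Yo_{\eff}(X\times_Y X)$ to a surjection $\Yo_{\eff}(Z)$ onto it does not change the coequalizer. I would justify this by observing that $\coeq(R \rightrightarrows \Yo_{\eff}(X))$ is the quotient of $\Yo_{\eff}(X)$ by the equivalence relation generated by the image of $R$, and that this generated equivalence relation depends only on the image $\im\big(R \to \Yo_{\eff}(X)\times\Yo_{\eff}(X)\big)$, which equals $\im\big(\Yo_{\eff}(Z) \to \Yo_{\eff}(X)\times\Yo_{\eff}(X)\big)$ since $\Yo_{\eff}(Z) \twoheadrightarrow R$ is an epimorphism and epimorphisms are preserved by composition. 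Everything else is bookkeeping with the standard topos facts collected in Section \ref{sect_coh_topoi} together with Proposition \ref{epi_condensed_sets}. As a sanity check one can evaluate on $\ast$: all three coequalizers agree with the set-level coequalizer by Proposition \ref{epi_condensed_sets}(3), which is consistent with — though not by itself sufficient for — the claimed isomorphism, since the objects involved are qcqs and Proposition \ref{epi_condensed_sets}(4) then upgrades a bijection on underlying sets to an isomorphism.
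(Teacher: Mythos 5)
Your proof is correct and follows essentially the same strategy as the paper's: first reduce to the case $Z = X \times_Y X$ using effective epimorphisms of condensed sets and preservation of fiber products by $\Yo_{\eff}$, then handle the general case by observing that the coequalizer of a parallel pair depends only on the image of the induced map to $\Yo_{\eff}(X) \times \Yo_{\eff}(X)$, which is $\Yo_{\eff}(X \times_Y X)$ because $\Yo_{\eff}$ preserves epimorphisms and monomorphisms. The paper phrases the first step via the sheaf condition (Proposition \ref{prop_sheaves}(2)) rather than via Lemma \ref{epi_equal_eff_epi_topos}, and phrases the second step as ``the equivalence relation imposed by $\Yo_{\eff}(Z)$ equals that imposed by $\Yo_{\eff}(X \times_Y X)$,'' but these are the same observations with a different vocabulary.
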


\begin{proof} If the morphism $Z \rightarrow X \times_Y X$ is the identity map, then the result follows directly from Proposition \ref{prop_sheaves}(2). To prove the general case we follow a strategy similar to the one we followed in \ref{general_coeq_comp}. We claim that the canonical map
\begin{align*}
	\Yo_{\eff}(Z) \rightarrow \Yo_{\eff}(X \times X) \simeq \Yo_{\eff}(X) \times \Yo_{\eff}(X)
\end{align*}
which is induced from the pair of morphisms $Z \rightrightarrows X$, admits a unique factorization as
\begin{align*}
	\Yo_{\eff}(Z) \twoheadrightarrow \Yo_{\eff}(X \times_Y X) \hookrightarrow \Yo_{\eff}(X \times X)
\end{align*}
where the first morphism is a monomorphism and the second morphism is an epimorphism. Indeed, the factorization exists as it already exists in $\Comp$, thus it remains to show that $\Yo_{\eff}$ preserves monomorphisms and epimorphism. Since $\Yo_{\eff}$ preserves finite limit, it follows that it preserves monomorphisms; and the fact that it preserves epimorphisms follows from Proposition \ref{epi_condensed_sets}(3).

To conclude, notice that the work above implies that equivalence relation imposed by $\Yo_{\eff}(Z) \rightrightarrows \Yo_{\eff}(X)$ on $\Yo_{\eff}(X)$ will be the same as the equivalence relation imposed by $\Yo_{\eff}(X \times_Y X) \rightrightarrows \Yo_{\eff}(X)$, proving that we have the isomorphisms
\begin{align*}
	\coeq(\Yo_{\eff}(Z) \rightrightarrows \Yo_{\eff}(X)) \simeq \coeq(\Yo_{\eff}(X \times_Y X) \rightrightarrows \Yo_{\eff}(X)) \simeq \Yo_{\eff}(Y)
\end{align*}
Where in the first isomorphism we are implicitly using the fact that $\Cond$ is a topos.
\end{proof}

\subsection{The \texorpdfstring{$\arc_{\varpi}$}{arc pi}-topos}\label{section_arc_pi_topos}

Fix a perfectoid non-archimedean field $K$.

\begin{const}\label{const_arc_pi_topos} Recall that in Definition \ref{defn_arc_pi} we defined the finitary Grothendieck topology called the $\arc_{\varpi}$ topology on the category $\Sch_{K_{\le 1}, \qcqs}$. We define the $\arc_{\varpi}$-topos as
\begin{equation*}
	\cX_{\arc_{\varpi}} := \Shv_{\arc_{\varpi}}(\Sch_{K_{\le 1}, \qcqs})
\end{equation*}
the category of $\Set$-valued sheaves on the site $(\Sch_{K_{\le 1}, \qcqs}, \arc_{\varpi})$; it follows from Proposition \ref{finitary_implies_coh} that $\cX_{\arc_{\varpi}}$ is a coherent topos. However, recall that the same topos can be realized as the category of sheaves on different sites; for our purposes it will be convenient to have various realizations of the $\arc_{\varpi}$-topos at hand, we proceed by describing the different sites that realize the $\arc_\varpi$-topos.
\begin{enumerate}[(1)]
	\item Consider the full subcategory of affine schemes $\CAlg_{K_{\le 1}}^{\op} =: \Aff_{K_{\le 1}} \subset \Sch_{K_{\le 1}, \qcqs}$, we say that a morphism $X \rightarrow Y$ is an $\arc_\varpi$ cover in $\Aff_{K_{\le 1}}$ if when considered as a morphism in $\Sch_{K_{\le 1}, \qcqs}$ it is an $\arc_{\varpi}$-cover. This determines a finitary Grothendieck topology $\arc_{\varpi}$ on $\Aff_{K_{\le 1}}$, and since $\Aff_{K_{\le 1}} \subset \Sch_{K_{\le 1}, \qcqs}$ forms a basis for the $\arc_{\varpi}$-topology (Lemma \ref{faithfully_flat_implies_arc_cover}) it follows that the canonical map $\Aff_{K_{\le 1}} \hookrightarrow \Sch_{K_{\le 1}, \qcqs}$ induces an equivalence of categories
	\begin{align*}
		\Shv_{\arc_{\varpi}} (\Sch_{K_{\le 1}, \qcqs}) \overset{\simeq}{\longrightarrow} \Shv_{\arc_\varpi} (\Aff_{K_{\le 1}})
	\end{align*}
	\item Under the equivalence $(-)_{\le 1}: \Ban_K^{\contr} \simeq \CAlg_{K_{\le 1}}^{\wedge a \tf}: (-)[\frac{1}{\varpi}]$ of Proposition \ref{equiv_almost_tf_banach} we can consider the category $\Ban_K^{\contr, \op}$ as a full subcategory of $\CAlg_{K_{\le 1}}^{\op} \subset \Sch_{K_{\le 1}, \qcqs}$; we say that a morphism $\cM(A) \rightarrow \cM(B)$ in $\Ban_{K_{\le 1}}^{\contr, \op}$ is an $\arc_{\varpi}$-cover if the induced map of compact hausdorff spaces $|\cM(A)| \rightarrow |\cM(B)|$ is surjective, which is equivalent to the requirement that $\Spec(A_{\le 1}) \rightarrow \Spec(B_{\le 1})$ is an $\arc_\varpi$-cover (Proposition \ref{arc_pi_cover_for_banach}). We claim that $\CAlg_{K_{\le 1}}^{\wedge a \tf} \subset \CAlg_{K_{\le 1}}$ is a basis for the $\arc_\varpi$-topology. Indeed, for any element $R \in \CAlg_{K_{\le 1}}$, $V$ a $\varpi$-complete rank one valuation ring with faithfully flat structure map $K_{\le 1} \rightarrow V$ (as in Definition \ref{defn_arc_pi}), and any map $R \rightarrow V$, then there exists an essentially unique factorization as $R \rightarrow R^{\wedge a \tf} \rightarrow V$, proving the claim that $\CAlg_{K_{\le 1}}^{\wedge a \tf} \subset \CAlg_{K_{\le 1}}$ is a basis for the $\arc_\varpi$-topology. Thus, it follows that the canonical map $\Ban_{K}^{\contr, \op} \hookrightarrow \Aff_{K_{\le 1}}$ determines an equivalence of categories
	\begin{align*}
		\Shv_{\arc_{\varpi}}(\Aff_{K_{\le 1}}) \overset{\simeq}{\longrightarrow} \Shv_{\arc_{\varpi}}(\Ban_K^{\contr, \op}) \simeq \Shv_{\arc_{\varpi}} (\CAlg_{K_{\le 1}}^{\wedge a \tf, \op})
	\end{align*}
	\item Recall that the equivalence $(-)_{\le 1}: \Ban_K^{\contr} \simeq \CAlg_{K_{\le 1}}^{\wedge a \tf}: (-)[\frac{1}{\varpi}]$ of Proposition \ref{equiv_almost_tf_banach} induces an equivalence $(-)_{\le 1}: \Perfd_{K_{\le 1}}^{\Ban} \simeq \Perfd_{K_{\le 1}}^{\Prism a}: (-)[\frac{1}{\varpi}]$ by Proposition \ref{equiv_perfd_ban_tic}. We say that a morphism $\cM(A) \rightarrow \cM(B)$ in $\Perfd_{K}^{\Ban}$ is an $\arc_{\varpi}$-cover if it the induced map of compact Hausdorff spaces $|\cM(A)| \rightarrow |\cM(B)|$ is surjective, which is equivalent to the requirement that $\Spec(A_{\le 1}) \rightarrow \Spec(B_{\le 1})$ is an $\arc_{\varpi}$-cover (Proposition \ref{arc_pi_cover_for_banach}). We claim that $\Perf_{K_{\le 1}}^{\Prism a} \subset \CAlg_{K_{\le 1}}$ is a basis for the $\arc_{\varpi}$-topology; we already showed in Remark \ref{canonical_arc_pi_covers} that for any $R \in \CAlg_{K_{\le 1}}^{\wedge a \tf}$ there exists a $P \in \Perfd_{K_{\le 1}}^{\Prism}$ such that $R \rightarrow P$ is an $\arc_{\varpi}$-cover, it remains to show that the map $R \rightarrow P^a$ is also an $\arc_{\varpi}$-cover. Indeed, let $V$ a $\varpi$-complete rank one valuation ring with faithfully flat structure map $K_{\le 1} \rightarrow V$ (as in Definition \ref{defn_arc_pi}), and any map $P \rightarrow V$, then there exists an essentially unique factorization as $P \rightarrow P^a \rightarrow V$, proving the claim that $\Perfd_{K_{\le 1}}^{\Prism a} \subset \CAlg_{K_{\le 1}}^{\wedge a \tf}$ forms a basis for the $\arc_\varpi$-topology. Thus, it follows that the canonical map $\Perfd_{K_{\le 1}}^{\Prism a} \hookrightarrow \CAlg_{K_{\le 1}}^{\wedge a \tf}$ determines an equivalence of categories
	\begin{align*}
		\Shv_{\arc_\varpi} (\Ban_{K}^{\contr, \op}) \simeq \Shv_{\arc_\varpi} (\CAlg_{K_{\le 1}}^{\wedge a \tf, \op}) \overset{\simeq}{\longrightarrow} \Shv_{\arc_\varpi}(\Perfd_{K_{\le 1}}^{\Prism a, \op}) \simeq \Shv_{\arc_\varpi}(\Perfd_{K}^{\Ban, \op})
	\end{align*}
\end{enumerate}
Finally, let us remaind the reader about our convention that the category $\Sch_{K_{\le 1}, \qcqs}$ is the category of all qcqs schemes over $K_{\le 1}$ with cardinality $< \kappa$, where $\kappa$ is an uncountable strong limit cardinal. As all the other categories we have considered are subcategories of $\Sch_{K_{\le 1}, \qcqs}$, it follows that we are also implicitly imposing a cardinality bound of $\kappa$ in all other categories.
\end{const}

\begin{prop}\label{basic_arc_pi_properties} The category $\cX_{\arc_\varpi}$ has the following properties:
\begin{enumerate}[(1)]
	\item The category $\cX_{\arc_{\varpi}}$ is a coherent topos.
	\item The sheafified Yoneda functor $\Yo_{\arc_{\varpi}}: \Sch_{K_{\le 1}, \qcqs} \rightarrow \Shv_{\arc_{\varpi}}(\Sch_{K_{\le 1}, \qcqs}) \simeq \cX_{\arc_\varpi}$ has its image contained in $\cX_{\arc_{\varpi}, \qcqs}$, we will sometimes write $X_{\arc_\varpi}$ for $\Yo_{\arc_\varpi}(X)$ where $X \in \Sch_{K_{\le 1}, \qcqs}$. Furthermore, the composition 
	\begin{align*}
		\Perfd_{K_{\le 1}}^{\Prism a, \op} \hookrightarrow \Sch_{K_{\le 1}, \qcqs} \overset{\Yo_{\arc_\varpi}}{\longrightarrow} \cX_{\arc_\varpi}
	\end{align*}
	is fully faithfull.
	\item The sheafified Yoneda functor $\Yo_{\arc_\varpi}: \Ban_K^{\contr, \op} \rightarrow \Shv_{\arc_\varpi}(\Ban_K^{\contr, \op}) \simeq \cX_{\arc_\varpi}$ has its image contained in $\cX_{\arc_\varpi, \qcqs}$, we will sometimes write $X_{\arc_\varpi}$ for $\Yo_{\arc_\varpi}(X)$ where $X \in \Ban_K^{\contr, \op}$. Furthermore, the composition
	\begin{align*}
		\Perfd_{K}^{\Ban, \op} \hookrightarrow \Ban_K^{\contr, \op} \overset{\Yo_{\arc_\varpi}}{\longrightarrow} \cX_{\arc_\varpi}
	\end{align*}
	is fully faithfull.
\end{enumerate}
For the sake of completeness let us mention that the functors $\Yo_{\arc_\varpi}: \Sch_{K_{\le 1}, \qcqs} \rightarrow \cX_{\arc_\varpi}$ and $\Yo_{\arc_\varpi}: \Ban_K^{\contr, \op} \rightarrow \cX_{\arc_\varpi}$ are never fully faithful, as seen in Example \ref{banach_yoneda_arc_pi}.
\end{prop}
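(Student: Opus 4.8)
The three assertions are of the same shape: the topos $\cX_{\arc_\varpi}$ is coherent, the sheafified Yoneda functors land in qcqs objects, and the restrictions to the two perfectoid categories are fully faithful. Part (1) is essentially a citation: by Construction \ref{const_arc_pi_topos} the category $\cX_{\arc_\varpi}$ is $\Shv_{\arc_\varpi}$ of a finitary site (e.g. $\Sch_{K_{\le 1}, \qcqs}$, which admits finite limits), so Proposition \ref{finitary_implies_coh}(1) gives coherence immediately. The first halves of (2) and (3) --- that $\Yo_{\arc_\varpi}(X)$ is qcqs for $X$ in $\Sch_{K_{\le 1}, \qcqs}$ or in $\Ban_K^{\contr, \op}$ --- also follow directly from Proposition \ref{finitary_implies_coh}(2), once one recalls (Construction \ref{const_arc_pi_topos}) that $\Ban_K^{\contr, \op} \hookrightarrow \Aff_{K_{\le 1}} \hookrightarrow \Sch_{K_{\le 1}, \qcqs}$ is the inclusion of a basis for the $\arc_\varpi$-topology, so that the sheafified Yoneda functors on all three sites agree under the equivalences of categories of sheaves established there. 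Thus the only substantive content is the \emph{full faithfulness} of $\Yo_{\arc_\varpi}$ restricted to $\Perfd_{K_{\le 1}}^{\Prism a, \op}$, respectively $\Perfd_K^{\Ban, \op}$; and since $(-)_{\le 1} \colon \Perfd_K^{\Ban} \simeq \Perfd_{K_{\le 1}}^{\Prism a}$ (Proposition \ref{equiv_perfd_ban_tic}) and this equivalence is compatible with the embeddings into $\CAlg_{K_{\le 1}}$, the two full faithfulness statements are equivalent to each other, so it suffices to prove one of them.

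\textbf{Reduction to Tate acyclicity.} To show that $\Yo_{\arc_\varpi}$ is fully faithful on $\Perfd_K^{\Ban, \op}$, I would unwind the definition of the sheafification. For $\cM(R), \cM(S) \in \Perfd_K^{\Ban, \op}$ we must show
\begin{equation*}
	\Maps_{\Ban_K^{\contr, \op}}(\cM(S), \cM(R)) \longrightarrow \Maps_{\cX_{\arc_\varpi}}(\Yo_{\arc_\varpi}(\cM(S)), \Yo_{\arc_\varpi}(\cM(R)))
\end{equation*}
is a bijection. Because $\Perfd_K^{\Ban, \op}$ is a basis for the $\arc_\varpi$-topology on $\Ban_K^{\contr, \op}$ (Construction \ref{const_arc_pi_topos}(3)), a map of $\arc_\varpi$-sheaves out of $\Yo_{\arc_\varpi}(\cM(S))$ into any sheaf $\cF$ is the same datum as a compatible family of sections, i.e. $\Maps(\Yo_{\arc_\varpi}(\cM(S)), \cF) = \cF(\cM(S))$ is the value of $\cF$ at $\cM(S)$ by the Yoneda lemma for sheaves. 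Applying this with $\cF = \Yo_{\arc_\varpi}(\cM(R))$, which as a presheaf sends $\cM(S) \mapsto \Maps_{\Ban_K^{\contr, \op}}(\cM(S), \cM(R))$ before sheafification, reduces the claim to showing that this presheaf on $\Perfd_K^{\Ban, \op}$ is \emph{already an $\arc_\varpi$-sheaf}. That is, for every $\arc_\varpi$-cover $\{\cM(S_i) \to \cM(S)\}_{i \in I}$ in $\Perfd_K^{\Ban, \op}$ --- equivalently, by Proposition \ref{arc_pi_cover_for_banach}, every finite family with $\sqcup_i |\cM(S_i)| \twoheadrightarrow |\cM(S)|$ surjective --- the diagram
\begin{equation*}
	\Maps(\cM(S), \cM(R)) \to \prod_i \Maps(\cM(S_i), \cM(R)) \rightrightarrows \prod_{i,j} \Maps(\cM(S_i) \times_{\cM(S)} \cM(S_j), \cM(R))
\end{equation*}
is an equalizer. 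Since $\Maps_{\Ban_K^{\contr, \op}}(\cM(S), \cM(R)) = \Hom_{\Ban_K^{\contr}}(R, S)$ and the fiber product corresponds to the completed tensor product $S_i \cotimes_S S_j$ (Construction \ref{const_comp_tensor_ban}), this equalizer statement is precisely the statement that
\begin{equation*}
	0 \to S \to \prod_i S_i \to \prod_{i,j} S_i \cotimes_S S_j
\end{equation*}
is exact as a diagram of sets of ring homomorphisms out of $R$; and that in turn is an immediate consequence of the exactness (indeed exactness and admissibility) of the Čech complex
\begin{equation*}
	0 \to S \to \prod_i S_i \to \prod_{i,j} S_i \cotimes_S S_j \to \cdots
\end{equation*}
which is exactly Corollary \ref{arc_pi_descent_for_bananach_perfectoids} (Tate acyclicity for perfectoid Banach $K$-algebras). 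One must also check the disjoint-union condition $F(\cM(S_1 \times S_2)) = F(\cM(S_1)) \times F(\cM(S_2))$, which is clear since $\cM(S_1 \times S_2) = \cM(S_1) \sqcup \cM(S_2)$ in $\Ban_K^{\contr, \op}$ and $\Hom(R, S_1 \times S_2) = \Hom(R, S_1) \times \Hom(R, S_2)$ for $R$ any Banach $K$-algebra.

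\textbf{Main obstacle.} The conceptual content has already been front-loaded into Corollary \ref{arc_pi_descent_for_bananach_perfectoids}; what remains is bookkeeping, and the one genuine subtlety I would be careful about is the comparison between "being a sheaf on the big site $\Ban_K^{\contr, \op}$" and "being a sheaf on the basis $\Perfd_K^{\Ban, \op}$" --- one needs that a presheaf defined on the basis which satisfies descent there right-Kan-extends to an honest $\arc_\varpi$-sheaf on the big site and that this extended sheaf agrees with $\Yo_{\arc_\varpi}(\cM(R))$; this is the content of the equivalence of sheaf categories in Construction \ref{const_arc_pi_topos}, but spelling out that the Yoneda sheaf restricted to the basis is computed by Čech descent (rather than a general covering sieve) uses that the $\arc_\varpi$-topology is finitary, so that covering sieves are generated by finite families and Čech descent for finite covers suffices (cf. Proposition \ref{prop_sheaves} and the discussion around Definition \ref{defn_infty_sheaf}). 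Finally, for the last sentence of the statement I would record the promised non-full-faithfulness of $\Yo_{\arc_\varpi}$ on all of $\Ban_K^{\contr, \op}$ by the counterexample of uniformization: for a non-uniform $A$, the map $\cM(A^u) \to \cM(A)$ has $|\cM(A^u)| \xrightarrow{\sim} |\cM(A)|$ (Proposition \ref{uniform_berko_sp}), hence is an $\arc_\varpi$-cover with $\cM(A^u) \times_{\cM(A)} \cM(A^u) = \cM(A^u)$, so $\Yo_{\arc_\varpi}$ inverts it while $A \to A^u$ is not an isomorphism of Banach $K$-algebras --- this is deferred to Example \ref{banach_yoneda_arc_pi} as indicated.
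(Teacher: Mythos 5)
Your proposal is correct and follows essentially the same route as the paper: parts (1) and the qcqs statements in (2)--(3) are read off from Proposition \ref{finitary_implies_coh}, and full faithfulness is reduced to the $\arc_\varpi$-descent results, exactly as the paper does (your unwinding of sheafified-Yoneda mapping sets and the verification that the representable presheaf restricted to perfectoids is already a sheaf is precisely the mechanism recorded in Example \ref{banach_yoneda_arc_pi} via Corollary \ref{arc_pi_descent_for_bananach_perfectoids}). The only cosmetic difference is that you deduce the integral-perfectoid case from the Banach case through the dictionary $(-)_{\le 1}$, whereas the paper cites Proposition \ref{arc_pi_descent_for_perfectoids} for it directly.
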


\begin{proof} Since the $\arc_\varpi$ topology on $\Sch_{K_{\le 1}, \qcqs}$ is finitary it follows from Proposition \ref{finitary_implies_coh} that $\cX_{\arc_\varpi}$ is a coherent topos, proving (1). Again by Proposition \ref{finitary_implies_coh} it follows that the image of the sheafified Yoneda functors $\Yo_{\arc_\varpi}$ have their image contained in $\cX_{\arc_\varpi, \qcqs}$, proving the first part of (2) and (3). Finally, the fact that the functors $\Perfd_{K_{\le 1}}^{\Prism a, \op} \rightarrow \cX_{\arc_\varpi}$ and $\Perfd_{K}^{\Ban, \op} \rightarrow \cX_{\arc_\varpi}$ are fully faithful is a concequence of Propositions \ref{arc_pi_descent_for_perfectoids} and \ref{arc_pi_descent_for_bananach_perfectoids} respectively.
\end{proof}

\begin{example}\label{banach_yoneda_arc_pi} Realizing the topos $\cX_{\arc_\varpi}$ as $\Shv_{\arc_\varpi}(\Ban_K^{\contr, \op})$ we obtain the sheafified Yoneneda functor $\Yo_{\arc_\varpi}: \Ban_{K}^{\contr, \op} \rightarrow \cX_{\arc_\varpi}$, let us provide a concrete description of this functor. Recall that $\Yo_{\arc_\varpi}$ is realized as the composition of the functors
\begin{align*}
	\Yo_{\arc_\varpi}: \Ban_K^{\contr, \op} \overset{\Yo}{\longrightarrow} \PreShv(\Ban_K^{\contr, \op}) \overset{L_{\arc_\varpi}}{\longrightarrow} \Shv_{\arc_\varpi}(\Ban_K^{\contr, \op}) = \cX_{\arc_\varpi}
\end{align*}
where $L_{\arc_\varpi}$ is the sheafification functor with respect to the $\arc_\varpi$-topology; and where $\Yo (\cM(A))$ is given by the functor $\Maps_{\Ban_K^{\contr}} (A, -): \Ban_K^{\contr} \rightarrow \Set$ for any Banach $K$-algebra $A$. By virtue of Corollary \ref{arc_pi_descent_for_bananach_perfectoids} we learn that the restriction 
\begin{align*}
	\Maps_{\Ban_{K}^{\contr}}(A, -)|_{\Perfd_{K}^{\Ban}}: \Perfd_{K}^{\Ban} \longrightarrow \Set
\end{align*}
is already an $\arc_\varpi$-sheaf. Thus, since we can also realize $\cX_{\arc_\varpi}$ as $\Shv_{\arc_\varpi}(\Perfd_K^{\Ban, \op})$ we learn that $\cM(A)_{\arc_\varpi}$ is given by the functor $\Maps_{\Ban_{K}^{\contr}}(A, -)|_{\Perfd_{K}^{\Ban}}$. In particular, this argument shows that if $A \rightarrow A^u$ is the uniformization map, then the induced map $\cM(A^u)_{\arc_\varpi} \rightarrow \cM(A)_{\arc_\varpi}$ is an isomorphism in $\cX_{\arc_\varpi}$, showing that the sheafified Yoneda functor $\Yo_{\arc_\varpi}: \Ban_K^{\contr, \op} \rightarrow \cX_{\arc_\varpi}$ is never fully faithfull.

However, despite the fact that $\Yo_{\arc_\varpi}$ is only naturally defined for the category of Banach $K$-algebras with contractive maps, since $\Yo_{\arc_\varpi}$ identifies a Banach $K$-algebra with its uniformization (and bounded morphism between uniform Banach algebras are contractive) we can define a functor $\Ban_K^{\op} \rightarrow \cX_{\arc_\varpi}$ as the composition
\begin{align*}
	\Ban_K^{\op} \overset{(-)^u}{\longrightarrow} \Ban_K^{\contr, \op} \overset{\Yo_{\arc_\varpi}}{\longrightarrow} \cX_{\arc_\varpi}
\end{align*}
which naturally extends the map $\Yo_{\arc_\varpi}$ along the inclusion $\Ban_{K}^{\contr} \subset \Ban_K$.
\end{example}

\begin{const}[The Berkovich Functor]\label{berko_funct}  Recall that Berkovich defined a functor $|-|: \Ban_K^{\contr, \op} \rightarrow \Comp$ sending $\cM(A) \mapsto |\cM(A)|$. From Proposition \ref{gluing_berko_sp} we learn that if $\cM(B) \rightarrow \cM(A)$ is a morphism in $\Ban_K^{\contr, \op}$, which is an $\arc_{\varpi}$-cover (in the sense of Construction \ref{const_arc_pi_topos}(2)), then the canonical map 
\begin{align*}
	\coeq(|\cM(B) \times_{\cM(A)} \cM(B)| \rightrightarrows |\cM(B)|) \rightarrow |\cM(A)|
\end{align*}	
is an isomorphism of compact hausdorff spaces. Furthermore, combining Lemma \ref{yo_eff_preserves_general_coeq} and Proposition \ref{general_fiber_prod_berko_sp} we learn that the functor $\Yo_{\eff}(-)$ preserves the above coequalizer, that is, the canonical map
\begin{align*}
	\coeq(\Yo_{\eff} |\cM(B) \times_{\cM(A)} \cM(B)| \rightrightarrows \Yo_{\eff}|\cM(B)|) \rightarrow \Yo_{\eff}|\cM(A)|
\end{align*}
is an isomorphism of condensed sets. Finally, realizing $\cX_{\arc_{\varpi}}$ as $\Shv_{\arc_{\varpi}}(\Ban_K^{\contr, \op})$, we learn from  Proposition \ref{prop_sheaves}(4) that there is an essentially unique colimit preserving functor $|-|: \cX_{\arc_{\varpi}} \rightarrow \Cond$ making the following diagram commute
\begin{cd}
	\Ban_K^{\contr, \op} \ar[r, " \vert - \vert"] \ar[d, "\Yo_{\arc_{\varpi}}", swap] & \Comp \ar[d, hook, "\Yo_{\eff}"]\\
	\cX_{\arc_{\varpi}} \ar[r, "\vert - \vert"] & \Cond
\end{cd}
We call the resulting colimit preserving functor $|-|: \cX_{\arc_{\varpi}} \rightarrow \Cond$ the Berkovich functor. Implicit in the construction of the Berkovich functor is the fact that the map $|-|: \Ban_{K}^{\contr, \op} \rightarrow \Comp$ respects the bounds $< \kappa$ with respect the (implicit) uncountable strong limit cardinal $\kappa$. 
\end{const}

\begin{prop}\label{berko_funct_stability} The Berkovich functor $|-|: \cX_{\arc_\varpi} \rightarrow \Cond$ has the following stability properties
\begin{enumerate}[(1)]
	\item Let $F \rightarrow G$ be a morphism in $\cX_{\arc_\varpi}$. If $F \twoheadrightarrow G$ is an epimorphism, then $|F| \twoheadrightarrow |G|$ is an epimorphism of condensed sets.
	\item Let $F_1 \rightarrow F_2 \leftarrow F_3$ be a pair of morphisms in $\cX_{\arc_\varpi}$. Then, the canonical map of condensed sets
	\begin{align*}
		|F_1 \times_{F_2} F_3| \twoheadrightarrow |F_1| \times_{|F_2|} |F_3|
	\end{align*}
	is an epimorphism.
	\item Let $F \rightarrow G$ be a morphism in $\cX_{\arc_\varpi}$. If $F \hookrightarrow G$ is a monomorphism, then $|F| \hookrightarrow |G|$ is a monomorphism of condensed sets.
	\item Let $F_1 \rightarrow F_2 \leftarrow F_3$ be a pair of morphisms in $\cX_{\arc_\varpi}$, and assume that $F_1 \hookrightarrow F_2$ is a monomorphism. Then, the canonical map of condensed sets
	\begin{align*}
		|F_1 \times_{F_2} F_3| \overset{\simeq}{\longrightarrow} |F_1| \times_{|F_2|} |F_3|
	\end{align*}
	is an isomorphism.
	\item If $F$ is a quasicompact object of $\cX_{\arc_\varpi}$, then $|F|$ is a quasicompact condensed set.
	\item If $F$ is a qcqs object of $\cX_{\arc_\varpi}$, then $|F|$ is a qcqs condensed set (equivalently, a compact hausdorff space).
	\item If $F$ is a quasiseparated object of $\cX_{\arc_\varpi}$, then $|F|$ is a quasiseparated condensed set.
\end{enumerate}
\end{prop}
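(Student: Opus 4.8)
The plan is to deduce all seven statements from two facts: the Berkovich functor $|-|\colon\cX_{\arc_\varpi}\to\Cond$ preserves all colimits (Construction~\ref{berko_funct}), and on representable objects it is computed by the Berkovich spectrum, whose behaviour under fibre products and $\arc_\varpi$-covers is governed by Propositions~\ref{general_fiber_prod_berko_sp}, \ref{mono_fiber_prod_berko_sp} and~\ref{gluing_berko_sp}. Since $\cX_{\arc_\varpi}$ and $\Cond$ are coherent topoi (Propositions~\ref{basic_arc_pi_properties} and~\ref{epi_condensed_sets}), colimits are universal in each and epimorphisms are stable under colimits, which is what lets one bootstrap from representables to arbitrary objects. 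The logical order will be $(1)$, then $(2)$, then $(3)$ and $(4)$ from $(2)$, then $(5)$, $(6)$ and $(7)$. Statement $(1)$ is immediate: an epimorphism $F\twoheadrightarrow G$ in a topos is effective, so $\coeq(F\times_G F\rightrightarrows F)\xrightarrow{\simeq}G$; applying the colimit-preserving functor $|-|$ gives $\coeq(|F\times_G F|\rightrightarrows|F|)\xrightarrow{\simeq}|G|$, and the canonical map from an object to a coequalizer of a pair out of it is always an epimorphism, so $|F|\to|G|$ is an epimorphism.

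For $(2)$ I would first treat the case in which $F_1,F_2,F_3$ are representable, say $F_j=\cM(A_j)_{\arc_\varpi}$: then $F_1\times_{F_2}F_3=\cM(A_1\cotimes_{A_2}A_3)_{\arc_\varpi}$ since the sheafified Yoneda functor preserves finite limits, and $|\cM(A_1\cotimes_{A_2}A_3)|\twoheadrightarrow|\cM(A_1)|\times_{|\cM(A_2)|}|\cM(A_3)|$ is surjective by Proposition~\ref{general_fiber_prod_berko_sp}; as $\Yo_{\eff}$ preserves finite limits and carries surjections of compact Hausdorff spaces to epimorphisms (Proposition~\ref{epi_condensed_sets}), this is exactly $(2)$ for representables. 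For general $F_1,F_2,F_3$ I would write each as a colimit of representables, use universality of colimits in $\cX_{\arc_\varpi}$ to present $F_1\times_{F_2}F_3$ as a colimit of fibre products of representables over representables --- reducing first to the case of representable $F_2$, and then handling a general $F_2$ via a cover of $F_2$ by representables together with disjointness of coproducts in a topos --- and then apply $|-|$ (which preserves these colimits), using universality of colimits in $\Cond$ and stability of epimorphisms under colimits.

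From $(2)$ the next three are formal. For $(3)$: if $F\hookrightarrow G$ is a monomorphism its relative diagonal $\Delta\colon F\to F\times_G F$ is an isomorphism, and the diagonal of $|F|\to|G|$ factors as $|F|\xrightarrow{|\Delta|}|F\times_G F|\to|F|\times_{|G|}|F|$ with the first map an isomorphism and the second an epimorphism by $(2)$; a morphism that is simultaneously a monomorphism (every diagonal is one) and an epimorphism in a topos is an isomorphism, so $|F|\to|G|$ has invertible diagonal, hence is a monomorphism. For $(4)$: if $F_1\hookrightarrow F_2$ is a monomorphism then so is $F_1\times_{F_2}F_3\to F_3$, hence by $(3)$ so is $|F_1\times_{F_2}F_3|\to|F_3|$; as this factors through $|F_1\times_{F_2}F_3|\to|F_1|\times_{|F_2|}|F_3|\to|F_3|$, the first map is a monomorphism, and it is an epimorphism by $(2)$, hence an isomorphism. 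For $(5)$: choose an epimorphism from a coproduct of representables onto $F$ and, using quasicompactness, a finite subfamily still covering; by $(1)$ its image is a finite coproduct of compact Hausdorff spaces mapping epimorphically onto $|F|$, so $|F|$ is quasicompact by Lemma~\ref{alt_defn_qc}.

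For $(6)$, with $F$ qcqs pick such a finite cover $P=\coprod_{i\in I_0}\cM(A_i)_{\arc_\varpi}\twoheadrightarrow F$; then $P$ is qcqs and, since $F$ is quasiseparated and $P\times_F P$ is a monomorphic subobject of the qcqs object $P\times P$ (Propositions~\ref{properties_qcqs_morphism} and~\ref{comp_relative_absolute_qcqs}), $P\times_F P$ is qcqs too, so $|P|$ is a compact Hausdorff space and $|P\times_F P|$ is quasicompact by $(5)$; by $(2)$ the map $|P\times_F P|\twoheadrightarrow|P|\times_{|F|}|P|$ is an epimorphism, so $|P|\times_{|F|}|P|$ is quasicompact, and Proposition~\ref{prop_qcqs_obj} applied to the epimorphism $|P|\twoheadrightarrow|F|$ (part~$(1)$) shows $|F|$ is qcqs, hence a compact Hausdorff space by Proposition~\ref{epi_condensed_sets}. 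For $(7)$: write $F=\colim_{\cI}U_i$ as a filtered colimit of qcqs objects along monomorphisms (Proposition~\ref{saturate_qs}); then $|F|=\colim_{\cI}|U_i|$ with each $|U_i|$ a compact Hausdorff space by $(6)$ and each transition map a monomorphism by $(3)$, and the converse direction of Proposition~\ref{saturate_qs}, valid in the coherent topos $\Cond$, gives that $|F|$ is quasiseparated. The main obstacle is the reduction step inside $(2)$: everything else is a formal consequence of $(1)$, $(2)$, the coherence of the two topoi, and the Berkovich-spectrum results already proved, but promoting the representable case of $(2)$ to the general case requires organising the resolution by representables and the repeated appeals to universality of colimits so that the fibre products appearing at each stage are genuinely fibre products over representable objects, and it is the two-stage bookkeeping indicated above that I would use to make this precise.
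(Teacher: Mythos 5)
Your parts (1) and (3)--(7) are essentially the paper's own arguments, but part (2) --- which drives (3), (4) and (6) --- has a genuine gap at exactly the step you yourself flag as ``the main obstacle''. Reducing the outer terms $F_1,F_3$ to representables is fine (colimits are universal in both topoi, $|-|$ preserves them, and an objectwise epimorphism of diagrams induces an epimorphism on colimits), but the base is not: fibre products do not commute with colimits in the base variable, so neither writing $F_2$ as a colimit of representables nor choosing an epimorphism $V=\sqcup_j \cM(A_j)_{\arc_\varpi}\twoheadrightarrow F_2$ presents $F_1\times_{F_2}F_3$ in terms of fibre products over representable bases. Every natural implementation of your ``cover $F_2$ and use disjointness of coproducts'' step is circular: pulling the target $|F_1|\times_{|F_2|}|F_3|$ back along the epimorphism $|V|\rightarrow|F_2|$ reduces you to showing that $|F_1\times_{F_2}V\times_{F_2}F_3|\rightarrow|F_1|\times_{|F_2|}|V|\times_{|F_2|}|F_3|$ is an epimorphism, and rewriting this as a fibre product over $V$ requires knowing that $|F_1\times_{F_2}V|\rightarrow|F_1|\times_{|F_2|}|V|$ and $|V\times_{F_2}F_3|\rightarrow|V|\times_{|F_2|}|F_3|$ are epimorphisms --- fresh instances of (2) over the same general base $F_2$. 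Similarly, if $U\rightarrow F_1$, $W\rightarrow F_3$ are covers by representables factoring through $V$, the map $U\times_V W\rightarrow F_1\times_{F_2}F_3$ is not an epimorphism (its failure is controlled by the relative diagonal of $V\rightarrow F_2$), so compatible covers do not surject onto the fibre product; and disjointness of coproducts does not help, since a general $F_2$ is a nontrivial coequalizer, not a coproduct, of representables. So the ``two-stage bookkeeping'' cannot be organized as you describe; a new idea is required.

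The paper's proof of (2) in fact never reduces to representables and never invokes Proposition \ref{general_fiber_prod_berko_sp} at this point. It replaces the cospan by the single map $p\colon X:=F_1\sqcup F_2\sqcup F_3\rightarrow F_2$, which is automatically an epimorphism because it restricts to the identity on the $F_2$-summand; then $F_1\times_{F_2}F_3$ is a direct summand of the kernel pair $X\times_{F_2}X$, and since $|-|$ preserves the coequalizer $\coeq(X\times_{F_2}X\rightrightarrows X)\simeq F_2$, the epi--mono factorization of $|X\times_{F_2}X|\rightarrow|X|\times|X|$ in the topos $\Cond$ is identified with $|X\times_{F_2}X|\twoheadrightarrow|X|\times_{|F_2|}|X|\hookrightarrow|X|\times|X|$; restricting to the $(1,3)$-summand gives the claim for arbitrary $F_1,F_2,F_3$. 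That single maneuver is what handles a completely general base, and it is the ingredient missing from your outline; once (2) is secured this way, your arguments for (1) and (3)--(7) agree with the paper's and go through.
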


\begin{proof}  First we prove (1). If $F \rightarrow G$ is an epimorphism, then by virtue of the fact we are working on a topos it follows that the canonical map $\coeq(F \times_G F \rightrightarrows F) \overset{\simeq}{\rightarrow} G$ is an isomorphism, and since the Berkovich functor $|-|$ is colimit preserving it follows that the canonical map $\coeq(|F \times_G F| \rightrightarrows |F|) \overset{\simeq}{\rightarrow} |G|$ is an isomorphism. Thus, since $\Cond$ is a topos, it follows that $|F| \rightarrow |G|$ is an epimorphism.

In order to prove (2), let $p: X =: F_1 \sqcup F_2 \sqcup F_3 \twoheadrightarrow F_2$ be the canonically induced morphism, which is also an epimorphism. We claim that the induced map $|X \times_{F_2} X| \rightarrow |X| \times_{|F_2|} |X|$ is an epimorphism. Indeed, since $X \rightarrow F_2$ is an epimorphism and $\cX_{\arc_\varpi}$ is a topos we have that the canonical map $\coeq(X \times_{F_2} X \rightrightarrows X) \rightarrow F_2$ is an isomorphism, and since the Berkovich functor is colimit preserving it follows that the canonical map of condensed sets $\coeq(|X \times_{F_2} X| \rightrightarrows |X|) \rightarrow |F_2|$ is also an isomorphism. Thus, by virtue of the fact that $\Cond$ is a topos it follows that the canonical map $|X \times_{F} X| \rightarrow |X| \times |X|$ factors uniquely as
\begin{align*}
	|X \times_{F_2} X| \twoheadrightarrow |X| \times_{|F_2|} |X| \hookrightarrow |X| \times |X|
\end{align*}
proving that the desired map $|X \times_{F_2} X| \rightarrow |X| \times_{|F_2|} |X|$ is an epimorphism. The result then follows from the fact that if $|X \times_{F_2} X| \rightarrow |X| \times_{|F_2|} |X|$ is an epimorphism then so is $|F_1 \times_{F_2} F_3| \rightarrow |F_1| \times_{|F_2|} |F_3|$.

Next we prove (3). To show that $|F| \rightarrow |G|$ is a monomorphism of condensed sets, it suffices to show that the diagonal map $\Delta: |F| \rightarrow |F| \times_{|G|} |F|$ is an isomorphism, and since $\Delta$ is always a monomorphism it remains to show that it is an epimorphism. Indeed, since $F \rightarrow G$ is a monomorphism, it follows that the canonical map $F \rightarrow F \times_G F$ is an isomorphism and then the result follows from part (2).

To prove (4) recall that a morphism in a topos is an isomorphism if and only if it is a monomorphism and a epimorphism, thus by part (2) it remains to show that the induced map $|F_1 \times_{F_2} F_3| \rightarrow |F_1| \times_{|F_2|} |F_3|$ is a monomorphism. Since monomorphisms are stable under basechange and the Berkovich functor preserves monomorphisms, it follows that the induced map $|F_1 \times_{F_2} F_3| \hookrightarrow |F_3|$ is a monomorphism, and since it factors as
\begin{align*}
	|F_1 \times_{F_2} F_3| \rightarrow |F_1| \times_{|F_2|} |F_3| \rightarrow |F_3|
\end{align*}
it follows that $|F_1 \times_{F_2} F_3| \rightarrow |F_1| \times_{|F_2|} |F_3|$ is also a monomorphism, proving the claim.

In order to prove $(5)$, first recall that we can realize the category $\cX_{\arc_\varpi}$ as $\Shv_{\arc_\varpi}(\Ban_{K}^{\op, \contr})$ and that since $F$ is quasicompact there exists a finite collection of morphisms $\{\cM(A_i)_{\arc_\varpi} \rightarrow F\}_{i \in I}$ such that the induced map $\sqcup \cM(A_i)_{\arc_\varpi} \rightarrow F$ is an epimorphism. Then by part (1) it follows that $\sqcup |\cM(A_i)_{\arc_\varpi}| \rightarrow |F|$ is an epimorphism, and since $\sqcup |\cM(A_i)_{\arc_\varpi}|$ is a quasicompact condensed set, the result follows from Lemma \ref{alt_defn_qc}.

For the proof of $(6)$ we follow the notation of the proof of $(5)$, and set $X := \sqcup \cM(A_i)_{\arc_\varpi} \rightarrow F$. By virtue of Proposition \ref{prop_qcqs_obj} and Lemma \ref{alt_defn_qc}, in order to show that $|F|$ is qcqs it suffices to show the canonical map $|X \times_F X| \rightarrow |X| \times_{|F|} |X|$ is an epimorphism of condensed sets, but this follows from part $(2)$, completing the proof of $(6)$.

Statement (7) is a direct consequence of Proposition \ref{saturate_qs} and the fact that the Berkovich functor is colimit preserving and preserves monomorphisms and qcqs objects.
\end{proof}

The following result provides a description of $|F|(*)$, for any $F \in \cX_{\arc_\varpi}$, in terms of algebraic data. This result is analogous to the one proven in \cite[Proposition 12.7]{diamonds_scholze} for small $v$-stacks.

\begin{prop}[Points]\label{pts_arc_pi_shv} Let $F$ be an object of $\cX_{\arc_\varpi}$ and realize $\cX_{\arc_\varpi}$ as $\Shv_{\arc_\varpi}(\Perfd_K^{\Ban, \op})$. We will abuse notation and denote by $\cM(A) \in \cX_{\arc_\varpi}$ the image of $\cM(A) \in \Perfd_K^{\Ban, \op}$ under the fully faithful functor $\Perfd_{K}^{\Ban, \op} \hookrightarrow \cX_{\arc_\varpi}$. Then, the following hold
\begin{enumerate}[(1)]
	\item For each $x \in |F|(*)$ there exists a perfectoid non-archimedean field $L/K$ and a morphism $\cM(L) \rightarrow F$ such that the induced map $\pt = |\cM(L)| \rightarrow |F|$ is exactly $x: \pt \rightarrow |F|$.
	\item Let $\cM(L_1) \rightarrow F \leftarrow \cM(L_2)$ be a pair of morphism in $\cX_{\arc_\varpi}$ from perfectoid non-archimedean fields over $K$, having the same image on the induced morphism of sets $|\cM(L_1)|(*) \rightarrow |F|(*) \leftarrow |\cM(L_2)|(*)$. Then, there exists a third perfectoid non-archimedean field $\cM(L_3)$ and a pair of morphisms $\cM(L_1) \leftarrow \cM(L_3) \rightarrow \cM(L_2)$ making the following diagram commute
	\begin{cd}
		\cM(L_3) \ar[r] \ar[d] & \cM(L_2) \ar[d] \\
		\cM(L_1) \ar[r] & F
	\end{cd}
	By part (1), this is equivalent to the requirement that $\cM(L_1) \times_F \cM(L_2) \not= \emptyset$.
\end{enumerate}
\end{prop}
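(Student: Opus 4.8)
The statement is a ``points-are-enough'' principle for the topos $\cX_{\arc_\varpi}$, and the natural strategy is to reduce it step by step to concrete assertions about perfectoid Banach $K$-algebras that have already been established. I would begin by realizing $\cX_{\arc_\varpi}$ as $\Shv_{\arc_\varpi}(\Perfd_K^{\Ban, \op})$, so that $\Perfd_K^{\Ban, \op}$ is a full subcategory of $\cX_{\arc_\varpi}$ forming a basis for the topology (Proposition \ref{basic_arc_pi_properties}(3) and Construction \ref{const_arc_pi_topos}(3)). The key input on the topological side is that for a perfectoid Banach $K$-algebra $A$, the underlying set $|\cM(A)|(*)$ agrees with the underlying set of the compact Hausdorff space $|\cM(A)|$, and that every point $x \in |\cM(A)|$ admits a completed residue field $\cH(x)$ which is a perfectoid non-archimedean field over $K$ (Theorem \ref{stalks_perfectoid}(5)) together with a morphism $\cM(\cH(x)) \to \cM(A)$ hitting exactly $x$ (Definition \ref{defn_completed_residue_field}).

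\textbf{Proof of (1).} Given $F \in \cX_{\arc_\varpi}$ and $x \in |F|(*)$, I would first pick an epimorphism $p: \sqcup_{i \in I} \cM(A_i) \twoheadrightarrow F$ in $\cX_{\arc_\varpi}$ with each $A_i \in \Perfd_K^{\Ban}$; such a cover exists since $\Perfd_K^{\Ban, \op}$ generates the topos. By Proposition \ref{berko_funct_stability}(1) the induced map $\sqcup_i |\cM(A_i)| \to |F|$ is an epimorphism of condensed sets; since the source is quasicompact and evaluation at $*$ detects surjectivity on underlying sets (Proposition \ref{epi_condensed_sets}(3), after first passing to the relevant qcqs piece), there is some $i$ and some $y \in |\cM(A_i)|(*)$ mapping to $x$. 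Now $y$ is a genuine point of the compact Hausdorff space $|\cM(A_i)|$, so setting $L := \cH(y)$ — a perfectoid non-archimedean field over $K$ by Theorem \ref{stalks_perfectoid}(5) — the canonical morphism $\cM(L) \to \cM(A_i)$ induces the map $\pt = |\cM(L)| \to |\cM(A_i)|$ hitting $y$ (Definition \ref{defn_completed_residue_field}). Composing with $\cM(A_i) \to F$ gives the desired $\cM(L) \to F$, and by functoriality of the Berkovich functor the induced map $\pt \to |F|$ is $x$.

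\textbf{Proof of (2), and the main obstacle.} Given $\cM(L_1) \to F \leftarrow \cM(L_2)$ with the same image point, I would form the fiber product $G := \cM(L_1) \times_F \cM(L_2)$ in $\cX_{\arc_\varpi}$, which is qcqs since all objects in sight are qcqs and $\cX_{\arc_\varpi}$ is coherent (Proposition \ref{prop_qcqs_obj}). Then $|G|$ is a compact Hausdorff space (Proposition \ref{berko_funct_stability}(6)), and by Proposition \ref{berko_funct_stability}(2) the canonical map $|G| \to |\cM(L_1)| \times_{|F|} |\cM(L_2)|$ is an epimorphism; since $|\cM(L_i)| = \pt$ and the two points agree in $|F|(*)$, the target is nonempty, hence $|G|$ is nonempty and in particular $|G|(*) \neq \emptyset$. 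Applying part (1) to $G$ produces a perfectoid non-archimedean field $L_3$ over $K$ and a morphism $\cM(L_3) \to G$; composing with the two projections $G \to \cM(L_1)$ and $G \to \cM(L_2)$ gives the commuting square. The step I expect to require the most care is the interplay between ``$|F|(*)$-surjectivity'' and genuine epimorphisms in $\cX_{\arc_\varpi}$ when $F$ itself is not assumed qcqs: in part (1) one must be careful that $\sqcup_i \cM(A_i)$ and $F$ need not be qcqs, so the surjectivity-on-$*$ criterion of Proposition \ref{epi_condensed_sets}(3) is not immediately available for $|F|$ itself. The fix is to observe that it suffices to work with the image $\im(\cM(A_i) \to F) \hookrightarrow F$, a qcqs object, for a single well-chosen $i$ — or alternatively to first replace $F$ by the qcqs subobject through which the point $x$ factors — and there Proposition \ref{epi_condensed_sets}(3) applies cleanly. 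Everything else is a formal consequence of the stability properties of the Berkovich functor and the existence of completed residue fields.
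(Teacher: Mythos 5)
Your overall route coincides with the paper's: cover $F$ by affinoid perfectoids, use that the Berkovich functor preserves epimorphisms, lift the point, and invoke Theorem \ref{stalks_perfectoid}(5) for part (1); for part (2), reduce to nonemptiness of $\cM(L_1)\times_F\cM(L_2)$ via Proposition \ref{berko_funct_stability}(2) and apply part (1). However, the justification of the key surjectivity step in (1) has a genuine gap as written. You need only the implication ``epimorphism of condensed sets $\Rightarrow$ surjective on $(*)$-sections,'' and that direction holds with \emph{no} qcqs hypotheses: given a section over $*$, it lifts after pullback along some finite jointly surjective family $\{U_i \to *\}$; some $U_i$ is nonempty, and restricting the lift along any point $* \to U_i$ yields a lift over $*$. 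This is what the paper uses. Your appeal to Proposition \ref{epi_condensed_sets}(3) instead imports hypotheses that fail here (neither $\sqcup_i|\cM(A_i)|$ nor $|F|$ need be qcqs), and neither of your proposed repairs works: choosing ``a single well-chosen $i$'' and passing to $\im(\cM(A_i)\to F)$ presupposes that $x$ already lies in the image of $|\cM(A_i)|(*)$, which is exactly the claim being proved; and ``the qcqs subobject through which $x$ factors'' is not available for a general (non-quasiseparated) $F$ --- in the paper such subobjects (Proposition \ref{res_fields_arc_topos}) are \emph{constructed using} Proposition \ref{pts_arc_pi_shv}, so invoking them here would be circular. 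The qcqs hypotheses in Proposition \ref{epi_condensed_sets}(3) are only needed for the converse implication, which you never use.

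A second, smaller inaccuracy occurs in (2): the fiber product $G=\cM(L_1)\times_F\cM(L_2)$ is \emph{not} known to be qcqs for arbitrary $F$, and Proposition \ref{prop_qcqs_obj} does not give this --- quasicompactness of such a fiber product is precisely what quasiseparatedness of $F$ would buy, and $F$ carries no such hypothesis. (It \emph{is} quasiseparated, being a subobject of $\cM(L_1)\times\cM(L_2)$ via base change of the diagonal of $F$, but possibly not quasicompact, so Proposition \ref{berko_funct_stability}(6) does not apply and $|G|$ need not be compact Hausdorff.) Fortunately you do not need this: once $|G|\to|\cM(L_1)|\times_{|F|}|\cM(L_2)|$ is an epimorphism (Proposition \ref{berko_funct_stability}(2)) onto a condensed set possessing a $(*)$-point (the common image of the two points, which exist by hypothesis), the same unconditional surjectivity-on-points fact gives $|G|(*)\neq\emptyset$, and part (1) applied to $G$ produces $\cM(L_3)\to G$; composing with the projections finishes the argument, exactly as in the paper. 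With these two repairs your proof becomes correct and is essentially the paper's proof.
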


\begin{proof} We begin by proving (1). For any $F \in \cX_{\arc_\varpi}$ there exists a (possibly infinite) collection of morphisms $\{\cM(A_i) \rightarrow F\}_{i \in I}$ from perfectoid Banach algebras such that the induced map $\sqcup_{i \in I} \cM(A_i) \rightarrow F$ is an epimorphism, which in turn implies that the induced map of sets $\sqcup_{i \in I} |\cM(A_i)|(*) \rightarrow |F|(*)$ is surjective, by a combination of Proposition \ref{berko_funct_stability} and the fact that an epimorphism of condensed sets $X \rightarrow Y$ must induce a surjective map $X(*) \rightarrow Y(*)$. Thus, for each $x \in |F|(*)$ there exists a point $y \in |\cM(A_i)|(*)$, for some $i \in I$, such that $x \mapsto y$ under the map $|\cM(A_i)|(*) \rightarrow |F|(*)$. The result then follow from the fact that for any $y \in |\cM(A_i)|(*)$ the completed residue field $\cH(y)$ of $A_i$ is perfectoid (cf. Theorem \ref{stalks_perfectoid}).

By the work done in part (1), in order to proof (2) it remains to show that $\cM(L_2) \times_F \cM(L_1) \not= \emptyset$. Indeed, from the hypothesis it is clear that $|\cM(L_1)| \times_{|F|} |\cM(L_2)|(*) \not= \emptyset$, thus since Proposition \ref{berko_funct_stability} guarantees that the the canonical map $|\cM(L_1) \times_{F} \cM(L_2)| \rightarrow |\cM(L_1)| \times_{|F|} |\cM(L_2)|$ is an epimorphism of condensed sets. This proves that $\cM(L_1) \times_{F} \cM(L_2) \not= \emptyset$, since the Berkovich functor is colimit preserving and so it preserves the initial object.
\end{proof}

We show that in certain situations we can check whether a morphism $F \rightarrow G$ is an epimorphism by only checking that associated map of "topological spaces" $|F| \rightarrow |G|$ is surjective at the level of its underlying set. This is similar to \cite[Lemma 12.11]{diamonds_scholze} (compare with \cite[Lemma 4.21]{arc_topology}), which was proven in the setting of $v$-sheaves. 

\begin{prop}[Epimorphisms]\label{epi_arc_topos} Let $F \rightarrow G$ be a morphisms in $\cX_{\arc_{\varpi}}$, and assume that $G$ is qcqs and $F$ is quasicompact. Then, the following are equivalent
\begin{enumerate}[(1)]
	\item The morphism $F \rightarrow G$ is an epimorphism.
	\item Realizing $\cX_{\arc_{\varpi}}$ as $\Shv_{\arc_\varpi}(\Sch_{K_{\le 1, \qcqs}})$, the morphism $F \rightarrow G$ has the $\arc_\varpi$-lifting property: for each $\varpi$-complete rank one valuation ring $V$ with a faithfully flat structure map $K_{\le 1} \rightarrow V$ and a section $g \in G(V)$, there exists a faithfully flat extension of $\varpi$-complete rank one valuation rings $V \rightarrow W$ and a section $f \in F(W)$ lifting the image of $g$ in $G(V) \rightarrow G(W)$.
	\item[(2')] Realizing $\cX_{\arc_{\varpi}}$ as $\Shv_{\arc_{\varpi}}(\Perfd_K^{\Ban, \op})$, the morphism $F \rightarrow G$ has the $\arc_{\varpi}$-lifting property: for each perfectoid non-archimedean field $L/K$ and a section $g \in G(L)$, there exists a morphism $L \rightarrow W$ of perfectoid non-archimedean fields and a section $f \in F(W)$ lifting the image of $g$ in $G(L) \rightarrow G(W)$.
	\item The induced map of condensed sets $|F| \rightarrow |G|$ is an epimorphism, which by Proposition \ref{epi_condensed_sets}(3) is equivalent to the requirement that $|F|(*) \rightarrow |G|(*)$ is a surjective map of sets.
\end{enumerate}
\end{prop}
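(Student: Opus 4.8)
The plan is to establish the cycle of implications $(1) \Rightarrow (4) \Rightarrow (2') \Leftrightarrow (2) \Rightarrow (1)$, using the various realizations of $\cX_{\arc_\varpi}$ from Construction \ref{const_arc_pi_topos} as a dictionary. The implication $(1) \Rightarrow (4)$ is essentially immediate: if $F \rightarrow G$ is an epimorphism then $|F| \rightarrow |G|$ is an epimorphism of condensed sets by Proposition \ref{berko_funct_stability}(1), and since $G$ is qcqs and $F$ is quasicompact, Proposition \ref{berko_funct_stability}(5),(6) tell us that $|F|$ is a quasicompact condensed set and $|G|$ is a compact Hausdorff space, so the last clause of $(4)$ follows from Proposition \ref{epi_condensed_sets}(3). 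The equivalence $(2) \Leftrightarrow (2')$ is a translation through the dictionary (Proposition \ref{equiv_almost_tf_banach}, Proposition \ref{equiv_perfd_ban_tic}, and Lemma \ref{na_field_rk_one_dict}): a $\varpi$-complete rank one valuation ring $V$ with faithfully flat structure map $K_{\le 1} \rightarrow V$ corresponds precisely to a perfectoid non-archimedean field $V[\tfrac{1}{\varpi}]$ over $K$, and a faithfully flat extension of such valuation rings corresponds to a map of perfectoid non-archimedean fields; sections over $V$ match sections over $V[\tfrac{1}{\varpi}]$ under the identification of the sites.

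\textbf{The implication $(4) \Rightarrow (2')$.} Here is where I expect to do real work. Given $g \in G(L)$ for a perfectoid non-archimedean field $L/K$, I want to produce a map $L \rightarrow W$ of perfectoid fields and a lift $f \in F(W)$. Realizing $\cX_{\arc_\varpi}$ as $\Shv_{\arc_\varpi}(\Perfd_K^{\Ban,\op})$, pick a (possibly infinite) collection $\{\cM(A_i) \rightarrow F\}_{i \in I}$ with $\sqcup_i \cM(A_i) \rightarrow F$ an epimorphism. The section $g: \cM(L) \rightarrow G$ and the map $F \rightarrow G$ give a point $x \in |\cM(L)|(*) = \{*\}$ mapping into $|G|(*)$; by hypothesis $(4)$ this point lies in the image of $|F|(*) \rightarrow |G|(*)$, hence (by the description of $|F|(*)$ via the cover and the fact that $|F| = \colim |\cM(A_i)|$ with the Berkovich functor colimit preserving) there is some $i$ and a point $y \in |\cM(A_i)|(*)$ with $|F \to G|(y) = x$ in $|G|(*)$. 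The completed residue field $\cH(y)$ of $A_i$ is a perfectoid non-archimedean field (Theorem \ref{stalks_perfectoid}), and we get a map $\cM(\cH(y)) \rightarrow F$. Now the two maps $\cM(L) \rightarrow G$ and $\cM(\cH(y)) \rightarrow F \rightarrow G$ agree at the level of underlying points, so by Proposition \ref{pts_arc_pi_shv}(2) there is a third perfectoid field $W$ and maps $\cM(W) \rightarrow \cM(L)$, $\cM(W) \rightarrow \cM(\cH(y))$ making the square over $G$ commute. The composite $\cM(W) \rightarrow \cM(\cH(y)) \rightarrow F$ is the desired lift $f \in F(W)$, and $\cM(W) \rightarrow \cM(L)$ gives the required extension $L \rightarrow W$ (it is automatically an extension of perfectoid fields since any map between non-archimedean fields is such).

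\textbf{The implication $(2) \Rightarrow (1)$.} Realizing $\cX_{\arc_\varpi}$ as $\Shv_{\arc_\varpi}(\Sch_{K_{\le 1},\qcqs})$, I factor $F \rightarrow G$ as $F \twoheadrightarrow \im(F \to G) \hookrightarrow G$ (Lemma \ref{epi_equal_eff_epi_topos}); it suffices to show the monomorphism $\im(F \to G) \hookrightarrow G$ is an isomorphism, equivalently an epimorphism. So I may assume $F \rightarrow G$ is a monomorphism and show it is an epimorphism. By the sheaf condition and Lemma \ref{morphism_of_shv}(2), $F \hookrightarrow G$ is an epimorphism in $\cX_{\arc_\varpi}$ if and only if for every object $X$ of the site and every $g \in G(X)$ there is an $\arc_\varpi$-cover $\{X_j \to X\}$ on which $g$ restricts into $F$. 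Since $\Perfd_{K_{\le 1}}^{\Prism a,\op}$ — equivalently the $\varpi$-complete rank one valuation rings faithfully flat over $K_{\le 1}$ — forms a basis for the $\arc_\varpi$-topology (Construction \ref{const_arc_pi_topos}, Remark \ref{canonical_arc_pi_covers}), it is enough to check this when $X = \Spf(V)$ for such a $V$; and there the $\arc_\varpi$-lifting property $(2)$ gives exactly a faithfully flat extension $V \to W$ — itself an $\arc_\varpi$-cover by Lemma \ref{faithfully_flat_implies_arc_cover} and Lemma \ref{arc_implies_pi_complete_arc} — on which $g$ lifts to $F$. (The hypotheses that $F$ is quasicompact and $G$ is qcqs are not needed for this direction; they are only used to get the clean reformulation in $(4)$ via Proposition \ref{epi_condensed_sets}(3).) This closes the cycle. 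The main obstacle is the bookkeeping in $(4) \Rightarrow (2')$ — specifically, making sure the point produced downstairs in $|G|(*)$ can be matched with a point in the image of $|F|(*)$ compatibly with the actual section $g$, which is precisely what Proposition \ref{pts_arc_pi_shv}(2) is designed to handle.
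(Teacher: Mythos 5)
Your implications $(1)\Rightarrow(4)$, $(2)\Leftrightarrow(2')$, and $(4)\Rightarrow(2')$ are fine and essentially match the paper (your use of Proposition \ref{pts_arc_pi_shv}(2) to merge the field point of $G$ coming from $g$ with a residue-field point of $F$ is just a repackaging of the paper's argument, which instead shows $F \times_G \cM(L) \not= \emptyset$ via Proposition \ref{berko_funct_stability}(2) and then applies Proposition \ref{pts_arc_pi_shv}(1)). The problem is in $(2)\Rightarrow(1)$. Your reduction rests on the claim that the $\varpi$-complete rank one valuation rings faithfully flat over $K_{\le 1}$ ``form a basis for the $\arc_\varpi$-topology,'' asserted as being equivalent to the basis $\Perfd_{K_{\le 1}}^{\Prism a}$ of Construction \ref{const_arc_pi_topos}. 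That identification is false: the basis consists of (almost) integral perfectoid algebras, and the canonical covers of Remark \ref{canonical_arc_pi_covers} are by \emph{infinite products} of valuation rings, not by valuation rings. Since the topology is finitary, a general affinoid cannot be covered by finitely many $\Spf(V)$'s (each such map hits essentially one Berkovich point, up to extension), so ``it is enough to check this when $X = \Spf(V)$'' is not a legitimate reduction. The lifting property against valuation rings characterizes $\arc_\varpi$-\emph{covers between affinoids} (Proposition \ref{arc_pi_cover_for_banach}); it does not, by itself, give local surjectivity of a general morphism of sheaves.

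Your parenthetical claim that the quasicompactness hypotheses are not needed for this direction is also false, and this shows the argument cannot be repaired without using them: take $G = \cM(A)$ an affinoid perfectoid space with $|\cM(A)|$ infinite and $F = \sqcup_{x \in |\cM(A)|(*)} \cM(\cH(x))$. Every map $\cM(L) \rightarrow G$ from a perfectoid field factors through the residue field at its image point, so $F \rightarrow G$ has the $\arc_\varpi$-lifting property; but the identity section of $G$ cannot be lifted to $F$ on any \emph{finite} $\arc_\varpi$-cover $\{\cM(B_j) \rightarrow \cM(A)\}$, since each restriction would have to factor through a single residue field and hence each $|\cM(B_j)|$ would land in a single point, contradicting surjectivity of $\sqcup_j |\cM(B_j)| \rightarrow |\cM(A)|$. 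So $F \rightarrow G$ is not an epimorphism. The correct argument (the paper's) uses the hypotheses precisely here: quasicompactness of $G$ gives a single epimorphism $\cM(A) \twoheadrightarrow G$ from an affinoid perfectoid; quasicompactness of $F$ together with quasiseparatedness of $G$ makes $F \times_G \cM(A)$ quasicompact, hence covered by a single $\cM(B)$; the composite $\cM(B) \rightarrow \cM(A)$ inherits the lifting property (stability under base change and composition), is therefore an $\arc_\varpi$-cover by Proposition \ref{arc_pi_cover_for_banach}, hence an epimorphism; and since $\cM(B) \rightarrow G$ factors through $F$, the map $F \rightarrow G$ is an epimorphism. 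You should replace your basis reduction by this affinoid d\'evissage.
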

		
\begin{proof} It follows from Lemma \ref{morphism_of_shv} that $(1) \Rightarrow (2)$, and from Proposition \ref{berko_funct_stability} that $(1) \Rightarrow (3)$. The equivalence between $(2)$ and $(2')$ is a direct consequence of Proposition \ref{equiv_almost_tf_banach} and Lemma \ref{na_field_rk_one_dict}. We begin by proving that $(2') \Rightarrow (1)$. We will abuse notation and denote the fully faithful functor $\Perfd_{K}^{\Ban, \op} \hookrightarrow \cX_{\arc_\varpi}$ as $\cM(A) \mapsto \cM(A)$. Let $B \rightarrow A$ be a morphism in $\Perfd_{K}^{\Ban}$, and $\cM(A) \rightarrow \cM(B)$ the induced morphism in $\cX_{\arc_\varpi}$; if the morphism $\cM(A) \rightarrow \cM(B)$ has the $\arc_\varpi$-lifiting property, then by construction $\cM(A) \rightarrow \cM(B)$ is an epimorphism in $\cX_{\arc_\varpi}$. More generally, since $G$ is quasicompact there exists a $A \in \Perfd_{K}^{\Ban}$ together with an epimorphism $\cM(A) \twoheadrightarrow G$, and since $F$ is quasicompact and $G$ is quasiseparated it follows that the fiber product $F \times_G \cM(A)$ is again quasicompact, and so there exists another $B \in \Perfd_{K}^{\Ban}$ together with an epimorphism $\cM(B) \twoheadrightarrow F \times_G \cM(A)$. Then, since $(1) \Rightarrow (2)$ and the $\arc_\varpi$-lifting property is stable under composition and basechange it follows that the composition $\cM(B) \rightarrow \cM(A)$ has the $\arc_\varpi$-lifiting property and so it is an epimorphism. To conclude, since epimorphisms are stable under composition it follows that $\cM(B) \rightarrow G$ is an epimorphism, and since this map factors as $\cM(B) \rightarrow F \rightarrow G$ it follows that the original map $F \rightarrow G$ is an epimorphism in $\cX_{\arc_\varpi}$.

Finally, it remains to show that $(3) \Rightarrow (2')$, thus we assume that $|F|(*) \rightarrow |G|(*)$ is a surjective map of sets. We will follow the notation of the previous paragraph. By Proposition \ref{pts_arc_pi_shv}, it suffices to show that for any perfectoid non-archimedean field $L/K$ together with a morphism $\cM(L) \rightarrow G$ the fiber product $F \times_G \cM(L) \not= \emptyset$. From the hypothesis that $|F| \rightarrow |G|$ is an epimorphism, it follows that $|F|\times_{|G|} |\cM(L)| \not= \emptyset$, then from Proposition \ref{berko_funct_stability} we learn that the induced map
\begin{align*}
	|F \times_G \cM(L)| \rightarrow |F|\times_{|G|} |\cM(L)|
\end{align*}
is an epimorphism, showing that $|F \times_G \cM(L)|\not= \emptyset$ and so $F \times_G \cM(L) \not= \emptyset$, again by virtue of Proposition \ref{pts_arc_pi_shv}.
\end{proof}

Parallel to the theory of completed residue fields of Berkovich, we stablish that $\arc_\varpi$-sheaves also have a well behaved theory of completed residue fields.

\begin{prop}[Residue Fields]\label{res_fields_arc_topos} Let $F$ be a quasiseparated object of $\cX_{\arc_\varpi}$, then the following hold.
\begin{enumerate}[(1)]
	\item Let $X$ be a quasicompact object of $\cX_{\arc_\varpi}$. Then, for any morphism $p: X \rightarrow F$ the resulting object $\im(p)$ is qcqs.
	\item For any $x \in |F|(*)$ there exists an essentially unique monomorphism $F_x \hookrightarrow F$ from a qcqs object $F_x \in \cX_{\arc_\varpi}$ such that the morphisms $F_x \hookrightarrow F$ gets mapped to $x: \pt \hookrightarrow |F|$ under the Berkovich functor. We call the resulting monomorphism $F_x \hookrightarrow F$ the completed residue field of $F$ at $x \in |F|(*)$.
	\item Let $F_x \hookrightarrow F$ be the completed residue field of $F$ at $x \in |F|(*)$. Then, for any morphism $Z \rightarrow F$ from a quasicompact object $Z$, which gets mapped to $x: |Z| = \pt \rightarrow |F|$ under the Berkovich functor, there exists a unique factorization of $Z \rightarrow F$ as
	\begin{align*}
		Z \rightarrow F_x \hookrightarrow F
	\end{align*}
	In particular, if $Z = \cM(L)_{\arc_\varpi}$ where $L$ is a non-archimedean field, there exists a unique factorization of $\cM(L)_{\arc_\varpi} \rightarrow F$ as $\cM(L)_{\arc_\varpi} \rightarrow F_x \rightarrow F$.
	\item Assume that $F$ is of the form $\cM(A)_{\arc_\varpi}$ for some Banach $K$-algebra $A$. For a fixed $x \in |F|(*)$, the resulting morphism $F_x \hookrightarrow F$ is given by $\cM(\cH(x))_{\arc_\varpi} \hookrightarrow \cM(A)_{\arc_\varpi}$, where $\cH(x)$ is the completed residue field of $A$ at $x \in |\cM(A)|(*) = |\cM(A)_{\arc_\varpi}|(*)$.
\end{enumerate}
\end{prop}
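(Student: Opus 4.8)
The overall strategy is to reduce everything to the qcqs (i.e. coherent-topos) statements already proved in Propositions \ref{pts_arc_pi_shv}, \ref{epi_arc_topos}, the monomorphism criterion \ref{mono_condensed_sets}, the stability results \ref{berko_funct_stability}, and the structural facts \ref{prop_qcqs_obj} and \ref{saturate_qs}. Part (1) is the engine: given a quasicompact $X$ and a morphism $p:X\to F$, factor $p$ in the topos $\cX_{\arc_\varpi}$ as $X\twoheadrightarrow\im(p)\hookrightarrow F$ using Lemma \ref{epi_equal_eff_epi_topos}. Since $X\twoheadrightarrow\im(p)$ is an epimorphism and $X$ is quasicompact, Lemma \ref{alt_defn_qc} gives that $\im(p)$ is quasicompact; since $\im(p)\hookrightarrow F$ is a monomorphism and $F$ is quasiseparated, $\im(p)$ is quasiseparated (a monomorphism is quasiseparated, and quasiseparatedness is inherited by the source of a morphism into a quasiseparated object via Proposition \ref{properties_qcqs_morphism}, or directly: $\im(p)\times_{\im(p)}\im(p)=\im(p)$ sits over $F$). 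Hence $\im(p)$ is qcqs.

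\textbf{Constructing $F_x$.} For $x\in|F|(*)$, apply Proposition \ref{res_fields_arc_topos}(1) — wait, that is what we are proving; instead use Proposition \ref{pts_arc_pi_shv}(1) (realizing $\cX_{\arc_\varpi}$ as $\Shv_{\arc_\varpi}(\Perfd_K^{\Ban,\op})$) to get a perfectoid non-archimedean field $L/K$ and a map $\cM(L)\to F$ whose image under the Berkovich functor is exactly $x$. Since $\cM(L)$ is qcqs, part (1) produces a qcqs object $F_x:=\im(\cM(L)\to F)$ together with a monomorphism $F_x\hookrightarrow F$. Applying the colimit-preserving Berkovich functor, which preserves monomorphisms and epimorphisms by Proposition \ref{berko_funct_stability}, we get $|\cM(L)|=\pt\twoheadrightarrow|F_x|\hookrightarrow|F|$, and the composite is $x$; since $\pt\twoheadrightarrow|F_x|$ is an epimorphism of condensed sets onto a quasicompact object, $|F_x|(*)$ is a nonempty quotient of $\pt$, hence $|F_x|=\pt$ and $F_x$ maps to $x$. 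For uniqueness: if $F_x'\hookrightarrow F$ is another qcqs subobject mapping to $x$, then $F_x\times_F F_x'$ is qcqs (quasiseparatedness of $F$), and by Proposition \ref{berko_funct_stability}(4) its image in $|F|$ is $|F_x|\cap|F_x'|=\{x\}$; one then checks $F_x\times_F F_x'\to F_x$ is an epimorphism — since it is a monomorphism into $F_x$ and both are qcqs, Proposition \ref{mono_condensed_sets} together with the fact that the underlying sets agree forces it to be an isomorphism, and symmetrically for $F_x'$. This identifies $F_x\cong F_x'$ over $F$.

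\textbf{The factorization property (3).} Given a quasicompact $Z$ with a map $q:Z\to F$ whose image under the Berkovich functor is $x$, form $\im(q)\hookrightarrow F$, a qcqs subobject by part (1); by Proposition \ref{berko_funct_stability} its image in $|F|$ is $\{x\}$, so by the uniqueness in (2) we get $\im(q)\cong F_x$ over $F$, giving the desired factorization $Z\twoheadrightarrow\im(q)=F_x\hookrightarrow F$; uniqueness of the factorization follows because $F_x\hookrightarrow F$ is a monomorphism. The special case $Z=\cM(L)_{\arc_\varpi}$ is immediate. Finally, part (4): when $F=\cM(A)_{\arc_\varpi}$, the map $\cM(\cH(x))_{\arc_\varpi}\to\cM(A)_{\arc_\varpi}$ is a monomorphism by Proposition \ref{mono_berko_sp_injective} together with the fact (Propositions \ref{mono_iso_on_residue_fields}, \ref{properties_stalk}) that $\cM(\cH(x))\to\cM(A)$ is a monomorphism in $\Ban_K^{\op}$; its image under the Berkovich functor is $\{x\}\subset|\cM(A)|$; and $\cM(\cH(x))_{\arc_\varpi}$ is qcqs by Proposition \ref{basic_arc_pi_properties}. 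By the uniqueness in (2) this must be $F_x\hookrightarrow F$.

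\textbf{Main obstacle.} The delicate point is part (2)'s existence-and-uniqueness: one must be careful that $\im(\cM(L)\to F)$ does not depend on the choice of $L$ (different perfectoid fields $L$, $L'$ over $x$), and that the comparison $F_x\times_F F_x'\xrightarrow{\ \sim\ }F_x$ genuinely holds. This is where Proposition \ref{epi_arc_topos} (epimorphism detected on underlying sets of Berkovich realizations, valid for quasicompact-over-qcqs maps) and Proposition \ref{mono_condensed_sets} do the real work; the compatibility of fiber products with the Berkovich functor along monomorphisms (Proposition \ref{berko_funct_stability}(4)) is what makes the set-theoretic computation $|F_x|\cap|F_x'|=\{x\}$ legitimate. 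Everything else is a formal consequence of working in a coherent topos.
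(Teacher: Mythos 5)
Parts (1)--(3) of your proposal are essentially the paper's own argument: factor through the image, get quasicompactness from Lemma \ref{alt_defn_qc} and quasiseparatedness from the monomorphism, construct $F_x$ as $\im(\cM(L) \rightarrow F)$ via Proposition \ref{pts_arc_pi_shv}, and prove uniqueness by showing both projections from $F_x \times_F F_x'$ are monomorphisms (basechange) and epimorphisms. One citation wobble: in the body of the uniqueness argument you invoke Proposition \ref{mono_condensed_sets}, which is a statement about condensed sets, whereas the step you actually need is that a quasicompact-over-qcqs map of $\arc_\varpi$-sheaves that is surjective on underlying sets is an epimorphism, i.e.\ Proposition \ref{epi_arc_topos} (plus Proposition \ref{pts_arc_pi_shv}(2) or \ref{berko_funct_stability}(4) for non-emptiness); you do name \ref{epi_arc_topos} in your closing paragraph, so the substance is there.

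Part (4) as written has a genuine gap. You assert that $\cM(\cH(x)) \rightarrow \cM(A)$ is a monomorphism in $\Ban_K^{\op}$ and then transport this along $\Yo_{\arc_\varpi}$. For a general Banach $K$-algebra $A$ this intermediate claim is false -- the paper explicitly flags in a footnote that $\cM(\cH(x)) \rightarrow \cM(R)$ need not be a monomorphism in $\Ban_K^{\contr,\op}$ -- and the propositions you cite do not give it: Proposition \ref{mono_berko_sp_injective} goes in the opposite direction (monomorphisms induce injections on spectra), and Proposition \ref{mono_iso_on_residue_fields} concerns a map $\cM(B) \rightarrow \cM(A)$ that is already assumed to be a monomorphism. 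The object that genuinely is a monomorphism in $\Ban_K^{\contr,\op}$ is $\cM(\cO_x^{\wedge}) \rightarrow \cM(A)$ (Proposition \ref{properties_stalk}(1)); the paper applies $\Yo_{\arc_\varpi}$ to this, and then identifies $\cM(\cO_x^{\wedge})_{\arc_\varpi} \simeq \cM(\cH(x))_{\arc_\varpi}$ because every test object in $\Perfd_K^{\Ban}$ is uniform, so every map $\cO_x^{\wedge} \rightarrow P$ factors uniquely through the uniformization $\cO_x^{\wedge,u} \simeq \cH(x)$ (Proposition \ref{properties_stalk}(4)). Equivalently, you could check directly that $\cM(\cH(x))_{\arc_\varpi} \rightarrow \cM(A)_{\arc_\varpi}$ is a monomorphism of $\arc_\varpi$-sheaves by testing on perfectoid $P$ and using the uniqueness of the factorization $A \rightarrow \cH(x) \rightarrow P$ from Proposition \ref{properties_stalk}(4); either repair makes your appeal to the uniqueness in (2) go through.
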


\begin{proof} To prove (1), recall that since $\cX_{\arc_\varpi}$ is a topos, the morphisms $X \rightarrow F$ admits an essentially unique factorization as $X \twoheadrightarrow \im(p) \hookrightarrow F$. Then, since $X$ is quasicompact and the map $X \twoheadrightarrow \im(p)$ is an epimorphism it follows that $\im(p)$ is quasicompact by Lemma \ref{alt_defn_qc}; and since every monomorphism is quasiseparated, it follows from Proposition \ref{comp_relative_absolute_qcqs} and the fact that quasiseparated morphisms are closed under composition that $\im(p)$ is also quasiseparated.
	
Next we prove (2), we begin by constructing the desired monomorphism $F_x \hookrightarrow F$. From Proposition \ref{pts_arc_pi_shv} we know that for each $x \in |F|(*)$ there exists a perfectoid non-archimedean field $L/K$ together with a morphism $g: \cM(L) \rightarrow F$ such that the induced map $\pt = |\cM(L)| \rightarrow |F|$ is exactly the map $x: \pt \rightarrow |F|$. Then, we define the monomorphism $F_x \hookrightarrow F$ as the map induced by the canonical factorization of the map $g$
\begin{align*}
	g: \cM(L) \twoheadrightarrow F_x \hookrightarrow F && \text{where} \qquad F_x := \im(g)
\end{align*}
To show that the morphism $F_x \rightarrow F$ gets mapped $x: \pt \rightarrow F$ under the Berkovich functor it suffices to show that the induced map $|\cM(L)| \rightarrow |F_x|$ is an isomorphism. Since $|\cM(L)| = \pt$ is it is clear that $|\cM(L)| \rightarrow |F_x|$ is a monomorphism, and since the Berkovich functor preserves epimorphism it follows that $|\cM(L)| \rightarrow |F_x|$ is an isomorphism. It remains to show that the monomorphism $F_x \hookrightarrow F$ is unique. Let $Z \hookrightarrow F$ be a monomorphism from a qcqs object $Z \in \cX_{\arc_\varpi}$ such that the induced map $|Z| \hookrightarrow |F|$ is given by $x: \pt \rightarrow |F|$, we need to show that there is an isomorphism $F_x \simeq Z$ which commutes with the monomorphisms towards $F$. Hence, it suffices to show that the fiber product $F_x \times_F Z \not= \emptyset$ and that the projection maps $F_x \times_F Z \rightarrow F_x$ and $F_x \times_F Z \rightarrow Z$ are isomorphisms. By Proposition \ref{pts_arc_pi_shv} it follows that $F_x \times_F Z \not= \emptyset$, and by virtue of Proposition \ref{epi_arc_topos} it follows that both projection maps $F_x \times_F Z \rightarrow F_x$ and $F_x \times_F Z \rightarrow Z$ are epimorphisms. Then, the result follows as monomorphisms are stable under basechange.

In order to proof (3), recall from the discussion in the previous paragraph that the map $f: Z \rightarrow F$ admits a essentially unique factorization as
\begin{align*}
	f: Z \twoheadrightarrow Y \hookrightarrow F && \text{where} \qquad Y := \im(f)
\end{align*}
such that $Y$ is a qcqs object and the induced morphism $|Y| \rightarrow |F|$ is exactly $x: \pt \hookrightarrow |F|$. Then, the uniqueness of completed residue fields, as proven in part (2), shows that there exists an isomorphism $Y \simeq F_x$ respecting the monomorphism towards $F$.

For the proof of (4) we make critical use of Proposition \ref{properties_stalk}. Recall that for each $x \in |\cM(A)|(*)$ there exists a monomorphism $\cM(\cO_{A,x}^{\wedge}) \hookrightarrow \cM(A)$ in the category $\Ban_K^{\contr, \op}$ such that under the induced map $|\cM(\cO_{A, x}^{\wedge})| \rightarrow |\cM(A)|$ is exactly the map $x: \pt \rightarrow |\cM(A)|$. Then, since the sheafified Yoneda functor $\Yo_{\arc_\varpi}: \Ban_{K}^{\contr, \op} \rightarrow \cX_{\arc_\varpi}$ preserves finite limits it follows that it preserves monomorphisms, thus we have an induced monomorphism $\cM(\cO_{A, x}^{\wedge})_{\arc_\varpi} \rightarrow \cM(A)_{\arc_\varpi}$, which gets mapped to $x: \pt \rightarrow |\cM(A)_{\arc_\varpi}|$ under the Berkovich functor. Furthermore, by Proposition \ref{properties_stalk} we learn that the uniformization map is given by $\cO_{A, x}^{\wedge} \rightarrow \cO_{A, x}^{u} \simeq \cH(x)$, thus it remains to show that the induced map $\cM(\cH(x))_{\arc_\varpi} \rightarrow \cM(\cO_{A, x}^{\wedge})_{\arc_\varpi}$ is an isomorphism. For this, recall that under the identity $\cX_{\arc_\varpi} \simeq \Shv_{\arc_\varpi}(\Perfd_K^{\Ban, \op})$ it follows that the functor $\cM(\cO_{A, x}^{\wedge})_{\arc_\varpi}: \Perfd_K^{\Ban} \rightarrow \Set$ is given by the restriction of the functor
\begin{align*}
	\Maps_{\Ban_K^{\contr}}(\cO_{A, x}^{\wedge}, -): \Ban_K^{\contr} \rightarrow \Set
\end{align*}
to the full-subcategory $\Perfd_K^{\Ban} \subset \Ban_K^{\contr}$. Hence, the result follows from the fact that all perfectoid Banach $K$-algebras $P$ are uniform and so any map $\cO_{A, x}^{\wedge} \rightarrow P$ factors as $\cO_{A, x}^{\wedge} \rightarrow \cO_{A,x}^{u} \simeq \cH(x) \rightarrow P$, proving that the induced map $\cM(\cH(x))_{\arc_\varpi} \rightarrow \cM(\cO_{A, x}^{\wedge})_{\arc_\varpi}$ is an isomorphism.
\end{proof}

Similar to \cite[Lemma 12.5]{diamonds_scholze} (compare also with \cite[Lemma 4.21]{arc_topology}), we show that being an isomorphism of $\arc_\varpi$-sheaves is equivalent to being an $\arc_\varpi$-equivalence. Furthermore, we provide a more topological characterization of this condition using our theory of completed residue fields.

\begin{prop}[Isomorphisms]\label{iso_arc_topos} Let $F \rightarrow G$ be a morphism in $\cX_{\arc_{\varpi}}$, and assume that both $G$ and $F$ are qcqs. Then, the following are equivalent
\begin{enumerate}[(1)]
	\item The morphism $F \rightarrow G$ is an isomorphism.
	\item Realizing $\cX_{\arc_\varpi}$ as $\Shv_{\arc_\varpi}(\Sch_{K_{\le 1}, \qcqs})$, the morphism $F \rightarrow G$ is an $\arc_\varpi$-equivalence: there exists a cofinal collection of perfectoid rank one valuation rings with faithfully flat structure map $K_{\le 1} \rightarrow V$ such that the induced map $F(V) \rightarrow G(V)$ is a bijection for all such $V$.
	\item[(2')] Realizing $\cX_{\arc_\varpi}$ as $\Shv_{\arc_\varpi}(\Perfd_{K}^{\Ban, \op})$, the morphism $F \rightarrow G$ is an $\arc_\varpi$-equivalence: there exists a cofinal collection of perfectoid non-archimedean fields $L/K$ such that the induced map $F(L) \rightarrow G(L)$ is a bijection for all such $L$.
	\item The induced map $|F|(*) \rightarrow |G|(*)$ is a bijection of sets, and for each $x \in |F|(*) \simeq |G|(*) \ni y$ the induced map of completed residue fields $F_x \rightarrow G_y$ is an $\arc_\varpi$-equivalence.
\end{enumerate}
\end{prop}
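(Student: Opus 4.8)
The plan is to prove the cyclic chain of implications $(1) \Rightarrow (2) \Leftrightarrow (2') \Rightarrow (3) \Rightarrow (1)$, where most arrows are either formal or follow from results already developed in the excerpt. The implication $(1) \Rightarrow (2)$ is immediate from Lemma \ref{morphism_of_shv}: an isomorphism of sheaves induces a bijection on sections over every object, so in particular over all perfectoid rank one valuation rings $V$ with faithfully flat structure map $K_{\le 1} \to V$; and such $V$ are cofinal since $\Perfd_{K_{\le 1}}^{\Prism a}$ (and hence these valuation rings, via the basis of $\Perfd_{K_{\le 1}}^{\Prism}$) forms a basis for the $\arc_\varpi$-topology by Construction \ref{const_arc_pi_topos}. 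The equivalence $(2) \Leftrightarrow (2')$ is a direct translation through the dictionary: Proposition \ref{equiv_almost_tf_banach} together with Lemma \ref{na_field_rk_one_dict} identifies perfectoid non-archimedean fields over $K$ with $\varpi$-complete rank one valuation rings $V$ with faithfully flat structure map $K_{\le 1} \to V$, and this identification is compatible with the cofinality conditions.

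\textbf{From $\arc_\varpi$-equivalence to isomorphism.} The implication $(2') \Rightarrow (1)$ is the categorically substantive step, but it reduces to tools already in hand. Since both $F$ and $G$ are qcqs, and $\cX_{\arc_\varpi}$ is a topos, it suffices to show $F \to G$ is both an epimorphism and a monomorphism. For the epimorphism: realizing $\cX_{\arc_\varpi}$ as $\Shv_{\arc_\varpi}(\Perfd_K^{\Ban,\op})$, a cofinal collection of perfectoid fields $L/K$ on which $F(L) \to G(L)$ is surjective is in particular enough to verify the $\arc_\varpi$-lifting property of Proposition \ref{epi_arc_topos}(2'), since any section $g \in G(L)$ lifts after passing to a member of the cofinal system (and cofinality lets us arrange the target to receive a map from $L$). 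Hence $F \to G$ is an epimorphism by Proposition \ref{epi_arc_topos}. For the monomorphism: it suffices to show the diagonal $\Delta \colon F \to F \times_G F$ is an isomorphism; since $\Delta$ is always a monomorphism and $F \times_G F$ is qcqs (being a fiber product of qcqs objects over a quasiseparated object, using Proposition \ref{prop_qcqs_obj}), one checks $\Delta$ is an epimorphism. On a cofinal system of perfectoid fields $L$ the map $F(L) \to G(L)$ is injective, which forces $F(L) \times_{G(L)} F(L) = F(L)$, i.e. $\Delta$ induces a bijection on $L$-points; then Proposition \ref{epi_arc_topos} again makes $\Delta$ an epimorphism, and combining monomorphism with epimorphism on a topos gives that $\Delta$ is an isomorphism.

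\textbf{The topological criterion.} For $(1) \Rightarrow (3)$: if $F \to G$ is an isomorphism then $|F| \to |G|$ is an isomorphism of condensed sets by functoriality, so $|F|(*) \to |G|(*)$ is a bijection; and for $x \in |F|(*) \simeq |G|(*) \ni y$, the completed residue fields $F_x \hookrightarrow F$ and $G_y \hookrightarrow G$ of Proposition \ref{res_fields_arc_topos} are transported into one another by the isomorphism (using the essential uniqueness of completed residue fields in Proposition \ref{res_fields_arc_topos}(2)), so $F_x \to G_y$ is an isomorphism, hence in particular an $\arc_\varpi$-equivalence by the already-proven implication $(1) \Rightarrow (2')$. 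The converse $(3) \Rightarrow (1)$ is where I expect the main obstacle to lie. Given that $|F|(*) \to |G|(*)$ is a bijection and each $F_x \to G_y$ is an $\arc_\varpi$-equivalence, I would first use $(2') \Rightarrow (1)$ on the residue fields to conclude $F_x \xrightarrow{\sim} G_y$ for all $x$. Then one must upgrade these pointwise isomorphisms plus the bijection on underlying sets to a global isomorphism $F \to G$. The argument: form the graph or the fiber product $F \times_G F$ and check, using Proposition \ref{pts_arc_pi_shv}, that every point of $|F \times_G F|(*)$ and indeed every perfectoid-field-valued point factors through a residue field where the isomorphism is already known, forcing $F \times_G F = F$; dually, a perfectoid-field-valued point $\cM(L) \to G$ lands in some $G_y$, lifts to $F_x$ by the residue-field isomorphism and thence to $F$, giving surjectivity on $L$-points, hence an epimorphism via Proposition \ref{epi_arc_topos}. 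The delicate point is that $F$ and $G$ need only be quasiseparated \emph{a priori} for residue fields to exist, but here we have the stronger hypothesis that they are qcqs, which I would exploit together with Proposition \ref{saturate_qs} to reduce to a statement about qcqs pieces and then to perfectoid-field-valued points, where Proposition \ref{pts_arc_pi_shv} and the residue-field comparison close the argument.
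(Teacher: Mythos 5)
Your proposal matches the paper's proof almost step for step: $(1)\Rightarrow(2)\Leftrightarrow(2')$ via the dictionary, $(2')\Rightarrow(1)$ by checking epi via Proposition \ref{epi_arc_topos} and mono via the diagonal $\Delta\colon F\to F\times_G F$, $(1)\Rightarrow(3)$ by uniqueness of completed residue fields, and finally the residue-field lifting argument for the converse (the paper phrases it as $(3)\Rightarrow(2')$ and then invokes the already-proved $(2')\Rightarrow(1)$, whereas you aim directly at $(3)\Rightarrow(1)$ by showing mono and epi on $L$-points, but these amount to the same computation). The only loose end is in your final paragraph, where the appeal to Proposition \ref{saturate_qs} is unnecessary — $F$ and $G$ are already qcqs by hypothesis, so the clean finish is exactly what the paper does: for a point $\cM(L)\to G$ with image $y$, factor it uniquely as $\cM(L)\twoheadrightarrow G_y\hookrightarrow G$ (Proposition \ref{res_fields_arc_topos}), transport through $F_x\simeq G_y$ to get a lift $\cM(L)\to F_x\hookrightarrow F$, and observe any other lift must also induce $x\colon\pt\to|F|$ and hence factor through $F_x$, forcing uniqueness.
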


\begin{proof} The equivalence between $(2)$ and $(2')$ follows from Proposition \ref{equiv_almost_tf_banach} and Lemma \ref{na_field_rk_one_dict}, while the implication $(1) \Rightarrow (2) \simeq (2')$ is obvious. Next, let us prove the implication $(2') \Rightarrow (1)$. Recall that since $\cX_{\arc_\varpi}$ is a topos it suffices to show that the map $F \rightarrow G$ is both an epimorphism and a monomorphism, and it is a direct consequence of Proposition \ref{epi_arc_topos} that the map $F \rightarrow G$ is an epimorphism. To show that $F \rightarrow G$ is also a monomorphism, recall that $F \rightarrow G$ is a monomorphism if and only if the diagonal map $\Delta: F \rightarrow F \times_G F$ is an isomorphism. In general for any morphism $X \rightarrow Y$ in a topos the diagonal map $X \rightarrow X \times_Y X$ is a monomorphism, so it remains to show that $\Delta: F \rightarrow F \times_G F$ is an epimorphism, but this follows from the fact that qcqs objects are stable under fiber products, the hypothesis and Proposition \ref{epi_arc_topos}.

Finally, we prove the equivalence of $(3)$ with the other conditions. We begin by noticing that if the induced map $|F|(*) \rightarrow |G|(*)$ is a bijection, Proposition \ref{res_fields_arc_topos} shows that for each $x \in |F|(*) \simeq |G|(*) \ni y$ there exists an induced map $F_x \rightarrow G_y$. For the proof of $(1) \Rightarrow (3)$, notice that since $F \rightarrow G$ is an isomorphism so is $|F| \rightarrow |G|$ and so the induced map $|F|(*) \rightarrow |G|(*)$ is a bijection, and the uniqueness of completed residue fields of Proposition \ref{res_fields_arc_topos} guarantee that the induced map $F_x \rightarrow G_y$ is an isomorphism, and so in particular an $\arc_\varpi$-equivalence. It remains to show that $(3) \Rightarrow (2')$. Recall we have already shown that $(2') \Rightarrow (1)$, so the hypothesis that $F_x \rightarrow G_y$ is an $\arc_\varpi$-equivalence implies that for all perfectoid non-archimedean fields $L/K$ the induced map $F_x(L) \rightarrow G_y(L)$ is a bijection. Let $L/K$ be a perfectoid non-archimedean field and consider a map $\cM(L) \rightarrow G$ which induces the map $x: \pt \rightarrow |G|$ under the Berkovich functor, it remains to show that there exists a unique morphism $\cM(L) \rightarrow F$ making the following diagram commute
\begin{cd}
	& F \ar[d] \\
	\cM(L) \ar[ru, dashed] \ar[r] & G
\end{cd}
Indeed, from Proposition \ref{res_fields_arc_topos} it follows that the map $\cM(L) \rightarrow G$ admits a unique factorization as $\cM(L) \twoheadrightarrow G_y \hookrightarrow G$; thus since $F_x \simeq G_y$ it follows that there exists a unique morphism $\cM(L) \rightarrow F_x$ factoring the map $\cM(L) \rightarrow G_y$, giving us a lift $\cM(L) \rightarrow F_x \hookrightarrow F$. To show that the lift $\cM(L) \rightarrow F$ is unique, notice that any lift $\cM(L) \rightarrow F$ must satisfy that it gets mapped to $x: \pt \rightarrow |F|$ under the Berkovich functor, and so it must admit a unique factorization as $\cM(L) \rightarrow F_x \hookrightarrow F$. Thus, since we already proved that lifts to $\cM(L) \rightarrow F_x$ are unique, it follows that lifts to $\cM(L) \rightarrow F$ are unique, completing the proof.
\end{proof}

Inspired by the classification of affinoid analytic domains in classical rigid geometry, we show that in the setting of $\arc_\varpi$-sheaves that monomorphisms are exactly the analytic domains of an $\arc_\varpi$-sheaf. The analogous result in the setting of $v$-sheaves can be found in \cite[Proposition 12.15]{diamonds_scholze}.

\begin{prop}[Monomorphisms]\label{mono_arc_topos} Let $F \rightarrow G$ be a morphism in $\cX_{\arc_\varpi}$, and assume that $F$ is qcqs and $G$ is quasiseparated. Then, the following are equivalent
\begin{enumerate}[(1)]
	\item The morphism $F \hookrightarrow G$ is a monomorphism.
	\item The induced map $|F|(*) \rightarrow |G|(*)$ is an injective map of sets, and for each $x \in |F|(*) \simeq \im(|F|(*) \hookrightarrow |G|(*)) \ni y$ the induced map $F_x \rightarrow G_y$ of completed residue fields is an $\arc_\varpi$-equivalence.
	\item The morphism $F \rightarrow G$ is an analytic domain: for any object $Z \in \cX_{\arc_\varpi}$ and any morphism $Z \rightarrow G$ satisfying $\im(|Z|(*) \rightarrow |G|(*)) \subset \im(|F|(*) \rightarrow |G|(*))$, there exists a unique morphism $Z \rightarrow F$ making the following diagram commute
	\begin{cd}
		& Z \ar[d] \ar[ld, dashed] \\
		F \ar[r] & G
	\end{cd}
\end{enumerate}
\end{prop}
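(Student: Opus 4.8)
The plan is to prove the chain of implications $(1) \Rightarrow (2) \Rightarrow (3) \Rightarrow (1)$, leaning heavily on the theory of completed residue fields (Proposition \ref{res_fields_arc_topos}), the characterization of isomorphisms (Proposition \ref{iso_arc_topos}), and the stability properties of the Berkovich functor (Proposition \ref{berko_funct_stability}). Throughout, the key structural fact is that a monomorphism $F \hookrightarrow G$ is the same data as an isomorphism $F \simeq F \times_G F$, so many questions reduce to the isomorphism criterion already established.

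\emph{First I would prove $(1) \Rightarrow (2)$.} If $F \hookrightarrow G$ is a monomorphism, then the Berkovich functor preserves it (Proposition \ref{berko_funct_stability}(3)), so $|F| \hookrightarrow |G|$ is a monomorphism of condensed sets and in particular $|F|(*) \rightarrow |G|(*)$ is injective. For the residue field statement, fix $x \in |F|(*)$ with image $y \in |G|(*)$. The completed residue field $F_x \hookrightarrow F$ is a monomorphism from a qcqs object; composing with $F \hookrightarrow G$ gives a monomorphism $F_x \hookrightarrow G$ which maps to $x = y$ under $|-|$, so by the uniqueness clause of Proposition \ref{res_fields_arc_topos}(2), $F_x \simeq G_y$ compatibly with the inclusions into $G$. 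In particular $F_x \rightarrow G_y$ is an isomorphism, hence an $\arc_\varpi$-equivalence.

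\emph{Next, $(2) \Rightarrow (3)$.} This is the heart of the argument. Given $Z \rightarrow G$ with $\im(|Z|(*) \rightarrow |G|(*)) \subset \im(|F|(*) \rightarrow |G|(*))$, I want to produce a unique lift $Z \rightarrow F$. By the universal property of fiber products, a lift is the same as a section of $F \times_G Z \rightarrow Z$, so it suffices to show $F \times_G Z \rightarrow Z$ is an isomorphism. I would first reduce to the case where $Z$ is qcqs: realizing $\cX_{\arc_\varpi}$ as sheaves on $\Perfd_K^{\Ban,\op}$ and using Proposition \ref{saturate_qs} / the fact that any $Z$ is a filtered colimit of the images of maps $\cM(A) \rightarrow Z$ from perfectoid affinoids, and that isomorphisms can be checked after pulling back along an epimorphic family, one reduces to $Z = \cM(A)_{\arc_\varpi}$ or more generally to $Z$ quasicompact (and automatically quasiseparated if it maps monomorphically, though here we only get quasicompactness, which is enough to invoke Proposition \ref{iso_arc_topos} once we know $F \times_G Z$ is qcqs — it is, since $F$ is qcqs, $G$ is quasiseparated, and $Z$ is qcqs). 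Then apply Proposition \ref{iso_arc_topos}: I must check that $|F \times_G Z|(*) \rightarrow |Z|(*)$ is a bijection and that the induced maps on completed residue fields are $\arc_\varpi$-equivalences. Injectivity of the former comes from $|F \times_G Z| \hookrightarrow |F| \times_{|G|} |Z| \rightarrow |Z|$ being a composite of monomorphisms (the first because the Berkovich functor sends the monomorphism $F \times_G Z \hookrightarrow Z$... wait — more directly: $F \times_G Z \rightarrow Z$ need not be mono a priori, so instead use that $F \times_G Z \rightarrow F$ is the pullback of $Z \rightarrow G$ and $|-|$ of the projection $|F \times_G Z| \rightarrow |F| \times_{|G|} |Z|$ is an epimorphism by Proposition \ref{berko_funct_stability}(2); combined with the hypothesis that the image of $|Z|(*)$ lands in the image of $|F|(*)$, this gives surjectivity of $|F\times_G Z|(*) \to |Z|(*)$, and injectivity follows because over each point $y = |F|(*) \ni x$ the fiber is controlled by $F_x \simeq G_y$). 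For the residue-field condition, at a point $z \in |F \times_G Z|(*)$ lying over $x \in |F|(*)$ and $w \in |Z|(*)$ with common image $y \in |G|(*)$, the completed residue field of $F \times_G Z$ at $z$ receives a map from, and maps to, the relevant fiber products of $F_x$, $G_y$, $Z_w$; using $F_x \simeq G_y$ one sees $(F\times_G Z)_z \simeq Z_w$ as an $\arc_\varpi$-equivalence.

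\emph{Finally $(3) \Rightarrow (1)$.} This is the easy direction: if $F \rightarrow G$ is an analytic domain, then to show it is a monomorphism it suffices to show $\Delta \colon F \rightarrow F \times_G F$ is an isomorphism, equivalently that both projections $F \times_G F \rightrightarrows F$ agree. But $Z := F \times_G F$ maps to $G$ (via either projection composed with $F \rightarrow G$) with $\im(|Z|(*) \rightarrow |G|(*)) \subset \im(|F|(*) \rightarrow |G|(*))$ trivially, so the uniqueness part of the lifting property forces the two lifts $F \times_G F \rightrightarrows F$ (namely the two projections) to coincide; hence $F \times_G F = F$ and $F \hookrightarrow G$ is mono. \emph{The main obstacle} I anticipate is the bookkeeping in $(2) \Rightarrow (3)$: carefully reducing to the qcqs case for $Z$, and then verifying the completed-residue-field hypothesis of Proposition \ref{iso_arc_topos} for $F \times_G Z \rightarrow Z$ — this requires knowing that completed residue fields behave well under the relevant fiber products, which should follow from the uniqueness in Proposition \ref{res_fields_arc_topos}(2)--(3) applied to $\cM(L) \rightarrow F \times_G Z$ for perfectoid fields $L$, but needs to be spelled out with care so that the $\arc_\varpi$-equivalence (rather than honest isomorphism) hypothesis on $F_x \rightarrow G_y$ is used correctly.
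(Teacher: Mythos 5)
Your implications $(1)\Rightarrow(2)$ and $(3)\Rightarrow(1)$ coincide with the paper's arguments. Where you genuinely diverge is at the crux: the paper never proves $(2)\Rightarrow(3)$ directly. Instead it proves $(2)\Rightarrow(1)$ by factoring $p\colon F \to G$ as $F \twoheadrightarrow \im(p) \hookrightarrow G$, noting $|F|(*)\to|\im(p)|(*)$ is a bijection, and using uniqueness of completed residue fields to identify $\im(p)_z \simeq G_y$, so that the hypothesis $F_x \simeq G_y$ feeds directly into Proposition \ref{iso_arc_topos} applied to $F \to \im(p)$. With $(1)$ in hand, $(3)$ is then cheap: for $Z = \cM(P)$ the base change $F\times_G Z \to Z$ is automatically a monomorphism, and it is an epimorphism by Proposition \ref{epi_arc_topos} together with Proposition \ref{berko_funct_stability}(2) and the image hypothesis; mono plus epi gives the isomorphism, with no residue-field analysis of the fiber product. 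Your route instead asks you to verify the hypotheses of Proposition \ref{iso_arc_topos} for $F\times_G Z \to Z$ \emph{before} knowing monomorphy, which is strictly harder.

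That verification is exactly where your write-up stays at the level of a gesture, and it is the one substantive gap. To close it you need: (i) that $F\times_G Z$ is qcqs -- quasicompactness follows from $F$ qcqs, $G$ quasiseparated, $Z$ qcqs, and quasiseparatedness because $F\times_G Z \hookrightarrow F\times Z$ is a monomorphism (base change of the diagonal of $G$) into a qcqs object; (ii) that for a point $z$ lying over $(x,w)$, both $F_x \to G$ and $Z_w \to G$ factor through $G_y$ by Proposition \ref{res_fields_arc_topos}(3), so $F_x \times_G Z_w = F_x \times_{G_y} Z_w \simeq Z_w$, using that $F_x \to G_y$ is an honest isomorphism (an $\arc_\varpi$-equivalence of qcqs objects, Proposition \ref{iso_arc_topos}); (iii) that any $\cM(L) \to F\times_G Z$ representing a point over $(x,w)$ factors through $F_x \times_G Z_w$, which gives injectivity of $|F\times_G Z|(*)\to|Z|(*)$ over such pairs and, via uniqueness of residue fields, the identification $(F\times_G Z)_z \simeq Z_w$ compatibly with the projection -- the residue-field condition you need. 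You should also note that in the reduction to qcqs $Z$ the lifts over the affinoids glue because each is the unique section of an isomorphism. With (i)--(iii) supplied your $(2)\Rightarrow(3)$ goes through, so the plan is salvageable; but as written the deferred fiber analysis (``the fiber is controlled by $F_x \simeq G_y$'') is the actual content of the step, and the paper's image-factorization detour is the cleaner way to avoid it.
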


\begin{proof} We will first prove the equivalence $(1) \simeq (2)$. For the implication $(1) \Rightarrow (2)$, recall that Proposition \ref{berko_funct_stability} that if $F \hookrightarrow G$ is a monomorphism then so is $|F| \hookrightarrow |G|$ and so the induced map of sets $|F|(*) \hookrightarrow |G|(*)$ is also injective. It remains to show that for each $x \in |F|(*)$ and $y \in |G|(*)$ such that $x \mapsto y$, the induced map $F_x \rightarrow G_y$ is an isomorphism. However, since $F_x \rightarrow F$ is uniquely characterized as the monomorphism from a qcqs object that gets mapped to $x: \pt \rightarrow |F|$ under the Berkovich functor (cf. Proposition \ref{res_fields_arc_topos}), and the composition $F_x \rightarrow F \rightarrow G$ gets mapped to $y: \pt \rightarrow |G|$ under the Berkovich functor it follows from the uniqueness of completed residue fields that $F_x \rightarrow G_y$ is an isomorphism, and so an $\arc_\varpi$-equivalence. For the implication $(2) \Rightarrow (1)$, recall that since $\cX_{\arc_\varpi}$ is a topos the morphism $p: F \rightarrow G$ admits a unique factorization as
\begin{align*}
	F \twoheadrightarrow Z \hookrightarrow G && \text{where} \qquad Z = \im(p)
\end{align*}
Thus, it remains to show that the map $F \rightarrow Z$ is an isomorphism. Let us begin by showing that the induced map of sets $|F|(*) \rightarrow |Z|(*)$ is a bijection; from the hypothesis we know that the composition $|F|(*) \rightarrow |Z|(*) \rightarrow |G|(*)$ is an injection, so the induced map $|F|(*) \rightarrow |Z|(*)$ is injective. On the other hand since $F \rightarrow Z$ is an epimorphism by construction, and the Berkovich functor preserves epimorphisms (cf. Proposition \ref{berko_funct_stability}) it follows that the induced map $|F|(*) \rightarrow |Z|(*)$ is surjective, and therefore a bijection. By virtue of Proposition \ref{iso_arc_topos}, to show that $F \rightarrow Z$ is an isomorphism suffices to show that for $x \in |F|(*) \simeq |Z|(*) \ni z$ the induced map of completed residue fields $F_x \rightarrow Z_z$ is an $\arc_\varpi$-equivalence. By the uniqueness of completed residue fields (cf. Proposition \ref{res_fields_arc_topos}) and the fact that $Z \hookrightarrow G$ is a monomorphism, it follows that the induced map of completed residue fields $Z_z \rightarrow G_y$ is an isomorphism, then the hypothesis that $F_x \overset{\simeq}{\longrightarrow} G_y$ implies that the map $F_x \rightarrow Z_z$ is an isomorphism. This completed the proof of $(1) \simeq (2)$.

It remains to show that $(3) \simeq (1) + (2)$. To prove the implication $(3) \Rightarrow (1)$, first notice that if $\im(|Z|(*) \rightarrow |G|(*)) \not\subset \im(|F|(*) \rightarrow |G|(*))$ then it follows from Proposition \ref{pts_arc_pi_shv} that there is no map $Z \rightarrow F$ making the desired diagram commute. The implication $(3) \Rightarrow (1)$ then follows from the uniqueness of the lift $Z \rightarrow F$. Finally, we prove that $(1) + (2) \Rightarrow (3)$; uniqueness of the lift is clear as the map $F \rightarrow G$ is a monomorphism. Now recall that any object $Z \in \cX_{\arc_\varpi}$ can be presented as $\colim_{i \in \cI} \cM(P_i) \simeq Z$ for a collection of perfectoid Banach $K$-algebras $P_i$ (cf. Propositions \ref{prop_presheaves} and \ref{prop_sheaves}), thus we may assume that $Z$ is of the form $\cM(P)$ for some perfectoid Banach $K$-algebra. Hence, it remains to show that the induced map $F \times_G Z \rightarrow Z$ is an isomorphism. By construction it is clear that $|F| \times_{|G|} |Z|(*) \rightarrow |Z|(*)$ is surjective, and by the surjectivity of $|F \times_G Z|(*) \rightarrow |F| \times_{|G|} |Z|(*)$ (cf. Proposition \ref{berko_funct_stability}) it follows that the induced map $|F \times_G Z|(*) \rightarrow |Z|(*)$ is surjective. Then, as $G$ is quasiseparated and $Z, F$ are qcqs it follows that $F \times_G Z$ is quasicompact, proving that the map $F \times_G Z \rightarrow Z$ is an epimorphism by virtue of Proposition \ref{epi_arc_topos}; and as monomorphisms are stable under basechange it follows that $F \times_G Z \rightarrow Z$ is a monomorphism, and therefore an isomorphism. This completes the proof of the implication $(1) + (2) \Rightarrow (3)$

\end{proof}

\begin{defn}\label{defn_subobjects} Let $\cX$ be a coherent topos, and $X$ and object of $\cX$. We define $\Sub(X)_{\qcqs}$ as the category whose objects are monomorphisms $Y \hookrightarrow X$ from a qcqs object $Y \in \cX$, and morphisms are maps $Y_1 \rightarrow Y_2$ making the following diagram commute
\begin{cd}
	Y_1 \ar[rd, hook] \ar[rr] && Y_2 \ar[dl, hook] \\
	& X
\end{cd}
In particular we see that any map $Y_1 \rightarrow Y_2$ in $\Sub(X)_{\qcqs}$ must be a monomorphism. We call $\Sub(X)_{\qcqs}$ the category of qcqs subobjects of $X$. If $X$ is an $\arc_\varpi$-sheaf we can also consider the full-subcategory $\Sub(X)_{\Aff} \subset \Sub(X)_{\qcqs}$ (resp. $\Sub(X)_{\Perfd}$) spanned by objects of the form $Y = \cM(A)_{\arc_\varpi}$ (resp. $Y = \cM(A)$, where $A$ is a perfectoid Banach $K$-algebra).
\end{defn}

\begin{prop}[qcqs Subobjects]\label{subobjects_arc_topos} Let $F$ be a quasiseparated object of $\cX_{\arc_\varpi}$. Then, the Berkovich functor $|-|: \cX_{\arc_\varpi} \rightarrow \Cond$ induces an equivalence of categories
\begin{align*}
	\Sub(F)_{\qcqs} \overset{\simeq}{\longrightarrow} \Sub(|F|)_{\qcqs} && (Y \hookrightarrow F) \mapsto (|Y| \hookrightarrow |F|)
\end{align*}
Furthermore, by Proposition \ref{berko_funct_stability}(4) we learn that for any pair of morphisms $Y_1 \hookrightarrow F \hookleftarrow Y_2$, there is a canonical isomorphism $|Y_1 \times_F Y_2| \simeq |Y_1| \times_{|F|} |Y_2| = |Y_1| \cap |Y_2|$.
\end{prop}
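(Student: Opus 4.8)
The plan is to show the functor $(Y \hookrightarrow F) \mapsto (|Y| \hookrightarrow |F|)$ is fully faithful and essentially surjective. First I would check well-definedness: by Proposition \ref{berko_funct_stability}(3) and (6) the Berkovich functor sends a monomorphism $Y \hookrightarrow F$ from a qcqs object to a monomorphism $|Y| \hookrightarrow |F|$ from a compact Hausdorff space (equivalently, a qcqs condensed set), and morphisms in $\Sub(F)_{\qcqs}$ are themselves monomorphisms, so they get sent to morphisms in $\Sub(|F|)_{\qcqs}$. For faithfulness there is nothing to do, since any two morphisms $Y_1 \rightarrow Y_2$ over $F$ agree (both are monomorphisms into $F$, so $\Hom$ is either empty or a point). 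Fullness amounts to: if $|Y_1| \hookrightarrow |Y_2|$ over $|F|$, then there is a (necessarily unique) morphism $Y_1 \rightarrow Y_2$ over $F$. This is exactly where Proposition \ref{mono_arc_topos} enters: $Y_1 \hookrightarrow F$ is a monomorphism with $Y_1$ qcqs and $F$ quasiseparated, hence an analytic domain, so since $\im(|Y_1|(*) \rightarrow |F|(*)) \subset \im(|Y_2|(*) \rightarrow |F|(*))$ (because $|Y_1| \hookrightarrow |Y_2| \hookrightarrow |F|$ factors at the level of underlying sets), the structure map $Y_1 \rightarrow F$ factors uniquely through $Y_2 \hookrightarrow F$. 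The resulting $Y_1 \rightarrow Y_2$ is automatically a morphism in $\Sub(F)_{\qcqs}$.

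For essential surjectivity, I would start with a monomorphism $T \hookrightarrow |F|$ from a compact Hausdorff space $T$ and produce a qcqs subobject $Y \hookrightarrow F$ with $|Y| \simeq T$ over $|F|$. The idea is to build $Y$ as the image of an appropriate map. Concretely, cover $T$ by its structure: by Proposition \ref{epi_condensed_sets}(2) $T$ is qcqs, and since $\cX_{\arc_\varpi}$ is generated by representables (Proposition \ref{basic_arc_pi_properties}) one finds a qcqs object $X$ (a finite disjoint union of affinoid perfectoid spaces) together with an epimorphism $|X| \twoheadrightarrow T$ in $\Cond$; here I would use Proposition \ref{pts_arc_pi_shv} to realize each point of $T$ (and enough of $T$) by maps $\cM(L) \rightarrow F$ with prescribed image, assembling these into a single map $X \rightarrow F$ whose induced map $|X| \rightarrow |F|$ has image exactly $T \subset |F|$. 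Factoring $X \rightarrow F$ as $X \twoheadrightarrow Y \hookrightarrow F$ in the topos $\cX_{\arc_\varpi}$, Proposition \ref{res_fields_arc_topos}(1) (or directly Lemma \ref{alt_defn_qc} plus the fact that monomorphisms are quasiseparated) shows $Y$ is qcqs. Applying the Berkovich functor, which is colimit-preserving and preserves monomorphisms, gives $|X| \twoheadrightarrow |Y| \hookrightarrow |F|$, and by uniqueness of epi–mono factorization in $\Cond$ together with $\im(|X| \rightarrow |F|) = T$, we get $|Y| \simeq T$ as subobjects of $|F|$.

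The main obstacle I anticipate is the essential-surjectivity step, specifically producing a single map $X \rightarrow F$ from a qcqs $X$ whose topological image is \emph{exactly} the prescribed $T$, not merely contained in it. The containment direction is cheap, but equality requires that $X$ surjects onto $T$; to arrange this one needs to choose, for a set of points dense enough to generate $T$ as a quasicompact condensed set, morphisms $\cM(L_x) \rightarrow F$ via Proposition \ref{pts_arc_pi_shv}, and then argue that the induced map from the (finite, after passing to a quasicompact subcover) disjoint union has image all of $T$. Here one leans on $T$ being qcqs: an epimorphism of condensed sets onto $T$ from a disjoint union of points can be refined to a finite subfamily, and the finite disjoint union of the corresponding $\cM(L_x)$ is a qcqs object of $\cX_{\arc_\varpi}$. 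Once the map $X \to F$ is in hand with the right image, the rest is formal from Propositions \ref{mono_arc_topos}, \ref{res_fields_arc_topos}, \ref{berko_funct_stability}, and \ref{epi_condensed_sets}. Finally, the last sentence of the statement is immediate from Proposition \ref{berko_funct_stability}(4) applied to $Y_1 \hookrightarrow F \hookleftarrow Y_2$, together with the observation that inside the topos $\Cond$ the fiber product of two subobjects of $|F|$ is their intersection.
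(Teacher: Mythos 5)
Your overall structure (well-definedness, faithfulness, fullness, essential surjectivity, final sentence from Proposition~\ref{berko_funct_stability}(4)) agrees with the paper, and the fullness argument via Proposition~\ref{mono_arc_topos} is exactly right. The gap you already suspected in essential surjectivity is real, and your proposed workaround does not close it.

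The flaw is the claim that ``an epimorphism of condensed sets onto $T$ from a disjoint union of points can be refined to a finite subfamily.'' A disjoint union of one-point condensed sets $\sqcup_{j} \{\ast\} \rightarrow T$ is \emph{not} an epimorphism in $\Cond$ unless $T$ is finite: an epimorphism onto $T$ means that any map $K \rightarrow T$ from a compact Hausdorff space must, after a finite jointly surjective cover of $K$, factor through a single point, which forces the image of $K$ in $T$ to be finite. So quasicompactness of $T$ buys you nothing here — there is no cover to extract a finite subcover from — and a finite disjoint union $\sqcup_{i=1}^n \cM(L_{x_i})$ has $|{-}|$ equal to $n$ points, never all of an infinite $T$.

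The correct replacement, and the main nontrivial input to the paper's proof, is the Gelfand-transform / Stone--\v{C}ech trick: after first reducing to $F$ qcqs via Proposition~\ref{saturate_qs}, pick a finite cover $\sqcup_{i\in I} \cM(A_i) \twoheadrightarrow F$, lift every point $\tilde{x}_j \in \tilde{Y}(\ast)$ to a map $\cM(L_{x_j}) \rightarrow \sqcup_i \cM(A_i)$ via Proposition~\ref{pts_arc_pi_shv}, and then assemble \emph{all} of them (possibly infinitely many) into a single map from the affinoid perfectoid $\cM\bigl(\prod_{j\in J} L_{x_j}\bigr)$ (a perfectoid algebra by Propositions~\ref{product_perfectoid} and \ref{equiv_perfd_ban_tic}). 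By Lemma~\ref{stone_cech_berko_sp} its Berkovich spectrum is $\beta(J)$, which \emph{does} map onto $T$; indeed the image in $|F|$ is the closure of $\{\tilde{x}_j\}$, and this equals $\tilde{Y}(\ast)$ because $\tilde{Y}$ is closed in $|F|$. From there the factorization $\cM(\prod_j L_{x_j}) \twoheadrightarrow Y \hookrightarrow F$ and the identification $|Y| \simeq \tilde{Y}$ via Proposition~\ref{mono_condensed_sets} go through as you sketched. You should also note the initial reduction to qcqs $F$ (via Proposition~\ref{saturate_qs} and the realization $\cX_{\arc_\varpi} = \Shv_{\mathrm{coh}}(\cX_{\arc_\varpi,\qcqs})$), which the paper uses both to get the finite affinoid cover and to ensure any $Q \rightarrow |F|$ and any $Z \rightarrow F$ from qcqs objects factor through some qcqs $F_i \hookrightarrow F$.
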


\begin{proof} First, let us recall some properties of quasiseparated objects that will allow us to reduce to the case where $F$ is qcqs. Since $F$ is quasiseparated, we know that there exists a filtered collection of monomorphisms $\{F_i \hookrightarrow F\}$ from qcqs objects $F_i$ such that $\colim F_i \rightarrow F$ is an isomorphism, thus since the Berkovich functor preserves monomorphisms and colimits it follows that there for any morphism $Q \rightarrow |F|$ from a compact Hausdorff space, there exists a monomorphism $F_i \hookrightarrow F$ such that the map $Q \rightarrow |F|$ admits a unique lift $Q \rightarrow |F_i|$. Similarly, by \cite[Theorem C.6.5]{lurie_ultracategories} we know we can realize $\cX_{\arc_\varpi} = \Shv_{\text{coh}}(\cX_{\arc_\varpi, \qcqs})$, and so for any morphism $Z \rightarrow F$ from a qcqs object $Z$ there exists a monomorphism $F_i \hookrightarrow F$ such that the map $Z \rightarrow F$ admits a unique lift $Z \rightarrow F_i$.

Let us show that for any monomorphism $\tilde{Y} \hookrightarrow |F|$ from a compact hausdorff spaces, there exists a monomorphism $Y \hookrightarrow F$ from a qcqs object $Y$ in $\cX_{\arc_\varpi}$ such that $|Y \hookrightarrow F| = \tilde{Y} \hookrightarrow |F|$. By the work in the previous paragraph we may assume that $F$ itself is qcqs. As $F$ is quasicompact we know that there exists a finite collection of morphism $\{\cM(A_i) \rightarrow F\}_{i \in I}$ such tha $A_i$ is a perfectoid Banach $K$-algebra and the induced map $\sqcup \cM(A_i) \rightarrow F$ is an epimorphism. Then, by Proposition \ref{berko_funct_stability}(1) we know that the induced map $\sqcup |\cM(A_i)| \rightarrow |F|$ is an epimorphism, and so in particular the induced map of sets $\sqcup |\cM(A_i)|(*) \rightarrow |F|(*)$ is surjective; so for each $\tilde{x} \in \tilde{Y}(*) \subset |F|(*)$ we can pick a lift $x \in \sqcup |\cM(A_i)|(*) \rightarrow |F|(*)$ and so by Proposition \ref{pts_arc_pi_shv} we can find a perfectoid non-archimedean field $\cM(L_x)$ together with a morphism $\cM(L_x) \rightarrow \sqcup \cM(A_i)$ such that under the Berkovich functor it gets mapped to $x: \pt \rightarrow \sqcup |\cM(A_i)|$. We have shown that for each element $\tilde{x}_j \in \tilde{Y}(*) \subset |F|(*)$ we can find a morphism $\cM(L_{x_j}) \rightarrow \sqcup \cM(A_i)$ such the induced map $|\cM(L_{x_j})| \rightarrow \sqcup |\cM(A_i)|$ lifts the morphism $\tilde{x}_j: \pt \rightarrow |F|$ along the map $\sqcup |\cM(A_i)| \rightarrow |F|$. We denote this (possibily infinite) collection of morphisms as $\{\cM(L_{x_j}) \rightarrow \sqcup \cM(A_i) \}_{j \in J}$, which is equivalent to providing a collection of morphism of perfectoid Banach $K$-algebras $\{\prod A_i \rightarrow L_{x_j} \}_{j \in J}$, which in turn induce a map $\prod_{i \in I} A_i \rightarrow \prod_{j \in J} L_{x_j}$ -- recall that by Proposition \ref{product_perfectoid} and \ref{equiv_perfd_ban_tic} we know that $\prod_{j \in J} L_{x_j}$ is a perfectoid Banach $K$-algebra. Hence, we get an induced map 
\begin{align*}
	p: \cM(\prod_{j \in J} L_{x_j}) \rightarrow \sqcup_{i \in I} \cM(A_i) \rightarrow F 
\end{align*}
We claim that $\im(|\cM(\prod_{j \in J} L_x)|(*) \rightarrow |F|(*)) = \tilde{Y}(*)$; indeed by virtue of Lemma \ref{stone_cech_berko_sp} we know that $|\cM(\prod_{j \in J} L_{x_j})|$ is homeomorphic to the Stone-Cech compactification of $J$, denoted by $\beta(J)$, it is clear from the construction that $\im(|\cM(\prod_{j \in J} L_x)|(*) \rightarrow |F|(*))$ is equal to the closure of $\tilde{Y}(*)$ in $|F|$ but since $\tilde{Y}(*) \subset |F|(*)$ is a compact hausdorff subset the claim follows.

To conclude the proof, we are back to the general situation where $F$ is only assumed to be quasiseparated. We define the monomorphism $Y \hookrightarrow F$ as $Y = \im(p)$, already showing that $Y$ is a qcqs object of $\cX_{\arc_\varpi}$. By construction we have that $\cM(\prod_{j \in J} L_{x_j}) \twoheadrightarrow Y$ is an epimorphism, which by virtue of the fact that the Berkovich functor preserves epimorphism we learn that the induced map $|\cM(\prod_{j \in J} L_{x_j})|(*) \twoheadrightarrow |Y|(*)$ is surjective, showing that $|Y|(*) = \tilde{Y}(*) \subset |F|(*)$. Then, Proposition \ref{mono_condensed_sets} shows that there is a unique isomorphism $|Y| \simeq \tilde{Y}$ respecting the monomorphisms towards $|F|$. To conclude, we we learn from Proposition \ref{mono_arc_topos} that the $Y$ we constructed is unique up to unique isomorphism, and by a combination of Proposition \ref{mono_arc_topos} and \ref{mono_condensed_sets} the claim about the equivalence of categories follows.
\end{proof}

\subsection{Separated morphisms}

Fix a perfectoid non-archimedean field $K$.

\begin{prop}[Zariski Closed Subsets]\label{zariski_closed_perfd} Let $S$ be a perfectoid Banach $K$-algebra, and $I \subset S$ a closed ideal. Then, there exists a perfectoid Banach $K$-algebra $R$ and a surjection $S \twoheadrightarrow R$ such that the induced map $\cM(R) \rightarrow \cM(S)$ identifies with $\cM(S/I)_{\arc_\varpi} \rightarrow \cM(S)$.
\end{prop}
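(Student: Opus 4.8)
The plan is to realize $R$ as the perfectoidization of the Banach quotient $S/I$, and then to identify its associated $\arc_\varpi$-sheaf with $\cM(S/I)_{\arc_\varpi}$ by a universal property; this parallels the discussion of Zariski closed subsets in the introduction and the treatment of closed immersions in Definition~\ref{defn_separated}. First I would set up the integral picture. Since $I$ is closed, $S/I$ endowed with the quotient norm is a Banach $K$-algebra and the projection $S \to S/I$ is a contractive surjection. Applying the dictionary (Proposition~\ref{equiv_almost_tf_banach}) and writing $J = I \cap S_{\le 1}$, one checks — using closedness of $I$ together with the open mapping theorem, which for each $n$ allows lifting $\varpi^{1/p^{n}}\bar x$ (for $\bar x \in (S/I)_{\le 1}$) to an element of $S_{\le 1}$ — that $S_{\le 1} \to (S/I)_{\le 1}$ is an almost surjection, so that $(S/I)_{\le 1}$ is a semiperfectoid object of $\CAlg_{K_{\le 1}}^{\wedge a \tf}$, receiving an almost surjection from the perfectoid ring $S_{\le 1}$.

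Next I would form the perfectoidization $R_{\le 1} := \bigl((S/I)_{\le 1}\bigr)_{\perfd}$ of Proposition~\ref{perfectoidization_banach} (equivalently Proposition~\ref{perfectoidization_pi_complete_arc_descent}). The essential input — and the main obstacle — is that for a semiperfectoid ring the perfectoidization is concentrated in degree zero \emph{and} the map from $S_{\le 1}$ onto it remains surjective: this is exactly the statement that Zariski closed subsets of a perfectoid ring are strongly Zariski closed, \cite[Theorem 7.4]{prisms} (see also \cite[Remark 7.5]{prisms} and Proposition~\ref{properties_perfd}(4), which identify $R_{\le 1}$ with the initial perfectoid ring admitting a map from $(S/I)_{\le 1}$). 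Granting this, $R_{\le 1}$ lies in $\Perfd_{K_{\le 1}}^{\Prism \tic}$ after the harmless passages through the $\tic$- and almost-categories of Propositions~\ref{equiv_perfd_tic_almost} and \ref{equiv_perfd_ban_tic}, so that $R := R_{\le 1}[\tfrac{1}{\varpi}]$ is a perfectoid Banach $K$-algebra; inverting $\varpi$ in the surjection $S_{\le 1} \twoheadrightarrow R_{\le 1}$ produces the desired surjection $S \twoheadrightarrow R$, which factors through $S/I$, and transporting the universal property of $R_{\le 1}$ through the dictionary shows that $S/I \to R$ is the initial morphism from $S/I$ to a perfectoid Banach $K$-algebra.

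Finally I would identify the two $\arc_\varpi$-sheaves over $\cM(S)$. Realizing $\cX_{\arc_\varpi}$ as $\Shv_{\arc_\varpi}(\Perfd_K^{\Ban,\op})$, Example~\ref{banach_yoneda_arc_pi} gives $\cM(S/I)_{\arc_\varpi}(\cM(P)) = \Maps_{\Ban_K^{\contr}}(S/I, P)$ and $\cM(R)(\cM(P)) = \Maps_{\Ban_K^{\contr}}(R, P)$ for every perfectoid Banach $K$-algebra $P$, and the universal property of $S/I \to R$ shows that precomposition is a bijection $\Maps_{\Ban_K^{\contr}}(R, P) \xrightarrow{\ \sim\ } \Maps_{\Ban_K^{\contr}}(S/I, P)$, naturally in $P$. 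Hence the induced morphism $\cM(R) = \cM(R)_{\arc_\varpi} \to \cM(S/I)_{\arc_\varpi}$ is an isomorphism, using full faithfulness of $\Yo_{\arc_\varpi}$ on $\Perfd_K^{\Ban,\op}$ (Proposition~\ref{basic_arc_pi_properties}(3)); it is compatible with the structure maps to $\cM(S) = \cM(S)_{\arc_\varpi}$ since both are induced by the composite $S \to S/I \to R$. This gives the claimed identification of $\cM(R)\to\cM(S)$ with $\cM(S/I)_{\arc_\varpi}\to\cM(S)$, and in particular exhibits $\cM(S/I)_{\arc_\varpi}$ as an affinoid perfectoid space.
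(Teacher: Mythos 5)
Your proposal follows essentially the same route as the paper: perfectoidize an integral model of the quotient, invoke the fact that Zariski closed sets in perfectoid rings are strongly Zariski closed (\cite[Theorem 7.4]{prisms}) for surjectivity and concentration in degree zero, and then identify $\cM(R)$ with $\cM(S/I)_{\arc_\varpi}$ via the initiality of $S/I \to R$ among maps to perfectoid Banach $K$-algebras together with Example~\ref{banach_yoneda_arc_pi}. The one place you drift from the paper is in the choice of integral model: you perfectoidize $(S/I)_{\le 1}$, the unit ball of the Banach quotient, which receives only an \emph{almost} surjection from $S_{\le 1}$ (no open mapping theorem is needed for this, incidentally — it is immediate from the infimum defining the quotient norm), so $(S/I)_{\le 1}$ is not literally a quotient of $S_{\le 1}$ and \cite[Theorem 7.4]{prisms} does not apply to it verbatim; in particular the intermediate assertion that $S_{\le 1}\twoheadrightarrow R_{\le 1}$ is surjective is not directly licensed by your citation. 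The paper sidesteps this by perfectoidizing $S_{\le 1}/I_{\le 1}$ (with $I_{\le 1} := I\cap S_{\le 1}$), which is an honest quotient of the perfectoid ring $S_{\le 1}$ and hence genuinely semiperfectoid, so that Theorem 7.4 gives a genuine surjection $S_{\le 1}\twoheadrightarrow (S_{\le 1}/I_{\le 1})_{\perfd}$, and only then inverts $\varpi$. Your version can be repaired either by observing that $(S/I)_{\le 1}$ and $S_{\le 1}/I_{\le 1}$ are almost isomorphic so their perfectoidizations agree after passing to $\Perfd^{\Prism a}_{K_{\le 1}}$, or simply by inverting $\varpi$ before asserting surjectivity — but as written the surjectivity step has a small unjustified claim.
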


\begin{proof} Let $S_{\le 1}$ be the object corresponding to $S$ in $\Perfd_{K_{\le 1}}^{\Prism a}$, and $I_{\le 1} := S_{\le 1} \cap I$ the associated closed ideal of $S_{\le 1}$. We know from \cite[Theorem 7.4]{prisms} that the map $(S_{\le 1}/I_{\le 1}) \rightarrow (S_{\le 1}/I_{\le 1})_{\perfd}$ is surjective and that $(S_{\le 1}/I_{\le 1})_{\perfd}$ is the universal integral perfectoid algebra equipped with a map $(S_{\le 1}/I_{\le 1}) \rightarrow (S_{\le 1}/I_{\le 1})_{\perfd}$. Set $R_{\le 1} := (S_{\le 1}/I_{\le 1})_{\perfd}^a$, so $R_{\le 1}$ is the universal object of $\Perfd_{K_{\le 1}}^{\Prism a}$ equipped with a map $(S_{\le 1}/I_{\le 1}) \rightarrow R_{\le 1}$, which in turn implies that $R$ is the universal perfectoid Banach $K$-algebra equipped with a map $(S/I) \rightarrow R$ -- then Example \ref{banach_yoneda_arc_pi} shows that we have an identification $\cM(R) \simeq \cM(S/I)_{\arc_\varpi}$. It remains to show that the resulting map $S \rightarrow R$ of perfectoid Banach $K$-algebras is surjective. Indeed, by construction we have that $R = R_{\le 1}[\frac{1}{\varpi}] = (S_{\le 1}/I_{\le 1})_{\perfd}[\frac{1}{\varpi}]$, so its clear that the induced map $S \rightarrow R$ is surjective, since localization is exact. 
\end{proof}

\begin{defn}\label{defn_separated} Let $Y \rightarrow X$ be a morphism in $\cX_{\arc_\varpi}$. Then,
\begin{enumerate}[(1)]
	\item We say that $Y \rightarrow X$ is affine if for every morphisms $\cM(A)_{\arc_\varpi} \rightarrow X$, where $A$ is a Banach $K$-algebra, the fiber product $Y \times_X \cM(A)_{\arc_\varpi}$ is represented by some Banach $K$-algebra $B$, in other words we have an identification $\cM(B)_{\arc_\varpi} = Y \times_X \cM(A)_{\arc_\varpi}$.
	\item We say that $Y \rightarrow X$ is a closed immersion if it is affine, and for every morphisms $\cM(P) \rightarrow X$, where $P$ is a perfectoid Banach $K$-algebra, the induced morphism $Y \times_X \cM(P) \rightarrow \cM(P)$ is represented by a surjective map $P \twoheadrightarrow R$ of perfectoid Banach $K$-algebras. In other words, there exists a perfectoid Banach $K$-algebra $R$ and an isomorphism $\cM(R) = Y \times_X \cM(P)$ such that the induced map $P \rightarrow R$ is surjective.
	\item We say that $Y \rightarrow X$ is separated if the diagonal map $\Delta: Y \rightarrow Y \times_X Y$ is a closed immersion. In particular, we say that $X$ is separated if the map $X \rightarrow \cM(K)$ is separated.
\end{enumerate}
\end{defn}

\begin{example}\label{affine_morphism_banach} Any morphism $X \rightarrow Y$ in $\cX_{\arc_\varpi}$, where $X = \cM(A)_{\arc_\varpi}$ and $Y = \cM(B)_{\arc_\varpi}$, is an affine morphism. Indeed, recall that the sheafification functor $\PreShv(\Ban_K^{\contr, \op}) \rightarrow \Shv_{\arc_\varpi}(\Ban_K^{\contr, \op})$ admits a calculus of fractions, thus there exists a contractive morphism $B \rightarrow A^{\prime}$ and an $\arc_\varpi$-equivalence $A^{\prime} \rightarrow A$ such that the the induced map $X \rightarrow Y$ is given by $\cM(A^{\prime})_{\arc_\varpi} \rightarrow \cM(B)_{\arc_\varpi}$. Hence, for any pair of morphisms $\cM(A)_{\arc_\varpi} \rightarrow \cM(B)_{\arc_\varpi} \leftarrow \cM(C)_{\arc_\varpi}$ we can find a pair of contractive maps $A^{\prime} \leftarrow B \rightarrow C^{\prime}$ which induce the maps $\cM(A)_{\arc_\varpi} \rightarrow \cM(B)_{\arc_\varpi} \leftarrow \cM(C)_{\arc_\varpi}$ after applying the functor $\cM(-)_{\arc_\varpi}$. The claim that morphisms of the form $\cM(A)_{\arc_\varpi} \rightarrow \cM(B)_{\arc_\varpi}$ are affine follows.
\end{example}

\begin{example}\label{closed_immersions_banach} Let us show that if $A \rightarrow B$ is a surjective contractive map of Banach $K$-algebras then the induced map $\cM(B)_{\arc_\varpi} \rightarrow \cM(A)_{\arc_\varpi}$ is a closed immersions. Identify $B = A/I$ for some closed ideal $I \subset A$, then for any perfectoid Banach $K$-algebra $P$ and a map $A \rightarrow P$, we know from Proposition \ref{zariski_closed_perfd} that the induced map $\cM(P/I)_{\arc_\varpi} \rightarrow \cM(P)$ can be identified with $\cM(R) \rightarrow \cM(P)$, where $R$ is a perfectoid Banach $K$-algebra and where the corresponding map $P \rightarrow R$ surjective. The claim then follows from the fact that the sheafified Yoneda functor $\Yo_{\arc_\varpi}: \Ban_{K}^{\contr, \op} \rightarrow \cX_{\arc_\varpi}$ preserves finite limits. 
\end{example}

\begin{example}\label{separated_banach} If $A$ if a Banach $K$-algebra, we claim that $\cM(A)_{\arc_\varpi}$ is a separated object of $\cX_{\arc_\varpi}$. By Example \ref{closed_immersions_banach} it suffices to show that the multiplication map $A \cotimes_K A \rightarrow A$ is surjective; but this follows from the fact that $A \otimes_K A \rightarrow A$ is surjective, and since $A$ is Banach it follows that there is a factorization as $A \otimes_K A \rightarrow A \cotimes_K A \rightarrow A$, showing that the induced map $A \cotimes_K A \rightarrow A$ is surjective.
\end{example}

\begin{lemma}\label{stab_separated_morphisms} The collection of affine morphisms, closed immersions and separated morphisms in $\cX_{\arc_\varpi}$ are stable under composition and basechange.
\end{lemma}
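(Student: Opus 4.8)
The strategy is to verify the three classes of morphisms separately, in the order affine $\Rightarrow$ closed immersion $\Rightarrow$ separated, since the latter two are defined in terms of the former. For each class we check stability under basechange first and composition second, as the composition arguments will typically reduce to basechange arguments combined with the fact that fiber products of the form $\cM(A)_{\arc_\varpi}$ behave well (Example \ref{affine_morphism_banach}).

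\textbf{Affine morphisms.} For basechange, suppose $Y \to X$ is affine and $X' \to X$ is arbitrary; we must show $Y \times_X X' \to X'$ is affine. Given a morphism $\cM(A)_{\arc_\varpi} \to X'$, composing with $X' \to X$ gives a morphism to $X$, so by hypothesis $Y \times_X \cM(A)_{\arc_\varpi}$ is of the form $\cM(B)_{\arc_\varpi}$; then $(Y\times_X X')\times_{X'} \cM(A)_{\arc_\varpi} \simeq Y \times_X \cM(A)_{\arc_\varpi} \simeq \cM(B)_{\arc_\varpi}$ using the pasting law for pullbacks in the topos $\cX_{\arc_\varpi}$. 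For composition, let $Z \to Y \to X$ be affine morphisms and take $\cM(A)_{\arc_\varpi} \to X$. Then $Y \times_X \cM(A)_{\arc_\varpi} \simeq \cM(B)_{\arc_\varpi}$ for some Banach $K$-algebra $B$, and since $Z \to Y$ is affine, $Z \times_Y \cM(B)_{\arc_\varpi} \simeq \cM(C)_{\arc_\varpi}$; the pasting law again gives $Z \times_X \cM(A)_{\arc_\varpi} \simeq \cM(C)_{\arc_\varpi}$.

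\textbf{Closed immersions.} These are affine morphisms with the extra property that pullback along $\cM(P)$, $P$ perfectoid, is a surjection of perfectoid Banach $K$-algebras. Stability under basechange of the "affine" part is already done; for the surjectivity part one argues exactly as above, using that if $Y\times_X \cM(P) \to \cM(P)$ comes from a surjection $P \twoheadrightarrow R$ of perfectoid Banach $K$-algebras, then so does its basechange. For composition of closed immersions $Z \to Y \to X$: pull back along $\cM(P) \to X$ to get $Y\times_X \cM(P) = \cM(R)$ with $P \twoheadrightarrow R$ surjective, where $R$ is perfectoid; then pull $Z \to Y$ back along $\cM(R) \to Y$ to get $Z \times_Y \cM(R) = \cM(R')$ with $R \twoheadrightarrow R'$ surjective. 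The composite $P \to R \to R'$ is then surjective, and by the pasting law $Z \times_X \cM(P) \simeq \cM(R')$, which gives the claim.

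\textbf{Separated morphisms.} A morphism $Y \to X$ is separated iff $\Delta_{Y/X}: Y \to Y\times_X Y$ is a closed immersion. For basechange along $X' \to X$, the diagonal of $Y\times_X X' \to X'$ is the basechange of $\Delta_{Y/X}$ along $Y\times_X X' \to Y$ (standard diagram chase with pullbacks), so it is a closed immersion by the previous paragraph. For composition of separated morphisms $Z \to Y \to X$, one uses the factorization of the diagonal
\begin{equation*}
	Z \xrightarrow{\ \Delta_{Z/Y}\ } Z\times_Y Z \xrightarrow{\ \ } Z\times_X Z,
\end{equation*}
where the first map is a closed immersion by hypothesis on $Z \to Y$, and the second map is a basechange of $\Delta_{Y/X}: Y \to Y\times_X Y$ along $Z\times_X Z \to Y\times_X Y$, hence a closed immersion by the hypothesis on $Y \to X$ together with basechange stability just established; closed immersions are stable under composition, so $\Delta_{Z/X}$ is a closed immersion. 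The main obstacle I anticipate is purely bookkeeping: one must carefully justify the pasting/compatibility diagrams for fiber products in $\cX_{\arc_\varpi}$ and, in the closed-immersion case, confirm that the representing surjection $P \twoheadrightarrow R$ is functorial enough that basechanges and composites of the relevant pullback squares remain represented by surjections of perfectoid Banach $K$-algebras; this is where one invokes that $\Yo_{\arc_\varpi}$ preserves finite limits and the identification of Zariski closed subsets in Proposition \ref{zariski_closed_perfd}.
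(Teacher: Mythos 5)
Your proof is correct and follows essentially the same route as the paper: the affine and closed-immersion cases are the direct pullback-pasting arguments the paper dismisses as clear from the definitions, and for separated morphisms you use exactly the paper's two steps, namely the identification of the diagonal of a basechange with the basechange of the diagonal, and the factorization $Z \to Z\times_Y Z \to Z\times_X Z$ with the second map a basechange of $\Delta_{Y/X}$.
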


\begin{proof} It is clear from the definitions that affine morphisms and closed immersions are closed under composition and basechange. To show that separated morphisms are closed under composition, assume that $X \rightarrow Y$ and $Y \rightarrow Z$ are separated morphisms in $\cX_{\arc_\varpi}$, and consider the following cartesian diagram
\begin{cd}
	X \times_Y X \ar[r] \ar[d] & X \times_Z X \ar[d] \\
	Y \ar[r, "\Delta"] & Y \times_{Z} Y
\end{cd}
Since $Y \rightarrow Z$ is separated it follows that $\Delta: Y \rightarrow Y \times_Z Y$ is a closed immersion, and since closed immersions are stable under basechange it follows that $X \times_Y X \rightarrow X \times_Z X$. As the diagonal map $X \rightarrow X \times_Z X$ factors as $X \rightarrow X \times_Y X \rightarrow X \times_Z X$ and closed immersions are stable under composition, it follows that $X \rightarrow Z$ is a separated morphism. In order to show that separated morphisms are closed under basechange, assume that $X \rightarrow Y$ is separated and let $Z \rightarrow Y$ be any morphism in $\cX_{\arc_\varpi}$, we need to show that $X \times_Y Z \rightarrow Z$ is separated, equivalently we need to show that the diagonal map $X \times_Y Z \rightarrow (X \times_Y Z) \times_Z (X \times_Y Z)$ is a closed immersion. Using the identity $(X \times_Y Z) \times_Z (X \times_Y Z) = (X \times_Y X) \times_Y Z$ we learn that the following morphism is a closed immersion $X \times_Y Z \rightarrow (X \times_Y Z) \times_Z (X \times_Y Z)$ as closed immersions are closed under basechange.
\end{proof}

\begin{prop}\label{properties_separated} Let $X \rightarrow Y$ be a morphisms in $\cX_{\arc_\varpi}$. Then,
\begin{enumerate}[(1)]
	\item If $X \rightarrow Y$ is a closed immersion, then $X \rightarrow Y$ is a monomorphisms.
	\item If $X$ and $Y$ are separated, then the morphism $X \rightarrow Y$ is separated.
	\item If $Y$ is separated and $X = \cM(A)_{\arc_\varpi}$ for some Banach $K$-algebra $A$, then $X \rightarrow Y$ is an affine morphism. Furthermore, if we have a pair of morphisms $\cM(P_1) \rightarrow Y \leftarrow \cM(P_2)$, where $P_1, P_2$ are perfectoid Banach $K$-algebras, then the fiber product $\cM(P_1) \times_{Y} \cM(P_2)$ is represented by a perfectoid Banach $K$-algebra. In particular, this shows that if $Y$ is separated, then it is quasiseparated.
	\item If $Y$ is separated and $X \rightarrow Y$ is a monomorphism, then $X$ is separated.
	\item Separated objects of $\cX_{\arc_\varpi}$ are stable under fiber product. 
\end{enumerate}
\end{prop}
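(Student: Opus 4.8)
\textbf{Proof strategy for Proposition \ref{properties_separated}.} The plan is to establish the five statements more or less in order, leaning heavily on Lemma \ref{stab_separated_morphisms} (stability of closed immersions and separated morphisms under composition and basechange) and on the results of the previous section (Propositions \ref{mono_arc_topos}, \ref{iso_arc_topos}, \ref{epi_arc_topos}, \ref{subobjects_arc_topos}).

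For (1), I would show a closed immersion $f \colon X \to Y$ is a monomorphism by checking that the diagonal $\Delta \colon X \to X \times_Y X$ is an isomorphism. By definition of affine morphism, for every $\cM(A)_{\arc_\varpi} \to Y$ the base change $X \times_Y \cM(A)_{\arc_\varpi}$ is represented by some Banach $K$-algebra $B$; pulling back along perfectoid points $\cM(P) \to Y$, the map is represented by a surjection $P \twoheadrightarrow R$ of perfectoid Banach $K$-algebras, and a surjection of Banach $K$-algebras induces a monomorphism of $\arc_\varpi$-sheaves by Example \ref{closed_immersions_banach}. Since $\Perfd_K^{\Ban,\op}$ is a basis for the $\arc_\varpi$-topology and being a monomorphism can be checked after an epimorphic base change (the diagonal $\Delta_{X/Y}$ base-changes to $\Delta$ of the pulled-back map), one reduces to the perfectoid case, where it holds. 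Statement (2) is then immediate from Lemma \ref{stab_separated_morphisms}: if $X \to \cM(K)$ and $Y \to \cM(K)$ are separated, then since separated morphisms are stable under composition and base change, the morphism $X \to Y$ sits in the usual triangle and one deduces $X \to Y$ is separated by the standard argument (factor $\Delta_{X/Y}$ through $X \times_Y X \to X \times_{\cM(K)} X$, which is a base change of the closed immersion $\Delta_{Y/\cM(K)}$).

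For (3) I would argue that if $Y$ is separated and $X = \cM(A)_{\arc_\varpi}$, then for a morphism $\cM(B)_{\arc_\varpi} \to Y$ the fiber product $X \times_Y \cM(B)_{\arc_\varpi}$ can be computed as a subsheaf of $\cM(A)_{\arc_\varpi} \times_{\cM(K)} \cM(B)_{\arc_\varpi} = \cM(A \cotimes_K B)_{\arc_\varpi}$ cut out by the closed immersion obtained by base-changing $\Delta_{Y/\cM(K)}$: concretely, $X \times_Y \cM(B)_{\arc_\varpi} = (X \times_{\cM(K)} \cM(B)_{\arc_\varpi}) \times_{(Y \times_{\cM(K)} Y)} Y$, and since $Y \to Y \times_{\cM(K)} Y$ is a closed immersion which is in particular affine, the fiber product is affine over $\cM(A \cotimes_K B)_{\arc_\varpi}$, hence represented by a Banach $K$-algebra. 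The same computation with $B = P_1$ and a further base change along $\cM(P_2) \to Y$ shows $\cM(P_1) \times_Y \cM(P_2)$ is cut out of $\cM(P_1 \cotimes_K P_2)$ by a surjection onto a perfectoid Banach $K$-algebra, using Corollary \ref{tensor_ban_perfectoid} to see $P_1 \cotimes_K P_2$ is perfectoid and the closed-immersion property to get the quotient perfectoid. Quasiseparatedness of $Y$ then follows since any two qcqs objects mapping to $Y$ can be covered by perfectoid points and the fiber products of perfectoid points are represented by perfectoid Banach $K$-algebras, hence qcqs, so one concludes via Proposition \ref{qcqs_check_local} (or directly from Proposition \ref{comp_relative_absolute_qcqs}, checking $Y \to \cM(K)$ is quasiseparated). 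For (4), given $Y$ separated and $X \hookrightarrow Y$ a monomorphism, I would use that a monomorphism $X \to Y$ has $X \overset{\simeq}{\to} X \times_Y X$, so the diagonal $\Delta_{X/\cM(K)} \colon X \to X \times_{\cM(K)} X$ factors as $X \overset{\simeq}{\to} X \times_Y X \to X \times_{\cM(K)} X$, and the latter map is the base change along $X \times_{\cM(K)} X \to Y \times_{\cM(K)} Y$ of the closed immersion $\Delta_{Y/\cM(K)}$; closed immersions being stable under base change (Lemma \ref{stab_separated_morphisms}), $\Delta_{X/\cM(K)}$ is a closed immersion. Finally (5) follows formally: if $X_1, X_2$ are separated objects with a map to a separated $Y$ (e.g.\ $Y = \cM(K)$), then by (2) the maps $X_i \to Y$ are separated, and separated morphisms are stable under composition and base change, so $X_1 \times_Y X_2 \to X_2 \to Y$ is separated, i.e.\ $X_1 \times_Y X_2$ is separated; the absolute case $Y = \cM(K)$ gives the claim.

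The main obstacle I anticipate is (3): making precise that "affine over an affinoid is affinoid" in the $\arc_\varpi$-topos, i.e.\ that the pullback of a closed immersion (hence affine morphism) along $\cM(A \cotimes_K B)_{\arc_\varpi} \to Y \times_{\cM(K)} Y$ is genuinely represented by a Banach $K$-algebra and not merely by an $\arc_\varpi$-sheaf. This requires knowing that the defining surjection $P \twoheadrightarrow R$ over perfectoid points glues along an $\arc_\varpi$-cover to a surjection of Banach $K$-algebras over $\cM(A\cotimes_K B)$ — which should come from combining Example \ref{closed_immersions_banach} with the fact that $\Perfd_K^{\Ban,\op}$ is a basis and the $\arc_\varpi$-descent of Theorem \ref{struct_presheaf_perfectoid}/Corollary \ref{arc_pi_descent_for_bananach_perfectoids}, together with a calculus-of-fractions presentation of the morphism $\cM(A)_{\arc_\varpi} \to Y$ as in Example \ref{affine_morphism_banach}. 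Everything else is a formal consequence of the stability lemma and the characterization of monomorphisms.
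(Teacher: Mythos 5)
Your proposal is correct and takes essentially the same route as the paper's proof: part (1) by checking the monomorphism condition after an epimorphic base change by perfectoid affinoids (where a closed immersion is a surjection of perfectoid Banach algebras, hence a monomorphism), part (2) via the cartesian square identifying $X \times_Y X \rightarrow X \times X$ with a base change of $\Delta_Y$ together with separatedness of $X$, part (3) by base-changing $\Delta_Y$ along $\cM(A)_{\arc_\varpi} \times \cM(B)_{\arc_\varpi} = \cM(A \cotimes_K B)_{\arc_\varpi}$ and using Corollary \ref{tensor_ban_perfectoid} in the perfectoid case, and parts (4)--(5) exactly as in the paper. The only remark worth making is that the ``main obstacle'' you anticipate in (3) is not actually an issue: affineness is part of the definition of a closed immersion (Definition \ref{defn_separated}), so the base change of $\Delta_Y$ along $\cM(A \cotimes_K B)_{\arc_\varpi} \rightarrow Y \times Y$ is represented by a Banach $K$-algebra by definition, with no descent or gluing argument needed --- which is precisely what your main argument already uses.
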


\begin{proof} In order to prove (1), recall that in a coherent topos a morphism $F \rightarrow G$ is a monomorphism if and only if it for some epimorphism $H \twoheadrightarrow F$ the basechange $H \times_G F \rightarrow H$ is a monomorphism. Therefore, we fix a collection of morphisms $\{\cM(A_i) \rightarrow Y\}_{i \in I}$ such that the induced map $\sqcup_{i \in I} \cM(A_i) \rightarrow X$ is an epimorphism and where each $A_i$ is a perfectoid Banach $K$-algebra, thus it remains to show that the induced map $X \times_Y (\sqcup_{i \in I} \cM(A_i)) \rightarrow (\sqcup_{i \in I} \cM(A_i))$ is a monomorphism. But this follows from the fact that each map $X \times_Y \cM(A_i) \rightarrow \cM(A_i)$ is a monomorphism as $X \rightarrow Y$ is a closed immersion and the fact that sheaves are stable (among presheaves) under arbitraty coproducts.

For the proof of (2), recall that the following commutative diagram is cartesian
\begin{cd}
	X \times_Y X \ar[r, hook] \ar[d] & X \times X \ar[d] \\
	Y \ar[r, hook] & Y \times Y
\end{cd}
Therefore, since monomorphisms are stable under basechange it follows that $X \times_Y X \rightarrow X \times X$ is a monomorphism. It remains to show that for any morphism $\cM(A) \rightarrow X \times_Y X$, where $A$ is a perfectoid Banach $K$-algebra, the basechange $X \times_{X \times_Y X} \cM(A) \rightarrow \cM(A)$ is represented by a surjective map $A \rightarrow B$ of perfectoid Banach $K$-algebras. From the fact that $X \times_Y X \rightarrow X \times X$ is a monomorphism we learn that the canonical map $(X \times_Y X) \times_{X \times X} \cM(A) \rightarrow \cM(A)$ is an isomorphism, and so the claim about $X \times_{X \times_Y X} \cM(A) \rightarrow \cM(A)$ being represented by a surjective map $A \rightarrow B$ follows from the fact that $X$ is separated.

For (3), let $f: \cM(A)_{\arc_\varpi} \rightarrow Y \leftarrow \cM(B)_{\arc_\varpi}: g$ be a pair of morphism, where $A,B$ are Banach $K$-algebras, we need to show that $\cM(A)_{\arc_\varpi} \times_Y \cM(B)_{\arc_\varpi}$ is represented by a Banach $K$-algebra. Since $Y$ is separated it follows that the basechange of $Y \rightarrow Y \times Y$ along the map $f \times g: \cM(A)_{\arc_\varpi} \times \cM(B)_{\arc_\varpi}$, which is given by $\cM(A)_{\arc_\varpi} \times_Y \cM(B)_{\arc_\varpi} \rightarrow \cM(A)_{\arc_\varpi} \times \cM(B)_{\arc_\varpi}$, is a closed embedding, and so $\cM(A)_{\arc_\varpi} \times_Y \cM(B)_{\arc_\varpi}$ is represented by a Banach $K$-algebras. By the same argument, setting $P_1 = A$ and $P_2 = B$, we learn that the map $\cM(P_1) \times_Y \cM(P_2) \rightarrow \cM(P_1) \times \cM(P_2)$ is a closed immersion, showing that $\cM(P_1) \times_Y \cM(P_2)$ is represented by a perfectoid Banach $K$-algebra. 

In order to prove (4), recall that since $X \rightarrow Y$ is a monomorphism we know that the diagonal map $X \rightarrow X \times_Y X$ is an isomorphism. Then, the result follows from the fact that $Y \rightarrow Y \times Y$ is a closed immersion, closed immersions are stable under basechange, and the fact that the basechange of $Y \rightarrow Y \times Y$ along $X \times X \rightarrow Y \times Y$ is given by $X = X \times_Y X \rightarrow X \times X$. While (5) is a consequence of the fact that separated morphisms are stable under basechange (Lemma \ref{stab_separated_morphisms}) and that morphisms between separated objects are separated.
\end{proof}

\newpage

\section{Analytic Geometry}\label{sect_analytic_spaces}

\subsection{Perfectoid Spaces}

Fix a perfectoid non-archimedean field $K$.

\begin{defn}\label{defn_perfectoid_space} An object $X \in \cX_{\arc_\varpi}$ is called a perfectoid space if there exists a collection of monomorphisms $\{\cM(A_i) \hookrightarrow X\}_{i \in I}$, such that the induced map $\sqcup_{i \in I} \cM(A_i) \rightarrow X$ is an epimorphism and where each $A_i$ is a perfectoid Banach $K$-algebra. We denote the full-subcategory of $\cX_{\arc_\varpi}$ spanned all perfectoid spaces as $\PerfdSpc_{K}$, and we say that a perfectoid space is quasicompact (resp. quasiseparated, separated) if it is so as an object of $\cX_{\arc_\varpi}$ (cf. Definitions \ref{defn_qcqs} and \ref{defn_separated}). Similarly, we say that a morphism of perfectoid spaces $X \rightarrow Y$ is an epimorphism (resp. monomorphism, isomorphism, quasicompact, quasiseparated, separated) if it is so as a morphism of $\cX_{\arc_\varpi}$.

In particular, we say that $X$ is an affinoid perfectoid space if there exists a perfectoid Banach $K$-algebra $A$ such that $X$ is represented by $\cM(A)$.
\end{defn}

\begin{thm}\label{perfd_spc_maps} The category $\PerfdSpc_{K}$ of perfectoid spaces has the following properties
\begin{enumerate}[(1)]
	\item Let $X$ be a perfectoid space, for each $x \in |X|(*)$ there exists a unique monomorphism $\cM(\cH(x)) \hookrightarrow X$ from perfectoid non-archimedean field, which gets mapped to $x: \pt \rightarrow |X|$ under the Berkovich functor. In particular, it satisfies the condition of Proposition \ref{res_fields_arc_topos}(2), making the map $\cM(\cH(x)) \hookrightarrow X$ a completed residue field in $\cX_{\arc_\varpi}$.
	\item Let $Y \rightarrow X$ be a morphism of perfectoid spaces, and assume that $Y$ is quasicompact and $X$ is qcqs. Then, $Y \rightarrow X$ is an epimorphism if and only if the induced map $|Y|(*) \rightarrow |X|(*)$ is a surjective map of sets.
	\item Let $Y \rightarrow X$ be a morphism of perfectoid spaces, and assume that $Y$ and $X$ are qcqs. Then, $Y \rightarrow X$ is an isomorphism if and the induced map $|Y|(*) \rightarrow |X|(*)$ is bijective, and for each $y \in |Y|(*) \simeq |X|(*) \ni x$ the induced map of completed residue fields $\cM(\cH(y)) \rightarrow \cM(\cH(x))$ is an $\arc_\varpi$-equivalence (equivalently, an isomorphism).
\end{enumerate}
\end{thm}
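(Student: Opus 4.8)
The plan is to deduce all three statements from the corresponding results about general qcqs objects of $\cX_{\arc_\varpi}$ established in the previous section (Propositions \ref{res_fields_arc_topos}, \ref{epi_arc_topos}, \ref{iso_arc_topos}), using the fact that affinoid perfectoid spaces are fully faithfully embedded in $\cX_{\arc_\varpi}$ (Proposition \ref{basic_arc_pi_properties}(3)) and that their completed residue fields are perfectoid fields (Theorem \ref{stalks_perfectoid}). The only genuinely new content is the \emph{representability} assertion in (1): that the completed residue field $F_x \hookrightarrow X$ of a perfectoid space at a point is itself of the form $\cM(\cH(x))$ for a perfectoid field $\cH(x)$. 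Once (1) is in hand, (2) and (3) are immediate translations.

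First I would prove (1). Given $x \in |X|(*)$, choose a covering $\{\cM(A_i) \hookrightarrow X\}_{i \in I}$ by affinoid perfectoid spaces; by Proposition \ref{berko_funct_stability}(1) the induced map $\sqcup |\cM(A_i)|(*) \to |X|(*)$ is surjective, so pick $i$ and a point $y \in |\cM(A_i)|(*)$ lying over $x$. Since $\cM(A_i) \hookrightarrow X$ is a monomorphism, Proposition \ref{berko_funct_stability}(3) shows $|\cM(A_i)| \hookrightarrow |X|$ is a monomorphism of condensed sets, so the point $y$ is uniquely determined by $x$ once we know $x$ lies in the image; in any case there is \emph{some} such $y$. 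Now apply Proposition \ref{res_fields_arc_topos}(4): the completed residue field of $\cM(A_i)_{\arc_\varpi}$ at $y$ is $\cM(\cH(y))_{\arc_\varpi}$, where $\cH(y)$ is the Berkovich completed residue field of $A_i$ at $y$; and by Theorem \ref{stalks_perfectoid}(5), $\cH(y)$ is a perfectoid non-archimedean field, so by Proposition \ref{basic_arc_pi_properties}(3) we may write $\cM(\cH(y))$ without the $\arc_\varpi$ decoration. The composite $\cM(\cH(y)) \hookrightarrow \cM(A_i) \hookrightarrow X$ is a monomorphism from a qcqs object which maps to $x\colon \pt \to |X|$ under the Berkovich functor. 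By the uniqueness clause of Proposition \ref{res_fields_arc_topos}(2), this composite \emph{is} the completed residue field $X_x \hookrightarrow X$; setting $\cH(x) := \cH(y)$ gives the desired perfectoid field, and uniqueness of the monomorphism is exactly Proposition \ref{res_fields_arc_topos}(2). I should also check that $\cH(x)$ does not depend on the chosen covering or on $i$: this follows again from the uniqueness in Proposition \ref{res_fields_arc_topos}(2), since any two such monomorphisms must be canonically isomorphic.

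Next, (2) is a direct special case of Proposition \ref{epi_arc_topos}, applied to the morphism $Y \to X$ of $\arc_\varpi$-sheaves with $Y$ quasicompact and $X$ qcqs: the equivalence $(1) \Leftrightarrow (3)$ there states precisely that $Y \to X$ is an epimorphism iff $|Y|(*) \to |X|(*)$ is surjective. Finally, (3) follows from Proposition \ref{iso_arc_topos} applied to $Y \to X$ with both qcqs: condition (3) of that proposition says $Y \to X$ is an isomorphism iff $|Y|(*) \to |X|(*)$ is a bijection and for each pair of corresponding points the induced map of completed residue fields is an $\arc_\varpi$-equivalence. By part (1) just proved, those completed residue fields are $\cM(\cH(y))$ and $\cM(\cH(x))$ for perfectoid fields, and for morphisms between affinoid perfectoid spaces being an $\arc_\varpi$-equivalence is equivalent to being an isomorphism (Proposition \ref{basic_arc_pi_properties}(3) together with Corollary \ref{arc_pi_descent_for_bananach_perfectoids}, i.e. full faithfulness of $\Yo_{\arc_\varpi}$ on $\Perfd_K^{\Ban,\op}$); this gives the parenthetical equivalence.

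The main obstacle, as indicated, is entirely concentrated in part (1): one must be careful that the abstract completed residue field $X_x$ of the $\arc_\varpi$-sheaf $X$ — a priori only a qcqs object — is actually \emph{representable} by an affinoid perfectoid space. The key leverage points making this work are (a) affinoid perfectoid spaces form a covering family for any perfectoid space by definition, so $x$ is hit by a point of some $\cM(A_i)$; (b) the completed residue fields of affinoid perfectoid spaces are computed algebraically and are again perfectoid (Proposition \ref{res_fields_arc_topos}(4) plus Theorem \ref{stalks_perfectoid}); and (c) uniqueness of completed residue fields in $\cX_{\arc_\varpi}$ (Proposition \ref{res_fields_arc_topos}(2)) forces the answer to be independent of all choices. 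No delicate estimate or new construction is needed beyond assembling these.
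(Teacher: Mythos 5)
Your proposal is correct and follows essentially the same route as the paper: for (1) it lifts $x$ to a point of an affinoid chart, takes the Berkovich completed residue field there (perfectoid by Theorem \ref{stalks_perfectoid}), composes with the monomorphism into $X$, and invokes the uniqueness of Proposition \ref{res_fields_arc_topos}(2); parts (2) and (3) are, exactly as in the paper, direct specializations of Propositions \ref{epi_arc_topos} and \ref{iso_arc_topos}.
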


\begin{proof} In order to prove $(1)$ fix a collection of monomorphisms $\{\cM(A_i) \hookrightarrow X\}_{i \in I}$, such that the induced map $\sqcup_{i \in I} \cM(A_i) \rightarrow X$ is an epimorphism and where each $A_i$ is a perfectoid Banach $K$-algebra. Then, the induced map $\sqcup_{i \in I} |\cM(A_i)|(*) \rightarrow |X|(*)$ is surjective, and so for any $x \in |X|(*)$ we can pick a lift $\tilde{x} \in \sqcup_{i \in I} |\cM(A_i)|(*)$ along the map $\sqcup_{i \in I} |\cM(A_i)| \rightarrow |X|$. By construction, there exists some $i \in I$ such that $\tilde{x} \in |\cM(A_i)|(*)$, and let $\cM(\cH(x)) \rightarrow \cM(A_i)$ be the completed residue field of $A_i$ at $\tilde{x}$, which by virtue of Theorem \ref{stalks_perfectoid} we know is a monomorphism and $\cH(x)$ is a non-archimedean perfectoid field. Then, the composition $\cM(\cH(x)) \hookrightarrow \cM(A_i) \hookrightarrow X$ is a monomorphism, and by construction it gets mapped to $x: \pt \rightarrow |X|$ under the Berkovich functor. Uniqueness then follows from Proposition \ref{res_fields_arc_topos}(2).

Statement (2) is a direct consequence of Proposition \ref{epi_arc_topos}, and statement (3) is a combination of part (1) and Proposition \ref{iso_arc_topos}.
\end{proof}

\begin{thm}\label{perfd_spc_mono} Let $Y \rightarrow X$ be a morphism of perfectoid spaces, and assume that $Y$ is qcqs and $X$ is quasiseparated. Then, the following are equivalent
\begin{enumerate}[(1)]
	\item The morphism $Y \hookrightarrow X$ is a monomorphism.
	\item The induced map $|Y|(*) \rightarrow |X|(*)$ is an injective map of sets, and for each $y \in |Y|(*) \simeq \im(|Y|(*) \rightarrow |X|(*)) \ni x$ the induced map $\cM(\cH(y)) \rightarrow \cM(\cH(x))$ of completed residue fields is an $\arc_\varpi$-equivalence (equivalently, an isomorphism).
	\item The morphism $Y \rightarrow X$ is an analytic domain: for any object $Z \in \cX_{\arc_\varpi}$ and any morphism $Z \rightarrow X$ satisfying $\im(|Z|(*) \rightarrow |X|(*)) \subset \im(|Y|(*) \rightarrow |X|(*))$, there exists a unique morphism $Z \rightarrow Y$ making the following diagram commute
	\begin{cd}
		& Z \ar[ld, dashed] \ar[d] \\
		Y \ar[r] & X
	\end{cd}
\end{enumerate}
\end{thm}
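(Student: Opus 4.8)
This theorem is almost a direct application of the general machinery developed in Proposition~\ref{mono_arc_topos}, once we verify that the relevant objects are covered by affinoid perfectoid spaces. The plan is to deduce $(1)\Leftrightarrow(2)\Leftrightarrow(3)$ from Proposition~\ref{mono_arc_topos} applied to the morphism $Y\rightarrow X$ in $\cX_{\arc_\varpi}$, after checking the hypotheses there ($Y$ qcqs, $X$ quasiseparated) are met — which they are, by assumption. The only gap to bridge is that Proposition~\ref{mono_arc_topos} is phrased in terms of the completed residue fields $F_x\hookrightarrow F$ of an abstract $\arc_\varpi$-sheaf, whereas the statement here speaks of the perfectoid completed residue fields $\cM(\cH(x))\hookrightarrow X$. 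So the first step is to invoke Theorem~\ref{perfd_spc_maps}(1): for a perfectoid space $X$ and $x\in|X|(*)$, the monomorphism $\cM(\cH(x))\hookrightarrow X$ constructed there satisfies the uniqueness characterization of Proposition~\ref{res_fields_arc_topos}(2), hence coincides with $X_x\hookrightarrow X$. The same applies to $Y$. Therefore the condition in Proposition~\ref{mono_arc_topos}(2) that $Y_y\rightarrow X_x$ be an $\arc_\varpi$-equivalence translates verbatim into the condition that $\cM(\cH(y))\rightarrow\cM(\cH(x))$ be an $\arc_\varpi$-equivalence.

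Next I would address the parenthetical ``equivalently, an isomorphism'' in $(2)$. For this, note that an $\arc_\varpi$-equivalence between qcqs objects is an isomorphism by Proposition~\ref{iso_arc_topos}. Both $\cM(\cH(y))$ and $\cM(\cH(x))$ are representable by perfectoid non-archimedean fields, hence qcqs (Proposition~\ref{basic_arc_pi_properties}), so any $\arc_\varpi$-equivalence between them is automatically an isomorphism; moreover since the sheafified Yoneda functor $\Yo_{\arc_\varpi}$ restricted to $\Perfd_K^{\Ban,\op}$ is fully faithful (Proposition~\ref{basic_arc_pi_properties}), such an isomorphism corresponds to an isomorphism $\cH(x)\simeq\cH(y)$ of perfectoid Banach $K$-algebras. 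This makes the two formulations of $(2)$ literally equivalent, and completes the bridge between the two theorems.

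With these translations in place, $(1)\Leftrightarrow(2)\Leftrightarrow(3)$ follows immediately from Proposition~\ref{mono_arc_topos}: condition $(1)$ here is its $(1)$, condition $(2)$ here is its $(2)$ after the residue-field identification, and condition $(3)$ here is its $(3)$ verbatim (the ``analytic domain'' formulation refers to arbitrary $Z\in\cX_{\arc_\varpi}$ in both cases). I do not anticipate a genuine obstacle; the one point requiring a little care is confirming that Theorem~\ref{perfd_spc_maps}(1) indeed supplies a monomorphism with the exact universal property demanded by Proposition~\ref{res_fields_arc_topos}(2) — namely, that among all monomorphisms from qcqs objects mapping to $x\colon\pt\rightarrow|X|$ under the Berkovich functor, $\cM(\cH(x))\hookrightarrow X$ is the one (uniquely up to unique isomorphism). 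Theorem~\ref{perfd_spc_maps}(1) states precisely this, so the argument is essentially a matter of citing the right prior results in the right order. The proof is therefore short: verify hypotheses, identify completed residue fields via Theorem~\ref{perfd_spc_maps}(1), upgrade ``$\arc_\varpi$-equivalence'' to ``isomorphism'' via Proposition~\ref{iso_arc_topos}, and conclude by Proposition~\ref{mono_arc_topos}.
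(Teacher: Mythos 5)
Your proposal is correct and is essentially the paper's proof: the paper opens by deducing the theorem directly from Proposition~\ref{mono_arc_topos} together with Theorem~\ref{perfd_spc_maps}(1) for the identification of completed residue fields, exactly as you do. The paper then additionally gives a self-contained, more direct argument in the affinoid case $Y=\cM(R)$, $X=\cM(S)$ ``for the sake of completeness,'' but this is supplementary and not part of the logical dependency chain you traced.
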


\begin{proof} This is a direct consequence of Proposition \ref{mono_arc_topos}, and Theorem \ref{perfd_spc_maps}(1) for the identification of the completed residue fields. However, for the sake of completeness let us include a more direct proof in the case where $Y = \cM(R)$ and $X = \cM(S)$ for a pair of perfectoid Banach $K$-algebras $R,S \in \Perfd_K^{\Ban}$. Recall that as in Proposition \ref{mono_arc_topos} it suffices to check condition (3) for object $Z$ of the form $\cM(T)$ for some perfectoid Banach $K$-algebra $T$. The implication $(2) \Rightarrow (3)$ is the most difficult, and relies on the fact that isomorphism of perfectoid Banach $K$-algebras are the same as $\arc_\varpi$-equivalences (cf. Proposition \ref{iso_arc_topos}).

We begin by showing that $(1) \Rightarrow (2)$. The fact that the induced map $|\cM(R)|(*) \rightarrow |\cM(S)|(*)$ is injective follows from Proposition \ref{mono_berko_sp_injective}, and the isomorphism of completed residue fields is Proposition \ref{mono_iso_on_residue_fields}. Next, we show that $(2) \Rightarrow (3)$. We claim that $\cM(R) \rightarrow \cM(S)$ being injective and defining isomorphisms on completed residue fields imply the following: for every perfectoid non-archimedean field $L/K$ we have an injective map
\begin{equation*}
	\Hom_{\Perfd_K^{\Ban, \op}} (\cM(L), \cM(R)) \hookrightarrow \Hom_{\Perfd_K^{\Ban, \op}} (\cM(L), \cM(S))
\end{equation*}
and the image corresponds to maps $S \rightarrow L$ such that the induced map $\pt = |\cM(L)| \rightarrow |\cM(S)|$ is contained in $\im(|\cM(R)|(*) \rightarrow |\cM(S)|(*))$. In order to proof injectivity, assume we have two morphisms $f_1, f_2: R \rightarrow L$, such that when pre-composed with $g: S \rightarrow R$ we gave $f_1 \circ g = f_2 \circ g$. Injectivity of $|\cM(R)|(*) \rightarrow |\cM(S)|(*)$ shows that both maps $f_1, f_2: \cM(L) \rightarrow \cM(R)$ get send to $x: \pt \rightarrow |\cM(R)|$ unde the Berkovich functor, and by virtue of Theorem \ref{stalks_perfectoid} $f_i$ will factor uniquely as
\begin{equation*}
	f_i = h_i \circ v_R: R \rightarrow \cH(x) \rightarrow L
\end{equation*}
thus it suffices to show that $h_1 = h_2$. From the hypothesis that the map $\cM(R) \rightarrow \cM(S)$ induces an isomorphism on completed residue fields, we get that we have an injection
\begin{equation*}
	\Hom_{\Perfd_K^{\Ban, \op}} (\cM(L), \cM(\cH(x))) \hookrightarrow \Hom_{\Perfd_K^{\Ban, \op}} (\cM(L), \cM(S) )
\end{equation*}
via the map $v_R \circ g = S \rightarrow R \rightarrow \cH(x)$. As the maps $h_i: \cH(x) \rightarrow L$ will become the same when pre-composed with $v_R\circ g$, we learn that $h_1 = h_2$. Furthermore, if $|\cM(L)| \rightarrow |\cM(S)|$ is in the image of $|\cM(R)|(*) \rightarrow |\cM(S)|(*)$, the map $S \rightarrow L$ admits a lift to a map $R \rightarrow L$; indeed, $S \rightarrow L$ will factor as $S \rightarrow \cH(x) \rightarrow L$, and by the hypothesis there is a canonical map $R \rightarrow \cH(x)$, showing the desired claim.

It remains to show that $\cM(R) \rightarrow \cM(S)$ is an analytic domain, that is, it satisfies the universal property stated in part (3). As in Proposition \ref{mono_arc_topos} it suffices to check condition (3) for object $Z$ of the form $\cM(T)$ for some perfectoid Banach $K$-algebra $T$. Let $S \rightarrow T$ be a morphism in $\Perfd_K^{\Ban}$ such that which satisfies the hypothesis of $(3)$, and consider the following pullback diagram
\begin{cd}
	\cM (T \cotimes_S R) \ar[r] \ar[d] & \cM(T) \ar[d] \\
	\cM(R) \ar[r]  & \cM(S)
\end{cd}
For every non-archimedean perfectoid field $L$, by the work done in the previous paragraph we have that $\cM(R)(L) \rightarrow \cM(S)(L)$ is an injection, which in turn implies that $\cM (T \cotimes_S R) (L) \rightarrow \cM(T)(L)$ is an injection. Furthermore, since $\im(\cM(T)(L) \rightarrow \cM(S)(L)) \subset \im(\cM(R)(L) \rightarrow \cM(S)(L))$ from the hypothesis and the work done in the previous paragraph, it follows that $\cM(T \cotimes_S R)(L) \rightarrow \cM(T)(L)$ is a bijection for all non-archimedean perfectoid fields $L$. This implies that $\cM(T \cotimes_S R) \rightarrow \cM(T)$ is an $\arc_\varpi$-equivalence, and therefore an isomorphism by Proposition \ref{iso_arc_topos}.

Finally, we need to show that $(3) \Rightarrow (1)$. Its clear that if $\im(|\cM(T)|(*) \rightarrow |\cM(S)|(*)) \not\subset \im(|\cM(R)|(*) \rightarrow |\cM(S)|(*))$ then there is no morphism $\cM(T) \rightarrow \cM(R)$ making the desired diagram commute. Then, the fact that $\cM(R) \rightarrow \cM(S)$ is a monomorphism in $\Perfd_K^{\Ban, \op}$ follows from the uniqueness of the lift.
\end{proof}

\begin{example} Let $A \in \Perfd_{K}^{\Ban}$ be a perfectoid Banach $K$-algebra, and $X = \cM(A)$ it associated affinoid perfectoid space. Then, the following are examples of analytic domains
\begin{enumerate}[(1)]
	\item For each $x \in |X|(*)$ there exists a perfectoid non-archimedean field $\cH(x)$ (cf. Definition \ref{defn_completed_residue_field})
	together with a morphism $\cM(\cH(x)) \rightarrow X$. By Theorems \ref{stalks_perfectoid} and \ref{perfd_spc_mono} we learn that the induced map $\cM(\cH(x)) \rightarrow X$ is an analytic domain.
	\item Recall from Theorem \ref{struct_presheaf_perfectoid} the definition of $|X|_{\rat}$ and the structure sheaf functor $\cO_{X}(-): |X|_{\rat}^{\op} \rightarrow \Perfd_K^{\Ban}$. Then, since $\cM(\cO_{X}(U)) \rightarrow X$ is a monomorphism for every $U \in |X|_{\rat}$ it follows from Theorem \ref{perfd_spc_mono} that $\cM(\cO_{X}(U)) \rightarrow X$ is an analytic domain.
	\item For any closed ideal $I \in A$, the induced map $\cM(A/I)_{\arc_\varpi} \rightarrow \cM(A)$ is a monomorphism of affinoid perfectoid spaces by virtue of Propositions \ref{zariski_closed_perfd} and \ref{properties_separated}(1).
\end{enumerate}
\end{example}

\begin{prop}\label{perfd_spc_separated} The category $\PerfdSpc_K^{\sep}$ of separated perfectoid spaces, has the following properties
\begin{enumerate}[(1)]
	\item  The category $\PerfdSpc_{K}^{\sep}$ is closed under fiber products; that is, if we have a pair of morphisms $X \rightarrow Y \leftarrow Z$ of separated perfectoid spaces then the fiber product $X \times_Y Z$, computed in the category $\cX_{\arc_\varpi}$, is a separated perfectoid space.
	\item Let $f: X \rightarrow Y$ be any morphism of separated perfectoid spaces, and $\{\cM(A_i) \hookrightarrow Y\}_{i \in I}$ a collection of monomorphisms from affinoid perfectoid spaces such that the induced map $\sqcup_{i \in I} \cM(A_i) \rightarrow Y$ is an epimorphism. Then, there exists a collection of monomorphisms $\{\cM(B_j) \hookrightarrow X\}_{j \in J}$ from affinoid perfectoid spaces such that $\sqcup_{j \in J} \cM(B_j) \rightarrow X$ is an epimorphism, and such that for each $j \in J$ there exists a $i \in I$ satisfying 
	\begin{align*}
		|f|(*) \Big (|\cM(B_j)|(*) \Big) \subset |\cM(A_i)|(*) \subset |Y|(*)
	\end{align*}
\end{enumerate}
\end{prop}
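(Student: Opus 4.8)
The plan is to prove (1) and (2) in turn, leaning heavily on the two characterization theorems for separated morphisms (Proposition \ref{properties_separated}) and the theory of qcqs subobjects (Proposition \ref{subobjects_arc_topos}).

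For part (1), I would argue as follows. First recall from Proposition \ref{properties_separated}(5) that separated objects of $\cX_{\arc_\varpi}$ are stable under fiber product, so $X \times_Y Z$ is automatically separated once we know it is a perfectoid space. Thus the only work is to exhibit an epimorphism onto $X \times_Y Z$ from a disjoint union of affinoid perfectoid spaces which are monomorphisms into it. Pick covers by monomorphisms $\{\cM(A_i) \hookrightarrow X\}$, $\{\cM(B_j) \hookrightarrow Y\}$, $\{\cM(C_k) \hookrightarrow Z\}$. Restricting the maps $X \to Y$ and $Z \to Y$ along $\cM(B_j) \hookrightarrow Y$ and using separatedness of $Y$ together with Proposition \ref{properties_separated}(3), one learns that $\cM(A_i) \times_Y \cM(C_k)$ is represented by a perfectoid Banach $K$-algebra whenever both map into a common $\cM(B_j)$; and by Proposition \ref{properties_separated}(3) again this fiber product is a closed immersion into $\cM(A_i) \times \cM(C_k) = \cM(A_i \cotimes_K C_k)$, which by Corollary \ref{tensor_ban_perfectoid} is affinoid perfectoid. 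The resulting collection $\{\cM(A_i) \times_Y \cM(C_k) \hookrightarrow X \times_Y Z\}$ consists of monomorphisms (fiber products of monomorphisms over $X\times_Y Z$, using stability of monomorphisms under base change), and I claim the induced map from their disjoint union is an epimorphism: this can be checked on underlying sets by Proposition \ref{epi_arc_topos} after observing $X\times_Y Z$ is quasiseparated, or more directly since each of $\sqcup\cM(A_i)\to X$, $\sqcup\cM(C_k)\to Z$, $\sqcup\cM(B_j)\to Y$ is an epimorphism and epimorphisms are stable under base change and composition in a topos. So $X\times_Y Z$ is a perfectoid space.

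For part (2), this is the substantive statement — a \emph{theory of strong morphisms} in Berkovich's sense — and it is where I expect the main obstacle to lie. The strategy: fix $x \in |X|(*)$ with image $y := |f|(*)(x) \in |Y|(*)$. By hypothesis $y$ lies in $|\cM(A_i)|(*)$ for some $i$. Using the identification of $|Y|$-qcqs-subobjects with compact Hausdorff subspaces of $|Y|$ (Proposition \ref{subobjects_arc_topos}), the set $|\cM(A_i)|(*)$ is a compact, hence in particular \emph{open-in-the-profinite-sense} is too much to hope for — rather I should pull back: $W_i := X \times_Y \cM(A_i)$ is, by part (1)'s argument and Proposition \ref{properties_separated}(3), a separated perfectoid space, affine over $X$ wherever things are affinoid, and $|W_i| \hookrightarrow |X|$ is the qcqs subobject corresponding to $|f|(*)^{-1}(|\cM(A_i)|(*))$. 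The point $x$ lies in $|W_i|(*)$. Now choose a cover $\{\cM(B_j) \hookrightarrow X\}$ by affinoid perfectoid monomorphisms; since $\sqcup|\cM(B_j)|(*) \twoheadrightarrow |X|(*)$, the point $x$ lifts to some $\cM(B_j)$, but there is no reason $|\cM(B_j)|(*) \subset |W_i|(*)$. The fix is to intersect: replace $\cM(B_j)$ by the qcqs subobject $\cM(B_j) \times_X W_i$, which by Proposition \ref{subobjects_arc_topos} corresponds to $|\cM(B_j)|(*) \cap |W_i|(*)$, and which is affinoid perfectoid because $\cM(B_j) \to X$ is a monomorphism (so $\cM(B_j)\times_X W_i = \cM(B_j)\times_X W_i$ is an analytic domain in the affinoid perfectoid $\cM(B_j)$, hence by Theorem \ref{perfd_spc_mono} a monomorphism from an affinoid perfectoid — here I need that analytic domains of affinoid perfectoids are themselves affinoid perfectoid, which follows since any monomorphism $Z \hookrightarrow \cM(B_j)$ with $Z$ qcqs in $\cX_{\arc_\varpi}$ is an affinoid perfectoid by the characterization of monomorphisms together with Proposition \ref{subobjects_arc_topos} and the fact that $\cM(B_j)$ is itself built from affinoid pieces). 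Running over all $j$, the collection $\{\cM(B_j)\times_X W_i \hookrightarrow X\}_{i,j}$ covers $X$ (epimorphism on underlying sets, by Proposition \ref{perfd_spc_maps}(2)), each piece is affinoid perfectoid, and by construction $|f|(*)$ carries $|\cM(B_j)\times_X W_i|(*) \subset |W_i|(*) \subset |\cM(A_i)|(*)$. Then I would appeal to Theorem \ref{perfd_spc_mono}(3): since $\cM(A_i) \hookrightarrow Y$ is an analytic domain and the image of $\cM(B_j)\times_X W_i$ under $f$ lands in $|\cM(A_i)|(*)$, the composite $\cM(B_j)\times_X W_i \to X \to Y$ factors (uniquely) through $\cM(A_i)$, giving exactly the refinement in the statement.

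The main obstacle will be the bookkeeping around part (2): specifically, verifying cleanly that an analytic domain inside an affinoid perfectoid space is again affinoid perfectoid (so that the intersections $\cM(B_j)\times_X W_i$ are of the required form), and that the resulting family is genuinely a cover. The first point is essentially Theorem \ref{perfd_spc_mono} combined with the observation that qcqs subobjects of $\cM(A)$ correspond to compact Hausdorff subspaces of $|\cM(A)|$ (Proposition \ref{subobjects_arc_topos}) together with the existence of the structure presheaf on $|\cM(A)|_{\rat}$ (Theorem \ref{struct_presheaf_perfectoid}) and the approximation lemma — one reduces a general compact Hausdorff subspace to an intersection of rational domains; I should double-check whether full generality is needed or whether it suffices, for Berkovich's net axioms as discussed in Remark \ref{an_spc_atlas_strong_morphism}, to work only with rational domains, which would shorten the argument considerably. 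The second point is routine given Proposition \ref{perfd_spc_maps}(2) and stability of epimorphisms under base change in the topos $\cX_{\arc_\varpi}$.
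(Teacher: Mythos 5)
Part (1) of your proposal is essentially the paper's argument: separatedness of $X \times_Y Z$ comes from Proposition \ref{properties_separated}(5), and the affinoid cover is $\{\cM(A_i) \times_Y \cM(C_k) \hookrightarrow X \times_Y Z\}$, each piece affinoid perfectoid by Proposition \ref{properties_separated}(3) applied to the separated $Y$. Your side condition ``whenever both map into a common $\cM(B_j)$'' is superfluous -- Proposition \ref{properties_separated}(3) needs no such hypothesis, and if you kept that restriction you would have to worry about whether the restricted family still covers; since you also give the direct closed-immersion-into-$\cM(A_i \cotimes_K C_k)$ argument, this is clutter rather than an error.

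In part (2), however, there is a genuine gap at exactly the step you flagged. You justify that $\cM(B_j) \times_X W_i$ is affinoid perfectoid by asserting that ``any monomorphism $Z \hookrightarrow \cM(B_j)$ with $Z$ qcqs in $\cX_{\arc_\varpi}$ is an affinoid perfectoid.'' This is not established anywhere in the paper and you should not expect it: Proposition \ref{subobjects_arc_topos} produces, for each compact Hausdorff subspace of $|\cM(B_j)|$, a qcqs subobject (constructed as the image of a map from a product of residue fields), but gives no representability by a single perfectoid Banach algebra; Theorem \ref{struct_presheaf_perfectoid} and the approximation lemma only handle rational domains, and an arbitrary compact Hausdorff subset is not (in any way used in the paper) an analytic domain of affinoid type. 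The good news is that you do not need this general claim: your specific subobject satisfies $\cM(B_j) \times_X W_i = \cM(B_j) \times_X (X \times_Y \cM(A_i)) \cong \cM(B_j) \times_Y \cM(A_i)$, which is affinoid perfectoid directly by Proposition \ref{properties_separated}(3) because $Y$ is separated; with that substitution your intersection argument goes through (the covering claim follows from stability of epimorphisms under base change and composition, and the image condition from Proposition \ref{berko_funct_stability}(4)). The paper takes an even shorter route that avoids choosing a cover of $X$ at all: by part (1), each $X \times_Y \cM(A_i)$ is itself a separated perfectoid space, so by definition it admits affinoid perfectoid monomorphisms $\{\cM(B_j) \hookrightarrow X \times_Y \cM(A_i)\}_{j \in J_i}$ covering it; composing with the monomorphism $X \times_Y \cM(A_i) \hookrightarrow X$ (base change of $\cM(A_i) \hookrightarrow Y$) and taking the union over $i$ gives the required cover of $X$, with $|f|(*)\big(|\cM(B_j)|(*)\big) \subset |\cM(A_i)|(*)$ by construction and Proposition \ref{berko_funct_stability}(4).
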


\begin{proof} We begin by proving (1). From Proposition \ref{properties_separated}(5) we know that separated objects of $\cX_{\arc_\varpi}$ are stable under fiber product, thus it remains to show that $X \times_Y Z$ is a perfectoid space. Pick a collection of monomorphisms $\{\cM(A_i) \hookrightarrow X\}_{i \in I}$ from affinoid perfectoid spaces such that the induced map $\sqcup_{i \in I} \cM(A_i) \rightarrow X$ is an epimorphism, then since $\cX_{\arc_\varpi}$ is a topos it follows that $\{\cM(A_i)\times_{Y} Z \hookrightarrow X \times_Y Z\}_{i \in I}$ is a collection of monomorphisms which after taking coproducts it induces an epimorphism. By picking a similar collection of maps $\{\cM(B_j) \hookrightarrow Z\}_{j \in J}$, we learn that there is a collection of monomorphisms $\{\cM(A_i) \times_{Y} \cM(B_j) \hookrightarrow X \times_Y Z\}_{(i,j) \in I \times J}$ induce an epimorphism after taking coproducts, and since $Y$ is separated it follows from Proposition \ref{properties_separated}(3) that $\cM(A_i) \times_{Y} \cM(B_j)$ is an affinoid perfectoid space, completing the proof.

For the proof of (2), we follow the notation of the proposition statement. Since separated perfectoid spaces are closed under fiber products, for each $X \times_{Y} \cM(A_i)$ there exists a collection of monomorphisms $\{\cM(B_j) \hookrightarrow X \times_{Y} \cM(A_i)\}_{j \in J_i}$ inducing an epimorphism after taking coproducts, which in turn implies that the totality of this monomorphisms $\{\cM(B_j) \hookrightarrow X\}_{j \in J}$, where $J = \cup_{i \in I} J_i$, induce an epimorphism $\sqcup_{j \in J} \cM(B_j) \rightarrow X$ as desired. The claim that $|f|(*) \Big (|\cM(B_j)|(*) \Big) \subset |\cM(A_i)|(*)$ follows from the construction and Proposition \ref{berko_funct_stability}(4).
\end{proof}

\begin{rem} Let us use the technology developed so far to make contact with Berkovich's theory of $K$-analytic spaces as developed in \cite[Section 1]{berkovichetale}. Let $X$ be a separated perfectoid space, then we learn from Proposition \ref{perfd_spc_separated} that the category $\Sub(X)_{\Perfd}$ (cf. Definition \ref{defn_subobjects}) is closed under fiber products. Thus, it satisfies most of the conditions for a net of compact Hausdorff spaces on a topological space, in the sense of Berkovich\footnote{With the exception that for each $x \in |X|(*)$ there need not exists a finite collection of objects $\{Y_i \hookrightarrow X\}$ in $\Sub(X)_{\Perfd}$ whose union contains a neighborhood of $x$. Later we will introduce the notion of locally compact perfectoid space, which is meant to fill in this gap.}.
	
Furthermore, by Theorem \ref{perfd_spc_mono} we learn that if $|\cM(A_i)|(*) \subset |\cM(A_j)|(*)$ are objects of $\Sub(X)_{\Perfd}$ then there exist a unique monomorphism $\cM(A_i) \rightarrow \cM(A_j)$ which commutes with the map towards $X$, giving rise to something analogous to atlas in the sense of \cite[Definition 1.2.3]{berkovichetale}. We claim that the functor $\Sub(X)_{\Perfd} \rightarrow \cX_{\arc_\varpi}$ defined by $(\cM(A) \hookrightarrow X) \mapsto \cM(A)$ satisfies $\colim_{\Sub(X)_{\Perfd}} \cM(A) = X$. Indeed, let $\Delta^{\op} \rightarrow \cX_{\arc_\varpi}$ be the Cech nerve of the maps $\sqcup_{\Sub(X)_{\Perfd}} \cM(A) \rightarrow X$, by construction we have that $\colim_{\Delta^{\op}} (\sqcup_{\Sub(X)_{\Perfd}} \cM(A_i)^{\bullet/X}) \simeq X$, and since the natural morphism $\Delta^{\op} \rightarrow \cT$ is cofinal, the claim follows.

Finally, let us argue that any morphism $f: X \rightarrow Y$ of separated perfectoid spaces comes from a ``strong morphism'' in the sense of Berkovich \cite[Definition 1.2.7]{berkovichetale}. Indeed, by Proposition \ref{perfd_spc_separated}(2) we know that for each morphism $\cM(A) \hookrightarrow X$ in $\Sub(X)_{\Perfd}$ there exists an $\arc_\varpi$-cover made up of a finite collection of monomorphisms $\{\cM(A_i) \hookrightarrow \cM(A)\}$ such that for each $\cM(A_i)$ there exists a monomorphism $\cM(B_i) \hookrightarrow Y$ in $\Sub(Y)_{\Perfd}$ such that $|f|(*) \Big ( |\cM(A_i)|(*) \Big) \subset |\cM(B_i)|(*)$. By virtue of Theorem \ref{perfd_spc_mono} we know that if $|f|(*) \Big ( |\cM(A_i)|(*) \Big) \subset |\cM(B_i)|(*)$, then there exists an essentially unique morphism $\cM(A_i) \rightarrow \cM(B_i)$ making the following diagram commute
\begin{cd}
	\cM(A_i) \ar[r] \ar[d, hook] & \cM(B_i) \ar[d, hook] \\
	X \ar[r] & Y
\end{cd}
\end{rem}

\subsection{\texorpdfstring{$\arc_\varpi$}{arc pi}-Analytic Spaces}

Fix a perfectoid non-archimedean field $K$.

\begin{defn}\label{defn_analytic_space} An object $X \in \cX_{\arc_\varpi}$ is called an $\arc_\varpi$-analytic space if there exists a collection of monomorphisms $\{\cM(A_i)_{\arc_\varpi} \hookrightarrow X\}_{i \in I}$, such that the induced map $\sqcup_{i \in I} \cM(A_i)_{\arc_\varpi} \rightarrow X$ is an epimorphism and where each $A_i$ is a Banach $K$-algebra. We denote the full-subcategory of $\cX_{\arc_\varpi}$ spanned all $\arc_\varpi$-analytic spaces as $\AnSpc_K$, and we say that a $\arc_\varpi$-analytic space is quasicompact (resp. quasiseparated, separated) if it is so as an object of $\cX_{\arc_\varpi}$ (cf. Definition \ref{defn_qcqs} and \ref{defn_separated}). Similarly, we say that a morphism of $\arc_\varpi$-analytic spaces $X \rightarrow Y$ is an epimorphism (resp. monomorphism, isomorphism, quasicompact, quasiseparated, separated) if it is so as a morphism of $\cX_{\arc_\varpi}$.

In particular, we say that $X$ is an affinoid $\arc_\varpi$-analytic space if there exists a Banach $K$-algebra $A$ such that $X$ is represented by $\cM(A)_{\arc_\varpi}$.
\end{defn}

\begin{thm}\label{analytic_spc_maps} The category $\AnSpc_{K}$ of $\arc_\varpi$-analytic spaces has the following properties
\begin{enumerate}[(1)]
	\item Let $X$ be a $\arc_\varpi$-analytic space, for each $x \in |X|(*)$ there exists a unique monomorphism $\cM(\cH(x))_{\arc_\varpi} \hookrightarrow X$ where $\cH(x)$ is a non-archimedean field, which gets mapped to $x: \pt \rightarrow |X|$ under the Berkovich functor. In particular, it satisfies the condition of Proposition \ref{res_fields_arc_topos}(2), making the map $\cM(\cH(x))_{\arc_\varpi} \hookrightarrow X$ a completed residue field in $\cX_{\arc_\varpi}$.
	\item Let $Y \rightarrow X$ be a morphism of $\arc_\varpi$-analytic spaces, and assume that $Y$ is quasicompact and $X$ is qcqs. Then, $Y \rightarrow X$ is an epimorphism if and only if the induced map $|Y|(*) \rightarrow |X|(*)$ is a surjective map of sets.
	\item Let $Y \rightarrow X$ be a morphism of $\arc_\varpi$-analytic spaces, and assume that $Y$ and $X$ are qcqs. Then, $Y \rightarrow X$ is an isomorphism if and the induced map $|Y|(*) \rightarrow |X|(*)$ is bijective, and for each $y \in |Y|(*) \simeq |X|(*) \ni x$ the induced map of completed residue fields $\cM(\cH(y))_{\arc_\varpi} \rightarrow \cM(\cH(x))_{\arc_\varpi}$ is an $\arc_\varpi$-equivalence (equivalently, an isomorphism).
\end{enumerate}
\end{thm}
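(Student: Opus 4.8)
The strategy is to reduce everything to the already-established results for general quasiseparated (resp.\ qcqs) objects of the $\arc_\varpi$-topos---namely Propositions \ref{res_fields_arc_topos}, \ref{epi_arc_topos}, \ref{iso_arc_topos}---together with the identification of completed residue fields of affinoid $\arc_\varpi$-analytic spaces provided by Proposition \ref{res_fields_arc_topos}(4). The point is that an $\arc_\varpi$-analytic space is by definition covered by affinoid $\arc_\varpi$-analytic spaces $\cM(A_i)_{\arc_\varpi}$ via monomorphisms, so its completed residue fields, being qcqs subobjects lying over a single point, must be pulled back from the completed residue fields of the covering pieces.

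\textbf{Step 1: Completed residue fields (part (1)).} Fix an $\arc_\varpi$-analytic space $X$ and a collection of monomorphisms $\{\cM(A_i)_{\arc_\varpi} \hookrightarrow X\}_{i\in I}$ whose coproduct maps epimorphically onto $X$. Applying the Berkovich functor and using that it preserves epimorphisms (Proposition \ref{berko_funct_stability}(1)), the map $\sqcup_{i} |\cM(A_i)_{\arc_\varpi}|(*) \to |X|(*)$ is surjective, so for a given $x \in |X|(*)$ there is an $i$ and a point $\tilde x \in |\cM(A_i)_{\arc_\varpi}|(*) = |\cM(A_i)|(*)$ mapping to $x$. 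By Proposition \ref{res_fields_arc_topos}(4) the completed residue field of $\cM(A_i)_{\arc_\varpi}$ at $\tilde x$ is the monomorphism $\cM(\cH(\tilde x))_{\arc_\varpi}\hookrightarrow \cM(A_i)_{\arc_\varpi}$, where $\cH(\tilde x)$ is the Berkovich completed residue field of the Banach algebra $A_i$ at $\tilde x$, which is a non-archimedean field by Definition \ref{defn_completed_residue_field}. Composing with the monomorphism $\cM(A_i)_{\arc_\varpi}\hookrightarrow X$ gives a monomorphism $\cM(\cH(\tilde x))_{\arc_\varpi}\hookrightarrow X$ from the affinoid $\arc_\varpi$-analytic space associated to a non-archimedean field, which gets sent to $x\colon \pt\to |X|$ under the Berkovich functor (since $|\cM(\cH(\tilde x))_{\arc_\varpi}| = \pt$ and the composite monomorphism is in particular an epimorphism onto its image). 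Uniqueness, and the fact that this is the completed residue field $X_x$ in the sense of Proposition \ref{res_fields_arc_topos}(2), follows from the uniqueness clause in Proposition \ref{res_fields_arc_topos}(2) applied to the quasiseparated object $X$ (every $\arc_\varpi$-analytic space is quasiseparated once one checks this, but in fact we only need $X$ quasiseparated to invoke that proposition---if $X$ is not quasiseparated one restricts to a qcqs subobject containing $x$, which exists by Proposition \ref{subobjects_arc_topos} after checking $|X|$ is quasiseparated; alternatively one simply adds a quasiseparatedness hypothesis as is done implicitly). I will set $\cH(x) := \cH(\tilde x)$, noting it is independent of choices by the uniqueness.

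\textbf{Step 2: Epimorphisms (part (2)) and isomorphisms (part (3)).} Part (2) is immediate: an $\arc_\varpi$-analytic space is in particular an object of $\cX_{\arc_\varpi}$, so if $Y$ is quasicompact and $X$ is qcqs then the equivalence ``epimorphism $\iff$ surjective on underlying sets of the Berkovich spectra'' is exactly Proposition \ref{epi_arc_topos} (conditions (1) and (3) there). For part (3), one direction is trivial. For the converse, assume $|Y|(*)\to |X|(*)$ is bijective and that for each $y\in|Y|(*)\simeq|X|(*)\ni x$ the induced map $\cM(\cH(y))_{\arc_\varpi}\to\cM(\cH(x))_{\arc_\varpi}$ of completed residue fields is an $\arc_\varpi$-equivalence. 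By part (1), these are precisely the completed residue fields $Y_y$ and $X_x$ in the sense of Proposition \ref{res_fields_arc_topos}, so condition (3) of Proposition \ref{iso_arc_topos} is satisfied verbatim, and $Y\to X$ is an isomorphism; moreover for affinoid $\arc_\varpi$-analytic spaces coming from non-archimedean fields, being an $\arc_\varpi$-equivalence is the same as being an isomorphism by Proposition \ref{iso_arc_topos} again (the parenthetical ``equivalently, an isomorphism'').

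\textbf{Main obstacle.} The only subtlety is ensuring that $X$ (and $Y$) satisfy the quasiseparatedness hypotheses needed to apply Propositions \ref{res_fields_arc_topos} and \ref{iso_arc_topos}; the cleanest route is to first record, as a preliminary lemma or remark, that every $\arc_\varpi$-analytic space covered by affinoids $\cM(A_i)_{\arc_\varpi}$ via monomorphisms is quasiseparated---this follows from Proposition \ref{saturate_qs} together with the fact that each $\cM(A_i)_{\arc_\varpi}$ is qcqs (Proposition \ref{basic_arc_pi_properties}) and the observation that finite intersections $\cM(A_i)_{\arc_\varpi}\times_X\cM(A_j)_{\arc_\varpi}$ are qcqs, which in turn uses that affinoid $\arc_\varpi$-analytic spaces are separated (Example \ref{separated_banach}) hence the relevant fiber products are again affinoid $\arc_\varpi$-analytic spaces by Proposition \ref{properties_separated}(3). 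Once this is in place, parts (1)--(3) are formal consequences of the topos-theoretic results in Section \ref{sect_berko_funct}.
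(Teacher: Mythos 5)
Your main line of argument is essentially the paper's: for (1) the paper likewise lifts $x$ along the surjection $\sqcup_i |\cM(A_i)_{\arc_\varpi}|(*) \to |X|(*)$, takes the Berkovich completed residue field $\cH(\tilde x)$ of $A_i$ at the lift, observes that $\cM(\cH(\tilde x))_{\arc_\varpi} \to \cM(A_i)_{\arc_\varpi}$ is a monomorphism (via Example \ref{banach_yoneda_arc_pi} and Proposition \ref{properties_stalk}, which is exactly the content of Proposition \ref{res_fields_arc_topos}(4) that you cite instead), composes with $\cM(A_i)_{\arc_\varpi}\hookrightarrow X$, and gets uniqueness from Proposition \ref{res_fields_arc_topos}(2); parts (2) and (3) are, as in your Step 2, direct applications of Propositions \ref{epi_arc_topos} and \ref{iso_arc_topos} together with part (1).

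The one piece that does not work is the auxiliary lemma sketched in your ``main obstacle'' paragraph. Proposition \ref{properties_separated}(3) places its separatedness hypothesis on the \emph{ambient} space (the target $Y$ there), not on the affinoid pieces mapping into it, so separatedness of each $\cM(A_i)_{\arc_\varpi}$ (Example \ref{separated_banach}) tells you nothing about the fiber products $\cM(A_i)_{\arc_\varpi} \times_X \cM(A_j)_{\arc_\varpi}$. Without a hypothesis on $X$, all you can say is that such a fiber product is a subobject of the qcqs object $\cM(A_i)_{\arc_\varpi} \times \cM(A_j)_{\arc_\varpi}$, hence quasiseparated but not necessarily quasicompact; quasicompactness of these pairwise intersections is precisely the quasiseparatedness of $X$ you are trying to establish, and it does not follow from Definition \ref{defn_analytic_space} alone (just as a scheme admitting an affine open cover need not be quasiseparated). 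So that preliminary lemma should be dropped; your stated fallback is what the paper itself does, namely invoke Proposition \ref{res_fields_arc_topos} directly (its quasiseparatedness hypothesis is left implicit in the paper's own proof as well), and with that the rest of your argument matches the paper's.
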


\begin{proof} In order to prove (1) fix a collection of monomorphism $\{\cM(A_i)_{\arc_\varpi} \hookrightarrow X \}_{i \in I}$, sucht hat the induced map $\sqcup_{i \in I} \cM(A_i)_{\arc_\varpi} \rightarrow X$ is an epimorphism, and where each $A_i$ is a Banach $K$-algebra. Then, the induced map $\sqcup |\cM(A_i)_{\arc_\varpi}|(*) \rightarrow |X|(*)$ is surjective, and let $\tilde{x} \in \sqcup |\cM(A_i)|(*)$ be a lift of $x \in |X|(*)$ along the map $\sqcup |\cM(A_i)_{\arc_\varpi}|(*) \rightarrow |X|(*)$. By construction, there exists some $i \in I$ such that $\tilde{x} \in |\cM(A_i)_{\arc_\varpi}|(*)$, and since $|\cM(A_i)| = |\cM(A_i)_{\arc_\varpi}$ we can find a map $\cM(\cH(x)) \rightarrow \cM(A_i)$ from the completed residue field $\cH(x)$ of $A_i$ at $\tilde{x}$ (cf. Definition \ref{defn_completed_residue_field}), in the category $\Ban_K^{\op}$. Then, by a combination of Example \ref{banach_yoneda_arc_pi} and Proposition \ref{properties_stalk} we learn that the induced map $\cM(\cH(x))_{\arc_\varpi} \rightarrow \cM(A_i)_{\arc_\varpi}$ is a monomorphism. This in turn implies that the composition $\cM(\cH(x))_{\arc_\varpi} \rightarrow X$ is a monomorphism which gets mapped to $x: \pt \rightarrow X$ under the Berkovich functor. Uniqueness follows from Proposition \ref{res_fields_arc_topos}(2).

Statement (2) is a direct consequence of Proposition \ref{epi_arc_topos}, and statement (3) is a combination of part (1) and Proposition \ref{iso_arc_topos}.
\end{proof}

\begin{thm}\label{analytic_spc_mono} Let $Y \rightarrow X$ be a morphism of $\arc_\varpi$-analytic spaces, and assume that $Y$ is qcqs and $X$ is quasiseparated. Then, the following are equivalent
\begin{enumerate}[(1)]
	\item The morphism $Y \hookrightarrow X$ is a monomorphism.
	\item The induced map $|Y|(*) \rightarrow |X|(*)$ is an injective map of sets, and for each $y \in |Y|(*) \simeq \im(|Y|(*) \rightarrow |X|(*)) \ni x$ the induced map $\cM(\cH(y))_{\arc_\varpi} \rightarrow \cM(\cH(x))_{\arc_\varpi}$ of completed residue fields is an $\arc_\varpi$-equivalence (equivalently, an isomorphism).
	\item The morphism $Y \rightarrow X$ is an analytic domain: for any object $Z \in \cX_{\arc_\varpi}$ and any morphism $Z \rightarrow X$ satisfying $\im(|Z|(*) \rightarrow |X|(*)) \subset \im(|Y|(*) \rightarrow |X|(*))$, there exists a unique morphism $Z \rightarrow Y$ making the following diagram commute
	\begin{cd}
		& Z \ar[ld, dashed] \ar[d] \\
		Y \ar[r] & X
	\end{cd}
\end{enumerate}
\end{thm}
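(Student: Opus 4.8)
The statement is the exact analogue of Theorem \ref{perfd_spc_mono} (monomorphisms of perfectoid spaces) and Proposition \ref{mono_arc_topos} (monomorphisms in $\cX_{\arc_\varpi}$), now phrased for $\arc_\varpi$-analytic spaces. The strategy is to reduce everything to Proposition \ref{mono_arc_topos}, using Theorem \ref{analytic_spc_maps}(1) to identify the completed residue fields $Y_y$ and $X_x$ appearing there with the $\cM(\cH(y))_{\arc_\varpi}$ and $\cM(\cH(x))_{\arc_\varpi}$ that appear in the present statement. Since an $\arc_\varpi$-analytic space is in particular an object of $\cX_{\arc_\varpi}$, and $Y$ is qcqs while $X$ is quasiseparated, the hypotheses of Proposition \ref{mono_arc_topos} are met verbatim; the only work is bookkeeping.

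\textbf{Step 1: identify the completed residue fields.} First I would invoke Theorem \ref{analytic_spc_maps}(1): for each $x \in |X|(*)$ the completed residue field $X_x \hookrightarrow X$ of $X$ at $x$ (in the sense of Proposition \ref{res_fields_arc_topos}(2)) is given by the monomorphism $\cM(\cH(x))_{\arc_\varpi} \hookrightarrow X$, and similarly $Y_y \simeq \cM(\cH(y))_{\arc_\varpi}$ for $y \in |Y|(*)$. With this identification, the notion of "$\arc_\varpi$-equivalence of completed residue fields" in condition (2) of the present statement is literally the condition on $Y_y \to X_x$ in condition (2) of Proposition \ref{mono_arc_topos}. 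I should also note, as remarked there and in Proposition \ref{iso_arc_topos}, that for qcqs objects an $\arc_\varpi$-equivalence is the same as an isomorphism, which justifies the parenthetical "(equivalently, an isomorphism)".

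\textbf{Step 2: transport the three equivalences.} With Step 1 in hand, condition (1) here equals condition (1) of Proposition \ref{mono_arc_topos}; condition (2) here equals condition (2) there after substituting $Y_y = \cM(\cH(y))_{\arc_\varpi}$, $X_x = \cM(\cH(x))_{\arc_\varpi}$; and condition (3) here is verbatim condition (3) there. Thus $(1) \Leftrightarrow (2) \Leftrightarrow (3)$ follows immediately by applying Proposition \ref{mono_arc_topos} to the morphism $Y \to X$ regarded in $\cX_{\arc_\varpi}$. I would write this out as a short direct deduction rather than re-proving the equivalences from scratch.

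\textbf{Main obstacle.} There is essentially no hard step; the only subtlety worth spelling out is the verification in Step 1 that the abstractly-defined completed residue field $X_x$ from Proposition \ref{res_fields_arc_topos}(2) really is represented by $\cM(\cH(x))_{\arc_\varpi}$ — but this is precisely the content of Theorem \ref{analytic_spc_maps}(1) (which in turn rests on Proposition \ref{res_fields_arc_topos}(4), identifying $F_x$ with $\cM(\cH(x))_{\arc_\varpi}$ when $F = \cM(A)_{\arc_\varpi}$, together with the uniqueness clause of Proposition \ref{res_fields_arc_topos}(2)). Once one grants Theorem \ref{analytic_spc_maps}(1), the present theorem is a formal corollary of Proposition \ref{mono_arc_topos}, and I would present it as such, perhaps adding the concrete description of the lift $Z \to Y$ in the affinoid case $Y = \cM(R)_{\arc_\varpi}$, $X = \cM(S)_{\arc_\varpi}$ via $T \cotimes_S R$ as was done in Theorem \ref{perfd_spc_mono}, to make the "analytic domain" characterization transparent to the reader.
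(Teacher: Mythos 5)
Your proposal is correct and follows exactly the paper's own argument: the paper proves Theorem \ref{analytic_spc_mono} by citing Proposition \ref{mono_arc_topos} together with Theorem \ref{analytic_spc_maps}(1) for the identification of completed residue fields, which is precisely your Step 1 and Step 2. The only difference is that you spell out the bookkeeping in more detail than the paper's one-sentence proof.
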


\begin{proof} This is a direct consequence of Proposition \ref{mono_arc_topos}, and Theorem \ref{analytic_spc_maps}(1) for the identification of completed residue fields.
\end{proof}

\begin{example} Let $A$ be a Banach $K$-algebra, and $X = \cM(A)_{\arc_\varpi}$ its associated affinoid $\arc_\varpi$-analytic space. Then, the following are examples of analytic domains
\begin{enumerate}[(1)]
	\item For each $x \in |X|(*)$ there exists a non-archimedean field $\cH(x)$ (cf. Definition \ref{defn_completed_residue_field})
	together with a morphism $\cM(\cH(x))_{\arc_\varpi} \rightarrow X$. By Theorems \ref{analytic_spc_maps} and \ref{analytic_spc_mono} we learn that the induced map $\cM(\cH(x))_{\arc_\varpi} \rightarrow X$ is an analytic domain.
	\item For any subset $\cM(A) \supset V := \{ x \in \cM(A) \text{ such that } |f_i(x)| \le |g(x)| \}$, where $\{g, f_1, \dots, f_n\} \subset A$ generate the unit ideal, there exists a Banach $K$-algebra $B$ and a monomorphism in $\cM(B) \rightarrow \cM(A)$ in $\Ban_K^{\op, \contr}$ with image exactly $V \subset \cM(A)$ (cf. Proposition \ref{rat_domains_are_mono} and Corollary \ref{topo_img_rat_domain}). Then, since the functor $\Yo_{\arc_\varpi}: \Ban_K^{\contr, \op} \rightarrow \cX_{\arc_\varpi}$ preserves monomorphisms and associated compact hausdorff space it follows that $\cM(B)_{\arc_\varpi} \rightarrow \cM(A)_{\arc_\varpi}$ is a monomorphism in $\cX_{\arc_\varpi}$ with image exactly $V \subset \cM(A)_{\arc_\varpi} = \cM(A)$.
	\item For any closed ideal $I \in A$, the induced map $\cM(A/I)_{\arc_\varpi} \rightarrow \cM(A)_{\arc_\varpi}$ is a monomorphism of affinoid $\arc_\varpi$-analytic spaces by virtue of Example \ref{closed_immersions_banach} and Proposition \ref{properties_separated}(1).
\end{enumerate}
\end{example}

\begin{prop}\label{an_spc_separated} The category $\AnSpc_K^{\sep}$ of separated $\arc_\varpi$-analytic spaces, has the following properties
\begin{enumerate}[(1)]
	\item  The category $\AnSpc_K^{\sep}$ is closed under fiber products; that is, if we have a pair of morphisms $X \rightarrow Y \leftarrow Z$ of separated $\arc_\varpi$-analytic spaces then the fiber product $X \times_Y Z$, computed in the category $\cX_{\arc_\varpi}$, is a separated $\arc_\varpi$-analytic space.
	\item Let $f: X \rightarrow Y$ be any morphism of separated $\arc_\varpi$-analytic spaces, and $\{\cM(A_i)_{\arc_\varpi} \hookrightarrow Y\}_{i \in I}$ a collection of monomorphisms from affinoid $\arc_\varpi$-analytic spaces such that the induced map $\sqcup_{i \in I} \cM(A_i)_{\arc_\varpi} \rightarrow Y$ is an epimorphism. Then, there exists a collection of monomorphisms $\{\cM(B_j)_{\arc_\varpi} \hookrightarrow X\}_{j \in J}$ from affinoid $\arc_\varpi$-analytic spaces such that $\sqcup_{j \in J} \cM(B_j)_{\arc_\varpi} \rightarrow X$ is an epimorphism, and such that for each $j \in J$ there exists a $i \in I$ satisfying 
	\begin{align*}
		|f|(*) \Big (|\cM(B_j)_{\arc_\varpi}|(*) \Big) \subset |\cM(A_i)_{\arc_\varpi}|(*) \subset |Y|(*)
	\end{align*}
\end{enumerate}
\end{prop}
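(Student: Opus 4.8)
The strategy is to deduce both statements from the corresponding facts about separated perfectoid spaces (Proposition \ref{perfd_spc_separated}), using only that affinoid $\arc_\varpi$-analytic spaces are separated (Example \ref{separated_banach}) and that separated objects of $\cX_{\arc_\varpi}$ are closed under fiber products and enjoy the base-change stability established in Proposition \ref{properties_separated}. The two claims parallel the perfectoid case exactly, so the proof is a matter of transcribing the perfectoid argument with $\cM(A)$ replaced by $\cM(A)_{\arc_\varpi}$ and invoking Proposition \ref{properties_separated}(3) in place of the corresponding statement for perfectoids.

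\textbf{Part (1).} First I would recall from Proposition \ref{properties_separated}(5) that separated objects of $\cX_{\arc_\varpi}$ are stable under fiber products, so it only remains to check that $X \times_Y Z$ is an $\arc_\varpi$-analytic space, i.e.\ admits a cover by affinoid $\arc_\varpi$-analytic spaces via monomorphisms. Pick collections of monomorphisms $\{\cM(A_i)_{\arc_\varpi} \hookrightarrow X\}_{i \in I}$ and $\{\cM(B_j)_{\arc_\varpi} \hookrightarrow Z\}_{j \in J}$ from affinoid $\arc_\varpi$-analytic spaces inducing epimorphisms after taking coproducts. Since $\cX_{\arc_\varpi}$ is a topos, base change preserves epimorphisms and monomorphisms, so $\{\cM(A_i)_{\arc_\varpi} \times_Y \cM(B_j)_{\arc_\varpi} \hookrightarrow X \times_Y Z\}_{(i,j) \in I \times J}$ is a collection of monomorphisms whose coproduct maps onto $X \times_Y Z$. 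Because $Y$ is separated, Proposition \ref{properties_separated}(3) shows that $\cM(A_i)_{\arc_\varpi} \times_Y \cM(B_j)_{\arc_\varpi}$ is represented by a Banach $K$-algebra, hence is an affinoid $\arc_\varpi$-analytic space. This exhibits $X \times_Y Z$ as an $\arc_\varpi$-analytic space.

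\textbf{Part (2).} By part (1) the category $\AnSpc_K^{\sep}$ is closed under fiber products, so for each $i \in I$ the object $X \times_Y \cM(A_i)_{\arc_\varpi}$ is a separated $\arc_\varpi$-analytic space; choose a collection of monomorphisms $\{\cM(B_j)_{\arc_\varpi} \hookrightarrow X \times_Y \cM(A_i)_{\arc_\varpi}\}_{j \in J_i}$ from affinoid $\arc_\varpi$-analytic spaces inducing an epimorphism after taking coproducts. Composing with the monomorphism $X \times_Y \cM(A_i)_{\arc_\varpi} \hookrightarrow X$ (a monomorphism since $\cM(A_i)_{\arc_\varpi} \hookrightarrow Y$ is and monomorphisms are stable under base change), the totality $\{\cM(B_j)_{\arc_\varpi} \hookrightarrow X\}_{j \in J}$ with $J = \bigcup_{i} J_i$ gives monomorphisms whose coproduct maps onto $X$, using that $\sqcup_{i} \cM(A_i)_{\arc_\varpi} \to Y$ is an epimorphism and that epimorphisms compose and are stable under base change in a topos. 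Finally the containment $|f|(*)\big(|\cM(B_j)_{\arc_\varpi}|(*)\big) \subset |\cM(A_i)_{\arc_\varpi}|(*)$ follows because $\cM(B_j)_{\arc_\varpi} \to X$ factors through $X \times_Y \cM(A_i)_{\arc_\varpi}$ and then the Berkovich functor sends fiber products to intersections on underlying sets by Proposition \ref{berko_funct_stability}(4). The only mild subtlety — and the point worth double-checking — is the systematic use of Proposition \ref{properties_separated}(3) to guarantee that fiber products of affinoid $\arc_\varpi$-analytic spaces over a separated base stay affinoid; everything else is formal topos-theoretic bookkeeping, essentially identical to the proof of Proposition \ref{perfd_spc_separated}.
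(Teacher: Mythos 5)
Your proof is correct and matches the paper's argument almost word for word: in part (1) you reduce to showing the fiber product is $\arc_\varpi$-analytic via Proposition \ref{properties_separated}(5), cover $X$ and $Z$ by affinoids, and invoke Proposition \ref{properties_separated}(3) to see that the resulting fiber products over the separated base $Y$ stay affinoid; in part (2) you base-change to $X \times_Y \cM(A_i)_{\arc_\varpi}$, cover by affinoids via part (1), and read off the containment from Proposition \ref{berko_funct_stability}(4). The small extra remarks you add (why $X \times_Y \cM(A_i)_{\arc_\varpi} \hookrightarrow X$ is a monomorphism, why the totality of $\cM(B_j)_{\arc_\varpi}$ covers $X$) are implicit in the paper and confirm the same reasoning.
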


\begin{proof} We begin by proving (1). From Proposition \ref{properties_separated}(5) we know that separated objects of $\cX_{\arc_\varpi}$ are stable under fiber product, thus it remains to show that $X \times_Y Z$ is an $\arc_\varpi$-analytic space. Pick a collection of monomorphisms $\{ \cM(A_i)_{\arc_\varpi} \hookrightarrow X\}_{i \in I}$ from affinoid $\arc_\varpi$-analytic spaces such that their coproduct induces an epimorphism, then since $\cX_{\arc_\varpi}$ is a topos it follows that $\{\cM(A)_{\arc_\varpi} \times_Y Z \hookrightarrow X \times_Y Z\}_{i \in I}$ is a collection of monomorphism which induce an epimorphism after taking coproducts. By picking a similar collection of monomorphisms $\{\cM(B_j)_{\arc_\varpi} \hookrightarrow Z\}_{j \in J}$ learn that there is a collection of monomorphisms $\{\cM(A_i)_{\arc_\varpi} \times_Y \cM(B_j)_{\arc_\varpi} \hookrightarrow X \times_Y Z\}_{(i,j) \in I \times J}$ which induce an epimorphism after taking coproducts, and since $Y$ is separated it follows that $\cM(A_i)_{\arc_\varpi} \times_Y \cM(B_j)_{\arc_\varpi}$ is an affinoid $\arc_\varpi$-analytic space, completing the proof of (1).
	
For the proof of (2) we follow the notation from the proposition statement. Since separated $\arc_\varpi$-analytic spaces are closed under fiber products, for each $X \times_Y \cM(A_i)_{\arc_\varpi}$ there exists a collection of monomorphisms $\{\cM(B_j)_{\arc_\varpi} \hookrightarrow X \times_Y \cM(A_i)_{\arc_\varpi} \}_{j \in J_i}$, inducing an epimorphism after taking coproducts, which in turn implies that the totality of this monomorphisms $\{\cM(B_j) \hookrightarrow X\}_{j \in J}$, where $J = \cup_{i \in I} J_i$, induce an epimorphism $\sqcup_{j \in J} \cM(B_j)_{\arc_\varpi} \rightarrow X$ as desired. The claim that $|f|(*) \Big (|\cM(B_j)_{\arc_\varpi}|(*) \Big) \subset |\cM(A_i)_{\arc_\varpi}|(*)$ follows from the construction and Proposition \ref{berko_funct_stability}(4).
\end{proof}

\begin{rem}\label{an_spc_atlas_strong_morphism} Let us use the technology developed so far to make contact with Berkovich's theory of $K$-analytic spaces as developed in \cite[Section 1]{berkovichetale}. Let $X$ be a separated $\arc_\varpi$-analytic space, then we learn from Proposition \ref{an_spc_separated} that the category $\Sub(X)_{\Aff}$ (cf. Definition \ref{defn_subobjects}) is closed under fiber products. Thus, it satisfies most of the conditions for a net of compact Hausdorff spaces on a topological space, in the sense of Berkovich\footnote{With the exception that for each $x \in |X|(*)$ there need not exists a finite collection of objects $\{Y_i \hookrightarrow X\}$ in $\Sub(X)_{\Aff}$ whose union contains a neighborhood of $x$. Later we will introduce the notion of locally compact perfectoid space, which is meant to fill in this gap.}.
	
Furthermore, by Theorem \ref{analytic_spc_mono} we learn that if $|\cM(A_i)_{\arc_\varpi}|(*) \subset |\cM(A_j)_{\arc_\varpi}|(*)$ are objects of $\Sub(X)_{\Aff}$ then there exist a unique monomorphism $\cM(A_i)_{\arc_\varpi} \rightarrow \cM(A_j)_{\arc_\varpi}$ which commutes with the map towards $X$, giving rise to something analogous to atlas in the sense of \cite[Definition 1.2.3]{berkovichetale}. We claim that the functor $\Sub(X)_{\Aff} \rightarrow \cX_{\arc_\varpi}$ defined by $(\cM(A)_{\arc_\varpi} \hookrightarrow X) \mapsto \cM(A)_{\arc_\varpi}$ satisfies $\colim_{\Sub(X)_{\Aff}} \cM(A)_{\arc_\varpi} = X$. Indeed, let $\Delta^{\op} \rightarrow \cX_{\arc_\varpi}$ be the Cech nerve of the maps $\sqcup_{\Sub(X)_{\Aff}} \cM(A)_{\arc_\varpi} \rightarrow X$, by construction we have that $\colim_{\Delta^{\op}} (\sqcup_{\Sub(X)_{\Aff}} \cM(A_i)_{\arc_\varpi}^{\bullet/X}) \simeq X$, and since the natural morphism $\Delta^{\op} \rightarrow \cT$ is cofinal, the claim follows.

Finally, let us argue that any morphism $f: X \rightarrow Y$ of separated $\arc_\varpi$-analytic spaces comes from a ``strong morphism'' in the sense of Berkovich \cite[Definition 1.2.7]{berkovichetale}. Indeed, by Proposition \ref{an_spc_separated}(2) we know that for each morphism $\cM(A)_{\arc_\varpi} \hookrightarrow X$ in $\Sub(X)_{\Aff}$ there exists an $\arc_\varpi$-cover made up of a finite collection of monomorphisms $\{\cM(A_i)_{\arc_\varpi} \hookrightarrow \cM(A)_{\arc_\varpi}\}$ such that for each $\cM(A_i)_{\arc_\varpi}$ there exists a monomorphism $\cM(B_i)_{\arc_\varpi} \hookrightarrow Y$ in $\Sub(Y)_{\Aff}$ such that $|f|(*) \Big ( |\cM(A_i)_{\arc_\varpi}|(*) \Big) \subset |\cM(B_i)_{\arc_\varpi}|(*)$. By virtue of Theorem \ref{analytic_spc_mono} we know that if $|f|(*) \Big ( |\cM(A_i)_{\arc_\varpi}|(*) \Big) \subset |\cM(B_i)_{\arc_\varpi}|(*)$, then there exists an essentially unique morphism $\cM(A_i)_{\arc_\varpi} \rightarrow \cM(B_i)_{\arc_\varpi}$ making the following diagram commute
\begin{cd}
	\cM(A_i)_{\arc_\varpi} \ar[r] \ar[d, hook] & \cM(B_i)_{\arc_\varpi} \ar[d, hook] \\
	X \ar[r] & Y
\end{cd}
\end{rem}

\begin{const}[The Generic Fiber Functor]\label{const_generic_fiber_funct} Recall from Construction \ref{const_arc_pi_topos} that the $\arc_\varpi$-topos is defined as $\cX_{\arc_\varpi} := \Shv_{\arc_\varpi}(\Sch_{K_{\le 1}, \qcqs})$, thus we immediately get a sheafified Yoneda functor
\begin{align*}
	\Yo_{\arc_\varpi}: \Sch_{K_{\le 1}, \qcqs} \longrightarrow \cX_{\arc_\varpi} && X \mapsto X_{\eta, \arc_\varpi}
\end{align*}
which we call the generic fiber functor. In order to justify its name, let us compute provide an explicit description of $X_{\eta, \arc_\varpi}$. In the case where $X = \Spec(A)$ for some $K_{\le 1}$-algebra $A$, from Proposition \ref{iso_arc_topos} we know that $\Yo_{\arc_\varpi}$ sends $\arc_\varpi$-equivalences to isomorphism, and therefore the canonical map $\Spec(A^{\wedge a \tf}) \rightarrow \Spec(A)$ becomes an isomorphism under the generic fiber functor. Furthermore, by realizing $\cX_{\arc_\varpi}$ as $\Shv_{\arc_\varpi}(\Ban_K^{\contr})$ we obtain a canonical isomorphism
\begin{align*}
	\cM \Big(A^{\wedge a \tf} \Big[\frac{1}{\varpi} \Big] \Big)_{\arc_\varpi} \simeq \Spec(A^{\wedge a \tf})_{\eta, \arc_\varpi} \overset{\simeq}{\longrightarrow} \Spec(A)_{\eta, \arc_\varpi}
\end{align*}
providing an explicit description of $\Spec(A)_{\eta, \arc_\varpi}$ and its associated compact Hausdorff space $|\Spec(A)_{\eta, \arc_\varpi}|$. In the case where $X$ is a general object of $\Sch_{K_{\le 1}, \qcqs}$, recall that we can always present $X$ as a coequalizer diagram of affine schemes, where all transition maps are open immersions
\begin{align*}
	\coeq \Big(\sqcup_{j \in I} \Spec(B_j) \rightrightarrows \sqcup_{i \in I} \Spec(A_i) \Big) \overset{\simeq}{\longrightarrow} X
\end{align*}
where the index sets $I$ and $J$ are both finite. Then, since faithfully flat maps are $\arc_\varpi$-covers (Lemma \ref{faithfully_flat_implies_arc_cover}) it follows that $\Yo_{\arc_\varpi}$ preserves the above coequalizer giving us an isomorphism
\begin{align*}
	\coeq \Big(\sqcup_{j \in I} \Spec(B_j)_{\eta, \arc_\varpi} \rightrightarrows \sqcup_{i \in I} \Spec(A_i)_{\eta, \arc_\varpi} \Big) \overset{\simeq}{\longrightarrow} X_{\eta, \arc_\varpi}
\end{align*}
in the category $\cX_{\arc_\varpi}$.
\end{const}

\begin{prop}\label{properties_gen_fib_funct} The generic fiber functor $\Yo_{\arc_\varpi}: \Sch_{K_{\le 1}, \qcqs} \rightarrow \cX_{\arc_\varpi}$ satisfies the following properties
\begin{enumerate}[(1)]
	\item If $X$ is qcqs $K_{\le 1}$-scheme, then $X_{\eta, \arc_\varpi}$ is a qcqs $\arc_\varpi$-analytic space.
	\item If $Y \rightarrow X$ is a closed immersion of qcqs $K_{\le 1}$-schemes, then $Y_{\eta, \arc_\varpi} \rightarrow X_{\eta, \arc_\varpi}$ is a closed immersion in $\cX_{\arc_\varpi}$ (cf. Definition \ref{defn_separated}).
	\item If $X$ is a quasicompact separated $K_{\le 1}$-scheme, then $X_{\eta, \arc_\varpi}$ is a quasicompact separated $\arc_\varpi$-analytic space.
\end{enumerate} 
\end{prop}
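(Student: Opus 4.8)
The three statements should be proved by reducing to the affine case and then invoking the dictionary from Chapter~\ref{chapt_comm_alg}. For part (1), the plan is to use the coequalizer presentation of $X_{\eta, \arc_\varpi}$ from Construction~\ref{const_generic_fiber_funct}: since $X$ is qcqs we may write it as a coequalizer of a finite diagram of affines $\Spec(A_i)$ along open immersions $\Spec(B_j)$. By the explicit description $\Spec(A_i)_{\eta, \arc_\varpi} \simeq \cM(A_i^{\wedge a \tf}[\tfrac{1}{\varpi}])_{\arc_\varpi}$, each term is an affinoid $\arc_\varpi$-analytic space, hence qcqs as an object of $\cX_{\arc_\varpi}$ by Proposition~\ref{basic_arc_pi_properties}. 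One then needs that $X_{\eta, \arc_\varpi}$, realized as a finite colimit of qcqs objects along the diagram, inherits the structure of an $\arc_\varpi$-analytic space (the $\cM(A_i^{\wedge a \tf}[\tfrac{1}{\varpi}])_{\arc_\varpi}$ give the required atlas) and is itself qcqs. Quasicompactness follows from Lemma~\ref{alt_defn_qc} applied to the epimorphism $\sqcup_i \cM(A_i^{\wedge a \tf}[\tfrac 1\varpi])_{\arc_\varpi} \twoheadrightarrow X_{\eta,\arc_\varpi}$; quasiseparatedness will require knowing that the overlaps $\Spec(B_j)_{\eta,\arc_\varpi}$ are qcqs and that the maps between the affinoid charts are quasicompact, which should follow from Proposition~\ref{prop_qcqs_obj}.

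For part (2), the approach is to reduce to the affine case by base change: given a closed immersion $Y \hookrightarrow X$ of qcqs $K_{\le 1}$-schemes, and any morphism $\cM(A)_{\arc_\varpi} \rightarrow X_{\eta,\arc_\varpi}$ with $A$ a Banach $K$-algebra, I would cover $X$ by affines $\Spec(R_\alpha)$ and pull back, reducing to showing: if $R \twoheadrightarrow R/I$ is a surjection of $K_{\le 1}$-algebras, then $\Spec(R/I)_{\eta,\arc_\varpi} \rightarrow \Spec(R)_{\eta,\arc_\varpi}$ is a closed immersion in the sense of Definition~\ref{defn_separated}. Unwinding, this amounts to checking that for any perfectoid Banach $K$-algebra $P$ receiving a map $R^{\wedge a \tf}[\tfrac1\varpi] \rightarrow P$, the base change $\Spec(R/I)_{\eta,\arc_\varpi} \times_{\Spec(R)_{\eta,\arc_\varpi}} \cM(P)$ is represented by a surjective map of perfectoid Banach $K$-algebras. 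Here the key input is the behavior of perfectoidization under quotients: the surjection $R \twoheadrightarrow R/I$ induces $P = (R^{\wedge a \tf})_{\perfd}[\tfrac1\varpi] \rightarrow ((R/I)^{\wedge a \tf})_{\perfd}[\tfrac1\varpi]$, and one must check this is surjective. This is precisely the content of Proposition~\ref{zariski_closed_perfd} combined with \cite[Theorem 7.4]{prisms} (Zariski closed sets are strongly Zariski closed), together with the fact that generic fiber commutes with the relevant colimits. I also need affineness of $Y_{\eta,\arc_\varpi} \rightarrow X_{\eta,\arc_\varpi}$, which reduces to the observation that base change of an affine scheme along an affine scheme is affine, so the generic fiber is represented by a Banach $K$-algebra.

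For part (3), the plan is to combine (1), (2), and the fact that the generic fiber functor preserves finite products (being a right Kan extension / sheafified Yoneda, it preserves finite limits, and finite products in $\Sch_{K_{\le 1},\qcqs}$ go to finite products). If $X$ is quasicompact separated over $K_{\le 1}$, then the diagonal $X \rightarrow X \times_{K_{\le 1}} X$ is a closed immersion of qcqs schemes, so by (2) the induced map $X_{\eta,\arc_\varpi} \rightarrow (X \times_{K_{\le 1}} X)_{\eta,\arc_\varpi} \simeq X_{\eta,\arc_\varpi} \times_{\cM(K)} X_{\eta,\arc_\varpi}$ is a closed immersion, which is exactly the definition of $X_{\eta,\arc_\varpi}$ being separated; combined with (1) this gives quasicompactness.

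The main obstacle I anticipate is part (2): carefully tracking how the generic fiber functor interacts with the passage $A \mapsto A^{\wedge a \tf}[\tfrac1\varpi]$ and with perfectoidization, in particular ensuring that $\Spec(R/I)_{\eta,\arc_\varpi} \times_{\Spec(R)_{\eta,\arc_\varpi}} \cM(P)$ really is computed by $((R/I) \otimes_R P_{\le 1})^{\wedge a \tf}$-type formulas and that Proposition~\ref{zariski_closed_perfd} applies. The subtlety is that $R/I$ need not be $\varpi$-torsion free or $\varpi$-complete, so one must be careful that the $(-)^{\wedge a \tf}$ operations do not destroy the surjectivity; this is where \cite[Theorem 7.4]{prisms} and Proposition~\ref{zariski_closed_perfd} do the essential work, and the bookkeeping with Corollary~\ref{pushout_in_comp-tf-a} will need to be done with care.
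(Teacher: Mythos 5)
Your plan matches the paper's proof in all essentials: reduce to affine charts via the identity $\Spec(A)_{\eta,\arc_\varpi}\simeq \cM(A^{\wedge a \tf}[\frac{1}{\varpi}])_{\arc_\varpi}$, obtain the analytic atlas and epimorphism from a finite Zariski cover, get (2) from preservation of finite limits together with the fact that surjectivity of $A \to B$ survives $(-)^{\wedge}[\frac{1}{\varpi}]$ plus strong Zariski closedness (Proposition \ref{zariski_closed_perfd}, packaged in the paper via Example \ref{closed_immersions_banach}), and get (3) by applying (2) to the diagonal. The only cosmetic difference is in (1), where the paper obtains qcqs-ness immediately from Proposition \ref{finitary_implies_coh}(2), since $X$ is already an object of the defining site, so your coequalizer argument via Lemma \ref{alt_defn_qc} and Proposition \ref{prop_qcqs_obj}, while workable, is unnecessary.
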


\begin{proof} First we prove (1). Since the $\arc_\varpi$-topology is a finitary Grothendieck topology on $\Sch_{K_{\le 1}, \qcqs}$ it follows from Proposition \ref{finitary_implies_coh} that $X_{\eta, \arc_\varpi}$ is a qcqs object of $\cX_{\arc_\varpi}$. To show that $X_{\eta, \arc_\varpi}$ is an $\arc_\varpi$-analytic space, recall that $\Yo_{\arc_\varpi}$ preserves finite limits, monomorphisms, and sends $\arc_\varpi$-covers to epimorphisms, so since any open immersion immersion of schemes is a monomorphism, we conclude that if $\{\Spec(A_i) \rightarrow X\}_{i \in I}$ is a finite Zariski cover of $X$, then the induced maps $\{\Spec(A_i)_{\eta, \arc_\varpi} \rightarrow X_{\eta, \arc_\varpi}\}_{i \in I}$ are monomorphisms and the induced map $\sqcup_{i \in I} \Spec(A_i)_{\eta, \arc_\varpi} \rightarrow X_{\eta, \arc_\varpi}$ is an epimorphism in $\cX_{\arc_\varpi}$. The claim then follows from the identity $\Spec(A_i)_{\eta, \arc_\varpi} \simeq \cM(A^{\wedge a \tf}[\frac{1}{\varpi}])$.

For the prove of (2), recall that a morphism $Y \rightarrow X$ is a closed immersion if and only if for any map $\Spec(A) \rightarrow X$, the induced map $\Spec(B) = Y \times_X \Spec(A) \rightarrow \Spec(A)$ is induced from a surjective map $A \rightarrow B$ of rings. Thus, since $\Yo_{\arc_\varpi}$ preserves finite limits it follows that $Y_{\eta, \arc_\varpi} \rightarrow X_{\eta, \arc_\varpi}$ is affine, and the fact it is a closed immersion then follows from the fact that if $A \rightarrow B$ is surjective, then so is $A^{\wedge}[\frac{1}{\varpi}] \rightarrow B^{\wedge}[\frac{1}{\varpi}]$.

In order to prove (3), we already know from part (1) that $X_{\eta, \arc_\varpi}$ will be a qcqs $\arc_\varpi$-analytic space. Since $X$ is separated we know that the diagonal map $\Delta: X \rightarrow X \times X$ is a closed immersion of qcqs schemes, and so by part (2) and the fact that $\Yo_{\arc_\varpi}$ preserves finite limits it follows that $\Delta: X_{\eta, \arc_\varpi} \rightarrow X_{\eta, \arc_\varpi} \times X_{\eta, \arc_\varpi}$ is a closed immersion, completing the proof.
\end{proof}

\subsection{Open condensed subsets}

\begin{defn} A topological space $X$ is said to be compactly generated (cg) if a subset $U \subset X$ is open if and only if for every continuous map $f: K \rightarrow X$, from a compact Hausdorff space, the inverse image $f^{-1}(U) \subset K$ is open in $K$. We denote the full subcategory of $\Top$ spanned by all compactly generated spaces by $\Top_{\cg}$.

Moreover, we say that $X$ is a compactly generated weak hausdorff space (cgwh) if $X$ is compactly generated and for each continuous map $f: K \rightarrow X$ from a compact hausdorff space, the image $f(K) \subset X$ is a compact hausdorff space under the subspace topology. We denote the category the full subcategory of $\Top$ spanned by all compactly generated weak hausdorff spaces by $\Top_{\cgwh}$. 
	
We refer the reader to \cite{strickland-cgwh} for background on these categories.
\end{defn}

\begin{example} The following are examples of compactly generated weak hausdorff spaces
\begin{enumerate}[(1)]
	\item Compact Hausdorff spaces
	\item Open subsets of compact hausdorff spaces
	\item Locally compact Hausdorff spaces
\end{enumerate}
\end{example}

\begin{const} Define a functor from the category $\Top_{\cgwh}$ to condensed sets as
\begin{align*}
	\underline{(-)}: \Top_{\cgwh} \rightarrow \Cond && X \mapsto \underline{X} := \Maps_{\Top}(-, X)|_{\Comp}: \Comp^{\op} \rightarrow \Set
\end{align*}
It is a consequence of \cite[Proposition 1.7 and Proposition 2.15]{condensedlectures} that this functor is fully faithful, and furthermore we learn from \cite[Theorem 2.16]{condensedlectures} that the essential image of this functor is contained in the subcategory of quasiseparated condensed sets. Its clear from the construction that the functor $\underline{(-)}$ preserves all limits, and in particular fiber products.

Recall in our definition of $\Comp$ (and therefore in $\Cond := \Shv_{\eff} (\Cond)$) there is an implicit uncountable strong limit cardinal $\kappa$ bounding the cardinality of the objects at hand ($< \kappa$), in the above construction this cardinality bound extends to $\Top_{\cgwh}$ -- where we only consider topological spaces with cardinality $< \kappa$.
\end{const}

\begin{defn}\label{defn_open_immersion_cond} Let $X$ be a qcqs condensed set (equivalently, a compact Hausdorff space), then we say that $f: U \rightarrow X$ is an open immersion of there exists an open subset $g: V \hookrightarrow X$ (as topological spaces) such that $\underline{g} = f: U = \underline{V} \rightarrow X$. In particular, since $\underline{(-)}$ preserves all limits, if we have a morphism $K \rightarrow X$ of qcqs condensed sets then the fiber product $U \times_X K \rightarrow K$ will be an open immersion.
	
More generally, we say that a morphism $Y \rightarrow X$ of condensed sets is an open immersion if for every morphism $K \rightarrow X$ from a qcqs condensed set the fiber product $Y \times_X K \rightarrow K$ is an open immersion.
\end{defn}

\begin{rem}\label{subspace_top_condensed} In what follows we will often want to check whether a given a monomorphism $U \hookrightarrow X$, where $X$ is a compact hausdorff space and $U(*) \subset X(*)$ is an open subset of $X$, is in fact an open immersion. The condition that there exists an open subset $V \rightarrow X$ of topological spaces such that $U = \underline{V} \rightarrow X$ can be rephrased as follows: for every morphism $K \rightarrow X$ of compact hausdorff spaces such that $\im(K(*) \rightarrow X(*)) \subset U(*)$ there exists a unique lift $K \rightarrow U$ making the following diagram commute
\begin{cd}
	& K \ar[d] \ar[ld, dashed] \\
	U \ar[r, hook] & X
\end{cd}
\end{rem}

\begin{prop}\label{prop_open_immersion_condensed} Open immersions of condensed sets have the following properties
\begin{enumerate}[(1)]
	\item If $f: U \rightarrow X$ is an open immersion of condensed sets, then it is a monomorphism.
	\item If $f: U \hookrightarrow X$ is an open immersion of condensed sets, and $Z \rightarrow X$ is any morphism such that $\im(Z(*) \rightarrow X(*)) \subset U(*)$, then there exists a unique morphism $Z \rightarrow U$ making the following diagram commute
	\begin{cd}
		& Z \ar[d] \ar[ld, dashed] \\
		U \ar[r, hook] & X
	\end{cd}
	In particular, this implies that $U = \colim_{K \rightarrow X} K \rightarrow X$, where the colimit ranges over all morphisms $K \rightarrow X$, from a compact Hausdorff space $K$, such that $\im(K(*) \rightarrow X(*)) \subset U(*)$.
	\item If $f_1: U_1 \rightarrow X$ and $f_2: U_2 \rightarrow X$ are open immersions, then $\im(f_1 \sqcup f_2) \rightarrow X$ is an open immersion of condensed sets. In particular, the collection of open immersions on $X$ is filtered, that is for any pair of open immersions $f_1: U_1 \rightarrow X$ and $f_2: U_2 \rightarrow X$ there exists an open immersion $g: V \rightarrow X$ such that $f_1$ and $f_2$ factor over $g$.
	\item Let $f: Z \rightarrow Y$ and $g: Y \rightarrow X$ be a pair of monomorphisms of condensed sets. If $g \circ f$ is an open immersion then so is $f$.
	\item Let $\{f_i: U_i \rightarrow X\}$ be a filtered collection of open immersions on $X$, that is for any pair of open immersions $f_i: U_i \rightarrow X$ and $f_j: U_j \rightarrow X$ there exists an $f_k: U_k \rightarrow X$ such that $f_i$ and $f_j$ factor over $f_k$. Then, $\colim U_i \rightarrow X$ is an open immersion.
	\item Open immersions are closed under composition and base-change.
\end{enumerate}
\end{prop}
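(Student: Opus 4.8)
\textbf{Proof plan for Proposition \ref{prop_open_immersion_condensed}.}

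The plan is to prove the six statements essentially in the listed order, since later parts use earlier ones. For (1), I would unwind the definition: an open immersion $f\colon U\to X$ satisfies that for every $K\to X$ from a qcqs condensed set, $U\times_X K\to K$ is an open immersion of topological spaces, hence in particular a monomorphism; since monomorphisms can be tested after pulling back along a suitable epimorphism (and qcqs condensed sets generate $\Cond$ in the sense that every condensed set is a colimit of compact Hausdorff spaces, by the presentation $\Cond=\Shv_{\eff}(\Comp)$ and Propositions \ref{prop_presheaves}(1), \ref{prop_sheaves}(1)), it follows that $f$ is a monomorphism. For (2), I would first reduce to the case where $Z=K$ is a compact Hausdorff space by writing $Z$ as a colimit of compact Hausdorff spaces and using that $U\hookrightarrow X$ is a monomorphism (so lifts are automatically unique and compatible). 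For $Z=K$ compact Hausdorff, the hypothesis $\im(K(*)\to X(*))\subset U(*)$ together with the fact that $U\times_X K\to K$ is an open immersion of topological spaces with underlying set all of $K(*)$ forces $U\times_X K\to K$ to be an isomorphism, giving the desired lift; see Remark \ref{subspace_top_condensed}. The statement about the colimit presentation then follows formally: the canonical map $\colim_{K\to X}K\to U$ is both a monomorphism (filtered colimit of monomorphisms) and an epimorphism (since every point of $U(*)$ lifts through some $K$, by part of the proof of Proposition \ref{epi_condensed_sets}(3) applied locally), hence an isomorphism in the topos $\Cond$.

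For (3), I would use that $\im(f_1\sqcup f_2)\to X$ is a monomorphism (Lemma \ref{epi_equal_eff_epi_topos}) whose underlying set is $U_1(*)\cup U_2(*)\subset X(*)$, which is open; to check it is an open immersion I would verify the lifting criterion of Remark \ref{subspace_top_condensed}: given $K\to X$ with image in $U_1(*)\cup U_2(*)$, one cannot directly lift $K$, but one can cover $K$ by the two closed preimages of $U_1(*)$ and $U_2(*)$ — this is where a small point-set argument is needed, using that $U_1\times_X K$ and $U_2\times_X K$ are open in $K$ and jointly cover it, so $K$ is the coequalizer of compact Hausdorff pieces each landing in $U_1$ or $U_2$, and then applying (2). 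The filteredness claim is then immediate. Statement (4) follows from (2): since $g\circ f$ is an open immersion and $f\colon Z\to Y$ has $\im(Z(*)\to Y(*))$ contained in (the preimage under $Y(*)\to X(*)$ of) the open set $\im(g\circ f)(*)$, one checks the lifting criterion for $f$ against qcqs test objects $K\to Y$ by composing with $g$ and using that $Z=(g\circ f)\times_X(\text{something})$, reducing to the open immersion $g\circ f$; the only subtlety is identifying $Z$ with the appropriate fiber product, which uses that $f$ is a monomorphism.

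For (5), I would show $\colim U_i\to X$ is a monomorphism (filtered colimit of monomorphisms) with underlying set $\bigcup_i U_i(*)$, which is open, and then verify the lifting criterion: given $K\to X$ from a compact Hausdorff space with image in $\bigcup U_i(*)$, compactness of $K(*)$ together with filteredness of the $U_i$ forces the image to lie in a single $U_i(*)$ already (a finite subcover argument, using that the $K\times_X U_i$ are open in $K$, cover $K$, and the system is filtered), so the lift exists by applying (2) to that single $U_i$; uniqueness is automatic. Finally (6): closure under base-change is immediate from the definition (an open immersion is defined by a pullback condition, and pullbacks of pullbacks are pullbacks, so one checks against test objects $K\to$ the base); closure under composition reduces, after base-changing to a compact Hausdorff test object, to the statement that a composite of open immersions of topological spaces restricted along $\Comp$ is again of that form, which is elementary. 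The main obstacle I anticipate is part (3) — specifically producing the lift in Remark \ref{subspace_top_condensed} for a union of two open immersions, since one genuinely needs to decompose an arbitrary compact Hausdorff test space into pieces controlled by the two opens, rather than lifting it wholesale; the rest is bookkeeping with monomorphisms, epimorphisms, and colimits in the topos $\Cond$.
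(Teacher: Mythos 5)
Your proposal follows essentially the same route as the paper: part (1) by testing monomorphisms after pullback along a covering by compact Hausdorff spaces, part (2) by reducing to $Z$ compact Hausdorff and using that $U\times_X Z(*)=Z(*)$ forces $U\times_X Z\simeq Z$, part (3) by decomposing a compact test space along the two open pieces (the paper instead approximates $U_1,U_2$ by filtered systems of compact Hausdorff subspaces and applies the lifting criterion approximant by approximant, but the underlying point-set compactness argument is the same), part (5) by the finite-subcover argument, and part (6) by reducing to topological spaces. One small slip: in part (4) the identification $Z\times_Y K\simeq Z\times_X K$ — which is what lets you transport the open-immersion condition from $g\circ f$ to $f$ — relies on $g\colon Y\to X$ being a monomorphism (so that a fiber product over $Y$ of maps that already factor through $Y$ agrees with the fiber product over $X$), not on $f$ being a monomorphism as you wrote; with that attribution corrected the argument goes through exactly as in the paper.
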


\begin{proof} \textit{proof of (1):} Recall that since $\Cond$ is a coherent topos we can check whether $f: U \rightarrow X$ is a monomorphisms after basechange on $X$ by an epimorphism. Pick a collection of maps $\{K_i \rightarrow X\}$ from compact Hausdorff spaces $K_i$ such that the induced map $\sqcup K_i \rightarrow X$ is an epimorphism, then by definition we get that each induced map $U \times_X K_i \rightarrow K_i$ is a monomorphism, which in turn implies that $\sqcup K_i \times_X U \rightarrow \sqcup K_i$ is a monomorphism, proving the claim.

\textit{proof of (2):} The uniqueness of the lift is automatic since $U \rightarrow X$ is a monomorphism by part (1), and since every condensed set can be presented as a colimit of compact hausdorff spaces we may assume that $Z$ is a compact Hausdorff space. Then, by the hypothesis that $\im(Z(*) \rightarrow X(*)) \subset U(*)$ it follows that the basechange $U \times_X Z \rightarrow Z$ is an open subset of $Z$ where $U \times_X Z(*) = Z(*)$, proving that $Z \simeq U \times_X Z$ and thus providing the desired lift.

\textit{proof of (3):} Since monomorphisms and epimorphisms are stable under basechange in a topos, together with the characterization of $\im(f_1 \sqcup f_2)$ as the essentially unique object factoring $f_1 \sqcup f_2: U_1 \sqcup U_2 \twoheadrightarrow \im(f_1 \sqcup f_2) \hookrightarrow X$ shows that images of morphisms in a topos are stable under basechange as well. Therefore, we may assume that $X$ is a compact hausdorff space. By construction we have that $\im(f_1 \sqcup f_2)(*) \subset X(*)$ is an open subset of $X$, thus it remains to show that $\im(f_1 \sqcup f_2) \rightarrow X$ has the lifting property of Remark \ref{subspace_top_condensed}. Since $X$ is a compact hausdorff space there is a filtered collection of monomorphisms $\{f_{i,k}: Z_{i,k} \hookrightarrow X\}$ from compact Hausdorff spaces $Z_{i,k}$ such that $f_i: U_i = \colim Z_{i, k} \hookrightarrow K$. Then, for each pair $Z_{1, k_1}, Z_{2, k_2}$ we have a canonical factorization $Z_{1, k_1} \sqcup Z_{2, k_2} \twoheadrightarrow \im(f_{1, k_1} \sqcup f_{2, k_2}) \hookrightarrow X$, in particular we have that $\im(f_{1, k_1} \sqcup f_{2, k_2}) \hookrightarrow X$ is a monomorphism of compact Hausdorff spaces and so it satisfies the lifting property of Remark \ref{subspace_top_condensed}. Then, since filtered colimits preserve epimorphisms and monomorphisms we learn that $\im(f_1 \sqcup f_2) = \colim \im(f_{1, k_1} \sqcup f_{2, k_2}) \rightarrow X$, showing that $\im(f_1 \sqcup f_2) \rightarrow X$ has the desired lifting property, completing the proof. 

\textit{proof of (4):} Recall that since $g: Y \rightarrow X$ is a monomorphism, for any map $W \rightarrow Y$ of condensed sets, the basechange of $g: Y \rightarrow X$ along $W \rightarrow Y \rightarrow X$ induced an isomorphism $W \times_{X} Y \rightarrow Y$. Thus, for any morphism from a compact Hausdorff space $K \rightarrow Y$ the basechange of $g: Y \rightarrow X$ along $K \rightarrow Y \rightarrow X$ induces an isomorphism $K \times_Y X \rightarrow K$. The claim then follows directly from the definitions.

\textit{proof of (5):} Since open immersions are detected after basechange and filtered colimits commute with basechange we may assume that $X$ is a compact hausdorff space. Since unions of open subsets of a compact hausdorff space is an open subset, we learn that $(\colim U_i)(*) \rightarrow X(*)$ is an open subset of $X$, and since filtered colimts of monomorphisms are monomorphisms it remains to show that $\colim U_i \rightarrow X$ has the lifting property of Remark \ref{subspace_top_condensed}. By part (2) we know that open immersions have the lifting property, and since filtered colimit of sheaves (computed in the category of presheaves) are again sheaves, it follows that $\colim U_i \rightarrow X$ has the desired lifting property, proving the claim.

\textit{proof of (6):} Its clear from the definition that open immersions are closed under basechange. Let $f: Z \rightarrow Y$ and $g: Y \rightarrow X$ be a pair of open immersions, we need to show that $g \circ f$ is an open immersion, and given that open immersions are closed under basechange we may assume that $X$ is a compact Hausdorff space. Then, by hypothesis that $g: Y \rightarrow X$ is an open immersion we know that there exist an open subset $U_Y \rightarrow X$ of topological spaces such that $Y = \underline{U_Y} \rightarrow X$; and since open immersions have the lifting property of part (2) we can conclude that there exists an open subset $U_Z \rightarrow U_Y$ such that $Z = \underline{U_Z} \rightarrow \underline{U_Y} = Y$. Then, since open subsets $U_Y$ is also an open subset of $X$ the result follows.
\end{proof}

\begin{defn}\label{defn_interior_cond} Let $Y \hookrightarrow X$ be a monomorphisms of condensed sets. Then, we say that $x \in Y(*) \subset X(*)$ is in the interior of $Y \hookrightarrow X$ if there exists a morphism $U \rightarrow Y$ such that $x \in \im(U(*) \rightarrow Y(*))$ and the composition $U \rightarrow Y \rightarrow X$ is an open immersion. In particular, we learn from Proposition \ref{prop_open_immersion_condensed} that $U \rightarrow Y$ is an open immersion.

Furthermore, for a monomorphism $Y \hookrightarrow X$ we define the interior of the monomorphism $\Int(Y/X)$ as
\begin{align*}
	\Int(Y/X) := \colim_{U \rightarrow Y} U
\end{align*}
where the colimit ranges over all maps $U \rightarrow Y$ whose composition $U \rightarrow Y \rightarrow X$ is an open immersion. Again by Proposition \ref{prop_open_immersion_condensed} we learn that the canonically induced maps $\Int(Y/X) \rightarrow Y$ and $\Int(Y/X) \rightarrow X$ are open immersions.
\end{defn}

\begin{defn}\label{defn_locally_compact_cond} A condensed set $X$ is locally compact if for every $x \in X(*)$ there exists a compact Hausdorff space $K$ and a monomorphism $K \hookrightarrow X$ such that $x \in \Int(K/X)(*) \subset X(*)$.
\end{defn}

\begin{example} All compact Hausdorff spaces are locally compact condensed sets.
\end{example}

\begin{prop}\label{properties_loc_compact_cond} The collection of locally compact condensed sets have the following properties
\begin{enumerate}[(1)]
	\item If $X$ is a locally compact condensed set and $U \hookrightarrow X$ an open immersion. Then $U$ is a locally compact condensed set.
	\item If $X$ is a locally compact quasiseparated condensed set, and $\{V_i \hookrightarrow X\}_{i \in I}$ be a collection of monomorphisms such that the induced map $\sqcup_{i \in I} \Int(V_i/X)(*) \rightarrow X(*)$ is surjective. Then, $\sqcup_{i \in I} V_i \rightarrow X$ is an epimorphism of condensed sets.
\end{enumerate}
\end{prop}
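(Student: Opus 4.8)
The goal is to prove two statements about locally compact condensed sets: that open immersions preserve local compactness, and that a family of qcqs subobjects whose interiors cover $X(*)$ induces an epimorphism $\sqcup_i V_i \to X$. I would handle these in order, since the second relies on the first.

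\textbf{Part (1).} Given $X$ locally compact and an open immersion $U \hookrightarrow X$, I want to show $U$ is locally compact. Fix $x \in U(*) \subset X(*)$. By local compactness of $X$, there is a compact Hausdorff space $K$ and a monomorphism $K \hookrightarrow X$ with $x \in \Int(K/X)(*)$; unwinding Definition \ref{defn_interior_cond}, this means there is a morphism $W \to K$ with $x \in \im(W(*) \to K(*))$ whose composite $W \to K \to X$ is an open immersion, and $W \hookrightarrow X$ is itself an open immersion by Proposition \ref{prop_open_immersion_condensed}(1). Now form $W' := W \times_X U$, which is an open immersion into $U$ (open immersions are stable under base change, Proposition \ref{prop_open_immersion_condensed}(6)) and also an open immersion into $X$ (composition of open immersions, again \ref{prop_open_immersion_condensed}(6)). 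Since $x$ lies in both $U(*)$ and $\im(W(*) \to X(*))$ and $U \times_X K \to K$ is an open immersion with $(U\times_X K)(*) = U(*) \cap K(*)$ (using Definition \ref{defn_open_immersion_cond}, the fact that $\underline{(-)}$ preserves limits, and that the underlying-set functor preserves limits), the point $x$ lies in $(U \times_X K)(*)$. I would then take $K' := U \times_X K$ — but I need $K'$ to be a \emph{compact} Hausdorff space, which it need not be since $U(*) \cap K(*)$ is only open in $K$. So instead the cleaner route is: $W \times_X U$ is an open immersion of $U$ containing $x$, so $x$ lies in the interior (relative to $U$) of the closure-type object; more precisely, pick any compact Hausdorff $L \hookrightarrow W \times_X U$ (as a subcondensed set, e.g. via Remark \ref{subspace_top_condensed} pulling back a small compact neighborhood in the topological model of $W$) containing $x$ in its topological interior inside the open set $W \times_X U$. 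Then $L \hookrightarrow U$ is a monomorphism from a compact Hausdorff space with $x \in \Int(L/U)(*)$, since the open immersion $W \times_X U \hookrightarrow U$ witnesses interiority. This establishes local compactness of $U$.

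\textbf{Part (2).} Suppose $X$ is locally compact quasiseparated and $\{V_i \hookrightarrow X\}_{i \in I}$ are monomorphisms with $\sqcup_i \Int(V_i/X)(*) \to X(*)$ surjective. I want $\sqcup_i V_i \to X$ to be an epimorphism of condensed sets. Since $\Int(V_i/X) \to V_i$ is a monomorphism, it suffices to show $\sqcup_i \Int(V_i/X) \to X$ is an epimorphism, i.e. I can replace $V_i$ by $\Int(V_i/X)$ and assume each $V_i \hookrightarrow X$ is an open immersion. Now epimorphisms of condensed sets onto qcqs targets are detected on underlying sets (Proposition \ref{epi_condensed_sets}(3)), but here $X$ is only quasiseparated, so I reduce to the qcqs case: by the analogue of Proposition \ref{saturate_qs} for condensed sets (or directly: write $X = \colim_j X_j$ over qcqs subobjects $X_j \hookrightarrow X$), and since colimits commute with base change, it's enough to show $\sqcup_i (V_i \times_X X_j) \to X_j$ is an epimorphism for each $j$. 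Each $V_i \times_X X_j \hookrightarrow X_j$ is an open immersion (base change), $X_j$ is a compact Hausdorff space, and $\sqcup_i (V_i \times_X X_j)(*) \to X_j(*)$ is surjective (since the $(V_i \times_X X_j)(*)$ are open subsets covering $X_j(*)$, which follows from the hypothesis). So I've reduced to: a compact Hausdorff space $X_j$ covered by open subsets $\{U_i\}$; then $\sqcup_i U_i \to X_j$ is an epimorphism. Here I use that $\sqcup_i U_i$ is quasicompact? — no, it need not be, but by compactness of $X_j$ finitely many $U_{i_1}, \dots, U_{i_n}$ already cover $X_j(*)$, so I may pass to a finite subcover; then $\sqcup_{k=1}^n U_{i_k}$ is quasicompact, the map to the qcqs space $X_j$ is a morphism of the required type, and surjectivity on underlying sets plus Proposition \ref{epi_condensed_sets}(3) gives that it is an epimorphism, hence so is $\sqcup_i V_i \to X$.

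\textbf{Main obstacle.} The delicate point is Part (1): extracting a genuinely \emph{compact} Hausdorff neighborhood witnessing $x \in \Int(L/U)$, rather than just an open neighborhood. The resolution is to work in the topological model: $W$ (or $W \times_X U$) corresponds via $\underline{(-)}$ to an open subset of a compact Hausdorff space, hence to a locally compact Hausdorff topological space, in which every point has a compact neighborhood; transporting this back through the fully faithful functor $\underline{(-)}$ and using Remark \ref{subspace_top_condensed} to see the resulting monomorphism is the one we want. I would also need to double-check that "$x \in \Int(K/X)(*)$ and $U$ open" genuinely forces $x$ into the domain of the witnessing open immersion for $U$ — this is a diagram chase with the pullback $W \times_X U$ and the underlying-set functor, routine but worth stating carefully. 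The second part is essentially a compactness-plus-finite-subcover argument once the quasiseparated-to-qcqs reduction is in place, and should be straightforward given the machinery already developed.
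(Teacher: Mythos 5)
Your Part (1) is in essence the paper's argument: pull back the open immersion witnessing local compactness of $X$ along $U \hookrightarrow X$, recognize the result as a locally compact Hausdorff topological space via the functor $\underline{(-)}$, and use Urysohn's lemma there to extract a genuinely compact Hausdorff neighborhood of $x$. The presentation is a bit tangled but the key move — work in the topological model of the open subset to get compactness — is exactly right and is what the paper does.

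Your Part (2) takes a different route from the paper (you reduce to the case where the base is compact Hausdorff via the filtered colimit over qcqs subobjects, rather than working directly with the quasiseparated $X$), and that reduction is sound. But the final step has a genuine gap: you assert that $\sqcup_{k=1}^n U_{i_k}$ is quasicompact so that Proposition \ref{epi_condensed_sets}(3) applies, and this is false. An open immersion of condensed sets into a compact Hausdorff space corresponds to an open \emph{subspace}, which is almost never a quasicompact condensed set — e.g.\ $(0,1) \hookrightarrow [0,1]$ — and a finite coproduct of non-quasicompact objects is not quasicompact. So the criterion ``surjective on $(*)$ implies epimorphism'' is not available. Notice also that, after the reduction you performed, your argument never used local compactness, which should have been a warning sign.

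There are two ways to repair this. The paper's route is to \emph{not} reduce to the qcqs case but instead exploit Part (1) substantively: for each point in the image of a test map $E \to X$ from an extremally disconnected set, use local compactness of $V_i$ to find a compact Hausdorff $K_x \hookrightarrow V_i$ with $x$ in its interior; compactness of $E$ gives finitely many such $K_x$; quasiseparatedness of $X$ makes $K := \im(\sqcup_x K_x \to X)$ a compact Hausdorff space; Proposition \ref{mono_condensed_sets} lifts $E \to X$ to $E \to K$; and projectivity of $E$ lifts further to $E \to \sqcup_x K_x \to \sqcup_i V_i$. Alternatively, one can finish your qcqs reduction by a clopen decomposition of the profinite test object $E$: pull back the finite open subcover of $X_j$ to $E$, refine to a finite clopen partition $E = E_1 \sqcup \cdots \sqcup E_m$ with each $f(E_l(*))$ contained in some $U_{i_l}(*)$, and use Proposition \ref{prop_open_immersion_condensed}(2) to lift each $E_l \to X_j$ through $U_{i_l}$; these lifts assemble to an actual lift $E \to \sqcup_i U_i$. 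Either fix works; the coproduct-is-quasicompact shortcut does not.
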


\begin{proof} \textit{proof of (1):} We need to show that for every $x \in U(*) \subset X(*)$ there exists a monomorphism $K \rightarrow U$ from a compact Hausdorff space $K$ such that $x \in \Int(K/U)(*) \subset U(*)$. By the assumption that $X$ is locally compact we can find a monomorphism $E \rightarrow X$ from a compact Hausdorff space such that $x \in \Int(E/X)(*) \subset X(*)$. Consider the pullback diagram
	\begin{cd}
		U_{\Int(E/X)} \ar[r] \ar[d] & \Int(E/X) \ar[d] \\
		U_E \ar[r] \ar[d] & E \ar[d] \\
		U \ar[r] & X
	\end{cd}
Since open immersions are closed under basechange and composition, and the fact that $\Int(E/X) \hookrightarrow E$ is an open immersion, we know that $U_{\Int(E/X)} \hookrightarrow E$ is an open immersion and $x \in U_{\Int(E/X)}(*) \subset E(*) \subset X(*)$, thus by Urysohn's Lemma we can find a compact Hausdorff space $K$ together with a monomorphism $K \hookrightarrow U_{\Int(E/X)}$ such that $x \in K(*) \subset U_{\Int(E/X)}(*)$ and $x \in \Int(K/U_{\Int(E/K)})(*)$. Thus, we obtain maps
\begin{align}
	\Int(K/U_{\Int(E/K)}) \hookrightarrow K \hookrightarrow U_{\Int(E/X)} \hookrightarrow U
\end{align}
Then, since $\Int(K/U_{\Int(E/K)}) \rightarrow U$ is an open immersion, the claim that $U$ is locally compact follows.

\textit{proof of (2):} It suffices to show that $\sqcup_{i \in I} \Int(V_i/X) \rightarrow X$ is an epimorphism, thus we may assume that all $V_i \hookrightarrow X$ are open immersions. To show that $\sqcup_{i \in I} V_i \rightarrow X$ is an epimorphism, it suffices to show that for every morphism $E \rightarrow X$, from an extremally disconnected set $E$, there exists a lift $E \rightarrow \sqcup_{i \in I} V_i$ making the following diagram commute
\begin{cd}
	& E \ar[d] \ar[ld, dashed] \\
	\sqcup_{i \in I} V_i \ar[r] & X
\end{cd}
Recall that extremally disconnected sets are the projective objects in the category of compact Hausdorff spaces. By the compactness of $E$ and the fact that open immersions are closed under basechange, we know that there exists a finite collection $I_0 \subset I$ such that $\im(E(*) \rightarrow X(*)) \subset \cup_{i \in I_0} V_i(*)$. Then, since each $V_i$ are locally compact condensed sets by part (1), for each $x \in \im(E(*) \rightarrow X(*))$ there exists a compact Hausdorff space $K_x$ together with a monomorphism $K_x \hookrightarrow V_i$ such that $x \in \Int(K_x/V_i)(*)$, and again by the compactness of $E$ there are finitely many $\{K_x \hookrightarrow X\}_{x \in J}$ such that $\im(E(*) \rightarrow X(*)) \subset \im(\sqcup_{x \in J} K_x(*) \rightarrow X(*))$. Set $K = \im(\sqcup_{x \in J} K_x \rightarrow X)$, since $X$ is quasiseparated we know that $K$ is a compact Hausdorff space satisfying $\im(E(*) \rightarrow X(*)) \subset K(*)$, thus by Proposition \ref{mono_condensed_sets} we know that the map $E \rightarrow X$ admits an essentially unique lift $E \rightarrow K$. Hence, by the projectivity of extremally disconnected sets the map $E \rightarrow K$ admits a lift to $E \rightarrow \sqcup_{x \in J} K_x$, and so a lift to $E \rightarrow \sqcup_{i \in I} V_i$ as desired.
\end{proof}

\subsection{Open analytic domains}

\begin{defn}\label{defn_open_immersion_arc} Let $f: U \hookrightarrow X$ be a monomorphism in $\cX_{\arc_\varpi}$, then $f: U \hookrightarrow X$ is an open immersion if the induced map $|U| \hookrightarrow |X|$ is an open immersion of condensed sets (cf. Definition \ref{defn_open_immersion_cond}).
\end{defn}

\begin{prop}\label{prop_open_immersion_arc} Open immersions of $\arc_\varpi$-sheaves have the following properties
\begin{enumerate}[(1)]
	\item If $f_1: U_1 \rightarrow X$ and $f_2: U_2 \rightarrow X$ are open immersions, then $\im(f_1 \sqcup f_2) \rightarrow X$ is an open immersion of $\arc_\varpi$-sheaves. In particular, the collection of open immersions on $X$ is filtered, that is for any pair of open immersions $f_1: U_1 \rightarrow X$ and $f_2: U_2 \rightarrow X$ there exists an open immersion $g: V \rightarrow X$ such that $f_1$ and $f_2$ factor over $g$.
	\item Let $f: Z \rightarrow Y$ and $g: Y \rightarrow X$ be a pair of monomorphisms of $\arc_\varpi$-sheaves. If $g \circ f$ is an open immersion then so is $f$.
	\item Let $\{f_i: U_i \rightarrow X\}$ be a filtered collection of open immersions on $X$, that is for any pair of open immersions $f_i: U_i \rightarrow X$ and $f_j: U_j \rightarrow X$ there exists an $f_k: U_k \rightarrow X$ such that $f_i$ and $f_j$ factor over $f_k$. Then, $\colim U_i \rightarrow X$ is an open immersion.
	\item Open immersions are closed under composition and base-change.
\end{enumerate}
\end{prop}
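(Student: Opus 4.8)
The strategy is to reduce each statement about open immersions of $\arc_\varpi$-sheaves to the corresponding statement about open immersions of condensed sets (Proposition \ref{prop_open_immersion_condensed}) by pushing forward along the Berkovich functor $|-|: \cX_{\arc_\varpi} \rightarrow \Cond$, while at the same time reconstructing the source object on the $\arc_\varpi$-side using the qcqs subobject equivalence of Proposition \ref{subobjects_arc_topos}. The key bookkeeping lemma to establish first is the following: if $f: U \hookrightarrow X$ is a monomorphism of $\arc_\varpi$-sheaves, then $f$ is an open immersion if and only if, for every morphism $\cM(A)_{\arc_\varpi} \to X$ (with $A$ a Banach $K$-algebra, so that $|\cM(A)_{\arc_\varpi}|$ is compact Hausdorff by Proposition \ref{berko_funct_stability}(6) and the qcqs case), the base change $U \times_X \cM(A)_{\arc_\varpi} \to \cM(A)_{\arc_\varpi}$ has compact Hausdorff open image. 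This follows because the Berkovich functor preserves monomorphisms and fiber products against monomorphisms (Proposition \ref{berko_funct_stability}(3),(4)), and because $|\cM(A)_{\arc_\varpi}|$ generates $\Cond$ under colimits, so openness of $|U| \hookrightarrow |X|$ can be tested on these compact Hausdorff pieces (Definition \ref{defn_open_immersion_cond}).

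For (1), I would argue as follows. First, $\im(f_1 \sqcup f_2) \hookrightarrow X$ is a monomorphism since $\cX_{\arc_\varpi}$ is a topos (Lemma \ref{epi_equal_eff_epi_topos}). Next, the Berkovich functor preserves epimorphisms and monomorphisms (Proposition \ref{berko_funct_stability}(1),(3)) and hence preserves the image factorization, so $|\im(f_1 \sqcup f_2)| = \im(|f_1| \sqcup |f_2|) \hookrightarrow |X|$; since the latter is an open immersion of condensed sets by Proposition \ref{prop_open_immersion_condensed}(3), the former is an open immersion of $\arc_\varpi$-sheaves by definition. The ``in particular'' clause is the special case where $f_1, f_2$ already factor through $g := \im(f_1 \sqcup f_2)$. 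For (2), suppose $g \circ f$ is an open immersion; since $f$ and $g$ are monomorphisms, I apply $|-|$ to get monomorphisms $|f|: |Z| \hookrightarrow |Y|$ and $|g|: |Y| \hookrightarrow |X|$ with $|g| \circ |f| = |g \circ f|$ an open immersion of condensed sets. By Proposition \ref{prop_open_immersion_condensed}(4), $|f|$ is an open immersion of condensed sets, hence $f$ is an open immersion of $\arc_\varpi$-sheaves.

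For (3), given a filtered system $\{f_i: U_i \to X\}$ of open immersions, note $\colim U_i \to X$ is a monomorphism because filtered colimits in a topos preserve monomorphisms; since the Berkovich functor is colimit-preserving, $|\colim U_i| = \colim |U_i| \to |X|$, and the latter is a filtered colimit of open immersions of condensed sets, hence an open immersion by Proposition \ref{prop_open_immersion_condensed}(5). For (4), closure under base change: if $U \hookrightarrow X$ is an open immersion and $Z \to X$ arbitrary, then $U \times_X Z \to Z$ is a monomorphism (stable under base change in a topos), and applying $|-|$ gives $|U \times_X Z| \hookrightarrow |Z|$; by Proposition \ref{berko_funct_stability}(4) this is the base change $|U| \times_{|X|} |Z| \to |Z|$ of the open immersion $|U| \hookrightarrow |X|$ along $|Z| \to |X|$, which is an open immersion of condensed sets by Proposition \ref{prop_open_immersion_condensed}(6). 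Closure under composition of open immersions then follows formally from (2) applied in reverse, or directly: composition of monomorphisms is a monomorphism, and $|-|$ of a composite of open immersions is a composite of open immersions of condensed sets, again open by Proposition \ref{prop_open_immersion_condensed}(6).

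\textbf{Main obstacle.} The one point requiring genuine care — rather than mechanical transport along $|-|$ — is the converse direction implicit in (2) and in the reconstruction lemma: knowing that $|f|$ is an open immersion of condensed sets, one must verify that $f$ itself is a monomorphism of $\arc_\varpi$-sheaves before one can even call it an open immersion by Definition \ref{defn_open_immersion_arc}. In (2) this is handed to us by hypothesis ($f$ is assumed to be a monomorphism), so the statement as written is clean; but the real subtlety, which I expect to be the crux, is checking that the image factorization in $\cX_{\arc_\varpi}$ is genuinely preserved by $|-|$ with the images landing among \emph{qcqs} condensed sets when the inputs are not themselves qcqs — this is where one must invoke that $|-|$ preserves qcqs objects (Proposition \ref{berko_funct_stability}(6)) together with the filtered-colimit presentation of quasiseparated objects (Proposition \ref{saturate_qs}), reducing (1) and (3) to the compact Hausdorff case where Proposition \ref{prop_open_immersion_condensed} applies directly.
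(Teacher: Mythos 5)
Your proposal is correct and follows essentially the same route as the paper: each part is transported along the Berkovich functor using its preservation of monomorphisms, epimorphisms (hence images), colimits, and fiber products along monomorphisms (Proposition \ref{berko_funct_stability}), and then the corresponding part of Proposition \ref{prop_open_immersion_condensed} is applied. Your preliminary ``bookkeeping lemma'' and the worry about qcqs images in the final paragraph are not actually needed for any of the four arguments, but they do no harm.
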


\begin{proof} \textit{proof of (1):} By Proposition \ref{berko_funct_stability} we know that the Berkovich functor preserves epimorphisms and monomorphism, and so it also preserves images, thus the result follow from Proposition \ref{prop_open_immersion_condensed}(3).

\textit{proof of (2):} Follows directly from \ref{prop_open_immersion_condensed}(4), and the fact that the Berkovich functor preserves monomorphisms.

\textit{proof of (3):} Follows from the fact that the Berkovich functor preserves colimits and \ref{prop_open_immersion_condensed}(5)

\textit{proof of (4):} Stability of open immersions under basechange follow from Proposition \ref{berko_funct_stability}(4) and \ref{prop_open_immersion_condensed}(6), stability under composition also follows from tha latter proposition.
\end{proof}

\begin{prop}[Open Subobjects]\label{open_subobject_arc} Let $X$ be a quasiseparated $\arc_\varpi$-sheaf, then
\begin{enumerate}[(1)]
	\item For every open immersion of condensed sets $\tilde{U} \hookrightarrow |X|$, there exists an open immersion $U \hookrightarrow X$ of $\arc_\varpi$-sheaves, such that $\tilde{U} = |U| \hookrightarrow |X|$. Furthermore, $U \hookrightarrow X$ can be realized as $\colim_{Y \hookrightarrow X} Y \rightarrow X$, where the colimit ranges over all monomorphisms $Y \hookrightarrow X$ from a qcqs $\arc_\varpi$-sheaf such that $|Y|(*) \subset \tilde{U}(*) \subset |X|(*)$.
	\item If $U \rightarrow X$ is an open immersion of $\arc_\varpi$-sheaves then for any morphism $Z \rightarrow X$ such that $\im(|Z|(*) \rightarrow |X|(*)) \subset |U|(*)$, there exists a unique morphism $Z \rightarrow U$ making the following diagram commute
	\begin{cd}
	& Z \ar[d] \ar[ld, dashed] \\
	U \ar[r, hook] & X
	\end{cd}
	In particular, this shows that $U \hookrightarrow X$ can be realized as $\colim_{Y \rightarrow X} Y \rightarrow X$ where the colimit ranges over all morphisms $Y \rightarrow X$ from a qcqs $\arc_\varpi$-sheaf $Y$ satisfying $\im(|Y|(*) \rightarrow |X|(*)) \subset |U|(*)$.
	\item For an object $X$ in either category $\cX_{\arc_\varpi}$ or $\Cond$, define $\Open(X)$ as the collection of open immersions $U \hookrightarrow X$, where the morphisms are maps $U_1 \hookrightarrow U_2$ of $\arc_\varpi$-sheaves making the following diagram commute
	\begin{cd}
		U_1 \ar[rr, hook] \ar[rd, hook] && U_2 \ar[ld, hook] \\
		& X
	\end{cd}
	Then, if $X$ is an $\arc_\varpi$-sheaf the Berkovich functor induces an equivalence of categories
	\begin{align*}
		\Open(X) \overset{\simeq}{\longrightarrow} \Open(|X|)
	\end{align*}
\end{enumerate}
\end{prop}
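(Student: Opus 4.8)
\textbf{Proof plan for Proposition \ref{open_subobject_arc}.}

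The strategy is to leverage the equivalence $\Sub(X)_{\qcqs} \simeq \Sub(|X|)_{\qcqs}$ from Proposition \ref{subobjects_arc_topos}, together with the fact (Proposition \ref{prop_open_immersion_condensed}(2)) that an open immersion of condensed sets is the filtered colimit of the compact Hausdorff spaces mapping into its ambient space whose image lands in the open subset. First I would prove (1). Given an open immersion $\tilde U \hookrightarrow |X|$, consider the full subcategory $\cI$ of $\Sub(X)_{\qcqs}$ spanned by those monomorphisms $Y \hookrightarrow X$ with $|Y|(*) \subset \tilde U(*)$. Under the equivalence of Proposition \ref{subobjects_arc_topos} this corresponds, via $Y \mapsto |Y|$, to the category of monomorphisms $K \hookrightarrow |X|$ from compact Hausdorff spaces with $K(*) \subset \tilde U(*)$; by Proposition \ref{prop_open_immersion_condensed}(2) the colimit of these $K$ over $\cI$ is exactly $\tilde U$. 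One checks $\cI$ is filtered: given $Y_1, Y_2 \in \cI$, the object $\im(Y_1 \sqcup Y_2 \to X)$ is qcqs (Proposition \ref{res_fields_arc_topos}(1) or Proposition \ref{prop_qcqs_obj}(3)) and its associated compact Hausdorff space is $\im(|Y_1| \sqcup |Y_2| \to |X|)$, which still lands in $\tilde U(*)$. Set $U := \colim_{\cI} Y$, computed in $\cX_{\arc_\varpi}$. Since the Berkovich functor is colimit-preserving and sends monomorphisms to monomorphisms (Proposition \ref{berko_funct_stability}), $|U| = \colim_{\cI} |Y| = \tilde U$ as a subobject of $|X|$; in particular $|U| \hookrightarrow |X|$ is an open immersion, so $U \hookrightarrow X$ is an open immersion by Definition \ref{defn_open_immersion_arc}, once we know $U \to X$ is a monomorphism --- which follows since filtered colimits of monomorphisms are monomorphisms in a topos and $X$ is quasiseparated (Proposition \ref{saturate_qs} guarantees the transition maps $Y_1 \to Y_2$ in $\cI$ are monomorphisms).

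Next I would prove the lifting property (2). The uniqueness of the lift is immediate since $U \hookrightarrow X$ is a monomorphism. For existence, I would first reduce to the case where $Z$ is qcqs: any $\arc_\varpi$-sheaf $Z$ is a colimit of affinoid $\arc_\varpi$-analytic spaces (Propositions \ref{prop_presheaves}, \ref{prop_sheaves}), and a lift on each piece of a presentation glues uniquely by the monomorphism property. So assume $Z$ is qcqs with $\im(|Z|(*) \to |X|(*)) \subset |U|(*)$. Form the pullback $U \times_X Z \to Z$; since open immersions are stable under base change (Proposition \ref{prop_open_immersion_arc}(4)) this is an open immersion of $\arc_\varpi$-sheaves, hence a monomorphism, and it is enough to show it is an isomorphism, for which by Proposition \ref{iso_arc_topos} (or directly Proposition \ref{epi_arc_topos}) it suffices that $|U \times_X Z|(*) \to |Z|(*)$ be a bijection. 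Applying $|-|$ and using Proposition \ref{berko_funct_stability}(4) (base change along a monomorphism), $|U \times_X Z| = |U| \times_{|X|} |Z|$; and by Proposition \ref{prop_open_immersion_condensed}(2) applied to the open immersion $|U| \hookrightarrow |X|$ and the map $|Z| \to |X|$ whose underlying set lands in $|U|(*)$, the base change $|U| \times_{|X|} |Z| \to |Z|$ is an isomorphism. This gives the lift $Z \to U \times_X Z \to U$. The ``In particular'' clauses in (1) and (2) then follow formally from this lifting property, which shows $\Open(X)$-objects represent the same functor whether we colimit over qcqs subobjects contained in $\tilde U$ or over arbitrary qcqs-sheaves mapping into $U$.

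Finally, for (3), the functor $\Open(X) \to \Open(|X|)$, $U \mapsto |U|$, is well-defined by the definition of open immersion and faithful because $|-|$ reflects nothing is needed beyond: a morphism $U_1 \to U_2$ over $X$ between open immersions is unique (both are subobjects of $X$), so $\Open(X)$ is a poset, and likewise $\Open(|X|)$; one just needs $U_1 \subseteq U_2$ in $\Open(X)$ iff $|U_1| \subseteq |U_2|$ in $\Open(|X|)$, which follows from the lifting property (2): if $|U_1|(*) \subset |U_2|(*)$ then $\im(|U_1|(*) \to |X|(*)) \subset |U_2|(*)$ gives the inclusion $U_1 \hookrightarrow U_2$. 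Essential surjectivity is exactly part (1). I expect the main obstacle to be the careful bookkeeping in part (1): verifying that the colimit $U = \colim_\cI Y$ is genuinely a monomorphism onto $X$ (using that the index category is filtered with monomorphic transition maps, which in turn relies on $X$ being quasiseparated and on the image factorization being compatible with $|-|$) and that no object of $\Sub(|X|)_{\qcqs}$ landing in $\tilde U$ is missed --- i.e., that the correspondence of Proposition \ref{subobjects_arc_topos} really does identify the two indexing categories. Everything else is a formal consequence of the colimit-preservation and base-change properties of the Berkovich functor already established in Proposition \ref{berko_funct_stability} and the condensed-set facts in Proposition \ref{prop_open_immersion_condensed}.
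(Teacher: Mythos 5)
Your parts (1) and (3) follow essentially the paper's own route: build $U$ as the filtered colimit of the qcqs subobjects $Y\hookrightarrow X$ with $|Y|(*)\subset \tilde U(*)$, use Proposition \ref{subobjects_arc_topos} to identify this index category with the compact Hausdorff subobjects of $|X|$ landing in $\tilde U$, and invoke Proposition \ref{prop_open_immersion_condensed}(2) (together with quasiseparatedness of $|X|$ to pass from arbitrary maps to monomorphisms); your explicit filteredness check, which guarantees that the colimit maps monomorphically to $X$, is a detail the paper leaves implicit and is correct. For (3) the poset comparison via (1) and (2) is also what the paper does.

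The problem is in your part (2). After establishing $|U\times_X Z|\simeq |U|\times_{|X|}|Z|\simeq |Z|$ you conclude that the open immersion $W:=U\times_X Z\to Z$ is an isomorphism by appealing to Proposition \ref{iso_arc_topos} or \ref{epi_arc_topos}; but both of these require the source to be qcqs (resp.\ quasicompact), and $W$ is only known to be a quasiseparated subsheaf of $Z$ --- $U$ is typically not quasicompact, and an open subsheaf of a qcqs object need not be quasicompact. In this framework surjectivity on underlying points detects epimorphisms only under those hypotheses (the proof of \ref{epi_arc_topos} chooses a qcqs atlas of the source), so the step as cited does not apply. The gap is repairable: since $W$ is quasiseparated, Proposition \ref{saturate_qs} writes $W=\colim W_i$ as a filtered colimit of qcqs subobjects with monomorphic transitions; applying the colimit- and mono-preserving Berkovich functor gives $|Z|=|W|=\colim |W_i|$, and quasicompactness of the compact Hausdorff space $|Z|$ together with filteredness forces $|W_{i_0}|=|Z|$ for some $i_0$, whence Proposition \ref{epi_arc_topos} does apply to $W_{i_0}\to Z$ and shows $W\to Z$ is an epimorphism, hence an isomorphism. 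Note for comparison that the paper's proof of (2) takes a shorter route: for $Z$ quasicompact it factors $Z\to X$ through its image $V=\im(Z\to X)$, which is qcqs by Proposition \ref{res_fields_arc_topos}(1), and lifts $V$ against $U$ using the colimit description from part (1); your base-change argument is fine in spirit, but the quasicompactness bookkeeping above is exactly the missing ingredient that turns the pointwise statement into a factorization.
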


\begin{proof}

\textit{proof of (1):} Recall from Proposition \ref{defn_open_immersion_cond}(2) that an open immersion of condensed sets $\tilde{U} \rightarrow X$ can be realized as $\colim_{Q \rightarrow |X|} Q \rightarrow |X|$ where the colimit ranges over all morphisms $Q \rightarrow |X|$ from a compact Hausdorff space $Q$ such that $\im(Q(*) \rightarrow |X|(*)) \subset \tilde{U}(*)$. Furthermore, since $|X|$ is quasiseparated $\tilde{U}$ can be realized as $\colim_{Q \hookrightarrow |X|} Q \rightarrow |X|$ where the colimit ranges over all monomorphisms $Q \hookrightarrow |X|$. By Proposition \ref{subobjects_arc_topos} we know that for each $Q \hookrightarrow |X|$ there exists a unique monomorphism $Y \hookrightarrow X$ such that $Q = |Y| \rightarrow X$, and define $U = \colim_{Y \hookrightarrow X} Y \rightarrow X$, where the colimit ranges over all monomorphisms $Y \hookrightarrow X$ from a qcqs $\arc_\varpi$-sheaf $Y$, such that $|Y|(*) \subset \tilde{U}(*)$. Since the Berkovich functor preserves colimit it follows that $\tilde{U} = |U| \hookrightarrow |X|$ and so $U \hookrightarrow X$ is an open immersion as desired.

\textit{proof of (2):} Set $\im(Z \rightarrow X) =: V \hookrightarrow X$, then by the description of $U \hookrightarrow X$ from part (1), we know that there exists an essentially unique lift $V \rightarrow U$, and so $Z \rightarrow X$ also admits a lift $Z \rightarrow U$.

\textit{proof of (3):} This is a combination of part (1) and (2), and Proposition \ref{prop_open_immersion_condensed}(2).
\end{proof}

\begin{prop}\label{open_of_spc_is_spc} Let $X$ be a $\arc_\varpi$-analytic space and $U \hookrightarrow X$ an open immersion, then $U$ is an $\arc_\varpi$-analytic space. Similarly, if $X$ is a perfectoid space, then $U$ is a perfectoid space.
\end{prop}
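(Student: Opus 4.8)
The plan is to reduce the statement to a local computation by intersecting $U$ with an affinoid cover of $X$. By definition of an $\arc_\varpi$-analytic space (Definition \ref{defn_analytic_space}), there is a collection of monomorphisms $\{\cM(A_i)_{\arc_\varpi} \hookrightarrow X\}_{i \in I}$ with each $A_i$ a Banach $K$-algebra and with $\sqcup_{i \in I} \cM(A_i)_{\arc_\varpi} \twoheadrightarrow X$ an epimorphism; in the perfectoid case we take the $A_i$ to be perfectoid Banach $K$-algebras. Since $\cX_{\arc_\varpi}$ is a topos, pulling back $U \hookrightarrow X$ along each map $\cM(A_i)_{\arc_\varpi} \rightarrow X$ gives open immersions $U_i := U \times_X \cM(A_i)_{\arc_\varpi} \hookrightarrow \cM(A_i)_{\arc_\varpi}$ (open immersions are stable under base change by Proposition \ref{prop_open_immersion_arc}(4)), and these fit together: the induced map $\sqcup_{i \in I} U_i \rightarrow U$ is an epimorphism, because epimorphisms are stable under base change in a topos. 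So it suffices to show each $U_i$ is an affinoid-locally covered $\arc_\varpi$-analytic space (resp. perfectoid space), i.e. it suffices to treat the case $X = \cM(A)_{\arc_\varpi}$ for a single Banach $K$-algebra $A$ (resp. $X = \cM(A)$ with $A$ perfectoid).

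\textbf{The affinoid case.} Now suppose $X = \cM(A)_{\arc_\varpi}$ and $U \hookrightarrow X$ is an open immersion. By Proposition \ref{open_subobject_arc}(2), $U$ can be realized as $\colim_{Y \rightarrow X} Y$ over all morphisms $Y \rightarrow X$ from a qcqs $\arc_\varpi$-sheaf with $\im(|Y|(*) \rightarrow |X|(*)) \subset |U|(*)$; in particular it suffices to produce, for each point of $|U|(*)$, a rational domain of $X$ landing inside $U$. Here I would invoke the explicit description of the topology: $|U| \hookrightarrow |X| = |\cM(A)|$ is an open immersion of condensed sets, hence (Remark \ref{subspace_top_condensed}, Definition \ref{defn_open_immersion_cond}) corresponds to an honest open subset of the compact Hausdorff space $|\cM(A)|$. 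Rational subsets of the form $|\cM(A)|\big(\tfrac{f_1,\dots,f_n}{g}\big)$, as $g$ ranges over the value-group scaling of a single function and the $f_j$ over finitely many functions, form a basis of closed neighborhoods of $|\cM(A)|$ — indeed the proof of Proposition \ref{properties_stalk}(2) produces, for any two points, a rational domain separating them, and a standard compactness argument turns this into a basis of neighborhoods. Thus for every $x \in |U|(*)$ there is a rational domain $V_x \subset |U|(*)$ with $x$ in its interior, represented by a Banach $K$-algebra $B_x$ via $\cM(B_x)_{\arc_\varpi} \hookrightarrow \cM(A)_{\arc_\varpi}$ (Proposition \ref{rat_domains_are_mono}, Corollary \ref{topo_img_rat_domain}, pushed into $\cX_{\arc_\varpi}$ using that $\Yo_{\arc_\varpi}$ preserves monomorphisms and Berkovich spectra). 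Since $\im(|\cM(B_x)_{\arc_\varpi}|(*) \rightarrow |X|(*)) = V_x \subset |U|(*)$, Proposition \ref{open_subobject_arc}(2) supplies a (unique) monomorphism $\cM(B_x)_{\arc_\varpi} \hookrightarrow U$, and the collection $\{\cM(B_x)_{\arc_\varpi} \hookrightarrow U\}_{x}$ induces an epimorphism onto $U$ by Proposition \ref{epi_arc_topos}, since it is surjective on underlying sets. This exhibits $U$ as an $\arc_\varpi$-analytic space.

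\textbf{The perfectoid case and the main obstacle.} When $A$ is perfectoid, the rational domains $B_x$ above are themselves perfectoid Banach $K$-algebras: this is exactly the content of Theorem \ref{struct_presheaf_perfectoid}, which shows $\cO_X(V_x) = A\big\langle \tfrac{f_1^{1/p^\infty},\dots,f_n^{1/p^\infty}}{f_i^{1/p^\infty}}\big\rangle \in \Perfd_K^{\Ban}$ represents the rational subset $V_x$, and that the induced map $\cM(\cO_X(V_x)) \rightarrow X$ is a monomorphism with image $V_x$. Hence the same argument produces an epimorphism $\sqcup_x \cM(\cO_X(V_x)) \twoheadrightarrow U$ from affinoid perfectoid spaces, so $U$ is a perfectoid space. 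The step I expect to require the most care is the claim that rational domains form a basis for the topology on $|\cM(A)|$ and that the resulting representing algebras are genuinely perfectoid (resp.\ Banach) and fit into $\cX_{\arc_\varpi}$ compatibly — that is, correctly transporting the point-set input (an open subset of a compact Hausdorff space) into the categorical machinery of open subobjects via Proposition \ref{open_subobject_arc} and Proposition \ref{epi_arc_topos}, and checking that the approximation lemma (Lemma \ref{approx_lemma}) really does let us shrink to rational domains admitting compatible $p$-power roots in the perfectoid case. The reduction to the affinoid case and the assembly of the local covers into a global epimorphism are then formal consequences of working in a topos together with the stability properties of open immersions in Proposition \ref{prop_open_immersion_arc}.
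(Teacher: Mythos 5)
Your overall route is the same as the paper's: reduce to the affinoid case by pulling $U$ back along an affinoid cover of $X$ (using stability of open immersions and epimorphisms under base change), then cover $|U|$ by rational domains that are neighborhoods of their points, represented by Banach (resp.\ perfectoid, via Theorem \ref{struct_presheaf_perfectoid}) algebras, and lift these into $U$ via Proposition \ref{open_subobject_arc}. However, there is a genuine gap at the decisive step: you claim that $\sqcup_x \cM(B_x)_{\arc_\varpi} \rightarrow U$ is an epimorphism ``by Proposition \ref{epi_arc_topos}, since it is surjective on underlying sets.'' Proposition \ref{epi_arc_topos} requires the target to be qcqs and the source to be quasicompact, and neither hypothesis holds here: $U$ is an open subobject of an affinoid, hence quasiseparated but in general \emph{not} quasicompact (think of the open disk inside the closed disk), and your coproduct runs over infinitely many points $x$, so it is not quasicompact either. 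Surjectivity on underlying points does not imply epimorphism once quasicompactness is dropped — already for condensed sets, $\sqcup_{x \in [0,1]} \{x\} \rightarrow [0,1]$ is surjective on points but is not an epimorphism — so the citation does not justify the claim, and this is exactly the point where the content of the proposition lives.

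The repair is the compactness argument the paper runs instead: define $V := \im\bigl(\sqcup_x \cM(B_x)_{\arc_\varpi} \rightarrow X\bigr)$, so that $\sqcup_x \cM(B_x)_{\arc_\varpi} \twoheadrightarrow V$ is an epimorphism tautologically and $V$ is an $\arc_\varpi$-analytic (resp.\ perfectoid) space by construction, and then prove $V = U$. By the equivalence $\Open(X) \simeq \Open(|X|)$ of Proposition \ref{open_subobject_arc}(3) this reduces to $|V| = |U|$ as subobjects of $|X|$, i.e.\ to the lifting property of Remark \ref{subspace_top_condensed}: given a compact Hausdorff $Q \rightarrow |X|$ with image in $|U|(*) = |V|(*)$, the compactness of $Q$ together with the fact that the chosen rational domains contain their points in their interiors (your interior condition is the right move, but you never use it after choosing the $V_x$) produces a \emph{finite} subfamily whose image $W$ is a qcqs subobject through which $Q$ factors uniquely by Proposition \ref{mono_condensed_sets}; hence $Q$ lifts to $|V|$. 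Only at this finite/qcqs level do \ref{mono_condensed_sets} and \ref{epi_arc_topos} legitimately apply. With that substitution your argument goes through; your appeal to the separation argument of Proposition \ref{properties_stalk}(2) plus compactness for the basis claim is also workable, provided you choose the constants strictly so that the rational domains are honest neighborhoods, which is what the paper gets directly from the embedding $|\cM(A)| \hookrightarrow \prod_{f \in A}[0,|f|]$.
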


\begin{proof} We claim that for any Banach $K$-algebra $A$ the compact Hausdorff $|\cM(A)|$ space admits a basis of neighborhoods which are of the form
\begin{align*}
	|\cM(A)|\Big (\frac{f_1, \dots, f_n}{r} \Big) &&  |\cM(A)|\Big (\frac{r}{f_1, \dots, f_n} \Big) && (\text{cf. Definition \ref{defn_top_rat_domain}})
\end{align*}
where $r \in |K^*|$ and $\{f_1, \dots, f_n\} \subset A$. Indeed, from the definition of the Berkovich spectrum of $A$, that the compact Hausdorff space $|\cM(A)|$ admits an injective map
\begin{equation*}
	\cM(A) \longrightarrow \prod_{f \in A} [0, |f|]
\end{equation*}
where $\prod_{f \in A} [0, |f|]$ is endowed with the product topology, making it the claim above clear. Furthermore, for both subsets $|\cM(A)|\Big (\frac{f_1, \dots, f_n}{r} \Big)$ and $ |\cM(A)|\Big (\frac{r}{f_1, \dots, f_n} \Big)$ there exists Banach $K$-algebras $B_1$ and $B_2$ respectively, together with contractive morphisms $A \rightarrow B_1$ and $A \rightarrow B_2$ such that the induced maps $\cM(B_1) \hookrightarrow \cM(A)$ and $\cM(B_2) \hookrightarrow \cM(A)$ are monomorphisms and have as image $|\cM(A)|\Big (\frac{f_1, \dots, f_n}{r} \Big)$ and $ |\cM(A)|\Big (\frac{r}{f_1, \dots, f_n} \Big)$ respectively (cf. Proposition \ref{topo_img_rat_domain}). Moreover, if $A$ is a perfectoid Banach $K$-algebra we may choose $B_1$ and $B_2$ to be perfectoid Banach $K$-algebras (cf. Theorem \ref{struct_presheaf_perfectoid}).

Let us first proof the result when $X = \cM(A)_{\arc_\varpi}$, set $V = \im(\sqcup_{i \in I} V_i \rightarrow X)$ where the maps $V_i \rightarrow X$ are rational domains of the above form satisfying $|V_i|(*) \subset |U|(*) \subset |X|(*)$, then by the work done in the previous paragraph we know that $|V|(*) = |U|(*) \subset |X|(*)$ and by construction $V$ is an $\arc_\varpi$-analytic space (resp. a perfectoid space if $A$ is perfectoid). We claim that $V = U$ and by Proposition \ref{open_subobject_arc}(3) it suffices to show that $|V| = |U|$. Indeed, we need to show that for any morphism $Q \rightarrow |X|$ from a compact Hausdorff space $Q$ satisfying $\im(Q(*) \rightarrow |X|(*)) \subset |V|(*)$ there exists a unique lift $Q \rightarrow |V|$. Since $Q$ is compact and rational domains form a basis of neighborhood of $|X|$ we can find a finite subset $I_0 \subset I$ such that $Q(*) \subset \im(\sqcup_{i \in I_0} |V_i|(*) \rightarrow |X|(*))$, setting $W = \im(\sqcup_{i \in I_0} V_i \rightarrow X)$ we can conclude by Proposition \ref{mono_condensed_sets} that there exists a unique lift of $Q \rightarrow |X|$ to $Q \rightarrow |W|$ and so also a lift to $Q \rightarrow |V|$, completing the proof.

For the general case, we have an open immersion $U \hookrightarrow X$, and pick a collection of monomorphisms $\{\cM(A_i)_{\arc_\varpi} \hookrightarrow X\}_{i \in I}$ such that the induced map $\sqcup_{i \in I} \cM(A_i) \rightarrow X$ is an epimorphism. Then, the basechange of $U \times_X \cM(A_i)_{\arc_\varpi} =: U_i \hookrightarrow \cM(A)_{\arc_\varpi}$ is an open immersion, and so an $\arc_\varpi$-analytic space (resp. a perfectoid space), and so it admits a collection of monomorphisms $\{\cM(B_j)_{\arc_\varpi} \hookrightarrow U_i\}_{j \in J_i}$ such that the induced map $\sqcup_{j \in J_i} \cM(B_i)_{\arc_\varpi} \rightarrow U_i$ is an epimorphism. Then, the collection of maps $\{\cM(B_j)_{\arc_\varpi} \hookrightarrow U\}_{j \in J}$, where $J = \cup_{i \in I} J_i$, are monomorphisms and induces an epimorphism after taking coproduct, showing that $U$ is an $\arc_\varpi$-analytic space (resp. a perfectoid space).
\end{proof}

\begin{example}\label{non_vanishing_subsets} For a Banach $K$-algebra, the subset
\begin{align*}
	|\cM(A)|_{f \not= 0} :=  \{x \in |\cM(A)| \text{ such that } |f|(x) \not= 0\} \subset |\cM(A)|
\end{align*}
is an open subset of $|\cM(A)|$, so by Propositions \ref{open_subobject_arc} and \ref{open_of_spc_is_spc} we know that there exists an $\arc_\varpi$-analytic space $U$ together with a monomorphism $U \hookrightarrow \cM(A)_{\arc_\varpi}$ such that $U = |\cM(A)|_{f \not= 0} \rightarrow |\cM(A)_{\arc_\varpi}| = |\cM(A)|$. 
\end{example}

\begin{defn} Let $Y \hookrightarrow X$ be a monomorphism of $\arc_\varpi$-sheaves. We say that $x \in |Y|(*) \subset |X|(*)$ is in the interior of $Y \hookrightarrow X$ if there exists a morphism $U \rightarrow Y$ such that $x \in \im(|U|(*) \rightarrow |Y|(*))$ and the composition $U \rightarrow Y \rightarrow X$ is an open immersion. In particular, we learn from Proposition \ref{prop_open_immersion_arc}(2) that $U \hookrightarrow Y$ is an open immersion.

Furthermore, for a monomorphism $Y \hookrightarrow X$ we define the interior of the monomorphism $\Int(Y/X)$ as
\begin{align*}
	\Int(Y/X) := \colim_{U \rightarrow Y} U
\end{align*}
where the colimit ranges over all maps $U \rightarrow Y$ whose composition $U \rightarrow Y \rightarrow X$ is an open immersion. Again, by Proposition \ref{prop_open_immersion_arc} we learn that the canonically induced maps $\Int(Y/X) \rightarrow Y$ and $\Int(Y/X) \rightarrow X$ are open immersions.
\end{defn}

\begin{lemma}\label{iso_interior_arc} Let $X$ be a quasiseparated $\arc_\varpi$-sheaf and $Y \hookrightarrow X$ a monomorphism of $\arc_\varpi$-sheaves. Then, the canonical map
\begin{align*}
	|\Int(Y/X)| \overset{\simeq}{\longrightarrow} \Int(|Y|/|X|)
\end{align*}
is an isomorphism of condensed sets.
\end{lemma}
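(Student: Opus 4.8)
The plan is to prove the two inclusions of condensed sets separately, using the fact (Proposition \ref{berko_funct_stability}) that the Berkovich functor preserves colimits, monomorphisms, epimorphisms, and hence images. Recall that $\Int(Y/X)$ is defined as $\colim_{U \to Y} U$ over all maps $U \to Y$ whose composite $U \to Y \to X$ is an open immersion of $\arc_\varpi$-sheaves, while $\Int(|Y|/|X|)$ is defined as $\colim_{V \to |Y|} V$ over all maps $V \to |Y|$ whose composite $V \to |Y| \to |X|$ is an open immersion of condensed sets (Definitions \ref{defn_interior_cond} and the $\arc_\varpi$ analogue). Since the Berkovich functor is colimit preserving, for each $U \to Y$ with $U \to X$ an open immersion of $\arc_\varpi$-sheaves, the induced map $|U| \to |Y|$ has the property that $|U| \to |X|$ is an open immersion of condensed sets by the very definition of open immersion in $\cX_{\arc_\varpi}$ (Definition \ref{defn_open_immersion_arc}). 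Hence each such $U$ contributes to the colimit defining $\Int(|Y|/|X|)$, and applying $|-|$ to the colimit $\Int(Y/X) = \colim_{U \to Y} U$ gives a canonical map $|\Int(Y/X)| \to \Int(|Y|/|X|)$, which is moreover an open immersion into $|X|$ by Proposition \ref{prop_open_immersion_condensed}.

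The substance is the reverse inclusion: every map $V \to |Y|$ whose composite $V \to |X|$ is an open immersion of condensed sets must factor through $|\Int(Y/X)|$. The key input is Proposition \ref{open_subobject_arc}(1): since $X$ is quasiseparated, for every open immersion of condensed sets $\tilde{U} \hookrightarrow |X|$ there exists an open immersion $U \hookrightarrow X$ of $\arc_\varpi$-sheaves with $|U| = \tilde U \hookrightarrow |X|$. So first I would reduce to showing that every open immersion $\tilde U \hookrightarrow |X|$ which factors through the monomorphism $|Y| \hookrightarrow |X|$ arises as $|U|$ for an open immersion $U \hookrightarrow X$ factoring through $Y \hookrightarrow X$. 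Given such $\tilde U$, Proposition \ref{open_subobject_arc}(1) furnishes $U \hookrightarrow X$ with $|U| = \tilde U$; the point is to see that $U \to X$ factors through $Y$. Since $|U|(*) = \tilde U(*) \subset |Y|(*) \subset |X|(*)$ and $Y \hookrightarrow X$ is a monomorphism with $X$ quasiseparated, if $U$ is moreover quasicompact then Theorem \ref{analytic_spc_mono}/Proposition \ref{mono_arc_topos} applied to $Y \hookrightarrow X$ (monomorphisms are analytic domains) gives a unique factorization $U \to Y \to X$. For general $U$, I would use that $U \hookrightarrow X$ is realized (Proposition \ref{open_subobject_arc}(1)) as $\colim_{Z \hookrightarrow X} Z$ over qcqs subobjects $Z$ with $|Z|(*) \subset \tilde U(*) \subset |Y|(*)$, each of which factors uniquely through $Y$ by the analytic-domain property, so $U$ itself factors through $Y$ by passing to the colimit. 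This shows $U$ contributes to $\colim_{U \to Y} U = \Int(Y/X)$, and hence $\tilde U = |U|$ maps into $|\Int(Y/X)|$.

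Assembling the two directions: the map $|\Int(Y/X)| \to \Int(|Y|/|X|)$ is a monomorphism (both sides are open immersions into $|X|$, and the triangle commutes over $|X|$, so Proposition \ref{prop_open_immersion_condensed}(4) applies), and it is an epimorphism because by the previous paragraph every $\tilde U$ contributing to $\Int(|Y|/|X|)$ factors through $|\Int(Y/X)|$, so on underlying sets the map is surjective; since both are quasiseparated (being subobjects of $|X|$) and an open immersion into $|X|$ which is surjective on points with a section is an isomorphism (Proposition \ref{prop_open_immersion_condensed}(2) gives the factorization, or one invokes that $\Int(|Y|/|X|) = \colim$ over the $\tilde U$'s which all factor through $|\Int(Y/X)|$). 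A morphism of condensed sets which is both mono and epi is an isomorphism since $\Cond$ is a topos, completing the proof.

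\textbf{Main obstacle.} I expect the delicate point to be the factorization $U \to Y$ in the reverse inclusion when $U$ is not quasicompact: one must carefully combine the presentation of $U$ as a filtered colimit of qcqs subobjects of $X$ from Proposition \ref{open_subobject_arc}(1) with the analytic-domain property of the monomorphism $Y \hookrightarrow X$ (Proposition \ref{mono_arc_topos}), checking that the unique factorizations of the qcqs pieces are compatible and glue to a factorization of the colimit. This is formal once one has the right commuting diagrams, but it requires tracking the universal properties precisely; everything else is a routine bookkeeping of the fact that $|-|$ preserves the relevant colimits, monos, epis, and open immersions.
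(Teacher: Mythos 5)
There is a genuine gap in the key step. You invoke the analytic-domain characterization of Proposition \ref{mono_arc_topos} for the monomorphism $Y \hookrightarrow X$, but that proposition requires the \emph{source} of the monomorphism to be quasicompact quasiseparated. Here the only hypothesis is that $X$ is quasiseparated; consequently $Y$, being a subobject of a quasiseparated object, is quasiseparated but need not be quasicompact, and the hypotheses of Proposition \ref{mono_arc_topos} are not met. The point is not merely formal: the underlying principle that a monomorphism bijective on points is an isomorphism genuinely fails without quasicompactness of the source (in $\Cond$ the monomorphism $\underline{[0,1]^{\delta}} \hookrightarrow [0,1]$ is a bijection on underlying sets but is not an isomorphism), so one cannot conclude that each qcqs $Z$ with $|Z|(*)\subset|Y|(*)$ factors through $Y$ simply by quoting that proposition.

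The repair, and what the paper actually does, is to apply Proposition \ref{open_subobject_arc} to $Y$ rather than to $X$. Since $Y$ is quasiseparated, Proposition \ref{open_subobject_arc}(3) gives the equivalence $\Open(Y) \simeq \Open(|Y|)$. The interior $\Int(|Y|/|X|) \hookrightarrow |Y|$ is an open immersion of condensed sets (Definition \ref{defn_interior_cond}), so it corresponds under this equivalence to an open immersion $U \hookrightarrow Y$ of $\arc_\varpi$-sheaves with $|U| = \Int(|Y|/|X|)$. The composite $U \hookrightarrow Y \hookrightarrow X$ is a monomorphism, and its image under $|-|$ is the open immersion $\Int(|Y|/|X|) \hookrightarrow |X|$, so $U \hookrightarrow X$ is an open immersion by Definition \ref{defn_open_immersion_arc}. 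Hence $U$ contributes to the colimit defining $\Int(Y/X)$, giving the inclusion $\Int(|Y|/|X|)(*) \subset |\Int(Y/X)|(*)$; combined with the easy inclusion you observed, Proposition \ref{prop_open_immersion_condensed}(2) (an open immersion into $|X|$ is determined by its underlying set) forces the isomorphism. Your overall structural instinct — reduce to equality of underlying sets, then realize the condensed interior as an open immersion on the $\arc_\varpi$ side — is exactly right; the misstep is constructing $U$ as a subobject of $X$ and then trying to push it into $Y$, instead of producing it directly as an open subobject of $Y$.
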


\begin{proof} By Proposition \ref{prop_open_immersion_condensed}(2) it suffices to show that $|\Int(Y/X)|(*) = \Int(|Y|/|X|)(*) \subset |X|(*)$, and its clear by the construction that $|\Int(Y/X)|(*) \subset \Int(|Y|/|X|)(*)$, thus it remains to show the reverse inclusion. By Proposition \ref{open_subobject_arc}(3) and the fact that $Y$ and $X$ are quasiseparated we know that there exists a mononomorphism $U \hookrightarrow Y$ whose composition $U \hookrightarrow Y \hookrightarrow X$ is an open immersion, which satisfies $\Int(|Y|/|X|) = |U| \rightarrow |Y| \rightarrow |X|$, showing that $|\Int(Y/X)|(*) \supset \Int(|Y|/|X|)(*)$ and completing the proof.
\end{proof}

Following \cite[Remark 1.2.16]{berkovichetale} we make the following definition, though we prefer the more descriptive name ``locally compact'' over ``good''.

\begin{defn}[Locally Compact]\label{defn_locally_compact_arc} A quasiseparated $\arc_\varpi$-sheaf $X$ is locally compact if for every point $x \in |X|(*)$ there exists a monomorphism $V_x \hookrightarrow X$ from a qcqs $\arc_\varpi$-sheaf such that $x \in |\Int(V_x/X)|(*)$. In particular, we say that a quasiseparated $\arc_\varpi$-analytic space (resp. a perfectoid space) $X$ is locally compact if it is locally compact as an $\arc_\varpi$-sheaf.

Furthermore, by Lemma \ref{iso_interior_arc} we know that $|\Int(V_x/X)| = \Int(|V_x|/|X|)$ and so if $X$ is a quasiseparated locally compact $\arc_\varpi$-sheaf, then $|X|$ is a quasiseparated locally compact condensed set (cf. Definition \ref{defn_locally_compact_cond}).
\end{defn}

\begin{example} If $X$ is a qcqs $\arc_\varpi$-sheaf then it is locally compact.
\end{example}

\begin{prop}\label{properties_loc_compact_arc} The collection of quasiseparated locally compact $\arc_\varpi$-sheaves have the following properties
\begin{enumerate}[(1)]
	\item Let $X$ be a quasiseparated $\arc_\varpi$-sheaf, then $X$ is locally compact if and only if $|X|$ is locally compact.
	\item If $X$ is quasiseparated locally compact $\arc_\varpi$-sheaf and $U \hookrightarrow X$ is an open immersion of $\arc_\varpi$-sheaves, then $U$ is a quasiseparated locally compact $\arc_\varpi$-sheaf.
	\item If $X$ is a quasiseparated locally compact $\arc_\varpi$-sheaf, and $\{V_i \hookrightarrow X\}_{i \in I}$ a collection of monomorphisms such that the induced map $\sqcup_{i \in I} |\Int(V_i/X)|(*) \rightarrow |X|(*)$ is surjective, then $\sqcup_{i \in I} V_i \rightarrow X$ is an epimorphism of $\arc_\varpi$-sheaves. 
\end{enumerate}
\end{prop}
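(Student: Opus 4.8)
The plan is to reduce each of the three statements to the corresponding facts about condensed sets established in Proposition \ref{properties_loc_compact_cond} via the Berkovich functor, using that $|-|$ preserves colimits, monomorphisms, open immersions (by definition, Definition \ref{defn_open_immersion_arc}), interiors (Lemma \ref{iso_interior_arc}), and qcqs objects (Proposition \ref{berko_funct_stability}). For part (1), the forward direction is immediate from Definition \ref{defn_locally_compact_arc} and Lemma \ref{iso_interior_arc}: if $X$ is locally compact as an $\arc_\varpi$-sheaf, then for each $x \in |X|(*)$ the witnessing monomorphism $V_x \hookrightarrow X$ gives a monomorphism $|V_x| \hookrightarrow |X|$ from a compact Hausdorff space with $x \in |\Int(V_x/X)|(*) = \Int(|V_x|/|X|)(*)$. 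For the converse, suppose $|X|$ is locally compact; given $x \in |X|(*)$ choose a compact Hausdorff $K \hookrightarrow |X|$ with $x \in \Int(K/|X|)(*)$, then invoke Proposition \ref{subobjects_arc_topos} to lift the monomorphism $K \hookrightarrow |X|$ to a monomorphism $V_x \hookrightarrow X$ with $|V_x| = K$; since $|\Int(V_x/X)| = \Int(|V_x|/|X|)$ by Lemma \ref{iso_interior_arc}, the point $x$ lies in $|\Int(V_x/X)|(*)$, so $X$ is locally compact.

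For part (2), let $U \hookrightarrow X$ be an open immersion with $X$ quasiseparated and locally compact. By Proposition \ref{open_of_spc_is_spc}-style reasoning $U$ is quasiseparated (a monomorphism into a quasiseparated object is quasiseparated, as monomorphisms are quasiseparated and quasiseparated morphisms compose); by part (1) and Proposition \ref{properties_loc_compact_cond}(1), since $|X|$ is locally compact and $|U| \hookrightarrow |X|$ is an open immersion of condensed sets, $|U|$ is locally compact. Applying part (1) again in the other direction, $U$ is a locally compact $\arc_\varpi$-sheaf.

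For part (3), apply the Berkovich functor to the collection $\{V_i \hookrightarrow X\}$. By Lemma \ref{iso_interior_arc} the hypothesis that $\sqcup_{i \in I} |\Int(V_i/X)|(*) \to |X|(*)$ is surjective translates to $\sqcup_{i \in I} \Int(|V_i|/|X|)(*) \to |X|(*)$ being surjective; since $|X|$ is a quasiseparated locally compact condensed set (by part (1)), Proposition \ref{properties_loc_compact_cond}(2) gives that $\sqcup_{i \in I} |V_i| \to |X|$ is an epimorphism of condensed sets, i.e. $|\sqcup_{i \in I} V_i| \to |X|$ is an epimorphism since $|-|$ preserves coproducts. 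It remains to descend this back to $\cX_{\arc_\varpi}$. The main subtlety here — the step I expect to require the most care — is that the collection $I$ may be infinite, so Proposition \ref{epi_arc_topos} does not apply directly (it requires the source to be quasicompact). Instead I would argue that it suffices to check the epimorphism condition after pullback along arbitrary maps $\cM(P) \to X$ from affinoid perfectoid spaces: for such a map, $\cM(P)$ is quasicompact, and by compactness of $|\cM(P)|$ combined with local compactness one can reduce to a finite subcollection (exactly as in the proof of Proposition \ref{properties_loc_compact_cond}(2), where compactness of $E$ forces a finite subcover and the images $|V_i|$ are ``fattened'' to compact Hausdorff subsets via local compactness), so that $\sqcup_{i \in I_0} (V_i \times_X \cM(P)) \to \cM(P)$ already has the $\arc_\varpi$-lifting property and is an epimorphism by Proposition \ref{epi_arc_topos}; then Proposition \ref{qcqs_check_local}(3) (checking epimorphisms locally on a basis) or a direct application of the sheaf condition promotes this to $\sqcup_{i \in I} V_i \to X$ being an epimorphism.
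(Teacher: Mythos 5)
Your argument is essentially the same as the paper's: parts (1) and (2) are identical in substance (part (1) via Proposition \ref{subobjects_arc_topos} together with Lemma \ref{iso_interior_arc}/Proposition \ref{open_subobject_arc}(3), part (2) via part (1) and Proposition \ref{properties_loc_compact_cond}(1)), and part (3) uses the same device of testing the epimorphism condition on maps $\cM(P) \rightarrow X$ from affinoid perfectoids, exploiting compactness of $|\cM(P)|$ together with local compactness to pass to a finite subcollection and then invoking Proposition \ref{epi_arc_topos}.

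Two small points of imprecision in your part (3), both peripheral to the main idea. First, in the sentence where you apply Proposition \ref{epi_arc_topos} to $\sqcup_{i \in I_0}(V_i \times_X \cM(P)) \rightarrow \cM(P)$, that proposition requires the source to be quasicompact, which $V_i \times_X \cM(P)$ need not be. What you want — and what your parenthetical already gestures at — is to first use local compactness of each $V_i$ (part (2)) to produce qcqs subobjects $Z_x \hookrightarrow V_i$ with $x \in |\Int(Z_x/X)|(*)$, and then run Proposition \ref{epi_arc_topos} on the finitely many $Z_x \times_X \cM(P)$, each of which is quasicompact because $X$ is quasiseparated; an epimorphism that factors through $\sqcup(V_i \times_X \cM(P))$ then forces the latter to be epi as well. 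Second, Proposition \ref{qcqs_check_local}(3) is about testing quasicompactness of a morphism after an epimorphism, not epimorphicity; your hedge "or a direct application of the sheaf condition" is the correct justification (in $\Shv_{\arc_\varpi}(\Perfd_K^{\Ban,\op})$, epimorphicity of $\sqcup V_i \rightarrow X$ is by definition checked section-by-section on affinoid perfectoids up to $\arc_\varpi$-covers). Finally, your opening appeal to Proposition \ref{properties_loc_compact_cond}(2) to get that $|\sqcup V_i| \rightarrow |X|$ is an epimorphism of condensed sets is a detour: as you correctly observe it does not descend for free, and your actual argument repeats its content rather than using its conclusion. The paper works directly in $\cX_{\arc_\varpi}$ and skips that step.
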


\begin{proof} \textit{proof of (1):} Recall that by Proposition \ref{berko_funct_stability}(6) is $X$ is a quasiseparated $\arc_\varpi$-sheaf then $|X|$ is quasiseparated condensed set. The claim then follows from a combination of Propositions \ref{subobjects_arc_topos} and \ref{open_subobject_arc}(3).

\textit{proof of (2):} This is a direct consequence of part (1) and Proposition \ref{properties_loc_compact_cond}(1).

\textit{proof of (3):} It suffices to show that $\sqcup_{i \in I} \Int(V_i/X) \rightarrow X$ is an epimorphism, thus we may assume that every $V_i \hookrightarrow X$ is an open immersion. To show that $\sqcup_{i \in I} V_i \rightarrow X$ is an epimorphism, we need to show that for every perfectoid Banach $K$-algebra $A$ and any morphism $\cM(A) \rightarrow X$, there exists an $\arc_\varpi$-cover $\sqcup_{h \in H} \cM(A_j) \rightarrow \cM(A)$, where each $A_h$ is perfectoid and $H$ is a finite set, and a map $\sqcup_{h \in H} \cM(A_h) \rightarrow \sqcup_{i \in I} V_i$ making the following diagram commute
\begin{cd}
	\sqcup_{h \in H} \cM(A_h) \ar[r] \ar[d] & \cM(A) \ar[d] \\
	\sqcup_{i \in I} V_i \ar[r] & X
\end{cd}
From the hypothesis we know that $\sqcup_{i \in I} |V_i|(*) \rightarrow |X|(*)$ is surjective and part (2) we know that each $V_i$ is itself a quasiseparated locally compact $\arc_\varpi$-sheaf, so for each $x \in \im(|\cM(A)|(*) \rightarrow |X|(*))$ we can find a monomorphism $Z_x \hookrightarrow V_i \hookrightarrow X$ from a qcqs $\arc_\varpi$-sheaf $Z_x$, such that $x$ is in the interior of $Z_x \hookrightarrow X$, and so by the compactness of $|\cM(A)|$ (and Proposition \ref{berko_funct_stability}(4)) we can find a finite collection $J$ such that $ \im(|\cM(A)|(*) \rightarrow |X|(*)) \subset \im(\sqcup_{x \in J} |Z_x|(*) \rightarrow |X|(*))$. We claim that the fiber product $\sqcup_{x \in J} Z_x \times_X \cM(A) \rightarrow \cM(A)$ is a epimorphism of qcqs $\arc_\varpi$-sheaves. Indeed, for each $x \in J$ we know that the fiber product $\cM(A) \times_X Z_x$ is quasicompact by virtue of the fact that $X$ is quasiseparated, and its quasiseparated since the canonical map $\cM(A) \times_X Z_x \hookrightarrow \cM(A)$ is a monomorphism, and so we can conclude that $\sqcup_{x \in J} Z_x \times_X \cM(A)$ is a qcqs object; which by construction and Proposition \ref{berko_funct_stability}(4), we know it induces a surjective map $\sqcup_{x \in J} |Z_x \times_X \cM(A)|(*) \rightarrow |\cM(A)|(*)$, which in turn implies that $\sqcup_{x \in J} Z_x \times_X \cM(A) \rightarrow \cM(A)$ is an epimorphism by Proposition \ref{epi_arc_topos}. To summarize, we have constructed a commutative diagram of the following form
\begin{cd}
	\sqcup_{x \in J} Z_x \times_X \cM(A) \ar[r] \ar[d] & \cM(A) \ar[d] \\
	\sqcup_{x \in J} Z_x \ar[r] & X
\end{cd}
where the top arrow is an epimorphism. Then, since $\sqcup_{x \in J} Z_x \times_X \cM(A)$ is quasicompact we can find a finite collection of morphisms $\{\cM(A_h) \rightarrow \sqcup_{x \in J} Z_x \times_X \cM(A) \}_{h \in H}$ such that the induced map $\sqcup_{h \in H} \cM(A_h) \rightarrow \sqcup_{x \in J} Z_x \times_X \cM(A)$ is an epimorphism, which in turn implies that $\sqcup_{h \in H} \cM(A_h) \rightarrow \cM(A)$ is an epimorphism, proving the claim.	
\end{proof}

\begin{example} Let $\{f_1, \dots, f_n\}_{i \in I} \subset A$ be a collection of objects of $A$ generating the unit ideal, and let $|\cM(A)|_{f_i \not= 0} \subset |\cM(A)|$ be the open subset of $|\cM(A)|$ introduced in Example \ref{non_vanishing_subsets}, and let $U_{f_i} \hookrightarrow \cM(A)_{\arc_\varpi}$ the monomorphism of $\arc_\varpi$-sheaves associated to the open subset (cf. Proposition \ref{open_subobject_arc}(3)). Then, the induced map $\sqcup_{i \in I} |U_{f_i}|(*) \rightarrow |\cM(A)_{\arc_\varpi}|(*)$ is surjective, and so by Proposition \ref{properties_loc_compact_arc} we learn that $\sqcup U_{f_i} \rightarrow |\cM(A)|_{\arc_\varpi}$ is an epimorphism, which in turn implies that we have the following identity
\begin{align*}
	\coeq \Big ( \sqcup_{(i,j) \in I^2} U_{f_i} \times_{\cM(A)_{\arc_\varpi}} U_{f_j} \rightrightarrows \sqcup_{i \in I} U_{f_i}  \Big) \overset{\simeq}{\longrightarrow} \cM(A)_{\arc_\varpi}
\end{align*}
Finally, recall that by Proposition \ref{berko_funct_stability}(4) we have that we have the identity
\begin{align*}
	|U_{f_i} \times_{\cM(A)_{\arc_\varpi}} U_{f_j}| = |\cM(A)|_{f_i \not= 0} \cap |\cM(A)|_{f_j \not= 0}
\end{align*}
Showing that $\arc_\varpi$-analytic spaces can be covered by ``Zariski open covers''.
\end{example}

\newpage

\phantomsection
\addcontentsline{toc}{chapter}{Bibliography}
\bibliographystyle{alpha}

\end{document}